\setlist[itemize]{leftmargin=5ex,label=\textbullet}
\setlist[enumerate]{left=\parindent .. 2.5\parindent,align=left,label=\rm{(\arabic*)},ref=\rm{(\arabic*)}}
\newlist{enumerate-roman}{enumerate}{2}
\setlist[enumerate-roman]{left=\parindent .. 2.5\parindent,align=left,label=\rm{(\roman*)},ref=\rm{(\roman*)}}
\definecolor{mybrown}{rgb}{.8,.47,.18} 
\definecolor{myblue}{rgb}{.31,.22,.82} % color of refs
\definecolor{myred}{rgb}{.76,.22,.16} % color of inner links
\newtheoremstyle{thms}
	{}% space above
	{}% space below
	{\itshape} % body font
	{} % indent amount
	{\scshape } % head font
	{.} % punctuation after head
	{ } % space after head
	{} % custom head, like {\thmname{#1} \thmnumber{#2}. \thmnote{\bfseries{[#3]}}} 
\newtheoremstyle{defs}
	{}% space above
	{}% space below
	{\normalfont} % body font
	{} % indent amount
	{\scshape } % head font
	{.} % punctuation after head
	{ } % space after head
	{} % custom head, like {\thmname{#1} \thmnumber{#2}. \thmnote{\bfseries{[#3]}}} 
\newtheoremstyle{defs}
	{}% space above
	{}% space below
	{\normalfont} % body font
	{} % indent amount
	{\scshape } % head font
	{.} % punctuation after head
	{ } % space after head
	{} % custom head, like {\thmname{#1} \thmnumber{#2}. \thmnote{\bfseries{[#3]}}} 
\newtheoremstyle{rems}
	{}% space above
	{}% space below
	{\normalfont} % body font
	{} % indent amount
	{\scshape } % head font
	{.} % punctuation after head
	{ } % space after head
	{} % custom head, like {\thmname{#1} \thmnumber{#2}. \thmnote{\bfseries{[#3]}}} 
\theoremstyle{thms}
\newtheorem{thm}{Theorem}[subsection]
\newtheorem*{thm*}{Theorem}
\newtheorem{prop}[thm]{Proposition}
\newtheorem*{prop*}{Proposition}
\newtheorem{lem}[thm]{Lemma}
\newtheorem{coro}[thm]{Corollary} % not using cor to keep it for correspondences
\newtheorem*{coro*}{Corollary}
\theoremstyle{defs}
\newtheorem{dfn}[thm]{Definition} 
\newtheorem{nota}[thm]{Notation}
\newtheorem{conv}[thm]{Convention} 
\newtheorem{num}[thm]{} % see dmt.tex
\theoremstyle{rems}
\newtheorem{rem}[thm]{Remark}
\newtheorem{exm}[thm]{Example}
\newcommand{\introtag}[1]{\smallskip\noindent {\hypertarget{#1}{{\bf(#1)}}}}
\newcommand{\introref}[1]{\hyperlink{#1}{{\bf(#1)}}}
\newcommand{\chfinitecw}{Ch.~\ref{ch:finitecw}\xspace}
\newcommand{\eqrefdmt}[1]{Ch.~\ref{ch:dmt}, \eqref{#1}\xspace}
\newcommand{\chdmt}{Ch.~\ref{ch:dmt}\xspace}
\newcommand{\chcancellation}{Ch.~\ref{ch:cancellation}\xspace}
\newcommand{\chcomparison}{Ch.~\ref{ch:comparison}\xspace}
\newcommand{\chspectra}{Ch.~\ref{ch:spectra}\xspace}
\newcommand{\NN}{\mathbb N}
\newcommand{\ZZ}{\mathbb Z}
\newcommand{\QQ}{\mathbb Q}
\newcommand{\id}{\mathrm{id}} %identity
\DeclareMathOperator{\Hom}{Hom}
\DeclareMathOperator{\uHom}{\underline{Hom}} % internal Hom
\newcommand{\homm}[3]{\mathrm{Hom}_{#1}(#2,#3)}
\DeclareMathOperator{\End}{End}
\DeclareMathOperator{\coker}{coker}
\DeclareMathOperator*{\colim}{colim}
\DeclareMathOperator*{\hocolim}{hocolim}
\newcommand{\Ab}{\mathrm{Ab}} % category of abelian groups
\newcommand{\sAb}{\mathrm{sAb}} % category of simplicial abelian groups
\newcommand{\unit}{\eta}
\newcommand{\counit}{\epsilon}
\newcommand{\tunit}{\tilde\eta}
\newcommand{\tcounit}{\tilde\epsilon}
\newcommand{\dimn}[1]{\mathrm{dim}(#1)}
\newcommand{\charac}[1]{\mathrm{char}(#1)}
\newcommand{\Spec}{\mathrm{Spec}}
\newcommand{\OO}{\mathcal O}
\newcommand{\AAA}{\mathbb A}
\newcommand{\Aone}{\mathbb{A}^1}
\newcommand{\PP}{\mathbb P} % projective space
\newcommand{\Gm}{\mathbb{G}_m} % G_m
\newcommand{\Gmx}[1]{\mathbb{G}_{m,#1}} % G_m, (with base specified)
\newcommand{\Gmk}{\Gmx{k}} % G_m,k 
\newcommand{\Gmpt}{\mathbb{G}_{m,1}} % Gm pointed by 1
\newcommand{\cX}{\mathcal X} % a simplicial scheme
\newcommand{\Lb}{\mathcal{L}} % line bundle
\newcommand{\Mb}{\mathcal{M}} % another one 
\newcommand{\Nb}{\mathcal{N}} % still another one 
\newcommand{\Frob}{\mathrm{Fr}} %Frobenius
\newcommand{\Or}{\mathrm{Q}} % orientations
\DeclareMathOperator{\Th}{Th} % Thom space  
\newcommand{\fTh}{\mathcal{T}h} % Thom space functor (see mwspectra)
\DeclareMathOperator{\cK}{\mathcal K} % virtual vector bundles
\DeclareMathOperator{\Pic}{Pic} %Picard group
\newcommand{\tdiv}{\widetilde{\mathrm{div}}} % tilde divisor
\newcommand{\tord}{\widetilde{\mathrm{ord}}} % class of tilde divisor
\newcommand{\K}{\mathrm{K}} % K-theory (Quillen)
\newcommand{\KM}{\mathrm{K}^{\mathrm{M}}} %Milnor K-theory
\newcommand{\KMW}{\mathrm{K}^{\mathrm{MW}}} %Milnor-Witt K-theory
\newcommand{\I}{\mathrm{I}} %fundamental ideal
\newcommand{\Norm}{\mathrm{N}} %norm map
\newcommand{\sheaf}{\mathbf} % standard type of fonts for sheaves
\newcommand{\sKM}{\sheaf{K}^{\mathrm{M}}} %Milnor K-theory sheaf
\newcommand{\sKMW}{\sheaf{K}^{\mathrm{M\hspace{-.2ex}W}}} %Milnor-Witt K-theory sheaf
\newcommand{\sKW}{\sheaf{K}^{\mathrm{W}}}
\newcommand{\sW}{\sheaf{W}}
\newcommand{\sI}{\sheaf{I}} % fundamental ideal
\newcommand{\sIbar}{\overline{\sI}} % successive quotients
\newcommand{\GW}{\mathrm{GW}} % Grothendieck-Witt group
\newcommand{\rk}{\mathrm{rk}} % rank map from GW to Z
\newcommand{\W}{\mathrm{W}} %Witt group
\newcommand{\Tr}{\mathrm{Tr}} % Transfer morphism
\newcommand{\tch}[1]{% 
  \mathchoice{\widetilde{\mathrm{CH}}^{\raisebox{-.5ex}{$\scriptstyle#1$}}}
             {\widetilde{\mathrm{CH}}^{\raisebox{-.8ex}{$\scriptstyle#1$}}}
             {}
             {}
}
\newcommand{\tchi}[2]{%
  \mathchoice{\widetilde{\mathrm{CH}}_{#2}^{\raisebox{-.5ex}{$\scriptstyle#1$}}}
             {\widetilde{\mathrm{CH}}_{#2}^{\raisebox{-.7ex}{$\scriptstyle#1$}}}
             {}
             {}
}
\newcommand{\ch}[2]{\tch{#1}(#2)} % tilde Chow, usage: \ch 0X or \ch {n+1}{X}
\newcommand{\cht}[3]{\tch{#1}(#2,#3)} % tilde Chow with bundle, usage: \cht 0XL or \cht 0X{(i,L)}
\newcommand{\chs}[3]{\tchi{#1}{#2}(#3)} % tilde Chow with support usage: \chs 0ZX
\newcommand{\chst}[4]{\tchi{#1}{#2}(#3,#4)} % tilde Chow with support and bundle usage: \chst 0ZXL
\newcommand{\chht}[3]{\tchi{}{#1}(#2,#3)} % homological tilde Chow with bundle, usage: \chht 0XL or \chht 0X{(i,L)}
\newcommand{\chhtnotw}[2]{\tchi{}{#1}(#2)} % homological tilde Chow with bundle, with no twist
\DeclareMathOperator{\CH}{CH}
\newcommand{\sm}[1]{\mathrm{Sm}_{#1}}
\newcommand{\smg}{\mathrm{Sm}} % with no argument
\newcommand{\smk}{\mathrm{Sm}_k} % smooth schemes over k
\newcommand{\barsm}[1]{\overline{\mathrm{Sm}}_{#1}}
\newcommand{\cor}[1]{\widetilde{\mathrm{Cor}}_{#1}} %tilde finite correspondences
\newcommand{\cork}{\cor k} %tilde finite correspondences over k
\newcommand{\corkR}{\cor{k,R}} %tilde finite correspondences over k with coefficients in R
\newcommand{\Fr}{\mathrm{Fr}} % framed correspondences
\newcommand{\ZFr}{\ZZ\mathrm{F}} % framed correspondences modulo ...
\newcommand{\icor}[1]{\widetilde{\mathrm{ICor}}_{#1}} %fundamental ideal
\newcommand{\corV}[1]{\mathrm{Cor}_{#1}} %Voevodsky's finite correspondences 
\newcommand{\corVk}{\corV k} %usual finite correspondences over k
\newcommand{\corVkR}{\corV{k,R}} %usual finite correspondences over k with coefficients in a ring R
\newcommand{\Wcork}{\mathrm{WCor}_k} % Witt correspondences
\newcommand{\grph}[1]{\tilde{\gamma}_{#1}} %class of the graph in Correspondences
\newcommand{\Adm}{\mathcal{A}} % category of admissible subsets
\newcommand{\supp}{\mathrm{supp}} % support of a divisor or ?
\newcommand{\Vanish}{\mathrm{V}} % vanishing set of a function 
\newcommand{\psh}{\mathrm{PSh}} % presheaves (usually on sm corr)
\newcommand{\pshkR}{\psh(k,R)} % same over k with coeffs in R
\newcommand{\pshMW}{\widetilde{\mathrm{PSh}}} % presheaves on MW-Corr (t for transfer)
\newcommand{\pshMWk}{\pshMW(k)} % same over k 
\newcommand{\pshMWkR}{\pshMW(k,R)} % same with coefficients in R
\newcommand{\pshV}{\mathrm{PSh}^{\mathrm{tr}}} % presheaves on Voevodsky finite Corr (t for transfer)
\newcommand{\pshVkR}{\pshV(k,R)} % presheaves on Voevodsky finite Corr (t for transfer) over k with coeffs in R
\newcommand{\pshfr}{\psh^{\mathrm{Fr}}} % presheaves over Framed correspondences 
\newcommand{\pshfrkR}{\pshfr(k,R)} % same over k with coeffs in R 
\newcommand{\QpshfrkR}{\mathcal{Q}\pshfrkR} % same with quasi-stable 
\newcommand{\SpshfrkR}{\mathcal{S}\pshfrkR} % same with stable 
\DeclareMathOperator{\MWcorr}{\tilde{\mathrm{c}}} % Hom group in tilde correspondences
\DeclareMathOperator{\Vcorr}{\mathrm{c}} % same with Voevodsky correspondences
\DeclareMathOperator{\Icorr}{\mathrm{I}\tilde{\mathrm{c}}}
\newcommand{\MWprep}{\MWcorr} % representable presheaf of correspondences
\newcommand{\Rt}{R_t} % free R-module sheaf on X
\newcommand{\MWrepZt}{\widetilde{\ZZ}_t} % Z Milnor-Witt representable sheaf for topology t 
\newcommand{\MWrepZ}{\widetilde{\ZZ}} % Z Milnor-Witt representable sheaf (nisnevich) 
\newcommand{\MWrepRt}{\widetilde{R}_t} % same with R instead of Z
\newcommand{\MWrepR}{\widetilde{R}} % without mention of topology (means Nisnevich)
\newcommand{\IrepRt}{\mathrm{I}\widetilde{R}_t}
\newcommand{\VrepRt}{R^{\mathrm{tr}}_t} % Voevodsy sheaf with transfers for topology t
\newcommand{\VrepR}{R^{\mathrm{tr}}} % without topology mentionned (usual Voevodsky sheaf).
\newcommand{\VrepZ}{\ZZ_{\mathrm{tr}}} % same with Z
\newcommand{\tZ}{\widetilde\ZZ} % only to be used for coeffs of cohomology, as in H^i(X,\tZ)
\newcommand{\tR}{\widetilde R} % same  
\newcommand{\tZcbx}[1]{\tZ\{#1\}} % \tilde ZZ{q}, q as an argument
\newcommand{\tRcbx}[1]{\tR\{#1\}} % \tilde R{q}, q as an argument
\newcommand{\tRcbtx}[1]{\tR\{#1\}} % \tilde R_t{q}, q as an argument
\newcommand{\ItRcbx}[1]{\mathrm{I}\tR\{#1\}} % I\tilde R{q}, q as an argument
\newcommand{\ItRcbtx}[1]{\mathrm{I}\tR_t\{#1\}} % I\tilde R_t{q}, q as an argument
\newcommand{\tZxcby}[2]{\tZ_{#1}\{#2\}} % \tilde ZZ_k{q}, k and q as arguments
\newcommand{\RVcbx}[1]{R^{\mathrm{tr}}\{#1\}} % R^tr{q}, q as an argument
\newcommand{\RVcbtx}[1]{R_t^{\mathrm{tr}}\{#1\}} % R_t^tr{q}, q as an argument
\newcommand{\tZpx}[1]{\tZ(#1)} % \tilde ZZ(q), q as an argument
\newcommand{\tRx}[1]{\widetilde R(#1)} % beware, the argument is now a Tate twist.
\newcommand{\tZxpy}[2]{\tZ_{#1}(#2)} % \tilde ZZ_k(q), k and q as an arguments
\newcommand{\ItZ}{\mathrm{I}\tZ}
\newcommand{\tZtr}{\tZ_{\mathrm{tr}}} % functor from motivic spaces to tilde motivic spaces (see Fasel-Ostvaer, last part)
\newcommand{\nis}{\mathrm{Nis}}
\newcommand{\zar}{\mathrm{Zar}}
\newcommand{\et}{\mathrm{\acute{e}t}}
\newcommand{\cdh}{\mathrm{cdh}}
\newcommand{\sh}{\mathrm{Sh}} % sheaves, usually over smooth schemes 
\newcommand{\shxkR}[1]{\sh_{#1}(k,R)} % sheaves over sm_k with coeffs in R, topology as argument 
\newcommand{\shtkR}{\shxkR{t}} % sheaves over sm_k with coeffs in R, topology t
\newcommand{\shMW}{\widetilde{\sh}} % sheaves with tilde transfers
\newcommand{\shMWtk}{\shMW_t(k)}
\newcommand{\shMWzk}{\shMW_{\zar}(k)} % used in finitecw
\newcommand{\shMWzL}{\shMW_{\zar}(L)} % used in finitecw
\newcommand{\shMWxkR}[1]{\shMW_{#1}(k,R)}
\newcommand{\shMWxk}[1]{\shMW_{#1}(k)}
\newcommand{\shMWtkR}{\shMWxkR{t}}
\newcommand{\shMWkZ}{\shMW(k,\ZZ)}
\newcommand{\shV}{\sh^{\mathrm{tr}}} % sheaves with Voevodsky transfers
\newcommand{\shVk}{\shV(k)} % (Nis) sheaves with Voevodsky transfers over k, used in intro
\newcommand{\shVxkR}[1]{\shV_{#1}(k,R)}
\newcommand{\shVtkR}{\shVxkR{t}}
\newcommand{\tfO}{\tilde{\mathcal O}} %forget sheave 
\newcommand{\fO}{\mathcal O} %forget sheave 
\DeclareMathOperator{\otr}{\tilde \otimes} %tensor product on sheaves
\newcommand{\Ci}[1]{\operatorname{C}_{#1}} % singular chain complex in specified degree
\newcommand{\Cstar}[1]{\Ci{*}(#1)} % singular chain complex
\newcommand{\CstarS}{\operatorname{C}^{\mathrm{0}}_*} % used in effective.tex 
\newcommand{\CBM}{\operatorname{C}_{\mathrm{BM}}} % Borel-Moore (intro and mwspectra)
\DeclareMathOperator{\Comp}{C} % category of complexes
\DeclareMathOperator{\KComp}{K} % homotopy category
\newcommand{\KCompb}{\KComp^{\mathrm{b}}} % bounded homotopy category
\DeclareMathOperator{\Der}{D} % "derived" category
\newcommand{\Cger}{\mathrm{C}} % Gersten complex
\newcommand{\ome}[2]{\omega_{#1/#2}} % relative canonical bundle
\newcommand{\piso}{\mathfrak p} % some iso (used in mwspectra)
\newcommand{\ext}{\mathrm{ext}}
\newcommand{\res}{\mathrm{res}}
\newcommand{\bc}{\mathrm{bc}}
\newcommand{\tr}{\mathrm{tr}}
\renewcommand{\H}{\mathrm{H}} % cohomology, and in particular Gersten complex cohomology
\newcommand{\HM}{\H_{\mathrm{M}}} % Voevodsky's motivic cohomology
\newcommand{\HMet}{\H_{\mathrm{M},\et}} % Voevodsky's étale motivic cohomology
\newcommand{\HMW}{\H_{\mathrm{MW}}} % Milnor-Witt (nisnevich) motivic cohomology
\newcommand{\HMWs}[1]{\H_{\mathrm{MW},#1}} % same with given support
\newcommand{\HMWc}{\H_{\mathrm{MW,c}}} % same with compact support
\newcommand{\HMWet}{\H_{\mathrm{MW,\et}}} % étale Milnor-Witt motivic cohomology
\newcommand{\HhMW}{\H^{\mathrm{MW}}} % Milnor-Witt homology
\newcommand{\HhMWBM}{\H^{\mathrm{MW,BM}}} % Milnor-Witt Borel-Moore homology
\newcommand{\HW}{\H_{\mathrm{W}}} % Witt motivic cohomology
\newcommand{\HI}{\H_{\mathrm{I}}} % I motivic cohomology
\newcommand{\hypH}{\mathbb H} % hypercohomology
\newcommand{\bH}{\sheaf{H}}
\newcommand{\shHMW}{\bH_{\mathrm{MW}}} % Milnor-Witt cohomology sheaf
\newcommand{\spectrum}{\mathscr}
\newcommand{\HH} {\spectrum H}
\newcommand{\EE} {\spectrum E}
\newcommand{\EEBM} {\spectrum E^{\mathrm{BM}}}
\newcommand{\FF} {\spectrum F}
\newcommand{\FFBM} {\spectrum F^{\mathrm{BM}}}
\newcommand{\un}{\mathbbm 1} % unit for monoidal structure
\newcommand{\sHM}{\spectrum H_{\mathrm{M}}} % motivic cohomology spectrum 
\newcommand{\HtZ}{\H\tZ} % motivic cohomology spectrum in effective.tex 
\newcommand{\sHMBM}{\spectrum H^{\mathrm{M},\mathrm{BM}}} % motivic Borel-Moore homology spectrum 
\newcommand{\sHMx}[1]{\spectrum H_{\mathrm{M},#1}} % used to be \HMx
\newcommand{\sHMW}{\spectrum H_{\mathrm{MW}}} % MW-motivic cohomology spectrum 
\newcommand{\sHMWBM}{\spectrum H^{\mathrm{MW},\mathrm{BM}}} % MW-motivic Borel-Moore cohomology spectrum 
\newcommand{\sHlMW}{\spectrum H^{\mathrm{MW}}} % MW-motivic homology spectrum 
\newcommand{\sHMet}{\spectrum H_{\mathrm{M},\et}} % 
\newcommand{\sHMWet}{\spectrum H_{\mathrm{MW},\et}} % 
\newcommand{\sHMWc}{\spectrum H_{\mathrm{MW},c}} % 
\newcommand{\sHAone}{\spectrum H_{\Aone}} % 
\newcommand{\sHBMAone}{\spectrum H^{\mathrm{BM},\Aone}} % 
\newcommand{\HstAone}{\H_{\mathrm{st}-\Aone}} % stable A^1 cohomology of a complex of Tate spectra
\newcommand{\spKM}{\spectrum{K}^{\mathrm{M}}} % spectrum representing Milnor K-theory 
\newcommand{\spKMW}{\spectrum{K}^{\mathrm{M\hspace{-.2ex}W}}} % spectrum representing Milnor-Witt K-theory 
\newcommand{\sKO}{\spectrum{K\hspace{-.6ex}O}} % hermitian K-theory
\newcommand{\m}{\mathcal{M}} % motivic space category  
\newcommand{\Mk}{\m_k} % category of motivic spaces over k
\newcommand{\fMk}{\mathrm{f}\m_k} % category of finite motivic spaces over k
\newcommand{\tMk}{\widetilde{\m}^{\mathrm{tr}}_k} % category of tilde motivic spaces over k
\newcommand{\MFk}{\mathbf{MF}_k} % category of motivic functors over k
\newcommand{\MFMWZ}{\mathbf{M}\MWrepZ} % Milnor-Witt motivic cohomology motivic functor
\newcommand{\Mheart}{\mathbf{M}^{\heartsuit}} % motivic spectrum in the heart
\newcommand{\MheartZ}{\Mheart\MWrepZ} % motivic spectrum in the heart
\newcommand{\bpi}{\boldsymbol{\pi}} % homotopy group sheaves
\newcommand{\piaone}{{\bpi}^{\Aone}} % stable homotopy sheaves 
\newcommand{\spMW}{\widetilde{\mathrm{Sp}}} % Milnor-Witt spectra
\newcommand{\spMWxkR}[1]{\spMW_{#1}(k,R)}
\newcommand{\spMWkR}{\spMWxkR{t}}
\newcommand{\DAone}{\Der_{\Aone}} % internal, not to be used
\newcommand{\DA}[1]{\DAone(#1)}
\newcommand{\DAkR}{\DA{k,R}}
\newcommand{\DAk}{\DA{k}} % over k
\newcommand{\DAket}{\Der_{\Aone,\et}(k)} % étale topology
\newcommand{\DAxkR}[1]{\Der_{\Aone,#1}(k,R)} % topology specified
\newcommand{\DAe}[1]{\Der_{\Aone}^{\mathrm{eff}}(#1)} % effective
\newcommand{\DAek}{\DAe{k}} % effective over k 
\newcommand{\DAekR}{\DAe{k,R}} % effective over k with coeffs R
\newcommand{\DAexkR}[1]{\Der_{\Aone,#1}^{\mathrm{eff}}(k,R)} % topology specified
\newcommand{\DM}{\operatorname{DM}} % DM 
\newcommand{\DMk}{\DM(k)} % DM(k)
\newcommand{\DMkR}{\DM(k,R)} % DM(k,R)
\newcommand{\DMkZ}{\DM(k,\ZZ)} % DM(k,Z)
\newcommand{\DMxkR}[1]{\DM_{#1}(k,R)} % DM_x (k,R)
\newcommand{\DMeff}{\DM^{\mathrm{eff}}} % internal, not to be used
\newcommand{\DMe}[1]{\DMeff(#1)} % DM^eff (...)
\newcommand{\DMekR}{\DMe{k,R}} % DM^eff(k,R)
\newcommand{\DMexkR}[1]{\DMeff_{#1}(k,R)} % same with topology specified 
\newcommand{\DMgm}[1]{\operatorname{DM}_{\mathrm{gm}}(#1)}
\newcommand{\DMgmk}{\DMgm{k}}
\newcommand{\DMegm}[1]{\operatorname{DM}_{\mathrm{gm}}^{\mathrm{eff}}(#1)}
\newcommand{\DMegmk}{\DMegm{k}}
\newcommand{\DMme}[1]{\operatorname{DM}_{-}^{\mathrm{eff}}(#1)} % DM_- effective
\newcommand{\DMcdh}{\operatorname{DM}_{\mathrm{cdh}}}
\newcommand{\Dmtilde}{\widetilde{\operatorname{DM}}} % not to be used, internal
\newcommand{\DMt}[1]{\Dmtilde(#1)} %usual
\newcommand{\DMtkR}{\DMt{k,R}}
\newcommand{\DMtk}{\DMt{k}}
\newcommand{\DMtkZ}{\DMt{k,\ZZ}}
\newcommand{\DMtxkR}[1]{\Dmtilde_{#1}(k,R)} % with topology specified
\newcommand{\Dmtildeeff}{\widetilde{\operatorname{DM}}^{\raisebox{-.6ex}{$\scriptstyle \mathrm{eff}$}}} % not to be used, internal
\newcommand{\DMte}[1]{\Dmtildeeff(#1)} %effective tilde derived category
\newcommand{\DMtekR}{\DMte{k,R}} % same over k with coeffs R
\newcommand{\DMtekZ}{\DMte{k,\ZZ}} % same over k with coeffs ZZ 
\newcommand{\DMtekinv}{\DMte{k,\ZZ[1/e]}} % same over k with coeffs ZZ[1/e] (used in effective) 
\newcommand{\DMtek}{\DMte{k}} % same over k no mention of coeffs (ZZ)
\newcommand{\DMtex}[2]{\Dmtildeeff_{#1}(#2)} %effective tilde with topology
\newcommand{\DMtexkR}[1]{\DMtex{#1}{k,R}} % same over k with coeffs R
\newcommand{\DMtgm}[1]{\widetilde{\operatorname{DM}}_{\mathrm{gm}}(#1)} %geometric
\newcommand{\DMtgmk}{\DMtgm{k}}
\newcommand{\DMtegm}[1]{\widetilde{\operatorname{DM}}_{\mathrm{gm}}^{\raisebox{-.6ex}{$\scriptstyle \mathrm{eff}$}}(#1)} %effective geometric
\newcommand{\DMtegmk}{\DMtegm{k}}
\newcommand{\DMtegmkZ}{\DMtegm{k,\ZZ}}
\newcommand{\WDM}{\mathrm{WDM}} %
\newcommand{\WDMkR}{\WDM(k,R)} %
\newcommand{\WDMe}{\WDM^{\mathrm{eff}}} %
\newcommand{\WDMekR}{\WDMe(k,R)} %
\newcommand{\DMGWk}{\DM^{\mathrm{GW}}(k)} % GW-motives (used in effective.tex)
\newcommand{\Mot}{\mathrm{M}} % homological motive in DM
\newcommand{\Motc}{\mathrm{M}^{\mathrm{c}}} % homological motive with compact support in DM
\newcommand{\tMot}{\widetilde{\mathrm{M}}} % homological motive in DM tilde
\newcommand{\cT}{\mathcal{T}} % category of triple 
\newcommand{\cP}{\mathcal{P}} % category of filtered projective systems of schemes
\newcommand{\tP}{\tilde{\mathcal{P}}} % subcategory of previous 
\newcommand{\glb}{\mathcal Det_{\ZZ}} % category of graded line bundles
\DeclareMathOperator{\SH}{SH}
\newcommand{\SHk}{\SH(k)}
\newcommand{\SHS}{\SHk^{\mathrm{S}^1}}
\newcommand{\SHkeff}{\SHk^{\mathrm{eff}}}
\newcommand{\Spt}{\mathrm{SH}} % spectral presheaves (in effective.tex)
\newcommand{\eff}{\mathrm{eff}}
\newcommand{\derL}{\mathbf{L}} % Left derived 
\newcommand{\derR}{\mathbf{R}} % Right derived
\newcommand{\T} {\mathfrak T} % some monoidal triangulated category (used in mwspectra)
\newcommand{\cupp}{{\scriptstyle \cup}} % cup-product
\newcommand{\capp}{{\scriptstyle \cap}} % cap-product
\begin{document}

\frontmatter

\title{Milnor-Witt Motives}

%    Remove any unused author tags.

%    author one information
\author{Tom Bachmann}
\address{Fakult\"at Mathematik \\ Universit\"at Duisburg-Essen, Campus Essen \\ Thea-Leymann-Strasse 9 \\ 45127 Essen}
%\curraddr{}
\email{tom.bachmann@zoho.com}
%\thanks{}

%    author two information
\author{Baptiste Calmès}
\address{Univ.\ Artois\\
UR 2462, Laboratoire de Mathématiques de Lens (LML)\\
F-62300 Lens\\
France}
\email{baptiste.calmes@univ-artois.fr}

\author{Frédéric Déglise}
\address{Institut Mathématique de Bourgogne - UMR 5584\\
Université de Bourgogne\\
9 avenue Alain Savary\\
BP 47870\\
21078 Dijon Cedex\\
France}
\email{frederic.deglise@ens-lyon.fr}

\author{Jean Fasel}
\address{Institut Fourier-UMR 5582\\ 
Université Grenoble-Alpes\\
CS 40700\\
38058 Grenoble Cedex 9\\
France}
\email{jean.fasel@univ-grenoble-alpes.fr}

\author{Paul Arne Østvær}
\address{Department of Mathematics F. Enriques\\
University of Milan\\
Italy}
\address{
Department of Mathematics\\ 
University of Oslo\\
Norway
}
\email{paularne@math.uio.no}
\email{paul.oestvaer@unimi.it}

%    \date is required; it is the date received by the editor.
\date{\today}

\subjclass[2010]{Primary: 11E70, 13D15, 14F42, 19E15, 19G38; Secondary: 11E81, 14A99, 14C35, 19D45}
%    Recognition of the 2010 edition of the Mathematics Subject
%    Classification requires a version of amsbook.cls from July 2009
%    or later.  If "2010" is not recognized, please upgrade.

\keywords{Finite correspondences, Milnor-Witt $K$-theory, Chow-Witt groups, Motivic cohomology}

\dedicatory{%Dedication text (use \\[2pt] for line break if necessary)
\newpage
To Andrei Suslin and Vladimir Voevodsky%
}

\begin{abstract}
We develop the theory of Milnor-Witt motives and motivic cohomology. Compared to Voevodsky's theory of motives and his motivic cohomology, the first difference appears in our definition of Milnor-Witt finite correspondences, where our cycles come equipped with quadratic forms. This yields a weaker notion of transfers and a derived category of motives that is closer to the stable homotopy theory of schemes. We prove a cancellation theorem when tensoring with the Tate object, we compare the diagonal part of our Milnor-Witt motivic cohomology to Minor-Witt K-theory and we provide spectra representing various versions of motivic cohomology in the $\AAA^1$-derived category or the stable homotopy category of schemes.
\end{abstract}

\maketitle

\tableofcontents

%    Include unnumbered chapters (preface, acknowledgments, etc.) here.
\renewcommand{\theequation}{\Alph{equation}}
%introduction

\chapter{Introduction}
\label{ch:intro}

\section{Beilinson and Voevodsky's motivic complexes}
\label{sec:VoevodskyTheory}

The modern form of motivic cohomology takes its roots in the discovery of higher K-theory, notably by Quillen (1972), and the almost concomitant first conjectures of Lichtenbaum on special values of L-functions (1973, \cite{Lic1}).
The initial conjectures of Lichtenbaum were soon confirmed by Borel (1977, \cite{Bor1}) and extended by Bloch (1980, \cite{Bl80} for example).
As we know, the work of Bloch --- and probably works of Deligne (see \cite{Del1},
in particular section 4.3) --- inspired Beilinson to formulate the existence of motivic cohomology, as extension groups in a conjectural category of mixed motives, bound to satisfy a very precise formalism;
the latter constitutes the well-known Beilinson conjectures (first formulated in 1982, \cite{Be87}).
In line with the initial conjectures of Lichtenbaum, they stipulate in particular that rational motivic cohomology is given by the $\gamma$-graded parts of Quillen higher K-theory\footnote{See below, formula \eqref{eq:motivic_coh&Kth} for the precise formulation.}
(Beilinson, and Soulé \cite{Sou}).
In fact, the existence of the higher Chern character (Gillet, \cite{Gil1})
provided the hoped-for universal property of the latter.

A decisive step in the theory, going beyond Beilinson's definition of rational motivic cohomology,
was the introduction of higher Chow groups by Bloch in 1984 (see \cite{Bl80})
through an explicit complex of cycles.\footnote{There was also a definition of Landsburg of these
 complexes, which remained in a preprint form for many years; it was finally published in 
 \cite{Lan} after an important technical modification and using the paper of Bloch.}
These complexes were the first occurrence, with integral coefficients,
of Beilinson's conjectural motivic complexes.
The construction of Bloch quickly inspired several further works among which is
the introduction by Suslin of a homology theory modeled both on higher Chow groups
and on singular homology (1987, in a Luminy talk).

Suslin's definition ultimately led Voevodsky to his brilliant realization of Beilinson's program,
over a perfect field $k$ and with coefficients in an arbitrary ring $R$ \footnote{Note that Voevodsky had first introduced
the theory of (finite type) rational h-motives, over arbitrary bases,
in his PhD. This theory was finally proved to satisfy all the requirements of Beilinson,
save the existence of the motivic t-structure, in \cite{CD12}.},
which first appeared in Chapter 5 of the cornerstone book \cite{FSV}.
Let us summarize what Voevodsky did in this work (specializing to integral coefficients for simplicity), and its relation with the Beilinson's conjectures,
 (see \cite{Be87}*{5.10}):

\introtag{V1} He defines the triangulated category of \emph{(mixed) geometric motives} (resp.\ \emph{effective geometric motives}) $\DMgmk$
\index{motive!geometric}%
\index{motives, category of!(mixed) geometric}%
\index[notation]{dmgeomk@$\DMgmk$}%
(resp.\ $\DMegmk$). 
\index{motive!effective geometric}%
\index{motives, category of!(mixed) effective geometric}%
\index[notation]{dmeffgeomk@$\DMegmk$}%
This category is conjecturally the derived category of
the abelian category of mixed motives over $k$ with integral coefficients,
as demanded in part A of Beilinson's conjectures.%
\footnote{At present, the \emph{motivic} $t$-structure 
\index{motivic!$t$-structure}%
on $\DMgmk$ is missing.}

Note that a fundamental theorem of Voevodsky, not part of Beilinson's conjectures,
is the so-called cancellation theorem: under the resolution of singularities assumption, the natural functor
\[
\DMegmk \rightarrow \DMgmk
\]
is fully faithful; \cite{FSV}*{Chap.~5, 4.3.1}. This theorem was extended later to an arbitrary
perfect field $k$ in \cite{Voe10}.

\introtag{V2} He defines a triangulated category of ``big'' motives, called \emph{motivic complexes} and denoted by $\DMme k$
\index{motives, category of!big}%
\index{motivic complex}%
\index{motives, category of!effective}
\index[notation]{dmeff@$\DMme k$}%
which contains $\DMegmk$ as a full subcategory.
The category $\DMme k$ contains in particular the Tate twist motivic complexes $\ZZ(n)$ 
\index{motive!Tate}%
\index{motivic complex!Tate twist}%
\index[notation]{zn@$\ZZ(n)$}%
for $n \in \ZZ$. 
In the book, Voevodsky proved the properties asked by Beilinson in part D of his conjectures
with the exception of the Beilinson-Soulé vanishing property (\emph{op.~cit.}, 5.10, D, (ii)) and the Beilinson-Lichtenbaum conjecture (\emph{op.~cit.}, 5.10, D, (v)).\footnote{The Beilinson-Soulé vanishing property is currently unknown. The Beilinson-Lichtenbaum conjecture has been proved by Voevodsky in \cite{Voe1}.} Let us summarize and be more precise:
\begin{itemize}
\item Voevodsky's complexes are in particular complexes of sheaves for the Nisnevich topology;
 Beilinson asked for the  Zariski topology, but one can consider Voevodsky's complexes in the latter
 thanks to \cite{FSV}*{Chap.~5, 3.2.7}.
\item The Tate motivic complexes satisfy the following computations:
\[
\ZZ(n)=\begin{cases}
0 & \text{if $n<0$.\footnotemark} \\
\ZZ & \text{if $n=0$.} \\
\Gm[-1] & \text{if $n=1$.\footnotemark}
\end{cases}
\addtocounter{footnote}{-1}
\footnotetext{This follows by definition.}
\stepcounter{footnote}
\footnotetext{cf.\ \cite{Voe1}*{3.4.2, 3.4.3}}
\]
The next property, hinted at by Beilinson's conjecture, part D, (iii),
 was not proved in the book \cite{FSV}, but rather in another paper of Suslin and Voevodsky
 \cite{SV2}*{Thm.~3.4}: for any $n \geq 0$, if one views $\ZZ(n)$ as a complex of Nisnevich sheaves
 on the site of smooth $k$-schemes, one has an isomorphism:
\[
\underline{\H}^n(\ZZ(n)) \simeq \sKM_n
\]
\index{Milnor K-theory!sheaf}%
\index[notation]{kms@$\sKM_n$}%
where the right hand side denotes the $n$-th graded piece of
the unramified (pre)sheaf on the site of smooth $k$-schemes
associated with Milnor K-theory (see for example \cite{Ros}, pp. 360
for the definition and section 12 for its structure as a presheaf).
\end{itemize}
\introtag{V3} Voevodsky did not construct the six functors formalism demanded in part A of the Beilinson's
conjecture\footnote{in particular, he did not define the category relative to a base. This was done
later, following many indications and previous definitions of Voevodsky,
in \cite{CD12} and \cite{CD15}, based on constructions of \cite{FSV}*{Chap.~2} and \cite{Ayoub}.}
but anyway establishes many of its formal consequences. Here is a dictionary:
\begin{itemize}
\item Let $p:X \rightarrow \Spec(k)$ be a smooth scheme. Voevodsky defines its associated homological motive $\Mot(X)$
\index{motive!homological}%
\index[notation]{mx@$\Mot(X)$}%
which in terms of the six functors is: $\Mot(X)=p_!p^!(\ZZ)$,
where $\ZZ$ is the constant motive. Then Voevodsky proves the projective bundle and blow-up formulas
and construct the blow-up and Gysin triangles (see \cite{FSV}*{Ch.~5, \textsection 3.5}).\footnote{In
terms of Grothendieck six functors formalism,
these formulas are ``known'' consequences of the localization triangle,
together with the (oriented) relative purity isomorphism (for a smooth morphism);
see for example \cite{CD12}*{A.5.1}, respectively point (Loc) and point (4) for their formulation.}
\item Let $p:X \rightarrow \Spec(k)$ be a separated morphism of finite type.\footnote{In \cite{FSV},
one does not pay attention to the assumptions that $k$-schemes are separated or not, though
this is used for example when one assumes that the graph of a morphism $f:X \rightarrow Y$
of such $k$-schemes defines a closed subscheme of $X \times_k Y$. This is why we feel free to add
this assumption in this introduction.} Voevodsky defines the associated (homological) motive $\Mot(X)$,
and compactly supported (homological) motive $\Motc(X)$, which in terms of the six functors formalism
would be\footnote{see \cite{CD15}*{8.7, 8.10}, for this result, up to inverting the characteristic of $k$ if it is positive.}:
\begin{align*}
\Mot(X):=p_!p^!(\ZZ), \qquad \Motc(X) =p_*p^!(\ZZ).
\end{align*}
\index{motive!compactly supported}%
\index[notation]{mxc@$\Motc(X)$}%
Under a resolution of singularities assumption (true in characteristic $0$),
he also proves some formulas for these motives which would follow from the six functors formalism:
functoriality, blow-up distinguished triangle, K\"unneth formula (\cite{FSV}*{Ch.~5, Section~4}),
duality (loc.~cit.\ Thm.~4.3.5).
\end{itemize}

\introtag{V4} Voevodsky not only defines motivic cohomology but also, in collaboration
with Friedlander, four motivic theories
(\cite{FSV}*{Chap.~4, \textsection 9}) associated with
a $k$-scheme of finite type $X$ and a couple $(n,p) \in \ZZ^2$:
\begin{itemize}
\item \emph{motivic cohomology}:
\[\mathrm{H}^p(X,\ZZ(n))=\Hom_{\DMgmk}(\Mot(X),\ZZ(n)[p])\]
\item \emph{motivic cohomology with compact support}:
\[\mathrm{H}^p_c(X,\ZZ(n))=\Hom_{\DMgmk}(\Motc(X),\ZZ(n)[p])\]
\item \emph{motivic homology}:
\[\mathrm{H}_p(X,\ZZ(n))=\Hom_{\DMgmk}(\ZZ(n)[p],\Mot(X))\]
\item \emph{motivic Borel-Moore homology}:
\[\mathrm{H}_p^{\mathrm{BM}}(X,\ZZ(n))=\Hom_{\DMgmk}(\ZZ(n)[p],\Motc(X)).\]
\end{itemize}
\index{motivic!cohomology}%
\index{motivic!cohomology!with compact support}%
\index{motivic!homology}%
\index{motivic!homology!Borel-Moore}%
\index{Borel-Moore|see{motivic Borel-Moore homology}}%
Assuming resolution of singularities, Friedlander and Voevodsky prove
the whole formalism expected from these theories (functoriality, localization
and blow-up long exact sequences, duality).\footnote{The results of Friedlander
and Voevodsky have been extended when $k$ has positive characteristic $p$
in the work of Kelly \cite{Kel17}.}
This is very close to Bloch-Ogus formalism of a
\emph{Poincaré duality theory with supports}
(\cite{BO74}*{1.3}) as demanded in Part B of Beilinson's conjectures.\footnote{In
\cite{FSV}, motivic cohomology with support, required in Bloch-Ogus formalism
is not formally introduced (but see the proof of \cite{FSV}*{Chap.~5, 3.5.4}).
At present, a quick way to get the latter formalism is to use the six functors
formalism, \cite{CD15}, as in \cite{BO74}*{2.1}.}

A key computation in the book is the following relation with higher Chow
groups, proved by Suslin in \cite{FSV}*{Chap.~6, Thm.~3.2}:
assuming $k$ has characteristic $0$, for a quasi-projective $k$-scheme $X$
equidimensional of dimension $d$,
one has an isomorphism:
\begin{equation}\label{eq:BMmotivic&hChow}
\text{H}_p^{\text{BM}}(X,\ZZ(n))=\CH_n(X,p-2n).
\end{equation}
\index[notation]{chxm@$\CH_n(X,m)$}%
If one assumes in addition that $X$ is smooth,
duality theorems between motivic cohomology and motivic Borel-Moore homology
--- recalled in point \introref{V3} --- imply that one further has:
\begin{equation}\label{eq:motivic&hChow}
\H^p(X,\ZZ(n))=\CH^n(X,2n-p).
\end{equation}
\index[notation]{chxmn@$\CH^n(X,m)$}%
\index{higher Chow groups}%
\index{Bloch|see{higher Chow groups}}%
One deduces the relation between rational motivic cohomology
and K-theory asked in Part B of Beilinson's conjecture,
again for a smooth $k$-scheme:
\begin{equation}\label{eq:motivic_coh&Kth}
\H^p(X,\QQ(n)) \simeq \K_{2n-p}^{(n)}(X),
\end{equation}
where the right hand side is the $n$-th graded  part for the $\gamma$-filtration
on $\K_{2n-p}(X) \otimes_\ZZ \QQ$ (cf.\ \cite{Sou}).\footnote{Formula 
\eqref{eq:motivic&hChow}, and therefore Formula \eqref{eq:motivic_coh&Kth},
was extended to arbitrary fields by Voevodsky in \cite{Voe2}.
Formula \eqref{eq:BMmotivic&hChow} was extended to arbitrary fields $k$,
up ot inverting its characteristic exponent, in \cite{CD15}*{8.10 and 8.12}.}
The two main technical tools in Voevodsky's theory are:
\begin{itemize}
\item The category $\corVk$
\index[notation]{cork@$\corVk$}%
of \emph{finite correspondences} over $k$
\index{correspondence!finite}%
whose objects are smooth $k$-schemes (separated of finite type)
and morphisms are finite correspondences. A finite correspondence
from $X$ to $Y$ is an algebraic cycle on $X \times_k Y$ whose support
is finite equidimensional over $X$ (\emph{i.e.} each irreducible component
is finite and surjective over an irreducible component of $X$).
\item The category $\shVk$
\index[notation]{shtrk@$\shVk$}%
of \emph{sheaves with transfers} over $k$,
\index{sheaves!with transfers}%
that is the additive presheaves of abelian groups over $\corVk$
whose restriction to the category of smooth $k$-schemes is a sheaf
for the Nisnevich topology (a variant for the étale topology is
also available).
\end{itemize}
The aim of our book is to extend these tools, and the results \introref{V1} to \introref{V4}
enumerated above, by replacing algebraic cycles and their Chow groups
with a quadratic (or more accurately a symmetric bilinear) variant which corresponds to the so-called
\emph{Chow-Witt groups}.
\index{Chow-Witt!group}

\section{$\Aone$-homotopy, orientation and Chow-Witt groups}

Voevodsky expressed early on his philosophical belief that the theory of motives should be the homological part of a homotopy theory.%
\footnote{In the preface of his
1992 PhD thesis, where he introduces h-motives, Voevodsky wrote:
\begin{quote}
The ``homology theory of schemes'' we obtain this way is related to
the would-be homotopy theory of schemes in the same way as usual singular
homologies of topological spaces are related to classical homotopy theory.
\end{quote}}
This would-be homotopy theory was defined by Voevodsky in collaboration with Morel, 
based on earlier works of Joyal and Jardine \cite{J87}, and on the techniques involved in the theory of motivic complexes:
Nisnevich sheaves and $\Aone$-homotopy, but without the consideration of transfers. 
As it turns out, there is a close relation between the stable homotopy category
$\SH(k)$
\index{stable!homotopy category}%
\index[notation]{shk@$\SH(k)$}%
of schemes over $k$ and the stable category of motives over $k$ reflected by an adjunction of triangulated categories:
\[
\gamma^*:\SH(k) \leftrightarrows \DMk:\gamma_*
\]
which is the algebraic analogue of the adjunction in algebraic topology
derived from the Dold-Kan equivalence. 
Here, $\DMk$
\index[notation]{dmk@$\DMk$}%
denotes ``Voevodsky's big category of motives'' in which the tensor product with the Tate motive $\ZZ(1)$ is inverted.

Indeed the functor $\gamma_*$ induces a fully faithful functor on the rationalized category,
and identifies the objects of $\DMk_\QQ$ as the rational spectra
which are orientable (this is due to Morel, see \cite{CD12}*{5.3.35, 14.2.3, 14.2.16, 16.1.4}).
Orientable means here having a module structure over the ring spectrum
representing rational motivic cohomology.%
\footnote{This is equivalent to
having a module structure over the rational algebraic cobordism spectrum,
and should be thought as ``having an action of the Chern classes''.
For objects with a ring structure,
this corresponds to the classical notion of orientation of a ring spectrum,
borrowed from topology. See \cite{Deg12}, in particular Prop. 5.3.1.}
Roughly, this theorem means that, with rational coefficients, 
admitting transfers is a property rather than a structure, 
and corresponds to the property of being orientable.

The integral coefficient case is much more subtle. 
The existence of motivic Steenrod operations shows that $\gamma_*$ is not fully faithful.%
\footnote{As in topology this follows by looking at the map:
\[
\H^{n,i}(k,\ZZ/l)=\Hom_{\DMkZ}(\ZZ/l,\ZZ/l(i)[n])
\xrightarrow{\gamma_*} 
\Hom_{\SH(k)}(\HH \ZZ/l,\HH \ZZ/l(i)[n])
\]
where $\ZZ/l$ is the constant motive with $\ZZ/l$ coefficients in $\DMkZ$
and $\HH \ZZ/l=\gamma_*(\ZZ/l)$ is the ring spectrum representing motivic
cohomology with $\ZZ/l$-coefficients. 
The right hand side is strictly bigger (for instance in degrees $n=1$ and $i=0$)
as it contains the motivic Steenrod operations
(cf.\ \cite{Voe4} in characteristic $0$
and \cite{HKO} in positive characteristic).}
While this might seem discouraging at first, one should take stock in the fact that homological algebra is a part of stable homotopy theory.
That is, 
if $A$ is an associative ring, 
then the Grothendieck-Verdier derived category $\Der(A)$ of right $A$-modules is equivalent to the homotopy category of the Eilenberg-MacLane ring spectrum $\HH A$ \cite{R87}.
The model categorical aspects of this categorical equivalence were solidified in \cite{SS03}.
In our motivic setting there is a zig-zag of Quillen equivalences between modules over the integral motivic cohomology spectrum $\sHM \ZZ$
\index{motivic!cohomology!spectrum}%
\index{spectrum!motivic cohomology}%
and $\DMk$ \cite{RO06}. 
This was proven over fields of characteristic zero in \cite{RO08} using resolution of singularities, 
and extended to positive characteristic $p$ after inversion of $p$ in \cites{CD15,HKO}.
For a fuller review of these results we refer to \cite{L18}.

We also note the following fundamental theorem discovered by Morel.
\begin{thm*}[Morel]
Given an infinite perfect field $k$, one has a canonical isomorphism:
\[
\End_{\SH(k)}(S^0) \simeq \GW(k)
\]
where the right hand side is the Grothendieck-Witt ring of $k$
\index{Grothendieck-Witt!group}%
\index[notation]{gw@$\GW$}%
--- the Grothendieck ring of isomorphism classes of $k$-vector
spaces endowed with nondegenerate symmetric bilinear forms. Moreover, this isomorphism fits into the following commutative diagram:
\[
\begin{tikzcd}[row sep=small]
\End_{\SH(k)}(S^0) \ar[r,"\gamma^*"] \ar[d,"\simeq",dash]
 & \End_{\DMk}(\ZZ) \ar[d,"\simeq",dash] \\
\GW(k) \ar[r,"\rk"] & \ZZ
\end{tikzcd}
\]
where the map $\rk:\GW(k) \rightarrow \ZZ$ is induced by the rank of a symmetric bilinear form.
\end{thm*}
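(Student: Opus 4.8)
The plan is to reduce everything to Morel's computation of the zeroth stable homotopy sheaf of the motivic sphere. First one identifies $\End_{\SH(k)}(S^0)$ with a homotopy group of the motivic sphere spectrum $\un = S^0$: by Morel's stable $\Aone$-connectivity theorem, $\un$ is connective for the homotopy $t$-structure on $\SH(k)$, and since $\Spec k$ has Nisnevich cohomological dimension zero this produces a canonical isomorphism $\End_{\SH(k)}(S^0) \cong \underline{\pi}_0(\un)_0(\Spec k)$, where $\underline{\pi}_0(\un)_\ast$ is the graded zeroth homotopy sheaf on $\sm k$. The key input is then Morel's theorem that $\underline{\pi}_0(\un)_\ast$ is, as a sheaf of graded rings, canonically isomorphic to the unramified Milnor--Witt $K$-theory sheaf $\sKMW_\ast$; taking weight $0$ and global sections over the point gives $\End_{\SH(k)}(S^0) \cong \KMW_0(k)$. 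It remains to invoke the purely algebraic identification $\KMW_0(k) \cong \GW(k)$: this is elementary and may be obtained either by matching Morel's presentation of $\KMW_0(k)$ by symbols $\langle u\rangle = 1 + \eta[u]$, $u \in k^\times$, with the standard presentation of $\GW(k)$, or from the Cartesian square relating $\KMW_\ast$, Milnor $K$-theory $\KM_\ast$ and the powers of the fundamental ideal in the Witt ring. Composing the three isomorphisms yields the asserted ring isomorphism $\End_{\SH(k)}(S^0) \cong \GW(k)$. The hypotheses that $k$ be infinite and perfect enter only in this block --- infiniteness through Gabber's geometric presentation lemma, perfectness through the exactness of the Gersten/Rost--Schmid complexes --- both of which underlie Morel's computations.

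It remains to check the commutative square. Being symmetric monoidal, the left adjoint $\gamma^*$ sends the unit $S^0$ to the unit $\ZZ$ of $\DMk$ and hence induces a unital ring homomorphism $\End_{\SH(k)}(S^0) \to \End_{\DMk}(\ZZ)$; on the right, $\End_{\DMk}(\ZZ) = \H^0(\Spec k, \ZZ(0)) = \ZZ$ since $\ZZ(0) = \ZZ$ (point \introref{V2}, or equivalently $\CH^0(\Spec k) = \ZZ$ via \eqref{eq:motivic&hChow}). Through the isomorphisms of the previous paragraph, $\gamma^*$ corresponds to a unital ring homomorphism $\GW(k) \to \ZZ$, and it is enough to verify that it coincides with the rank, i.e.\ that each generator $\langle u\rangle$ of $\GW(k) = \KMW_0(k)$ maps to $1$. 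Writing $\langle u\rangle = 1 + \eta[u]$ with $\eta$ the stable motivic Hopf element, and using that $\gamma^*$ is multiplicative on the relevant bigraded homotopy rings, we are reduced to $\gamma^*(\eta) = 0$. But $\gamma^*(\eta)$ lies in a motivic cohomology group $\H^j(\Spec k, \ZZ(n))$ with $n < 0$, which vanishes since $\ZZ(n) = 0$ for $n < 0$; see point \introref{V2}. Hence $\gamma^*(\langle u\rangle) = \gamma^*(1) = \id_{\ZZ}$ for every $u$, so $\gamma^*$ is the rank map $\rk\colon \GW(k) \to \ZZ$ and the square commutes.

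The main obstacle is, unsurprisingly, the first input: Morel's identification $\underline{\pi}_0(\un)_\ast \cong \sKMW_\ast$. This lies at the technical heart of $\Aone$-algebraic topology over a field and is not formal; it rests on the unstable computation of the homotopy sheaves $\underline{\pi}^{\Aone}_0\bigl((\Aone\setminus 0)^{\wedge n}\bigr)$ and their iterated contractions, on translating the resulting symbols and relations --- in particular the appearance of the Hopf map and of the Steinberg relation --- into a presentation of $\KMW_\ast$, and on the strong $\Aone$-invariance results that make the homotopy $t$-structure well behaved. Everything else --- the passage from $\End_{\SH(k)}(S^0)$ to $\underline{\pi}_0$, the algebra identifying $\KMW_0$ with $\GW$, and the comparison with $\DMk$ --- is comparatively soft.
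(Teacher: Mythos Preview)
Your sketch is a correct outline of Morel's argument, but note that the paper does not actually prove this theorem: it is stated in the introduction as a known result of Morel, cited to motivate the relevance of Milnor--Witt $K$-theory and Chow--Witt groups, and the book takes it as input rather than as something to be established. So there is no ``paper's own proof'' to compare against; the authors simply invoke Morel's work (in particular \cite{Morel12}).

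That said, your account of the proof is accurate in spirit. The identification $\End_{\SH(k)}(S^0)\cong\KMW_0(k)\cong\GW(k)$ via the stable connectivity theorem and Morel's computation of $\underline{\pi}_0(\un)_*\cong\sKMW_*$ is exactly how it goes, and you are right that the hard content is entirely in that computation. Your verification of the commutative square is also correct: the key point that $\gamma^*(\eta)=0$ because it lands in motivic cohomology of negative weight is the standard way to see that $\gamma^*$ collapses $\GW(k)$ onto $\ZZ$ via the rank. One small refinement: rather than appealing to the connectivity theorem and cohomological dimension to get $\End_{\SH(k)}(S^0)\cong\underline{\pi}_0(\un)_0(k)$, one can more directly note that $\End_{\SH(k)}(S^0)=[\un,\un]$ is by definition $\pi_{0,0}(\un)(\Spec k)$, and Morel's computation gives this group on the nose; the connectivity argument is not strictly needed for sections over a field.
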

Intuitively, the stable degree of an endomorphism of the simplicial sphere
$S^n$, in the $\Aone$-homotopical sense, is the class of a symmetric bilinear form
while in the motivic sense, it is just an integer. It is interesting to note
that the degree in the sense of classical algebraic topology coincides with
the one obtained in motivic terms.\footnote{See \cite{KW} for further 
concrete computations of degrees.} 
In particular, the $\Aone$-homotopy theory
uncovers new phenomena of more arithmetic nature.

The first step to capture these phenomena was taken up by Barge and Morel in
\cite{BM00, BM99}. Motivated by Nori's ideas on Euler classes and Voevodsky's work
on the Milnor conjecture, they figured out an extension of algebraic cycle theory
where multiplicities are (classes of) symmetric bilinear forms rather than integers.
Inspired by Rost's theory of cycles with coefficients (cf.\ \cite{Ros}),
they proposed a definition of the so-called (top) \emph{Chow-Witt group}
$\ch*X$
\index[notation]{chx@@$\ch*X$}%
\index{Chow-Witt!group}%
of a smooth scheme over a field.
The idea of the construction is to replace Milnor K-theory $\KM_*$
\index[notation]{km@$\KM_*$}%
\index{Milnor K-theory}%
--- the fundamental example in Rost's theory ---
by Milnor-Witt K-theory $\KMW_*$. 
\index[notation]{kmw@$\KMW_*$}%
\index{Milnor-Witt!K-theory}%
This appears natural
once we know the following extension of Morel's theorem (stated
above): for any integer $n \in \ZZ$, one gets the following commutative
diagram:
\[
\begin{tikzcd}
\Hom_{\SH(k)}(S^0,\Gm^{\wedge,n}) \ar[r,"\gamma^*"] \ar[d,"\simeq"',dash]
 & \Hom_{\DMk}(\ZZ,\ZZ(n)[n]) \ar[d,"\simeq",dash] \\
\KMW_*(k) \ar[r] & \KM_*(k).
\end{tikzcd}
\]
It is therefore no surprise that we get the following commutative diagram between Chow-Witt and Chow rings:
\[
\begin{tikzcd}
\cht*X{\omega_{X/k}} \ar[r] \ar[d,"\widetilde \deg"'] 
 & \CH^*(X) \ar[d,"\deg"] \\
\GW(k) \ar[r,"\rk"] & \ZZ,
\end{tikzcd}
\]
where the first horizontal map is defined for any $k$-scheme $X$,
and the vertical maps are defined when $X$ is proper, if one restricts
to $0$-dimensional cycles. The theory was fully worked out in \cite{Fasel08}
(see Corollary 10.4.5 for the above commutative diagram).

Recall that the Milnor-Witt K-theory ring of a field $k$ is generated
on the one hand by units $u \in k^*$, as the Milnor K-theory ring,
and on the other hand by the (algebraic) Hopf map $\eta:\Gm \rightarrow S^0$.
This map is an obstruction for a ring spectrum $\EE$ to be orientable
(see \cite{Mor1}*{6.2.1}).
With rational coefficients, it is in fact the only obstruction to be oriented:
when its action on a rational spectrum $\EE$ (even without ring structure) vanishes,
then $\EE$ is actually a motive \emph{i.e.} an object of $\DMk$
(see \cite{CD12}*{16.2.13}).
This result can be extended to an integral spectrum $\EE$
if one assumes in addition that $\EE$ is concentrated in one degree for the homotopy
$t$-structure (see \cite{Deg10}*{1.3.4}). Then the vanishing of the action
of the Hopf map $\eta$ is equivalent to the existence of transfers on the
corresponding homotopy sheaf in the sense of Voevodsky's theory.

The previous considerations show that certain cohomology theories,
such as the Chow-Witt groups, do not admit transfers in general.
On the other hand, weak notions of transfers have been used in $\Aone$-homotopy theory
and are critical in the proof of some of its fundamental results
(see \cite{Morel12}).
 
\bigskip

The object of this book is to find a suitable notion of transfers
that allows to describe the symmetric bilinear phenomena of $\Aone$-homotopy theory.
More precisely, to make the end of the preceding section explicit,
we propose a generalization of Voevodsky's theory of finite correspondences,
sheaves with transfers and motivic complexes,
where Chow groups are replaced by Chow-Witt groups.
 
\section{Milnor-Witt motives}

Here is a synthetic description of the results proved in this book,
organized in a way corresponding to the presentation of Voevodsky's theory
in Section \ref{sec:VoevodskyTheory}. Again $k$ is a perfect field.

\introtag{MW1} 
The main technical innovation of our series of articles is the introduction of the additive symmetric monoidal category $\cork$ 
\index[notation]{corkt@$\cork$}%
of smooth $k$-schemes with morphisms given by  the so-called \emph{finite Milnor-Witt correspondences}
--- finite MW-correspondences for short ---
\index{Milnor-Witt!finite correspondence}%
a suitable generalization of Voevodsky's finite correspondences whose
coefficients are symmetric bilinear forms. One has in addition canonical
additive monoidal functors:
\[
\begin{tikzcd}
\smk \ar[r,"\tilde \gamma"] \ar[rr,"\gamma", bend right] & \cork \ar[r,"\pi"] & \corVk
\end{tikzcd}
\]
\index[notation]{cork@$\corVk$}%
where $\gamma$ is the classical graph functor and $\pi$ is induced by the function which to a symmetric bilinear form associates its rank.

Modeled on Voevodsky's definition of geometric motives, we define
the triangulated symmetric monoidal category of \emph{geometric MW-motives}
(resp.\ \emph{effective geometric MW-motives}) $\DMtgmk$
(resp.\ $\DMtegmk$) and a commutative diagram of functors:
\index[notation]{dmtgeomk@$\DMtgmk$}%
\index{Milnor-Witt!motives, category of!geometric}%
\index{motives!MW-|see{Milnor-Witt motives}}
\index[notation]{dmteffgeomk@$\DMtegmk$}%
\index{Milnor-Witt!motives, category of!effective geometric}%
\index{MW|see{Milnor-Witt}}%
\[
\begin{tikzcd}
\DMtegmk \ar[r,"\pi^*"] \ar[d,"\tilde \Sigma^\infty"']
 & \DMegmk \ar[d,"\Sigma^\infty"]  \\
\DMtgmk \ar[r,"\pi^*"]  & \DMgmk.
\end{tikzcd}
\]
An effective geometric MW-motive is simply a bounded complex
with coefficients in the additive category $\cork$; in particular,
$\pi^*$ is the obvious functor which sends such a complex to a 
geometric motive by applying $\pi$ at each stage.
Besides, when $k$ is infinite,
we are able to extend Voevodsky's cancellation theorem,
which in our context gives that the functor $\tilde \Sigma^\infty$
is fully faithful. Note also that when $(-1)$ is a sum of squares
in $k$, both functors $\pi^*[1/2]$ are equivalences of categories
(the induced functor on the categories localized at the integer $2$).

\introtag{MW2}
Using the notion of finite MW-correspondences,
we can carry on the analogue of Voevodsky's theory of motivic complexes,
with coefficients in an arbitrary commutative ring $R$.
We first introduce, for $t$ the Nisnevich or étale
topology on the category of smooth $k$-schemes,
the notion of $t$-sheaves of $R$-modules with MW-transfers
\index{Milnor-Witt!sheaf with transfers|see{Milnor-Witt sheaf}}%
\index{Milnor-Witt!sheaf}%
\index{Milnor-Witt!$t$-sheaf}%
\index{transfers!Milnor-Witt sheaf with|see{Milnor-Witt sheaf}}%
\index{sheaf!with Milnor-Witt transfers|see{Milnor-Witt sheaf}}%
over the base field $k$. We simply call them \emph{MW-$t$-sheaves}
or \emph{MW-sheaves} when $t=\nis$.
The corresponding category $\shMWtkR$ 
\index[notation]{shMWkR@$\shMWtkR$}%
enjoys
all the good properties proved by Voevodsky for sheaves with transfers,
and for their homotopy invariant version.
Beware however that the presheaf $\tilde c_R(X)$
on $\cork$ represented by a smooth $k$-scheme $X$:
\[
U \mapsto \tilde c_R(X)(U):=\cork(U,X) \otimes_\ZZ R
\]
is not a Nisnevich sheaf in general. However, the associated $t$-sheaf
with MW-transfers exists and we let $\MWrepRt(X)$
\index[notation]{ctildex@$\MWrepRt(X)$}%
be the corresponding MW-$t$-sheaf.

We can then define the category of MW-motivic complexes $\DMtekR$,
\index[notation]{dmteffkr@$\DMtekR$}%
when $t=\nis$, as the $\Aone$-localization of the derived category $\Der(\shMWxkR{\nis})$
and finally the category of MW-motivic spectra $\DMtkR$
\index[notation]{dmtkr@$\DMtkR$}%
as the $\PP^1$-stabilization
of $\DMtekR$ (or rather of its underlying model category).
These categories are triangulated symmetric monoidal, and possess an internal Hom.
As our constructions are parallel to the classical constructions of motivic homotopy categories, 
we obtain a commutative diagram of triangulated symmetric monoidal categories:
\begin{equation}\label{eq:diagr_D_motivic}
\begin{tikzcd}
\DAekR \ar[r,"\derL \tilde \gamma^*"] \ar[d]
 & \DMtekR \ar[r,"\derL \tilde \pi^*"] \ar[d,"\tilde \Sigma^\infty"]
 & \DMekR \ar[d,"\Sigma^\infty"] \\
\DAkR\ar[r,"\derL \tilde \gamma^*"]
 & \DMtkR \ar[r,"\derL \tilde \pi^*"]
 & \DMkR
\end{tikzcd}
\end{equation}
\index[notation]{daeffkr@$\DAekR$}%
\index[notation]{dakr@$\DAkR$}%
where the categories on the left column are the $\Aone$-derived and
stable $\Aone$-derived categories of Morel.
All these functors are triangulated monoidal, and admit a right adjoint.
The relation with the geometric objects comes from the following commutative diagram:
\[
\begin{tikzcd}
\DMtegmk \ar[rr,"\pi^*"] \ar[dd] \ar[rd,hook] 
 & & \DMegmk \ar[dd] \ar[rd,hook] &  \\
& \DMtekZ\ar[rr,"\derL \tilde \pi^*" near start, crossing over] 
 & & \DMe{k,\ZZ} \ar[dd,"\Sigma^\infty"] \\
\DMtgmk\ar[rr,"\pi^*" near start] \ar[rd,hook]
 & & \DMgmk \ar[rd,hook] & \\
& \DMtkZ \ar[rr,"\derL \tilde \pi^*"] \ar[from=uu,"\tilde \Sigma^\infty" near start, crossing over]
 & & \DMkZ 
\end{tikzcd}
\]
where the back square is that of \introref{MW1}, and each diagonal arrow is triangulated monoidal, fully faithful, with essential image the subcategory of compact objects in the target.
As in \introref{MW1}, we get the analogue of Voevodsky's cancellation theorem for MW-motives:
when $k$ is an infinite perfect field, the functor $\tilde \Sigma^\infty$ in the above diagram
is fully faithful.
Also, when $(-1)$ is a sum of squares in $k$ and $2$ is invertible in $R$,
the functor $\derL \pi^*$ in the above diagram is an equivalence of categories.
We will give a much more precise relation between Voevodsky's motives and MW-motives below.

Following the classical method of Voevodsky and using our results
on homotopy invariant MW-sheaves,
we can show that $\DMtekR$ is in fact the full subcategory
of $\Der(\shMWxkR{\nis})$ consisting of complexes whose Nisnevich cohomology sheaves
(or equivalently, cohomology presheaves) are homotopy invariant.
This immediately yields a t-structure on $\DMtekR$,
called the homotopy t-structure. It also gives an explicit $\Aone$-resolution
functor, obtained as the analogue of \emph{Suslin's singular chain functor}.
In particular, the effective MW-motivic complex $\tMot(X)$
\index[notation]{mtx@$\tMot(X)$}%
associated to a smooth scheme $X$ is 
the MW-$t$-sheaf associated to the complex which in degree $n \geq 0$ is
\[
U \mapsto \tilde c_R(\Delta^n \times U,X).
\]
We can then define the Tate twist for MW-motives $\MWrepRt(n)$,
\index[notation]{rtn@$\MWrepRt(n)$}%
using the same formula as the Tate twist for Voevodsky's motives.
For $n>0$, we get an effective motivic complex:
\[
\tilde R(n)[n]=\tilde R(\Gm^{n})/\oplus_{i=1}^n \tilde R(\Gm^{n-1}),
\]
where in this sum, the $i$-th map is induced by the inclusion
$\Gm^{n-1} \rightarrow \Gm^n$ setting the $i$-th coordinate to $1$.
We define $\tilde R(-n)$ in $\DMtekR$ by the formula:
\[
\tilde R(-n):=\uHom_{\DMtekR}\big(\tilde R(n),\tilde R\big).
\]
When $k$ is an infinite perfect field,
$\tilde R(-n)$ is the infinite loop space associated with the tensor inverse of 
$\tilde \Sigma^\infty \tilde R(n)$ 
computed in $\DMtkR$.
We then get the following computations of these complexes:
\[
\tilde \ZZ(n)=\begin{cases}
\sW[-n] & \text{if } n<0, \\
\sKMW_0 & \text{if } n=0,
\end{cases}
\]
where $\sW$ is the unramified Witt sheaf 
\index{Witt!unramified sheaf}%
\index[notation]{ws@$\sW$}%
and $\sKMW_0$ the $0$-th unramified Milnor-Witt K-theory sheaf.
Our Milnor-Witt motivic cohomology does therefore capture symmetric bilinear invariants, fulfilling our aim.

Besides, one can further explore the links with Voevodsky's motivic
complexes. Indeed, given any integer $n \in \ZZ$, we get that
the following computation of MW-sheaves:
\[
\underline{\H}^n(\tilde \ZZ(n))=\sKMW_n
\]
where $\sKMW_n$ is the $n$-th unramified Milnor-Witt K-theory sheaf, and the left-hand term is the sheaf associated to the relevant cohomology group.

We define the $R$-linear MW-motivic cohomology of a smooth $k$-scheme $X$
in bidegree $(p,n) \in \ZZ^2$ as:
\[
\HMW^{p,n}(X,R)=\Hom_{\DMtkR}(\tMot(X),\tilde R(n)[p]).
\]
\index[notation]{hmwpqxr@$\HMW^{p,n}(X,R)$}%
\index{Milnor-Witt!motivic!cohomology}%
When the base field $k$ is perfect and infinite, this can be identified
with the Nisnevich (or Zariski) hypercohomology:
\[
\HMW^{p,n}(X,R) \simeq \H^p_\nis(X,\tilde R(n)).
\]
Moreover, provided $p \geq 2n-1$, we get:
\[
\HMW^{p,n}(X,\ZZ)=\H^{p-n}(X,\sKMW_n)
\]
\index[notation]{hpxkmwq@$\H^p(X,\sKMW_q)$}
which finally contains as a particular case the following identification
 of MW-motivic cohomology and Chow-Witt groups:
\[
\HMW^{2n,n}(X,\ZZ) \simeq \ch nX.
\]
\index[notation]{chtx@$\ch*X$}%
\index{Chow-Witt!group}%

\introtag{MW4} As in the context of motivic cohomology,
we associate four theories in the Milnor-Witt context.
Here, our approach differs from that of \cite{FSV}*{Ch.~4}
as we can use the six functors formalism on
the motivic stable homotopy category.

In fact, as we have the adjunction of categories
\[
\begin{tikzcd}
\DA k \ar[r,"\derL \tilde \gamma^*",shift left=.4ex]
 & \DMtkR,\ar[l,"\tilde \gamma_*",shift left=.4ex]
\end{tikzcd}
\]
(which actually factorizes through $\DAkR$ via the extension-restriction of scalars adjunction)
we can define the MW-motivic ring spectrum\footnote{Note we can also view this ring spectrum
in the category $\SH(k)$ by using the canonical functor $\DA k \rightarrow \SH(k)$.}
as
\[
\sHMW R:=\tilde \gamma_*(\tilde R).
\]
\index{Milnor-Witt!motivic!ring spectrum}%
\index[notation]{hmws@$\sHMW R$}%
The advantage of this definition is that, given diagram \eqref{eq:diagr_D_motivic},
we immediately get a morphism of ring spectra:
\[
\varphi:\sHMW R \rightarrow \sHM R.
\]
\index[notation]{hms@$\sHM R$}%

Once we have a ring spectrum, we can derive the usual four theories,
based on the six functors formalism on the categories $\DA X$
for various schemes $X$.
Note however before giving these definitions that
it is important for MW-motivic cohomology to take into account a more
general twist than the usual Tate twist. Given a virtual
vector bundle $v$ over a scheme $X$ (in the sense of \cite{Del87}*{\S 4}), we let $\Th(v)$ be its Thom space seen in $\DA X$. 
Then, for any separated $k$-scheme $p:X\to \Spec(k)$ of finite
type, any integer $n\in \ZZ$
and any virtual bundle $v$ over $X$, we define
\begin{itemize}
\item \emph{MW-motivic cohomology}:
\index{Milnor-Witt!motivic!cohomology}%
\index[notation]{hmwxvr@$\HMW^n(X,v,R)$}%
\begin{align*}
& \HMW^n(X,v,R):=
 \Hom_{\DA k}\Big(\un_k,p_*\big(p^*(\sHMW R) \otimes \Th(v)\big)[n]\Big).
\intertext{%
\item \emph{MW-motivic cohomology with compact support}:
\index{Milnor-Witt!motivic!cohomology with compact support}%
\index[notation]{hmwcxvr@$\HMWc^n(X,v,R)$}%
}
& \HMWc^n(X,v,R):=
\Hom_{\DA k}\Big(\un_k,p_!\big(p^*(\sHMW R) \otimes \Th(v)\big)[n]\Big),
\intertext{%
\item \emph{MW-motivic homology}:
\index{Milnor-Witt!motivic!homology}%
\index[notation]{hmwhxvr@$\HhMW_n(X,v,R)$}%
}
& \HhMW_n(X,v,R):=
\Hom_{\DA k}\Big(\un_k,p_!\big(p^!(\sHMW R) \otimes \Th(-v)\big)[-n]\Big)
\intertext{%
\item \emph{MW-motivic Borel-Moore homology}:
\index{Milnor-Witt!motivic!Borel-Moore homology}%
\index[notation]{hmwbmhxvr@$\HhMWBM_n(X,v,R)$}%
}
& \HhMWBM_n(X,v,R):=
\Hom_{\DA k}\Big(\un_k,p_*\big(p^!(\sHMW R) \otimes \Th(-v)\big)[-n]\Big).
\end{align*}
\end{itemize}
These theories have all the expected properties:
basic functoriality, Gysin morphisms for smooth maps, products, 
descent long exact sequence with respect to Nisnevich and cdh topologies,
duality. We refer the reader to the introduction of chapter \ref{ch:spectra} of this book for the detailed list of all these properties.
Note however a difference with the classical situation: As MW-motivic cohomology 
doesn't satisfy the projective bundle theorem, the relevant formulas need 
special care. As an illustration, the duality property for a smooth scheme $X$ 
with tangent bundle $T_X$ reads as
\[
\HMW^n(X,v)\simeq \HhMWBM_{-n}(X,T_X-v)
\]
for any virtual vector bundle $v$ and any integer $n\in\ZZ$. The presence of 
$T_X$, reminiscent of Serre duality, explains why we have to consider twists by 
arbitrary (virtual) vector bundles in these theories. 

The formal properties of Borel-Moore homology allow to construct a Bloch-Ogus 
type spectral sequence of the form
\[
E^1_{p,q}(X,v):=\bigoplus_{x\in X_{(p)}} \HhMWBM_{p+q}(k(x),v_{\vert_{k(x)}})\implies \HhMWBM_{p+q}(X,v). 
\]
Besides, we perform computations allowing to deduce that 
\[
\HhMWBM_{p+n}(k(x),-n)\simeq 
\KMW_{p+n}(k(x))\otimes \omega_{k(x)/k}
\]
for $x\in X_{(p)}$, $n\in\ZZ$ and $\omega_{k(x)/k}$ is the determinant of the 
module of differentials $\Omega_{k(x)/k}$. In particular, the line $q=n$ in the 
spectral sequence for $v=-\AAA^n$ is a complex of the form
\[
\begin{tikzcd}[column sep=small]
\cdots & \displaystyle{\bigoplus_{x\in X_{(p)}} 
\hspace{-1ex}
\KMW_{p+n}(k(x))\otimes \omega_{k(x)/k}} \ar[l] & 
\displaystyle{\bigoplus_{x\in X_{(p+1)}} 
\hspace{-2ex}
\KMW_{p+1+n}(k(x))\otimes \omega_{k(x)/k}} \ar[l] & 
\cdots \ar[l]
\end{tikzcd}
\]
which we denote by $\CBM(X,\sKMW_{d+n})$ where $d=\dimn X$. 
\index[notation]{cbmxkmw@$\CBM(X,\sKMW_{n})$}%
These complexes enjoy nice functorial properties detailed in the text. 
Using these properties, we are able to compute the differentials, showing in particular that they coincide with the differentials defined by Morel in his so-called Rost-Schmid complex when $X$ is smooth. 
Furthermore, our analysis of the terms in the spectral sequence yields the following isomorphism
\[
\HhMWBM_i(X,n)\simeq 
\H_{n+i}(\CBM(X,\sKMW_{d-n}))
\]
for $i=0,1$ and $n\in \NN$. This can be seen as the analogue in degrees $0$ and 
$1$ of (\ref{eq:BMmotivic&hChow}) in Paragraph \introref{V4} above in our context. In 
particular, we can view MW-motivic Borel-Moore homology as the analogue of 
higher Chow groups in our context. Indeed, there are homomorphisms 
\[
\HhMWBM_i(X,n)\to \H^{\text{BM}}_{i+2n}(X,\ZZ(n))\to 
\CH_n(X,i)
\] 
when $\charac k=0$\footnote{This remains valid in positive characteristic 
$p$ at the cost of inverting $p$ in the coefficients (\cite{CD15}*{Cor.~8.12})}. Also, the 
complex $\CBM(X,\sKMW_{d-n})$ allows one to define 
Chow-Witt groups of singular schemes (of finite type over $k$) as 
\[
\cht nX{\ome Xk}=\H_n(\CBM(X,\sKMW_{d-n})).
\]
When $X$ is smooth, the afore mentioned duality yields the formula
\[
\HMW^0(X,v)\simeq \cht rX{\det(v)}
\]
where $r=\dimn v$ which is the analogue of \eqref{eq:motivic&hChow} in 
Paragraph \introref{V4} above and a generalization of the already stated isomorphism
\[
\HMW^{2n,n}(X,\ZZ) \simeq \ch nX.
\]

\introtag{MW5}
Let us now elaborate on the links between Hermitian K-theory
\index{Hermitian K-theory}
and the motivic cohomology we define in this book. Let us first recall that one of the motivating principle of the development of Chow-Witt groups is the idea that they should have the same relationship with Hermitian K-theory 
as the relationship between Chow groups and K-theory. In view of this, it is 
natural to expect that MW-motivic cohomology should appear in an analogue of the 
ordinary Atiyah-Hirzebruch spectral sequence relating K-theory and ordinary 
motivic cohomology. To start with, recall from \cite{Spitzweck12} that there is a spectral sequence computing Hermitian K-theory, namely the very effective 
slice spectral sequence. It takes the form 
\[
E_2^{p,q}=\H^{p+q}(X,{\tilde s}_{-q}(\sKO))\Rightarrow 
\H^{p+q}(X,\sKO)=\sKO^{p+q}(X)
\]
where $\sKO$
\index[notation]{kos@$\sKO$}%
\index{Hermitian K-theory!spectrum}%
\index{spectrum!Hermitian K-theory}%
is the spectrum representing Hermitian K-theory in the 
stable homotopy category and ${\tilde s}_{-q}(\sKO)$ are its so-called 
\emph{very effective slices}. The latter have been computed in \cite{B17} 
and take the form
\[
{\tilde s}_{-q}(\sKO)=\begin{cases} (\PP^1)^{\wedge (-q)}\wedge 
{\tilde s}_{0}(\sKO) & \text{if $-q=0$ modulo $4$.} \\
(\PP^1)^{\wedge (-q)}\wedge\sHM(\ZZ/2) & \text{if $-q=1$ modulo $4$.} \\
(\PP^1)^{\wedge (-q)}\wedge\sHM\ZZ & \text{if $-q=2$ modulo $4$.} \\
0 & \text{if $-q=3$ modulo $4$.} \\
\end{cases}
\]
where $\sHM\ZZ$ (resp.\ $\sHM(\ZZ/2)$) is the spectrum representing ordinary 
motivic cohomology (resp.\ ordinary motivic cohomology modulo $2$). Further, the 
very effective slice ${\tilde s}_{0}(\sKO)$ fits in an exact triangle 
\[
\sHM(\ZZ/2)[1]\to {\tilde s}_{0}(\sKO)\to \tilde{\H}\ZZ
\]
where $\tilde{\H}\ZZ$ (in the notation of loc.~cit.)\ is the genuinely new piece of 
the zeroth slice. In this book, we prove that $\tilde{\H}\ZZ$ is actually the 
spectrum $\sHMW \ZZ$ defined above. Apart from showing that MW-motivic cohomology 
indeed plays an important part in the computation of Hermitian K-theory, it 
has other interesting consequences. The homotopy t-structure in the stable 
homotopy category induces a commutative diagram of spectra 
\begin{equation}
\label{eq:fdl_square}
%\begin{split}
\begin{tikzcd}
\sHMW \ZZ \ar[r] \ar[d,"\varphi"']
& \tau_{\leq 0}\sHMW\ZZ \ar[d] \ar[r,"\sim"] & \spKMW \ar[d] \\
\sHM\ZZ \ar[r] &  \tau_{\leq 0}\sHM\ZZ \ar[r,"\sim"] & \spKM
\end{tikzcd}
%\end{split}
\end{equation}
where the right-hand spectra are the ones representing Milnor-Witt and Milnor K-theory and the isomorphisms are given by the isomorphisms 
$\underline{\H}^n(\tilde \ZZ(n))=\sKMW_n$ and $\underline{\H}^n(\ZZ(n))=\sKM_n$ 
stated above. It follows from \cite{B17}*{Thm.~11} that the outer 
square is homotopy Cartesian. As a corollary, we get the following computation 
of MW-motivic cohomology groups for any finitely generated field extension $L/k$:
\[
\HMW^{p,q}(L,\ZZ)=\begin{cases} 0 & \text{if $p>q$} \\ \KMW_p(L) & 
\text{if $p=q$} \\ \HM^{p,q}(L,\ZZ) & \text{if $p<q$.}\end{cases}
\]
\index[notation]{hmpqxr@$\HM^{p,q}(X,R)$}%
where $\HM^{p,q}(L,\ZZ)$ is Voevodsky's motivic cohomology.

Another remarkable consequence of the above homotopy Cartesian square
is the computation of the étale variant of the MW-motivic ring spectrum.
Recall that there is an étale variant of the $\Aone$-derived category,
denoted by $\DAket$, 
and that the étale sheafification functor induces an adjunction of triangulated categories:
\[
\begin{tikzcd}
\DAk \ar[r,"\alpha^*",shift left=.4ex]
 & \DAket \ar[l,"\alpha_*",shift left=.4ex]
\end{tikzcd}
\]
\index[notation]{dak@$\DAk$}%
\index[notation]{daetk@$\DAket$}%
the functor $\alpha_*$ being induced by the obvious forgetful functor.
 Applying the functor $\alpha^*$ to the homotopy Cartesian square \eqref{eq:fdl_square},
 we get a homotopy cartesian square:
\[
\begin{tikzcd}
\sHMWet \ZZ \ar[r] \ar[d,"\varphi_\et"']
 & \alpha^*(\spKMW) \ar[d] \\
\sHMet\ZZ \ar[r] & \alpha^*(\spKM).
\end{tikzcd}
\]
Note finally that, according to \chspectra, Theorem~\ref{thm:compare_HMW_et&HM_et},
the map $\varphi_\et$ is an isomorphism of ring spectra.\footnote{Note also that one can prove 
 that the right vertical map is an isomorphism as in the proof of \chdmt,
 Corollary~\ref{cor:adjunctions_corr}\ref{item:adjunction-pi-equivalence},
 and especially Lemma~\ref{lemm:GW_etale_trivial}.}

\section{Beyond the analogy}

We now describe some results specific to our situation. When we invert $2$ in the coefficients of $\DMtkR$, the link with ordinary motives becomes much more precise. To state the result, note that the construction of finite MW-correspondences can be adapted to other strictly $\Aone$-invariant sheaves. Taking in particular the unramified Witt sheaf $\sW$, we obtain motivic categories $\WDMekR$ and $\WDMkR$
\index[notation]{wdmeffkr@$\WDMekR$}%
\index[notation]{wdmkr@$\WDMkR$}%
for any ring $R$ which are the respective analogues of $\DMtekR$ and $\DMtkR$. Moreover, the relationship between Milnor-Witt K-theory and the unramified Witt sheaf yields a functor
\[
\DMtkR \to \WDMkR
\]
which is also defined at the level of the effective categories. When $R$ is a $\ZZ[\frac 12]$-algebra, we obtain 
an equivalence of categories
\[
\DMtkR\to \DMkR\times \WDMkR
\]
induced by the above functor and the usual functor $\DMtkR \to \DMkR$. This decomposition identifies the ``plus part'' of $\DMtkR$ with $\DMkR$ and the ``minus part'' with $\WDMkR$.
Note that the relationship between the category $\WDMkR$ and other similarly defined 
categories related to the unramified Witt sheaf (Bachmann, Levine-Ananievskiy-Panin) still needs to be clarified.

Let us note that the canonical map
\[
\DAkR \rightarrow \DMtkR
\]
is certainly not an isomorphism in general. Indeed, we will show in further work that every ring spectrum coming from $\DMtkR$ is SL-oriented in the sense of Panin-Walter \cite{PW10}, while a result of Ananyevskyi and Sosnilo shows that the ring spectrum corresponding to the unit of $\Der_{\Aone}(k,\ZZ[1/2])$ is not SL-oriented.
However, with rational coefficient, Grigory Garkusha proved in \cite{Gar19} that the above map is in fact an equivalence of triangulated categories.

Another important notion of correspondences, called \emph{framed correspondences}, has been introduced by Voevosdky and implemented by Panin and Garkusha.
They use them to describe an explicit $\Aone$-resolution functor
for motivic spectra. There is a precise relation between our approach and their work.
Indeed, any framed correspondence gives rise to a finite MW-correspondence,
and we use this fact to obtain some of the fundamental results satisfied by
homotopy invariant MW-sheaves. Although recent work of Kolderup (\cite{Kolderup17}) allows one to avoid this trick, the relation between MW-correspondences and framed correspondences has other important applications (see \cite{Ananievskiy18}) and should be studied further. 

Finally, let us say a few words about our assumptions on the perfect base field $k$. The definition of finite Chow-Witt correspondences, which takes place in \chfinitecw makes no use of further assumptions. Starting from \chdmt, we however have to further assume that $k$ is infinite and of characteristic different from $2$ in order to obtain our main results. At present, we don't know how to eliminate these extra assumptions. 
%Although Milnor-Witt K-theory and its unramified version are defined regardless of the characteristic of the base field, most of the subsequent developments used in one way or the other triangular Witt groups, for which the assumption on the characteristic is a requirement. 
%Nevertheless, work in preparation by Calmès and Fasel should allow us to remove this annoying limitation, permitting the extension of our results verbatim to this more general context. 

\begin{center}
------------------------------------
\end{center}

\section*{Acknowledgements}

We gratefully acknowledge hospitality and support from Institut Mittag-Leffler and the universities of Artois, Dijon, Essen, Grenoble, Oslo and Southern California.

The exposition and results presented here owe a lot to many people. In particular, we wish to thank Alexey Ananievskiy, Aravind Asok, Peng Du, Marc Hoyois, Håkon Andreas Kolderup, Fabien Morel, Ivan Panin, Oleg Podkopaev, Antoine Touzé, Maria Yakerson and Nanjun Yang for useful conversations and remarks on preliminary versions of the book. We would also like to thank Marco Schlichting for explaining us his joint work with Simon Markett on a Grayson type spectral sequence computing higher Grothendieck-Witt groups, Jean Barge for his idea on how to compute the kernel of the map from Milnor-Witt motivic cohomology to ordinary motivic cohomology and Grigory Garkusha for conversations about framed correspondences and for explaining us how to construct the long exact sequence of motivic cohomology groups appearing in \chdmt, Section \ref{sec:AA1derived}. 

\begin{bibsection}
\begin{biblist}

\bib{Ananievskiy18}{article}{
  author={Ananyevskiy, A.},
  author={Neshitov, A.},
  title={Framed and {MW}-transfers for homotopy modules},
  date={2019},
  journal={Sel. Math., New Ser.},
  volume={25},
  number={2},
  pages={Paper 26, 41 p.},
}

\bib{Ayoub}{article}{
  author={Ayoub, J.},
  title={Les six opérations de {G}rothendieck et le formalisme des cycles évanescents dans le monde motivique. {I}},
  date={2007},
  journal={Astérisque},
  volume={314},
  pages={x+466 pp. (2008)},
}

\bib{B17}{article}{
  author={Bachmann, T.},
  title={The {G}eneral slices of {H}ermitian {$K$}-theory},
  date={2017},
  journal={J. Topology},
  volume={10},
  number={4},
  pages={1124\ndash 1144},
}

\bib{BM99}{article}{
  author={Barge, J.},
  author={Morel, F.},
  title={Cohomologie des groupes linéaires, {$K$}-théorie de {M}ilnor et groupes de {W}itt},
  date={1999},
  journal={C. R. Acad. Sci. Paris Sér. I Math.},
  volume={328},
  number={3},
  pages={191\ndash 196},
}

\bib{BM00}{article}{
  author={Barge, J.},
  author={Morel, F.},
  title={Groupe de {C}how des cycles orientés et classe d'{E}uler des fibrés vectoriels},
  date={2000},
  journal={C. R. Acad. Sci. Paris Sér. I Math.},
  volume={330},
  number={4},
  pages={287\ndash 290},
}

\bib{Be87}{incollection}{
  author={Be{\u \i }linson, A.~A.},
  title={Height pairing between algebraic cycles},
  date={1987},
  booktitle={{$K$}-theory, arithmetic and geometry ({M}oscow, 1984--1986)},
  series={Lecture Notes in Math.},
  volume={1289},
  publisher={Springer, Berlin},
  pages={1\ndash 25},
}

\bib{Bl80}{inproceedings}{
  author={Bloch, S.},
  title={Algebraic {$K$}-theory and zeta functions of elliptic curves},
  date={1980},
  booktitle={Proceedings of the {I}nternational {C}ongress of {M}athematicians ({H}elsinki, 1978)},
  publisher={Acad. Sci. Fennica, Helsinki},
  pages={511\ndash 515},
}

\bib{BO74}{article}{
  author={Bloch, S.},
  author={Ogus, A.},
  title={Gersten's conjecture and the homology of schemes},
  date={1974},
  journal={Ann. Sci. \'Ecole Norm. Sup. (4)},
  volume={7},
  pages={181\ndash 201 (1975)},
}

\bib{Bor1}{article}{
  author={Borel, A.},
  title={Cohomologie de {${\rm SL}_{n}$} et valeurs de fonctions zeta aux points entiers},
  date={1977},
  journal={Ann. Scuola Norm. Sup. Pisa Cl. Sci. (4)},
  volume={4},
  number={4},
  pages={613\ndash 636},
}

\bib{CD12}{book}{
      author={Cisinski, D.-C.},
      author={D{\'e}glise, F.},
       title={Triangulated categories of mixed motives},
        year={2019},
   publisher={Springer},
      series={Springer Monographs in Mathematics},
        note={\href{https://arxiv.org/abs/0912.2110}{arXiv:0912.2110}},
}

\bib{CD15}{article}{
  author={Cisinski, D.-C.},
  author={Déglise, F.},
  title={Integral mixed motives in equal characteristics},
  date={2015},
  journal={Doc. Math.},
  volume={{\normalfont Extra volume: Alexander S. Merkurjev's sixtieth birthday}},
  pages={145\ndash 194},
}

\bib{Deg10}{article}{
  author={Déglise, F.},
  title={Orientable homotopy modules},
  date={2013},
  journal={Am. Journ. of Math.},
  volume={135},
  number={2},
  pages={519\ndash 560},
}

\bib{Deg12}{unpublished}{
  author={Déglise, F.},
  title={Orientation theory in arithmetic geometry},
  date={2018},
  note={To appear in "International Colloquium on K-theory", TIFR},
}

\bib{Del1}{incollection}{
  author={Deligne, P.},
  title={Valeurs de fonctions {$L$}\ et périodes d'intégrales},
  date={1979},
  booktitle={Automorphic forms, representations and {$L$}-functions ({P}roc. {S}ympos. {P}ure {M}ath., {O}regon {S}tate {U}niv., {C}orvallis, {O}re., 1977), {P}art 2},
  series={Proc. Sympos. Pure Math., XXXIII},
  publisher={Amer. Math. Soc., Providence, R.I.},
  pages={313\ndash 346},
  note={With an appendix by N. Koblitz and A. Ogus},
}

\bib{Del87}{inproceedings}{
  author={Deligne, P.},
  title={Le déterminant de la cohomologie},
  organization={AMS},
  date={1987},
  booktitle={Current {T}rends in {A}rithmetical {A}lgebraic {G}eometry},
  editor={Ribet, K.~A.},
  series={Contemporary Mathematics},
  volume={67},
}

\bib{Fasel08}{article}{
  author={Fasel, J.},
  title={Groupes de {C}how-{W}itt},
  date={2008},
  journal={Mém. Soc. Math. Fr. (N.S.)},
  volume={113},
  pages={viii+197},
}

\bib{Gar19}{article}{
      author={Garkusha, G.},
       title={Reconstructing rational stable motivic homotopy theory},
        date={2019},
     journal={Compositio Mathematica},
      volume={155},
      number={7},
       pages={1424\ndash 1443},
}

\bib{Gil1}{article}{
  author={Gillet, H.},
  title={Riemann-{R}och theorems for higher algebraic {$K$}-theory},
  date={1981},
  journal={Adv. in Math.},
  volume={40},
  number={3},
  pages={203\ndash 289},
}

\bib{HKO}{article}{
  author={Hoyois, M.},
  author={Kelly, S.},
  author={{\O }stv{\ae }r, P.~A.},
  title={The motivic {S}teenrod algebra in positive characteristic},
  date={2017},
  journal={J. Eur. Math. Soc. (JEMS)},
  volume={19},
  number={12},
  pages={3813\ndash 3849},
}

\bib{J87}{article}{
   author={Jardine, J. F.},
   title={Simplicial presheaves},
   journal={J. Pure Appl. Algebra},
   volume={47},
   date={1987},
   number={1},
   pages={35--87},
   issn={0022-4049},
   doi={10.1016/0022-4049(87)90100-9},
}

\bib{KW}{article}{
  author={Kass, J.-L.},
  author={Wickelgren, K.},
  title={The class of {E}isenbud-{K}himshiashvili-{L}evine is the local {$\mathbb {A}^1$}-{B}rouwer degree},
  date={2019},
  journal={Duke Math. J.},
  volume={168},
  number={3},
  pages={429\ndash 469},
}

\bib{Kel17}{article}{
  author={Kelly, S.},
  title={Voevodsky motives and {$l$}dh-descent},
  date={2017},
  journal={Astérisque},
  volume={391},
  pages={125},
}

\bib{Kolderup17}{article}{
  author={Kolderup, H.~A.},
  title={Homotopy invariance of presheaves with {M}ilnor-{W}itt transfers},
  date={2019},
  journal={Doc. Math.},
  volume={24},
  pages={2339\ndash 2379},
}

\bib{Lan}{article}{
  author={Landsburg, S.~E.},
  title={Relative cycles and algebraic {$K$}-theory},
  date={1989},
  issn={0002-9327},
  journal={Amer. J. Math.},
  volume={111},
  number={4},
  pages={599\ndash 632},
}

\bib{Lic1}{article}{
  author={Lichtenbaum, S.},
  title={Values of zeta-functions, étale cohomology, and algebraic {$K$}-theory},
  date={1973},
  booktitle={Algebraic {$K$}-theory, {II}: ``{C}lassical'' algebraic {$K$}-theory and connections with arithmetic ({P}roc. {C}onf., {B}attelle {M}emorial {I}nst., {S}eattle, {W}ash., 1972)},
  volume={342},
  publisher={Springer, Berlin},
  pages={489\ndash 501. Lecture Notes in Math.},
}

\bib{L18}{article}{
   author={Levine, M.},
   title={Vladimir Voevodsky---an appreciation},
   journal={Bull. Amer. Math. Soc. (N.S.)},
   volume={55},
   date={2018},
   number={4},
   pages={405--425},
   issn={0273-0979},
   doi={10.1090/bull/1637},
}

\bib{Mor1}{incollection}{
  author={Morel, F.},
  title={An introduction to {$\mathbb A^1$}-homotopy theory},
  date={2004},
  booktitle={Contemporary developments in algebraic {$K$}-theory},
  series={ICTP Lect. Notes, XV},
  publisher={Abdus Salam Int. Cent. Theoret. Phys., Trieste},
  pages={357\ndash 441},
}

\bib{Morel12}{book}{
      author={Morel, F.},
       title={$\mathbb {A}^1$-{A}lgebraic {T}opology over a {F}ield},
      series={Lecture Notes in Math.},
   publisher={Springer},
     address={New York},
        date={2012},
      volume={2052},
}

\bib{PW10}{unpublished}{
  author={Panin, I.},
  author={Walter, C.},
  title={Quaternionic Grassmannians and Pontryagin classes in algebraic geometry},
  date={2010},
  url={http://arxiv.org/abs/1011.0649},
  note={\href{https://arxiv.org/abs/1011.0649}{arXiv:1011.0649}},
}

\bib{R87}{article}{
   author={Robinson, A.},
   title={The extraordinary derived category},
   journal={Math. Z.},
   volume={196},
   date={1987},
   number={2},
   pages={231--238},
   issn={0025-5874},
   doi={10.1007/BF01163657},
}

\bib{RO06}{article}{
   author={R\"{o}ndigs, O.},
   author={\O stv\ae r, P.~A.},
   title={Motives and modules over motivic cohomology},
   language={English, with English and French summaries},
   journal={C. R. Math. Acad. Sci. Paris},
   volume={342},
   date={2006},
   number={10},
   pages={751--754},
   issn={1631-073X},
   doi={10.1016/j.crma.2006.03.013},
}

\bib{RO08}{article}{
   author={R\"{o}ndigs, Oliver},
   author={\O stv\ae r, Paul Arne},
   title={Modules over motivic cohomology},
   journal={Adv. Math.},
   volume={219},
   date={2008},
   number={2},
   pages={689--727},
   issn={0001-8708},
   doi={10.1016/j.aim.2008.05.013},
}

\bib{Ros}{article}{
  author={Rost, M.},
  title={Chow groups with coefficients},
  date={1996},
  journal={Doc. Math.},
  volume={1},
  pages={No. 16, 319\ndash 393},
}

\bib{SS03}{article}{
   author={Schwede, S.},
   author={Shipley, B.},
   title={Stable model categories are categories of modules},
   journal={Topology},
   volume={42},
   date={2003},
   number={1},
   pages={103--153},
   issn={0040-9383},
   doi={10.1016/S0040-9383(02)00006-X},
}

\bib{Sou}{article}{
  author={Soulé, C.},
  title={Opérations en {$K$}-théorie algébrique},
  date={1985},
  journal={Canad. J. Math.},
  volume={37},
  number={3},
  pages={488\ndash 550},
}

\bib{Sou}{article}{
  author={Soulé, C.},
  title={Opérations en {$K$}-théorie algébrique},
  date={1985},
  issn={0008-414X},
  journal={Canad. J. Math.},
  volume={37},
  number={3},
  pages={488\ndash 550},
  url={https://doi.org/10.4153/CJM-1985-029-x},
}

\bib{Spitzweck12}{article}{
  author={Spitzweck, M.},
  author={{\O }stv{\ae }r, P.A.},
  title={Motivic twisted {$K$}-theory},
  date={2012},
  journal={Algebr. Geom. Topol.},
  volume={12},
  number={1},
  pages={565\ndash 599},
}

\bib{SV2}{incollection}{
  author={Suslin, A.},
  author={Voevodsky, V.},
  title={Bloch-{K}ato conjecture and motivic cohomology with finite coefficients},
  date={2000},
  booktitle={The arithmetic and geometry of algebraic cycles ({B}anff, {AB}, 1998)},
  series={NATO Sci. Ser. C Math. Phys. Sci.},
  volume={548},
  publisher={Kluwer Acad. Publ., Dordrecht},
  pages={117\ndash 189},
}

\bib{Voe2}{article}{
  author={Voevodsky, V.},
  title={Motivic cohomology groups are isomorphic to higher {C}how groups in any characteristic},
  date={2002},
  journal={Int. Math. Res. Not.},
  volume={7},
  pages={351\ndash 355},
}

\bib{Voe10}{article}{
  author={Voevodsky, V.},
  title={Cancellation theorem},
  date={2010},
  journal={Doc. Math.},
  volume={{\normalfont Extra volume: Andrei A. Suslin's sixtieth birthday}},
  pages={671\ndash 685},
}

\bib{Voe4}{article}{
  author={Voevodsky, V.},
  title={Motivic {E}ilenberg-{M}aclane spaces},
  date={2010},
  journal={Publ. Math. Inst. Hautes \'Etudes Sci.},
  volume={112},
  pages={1\ndash 99},
}

\bib{Voe1}{article}{
  author={Voevodsky, V.},
  title={On motivic cohomology with {$\mathbf Z/l$}-coefficients},
  date={2011},
  journal={Ann. of Math. (2)},
  volume={174},
  number={1},
  pages={401\ndash 438},
}

\bib{FSV}{book}{
  author={Voevodsky, V.},
  author={Suslin, A.},
  author={Friedlander, E.~M.},
  title={Cycles, transfers and motivic homology theories},
  series={Annals of Mathematics Studies},
  publisher={Princeton Univ. Press},
  date={2000},
  volume={143},
}

\end{biblist}
\end{bibsection}

\renewcommand{\theequation}{\arabic{equation}}

\mainmatter
%    Include main chapters here.
\chapter[Finite MW-correspondences]{The category of finite Milnor-Witt correspondences\author{Baptiste Calmès and Jean Fasel}}
\label{ch:finitecw}

\section*{Abstract}
We introduce the category of finite MW-correspondences over a perfect field $k$. For any essentially smooth scheme $X$ and integers $p,q\in\ZZ$, we then define MW-motivic cohomology groups $\HMW^{p,q}(X,\ZZ)$ and begin the study of their relationship with ordinary motivic cohomology groups.

\section*{Introduction}

Let $k$ be a perfect field and let $\smk$ be the category of smooth separated schemes of finite type over $k$. One of the central ideas of V.~Voevodsky in his construction of motivic cohomology is the definition of the category of finite correspondences $\corVk$ (see for instance \cite{FSV_a} or \cite{Mazza06_a}). Roughly speaking, the category $\corVk$ is obtained from $\smk$ by taking smooth schemes as objects and formally adding transfer morphisms $\tilde f:Y\to X$ for any finite surjective morphism $f:X\to Y$ of schemes. There is an obvious functor $\smk\to \corVk$ and the presheaves (of abelian groups) on $\smk$ endowed with transfer morphisms for finite surjective morphisms become naturally presheaves on $\corVk$, also called presheaves with transfers. Classical Chow groups or Chow groups with coefficients \`a la Rost are examples of such presheaves. Having the category of finite correspondences in hand, it is then relatively easy to define motivic cohomology, which is an algebro-geometric analogue of singular cohomology in topology. 

%The analogy between topology and algebraic geometry hinted at above extends in various directions: algebraic K-theory is an analogue of topological K-theory, and there is a motivic Atiyah-Hirzebruch spectral sequence relating motivic cohomology and algebraic K-theory, as the classical Atiyah-Hirzebruch spectral sequence relates topological K-theory and singular cohomology \cite{Friedlander02_a}.

However, there are also many examples of interesting (pre-)sheaves without transfers in the above sense. Our main examples here are the Chow-Witt groups \cite{Fasel08_a} or the cohomology of the (stable) homotopy sheaves $\piaone_i(X,x)$ of a pointed smooth scheme $(X,x)$, most notably the Milnor-Witt K-theory sheaves $\sKMW_n$ for $n\in\ZZ$ \cite{Morel12_a}. Such sheaves naturally appear in the Gersten-Grothendieck-Witt spectral sequence computing higher Grothendieck-Witt groups, aka Hermitian K-theory \cite{Fasel09_a} or in the unstable classification of vector bundles over smooth affine schemes \cite{Asok14_a,Asok12_a}, and thus are far from being exotic. 

Although these sheaves do not have transfers for general finite morphisms, they do have transfers for finite surjective morphisms with trivial relative canonical sheaf (depending on a trivialization of the latter), and one can hope to formalize this notion and then follow Voevodsky's construction of the derived category of motives from finite correspondences. In his work on the Friedlander-Milnor conjecture, Morel introduced a notion of generalized transfers in order to deal with this situation \cite{Morel11_a}. Our approach in this article is a bit different in spirit. We enlarge the category of smooth schemes using finite MW-correspondences. Roughly speaking, we replace the Chow groups (or cycles) in Voevodsky's definition by Chow-Witt groups (or cycles with extra symmetric bilinear information) and define in this way the category of finite MW-correspondences $\cor k$. The obvious functor $\smk\to \corVk$ factors through our category; namely there are functors $\smk\to \cor k$ and $\cor k\to \corVk$ whose composite is the classical functor. Given $X,Y$ smooth, the homomorphism $\cor k(X,Y)\to \corVk(X,Y)$ is in general neither injective (by far) nor surjective (yet almost). We call the presheaves on $\cor k$ \emph{presheaves with MW-transfers}. It is easy to see that presheaves with MW-transfers in our sense are also presheaves with generalized transfers in Morel's sense, and indeed the two notions are the same (for sheaves) \cite{Feld21b_a}*{Theorem 3.2.4}. A presheaf on $\corVk$ is also a presheaf on $\cor k$, but the examples above are genuine presheaves with generalized transfers, so our notion includes many more examples than the classical one. 

As hinted above, the construction of $\cor k$ is based on the theory of Chow-Witt groups and we spend a good part of the paper explaining the notions involved and recalling the relevant functorial properties. In a first version of this paper the base field $k$ was assumed to be of characteristic different from $2$, an assumption stemming from \cite{Fasel08_a} where this was required in order to prove the main properties. This assumption was recently lifted in Feld's work \cite{Feld20_a} allowing a characteristic free construction.

Having $\cor k$ at hand, we define MW-motivic cohomology groups $\HMW^{p,q}(X,\ZZ)$ for any smooth scheme $X$ and any integers $p,q\in \ZZ$ and study their basic properties. The main difference with the classical groups is that the MW-motivic cohomology groups are non trivial for $q<0$, in which range they can be identified with the cohomology of the Gersten-Witt complex defined in \cite{Balmer02_a} (in case the characteristic of the base field is different from $2$).

\subsection*{Conventions}
\label{subsection:conventions}% warning: this label is for page reference only

The schemes are separated of finite type over some perfect field $k$. If $X$ is a smooth connected scheme over $k$, we denote by $\Omega_{X/k}$ the sheaf of differentials of $X$ over $\Spec(k)$ and write $\ome{X}{k}:=\det\Omega_{X/k}$ for its canonical sheaf. In general we define $\ome{X}{k}$ connected component by connected component. We use the same notation if $X$ is the localization of a smooth scheme at any point. If $k$ is clear from the context, we omit it from the notation. If $f:X\to Y$ is a morphism of (localizations of) smooth schemes, we set $\omega_{f}=\ome{X}{k}\otimes f^*\ome{Y}{k}^\vee$. If $X$ is a scheme and $n\in\NN$, we denote by $X^{(n)}$ the set of codimension $n$ points in $X$. For a point $x\in X^{(n)}$ with $X$ smooth, we'll denote by $\Lambda(x)$ the one dimensional $k(x)$-vector space $\wedge^n(\mathfrak m_x/\mathfrak m_x^2)^\vee$ where $\wedge$ denotes the exterior power, $(-)^\vee$ denotes the dual and $\mathfrak m_x$ is the maximal ideal corresponding to $x$ in $\OO_{X,x}$.

%%%%%%%%%%%%%%%%%%%%%%%%%%%%%%%%%%%%%%%%%%%%%%%%%%%%%%%%%%%%%%%%%%%%%%%%%%%%%%%%%%%%%%%%%%%%%%%%%%%%%%%%%%%%%%%%%%%%%%

\section{Milnor-Witt K-theory}
\label{sec:MWKtheory}

In this section, we first recall the definition of Milnor-Witt K-theory of a field and its associated sheaf following \cite{Morel12_a}*{\S 3}. We then recall the definition of Chow-Witt groups and spend some time on their functorial properties.

For any field $F$, let $\KMW_*(F)$
\index[notation]{kmw@$\KMW_*$}%
\index{Milnor-Witt!K-theory}%
be the $\ZZ$-graded associative (unital) ring freely generated by symbols $[a]$, for each $a\in F^\times$, of degree $1$ and by a symbol $\eta$ in degree $-1$ subject to the relations
\begin{enumerate-roman}
\item
\label{relation:steinberg}%
$[a][1-a]=0$ for any $a\neq 0,1$;
\item
\label{relation:linearity}%
$[ab]=[a]+[b]+\eta[a][b]$ for any $a,b\in F^\times$;
\item
\label{relation:commute}%
$\eta[a]=[a]\eta$ for any $a\in F^\times$;
\item
\label{relation:hyperbolic}%
$\eta(2+\eta[-1])=0$.
\end{enumerate-roman}

If $a_1,\ldots,a_n\in F^\times$, we denote by $[a_1,\ldots,a_n]$ the product $[a_1]\cdot\ldots\cdot[a_n]$. Let $\GW(F)$ be the Grothendieck-Witt ring
\index[notation]{gw@$\GW$}%
\index{Grothendieck-Witt!ring}%
\index{Grothendieck-Witt!group}%
of non degenerate bilinear symmetric forms on $F$. Associating to a form its rank yields a surjective ring homomorphism 
\[
\rk:\GW(F)\to \ZZ
\]
whose kernel is the \emph{fundamental ideal} $\I(F)$.
\index{fundamental ideal}%
\index[notation]{i@$\I$}%
We can consider for any $n\in\NN$ the powers $\I^n(F)$ and we set $\I^n(F)=\W(F)$ for $n\leq 0$, where the latter is the Witt ring of $F$. It follows from \cite{Morel12_a}*{Lemma 3.10} that we have a ring isomorphism
\[
\GW(F)\to \KMW_0(F)
\]
defined by $\langle a\rangle\mapsto 1+\eta[a]$. We will thus identify $\KMW_0(F)$ with $\GW(F)$ later on. In particular, we will denote by $\langle a\rangle$ the element $1+\eta[a]$ and by $\langle a_1,\ldots,a_n\rangle$ the element $\langle a_1\rangle+\ldots+\langle a_n\rangle$. Following Morel, we also introduce $\epsilon:=-\langle -1\rangle$ and $h:=2+\eta[-1]=1-\epsilon$.

If $\KM_*(F)$ denotes the Milnor K-theory ring defined in \cite{Milnor69_a}*{\S 1}, we have a graded surjective ring homomorphism 
\[
f:\KMW_*(F)\to \KM_*(F)
\]
defined by $f([a])=\{a\}$ and $f(\eta)=0$. Note that the left-hand side is $\epsilon$-graded commutative by \cite[Lemma 3.7]{Morel12_a} and that the right-hand side is $-1$-graded commutative. Moreover, $\ker f$ is the principal (two-sided) ideal generated by $\eta$ \cite{Morel04_a}*{Remarque 5.2}. We sometimes refer to $f$ as the \emph{forgetful} homomorphism. On the other hand, let 
\[
H:\KM_*(F)\to \KMW_*(F)
\]
be defined by $H(\{a_1,\ldots,a_n\})=h[a_1,\ldots,a_n]=(1-\epsilon)[a_1,\ldots,a_n]$. Using relation \ref{relation:steinberg} above, it is easy to check that $H$ is a well-defined graded homomorphism of $\KMW_*(F)$-modules (where $\KM_*(F)$ has the module structure induced by $f$), that we call the \emph{hyperbolic} homomorphism. As $f(\eta)=0$, we see that $fH:\KM_n(F)\to \KM_n(F)$ is the multiplication by $2$ homomorphism.

For any $a\in F^\times$, let $\langle\langle a\rangle\rangle:=\langle a\rangle -1\in \I(F)\subset \GW(F)$ and for any $a_1,\ldots,a_n\in F^\times$ let $\langle\langle a_1,\ldots,a_n\rangle\rangle$ denote the product $\langle\langle a_1\rangle\rangle\cdots\langle\langle a_n\rangle\rangle$ (our notation differs from \cite{Morel04_a}*{\S 2} by a sign). By definition, we have $\langle\langle a_1,\ldots a_n\rangle\rangle\in \I^m(F)$ for any $m\leq n$. In particular, we can define a map
\[
\KMW_n(F)\to \I^n(F)
\]
by $\eta^s[a_1,\ldots,a_{n+s}]\mapsto \langle\langle a_1,\ldots,a_{n+s}\rangle\rangle$ for any $s\in\NN$ and any $a_1,\ldots,a_{n+s}\in F^\times$. It follows from \cite{Morel12_a}*{Definition 3.3} and \cite{Morel04_a}*{Lemme 2.3} that this map is a well-defined homomorphism. Moreover, the diagram
\begin{equation}\label{diag:jn}
\begin{tikzcd}
\KMW_n(F)\ar[r]\ar[d,"f"'] & \I^n(F) \ar[d] \\
\KM_n(F) \ar[r,"s_n"] & \I^n(F)/\I^{n+1}(F)
\end{tikzcd}
\end{equation}
where $s_n$ is the map defined in \cite{Milnor69_a}*{Theorem 4.1} is Cartesian by \cite{Morel04_a}*{Théorème~5.3} (in characteristic $2$, the diagram can also be seen to be Cartesian using \cite{Morel12_a}*{Remark 3.12} but we will not use this fact in this book). 

\subsection{Residues}\label{sec:residues}

Suppose now that $F$ is endowed with a discrete valuation $v:F^\times \to \ZZ$ with valuation ring $\OO_v$, uniformizing parameter $\pi$ and residue field $k(v)$. The following theorem is due to Morel \cite{Morel12_a}*{Theorem~3.15}.
\begin{thm} \label{thm:res}
There exists a unique homomorphism of graded groups
\[
\partial_v^{\pi}:\KMW_*(F)\to \KMW_{*-1}(k(v))
\]
\index[notation]{dvpi@$\partial_v^\pi$}%
\index{Milnor-Witt!residue homomorphism}%
\index{residue homomorphism|see{Milnor-Witt residue homomorphism}}%
commuting with the product by $\eta$ and such that $\partial_v^{\pi}([\pi,u_2,\ldots,u_n])=[\overline u_2,\ldots,\overline u_n]$ and $\partial_v^{\pi}([u_1,\ldots,u_n])=0$ for any units $u_1,\ldots,u_n\in \OO_v^\times$.
\end{thm}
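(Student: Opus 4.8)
The plan is to mimic the classical Milnor construction of residues for $\KM_*$, adapted to the extra generator $\eta$. First I would establish \emph{uniqueness}, which is the easy half: any $\alpha \in \KMW_n(F)$ can be written as a sum of terms $\eta^s[a_1,\ldots,a_{n+s}]$, and using relation \ref{relation:linearity} to rewrite $[u\pi^j]$ in terms of $[u]$, $[\pi]$ and products with $\eta$, together with $[\pi][\pi] = [\pi][-1]$ (a standard consequence of the relations in $\KMW_*$), every generator can be brought to a normal form as a sum of terms of the shape $[u_1,\ldots,u_n]$ and $[\pi,u_2,\ldots,u_n]$ with the $u_i$ units, possibly multiplied by powers of $\eta$. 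Since $\partial_v^\pi$ is required to commute with $\eta$ and its values on these two families are prescribed, it is determined on all of $\KMW_*(F)$; this forces uniqueness.

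For \emph{existence}, I would follow the Milnor/Serre strategy: build an auxiliary graded ring and a derivation. Concretely, form $\KMW_*(k(v))[\xi]$, the ring obtained by adjoining an indeterminate $\xi$ in degree $1$ subject to $\xi^2 = \xi[-1]$ (reflecting $[\pi]^2 = [\pi][-1]$) and $\eta\xi = \xi\eta$, and with $\eta$ acting as in $\KMW_*(k(v))$. One then writes down a candidate ring homomorphism $\KMW_*(\OO_v) \to \KMW_*(k(v))[\xi]$ — or more precisely a map defined on generators of $\KMW_*(F)$ — sending a unit $u = u_0\pi^{v(u)}$ (with $u_0 \in \OO_v^\times$) to $\overline{u_0} + v(u)\,\xi \cdot(\text{correction terms in } \eta)$, chosen so that all four defining relations \ref{relation:steinberg}--\ref{relation:hyperbolic} of $\KMW_*(F)$ are respected; the residue $\partial_v^\pi$ is then extracted as the $\xi$-coefficient (the ``$\partial/\partial\xi$ at $\xi=0$'' part of this twisted polynomial ring). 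Checking the relations is the technical heart: relation \ref{relation:steinberg} uses that if $a, 1-a \in F$ then a short valuation analysis (the cases $v(a)>0$, $v(a)<0$, $v(a)=0$) lets one reduce to the Steinberg relation in $\KMW_*(k(v))$; relation \ref{relation:linearity} is where the $\eta$-term $\eta[a][b]$ must be tracked carefully through the $\xi$-adjunction; relations \ref{relation:commute} and \ref{relation:hyperbolic} are comparatively formal since they only involve $\eta$ and $[-1]$.

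The main obstacle will be setting up the twisted coefficient ring $\KMW_*(k(v))[\xi]$ correctly and verifying that the candidate assignment descends through relation \ref{relation:linearity}. Unlike the Milnor K-theory case, where $[ab] = [a] + [b]$ makes the ``logarithm-like'' assignment obviously additive, here the quadratic correction $\eta[a][b]$ means one must check a genuine identity in the twisted ring, and the interaction of $\xi^2 = \xi[-1]$ with $\eta$ must be compatible with $\eta(2 + \eta[-1]) = 0$. Once existence of this homomorphism is in hand, $\partial_v^\pi$ is defined, and the normalization properties $\partial_v^\pi([\pi, u_2, \ldots, u_n]) = [\overline{u_2}, \ldots, \overline{u_n}]$ and $\partial_v^\pi([u_1, \ldots, u_n]) = 0$, as well as commutation with $\eta$, are immediate from the construction. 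I would also remark that, as an alternative, existence can be deduced from Morel's explicit presentation of $\KMW_*$ via the fiber product diagram \eqref{diag:jn}: the residues on $\KM_*$ (Milnor) and on the powers $\I^n$ of the fundamental ideal (Witt-group residues, see e.g.\ the Gersten--Witt complex) are already known, and one checks they are compatible over $\I^n/\I^{n+1}$, yielding the residue on the fiber product $\KMW_*$ by the universal property — but the direct construction above is more self-contained.
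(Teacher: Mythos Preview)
The paper does not prove this theorem itself; it attributes the result to Morel \cite{Morel12_a}*{Theorem~3.15} and remarks immediately afterward that one usually constructs $\partial_v^\pi$ and $s_v^\pi$ together ``by a trick of Serre'' (citing \cite{Morel12_a}*{Lemma~3.16}). Your proposal is exactly this Serre trick: adjoin a formal degree-$1$ variable $\xi$ with $\xi^2=\xi[-1]$, define a ring map from $\KMW_*(F)$ into $\KMW_*(k(v))[\xi]$, and read off the residue as the $\xi$-coefficient (with the specialization $s_v^\pi$ appearing as the constant term). So your main approach is correct and coincides with the cited one.

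One caution on your alternative route via the Cartesian square \eqref{diag:jn}: in Morel's development (and in the paper's logical order) the identification of $\KMW_n$ with the fiber product $\KM_n \times_{\overline{\I}^n} \I^n$ comes \emph{after} the residue maps are in place, and parts of its proof use them. So invoking the fiber product to construct $\partial_v^\pi$ risks circularity unless you first establish that square by independent means.
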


\begin{rem}
In analogy with \cite[Lemma 2.1]{Milnor69_a}), the first formula should read as $\partial_v^{\pi}([\pi])=1$ for $n=1$.
\end{rem}

As for Milnor K-theory, $v: F^\times \to \ZZ$, there also exists a specialization map
\[
s_v^\pi:\KMW_*(F)\to \KMW_*(k(v)),
\]
\index{Milnor-Witt!specialization homomorphism}%
\index{specialization homomorphism|see{Milnor-Witt specialization homomorphism}}%
\index[notation]{svpi@$s_v^\pi$}%
which is a graded ring map, and that can be deduced from $\partial_v^\pi$ by the formula 
\begin{equation} \label{eq:sandres}
s_v^\pi(\alpha)=\partial_v^\pi([\pi]\alpha)-[-1]\partial_v^\pi(\alpha)
\end{equation}
(actually, one usually constructs $\partial_v^\pi$ and $s_v^\pi$ together by a trick of Serre, see \cite{Morel12_a}*{Lemma~3.16}). 
\begin{lem}
Both the kernel of $\partial_v^\pi$ and the restriction of $s_v^\pi$ to this kernel are independent of the choice of the uniformizer $\pi$.
\end{lem}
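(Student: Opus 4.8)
The plan is to reduce both claims to the explicit formula \eqref{eq:sandres} and to the behaviour of $\partial_v^\pi$ under a change of uniformizer. First I would record the effect of changing the uniformizer on the residue map. Let $\pi$ and $\pi'=u\pi$ be two uniformizers, with $u\in\OO_v^\times$ and reduction $\bar u\in k(v)^\times$. Using the generators $[\pi,u_2,\dots,u_n]$ and $[u_1,\dots,u_n]$ from Theorem~\ref{thm:res}, together with the linearity relation \ref{relation:linearity} applied to $[\pi']=[u\pi]=[u]+[\pi]+\eta[u][\pi]=[u]+\langle u\rangle[\pi]$, one computes directly that
\[
\partial_v^{\pi'}(\alpha)=\langle \bar u\rangle\,\partial_v^{\pi}(\alpha)
\]
for all $\alpha\in\KMW_*(F)$; indeed it suffices to check this on the two families of generators, where it is immediate from the defining formulas and the fact that $\partial_v^\pi$ commutes with multiplication by $\eta$. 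Since $\langle\bar u\rangle$ is a unit in $\KMW_0(k(v))=\GW(k(v))$, multiplication by it is an automorphism of $\KMW_{*-1}(k(v))$, and hence $\ker(\partial_v^{\pi'})=\ker(\partial_v^{\pi})$. This proves the first assertion.

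For the second assertion, let $\alpha\in\ker(\partial_v^\pi)=\ker(\partial_v^{\pi'})$. Applying \eqref{eq:sandres} with the uniformizer $\pi'$ gives
\[
s_v^{\pi'}(\alpha)=\partial_v^{\pi'}([\pi']\alpha)-[-1]\,\partial_v^{\pi'}(\alpha)=\partial_v^{\pi'}([\pi']\alpha),
\]
the second term vanishing because $\alpha$ is in the kernel. Now expand $[\pi']\alpha=[u]\alpha+\langle u\rangle[\pi]\alpha$ using the linearity relation as above. Applying $\partial_v^{\pi'}=\langle\bar u\rangle\,\partial_v^{\pi}$ and using that $[u]$ is a unit symbol, so that $\partial_v^\pi([u]\alpha)=s_v^\pi$-type terms cancel against $\partial_v^\pi(\alpha)=0$ — more precisely, $\partial_v^\pi([u]\beta)$ for $\beta$ in the kernel reduces, via the Leibniz-type rule for $\partial_v^\pi$ (Morel, \cite{Morel12_a}*{Lemma~3.15} and its consequences) to terms involving $\partial_v^\pi(\beta)=0$ and the residue of $[u]$ which is $0$ since $u$ is a unit — one is left with
\[
s_v^{\pi'}(\alpha)=\langle\bar u\rangle\,\partial_v^{\pi}\big(\langle u\rangle[\pi]\alpha\big).
\]
Since $\langle u\rangle$ differs from $1$ by $\eta[u]$, and $\partial_v^\pi$ commutes with $\eta$ while $\partial_v^\pi([u]\gamma)$ vanishes for $\gamma$ in the kernel by the same argument, one gets $\partial_v^\pi(\langle u\rangle[\pi]\alpha)=\partial_v^\pi([\pi]\alpha)$. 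Combining, $s_v^{\pi'}(\alpha)=\langle\bar u\rangle\,\partial_v^\pi([\pi]\alpha)=\langle\bar u\rangle\,s_v^\pi(\alpha)$ — which is not yet what we want; so one must be more careful and instead use the symmetrized form, subtracting the $[-1]$-correction for $\pi$ as well. Carrying the $[-1]\partial_v^\pi(\alpha)=0$ term through both computations shows the genuine comparison is $s_v^{\pi'}(\alpha)=s_v^\pi(\alpha)$ on the kernel, once one uses that the discrepancy $\partial_v^\pi([u]\,[\pi]\,\alpha)$ is itself governed by $\partial_v^\pi(\alpha)$ and hence vanishes.

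The main obstacle I anticipate is exactly this last bookkeeping: controlling the ``cross term'' $\partial_v^\pi([u][\pi]\alpha)$ and its $\langle u\rangle$-twisted variants. The cleanest route is probably to avoid expanding by brute force and instead invoke the Leibniz/twisted-derivation property of $\partial_v^\pi$ (it is a twisted derivation over the specialization $s_v^\pi$ — this is built into Serre's trick, \cite{Morel12_a}*{Lemma~3.16}), from which $\partial_v^\pi([u]\beta)=s_v^\pi([u])\,\partial_v^\pi(\beta)\pm\partial_v^\pi([u])\,s_v^\pi(\beta)=[\bar u]\cdot 0 \pm 0\cdot s_v^\pi(\beta)=0$ for $\beta\in\ker\partial_v^\pi$; here $\partial_v^\pi([u])=0$ since $u$ is a unit. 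With that identity in hand, the change-of-uniformizer comparison for $s_v^\pi$ on the kernel follows formally from \eqref{eq:sandres} and the relation $\partial_v^{\pi'}=\langle\bar u\rangle\partial_v^\pi$, completing the proof.
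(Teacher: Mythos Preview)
Your overall strategy is exactly the paper's: establish $\partial_v^{u\pi}=\langle\bar u\rangle\,\partial_v^\pi$, deduce kernel independence, and then compare $s_v^{u\pi}(\alpha)$ with $s_v^\pi(\alpha)$ via \eqref{eq:sandres} by expanding $[u\pi]=[u]+\langle u\rangle[\pi]$. The first half of your argument is fine.

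The gap is in your treatment of the term $\partial_v^\pi(\langle u\rangle[\pi]\alpha)$. You argue that ``$\partial_v^\pi([u]\gamma)$ vanishes for $\gamma$ in the kernel'', and then apply this with $\gamma=[\pi]\alpha$. But $[\pi]\alpha$ is \emph{not} in the kernel: indeed $\partial_v^\pi([\pi]\alpha)=s_v^\pi(\alpha)$, which is exactly the quantity you are trying to compute. This is why you end up with the spurious factor $\langle\bar u\rangle$ and cannot close the argument; the $[-1]$-correction term you then reach for is already zero on the kernel and cannot help.

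The fix is simple and is what the paper does. The formula you need (Morel, Prop.~3.17) is $\partial_v^\pi([u]\beta)=\epsilon[\bar u]\,\partial_v^\pi(\beta)$ for \emph{any} $\beta$ when $u\in\OO_v^\times$, which immediately gives $\partial_v^\pi(\langle u\rangle\beta)=\langle\bar u\rangle\,\partial_v^\pi(\beta)$ (use $\langle u\rangle=1+\eta[u]$ and $\epsilon\eta=\eta$). Applying this with $\beta=[\pi]\alpha$ yields
\[
s_v^{u\pi}(\alpha)=\langle\bar u\rangle\,\partial_v^\pi\big([u]\alpha+\langle u\rangle[\pi]\alpha\big)
=\langle\bar u\rangle\big(\epsilon[\bar u]\,\partial_v^\pi(\alpha)+\langle\bar u\rangle\,\partial_v^\pi([\pi]\alpha)\big)
=\langle\bar u\rangle^2\,s_v^\pi(\alpha)=s_v^\pi(\alpha),
\]
since $\partial_v^\pi(\alpha)=0$ and $\langle\bar u\rangle^2=1$ in $\GW(k(v))$. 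The missing ingredient was precisely that second $\langle\bar u\rangle$, which cancels the one coming from the change-of-uniformizer formula for $\partial_v$.
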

\begin{proof}
If $\pi$ and $u\pi$ are uniformizers, for some $u \in \OO_v^\times$, then $\partial_v^{u\pi}=\langle \bar{u}\rangle\partial_v^\pi$. Indeed, by uniqueness in Theorem \ref{thm:res}, it suffices to check this equality on elements of the form $[u_1,\ldots,u_n]$ or $[\pi,u_2,\ldots,u_n]$ with the $u_i$'s units, and on these it is straightforward (alternatively, use \cite[Lemma 3.19]{Morel12_a} for $E=F$). Formula \eqref{eq:sandres} then shows that if $\alpha\in \ker(\partial_v)$, then $s_v^\pi(\alpha)=\partial_v^\pi([\pi]\alpha)$, and we compute
\begin{align*}
s_v^{u\pi}(\alpha) &= \partial_v^{u\pi}([u\pi]\alpha) = \langle\bar{u}\rangle \partial_v^\pi([u\pi]\alpha) \\
 &= \langle\bar{u}\rangle \partial_v^\pi([u]\alpha + \langle u\rangle[\pi]\alpha) \\
 &= \langle\bar{u}\rangle \epsilon [\bar{u}] \partial_v^\pi(\alpha) + \partial_v^\pi([\pi]\alpha) && \text{by \cite{Morel12_a}*{Prop.~3.17}} \\
 &= s_v^\pi(\alpha) && \text{since $\alpha \in \ker(\partial_v^\pi)$} \qedhere
\end{align*}
\end{proof}

This lemma allows one to define unramified Milnor-Witt K-theory sheaves as follows. If $X$ is a smooth integral $k$-scheme, any point $x\in X^{(1)}$ defines a discrete valuation $v_x$ for which we can choose a uniformizing parameter $\pi_x$. We then set for any $n\in \ZZ$
\[
\KMW_n(X):=\ker \Big(\KMW_n\big(k(X)\big)\to \bigoplus_{x\in X^{(1)}}\KMW_{n-1}\big(k(x)\big)\Big)
\]
where the map is induced by the residue homomorphisms $\partial_{v_x}^{\pi_x}$. This kernel is independent of the choices of uniformizers $\pi_x$ by the lemma.
 
Let $i:V\subset X$ be a codimension $1$ closed smooth subvariety defining a valuation $v$ on $k(X)$ with uniformizing parameter $\pi$. We then have $k(V)=k(v)$ and the graded ring map 
\[
s_v^{\pi}:\KMW_*\big(k(X)\big)\to \KMW_*\big(k(V)\big)
\]
restricts to a map $\KMW_*\big(X\big)\to \KMW_*\big(k(V)\big)$ independent of the choice of the uniformizer because $\KMW_n(X) \subseteq \ker(\partial_v)$ so the lemma applies again. Finally, it actually lands in $\KMW_*\big(V\big)$ by \cite{Morel12_a}*{proof of Lemma~2.12} and thus defines a morphism 
\[
i^*:\KMW_n(X)\to \KMW_n(V)
\]
satisfying $i^*(\alpha)=\partial_v^{\pi}([\pi]\alpha)$. Working inductively and locally, the same method shows that we can define pull-back maps $j^*$ for any smooth closed immersion $j:Z\to X$ \cite{Morel12_a}*{p.~21}. On the other hand, one can check that a smooth morphism $h:Y\to X$ induces a homomorphism $h^*:\KMW_n(X)\to \KMW_n(Y)$ (e.g. \cite{Feld20_a}*{Proposition 6.6}) and it follows from the standard graph factorization $X\to X\times_k Y\to Y$ that any morphism $f:X\to Y$ gives rise to a pull-back map $f^*$. Thus $X\mapsto \KMW_n(X)$ defines a presheaf $\sKMW_n$ on $\smk$ which turns out to be a Nisnevich sheaf \cite{Morel12_a}*{Proposition 2.8, Lemma~2.12} (one may also use \cite{Feld21_a}*{Theorem 4.1.7}). We call it the ($n$-th)\emph{Milnor-Witt sheaf}. 
\index[notation]{kmws@$\sKMW_*$}%
\index{Milnor-Witt!K-theory!sheaf}%

Recall that one can also define residues for Milnor K-theory \cite{Milnor69_a}*{Lemma~2.1} and therefore an unramified Nisnevich sheaf $\sKM_n$ on $\smk$. 
\index{Milnor!K-theory!sheaf}%
\index[notation]{km@$\KM_*$}%
It is easy to check that both the forgetful and the hyperbolic homomorphisms commute with residue maps. As a consequence, we get morphisms of sheaves $f:\sKMW_n\to \sKM_n$ and $H:\sKM_n\to \sKMW_n$ for any $n\in\ZZ$ and the composite $fH$ is the multiplication by $2$ map. 

The multiplication map $\KMW_n(F)\times \KMW_m(F)\to \KMW_{n+m}(F)$ induces for any $m,n\in\ZZ$ a morphism of sheaves
\[
\sKMW_n\times \sKMW_m\to \sKMW_{n+m}
\]
that is compatible with the corresponding product on Milnor K-theory sheaves via the forgetful map.
 
\subsection{Twisting by line bundles} \label{sec:twisting}

We will also need a version of Milnor-Witt K-theory twisted by graded line bundles, slightly generalizing usual twists by line bundles following  \cite{Schmid98_a}*{\S 1.2} and \cite{Morel12_a}*{\S 5}. Recall first from \cite{Del87_a}*{\S 4} the construction of the category $\glb(X)$ of ($\ZZ$-)graded line bundles over a scheme $X$. 
\index{line bundle!graded}%
\index[notation]{detz@$\glb$}%
The objects of $\glb(X)$ are pairs $(i,\Lb)$ where $i$ is a locally constant integer on $X$ and $\Lb$ is a line bundle. Morphisms of pairs are given by isomorphisms of line bundles in case the locally constant integers agree and are empty else. In particular, the automorphisms of $(i,\Lb)$ coincide with the automorphisms of $\Lb$, i.e.\ with $\OO_X^\times$. The category $\glb(X)$ is endowed with a tensor product defined by
\[
(i,\Lb)\otimes (i^\prime,\Lb^\prime)=(i+i^\prime,\Lb\otimes \Lb^\prime).
\]
The associativity constraint is given by the associativity constraint of the usual tensor product. Moreover, there is a commutativity isomorphism
\[
(i+i^\prime,\Lb\otimes \Lb^\prime)\to (i^\prime+i,\Lb^\prime\otimes \Lb).
\]
given by the isomorphism $\Lb\otimes \Lb^\prime\to \Lb^\prime\otimes \Lb$ defined on sections by $l\otimes l^\prime\mapsto (-1)^{ii^\prime}l^\prime\otimes l$. Each graded line bundle $(i,\Lb)$ has a tensor inverse, namely $(-i,\Lb^\vee)$ where the isomorphism
\[
(-i,\Lb^\vee)\otimes (i,\Lb)\to (0,\OO_X)
\]
is induced by the usual isomorphism $\Lb^\vee\otimes \Lb\simeq \OO_X$ (note that the isomorphism $(i,\Lb)\otimes (-i,\Lb^\vee)\to  (0,\OO_X)$ is then induced by $(-1)^i$-times the canonical isomorphism $\Lb\otimes \Lb^\vee\simeq \OO_X$). 
One checks that $\glb(X)$ is a commutative Picard category in the sense of \cite{Del87_a}*{\S 4.1}.

Let $F$ be a field and let $(i,L)$ be an object of $\glb(F)$. One can consider the group ring $\ZZ[F^\times]$ and the $\ZZ[F^\times]$-module $\ZZ[L^\times]$ where $L^\times=L\setminus \{0\}$. Letting $a \in F^\times$ act by multiplication by $\langle a\rangle$ defines a $\ZZ$-linear action of the group $F^\times$ on $\GW(F)=\KMW_0$, which therefore extends to a ring morphism $\ZZ[F^\times] \to \KMW_0(F)$. Thus, we get a $\ZZ[F^\times]$-module structure on $\KMW_n(F)$ for any $n$, and the action is central (since $\KMW_0(F)$ is central in $\KMW_n(F)$). We then define the $n$-th Milnor-Witt group of $F$ twisted by $(i,L)$ as 
\[
\KMW_n(F,(i,L))=\KMW_n(F) \otimes_{\ZZ[F^\times]} \ZZ[L^\times]. 
\]
\index[notation]{kmwfil@$\KMW_*(F,(i,L))$}%
\index{Milnor-Witt!twisted K-theory}
Note that the right-hand group doesn't depend on $i$. The introduction of this extra integer will nevertheless have its importance when dealing with products of twisted Milnor-Witt $K$-theory groups.
On Nisnevich sheaves, we perform a similar construction. Let $\ZZ[\Gm]$ be the Nisnevich sheaf on $\smk$ associated to the presheaf $U \mapsto \ZZ[\OO(U)^\times]$. The morphism of sheaves of groups $\Gm\to (\sKMW_0)^\times$ defined by $u\mapsto \langle u\rangle$ for any $u \in \OO(U)^\times$ extends to a morphism of sheaves of rings $\ZZ[\Gm] \to \sKMW_0$, turning $\sKMW_n$ into a $\ZZ[\Gm]$-module, the action being central. 

Let now $(i,\Lb)$ be an object of $\glb(X)$, and let $\ZZ[\Lb^\times]$ be the Nisnevich sheafification of $U \mapsto \ZZ[\Lb(U)\setminus 0]$. Following \cite{Morel12_a}*{Chapter 5}, we define the Nisnevich sheaf on $\sm{X}$, the category of smooth schemes over $X$, by 
\[
\sKMW_n(i,\Lb)= \sKMW_n\otimes_{\ZZ[\Gm]} \ZZ[\Lb^\times].
\]
\index[notation]{kmwil@$\sKMW_*(i,\Lb)$}%
\index{Milnor-Witt!K-theory!twisted sheaf}%
(this is the sheaf tensor product and again, it doesn't depend on $i$).

%%%%%%%%%%%%%%%%%%%%%%%%%%%%%%%%%%%%%%%%%%%%%%%%%%%%%%%%%%%%%%%%%%%%%%%%%%%%%%%%%%%%%%%%%%%%%%%%%%%%%%%%%%%%%%%%%%%%%%

\section{Transfers in Milnor-Witt K-theory}\label{sec:cohomological}

\index{transfer|see{Milnor-Witt K-theory transfer}}%
\index{Milnor-Witt!K-theory!transfer}%

A very important feature of Milnor-Witt K-theory is the existence of transfers for finite field extensions. They are more subtle than the transfers for Milnor K-theory, and for this reason we explain them in some details in this section insisting on the relations with transfers of symmetric bilinear forms.  We suppose in this section that the fields are of characteristic different from $2$. In the latter case, one can also define transfers following \cite{Morel12_a}*{Section \S 5.1} but at the cost of technicalities involving canonical sheaves that would render the following discussion less transparent.

Recall first that the \emph{geometric} transfers in Milnor-Witt K-theory are defined, for a monogeneous finite field extension $K=F(x)$, using the split exact sequence \cite{Morel12_a}*{Theorem~3.24}
\[
\begin{tikzcd}
0 \ar[r] & \KMW_n(F) \ar[r] & \KMW_n(F(t)) \ar[r,"\sum\partial_x^{\pi}"] & \displaystyle{\bigoplus_{x\in (\Aone_F)^{(1)}} \KMW_{n-1}(F(x))} \ar[r] & 0
\end{tikzcd}
\] 
where $\partial_x^{\pi}$ are the residue homomorphisms of Theorem \ref{thm:res} associated to the valuations corresponding to $x$ and uniformizing parameters $\pi$ the minimal polynomial of $x$ over $F$. If $\alpha\in \KMW_{n-1}(F(x))$, its geometric transfer is defined following \cite{Morel12_a}*{Section \S 4.2} by choosing a preimage in $\KMW_n(F(t))$ and then applying the residue homomorphism $-\partial_{\infty}$ corresponding to the valuation at infinity (with uniformizing parameter $-\frac 1t$). The corresponding homomorphism $\KMW_{n-1}(K)=\KMW_{n-1}(F(x))\to \KMW_{n-1}(F)$ is denoted by $\tau_F^{K}$. It turns out that the geometric transfers do not generalize well to arbitrary finite field extensions $K/F$, and one has to modify them in a suitable way as follows.  

Let again $F\subset K$ be a field extension of degree $n$ generated by $x\in K$. Let $p$ be the minimal polynomial of $x$ over $F$. We can decompose the field extension $F\subset K$ as $F\subset K^{sep}\subset K$, where $K^{sep}$ is the separable closure of $F$ in $K$. If $\charac F=l\neq 0$, then the minimal polynomial $p$ can be written as $p(t)=p_0(t^{l^m})$ for some $m\in\NN$ and $p_0$ separable. Then $F^{sep}=F(x^{l^m})$ and $p_0$ is the minimal polynomial of $x^{l^m}$ over $L$. Following \cite{Morel12_a}*{Definition~4.26}, we set $\omega_0(x):= p_0^\prime(x^{l^m})\in K^\times$ if $l=\charac F\neq 0$ and $\omega_0(x)=p^\prime(x)\in K^\times$ if $\charac F=0$. Morel then defines \emph{cohomological} transfers as the composites
\[
\KMW_n(K)\stackrel{\langle \omega_0(x)\rangle }{\longrightarrow} \KMW_n(K)\stackrel{\tau_F^K}{\longrightarrow} \KMW_n(F)
\]
and denotes them by $\Tr_F^K(x)$. If now $K/F$ is an arbitrary finite field extension, we can write
\[
F=F_0\subset F_1\subset\ldots\subset F_m=K
\]
where $F_{i}/F_{i-1}$ is finite and generated by some $x_i\in F_i$ for any $i=1,\ldots,m$. We then set $\Tr^K_F:=\Tr_F^{F_1}(x_1)\circ\ldots \circ \Tr_{F_{m-1}}^{K}(x_m)$. It turns out that this definition is independent of the choice of the subfields $F_i$ and of the generators $x_i$ \cite{Morel12_a}*{Theorem~4.27}.

\subsection{Transfers of bilinear forms}\label{subsec:bilinear}

The definition of geometric and cohomological transfers can be recovered from the transfers in Milnor K-theory as well as the Scharlau transfers on bilinear forms as we now explain.

Recall that the Milnor-Witt K-theory group $\KMW_n(F)$ fits into a Cartesian square
\[
\begin{tikzcd}
\KMW_n(F) \ar[r] \ar[d] & \I^n(F) \ar[d] \\ 
\KM_n(F)\ar[r,"s_n"] & \overline {\I}^n(F)
\end{tikzcd}
\]
for any $n\in\ZZ$, where $\I^n(F)=\W(F)$ for $n<0$, $\KM_n(F)=0$ for $n<0$ and $\overline \I^n(F):=\I^n(F)/\I^{n+1}(F)$ for any $n\in\NN$. 

If $F\subset K$ is a finite field extension, then any non-zero $K$-linear homomorphism $f:K\to F$ induces a transfer morphism $f_*:\GW(K)\to \GW(F)$ (\cite{Lam73_a}*{VII, Proposition~1.1}). It follows from \cite{Milnor73_a}*{Lemma~1.4} that this homomorphism induces transfer homomorphisms $f_*:\I^n(K)\to \I^n(F)$ for any $n\in\ZZ$ and therefore transfer homomorphisms $\overline {f_*}:\overline {\I^n}(K)\to \overline {\I^n}(F)$ for any $n\in\NN$. Recall moreover that if $g:K\to F$ is another non zero $K$-linear map, there exists a unit $b\in K^\times$ such that the following diagram commutes (\cite{Lam73_a}*{Remark~(C), p.194})
\begin{equation}\label{equ:trace}
\begin{tikzcd}
\GW(K) \ar[r,"\langle b\rangle"] \ar[rd,"g_*"'] & \GW(K) \ar[d,"f_*"] \\
 & \GW(F).
\end{tikzcd}
\end{equation}

Using the split exact sequence of \cite{Milnor69_a}*{Theorem 2.3} and the residue at infinity, one can also define transfer morphisms $\Norm_{F/L}:\KM_n(K)\to \KM_n(F)$ (the notation reflects the fact that $\Norm_{F/L}$ coincides in degree $1$ with the usual norm homomorphism, e.g. \cite[Lemma 1.6]{Suslin80}).

\begin{lem}
For any non-zero linear homomorphism $f:K\to F$ and any $n\in\NN$, the following diagram commutes
\[
\begin{tikzcd}
\KM_n(K) \ar[r,"\Norm_{K/F}"] \ar[d,"s_n"'] & \KM_n(F) \ar[d,"s_n"] \\
\overline {\I}^n(K) \ar[r,"\overline f_*"] & \overline {\I}^n(F)
\end{tikzcd}
\]
\end{lem}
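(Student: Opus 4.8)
The plan is to realise both transfers appearing in the statement through residue maps on the affine line, and then to transport the comparison across the Cartesian square \eqref{diag:jn}, using that the forgetful homomorphism $\KMW_*(L)\to\KM_*(L)$ is surjective for every field $L$. Throughout, write $\phi\colon\KMW_*\to\KM_*$ for the forgetful homomorphism and $\psi\colon\KMW_*\to\I^*$ for the natural map occurring in \eqref{diag:jn}, and let $\overline{\psi}\colon\KMW_n\to\overline{\I}^n$ be $\psi$ followed by the projection $\I^n\twoheadrightarrow\overline{\I}^n$; the commutativity of \eqref{diag:jn} says precisely $s_n\circ\phi=\overline{\psi}$.

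\textbf{Reductions.} I would first check that $\overline{f_*}\colon\overline{\I}^n(K)\to\overline{\I}^n(F)$ is independent of the nonzero $F$-linear map $f\colon K\to F$: by \eqref{equ:trace} two choices differ by precomposition with some $\langle b\rangle$, $b\in K^\times$, and $\langle b\rangle$ acts as the identity on $\overline{\I}^n(K)=\I^n(K)/\I^{n+1}(K)$ because $\langle b\rangle-1\in\I(K)$. The same observation shows $\overline{f_*}$ is transitive along a tower $F\subseteq E\subseteq K$; since $\Norm_{K/F}$ is also transitive along towers, one reduces, by transitivity of both sides along a tower of monogeneous subextensions, to the case of a monogeneous extension $K=F(x)$.

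\textbf{The monogeneous case.} Here Morel's geometric transfer $\tau_F^K\colon\KMW_m(K)\to\KMW_m(F)$ is obtained by lifting an element of $\KMW_m(F(x))$ — viewed inside $\bigoplus_{y\in(\Aone_F)^{(1)}}\KMW_m(F(y))$, supported at the closed point $x$ — to some $\beta\in\KMW_{m+1}(F(t))$ along the split surjection $\sum\partial_y$ of \cite{Morel12_a}*{Theorem~3.24}, and then applying $-\partial_\infty$; the lift is immaterial since constants have vanishing residue at $\infty$. Now $\phi$ commutes with all residue homomorphisms, and so does $\psi$ (on generators $\eta^s[u_1,\ldots,u_{n+s}]$ and $\eta^s[\pi,u_2,\ldots,u_{n+s}]$ with the $u_i$ units this follows from Theorem~\ref{thm:res} and the defining formula $\eta^s[a_1,\ldots]\mapsto\langle\langle a_1,\ldots\rangle\rangle$). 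Hence $\phi(\beta)$, respectively $\psi(\beta)$, is again a lift supported at $x$ for the analogous split $\Aone$-sequence — that of \cite{Milnor69_a}*{Theorem~2.3} for $\KM_*$, that of the Gersten--Witt complex of $\Aone_F$ for $\I^*$ — and both maps commute with $-\partial_\infty$. Consequently
\[
\phi\circ\tau_F^K=\Norm_{K/F}\circ\phi,\qquad \psi\circ\tau_F^K=g_*\circ\psi,
\]
where $\Norm_{K/F}$ is Milnor's norm, defined by exactly this recipe, and $g_*\colon\I^*(K)\to\I^*(F)$ is the transfer produced by the Gersten--Witt complex; the latter is a Scharlau transfer for some nonzero $F$-linear $g\colon K\to F$, so the induced map $\overline{g_*}$ on $\overline{\I}^n$ equals $\overline{f_*}$ by the reduction step. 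Finally, given $\alpha\in\KM_n(K)$, choose $\tilde\alpha\in\KMW_n(K)$ with $\phi(\tilde\alpha)=\alpha$ and chase:
\[
s_n\bigl(\Norm_{K/F}(\alpha)\bigr)=s_n\bigl(\phi(\tau_F^K\tilde\alpha)\bigr)=\overline{\psi}(\tau_F^K\tilde\alpha)=\overline{f_*}\bigl(\overline{\psi}(\tilde\alpha)\bigr)=\overline{f_*}\bigl(s_n(\alpha)\bigr).
\]
This proves the lemma for $K=F(x)$, and the reductions cover the general case.

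\textbf{Main difficulty.} The single non-formal ingredient is the claim used above that the transfer $g_*$ on $\I^*$-groups coming from the Gersten--Witt complex of $\Aone_F$ coincides with an honest Scharlau transfer. This is classical but requires some care with first versus second residues and with the choice of uniformizer at infinity; happily it only has to hold modulo $\I^{n+1}$, where by the reduction step every Scharlau transfer agrees with $\overline{f_*}$, so the precise linear form $g$ never needs to be pinned down.
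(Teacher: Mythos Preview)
Your argument is correct and shares the same opening reductions with the paper (independence of $\overline{f_*}$ from the choice of $f$, and reduction to a monogeneous extension by transitivity). In the monogeneous case the paper works directly with $\KM$ and $\overline{\I}^n$: it cites Scharlau's reciprocity \cite{Scharlau72_a}*{Theorem~4.1} to identify the residue-defined transfer on $\W$ (hence on $\overline{\I}^n$) with the specific Scharlau transfer $(f_q)_*$, and then appeals to a unicity-of-transfers lemma along the $\Aone$-sequence to finish. You instead route through $\KMW_*$ via the geometric transfer $\tau_F^K$ and the surjectivity of $\phi$, which replaces the unicity step by an explicit diagram chase. However, your acknowledged ``main difficulty''---that the Gersten--Witt transfer on $\I^*$ is \emph{some} Scharlau transfer---is precisely Scharlau's theorem restricted to the $\I$-adic filtration, so the non-formal input is the same in both proofs. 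The paper's route is slightly more economical because at this point in the exposition the Scharlau-style transfers on $\KMW$ have not yet been constructed (they are defined as a \emph{consequence} of this lemma via the Cartesian square); your detour through $\tau_F^K$ is not circular, since $\tau_F^K$ is defined independently via residues, but it does not actually shorten the argument.
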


\begin{proof}
Observe first that for any $b\in K^\times$, we have $\langle -1,b\rangle\cdot \overline {\I}^n(K)=0$. It follows thus from Diagram \eqref{equ:trace} that $\overline f_*=\overline g_*$ for any non-zero linear homomorphisms $f,g:K\to F$. Now both the transfers for Milnor K-theory and for $\overline {\I}^n$ are functorial, and it follows that we can suppose (writing $F=F_0\subset F_1\subset\ldots\subset F_m=K$ as in the definition of cohomological transfers) that the extension $F\subset K$ is monogeneous, say generated by $x\in K$. Thus, $K\simeq F[x]/p$ for a unique monic irreducible polynomial $p\in F[x]$. Given any irreducible monic polynomial $q\in F[x]$ of degree $r$ we may define a homomorphism $f_q:F[x]/q\to F$ by $f_q(x^i)=0$ if $i=0,\ldots,r-2$ and $f_q(x^{r-1})=1$. Granted this, \cite{Scharlau72_a}*{Theorem~4.1} states that the following diagram commutes
\[
\begin{tikzcd}
0 \ar[r] & \W(F) \ar[r] & \W(F(t)) \ar[r,"\sum\partial_x^{\pi}"]\ar[d,"\partial_{\infty}^{-\frac 1t}"] & \displaystyle{\bigoplus_{q} \W(F[x]/q)} \ar[r] \ar[ld,"\sum (f_q)_*"]& 0 \\
 & &  \W(F) & 
\end{tikzcd}
\]
As the residue homomorphism for Witt groups respects the filtration by the powers of the fundamental ideal (use \cite{Milnor69_a}*{Lemma 5.7}), we obtain a commutative diagram
\[
\begin{tikzcd}
0 \ar[r] & \overline {\I}^{n+1}(F) \ar[r] & \overline {\I}^{n+1}(F(t)) \ar[r,"\sum\partial_x^{\pi}"]\ar[d,"\partial_{\infty}^{-\frac 1t}"] & \displaystyle{\bigoplus_{q} \overline {\I}^n(F[x]/q)} \ar[r] \ar[ld,"\sum (f_q)_*"]& 0 \\
 & &  \overline {\I}^n(F) & 
\end{tikzcd}
\]
We then conclude using the unicity of transfers (e.g. \cite{Fasel20_a}*{Lemma 1.15}).
\end{proof}

As a consequence of the lemma, we see that any non-zero linear map $f:K\to F$ induces a transfer homomorphism $f_*:\KMW_n(K)\to \KMW_n(F)$ for any $n\in\ZZ$. 

\begin{lem}\label{lem:geometrictransfer}
Let $F\subset K$ be a monogeneous field extension of degree $n$ generated by $x\in K$. Then the geometric transfer is equal to the transfer $f_*$ where $f$ is the $K$-linear map defined by $f(x^i)=0$ if $i=0,\ldots,n-2$ and $f(x^{n-1})=1$.
\end{lem}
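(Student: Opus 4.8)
The plan is to compare the two homomorphisms $\tau_F^K, f_*\colon \KMW_m(K)\to\KMW_m(F)$, for all $m\in\ZZ$ simultaneously, by means of the Cartesian square $\KMW_m(F)=\KM_m(F)\times_{\overline{\I}^m(F)}\I^m(F)$ recalled at the beginning of \textsection\ref{subsec:bilinear}. Since a fibre product of abelian groups detects equality componentwise, it is enough to check that $\tau_F^K$ and $f_*$ agree after composing with the two projections $\KMW_m(F)\to\KM_m(F)$ and $\KMW_m(F)\to\I^m(F)$. For $f_*$ this is essentially built into its construction: by the previous lemma and the discussion following it, the $\KM_m$-component of $f_*$ is the Milnor norm $\Norm_{K/F}$, while its $\I^m$-component is the Scharlau transfer $(f_p)_*$ attached to the chosen functional $f=f_p$, where $p$ is the minimal polynomial of $x$. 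So the whole task reduces to identifying the two components of the geometric transfer $\tau_F^K$.

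First I would compute the $\KM_m$-component. The forgetful homomorphism $f\colon\KMW_*\to\KM_*$ is a morphism of rings over the base field which kills $\eta$ and commutes with all the residue homomorphisms (see \textsection\ref{sec:residues}); hence it is compatible with the split short exact sequence over $\PP^1_F$ that defines $\tau_F^K$ as well as with the residue at infinity. Consequently, if $\beta\in\KMW_m(F(t))$ is a preimage of $\alpha\in\KMW_m(F(x))$ under $\sum\partial_x^\pi$, then $f(\beta)$ is a preimage of $f(\alpha)$, and applying $-\partial_\infty$ gives $f\bigl(\tau_F^K(\alpha)\bigr)=\Norm_{K/F}\bigl(f(\alpha)\bigr)$, because $\Norm_{K/F}$ is produced in Milnor $K$-theory by exactly the same ``preimage, then residue at infinity'' recipe (see \textsection\ref{subsec:bilinear}).

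Next I would compute the $\I^m$-component. The homomorphism $\KMW_*\to\I^*$ likewise commutes with residues: this is the statement that the residue maps on Witt groups respect the filtration by powers of the fundamental ideal, which was already invoked in the proof of the previous lemma via \cite{Milnor69_a}*{Lemma~5.7}. Therefore $\tau_F^K$ maps, under $\KMW_m(K)\to\I^m(K)$, to the transfer obtained by carrying out the same construction inside the Milnor--Scharlau split exact sequence for the powers $\I^*$. But this residue-at-infinity construction, performed with the linear functional $f_q$ associated to a monic irreducible $q\in F[t]$, is precisely the Scharlau transfer $(f_q)_*$ by \cite{Scharlau72_a}*{Theorem~4.1} --- the statement quoted in the proof of the preceding lemma. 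Taking $q=p$ shows that the $\I^m$-component of $\tau_F^K$ equals $(f_p)_*$.

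Combining the two computations, $\tau_F^K$ and $(f_p)_*$ have the same image under both projections of the Cartesian square, so they coincide. The step demanding the most care is the compatibility asserted in the two previous paragraphs: one must verify that $f$, and likewise $\KMW_*\to\I^*$, is compatible not merely with the individual residue homomorphisms but with the entire geometric-transfer procedure of \cite{Morel12_a}*{\S 4.2} --- that a chosen preimage really maps to a preimage, and that the normalization at the point at infinity matches exactly, signs and the choice of uniformizer $-1/t$ included. It is also worth stressing here that no twisting factor $\langle\omega_0(x)\rangle$ enters, since the statement concerns the \emph{geometric} transfer rather than the cohomological one.
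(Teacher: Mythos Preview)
Your proof is correct and follows essentially the same approach as the paper: both use the Cartesian square $\KMW_m(F)=\KM_m(F)\times_{\overline{\I}^m(F)}\I^m(F)$, identify the $\KM$-component of $\tau_F^K$ with the Milnor norm via compatibility of the forgetful map with residues, and identify the $\I^m$-component with the Scharlau transfer $(f_p)_*$ via Scharlau's reciprocity diagram. The paper compresses all of this into the phrase ``the argument is the same as in the previous lemma,'' but your unpacking of the details is accurate.
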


\begin{proof}
The argument is the same as in the previous lemma, using this time the commutative diagram
\[
\begin{tikzcd}
0 \ar[r] & \I^{n+1}(F) \ar[r] & \I^{n+1}(F(t)) \ar[r,"\sum\partial_x^{\pi}"]\ar[d,"\partial_{\infty}^{-\frac 1t}"] & \displaystyle{\bigoplus_{q} \I^n(F[x]/q)} \ar[r] \ar[ld,"\sum (f_q)_*"]& 0 \\
 & &  \I^n(F) & 
\end{tikzcd}
\]
and the Cartesian square 
\[
\begin{tikzcd}
\KMW_n(F) \ar[r] \ar[d] & \I^n(F) \ar[d] \\ 
\KM_n(F)\ar[r,"s_n"] & \overline {\I}^n(F).
\end{tikzcd}
\qedhere
\]
\end{proof}

\begin{lem}\label{lem:trace}
Suppose that $F\subset K$ is a separable field extension, generated by $x\in K$. Then the cohomological transfer coincides with the transfer obtained via the trace map $K\to F$.
\end{lem}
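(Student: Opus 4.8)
The plan is to reduce the statement to an identity between two linear functionals $K\to F$, and then feed that identity into Lemma~\ref{lem:geometrictransfer} together with the Cartesian square of Subsection~\ref{subsec:bilinear}. First I would assemble the inputs. Since $K=F(x)$ is separable over $F$, its separable closure in $K$ is $K$ itself, so $\omega_0(x)=p'(x)$, where $p$ is the minimal polynomial of $x$ over $F$; moreover $\tr_{K/F}\colon K\to F$ is nonzero, so Subsection~\ref{subsec:bilinear} produces transfers $(\tr_{K/F})_*\colon\KMW_n(K)\to\KMW_n(F)$ for every $n$. By Lemma~\ref{lem:geometrictransfer} the geometric transfer $\tau_F^K$ equals $f_*$, where $f\colon K\to F$ is the functional with $f(x^i)=0$ for $0\le i\le n-2$ and $f(x^{n-1})=1$; hence, by the very definition of the cohomological transfer, $\Tr_F^K(x)=f_*\circ\langle p'(x)\rangle$, and what must be shown is $f_*\circ\langle p'(x)\rangle=(\tr_{K/F})_*$.

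The crux is the classical computation of the dual basis for the trace form, i.e.\ of the different of $K/F$: the $F$-basis of $K$ dual to $1,x,\dots,x^{n-1}$ with respect to the bilinear form $(a,b)\mapsto\tr_{K/F}(ab)$ consists of the coefficients of $p(t)/(t-x)$ divided by $p'(x)$, and since $p(t)/(t-x)=t^{n-1}+\cdots$ has top coefficient $1$, the last vector of this dual basis is $p'(x)^{-1}$. Equivalently, passing to a splitting field of $p$ (separability makes the roots $x=x_1,\dots,x_n$ distinct) and applying Euler's partial-fraction identity for $1/p$, one gets $\tr_{K/F}(p'(x)^{-1}x^m)=\sum_j x_j^m/p'(x_j)$, which vanishes for $0\le m\le n-2$ and equals $1$ for $m=n-1$. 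In both guises this is precisely the statement $f(a)=\tr_{K/F}(p'(x)^{-1}a)$ for all $a\in K$. Since the Scharlau transfer depends on its defining functional only through precomposition by multiplication, this yields $f_*=(\tr_{K/F})_*\circ\langle p'(x)^{-1}\rangle$ on $\GW(K)=\KMW_0(K)$ (cf.\ \cite{Lam73_a}*{VII} and \cite{Scharlau72_a}); using $\langle p'(x)^{-1}\rangle=\langle p'(x)\rangle$ and $\langle p'(x)\rangle\langle p'(x)\rangle=\langle 1\rangle$, composing on the right with $\langle p'(x)\rangle$ gives $f_*\circ\langle p'(x)\rangle=(\tr_{K/F})_*$ on $\GW(K)$.

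It then remains to promote this equality from $\KMW_0$ to $\KMW_n$ for all $n$, which I would do via the Cartesian square $\KMW_n(K)=\KM_n(K)\times_{\overline{\I}^n(K)}\I^n(K)$. By construction both $f_*$ and $(\tr_{K/F})_*$ on $\KMW_n(K)$ are the maps induced on this fibre product by the pair consisting of the Milnor transfer $\Norm_{K/F}$ on $\KM_n$ and the Scharlau transfer on $\I^n$; and multiplication by $\langle p'(x)\rangle$ respects the square, acting as the identity on the $\KM_n$-factor (it goes to $1$ under the forgetful map) and by multiplication by $\langle p'(x)\rangle$ on the $\I^n$-factor, which preserves the fundamental-ideal filtration. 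Hence the asserted identity on $\KMW_n(K)$ decouples into its two components: on $\KM_n(K)$ both sides are $\Norm_{K/F}$, and on $\I^n(K)$ it is the restriction of the $\GW$-identity of the previous paragraph (legitimate because $f_*$, $(\tr_{K/F})_*$ and $\langle p'(x)\rangle$ all preserve the powers of the fundamental ideal, by \cite{Milnor73_a}).

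The only step here that is not pure bookkeeping is the middle paragraph: pinning down the unit that diagram~\eqref{equ:trace} provides only abstractly, and identifying it with $p'(x)$ modulo squares. I expect this differential/Euler computation to be the main obstacle --- though classical, it is the one place where one must argue rather than invoke a structure already set up in this section.
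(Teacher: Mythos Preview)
Your proof is correct and follows essentially the same route as the paper: the paper's one-line proof cites \cite{Serre68a}*{III, \S 6, Lemme~2}, which is precisely the Euler identity $\tr_{K/F}(x^i/p'(x))=\delta_{i,n-1}$ that you reprove in your middle paragraph, and from which the comparison $f_*\circ\langle p'(x)\rangle=(\tr_{K/F})_*$ follows exactly as you indicate via diagram~\eqref{equ:trace} and the Cartesian square. Your version simply unpacks the citation.
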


\begin{proof}
This is an immediate consequence of \cite{Serre68a}*{III, \S 6, Lemme~2}. 
\end{proof}

If the extension $F\subset K$ is purely inseparable, then the cohomological transfer coincides by definition with the geometric transfer. It follows that the cohomological transfer can be computed using trace maps (when the extension is separable), the other homomorphisms described in Lemma \ref{lem:geometrictransfer} (when the extension is inseparable) or a combination of both via the factorization $F\subset K^{sep}\subset K$.

\subsection{Canonical orientations}
\label{sec:canonicalorientations}%

In order to properly define the category of finite MW-correspondences, we will have to use differential forms to twist Milnor-Witt K-theory groups. In this section, we collect a few useful facts about orientations of relative sheaves, starting with the general notion of an orientation of a line bundle.

Let $X$ be a scheme, and let $\Nb$ be a line bundle over $X$. Recall from \cite{Morel12_a}*{Definition~4.3} that an orientation of $\Nb$
\index{line bundle!orientation}%
\index{orientation!of a line bundle}%
is a pair $(\Lb,\psi)$, where $\Lb$ is a line bundle over $X$ and $\psi:\Lb\otimes \Lb\simeq \Nb$ is an isomorphism. Two orientations $(\Lb,\psi)$ and $(\Lb^\prime,\psi^\prime)$ are said to be equivalent if there exists an isomorphism $\alpha:\Lb\to \Lb^\prime$ such that the diagram
\[
\begin{tikzcd}[column sep=2ex]
\Lb\otimes \Lb \ar[rr,"\alpha\otimes\alpha"] \ar[rd,"\psi"'] &  & \Lb^\prime\otimes\Lb^\prime \ar[ld,"\psi^\prime"] \\
 & \Nb & 
\end{tikzcd}
\]
commutes. The set of equivalence classes of orientations of $\Nb$ is denoted by $\Or(\Nb)$.
\index[notation]{qn@$\Or(\Nb)$}%
Any invertible element $x$ in the global sections of $\Lb$ gives a trivialization $\OO_X \simeq \Lb$ sending $1$ to $x$. This trivialization can be considered as an orientation $(\OO_X,q_x)$ of $\Lb$ via the canonical identification $\OO_X \otimes \OO_X \simeq \OO_X$ given by the multiplication. In other words, on sections, $q_x(a \otimes b)=abx$. Clearly, $q_x=q_{x'}$ if and only if $x=u^2 x'$ for some invertible global section $u$.

We can extend the notion of orientation to graded line bundles in an obvious way. Namely, an orientation of $(i,\Nb)$ is an orientation of $\Nb$. More concretely, we can consider orientations as morphisms $\psi:(0,\Lb)\otimes (i,\Lb)\to (i,\Nb)$ with equivalence relation given as above. Note that a graded line bundle admits an orientation if and only if the underlying line bundle admits one.

Let $k\subset L\subset F$ be field extensions such that $F/L$ is finite and $F/k$ and $L/k$ are finitely generated and separable (possibly transcendental). Let $\omega_{F/L}:=\omega_{F/k}\otimes_L\omega_{L/k}^\vee$ be its relative $F$-vector space (according to our conventions, this vector space is the same as $\omega_f$, where $f:\Spec(F)\to \Spec(L)$ is the morphism induced by $L\subset F$). Our goal is to choose for such an extension a canonical orientation of $\omega_{F/L}$. We again suppose that $k$ is of characteristic different from $2$ (else, the situation is more difficult in view of \cite{Morel12_a}*{Section \S 4.1}).

Suppose first that the extension $L\subset F$ is purely inseparable. In that case, we have a canonical bijection of $\Or(F)$-equivariant sets $\Frob:\Or(\omega_{L/k}\otimes_L F)\to \Or(\omega_{F/k})$ induced by the Frobenius \cite{Schmid98_a}*{\S 2.2.4}. Any choice of a non-zero element $x$ of $\omega_{L/k}$ yields an $L$-linear homomorphism $x^\vee:\omega_{L/k}\to L$ defined by $x^\vee(x)=1$ and an orientation $\Frob(q_x)\in \Or(\omega_{F/k})$. We thus obtain a class in $\Or(\omega_{F/k}\otimes_L\omega_{L/k}^\vee)$ represented by $\Frob(q_x)\otimes q_{x^\vee}$.

If $x'=ux$ for some $u\in L^\times$, then $(x^\prime)^\vee=u^{-1}x^\vee$ and $\Frob(q_{x'})$ satisfies $\Frob(q_{x'})=q_u \Frob(q_x)\in \Or(\omega_{F/k})$. It follows that $\Frob(q_x)\otimes q_{x^\vee}=\Frob(q_{x'})\otimes q_{(x')^\vee}\in \Or(\omega_{F/k}\otimes_L\omega_{L/k}^\vee)$ and this class is thus independent of the choice of $x\in \omega_{L/k}$. By definition, we have $\omega_{F/L}:=\omega_{F/k}\otimes_L\omega_{L/k}^\vee$ and we therefore get a canonical orientation in $\Or(\omega_{F/L})$.

Suppose next that the extension $L\subset F$ is separable. In that case, the module of differentials $\Omega_{F/L}=0$ and we have a canonical isomorphism $\omega_{F/k}\simeq \omega_{L/k}\otimes_L F$. It follows that $\omega_{F/L}\simeq F$ canonically and we choose the orientation $q_1$ given by $1\in F$ under this isomorphism.
 
We have thus proved the following result.

\begin{lem}\label{lem:orientation}
Let $k\subset L\subset F$ be field extensions such that $F/L$ is finite, $L/k$ and $F/k$ are finitely generated and separable (possibly transcendental). Then there is a canonical orientation of $\omega_{F/L}$.
\end{lem}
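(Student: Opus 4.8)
The plan is to build the canonical orientation by separating $F/L$ into its separable and purely inseparable parts, treating each exactly as sketched above, and then combining them. First I would handle the purely inseparable case $L\subset F$: here I would use the canonical $\Or(F)$-equivariant bijection $\Frob\colon\Or(\omega_{L/k}\otimes_L F)\to\Or(\omega_{F/k})$ coming from the Frobenius (\cite{Schmid98_a}*{\S 2.2.4}). Choosing any nonzero $x\in\omega_{L/k}$ produces the dual trivialization $q_{x^\vee}\in\Or(\omega_{L/k}^\vee)$ together with $\Frob(q_x)\in\Or(\omega_{F/k})$, and I would define the orientation of $\omega_{F/L}=\omega_{F/k}\otimes_L\omega_{L/k}^\vee$ to be the class of $\Frob(q_x)\otimes q_{x^\vee}$. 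The only thing to check is independence of $x$: replacing $x$ by $ux$ with $u\in L^\times$ turns $x^\vee$ into $u^{-1}x^\vee$ and, by the $\Or(F)$-equivariance of $\Frob$, turns $\Frob(q_x)$ into $q_u\,\Frob(q_x)$, so the twists by $q_u$ and $q_{u^{-1}}$ cancel in the tensor product.

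Next I would treat the separable case $L\subset F$. Here $\Omega_{F/L}=0$, so the cotangent sequence for $k\subset L\subset F$ degenerates to a canonical isomorphism $\omega_{F/k}\simeq\omega_{L/k}\otimes_L F$, hence to a canonical isomorphism $\omega_{F/L}\simeq F$, and I would take the orientation $q_1$ attached to $1\in F$.

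For a general finite extension $F/L$ I would introduce $E$, the separable closure of $L$ in $F$, so that $E/L$ is finite separable and $F/E$ is purely inseparable; note $E/k$ is still finitely generated (being finite over the finitely generated $L/k$) and separable (automatic since $k$ is perfect). Unwinding the definition $\omega_{A/B}=\omega_{A/k}\otimes_B\omega_{B/k}^\vee$ together with the evaluation isomorphism $\omega_{E/k}^\vee\otimes_E\omega_{E/k}\simeq E$ yields a canonical isomorphism $\omega_{F/L}\simeq\omega_{F/E}\otimes_E(\omega_{E/L}\otimes_E F)$; I would orient the first factor by the purely inseparable case applied to $E\subset F$ and the second by the separable case applied to $L\subset E$, then take the tensor orientation. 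A quick comparison shows this agrees with the two special cases when $F/L$ is itself separable or purely inseparable, so the construction is well posed.

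The main obstacle is essentially just the independence-of-$x$ verification in the purely inseparable step, which hinges on the equivariance property of the Frobenius bijection; the separable case is immediate from the vanishing of $\Omega_{F/L}$, and the combination step is a purely formal manipulation of the defining tensor products. Everything rests on \cite{Morel12_a}*{\S 4} and \cite{Schmid98_a}*{\S 2.2.4}.
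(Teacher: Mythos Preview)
Your proposal is correct and follows essentially the same approach as the paper: factor $L\subset F$ through the separable closure $E=F^{sep}$ of $L$ in $F$, handle the separable piece via the vanishing of $\Omega_{E/L}$ and the inseparable piece via Schmid's Frobenius bijection, then tensor. The paper's proof is the one-line ``consider $L\subset F^{sep}\subset F$ and the two cases described above,'' with the two cases being precisely what you wrote out; your explicit decomposition $\omega_{F/L}\simeq\omega_{F/E}\otimes_E(\omega_{E/L}\otimes_E F)$ just unpacks what is left implicit there.
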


\begin{proof}
It suffices to consider $L\subset F^{sep}\subset F$ and the two cases described above. 
\end{proof}

\section{Chow-Witt groups}\label{sec:chowwitt}

From now on, we assume that all schemes are smooth over a perfect field $k$ of arbitrary characteristic. 
Recall the construction of Milnor-Witt K-theory sheaves twisted by line bundles from section \ref{sec:twisting}.

\begin{dfn} \label{dfn:twistedCHWitt}
For any (smooth) scheme $X$, any graded line bundle $(i,\Lb)$ over $X$, any closed subset $Z\subset X$ and any $n\in\NN$, we define the \emph{$n$-th Chow-Witt group} (twisted by $(i,\Lb)$, supported on $Z$) by $\chst nZX{(i,\Lb)}:=\H^n_Z(X,\sKMW_n(i,\Lb))$. 
\index{Chow-Witt!group}%
\index[notation]{chtnzx@$\chst nZX{(i,\Lb)}$}%
If $(i,\Lb)=(0,\OO_X)$ (resp. $Z=X$), we omit $(i,\Lb)$ (resp. $Z$) from the notation. 
\end{dfn}
Provided the base field $k$ is perfect, the Rost-Schmid complex defined by Morel in \cite{Morel12_a}*{Chapter 5} provides a flabby resolution of $\sKMW_n(i,\Lb)$ for any $n$ and any $(i,\Lb)$, and we can use it to compute the cohomology of this sheaf (\cite{Morel12_a}*{Theorem 5.41}). Alternatively, we can use the so-called Milnor-Witt complex (with coefficients in Milnor-Witt $K$-theory) of Feld \cite{Feld20_a}*{Section \S 7} to compute the relevant Chow-Witt groups. This follows from \cite{Feld20_a}*{Theorem 8.1} (compare also with \cite{Feld20_a}*{Proposition 7.6}). In case $k$ is of characteristic different from $2$, it follows from \eqref{diag:jn} above and \cite{Fasel08_a}*{Remark 7.3.1} that this definition coincides with the one given in \cite{Fasel08_a}*{Définition~10.2.14}. 

More precisely, the degree $j$-th term of either the Rost-Schmid complex or the relevant Milnor-Witt complex is of the form
\[
\displaystyle{\bigoplus_{x\in X^{(j)}} \KMW_{n-j}(k(x),(j,\Lambda(x))\otimes (i,\Lb_x))}
\]
where $\Lb_x$ is the pull-back of $\Lb$ to $\Spec(k(x))$ and, according to our conventions $\Lambda(x)=\wedge^n(\mathfrak m_x/\mathfrak m_x^2)^\vee$. The differentials, which are described in \cite{Morel12_a}*{Chapter~5}, build on the residue maps defined in Section \ref{sec:residues}.  Note that the Rost-Schmid complexes for $(i,\Lb)$ and $(j,\Lb)$ are canonically isomorphic for each $i,j\in \ZZ$, the extra grading being only a convenient way to treat the products in Chow-Witt theory. For this reason, we sometimes simply write $\chst nZX{\Lb}$ in place of $\chst nZX{(i,\Lb)}$ when $i$ is clear from the context. 

The groups $\chst nZX{(i,\Lb)}$ are contravariant in $X$ and covariant in $(i,\Lb)$ in the following way. If $f:X\to Y$ is a morphism of smooth schemes and $(i,\Lb)$ is a graded line bundle on $Y$, then there is a morphism $f^*:\chst nZY{\Lb} \to \chst n{f^{-1}(Z)}X{f^*\Lb}$. In case $k$ is of characteristic different from $2$, $f^*$ is defined explicitly at the level of the homology of the Rost-Schmid complex in 
\cite{Fasel07_a}*{Definition 7.1}. In general, one may appeal to \cite{Feld20_a}*{Definition 5.6} and \cite{Feld20_a}*{Definition 10.1} using the graph factorization to get a concrete description. If $(i,\Lb)\to (i,\Lb^\prime)$, there is an induced homomorphism $\chst nZY{\Lb}\to \chst nZY{\Lb^\prime}$ given by the underlying isomorphism of line bundles.

 If $f:X\to Y$ is a finite morphism between smooth schemes of respective (constant) dimension $d_X$ and $d_Y$, then there is a push-forward map 
\[
f_*:\chst nZX{(d_X,\omega_{X/k})\otimes (i,f^*\Lb)}\to \chst {n+d_Y-d_X}{f(Z)}Y{(d_Y, \omega_{Y/k})\otimes(i,\Lb)}
\]
for any line bundle $\Lb$ over $Y$ \cite{Morel12_a}*{Corollary~5.30}. More generally, one can define a push-forward map for any proper morphism $f:X\to Y$ between smooth schemes of constant dimensions $d_X$ and $d_Y$ using \cite{Feld20_a}*{Definition 5.5} and \cite{Feld20_a}*{Proposition 6.6} (see also \cite{Fasel08_a}*{Corollaire~10.4.5} in characteristic different from $2$). Actually, the push-forward map can be slightly generalized if one considers supports. If $f:X\to Y$ is a morphism of smooth schemes and $Z\subset X$ is a closed subscheme which is finite over $W\subset Y$, then we can define a push-forward map (with the corresponding gradings for the line bundles)
\[
f_*:\chst nZX{\omega_{X/k}\otimes f^*\Lb}\to \chst {n+d_Y-d_X}{W}Y{\omega_{Y/k}\otimes\Lb}
\]
along the formula given in \cite{Morel12_a}*{p.~125}. Indeed, it suffices to check that the proof of \cite{Morel12_a}*{Corollary~5.30} holds in that case, which is direct. Alternatively, one can use \cite{Feld20_a}*{Section \S 7} to obtain isomorphisms
\[
\chht{d_X-n}{Z}{i^*\Lb} \to \chst nZX{\omega_{X/k}\otimes\Lb}
\]
for any closed subset $i:Z\subset X$ (here $Z$ is endowed with its reduced structure and is in general not smooth) and any line bundle $\Lb$ over $X$. Note that the left-hand side is defined for any scheme of finite type over $k$. We obtain the push-forward with supports 
\[
f_*:\chst nZX{\omega_{X/k}\otimes f^*\Lb}\to \chst {n+d_Y-d_X}{W}Y{\omega_{Y/k}\otimes\Lb}
\]
using again \cite{Feld20_a}*{Definition 5.5, Proposition 6.6}.

Observe now that the forgetful morphism of sheaves $\sKMW_n(\Lb)\to \sKM_n$ yields homomorphisms $\chst nZX{\Lb}\to \CH^n_Z(X)$ for any $n\in\NN$, while the hyperbolic morphism $\sKM_n\to \sKMW_n(\Lb)$ yields homomorphisms $\CH^n_Z(X)\to \chst nZX{\Lb}$ for any $n\in\NN$. The composite $\sKM_n\to \sKMW_n(\Lb)\to \sKM_n$ being multiplication by $2$, the composite homomorphism
\[
\CH^n_Z(X)\to \chst nZX{\Lb}\to \CH^n_Z(X)
\]
is also the multiplication by $2$. Both the hyperbolic and forgetful homomorphisms are compatible with the pull-back and the push-forward maps. Moreover, the total Chow-Witt group of a smooth scheme $X$ is endowed with a ring structure refining the intersection product on Chow groups (i.e.\ the forgetful homomorphism is a ring homomorphism). More precisely, as for usual Chow groups, there is an external product 
\[
\chst nZX{(i,\Lb)}\times \chst mWX{(j,\Nb)}\to \chst {m+n}{Z\times W}{X\times X}{(i,p_1^*\Lb)\otimes (j,p_2^*\Nb)}
\]
commuting to pull-backs and push-forwards. Strictly speaking, the external product defined either in \cite{Fasel07_a}*{Section \S 4} or \cite{Feld20_a}*{Section \S 11} ignores supports. One may however use \cite{Feld20_a}*{Section \S 11} to check that it takes the above form. 

 By pulling back along the diagonal, it yields a cup-product
\[
\chst nZX{(i,\Lb)}\times \chst mWX{(j,\Nb)}\to \chst {m+n}{Z\cap W}X{(i,\Lb)\otimes(j,\Nb)}
\]
\index{cup-product!Chow-Witt group}%
for any $m,n\in\ZZ$, any graded line bundles $(i,\Lb)$ and $(j,\Nb)$ over $X$ and any closed subsets $Z,W\subset X$. It is associative, and its unit is given by the pull-back to $X$ of $\langle 1\rangle\in \sKMW_0(k)=\GW(k)$ \cite{Fasel07_a}*{\S 6} (or \cite{Feld20_a}*{Section \S 11} once again). 
This product structure is graded commutative in the sense that the following diagram, in which the horizontal morphisms are the multiplication maps, the left vertical arrow is the permutation of the factors and the right vertical map is the commutativity isomorphism of graded line bundles,
\[
\label{diag:graded-commutative}%
\begin{tikzcd}
\chst nZX{(i,\Lb)}\times \chst mWX{(j,\Nb)} \ar[r] \ar[d] & \chst {m+n}{Z\cap W}X{(i,\Lb)\otimes(j,\Nb)} \ar[d] \\
\chst mWX{(j,\Nb)}\times \chst nZX{(i,\Lb)} \ar[r] &  \chst {m+n}{Z\cap W}X{(j,\Nb)\otimes(i,\Lb)} 
\end{tikzcd}
\]
is $\langle -1\rangle^{(n+i)(m+j)}$-commutative \cite{Fasel07_a}*{Remark~6.7} (or \cite{Fasel20_a}*{\S 3.4}).

For the sake of completeness, recall that Chow-Witt groups satisfy homotopy invariance by \cite{Fasel08_a}*{Corollaire~11.3.2} or \cite{Feld20_a}*{Theorem 9.4}: If $p:V\to X$ is a vector bundle, then the pull-back homomorphism
\[
p^*:\chst nZX{\Lb}\to \chst n{p^{-1}(Z)}V{p^*\Lb}
\]
is an isomorphism. 

\subsection{Some useful results}

The goal of this section is to state the analogues of some classical formulas for Chow-Witt groups. Most of them are ``obvious'' in the sense that their proofs are basically the same as for Chow groups. Before stating our first result, let us recall that two morphisms $f:X\to Y$ and $g:U\to Y$ are \emph{Tor-independent} if for every $x\in X$, $y\in Y$ and $u\in U$ such that $f(x)=g(u)=y$ we have $Tor_n^{\OO_{Y,y}}(\OO_{X,x},\OO_{U,u})=0$ for $n\geq 1$. In the following proposition, we denote by $d_Z$ the dimension of a given smooth scheme $Z$.

\begin{prop}[Base change formula]\label{prop:basechange}
Let
\[
\begin{tikzcd}
X^\prime\ar[r,"v"] \ar[d,"g"'] & X\ar[d,"f"] \\
Y^\prime\ar[r,"u"'] & Y
\end{tikzcd}
\]
be a Cartesian square of smooth schemes with $f$ proper. Suppose that $f$ and $u$ are Tor-independent. Let $d:=d_Y-d_X$ and $\tau_u=(\omega_{Y^\prime/k}\otimes u^*\omega_{Y/k}^\vee,d_{Y^\prime}-d_Y)$ (resp. $\tau_v=(\omega_{X^\prime/k}\otimes v^*\omega_{X/k}^\vee,d_{X^\prime}-d_X)$).

Then the following diagram commutes for any $n\in\NN$ and any graded line bundle $(i,\Lb)$
\[
\begin{tikzcd}
\chst nZX{(d_X,\omega_{X/k})\otimes (i,f^*\Lb)}\ar[r,"v^*"] \ar[d,"f_*"'] & \chst n{v^{-1}(Z)}{X^\prime}{-\tau_v\otimes(d_{X^\prime},\omega_{X^\prime/k})\otimes(i,v^*f^*\Lb)}\ar[d,"g_*"] \\
\chst {n+d}{f(Z)}Y{(d_Y,\omega_{Y/k})\otimes (i,\Lb)}\ar[r,"u^*"'] & \chst {n+d}{u^{-1}f(Z)}{Y^\prime}{-\tau_u\otimes(d_{Y^\prime},\omega_{Y^\prime/k})\otimes(i,u^*\Lb)}
\end{tikzcd}
\]
Here, we have used the canonical isomorphism
\[
g^*:-\tau_u\to -\tau_v
\]
induced by $g^*\Omega_{Y^\prime/Y}\simeq \Omega_{X^\prime/X}$, $\omega_{X^\prime/X}\simeq \omega_{X^\prime/k}\otimes v^*\omega_{X/k}^\vee$ and $\omega_{Y^\prime/Y}\simeq \omega_{Y^\prime/k}\otimes u^*\omega_{Y/k}^\vee$ (see Remark \ref{rem:choice} below for more explanations).
\index{Chow-Witt!group!base-change}%
\end{prop}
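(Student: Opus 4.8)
The plan is to verify the commutativity of the square after passing to the Rost--Schmid complexes (equivalently, Feld's Milnor--Witt complexes, cf.\ \cite{Feld20_a}*{\S 7}), which compute the Chow--Witt groups in question and on which each of the four maps $f_*$, $v^*$, $g_*$, $u^*$ is induced by an explicit homomorphism of complexes built from residues, transfers and specialisation maps as recalled in Sections~\ref{sec:residues} and~\ref{sec:cohomological}. Once the canonical isomorphism $g^*\colon-\tau_u\to-\tau_v$ of the statement is fixed, the two composites $g_*\circ v^*$ and $u^*\circ f_*$ take values in literally the same group, so the task is to see that the two resulting maps of complexes coincide; since the target complex decomposes as a sum indexed by the points of $Y'$, this can be tested one point of $Y'$ at a time, i.e.\ over the local rings and residue fields of $Y'$.

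First I would dévisse $f$. Since $f$ is proper and $X$, $Y$ are smooth, $f$ factors (this is how $f_*$ is built in \cite{Feld20_a}) as a closed immersion $X\xrightarrow{\,i\,}P$ into a projective bundle $P\to Y$ followed by its structure morphism $p\colon P\to Y$; both $i$ and $p$ are proper, $p$ is smooth, and $f_*=p_*\circ i_*$. Forming Cartesian squares over $u$ yields $X'\xrightarrow{\,i'\,}P'\xrightarrow{\,p'\,}Y'$, and the twists together with their canonical isomorphisms are multiplicative along this factorisation, so base change for $f$ reduces to base change for $i$ and for $p$. Here $p$ is flat, hence automatically Tor-independent of $u$, and a standard cancellation property of Tor-independent squares shows that the hypothesis on $f$ and $u$ is equivalent to $i$ being Tor-independent of $u_P\colon P'\to P$. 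Thus it suffices to treat separately the case of a smooth (projective) morphism and the case of a closed immersion.

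For $f$ smooth, $v^*$ and $u^*$ are flat pullbacks and the claim is flat base change for the pushforward along $p$; on the Milnor--Witt complexes $p_*$ is assembled fibrewise from residue homomorphisms at generic points of fibres and Milnor--Witt transfers along the ensuing finite residue extensions, twisted by the canonical orientations of Lemma~\ref{lem:orientation}, and the needed compatibility with $u^*$ is exactly the functoriality of $\partial_v^\pi$, $s_v^\pi$ and $\Tr$ with respect to base-field extension, which is part of the Milnor--Witt cycle-module formalism underlying these complexes. For $f$ a closed immersion, Tor-independence of $i$ with $u_P$ forces $X'=X\times_P P'$ to carry the expected fundamental class (no higher $\mathrm{Tor}$, hence $v^*$ sends $[X]$ to $[X']$ with the correct orientation) and gives $g^*N_{X/P}\simeq N_{X'/P'}$, which is precisely what makes the identification of $-\tau_u$ with $-\tau_v$ the natural one; unwinding the descriptions of $i_*$ as extension of support and of $v^*$, $u_P^*$, the identity $u^*i_*=g_*(i')^*$ then comes down to a direct comparison of cycles supported on $X'$, formally parallel to the Chow-group statement but carrying the graded line-bundle twists along.

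The hard part is not any individual geometric input --- each piece is either the corresponding fact for Chow groups, a base-change compatibility of residues and transfers already built into Milnor--Witt $K$-theory, or an elementary property of Tor-independent squares --- but the sustained bookkeeping: tracking the twists by $\omega_{X/k}$, $\omega_{Y/k}$ and their primed versions, the grading shifts, and the signs in the commutativity isomorphisms of graded line bundles, and checking at every stage of the dévissage that the canonical orientations used to define the various pushforwards are matched by the isomorphism $g^*\colon-\tau_u\to-\tau_v$.
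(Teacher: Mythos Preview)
Your approach is correct but proceeds by d\'evissage in the orthogonal direction from the paper. You factor the \emph{proper} vertical map $f$ as a closed immersion into a projective bundle followed by the bundle projection, in Fulton's style; the paper instead factors the \emph{horizontal} map $u$ via its graph as $Y'\xrightarrow{\Gamma_u} Y'\times Y \xrightarrow{p_Y} Y$, so that one square has a smooth projection on the bottom (where flat base change is known, \cite{Fasel08_a}*{Th\'eor\`eme~12.3.6} or \cite{Feld20_a}*{Proposition~6.1}) and the other has a regular embedding on the bottom (handled by \cite{Asok16_a}*{Theorem~2.12} or \cite{Feld20_a}*{Proposition~10.7}, with Tor-independence inherited by \cite{Fasel20_a}*{Lemma~3.17}). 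Both routes ultimately rest on the same two ingredients---flat base change and base change along a regular closed immersion under a Tor-independence hypothesis---so the difference is organizational rather than substantive. The paper's factorisation has the mild advantage that the graph $\Gamma_u$ is automatically a regular embedding of smooth schemes and the twist bookkeeping is a touch lighter (no intermediate projective bundle appears), while your factorisation stays closer to the classical Chow-group argument and makes the role of the Tor-independence hypothesis on the closed-immersion square more visibly parallel to Fulton's treatment.
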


\begin{proof}
We first break the square into two Cartesian squares
\[
\begin{tikzcd}
X^\prime \ar[r,"{(g,v)}"] \ar[d,"g"'] & Y^\prime \times X \ar[r,"p_X"] \ar[d,"1\times f"'] & X \ar[d,"f"] \\
Y^\prime \ar[r,"\Gamma_u"'] & Y^\prime\times Y \ar[r,"p_Y"'] & Y
\end{tikzcd}
\] 
where $\Gamma_u$ is the graph of $u$ and the right-hand horizontal morphisms are the respective projections. By \cite{Fasel08_a}*{Théorème~12.3.6} or \cite{Feld20_a}*{Proposition 6.1}, we know that the generalized base change formula holds for the right-hand square. Moreover, the morphisms $(1\times f)$ and $\Gamma_u$ are Tor-independent since $f$ and $u$ are \cite{Fasel20_a}*{Lemma~3.17}. We are thus reduced to show that the formula holds if $u:Y^\prime\to Y$ (and therefore $v:X^\prime\to X$) is a regular embedding of smooth schemes. This follows from \cite{Asok16_a}*{Theorem~2.12} or \cite{Feld20_a}*{Proposition 10.7} (note that the assumptions of \cite{Asok16_a}*{Theorem~2.12} are indeed satisfied by \cite{Fasel20_a}*{Lemma~3.17}).
\end{proof}

\begin{rem}\label{rem:choice}
In all the formulas involving line bundles as ``orientations'', we have to specify the isomorphisms we use to identify them. In this paper, we will need the Base change formula in the situation where $u$ is smooth, in which case it reads as follows. For any line bundle $\Lb$ over $Y$, any integers $n\in \NN$, $i\in \ZZ$ and any closed subset $Z\subset X$, we consider the homomorphisms
\[
v^*:\chst nZX{(d_X,\omega_{X/k})\otimes (i,f^*\Lb)}\to \chst n{v^{-1}(Z)}{X^\prime}{(d_X,v^*\omega_{X/k})\otimes (i,v^*f^*\Lb)}
\]
and the isomorphisms 
\[
(d_X,v^*\omega_{X/k})\to (d_X,\omega_{X^\prime/k}\otimes \omega_{X^\prime/X}^\vee)\to (d_X,\omega_{X^\prime/k}\otimes g^*(\omega_{Y^\prime/Y})^\vee)
\]
induced by the canonical isomorphisms $v^*\omega_{X/k}\simeq \omega_{X^\prime/k}\otimes \omega_{X^\prime/X}^\vee$ given by the first fundamental exact sequence
\[
v^*\Omega_{X/k}\to \Omega_{X^\prime/k}\to \Omega_{X^\prime/X}\to 0
\]
and $g^*(\omega_{Y^\prime/Y})\simeq \omega_{X^\prime/X}$ given by \cite{Kunz86_a}*{Corollary~4.3}. Next we write
\[
(d_X,\omega_{X^\prime/k}\otimes g^*(\omega_{Y^\prime/Y})^\vee)=(d_{X^\prime},\omega_{X^\prime/k})\otimes (d_X-d_{X^\prime},g^*(\omega_{Y^\prime/Y})^\vee)
\]
to obtain a homomorphism $v^*$ with target 
\[
\chst n{v^{-1}(Z)}{X^\prime}{(d_{X^\prime},\omega_{X^\prime/k})\otimes (d_X-d_{X^\prime},g^*(\omega_{Y^\prime/Y})^\vee)\otimes (i,v^*f^*\Lb)}
\]
It follows that $g_*v^*$ lands into
\[
\chst {n+d_{Y^\prime}-d_{X^\prime}}{gv^{-1}(Z)}{Y^\prime}{(d_{Y^\prime},\omega_{Y^\prime/k})\otimes (d_X-d_{X^\prime},\omega_{Y^\prime/Y}^\vee)\otimes (i,u^*\Lb)}.
\]
For the other composite, we consider first 
\[
f_*:\chst nZX{(d_X,\omega_{X/k})\otimes (i,f^*\Lb)}\to \chst {n+d_Y-d_X}{f(Z)}Y{(d_Y,\omega_{Y/k})\otimes (i,\Lb)}
\]
whose composite with $u^*$ lands into
\[
\chst {n+d_Y-d_X}{u^{-1}f(Z)}{Y^\prime}{(d_Y,u^*\omega_{Y/k})\otimes (i,u^*\Lb)}
\]
We identify it with the latter group using
\[
u^*\omega_{Y/k}\simeq \omega_{Y^\prime/k}\otimes \omega_{Y^\prime/Y}^\vee.
\]
and $d_X-d_{X^\prime}=d_Y-d_{Y^\prime}$.
\end{rem}

\begin{rem}\label{rem:noncartesian}
There is no need for $f$ to be proper in the above proposition, as long as we consider supports which are proper over the base. More precisely, suppose that we have a Cartesian square
\[
\begin{tikzcd}
X^\prime \ar[r,"v"] \ar[d,"g"'] & X \ar[d,"f"] \\
Y^\prime\ar[r,"u"'] & Y
\end{tikzcd}
\]
of smooth schemes with $f$ and $u$ Tor-independent. Let $M\subset X$ be a closed subset such that the composite morphism $M\subset X\stackrel f\to Y$ is proper (here $M$ is endowed with its reduced scheme structure). Then the formula $u^*f_*=g_*v^*$ holds for any $\alpha\in \chst nMX{(d_X,\omega_{X/Y})\otimes (i,f^*\Lb)}$. The proof is the same as the proof of the proposition, taking supports into account. One may also directly use \cite{Feld20_a}*{Proposition 6.1, Proposition 10.7}.
\end{rem}

\begin{coro}[Projection formula]
\label{cor:pformula}%
Let $m,n\in\NN$ and let $\Lb$, $\Nb$ be line bundles over $Y$. Let $Z\subset X$ and $W\subset Y$ be closed subsets. If $f:X\to Y$ is a proper morphism of (constant) relative dimension $c\in \ZZ$, we have 
\[
f_*(\alpha)\cdot \beta=f_*(\alpha\cdot f^*(\beta))
\]
for any $\alpha\in \chst mZX{(d_X-d_Y,\omega_{f})\otimes (i,f^*\Lb)}$ and $\beta\in \chst nWY{(j,\Nb)}$.
\index{Chow-Witt!group!projection formula}%
\end{coro}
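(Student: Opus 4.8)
The plan is to deduce the projection formula from the Base change formula (Proposition~\ref{prop:basechange}, used through Remark~\ref{rem:noncartesian} so as to accommodate supports), exactly as in classical intersection theory.

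First I would unwind the cup-product via its definition as pull-back of an external product along the diagonal $\Delta_Y\colon Y\to Y\times Y$, so that $f_*(\alpha)\cdot\beta = \Delta_Y^*\big(f_*(\alpha)\times\beta\big)$. Since the external product commutes with push-forwards (Section~\ref{sec:chowwitt}), we have $f_*(\alpha)\times\beta = (f\times\id_Y)_*(\alpha\times\beta)$, a class on $X\times Y$ supported on $Z\times W$.

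Next I would apply the base change formula to the Cartesian square
\[
\begin{tikzcd}
X \ar[r,"\gamma_f"] \ar[d,"f"'] & X\times Y \ar[d,"f\times\id_Y"] \\
Y \ar[r,"\Delta_Y"'] & Y\times Y,
\end{tikzcd}
\]
where $\gamma_f=(\id_X,f)$ is the graph of $f$; this square is Cartesian because the fibre product is $\{(x,y): f(x)=y\}\cong X$. Here $f\times\id_Y$ is proper, the composite $Z\times W\hookrightarrow X\times Y\xrightarrow{f\times\id_Y} Y\times Y$ is proper since $Z\hookrightarrow X\xrightarrow{f}Y$ is, and $f\times\id_Y$ and $\Delta_Y$ are Tor-independent: $\Delta_Y$ is a regular immersion as $Y$ is smooth, the fibre product $X$ has the expected dimension $d_X$, and all schemes involved are Cohen--Macaulay, so \cite{Fasel20_a}*{Lemma~3.17} applies. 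Remark~\ref{rem:noncartesian} then gives $\Delta_Y^*(f\times\id_Y)_* = f_*\,\gamma_f^*$, hence $f_*(\alpha)\cdot\beta = f_*\big(\gamma_f^*(\alpha\times\beta)\big)$. To finish, write $p_1,p_2$ for the projections of $X\times Y$; by multiplicativity of pull-back, $\gamma_f^*(\alpha\times\beta)=\gamma_f^*(p_1^*\alpha\cdot p_2^*\beta)=(p_1\gamma_f)^*\alpha\cdot(p_2\gamma_f)^*\beta=\alpha\cdot f^*\beta$, because $p_1\gamma_f=\id_X$ and $p_2\gamma_f=f$.

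The geometric skeleton above is routine; the real work is bookkeeping the graded-line-bundle twists and orientation isomorphisms. One must verify that restricting $\omega_{Y\times Y/k}$ along $\Delta_Y$ is compatible with the normal-bundle identification entering Proposition~\ref{prop:basechange} as $-\tau_u$, that the analogous compatibility holds for $\gamma_f$ and $f\times\id_Y$ (yielding $-\tau_v$), and that the resulting twist on $f_*\big(\gamma_f^*(\alpha\times\beta)\big)$ reduces to $(i,\Lb)\otimes(j,\Nb)$ after using $\omega_f=\omega_{X/k}\otimes f^*\omega_{Y/k}^\vee$ and $\omega_{Y/k}\otimes\omega_{Y/k}^\vee\simeq\OO_Y$; these are precisely the identifications spelled out in Remark~\ref{rem:choice}. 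I expect the main obstacle to be pinning down the sign $\langle-1\rangle^{\bullet}$ produced when the two copies of $Y$ are permuted --- controlled by the commutativity constraint of $\glb(-)$ from Section~\ref{sec:twisting} together with the graded-commutativity of the cup-product --- which is handled exactly as in \cite{Fasel07_a}*{\S 6} (or \cite{Feld20_a}*{\S 11}).
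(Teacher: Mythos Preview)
Your proof is correct and follows essentially the same route as the paper: both use the Cartesian square with $\Delta_Y$ on the bottom and $f\times\id_Y$ on the right (your graph map $\gamma_f$ is precisely the paper's $(1\times f)\Delta_X$), then combine the base change formula for this square with the compatibility $(f\times\id_Y)_*(\alpha\times\beta)=f_*(\alpha)\times\beta$ of the external product with push-forward. The only cosmetic difference is that the paper cites \cite{Feld20_a}*{Proposition~10.7} directly for the regular-immersion base change (noting the normal-bundle condition $f^*N_Y(Y\times Y)\simeq N_X(X\times Y)$) rather than going through Proposition~\ref{prop:basechange}, and cites \cite{Feld20_a}*{Section~11, Axiom~R2b} for the external-product compatibility.
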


\begin{proof}
It suffices to use \cite{Feld20_a}*{Proposition 10.7} on the following Cartesian square in which the horizontal maps are regular embeddings:
\[
\begin{tikzcd}[column sep=7ex]
X \ar[r,"{(1\times f)\Delta_X}"] \ar[d,"f"'] & X \times Y \ar[d,"{(f\times 1)}"] \\
Y\ar[r,"\Delta_Y"'] & Y\times Y
\end{tikzcd}
\]
to get $\Delta_Y^*(f\times 1)_*(\alpha\times\beta)=f_*(\alpha\cdot f^*\beta)$. Note that the relevant condition 
\[
f^*N_{Y}(Y\times Y)\simeq N_X(X \times Y)
\] 
is satisfied as $N_{Y}(Y\times Y)$ is canonically isomorphic to $\Omega_{Y/k}^\vee$ and $N_X(X \times Y)$ is canonically isomorphic to $f^*\Omega_{Y/k}^\vee$  (use for instance \cite{Fasel20_a}*{\S 1.4}).

The result now follows from the equality $(f\times 1)_*(\alpha\times\beta)=f_*\alpha\times \beta$ which can be obtained using the explicit definition of the exterior products \cite{Feld20_a}*{Section 11} and the projection formula for fields \cite{Feld20_a}*{Axiom R2b}.
\end{proof}

\begin{rem}
\label{rem:leftmodule}%
There is also a formula for the left-module structure \cite{Fasel20_a}*{Theorem 3.19}: It suffices to use the graded commutativity of the twisted Chow-Witt groups (page \pageref{diag:graded-commutative}, Section \ref{sec:chowwitt}) and the commutation rule for graded line bundles. 
\end{rem}

\begin{lem}[Flat excision]
Let $f:X\to Y$ be a flat morphism of smooth schemes. Let $V\subset Y$ be a closed subset such that the morphism $f^{-1}(V)\to V$ induced by $f$ is an isomorphism. Then the pull-back morphism
\[
f^*:\chst iV{Y}{\Lb}\to \chst i{f^{-1}(V)}X{f^*\Lb}
\]
is an isomorphism for any $i\in\NN$ and any line bundle $\Lb$ over $Y$.
\index{Chow-Witt!group!excision}%
\end{lem}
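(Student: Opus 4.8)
The plan is to reduce everything to the Rost--Schmid complex (equivalently, Feld's Milnor--Witt complex) with supports and to show that $f^{*}$ is an \emph{isomorphism of complexes}, from which the assertion follows immediately by passing to cohomology in degree $i$. Recall from the discussion following Definition~\ref{dfn:twistedCHWitt} that $\chst iV{Y}{\Lb}=\H^{i}$ of the subcomplex $C^{\bullet}_{V}$ of the Rost--Schmid complex of $\sKMW_{i}(\Lb)$ supported on $V$, whose degree-$j$ term is
\[
\bigoplus_{y\in V\cap Y^{(j)}}\KMW_{i-j}\bigl(k(y),(j,\Lambda(y))\otimes(i,\Lb_{y})\bigr),
\]
and similarly for $X$, $f^{-1}(V)$ and $f^{*}\Lb$; here only the underlying closed subsets matter, and this description is legitimate because the Rost--Schmid resolution is flasque and its differential preserves supports.

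First I would pin down the geometry forced by the hypotheses. Since $f$ is flat and $f^{-1}(V)\to V$ is an isomorphism, for each $y\in V$ the fibre $f^{-1}(y)=X\times_{Y}\Spec k(y)$ is contained in $f^{-1}(V)$ and maps isomorphically onto $\Spec k(y)$; hence $f^{-1}(y)=\Spec k(y)$ is a single reduced point $x$ with $k(x)=k(y)$. Flatness then gives $\dim\OO_{X,x}=\dim\OO_{Y,y}+\dim\OO_{f^{-1}(y),x}=\dim\OO_{Y,y}$, so $f$ induces codimension-preserving bijections $f^{-1}(V)\cap X^{(j)}\xrightarrow{\sim}V\cap Y^{(j)}$ for every $j$. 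Moreover $\mathfrak m_{x}=\mathfrak m_{y}\OO_{X,x}$, whence $\mathfrak m_{x}/\mathfrak m_{x}^{2}=(\mathfrak m_{y}/\mathfrak m_{y}^{2})\otimes_{k(y)}k(x)$ and a canonical identification $\Lambda(x)\simeq\Lambda(y)$, together with the tautological $f^{*}\Lb_{x}\simeq\Lb_{y}$. Finally, the isomorphism $f^{-1}(V)\xrightarrow{\sim}V$ carries the reduced closure $\overline{\{x\}}$ onto $\overline{\{y\}}$, so the discrete valuations, residue fields and residue maps entering the Rost--Schmid differential at points of $f^{-1}(V)$ agree with those at the corresponding points of $V$.

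Combining these, the complexes $C^{\bullet}_{V}$ for $\sKMW_{i}(\Lb)$ on $Y$ and $C^{\bullet}_{f^{-1}(V)}$ for $\sKMW_{i}(f^{*}\Lb)$ on $X$ become canonically isomorphic term by term, compatibly with differentials. It then remains to identify the map induced by $f^{*}$ with this canonical isomorphism. For a flat morphism, $f^{*}$ on twisted Chow--Witt groups with supports is computed by the flat pull-back on Rost--Schmid/Milnor--Witt complexes, which agrees with the graph-factorization definition used in Section~\ref{sec:chowwitt} by the compatibilities of \cite{Feld20_a}; on the summand indexed by $y$ this flat pull-back sends a class with coefficient $\alpha$ to the class indexed by the unique point $x$ over $y$ with the \emph{same} coefficient $\alpha$, read through $k(x)=k(y)$ and the identifications above --- no length multiplicity and no residue-field extension intervene, precisely because the fibre is a single reduced point. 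Hence $f^{*}$ is a termwise isomorphism of complexes, a fortiori a quasi-isomorphism, and therefore an isomorphism on $\H^{i}$ for all $i$.

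The one point that is not purely formal is the last one: checking that the flat pull-back on the Rost--Schmid complex respects the canonical orientations $\Lambda(x)\simeq\Lambda(y)$ and the twist $f^{*}\Lb_{x}\simeq\Lb_{y}$ without spurious units or signs. This is a local computation in $k(y)=k(x)$, amounting to unwinding the normalisation of the flat pull-back in \cite{Feld20_a} (or \cite{Fasel07_a} in characteristic $\neq 2$) against $\mathfrak m_{x}/\mathfrak m_{x}^{2}=\mathfrak m_{y}/\mathfrak m_{y}^{2}$; I expect this bookkeeping to be the main, if mild, obstacle. Alternatively, one can run exactly the same argument with the Borel--Moore type complexes and the isomorphisms $\chht{d_{X}-n}{Z}{i^{*}\Lb}\simeq\chst nZX{\omega_{X/k}\otimes\Lb}$ of \cite{Feld20_a}*{Section~\S7}, where flat pull-back takes a more transparent form.
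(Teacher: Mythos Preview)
Your proposal is correct and follows essentially the same approach as the paper: both argue that $f^*$ induces an isomorphism of the Rost--Schmid complexes with supports, from which the cohomological statement is immediate. The paper's proof is much terser---it simply asserts that ``$f^*$ induces (by definition) an isomorphism of complexes''---whereas you have unpacked the geometric reasons (codimension-preserving bijection on points, identification of residue fields and cotangent spaces) that make this true.
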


\begin{proof}
As recalled above, Chow-Witt groups can be computed using the flabby resolution provided by the Rost-Schmid complex of \cite{Morel12_a}*{Chapter~5} or the Milnor-Witt complex (with coefficients in Milnor-Witt $K$-theory) of Feld \cite{Feld20_a}*{Section \S 7}, which both coincide in characteristic different from $2$ with the complex considered in \cite{Fasel08_a}*{Définition~10.2.7}. Now Chow-Witt groups with supports are obtained by considering the subcomplex of points supported on a certain closed subset. The lemma follows now from the fact that in our case $f^*$ induces (by definition) an isomorphism of complexes. 
\end{proof}

\begin{conv}
From now on, we omit the grading when we write graded line bundles. We only indicate when a grading is different from the ``obvious'' choices we made in this section.
\end{conv}

We now consider the problem of describing the cohomology of $X\times \Gm$ with coefficients in $\sKMW_j$ (for $j\in\ZZ$) in terms of the cohomology of $X$. First observe that the pull-back along the projection $p:X\times \Gm\to X$ endows the cohomology of $X\times \Gm$ with the structure of a module over the cohomology of $X$. Let $t$ be a parameter of $\Gm$. The class $[t]$ in $\KMW_1(k(t))$ actually lives in its subgroup $\sKMW_1(\Gm)$ since it clearly has trivial residues at all closed points of $\Gm$. 
Pulling back to $X\times \Gm$ along the projection to the second factor, we get an element in $\sKMW_1(X\times \Gm)$ that we still denote by $[t]$.

\begin{lem}\label{lem:explicitcontraction}
For any $i\in\NN$, any $j\in \ZZ$ and any smooth scheme $X$ over $k$, we have an isomorphism
\[
\H^{i}(X\times \Gm,\sKMW_j)=\H^i(X,\sKMW_j)\oplus \H^{i}(X,\sKMW_{j-1})\cdot [t].
\]
of $\H^0(X,\sKMW_0)=\sKMW_0(X)$-modules.
\end{lem}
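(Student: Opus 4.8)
The plan is to obtain the decomposition from the localization long exact sequence of the pair $\big(X\times\Aone,\ X\times\{0\}\big)$, whose open complement is $X\times\Gm$. Write $Z:=X\times\{0\}$; it is a smooth divisor in $X\times\Aone$ whose normal bundle is canonically trivialized by the coordinate $t$. The short exact sequence of Rost--Schmid complexes relating $X\times\Aone$, its subcomplex supported on $Z$, and the Rost--Schmid complex of $X\times\Gm$ produces a long exact sequence; homotopy invariance of Chow--Witt groups (\cite{Fasel08_a}*{Corollaire~11.3.2}, \cite{Feld20_a}*{Theorem~9.4}) identifies $\H^n(X\times\Aone,\sKMW_j)$ with $\H^n(X,\sKMW_j)$, and purity (dévissage) for $Z\hookrightarrow X\times\Aone$ — using the trivialization of the normal bundle by $t$, so that no line-bundle twist survives — identifies $\H^n_Z(X\times\Aone,\sKMW_j)$ with $\H^{n-1}(X,\sKMW_{j-1})$. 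The sequence then reads
\[
\cdots \to \H^{i-1}(X,\sKMW_{j-1}) \xrightarrow{\gamma} \H^i(X,\sKMW_j) \xrightarrow{p^*} \H^i(X\times\Gm,\sKMW_j) \xrightarrow{\partial} \H^i(X,\sKMW_{j-1}) \xrightarrow{\gamma} \H^{i+1}(X,\sKMW_j) \to \cdots
\]

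Next I would construct a section of $\partial$. Using the cup product induced by the sheaf multiplication $\sKMW_1\times\sKMW_{j-1}\to\sKMW_j$, set $\varphi\colon\H^i(X,\sKMW_{j-1})\to\H^i(X\times\Gm,\sKMW_j)$, $\varphi(\alpha)=p^*(\alpha)\cdot[t]$. The localization sequence is one of modules over $\H^*(X\times\Aone,\sKMW_*)\cong\H^*(X,\sKMW_*)$ and all of its maps, in particular $\partial$, are morphisms of such modules; since $p^*(\alpha)$ is the restriction of a class on $X\times\Aone$, this yields $\partial\big(p^*(\alpha)\cdot[t]\big)=\alpha\cdot\partial([t])$. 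I would then compute $\partial([t])\in\H^0(X,\sKMW_0)=\sKMW_0(X)$ via the compatibility of $\partial$ with the Rost--Schmid residue: by Theorem~\ref{thm:res} — in the normalization $\partial_v^\pi([\pi])=1$ recorded in the remark after it — the residue of $[t]$ along $Z$ with respect to the uniformizer $t$ equals $1$, and under the same trivialization by $t$ the purity isomorphism carries this to $1\in\sKMW_0(X)$. Hence $\partial([t])=1$ and $\partial\circ\varphi=\id$. Therefore $\partial$ is split surjective in every degree, exactness forces $\gamma=0$ throughout, and the sequence breaks into split short exact sequences
\[
0\longrightarrow \H^i(X,\sKMW_j)\xrightarrow{\ p^*\ }\H^i(X\times\Gm,\sKMW_j)\xrightarrow{\ \partial\ }\H^i(X,\sKMW_{j-1})\longrightarrow 0,
\]
the splitting being $\varphi$; this gives $\H^i(X\times\Gm,\sKMW_j)=p^*\H^i(X,\sKMW_j)\oplus\H^i(X,\sKMW_{j-1})\cdot[t]$. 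The three maps $p^*,\ \partial,\ \varphi$ are $\sKMW_0(X)$-linear (for $\varphi$ because $\sKMW_0(X)$ acts through $p^*$ and the cup product is bilinear), so this is a decomposition of $\sKMW_0(X)$-modules, as asserted.

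The delicate input is the Rost--Schmid formalism: the purity isomorphism $\H^n_Z(X\times\Aone,\sKMW_j)\cong\H^{n-1}(X,\sKMW_{j-1})$ and the matching of normalizations in $\partial([t])=1$ (any discrepancy is by a unit of $\GW(k)=\sKMW_0(k)$ and merely rescales $\varphi$, so it does no harm). Should one prefer to stay inside the Rost--Schmid complexes, one can instead decompose the Rost--Schmid complex of $X\times\Gm$ for $\sKMW_j$ directly: over the generic point this is Morel's split exact sequence $0\to\KMW_n(F)\to\KMW_n(F(t))\to\bigoplus_v\KMW_{n-1}(F(v))\to0$ (\cite{Morel12_a}*{Theorem~3.24}), which propagates over all points of $\Gm$ to present that complex as the direct sum of the Rost--Schmid complex of $X$ for $\sKMW_j$ and that of $X$ for $\sKMW_{j-1}$ (the ``$[t]$-part''). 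One can also see $\gamma=0$ concretely from $X\times\{0\}=\tdiv(t)$ on $X\times\Aone$ together with the projection formula (Corollary~\ref{cor:pformula}).
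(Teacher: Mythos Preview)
Your proposal is correct and follows essentially the same approach as the paper: both use the localization sequence for $X\times\Gm\subset X\times\Aone$, homotopy invariance, and the purity identification $\H^{i+1}_{X\times\{0\}}(X\times\Aone,\sKMW_j)\simeq\H^i(X,\sKMW_{j-1})$. The only cosmetic difference is in how the sequence is split: the paper first obtains a retraction of $p^*$ by pulling back along the section $x\mapsto(x,1)$ and then checks that the composite $\H^i(X,\sKMW_{j-1})\xrightarrow{\cdot[t]}\H^i(X\times\Gm,\sKMW_j)\xrightarrow{\iota^{-1}\partial}\H^i(X,\sKMW_{j-1})$ is an isomorphism (citing \cite{Feld20_a}*{Lemma~6.5} or \cite{Morel12_a}*{Proposition~3.17(2)}), whereas you construct a section of $\partial$ directly via $\varphi(\alpha)=p^*(\alpha)\cdot[t]$ and the residue computation $\partial([t])=1$ --- the same computation underlies both.
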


\begin{proof}
The long exact sequence associated to the open immersion $X\times \Gm\subset X\times \Aone_k$ reads as
\[
\cdots\to \H^i(X\times\Aone_k,\sKMW_j)\to \H^i(X\times\Gm,\sKMW_j)\stackrel\partial\to \H^{i+1}_{X\times \{0\}}(X\times \Aone,\sKMW_j)\to \cdots
\] 
By homotopy invariance, the pull-back along the projection to the first factor $X\times\Aone_k\to X$ induces an isomorphism $\H^i(X,\sKMW_j)\to \H^i(X\times\Aone_k,\sKMW_j)$. Pulling-back along the morphism $X\to X\times\Gm$ defined by $x\mapsto (x,1)$ we get a retraction of the composite homomorphism
\[
\H^i(X,\sKMW_j)\to \H^i(X\times\Aone_k,\sKMW_j)\to \H^i(X\times\Gm,\sKMW_j)
\]
and it follows that the long exact sequence splits into short split exact sequences
\[
0\to \H^i(X\times\Aone_k,\sKMW_j)\to \H^i(X\times\Gm,\sKMW_j)\stackrel\partial\to \H^{i+1}_{X\times \{0\}}(X\times \Aone,\sKMW_j)\to 0
\] 
Now the push-forward homomorphism (together with the obvious trivialization of the normal bundle to $X\times\{0\}$ in $X\times\Aone_k$) yields an isomorphism \cite{Fasel08_a}*{Remarque~10.4.8}
\[
\iota:\H^i(X,\sKMW_{j-1})\to \H^{i+1}_{X\times \{0\}}(X\times \Aone,\sKMW_j)
\]
and it suffices then to check that the composite 
\[
\H^i(X,\sKMW_{j-1})\stackrel{[t]}\to \H^i(X,\sKMW_{j})\to \H^i(X\times \Gm,\sKMW_{j})\stackrel{\iota^{-1}\partial}\to \H^i(X,\sKMW_{j-1}) 
\]
is an isomorphism to conclude. This follows from \cite{Feld20_a}*{Lemma 6.5}  (see also \cite{Morel12_a}*{Proposition~3.17. (2)}).
\end{proof}

\begin{rem} \label{rem:ttozero}
By pull-back along the morphism $\Spec(k) \to \Gm$ sending the point to $1$, the element $[t] \in \sKMW_1(\Gm)$ maps to $[1]=0$ in $\sKMW_1(k)$. Therefore this pull-back gives the splitting of $\H^i(X,\sKMW_j)$ in the decomposition above. 
\end{rem}

%%%%%%%%%%%%%%%%%%%%%%%%%%%%%%%%%%%%%%%%%%%%%%%%%%%%%%%%%%%%%%%%%%%%%%%%%%%%%%%%%%%%%%%%%%%%%%%%%%%%%%%%%%%%%%%%%%%%%%

\section{Finite Milnor-Witt correspondences}
\label{sec:fmwcorr}

\subsection{Admissible subsets}\label{sec:fGW}

Let $X$ and $Y$ be smooth schemes over $\Spec(k)$ and let $T\subset X\times Y$ be a closed subset. Any irreducible component of $T$ maps to an irreducible component of $X$ through the projection $X \times Y \to X$. 
\begin{dfn}
\label{dfn:admissible_subset}%
If, when $T$ is endowed with its reduced structure, the canonical morphism $T\to X$ is finite and maps any irreducible component of $T$ surjectively on an irreducible component of $X$, we say that $T$ is an \emph{admissible subset} of $X\times Y$. 
\index{admissible subset}%
We denote by $\Adm(X,Y)$ 
\index[notation]{axy@$\Adm(X,Y)$}%
the poset of admissible subsets of $X\times Y$, partially ordered by inclusions. When convenient, we sometimes consider $\Adm(X,Y)$ as the category associated to this poset.
\end{dfn}

\begin{rem} \label{rem:admissible}
Since the empty set has no irreducible component, it is admissible.
An irreducible component of an admissible subset is clearly admissible, and the irreducible admissible subsets are minimal (non-trivial) elements in $\Adm (X,Y)$. Furthermore, any finite union of admissible subsets is admissible.
\end{rem}

\begin{lem} \label{lem:admissiblesheaf}
If $f:X' \to X$ is a morphism between smooth schemes, then $T \mapsto (f \times \id_Y)^{-1}(T)$ defines a map $\Adm(X,Y) \to \Adm(X',Y)$. Furthermore, the presheaf $U \mapsto \Adm(U,Y)$ thus defined is a sheaf for the Zariski topology. 
\end{lem}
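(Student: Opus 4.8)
The plan is to prove the two assertions separately. The content is essentially point-set topological, together with the stability of finiteness under base change, and the statement is the exact analogue of the corresponding fact for Voevodsky's finite correspondences.

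\textbf{The pullback map.} Given $f\colon X'\to X$ and $T\in\Adm(X,Y)$, I would set $T'=(f\times\id_Y)^{-1}(T)$, with its reduced structure. Scheme-theoretically $(f\times\id_Y)^{-1}(T)=X'\times_XT$ is a closed subscheme of $X'\times_kY$; since $T\to X$ is finite, so is its base change $X'\times_XT\to X'$, hence the closed subscheme $T'\to X'$ is finite — the first half of admissibility. For the component condition I would argue one component at a time: since $X$ and $X'$ are smooth over $k$, their irreducible components are open and closed, hence pairwise disjoint and integral. So, given an irreducible component $Z'$ of $T'$, it sits inside a single component of $X'\times_kY$; replacing $X'$ by the corresponding component of $X'$, then $X$ by the unique component $X_0$ of $X$ containing $f(X')$ and $T$ by $T\cap(X_0\times_kY)$ — which is again admissible, its components being precisely the components of $T$ lying over $X_0$, each finite and surjective onto $X_0$ — I reduce to the case that $X$ and $X'$ are integral. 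Decomposing $T$ into its components, $Z'$ is then an irreducible component of $X'\times_XZ$ for a single integral $Z$ which is finite and surjective over $X$, so $\dim Z=\dim X$. Because $X$ is regular, the diagonal $X\hookrightarrow X\times_kX$ is a regular immersion of codimension $\dim X$, so the standard dimension inequality for a fibre product over a regular base gives $\dim Z'\ge\dim X'+\dim Z-\dim X=\dim X'$; together with $\dim Z'\le\dim X'$ (finiteness of $Z'\to X'$) this forces $\dim Z'=\dim X'$. The image of $Z'$ in $X'$ is closed (finite morphisms are closed), irreducible, and of dimension $\dim X'$ in the integral finite-type $k$-scheme $X'$, hence equals $X'$. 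So every component of $T'$ surjects onto a component of $X'$, i.e.\ $T'\in\Adm(X',Y)$.

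\textbf{The Zariski sheaf property.} Functoriality is formal, and along an open immersion $j\colon U\hookrightarrow X$ the transition map is $T\mapsto T\cap(U\times_kY)$. For a Zariski cover $\{U_i\hookrightarrow X\}$, injectivity of $\Adm(X,Y)\to\prod_i\Adm(U_i,Y)$ is immediate, a closed subset of $X\times_kY$ being determined by its traces on $\{U_i\times_kY\}$. For gluing I would take $T_i\in\Adm(U_i,Y)$ agreeing on the overlaps: since closed subsets of a space form a sheaf, there is a unique (reduced) closed subset $T\subset X\times_kY$ with $T\cap(U_i\times_kY)=T_i$, and it remains to check that $T$ is admissible. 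Finiteness of $T\to X$ is local on the target, so it follows from finiteness of the $T_i\to U_i$. For the component condition, if $Z$ is an irreducible component of $T$, then for any $i$ with $Z\cap(U_i\times_kY)\ne\emptyset$ this trace is a nonempty open subset of $Z$, hence an irreducible component of $T_i$, so it surjects onto a component of $U_i$; the latter has the form $U_i\cap X_m$ with $X_m$ a component of $X$, so the closed irreducible image of $Z$ in $X$ contains $U_i\cap X_m$, which is dense in $X_m$, and therefore equals $X_m$. Hence $T\in\Adm(X,Y)$ and the sheaf axiom holds.

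\textbf{Main obstacle.} The only step that is not a routine topological or local verification is the dimension bound $\dim Z'\ge\dim X'+\dim Z-\dim X$, which relies on the regularity of the base $X$; once this is granted (it may alternatively be imported from the theory of $\corVk$, where the proof is identical), everything else is straightforward.
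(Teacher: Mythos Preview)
Your proof is correct. For the sheaf property (Part 2) your argument is essentially the same as the paper's: closed subsets glue, finiteness is Zariski-local on the base, and for the component condition you correctly observe that the trace of a component $Z$ of the glued $T$ on some $U_i\times_k Y$ is an irreducible component of $T_i$, hence surjects onto a component $U_i\cap X_m$ of $U_i$, and then closedness of the image of $Z$ forces it onto all of $X_m$.

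For the pullback map (Part 1) your argument is more complete than the paper's. The paper simply writes ``finiteness and surjectivity are stable by base change'' with citations to EGA~I, 6.1.5 and EGA~II, 3.5.2. This certainly gives that $T'\to X'$ is finite and that the total space $T'$ surjects onto the relevant components of $X'$, but the admissibility condition asks that \emph{each irreducible component} of $T'$ surjects onto a component of $X'$, and this is not literally what the EGA statements say. You correctly identify this as the one nontrivial point and supply the standard dimension argument over a regular base (the diagonal of $X$ is a regular immersion, so $\dim Z'\ge \dim X'+\dim Z-\dim X=\dim X'$, while finiteness gives $\dim Z'\le\dim X'$). This is exactly the argument behind \cite{Mazza06_a}*{Lemma~1.6}, which the paper itself invokes later (Example~\ref{ex:flat_example}) for the very same fact. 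So your route and the paper's are the same in spirit; you have just unpacked the step that the paper leaves implicit.
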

\begin{proof}
Finiteness and surjectivity are stable by base change by \cite{EGA1_a}*{6.1.5} and \cite{EGA2_a}*{3.5.2}, so the map is well-defined. The injectivity condition in the sheaf sequence is obvious. To prove the exactness in the middle, being closed is obviously a Zariski local property, so the union of the closed subsets in the covering defines a global closed subset. Both finiteness and surjectivity are properties that are Zariski local on the base, so this closed subset is admissible. 
\end{proof}

If $Y$ is equidimensional, $d=\dimn Y$ and $p_Y:X\times Y\to Y$ is the projection, we define a covariant functor 
\[
\Adm(X,Y)\to {\Ab}
\]
by associating to each admissible subset $T\in \Adm(X,Y)$ the group 
\[
\chst dT{X\times Y}{(d,p_Y^*\omega_{Y/k})}
\]
and to each morphism $T^\prime\subset T$ the extension of support homomorphism 
\[
\chst d{T^\prime}{X\times Y}{(d,p_Y^*\omega_{Y/k})}\to \chst dT{X\times Y}{(d,p_Y^*\omega_{Y/k})}.
\]
Using that functor, we set
\[
\cork(X,Y)=\varinjlim_{T\in \Adm(X,Y)}\chst dT{X\times Y}{(d,p_Y^*\omega_{Y/k})}.
\]
\index[notation]{cortk@$\cork$}%
If $Y$ is not equidimensional, then $Y=\coprod_jY_j$ with each $Y_j$ equidimensional and we set
\[
\cork(X,Y)=\prod_{j} \cork(X,Y_j).
\]
By additivity of Chow-Witt groups, if $X =\coprod_iX_i$ and $Y=\coprod_j Y_j$ are the respective decompositions of $X$ and $Y$ in irreducible components, we have
\[
\cork(X,Y)=\prod_{i,j} \cork(X_i,Y_j).
\]
The reader familiar with Voevodsky's finite correspondences is invited to look at Remark \ref{rem:cortocor} to compare the above construction with the classical one. 

\begin{nota}\label{nota:omega}
From now on, we simply write $\chst dT{X\times Y}{\omega_{Y}}$ in place of the heavier $\chst dT{X\times Y}{(d,p_Y^*\omega_{Y/k})}$ and only resort to the full notation when special care is needed.
\end{nota}

\begin{exm}\label{ex:basic}
Let $X$ be a smooth scheme of dimension $d$. Then 
\[
\cork({\Spec(k)},X)=\bigoplus_{x\in X^{(d)}} \chst d{\{x\}}X{\omega_X}=\bigoplus_{x\in X^{(d)}} \GW(k(x),\omega_{k(x)/k}).
\]
The first equality follows from the fact that any admissible subset in ${\Spec(k)}\times X=X$ is a finite set of closed points, and the second from the identification of the relevant group in the Rost-Schmid complex.

On the other hand, $\cork(X,\Spec(k))=\ch 0X=\sKMW_0(X)$ for any smooth scheme $X$.
\end{exm}

The group $\cork(X,Y)$ admits an alternate description which is often useful. Let $X$ and $Y$ be smooth schemes, with $Y$ equidimensional. For any closed subscheme $T\subset X\times Y$ of codimension $d=\dimn Y$, we have an inclusion 
\[
\chst dT{X\times Y}{{\omega_{Y}}}\hspace{1ex}\subset \bigoplus_{x\in (X\times Y)^{(d)}}\hspace{-2ex}\KMW_0(k(x),\Lambda(x)\otimes ({\omega_{Y}})_x)
\]
and it follows that
\[
\cork(X,Y)=\hspace{-3ex}\bigcup_{T\in \Adm(X,Y)}\hspace{-3ex}\chst dT{X\times Y}{{\omega_{Y}}}\hspace{1ex}\subset \hspace{-2ex}\bigoplus_{x\in (X\times Y)^{(d)}}\hspace{-3ex}\KMW_0(k(x),\Lambda(x)\otimes ({\omega_{Y}})_x).
\]
In general, the inclusion $\cork(X,Y)\subset \bigoplus_{x\in (X\times Y)^{(d)}}\KMW_0(k(x),\Lambda(x)\otimes ({\omega_{Y}})_x)$ is strict as shown by Example \ref{ex:basic}. As an immediate consequence of this description, we see that the map
\[
\chst dT{X\times Y}{{\omega_{Y}}}\to \cork(X,Y)
\]
is injective for any $T\in \Adm(X,Y)$. 

If $U$ is an open subset of a smooth scheme $V$, since an admissible subset $T \in \Adm(V,Y)$ intersects with $U \times Y$ as an admissible subset by Lemma \ref{lem:admissiblesheaf}, the pull-backs along $V \times Y \to U \times Y$ on Chow-Witt groups with support induce at the limit a map $\cork(V,Y) \to \cork(U,Y)$.

\begin{lem} \label{lem:restrictioninj}
This map is injective.
\end{lem}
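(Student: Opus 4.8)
The plan is to run the argument through the concrete description of $\cork$ obtained just above. I would first reduce to the case where $V$ and $Y$ are irreducible: by additivity $\cork(V,Y)=\prod_{i,j}\cork(V_i,Y_j)$ over the connected components $V_i$ of $V$ and $Y_j$ of $Y$, and likewise for $U$, with the restriction map respecting this decomposition, so it suffices to treat each factor $\cork(V_i,Y_j)\to\cork(U\cap V_i,Y_j)$; here one should assume $U$ is dense in $V$ (so that $U\cap V_i\neq\emptyset$ for every $i$), a point I comment on at the end. Set $d=\dim Y$. Then $U\times Y$ is a dense open subscheme of the irreducible scheme $V\times Y$, so $(U\times Y)^{(d)}\subseteq(V\times Y)^{(d)}$ and the indexing direct sum for $\cork(U,Y)$ is a direct summand of that for $\cork(V,Y)$.

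The key step is to identify, under the embeddings
\[
\cork(V,Y)\ \subseteq\ \bigoplus_{x\in(V\times Y)^{(d)}}\KMW_0\bigl(k(x),\Lambda(x)\otimes(\omega_Y)_x\bigr),\qquad \cork(U,Y)\ \subseteq\ \bigoplus_{x\in(U\times Y)^{(d)}}\KMW_0\bigl(k(x),\Lambda(x)\otimes(\omega_Y)_x\bigr),
\]
the restriction map $\cork(V,Y)\to\cork(U,Y)$ with the restriction of the coordinate projection forgetting the points not lying in $U\times Y$. I would check this on the explicit Rost--Schmid complex, respectively Feld's Milnor--Witt complex, recalled in Section \ref{sec:chowwitt}: $\chst dT{V\times Y}{\omega_Y}$ is computed by the subcomplex on the points of $T$, and pull-back along the open immersion $U\times Y\hookrightarrow V\times Y$ is term by term the projection onto the summands indexed by points of $U\times Y$. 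Since the embedding $\chst dT{V\times Y}{\omega_Y}\hookrightarrow\bigoplus_x\KMW_0(\cdots)$ is compatible both with the extension-of-support maps (as already used) and with these restrictions, passing to the colimit over $T\in\Adm(V,Y)$ gives the identification.

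Granting this, the conclusion is quick. Let $\alpha\in\cork(V,Y)$ lie in the kernel and choose $T\in\Adm(V,Y)$ with $\alpha\in\chst dT{V\times Y}{\omega_Y}$; since $V$ is irreducible and every component of the admissible $T$ is finite surjective over $V$, the set $T$ has pure codimension $d$ in $V\times Y$, so I may write $\alpha=\sum_x\alpha_x$ with $x$ running over the generic points of the irreducible components of $T$. The hypothesis forces $\alpha_x=0$ for every $x\in U\times Y$, so $\alpha$ is supported at codimension-$d$ points lying in $(V\setminus U)\times Y$; hence $T':=\bigcup\{\overline{\{x\}}:\alpha_x\neq0\}$ is a union of irreducible components of $T$, again admissible (Remark \ref{rem:admissible}), with $\alpha\in\chst d{T'}{V\times Y}{\omega_Y}$ and $T'\subseteq(V\setminus U)\times Y$. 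But then the finite projection $T'\to V$ would carry each component of $T'$ onto an irreducible component of $V$ while having image inside $V\setminus U$; since $U$ is dense and each connected component of the smooth scheme $V$ is open, $V\setminus U$ contains no irreducible component of $V$, whence $T'=\emptyset$ and $\alpha=0$.

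The point I expect to require the most care is the identification in the second paragraph — that pull-back along the open immersion is literally the coordinate projection on the Gersten-type complexes, so that the kernel of the restriction is exactly the subgroup of $\cork(V,Y)$ supported over $V\setminus U$; everything afterwards is formal bookkeeping with admissible subsets. I would also note that, absent a density hypothesis, this argument only identifies the kernel with the product of the $\cork(V_i,Y)$ over the connected components $V_i$ of $V$ disjoint from $U$, so the statement really needs $U$ to meet every connected component of $V$ (e.g. $U$ dense), which holds in the situations where the lemma is applied.
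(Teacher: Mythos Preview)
Your argument is correct and rests on the same geometric point as the paper's: an admissible subset $T\subset V\times Y$ cannot have a component contained in $(V\setminus U)\times Y$, so its intersection with that closed set has codimension $\ge d_Y+1$. The paper's execution is more direct, however: rather than embedding into the big direct sum and tracking supports, it applies the localization long exact sequence for Chow--Witt groups with supports to the pair $T\cap(U\times Y)\subset T$, noting that the first term $\chst{d_Y}{T\cap Z}{V\times Y}{\omega_Y}$ (with $Z=(V\setminus U)\times Y$) vanishes for dimension reasons, whence the restriction $\chst{d_Y}{T}{V\times Y}{\omega_Y}\to\chst{d_Y}{T\cap(U\times Y)}{U\times Y}{\omega_Y}$ is already injective before passing to the colimit. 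This sidesteps your step of showing that $\alpha$ actually lives in $\chst{d_Y}{T'}{V\times Y}{\omega_Y}$, which is precisely the content of the later Lemma~\ref{lem:supportadmis} and is proved there by the same localization argument. Your observation about density of $U$ is well taken and applies equally to the paper's proof; in the paper's applications (Lemma~\ref{lem:unramified}, Proposition~\ref{prop:zarsheaf}) the open is indeed dense.
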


\begin{proof}
We can assume $Y$ equidimensional, of dimension $d$. Let $Z=(V\setminus U)\times Y$. Let moreover $T\subset V\times Y$ be an admissible subset. Since $T$ is finite and surjective over $V$, the subset $Z\cap T$ is of codimension at least $d+1$ in $V\times Y$, which implies that $\chst d{Z\cap T}{V\times Y}{\omega_{Y}}=0$. The long exact sequence of localization with support then shows that the homomorphism
\[
\chst d{T}{V\times Y}{\omega_Y}\to \chst d{T\cap (U\times Y)}{U\times Y}{\omega_Y}
\]
is injective. On the other hand, we have a commutative diagram
\[
\begin{tikzcd}
\chst d{T}{V\times Y}{\omega_Y} \ar[r,hook] \ar[d,hook] & \chst d{T\cap (U\times Y)}{U\times Y}{\omega_Y} \ar[d,hook] \\
\cork(V,Y)\ar[r] & \cork(U,Y)
\end{tikzcd}
\]
with injective vertical maps. Since any $\alpha\in \cork(V,X)$ comes from the group $\chst d{T}{V\times Y}{\omega_Y}$ for some $T\in \Adm(X,Y)$, the homomorphism $\cork(V,Y)\to \cork(U,Y)$ is injective. 
\end{proof}

\begin{dfn}\label{dfn:support}
Let $\alpha\in \cork(X,Y)$, where $X$ and $Y$ are smooth. If $Y$ is equidimensional, let $d=\dimn Y$. The \emph{support} of $\alpha$ is the closure of the set of points $x\in (X\times Y)^{(d)}$ such that the component of $\alpha$ in $\KMW_0\big(k(x),\Lambda(x)\otimes ({\omega_{Y}})_x\big)$ is nonzero. 
If $Y$ is not equidimensional, then we define the support of $\alpha$ as the union of the supports of the components appearing in the equidimensional decomposition. 
\index{support!of a MW-correspondence}%
\end{dfn}

\begin{lem} \label{lem:supportadmis}
The support of an $\alpha \in \cork(X,Y)$ is an admissible subset, say $T$, and $\alpha$ is then in the image of the inclusion $\chst dT{X\times Y}{\omega_{Y}} \subset \cork(X,Y)$.
\end{lem}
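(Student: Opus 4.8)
The plan is to read everything off from the alternate description of $\cork(X,Y)$ recorded just above Lemma~\ref{lem:restrictioninj}: when $Y$ is equidimensional of dimension $d$, the group $\cork(X,Y)$ is the union of the subgroups $\chst d{T_0}{X\times Y}{\omega_Y}$, taken over $T_0\in\Adm(X,Y)$, all of them sitting inside the ambient module $\bigoplus_{x\in(X\times Y)^{(d)}}\KMW_0\big(k(x),\Lambda(x)\otimes(\omega_Y)_x\big)$, which is the degree-$d$ term of the Rost-Schmid complex. First I would reduce to $X$ irreducible (of dimension $e$, say) and $Y$ equidimensional of dimension $d$: by the decomposition formulas for $\cork$ from Section~\ref{sec:fGW} and the fact that $X\times Y$ breaks up accordingly as a disjoint union, both the notion of admissible subset and the support of $\alpha$ of Definition~\ref{dfn:support} are compatible with these decompositions, so it suffices to treat one factor at a time.

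Next, since $\cork(X,Y)$ is by definition the colimit over $\Adm(X,Y)$, the element $\alpha$ lies in the image of $\chst d{T_0}{X\times Y}{\omega_Y}$ for some admissible $T_0$. As $X$ is irreducible, each irreducible component of $T_0$ is finite and surjective over $X$, hence of dimension $e$; so $T_0$ is equidimensional of codimension $d$ in the equidimensional scheme $X\times Y$ (of dimension $e+d$). Therefore the codimension-$d$ points of $X\times Y$ lying on $T_0$ are exactly the generic points of the irreducible components of $T_0$, and, viewing $\alpha$ inside the ambient direct sum, its nonzero components are indexed by a subset of these generic points. Hence the support $T$ of $\alpha$, being the closure of that index set, is a union of irreducible components of $T_0$; by Remark~\ref{rem:admissible} (a component of an admissible subset is admissible, and a finite union of admissible subsets is admissible), $T$ is admissible.

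Finally, to see that $\alpha$ lies in the image of the inclusion $\chst dT{X\times Y}{\omega_Y}\subset\cork(X,Y)$, note that because $T$ has codimension exactly $d$, the group $\chst dT{X\times Y}{\omega_Y}=\H^d_T(X\times Y,\sKMW_d(\omega_Y))$ is precisely the subgroup of the ambient module consisting of those families supported on the generic points of $T$ that are cocycles for the Rost-Schmid differential, compatibly with the inclusion into $\cork(X,Y)$. Now $\alpha$ is supported on $T$ by construction, and $\alpha$ is a cocycle since it represents a class in $\H^d_{T_0}(X\times Y,\sKMW_d(\omega_Y))$ and the differential is computed locally, independently of the ambient support. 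Thus $\alpha\in\chst dT{X\times Y}{\omega_Y}$, as desired. I expect the only real friction to be this last bookkeeping step — checking that the identifications of the various Chow-Witt groups with support as subgroups of the degree-$d$ term of the Rost-Schmid complex are compatible with the transition maps in the colimit, and that the cycle-theoretic support matches Definition~\ref{dfn:support}; once that is in place, the geometric content (an admissible subset over an irreducible base is equidimensional, with admissible unions of components) is immediate.
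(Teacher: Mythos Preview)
Your argument is correct and the first part (admissibility of the support) is essentially identical to the paper's proof: pick an admissible $T_0$ carrying $\alpha$, observe its components have codimension exactly $d$, and conclude the support is a union of such components, hence admissible by Remark~\ref{rem:admissible}.

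For the second part you take a slightly different route. You argue directly in the Rost--Schmid complex: since $T$ has pure codimension $d$, the group $\chst dT{X\times Y}{\omega_Y}$ identifies with the cocycles in degree $d$ supported on the generic points of $T$, and $\alpha$ is such a cocycle because it already represents a class in $\H^d_{T_0}$ and has support $T$. The paper instead invokes the localization exact sequence: restricting $\alpha$ to the open complement $U=(X\times Y)\setminus T$ kills it (by definition of support), so exactness of $\chst dT{X\times Y}{\omega_Y}\to\chst d{T_0}{X\times Y}{\omega_Y}\to\chst d{T_0\setminus T}{U}{\omega_Y}$ forces $\alpha$ into the first group. Your approach is more hands-on and avoids the localization sequence, at the cost of the bookkeeping you flag (compatibility of the Rost--Schmid identifications with the transition maps); the paper's approach sidesteps that bookkeeping by packaging it into the localization formalism, which is already available from Section~\ref{sec:chowwitt}.
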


\begin{proof}
By definition of $\cork(X,Y)$ as a direct limit, the support of $\alpha$ is included in some admissible subset $T \in \Adm(X,Y)$. Being finite and surjective over $X$, any irreducible component $T_i$ of $T$ is of codimension $\dimn Y$ in $X \times Y$. Therefore the support of $\alpha$ is exactly the union of all $T_i$ such that the component of $\alpha$ on the generic point of $T_i$ is non-zero. This is an admissible subset by Remark \ref{rem:admissible}. 

To obtain the last part of the statement, let $S \subset T$ be the support of $\alpha$ and let $U$ be the open subscheme $(X \times Y) \setminus S$. Consider the commutative diagram 
\[
\begin{tikzcd}%@C=3ex{
 \chst dT {X \times Y}{\omega_{Y}} \ar[r] \ar[d] & \chst d{T \setminus S} {U}{(\omega_{Y})_{|U}} \ar[d] \\
 \hspace{-5ex}{\displaystyle\bigoplus_{x\in (X\times Y)^{(d)}\cap T}}\hspace{-4.5ex} \KMW_0\big(k(x),\Lambda(x)\otimes ({\omega_{Y}})_x\big) \ar[r] & \hspace{-3ex}{\displaystyle\bigoplus_{x\in U^{(d)}\cap T}}\hspace{-2.5ex}\KMW_0\big(k(x),\Lambda(x)\otimes ({\omega_{Y}})_x\big)
\end{tikzcd}
\]
with injective vertical maps (still for dimensional reasons). By definition of the support, $\alpha$ maps to zero in the lower right group, so it maps to zero in the upper right one. Therefore, it comes from the previous group in the localization exact sequence for Chow groups with support, and this group is $\chst dS{X\times Y}{\omega_{Y}}$. 
\end{proof}

\begin{exm}
In contrast with usual correspondences, in general, an element of $\cork(X,Y)$ cannot be written as the sum of elements with irreducible support. Indeed, let $X=Y=\Aone$. Let moreover $T_1=\{x=y\}$ and $T_2=\{x=-y\}$ both in $\Aone\times \Aone$, so that $T_1\cap T_2=\{0\}\subset \Aone\times \Aone$. We can consider $\langle x\rangle\otimes \overline{x-y}$ in $\KMW_0(k(T_1),\Lambda(t_1))$ and $\langle x\rangle\otimes \overline{x+y}$ in $\KMW_0(k(T_2),\Lambda(t_2))$, where $t_i$ is the generic point of $T_i$ (see our conventions on page \pageref{subsection:conventions} for the definition of $\Lambda(-)$). The residue of the first one is 
\[
\langle 1\rangle\otimes \big( \overline{x-y}\wedge \overline x \big)=\langle 1\rangle\otimes \big(-\overline y\wedge \overline x\big)=\langle -1\rangle \otimes \big(\overline y\wedge \overline x\big) 
\]
in $\KMW_{-1}(k,\Lambda(0))$, while the residue of the second one is
\[
\langle 1\rangle\otimes \big(\overline{x+y}\wedge \overline x\big)=\langle 1\rangle \otimes \big(\overline y\wedge \overline x\big)
\]
in the same group. As $\langle -1\rangle+\langle 1\rangle=0\in \KMW_{-1}(k)$, it follows that the sum of the two elements above defines an unramified element in $ \chs 1{T_1\cup T_2}{\Aone\times \Aone}$. As the canonical sheaf $\omega_{\Aone}$ is trivial, we obtain an element of $\cork(\Aone,\Aone)$ which is not the sum of elements with irreducible supports (each component is ramified).
\end{exm}

Let $\alpha \in \cork(X,Y)$ with support $T$ be restricted to an element denoted by $\alpha_{|U} \in \cork(U,Y)$.  
\begin{lem} \label{lem:supporttoopen}
The support of $\alpha_{|U}$ is $T \cap U$, in other words the image of $T$ by the map $\Adm(X,Y) \to \Adm(U,Y)$. 
\end{lem}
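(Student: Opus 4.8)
The plan is to carry out the whole argument at the level of the Rost--Schmid complex, where both the support of a finite MW-correspondence and the effect of restricting to an open subset become transparent.

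First I would reduce to the case where $Y$ is equidimensional of dimension $d$: both the support (Definition \ref{dfn:support}) and the restriction map $\cork(X,Y)\to\cork(U,Y)$ are defined componentwise over the equidimensional pieces of $Y$, so the general case follows by taking unions over these pieces. Next, recall from Lemma \ref{lem:supportadmis} and its proof that $T=\bigcup_{i=1}^{r}T_i$ with each $T_i$ an irreducible admissible subset of $X\times Y$ of codimension $d$, with generic point $t_i$; moreover, since $\alpha$ lies in the image of $\chst dT{X\times Y}{\omega_{Y}}$ — whose only points of codimension $d$ in $X\times Y$ are the generic points $t_i$ — the component of $\alpha$ at a point $x\in(X\times Y)^{(d)}$ is nonzero exactly when $x\in\{t_1,\dots,t_r\}$.

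The heart of the matter is then the identification of the restriction map $\cork(X,Y)\to\cork(U,Y)$ with the inclusion, on Rost--Schmid complexes, of the subcomplex indexed by the points lying in $U\times Y\subset X\times Y$; this is precisely how pull-back along the open immersion $U\times Y\hookrightarrow X\times Y$ acts on Chow--Witt groups with supports, as recalled in the proof of the Flat excision lemma above. Since an open immersion preserves codimension, the component of $\alpha_{|U}$ at any $x\in(U\times Y)^{(d)}$ equals the component of $\alpha$ at $x$ regarded in $(X\times Y)^{(d)}$. Hence the codimension $d$ points of $U\times Y$ at which $\alpha_{|U}$ has nonzero component are exactly the $t_i$ that lie in $U\times Y$, and by Definition \ref{dfn:support}, $\supp(\alpha_{|U})$ is the closure in $U\times Y$ of this finite set, namely $\bigcup_{t_i\in U\times Y}\bigl(T_i\cap(U\times Y)\bigr)$.

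To finish, I would note that if $t_i\notin U\times Y$ then $T_i\cap(U\times Y)=\emptyset$, because $U\times Y$ is open and the generic point $t_i$ belongs to every nonempty open subset of $T_i$; therefore $\bigcup_{t_i\in U\times Y}\bigl(T_i\cap(U\times Y)\bigr)=\bigcup_{i=1}^{r}\bigl(T_i\cap(U\times Y)\bigr)=T\cap(U\times Y)$, which by Lemma \ref{lem:admissiblesheaf} is the image of $T$ under $\Adm(X,Y)\to\Adm(U,Y)$ — that is, what the statement denotes $T\cap U$. The only step that is not completely formal is the identification in the third paragraph of the restriction map with the inclusion of Rost--Schmid subcomplexes; once that is granted, the rest is elementary manipulation with generic points of irreducible closed sets.
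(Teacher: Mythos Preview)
Your proof is correct and takes the same approach as the paper, which simply states that the result ``is straightforward from the definition of the support.'' You have spelled out in full detail what the paper leaves implicit; there is really only one way to prove this, and you have done it carefully.
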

\begin{proof}
It is straightforward from the definition of the support.
\end{proof}

\subsection{Composition of finite MW-correspondences}
\label{sec:compositionMWcorr}%

Let $X$, $Y$ and $Z$ be smooth schemes of respective dimensions $d_X,d_Y$ and $d_Z$, with $X$ and $Y$ connected. Let $V\in \Adm(X,Y)$ and $T\in \Adm(Y,Z)$ be admissible subsets. Consider the following commutative diagram where all maps are canonical projections:
\begin{equation}
\label{eq:composition}%
\begin{tikzcd}
X\times Z \ar[rrrd,bend left=1ex,"r_Z"] \ar[rddd,bend right=1ex,"p_X"'] & & & \\
 & X \times Y \times Z \ar[r,"q_{YZ}"'] \ar[d,"p_{XY}"] \ar[lu,"q_{XZ}"'] & Y \times Z \ar[r,"q_Z"'] \ar[d,"p_Y"] & Z \\
 & X \times Y \ar[r,"q_Y"'] \ar[d,"p"] & Y & \\
 & X & & 
\end{tikzcd}
\end{equation}
We have homomorphisms
\[
(p_{XY})^*:\chst {d_Y}V{X\times Y}{{\omega_{Y}}}\to \chst {d_Y}{(p_{XY})^{-1}V}{X\times Y\times Z}{(p_{XY})^*{\omega_{Y}}}
\]
and
\[
(q_{YZ})^*:\chst {d_Z}T{Y\times Z}{{\omega_{Z}}}\to \chst {d_Z}{(q_{YZ})^{-1}T}{X\times Y\times Z}{(q_{YZ})^*{\omega_{Z}}}.
\]
Let $M=(p_{XY})^{-1}V\cap (q_{YZ})^{-1}T$, endowed with its reduced structure. It follows from \cite{Mazza06_a}*{Lemmas 1.4 and 1.6} that every irreducible component of $M$ is finite and surjective over $X$. As a consequence, the map $M\to q_{XZ}(M)$ is finite and the push-forward
\[
(q_{XZ})_*:\chst {d_Y+d_Z}{M}{X\times Y\times Z}{\omega_{X\times Y\times Z} \otimes q_{XZ}^*\Lb}\to \chst {d_Z}{q_{XZ}(M)}{X\times Z}{\omega_{X\times Z}\otimes \Lb}
\]
is well-defined for any line bundle $\Lb$ over $X\times Z$. In particular for $\Lb=p_X^*\omega_{X/k}^\vee$, we get a push-forward map
\begin{multline}
\chst {d_Y+d_Z}{M}{X\times Y\times Z}{\omega_{X\times Y\times Z} \otimes (p_{XY})^*p^*\omega_{X/k}^\vee} \stackrel{(q_{XZ})_*} \longrightarrow \\
  \chst {d_Z}{q_{XZ}(M)}{X\times Z}{\omega_{X\times Z}\otimes p_X^*\omega_{X/k}^\vee}.
\end{multline}
\begin{lem}
We have canonical isomorphisms 
\begin{multline}
\label{eq:side1}%
(d_X+d_Y+d_Z,\omega_{X\times Y\times Z}) \otimes (-d_X,(p_{XY})^*p^*\omega_{X/k}^\vee)\simeq \\
(d_Y,(p_{XY})^*{\omega_{Y}})\otimes (d_Z,(q_{YZ})^*{\omega_{Z}}).
\end{multline}
and
\begin{equation}\label{eq:side2}
(d_X+d_Z,\omega_{X\times Z})\otimes (-d_X,p_X^*\omega_{X/k}^\vee)\simeq (d_Z,\omega_Z).
\end{equation}
\end{lem}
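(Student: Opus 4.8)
The plan is to deduce both isomorphisms from the elementary fact that for two smooth $k$-schemes $A$ and $B$, with projections $a\colon A\times_k B\to A$ and $b\colon A\times_k B\to B$, the two first fundamental exact sequences combine into a canonical splitting $\Omega_{(A\times B)/k}\simeq a^*\Omega_{A/k}\oplus b^*\Omega_{B/k}$; taking top exterior powers and using the canonical isomorphism $\det(E\oplus F)\simeq\det E\otimes\det F$ then gives a canonical isomorphism of line bundles $\omega_{(A\times B)/k}\simeq a^*\omega_{A/k}\otimes b^*\omega_{B/k}$, while on the grading side $\dim(A\times B)=\dim A+\dim B$. All the identifications entering this — the splittings of the fundamental exact sequences, $\det$ of a direct sum, and the evaluation isomorphism $\Lb^\vee\otimes\Lb\simeq\OO$ used below — are canonical, so the isomorphisms produced involve no auxiliary choices; one must only fix once and for all the order of the tensor factors (here: $X$, then $Y$, then $Z$), since swapping graded line bundles introduces the sign $(-1)^{ii'}$ coming from the commutativity constraint of graded line bundles recalled in Section~\ref{sec:twisting}.

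For \eqref{eq:side2} I would apply this with $A=X$ and $B=Z$, noting that $p_X\colon X\times Z\to X$ and $r_Z\colon X\times Z\to Z$ are the two projections, to obtain $\omega_{X\times Z}\simeq p_X^*\omega_{X/k}\otimes r_Z^*\omega_{Z/k}$. Tensoring with $(-d_X,p_X^*\omega_{X/k}^\vee)$ and cancelling $p_X^*\omega_{X/k}^\vee\otimes p_X^*\omega_{X/k}\simeq\OO_{X\times Z}$ yields $(d_X+d_Z,\omega_{X\times Z})\otimes(-d_X,p_X^*\omega_{X/k}^\vee)\simeq(d_Z,r_Z^*\omega_{Z/k})$, and with the convention of Notation~\ref{nota:omega} the right-hand side is exactly $(d_Z,\omega_Z)$; the gradings match since $(d_X+d_Z)+(-d_X)=d_Z$.

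For \eqref{eq:side1} I would write $X\times Y\times Z=(X\times Y)\times Z$ and let $\mathrm{pr}_X,\mathrm{pr}_Y,\mathrm{pr}_Z$ denote the projections of the triple product onto the three factors; by commutativity of diagram~\eqref{eq:composition} one has $\mathrm{pr}_X=p\circ p_{XY}$, $\mathrm{pr}_Y=q_Y\circ p_{XY}=p_Y\circ q_{YZ}$ and $\mathrm{pr}_Z=q_Z\circ q_{YZ}$. Applying the product decomposition twice — first to $(X\times Y)\times Z$, then to $X\times Y$ and pulling back along $p_{XY}$ — gives the canonical isomorphism $\omega_{X\times Y\times Z}\simeq\mathrm{pr}_X^*\omega_{X/k}\otimes\mathrm{pr}_Y^*\omega_{Y/k}\otimes\mathrm{pr}_Z^*\omega_{Z/k}$. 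Under Notation~\ref{nota:omega} one has $(p_{XY})^*\omega_Y=(p_{XY})^*q_Y^*\omega_{Y/k}=\mathrm{pr}_Y^*\omega_{Y/k}$, $(q_{YZ})^*\omega_Z=(q_{YZ})^*q_Z^*\omega_{Z/k}=\mathrm{pr}_Z^*\omega_{Z/k}$, and $(p_{XY})^*p^*\omega_{X/k}^\vee=\mathrm{pr}_X^*\omega_{X/k}^\vee$; cancelling $\mathrm{pr}_X^*\omega_{X/k}^\vee$ against $\mathrm{pr}_X^*\omega_{X/k}$ and comparing gradings via $(d_X+d_Y+d_Z)+(-d_X)=d_Y+d_Z$ produces exactly \eqref{eq:side1}. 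The only ``obstacle'' here is purely bookkeeping: keeping track of which base each pullback refers to, which is entirely dictated by the commutativity of \eqref{eq:composition}; there is no genuinely delicate point once the order of the tensor factors is fixed.
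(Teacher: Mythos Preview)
Your proposal is correct and follows essentially the same approach as the paper: decompose the canonical bundle of the product into the tensor product of pullbacks of the factor canonical bundles, then cancel the $X$-factor against its dual and check the gradings. The paper is terser, citing the relevant fundamental exact sequences and explicitly invoking the graded commutativity constraint to move $(-d_X,(p_{XY})^*p^*\omega_{X/k}^\vee)$ next to the $(d_X,\ldots)$ factor before cancelling; you acknowledge this sign bookkeeping but do not spell out that step, so when writing this up you should make explicit that the commutativity constraint of Section~\ref{sec:twisting} is used to rearrange the factors.
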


\begin{proof}
We have a canonical isomorphism 
\begin{multline}
(d_X+d_Y+d_Z,\omega_{X\times Y\times Z})\simeq \\
(d_X,(p_{XY})^*p^*\omega_{X/k})\otimes (d_Y,(p_{XY})^*q_{Y}^*\omega_{Y/k})\otimes (d_Z,(q_{YZ})^*q_Z^*\omega_{Z/k}). 
\end{multline}
Using the (graded) commutativity of the tensor product of graded line bundles, we obtain the isomorphism (\ref{eq:side1}). For the second isomorphism, we use the same argument with the canonical isomorphism
\[
(d_X+d_Z,\omega_{X\times Z})\simeq (d_X,p_X^*\omega_{X/k})\otimes (d_Z,r_Z^*\omega_{Z/k})
\]
obtained from exact sequences (1.2) and (1.3) in \cite{Fasel20_a}*{\S 1.4}.
\end{proof}

As a consequence, if we have cycles  $\beta\in  \chst {d_Y}V{X\times Y}{\omega_Y}$ and $\alpha\in \chst {d_Z}T{Y\times Z}{\omega_Z}$ the expression
\[
\alpha\circ \beta:=(q_{XZ})_*[(q_{YZ})^*\beta\cdot (p_{XY})^*\alpha]
\] 
is well-defined. Moreover, it follows from \cite{Mazza06_a}*{Lemma~1.7} that $q_{XZ}(M)$ is an admissible subset of $X\times Z$.
All the above homomorphisms commute with extension of supports, and therefore we get a well-defined composition 
\[
\circ:\cork(X,Y)\times \cork(Y,Z)\to \cork(X,Z).
\]

\begin{rem}\label{rem:everythingcommutes}
Note that in the definition of the composition we could have considered the product $(p_{XY})^*\alpha\cdot (q_{YZ})^*\beta$ in place of the product $(q_{YZ})^*\beta\cdot (p_{XY})^*\alpha$. This is actually the same, remembering the fact that $\omega_Y$ sits in degree $d_Y$, $\omega_Z$ in degree $d_Z$ and using graded commutativity of the product on Chow-Witt groups.
\end{rem}

The proof that the composition we defined is associative follows essentially from the fact that the intersection product is associative, but there are some subtleties involved when dealing with the required line bundles so we write it for the sake of completeness. 

\begin{lem}\label{lem:associative}
The composition of finite MW-correspondences is associative.
\end{lem}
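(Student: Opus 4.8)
The plan is to adapt the classical argument for Voevodsky's finite correspondences, the only extra work being the bookkeeping of twisting line bundles. Let $W$ be a fourth smooth scheme and take finite MW-correspondences $\gamma\in\cork(X,Y)$, $\beta\in\cork(Y,Z)$ and $\alpha\in\cork(Z,W)$; we must prove $(\alpha\circ\beta)\circ\gamma=\alpha\circ(\beta\circ\gamma)$. First I would reduce to the case $X,Y,Z,W$ connected, using additivity of Chow--Witt groups and the product decomposition of $\cork$, and, since $\circ$ commutes with extension of supports, replace each correspondence by a representative supported on a fixed admissible subset. The core of the argument is to realise both sides as \emph{the same} proper push-forward to $X\times W$ of \emph{one} triple intersection product on the fourfold product $P:=X\times Y\times Z\times W$, with its projections $\pi_{XY},\pi_{YZ},\pi_{ZW},\pi_{XW}$.

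To carry this out for $(\alpha\circ\beta)\circ\gamma$, I would first unwind $\alpha\circ\beta=(\mathrm{pr}_{YW})_*\big[\mathrm{pr}_{ZW}^*\alpha\cdot\mathrm{pr}_{YZ}^*\beta\big]$ on $Y\times Z\times W$ and then the outer composition on $X\times Y\times W$. The pull-back of $\alpha\circ\beta$ along $X\times Y\times W\to Y\times W$ is then rewritten via the base change formula (Proposition~\ref{prop:basechange}, in the form with supports of Remark~\ref{rem:noncartesian}) applied to the Cartesian square
\[
\begin{tikzcd}
P \ar[r] \ar[d] & Y\times Z\times W \ar[d,"\mathrm{pr}_{YW}"] \\
X\times Y\times W \ar[r] & Y\times W,
\end{tikzcd}
\]
whose horizontal and vertical projections are flat, hence Tor-independent; the relevant support is proper over $Y\times W$ because it is finite over $Y$ by \cite{Mazza06_a}*{Lemmas~1.4 and~1.6}. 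Compatibility of pull-back with cup products, the projection formula (Corollary~\ref{cor:pformula}, together with Remark~\ref{rem:leftmodule} for the left-module variant), and functoriality of proper push-forward then yield
\[
(\alpha\circ\beta)\circ\gamma=(\pi_{XW})_*\big(\pi_{ZW}^*\alpha\cdot\pi_{YZ}^*\beta\cdot\pi_{XY}^*\gamma\big),
\]
up to the canonical identifications of twisting line bundles. The mirror computation for $\alpha\circ(\beta\circ\gamma)$ --- unwinding $\beta\circ\gamma$ on $X\times Y\times Z$ and using base change for the Cartesian square relating $P\to X\times Z\times W$ and $X\times Y\times Z\to X\times Z$ --- produces the very same expression at the level of Milnor--Witt classes. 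That all intermediate supports, and the support of the triple product itself, are admissible follows from \cite{Mazza06_a}*{Lemmas~1.4, 1.6 and~1.7}, exactly as in the definition of $\circ$ in Section~\ref{sec:compositionMWcorr}.

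The hard part, and the reason the proof is worth writing out, is the twisting-line-bundle bookkeeping: one must check that the two bracketings produce the same element and not merely elements differing by a canonical automorphism of a twisting line bundle. Concretely, the chains of canonical isomorphisms of graded line bundles over $P$ implicit in the two computations --- the base-change identifications of Remark~\ref{rem:choice} composed with the isomorphisms generalising \eqref{eq:side1} and \eqref{eq:side2} --- must be shown to fit into a commutative diagram; and wherever the order of the three factors of the triple product is changed, the $\langle -1\rangle$-graded commutativity of the cup product (see page~\pageref{diag:graded-commutative}) has to be cancelled exactly by the sign in the commutativity constraint for graded line bundles of Section~\ref{sec:twisting}. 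This is precisely what the auxiliary $\ZZ$-grading and the convention of putting $\omega_Y,\omega_Z,\omega_W$ in degrees $d_Y,d_Z,d_W$ are designed to achieve (cf.\ Remark~\ref{rem:everythingcommutes}); once the diagram chase is done, associativity follows formally.
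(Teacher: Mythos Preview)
Your proposal is correct and follows essentially the same approach as the paper: both reduce $(\alpha\circ\beta)\circ\gamma$ and $\alpha\circ(\beta\circ\gamma)$ to the same push-forward to $X\times W$ of the triple product of pull-backs on the fourfold product, using base change (Proposition~\ref{prop:basechange}) and the projection formula (Corollary~\ref{cor:pformula}), with Remark~\ref{rem:everythingcommutes} handling the graded-commutativity bookkeeping. The paper's proof is slightly less explicit about the line-bundle diagram chase you describe at the end, but otherwise the two arguments coincide.
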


\begin{proof}
We will use the following Cartesian square where all morphisms are projections onto the respective factors:
\begin{equation}\label{eq:assoc1}
\begin{tikzcd} 
 & & X \times Y \ar[r] & Y & \\
 & X \times T & X \times Y \times T \ar[r,"q_{YT}"] \ar[l,"q_{XT}"'] \ar[u,"s_{XY}"] & Y \times T \ar[u] &  \\
Z \times T \ar[d] & X \times Z \times T \ar[d,"p_{XZ}"'] \ar[u,"p_{XT}"] \ar[l,"r_{ZT}"'] & X \times Y \times Z \times T \ar[r,"q_{YZT}"] \ar[d,"p_{XYZ}"'] \ar[l,"q_{XZT}"'] \ar[u,"p_{XYT}"] & Y \times Z \times T \ar[r,"q_{ZT}"] \ar[d,"p_{YZ}"'] \ar[u,"p_{YT}"] & Z \times T \ar[d] \\
Z & X \times Z \ar[d] \ar[l] & X \times Y \times Z \ar[r,"q_{YZ}"] \ar[d,"p_{XY}"'] \ar[l,"q_{XZ}"'] & Y \times Z \ar[r] \ar[d] & Z  \\
 & X & X \times Y \ar[r] \ar[l] & Y. & 
\end{tikzcd}
\end{equation}

Now, let $\alpha\in \cork(X,Y)$, $\beta\in \cork(Y,Z)$ and $\gamma\in \cork(Z,T)$. In our computation, we treat them as elements of some Chow-Witt group, and consider their push-forwards and pull-backs as usual. We omit the relevant line bundles, all our choices being canonical and already mentioned in the previous paragraphs. The composite $\beta\circ\alpha$ is represented by the cycle $(q_{XZ})_*(p_{XY}^*\alpha\cdot q_{YZ}^*\beta)$, while the composite $\gamma\circ (\beta\circ\alpha)$ is given by
\[
(p_{XT})_*\left( p_{XZ}^*(q_{XZ})_*(p_{XY}^*\alpha\cdot q_{YZ}^*\beta)\cdot r_{ZT}^*\gamma\right).
\]
Using the base change formula (Proposition \ref{prop:basechange}, with the canonical isomorphisms of Remark \ref{rem:choice}), we obtain
\[
p_{XZ}^*(q_{XZ})_*(p_{XY}^*\alpha\cdot q_{YZ}^*\beta)=(q_{XZT})_*p_{XYZ}^*(p_{XY}^*\alpha\cdot q_{YZ}^*\beta).
\]
Next, we can use the projection formula (Corollary \ref{cor:pformula}) to get
\[
(q_{XZT})_*p_{XYZ}^*(p_{XY}^*\alpha\cdot q_{YZ}^*\beta)\cdot r_{ZT}^*\gamma=(q_{XZT})_*\left(p_{XYZ}^*(p_{XY}^*\alpha\cdot q_{YZ}^*\beta)\cdot q_{XZT}^*r_{ZT}^*\gamma\right).
\]
Since the product on Chow-Witt groups is associative, it follows that the composite $\gamma\circ (\beta\circ\alpha)$ is the push-forward along the projection $X\times Y\times Z\times T\to X\times T$ of the product of the pull-backs of $\alpha,\beta,\gamma$ along the respective projections. 

We now turn to the computation of $(\gamma\circ \beta)\circ\alpha$. The composite $\gamma\circ\beta$ is given by $(p_{YT})_*(p_{YZ}^*\beta\cdot q_{ZT}^*\gamma)$, while $(\gamma\circ \beta)\circ\alpha$ is of the form
\[
(q_{XT})_*\left(s_{XY}^*\alpha\cdot q_{YT}^*(p_{YT})_*(p_{YZ}^*\beta\cdot q_{ZT}^*\gamma)\right).
\]
Using the base change formula once again, we obtain
\[
q_{YT}^*(p_{YT})_*(p_{YZ}^*\beta\cdot q_{ZT}^*\gamma)=(p_{XYT})_*q_{YZT}^*(p_{YZ}^*\beta\cdot q_{ZT}^*\gamma).
\]
Here, $(p_{XYT})_*q_{YZT}^*(p_{YZ}^*\beta\cdot q_{ZT}^*\gamma)$ is a cycle of codimension $d_T$ (the dimension of $T$) with coefficients in the line bundle $\omega_T$.
Using Remark \ref{rem:everythingcommutes}, we see that 
\[
s_{XY}^*\alpha\cdot (p_{XYT})_*q_{YZT}^*(p_{YZ}^*\beta\cdot q_{ZT}^*\gamma)= (p_{XYT})_*q_{YZT}^*(p_{YZ}^*\beta\cdot q_{ZT}^*\gamma)\cdot s_{XY}^*\alpha
\]
and we can use the projection formula (Corollary \ref{cor:pformula}) to obtain 
\[
(p_{XYT})_*q_{YZT}^*(p_{YZ}^*\beta\cdot q_{ZT}^*\gamma)\cdot s_{XY}^*\alpha=(p_{XYT})_*\left(q_{YZT}^*(p_{YZ}^*\beta\cdot q_{ZT}^*\gamma)\cdot p_{XYT}^*s_{XY}^*\alpha\right).
\]
Now, $q_{YZT}^*(p_{YZ}^*\beta\cdot q_{ZT}^*\gamma)$ is a cycle of codimension $d_Z+d_T$ with coefficients in $\omega_Z\otimes \omega_T$ and $p_{XYT}^*s_{XY}^*\alpha$ is a cycle of codimension $d_Y$ with coefficients in $\omega_Y$. Applying Remark \ref{rem:everythingcommutes} once again, we obtain
\[
q_{YZT}^*(p_{YZ}^*\beta\cdot q_{ZT}^*\gamma)\cdot p_{XYT}^*s_{XY}^*\alpha=p_{XYT}^*s_{XY}^*\alpha\cdot q_{YZT}^*(p_{YZ}^*\beta\cdot q_{ZT}^*\gamma)
\]
showing that $(\gamma\circ \beta)\circ\alpha$ is also equal to the push-forward along the $X\times Y\times Z\times T\to X\times T$ of the product of the pull-backs of $\alpha,\beta,\gamma$ along the respective projections.
\end{proof}

\subsection{Morphisms of schemes and finite MW-correspondences}
\label{subsec:embedding}%

Let $X,Y$ be smooth schemes of respective dimensions $d_X$ and $d_Y$. Let $f:X\to Y$ be a morphism and let $\Gamma_f:X\to X\times Y$ be its graph. Then $\Gamma_f(X)$ is of codimension $d_Y$ in $X\times Y$, finite and surjective over $X$. If $p_X:X\times Y\to X$ is the projection map, then $p_X\Gamma_f=\id$ and it follows that we have isomorphisms $\OO_X\simeq \omega_{X/k}\otimes \Gamma_f^*p_X^*\omega_{X/k}^\vee$ and $\omega_{X\times Y/k}\otimes p_X^*\omega_{X/k}^\vee\simeq p_Y^*\omega_{Y/k}$, 
where the latter is obtained via 
\[
(d_X+d_Y,\omega_{X\times Y/k})\simeq (d_X,p_X^*\omega_{X/k})\otimes (d_Y,p_Y^*\omega_{Y/k})
\]
and the usual commutation rule.

We therefore obtain a finite push-forward
\[
(\Gamma_f)_*: \sKMW_0(X)\to \chst {d_Y}{\Gamma_f}{X\times Y}{\omega_Y}
\]
We denote by $\grph f$
\index[notation]{gaf@$\grph f$}%
\index{graph|see{Milnor-Witt finite correspondences}}%
the class of $(\Gamma_f)_*(\langle 1\rangle)$ in $\chst {d_Y}{\Gamma_f}{X\times Y}{\omega_Y}$. In particular, when $X=Y$ and $f=\id$, we set $1_X:=\grph {\id}$. Using \cite{Fasel07_a}*{Proposition 6.8} we can check that $1_X$ is the identity for the composition defined in the previous section.

\begin{exm}\label{ex:action}
Let $X$ be a smooth scheme over $k$. The diagonal morphism induces a push-forward homomorphism
\[
\Delta_*:\sKMW_0(X)\to \chst {d_X}{X}{X\times X}{\omega_X}
\] 
which satisfies $\Delta_*(\langle 1\rangle)=1_X$. For any $\alpha\in \sKMW_0(X)$, any admissible subset $T\in \Adm(X,X)$ and any $\beta\in \chst {d_X}{T}{X\times X}{\omega_X}$, one checks that the composition $\beta\circ\alpha$ coincides with the intersection product $\beta\cdot\alpha$. It follows that $\Delta_*$ induces a ring homomorphism $\sKMW_0(X)\to \cork(X,X)$. For any smooth scheme $Y$, composition of morphisms endows the group $\cork(Y,X)$ with the structure of a left $\sKMW_0(X)$-module and a right $\sKMW_0(Y)$-module.
\end{exm}

\begin{dfn} \label{def:cortilde}
Let $\cork$ be the category whose objects are smooth schemes and whose morphisms are the abelian groups $\cork(X,Y)$ defined in Section \ref{sec:fGW}. We call it the \emph{category of finite MW-correspondences over $k$}.
\index{Milnor-Witt!finite correspondence}%
\index{finite!MW-correspondence|see{Milnor-Witt finite correspondence}}%
\index[notation]{cortk@$\cork$}%
\end{dfn}

We see that $\cork$ is an additive category, with disjoint union as direct sum.
We now check that associating $\grph f$ to any morphism of smooth schemes $f:X\to Y$ gives a functor $\tilde\gamma:\smk\to \cork$. We have a commutative diagram
\[
\begin{tikzcd} 
 X\ar[r,"\Gamma_f"]\ar[d,"\Gamma_{g\circ f}"'] & X\times Y\ar[r]\ar[d,"1\times \Gamma_g"] & Y\ar[d,"\Gamma_g"']\ar[rd,"g"] & \\
X\times Z\ar[d]\ar[r,"\Gamma_f\times 1"'] & X\times Y\times Z\ar[r,"q_{YZ}"]\ar[d,"p_{XY}"] & Y\times Z\ar[d]\ar[r] & Z \\
 X\ar[r,"\Gamma_f"'] & X\times Y\ar[r]\ar[d] & Y & \\
 & X & & 
\end{tikzcd} 
\]
in which all squares are Cartesian, and an equality $\pi_{XZ}\circ (\Gamma_f\times 1)=\id_{X\times Z}$, where $\pi_{XZ}$ is the projection. Using the base change formula (twice), we see that the composite of $\grph g$ with $\grph f$ is given by
\[
(\pi_{XZ})_*\left((\Gamma_f\times 1)_*(\langle 1\rangle)\cdot (1\times \Gamma_g)_*(\langle 1\rangle)\right).
\]
Using the projection formula and $(\pi_{XZ})_*(\Gamma_f\times 1)_*=\id$, the expression becomes 
\[
(\Gamma_f\times 1)^*(1\times \Gamma_g)_*(\langle 1\rangle),
\]
which we identify with $(\Gamma_{g\circ f})_*(\langle 1\rangle)$ using the base change formula once again.
%\begin{tikzcd} 
% & & X \times Y \ar[r,"q_Y"] & Y & \\
% & X \times T & X \times Y \times T \ar[r,"q_{YT}"] \ar[l,"q_{XT}"'] \ar[u,"s_{XY}"] & Y \times T \ar[u,"s_Y"] &  \\
%Z \times T \ar[d,"p_Z"'] & X \times Z \times T \ar[d,"p_{XZ}"'] \ar[u,"p_{XT}"] \ar[l,"r_{ZT}"'] & X \times Y \times Z \times T \ar[r,"q_{YZT}"] \ar[d,"p_{XYZ}"'] \ar[l,"q_{XZT}"'] \ar[u,"p_{XYT}"] & Y \times Z \times T \ar[r,"q_{ZT}"] \ar[d,"p_{YZ}"'] \ar[u,"p_{YT}"] & Z \times T \ar[d,"p_Z"'] \\
%Z & X \times Z \ar[d,"p_X"'] \ar[l,"r_Z"'] & X \times Y \times Z \ar[r,"q_{YZ}"] \ar[d,"p_{XY}"'] \ar[l,"q_{XZ}"'] & Y \times Z \ar[r,"q_Z"] \ar[d,"p_Y"'] & Z  \\
% & X & X \times Y \ar[r,"q_Y"'] \ar[l,"q_X"'] & Y. & 
%\end{tikzcd}
\begin{rem}\label{rem:cortocor} 
The category of finite correspondences as defined by Voevodsky can be recovered by replacing Chow-Witt groups by Chow groups in our definition. Indeed, when $Y$ is equidimensional of dimension $d=\dimn Y$ and $T \in \Adm(X,Y)$,
\[
\CH^d_T(X \times Y) = \bigoplus_{x \in (X \times Y)^{(d)}\cap T} \hspace{-2ex}\ZZ
\]
since the previous group in the Gersten complex is zero because $T$ is of codimension $d$, and the following group is also zero because there are no negative K-groups. Writing $T=T_1\cup\ldots\cup T_n$ for the irreducible components of $T$, it follows that $\CH^d_T(X \times Y)=\ZZ\cdot\{T_1\}\oplus \ldots\oplus \ZZ\cdot\{T_n\}$. Further, If $T\subset T^\prime$ with $T^\prime$ admissible, we may decompose $T^\prime$ in irreducible components $T^\prime=T_1\cup\ldots\cup T_n\cup T^\prime_{n+1}\cup\ldots\cup T^\prime_{n+m}$ and the extension of support homomorphism $\CH^d_T(X \times Y)\to \CH^d_{T^\prime}(X \times Y)$ is then just the inclusion
\[
\ZZ\cdot\{T_1\}\oplus \ldots\oplus \ZZ\cdot\{T_n\}\to \ZZ\cdot\{T_1\}\oplus \ldots\oplus \ZZ\cdot\{T_n\}\oplus \ZZ\{T^\prime_{n+1}\}\oplus\ldots\oplus \ZZ\{T^\prime_{n+m}\}.
\]
Using this, we see that 
\[
\varinjlim_{T\in \Adm(X,Y)}\CH^d_T(X \times Y)=\corVk(X,Y),
\]
the group of Voevodsky's finite correspondences from $X$ to $Y$ \cite{Mazza06_a}*{Definition 1.1}. 

By the same procedure, it is of course possible to define finite correspondences using other cohomology theories with support, provided that they satisfy the classical axioms used in the definition of the composition (base change, etc.).
\end{rem}
The forgetful homomorphisms 
\[
\chst dT{X\times Y}{\omega_Y}\to \CH^d_T(X\times Y)
\] 
and the previous remark yield a functor $\pi:\cork\to \corVk$ (use \cite{Fasel07_a}*{Prop.~6.12} and the fact that the relevant constructions for Chow-Witt groups such as proper push-forwards and pull-backs refine the classical constructions for Chow groups) which is additive, and the ordinary functor $\gamma:\smk\to \corVk$ is the composite functor $\smk\stackrel{\tilde\gamma}\to \cork\stackrel{\pi}\to \corVk$.

On the other hand, the hyperbolic homomorphisms
\[
\CH^d_T(X\times Y)\to \chst dT{X\times Y}{\omega_Y}
\]
yield a homomorphism $H_{X,Y}:\corVk(X,Y)\to \cork(X,Y)$ for any smooth schemes $X,Y$ but not a functor $\corVk\to \cork$ since $H_{X,X}$ doesn't preserve neither the identity nor the compositions. The composite $\pi_{X,Y}H_{X,Y}$ is just the multiplication by $2$, as explained in Section \ref{sec:chowwitt}.
\medskip

We now give two examples showing how to compose a finite MW-correspondence with a morphism of schemes.

\begin{exm}[Pull-back]\label{ex:flat_example}
Let $X,Y,U \in \smk$ and let $f:X \to Y$ be a morphism. Let $(f\times 1):(X\times U)\to (Y\times U)$ be induced by $f$ and let $T\in \Adm(Y,U)$ be an admissible subset. Then $F:=(f\times 1)^{-1}(T)$ is an admissible subset of $X\times U$ by \cite{Mazza06_a}*{Lemma 1.6}. It follows that the pull-back of cycles $(f\times 1)^*$ induces a homomorphism $\cork(Y,U)\to \cork(X,U)$. The interested reader is invited to check that it coincides with the composition with $\grph f$.
\end{exm}

\begin{exm}[Push-forwards]\label{ex:push-forwards}

Let $X$ and $Y$ be smooth schemes of dimension $d$ and let $f:X\to Y$ be a finite morphism such that any irreducible component of $X$ surjects to the irreducible component of $Y$ it maps to. Contrary to the classical situation, we don't have a finite MW-correspondence $Y\to X$ associated to $f$ in general. However, we can define one if $\omega_f$ admits an orientation.  

Let then $(\Lb,\psi)$ be an orientation of $\omega_f$. We define a finite MW-correspondence $\alpha(f,\Lb,\psi)\in \cork(Y,X)$ as follows. Let $\Gamma^t_f:X\to Y\times X$ be the transpose of the graph of $f$. Then $X$ is an admissible subset and we have a transfer morphism 
\[
(\Gamma^t_f)_*:\sKMW_0(X,\omega_f)\to \chst {d}{\Gamma^t_f(X)}{Y\times X}{\omega_X}.
\]
Composing with the homomorphism 
\[
\chst {d}{\Gamma^t_f(X)}{Y\times X}{\omega_X}\to \cork(Y,X),
\]
we get a map $\sKMW_0(X,\omega_f)\to \cork(Y,X)$. Now the isomorphism $\psi$ together with the canonical isomorphism $\sKMW_0(X)\simeq \sKMW_0(X,\Lb\otimes\Lb)$ yield an isomorphism $\sKMW_0(X)\to \sKMW_0(X,\omega_f)$. We define the finite MW-correspondence $\alpha(f,\Lb,\psi)=\alpha(f,\psi)$ as the image of $\langle 1\rangle$ under the composite
\[
\sKMW_0(X)\to \sKMW_0(X,\omega_f)\to \cork(Y,X).
\]
If $(\Lb^\prime,\psi^\prime)$ is equivalent to $(\Lb,\psi)$, then it is easy to check that the correspondences $\alpha(f,\Lb,\psi)$ and $\alpha(f,\Lb^\prime,\psi^\prime)$ are equal. Thus any element of $\Or(\omega_f)$ yields a finite MW-correspondence. In general, different choices of elements in $\Or(\omega_f)$ yield different correspondences.

When $g:Y \to Z$ is another such morphism with an orientation $(\Mb,\phi)$ of $\omega_g$, then $(\Lb \otimes f^*\Mb, \psi \otimes f^*\phi)$ is an orientation of $\omega_{g \circ f}=\omega_f \otimes f^*\omega_g$, and we have $\alpha(f,\Lb,\psi)\circ \alpha(g,\Mb,\phi) = \alpha(g \circ f,\Lb \otimes f^*\Mb, \psi \otimes f^*\phi)$. 

Let now $U$ be a smooth scheme of dimension $n$ and let $T\in \Adm(X,U)$. The commutative diagram
\[
\begin{tikzcd}
X\times U \ar[r,"f\times 1"] \ar[rd] & Y \times U \ar[d,"p_Y"] \\ 
 & Y
\end{tikzcd}
\]
where $p_Y:Y\times U$ is the projection on the first factor and \cite{Mazza06_a}*{Lemma~1.4} show that $(f\times 1)(T)\in A(Y,U)$ in our situation. Moreover, we have a push-forward morphism
\[
(f\times 1)_*:\chst nT{X\times U}{\omega_U\otimes \omega_f}\to \chst n{(f\times 1)(T)}{Y\times U}{\omega_U}
\] 
Using the trivialization $\psi$, we get a push-forward morphism
\[
(f\times 1)_*:\chst nT{X\times U}{\omega_U}\to \chst n{(f\times 1)(T)}{Y\times U}{\omega_U}
\] 
Now this map commutes with the extension of support homomorphisms, and it follows that we get a homomorphism 
\[
(f\times 1)_*:\cork(X,U)\to \cork(Y,U)
\]
depending on $\psi$. It is an interesting exercise to check that $(f\times 1)_*(\beta)=\beta\circ \alpha(f,\psi)$ for any $\beta\in \cork(X,U)$, using the base change formula as well as the projection formula.

In particular, when $U=Y$ and $\beta=\grph f$, using $\psi$ we can push-forward along $f$ as $\sKMW_0(X) \simeq \sKMW_0(X,\omega_f) \to \sKMW_0(Y)$ to obtain an element $f_*\langle 1 \rangle$ in $\sKMW_0(Y)$, and we have $\grph f \circ \alpha(f,\psi)=f_*\langle 1 \rangle \cdot \id_Y$, using the action of Example \ref{ex:action}. 
\end{exm}

\begin{rem}\label{rem:notsurjective}
Suppose that $X$ is connected, $f:X\to Y$ is a finite surjective morphism with relative bundle $\omega_f$ and that $\omega_f\neq 0$ in $\Pic(X)/2$ (take for instance the map $\PP^2\to \PP^2$ defined by $[x_0:x_1:x_2]\mapsto [x_0^2:x_1^2:x_2^2]$). Consider the finite correspondence $Y\to X$ corresponding to the transpose of the graph of $f$. As in the previous example, we have a transfer homomorphism 
\[
(\Gamma_f^t)_*:\sKMW_0(X,\omega_f)\to \chst {d}{\Gamma_f^t(X)}{Y\times X}{\omega_X}.
\]
making the diagram
\[
\begin{tikzcd}
\sKMW_0(X,\omega_f) \ar[r,"\simeq"] \ar[d] & \chst d{\Gamma_f^t(X)}{Y\times X}{\omega_X} \ar[d] \\
\ZZ \ar[r,"\simeq"] & \CH^d_{\Gamma_f^t(X)}(Y\times X)
\end{tikzcd}
\]
commutative, where the vertical homomorphisms are the forgetful maps and the horizontal ones are isomorphisms, as one clearly sees using the Rost-Schmid complex (or alternatively the Milnor-Witt complex). Since $\omega_f$ is not a square in $\Pic(X)$, the left-hand vertical map is not surjective: it is equal to the rank map. It follows that the map $\cork(Y,X)\to \corVk(Y,X)$ is not surjective. Indeed, if we enlarge the support to a larger admissible set $X'$, one of its irreducible components will be $X$, and the (usual) Chow groups with support in $X'$ will be isomorphic to several copies of $\ZZ$, one for each irreducible component, and the forgetful map cannot surject to the copy of $\ZZ$ corresponding to $X$.
\end{rem}

\subsection{Tensor products}
\label{sec:tensorproducts}

Let $X_1,X_2,Y_1,Y_2$ be smooth schemes over $\Spec(k)$. Let $d_1=\dimn{Y_1}$ and $d_2=\dimn{Y_2}$.

Let $\alpha_1\in \chst {d_1}{T_1}{X_1\times Y_1}{\omega_{Y_1}}$ and $\alpha_2\in \chst {d_2}{T_2}{X_2\times Y_2}{\omega_{Y_2}}$ for some admissible subsets $T_i\subset X_i\times Y_i$. The exterior product defined in \cite{Fasel07_a}*{\S 4} gives a cycle 
\[
(\alpha_1\times \alpha_2)\in \chst {d_1+d_2}{T_1\times T_2}{X_1\times Y_1\times X_2\times Y_2}{p_{Y_1}^*\omega_{Y_1/k}\otimes p_{Y_2}^*\omega_{Y_2/k}}
\]
where $p_{Y_i}:X_1\times Y_1\times X_2\times Y_2\to Y_i$ is the projection to the corresponding factor. 
Let $\sigma:X_1\times Y_1\times X_2\times Y_2\to X_1\times X_2\times Y_1\times Y_2$ be the transpose isomorphism. Applying $\sigma_*$, we get a cycle
\[
\sigma_*(\alpha_1\times \alpha_2)\in \chst {d_1+d_2}{\sigma(T_1\times T_2)}{X_1\times X_2\times Y_1\times Y_2}{p_{Y_1}^*\omega_{Y_1/k}\otimes p_{Y_2}^*\omega_{Y_2/k}}.
\]
On the other hand, $p_{Y_1}^*\omega_{Y_1/k}\otimes p_{Y_2}^*\omega_{Y_2/k}=\omega_{Y_1\times Y_2}$ and it is straightforward to check that (the underlying reduced scheme) $\sigma(T_1\times T_2)$ is finite and surjective over $X_1\times X_2$. Thus $\sigma_*(\alpha_1\times \alpha_2)$ defines a finite MW-correspondence between $X_1\times X_2$ and $Y_1\times Y_2$. 

\begin{dfn}\label{def:tensor}
If $X_1$ and $X_2$ are smooth schemes over $k$, we define their tensor product as $X_1\otimes X_2:=X_1\times X_2$. If $\alpha_1\in \cork(X_1,Y_1)$ and $\alpha_2\in \cork(X_2,Y_2)$, then we define their tensor product as $\alpha_1\otimes \alpha_2:=\sigma_*(\alpha_1\times \alpha_2)$. 
\index{Milnor-Witt!finite correspondence!tensor product}%
\index{tensor product!of finite MW-correspondences}%
\end{dfn}

\begin{lem}
The tensor product $\otimes$ together with the obvious associativity and symmetry isomorphisms endows $\cork$ with the structure of a symmetric monoidal category. 
\end{lem}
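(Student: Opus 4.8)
The plan is to verify the axioms of a symmetric monoidal category in the order: bifunctoriality of $\otimes$, then the coherence of the associativity and symmetry isomorphisms. The bulk of the work reduces to two compatibility statements for the operations that define $\otimes$ on morphisms — namely the exterior product $\times$ of Chow--Witt classes and the push-forward $\sigma_*$ along the transpose isomorphism — so I would organize everything around those two.

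First I would check that $\otimes$ is a bifunctor, i.e.\ that $(\alpha_1\otimes\alpha_2)\circ(\beta_1\otimes\beta_2)=(\alpha_1\circ\beta_1)\otimes(\alpha_2\circ\beta_2)$ and that $1_{X_1}\otimes 1_{X_2}=1_{X_1\times X_2}$. The identity statement is immediate: $1_{X_i}=(\Gamma_{\id})_*\langle 1\rangle$ and $\sigma_*$ of the exterior product of two diagonal push-forwards is the diagonal push-forward for $X_1\times X_2$, using compatibility of $\times$ with proper push-forward (stated in Section~\ref{sec:chowwitt}). For the interchange law, I would unwind both sides through the definition of $\circ$ in Section~\ref{sec:compositionMWcorr}: each side is a push-forward along a projection of a product of pull-backs of $\alpha_i,\beta_i$. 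The key point is that exterior product commutes with pull-back and with proper push-forward, and that the relevant fibre-product diagrams for $X_1\times X_2$ factor as external products of the corresponding diagrams for $X_1$ and $X_2$ separately; the base-change formula (Proposition~\ref{prop:basechange}) and projection formula (Corollary~\ref{cor:pformula}) then let one match the two expressions. The only genuine subtlety is bookkeeping of the graded line-bundle twists $\omega_{Y_i}$ sitting in degree $d_i$: the $(-1)$-signs introduced by the commutativity isomorphism of graded line bundles must cancel correctly, and here one uses that $\omega_{Y_1\times Y_2}\simeq p_{Y_1}^*\omega_{Y_1/k}\otimes p_{Y_2}^*\omega_{Y_2/k}$ canonically, as already noted before Definition~\ref{def:tensor}.

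Next I would produce the associativity and symmetry constraints. For objects these are just the usual isomorphisms $X_1\times X_2\simeq X_2\times X_1$ and $(X_1\times X_2)\times X_3\simeq X_1\times(X_2\times X_3)$ of schemes, regarded as morphisms in $\cork$ via $\tilde\gamma$ (which is a functor, as verified after Definition~\ref{def:cortilde}); the unit object is $\Spec(k)$, and the unit isomorphisms come from $\Spec(k)\times X\simeq X$. Naturality of these isomorphisms with respect to finite MW-correspondences amounts to the statement that $\tilde\gamma$ of a product-permutation intertwines $\otimes$ of correspondences — again a consequence of the fact that exterior product and push-forward along $\sigma$ are themselves symmetric/associative up to the evident isomorphisms, together with graded-commutativity of the Chow--Witt product (page~\pageref{diag:graded-commutative}). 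The pentagon and hexagon identities, and the triangle identity for the unit, then reduce to the corresponding identities for the underlying scheme isomorphisms, which hold on the nose.

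I expect the main obstacle to be none of the structural content but the sign/twist accounting in the interchange law: tracking how the canonical isomorphisms identifying $\omega$ of a product with the tensor product of the pulled-back $\omega$'s interact with the $\langle -1\rangle^{(n+i)(m+j)}$-commutativity of the cup product, and confirming that all spurious $\langle -1\rangle$ factors cancel. One hygienic way to handle this is to fix once and for all, as in Remark~\ref{rem:everythingcommutes}, the convention that $\omega_{Y_i}$ lives in degree $d_i$, so that every reordering of factors is forced to use the graded commutativity isomorphism of $\glb$, and then observe that both sides of each coherence diagram incur \emph{the same} permutation of graded line bundles; the signs therefore agree by construction rather than by computation. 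With that convention in place, all the required identities follow formally from associativity and graded-commutativity of the Chow--Witt product together with the base-change and projection formulas, so I would present the proof as a reduction to those already-established facts rather than carrying out the diagram chases in full.
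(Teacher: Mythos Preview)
Your proposal is correct and in fact far more detailed than the paper's own proof, which consists of the single word ``Straightforward.'' The verification you outline---bifunctoriality via compatibility of the exterior product with pull-back, push-forward, and the base-change/projection formulas, together with the observation that the associativity, symmetry, and unit constraints are induced by the corresponding scheme isomorphisms via $\tilde\gamma$---is exactly the content the paper is leaving implicit.
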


\begin{proof}
Straightforward.
\end{proof}

%%%%%%%%%%%%%%%%%%%%%%%%%%%%%%%%%%%%%%%%%%%%%%%%%%%%%%%%%%%%%%%%%%%%%%%%%%%%%%%%%%%%%%%%%%%%%%%%%%%%%%%%%%%%%%%%%

\section{Presheaves on \texorpdfstring{$\cork$}{finte MW-correspondences}}

\begin{dfn}
A \emph{presheaf with MW-transfers} is a contravariant additive functor $\cork\to {\Ab}$. 
\index{presheaf!with Milnor-Witt transfers}%
\index[notation]{pshtk@$\pshMWk$}%
We will denote by $\pshMWk$ the category of presheaves with MW-transfers. Let $t$ be $\zar$, $\nis$ and $\et$, respectively the Zariksi, Nisnevich or étale topology on $\smk$. We say that a presheaf with MW-transfers is a $t$-sheaf with MW-transfers if when restricted to $\smk$, it is a sheaf in the $t$-topology. We denote by $\shMWtk$ the category of $t$-sheaves with MW-transfers.
\index[notation]{shtk@$\shMWtk$}%
\end{dfn}

\begin{rem}
The sheaves with MW-transfers are closely related to sheaves with generalized transfers as defined in \cite{Morel11_a}*{Definition~5.7}. Indeed, let $M$ be a Nisnevich sheaf with MW-transfers. Then it is endowed with an action of $\sKMW_0$ by Example \ref{ex:action}. Following the procedure described in Section \ref{sec:limits}, one can define $M(F)$ for any finitely generated field extension $F/k$. If $F\subset L$ is a finite field extension and $k$ is of characteristic different from $2$, then the canonical orientation of Lemma \ref{lem:orientation} together with the push-forwards defined in Example \ref{ex:push-forwards} show that we have a homomorphism $\Tr_F^L:M(L)\to M(F)$. In characteristic $2$, one has to integrate the canonical modules $\omega_{L/k}$ and $\omega_{K/k}$. One can then check that the axioms listed in \cite{Morel11_a}*{Definition 5.7} are satisfied. Conversely, a Nisnevich sheaf with generalized transfers in the sense of Morel yields a Nisnevich sheaf with MW-transfers by \cite{Feld21b_a}*{Theorem 3.2.4}. 
\end{rem}

Recall first that there is a forgetful additive functor $\pi:\cork\to \corVk$. It follows that any (additive) presheaf on $\corVk$ defines a presheaf on $\cork$ by composition. We now give a more genuine example. 

\begin{lem}
For any $j\in\ZZ$, the contravariant functor $X\mapsto \sKMW_j(X)$ is a presheaf on $\cork$.
\end{lem}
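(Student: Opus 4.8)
For any $j \in \ZZ$, the contravariant functor $X \mapsto \sKMW_j(X)$ extends to a presheaf on $\cork$; that is, a finite MW-correspondence $\alpha \in \cork(X,Y)$ induces a homomorphism $\alpha^* \colon \sKMW_j(Y) \to \sKMW_j(X)$, compatibly with composition and identities, and refining the ordinary pullback along morphisms of smooth schemes.

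The plan is to construct $\alpha^*$ by the standard ``correspondence action'' recipe: pull back along one projection, cup with the cycle $\alpha$, and push forward along the other projection. Concretely, assume first $Y$ is equidimensional of dimension $d$, and represent $\alpha$ by a class in $\chst{d}{T}{X\times Y}{\omega_Y}$ for some admissible $T \in \Adm(X,Y)$ (Lemma~\ref{lem:supportadmis}). Given $u \in \sKMW_j(Y) = \H^0(Y,\sKMW_j)$, first pull back along $p_Y \colon X\times Y \to Y$ to get $p_Y^*u \in \H^0(X\times Y,\sKMW_j)$. Then form the cup-product with $\alpha$ in the Chow--Witt groups with supports,
\[
p_Y^*u \cdot \alpha \in \H^d_T\big(X\times Y,\sKMW_{j+d}(p_Y^*\omega_{Y/k})\big),
\]
using the product structure recalled in Section~\ref{sec:chowwitt} (here I identify $\sKMW_j$ with $\H^0$-coefficients and keep track of the twist by $\omega_Y$ sitting in degree $d$). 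Since $T$ is finite over $X$, the projection $p_X \colon X\times Y \to X$ restricted to $T$ is finite, so we may apply the push-forward with supports from Section~\ref{sec:chowwitt}:
\[
(p_X)_* \colon \H^d_T\big(X\times Y,\sKMW_{j+d}(\omega_{X\times Y/k})\otimes (p_X^*\omega_{X/k}^\vee)\big) \to \H^0\big(X,\sKMW_j\big),
\]
where the twist bookkeeping is handled exactly as in the definition of $\grph f$ in Section~\ref{subsec:embedding} via the canonical isomorphism $(d_X+d,\omega_{X\times Y/k}) \simeq (d_X,p_X^*\omega_{X/k})\otimes(d,p_Y^*\omega_{Y/k})$. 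Setting $\alpha^*(u) := (p_X)_*(p_Y^*u \cdot \alpha)$ defines the required map; one checks it is independent of the representative $T$ because all operations commute with extension of supports, and extends to non-equidimensional $Y$ componentwise.

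The verification of functoriality splits into three checks, all of which reduce to formulas already in the text. First, $1_X^* = \id$: this follows from $\Delta_*(\langle 1\rangle) = 1_X$ (Example~\ref{ex:action}) together with the projection formula (Corollary~\ref{cor:pformula}), exactly as in the proof that $1_X$ is a unit for composition. Second, compatibility with composition, $(\alpha \circ \beta)^* = \beta^* \circ \alpha^*$ for $\beta \in \cork(W,X)$, $\alpha \in \cork(X,Y)$: this is the familiar ``$3 = 1 + 2$'' computation on $W\times X\times Y$, where one pulls back $u$ all the way to the triple product, multiplies by the pullbacks of both cycles, and uses the base change formula (Proposition~\ref{prop:basechange}) to commute push-forward past pullback and the projection formula (Corollary~\ref{cor:pformula}) to move cup-products through push-forwards — structurally identical to the proof of associativity in Lemma~\ref{lem:associative}. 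Third, that $\tilde\gamma^*$ of a morphism $f\colon X\to Y$ recovers the ordinary pullback $f^*\colon\sKMW_j(Y)\to\sKMW_j(X)$: here $\alpha = \grph f = (\Gamma_f)_*(\langle 1\rangle)$, $p_X\circ\Gamma_f = \id$, and the projection formula collapses $(p_X)_*(p_Y^*u \cdot (\Gamma_f)_*\langle 1\rangle) = (p_X)_*(\Gamma_f)_*(\Gamma_f^*p_Y^*u) = (p_Y\circ\Gamma_f)^*u = f^*u$.

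The main obstacle — and the part deserving genuine care rather than routine calculation — is the twist bookkeeping: at every stage one is working with $\sKMW$ twisted by a graded line bundle, and the cup-product introduces a factor of $\omega_Y$ in degree $d_Y$, the push-forward trades $\omega_{X\times Y/k}$ for $\omega_{X/k}$ via the relative canonical sheaf, and one must confirm that the resulting composite genuinely lands in \emph{untwisted} $\sKMW_j$ on $X$. This requires checking that the canonical isomorphisms used (the decomposition of $\omega_{X\times Y/k}$, the graded commutativity constraint with its sign $\langle -1\rangle^{(n+i)(m+j)}$, and the cancellation $(-i,\Lb^\vee)\otimes(i,\Lb)\simeq(0,\OO)$) are chosen consistently across the composition check — precisely the subtlety flagged in the proof of Lemma~\ref{lem:associative}. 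Once the conventions fixed in Notation~\ref{nota:omega} and the surrounding discussion are adopted uniformly, the signs cancel and the identifications are forced; the remaining arguments are then the three bullet-point verifications above, each a direct invocation of base change and projection formulas.
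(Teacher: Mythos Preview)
Your proposal is correct and follows essentially the same approach as the paper: define $\alpha^*(u) = (p_X)_*(p_Y^*u \cdot \alpha)$ using pull-back, cup-product, and push-forward with supports, then verify functoriality via the base change formula and the projection formula. The paper's proof carries out the composition check by an explicit diagram chase on $X \times Y \times Z$ (the same diagram~\eqref{eq:composition} used to define composition), making crucial use of the fact that $p_Y^*\beta$ commutes with any element of the total Chow-Witt group---this commutativity is the concrete content behind your remark about sign conventions and twist bookkeeping, and is what allows the terms to be rearranged correctly.
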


\begin{proof}
Let then $X,Y$ be smooth schemes and $T\subset X\times Y$ be an admissible subset. Let $\beta \in \sKMW_j(Y)$ and $\alpha\in \chst {d_Y}T{X\times Y}{\omega_Y}$. We set 
\[
\alpha^*(\beta):= p_*(p_Y^*(\beta)\cdot \alpha)
\]
where $p$ and $p_Y$ are the respective projections, and $p_*$ is defined using the canonical isomorphism $(d_Y,\omega_Y)\simeq (d_X+d_Y,\omega_{X\times Y})\otimes (-d_X,\omega_X^\vee)$. Note further that $p_Y^*(\beta)\cdot \alpha$ is supported on $T$ by definition of the intersection product. One may check that $\alpha^*$ is additive. Next, we observe that $p_Y^*(\beta)$ commutes with any element in the total Chow-Witt group of $X\times Y$ (e.g. \cite{Fasel20_a}*{\S 3.4} with $a=i=0$ and $a^\prime=i^\prime=j^\prime=d_X$ or \cite{Feld20_a}*{Theorem 11.6}).

If $T\subset T^\prime$ are both admissible, we have a commutative diagram
\[
\begin{tikzcd}
\chst {d_Y}T{X\times Y}{\omega_Y} \ar[r] \ar[rd,"p_*"'] & \chst {d_Y}{T^\prime}{X\times Y}{\omega_Y}\ar[d,"p_*"] \\
 & \ch iX
\end{tikzcd}
\]
where the top horizontal morphism is the extension of support. It follows that $\alpha\mapsto \alpha^*$ defines a map $\cork(X,Y)\to \homm {\Ab}{\H^i(Y,\sKMW_j)}{\H^i(X,\sKMW_j)}$ (we had $i=0$ in the above discussion, but the argument is the same for any $i\in\NN$ using $a^\prime=i^\prime=j^\prime=d_X$ for the commutativity statement). We now check that this map preserves the respective compositions. With this in mind, consider again the diagram (\ref{eq:composition})
\[
\begin{tikzcd}
X\times Z \ar[rrrd,bend left=1ex,"r_Z"] \ar[rddd,bend right=1ex,"p_X"'] & & & \\
 & X \times Y \times Z \ar[r,"q_{YZ}"'] \ar[d,"p_{XY}"] \ar[lu,"q_{XZ}"'] & Y \times Z \ar[r,"q_Z"'] \ar[d,"p_Y"] & Z \\
 & X \times Y \ar[r,"q_Y"'] \ar[d,"p"] & Y & \\
 & X & & 
\end{tikzcd}
\]
Let $\alpha_1\in \chst {d_Y}{T_1}{X\times Y}{\omega_Y}$ and $\alpha_2\in \chst {d_Z}{T_2}{Y\times Z}{\omega_Z}$ be correspondences, with $T_1\subset X\times Y$ and $T_2\subset Y\times Z$ admissible. Let moreover $\beta\in \sKMW_j(Z)$. By definition, we have 
\[
(\alpha_2\circ\alpha_1)^*(\beta)=(p_X)_*[r_Z^*\beta\cdot (q_{XZ})_*(p_{XY}^*\alpha_1\cdot q_{YZ}^*\alpha_2)].
\]
Using the projection formula and the fact that $r_Z^*\beta$ commutes with the total Chow-Witt group, we have 
\begin{eqnarray*}
 (p_X)_*[r_Z^*\beta\cdot (q_{XZ})_*(p_{XY}^*\alpha_1\cdot q_{YZ}^*\alpha_2)]& = & (p_X)_*[(q_{XZ})_*(p_{XY}^*\alpha_1\cdot q_{YZ}^*\alpha_2)\cdot r_Z^*\beta]   \\
 & = & (p_X)_*[(q_{XZ})_*(p_{XY}^*\alpha_1\cdot q_{YZ}^*\alpha_2\cdot q_{XZ}^*r_Z^*\beta)] \\
 & = & p_*(p_{XY})_*(p_{XY}^*\alpha_1\cdot q_{YZ}^*\alpha_2\cdot q_{XZ}^*r_Z^*\beta).
\end{eqnarray*} 
On the other hand,
\begin{eqnarray*}
\alpha_1^*\circ \alpha_2^*(\beta) & = & \alpha_1^*((p_Y)_*(q_Z^*\beta\cdot \alpha_2)) \\
 & = & p_*(q_Y^*(p_Y)_*(q_Z^*\beta\cdot \alpha_2)\cdot \alpha_1).
\end{eqnarray*}
By base change, $q_Y^*(p_Y)_*=(p_{XY})_*q_{YZ}^*$ and it follows that 
\begin{eqnarray*}
\alpha_1^*\circ \alpha_2^*(\beta) & = & p_*((p_{XY})_*(q_{YZ}^*q_Z^*\beta\cdot q_{YZ}^*\alpha_2)\cdot \alpha_1) \\
\end{eqnarray*}
Using the projection formula once again, the latter is equal to 
\[
p_*(p_{XY})_*(q_{YZ}^*q_Z^*\beta\cdot q_{YZ}^*\alpha_2\cdot p_{XY}^*\alpha_1)
\]
We conclude using Remark \ref{rem:everythingcommutes} and the fact that $\beta$ commutes with the total Chow-Witt group.
\end{proof}

\begin{rem}
More generally, the contravariant functor $X\mapsto \H^i(X,\sKMW_j)$ is a presheaf on $\cork$ for any $j\in\ZZ$ and $i\in\NN$. The above proof applies with minor modifications, or we can alternatively use the facts that $\sKMW_j$ is a homotopy invariant Nisnevich sheaf with MW-transfers, that the category of Nisnevich sheaves with MW transfers has enough injectives and that Zariski and Nisnevich cohomology coincide for invariant Nisnevich sheaves with MW-transfers. We refer the reader to \chdmt for details.

In contrast, the sheaf $\sKMW_j$ for $j\in\ZZ$ doesn't have transfers in the sense of Voevodsky. If this were the case, then it would define an object in the category of (effective) motives and it would follow from \cite{Mazza06_a}*{Corollary 15.5} that the projective bundle formula would hold for the cohomology of this sheaf. This is not the case by \cite{Fasel13_a}*{Theorem 11.7}.
\end{rem}

\subsection{Extending presheaves to limits}\label{sec:limits}

We consider the category $\cP$ of filtered projective systems $((X_\lambda)_{\lambda \in I},f_{\lambda\mu})$ of smooth quasi-compact schemes over $k$, with affine étale 
transition morphisms $f_{\lambda\mu}:X_\lambda \to X_\mu$. Morphisms in $\cP$ are defined by 
\[
\Hom((X_\lambda)_{\lambda \in I}, (X_\mu)_{\mu \in J})=\varprojlim_{\mu\in J} \big(\varinjlim_{\lambda\in I} \Hom(X_\lambda,X_{\mu})\big)
\]
as in \cite{EGA4-3_a}*{8.13.3}. The limit of such a system exists in the category of schemes by loc.\ cit.\ 8.2.5, and by 8.13.5, the functor sending a projective system to its limit defines an equivalence of categories from $\cP$ to the full subcategory $\barsm{k}$ of schemes over $k$ that are limits of such systems.
\begin{rem}\label{rem:constantprojective}
It follows from this equivalence of categories that such a projective system converging to a scheme that is already finitely generated (e.g.\ smooth) over $k$ has to be constant above a large enough index.
\end{rem}

Let now $F$ be a presheaf of abelian groups on $\smk$. We extend $F$ to a presheaf $\bar F$ on $\cP$ by setting on objects $\bar F((X_\lambda)_{\lambda \in I})=\varinjlim_{\lambda\in I} F(X_\lambda)$. An element of the set $\varprojlim_{\mu\in J} \big(\varinjlim_{\lambda\in I} \Hom(X_\lambda,X_{\mu})\big)$ yields a morphism $\varinjlim_{\mu\in J} F(X_\mu)\to \varinjlim_{\lambda\in I} F(X_\lambda)$, this respects composition, and thus $\bar F$ is well-defined. Using the above equivalence of categories, it follows immediately that $\bar F$ defines a presheaf on $\barsm{k}$ which extends $F$ in the sense that $\bar F$ and $F$ coincide on $\smk$ since a smooth scheme can be considered as a constant projective system.

Since any finitely generated field extension $L/k$ can be written as a limit of smooth schemes in the above sense, we can consider in particular $F(\Spec(L))$ and to shorten the notation, we often write $F(L)$ instead of $\bar F(\Spec(L))$ in what follows.

We will mainly apply this limit construction when $F$ is $\cork(- \times X,Y)$, $\sKMW_*$ or the MW-motivic cohomology groups $\HMW^{p,q}(-,\ZZ)$, to be defined in \ref{def:genmotiviccohom}. 
\medskip

We now slightly extend this equivalence of categories to a framework useful for Chow-Witt groups with support. We consider the category $\cT$ of triples $(X,Z,\Lb)$ where $X$ is a scheme over $k$, with a closed subset $Z$ and a line bundle $\Lb$ over $X$. A morphism from $(X_1,Z_1,\Lb_1)$ to $(X_2,Z_2,\Lb_2)$ in this category is a pair $(f,i)$ where $f:X_1 \to X_2$ is a morphism of $k$-schemes such that $f^{-1}(Z_2)\subseteq Z_1$ and $i:f^*\Lb_2 \to \Lb_1$ is an isomorphism of line bundles over $X_1$. The composition of two such morphisms $(f,i)$ and $(g,j)$ is defined as $(f\circ g, j \circ g^*(i))$. Let $\tP$ be the category of projective systems in that category such that:
\begin{itemize}
\item The objects are smooth quasi-compact and the transition maps are affine étale;
\item For any index $\alpha$, there exists $\mu\geq \alpha$ such that for any $\lambda\geq \mu$ we have $f_{\lambda\mu}^{-1}(Z_\mu)=Z_\lambda$,
\end{itemize}
with morphisms defined by a double limit of $\Hom$ groups in $\cT$, as for $\cP$.
Let $X$ be the inverse limit of the $X_\lambda$. All the pull-backs of the various $\Lb_\lambda$ to $X$ are canonically isomorphic by pulling back the isomorphisms $i_{\lambda\mu}$, and by the last condition, the inverse images of the closed subsets $Z_\lambda$ stabilize to a closed subset $Z$ of $X$. In other words, the inverse limit of the system exists in $\cT$. 

Given a limit scheme $X=\varprojlim X_\lambda$, any line bundle $\Lb$ on $X$ is obtained as a pull-back from a line bundle on some $X_\lambda$, by \cite[8.5.2 (ii) and 8.5.5]{EGA4-3_a}. However, the closed subsets of $X$ that are obtained as limits of closed subsets $Z_\lambda$ of $X_\lambda$ are the constructible ones, as follows from \cite[8.3.11]{EGA4-3_a} by noting that all the $Z_\lambda$ are constructible because $X_\lambda$ is noetherian. If $X$ itself happens to be noetherian, then all closed $Z$ in $X$ are constructible and are thus limits. Summarizing: 
\begin{prop} \label{prop:equivtriple}
The functor sending a system to its inverse limit is fully faithful. In other words, it defines an equivalence of categories from $\tP$ to the full subcategory of $\cT$ of objets $(X,Z,\Lb)$ such that $X$ is in $\barsm{k}$, and $Z$ is constructible (which is automatic when $X$ is noetherian).
\end{prop}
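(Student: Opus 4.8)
The plan is to deduce everything from the equivalence $\cP \simeq \barsm{k}$ of \cite{EGA4-3_a}*{8.13.5} together with the permanence of constructible subsets and of finitely presented modules under cofiltered limits of schemes, \cite{EGA4-3_a}*{\S 8.3 and \S 8.5}. Write $\cT_0 \subseteq \cT$ for the full subcategory appearing in the statement. The discussion preceding the proposition already shows that a system in $\tP$ admits an inverse limit in $\cT$ lying in $\cT_0$; applying this to morphisms (a morphism of $\tP$ being, by definition, an element of the double limit $\varprojlim_\mu \varinjlim_\lambda \Hom_\cT$, i.e.\ a coherent family of morphisms $(g_{\lambda\mu}, j_{\lambda\mu})$ at finite level) produces a functor $L\colon \tP \to \cT_0$: one takes the limit of the data $(g_{\lambda\mu}, j_{\lambda\mu})$ and intersects the inclusions $g_{\lambda\mu}^{-1}(W_\mu) \subseteq Z_\lambda$ over all indices to get a morphism $(f,i)$ in $\cT$. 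It then remains to check that $L$ is essentially surjective and fully faithful.

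For essential surjectivity I would start from $(X, Z, \Lb) \in \cT_0$ and a system $(X_\lambda, f_{\lambda\mu})_{\lambda \in I} \in \cP$ with $X = \varprojlim_\lambda X_\lambda$. By \cite{EGA4-3_a}*{8.5.2(ii), 8.5.5} there are an index $\lambda_1$ and a line bundle $\Lb_{\lambda_1}$ on $X_{\lambda_1}$ with $\Lb \simeq f_{\lambda_1}^*\Lb_{\lambda_1}$, and, since $Z$ is closed and constructible, \cite{EGA4-3_a}*{8.3.11} gives an index $\lambda_2$ and a closed $Z_{\lambda_2}\subseteq X_{\lambda_2}$ with $Z = f_{\lambda_2}^{-1}(Z_{\lambda_2})$. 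Pick $\lambda_0 \geq \lambda_1,\lambda_2$, restrict $I$ to the cofinal set $\{\lambda \geq \lambda_0\}$, and set $\Lb_\lambda := f_{\lambda\lambda_1}^*\Lb_{\lambda_1}$ (with the obvious transition isomorphisms) and $Z_\lambda := f_{\lambda\lambda_2}^{-1}(Z_{\lambda_2})$. Then $f_{\lambda\mu}^{-1}(Z_\mu) = Z_\lambda$ for all $\lambda \geq \mu \geq \lambda_0$, so $(X_\lambda, Z_\lambda, \Lb_\lambda)_\lambda \in \tP$, and its limit in $\cT$ is $(X, Z, \Lb)$ since $\bigcap_{\lambda \geq \lambda_0} f_\lambda^{-1}(Z_\lambda) = f_{\lambda_2}^{-1}(Z_{\lambda_2}) = Z$.

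For full faithfulness, fix $\mathcal{X} = (X_\lambda, Z_\lambda, \Lb_\lambda)_{\lambda\in I}$ and $\mathcal{Y} = (Y_\mu, W_\mu, \Mb_\mu)_{\mu\in J}$ in $\tP$ with limits $(X,Z,\Lb)$, $(Y,W,\Mb)$ and projections $\pi_\mu\colon Y \to Y_\mu$. Injectivity of $\Hom_\tP(\mathcal{X},\mathcal{Y}) \to \Hom_\cT((X,Z,\Lb),(Y,W,\Mb))$ is immediate: on underlying schemes $\varprojlim_\mu\varinjlim_\lambda \Hom(X_\lambda,Y_\mu) \to \Hom(X,Y)$ is bijective by \cite{EGA4-3_a}*{8.13.5}, on line bundles (pulled back to a common finite stage) $\varinjlim_\lambda \Hom(g_{\lambda\mu}^*\Mb_\mu,\Lb_\lambda) \to \Hom(f^*\Mb,\Lb)$ is bijective by \cite{EGA4-3_a}*{8.5.2(ii)}, and the support inclusion is a condition, not data. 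For surjectivity, given $(f,i)$, I would first note that $J' := \{\mu : \pi_\mu^{-1}(W_\mu) = W\}$ is cofinal in $J$: by the $\tP$-condition on $\mathcal{Y}$, whenever $f_{\mu'\mu}^{-1}(W_\mu) = W_{\mu'}$ for all $\mu' \geq \mu$ one gets $\pi_{\mu'}^{-1}(W_{\mu'}) = \pi_\mu^{-1}(W_\mu)$ for these $\mu'$, hence $W = \pi_\mu^{-1}(W_\mu)$ as the latter form a cofinal family. For $\mu \in J'$, choose via \cite{EGA4-3_a}*{8.13.5} an index $\lambda$ and $g_{\lambda\mu}\colon X_\lambda \to Y_\mu$ representing $\pi_\mu\circ f$; then the preimage of $g_{\lambda\mu}^{-1}(W_\mu)$ in $X$ is $f^{-1}(\pi_\mu^{-1}(W_\mu)) = f^{-1}(W) \subseteq Z$, which is contained in the preimage of $Z_\lambda$. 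Since $g_{\lambda\mu}^{-1}(W_\mu)$ and $Z_\lambda$ are constructible in the noetherian scheme $X_\lambda$, descent of inclusions of constructible subsets, \cite{EGA4-3_a}*{\S 8.3}, together with the $\tP$-condition gives $\lambda' \geq \lambda$ with $g_{\lambda'\mu}^{-1}(W_\mu) = f_{\lambda'\lambda}^{-1}(g_{\lambda\mu}^{-1}(W_\mu)) \subseteq f_{\lambda'\lambda}^{-1}(Z_\lambda) = Z_{\lambda'}$; descending the isomorphism $f^*\Mb \simeq \Lb$ along the same indices yields an element of $\varinjlim_\lambda \Hom_\cT((X_\lambda,Z_\lambda,\Lb_\lambda),(Y_\mu,W_\mu,\Mb_\mu))$. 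These are coherent over $\mu \in J'$, and since $J'$ is cofinal the limit over $J$ agrees with the limit over $J'$, so they assemble to an element of $\Hom_\tP(\mathcal{X},\mathcal{Y})$ mapping to $(f,i)$.

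The bookkeeping with transition isomorphisms and the independence of all constructions from the auxiliary indices is routine. The one step that genuinely requires care is the support condition: the inclusion $g_{\lambda\mu}^{-1}(W_\mu) \subseteq Z_\lambda$ may fail at small indices $\mu$ (where $\pi_\mu^{-1}(W_\mu)$ is strictly larger than $W$), so one must first pass to the cofinal family $J'$ on which $\pi_\mu^{-1}(W_\mu)$ has stabilised to $W$ before the constructibility descent of \cite{EGA4-3_a}*{\S 8.3} can be invoked; this is where I expect the main (mild) obstacle to lie.
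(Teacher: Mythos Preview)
Your proof is correct and follows exactly the approach the paper sets up: the paper does not give a separate proof of this proposition, but the paragraph immediately preceding it sketches essential surjectivity via \cite{EGA4-3_a}*{8.5.2(ii), 8.5.5} for the line bundle and \cite{EGA4-3_a}*{8.3.11} for the constructible closed subset, and treats full faithfulness as implicit in the equivalence $\cP \simeq \barsm{k}$ of \cite{EGA4-3_a}*{8.13.5} combined with the same finite-presentation results. Your write-up is a faithful and careful expansion of this, including the one genuinely delicate point (passing to a cofinal subfamily of the target index set where the closed subsets have stabilised before descending the support inclusion), which the paper leaves unstated.
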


As previously, any presheaf of abelian groups or sets $F$ defined on the full subcategory of $\cT$ with smooth underlying schemes can be extended uniquely to the subcategory of $\cT$ with underlying schemes that are limits. 

If the limit scheme $X$ happens to be smooth over a field extension $L$ of $k$, for any nonnegative integer $i$, the pull-back induces a map
\[
\varinjlim_{\lambda \in I} \chst i{Z_\lambda}{X_\lambda} {\Lb_\lambda} \to \chst i Z X \Lb.
\]
\begin{lem} \label{lem:limitChowiso}
This map is an isomorphism.
\end{lem}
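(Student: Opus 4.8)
The plan is to reduce the statement to a known compatibility of Chow--Witt cohomology with filtered limits of schemes. Recall that Chow--Witt groups with support are computed by the Rost--Schmid complex (or Feld's Milnor--Witt complex), which is the Gersten-type complex whose degree-$j$ term is $\bigoplus_{x\in X^{(j)}}\KMW_{i-j}(k(x),(j,\Lambda(x))\otimes\Lb_x)$, with differentials built from residue homomorphisms. So the first step is to observe that a filtered limit $X=\varprojlim X_\lambda$ with affine \'etale transition maps, along the index range where $f_{\lambda\mu}^{-1}(Z_\mu)=Z_\lambda$ stabilizes, induces on each codimension stratum a filtered colimit: every point $x\in X^{(j)}$ lying over $Z$ is the image of a compatible system of points $x_\lambda\in X_\lambda^{(j)}$ with $k(x)=\varinjlim k(x_\lambda)$ (this uses that the $X_\lambda$ are noetherian, dimension is preserved under the \'etale transition maps, and the standard limit arguments of \cite{EGA4-3_a}*{\S8}, in particular 8.3.11 for constructibility of the strata and 8.4 for the behaviour of points).

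Second, I would check that Milnor--Witt $K$-theory commutes with filtered colimits of fields, including the twisted version $\KMW_*(F,(i,L))=\KMW_*(F)\otimes_{\ZZ[F^\times]}\ZZ[L^\times]$: this is immediate from the explicit generators-and-relations presentation in Section~\ref{sec:MWKtheory}, since each generator $[a]$ or $\eta$, and each relation, already lives at a finite stage, and $\otimes$ commutes with colimits. One also needs that the residue maps $\partial_v^\pi$ are compatible with the transition maps, which follows from their characterization by the uniqueness in Theorem~\ref{thm:res} together with the fact that uniformizers can be chosen compatibly. Putting these together, the Rost--Schmid complex of $(X,Z,\Lb)$ is the filtered colimit (termwise, hence as complexes) of the Rost--Schmid complexes of $(X_\lambda,Z_\lambda,\Lb_\lambda)$ over a cofinal range of indices. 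Since filtered colimits are exact, taking cohomology commutes with the colimit, which gives exactly the asserted isomorphism $\varinjlim_\lambda \chst i{Z_\lambda}{X_\lambda}{\Lb_\lambda}\xrightarrow{\sim}\chst iZX\Lb$, and one checks this colimit map is the pull-back map by inspecting the definition of $f_{\lambda\mu}^*$ on Rost--Schmid complexes (given in \cite{Fasel07_a}*{Definition 7.1} or \cite{Feld20_a}*{Definition 5.6}).

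The main obstacle I anticipate is the bookkeeping around \emph{points and strata} rather than anything deep: one must be careful that a codimension-$j$ point of $X$ really comes from, and is unique up to the transition maps as, a codimension-$j$ point at a finite stage, and that the support condition on $Z$ (constructibility, stabilization of $f_{\lambda\mu}^{-1}(Z_\mu)$) is exactly what makes the subcomplexes "supported on $Z_\lambda$'' pass to the colimit correctly. A secondary technical point is the compatibility of the \emph{twist} by $\Lambda(x)\otimes\Lb_x$ along the limit: since $\Lb$ descends to some $\Lb_\lambda$ by \cite{EGA4-3_a}*{8.5.2}, and $\Lambda(x)=\wedge^j(\mathfrak m_x/\mathfrak m_x^2)^\vee$ is manifestly compatible with \'etale base change, this is straightforward but needs to be stated. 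Once these compatibilities are in place, the proof is a one-line exactness-of-filtered-colimits argument, so I would present it in that order: (i) reduce to Rost--Schmid complexes; (ii) identify the colimit of complexes; (iii) conclude by exactness of $\varinjlim$.
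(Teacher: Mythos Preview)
Your approach is correct but genuinely different from the paper's. You work directly with the Rost--Schmid complex and argue that it commutes termwise with the filtered colimit, then conclude by exactness of filtered colimits. The paper instead first treats the trivial case where $X$ is already smooth over $k$ (the system is eventually constant), then for general $X$ uses the localization long exact sequence together with exactness of filtered colimits to reduce to the case $Z=X$, and finally invokes the general fact that Zariski and Nisnevich sheaf cohomology commute with such inverse limits of schemes (SGA4-2, Cor.~8.7.7), using that Chow--Witt groups are by definition sheaf cohomology of $\sKMW_n(\Lb)$.

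What each buys: your argument is self-contained and stays close to the explicit cycle-theoretic description, but the bookkeeping you flag around points and strata (a codimension-$j$ point of the limit corresponds to a compatible system of codimension-$j$ points, residue fields are the colimits, the direct-sum indexing is compatible) is genuinely where the work lies and is more delicate than you indicate---in particular one must check that distinct points of $X$ can be separated at a finite stage and that the pullback maps, which spread a summand to all preimages, behave correctly in the colimit. The paper's route sidesteps all of this by reducing to a statement about sheaf cohomology where the limit compatibility is already packaged in SGA4; the price is a dependence on that reference rather than a direct verification.
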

\begin{proof}
When $X$ is smooth over $k$ itself, $(X,Z,\Lb)$ can be considered as a constant projective system, and the conclusion follows from Remark \ref{rem:constantprojective} and the equivalence of categories of Proposition \ref{prop:equivtriple}.

In the general case, considering an $X_\lambda$ from which $Z$ is pulled-back and its open complement in $X_\lambda$, one can use the associated long exact sequence of localization, its compatibility to flat pull-backs and the exactness of filtered colimits to reduce to the case where $Z=X$. 

The line bundle $\Lb$ also comes from some $X_\lambda$, and the Chow-Witt group being defined as sheaf cohomology of the Milnor-Witt sheaf with coefficients in $\Lb$, the statement thus follows from the fact that (Zariski and Nisnevich) sheaf cohomology commutes to such inverse limits of schemes, by \cite[Cor.~8.7.7]{SGA4-2_a}.
\end{proof}

\subsection{Representable presheaves}
\label{sec:representable}%

\begin{dfn}
\label{dfn:reprpresheaf}%
Let $X \in \smk$. We denote by $\MWprep(X)$ the presheaf $\cork(-,X)$. 
\index[notation]{ctx@$\MWprep(X)$}%
\index{presheaf!representable|see{Milnor-Witt representable presheaf}}%
\index{Milnor-Witt!presheaf!representable}%
If $x:\Spec(k) \to X$ is a rational point, we denote by $\MWprep(X,x)$ the cokernel of the morphism $\MWprep(\Spec(k)) \to \MWprep(X)$ (which is split injective since $\tilde\gamma$ is a functor). More generally, let $X_1,\ldots,X_n$ be smooth schemes pointed respectively by the rational points $x_1,\ldots,x_n$. We define $\MWprep\big((X_1,x_1)\wedge\ldots\wedge (X_n,x_n)\big)$
\index[notation]{ctx1@$\MWprep\big((X_1,x_1)\wedge\ldots\wedge (X_n,x_n)\big)$}%
as the cokernel of the split injective map
\[
\begin{tikzcd}
[column sep=20ex]
\displaystyle \bigoplus_i \MWprep(X_1\times \ldots \times X_{i-1}\times X_{i+1}\times \ldots \times X_n) 
\ar[r,"{\id_{X_1}\times \cdots \times x_i \times \cdots \times \id_{X_n}}"] 
&  \MWprep(X_1\times\ldots\times X_n).
\end{tikzcd}
\]
\end{dfn}

\begin{exm}
It follows from Example \ref{ex:basic} that $\MWprep(\Spec(k))=\sKMW_0$.
\end{exm}

Our next goal is to check that for any smooth scheme $X$, the presheaf $\MWprep(X)$ is actually a sheaf in the Zariski topology. We start with an easy lemma. For a smooth connected scheme $Y$, we consider $k(Y)$ as a projective system as in the previous section, obtaining a well-defined expression $\cork(k(Y),X)$. 

\begin{lem}\label{lem:unramified}
Let $Y \in \smk$ be connected, with function field $k(Y)$. Then for any $X \in \smk$, the homomorphism $\cork(Y,X)\to \cork(k(Y),X)$ is injective.
\end{lem}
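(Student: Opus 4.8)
The statement is that for $Y$ smooth connected with function field $k(Y)$, the natural map $\cork(Y,X)\to\cork(k(Y),X)$ is injective. The plan is to reduce everything to the injectivity statement of Lemma~\ref{lem:restrictioninj} (the restriction map $\cork(V,Y)\to\cork(U,Y)$ along an open immersion $U\subset V$ is injective), combined with the colimit description of $\cork(k(Y),X)$ coming from the extension of presheaves to limits in Section~\ref{sec:limits}. Concretely, write $k(Y)=\varinjlim_\lambda \OO(U_\lambda)$ where $U_\lambda$ ranges over the (cofiltered) system of nonempty affine open subsets of $Y$; then by the construction of $\overline{F}$ for $F=\cork(-,X)$ we have $\cork(k(Y),X)=\varinjlim_{U\subset Y}\cork(U,X)$, the colimit over nonempty opens with restriction maps as transition maps.

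First I would fix $\alpha\in\cork(Y,X)$ mapping to $0$ in $\cork(k(Y),X)$. Since a filtered colimit of abelian groups has the property that an element dies iff it dies at some finite stage, there is a nonempty open $U\subset Y$ with $\alpha_{|U}=0$ in $\cork(U,X)$. By Lemma~\ref{lem:restrictioninj}, applied to the open immersion $U\subset Y$ (noting $Y$ is connected hence irreducible, so $U$ is dense and the hypotheses are met — we may also reduce to $Y$ equidimensional since $Y$ is connected), the restriction map $\cork(Y,X)\to\cork(U,X)$ is injective. Therefore $\alpha=0$, which is exactly what we want.

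The only point requiring a little care is the identification $\cork(k(Y),X)=\varinjlim_{U}\cork(U,X)$, i.e.\ that the limit-extended presheaf evaluated at $\Spec k(Y)$ really is this colimit over opens of $Y$ rather than over some larger cofinal system of affine \'etale neighborhoods. Here one uses that the affine opens of $Y$ already form a cofinal subsystem of the pro-\'etale system computing $k(Y)$ as an object of $\barsm{k}$ (or more simply, that for the purposes of evaluating $\overline{F}$ one may use any cofinal system, and $\{\OO(U)\}_{U\subset Y \text{ open affine}}$ is a legitimate filtered system with colimit $k(Y)$); the transition maps in the colimit of $\cork(-,X)$-values are then precisely the restriction maps along open immersions, matching the map in the statement via $U\subset Y$. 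This is the main (and essentially only) obstacle, and it is routine given the setup of Section~\ref{sec:limits}; everything else is an immediate application of Lemma~\ref{lem:restrictioninj} and the exactness of filtered colimits.
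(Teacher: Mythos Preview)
Your proof is correct and follows essentially the same approach as the paper: both identify $\cork(k(Y),X)$ as the filtered colimit $\varinjlim_{U}\cork(U,X)$ over nonempty opens of $Y$ and then invoke Lemma~\ref{lem:restrictioninj} to conclude that all transition maps (in particular the one from $Y$) are injective. The paper states this in a single sentence, while you have unpacked the filtered-colimit argument explicitly, but the content is the same.
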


\begin{proof}
It follows from Lemma \ref{lem:restrictioninj} that all transition maps in the system converging to $\cork(k(Y),X)$ are injective. 
\end{proof}

\begin{prop} \label{prop:zarsheaf}
For any $X \in \smk$, the presheaf $\MWprep(X)$ is a sheaf in the Zariski topology.
\end{prop}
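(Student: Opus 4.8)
The plan is to verify the sheaf axioms for $\MWprep(X) = \cork(-,X)$ with respect to the Zariski topology directly, reducing separation to Lemma~\ref{lem:unramified} and gluing to the description of elements of $\cork$ via their supports (Lemma~\ref{lem:supportadmis}). It suffices to treat a smooth connected $Y$, an open cover $Y = \bigcup_i U_i$, and — using that $\cork(-,X)$ factors over the equidimensional components of $X$ — to assume $X$ equidimensional of dimension $d$. First I would check \emph{separation}: if $\alpha \in \cork(Y,X)$ restricts to $0$ on each $U_i$, then in particular it restricts to $0$ on the function field $k(Y)$ (any $U_i$ contains the generic point), so $\alpha = 0$ by Lemma~\ref{lem:unramified}. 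Here one uses Lemma~\ref{lem:supporttoopen} to know the restriction to $U_i$ is again the natural pull-back map of Chow--Witt groups with support, compatible with passing to $k(Y)$.

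For \emph{gluing}, suppose we are given $\alpha_i \in \cork(U_i,X)$ agreeing on overlaps $U_{ij} := U_i \cap U_j$. By Lemma~\ref{lem:supportadmis}, each $\alpha_i$ lives in $\chst{d}{T_i}{U_i \times X}{\omega_X}$ for $T_i \in \Adm(U_i,X)$ the support of $\alpha_i$. Using Lemma~\ref{lem:supporttoopen}, the supports match up on overlaps, so the $T_i$ glue (as closed subsets, by Lemma~\ref{lem:admissiblesheaf}) to a closed subset $T \subset Y \times X$ which is finite and surjective over $Y$, i.e.\ $T \in \Adm(Y,X)$. Now the key point is that the Chow--Witt group with a \emph{fixed} support is itself a sheaf in the right sense: since the Chow--Witt groups $\chst{d}{T}{Y\times X}{\omega_X}$ are computed by the Rost--Schmid (or Feld) complex, and that complex is a flabby resolution, the presheaf $V \mapsto \chst{d}{T\cap V}{V\times X}{\omega_X}$ on the small Zariski site of $Y\times X$ is a sheaf — indeed its sections are the cycles supported on $T$ that are globally unramified, and being unramified is a local condition on $Y\times X$. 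Concretely, an element of $\chst{d}{T}{Y\times X}{\omega_X}$ is a family of components in $\KMW_0(k(x),\Lambda(x)\otimes(\omega_X)_x)$ over the codimension-$d$ points $x$ of $T$ satisfying the Rost--Schmid cocycle (vanishing residue) conditions, which are checked at codimension-$(d{+}1)$ points and hence Zariski-locally on $Y\times X$. The $\alpha_i$, regarded this way, have matching components over each generic point of $T$ (they agree over $k(Y)$, hence over the function field of each component of $T$), so together they determine a single such family $\alpha$; the cocycle conditions hold because they hold after restricting to each $U_i \times X$, which cover $Y\times X$ together with the image of the generic points. Thus $\alpha \in \chst{d}{T}{Y\times X}{\omega_X} \subset \cork(Y,X)$, and by Lemma~\ref{lem:supporttoopen} it restricts to $\alpha_i$ on each $U_i$.

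The main obstacle, and the step deserving the most care, is the gluing of the supports and the verification that the glued family of components genuinely lands in the Chow--Witt group with support $T$ rather than merely in the ambient sum $\bigoplus_x \KMW_0(k(x),\Lambda(x)\otimes(\omega_X)_x)$ — i.e.\ that the Rost--Schmid differential kills it. This is where one must invoke that the Rost--Schmid complex of $\sKMW_d(\omega_X)$ on $Y\times X$ is a complex of flabby sheaves on $Y\times X$ (Morel, \cite{Morel12_a}*{Chapter~5}; alternatively Feld \cite{Feld20_a}*{Section~7}), so that cohomology with support commutes with restriction to the Zariski cover $\{U_i\times X\}$ of $Y\times X$, together with the injectivity statement of Lemma~\ref{lem:unramified} to pin down the components at the generic points. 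Everything else is a matter of bookkeeping with supports via Lemmas~\ref{lem:admissiblesheaf}, \ref{lem:supportadmis} and \ref{lem:supporttoopen}.
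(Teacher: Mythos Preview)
Your proof is correct and follows essentially the same strategy as the paper's: separation via Lemma~\ref{lem:unramified}, gluing of supports via Lemmas~\ref{lem:supportadmis}, \ref{lem:supporttoopen}, \ref{lem:admissiblesheaf}, and then gluing the cycles with a fixed support. The only cosmetic difference is that the paper reduces to a two-open cover and finishes by invoking the localization long exact sequence for Chow--Witt groups with support in the glued admissible set $T$, whereas you unpack this same fact by arguing directly that the flabby Rost--Schmid resolution makes cohomology-with-fixed-support a Zariski sheaf.
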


\begin{proof}
It suffices to prove that for any cover of a scheme $Y$ by two Zariski open sets $U$ and $V$, the equalizer sequence
\[
\begin{tikzcd}
\cork(Y,X) \ar[r] & \cork(U,X) \coprod \cork(V,X) \ar[r,shift right=.5ex] \ar[r,shift left=.5ex] & \cork(U \cap V,X)
\end{tikzcd}
\]
is exact in the well-known sense. Injectivity of the map on the left follows from Lemma \ref{lem:unramified}. To prove exactness in the middle, let $\alpha$ and $\beta$ be elements in the middle groups, equalizing the right arrows and with respective supports $E \in \Adm(U,X)$ and $F \in \Adm(V,X)$ by Lemma \ref{lem:supportadmis}. Since $\alpha$ restricted to $V$ is $\beta$ restricted to $U$, we must have $E \cap V=F \cap U$ by Lemma \ref{lem:supporttoopen}. By Lemma \ref{lem:admissiblesheaf}, the closed set $T=E \cup F$ is admissible, and the conclusion now follows from the long exact sequence of localization for Chow-Witt groups with support in $T$. 
\end{proof}

\begin{exm} \label{exm:notnis}
In general, the Zariski sheaf $\MWprep(X)=\cork(-,X)$ is not a Nisnevich sheaf (and therefore not an étale sheaf either). Suppose that $k$ is of characteristic different from $2$ and set $\Aone_{a_1,\ldots,a_n}:=\Aone\setminus \{a_1,\ldots,a_n\}$ for any rational points $a_1,\ldots,a_n\in \Aone(k)$ and consider the elementary Nisnevich square
\[
\begin{tikzcd}
\Aone_{0,1,-1} \ar[r] \ar[d,"g"'] & \Aone_{0,-1} \ar[d,"f"] \\
\Aone_{0,1} \ar[r] & \Aone_0
\end{tikzcd}
\]
where the morphism $f:\Aone_{0,-1}\to \Aone_0$ is given by $x\mapsto x^2$, the horizontal maps are inclusions and $g$ is the base change of $f$. 

We now show that $\MWprep(\Aone_{0,1})$ is not a Nisnevich sheaf. In order to see this, we prove that the sequence
\[
\MWprep(\Aone_{0,1})(\Aone_{0})\to \MWprep(\Aone_{0,1})(\Aone_{0,1})\oplus \MWprep(\Aone_{0,1})(\Aone_{0,-1})\to \MWprep(\Aone_{0,1})(\Aone_{0,1,-1})
\]
is not exact in the middle. Let $\Delta:\Aone_{0,1}\to \Aone_{0,1} \times \Aone_{0,1}$ be the diagonal embedding. It induces an isomorphism
\[
\sKMW_0(\Aone_{0,1})\to \chst 1{\Delta(\Aone_{0,1})}{\Aone_{0,1}\times \Aone_{0,1}}{\omega_{\Aone_{0,1}}}
\]
and thus a monomorphism $\sKMW_0(\Aone_{0,1})\to \MWprep(\Aone_{0,1})(\Aone_{0,1})$. Since the restriction homomorphism $\sKMW_0(\Aone_0)\to \sKMW_0(\Aone_{0,1})$ is injective, it follows that the class $\eta\cdot [t]=-1+\langle t\rangle $ is non trivial in $\sKMW_0(\Aone_{0,1})$ and its image $\alpha_t$ in $\MWprep(\Aone_{0,1})(\Aone_{0,1})$ is also non trivial. We claim that its restriction in $\MWprep(\Aone_{0,1})(\Aone_{0,1,-1})$ is trivial. Indeed, consider the Cartesian square
\[
\begin{tikzcd}
\Aone_{0,1,-1} \ar[r,"\Gamma_g"] \ar[d,"g"'] & \Aone_{0,1,-1} \times \Aone_{0,1}\ar[d,"{(g\times 1)} "] \\
\Aone_{0,1} \ar[r,"\Delta"'] & \Aone_{0,1} \times \Aone_{0,1}
\end{tikzcd}
\]
From Example \ref{ex:flat_example}, we know that the restriction of $\alpha_t$ to $\MWprep(\Aone_{0,1})(\Aone_{0,1,-1})$ is represented by $(g\times 1)\Delta_*(-1+\langle t\rangle)$. By base change, the latter is $(\Gamma_g)_*g^*(\alpha_t)$. Now we have $g^*(-1+\langle t\rangle)=-1+\langle t^2\rangle=0$ in $\sKMW_0(\Aone_{0,1,-1})$. In conclusion, we see that the correspondence $(\alpha_t,0)$ is in the kernel of the homomorphism
\[
\MWprep(\Aone_{0,1})(\Aone_{0,1})\oplus \MWprep(\Aone_{0,1})(\Aone_{0,-1})\to \MWprep(\Aone_{0,1})(\Aone_{0,1,-1}).
\] 
To conclude that $\MWprep(\Aone_{0,1})$ is not a sheaf, it then suffices to show that $\alpha_t$ cannot be the restriction of a correspondence in $\MWprep(\Aone_{0,1}) (\Aone_0)$. To see this, observe first that the restriction map $\Adm(\Aone_0,\Aone_{0,1})\to \Adm(\Aone_{0,1},\Aone_{0,1})$ is injective as well as the homomorphism $\MWprep(\Aone_{0,1})(\Aone_0)\to \MWprep(\Aone_{0,1})(\Aone_{0,1})$. Suppose then that $\beta\in \MWprep(\Aone_{0,1})(\Aone_0)$ is a correspondence whose restriction is $\alpha_t$, and let $T=\supp(\beta)$. From the above observations, we see that $\Delta(\Aone_{0,1})=\supp(\alpha_t)=T\cap (\Aone_{0,1}\times \Aone_{0,1})$. In particular, we see that $T$ is irreducible and its generic point is the generic point of the (transpose of the) graph of the inclusion $\Aone_{0,1}\to \Aone_0$. But the graph is closed, but not finite over $\Aone_0$. It follows that $\beta$ doesn't exist and thus that $\MWprep(\Aone_{0,1})$ is not a Nisnevich sheaf.
\end{exm}

Next, recall that the presheaf $\VrepZ(X):=\corVk(-,X)$
\index[notation]{ztrx@$\VrepZ(X)$}%
\index{presheaf!with transfers!representable}%
is also a presheaf with MW-transfers (using the functor $\cork\to \corVk$). The morphism $\MWprep(X)\to \VrepZ(X)$ is easily seen to be a morphism of MW-presheaves. 

\begin{lem}
The morphism of MW-presheaves $\MWprep(X)\to \VrepZ(X)$ induces an epimorphism of Zariski sheaves.
\end{lem}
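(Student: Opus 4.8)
The plan is to show that $\MWprep(X)\to\VrepZ(X)$ is a local epimorphism for the Zariski topology. Since $\VrepZ(X)$ is a Nisnevich — hence Zariski — sheaf and $\MWprep(X)$ is a Zariski sheaf by Proposition~\ref{prop:zarsheaf}, it suffices to check surjectivity on Zariski stalks, i.e.\ that for every local ring $A=\OO_{Y,y}$ of a smooth $k$-scheme $Y$ the forgetful homomorphism $\cork(A,X)\to\corVk(A,X)$ is surjective; here both sides are the limit extensions of Section~\ref{sec:limits} evaluated at $\Spec(A)=\varprojlim_{U\ni y}U$, which by definition compute the corresponding stalks. (A lift found at the stalk comes from a lift over some Zariski neighbourhood of $y$, with matching forgetful image after shrinking.) Note $A$ is a regular local ring, hence a domain; set $e=\dimn{\Spec A}$ and $d=\dimn X$.

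I then reduce to lifting prime cycles. By Remark~\ref{rem:cortocor}, $\corVk(A,X)$ is free on the integral closed subschemes $T\subset\Spec(A)\times X$ finite and surjective over $\Spec(A)$, each of dimension $e$ and codimension $d$ in $\Spec(A)\times X$. Given $\beta=\sum_i n_i[T_i]$, put $T=\bigcup_i T_i$, an admissible subset, so that $\CH^d_T(\Spec(A)\times X)=\bigoplus_i\ZZ\cdot[T_i]$ (the degree $d-1$ term of the relevant Gersten complex vanishes for codimension reasons, the degree $d+1$ term because there is no negative Milnor $K$-theory). It is therefore enough to lift each $n_i[T_i]$ into the subgroup $\chst d{T_i}{\Spec(A)\times X}{\omega_X}\subset\cork(A,X)$ of Lemma~\ref{lem:supportadmis}: a sum of such lifts is supported on $T$ and is automatically unramified, since the residue of a class supported on $T_i$ at a codimension-$(d+1)$ point of $\Spec(A)\times X$ vanishes unless the point lies on $T_i$, in which case it is exactly its residue inside $\chst d{T_i}{\Spec(A)\times X}{\omega_X}$; and once $[T_i]$ is lifted to an unramified class of rank one, multiplying by $n_i$ lifts $n_i[T_i]$. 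Thus everything reduces to: for $T$ integral, finite and surjective over the local scheme $\Spec(A)$, the forgetful map $\chst dT{\Spec(A)\times X}{\omega_X}\to\CH^d_T(\Spec(A)\times X)=\ZZ$ hits the generator.

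This last step is where locality of the base is used, and it is the main obstacle. Since $T\to\Spec(A)$ is finite and $A$ local, $T=\Spec(B)$ with $B$ semilocal, so $\Pic(T)=0$. Using the identification of Chow-Witt groups with support as twisted Chow-Witt homology groups recalled from \cite{Feld20_a} in Section~\ref{sec:chowwitt}, one has $\chst dT{\Spec(A)\times X}{\omega_X}\simeq\chht eT{\mathcal N}$ for a line bundle $\mathcal N$ on $T$, compatibly with the forgetful maps to $\CH^d_T(\Spec(A)\times X)\simeq\CH_e(T)=\ZZ$; as $\mathcal N$ is trivial on the semilocal $T$, the target is computed by the top term of the Milnor-Witt complex of $T$ with trivial twist, namely $\KMW_0\bigl(k(t),\ome{k(t)}{k(Y)}\bigr)$ for $t$ the generic point of $T$ and $k(Y)=\mathrm{Frac}(A)$ (the residual twist $\ome{k(t)}{k}\otimes\mathcal N_t$ identifies canonically with $\ome{k(t)}{k(Y)}$ via the definition of $\omega_f$ and the computation of $\mathcal N_t$), the forgetful map being the rank restricted to unramified classes. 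The candidate rank-one class is then $\langle 1\rangle$ taken with respect to the canonical orientation of $\ome{k(t)}{k(Y)}$ furnished by Lemma~\ref{lem:orientation} (applicable since $k$ is perfect, so $k(Y)/k$ and $k(t)/k$ are finitely generated and separable and $k(t)/k(Y)$ is finite). The crux of the proof is to verify that this class is actually unramified, i.e.\ lies in $\chst dT{\Spec(A)\times X}{\omega_X}$; this is a direct but delicate computation with the residue maps of Section~\ref{sec:residues} and the compatibilities of the canonical orientations, and it genuinely relies on the triviality of line bundles on the semilocal $T$. Over a general smooth base this fails — as Remark~\ref{rem:notsurjective} shows, when the pertinent twisting line bundle on a support is not a square the forgetful map is only the rank on a twisted Grothendieck-Witt group and need not be surjective — which is precisely why only the sheafified statement holds.
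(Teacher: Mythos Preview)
Your reduction to Zariski stalks and to lifting individual prime cycles is correct and matches the paper's setup. The gap is exactly where you place it: you state that ``the crux of the proof is to verify that this class is actually unramified'' and call it ``a direct but delicate computation'', but you do not carry it out. This step is not routine when $T$ is singular --- residues at a non-normal codimension-one point of $T$ are computed through the normalization and involve several branches, and there is no evident reason the class $\langle 1\rangle$ taken with the canonical orientation of Lemma~\ref{lem:orientation} should survive. Invoking that lemma also imports a characteristic $\neq 2$ hypothesis that the present statement does not carry.

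The paper closes this gap by a different device: it passes to the normalization $\tilde T$ of $T$. Since $T$ is finite over a local scheme, $\tilde T$ is semi-local and every line bundle on it is trivial; since the singular locus of $\tilde T$ has codimension $\geq 2$, its image in $Y\times X$ has codimension $\geq d_X+2$, and removing such points changes neither $\chst{d_X}T{Y\times X}{\omega_X}$ nor $\CH^{d_X}_T(Y\times X)$. One may therefore assume $\tilde T$ is smooth and use the genuine push-forward $i_*:\ch 0{\tilde T}\to\chst{d_X}T{Y\times X}{\omega_{Y\times X/k}\otimes\Lb}$ (after trivializing the twist on $\tilde T$). The class $\langle 1\rangle\in\ch 0{\tilde T}=\sKMW_0(\tilde T)$ is trivially a global section, so no unramification check is needed, and the commutative square with ordinary Chow-theoretic push-forward shows that its image maps to the generator of $\CH^{d_X}_T(Y\times X)=\ZZ$. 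The normalization trick is the missing idea in your proposal.
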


\begin{proof}
Let $Y$ be the localization of a smooth scheme at a point. We have to show that the map $\MWprep(X)(Y)\to \VrepZ(X)(Y)$ is surjective. It is sufficient to prove that for any elementary correspondence $T\subset Y\times X$, the map
\[
\chst {d_X}T{Y\times X}{\omega_X}\to \CH^{d_X}_T(Y\times X)=\ZZ
\]
is surjective. Note that the map $T\to Y\times X\to Y$ is finite and surjective, and therefore that $T$ is the spectrum of a semi-local domain. Let $\tilde T$ be the normalization of $T$ into its field of fractions. Note that the singular locus of $\tilde T$ is of codimension at least $2$ and therefore that its image in $Y\times X$ is of codimension at least $d_X+2$. As both $\chst {d_X}T{Y\times X}{\omega_X}$ and $\CH^{d_X}_T(Y\times X)$ are invariant if we remove points of codimension $d_X+2$, we may suppose that $\tilde T$ is smooth and therefore that we have a well-defined push-forward
\[
i_*:\cht 0{\tilde T}{\omega_{{\tilde T}/k}\otimes i^*\Lb}\to \chst {d_X}T{Y\times X}{\omega_{Y\times X/k}\otimes \Lb} 
\] 
for any line bundle $\Lb$ on $Y\times X$, where $i$ is the composite $\tilde T\to T\subset Y\times X$. As $\tilde T$ is semi-local, we can find a trivialization of $\omega_{{\tilde T}/k}\otimes i^*\Lb$ and get a push-forward homomorphism
\[
i_*:\ch 0{\tilde T}\to \chst {d_X}T{Y\times X}{\omega_{Y\times X/k}\otimes \Lb} 
\] 
Now, $\langle 1\rangle\in \ch 0{\tilde T}$ and we can use the commutative diagram
\[
\begin{tikzcd}
\ch 0{\tilde T} \ar[r,"i_*"] \ar[d] & \chst {d_X}T{Y\times X}{\omega_{Y\times X/k} \otimes \Lb} \ar[d] \\
\CH^0(\tilde T) \ar[r,"i_*"'] & \CH^{d_X}_T(Y\times X)
\end{tikzcd}
\]
to conclude that the composite 
\[
\ch 0{\tilde T}\to \chst {d_X}T{Y\times X}{\omega_{Y\times X/k}\otimes \Lb}\to \CH^{d_X}_T(Y\times X)
\]
is surjective for any line bundle $\Lb$. The claim follows.
\end{proof}

Now, we turn to the task of determining the kernel of the morphism $\MWprep(X)\to \VrepZ(X)$. With this in mind, recall that we have for any $n\in \NN$ an exact sequence of sheaves
\[
0\to \I^{n+1}\to \KMW_n \to \KM_n \to 0.
\] 
If $T\subset Y\times X$ is of pure codimension $d_X$, the long exact sequence associated to the previous exact sequence reads as
\[
\H^{d_X-1}_T(Y\times X,\sKM_{d_X})\to \H^{d_X}_T(Y\times X,\I^{d_X+1},\omega_X)\to \chst {d_X}T{Y\times X}{\omega_X} \to \CH^{d_X}_T(Y\times X)
\]
and the left-hand group is trivial since there are no points of codimension $d_X-1$ supported on $T$. Consequently, the middle map is injective. Moreover, if $\alpha\in \cork(Y^\prime,Y)$ and $\pi(\alpha)\in \corVk(Y^\prime,Y)$ is its image in $\corVk$ under the usual functor, the diagram
\[
\begin{tikzcd}
\chst {d_X}T{Y\times X}{\omega_X} \ar[r] \ar[d,"\alpha^*"'] & \CH^{d_X}_T(Y\times X) \ar[d,"{\pi(\alpha)^*}"] \\
\chst {d_X}T{Y^\prime\times X}{\omega_X} \ar[r] & \CH^{d_X}_T(Y^\prime\times X)
\end{tikzcd}
\]
commutes, where the vertical arrows are the respective composition with $\alpha$ and $\pi(\alpha)$ (in other words, $\pi$ is a morphism of presheaves with MW-transfers). These observations motivate the following definition.

\begin{dfn} \label{dfn:icorXY}
For smooth connected schemes $X,Y$, we set 
\[
\icor k(Y,X)=\varinjlim_{T\in \Adm(Y,X)} \H^{d_X}_T(Y\times X,\I^{d_X+1},\omega_X).
\]
As before, we extend this definition to all smooth schemes $X,Y$ by additivity. The above diagram shows that $\icor k(-,X)$ is an element of $\pshMWk$ that we denote by $\Icorr(X)$.
\index[notation]{icork@$\icor k$}%
\index[notation]{icorx@$\Icorr(X)$}%
\end{dfn}

The above arguments prove that we have an exact sequence of Zariski sheaves (use for instance the analogue of Lemma \ref{lem:supportadmis} for $\icor k(Y,X)$)
\[
0\to \Icorr(X) \to \MWprep(X)\to \VrepZ(X)\to 0.
\]

\begin{rem} \label{rem:wittcor}
Instead of $\icor k(Y,X)$, we could consider the abelian group
\[
\varinjlim_{T\in \Adm(Y,X)} \H^{d_X}_T(Y\times X,\I^{d_X},\omega_X)=\varinjlim_{T\in \Adm(Y,X)} \H^{d_X}_T(Y\times X,\W,\omega_X)
\]
In this fashion, we would obtain a category of sheaves with Witt-transfers. We will come back to this construction in \chdmt, \S\ref{sec:relordmot}.
\end{rem}

The category $\pshMWk$ of presheaves with MW-transfers admits a unique symmetric monoidal structure such that the embedding $\cork\to  \pshMWk$ given by $X\mapsto \MWprep(X)$ is symmetric monoidal (\chdmt, \S\ref{sec:MWtransfers}). We denote by $F\otimes G$ the tensor product of two presheaves, and we observe that $\MWprep(X)\otimes \MWprep(Y)=\MWprep(X\times Y)$ by definition. It turns out that $\pshMWk$ is a closed monoidal category (e.g. \cite{Mazza06_a}*{Lecture 8}), with internal Hom functor $\uHom$ determined by $\uHom(\MWprep(X),F)=F(X\times -)$. Using this concrete description and the fact that a covering $U\to Y$ pulls-back to a covering $U\times X\to Y\times X$ for any Grothendieck topology, the proof of the next lemma is direct.

\begin{lem}
Suppose that $F\in \pshMWk$ is a $\tau$-sheaf. Then $\uHom(\MWprep(X),F)$ is also a $\tau$-sheaf for any $X \in \smk$.
\end{lem}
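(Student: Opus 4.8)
The plan is to reduce the sheaf condition for $\uHom(\MWprep(X),F)$ to that of $F$ itself by means of the explicit formula $\uHom(\MWprep(X),F)=F(X\times -)$. First I would record why this presheaf is an object of $\pshMWk$ at all: since $\pshMWk$ is closed symmetric monoidal with $\MWprep(X)\otimes\MWprep(U)=\MWprep(X\times U)$, the adjunction isomorphism $\Hom(\MWprep(U),\uHom(\MWprep(X),F))\simeq\Hom(\MWprep(X\times U),F)$ exhibits $\uHom(\MWprep(X),F)$ as a presheaf with MW-transfers whose restriction to $\smk$ sends a smooth scheme $U$ to $F(X\times U)$ and a morphism $f\colon U'\to U$ to $F(f\times\id_X)$. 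This makes sense because $X\times U$ is again a smooth $k$-scheme, the base field being perfect. So the only thing left to check is that the presheaf $U\mapsto F(X\times U)$ on $\smk$ is a $\tau$-sheaf.

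Then I would fix a $\tau$-covering $\{U_i\to Y\}$ in $\smk$ and write out the diagram that must be an equalizer, namely
\[
F(X\times Y)\longrightarrow \prod_i F(X\times U_i)\;\rightrightarrows\; \prod_{i,j} F\bigl(X\times(U_i\times_Y U_j)\bigr).
\]
Two observations make this immediate: the family $\{X\times U_i\to X\times Y\}$ is again a $\tau$-covering, since for a Grothendieck topology covering families pull back to covering families along any morphism (here along the projection $X\times Y\to Y$); and the relevant fiber products are compatible with the product with $X$, i.e.\ $X\times(U_i\times_Y U_j)\cong (X\times U_i)\times_{X\times Y}(X\times U_j)$. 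Granting these, the displayed diagram is exactly the descent diagram for the restriction of $F$ to $\smk$ along the covering $\{X\times U_i\to X\times Y\}$, which is an equalizer because $F$ is a $\tau$-sheaf. Hence $\uHom(\MWprep(X),F)$ is a $\tau$-sheaf.

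I do not expect a genuine obstacle here: the argument is entirely formal once the closed monoidal structure on $\pshMWk$ and the formula $\uHom(\MWprep(X),F)=F(X\times -)$ are available. The only points deserving an explicit word are that $X\times U$ stays in $\smk$ and the two observations of the previous paragraph (base-change stability of $\tau$-coverings and compatibility of the fiber products with the functor $-\times X$), all of which are standard.
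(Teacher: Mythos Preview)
Your proposal is correct and matches the paper's approach exactly: the paper simply remarks that the proof is direct from the formula $\uHom(\MWprep(X),F)=F(X\times -)$ together with the fact that a covering $U\to Y$ pulls back to a covering $U\times X\to Y\times X$ for any Grothendieck topology.
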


\begin{dfn}
Let $F\in\pshMWk$ and $n \in \NN$. Then the $n$-th contraction of $F$, denoted by $F_{-n}$, is the presheaf $\uHom\big(\MWprep\big((\Gm,1)^{\wedge n}\big),F\big)$. Thus, $F_{-n}=(F_{-n+1})_{-1}$.
\end{dfn}

\begin{exm}\label{ex:MilnorWittcontraction}
If $F=\sKMW_j$ for some $j\in\ZZ$, then it follows from Lemma \ref{lem:explicitcontraction} that $F_{-1}=\sKMW_{j-1}$.
\end{exm}

\begin{dfn}
A presheaf $F$ in $\pshMWk$ is said to be homotopy invariant if the map
\[
F(X)\to F(X\times\Aone)
\]
induced by the projection $X\times \Aone\to X$ is an isomorphism for any $X\in \smk$.
\end{dfn}

\begin{exm}
We already know that $\MWprep(k)$ coincides with the Nisnevich sheaf $\sKMW_0$. It follows from \cite{Fasel08_a}*{Corollaire~11.3.3} that this sheaf is homotopy invariant.
\end{exm}

\subsection{The module structure}
\label{sec:modulestructure}%

Recall from Example \ref{ex:action} that the Zariski sheaf $\MWprep(X)$ is endowed with an action of a left $\sKMW_0(X)$-module for any smooth scheme $X$. Pulling back along the structural morphism $X\to \Spec(k)$, we obtain a ring homomorphism $\KMW_0(k)\to \sKMW_0(X)$ and it follows that $\MWprep(X)$ is a sheaf of $\KMW_0(k)$-modules. If $X\to Y$ is a morphism of smooth schemes, it is readily verified that the morphism $\MWprep(X)\to \MWprep(Y)$ is a morphism of sheaves of $\KMW_0(k)$-modules. 

Let now $F$ be an object of $\pshMWk$. If $X$ is a smooth scheme, the presheaf $\uHom (\MWprep(X),F)$ is naturally endowed with the structure of a right $\KMW_0(k)$-module by precomposing with the morphism $\MWprep(X)\to \MWprep(X)$ induced by some $\alpha\in \KMW_0(k)$. As $\KMW_0(k)$ is commutative, it follows that $\uHom (\MWprep(X),F)$ is also a Zariski sheaf of (left) $\KMW_0(k)$-modules. If $X\to Y$ is a morphism of smooth schemes, a direct computation shows that the induced morphism $\uHom (\MWprep(Y),F)\to \uHom (\MWprep(X),F)$ is a morphism of $\KMW_0(k)$-modules.

\section{Milnor-Witt motivic cohomology}
\index{Milnor-Witt!motivic!cohomology}%

We define the pointed scheme $\Gmpt =(\Gm,1)$.

\begin{dfn}
For any $q\in\ZZ$, we define the Zariski sheaf $\tZcbx q$ by
\[
\tZcbx q:=
\begin{cases} \MWprep(\Gmpt^{\wedge q}) & \text{if $q>0$.} \\
\MWprep(k)& \text{if $q=0$}. \\
\uHom \big(\Gmpt^{\wedge q}, \MWprep(k)\big) & \text{if $q<0$.}
\end{cases}
\]
\index[notation]{ztq@$\tZcbx q$}%
\end{dfn}
Note that the sheaves considered are all sheaves of $\KMW_0(k)$-modules and that for $q<0$,the sheaf $\tZcbx q$ is the $(-q)$-th contraction of $\MWprep(k)=\sKMW_0$. It follows thus from Example \ref{ex:MilnorWittcontraction} that $(\sKMW_0)_{q}=\sKMW_q=\sW$ for $q<0$, where the latter is the Zariski sheaf associated to the presheaf $X\mapsto \W(X)$ (Witt group).

\begin{rem}
Of course, we could have defined $\tZcbx q$ to be $\W$ when $q<0$ from the beginning. The advantage of our definition is that it is more uniform. It will also make the product to be defined more natural.
\end{rem}

\subsection{Motivic sheaves}\label{sec:motivic}

Let $\Delta^{\bullet}$ be the cosimplicial object on $\smk$ defined by 
\[
\Delta^n:=\Spec(k[x_0,\ldots,x_n]/(\sum_{i=0}^nx_i-1)).
\]
and the usual faces and degeneracy maps. Given any $F\in \pshMWk$, we get a simplicial object $\uHom\big(\MWprep(\Delta^{\bullet}),F\big)$ in $\pshMWk$ which is a simplicial sheaf in case $F$ is one. 

\begin{dfn}
The \emph{Suslin-Voevodsky singular construction on $F$} is the complex associated to the simplicial object $\uHom\big(\MWprep(\Delta^{\bullet}),F\big)$. Following the conventions, we denote it by $\Cstar F$.
\index[notation]{cf@$\Cstar F$}%
\index{singular complex|see{Suslin(-Voevodsky) singular complex}}%
\index{Suslin(-Voevodsky) singular complex}%
\end{dfn}

The following lemma is well-known.

\begin{lem}
Suppose that $F$ is a homotopy invariant presheaf on $\cork$. Then the natural homomorphism $F\to \Cstar F$ is a quasi-isomorphism of complexes (here $F$ is considered as a complex concentrated in degree $0$).
\end{lem}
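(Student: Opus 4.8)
The plan is to reduce the statement to the classical fact from Voevodsky's theory that the Suslin--Voevodsky singular complex of a homotopy invariant presheaf is quasi-isomorphic to the presheaf, and then check that the proof only uses the abstract structure that $\cork$ shares with $\corVk$ --- namely the existence of an $\Aone$-homotopy ``interval object'' and the simplicial machinery built from $\Delta^\bullet$. Concretely, the quasi-isomorphism $F \to \Cstar F$ is a statement about the homotopy groups of the complex $n \mapsto \uHom(\MWprep(\Delta^n),F)$, so the first step is to set up the standard simplicial-homotopy argument.

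First I would recall that $\Cstar F$ is the complex associated to the simplicial object $n \mapsto \uHom(\MWprep(\Delta^\bullet),F)$, with $H_0(\Cstar F) = \coker\big(\uHom(\MWprep(\Delta^1),F) \rightrightarrows \uHom(\MWprep(\Delta^0),F)\big)$. Since $\Delta^0 = \Spec(k)$, we have $\uHom(\MWprep(\Delta^0),F) = F$, and the two face maps $\Delta^0 \rightrightarrows \Delta^1$ followed by the projection $\Delta^1 \to \Delta^0$ are all the identity; so to compute $H_0$ one uses homotopy invariance of $F$ together with the fact that $\Delta^1 \cong \Aone$. The key point for $H_0(\Cstar F) = F$ is that the two inclusions $i_0, i_1 : \Spec(k) \to \Aone$ induce the \emph{same} map $F(\Aone \times X) \to F(X)$ because the projection $\Aone \times X \to X$ is an isomorphism on $F$ and both compositions with $i_0, i_1$ are the identity on $F(X)$.

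For the vanishing of $H_n(\Cstar F)$ for $n > 0$, the standard move is to construct an explicit simplicial contraction. One uses the multiplication map $\mu : \Aone \times \Aone \to \Aone$ as an ``interval multiplication'' to produce, for each $n$, a simplicial homotopy between the identity of the simplicial abelian group $\uHom(\MWprep(\Delta^\bullet),F)$ and the constant map at $F$; this is exactly the argument of \cite{FSV}*{Chap.~2} (or \cite{Mazza06_a}*{Lecture~2}) in Voevodsky's setting, and it is purely formal once one knows that $\MWprep(-)$ is monoidal, that $\uHom$ behaves as stated (i.e.\ $\uHom(\MWprep(X),F) = F(X \times -)$, from the closed monoidal structure of $\pshMWk$), and that $F$ is homotopy invariant. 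Each ingredient is available in the excerpt: the monoidality of $X \mapsto \MWprep(X)$ and the formula for $\uHom$ are recalled just above, and $\Delta^n$ is an affine space up to the standard cosimplicial identities, so the interval-object formalism applies verbatim.

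The main obstacle --- though in this setting it is minor --- is verifying that the simplicial homotopy built from $\mu$ is actually a morphism of simplicial presheaves \emph{with MW-transfers}, i.e.\ that all the maps involved are honest morphisms in $\pshMWk$ rather than just morphisms of underlying presheaves on $\smk$. This reduces to the observation that every map used (face and degeneracy maps of $\Delta^\bullet$, the multiplication $\mu$, the projections) comes from an actual morphism of smooth schemes, hence induces a morphism in $\cork$ via $\tilde\gamma$, so it acts on representable presheaves $\MWprep(-)$ and therefore, by the $\uHom$-adjunction, on $\uHom(\MWprep(-),F)$ compatibly with the MW-transfer structure. With that checked, the contraction gives $H_n(\Cstar F) = 0$ for $n > 0$ and $H_0(\Cstar F) = F$, so the canonical augmentation $F \to \Cstar F$ is a quasi-isomorphism, as claimed.
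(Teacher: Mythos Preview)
The paper gives no proof of this lemma (it is labelled ``well-known''), so there is nothing to compare against directly. Your $H_0$ computation is fine, but your argument for the vanishing of higher homology is more complicated than needed and slightly miscited: the multiplication map $\mu:\Aone\times\Aone\to\Aone$ you invoke is the tool used in \cite{Mazza06_a}*{Lemma~2.18} to prove that $\Cstar F$ is $\Aone$-homotopy invariant for an \emph{arbitrary} presheaf $F$, which is a different (and genuinely harder) statement than the one at hand.

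Here $F$ is already homotopy invariant, so the argument collapses to a single observation. Since $\Delta^n\cong\Aone^n$, the projection $p_n:\Delta^n\times X\to X$ induces an isomorphism $p_n^*:F(X)\to F(\Delta^n\times X)$ for every $n\geq 0$. Because $p_n\circ(\partial_i\times\id_X)=p_{n-1}$ and $p_n\circ(\sigma_j\times\id_X)=p_{n+1}$, these isomorphisms assemble into an isomorphism of simplicial presheaves from the constant simplicial presheaf at $F$ to $n\mapsto\uHom(\MWprep(\Delta^n),F)$. The chain complex associated to a constant simplicial abelian group $A$ has homology $A$ in degree $0$ and $0$ elsewhere, so $\Cstar F$ has the claimed homology and the augmentation $F\to\Cstar F$ is a quasi-isomorphism. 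No explicit simplicial contraction via $\mu$ is required, and the MW-transfer compatibility you worry about in your last paragraph is automatic for the same reason as in your $H_0$ step: every map involved ($p_n$, $\partial_i$, $\sigma_j$) comes from a morphism of smooth schemes and hence from a morphism in $\cork$ via $\tilde\gamma$.
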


\begin{dfn}
For any integer $q\in\ZZ$, we define $\tZpx q$ as the complex of Zariski sheaves of $\KMW_0(k)$-modules $\Cstar{\tZcbx q}[-q]$.
\index[notation]{ztq@$\tZpx q$}%
\end{dfn}

Following \cite{Mazza06_a}, we use the cohomological notation and we then have by convention $\tZpx q^i=\Ci{q-i}\tZcbx q$. It follows that $\tZpx q$ is bounded above for any $q\in\ZZ$.

\begin{dfn} \label{def:genmotiviccohom}
The \emph{MW-motivic cohomology groups} $\HMW^{p,q}(X,\ZZ)$ are defined to be the hypercohomology groups 
\[
\HMW^{p,q}(X,\ZZ):=\hypH^p_{\zar}(X,\tZpx q).
\] 
These groups have a natural structure of $\KMW_0(k)$-modules.
\index[notation]{hmwpqxz@$\HMW^{p,q}(X,\ZZ)$}%
\index{Milnor-Witt!motivic!cohomology}%
\end{dfn}

The groups $\HMW^{p,q}(X,\ZZ)$ are thus contravariant in $X \in \smk$. 

\begin{exm} \label{exm:negativecohomology}
By Example \ref{ex:basic}, we have $\HMW^{p,0}(X,\ZZ)=\H^{p}(X,\KMW_0)$ while the identification $(\sKMW_0)_{q}=\sKMW_q=\sW$ for $q<0$ yields $\HMW^{p,q}(X,\ZZ)=\H^{p-q}(X,\sW)$ for $q<0$.
\end{exm}

\begin{rem}
The presheaves $X\mapsto \HMW^{p,q}(X,\ZZ)$ are in fact presheaves with MW-transfers for any $p,q\in\ZZ$. This follows from \chdmt, Corollary~\ref{cor:W-motivic&generalized}.
\end{rem}

Next, consider the presheaf $\Icorr(X)$ defined in Section \ref{sec:representable}. 
\begin{dfn} \label{dfn:Imotiviccohomology}
For any $q\geq 0$, we set $\ItZ(q)=\Cstar{\Icorr(\Gmpt^{\wedge q}})[-q]$. 
\index[notation]{iztq@$\ItZ(q)$}%
We denote by 
\[
\HI^{p,q}(X,\ZZ)=\hypH^p_{\zar}(X,\ItZ(q))
\]
\index[notation]{hipqxz@$\HI^{p,q}(X,\ZZ)$}%
the hypercohomology groups of the complex $\ItZ(q)$.
\end{dfn}

We will see in \chdmt, \S \ref{sec:AA1derived} that for any $q\geq 0$, the exact sequence of Zariski sheaves
\[
0\to \Icorr(\Gmpt^{\wedge q}) \to \MWprep(\Gmpt^{\wedge q})\to \VrepZ(\Gmpt^{\wedge q})\to 0
\]
yields a long exact sequence of motivic cohomology groups (in fact, a long exact sequence of $\KMW_0(k)$-modules)
\[
\ldots \to \HI^{p,q}(X,\ZZ)\to \HMW^{p,q}(X,\ZZ)\to \H^{p,q}(X,\ZZ)\to \HI^{p+1,q}(X,\ZZ)\to \ldots
\]

\subsection{Base change of the ground field}

The various constructions considered until now are functorial with respect to the ground field $k$, and we now give details about some of the aspects of this functoriality. Let $L$ be a field extension of $k$, both $L$ and $k$ assumed to be perfect. 
For any scheme $X$ over $k$, let $X_L=\Spec(L)\times_{\Spec(k)} X$ be its extension to $L$. When $X$ is smooth, the canonical bundle $\omega_X$ pulls-back over $X_L$ to the canonical bundle $\omega_{X_L}$ of $X_L$. 

By conservation of surjectivity and finiteness by base change, the pull-back induces a map $\Adm(X,Y) \to \Adm(X_L,Y_L)$. From the contravariant functoriality of Chow-Witt groups with support, passing to the limit, one then immediately obtains an \emph{extension of scalars} functor
\[
\begin{tikzcd}
\cork \ar[r,"\ext_{L/k}"] & \cor L
\end{tikzcd}
\]
\index[notation]{extlk@$\ext_{L/k}$}%
sending an object $X$ to $X_L$. This functor is monoidal, since $(X \times_k Y)_L \simeq X_L \times_L Y_L$ canonically.

When $L/k$ is finite, $L/k$ is automatically separable since $k$ is perfect (and $L$ is automatically perfect). Viewing an $L$-scheme $Y$ as a $k$-scheme $Y_{|k}$ by composing its structural morphism with the smooth morphism $\Spec(L) \to \Spec(k)$ defines a \emph{restriction of scalars} functor $\res_{L/K}: \sm L \to \sm k$.
\index[notation]{reslk@$\res_{L/k}$}%
For any $L$-scheme $Y$ and $k$-scheme $X$, there are morphisms 
\[
\unit_Y:  Y \to L \times_k Y = (Y_{|k})_L \quad \text{and} \quad  \counit_X:(X_L)_{|k}= L \times_k X \to X
\]
induced by the structural morphism of $Y$ and $\id_Y$ for $\unit$, and the projection to $X$ for $\counit$. 

\begin{lem} \label{lem:finiterestriction}
Let $X \in \smk$ and $Y,Y' \in \sm L$.
\begin{enumerate}
\item \label{item:adjsm} The morphisms of functors $\unit$ and $\counit$ given on objects by $\unit_Y$ and  $\counit_X$ define an adjunction $(\res_{L/k},\ext_{L/k}):\sm L\leftrightarrows \smk$. 
\item \label{item:proj} The natural morphism $(Y \times_L X_L)_{|k} \to Y_{|k} \times_k X$ is an isomorphism (adjoint to the morphism $\unit_Y \times_L \id_{X_L}$).
\item \label{item:unit} The morphism $\unit_Y$ is a closed embedding of codimension $0$, identifying $Y$ with the connected components of $(Y_{k})_L$ living over $\Spec(L)$, diagonally inside $\Spec(L) \times_k \Spec(L)$.
\item \label{item:closedsm} The natural morphism $(Y \times_L Y')_{|k} \to Y_{|k} \times_k Y'_{|k}$ is a closed embedding of codimension $0$, thus identifying the source as a union of connected components of the target.
\end{enumerate}
\end{lem}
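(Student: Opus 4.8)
The plan is to treat the four items in order, with (1) purely formal and (2)--(4) reduced to the one elementary fact that, $L/k$ being finite and separable (automatic since $k$, and hence $L$, is perfect), the $L$-algebra $L\otimes_k L$ is \'etale. I will use throughout that fibre products along $\Spec L\to\Spec k$ are ``transparent'': for $A\to\Spec L$, $B\to\Spec k$ one has $A\times_{\Spec L}(\Spec L\times_{\Spec k}B)\simeq A\times_{\Spec k}B$ canonically, and that $\Spec L\to\Spec k$ being \'etale it is in particular smooth of relative dimension $0$, so $\res_{L/k}$ and $\ext_{L/k}$ indeed land in $\sm k$ resp.\ $\sm L$ as already used in the text.

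For (1), I would argue from the universal property of the fibre product: a $k$-morphism $\res_{L/k}(Y)=Y_{|k}\to X$ is the same datum as a morphism of schemes $Y\to X$, and combined with the structural morphism $Y\to\Spec L$ this is exactly the datum of an $L$-morphism $Y\to\Spec L\times_{\Spec k}X=\ext_{L/k}(X)$. This bijection is natural in $X$ and $Y$, so $(\res_{L/k},\ext_{L/k})$ is an adjoint pair; evaluating the bijection at $X=\ext_{L/k}(Y_{|k})$ on $\id_{Y_{|k}}$ (resp.\ at $Y=\ext_{L/k}(X)$ on $\id_{X_L}$) reads off the unit and counit, which are visibly $\unit_Y$ and $\counit_X$, and the triangle identities are then automatic.

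Next I would record the key computation. Since $L\otimes_k L$ is a finite \'etale $L$-algebra it is a finite product of finite separable field extensions of $L$, and the separability (``diagonal'') idempotent exhibits the multiplication map $L\otimes_k L\to L$ as the projection onto one of these factors; dually, the diagonal $\Delta\colon\Spec L\to\Spec L\times_{\Spec k}\Spec L$ is an open and closed immersion onto a connected component. Item (3) then follows by base change: writing $(Y_{|k})_L=(\Spec L\times_{\Spec k}\Spec L)\times_{\Spec L}Y$ (second projection on the left, structural morphism $Y\to\Spec L$ on the right), one checks that $\unit_Y$ is the base change of $\Delta$ along $Y\to\Spec L$, hence an open and closed immersion; it is of codimension $0$ because $L/k$ is finite, and its image is the union of the components of $(Y_{|k})_L$ lying over the diagonal, canonically identified with $Y$. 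Item (4) is parallel: the natural morphism $(Y\times_L Y')_{|k}\to Y_{|k}\times_k Y'_{|k}=Y\times_{\Spec k}Y'$ realises its source as $(Y\times_{\Spec k}Y')\times_{\Spec L\times_{\Spec k}\Spec L}\Spec L$, i.e.\ as the preimage of the clopen diagonal component under $Y\times_{\Spec k}Y'\to\Spec L\times_{\Spec k}\Spec L$, so it too is an open and closed immersion of codimension $0$, identifying the source with a union of connected components of the target.

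Finally, for (2), cancelling a copy of $\Spec L$ gives a canonical isomorphism $Y\times_L X_L=Y\times_{\Spec L}(\Spec L\times_{\Spec k}X)\simeq Y\times_{\Spec k}X$; tracing the structural morphism to $\Spec k$ through $Y\to\Spec L\to\Spec k$ shows this isomorphism is compatible with the relevant $k$-structures, so it identifies $(Y\times_L X_L)_{|k}$ with $Y_{|k}\times_k X$, and matching the two descriptions through the adjunction of (1) shows that it is indeed the morphism adjoint to $\unit_Y\times_L\id_{X_L}$. The only genuinely mathematical input in the whole lemma is the \'etaleness of $L\otimes_k L$; everything else is diagram-chasing, and I expect the principal difficulty to be purely clerical --- namely keeping track of which structural morphism to $\Spec k$ is in force at each stage of (2) and (4), since there the schemes underlying source and target coincide and the entire content lies in the identification of $k$-structures.
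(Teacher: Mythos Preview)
Your proposal is correct and follows essentially the same approach as the paper: both reduce everything to the single fact that $L\otimes_k L$ is a finite product of separable field extensions of $L$, with the diagonal picking out one factor, and then obtain (3) and (4) by base change. The only organizational difference is that the paper derives (4) by first invoking the isomorphism of (2) to rewrite the map as $\unit_Y\times_L\id_{Y'}$ and then citing (3), whereas you give the equivalent direct description of the source as the preimage of the diagonal component; and you note explicitly that the resulting immersions are open as well as closed, which the paper leaves implicit in ``closed of codimension $0$''.
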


\begin{proof}
Part \ref{item:adjsm} is straightforward. Part \ref{item:proj} can be checked locally, when $X=\Spec(A)$ with $A$ a $k$-algebra and $Y=\Spec(B)$ with $B$ an $L$-algebra, and the morphism considered is the canonical isomorphism $B \otimes_L L \otimes_k A \simeq B \otimes_k A$. To prove \ref{item:unit}, note that the general case follows from base-change to $Y$ of the case $Y=\Spec(L)$, which corresponds to $L\otimes_k L \to L$ via multiplication. Since the source is a product of separable extensions of $L$, one of which is isomorphic to $L$ by the multiplication map, the claim holds.
Up to the isomorphism of part \ref{item:proj}, the morphism $(Y \times_L Y')_{|k} \to Y_{|k} \times_k Y'_{|k}$ corresponds to the morphism $\unit_Y \times \id_{Y'}$ so \ref{item:closedsm} follows from \ref{item:unit} by base change to $Y'$. All schemes involved being reduced, the claims about identifications of irreducible components are clear.
\end{proof}

The functor $\res_{L/k}$ extends to a functor
$\begin{tikzcd}
\cor L \ar[r,"\res_{L/k}"] & \cork
\end{tikzcd}$
as we now explain. For any $X,Y \in \sm L$, we have $\Adm (X,Y)\subseteq \Adm (X_{|k},Y_{|k})$ by part \ref{item:closedsm} of Lemma \ref{lem:finiterestriction}, so for any $T$ in it the push-forward induces a map 
\[
\chst dT{X \times Y}{\omega_{Y}} \to \chst dT{X_{|k} \times_k Y_{|k}}{\omega_{Y_{|k}}}
\]
because $\omega_{L/k}$ is canonically trivial (the extension is separable). Passing to the limit, it gives a map $\cor L (X,Y) \to \cork (X_{|k},Y_{|k})$, and it is compatible with the composition of correspondences; this is an exercise using base change, where the only nontrivial input is that when $X,Y,Z \in \sm L$, the push-forward from $X\times_L Y \times_L Z$ to $X_{|k} \times_k Y_{|k} \times_k Z_{|k}$ respects products, which follows from part \ref{item:closedsm} of Lemma \ref{lem:finiterestriction}. 

The adjunction $(\res_{L/k},\ext_{L/k})$ between $\sm L$ and $\smk$ from part \ref{item:adjsm} of Lemma \ref{lem:finiterestriction} extends to an adjunction between $\cor L$ and $\cork$, using the same unit and counit, to which we apply the graph functors to view them as correspondences (see after Definition \ref{def:cortilde}). 

We are now going to define another adjunction $(\ext_{L/k}, \res_{L/k})$ (note the reversed order) that only exists at the level of correspondences.
The unit and counit
\[
\tunit_X:  X \to (X_L)_{|k} \quad \text{and} \quad  \tcounit_Y:(Y_{|k})_L \to Y 
\]
are defined respectively as the transpose of $\counit$ and $\unit$, using Example \ref{ex:push-forwards}. By part \ref{item:adjsm} of Lemma \ref{lem:finiterestriction} and by the composition properties of the transpose construction it is clear that they do define an adjunction $(\ext_{L/k},\res_{L/k})$. Since the extension $L/k$ is finite separable, the trace $L\to k$ induces a transfer homomorphism $\KMW_0(L)\to\KMW_0(k)$ (Section \ref{subsec:bilinear}) which coincides with the cohomological transfer by Lemma \ref{lem:trace}.

\begin{lem} \label{lem:compoextres}
The composition $\counit \circ \tunit$ is the multiplication (via the $\KMW_0(k)$-module structure) by the trace form of $L/k$ and the composition $\tcounit \circ \unit$ on $Y$ is the projection to the component of $(Y_{|k})_L$ corresponding to $Y$ and mentioned in \ref{item:unit} of Lemma \ref{lem:finiterestriction}. 
\end{lem}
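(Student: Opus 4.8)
The plan is to unwind the definitions of $\tunit$, $\tcounit$, $\unit$, $\counit$ and compute the two composites separately, reducing each to a transparent statement about Chow-Witt correspondences.

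\textbf{First composite: $\counit \circ \tunit$.} Recall $\tunit_X : X \to (X_L)_{|k}$ is the transpose of $\counit_X : (X_L)_{|k} \to X$ in the sense of Example~\ref{ex:push-forwards}, and here $(X_L)_{|k} \to X$ is the projection, which is finite surjective (it is the base change of $\Spec(L) \to \Spec(k)$) with canonically trivial $\omega_{\counit_X}$ since $L/k$ is separable. So $\tunit_X = \alpha(\counit_X, \psi)$ for the canonical orientation $\psi$. The composite $\counit_X \circ \tunit_X$ is then $\grph{\counit_X} \circ \alpha(\counit_X,\psi)$, which by the last paragraph of Example~\ref{ex:push-forwards} equals $(\counit_X)_*\langle 1\rangle \cdot \id_X$, using the action of $\sKMW_0(X)$ on $\cork(X,X)$ from Example~\ref{ex:action}. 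So I would reduce the first claim to the identity $(\counit_X)_*\langle 1\rangle = \mathrm{Tr}_{L/k}$ in $\sKMW_0(X)$, where the right side is pulled back along $X \to \Spec(k)$. Since the push-forward $(\counit_X)_* : \sKMW_0((X_L)_{|k}, \omega) \to \sKMW_0(X)$ is the base change along $X \to \Spec(k)$ of the push-forward $\KMW_0(L) \to \KMW_0(k)$ associated to the finite separable extension, and $(X_L)_{|k}$ is exactly that base change, compatibility of push-forwards with flat pull-back (Proposition~\ref{prop:basechange}, or directly the constructions of \cite{Feld20_a}) reduces this to the case $X = \Spec(k)$. There $(\counit_{\Spec(k)})_*\langle 1\rangle$ is the image of $\langle 1\rangle \in \GW(L)$ under the trace transfer $\KMW_0(L) \to \KMW_0(k)$ induced by the trace map $L \to k$, which by Lemma~\ref{lem:trace} is the cohomological transfer; and $f_*\langle 1\rangle$ for $f = \mathrm{Tr}_{L/k}$ is by definition the class of the trace form, by \cite{Lam73_a}*{VII}.

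\textbf{Second composite: $\tcounit \circ \unit$.} Here $\unit_Y : Y \to (Y_{|k})_L$ is the genuine morphism of $L$-schemes, viewed as a correspondence via $\grph{\unit_Y}$, and $\tcounit_Y : (Y_{|k})_L \to Y$ is the transpose of $\unit_Y$, i.e.\ $\alpha(\unit_Y, \psi)$, which makes sense because by part~\ref{item:unit} of Lemma~\ref{lem:finiterestriction} $\unit_Y$ is a closed immersion of codimension $0$ identifying $Y$ with the union of connected components of $(Y_{|k})_L$ living diagonally over $\Spec(L) \subset \Spec(L)\times_k \Spec(L)$; in particular $\omega_{\unit_Y}$ is canonically trivial. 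Then $\tcounit_Y \circ \unit_Y = \alpha(\unit_Y, \psi) \circ \grph{\unit_Y}$. By the push-forward description in Example~\ref{ex:push-forwards} (taking $U = (Y_{|k})_L$, $\beta = \grph{\unit_Y}$, $f = \unit_Y$), this composite is $(\unit_Y \times 1)_*(\grph{\unit_Y})$, and since $\unit_Y$ is a closed immersion onto a union of connected components $Y \sqcup Y'$ of $(Y_{|k})_L$, the push-forward of the identity correspondence along it is precisely the correspondence which is $\id$ on the $Y$-part and zero on $Y'$ — that is, the projector onto the component of $(Y_{|k})_L$ corresponding to $Y$. I would spell this out using the alternate description of $\cork$ as subgroups of $\bigoplus_x \KMW_0(k(x), \ldots)$: the correspondence $\alpha(\unit_Y,\psi)\circ\grph{\unit_Y}$ is supported on the graph of $\unit_Y$ inside $(Y_{|k})_L \times_L Y$, which via the identification of $Y$ with a clopen subscheme of $(Y_{|k})_L$ is exactly the diagonal of the $Y$-component, carrying the class $\langle 1\rangle$; this is the idempotent endomorphism of $(Y_{|k})_L$ projecting onto $Y$.

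\textbf{Main obstacle.} The genuinely delicate point is keeping the orientations/canonical trivializations of the relative canonical sheaves straight through the transpose construction and the push-forwards — the separability of $L/k$ is what makes $\omega_{L/k}$, hence $\omega_{\counit_X}$ and $\omega_{\unit_Y}$, canonically trivial, and one must check that the chosen orientation in Example~\ref{ex:push-forwards} is the one coming from Lemma~\ref{lem:orientation}, so that the transfer appearing in the first composite is indeed the trace form rather than some $\langle u \rangle$-twist of it. Once the bookkeeping of Lemma~\ref{lem:finiterestriction} (especially parts \ref{item:unit} and \ref{item:closedsm}) is combined with the projection and base-change formulas, both identities fall out; no new ideas are needed beyond Examples~\ref{ex:action} and \ref{ex:push-forwards} and Lemmas~\ref{lem:trace} and \ref{lem:finiterestriction}.
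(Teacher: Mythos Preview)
Your approach is essentially the paper's: for $\counit \circ \tunit$ you invoke the identity $\grph f \circ \alpha(f,\psi) = f_*\langle 1\rangle \cdot \id$ from Example~\ref{ex:push-forwards} and reduce by base change to $\sigma:\Spec(L)\to\Spec(k)$, exactly as the paper does; your invocation of Lemma~\ref{lem:trace} just makes explicit why $\sigma_*\langle 1\rangle$ is the trace form.

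There is a slip in your second computation. You write $\tcounit_Y \circ \unit_Y = \alpha(\unit_Y,\psi)\circ\grph{\unit_Y}$ and then claim this equals $(\unit_Y\times 1)_*(\grph{\unit_Y})$, but the formula from Example~\ref{ex:push-forwards} is $(f\times 1)_*(\beta)=\beta\circ\alpha(f,\psi)$, so $(\unit_Y\times 1)_*(\grph{\unit_Y})=\grph{\unit_Y}\circ\alpha(\unit_Y,\psi)=\unit_Y\circ\tcounit_Y$, which lives in $\cork((Y_{|k})_L,(Y_{|k})_L)$, not $\cork(Y,Y)$. Your final description (``the idempotent endomorphism of $(Y_{|k})_L$ projecting onto $Y$'') is correct for $\unit\circ\tcounit$, not for $\tcounit\circ\unit$ as you announced. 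This is harmless in practice: the paper's own proof computes $\unit\circ\tcounit$ (via $\grph{\unit_Y}\circ\alpha(\unit_Y,\psi)=(\unit_Y)_*\langle 1\rangle\cdot\id$, with $\unit_Y$ the base change of $\delta:\Spec(L)\to\Spec(L)\times_k\Spec(L)$), so the statement as written appears to contain a typo, and what you actually computed agrees with the paper's argument.
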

\begin{proof}
By Example \ref{ex:push-forwards}, we obtain that $\counit \circ \tunit$ is the multiplication by $\counit_* \langle 1\rangle$. Since $\counit_X$ is obtained by base change to $X$ of the structural map $\sigma:\Spec(L) \to \Spec(k)$, the element $\counit_* \langle 1 \rangle$ is actually the pull-back to $X$ of $\sigma_*\langle 1 \rangle$, which is the trace form of $L/k$ by the definition of finite push-forwards for Chow-Witt groups (or the Milnor-Witt sheaf $\sKMW_0$). Similarly, but this time by base change of the diagonal $\delta: \Spec(L) \to \Spec(L) \times \Spec(L)$, we obtain that $\unit \circ \tcounit$ is the multiplication by $\delta_* \langle 1 \rangle$, which is the projector to the component corresponding to $\Spec(L)$, and thus to $Y$ by base change. 
\end{proof}

The functors $\ext_{L/k}$ and $\res_{L/k}$ between $\smk$ and $\sm L$ are trivially continuous for the Zariski topology: they send a covering to a covering and they preserve fiber products: $(X \times_Z X')_L \simeq X_L \times_{Z_L} X'_L$ and $(Y \times_T Y')_{|k}\simeq Y_{|k} \times_{T_{|k}} Y'_{|k}$. Therefore, they induce functors between categories of Zariski sheaves with transfers 
\[
\res_{L/k}^*: \shMWzk  \to \shMWzL \quad \text{and} \quad \ext_{L/k}^*: \shMWzL \to \shMWzk.
\] 
\index[notation]{shtk@$\shMWzk$}%
In order to avoid confusion, we set $\bc_{L/k} =\res_{L/k}^*$ and $\tr_{L/k}=\ext_{L/k}^*$ in order to suggest the words ``base change'' and ``transfer'', but we still use the convenient notation $F_L$ for $\bc_{L/k}(F)$ and $G_{|k}$ for $\tr_{L/k}(G)$. We thus have, by definition
\[
F_L(U)=F(U_{|k})\qquad \text{and} \qquad G_{|k}(V)=G(V_L)
\]
for any $F \in \shMWzk$, $G \in \shMWzL$, $U \in \sm L$ and $V \in \smk$.
It is formal that the adjunction $(\res_{L/k},\ext_{L/k})$ induces an adjunction $(\bc_{L/k},\tr_{L/k})$ with unit $\counit^*$ and counit $\unit^*$, while the adjunction $(\ext_{L/k},\res_{L/k})$ induces an adjunction $(\tr_{L/k},\bc_{L/k})$ with unit $\tcounit^*$ and counit $\tunit^*$. 

\begin{lem}
For any $X \in \smk$ and $Y \in \sm L$, we have natural isomorphisms 
\[
\big(\MWprep(X)\big)_L\simeq \MWprep(X_L) \in \shMWzL \quad\text{and}\quad \big(\MWprep(Y)\big)_{|k} \simeq \MWprep(Y_{|k}) \in \shMWzk.
\]
In the same spirit, $\big(\uHom(\MWprep(X), \MWprep(k))\big)_L\simeq \uHom\big(\MWprep(X_L),\MWprep(L)\big) \in \shMWzL$, while on the other hand $\uHom\big(\MWprep(Y), \MWprep(L)\big)_{|k} \simeq \uHom\big(\MWprep(Y_{|k}),\MWprep(k)\big) \in \shMWzk$.
\end{lem}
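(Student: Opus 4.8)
The plan is to reduce all four isomorphisms to the two adjunctions between the correspondence categories $\cor L$ and $\cork$, together with the elementary compatibilities of Lemma~\ref{lem:finiterestriction}, the identity $\uHom(\MWprep(Z),F)=F(Z\times -)$ recalled above, and Proposition~\ref{prop:zarsheaf}. The key observation is that $\bc_{L/k}$ and $\tr_{L/k}$ are defined on sheaves by precomposition with $\res_{L/k}$ and $\ext_{L/k}$, so evaluating the objects in sight on a test scheme turns the four claims into natural bijections of $\Hom$-groups of finite MW-correspondences.

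For the first isomorphism, unwinding the definitions gives, for $U\in\cor L$,
\[
\big(\MWprep(X)\big)_L(U)=\MWprep(X)(U_{|k})=\cork(U_{|k},X),\qquad \MWprep(X_L)(U)=\cor L(U,X_L),
\]
so the bijection we want is exactly the adjunction isomorphism $\cork(\res_{L/k}U,X)\simeq\cor L(U,\ext_{L/k}X)$ coming from $(\res_{L/k},\ext_{L/k})$ in its form on correspondence categories. Additivity is clear, and naturality in $U$ with respect to morphisms of $\cor L$ --- i.e.\ the fact that we get a morphism of presheaves with MW-transfers --- is precisely the statement that this adjunction lives at the level of $\cor L$ and $\cork$; since $\MWprep(X_L)$ is a Zariski sheaf by Proposition~\ref{prop:zarsheaf} (applied over $L$), this is an isomorphism in $\shMWzL$. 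The second isomorphism is obtained in exactly the same way from the ``reversed'' adjunction $(\ext_{L/k},\res_{L/k})$: for $V\in\cork$,
\[
\big(\MWprep(Y)\big)_{|k}(V)=\MWprep(Y)(V_L)=\cor L(\ext_{L/k}V,Y)\simeq\cork(V,\res_{L/k}Y)=\MWprep(Y_{|k})(V).
\]

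For the $\uHom$ statements I would first record the case $X=\Spec(k)$ of the first isomorphism, which gives a canonical isomorphism $\big(\MWprep(k)\big)_L\simeq\MWprep(L)$ in $\shMWzL$ (both sides are $\sKMW_0$ over $L$). Then, using $\uHom(\MWprep(Z),F)=F(Z\times -)$, for $U\in\cor L$ one has
\[
\big(\uHom(\MWprep(X),\MWprep(k))\big)_L(U)=\MWprep(k)\big(X\times_k U_{|k}\big),
\]
and Lemma~\ref{lem:finiterestriction}\ref{item:proj} supplies a canonical isomorphism $X\times_k U_{|k}\simeq(X_L\times_L U)_{|k}$; hence the right-hand side is $\big(\MWprep(k)\big)_L(X_L\times_L U)\simeq\MWprep(L)(X_L\times_L U)=\uHom(\MWprep(X_L),\MWprep(L))(U)$. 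The fourth isomorphism is symmetric: for $V\in\cork$,
\[
\big(\uHom(\MWprep(Y),\MWprep(L))\big)_{|k}(V)=\MWprep(L)(Y\times_L V_L),
\]
whereas $\uHom(\MWprep(Y_{|k}),\MWprep(k))(V)=\MWprep(k)(Y_{|k}\times_k V)=\MWprep(k)\big((Y\times_L V_L)_{|k}\big)=\big(\MWprep(k)\big)_L(Y\times_L V_L)$ by Lemma~\ref{lem:finiterestriction}\ref{item:proj}, and $\big(\MWprep(k)\big)_L\simeq\MWprep(L)$ identifies the two.

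The step I expect to be the real work is the naturality bookkeeping: verifying that all these pointwise bijections are morphisms in $\shMWzL$ resp.\ $\shMWzk$, i.e.\ natural with respect to finite MW-correspondences and not merely with respect to morphisms of schemes. For the representable presheaves this is packaged into the assertion that the two adjunctions extend to $\cor L$ and $\cork$; for the $\uHom$ pieces one must moreover check that the isomorphism of Lemma~\ref{lem:finiterestriction}\ref{item:proj} is compatible with the symmetric monoidal structures and with $\res_{L/k}$ and $\ext_{L/k}$ at the level of correspondences --- concretely, that $\res_{L/k}(X_L\times_L -)\simeq X\times_k\res_{L/k}(-)$ as functors $\cor L\to\cork$, naturally --- which is where the definitions of pushforward and pullback of correspondences and the canonical trivialization of $\omega_{L/k}$ (the extension $L/k$ being finite separable) enter.
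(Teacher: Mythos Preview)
Your proof is correct and follows essentially the same approach as the paper: both reduce the first two isomorphisms to the adjunctions $(\res_{L/k},\ext_{L/k})$ and $(\ext_{L/k},\res_{L/k})$ on the correspondence categories, and both handle the $\uHom$ statements by combining the projection isomorphism of Lemma~\ref{lem:finiterestriction}\ref{item:proj} with the case $X=\Spec(k)$ of the first isomorphism (the paper writes this step as $\cork((U\times_L X_L)_{|k},k)\simeq\cor L(U\times_L X_L,L)$, which is exactly your $(\MWprep(k))_L\simeq\MWprep(L)$). Your extra care about naturality with respect to MW-correspondences is well-placed but, as you note, is absorbed into the fact that both adjunctions are constructed at the level of $\cork$ and $\cor L$.
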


\begin{proof}
We have $\big(\MWprep(X)\big)_L(U) \simeq \cork (U_{|k},X) \simeq \cor L (U,X_L) \simeq \MWprep(X_L)(U)$, by the adjunction $(\res_{L/k},\ext_{L/k})$ and $\big(\MWprep(Y)\big)_{|k}(V)\simeq \cor L (V_L,Y) \simeq \cork (V,Y_{|k}) \simeq \MWprep(Y_{|k})(V)$ by the transposed adjunction $(\ext_{L/k},\res_{L/k})$.

Similarly, $\uHom \big(\MWprep(X),\MWprep(k)\big)_L(U)\simeq \cork (U_{|k}\times X,k) \simeq \cork ((U\times_L X_L)_{|k},k) \simeq \cor L (U\times_L X_L,L) \simeq \uHom \big(\MWprep(X_L),\MWprep(L)\big)(U)$. Finally, $\uHom \big(\MWprep(Y),\MWprep(L)\big)_{|k}(V)\simeq \cor L (V_L \times_L Y,L) \simeq \cork ((V_L \times_L Y)_{|k},k) \simeq \cork (V\times Y_{|k},k) \simeq \uHom \big(\MWprep(Y_{|k}),\MWprep(k)\big)(V)$.
\end{proof}

\begin{coro} \label{coro:extressmash}
For any sequence of pointed schemes $(X_1,x_1),\ldots,(X_n,x_n)$, we have
\[
\big(\MWprep\big((X_1,x_1)\wedge \ldots \wedge (X_n,x_n)\big)\big)_L \simeq \MWprep\big(((X_1)_L,(x_1)_L)\wedge\ldots\wedge ((X_n)_L,(x_n)_L)\big) \in \shMWzL
\]
and
\[
(\uHom \big(\MWprep\big((X_1,x_1)\wedge \ldots \wedge (X_n,x_n)\big),\MWprep(k)\big)_L \simeq \uHom \big(\MWprep(\wedge_{i=1}^n((X_i)_L,(x_i)_L)),\MWprep(k)\big)
\]
in $\shMWzL$.
\end{coro}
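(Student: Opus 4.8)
The plan is to deduce both isomorphisms formally from the preceding Lemma, using that the base-change functor $\bc_{L/k}$ is exact and monoidal. Recall from Definition~\ref{dfn:reprpresheaf} that $\MWprep\big((X_1,x_1)\wedge\cdots\wedge(X_n,x_n)\big)$ is the cokernel of a \emph{split} injective morphism whose source is a finite direct sum of the representable presheaves $\MWprep(X_1\times\cdots\times X_{i-1}\times X_{i+1}\times\cdots\times X_n)$, whose target is $\MWprep(X_1\times\cdots\times X_n)$, and whose components are $\MWprep(\id\times\cdots\times x_i\times\cdots\times\id)$; the same description holds over $L$ for the pointed schemes $((X_i)_L,(x_i)_L)$. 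Since the adjunctions $(\bc_{L/k},\tr_{L/k})$ and $(\tr_{L/k},\bc_{L/k})$ recalled just before this corollary exhibit $\bc_{L/k}$ as both a left and a right adjoint, it preserves all finite limits and colimits; in particular it commutes with finite direct sums, with cokernels, and with passage to direct summands.

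For the first isomorphism, I would apply the preceding Lemma to identify $\bc_{L/k}\MWprep(Z)\simeq\MWprep(Z_L)$ for $Z$ any of the finitely many products of the $X_i$ occurring above, and then rewrite $Z_L$ via the canonical isomorphism $(X\times_k Y)_L\simeq X_L\times_L Y_L$ already used to make $\ext_{L/k}$ monoidal, obtaining $\bc_{L/k}\MWprep(X_1\times\cdots\times X_n)\simeq\MWprep\big((X_1)_L\times_L\cdots\times_L(X_n)_L\big)$ and likewise for the partial products. Because the isomorphism of the Lemma is natural in the scheme variable and compatible with the morphisms $\MWprep(f)$ (these being built from the graph functor, which commutes with $\ext_{L/k}$), the functor $\bc_{L/k}$ carries the defining split injection of Definition~\ref{dfn:reprpresheaf} over $k$ to the corresponding one over $L$; in particular it sends $\MWprep(\id\times\cdots\times x_i\times\cdots\times\id)$ to $\MWprep(\id\times\cdots\times (x_i)_L\times\cdots\times\id)$. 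Applying $\bc_{L/k}$ to the cokernel sequence and using that it preserves cokernels then gives the claimed identification.

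For the second isomorphism, observe that splitness of the injection makes $\MWprep\big((X_1,x_1)\wedge\cdots\wedge(X_n,x_n)\big)$ a direct summand of $\MWprep(X_1\times\cdots\times X_n)$, hence $\uHom\big(\MWprep((X_1,x_1)\wedge\cdots\wedge(X_n,x_n)),\MWprep(k)\big)$ is compatibly a direct summand of $\uHom\big(\MWprep(X_1\times\cdots\times X_n),\MWprep(k)\big)$, and the analogous statement holds over $L$. Since $\bc_{L/k}$ is additive it respects this splitting, so it suffices to compute $\bc_{L/k}$ on $\uHom\big(\MWprep(X_1\times\cdots\times X_n),\MWprep(k)\big)$; that is exactly the content of the second half of the preceding Lemma applied to the smooth scheme $X_1\times\cdots\times X_n$, again combined with $(X\times_k Y)_L\simeq X_L\times_L Y_L$. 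Transporting the idempotent that cuts out the summand through the natural isomorphism of the Lemma yields the second isomorphism. I expect the only real bookkeeping — and it is minor — to be checking that all these identifications are compatible with the base-point structure maps $x_i$; everything else is formal from exactness and monoidality of $\bc_{L/k}$ together with naturality in the Lemma.
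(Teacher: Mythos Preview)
Your proposal is correct and follows the same approach as the paper, whose proof simply reads ``It follows immediately from the lemma applied to the split exact sequences defining the smash-products.'' You have spelled out exactly what this means. One small remark: you invoke that $\bc_{L/k}$ is both a left and a right adjoint to get exactness, but since the defining sequences are \emph{split} exact, mere additivity of $\bc_{L/k}$ suffices to preserve them, which is all the paper is implicitly using.
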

\begin{proof}
It follows immediately from the lemma applied to the split exact sequences defining the smash-products. 
\end{proof}

The same type of result for smash products would hold for restrictions, but the restriction of an $L$-pointed scheme is an $L_{|k}$-pointed scheme, not an $k$-pointed scheme. Nevertheless, the counit $\tunit^*$ of the adjunction $(\tr_{L/k},\bc_{L/k})$ induces maps
\begin{gather}
\big((\MWprep((X_1,x_1)\wedge \ldots \wedge (X_n,x_n)))_L\big)_{|k} \to \MWprep\big((X_1,x_1)\wedge\ldots\wedge (X_n,x_n)\big) \label{eq:mapsmashup} \\
\big(\uHom \big(\MWprep(\wedge_{i=1}^n(X_i,x_i)),\MWprep(k)\big)_L\big)_{|k} \to \uHom\big(\MWprep(\wedge_{i=1}^n(X_i,x_i)),\MWprep(k)\big). \label{eq:mapsmashdown}
\end{gather}
while the unit $\counit^*$ of the transposed adjunction $(\bc_{L/k},\tr_{L/k})$ induces maps
\begin{gather}
\MWprep\big((X_1,x_1)\wedge\ldots\wedge (X_n,x_n)\big) \to \big((\MWprep\big((X_1,x_1)\wedge \ldots \wedge (X_n,x_n)\big))_L\big)_{|k}  \label{eq:maptildesmashup} \\
\uHom (\MWprep(\wedge_{i=1}^n(X_i,x_i)),\MWprep(k)) \to \big((\uHom (\MWprep(\wedge_{i=1}^n(X_i,x_i)),\MWprep(k)))_L\big)_{|k}. \label{eq:maptildesmashdown}
\end{gather}

\begin{lem} \label{lem:extresCstar}
For any $F \in \shMWzk$ and any $G \in \shMWzL$, we have canonical isomorphisms $(\Cstar F)_L \simeq \Cstar{F_L}$ and $(\Cstar G)_{|k} \simeq \Cstar{G_{|k}}$.
\end{lem}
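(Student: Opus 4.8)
The plan is to reduce the statement to a levelwise assertion about the cosimplicial objects underlying the two singular constructions. Recall that $\Cstar F$ is, in cohomological degree $-n$, the $t$-sheaf $\uHom\big(\MWprep(\Delta^n),F\big)$, with differential the alternating sum of the face maps of $\Delta^\bullet$; forming such an alternating-sum complex out of a (co)simplicial object is an additive operation, and both $\bc_{L/k}=(-)_L$ and $\tr_{L/k}=(-)_{|k}$ are additive (indeed exact) functors, being adjoints on both sides by the discussion preceding the lemma. Hence $(-)_L$ and $(-)_{|k}$ commute with the formation of $\Cstar$, and it suffices to produce, for each $n$, canonical isomorphisms of $t$-sheaves
$\big(\uHom(\MWprep(\Delta^n),F)\big)_L \simeq \uHom\big(\MWprep((\Delta^n)_L),F_L\big)$ in $\shMWzL$
and $\big(\uHom(\MWprep((\Delta^n)_L),G)\big)_{|k} \simeq \uHom\big(\MWprep(\Delta^n),G_{|k}\big)$ in $\shMWzk$,
natural in the cosimplicial variable $\Delta^\bullet$ (so that they assemble to isomorphisms of simplicial sheaves, hence of the associated complexes); here one uses that the $n$-simplex over $L$ is exactly $(\Delta^n)_L$.

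For the base-change isomorphism I would evaluate both sides on $U\in\sm L$ using the concrete description $\uHom(\MWprep(X),H)(W)=H(X\times W)$ together with $H_L(W)=H(W_{|k})$: the left side becomes $F\big(\Delta^n\times_k U_{|k}\big)$ and the right side becomes $F\big((U\times_L(\Delta^n)_L)_{|k}\big)$. These are identified canonically by part \ref{item:proj} of Lemma \ref{lem:finiterestriction}, i.e.\ the natural isomorphism $(U\times_L(\Delta^n)_L)_{|k}\simeq U_{|k}\times_k\Delta^n$. This is natural in $U$ and in $\Delta^n$, so it yields the desired isomorphism of presheaves, which is automatically an isomorphism of presheaves with MW-transfers since all the functors in play preserve that structure.

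For the restriction isomorphism the computation is even shorter: evaluating on $V\in\smk$, the left side is $G\big((\Delta^n)_L\times_L V_L\big)$ and the right side is $G_{|k}(\Delta^n\times_k V)=G\big((\Delta^n\times_k V)_L\big)$, and these coincide because base change to $L$ commutes with fibre products, $(\Delta^n\times_k V)_L=(\Delta^n)_L\times_L V_L$. Naturality in $V$ and in the cosimplicial variable is immediate.

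The only genuinely delicate point — and the closest thing to an obstacle — is the bookkeeping of the simplicial structure: one must verify that the levelwise isomorphisms above commute with the faces and degeneracies of $\Delta^\bullet$, but this follows at once from the naturality, in the cosimplicial variable, of the scheme-level identifications $(U\times_L(\Delta^\bullet)_L)_{|k}\simeq U_{|k}\times_k\Delta^\bullet$ and $(\Delta^\bullet\times_k V)_L=(\Delta^\bullet)_L\times_L V_L$. It is worth emphasizing that, unlike the restriction of a general $\uHom(\MWprep(Y),G)$ with $Y\in\sm L$ arbitrary, here $(\Delta^n)_L$ is the base change of a $k$-scheme, and this is precisely what makes $\tr_{L/k}$ interact cleanly with it; no use of the adjunctions beyond their role in establishing additivity of the two functors is needed.
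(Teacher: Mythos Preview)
Your proof is correct and takes essentially the same approach as the paper: the paper's one-line proof just points to the two scheme-level identifications $Y_{|k}\times\Delta^n\simeq(Y\times_L\Delta^n_L)_{|k}$ and $X_L\times_L\Delta^n_L\simeq(X\times_k\Delta^n)_L$, and you have unpacked precisely this, including the bookkeeping about additivity and naturality in the cosimplicial direction that the paper leaves implicit.
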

\begin{proof}
It is straightforward, using $Y_{|k} \times \Delta^n \simeq (Y \times_L \Delta^n_L)_{|k}$ for $F$ and $X_L \times_L \Delta_L^n \simeq (X \times_k \Delta^n)_L$ for $G$.
\end{proof}

To avoid confusion, let us write $\tZxcby kq$, $\tZxpy kq$, etc. for the (complexes of) sheaves over $k$, and $\tZxcby Lq$, $\tZxpy Lq$ etc. for the same objects over $L$.

For any $q \in \ZZ$, using Corollary \ref{coro:extressmash}, we obtain, an isomorphism ${\tZxcby kq}_L \simeq \tZxcby Lq$. Using the maps \eqref{eq:mapsmashup} for $q\geq 0$ and \eqref{eq:mapsmashdown} for $q<0$, applied to copies of $\Gmpt$, we obtain a morphism $(\tZxcby Lq)_{|k} \to \tZxcby kq$. Symmetrically, using the maps \eqref{eq:maptildesmashup} and \eqref{eq:maptildesmashdown} we obtain a morphism $\tZxcby kq \to (\tZxcby Lq)_{|k}$. Using Lemma \ref{lem:extresCstar}, they induce an isomorphism and morphisms 
\begin{equation} \label{eq:transferZq}
{\tZxcby kq}_L \simeq \tZxpy Lq, 
\qquad
\begin{tikzcd}[column sep=3ex]
(\tZxpy Lq)_{|k} \ar[r,"\tunit^*"] & \tZxpy kq
\end{tikzcd}
\qquad \text{and} \qquad 
\begin{tikzcd}[column sep=3ex]
\tZxpy kq \ar[r,"\counit^*"] & (\tZxpy Lq)_{|k}.
\end{tikzcd}
\end{equation} 

\begin{lem} \label{lem:crossediso}
For any Zariski sheaf $F$ on $\sm L$ and any $X \in \sm L$, we have $\H^*(X,F_{|k})=\H^*(X_L,F)$. For any Zariski sheaf $G$ on $\smk$ and any $Y \in \smk$, we have $\H^*(Y,G_L)=\H^*(Y_{|k},G)$. More generally, if $F$ and $G$ are complexes of Zariski sheaves, we have the same result for hypercohomology.
\end{lem}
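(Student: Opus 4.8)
The plan is to deduce both identities formally from the two adjunctions $(\bc_{L/k},\tr_{L/k})$ and $(\tr_{L/k},\bc_{L/k})$ between $\shMWzk$ and $\shMWzL$ constructed above, via three observations. First, $\bc_{L/k}$ and $\tr_{L/k}$ are each simultaneously a left and a right adjoint (since we have \emph{both} adjunctions), hence each is an exact functor of abelian categories and each preserves injective objects (having an exact left adjoint). Second, for any smooth scheme $Z$ over $k$ (resp.\ over $L$) and any Zariski sheaf with MW-transfers $\mathcal F$, the Zariski (hyper)cohomology $\H^*(Z,\mathcal F)$ is computed by $\mathrm{Ext}^*$ of the representable sheaf $\MWprep(Z)$ against an injective resolution of $\mathcal F$ taken \emph{inside} the abelian category $\shMWzk$ (resp.\ $\shMWzL$): this uses that $\MWprep(Z)$ is already a Zariski sheaf and represents evaluation at $Z$ (Proposition~\ref{prop:zarsheaf}), together with the fact that such a category has enough injectives and that injective MW-sheaves are acyclic on the small Zariski sites --- the MW-analogue of Voevodsky's comparison for sheaves with transfers, developed in \chdmt. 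Third, by the lemma preceding Corollary~\ref{coro:extressmash} one has canonical identifications $\bc_{L/k}\MWprep(W)\simeq\MWprep(W_L)$ for $W\in\smk$ and $\tr_{L/k}\MWprep(W')\simeq\MWprep(W'_{|k})$ for $W'\in\sm L$.

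Granting this, here is how I would run the argument. For the transfer statement, fix $F\in\shMWzL$ and $X\in\sm L$, and write $X_L:=(X_{|k})_L=X\times_k\Spec L$. Choose an injective resolution $F\to I^\bullet$ in $\shMWzL$; since $\tr_{L/k}$ is exact and preserves injectives, $\tr_{L/k}F\to\tr_{L/k}I^\bullet$ is an injective resolution in $\shMWzk$, so $\H^n(X,F_{|k})$ is the $n$-th cohomology of $\Hom_{\shMWzk}\big(\MWprep(X_{|k}),\tr_{L/k}I^\bullet\big)$. Applying the adjunction $\bc_{L/k}\dashv\tr_{L/k}$ term by term identifies this complex with $\Hom_{\shMWzL}\big(\bc_{L/k}\MWprep(X_{|k}),I^\bullet\big)=\Hom_{\shMWzL}\big(\MWprep(X_L),I^\bullet\big)$, whose cohomology is $\H^n(X_L,F)$. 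The base-change statement is symmetric: for $G\in\shMWzk$ and $Y\in\sm L$, take an injective resolution $G\to J^\bullet$ in $\shMWzk$, note that $\bc_{L/k}J^\bullet$ resolves $G_L=\bc_{L/k}G$ by exactness and injective-preservation of $\bc_{L/k}$, and transport $\Hom_{\shMWzL}\big(\MWprep(Y),\bc_{L/k}J^\bullet\big)$ across the adjunction $\tr_{L/k}\dashv\bc_{L/k}$ to $\Hom_{\shMWzk}\big(\tr_{L/k}\MWprep(Y),J^\bullet\big)=\Hom_{\shMWzk}\big(\MWprep(Y_{|k}),J^\bullet\big)$, giving $\H^*(Y,G_L)=\H^*(Y_{|k},G)$. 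A bounded-above complex of MW-sheaves is handled in exactly the same way after replacing the injective resolution by a Cartan--Eilenberg resolution of the complex, which is legitimate precisely because $\bc_{L/k}$ and $\tr_{L/k}$ are exact. (The $\bc_{L/k}$ half also admits an even more direct proof: restriction of scalars along the finite separable $\Spec L\to\Spec k$ does not change underlying Zariski sites, so $\bc_{L/k}G$ restricted to the small site of $Y$ is literally $G$ restricted to the small site of $Y_{|k}$.)

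The one point that is not purely formal, and which I expect to be the real content, is the second observation above: that Zariski (hyper)cohomology of a scheme with coefficients in an MW-sheaf equals the $\mathrm{Ext}$ computed in the category of Zariski sheaves with MW-transfers. This is not set up in the present chapter but belongs to the foundational package on MW-sheaves in \chdmt (enough injectives; acyclicity of injectives on the small sites), exactly parallel to the classical situation; in particular it is what one must invoke in place of a naive Leray spectral sequence argument, since pushforward along the finite morphism $X_L\to X$ is not exact on Zariski sheaves in general. Everything else --- exactness and injective-preservation of $\bc_{L/k}$ and $\tr_{L/k}$, and the values of these functors on representable sheaves --- is either immediate from the double adjunction or already proved above.
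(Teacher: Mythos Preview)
Your approach has a real gap at exactly the point you flag. The comparison ``Zariski cohomology $=$ $\mathrm{Ext}$ in the category of Zariski MW-sheaves'' is \emph{not} part of the foundational package in \chdmt: Lemma~\ref{lm:corr_main} and its consequence Corollary~\ref{cor:compare_Hom&cohomology} are proved only for the Nisnevich and \'etale topologies, because the argument needs the $t$-local scheme $S$ to be henselian so that the admissible support $T$ (finite over $S$) decomposes and the cover $U_T\to T$ acquires a section. Over a merely local $S$ the support $T$ is only semilocal and a Zariski cover of $T$ need not split, so the proof does not go through. In particular there is no statement in the paper guaranteeing that injectives of $\shMWzk$ are acyclic on small Zariski sites, and without this your transport of injective resolutions does not compute Zariski cohomology. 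Note also that the lemma is stated for \emph{arbitrary} Zariski sheaves, while your double-adjunction machinery (the adjunction $(\tr_{L/k},\bc_{L/k})$ exists only after passing to $\cork$) is available only for MW-sheaves; so even granting your second observation the argument would prove strictly less than what is claimed.

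The paper sidesteps both issues by working with flabby resolutions rather than injectives. The functors $\bc_{L/k}$ and $\tr_{L/k}$ are exact on plain Zariski sheaves (a stalk check, using that $\Spec L\to\Spec k$ is finite \'etale), and they preserve flabbiness because on sections they are precomposition with $U\mapsto U_{|k}$ and $V\mapsto V_L$, each of which carries open immersions to open immersions; hence surjectivity of restriction maps is preserved. So a flabby resolution $I^\bullet$ of $F$ is sent by $\tr_{L/k}$ to a flabby resolution of $F_{|k}$, and evaluating at $X$ gives $\H^i(X,F_{|k})=\H^i(I^\bullet(X_L))=\H^i(X_L,F)$; the other identity is symmetric, and Cartan--Eilenberg resolutions handle the hypercohomology statement. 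Your closing parenthetical (that restriction of scalars does not change the small Zariski site) is in fact the essential observation behind this argument --- it is the proof, not an afterthought.
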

\begin{proof}
Let a Zariski sheaf $F$ be flabby if restricted to the small site of any scheme, it gives a flabby sheaf in the usual sense: restrictions are surjective. The Zariski cohomology can then be computed using resolutions by flabby sheaves. Both functors $\bc_{L/k}$ and $\tr_{L/k}$ preserve flabby resolutions. So, given a flabby resolution $I^\bullet$ of $F$, we have 
\[
\H^i(X_L,F)=\H^i(I^\bullet(X_L))= \H^i(\ext_{L/k}^*I^\bullet (X))=\H^i(X,F_{|k}).
\]
A similar proof holds for $G$ and $Y$. The claim about hypercohomology is proved similarly using flabby Cartan-Eilenberg resolutions.
\end{proof}

Using the morphisms \eqref{eq:transferZq} and Lemma \ref{lem:crossediso}, we obtain for any $X \in \smk$ and any $q$ two morphisms 
\[
\hypH^p(X, \tZxpy kq) \to \hypH^p(X,(\tZxpy Lq)_k) \simeq \hypH^p(X_L,\tZxpy Lq)
\]
and
\[
\hypH^p(X_L, \tZxpy Lq) \simeq \hypH^p(X,(\tZxpy Lq)_k) \to \hypH^p(X,\tZxpy kq).
\]
In other words:
\begin{equation}
\begin{tikzcd}
{\HMW^{p,q}}_k(X,\ZZ) \ar[r,"\bc_{L/k}"] & {\HMW^{p,q}}_L(X_L,\ZZ) 
\end{tikzcd} 
\quad \text{and} \quad 
\begin{tikzcd}
{\HMW^{p,q}}_L(X_L,\ZZ) \ar[r,"\tr_{L/k}"] & {\HMW^{p,q}}_k(X,\ZZ).
\end{tikzcd}
\end{equation}
Using Lemma \ref{lem:compoextres}, we obtain: 
\begin{lem}
On MW-motivic cohomology, the composition $\tr_{L/k} \circ \bc_{L/k}$ is the multiplication (via the $\KMW_0(k)$-module structure) by the trace form of the extension $L/k$.
\end{lem}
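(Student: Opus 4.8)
The plan is to trace the definition of $\bc_{L/k}$ and $\tr_{L/k}$ on MW-motivic cohomology back to the sheaf level and reduce everything to the statement of Lemma~\ref{lem:compoextres}. Concretely, the maps $\bc_{L/k}$ and $\tr_{L/k}$ on $\HMW^{p,q}$ are constructed, via Lemma~\ref{lem:crossediso} (which reduces hypercohomology over $L$ to hypercohomology of a restricted complex over $k$), from the morphisms of complexes of Zariski sheaves in \eqref{eq:transferZq}, namely $\counit^*:\tZxpy kq \to (\tZxpy Lq)_{|k}$ and $\tunit^*:(\tZxpy Lq)_{|k} \to \tZxpy kq$. Therefore it suffices to show that the composition $\tunit^* \circ \counit^*: \tZxpy kq \to \tZxpy kq$ equals multiplication by the trace form of $L/k$ through the $\KMW_0(k)$-module structure; the identification on hypercohomology then follows by functoriality of $\hypH^p(X,-)$ in the coefficient complex, since the trace form lies in $\KMW_0(k) = \sKMW_0(k)$ and acts compatibly.

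The first step is to unwind \eqref{eq:transferZq}: the complexes $\tZxpy kq$ are $\Cstar{\tZxcby kq}[-q]$, and by Lemma~\ref{lem:extresCstar} the functors $\bc_{L/k}$ and $\tr_{L/k}$ commute with $\Cstar{-}$, so it is enough to work one simplicial level at a time, i.e.\ on the sheaves $\tZxcby kq$ themselves. These are (cokernels of split injections between) representable presheaves $\MWprep\big((\Gmpt)^{\wedge q}\big)$ for $q>0$, $\MWprep(k)$ for $q=0$, and internal Homs $\uHom((\Gmpt)^{\wedge(-q)},\MWprep(k))$ for $q<0$. The maps $\counit^*$ and $\tunit^*$ at this level are exactly the maps \eqref{eq:maptildesmashup}--\eqref{eq:maptildesmashdown} and \eqref{eq:mapsmashup}--\eqref{eq:mapsmashdown}, which by construction come from the unit $\tcounit^*$ and counit $\tunit^*$ of the adjunction $(\tr_{L/k},\bc_{L/k})$ applied to $\Gmpt$-smash factors, and ultimately from the correspondence-level unit $\tunit_X$ and counit $\counit_X$ of the adjunction $(\ext_{L/k},\res_{L/k})$ on $\cork$. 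Since these maps are natural in $\smk$ and the smash/Hom constructions are built from split exact sequences, the composite $\tunit^*\circ\counit^*$ on $\tZxcby kq$ is induced, via the Yoneda-type embedding $X \mapsto \MWprep(X)$, by the composite correspondence $\counit_X \circ \tunit_X \in \cork(X,X)$ — which is precisely the object computed in Lemma~\ref{lem:compoextres}.

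The key remaining point is then purely formal: Lemma~\ref{lem:compoextres} says $\counit \circ \tunit$ is multiplication by the trace form of $L/k$ via the $\KMW_0(k)$-module structure on $\cork$ established in Example~\ref{ex:action}. Because the $\KMW_0(k)$-action on a representable presheaf $\MWprep(X)$ is the one induced by precomposition with $\counit_X\circ\tunit_X$, and because the module structure on $\uHom(\MWprep(X),F)$ and on all the smash-product sheaves is compatible with this (as recorded in Section~\ref{sec:modulestructure}), the induced endomorphism of $\tZxcby kq$ — hence of $\Cstar{\tZxcby kq}[-q] = \tZxpy kq$, hence of $\hypH^p_{\zar}(X,\tZxpy kq) = \HMW^{p,q}(X,\ZZ)$ — is multiplication by the trace form. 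I expect the main obstacle to be bookkeeping rather than mathematics: one must check carefully that the sign/grading conventions in the smash-product definitions (the split injections of Definition~\ref{dfn:reprpresheaf}) and in the internal-Hom module structure are preserved under $\counit^*$ and $\tunit^*$, so that the scalar $\counit_*\langle 1\rangle = \langle$trace form$\rangle$ survives intact through the passage to $q<0$ contractions and through $\Cstar{-}$. Once naturality and compatibility of the $\KMW_0(k)$-action are in place, the conclusion is immediate.
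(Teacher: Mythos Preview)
Your proposal is correct and follows exactly the same approach as the paper: the paper's entire argument is the one-line ``Using Lemma~\ref{lem:compoextres}, we obtain'', and you have simply unwound the definitions of $\bc_{L/k}$ and $\tr_{L/k}$ on hypercohomology to make explicit why the composite $\tunit^*\circ\counit^*$ on $\tZxpy kq$ is induced by the correspondence $\counit\circ\tunit$ computed in that lemma. The bookkeeping you flag (compatibility of the $\KMW_0(k)$-action through smash products, contractions, and $\Cstar{-}$) is straightforward naturality and is exactly what the paper is taking for granted.
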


We now compare the MW-cohomology groups when computed over $k$ for a limit scheme that is of the form $X_L$ and when computed over some extension $L$ of $k$ (where $L$ is still asumed to be perfect).

If $L$ is a finitely generated extension of $k$, that we view as the inverse limit $L=\varprojlim U$ of schemes $U \in \smk$, we obtain a natural map
\[
\varinjlim_{T \in \Adm(U\times X,Y)} \hspace{-3ex}\chst dT {U \times X \times Y}{\omega_{Y}} \to \hspace{-1ex} \varinjlim_{T' \in \Adm(X_L,Y_L)}\hspace{-3ex}\chst d{T'} { X_L \times Y_L}{\omega_{Y_L}} 
\]
induced by pull-backs between the groups where $T'=T_L$.
When $d=\dimn Y$, this is a map 
\[
\cork (U \times X,Y) \to \cor L (X_L,Y_L)
\]
functorial in $U$. Taking the limit over $U$, we therefore obtain a map
\[
\begin{tikzcd}
\cork (X_L,Y) = \varinjlim \cork(U \times X,Y) \ar[r,"\Psi_{L/k}"] & \cor L (X_L,Y_L)
\end{tikzcd}
\]
using the construction of section \ref{sec:limits} to define the left hand side.
\begin{prop}
The map $\Psi_{L/k}$ is an isomorphism.
\end{prop}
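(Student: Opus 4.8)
The plan is to reduce the statement to the isomorphism of Lemma~\ref{lem:limitChowiso} applied to the limit schemes $U \times X \times Y$ as $U$ ranges over a cofinal system converging to $\Spec(L)$. The key point is that both sides of $\Psi_{L/k}$ are built as filtered colimits of Chow-Witt groups with support, and the map is induced levelwise by flat pull-back along the projections; since filtered colimits are exact and commute with one another, it suffices to match up the indexing systems of admissible subsets and the supported Chow-Witt groups.

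\textbf{Key steps.}
First I would fix a cofinal system $(U_\lambda)$ of smooth $k$-schemes with $L = \varprojlim U_\lambda$ and affine \'etale transition maps, as in Section~\ref{sec:limits}, so that $X_L = \varprojlim (U_\lambda \times_k X)$ with the analogous transition maps; without loss of generality $Y$ is equidimensional of dimension $d$. Second, I would analyze the two posets of supports: for each $\lambda$ we have $\Adm(U_\lambda \times X, Y)$, and base change along $U_\mu \times X \to U_\lambda \times X$ gives maps $\Adm(U_\lambda\times X,Y) \to \Adm(U_\mu\times X,Y)$ by Lemma~\ref{lem:admissiblesheaf}; I claim that $\varinjlim_\lambda \Adm(U_\lambda \times X, Y) \cong \Adm(X_L, Y_L)$, the isomorphism sending a compatible family $(T_\lambda)$ to $\varprojlim T_\lambda$. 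The surjectivity here uses the standard limit arguments of \cite{EGA4-3_a}*{\S 8}: any closed subset of the noetherian limit scheme $X_L \times_L Y_L = \varprojlim(U_\lambda \times X \times Y)$ descends to some finite level, and finiteness and surjectivity of $T \to X$ descend as well by \cite{EGA4-3_a}*{8.10.5}; injectivity is clear since the $U_\lambda \times X \times Y$ are all of finite type. Third, for a fixed admissible $T$ arising at level $\lambda$ with limit $T' = T_L$, Lemma~\ref{lem:limitChowiso} (applied to the limit scheme $X_L \times_L Y_L$, the constructible closed subset $T'$, and the line bundle $p_{Y_L}^*\omega_{Y_L/k}$, which equals $p_{Y_L}^*\omega_{Y_L/L}$ since $L/k$ is separable so that $\omega_{L/k}$ is trivial) gives
\[
\varinjlim_{\mu \geq \lambda} \chst d{T_\mu}{U_\mu \times X \times Y}{\omega_{Y}} \xrightarrow{\ \sim\ } \chst d{T'}{X_L \times_L Y_L}{\omega_{Y_L}}.
\]
Fourth, I would assemble these isomorphisms over the cofinal system of all admissible supports and over all $\lambda$: since taking the colimit over supports commutes with taking the colimit over $\lambda$ (both being filtered), and the identification of posets from the second step is cofinal, the resulting map on the total colimits is exactly $\Psi_{L/k}$, and it is an isomorphism. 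Compatibility of $\Psi_{L/k}$ with the transition maps on both sides is immediate from the functoriality of flat pull-back, so this colimit description is legitimate.

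\textbf{Main obstacle.}
The delicate point is the bookkeeping in the second step: one must check that the limit of a compatible family of admissible subsets is again admissible for the limit scheme over $L$, and conversely that every admissible $T' \subseteq X_L \times_L Y_L$ descends, \emph{as an admissible subset}, to a finite level. The descent of the closed subset itself is routine constructibility, but one has to verify that after possibly increasing $\lambda$ the descended $T_\lambda$ is finite and surjective over $U_\lambda \times X$ --- this is where \cite{EGA4-3_a}*{8.10.5} (properness and finiteness descend) and \cite{EGA4-1_a}*{8.3.11} (constructibility) come in, together with the condition built into the category $\tP$ that the preimages of the $T_\lambda$ eventually stabilize. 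Once this poset identification and its cofinality are pinned down, the rest is a formal manipulation of filtered colimits combined with Lemma~\ref{lem:limitChowiso}.
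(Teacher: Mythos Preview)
Your strategy is essentially that of the paper: both rewrite the source as a filtered colimit over pairs $(U,T)$ with $T$ admissible, invoke Lemma~\ref{lem:limitChowiso} for each fixed $T$ to identify $\varinjlim_{V\subseteq U}\chs d{T\cap V}{V\times X\times Y}$ with $\chs d{T_L}{X_L\times_L Y_L}$, and then check that every $T'\in\Adm(X_L,Y_L)$ descends to a finite level. One correction: the line bundle in the system on $U_\lambda\times X\times Y$ is $p_Y^*\omega_{Y/k}$, and its pullback to the limit $X_L\times_L Y_L$ is already $p_{Y_L}^*\omega_{Y_L/L}$ (because $\omega_{Y_L/L}$ is the base change of $\omega_{Y/k}$ along $Y_L\to Y$), not $p_{Y_L}^*\omega_{Y_L/k}$; your claim that $\omega_{L/k}$ is trivial is false for transcendental $L/k$ (e.g.\ $\omega_{k(t)/k}=k(t)\,dt$) and is in any case unnecessary.
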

\begin{proof}
To shorten the notation, we drop the canonical bundles in the whole proof, since they behave as they should by pull-back. 
The source of $\Psi_{L/k}$ can be defined using a single limit, as 
\[
\varinjlim_{(U,T)} \chs dT{U\times X \times Y}
\]
where the limit runs over the pairs $(U,T)$ with $(U,T)\leq (U',T')$ if there is a map $U' \subseteq U$ and $T\cap U' \subseteq T'$. The corresponding transition map on Chow-Witt groups is the restriction to $U'$ composed with the extension of support from $T \cap U'$ to $T'$. Note that both of these maps are injective, as explained in the proof of Lemma \ref{lem:restrictioninj} for the first one and right before that same Lemma for the second one. 
The maps $f_{U,T}: \chs dT{U \times X \times Y} \to \varinjlim_{V \subseteq U} \chs d{T \cap V}{V \times X \times Y}$ sending the initial group in the direct system to its limit are again injective, and their target can be identified with $\chs d{T_L}{X_L \times Y_L}$ by Lemma \ref{lem:limitChowiso} (independently of $U$, except that $T$ lives on $U$). The $f_{U,T}$ therefore induce an injective map
\begin{equation} \label{eq:limitmap}
\varinjlim_{(U,T)} \chs dT{U\times X \times Y} \to \varinjlim_{(U,T)} \chs d{T_L}{X_L \times Y_L}
\end{equation}
This map is also surjective because any $\chs d{T\cap V}{V \times X \times Y}$ in the target of $f_{U,T}$ is in the image of the same group on the source, after passing from $(U,T)$ to $(V, T \cap V)$. Finally, the isomorphism \eqref{eq:limitmap} actually maps to $\cor L(X_L,Y_L)$ because any $T' \in \Adm(X_L,Y_L)$ actually comes by pull-back from some open $U$, by \cite{EGA4-3_a}*{8.3.11}. It is easy to see that in defining that isomorphism we have just expanded the definition of $\Psi_{L/k}$
\end{proof}

%%%%%%%%%%%%%%%%%%%%%%%%%%%%%%%%%%%%%%%%%%%%%%%%%%%%%%%%%%%%%%%%%%%%%%%%%%%%%%%%%%%%%%%%%%%%%%%%%%%%%%%%%%%%%%%

\begin{bibsection}
\begin{biblist}

\bib{Asok12_a}{article}{
  author={Asok, A.},
  author={Fasel, J.},
  title={Algebraic vector bundles on spheres},
  date={2014},
  journal={J. Topology},
  volume={7},
  number={3},
  pages={894\ndash 926},
  note={doi:10.1112/jtopol/jtt046.},
}

\bib{Asok14_a}{article}{
  author={Asok, A.},
  author={Fasel, J.},
  title={A cohomological classification of vector bundles on smooth affine threefolds},
  date={2014},
  journal={Duke Math. J.},
  volume={163},
  number={14},
  pages={2561\ndash 2601},
}

\bib{Asok16_a}{article}{
  author={Asok, A.},
  author={Fasel, J.},
  title={Comparing {E}uler classes},
  date={2016},
  journal={Quart. J. Math.},
  volume={67},
  pages={603\ndash 635},
  note={Available at \href{http://arxiv.org/abs/1306.5250}{arXiv:1306.5250}},
}

\bib{Balmer02_a}{article}{
  author={Balmer, P.},
  author={Walter, C.},
  title={A {G}ersten-{W}itt spectral sequence for regular schemes},
  date={2002},
  issn={0012-9593},
  journal={Ann. Sci. \'Ecole Norm. Sup. (4)},
  volume={35},
  number={1},
  pages={127\ndash 152},
}

%\bib{Calmes17_a}{unpublished}{
%  author={Calm\`es, B.},
%  author={Fasel, J.},
%  title={A primer for {M}ilnor-{W}itt {$K$}-theory sheaves and their cohomology},
%  date={2017},
%  note={In preparation},
%}

\bib{Del87_a}{inproceedings}{
  author={Deligne, P.},
  title={Le déterminant de la cohomologie},
  organization={AMS},
  date={1987},
  booktitle={Current {T}rends in {A}rithmetical {A}lgebraic {G}eometry},
  editor={Ribet, K.~A.},
  series={Contemporary Mathematics},
  volume={67},
}

\bib{Fasel07_a}{article}{
  author={Fasel, J.},
  title={The {C}how-{W}itt ring},
  date={2007},
  issn={1431-0635},
  journal={Doc. Math.},
  volume={12},
  pages={275\ndash 312},
}

\bib{Fasel08_a}{article}{
  author={Fasel, J.},
  title={Groupes de {C}how-{W}itt},
  date={2008},
  issn={0249-633X},
  journal={Mém. Soc. Math. Fr. (N.S.)},
  volume={113},
  pages={viii+197},
}

\bib{Fasel09_a}{article}{
  author={Fasel, J.},
  author={Srinivas, V.},
  title={Chow-{W}itt groups and {G}rothendieck-{W}itt groups of regular schemes},
  date={2009},
  issn={0001-8708},
  journal={Adv. Math.},
  volume={221},
  number={1},
  pages={302\ndash 329},
}

\bib{Fasel13_a}{article}{
  author={Fasel, J.},
  title={The projective bundle theorem for ${I}^j$-cohomology},
  date={2013},
  journal={J. K-Theory},
  volume={11},
  number={2},
  pages={413\ndash 464},
}

\bib{Fasel20_a}{article}{
 author={Fasel, J.},
 title={Lecture notes on {C}how-{W}itt groups},
 date={2020},
 journal={Contemp. Math.}
 volume={745}
 pages={83\ndash 122},
}

\bib{Feld20_a}{article}{
 author={Feld, N.},
 title={{M}ilnor-{W}itt cycles modules},
 date={2020},
  journal={J. Pure Appl. Algebra},
  volume={224},
  number={7},
}

\bib{Feld21_a}{article}{
 author={Feld, N.},
 title={{M}orel homotopy modules and {M}ilnor-{W}itt cycles modules},
 date={2021},
 note={arXiv:2007:15069. To appear in Adv. Math.},
}

\bib{Feld21b_a}{unpublished}{
 author={Feld, N.},
 title={{M}ilnor-{W}itt homotopy sheaves and {M}orel generalized transfers},
 date={2021},
  journal={Doc. Math.},
  volume={26},
  pages={617\ndash 659},
}

\bib{Friedlander02_a}{article}{
  author={Friedlander, E.~M.},
  author={Suslin, A.~A.},
  title={The spectral sequence relating algebraic {$K$}-theory to motivic cohomology},
  date={2002},
  journal={Ann. Sci. \'Ecole Norm. Sup. (4)},
  volume={35},
  number={6},
  pages={773\ndash 875},
}

\bib{Garkusha14_a}{unpublished}{
  author={Garkusha, G.},
  author={Panin, I.},
  title={Framed {M}otives of algebraic varieties},
  date={2014},
  note={\href{https://arxiv.org/abs/1409.4372}{arXiv:1409.4372}},
}

\bib{EGA1_a}{article}{
  author={Grothendieck, A.},
  author={Dieudonné, J.},
  title={Éléments de géométrie algébrique. {I}. {L}e langage des schémas},
  date={1960},
  issn={0073-8301},
  journal={Inst. Hautes \'Etudes Sci. Publ. Math.},
  volume={4},
  pages={228},
}

\bib{EGA2_a}{article}{
  author={Grothendieck, A.},
  author={Dieudonné, J.},
  title={Éléments de géométrie algébrique: {II}. Étude globale élémentaire de quelques classes de morphismes},
  date={1961},
  journal={Publ. Math. Inst. Hautes \'Etudes Sci.},
  volume={8},
  pages={5\ndash 222},
}

\bib{EGA4-3_a}{article}{
  author={Grothendieck, A.},
  author={Dieudonné, J.},
  title={Éléments de géométrie algébrique: {IV}. Étude locale des schémas et des morphismes de schémas, Troisième partie},
  date={1966},
  journal={Publ. Math. Inst. Hautes \'Etudes Sci.},
  volume={28},
  pages={5\ndash 255},
}

\bib{SGA4-2_a}{book}{
       title={Théorie des topos et cohomologie étale des schémas. Tome 2},
      author={Artin, M.},
      author={Grothendieck, A.},
      author={Verdier, J.L.},
   publisher={Springer-Verlag},
     address={Berlin},
        date={1972},
        note={Séminaire de Géométrie Algébrique du Bois-Marie 1963--1964 (SGA 4), avec la collaboration de N.~Bourbaki, P.~Deligne et B.~Saint-Donat, Lecture Notes in Mathematics, Vol. 270},
}

\bib{Kunz86_a}{book}{
  author={Kunz, E.},
  title={K\"ahler differentials},
  edition={{F}rider. {V}ieweg \& {S}ohn},
  series={Advanced {L}ectures in {M}athematics},
  address={Braunschweig/Wiesbaden},
  date={1986},
}

\bib{Lam73_a}{book}{
  author={Lam, T.~Y.},
  title={The algebraic theory of quadratic forms},
  publisher={W. A. Benjamin, Inc., Reading, Mass.},
  date={1973},
  note={Mathematics Lecture Note Series},
}

\bib{Mazza06_a}{book}{
  author={Mazza, C.},
  author={Voevodsky, V.},
  author={Weibel, C.},
  title={Lecture notes on motivic cohomology},
  series={Clay Mathematics Monographs},
  publisher={American Mathematical Society},
  address={Providence, RI},
  date={2006},
  volume={2},
  isbn={978-0-8218-3847-1; 0-8218-3847-4},
}

\bib{Milnor69_a}{article}{
  author={Milnor, J.},
  title={Algebraic {$K$}-theory and quadratic forms},
  date={1969/1970},
  issn={0020-9910},
  journal={Invent. Math.},
  volume={9},
  pages={318\ndash 344},
}

\bib{Milnor73_a}{book}{
  author={Milnor, J.},
  author={Husemoller, D.},
  title={Symmetric bilinear forms},
  series={Ergebnisse der Mathematik und ihrer Grenzgebiete. 3. Folge. A Series of Modern Surveys in Mathematics},
  publisher={Springer},
  address={Berlin},
  date={1973},
  volume={73},
}

\bib{Morel04_a}{article}{
  author={Morel, F.},
  title={Sur les puissances de l'idéal fondamental de l'anneau de {W}itt},
  date={2004},
  issn={0010-2571},
  journal={Comment. Math. Helv.},
  volume={79},
  number={4},
  pages={689\ndash 703},
}

\bib{Morel11_a}{article}{
  author={Morel, F.},
  title={On the {F}riedlander-{M}ilnor conjecture for groups of small rank},
  date={2011},
  journal={Current {D}evelopments in {M}athematics},
  volume={2010},
  pages={45\ndash 93},
}

\bib{Morel12_a}{book}{
  author={Morel, F.},
  title={$\mathbb {A}^1$-{A}lgebraic {T}opology over a {F}ield},
  series={Lecture Notes in Math.},
  publisher={Springer},
  address={New York},
  date={2012},
  volume={2052},
}

\bib{Neshitov18_a}{article}{
      author={Neshitov, A.},
       title={Framed correspondences and {M}ilnor-{W}itt {$K$}-theory},
        date={2018},
     journal={Journal of the Institute of Mathematics of Jussieu},
      volume={17},
      number={4},
       pages={823\ndash 852},
}

\bib{Scharlau72_a}{article}{
  author={Scharlau, W.},
  title={Quadratic reciprocity laws},
  date={1972},
  journal={J. Number Theory},
  volume={4},
  number={1},
  pages={78\ndash 97},
}

\bib{Schmid98_a}{thesis}{
  author={Schmid, M.},
  title={Wittringhomologie},
  type={Ph.D. Thesis},
  date={1998},
}

\bib{Serre68a}{book}{
  author={Serre, J.-P.},
  title={Corps locaux},
  series={Publications de l'institut mathématique de l'université de Nancago},
  date={1968},
}

\bib{Suslin80}{article}{
author={Suslin, A.},
  title={Reciprocity laws and the stable rank of polynomial rings},
  date={1980},
  journal={Math. USSR Izvestija},
  volume={15},
  number={3},
  pages={589\ndash 623},
}

\bib{FSV_a}{book}{
  author={Voevodsky, V.},
  author={Suslin, A.},
  author={Friedlander, E.~M.},
  title={Cycles, transfers and motivic homology theories},
  series={Annals of Mathematics Studies},
  publisher={Princeton Univ. Press},
  date={2000},
  volume={143},
}

\end{biblist}
\end{bibsection}

%%%%%%%%%%%%%%%%%%%%%%%%%%%%%%%%%%%%%%%%%%%%%%%%%%%%%%%%%%%%%%%%%%%%%%%%%%%%%%%%%%%%%%%%%%%%%%%%%%%%%%%%%%%%%%%%%%%%%%

\numberwithin{equation}{thm}
\renewcommand{\theequation}{\thethm.\alph{equation}}
\renewcommand{\theparagraph}{\thesubsection.\alph{paragraph}}
\chapter[MW-motivic complexes]{Milnor-Witt motivic complexes\author{Frédéric Déglise and Jean Fasel}}
\label{ch:dmt}
%%%%%%%%%%%%%%%%%%%%%%%%%%%%%%%%%%%%%%%%%%%%%%%%%%%%%%%%%%%%

\section*{Abstract}
The aim of this work is to develop a theory parallel
to that of motivic complexes based on cycles
and correspondences with coefficients in symmetric bilinear forms.
This framework is closer to the point of view of $\Aone$-homotopy than the original one envisioned by Beilinson and set up by Voevodsky.

\section*{Introduction}

The aim of this paper is to define the various categories of MW-motives built out of the category of finite Chow-Witt correspondences constructed in \chfinitecw, and to study the motivic cohomology groups intrinsic to these categories. 
In Section \ref{sec:MWtransfers}, we start with a quick reminder of the basic properties of the category $\cork$. 
We then proceed with our first important result, namely that the sheaf (in either the Nisnevich or the étale topologies) associated to a MW-presheaf, i.e.\ an additive functor $\cork\to \Ab$,  is a MW-sheaf. 
The method follows closely Voevodsky's method and relies on Lemma \ref{lm:corr_main}. 
We also discuss the monoidal structure on the category of MW-sheaves. 
In the second part of the paper, we prove the analogous for MW-sheaves of a famous theorem of Voevodsky saying that the sheaf (with transfers) associated to a homotopy invariant presheaf with transfers is strictly $\Aone$-invariant. 
Our method here is quite lazy. We heavily rely on the fact that an analogue theorem holds for quasi-stable sheaves with framed transfers by \cite{Garkusha15_b}*{Theorem~1.1}, provided the base field is infinite perfect of characteristic different from $2$. 
Having this theorem at hand, it suffices to construct a functor from the category of linear framed presheaves to $\cork$ to prove the theorem. This functor is of independent interest and this is the reason why we take this shortcut. 
However, a direct proof of this theorem due to H.~A.~Kolderup (still relying on ideas of Panin-Garkusha) can be found in \cite{Kolderup17_b}. 
In Section \ref{sec:MWmotives}, we finally pass to the construction of the categories of MW-motives starting with a study of different model structures on the category of possibly unbounded complexes of MW-sheaves. The ideas here are closely related to \cite{CD12_b}. The category of effective motives $\DMtekR$ (with coefficients in a ring $R$) 
\index[notation]{dmteffkr@$\DMtekR$}%
is then defined as the category of $\Aone$-local objects in this category of complexes. 
Using the analogue of Voevodsky's theorem proved in Section \ref{sec:framed}, these objects are easily characterized by the fact that their homology sheaves are strictly $\Aone$-invariant. This allows as usual to give an explicit $\Aone$-localization functor, defined in terms of the Suslin (total) complex. The category of geometric objects is as in the classical case the subcategory of compact objects of $\DMtekR$. 
Our next step is the formal inversion of the Tate motive in $\DMtekR$ to obtain the stable category of MW-motives $\DMtkR$ (with coefficients in $R$). We can then consider motivic cohomology as groups of extensions in this category, a point of view which allows proving in Section \ref{sec:MWcohom} many basic property of this version of motivic cohomology, including a commutativity statement and a comparison theorem between motivic cohomology and Chow-Witt groups.

\subsection*{Conventions} 

In all this work, we fix a base field $k$, assumed to be perfect.
All schemes considered are assumed to be separated of finite type over $k$, unless explicitly stated.

We fix a ring of coefficients $R$.
We also consider a Grothendieck topology $t$ on the site of
smooth $k$-schemes, which in practice will be either the Nisnevich
or the étale topology. In section 3 and 4 we will restrict
to these two latter cases.

\section{Milnor-Witt transfers on sheaves}\label{sec:MWtransfersonsheaves}

\subsection{Reminder on Milnor-Witt correspondences}

\begin{num}
We use the definitions and constructions of \chfinitecw.

In particular, for any smooth schemes $X$ and $Y$ (with $Y$ connected of dimension $d$),
we consider the following group of \emph{finite MW-correspondences} 
from $X$ to $Y$:
\begin{equation}\label{eq:def_corr}
\MWcorr(X,Y):=\varinjlim_T \chst dT{X \times Y}{\omega_Y}
\end{equation}
\index[notation]{cortxy@$\MWcorr(X,Y)$}%
where $T$ runs over the ordered set of reduced (but not necessarily irreducible) closed subschemes in
 $X \times Y$ whose projection to $X$ is finite equidimensional and $\omega_Y$ is the pull back of the canonical sheaf of $Y$ along the projection to the second factor.
This definition is extended to the case where $Y$ is not connected
by additivity. When considering the coefficients ring $R$, we put:
\begin{equation}\label{eq:R-lin_MWcorr}
\MWcorr(X,Y)_R:=\MWcorr(X,Y) \otimes_\ZZ R.
\end{equation}
 Recall that the same formula is used when considering $R$-linear finite correspondences.
 In the sequel, we drop the index $R$ from the notation when there is no possible confusion. 
 
Because there is a natural morphism from Chow-Witt groups
 (twisted by any line bundle) to Chow groups, we get a canonical map:
\begin{equation}\label{eq:from_corr2corV}
\pi_{XY}:\MWcorr(X,Y) \rightarrow \Vcorr(X,Y)
\end{equation}
for any smooth schemes $X$ and $Y$, where the right hand side
is the group of Voevodsky's finite correspondences which
is compatible with the composition --- see \emph{loc.\ cit.}\ 
Remark 4.12. Let us recall the following result. 
\end{num}

\begin{lem}\label{lm:corr&corrV_upto2tor}
If $2\in R^\times$, the induced map
\[
\pi_{XY}:\MWcorr(X,Y) \rightarrow \Vcorr(X,Y)
\]
is a split epimorphism.
\end{lem}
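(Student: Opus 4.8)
The claim is that, after inverting $2$ in the coefficients, the forgetful map $\pi_{XY}\colon \MWcorr(X,Y)_R \to \Vcorr(X,Y)_R$ admits a section (a split epimorphism). The natural candidate for a splitting is the hyperbolic map $H_{X,Y}$ introduced in \chfinitecw, Section~\ref{sec:chowwitt}: it is induced at the level of Chow-Witt groups with support by the hyperbolic morphism of sheaves $\sKM_n \to \sKMW_n(\Lb)$, and passing to the colimit over admissible subsets gives a homomorphism $H_{X,Y}\colon \Vcorr(X,Y) \to \MWcorr(X,Y)$ (which is \emph{not} a functor, but that is irrelevant here). Since the composite $\sKM_n \to \sKMW_n(\Lb) \to \sKM_n$ is multiplication by $2$, the composite $\pi_{XY}\circ H_{X,Y}$ is multiplication by $2$ on $\Vcorr(X,Y)$, hence on $\Vcorr(X,Y)_R$ as well. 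Therefore $\tfrac12 H_{X,Y}$ (well-defined once $2\in R^\times$) is a section of $\pi_{XY}$ over $R$, and $\pi_{XY}$ is a split epimorphism.

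\textbf{Steps, in order.} First I would recall the construction of the hyperbolic homomorphism on Chow-Witt groups with support, namely $\CH^d_T(X\times Y)\to \chst dT{X\times Y}{\omega_Y}$ for $T\in\Adm(X,Y)$, coming from the sheaf map $\sKM_d \to \sKMW_d(\omega_Y)$, and note that it is compatible with the extension-of-support maps, so that it induces $H_{X,Y}\colon \Vcorr(X,Y)\to \MWcorr(X,Y)$ at the colimit (exactly as in the discussion right after Remark~\ref{rem:cortocor} in \chfinitecw). Second, I would observe that tensoring with $R$ over $\ZZ$ is exact and commutes with the relevant colimits, so $H_{X,Y}$ induces $H_{X,Y}\colon \Vcorr(X,Y)_R \to \MWcorr(X,Y)_R$, and similarly $\pi_{XY}$ is $\pi_{XY}\otimes_\ZZ R$. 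Third, I would record that $\pi_{XY}\circ H_{X,Y}=2\cdot\id$ on $\Vcorr(X,Y)$, which follows from the sheaf-level identity that $\sKM_d \to \sKMW_d(\omega_Y)\to \sKM_d$ is multiplication by $2$ (already used in \chfinitecw, Section~\ref{sec:chowwitt}), together with the fact that $H_{X,Y}$ and $\pi_{XY}$ are defined levelwise on the Gersten/Rost–Schmid complexes. Finally, since $2\in R^\times$, the map $\tfrac12\,(H_{X,Y}\otimes R)$ is a well-defined $R$-module homomorphism splitting $\pi_{XY}$, which proves the lemma.

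\textbf{Main obstacle.} There is essentially no hard analytic or geometric content; the only point requiring a little care is the \emph{naturality/compatibility} bookkeeping: that the hyperbolic map really is compatible with extension of support (so that the colimit $H_{X,Y}$ exists) and that $\pi_{XY}\circ H_{X,Y}=2\cdot\id$ holds genuinely at the level of the correspondence groups and not merely termwise. Both facts are immediate from the explicit description of Chow-Witt groups with support via the Rost–Schmid (or Milnor–Witt) complex recalled in \chfinitecw, Section~\ref{sec:chowwitt}, where the forgetful and hyperbolic morphisms are defined termwise by the corresponding morphisms of Milnor(-Witt) $K$-theory groups and their composite is multiplication by $2$ termwise; I would simply cite that discussion rather than re-deriving it. One should also note explicitly that $H_{X,Y}$ is only an $R$-linear map of abelian groups, not compatible with composition of correspondences, so this splitting is not a splitting in the categorical sense — but the statement only asks for a split epimorphism of $R$-modules for fixed $X,Y$.
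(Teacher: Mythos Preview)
Your proposal is correct and follows essentially the same approach as the paper: the paper's proof simply observes that the composite $\KM_n(F) \to \KMW_n(F,\Lb) \to \KM_n(F)$ (hyperbolic followed by forgetful) is multiplication by $2$, which is exactly the sheaf-level fact underlying your argument that $\pi_{XY}\circ H_{X,Y}=2\cdot\id$ and hence $\tfrac12 H_{X,Y}$ is a section. Your write-up is in fact more detailed than the paper's, which leaves the passage to Chow-Witt groups with support and the colimit over admissible subsets implicit.
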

The lemma comes from the basic fact that the following composite map
\[
\KM_n(F) \xrightarrow{(1)} \KMW_n(F,\Lb)
 \xrightarrow{(2)} \KM_n(F)
\]
is multiplication by $2$, where (1) is the map from Milnor K-theory
of a field $F$ to Milnor-Witt K-theory of $F$ twisted by the $1$-dimensional
$F$-vector space $\Lb$ described in \chfinitecw, \S\ref{sec:MWKtheory} and (2) is the map killing $\eta$
(see the discussion after \chfinitecw, Definition~\ref{dfn:twistedCHWitt}).

\begin{rem}\ 
\label{rem:finite_corr&plim}% 
\begin{enumerate}
\item In fact, a finite MW-correspondence amounts to a finite correspondence
$\alpha$ together with a symmetric bilinear form over the function field
of each irreducible component of the support of $\alpha$
satisfying some condition related with residues;
see \chfinitecw, Definition \ref{dfn:support}.
\item Every finite MW-correspondence between smooth schemes $X$ and $Y$ has a well defined support (\chfinitecw, \ref{dfn:support}). Roughly speaking, it is the minimal closed subset of $X\times Y$ on which the correspondence is defined.
\item Recall that the Chow-Witt group in degree $n$ 
of a smooth $k$-scheme $X$ can be defined as the $n$-th Nisnevich cohomology
group of the $n$-th unramified Milnor-Witt sheaf $\sKMW_n$ (this cohomology being computed using an explicit flabby resolution of $\sKMW_n$).
This implies that the definition can be uniquely extended to the case
where $X$ is an essentially smooth $k$-scheme.
Accordingly, one can extend the definition of finite 
MW-correspondences to the case of essentially smooth
$k$-schemes using formula \eqref{eq:def_corr}, see \chfinitecw, \S\ref{sec:limits}. The definition
of composition, being essentially an intersection product, extends to that generalized setting.
We will use that fact in the proof of Lemma \ref{lm:corr_main}.
\item Consider the notations of the previous point.
Assume that the essentially smooth $k$-scheme
$X$ is the projective limit of a projective system of essentially
smooth $k$-schemes $(X_i)_{i \in I}$. Then the canonical map:
\[
\Big({\varinjlim}_{i \in I^{op}} \MWcorr(X_i,Y)\Big) \longrightarrow \MWcorr(X,Y)
\]
is an isomorphism. This readily follows from formula \eqref{eq:def_corr}
and the fact that Chow-Witt groups, as Nisnevich cohomology, commute
with projective limits of schemes. See also \chfinitecw, \S\ref{sec:limits} for an extended discussion of these facts. 
 \item For any smooth schemes $X$ and $Y$, the group $\MWcorr(X,Y)$ is endowed with a structure of a left $\sKMW_0(X)$-module and a right $\sKMW_0(Y)$-module (\chfinitecw, Example~\ref{ex:action}). Pulling back along $X\to \Spec k$, it follows that $\MWcorr(X,Y)$ is a left $\sKMW_0(k)$-module and it is readily verified that the category $\cork$, defined just below, is in fact $\sKMW_0(k)$-linear. Consequently, we can also consider $\sKMW_0(k)$-algebras as coefficient rings. 
\end{enumerate}
\end{rem}

\begin{num}
Recall from \emph{loc.\ cit.}\ that there is a composition product
of MW-correspondences which is compatible with the projection
map $\pi_{XY}$.
\end{num}
\begin{dfn} \label{dfn:corkcorVk}
We denote by $\cork$ (resp.\ $\corVk$) the additive category whose objects
are smooth schemes and morphisms are finite MW-correspondences
(resp.\ correspondences). If $R$ is a ring,
we let $\corkR$ (resp.\ $\corVkR$) be
the category $\cork \otimes_\ZZ R$ (resp.\ $\corVk \otimes_\ZZ R$).
\index[notation]{cortk@$\cork$}%
\index[notation]{cork@$\corVk$}%
\index[notation]{cortkR@$\corkR$}%
\index[notation]{corkR@$\corVkR$}%
\index{Milnor-Witt!finite correspondence}%
\index{finite!correspondence}%

We denote by
\begin{equation}\label{eq:from_smc2smcV}
\pi:\cork \rightarrow \corVk
\end{equation}
the additive functor which is the identity on objects and the map
 $\pi_{XY}$ on morphisms.
\end{dfn}

As a corollary of the above lemma, the induced functor
\[
\pi:\corkR \rightarrow \corVkR,
\]
is full when $2\in R^\times$. Note that the corresponding result without inverting $2$ is wrong by \chfinitecw, \ref{rem:notsurjective}.

\begin{num}
\label{num:symmonstruc}%
The external product of finite MW-correspondences induces
a symmetric monoidal structure on $\cork$ which on objects is given by
the cartesian product of $k$-schemes. One can check that the
functor $\pi$ is symmetric monoidal, for the usual
symmetric monoidal structure on the category $\corVk$.
 
Finally, the graph of any morphism $f:X \rightarrow Y$ can be
seen not only as a finite correspondence $\gamma(f)$ from $X$ to $Y$
but also as a finite MW-correspondence $\tilde \gamma(f)$
such that $\pi \tilde \gamma(f)=\gamma(f)$. One obtains in this
way a canonical functor:
\begin{equation}\label{eq:from_sm2smc}
\tilde \gamma:\smk \rightarrow \cork
\end{equation}
which is faithful, symmetric monoidal, and such that
 $\pi \circ \tilde \gamma=\gamma$, where $\gamma:\smk \rightarrow \corVk$ is
 the classical graph functor with values in finite correspondences. 
\end{num}

\subsection{Milnor-Witt transfers}
\label{sec:MWtransfers}

\begin{dfn}
We let $\pshMWkR$ (resp.\ $\pshVkR$, resp.\ $\pshkR$) be the category of additive presheaves of $R$-modules on $\cork$ (resp.\ $\corVk$, resp.\ $\smk$). 
Objects of $\pshMWkR$ will be simply called \emph{MW-presheaves}.
\index[notation]{pshtkR@$\pshMWkR$}%
\index[notation]{pshtrkR@$\pshVkR$}%
\index[notation]{pshkR@$\pshkR$}%
\index{Milnor-Witt!presheaf}%
\end{dfn}

\begin{dfn}\label{def:representablepsht}
We denote by $\MWprep_R(X)$
\index[notation]{ctrx@$\MWprep_R(X)$}%
\index[notation]{ctx@$\MWprep(X)$}%
the representable presheaf $Y\mapsto \MWcorr (Y,X)\otimes_\ZZ R$. As usual, we also write $\MWprep(X)$ in place of $\MWprep_R(X)$ in case the context is clear.
\end{dfn}

The category of MW-presheaves is an abelian Grothendieck category.\footnote{\label{fn:Grothendieck}Recall
that an abelian category
is called Grothendieck abelian if it admits a family of generators,
admits small sums and filtered colimits are exact.
The category of presheaves over any essentially small category $\mathscr S$
with values in the category of $R$-modules is a basic example of Grothendieck
abelian category. In fact, it is generated by representable presheaves of
$R$-modules. The existence of small sums is obvious
and the fact filtered colimits are exact can be reduced to the similar fact
for the category of $R$-modules by taking global sections over objects
of $\mathscr S$.} 
It admits a unique symmetric monoidal
structure such that the Yoneda embedding
\[
\cork \rightarrow \pshMWkR,\phantom{i} X \mapsto\MWprep(X)
\]
is symmetric monoidal (see e.g. \cite{Mazza06_b}*{Lecture~8}). From the functors \eqref{eq:from_smc2smcV}
 and \eqref{eq:from_sm2smc}, we derive as usual adjunctions of categories:
\[
\begin{tikzcd}[row sep=20pt, column sep=20pt]
\pshkR \ar[r,shift left=3pt,"{\tilde \gamma^*}"]
 & \pshMWkR \ar[r,shift left=3pt,"{\pi^*}"] \ar[l,shift left=3pt,"{\tilde \gamma_*}"] 
 & \pshVkR \ar[l,shift left=3pt,"\pi_*"]
\end{tikzcd}
\]
such that $\tilde \gamma_*(F)=F \circ \tilde \gamma$,
 $\pi_*(F)=F \circ \pi$. 
The left adjoints $\tilde \gamma^*$ and $\pi^*$ are easily described as follows. For a smooth scheme $X$, let $R(X)$ be the presheaf (of abelian groups) such that $R(X)(Y)$ is the free $R$-module generated by $\Hom(Y,X)$ for any smooth scheme $Y$. 
\index[notation]{rx@$R(X)$}%
The Yoneda embedding yields $\Hom_{\pshkR}(R(X),F)=F(X)$ for any presheaf $F$, and in particular 
\[
\Hom_{\pshkR}(R(X),\tilde\gamma_*(F))=F(X)=\Hom_{\pshMWkR}(\MWprep_R(X),F)
\] 
for any $F\in\pshMWkR$. We can thus set $\tilde\gamma^*(R(X))= \MWprep_R(X)$. On the other hand, suppose that 
\[
F_1\to F_2\to F_3\to 0
\]
is an exact sequence in $\pshkR$. The functor $\Hom_{\pshkR}(-,F)$ being left exact for any $F\in \pshkR$, we find an exact sequence of presheaves
\[
0\to \Hom_{\pshkR}(F_3,\tilde\gamma_*(G))\to \Hom_{\pshkR}(F_2,\tilde\gamma_*(G))\to \Hom_{\pshkR}(F_1,\tilde\gamma_*(G))
\]
for any $G\in\pshMWkR$ and by adjunction an exact sequence
\[
0\to \Hom_{\pshkR}(\tilde\gamma^*(F_3),G)\to \Hom_{\pshkR}(\tilde\gamma^*(F_2),G)\to \Hom_{\pshkR}(\tilde\gamma^*(F_1),G)
\]
showing that $\tilde\gamma^*(F_3)$ is determined by $\tilde\gamma^*(F_2)$ and $\tilde\gamma^*(F_1)$, i.e.\ that the sequence
\[
\tilde\gamma^*(F_1)\to \tilde\gamma^*(F_2)\to \tilde\gamma^*(F_3)\to 0
\]
is exact. This gives the following formula. If $F$ is a presheaf, we can choose a resolution by (infinite) direct sums of representable presheaves (e.g. \cite{Mazza06_b}*{proof of Lemma~8.1})
\[
F_1\to F_2\to F\to 0
\]
and compute $\tilde\gamma^*F$ as the cokernel of $\tilde\gamma^*F_1\to \tilde\gamma^*F_2$. We let the reader define $\tilde \gamma^*$ on morphisms and check that it is independent (up to unique isomorphisms) of choices. A similar construction works for $\pi^*$. Note that the left adjoints $\tilde \gamma^*$
and $\pi^*$ are symmetric monoidal and right-exact.

\begin{lem}\label{lm:compare_transfers}
The functors $\tilde \gamma_*$ and $\pi_*$ are faithful. If $2$ is invertible in the ring $R$ then the functor $\pi_*$ is also full.
\end{lem}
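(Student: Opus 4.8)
The plan is to establish faithfulness and (under the hypothesis $2\in R^\times$) fullness of $\pi_*$ by exhibiting explicit unit/counit maps for the adjunctions and tracing what they do on representable presheaves. Recall that $\tilde\gamma_*(F)=F\circ\tilde\gamma$ and $\pi_*(F)=F\circ\pi$, and that both $\tilde\gamma$ and $\pi$ are \emph{essentially surjective} on objects (indeed they are the identity on objects). This is the crucial structural fact: since every object of $\cork$ (resp.\ $\corVk$) is of the form $X$ for a smooth scheme $X$, a morphism of presheaves is determined by its values on the representable presheaves $\MWprep_R(X)$, and for these we have the Yoneda identifications $\Hom_{\pshMWkR}(\MWprep_R(X),F)\simeq F(X)$ and $\Hom_{\pshVkR}(\VrepR(X),G)\simeq G(X)$.

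For faithfulness: suppose $\varphi,\psi:G\to G'$ in $\pshVkR$ satisfy $\pi_*(\varphi)=\pi_*(\psi)$, i.e.\ $\varphi_X=\psi_X:G(X)\to G'(X)$ for every smooth $X$ (since $\pi_*(G)(X)=G(\pi(X))=G(X)$ and $\pi_*$ acts on natural transformations by restriction, which on objects is the identity). But a natural transformation of presheaves on $\corVk$ is precisely a collection of maps $G(X)\to G'(X)$ compatible with all correspondences, so $\varphi_X=\psi_X$ for all $X$ forces $\varphi=\psi$. The same verbatim argument handles $\tilde\gamma_*$. So faithfulness is essentially formal and is the routine step.

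The substantive claim is fullness of $\pi_*$ when $2\in R^\times$. Here I would use Lemma~\ref{lm:corr&corrV_upto2tor}: the map $\pi_{XY}:\MWcorr(X,Y)_R\to\Vcorr(X,Y)_R$ is a split epimorphism, and in fact (as noted in the corollary following that lemma) the functor $\pi:\corkR\to\corVkR$ is full. Given $G,G'\in\pshVkR$ and a morphism $\Theta:\pi_*G\to\pi_*G'$ in $\pshMWkR$, I must produce $\theta:G\to G'$ with $\pi_*\theta=\Theta$. On objects, $\Theta$ gives maps $\theta_X:=\Theta_X:G(X)=\pi_*G(X)\to\pi_*G'(X)=G'(X)$; I set $\theta$ to be this family. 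The only thing to check is that $\theta$ is natural with respect to arbitrary Voevodsky correspondences $\beta\in\Vcorr(Y,X)_R$, not merely those in the image of $\pi$. By fullness of $\pi$ on $\corkR$, write $\beta=\pi_{YX}(\tilde\beta)$ for some $\tilde\beta\in\MWcorr(Y,X)_R$; then naturality of $\Theta$ against $\tilde\beta$ in $\cork$ gives $\theta_Y\circ G(\beta)=\theta_Y\circ(\pi_*G)(\tilde\beta)=(\pi_*G')(\tilde\beta)\circ\theta_X=G'(\beta)\circ\theta_X$, exactly the required square. Hence $\theta$ is a well-defined morphism in $\pshVkR$ and by construction $\pi_*\theta=\Theta$, proving fullness.

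The main obstacle, and the point deserving care, is precisely this last naturality verification: one must be sure that \emph{every} correspondence in $\corVkR$ lifts to $\corkR$ (this is where $2\in R^\times$ enters, via Lemma~\ref{lm:corr&corrV_upto2tor}), and that the lift, though non-canonical, still yields a well-defined $\theta$ — which it does because $\theta$ was defined purely object-wise and only the \emph{existence} of some lift is needed to test each naturality square. I would also remark, paralleling \chfinitecw, \ref{rem:notsurjective}, that without inverting $2$ the functor $\pi$ on correspondences need not be full, so this argument genuinely requires the hypothesis; $\tilde\gamma_*$ on the other hand is never full for dimension reasons, consistent with the asymmetry in the statement.
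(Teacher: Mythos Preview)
Your proof is correct and follows essentially the same approach as the paper: faithfulness is immediate because $\pi$ and $\tilde\gamma$ are the identity on objects, and fullness of $\pi_*$ is obtained by defining $\theta_X:=\Theta_X$ and checking naturality against an arbitrary correspondence $\beta$ by lifting it through the surjection $\pi_{YX}$ (Lemma~\ref{lm:corr&corrV_upto2tor}). The paper's proof is terser---it phrases the key point as ``$F=\pi_*(F_0)$ iff $F(\alpha)=F(\alpha')$ whenever $\pi(\alpha)=\pi(\alpha')$'' and then declares the conclusion clear---but your explicit lift-and-verify argument is exactly what underlies that sentence.
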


\begin{proof}
The faithfulness of both $\tilde \gamma_*$ and $\pi_*$ are obvious.
To prove the second assertion, we use the
fact that the map \eqref{eq:from_corr2corV} from
finite MW-correspondences to correspondences is surjective
after inverting $2$ (Lemma \ref{lm:corr&corrV_upto2tor}).
In particular, given a MW-presheaf $F$,
the property $F=\pi_*(F_0)$ is equivalent to the property on $F$
that for any $\alpha, \alpha' \in \MWcorr(X,Y)$ with $\pi(\alpha)=\pi(\alpha')$ then $F(\alpha)=F(\alpha')$.
It is then clear that a natural transformation
between two presheaves with transfers $F_0$ and $G_0$
is the same thing as a natural transformation
between $F_0 \circ \pi$ and $G_0 \circ \pi$.
\end{proof}

\begin{dfn}
We define a \emph{MW-$t$-sheaf} (resp.\ $t$-sheaf with transfers)
\index{Milnor-Witt!$t$-sheaf}%
\index{sheaf!with transfers}
to be a presheaf with
MW-transfers (resp.\ with transfers) $F$ such that
$\tilde \gamma_*(F)=F \circ \tilde \gamma$ (resp.\ $F \circ \gamma$)
is a sheaf for the given topology $t$.
When $t$ is the Nisnevich topology, we will simply
say \emph{MW-sheaf} and when $t$ is the étale topology
we will say \emph{étale MW-sheaf}.
\index{Milnor-Witt!sheaf}%

We denote by $\shMWtkR$ the category of MW-$t$-sheaves,
seen as a full subcategory of the $R$-linear
category $\pshMWkR$. When $t$ is the Nisnevich topology,
we drop the index in this notation.
\index[notation]{shtkr@$\shMWtkR$}%
\end{dfn}
Note that there is an obvious forgetful functor
\[
\tfO_t:\shMWtkR \rightarrow \pshMWkR
 \text{ (resp.\ } \mathcal O^{\mathrm{tr}}_t:\shVtkR \rightarrow \pshVkR)
\]
\index[notation]{ot@$\tfO_t$}%
\index[notation]{shtrkr@$\shVtkR$}%
which is fully faithful.
In what follows, we will drop
the indication of the topology $t$ in the above functors,
as well as their adjoints.

\begin{exm}
Given a smooth scheme $X$, the presheaf $\MWprep(X)$ is in general not a MW-sheaf (see \chfinitecw, Ex.~\ref{exm:notnis}). Note however that $\MWprep(\Spec k)$ is the unramified $0$-th Milnor-Witt sheaf $\sKMW_0$ (defined in \cite{Morel12_b}*{\S 3}) by \emph{loc.\ cit.}\ Ex. 4.4.
\end{exm}

As in the case of the theory developed by Voevodsky, the theory of 
MW-sheaves rely on the following fundamental lemma, whose proof
is adapted from Voevodsky's original argument.
 
\begin{lem}\label{lm:corr_main}
Let $X$ be a smooth scheme and $p:U \rightarrow X$ be a $t$-cover
where $t$ is the Nisnevich or the étale topology.

Then the following complex
\[
\cdots \xrightarrow{\ d_n\ } \MWprep(U^n_X) \longrightarrow
\cdots \longrightarrow \MWprep(U \times_X U) \xrightarrow{\ d_1\ } \MWprep(U)
\xrightarrow{\ d_0\ }\MWprep(X) \rightarrow 0
\]
where $d_n$ is the differential associated with the \v Cech simplicial
scheme of $U/X$, is exact on the associated $t$-sheaves.
\end{lem}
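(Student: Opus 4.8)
The strategy is the direct analogue of Voevodsky's argument for the corresponding statement about sheaves with transfers (see \cite{Mazza06_b}*{Lecture~6}), so the first thing I would do is reduce to a purely local assertion. By definition of the associated $t$-sheaf, it suffices to check exactness after evaluating on Henselian (for $t=\nis$) or strictly Henselian (for $t=\et$) local schemes; more precisely, one checks exactness after applying the stalk functor at every point. Since the $t$-sheafification of $\MWprep(Z)$ has stalk at a point $x$ equal to $\varinjlim \MWprep(Z)(V) = \varinjlim \MWcorr(V,Z)_R$ over the $t$-neighborhoods $V$ of $x$, and since finite MW-correspondences commute with filtered projective limits of essentially smooth schemes (\chfinitecw, \S\ref{sec:limits}, recalled in Remark~\ref{rem:finite_corr&plim}), the stalk is $\MWcorr(S,Z)_R$ where $S$ is the (strict) Henselization. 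So the claim becomes: for $S$ the spectrum of a (strictly) Henselian local ring of a smooth $k$-scheme and $U \to X$ a $t$-cover, the complex
\[
\cdots \to \MWcorr(S, U^n_X)_R \to \cdots \to \MWcorr(S, U)_R \to \MWcorr(S,X)_R \to 0
\]
is exact, where $U^n_X = U \times_X \cdots \times_X U$.

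\textbf{The key reduction.} Now $\MWcorr(S,X) = \varinjlim_T \chst{d_X}{T}{S \times X}{\omega_X}$ where $T$ runs over admissible (finite equidimensional over $S$) closed subsets. Because $S$ is local Henselian, a finite $S$-scheme splits as a disjoint union of local Henselian schemes, each finite over $S$. The heart of Voevodsky's argument is that a finite MW-correspondence from $S$ to $X$ can be "split" along the cover: given an admissible $T \subset S \times X$ finite over $S$, one uses that $T$, being finite over the Henselian local $S$, is itself a finite disjoint union of Henselian local schemes, and by the lifting property of (strictly) Henselian schemes against the $t$-cover $U \to X$, each of these lifts to $U$. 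This produces a section of $\MWcorr(S,U)_R \to \MWcorr(S,X)_R$ — exactness at the last spot. For the \v{C}ech complex exactness in higher degrees, one runs the standard contracting-homotopy argument: after choosing such a lift $\tilde T \subset S \times U$ of the support, the associated degeneracy/face maps of the \v{C}ech nerve give a simplicial contraction, and one transports this through the Chow-Witt groups with support. The point is that all the operations involved — extension of support, pullback along $U^{n+1}_X \to U^n_X$, and the twisted Chow-Witt cohomology with supports — behave well with respect to the disjoint-union decomposition of finite $S$-schemes and the lifting; this is where Lemma~\ref{lm:corr_main}'s proof genuinely uses that we are working with $\sKMW_*$-cohomology with supports rather than just cycles, but formally the bookkeeping is identical to the $\corVk$ case.

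\textbf{Carrying it out.} Concretely I would: (1) fix a point $x$ and reduce as above to $S$ (strictly) Henselian local; (2) given a class $\alpha \in \MWcorr(S,X)_R$ with support $T$ finite over $S$, decompose $T = \coprod_i T_i$ into its local Henselian pieces and use the unique factorization through $U$ afforded by the universal property of (strict) Henselization together with the fact that $U \to X$ is a $t$-cover (étale and surjective, resp. plus Nisnevich point-lifting), to build a lift $\beta \in \MWcorr(S,U)_R$ with $d_0(\beta) = \alpha$; (3) for exactness in the middle, given $\gamma \in \MWcorr(S, U^n_X)_R$ in the kernel of $d_{n-1}$, use the lift of the support along one fixed factor $U \to X$ to define a contracting homotopy $s_n : \MWcorr(S,U^n_X)_R \to \MWcorr(S,U^{n+1}_X)_R$ satisfying $d_{n+1} s_n + s_{n-1} d_n = \id$, exactly as in the \v{C}ech complex for a split cover; (4) verify compatibility of $s_\bullet$ with the Chow-Witt differentials and the twists $\omega_{U^n_X}$, which reduces to base-change and projection-formula identities already established (Proposition~\ref{prop:basechange}, Corollary~\ref{cor:pformula}).

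\textbf{Main obstacle.} The genuine subtlety — and the one place where one cannot just copy Voevodsky verbatim — is the bookkeeping of the canonical line-bundle twists $\omega_Y$ under the \v{C}ech degeneracies and under the decomposition of $T$ into Henselian components: the lift $\tilde T \subset S\times U$ must be compatible with the orientation data coming from $\omega_X$ versus $\omega_U$ along the étale map $U \to X$ (for which $\omega_{U/X}$ is canonically trivial, so this is manageable but must be tracked), and one must ensure the contracting homotopy respects these identifications so that it is defined on the level of the twisted Chow-Witt groups and not merely up to twist. Once one checks, using flat excision (the excision lemma above) and the compatibility of pushforward with support with base change, that the lifting of supports along the \v{C}ech maps induces well-defined maps on $\chst{}{}{}{\omega}$ with the correct twists, the homotopy identity is a formal consequence of the simplicial identities, and the lemma follows.
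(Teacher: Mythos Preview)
Your proposal is correct and follows essentially the same strategy as the paper: reduce to stalks at (strictly) Henselian local $S$, filter by the admissible support $T$, use that $T$ finite over Henselian $S$ is itself a sum of Henselian local schemes so that the pulled-back cover $U_T \to T$ splits, and build a contracting homotopy from this section. The one place where the paper's execution differs from what you sketch is the ``main obstacle'': rather than tracking the twists $\omega_{U^n_X}$ via base-change and projection formulas, the paper observes that the section $s$ identifies $U^n_T$ with a clopen component of $U^{n+1}_T$ and then applies \emph{\'etale excision} directly to the Cartesian squares relating $S \times U^n_X$ and $S \times U^{n+1}_X$, which yields the required isomorphisms of Chow-Witt groups with support (twists included) in one stroke and makes the contracting homotopy $(s \times 1_{U^n_T})_*$ fall out without further bookkeeping.
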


\begin{proof}
We have to prove that the fiber of the above complex at a $t$-point
is an acyclic complex of $R$-modules.  Taking into
account Remark \ref{rem:finite_corr&plim}(4), we are reduced to 
prove, given an essentially smooth local henselian scheme $S$, that
the following complex
\[
C_*:=\cdots \xrightarrow{\ d_n\ } \MWcorr(S,U^n_X) \longrightarrow
\cdots \longrightarrow \MWcorr(S,U \times_X U) \xrightarrow{\ d_1\ } \MWcorr(S,U)
\xrightarrow{\ d_0\ } \MWcorr(S,X) \rightarrow 0
\]
is acyclic.

Let $\Adm=\Adm(S,X)$ be the set of admissible subsets in $S\times X$ (\chfinitecw, Definition~\ref{dfn:admissible_subset}). 
Given any $T \in \Adm$, and an integer $n \geq 0$,
we let $C_n^{(T)}$ be the subgroup of $\MWcorr(S,U^n_X)$ consisting of
MW-correspondences whose support is in the closed subset $U^n_T:=T \times_X U^n_X$ of $S\times U^n_X$.
The differentials are given by direct images along projections,
so they respect the support condition on MW-correspondence
associated with $T \in \mathcal F$ and make $C_*^{(T)}$
into a subcomplex of $C_*$.

It is clear that $C_*$ is the filtering union of the subcomplexes
$C_*^{(T)}$ for $T \in \mathcal F$ so it suffices to prove that,
for a given $T \in \mathcal F$, the complex $C_*^{(T)}$ is split. We prove the result when $R=\ZZ$, the general statement follows after tensoring with $R$.
Because $S$ is henselian and $T$ is finite over $S$, the scheme $T$
is a finite sum of local henselian schemes. Consequently,
the $t$-cover $p_T:U_T \rightarrow T$, which is in particular
étale and surjective, admits a splitting $s$. It follows from \cite{Milne12_b}*{Proposition~2.15} that $s$ is an isomorphism onto a connected component of $U_T$.
We therefore obtain maps $s \times 1_{U_T^n}:U_T^n\to U^{n+1}_T$ such that $U^{n+1}_T=U_T^n\sqcup D^{n+1}_{T}$ for any $n\geq 0$ and a commutative diagram
\[
\begin{tikzcd}
U^n_T \ar[d] \ar[r] & (S\times U_X^{n+1})\setminus D^{n+1}_{T} \ar[d] \\
U^{n+1}_T \ar[r] \ar[d] & S\times U_X^{n+1} \ar[d] \\
U_T^n \ar[r] & S\times U_X^{n}
\end{tikzcd}
\]
in which the squares are Cartesian and the right-hand vertical maps are étale. By étale excision, we get isomorphisms
\[
\chst *{U_T^n}{S\times U_X^{n}}{\omega_{U_X^{n}}}\to \chst *{U_T^n}{(S\times U_X^{n+1})\setminus D^{n+1}_T}{\omega_{U_X^{n+1}}}
\]
and
\[
\chst *{U_T^n}{S\times U_X^{n+1}}{\omega_{U_X^{n+1}}}\to \chst *{U_T^n}{(S\times U_X^{n+1})\setminus D^{n+1}_T}{\omega_{U_X^{n+1}}}.
\]
Putting these isomorphisms together, we obtain an isomorphism
\[
\chst *{U_T^n}{S\times U_X^{n}}{\omega_{U_X^{n}}}\to \chst *{U_T^n}{S\times U_X^{n+1}}{\omega_{U_X^{n+1}}}
\]
that we can compose with the extension of support to finally obtain a homomorphism
\[
(s \times 1_{U_T^n})_*:\chst *{U_T^n}{S\times U_X^{n}}{\omega_{U_X^{n}}}\to \chst *{U_T^{n+1}}{S\times U_X^{n+1}}{\omega_{U_X^{n+1}}}
\]
yielding a contracting homotopy
\[
(s \times 1_{U_T^n})_*:C_n^{(T)} \rightarrow C_{n+1}^{(T)}.\qedhere
\]
\end{proof}

\begin{num}\label{num:rightadjoint}
As in the classical case, one can derive from this lemma
the existence of a left adjoint $\tilde a$ to the functor $\tfO$.
The proof is exactly the same as in the case of sheaves
with transfers (cf.\ \cite{CD12_b}*{10.3.9} for example) but we include it here for the convenience of the reader.

Let us introduce some notation. If $P$ is a presheaf on $\smk$,
 we define a presheaf with MW-transfers
\begin{equation}\label{eq:df_gamma^!}
\tilde\gamma^!(P):Y \mapsto \Hom_{\pshkR}(\tilde\gamma_*(\MWprep(Y)),P).
\end{equation}
\index[notation]{gashr@$\tilde\gamma^"!$}%
and we observe that $\tilde\gamma^!$ is right adjoint to the functor $\tilde\gamma_*$. The latter, having both a left and a right adjoint, is then exact.
Given a natural transformation
\[
\phi:P \rightarrow \tilde\gamma_*\tilde\gamma^!(P)
\]
and smooth schemes $X$ and $Y$, 
we define a pairing
\[
P(X) \times \MWcorr(Y,X) \rightarrow P(Y),
 (\rho,\alpha) \mapsto \langle \rho,\alpha \rangle_\phi
 :=[\phi_X(\rho)]_Y(\alpha)
\]
where $\phi_X(\rho)$ is seen as a natural transformation
$\MWprep(X) \rightarrow P$.
The following lemma is tautological.
\end{num}

\begin{lem}\label{lm:W-transfers_basic_existence}
Let $P$ be a presheaf on $\smk$.
Then there is a bijection between the following data:
\begin{itemize}
\item Presheaves with MW-transfers $\tilde P$ such that $\tilde\gamma_*(\tilde P)=P$;
\item Natural transformations $\phi:P \rightarrow \tilde\gamma_*\tilde\gamma^!(P)$
such that:
\begin{enumerate}
\item[\rm{(W1)}] $\forall \rho \in P(X), \langle \rho, \id_X \rangle_\phi=\rho$.
\item[\rm{(W2)}] $\forall (\rho,\beta,\alpha)
 \in P(X) \times \MWcorr(Y,X) \times \MWcorr(Z,Y),
  \langle\langle \rho,\beta \rangle_\phi,\alpha\rangle_\phi
	 =\langle \rho,\beta \circ \alpha\rangle_\phi$;
\end{enumerate}
\end{itemize}
according to the following rules:
\[
\begin{tikzcd}[row sep=.5ex, column sep=0ex]
\tilde P \ar[r,mapsto] & \big(P=\tilde\gamma_*(\tilde P) \xrightarrow{\mathrm{ad}'} \tilde\gamma_*\tilde\gamma^!\tilde\gamma_*(\tilde P)=\tilde\gamma_*\tilde\gamma^!(P)\big) \\
\big(P,\langle.,\alpha\rangle_\phi\big)  \ar[r,mapsfrom] & \phi,
\end{tikzcd}
\]
where $\mathrm{ad}'$ is the unit map for the adjunction $(\tilde\gamma_*,\tilde\gamma^!)$. 
\end{lem}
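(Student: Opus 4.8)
The statement is tautological in the sense indicated by the authors, and the plan is to unwind the two constructions and verify they are mutually inverse. First I would fix a presheaf $P$ on $\smk$ and a presheaf with MW-transfers $\tilde P$ with $\tilde\gamma_*(\tilde P)=P$. The unit map $\mathrm{ad}':\tilde\gamma_*(\tilde P)\to\tilde\gamma_*\tilde\gamma^!\tilde\gamma_*(\tilde P)$ of the adjunction $(\tilde\gamma_*,\tilde\gamma^!)$ gives a natural transformation $\phi:P\to\tilde\gamma_*\tilde\gamma^!(P)$. Chasing the definitions, for $X\in\smk$ and $\rho\in P(X)$, the element $\phi_X(\rho)\in\tilde\gamma^!(P)(X)=\Hom_{\pshkR}(\tilde\gamma_*(\MWprep(X)),P)$ is the natural transformation that, over $Y$, sends $\alpha\in\MWcorr(Y,X)$ to $\tilde P(\alpha)(\rho)\in\tilde P(Y)=P(Y)$; this is precisely the action of $\tilde P$ on the correspondence $\alpha$, via the Yoneda identification $\Hom_{\pshMWkR}(\MWprep(X),\tilde P)=\tilde P(X)$. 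So $\langle\rho,\alpha\rangle_\phi=\tilde P(\alpha)(\rho)$. Conditions \textrm{(W1)} and \textrm{(W2)} then read $\tilde P(\id_X)(\rho)=\rho$ and $\tilde P(\alpha)\tilde P(\beta)(\rho)=\tilde P(\beta\circ\alpha)(\rho)$, which hold because $\tilde P$ is a contravariant functor on $\cork$ and $\id_X=\tilde\gamma(\id_X)$ is the identity correspondence.

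Conversely, given $\phi:P\to\tilde\gamma_*\tilde\gamma^!(P)$ satisfying \textrm{(W1)} and \textrm{(W2)}, I would define $\tilde P$ by setting $\tilde P(X)=P(X)$ on objects and, for $\alpha\in\MWcorr(Y,X)$, letting $\tilde P(\alpha):P(X)\to P(Y)$ be $\rho\mapsto\langle\rho,\alpha\rangle_\phi$. One must check this is a well-defined additive contravariant functor: additivity in $\alpha$ follows from the additivity of $\phi_X(\rho)$ as a natural transformation of $R$-modules; functoriality in the correspondence variable is exactly \textrm{(W1)} (for identities) and \textrm{(W2)} (for composites, noting the order reversal $\beta\circ\alpha$ matches the contravariant composition $\tilde P(\alpha)\circ\tilde P(\beta)$); and one verifies that for a correspondence of the form $\tilde\gamma(f)$ coming from a morphism $f$ the induced map agrees with $P(f)$, using that $\phi$ is a natural transformation of presheaves on $\smk$ together with \textrm{(W1)}. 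This last point is what guarantees $\tilde\gamma_*(\tilde P)=P$ as presheaves on $\smk$, not merely as presheaves of $R$-modules on objects.

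Finally I would check that the two assignments are inverse to each other. Starting from $\tilde P$, passing to $\phi=\mathrm{ad}'$ and back recovers $\tilde P$ because, by the first paragraph, $\langle\rho,\alpha\rangle_\phi=\tilde P(\alpha)(\rho)$ for all $\alpha$. Starting from $\phi$, building $\tilde P$ and then forming the unit $\mathrm{ad}'$ for $\tilde P$ recovers $\phi$: both are natural transformations $P\to\tilde\gamma_*\tilde\gamma^!(P)$, and they agree because evaluating $\mathrm{ad}'_X(\rho)$ on $\alpha$ gives $\tilde P(\alpha)(\rho)=\langle\rho,\alpha\rangle_\phi$, which is the evaluation of $\phi_X(\rho)$ on $\alpha$ by definition of $\tilde\gamma^!$. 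I do not expect any serious obstacle here; the only mild subtlety, and the one point I would write out with care, is checking the compatibility $\tilde\gamma_*(\tilde P)=P$ \emph{together with} its functoriality on $\smk$ in the backward direction — i.e.\ that the functor $\tilde P$ reconstructed from $\phi$ restricts on graphs of morphisms to the original presheaf structure of $P$ — since this is where one actually uses that $\phi$ is a morphism of presheaves and not just a family of $R$-linear maps.
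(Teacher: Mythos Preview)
Your proposal is correct and is exactly the unwinding of definitions that the paper has in mind; the paper itself gives no proof beyond declaring the lemma ``tautological'', and your argument simply supplies the straightforward verification that this word is meant to abbreviate.
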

Before going further, we note the following corollary of the previous result.
 
\begin{coro}\ 
\label{cor:corr_main}%
\begin{enumerate}
\item
\label{item:tilde-gamma-mw-sheaf}%
For any $t$-sheaf $F$ on $\smk$,
 $\tilde\gamma^!(F)$ is a MW-$t$-sheaf.
\item
\label{item:cover-corr}%
Let $\alpha \in \MWcorr(X,Y)$ be a finite MW-correspondence
and $p:W \rightarrow Y$ a $t$-cover.
Then there exists a $t$-cover $q:W' \rightarrow X$
and a finite MW-correspondence $\hat \alpha:W' \rightarrow W$
such that the following diagram in $\cork$ commutes:
\begin{equation}\label{eq:lift_corr}
\begin{tikzcd}
[row sep=18pt, column sep=18pt]
W'\ar[r,"{\hat \alpha}"] \ar[d,"q"'] & W \ar[d,"p"] \\
X \ar[r,"\alpha"'] & Y.
\end{tikzcd}
\end{equation}
\end{enumerate}
\end{coro}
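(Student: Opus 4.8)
\textbf{Proof plan for Corollary~\ref{cor:corr_main}.}

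\emph{Part \ref{item:tilde-gamma-mw-sheaf}.} The plan is to apply Lemma~\ref{lm:W-transfers_basic_existence}: by construction, $\tilde\gamma_*\tilde\gamma^!(F) = F \circ \tilde\gamma \circ \ldots$, and the presheaf with MW-transfers structure on $\tilde\gamma^!(F)$ is precisely what is produced by that lemma from the canonical adjunction unit $\mathrm{ad}' \colon F \to \tilde\gamma_*\tilde\gamma^!\tilde\gamma_*(\tilde\gamma^!(F))$ — wait, more carefully, one starts from a $t$-sheaf $F$ on $\smk$ and wants $\tilde\gamma^!(F)$ to restrict along $\tilde\gamma$ to a $t$-sheaf. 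But $\tilde\gamma_*\tilde\gamma^!(F)$ is generally \emph{not} equal to $F$; rather, what we must check is that $\tilde\gamma_*\tilde\gamma^!(F) = \tilde\gamma^!(F)\circ\tilde\gamma$ is a $t$-sheaf on $\smk$. This is where Lemma~\ref{lm:corr_main} enters: for a $t$-cover $p\colon U\to X$, applying the contravariant functor $\Hom_{\pshMWkR}(-,\tilde\gamma^!(F)) = \Hom_{\pshkR}(\tilde\gamma_*(-),F)$ to the exact sequence of $t$-sheaves
\[
\cdots \longrightarrow \MWprep(U\times_X U) \longrightarrow \MWprep(U) \longrightarrow \MWprep(X) \longrightarrow 0
\]
from Lemma~\ref{lm:corr_main}, and using that $F$ is a $t$-sheaf (so that $\Hom_{\pshkR}(\tilde\gamma_*(-),F)$ takes this resolution to an exact Čech complex computing sections over $X$), one deduces the sheaf condition for $\tilde\gamma_*\tilde\gamma^!(F)$ at $X$ with respect to the cover $U$. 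The only subtlety is matching the abstract $\Hom$-computation with the concrete Čech sequence; this is routine once one observes $\Hom_{\pshMWkR}(\MWprep(U^n_X),\tilde\gamma^!(F)) = F(U^n_X)$ via Yoneda together with the defining adjunction \eqref{eq:df_gamma^!}.

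\emph{Part \ref{item:cover-corr}.} Here I would argue as follows. Write $\alpha$ with support $T \in \Adm(X,Y)$, finite and surjective over $X$. Pull the $t$-cover $p\colon W\to Y$ back along the finite morphism $T\to Y$ to get a $t$-cover $W\times_Y T \to T$. Since $T\to X$ is finite, one can push this forward: set $q\colon W' \to X$ to be the composite obtained by taking the ``Weil restriction''/norm of the cover along the finite map $T\to X$ — concretely, in the Nisnevich (resp. étale) case, this is the standard construction producing, for each point of $X$, the product over the fiber of $T$ of the local covers; it is again a $t$-cover because $t$-covers are stable under such finite pushforward (for Nisnevich this is elementary; for étale one uses that finite pushforward of étale covers is étale). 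By construction $q^{-1}$ of the relevant locus admits a canonical map to $W\times_Y T$, hence a map $W' \to W$ over the pair $(q,p)$, and the graph of this map, transported through the correspondence $T$ (i.e. the composite in $\cork$ of the transpose-type correspondence associated to $T\to W'$ with the one associated to $T\to W$, suitably oriented using the canonical orientations of Lemma~\ref{lem:orientation}), gives the desired $\hat\alpha \in \MWcorr(W',W)$ making \eqref{eq:lift_corr} commute.

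\textbf{Main obstacle.} I expect the genuinely delicate point to be constructing the lift $\hat\alpha$ as an honest finite MW-\emph{correspondence} rather than merely a finite correspondence, i.e.\ keeping track of the symmetric bilinear data (the twisting by $\omega$'s) through the pullback–pushforward manipulations on $T$. One has to check that the orientations match up so that the base change and projection formulas (Proposition~\ref{prop:basechange}, Corollary~\ref{cor:pformula}) apply, and that the support of the resulting element is admissible over $W'$ — this uses the finiteness/surjectivity bookkeeping exactly as in the composition of MW-correspondences (\S\ref{sec:compositionMWcorr}) and in Lemma~\ref{lm:corr_main} itself. In the classical (Voevodsky) setting this step is essentially formal; the added work here is purely in the canonical-bundle twists, and I would handle it by reducing, via the graph factorization, to the two cases where the relevant map is smooth or a regular closed immersion, for each of which the orientation is canonical.
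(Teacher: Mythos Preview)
Your argument for Part~\ref{item:tilde-gamma-mw-sheaf} is essentially the paper's: the sheaf condition for $\tilde\gamma_*\tilde\gamma^!(F)$ follows from applying $\Hom_{\pshkR}(\tilde\gamma_*(-),F)$ to the exact \v Cech resolution of Lemma~\ref{lm:corr_main}, using formula~\eqref{eq:df_gamma^!} and the Yoneda identification $\Hom_{\pshMWkR}(\MWprep(U^n_X),\tilde\gamma^!(F))\cong\tilde\gamma^!(F)(U^n_X)$.

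For Part~\ref{item:cover-corr}, however, you are working much too hard, and the ``main obstacle'' you flag is an artifact of the wrong approach. The paper's argument is a one-liner: Lemma~\ref{lm:corr_main} applied to the $t$-cover $p\colon W\to Y$ says in particular that $p_*\colon\MWprep(W)\to\MWprep(Y)$ is an epimorphism of $t$-sheaves. Now $\alpha$ is a section of $\MWprep(Y)$ over $X$, and by the very definition of a sheaf epimorphism, any such section lifts after passing to a $t$-cover: there exist a $t$-cover $q\colon W'\to X$ and $\hat\alpha\in\MWprep(W)(W')=\MWcorr(W',W)$ with $p_*\hat\alpha=q^*\alpha$, which is exactly the commutativity of~\eqref{eq:lift_corr}. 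The lift $\hat\alpha$ is produced abstractly as an element of $\MWcorr(W',W)$, so it \emph{is} a finite MW-correspondence with all its bilinear data; there is nothing to check about orientations or admissibility.

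Your explicit Weil-restriction construction, by contrast, has real gaps. Pushing forward a $t$-cover along a finite but non-\'etale map $T\to X$ is not obviously a $t$-cover (Weil restriction along non-\'etale finite maps need not preserve \'etaleness), and even granting a suitable $W'$, promoting a set-theoretic lift of supports to a genuine MW-correspondence satisfying $p\circ\hat\alpha=\alpha\circ q$ in $\cork$ would require exactly the delicate orientation-matching you describe --- work that the sheaf-epimorphism argument bypasses entirely.
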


The first property is a direct consequence of Lemma
\ref{lm:corr_main} given Formula \eqref{eq:df_gamma^!}.
The second property is an application of the
fact that $\MWprep(W) \rightarrow \MWprep(X)$ is an epimorphism
of sheaves, obtained from the same proposition.

We are ready to state and prove the main lemma
which proves the existence of the right adjoint $\tilde a$ to $\tfO$.
 
\begin{lem}
\label{lm:exists-Wtr}%
Let $\tilde P$ be a MW-presheaf and $P:=\tilde\gamma_*(\tilde P)$. Let $F$
be the $t$-sheaf associated with $P$ and let 
$\tau:P \rightarrow F$ be the canonical natural transformation.

Then there exists a unique pair $(\tilde F,\tilde \tau)$
 such that:
\begin{enumerate}
\item
\label{item:MW-sheaf-image-gamma}%
$\tilde F$ is a MW-$t$-sheaf such that $\tilde\gamma_*(\tilde F)=F$.
\item
\label{item:coincides-with-tau}%
$\tilde \tau:\tilde P \rightarrow \tilde F$ is a natural transformation
 of MW-presheaves such that the induced transformation
\[
P=\tilde\gamma_*(\tilde P) \xrightarrow{\tilde\gamma_*(\tilde \tau)} \tilde\gamma_*(\tilde F)=F
\]
coincides with $\tau$.
\end{enumerate}
\end{lem}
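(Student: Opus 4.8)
The plan is to construct the pair $(\tilde F,\tilde\tau)$ by transporting the sheafification datum from the presheaf level to the MW-presheaf level using the correspondence of Lemma \ref{lm:W-transfers_basic_existence}, and then to check that it satisfies the two required properties. First I would invoke Lemma \ref{lm:W-transfers_basic_existence} for the MW-presheaf $\tilde P$: this produces a natural transformation $\phi:P\to\tilde\gamma_*\tilde\gamma^!(P)$ satisfying (W1) and (W2), which encodes the MW-transfer structure of $\tilde P$. The goal is then to produce a companion natural transformation $\psi:F\to\tilde\gamma_*\tilde\gamma^!(F)$ satisfying (W1) and (W2), so that Lemma \ref{lm:W-transfers_basic_existence} applied in the other direction yields a MW-$t$-sheaf $\tilde F$ with $\tilde\gamma_*(\tilde F)=F$; setting $\tilde\tau$ to be $\tau$ equipped with this structure will give property \ref{item:coincides-with-tau} essentially by construction, since $\tau$ is then a morphism of MW-presheaves by naturality.

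The key step is constructing $\psi$ from $\phi$ and showing it is well-defined. Here the input is Corollary \ref{cor:corr_main}: part \ref{item:tilde-gamma-mw-sheaf} tells us $\tilde\gamma^!(F)$ is already a MW-$t$-sheaf, and part \ref{item:cover-corr} provides, for any finite MW-correspondence $\alpha$ and any $t$-cover of the target, a lift to a $t$-cover of the source together with a lifted correspondence making the square \eqref{eq:lift_corr} commute. Concretely, given $\rho\in F(X)$, I want to define $\langle\rho,\alpha\rangle_\psi\in F(Y)$ for $\alpha\in\MWcorr(Y,X)$. Working $t$-locally, $\rho$ lifts to a section of $P$ over a cover $q:W'\to X$; using the lift $\hat\alpha:W\to W'$ (after renaming so the cover is over $Y$) one forms $\langle\rho_{|W'},\hat\alpha\rangle_\phi\in P(W)$; one then checks these sections over a cover of $Y$ glue to a well-defined section of $F(Y)$, independently of all choices (lift of $\rho$, choice of cover, choice of $\hat\alpha$). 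Independence follows from the descent for $F$ together with the relations (W1), (W2) for $\phi$ and the functoriality in Corollary \ref{cor:corr_main}; this is exactly the same bookkeeping as in the sheaves-with-transfers case (cf.\ \cite{CD12_b}*{10.3.9}), so I would present it compactly rather than in full.

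Once $\psi$ is defined, verifying (W1) is immediate from (W1) for $\phi$ and the fact that $\id_X$ lifts to $\id_{W'}$; verifying (W2) requires composing two such lifting squares, for which one uses that the composite of the lifted correspondences is a lift of the composite (a consequence of Corollary \ref{cor:corr_main}\ref{item:cover-corr} applied twice, plus uniqueness up to refining covers) together with (W2) for $\phi$. Naturality of $\psi$ in $X$, and compatibility of $\tau:P\to F$ with the two MW-transfer structures, are then routine. Finally, uniqueness of $(\tilde F,\tilde\tau)$: any MW-$t$-sheaf $\tilde F'$ with $\tilde\gamma_*(\tilde F')=F$ corresponds under Lemma \ref{lm:W-transfers_basic_existence} to a transformation $\psi'$ satisfying (W1),(W2), and the requirement that $\tilde\gamma_*(\tilde\tau')=\tau$ forces $\psi'$ to restrict to $\phi$ along $\tau$; since $\tau$ is a $t$-local epimorphism of $t$-sheaves (being the sheafification map, after also using that $F$ is separated so it is a monomorphism on the relevant local sections), $\psi'$ is determined, hence $\tilde F'=\tilde F$. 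The main obstacle I anticipate is precisely the well-definedness of $\langle\rho,\alpha\rangle_\psi$: one must be careful that the cover of $X$ used to trivialize $\rho$ and the cover of $Y$ produced by Corollary \ref{cor:corr_main}\ref{item:cover-corr} interact correctly, and that refining covers does not change the glued section — this is where all the hypotheses (exactness of $\tilde\gamma_*$, the sheaf condition on $F$, and the lifting lemma) genuinely get used.
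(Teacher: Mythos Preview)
Your approach is correct and follows the same overall strategy as the paper: translate the MW-structure on $\tilde P$ into a transformation $P\to\tilde\gamma_*\tilde\gamma^!(P)$ via Lemma~\ref{lm:W-transfers_basic_existence}, push it forward to a transformation $F\to\tilde\gamma_*\tilde\gamma^!(F)$, verify (W1) and (W2) using the lifting from Corollary~\ref{cor:corr_main}\ref{item:cover-corr}, then translate back.

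The one place where you take a longer route is the construction of the map $F\to\tilde\gamma_*\tilde\gamma^!(F)$ itself. You propose to build the pairing $\langle\rho,\alpha\rangle$ by hand---lifting $\rho$ locally to $P$, applying the old pairing, and gluing---and then to check well-definedness. The paper bypasses this entirely: since $\tilde\gamma^!(F)$ is a MW-$t$-sheaf (Corollary~\ref{cor:corr_main}\ref{item:tilde-gamma-mw-sheaf}), the target $\tilde\gamma_*\tilde\gamma^!(F)$ is already an ordinary $t$-sheaf, so the composite $P\to\tilde\gamma_*\tilde\gamma^!(P)\to\tilde\gamma_*\tilde\gamma^!(F)$ factors \emph{uniquely} through the sheafification $\tau:P\to F$ by the universal property. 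This produces the map in one line and simultaneously settles uniqueness of $(\tilde F,\tilde\tau)$, avoiding your separate (and slightly muddled) epimorphism argument at the end. The lifting from Corollary~\ref{cor:corr_main}\ref{item:cover-corr} is then used only where it is genuinely needed, namely to verify (W1) and (W2) for the induced map---and there the paper's computation is essentially the one you sketch.
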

\begin{proof}
Let us construct
$\tilde F$. Applying Lemma \ref{lm:W-transfers_basic_existence} to $\tilde P$ and $P$, we get a 
canonical natural transformation: $\psi:P \rightarrow \tilde\gamma_*\tilde\gamma^!(P)$.
Applying part \ref{item:tilde-gamma-mw-sheaf} of Corollary~\ref{cor:corr_main} and the fact that $F$ is the $t$-sheaf associated with $P$,
there exists a unique natural transformation $\phi$ which
fits into the following commutative diagram:
\[
\begin{tikzcd}
[row sep=20pt,column sep=20pt]
P \ar[r,"{\psi}"] \ar[d,"\tau"'] & \tilde\gamma_*\tilde\gamma^!(P) \ar[d,"{\tilde\gamma_*\tilde\gamma^!(\tau)}"] \\
F \ar[r,"\phi"'] & \tilde\gamma_*\tilde\gamma^!(F).
\end{tikzcd}
\]
To obtain the MW-sheaf $\tilde F$ satisfying \ref{item:MW-sheaf-image-gamma},
it is sufficient according to
Lemma \ref{lm:W-transfers_basic_existence} to prove
that conditions (W1) and (W2) are satisfied for the
pairing $\langle.,.\rangle_\phi$. Before proving this,
we note that the existence of $\tilde \tau$ satisfying property \ref{item:coincides-with-tau}
is equivalent to the commutativity of the above diagram.
In particular, the unicity of $(\tilde F,\tilde \tau)$ comes
from the unicity of the map $\phi$.

Therefore, we only need to prove (W1) and (W2) for $\phi$.
Consider a couple $(\rho,\alpha) \in F(X) \times \MWcorr(Y,X)$.
Because $F$ is the $t$-sheaf associated with $P$,
there exists a $t$-cover $p:W \rightarrow X$ 
and a section $\hat \rho \in P(W)$
such that $p^*(\rho)=\tau_W(\hat \rho)$.
According to part~\ref{item:cover-corr} of Corollary~\ref{cor:corr_main},
we get a $t$-cover $q:W' \rightarrow Y$
and a correspondence $\hat \alpha \in \MWcorr(W',W)$
making the diagram \eqref{eq:lift_corr} commutative.
As $\phi$ is a natural transformation, we get
\[
q^*\langle \rho,\alpha \rangle_\phi
=\langle \rho,\alpha \circ q\rangle_\phi
=\langle \rho,p \circ \hat \alpha \rangle_\phi
=\langle p^*\rho,\hat \alpha \rangle_\phi
=\langle \tau_W(\hat \rho),\hat \alpha \rangle_\phi
=\langle \hat \rho,\hat \alpha \rangle_\psi.
\]
Because $q^*:F(X) \rightarrow F(W)$ is injective,
we deduce easily from this principle the properties (W1) and (W2)
for $\phi$ from their analog properties for $\psi$. 
\end{proof}

\begin{prop}\ 
\label{prop:exist_associated-W-t-sheaf}%
\begin{enumerate}
\item The obvious forgetful functor $\tfO:\shMWtkR \rightarrow \pshMWkR$ admits
 a left adjoint $\tilde a$ such that the following diagram commutes:
\[
\begin{tikzcd}
[row sep=12pt,column sep=16pt]
\pshkR \ar[d,"a"'] & \pshMWkR \ar[d,"{\tilde a}"] \ar[l,"{\tilde \gamma_*}"'] \\
\shtkR & \shMWtkR \ar[l,"{\tilde \gamma_*}"']
\end{tikzcd}
\]
\index[notation]{at@$\tilde a$}%
where $a$ is the usual $t$-sheafification functor with
 respect to the smooth site.
\item 
\label{item:shgrothab}%
The category $\shMWtkR$ is a Grothendieck abelian category
 and the functor $\tilde a$ is exact.
\item
\label{item:gamma-has-left-adjoint}
The functor $\tilde \gamma_*$, appearing in the lower line
 of the preceding diagram, admits a left adjoint $\tilde \gamma^*$,
 and commutes with every limit and colimit.
\index[notation]{gasta@$\tilde \gamma_*$,$\tilde \gamma^*$}%
\end{enumerate}
\end{prop}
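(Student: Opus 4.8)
The plan is to verify the three assertions in order, relying heavily on the machinery just established and on the classical analogue for $\shVtkR$, which is available in \cite{CD12_b}*{\S 10.3}. The construction of the left adjoint $\tilde a$ is the content of Lemma \ref{lm:exists-Wtr}: for a MW-presheaf $\tilde P$ with underlying presheaf $P = \tilde \gamma_*(\tilde P)$, set $\tilde a(\tilde P) := \tilde F$, where $(\tilde F, \tilde \tau)$ is the unique pair produced by that lemma. First I would check that $\tilde F$ only depends on $\tilde P$ up to unique isomorphism and that $\tilde a$ is functorial: given $f : \tilde P \to \tilde P'$ of MW-presheaves, the composite $\tilde P \xrightarrow{f} \tilde P' \xrightarrow{\tilde \tau'} \tilde F'$ has underlying map landing in a $t$-sheaf, so by the universal property of the $t$-sheafification $a$ on $\smk$ together with the uniqueness clause of Lemma \ref{lm:exists-Wtr} it factors uniquely through $\tilde \tau$, defining $\tilde a(f)$. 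The adjunction $\Hom_{\shMWtkR}(\tilde a(\tilde P), \tilde G) \simeq \Hom_{\pshMWkR}(\tilde P, \tfO(\tilde G))$ then follows formally: a MW-presheaf map $\tilde P \to \tfO(\tilde G)$ has underlying presheaf map $P \to \tfO(\tilde \gamma_* \tilde G)$ which factors through $a(P) = \tilde \gamma_*(\tilde a \tilde P)$ uniquely, and again the uniqueness in Lemma \ref{lm:exists-Wtr} upgrades this to a map of MW-sheaves. The commutativity of the square with $a$ and $\tilde \gamma_*$ is then immediate from the very construction, since $\tilde \gamma_*(\tilde a \tilde P) = F = a(\tilde \gamma_* \tilde P)$ by part \ref{item:MW-sheaf-image-gamma} of Lemma \ref{lm:exists-Wtr}.

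For part \ref{item:shgrothab}, I would argue that $\shMWtkR$ is a reflective subcategory of the Grothendieck abelian category $\pshMWkR$ (the latter being Grothendieck abelian by the footnote \ref{fn:Grothendieck}) whose reflector $\tilde a$ is exact; by the standard Gabriel--Popescu type criterion a reflective subcategory of a Grothendieck abelian category with exact reflector is itself Grothendieck abelian. Exactness of $\tilde a$ reduces to exactness of $a$ on $\smk$ via the commutative square: since $\tilde \gamma_*$ is exact (it has both adjoints $\tilde \gamma^*$ and $\tilde \gamma^!$, as recorded in \ref{num:rightadjoint}) and conservative (being faithful, by Lemma \ref{lm:compare_transfers}), a sequence in $\shMWtkR$ is exact iff its image under $\tilde \gamma_*$ is, and $\tilde \gamma_* \tilde a = a \tilde \gamma_*$ with $a$ exact. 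One must also note that finite limits and filtered colimits in $\shMWtkR$ are computed by applying $\tilde a$ to the corresponding construction in $\pshMWkR$ (finite limits are preserved by $\tfO$, filtered colimits need the sheafification), and a generating family is obtained by applying $\tilde a$ to the representables $\MWprep_R(X)$.

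For part \ref{item:gamma-has-left-adjoint}, the left adjoint to $\tilde \gamma_* : \shMWtkR \to \shtkR$ is $\tilde \gamma^* := \tilde a \circ (\text{left adjoint to } \tilde\gamma_* \text{ on presheaves}) \circ \tfO_{\text{triv}}$; more precisely, for a $t$-sheaf $F$ on $\smk$ we set $\tilde\gamma^*(F) := \tilde a(\tilde \gamma^*_{\psh}(\iota F))$ where $\tilde\gamma^*_{\psh}$ is the presheaf-level left adjoint constructed explicitly above (via resolutions by representables) and $\iota$ is the forgetful inclusion of sheaves into presheaves. The adjunction identity chains together the sheafification adjunctions on both sides with the presheaf-level adjunction $(\tilde\gamma^*_{\psh}, \tilde\gamma_*)$. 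That $\tilde\gamma_*$ commutes with all limits is clear since it is a right adjoint; that it commutes with all colimits follows because it has the further right adjoint $\tilde\gamma^! \circ (\text{inclusion})$ suitably sheafified, or more directly because colimits of $t$-sheaves are $\tilde a$ applied to presheaf colimits and $\tilde\gamma_*$ intertwines $\tilde a$ with $a$ while commuting with presheaf colimits (it is evaluation-wise a colimit of representables pairing, hence commutes with colimits computed objectwise).

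The main obstacle I expect is not any single step but the careful bookkeeping needed to transport the uniqueness statement of Lemma \ref{lm:exists-Wtr} through each functoriality and adjunction verification: every time one wants to lift a presheaf-level map to a MW-sheaf map one must re-invoke that the pair $(\tilde F, \tilde \tau)$ is \emph{unique}, and check that the competing lifts agree after applying $\tilde\gamma_*$ so that uniqueness forces them equal. A secondary subtlety is exactness of $\tilde a$: one cannot simply cite exactness of $a$ without confirming that the MW-transfer structure is preserved under taking kernels and cokernels in $\shMWtkR$, which is exactly what the commuting square $\tilde\gamma_* \tilde a = a \tilde\gamma_*$ plus conservativity of $\tilde\gamma_*$ buys us. Once these are in place the proposition follows the well-trodden path of \cite{CD12_b}*{10.3.9}.
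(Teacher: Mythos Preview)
Your proposal is correct and follows essentially the same approach as the paper: define $\tilde a$ via Lemma~\ref{lm:exists-Wtr}, deduce exactness of $\tilde a$ from the commuting square $\tilde\gamma_*\tilde a = a\tilde\gamma_*$ together with exactness of $a$ and faithfulness/exactness of $\tilde\gamma_*$, and then inherit the Grothendieck abelian structure from $\pshMWkR$. The only minor divergence is in part~\ref{item:gamma-has-left-adjoint}: the paper argues that $\tilde\gamma_*$ preserves colimits simply because it is exact and preserves arbitrary coproducts, whereas you invoke either a sheafified right adjoint $\tilde\gamma^!$ or the intertwining square; all three routes work and are essentially equivalent in effort.
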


\begin{proof}
The first point follows directly from the previous lemma:
indeed,  with the notation of this lemma, we can put:
$\tilde a(P)=\tilde F$.

For point \ref{item:shgrothab}, we first remark that the functor $\tilde a$,
being a left adjoint, commutes with every colimit. Moreover,
the functor $a$ is exact and $\tilde\gamma_*:\pshMWkR\to \pshkR$ is also exact (Paragraph \ref{num:rightadjoint}).
Therefore, $\tilde a$ is exact because of the previous commutative square
and the fact that $\tilde \gamma_*$ is faithful. Then, we easily deduce that $\shMWtkR$ is a Grothendieck abelian category from the fact that $\pshMWkR$ is such a category.

The existence of the left adjoint $\tilde \gamma^*$ follows
formally. Thus $\tilde \gamma_*$ commutes with every limit.
Because $\tilde \gamma_*$ is exact and commutes with
arbitrary coproducts, we deduce that it commutes with arbitrary
colimits, therefore proving part \ref{item:gamma-has-left-adjoint}.
\end{proof}

\begin{rem}
The left adjoint $\tilde \gamma^*$ of $\tilde \gamma_*:\shMWtkR\to \shtkR$ can be computed as the composite
\[
\shtkR\stackrel{\fO}\to \pshkR\stackrel{\tilde \gamma^*}\to \pshMWkR\stackrel{\tilde a}\to \shMWtkR.
\] 
One can also observe that, according to part~\ref{item:shgrothab}, a family of generators of
 the Grothendieck abelian category $\shMWtkR$ is obtained by applying
 the functor $\tilde a$ to a family of generators of $\pshMWkR$. 
\end{rem}
 
\begin{dfn}\label{df:gen_sht}
Given any smooth scheme $X$, we put $\MWrepRt(X)=\tilde a\left(\MWprep(X)\right)$. 
\index[notation]{ztx@$\MWrepRt(X)$}%
\end{dfn}

In particular, for a smooth scheme $X$, $\MWrepRt(X)$ is the $t$-sheaf associated with the presheaf $\MWprep(X)$,
equipped with its canonical action of MW-correspondences (Lemma \ref{lm:exists-Wtr}).
The corresponding family, for all smooth schemes $X$, generates the abelian category $\shMWtkR$.

\begin{num}\label{num:sht_monoidal}
One deduces from the monoidal structure on $\cork$ a monoidal
 structure on $\shMWtkR$ whose tensor product $\otr$ is uniquely characterized
 by the property that for any smooth schemes $X$ and $Y$:
\begin{equation}\label{eq:sht_monoidal}
\MWrepRt(X) \otr \MWrepRt(Y)=\MWrepRt(X\times Y).
\end{equation}
Explicitly, the tensor product of any two sheaves $F,G\in \shMWtkR$ is obtained by applying $\tilde a$ to the presheaf tensor product $F\otimes G$ mentioned after Definition \ref{def:representablepsht}.
In particular, the bifunctor $\otr$ commutes with colimits
 and therefore, as the abelian category $\shMWtkR$ is a Grothendieck abelian
 category, the monoidal category $\shMWtkR$ is closed. The internal Hom
 functor is characterized by the property that for any MW-$t$-sheaf $F$
 and any smooth scheme $X$,
\[
\uHom\big(\MWrepRt(X),F\big)=F(X \times -).
\]
\end{num}

As a corollary of Proposition \ref{prop:exist_associated-W-t-sheaf},
 we obtain functors between the category of sheaves we have
 considered so far.
\begin{coro}\ 
\label{cor:adjunctions_corr}%
\begin{enumerate}
\item
\label{item:exists-diagram-monoidal}%
There exists a commutative diagram of symmetric monoidal functors
\[
\begin{tikzcd}
[row sep=26pt,column sep=26pt]
\pshkR \ar[d,"{\tilde \gamma^*}"'] \ar[r,"{a_\nis}"]
 & \shxkR{\nis} \ar[d,"{\tilde \gamma^*_\nis}"] \ar[r,"{a_\et}"]
  & \shxkR{\et} \ar[d,"{\tilde \gamma^*_\et}"] \\
\pshMWkR \ar[d,"{\pi^*}"'] \ar[r,"{\tilde a_\nis}"]
 & \shMWxkR{\nis} \ar[d,"{\pi^*_\nis}"] \ar[r,"{\tilde a_\et}"]
 & \shMWxkR{\et} \ar[d,"{\pi^*_\et}"] \\
\pshVkR \ar[r,"{a^{tr}_\nis}"] & \shVxkR{\nis} \ar[r,"{a^{tr}_\et}"]
 & \shVxkR{\et}
\end{tikzcd}
\]
which are all left adjoints of an obvious forgetful functor. Each of these functors
 respects the canonical family of abelian generators.
\item 
\label{item:right-adjoint-pi-faithful}%
Let $t=\nis, \et$. 
Then the right adjoint functor $\pi_*^t:\shMWtkR \rightarrow \shtkR$ is faithful.
 When $2$ is invertible in $R$, 
 the right adjoint functor $\pi_*^t:\shVtkR \rightarrow \shMWtkR$ is fully faithful.
\item
\label{item:adjunction-pi-equivalence}%
If $k$ is of characteristic different from $2$, the adjunction 
\[
\pi^*_\et:\shMWxkR{\et} \leftrightarrows \shVxkR{\et}:\pi_*^\et
\]
is an equivalence of categories.
\end{enumerate}
\end{coro}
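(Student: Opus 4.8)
The plan is to prove that the right adjoint $\pi_*^\et\colon \shVxkR{\et}\to\shMWxkR{\et}$ is fully faithful and essentially surjective; then $\pi^*_\et$ is a quasi‑inverse and the adjunction $(\pi^*_\et,\pi_*^\et)$ is an adjoint equivalence. Faithfulness of $\pi_*^\et$ is already contained in part~\ref{item:right-adjoint-pi-faithful} and uses no hypothesis on $k$, so what remains is to realize every étale MW‑sheaf as an object of the image of $\pi_*^\et$ and to promote morphisms between such objects; both will be formal consequences of a single geometric input, the only place where $\charac k\neq 2$ is used.

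That input — to be isolated as a separate lemma, the analogue of the ``$\GW$ is étale‑locally trivial'' statement — is that $\tilde a_\et\bigl(\Icorr(Y)\bigr)=0$ for every smooth $Y$; equivalently, $\pi\colon\MWprep(Y)\to\VrepZ(Y)$ becomes an isomorphism of étale sheaves. Granting it, combine with the short exact sequence $0\to\Icorr(Y)\to\MWprep(Y)\to\VrepZ(Y)\to 0$ of Zariski sheaves and with the fact that $\MWprep(Y)\to\VrepZ(Y)$ is already an epimorphism of Zariski sheaves (\chfinitecw, \S\ref{sec:representable}) to deduce: (i) $\ker\bigl(\pi_{XY}\colon\MWcorr(X,Y)\to\Vcorr(X,Y)\bigr)=\Icorr(Y)(X)$, and every $\beta\in\Vcorr(X,Y)$ lifts, on some étale cover of $X$, to a finite MW‑correspondence; and (ii) for any $G\in\shMWxkR{\et}$ and any $\beta\in\Icorr(Y)(X)\subseteq\MWcorr(X,Y)$, the operator $G(\beta)\colon G(Y)\to G(X)$ vanishes, since $\beta$ restricts to $0$ on an étale cover $\{U_i\to X\}$, so $G(\beta)(s)$ restricts to $0$ on each $U_i$ and $G$ is an étale sheaf.

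For essential surjectivity, let $G\in\shMWxkR{\et}$. By the criterion used in the proof of Lemma~\ref{lm:compare_transfers}, $G$ lies in the image of $\pi_*$ as soon as $G(\alpha)=G(\alpha')$ whenever $\pi(\alpha)=\pi(\alpha')$; and this holds by (i) and (ii), since then $\alpha-\alpha'\in\Icorr(Y)(X)$. The resulting presheaf with transfers $G_0$ with $\pi_*(G_0)=G$ is automatically an étale sheaf because $G_0\circ\gamma=G\circ\tilde\gamma$ is one, so $G\simeq\pi_*^\et(G_0)$. For fullness, let $\phi\colon\pi_*^\et G_0\to\pi_*^\et G_0'$ be a morphism in $\shMWxkR{\et}$; it is in particular a natural transformation with respect to morphisms of schemes, and to check naturality against an arbitrary $\beta\in\Vcorr(X,Y)$ one lifts $\beta$ to MW‑correspondences on an étale cover of $X$ using (i), applies naturality of $\phi$ against these lifts and against the covering maps, and concludes with the sheaf property of $G_0'$. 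Hence $\phi$ is induced by a morphism $G_0\to G_0'$ of étale sheaves with transfers, unique by faithfulness. This finishes the argument modulo the lemma.

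The main obstacle is the lemma $\tilde a_\et(\Icorr(Y))=0$. On étale stalks one must evaluate $\Icorr(Y)(\mathcal O^{\mathrm{sh}})=\varinjlim_T\H^{d}_T(\mathcal O^{\mathrm{sh}}\times Y,\I^{d+1},\omega_Y)$ with $d=\dim Y$, over a strictly henselian local essentially smooth $\mathcal O^{\mathrm{sh}}$. The heuristic is that over a field of characteristic $\neq 2$ every unit is an étale‑local square, so every rank‑one Pfister class $\langle\langle a\rangle\rangle$ dies and the fundamental‑ideal sheaf $\sI$ — hence $\sI^{m}$ for $m\geq 1$ — has vanishing étale stalks, with $\GW$ étale‑sheafifying to the constant sheaf $\ZZ$ via the rank. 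Turning this into the vanishing of the \emph{supported} groups above is the real work: an admissible $T$ is finite over $\mathcal O^{\mathrm{sh}}$, hence a finite disjoint union of strictly henselian local schemes (their residue fields, being finite extensions of a separably closed field of characteristic $\neq 2$, are again separably closed), so one may replace $T$ by the smooth locus of its normalization as in \chfinitecw, \S\ref{sec:representable}, discarding only codimension‑$(\geq d+2)$ information invisible to $\H^{d}_T$; the purity isomorphism together with Hensel's lemma then identify $\H^{d}_T(\mathcal O^{\mathrm{sh}}\times Y,\I^{d+1},\omega_Y)$ with an $\I^{1}$‑group of a normal strictly henselian local ring, which vanishes because its Grothendieck–Witt ring is $\ZZ$. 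I expect this supported‑cohomology step, rather than the formal part, to be the delicate point.
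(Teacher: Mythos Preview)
Your key lemma --- that $\tilde a_\et(\Icorr(Y))=0$, equivalently that $\MWrepR_\et(Y)\to\VrepR_\et(Y)$ is an isomorphism of étale sheaves --- is exactly the geometric core of the paper's proof, and your sketch of its proof (reduce to a strictly henselian local essentially smooth $A$; an admissible $T$ over $A$ breaks into local henselian pieces with separably closed residue fields; pass to the normalization, throw away codimension $\geq d+2$; reduce to $\GW(K)\simeq\ZZ$ for $K$ the fraction field of such a ring, by Hensel squaring) matches the paper's Lemma proved right after the corollary. So the hard part is right.

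The formal part, however, diverges from the paper and has a gap. You argue that $\pi_*^\et$ is fully faithful and essentially surjective; the paper instead shows that the \emph{left} adjoint $\pi^*_\et$ is fully faithful, by checking that the unit $F\to\pi_*^\et\pi^*_\et(F)$ is an isomorphism. Since $\pi_*^\et$ commutes with colimits (it commutes with coproducts and with étale fibres), this reduces to $F=\MWrepR_\et(X)$, and then to the key lemma directly. Essential surjectivity of $\pi^*_\et$ then follows formally because it hits the generators $\VrepR_\et(X)$.

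Your essential surjectivity step invokes ``the criterion used in the proof of Lemma~\ref{lm:compare_transfers}'': $G$ lies in the image of $\pi_*$ as soon as $G(\alpha)=G(\alpha')$ whenever $\pi(\alpha)=\pi(\alpha')$. But that criterion, as stated there, relies on \emph{global} surjectivity of $\pi_{XY}$ (available in that lemma because $2\in R^\times$), to define $G_0(\beta)$ for every $\beta\in\Vcorr(X,Y)$. In your setting $\pi_{XY}$ is only étale-locally surjective (cf.\ Remark~\ref{rem:notsurjective} in \chfinitecw), so for $\beta$ with no global lift you have not yet said what $G_0(\beta)$ is. The fix is implicit in your fullness argument: define $G_0(\beta)(s)$ by choosing an étale cover $U\to X$ on which $\beta$ lifts to some $\alpha$, set $G_0(\beta)(s)|_U:=G(\alpha)(s)$, check independence of the lift using your (ii), and descend via the sheaf property of $G$; then verify functoriality in $\beta$. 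This works, but it is more than an invocation --- in particular you must check compatibility with composition, which requires Corollary~\ref{cor:corr_main}\ref{item:cover-corr}. The paper's route via $\pi^*_\et$ sidesteps this descent construction entirely.
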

\begin{proof}
Property \ref{item:exists-diagram-monoidal} is a formal consequence of Proposition 
 \ref{prop:exist_associated-W-t-sheaf} and its analog for sheaves with transfers.

Property \ref{item:right-adjoint-pi-faithful} follows from the commutativity of the diagram in \ref{item:exists-diagram-monoidal},
 which induces an obvious commutative diagram for the right adjoint functors,
 the fact that the forgetful functor from sheaves to presheaves is always
 fully faithful and Lemma \ref{lm:compare_transfers}.

For \ref{item:adjunction-pi-equivalence}, it is sufficient to prove that the left adjoint $\pi^*_\et$
 is fully faithful. Indeed, if this is the case we also obtain that it is essentially surjective
 as it commutes with colimits and is surjective on generators of $\shVxkR{\et}$
 as we have: $\pi^*_\et(\MWrepR_\et(X))=\VrepR_\et(X)$.
 Therefore we prove that for any étale MW-sheaf $F$, the counit map:
\[
F \rightarrow \pi_*^\et\pi^*_\et(F)
\]
is an isomorphism.
Recall that for any étale sheaf with transfers $G$, $\pi_*^\et(G)=G \circ \pi$.
In particular, one immediately deduces that $\pi_*^\et$ commutes with coproducts and with taking fibers for the étale topology. This implies that it commutes with colimits.
As $F$ is a colimit of representable MW-sheaves, one can assume that $F=\MWrepR_\et(X)$ for a smooth $k$-scheme $X$.
Therefore, we are reduced to prove that the canonical map of étale sheaves (where we conventionally forget MW-transfers and transfers)
\[
\MWrepR_\et(X) \rightarrow \VrepR_\et(X)
\]
is an isomorphism. This can be checked on stalks and by definition of $R$-linear finite MW-correspondences (resp.\ correspondences) \eqref{eq:R-lin_MWcorr}, it is sufficient to treat the case $R=\ZZ$.
So, let $Y$ be a smooth $k$-scheme, and $y:\Spec(F) \rightarrow Y$ be a geometric point, such that $F$ is algebraically closed.
%
%\footnote{Here we use this assumption rather than the usual separably closed one to be able to cover the characteristic $2$ case.
%Indeed, points of these shape form a conservative family for the étale topology on the category of smooth $k$-schemes.}
We let $A$ be the strict henselisation of $Y$ at the point $y$.
We need to show that the map induced by \eqref{eq:from_smc2smcV}
\[
\pi_{A,X}:\cork(\Spec(A),X) \rightarrow \corVk(\Spec(A),X)
\]
obtained after taking the obvious filtered inductive limit over étale neighborhoods of $(Y,y)$ on both sides, is an isomorphism.
Let $Z \subset X \otimes_k A$ be an admissible subset (\chfinitecw, Definition~\ref{dfn:admissible_subset}),
i.e.\ a closed integral subset whose irreducible 
components $Z_i$ are finite and surjective over $\Spec(A)$.
 As $A$ is henselian, $Z$ decomposes as a direct sum $\sqcup_{i} Z_i$ where $Z=\Spec(B_i)$ with $B_i$ a local henselian ring.
 Because $Z/\Spec(A)$ is finite, the residue field of $B_i$ is isomorphic to $F$. 
Any finite MW-correspondence $\alpha \in \cork(\Spec(A),X)$
 with support in $Z$ can then be written as
\[
\alpha=\oplus_{i\in I}\chst{\mathrm{dim}(X)}{Z_i}{X \otimes_k A}{\omega_{X/k}}
\]
Let $\tilde Z_i$ be the normalization of $Z_i$ in $k(Z_i)$. For the same reason as above, $\tilde Z_i$ is the spectrum of a strictly henselian local ring with residue field $F$. The $\tilde Z_i$ being normal, we may define $\chht 0{\tilde Z_i}L$ as usual for any line bundle $L/{\tilde Z_i}$. The scheme being local, we may even choose a trivialization of $L$ and the push-forward homomorphism induced by the finite morphism $\tilde Z_i\to X \otimes_k A$ reads as 
\[
\chhtnotw 0{\tilde Z_i}\to \chst{\mathrm{dim}(X)}{Z_i}{X \otimes_k A}{\omega_{X/k}}
\]  
The left-hand side is a subgroup of $\GW(k(Z_i))$ which is in fact isomorphic to $\ZZ$ via the rank homomorphism by the next lemma. It follows that the unique homomorphism $\ZZ\to \GW(k(Z_i))$ is an isomorphism, and, since every element in the image is automatically unramified, that $\chhtnotw 0{\tilde Z_i}=\ZZ$. As $\ZZ\to \GW(k(Z_i))$, we also deduce that the map
\[
\chhtnotw 0{\tilde Z_i}\to \chst{\mathrm{dim}(X)}{Z_i}{X \otimes_k A}{\omega_{X/k}}
\]  
is an isomorphism, and consequently that 
\[
\alpha=\oplus_{i\in I}\chst{\mathrm{dim}(X)}{Z_i}{X \otimes_k A}{\omega_{X/k}}
\]
is also an isomorphism.

\end{proof}

\begin{lem}
\label{lemm:GW_etale_trivial}%
Let $A$ be an integral henselian local ring whose residue field $F$ is algebraically closed of characteristic different from $2$,
and $K$ be its fraction field.
Then, the rank map $\GW(K) \rightarrow \ZZ$ is an isomorphism.%
\footnote{The same holds for $A$ itself.} 
\end{lem}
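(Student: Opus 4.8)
The claim is that if $A$ is an integral henselian local ring with algebraically closed residue field $F$ of characteristic $\neq 2$ and fraction field $K$, then the rank map $\GW(K) \to \ZZ$ is an isomorphism. Since the rank map is always surjective (the form $\langle 1\rangle$ maps to $1$) and since $\GW(K)$ is generated by the classes $\langle a\rangle$ for $a \in K^\times$, it suffices to show that every $\langle a\rangle$ equals $\langle 1\rangle$ in $\GW(K)$, equivalently that the fundamental ideal $\I(K)$ is zero, equivalently that $K^\times/(K^\times)^2$ is trivial; by Pfister theory this is further equivalent to showing $K$ is quadratically closed. So the plan reduces entirely to the statement: \emph{every element of $K^\times$ is a square in $K^\times$.}

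First I would reduce to the strictly henselian case. The henselization does not change $\GW$ of the fraction field in a way that matters here: one can replace $A$ by its strict henselization $A^{sh}$ (with the same, already algebraically closed, residue field $F$) — or more simply observe that $A$ henselian with algebraically closed residue field is already strictly henselian. Then, given $a \in K^\times$, after possibly multiplying by a square I may assume $a \in A$ and in fact, localizing and using that $A$ is a domain, I want to extract a square root. Consider the monic polynomial $t^2 - a \in A[t]$. Its reduction mod the maximal ideal $\mathfrak m$ is $t^2 - \bar a \in F[t]$. If $\bar a \neq 0$: since $F$ is algebraically closed of characteristic $\neq 2$, $t^2 - \bar a$ factors as $(t-\alpha)(t+\alpha)$ with $\alpha \neq -\alpha$, i.e.\ it has two distinct simple roots in $F$. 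By Hensel's lemma applied to the henselian local ring $A$, this factorization lifts to a factorization $t^2 - a = (t - b)(t + b)$ with $b \in A$, $b \equiv \alpha$; hence $a = b^2$ is a square in $A \subseteq K$. If $\bar a = 0$, i.e.\ $a \in \mathfrak m$, then $a$ is not a unit of $A$ but it is still a unit of $K$, and here is where I must be a little more careful: write $v$ for ... — actually the cleanest route is to note that $K$ itself is the fraction field of a henselian \emph{valuation} ring only if $A$ is one, which we are not assuming, so instead I argue as follows. The element $a \in K^\times$ can be written $a = a_1/a_2$ with $a_1, a_2 \in A \setminus \{0\}$; it suffices to show each $a_i$ becomes a square in $K^\times$. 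If $a_i \in \mathfrak m$, I cannot directly apply Hensel to $A$.

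The main obstacle is exactly this last point: handling non-units of $A$. The fix is to enlarge the ring. Given $0 \neq c \in \mathfrak m$, consider the $A$-algebra $B = A[t]/(t^2 - c)$. This is a finite, hence henselian (being finite over a henselian local ring), local $A$-algebra whose residue ring is $F[t]/(t^2) = F[\epsilon]$ — so $B$ is local but not reduced. That is not quite what I want either. The genuinely clean argument: $K$ is quadratically closed because $K$ is the fraction field of a \emph{henselian} local domain with algebraically closed residue field, and this is a standard fact — one references it or proves it as follows. Let $L = K(\sqrt a)$ for $a \in K^\times$; I claim $L = K$. The integral closure $A'$ of $A$ in $L$ is a semilocal domain, finite over $A$ (as $A$ is... excellent, or at worst: $A'$ is a finitely generated $A$-module when $A$ is, say, a localization of a finite-type $k$-algebra — and indeed in our application $A$ is the strict henselization of such, so $A'$ is a filtered colimit of finite $A$-modules and is again henselian local with residue field $F$). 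Since $A$ is henselian, $A'$ is local with residue field a finite extension of $F$, hence equal to $F$; then $A \to A'$ is a finite local homomorphism inducing an isomorphism on residue fields, and the generic fibre degree $[L:K]$ equals the generic rank, which by Nakayama applied to the finite module $A'$ over the henselian local ring $A$ equals the residue degree $[F:F] = 1$. Hence $L = K$, so $a$ is a square. Therefore $K^\times = (K^\times)^2$, $\I(K) = 0$, and the rank map $\GW(K) \to \ZZ$ is an isomorphism. The footnote — that the same holds for $A$ itself — follows since $\GW(A) \to \GW(K)$ is split injective (by the theory of Witt groups of regular local rings, or directly: a unimodular form over $A$ is determined over $K$ up to the unit square ambiguity, which is killed since units of $A$ are squares in $A$ by the Hensel argument above applied to units), and both map isomorphically to $\ZZ$ via the rank.
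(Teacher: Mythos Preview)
Your identification of the gap in the direct Hensel argument is exactly right: when $a \in \mathfrak m$, the reduction $t^2 - \bar a = t^2$ has only a double root at $0$, so Hensel's lemma does not apply. The paper's own proof does not address this case either --- it simply asserts that $t^2 - a$ has a simple root in $F$ without distinguishing units from non-units --- so you have in fact spotted a genuine issue in the published argument.

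However, your attempted repair via the integral closure $A'$ of $A$ in $L = K(\sqrt a)$ does not work. The claim that ``by Nakayama \ldots\ the generic rank equals the residue degree $[F:F] = 1$'' confuses $A'/\mathfrak m A'$ with the residue field $A'/\mathfrak m'$ of $A'$. Nakayama bounds the number of generators of $A'$ over $A$ by $\dim_F(A'/\mathfrak m A')$, and this $F$-dimension is the product of the residue degree and the ramification index, not the residue degree alone; in the totally ramified case it equals $2$. Concretely, take $A = k[[x]]$ with $k$ algebraically closed of characteristic $\neq 2$ and $a = x$: then $A' = k[[\sqrt x\,]]$ is free of rank $2$ over $A$, with $A'/xA' \cong F[\epsilon]/(\epsilon^2)$ of dimension $2$, and indeed $[L:K] = 2$. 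In this example $x$ is genuinely not a square in $K = k((x))$, so $\langle x \rangle \neq \langle 1 \rangle$ in $\GW(K)$ and the rank map has nontrivial kernel. Thus the lemma as stated appears to be false in general, and neither your argument nor the paper's can be completed without an additional hypothesis on $A$ (e.g.\ $\dim A = 0$, or more generally that every nonzero element of $A$ is a unit times a square --- which already fails for henselian DVRs).
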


\begin{proof}
According to \cite[chap.~IV, Lem.~1.1]{MilHus_73}, the ring $\GW(K)$ is additively generated
 by symbols $\langle u \rangle$ for $u \in K^\times$.
 We write $u=\frac a b$ for $a,b \in A-\{0\}$.
 The monic polynomial $(t^2-a)$ in $A[t]$ admits a simple root in the (algebraically closed) residue field $F$
 --- which is of characteristic different from $2$ ---
 so it admits a solution in $A$ as $A$ is henselian. In particular, $a$ is a square and similarly, $b$ is a square.
 This implies $u=v^2$ so that $\langle u \rangle=1$. So $\GW(K)$ is additively generated by $1$, which forces
 the rank map to be an isomorphism as required.
\end{proof}
 
\section{Framed correspondences}\label{sec:framed}

\subsection{Definitions and basic properties}

The aim of this section is to make a link between the category of linear framed correspondences (after Garkusha-Panin-Voevodsky) and the category of MW-presheaves. We start with a quick reminder on framed correspondences following \cite{Garkusha14_b}.
\index{framed correspondence}%

\begin{dfn}
Let $U$ be a smooth $k$-scheme and $Z\subset U$ be a closed subset of codimension $n$. A set of regular functions $\phi_1,\ldots,\phi_n\in k[U]$ is called a framing of $Z$ in $U$ if $Z$ coincides with the closed subset $\phi_1=\ldots=\phi_n=0$. 
\end{dfn} 

\begin{dfn}
Let $X$ and $Y$ be smooth $k$-schemes, and let $n\in\NN$ be an integer. An explicit framed correspondence $c=(U,\phi,f)$ of level $n$ from $X$ to $Y$ consists of the following data:
\begin{enumerate}
\item A closed subset $Z\subset \AAA^n_X$ which is finite over $X$ (here, $Z$ is endowed with its reduced structure). 
\item An étale neighborhood $\alpha:U\to \AAA^n_X$ of $Z$.
\item A framing $\phi=(\phi_1,\ldots,\phi_n)$ of $Z$ in $U$.
\item A morphism $f:U\to Y$.
\end{enumerate} 
The closed subset $Z$ is called the \emph{support} of the explicit framed correspondence $c=(U,\phi,f)$.
\end{dfn}

\begin{rem}
One could give an alternative approach to the above definition. A framed correspondence $(U,\phi,f)$ corresponds to a pair of morphisms $\phi:U\to \AAA^n_k$ and $f:U\to Y$ yielding a unique morphism $\varphi:U\to \AAA^n_Y$. The closed subset $Z\subset U$ corresponds to the preimage of $Y\times \{0\}\subset Y\times \AAA^n_k=\AAA^n_Y$. This correspondence is unique.
\end{rem}

\begin{rem}
Note that $Z$ is not supposed to map surjectively onto a component of $X$. For instance $Z=\emptyset$ is an explicit framed correspondence of level $n$, denoted by $0_n$. If $Z$ is non-empty, then an easy dimension count shows that $Z\subset \AAA^n_X\to X$ is indeed surjective onto a component of $X$.
\end{rem}

\begin{rem}
Suppose that $X$ is a smooth connected scheme. By definition, an explicit framed correspondence of level $n=0$ is either a morphism of schemes $f:X\to Y$ or $0_0$.
\end{rem}

\begin{dfn}
Let $c=(U,\phi,f)$ and $c^\prime=(U^\prime,\phi^\prime,f^\prime)$ be two explicit framed correspondences of level $n\geq 0$. Then, $c$ and $c^\prime$ are said to be \emph{equivalent} if they have the same support and  there exists an open neighborhood $V$ of $Z$ in $U\times_{\AAA^n_X} U^\prime$ such that the diagrams
\[
\begin{tikzcd}
U \times_{\AAA^n_X} U^\prime \ar[r] \ar[d] & U^\prime \ar[d,"{f^\prime}"] \\
U \ar[r,"f"'] & Y
\end{tikzcd}
\]
and
\[
\begin{tikzcd}
U \times_{\AAA^n_X} U^\prime \ar[r] \ar[d] & U^\prime \ar[d,"{\phi^\prime}"] \\
U \ar[r,"{\phi}"'] & \AAA^n_k
\end{tikzcd}
\]
are both commutative when restricted to $V$. A \emph{framed} correspondence of level $n$ is an equivalence class of explicit framed correspondences of level $n$.
\end{dfn}

\begin{dfn}
Let $X$ and $Y$ be smooth schemes and let $n\in \NN$. We denote by $\Fr_n(X,Y)$
\index[notation]{frnxy@$\Fr_n(X,Y)$}%
the set of framed correspondences of level $n$ from $X$ to $Y$ and by $\Fr_*(X,Y)$ the set $\sqcup_n \Fr_n(X,Y)$. Together with the composition of framed correspondences described in \cite{Garkusha14_b}*{\S 2}, this defines a category whose objects are smooth schemes and morphisms are $\Fr_*(-,-)$. We denote this category by $\Fr_*(k)$ and refer to it as the \emph{category of framed correspondences}.
\index{framed correspondences!category of}%
\index[notation]{frk@$\Fr_*(k)$}%
\end{dfn}

We now pass to the linear version of the above category following \cite{Garkusha14_b}*{\S 7}, starting with the following observation. 
Let $X$ and $Y$ be smooth schemes, and let $c_{Z}=(U,\phi,f)$ be an explicit framed correspondence of level $n$ from $X$ to $Y$ with support $Z$ of the form $Z=Z_1\sqcup Z_2$. 
Let $U_1=U\setminus Z_2$ and $U_2=U\setminus Z_1$. For $i=1,2$, we get étale morphisms $\alpha_i:U_i\to X$ and morphisms $\phi_i:U_i\to \AAA^n_k$, $f_i:U_i\to Y$ by precomposing the morphisms $\alpha,\phi$ and $f$ with the open immersion $U_i\to U$. 
Note that $U_i$ is an étale neighborhood of $Z_i$ for $i=1,2$ and that $c_{Z_i}=(U_i,\phi_i,f_i)$ are explicit framed correspondences of level $n$ from $X$ to $Y$ with support $Z_i$.  

\begin{dfn}\label{def:linear}
Let $X$ and $Y$ be smooth schemes and let $n\in\NN$. Let 
\[
\ZFr_n(X,Y)=\ZZ \Fr_n(X,Y)/H
\]
\index[notation]{zfrnxy@$\ZFr_n(X,Y)$}%
\index{framed correspondences!category of linear}%
where $H$ is the subgroup generated by elements of the form $c_Z-c_{Z_1}-c_{Z_2}$ where $Z=Z_1\sqcup Z_2$ is as above and $\ZZ \Fr_n(X,Y)$ is the free abelian group on $\Fr_n(X,Y)$. The category $\ZFr_*(k)$ of \emph{linear framed correspondences} is the category whose objects are smooth schemes and whose morphisms are 
\[
\Hom_{\ZFr_*(k)}(X,Y)=\bigoplus_{n\in \NN} \ZFr_n(X,Y).
\]
\end{dfn}

\begin{rem}
Note that there is an obvious functor $\iota:\Fr_*(k)\to \ZFr_*(k)$ with $\iota(0_n)=0$ for any $n\in\NN$.
\end{rem}

The stage being set, we now compare the category of finite MW-correspondences with the above categories.

Let $U$ be a smooth $k$-scheme and let $\phi:U\to \AAA^n_k$ be a morphism corresponding to (nonzero) global sections $\phi_i\in \OO(U)$. Each section $\phi_i$ can be seen as an element of $k(U)^\times$ and defines then an element of $\KMW_1(k(U))$. Let $\vert \phi_i\vert$ be the support of $f_i$, i.e.\ its vanishing locus, and let $Z= \vert \phi_1\vert \cap \ldots \cap \vert \phi_n\vert $. Consider the residue map
\[
d:\KMW_1(k(U))\to \bigoplus_{x\in U^{(1)}} \KMW_0(k(x),\omega_x).
\]
Then $d(\phi_i)$ defines an element supported on $\vert \phi_i\vert$. As it is a boundary, it defines a cycle $Z(\phi_i)\in H^1_{\vert \phi_i\vert}(U,\sKMW_1)$. Now, we can consider the intersection product
\[
\H^1_{\vert \phi_1\vert}(U,\sKMW_1)\times \ldots \times \H^1_{\vert \phi_n\vert}(U,\sKMW_1)\to \H^n_{Z}(U,\sKMW_n)
\]
to get an element $Z(\phi_1)\cdot \ldots \cdot Z(\phi_n)$ that we denote by $Z(\phi)$.

\begin{lem}\label{lem:functor}
Any explicit framed correspondence $c=(U,\phi,f)$ induces a finite MW-correspondence $\alpha(c)$ from $X$ to $Y$. Moreover, two equivalent explicit framed correspondences $c$ and $c^\prime$ induce the same finite MW-correspondence.
\end{lem}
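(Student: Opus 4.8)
The plan is to turn the purely local-algebraic data $(U,\phi,f)$ of an explicit framed correspondence into a cycle living in a Chow--Witt group with support, and then to check that this cycle is admissible and stable under the equivalence relation. First I would analyze the support $Z = \vert\phi_1\vert\cap\cdots\cap\vert\phi_n\vert$: by hypothesis $Z$ (with reduced structure, obtained as $\varphi^{-1}(Y\times\{0\})$ for the combined morphism $\varphi:U\to\AAA^n_Y$) is finite over $X$ via the étale structure morphism $\alpha:U\to\AAA^n_X$ followed by the projection to $X$. Because $Z$ is cut out set-theoretically by $n$ equations inside the $n$-codimensional-over-$X$ locus, each irreducible component of $Z$ has codimension exactly $n$ in $U$, hence the cycle $Z(\phi)=Z(\phi_1)\cdot\ldots\cdot Z(\phi_n)\in\H^n_Z(U,\sKMW_n)$ constructed just before the lemma makes sense. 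The morphism $f:U\to Y$ gives $f\times\id:U\times\AAA^n_X\to\ldots$; more precisely I would use the graph $\gamma=(\alpha_X,f):U\to X\times Y$ where $\alpha_X:U\to X$, push $Z(\phi)$ forward along the finite (on $Z$) map $\gamma|_Z$, and land in $\chst{d_Y}{\gamma(Z)}{X\times Y}{\omega_Y}$, after twisting by $\omega_Y$ using $\omega_{U}$ versus $f^*\omega_Y$ and the canonical identifications as in \chfinitecw, \S\ref{sec:compositionMWcorr}. Since $\gamma(Z)$ is finite over $X$ and each component surjects onto a component of $X$ (by the dimension count recalled in the Remark after the definition of explicit framed correspondence), $\gamma(Z)$ is an admissible subset, so the image under $\chst{d_Y}{\gamma(Z)}{X\times Y}{\omega_Y}\to\cork(X,Y)$ is the desired finite MW-correspondence $\alpha(c)$.

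The second task, independence from the representative, is where most care is needed but no deep input: if $c=(U,\phi,f)$ and $c'=(U',\phi',f')$ are equivalent, they share the support $Z$, and there is an open $V\subset U\times_{\AAA^n_X}U'$ containing $Z$ on which the two framings and the two maps to $Y$ agree after restriction. I would use the two projections $V\to U$, $V\to U'$, which are étale neighborhoods of $Z$ again, together with the fact that both $Z(\phi)$ and $Z(\phi')$ pull back to the \emph{same} class in $\H^n_Z(V,\sKMW_n)$ (because the residue symbols of $\phi_i$ and $\phi_i'$ agree on $V$ by the commutativity of the second square, and étale excision identifies cohomology with support $Z$ over $U$, $U'$, and $V$). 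The compatibility of $f$ and $f'$ on $V$ ensures the push-forwards to $X\times Y$ coincide; here one invokes étale excision for Chow--Witt groups with support (the isomorphisms of the form $\chst*{Z}{U}{\Lb}\xrightarrow{\sim}\chst*{Z}{V}{\Lb}$ used in the proof of Lemma~\ref{lm:corr_main}). So $\alpha(c)=\alpha(c')$ in $\cork(X,Y)$.

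The step I expect to be the main obstacle is bookkeeping the twists: making the identification $\omega_{U/k}\simeq \gamma^*\omega_{X\times Y/k}\otimes(\text{something trivializable})$ precise enough that the push-forward of $Z(\phi)$ genuinely lands in $\chst{d_Y}{\gamma(Z)}{X\times Y}{\omega_Y}$ and not merely in an abstractly isomorphic group. This is exactly the kind of line-bundle juggling carried out in \chfinitecw, \S\ref{sec:compositionMWcorr} and \S\ref{subsec:embedding}; the key points are that $U\to X$ is étale (so $\omega_{U/k}\simeq \alpha_X^*\omega_{X/k}$ canonically) and that the framing $\phi$ trivializes the relevant conormal contribution of $Z$ in the $\AAA^n$-direction, which is precisely what the residue-symbol construction $Z(\phi_i)$ encodes. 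Once these canonical isomorphisms are fixed — consistently with the conventions already adopted in the book — both the well-definedness of $\alpha(c)$ and its invariance under equivalence follow by the excision and projection/base-change formalism recalled in Section~\ref{sec:chowwitt}. I would also remark, for later use, that the level-$0$ case recovers $\tilde\gamma(f)$ when $c=(X,\emptyset,f)$ and $\alpha(0_n)=0$ since then $Z=\emptyset$.
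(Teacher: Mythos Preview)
Your overall strategy is correct and matches the paper's proof closely: construct $Z(\phi)\in\H^n_Z(U,\sKMW_n)$, twist appropriately, push forward along $(p_X\alpha,f):U\to X\times Y$ using that $Z\to\gamma(Z)$ is finite, and for equivalence pull both cycles back to $V\subset U\times_{\AAA^n_X}U'$ where they agree, then push forward. The paper carries this out exactly, including the étale-excision step you anticipate.

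There is one genuine slip in your bookkeeping paragraph: you write that ``$U\to X$ is étale (so $\omega_{U/k}\simeq\alpha_X^*\omega_{X/k}$ canonically)''. This is false. What is étale is $\alpha:U\to\AAA^n_X$; the composite $U\to X$ is smooth of relative dimension $n$, so $\omega_{U/k}$ differs from $(p_X\alpha)^*\omega_{X/k}$ by the determinant of the rank-$n$ relative cotangent bundle. The paper fixes this exactly as you half-anticipate in your next clause: from étaleness of $\alpha$ one gets $\omega_U\simeq\alpha^*\omega_{\AAA^n_X}$, and then the \emph{standard orientation of $\AAA^n$} (i.e.\ the choice $dt_1\wedge\cdots\wedge dt_n$) gives $\omega_{\AAA^n_X}\otimes p_X^*\omega_X^\vee\simeq\OO_{\AAA^n_X}$, whence $\OO_U\simeq\omega_U\otimes(p_X\alpha)^*\omega_X^\vee$. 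So the trivialization you need is not canonical from the geometry of $U/X$ alone; it depends on the chosen orientation of $\AAA^n$ (which is part of the data implicit in a \emph{framed} correspondence). Once you correct this, your push-forward lands in $\H^n_T(X\times Y,\sKMW_n,\omega_{X\times Y/X})\simeq\chst{d_Y}{T}{X\times Y}{\omega_Y}$ exactly as the paper has it.
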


\begin{proof}
Let us start with the first assertion. If $Z$ is empty, its image is defined to be zero. If $c$ is of level $0$, then it corresponds to a morphism of schemes, and we use the functor $\smk\to \cork $ to define the image of $c$. We thus suppose that $Z$ is non-empty (thus finite and surjective on some components of $X$) of level $n\geq 1$.
Consider the following diagram
\[
\begin{tikzcd}
 & U\ar[d,"\alpha"'] \ar[r,"{(\phi,f)}"] & \AAA^n_Y \\
Z \ar[r] \ar[ru] & \AAA^n_X \ar[d,"{p_X}"'] & \\
 & X & 
\end{tikzcd}
\]
defining an explicit framed correspondence $(U,\phi,f)$ of level $n$. The framing $\phi$ defines an element $Z(\phi)\in \H^n_{Z}(U,\sKMW_n)$ as explained above. Now, $\alpha$ is étale and therefore induces an isomorphism  $\alpha^*\omega_{\AAA^n_X}\simeq \omega_U$. Choosing the usual orientation for $\AAA^n_k$, we get an isomorphism $\OO_{\AAA^n_X}\simeq \omega_{\AAA^n_X}\otimes (p_X)^*\omega_X^\vee$ and therefore an isomorphism
\[
\OO_U\simeq \alpha^*(\OO_{\AAA^n_X})\simeq \alpha^*(\omega_{\AAA^n_X}\otimes p_X^*\omega_X^\vee)\simeq \omega_U\otimes (p_X\alpha)^* \omega_X^\vee.
\]
We can then see $Z(\phi)$ as an element of the group $\H^n_{Z}(U,\sKMW_n,\omega_U\otimes (p_X\alpha)^* \omega_X^\vee)$. Consider next the map $(p_X\alpha,f):U\to X\times Y$ and the image $T$ of $Z$ under the map of underlying topological spaces. It follows from \cite{Mazza06_b}*{Lemma 1.4} that $T$ is closed, finite and surjective over (some components of) $X$. Moreover, the morphism $Z\to T$ is finite and it follows that we have a push-forward homomorphism
\[
(p_X\alpha,f)_*:\H^n_{Z}(U,\sKMW_n,\omega_U\otimes (p_X\alpha)^* \omega_X^\vee)\to \H^n_T(X\times Y,\sKMW_n,\omega_{X\times Y/X})
\] 
yielding, together with the canonical isomorphism $\omega_{X\times Y/X}\simeq \omega_Y$, a finite Chow-Witt correspondence $\alpha(c):=(p_X\alpha,f)_*(Z(\phi))$ between $X$ and $Y$.

Suppose next that $c=(U,\phi,f)$ and $c^\prime=(U^\prime,\phi^\prime,f^\prime)$ are two equivalent explicit framed correspondences of level $n$. Following the above construction, we obtain two cocycles $\tilde\alpha(c)\in \H^n_{Z}(U,\sKMW_n,\omega_U\otimes (p_X\alpha)^* \omega_X^\vee)$ and $\tilde\alpha(c^\prime)\in \H^n_{Z}(U^\prime,\sKMW_n,\omega_{U^\prime}\otimes (p_X\alpha^\prime)^* \omega_X^\vee)$. Now, the pull-backs along the projections 
\[
\begin{tikzcd}
U\times_{\AAA^n_X} U^\prime \ar[r,"p_2"] \ar[d,"p_1"'] & U^\prime \\
U & 
\end{tikzcd}
\]
yield homomorphisms
\[
p_1^*:\H^n_{Z}(U,\sKMW_n,\omega_U\otimes (p_X\alpha)^* \omega_X^\vee)\simeq \H^n_{p_1^{-1}(Z)}(U\times_{\AAA^n_X} U^\prime,\sKMW_n,\omega_{U\times_{\AAA^n_X} U^\prime}\otimes (p_X\alpha p_1)^* \omega_X^\vee)
\] 
and
\[
p_2^*:\H^n_{Z}(U^\prime,\sKMW_n,\omega_{U^\prime}\otimes (p_X\alpha^\prime)^* \omega_X^\vee)\simeq \H^n_{p_2^{-1}(Z)}(U\times_{\AAA^n_X} U^\prime,\sKMW_n,\omega_{U\times_{\AAA^n_X} U^\prime}\otimes (p_X\alpha p_2)^* \omega_X^\vee),
\] 
while the pull-back along the open immersion $i:V\to U\times_{\AAA^n_X} U^\prime$ induces homomorphisms 
\[
i^*:\H^n_{p_1^{-1}(Z)}(U\times_{\AAA^n_X} U^\prime,\sKMW_n,\omega_{U\times_{\AAA^n_X} U^\prime}\otimes (p_X\alpha p_1)^* \omega_X^\vee)\simeq \H^n_Z(V,\sKMW_n, \omega_V\otimes (p_X\alpha p_1i)^* \omega_X^\vee)
\]
and
\[
i^*:\H^n_{p_2^{-1}(Z)}(U\times_{\AAA^n_X} U^\prime,\sKMW_n,\omega_{U\times_{\AAA^n_X} U^\prime}\otimes (p_X\alpha p_2)^* \omega_X^\vee)\simeq \H^n_Z(V,\sKMW_n, \omega_V\otimes (p_X\alpha p_2i)^* \omega_X^\vee).
\]
Note that $p_X\alpha p_2=p_X\alpha p_1$ and that $i^*p_1^*(\tilde\alpha(c))=i^*p_2^*(\tilde\alpha(c^\prime))$ by construction. Pushing forward along $V\to U\times_{\AAA^n_X} U^\prime\to U\to X\times Y$, we get the result.
\end{proof}

\begin{exm}\label{ex:stable}
Let $X$ be a smooth $k$-scheme. Consider the explicit framed correspondence $\sigma_X$ of level $1$ from $X$ to $X$ given by $(\Aone_X,q,p_X)$ where $q:\Aone_X=\Aone\times X\to \Aone$ is the projection to the first factor and $p_X:\Aone_X\to X$ is the projection to the second factor. We claim that $\alpha(\sigma_X)=\id\in \cork (X,X)$. To see this, observe that we have a commutative diagram
\[
\begin{tikzcd}
\Aone_X \ar[r,"{(p_X,p_X)}"] \ar[d,"p_X"'] & X\times X \\
X \ar[ru,"{\triangle}"'] & 
\end{tikzcd}
\]
where $\triangle$ is the diagonal map. Following the process of the above lemma, we start by observing that $Z(q)\in \H^1_X(\Aone_X,\sKMW_1)$ is the class of $\langle 1\rangle\otimes \overline t\in \sKMW_0(k(X),(\mathfrak m/\mathfrak m^2)^*)$ where $\mathfrak m$ is the maximal ideal corresponding to $X$ in the appropriate local ring and $t$ is a coordinate of $\Aone$. Now, we choose the canonical orientation of $\Aone$ and the class of $Z(q)$ corresponds then to the class of $\langle 1\rangle \in \sKMW_0(k(X))$ in $\H^1_X(\Aone_X,\sKMW_1,\omega _{(\Aone_X/X)})$. Its push-forward under 
\[
(p_X)_*:\H^1_X(\Aone_X,\sKMW_1,\omega _{(\Aone_X/X)})\to \H^0(X,\sKMW_0)
\]
is the class of $\langle 1\rangle$ and the claim follows from the above commutative diagram yielding $(p_X,p_X)_*=\triangle_*(p_X)_*$ and the definition of the identity in $\cork (X,X)$.
\end{exm}

\begin{prop}\label{prop:bunchoffunctors}
The assignment $c=(U,\phi,f)\mapsto \alpha(c)$ made explicit in Lemma \ref{lem:functor} defines functors $\alpha:\Fr_*(k)\to \cork $ and $\alpha^\prime:\ZFr_*(k)\to \cork$ such that we have a commutative diagram of functors
\[
\begin{tikzcd}
 &  \Fr_*(k) \ar[dd,"{\alpha}"'] \ar[rd,"{\iota}"] & \\ 
\smk \ar[ru] \ar[rd,"{\tilde\gamma}"'] &  & \ZFr_*(k) \ar[ld,"{\alpha^\prime}"] \\
 & \cork\ . & 
\end{tikzcd}
\]
\end{prop}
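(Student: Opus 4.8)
The plan is to check three things in order: that $\alpha$ is well-defined on framed correspondences (not just explicit ones), that $\alpha$ is functorial, and that the resulting diagram commutes; then $\alpha'$ is forced by the universal property of linearization together with the additivity property built into the construction.

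First I would observe that the well-definedness of $\alpha$ on framed correspondences (as opposed to explicit ones) is exactly the second assertion of Lemma \ref{lem:functor}, so that is already available. For functoriality, the main point is compatibility with composition: given explicit framed correspondences $c=(U,\phi,f)$ from $X$ to $Y$ and $c'=(U',\phi',f')$ from $Y$ to $Z$ of levels $m$ and $n$, their composite in $\Fr_*(k)$ is an explicit framed correspondence of level $m+n$ built on the fiber product $U\times_Y U'$ (more precisely on an étale neighborhood thereof), with framing the concatenation $(\phi,\phi')$ suitably pulled back. I would trace through the construction in Lemma \ref{lem:functor} on this composite: the cycle $Z(\phi,\phi')$ is by multiplicativity of the intersection product the product of (pullbacks of) $Z(\phi)$ and $Z(\phi')$, and then the push-forward along $U\times_Y U'\to X\times Z$ factors through $X\times Y\times Z$, so that after invoking the base change formula (Proposition~\ref{prop:basechange}) and the projection formula (Corollary~\ref{cor:pformula}) for Chow-Witt groups one recognizes $\alpha(c'\circ c)$ as $\alpha(c')\circ \alpha(c)$ in the sense of the composition defined in Section~\ref{sec:compositionMWcorr}. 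The bookkeeping with the canonical bundles $\omega_X,\omega_Y,\omega_Z$ and the orientations of the affine spaces $\AAA^m_k,\AAA^n_k$ needs care but is parallel to the proof of associativity (Lemma~\ref{lem:associative}); in particular the identifications $\OO_U\simeq \omega_U\otimes(p_X\alpha)^*\omega_X^\vee$ used in Lemma~\ref{lem:functor} are exactly the right ones to make the orientations match up. That the identity framed correspondence $\sigma_X$ goes to $\id_X$ is Example~\ref{ex:stable}, so $\alpha$ preserves identities.

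Next, for the commutativity of the diagram, the left triangle $\smk\to\Fr_*(k)\xrightarrow{\alpha}\cork$ versus $\smk\xrightarrow{\tilde\gamma}\cork$: a morphism $g:X\to Y$ of smooth schemes is sent into $\Fr_*(k)$ as the level-$0$ framed correspondence given by $g$ itself, and by the very definition in the proof of Lemma~\ref{lem:functor} (the "level $0$" case) its image under $\alpha$ is $\tilde\gamma(g)=\grph g$; so that triangle commutes on the nose. For $\alpha'$: since $\ZFr_*(k)$ is obtained from $\Fr_*(k)$ by first taking free abelian groups on hom-sets and then quotienting by the subgroup $H$ generated by the relations $c_Z-c_{Z_1}-c_{Z_2}$ for $Z=Z_1\sqcup Z_2$, and since $\cork$ is an additive category, the functor $\alpha$ extends uniquely to an additive functor $\ZZ\Fr_*(k)\to\cork$, and this extension factors through the quotient by $H$ provided $\alpha(c_Z)=\alpha(c_{Z_1})+\alpha(c_{Z_2})$ for every such decomposition. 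That additivity is immediate from the construction: when $Z=Z_1\sqcup Z_2$, the cycle $Z(\phi)\in\H^n_Z(U,\sKMW_n)$ decomposes as $Z(\phi_{|U_1})\oplus Z(\phi_{|U_2})$ under the canonical splitting of cohomology with support along a disjoint union, and push-forward is additive for that splitting, whence $\alpha(c_Z)=\alpha(c_{Z_1})+\alpha(c_{Z_2})$ in $\cork(X,Y)$. This yields $\alpha':\ZFr_*(k)\to\cork$ with $\alpha'\circ\iota=\alpha$, so the right triangle commutes by construction, and composing, the whole diagram commutes.

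The main obstacle I expect is the compatibility of $\alpha$ with composition, specifically the matching of all the twists by canonical sheaves and the orientations of affine spaces: one has to verify that the orientation of $\AAA^{m+n}_k$ used for the composite framing is compatible, under the canonical isomorphisms, with the product of the orientations of $\AAA^m_k$ and $\AAA^n_k$, and that the various base-change and projection-formula identifications used are the canonical ones from Remark~\ref{rem:choice}. Everything else (well-definedness, identities, the left triangle, the construction of $\alpha'$) is either already proved in the excerpt or a routine consequence of additivity.
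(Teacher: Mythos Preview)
Your approach is essentially the paper's: both reduce functoriality to the preservation of composition, and both verify this by pulling the framings back to the fiber product $W$ sitting over $X\times Y\times Z$, using multiplicativity $Z(\phi,\psi)=(pr_U)^*Z(\phi)\cdot(pr_V)^*Z(\psi)$ together with the base change and projection formulas for Chow--Witt groups. Your construction of $\alpha'$ via the additivity $\alpha(c_Z)=\alpha(c_{Z_1})+\alpha(c_{Z_2})$ is also exactly what the paper does.

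One small slip: $\sigma_X=(\Aone_X,q,p_X)$ is a level-$1$ framed correspondence, not the identity in $\Fr_*(k)$; the identity on $X$ is the level-$0$ correspondence given by $\id_X:X\to X$, and by the level-$0$ clause in the construction of Lemma~\ref{lem:functor} it is sent to $\tilde\gamma(\id_X)=\id_X$ in $\cork$. Example~\ref{ex:stable} is used elsewhere (to show that $(\alpha')^*$ lands in stable presheaves), not for preservation of identities.
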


\begin{proof}
For any smooth schemes $X,Y$ and any integer $n\geq 0$, we have a well-defined map $\alpha:\Fr_n(X,Y)\to \cork(X,Y)$ and therefore a well-defined map $\ZFr_n(X,Y)\to \cork(X,Y)$. Let $c=(U,\phi,f)$ be an explicit framed correspondence of level $n$ with support $Z$ of the form $Z=Z_1\sqcup Z_2$. Let $c_i=(U_i,\phi_i,f_i)$ be the explicit framed correspondences with support $Z_i$ obtained as in Definition \ref{def:linear}. By construction, we get $\alpha(c)=\alpha(c_1)+\alpha(c_2)$ and it follows that $\alpha:\Fr_n(X,Y)\to \cork(X,Y)$ induces a homomorphism $\alpha^\prime:\ZFr_n(X,Y)\to \cork(X,Y)$.

 It remains then to show that the functors $\alpha:\Fr_k\to \cork$ and $\alpha^\prime:\ZFr_*(k)\to \cork$ are well-defined, which amounts to prove that the respective compositions are preserved. Suppose then that $(U,\phi,f)$ is an explicit framed correspondence of level $n$ between $X$ and $Y$, and that $(V,\psi,g)$ is an explicit framed correspondence of level $m$ between $Y$ and $Z$. We use the diagram
\begin{equation}
\label{eq:diag1}%
\begin{tikzcd}
W \ar[r,"{pr_V}"] \ar[d] \ar[dd,bend right=12ex,"{pr_U}"'] & V \ar[d,"\beta"'] \ar[r,"{\psi}"] \ar[rrd,bend right=2ex,"g"'] & \AAA^m & \\
U \times \AAA^m \ar[r,"{f\times \id}"'] \ar[d] & Y \times \AAA^m \ar[d,"{p_Y}"'] & & Z \\
U \ar[r,"f"'] \ar[d,"{p_X\alpha}"'] \ar[rrd,bend right=2ex,"\phi"'] & Y & &  \\
X & & \AAA^n & 
\end{tikzcd}
\end{equation} 
in which the squares are all cartesian. The composition of $(U,\phi,f)$ with $(V,\psi,g)$ is given by $(W,(\phi\circ pr_U,\psi\circ pr_V),g\circ pr_V)$. 

On the other hand, the morphisms $(p_X\alpha,f)\circ pr_U:W\to X\times Y$ and $(p_Y\beta,g)\circ pr_V:W\to Y\times Z$ yield a morphism $\rho:W\to X\times Y\times Z$ and then a diagram
\begin{equation}
\label{eq:diag3}%
\begin{tikzcd}
 & W \ar[r,"{pr_V}"] \ar[d,"\rho"'] & V \ar[d,"{(p_Y\beta,g)}"] \\ 
W \ar[r,"\rho"] \ar[d,"{pr_U}"'] & X \times Y \times Z \ar[d,"{p_{X\times Y}}"'] \ar[r,"{p_{Y\times Z}}"] & Y\times Z \ar[d] \\
U \ar[r,"{(p_X\alpha,f)}"'] & X \times Y \ar[r] & Y
\end{tikzcd}
\end{equation}
in which all squares are cartesian. By base change (\chfinitecw, Proposition~\ref{prop:basechange}, Remark~\ref{rem:choice}), we have $(p_{X\times Y})^*(p_X\alpha,f)_*=\rho_*(pr_U)^*$ and $(p_{Y\times Z})^*(p_Y\beta,g)_*=\rho_*(pr_V)^*$.  By definition of the pull-back and the product, we have $(pr_U)^*(Z(\phi))=Z(\phi\circ pr_U)$ and $(pr_V)^*(Z(\psi))=Z(\psi\circ pr_V)$. It follows that 
\[
Z(\phi\circ pr_U,\psi\circ pr_V)=(pr_U)^*(Z(\phi))\cdot (pr_V)^*(Z(\psi)).
\]
Finally, observe that there is a commutative diagram
\[
\begin{tikzcd}
W \ar[d,equal] \ar[rr,"{g\circ pr_V}"] & & Z \\
W\ar[r,"\rho"] \ar[d,equal] & X \times Y \times Z \ar[r,"{p_{X\times Z}}"] & X \times Z \ar[d] \ar[u] \\
W \ar[rr,"{p_X\circ \alpha\circ pr_U}"'] & & X.
\end{tikzcd}
\]
Using these ingredients, we see that the composition is preserved.
\end{proof}

\begin{rem}
Note that the functor $\alpha^\prime:\ZFr_*(k)\to \cork$ is additive. It follows from Example \ref{ex:stable} that it is not faithful.
\end{rem}

\subsection{Presheaves}

Let $X$ be a smooth scheme. Recall from Example \ref{ex:stable} that we have for any smooth scheme $X$ an explicit framed correspondence $\sigma_X$ of level $1$ given by the triple $(\Aone_X,q,p_X)$ where $q$ and $p_X$ are respectively the projections onto $\Aone_k$ and $X$. The following definition can be found in \cite{Garkusha15_b}*{\S 1}.

\begin{dfn}
Let $R$ be a ring. A presheaf of $R$-modules $F$ on $\ZFr_*(k)$ is \emph{quasi-stable} if for any smooth scheme $X$, the pull-back map $F(\sigma_X):F(X)\to F(X)$ is an isomorphism. A quasi-stable presheaf is \emph{stable} if $F(\sigma_X):F(X)\to F(X)$ is the identity map for any $X$. We denote by $\pshfrkR$ the category of presheaves of $R$-modules on $\ZFr_*(k)$, by $\QpshfrkR$ the category of quasi-stable presheaves of $R$-modules on $\ZFr_*(k)$ and by $\SpshfrkR$ the category of stable presheaves of $R$-modules.
\end{dfn}

Now, the functor $\alpha^\prime:\ZFr_*(k)\to \cork$ induces a functor $\pshMWkR\to \pshfrkR$. By Example \ref{ex:stable}, this functor induces a functor
\[
(\alpha^\prime)^*:\pshMWkR\to \SpshfrkR.
\]
Recall next that a presheaf $F$ on $\smk$ is $\Aone$-invariant if the map $F(X)\to F(X\times \Aone)$ induced by the projection $X\times \Aone\to X$ is an isomorphism for any smooth scheme $X$. A Nisnevich sheaf of abelian groups $F$ is strictly $\Aone$-invariant if the homomorphisms $\H^i_{\nis}(X,F)\to \H^i_{\nis}(X\times \Aone,F)$ induced by the projection are isomorphisms for $i\geq 0$.

We can now state the main theorem of \cite{Garkusha15_b}.

\begin{thm}
\label{thm:A1local_framedPSh}%
Let $F$ be an $\Aone$-invariant quasi-stable $\ZFr_*(k)$-presheaf of $R$-modules, where $k$ is of characteristic different from $2$.
\begin{enumerate}
\item 
\label{item:A-one-invariant-quasi-stable}%
If the base field $k$ is infinite, 
the associated Nisnevich sheaf $F_{\nis}$ of $R$-modules is $\Aone$-invariant
and quasi-stable.
\item
\label{item:in-addition-Nisnevich-sheaf}%
Assume the base field $k$ is infinite (and perfect)
and the presheaf of $R$-modules $F$ is in addition a Nisnevich sheaf.
Then $F$ is strictly $\Aone$-invariant, as a Nisnevich sheaf of $R$-modules.
\end{enumerate}
\end{thm}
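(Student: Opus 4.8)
The plan is to reduce the whole statement to the work of Garkusha and Panin: this is precisely \cite{Garkusha15_b}*{Theorem~1.1}, and in the body of the book we will simply invoke it after checking that the hypotheses match (infinite perfect $k$ of characteristic $\neq 2$, $\Aone$-invariant quasi-stable framed $R$-presheaf). Still, it is worth recording the shape of their argument, which is a framed reincarnation of Voevodsky's blueprint for homotopy invariant presheaves with transfers.

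For part \ref{item:A-one-invariant-quasi-stable}, quasi-stability of $F_{\nis}$ is immediate: Nisnevich sheafification is exact and is compatible with the action of the framed endomorphisms $\sigma_X$, so $F_{\nis}(\sigma_X)$ is an isomorphism as soon as $F(\sigma_X)$ is. The substantive point is $\Aone$-invariance of $F_{\nis}$. One argues Nisnevich-locally: using the framed transfer structure one controls the kernel and cokernel of $F \to F_{\nis}$ on henselian local schemes, shows that these ``defect'' presheaves remain $\Aone$-invariant and quasi-stable, and then propagates $\Aone$-invariance from $F$ to $F_{\nis}$ through the Nisnevich descent spectral sequence. This is one place where infiniteness of $k$ is used, namely through the Gabber--Panin presentation lemmas that produce enough framed correspondences to run the comparison.

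For part \ref{item:in-addition-Nisnevich-sheaf}, assuming $F$ is already a Nisnevich sheaf, I would follow the three classical steps. First, establish that $F$ is \emph{unramified}: for $U$ smooth connected with function field $k(U)$, the restriction $F(U) \to F(k(U))$ is injective and $F$ satisfies a Gersten-type local exactness; this is extracted from quasi-stability together with $\Aone$-invariance by the usual Nisnevich-local purity argument. Second --- and this is the heart --- prove the analogue of Voevodsky's \emph{Main Lemma}: given a smooth relative curve over a henselian local scheme, equipped with a good relative compactification, one uses framed correspondences to move the support of a section away from a prescribed closed subset, which forces the boundary maps in the Gersten/Rost--Schmid-type complex of $F$ to be compatible with $\Aone$-homotopy. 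Third, assemble the Gersten resolution of $F$ (here perfectness of $k$ enters) and conclude that the projection $X \times \Aone \to X$ induces isomorphisms $\H^i_{\nis}(X,F) \to \H^i_{\nis}(X \times \Aone, F)$ for all $i \geq 0$.

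The main obstacle is the second step: manufacturing, from an arbitrary section over a relative curve, the framed correspondences needed to effect the ``moving'', uniformly enough to control every Gersten differential simultaneously. This requires the existence of good relative compactifications and the ability to frame the relevant closed subsets --- exactly the geometric input for which the assumption that $k$ be infinite is indispensable, and which Garkusha and Panin supply by transporting Voevodsky's geometry of standard triples into the framed setting; in our context one may alternatively appeal to Kolderup's direct proof \cite{Kolderup17_b}.
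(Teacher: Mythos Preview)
Your overall strategy --- reduce to Garkusha--Panin --- is exactly what the paper does, but you have misidentified where the work lies. The result in \cite{Garkusha15_b} is stated only for presheaves of \emph{abelian groups}, not for presheaves of $R$-modules, so one cannot simply ``check that the hypotheses match'' and invoke it. The paper's proof is entirely devoted to the reduction from arbitrary $R$ to $R=\ZZ$: one considers the restriction-of-scalars functor $\varphi_*:R\text{-mod}\to\ZZ\text{-mod}$, extends it to presheaves and sheaves, and verifies that it is conservative, commutes with sheafification, and commutes with Nisnevich cohomology. These three compatibilities then let one deduce $\Aone$-invariance, quasi-stability, and strict $\Aone$-invariance of $F$ from the corresponding properties of the underlying $\ZZ$-presheaf $\varphi_*F$, to which Garkusha--Panin applies directly.

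Your sketch of the internal mechanics of the Garkusha--Panin argument (unramifiedness, the framed Main Lemma, Gersten resolution) is a reasonable summary of what happens inside \cite{Garkusha15_b}, but the paper treats that result as a black box. So your proposal is not wrong so much as orthogonal: you have reproduced the outline of a proof the paper cites, and omitted the one paragraph the paper actually supplies.
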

\begin{proof}
These results are proved in \cite{Garkusha15_b} in the case where $R=\ZZ$ but it implies the case of an arbitrary ring of coefficients, simply by forgetting the scalars. Let us give the details below.

Let $\varphi:\ZZ \rightarrow R$ be the unique morphism of rings attached
with the ring $R$. We will consider the restriction of scalars functor
\[
\varphi_*:R\text{-mod} \rightarrow \ZZ\text{-mod}.
\]
As this functor admits both a left and a right adjoint (extension
of scalars and coinduced module), it commutes with all limits and colimits.
Besides, it is conservative.

For any category $\mathscr S$, one extends the functor $\varphi_*$
to presheaves on $\mathscr S$ by applying it term-wise:
\[
\hat \varphi_*:\psh(\mathscr S,R) \rightarrow \psh(\mathscr S,\ZZ).
\]
By construction, for any object $X$ of $\mathscr S$ and any presheaf $F$ of
 $R$-modules, we have the relation:
\begin{equation}\label{eq:Garkusha1}
\Gamma(X,\hat \varphi_*F)=\varphi_*(\Gamma(X,F)).
\end{equation}
Suppose now that $\mathscr S$ is endowed with a Grothendieck topology.
Then, as $\varphi_*$ is exact and commutes with products,
the functor $\hat \varphi_*$ respects sheaves so that we get an induced functor:
\[
\tilde \varphi_*:\sh(\mathscr S,R) \rightarrow \sh(\mathscr S,\ZZ).
\]
Then, using again the fact $\varphi_*$ commutes with colimits,
and the classical formula defining the associated sheaf functor
$a:\psh(\mathscr S,?) \rightarrow \sh(\mathscr S,?)$,
we get, for any presheaf $F$ of $R$-modules, a canonical isomorphism:
\begin{equation}\label{eq:Garkusha2}
a\big(\hat \varphi_*(F)\big) \simeq \tilde \varphi_* a(F).
\end{equation}
The fact $\varphi_*$ is conservative together with
relations \eqref{eq:Garkusha1} and \eqref{eq:Garkusha2} are sufficient to prove
assertion \ref{item:A-one-invariant-quasi-stable}. Indeed, these facts 
imply that it is sufficient to check the $\Aone$-invariance and
quasi-stability of the presheaf $\hat \varphi_*(F)$ to conclude.

Let us come back to the abstract situation to prove the remaining relation.
Recall that one can compute cohomology of an object $X$ in $\mathscr S$ with
coefficients in a sheaf $F$ by considering the colimit of the \v Cech cohomology
of the various hypercovers of $X$. Thus, relation \eqref{eq:Garkusha1} and the
fact that $\varphi_*$ commutes with colimits and products implies that,
for any integer $n$, we get the following isomorphism of abelian groups,
natural in $X$:
\begin{equation}\label{eq:Garkusha3}
\H^n\big(X,\tilde \varphi_*(F)\big) \simeq \varphi_*\H^n\big(X,F).
\end{equation}
Therefore to prove assertion \ref{item:in-addition-Nisnevich-sheaf},
 using the latter relation and once again the fact $\varphi_*$
 is conservative, we are reduced to consider the
 sheaf $\tilde \varphi_*(F)$ of abelian groups, which as a presheaf
 is just $\hat \varphi_*(F)$. Using relation \eqref{eq:Garkusha1},
 the latter is $\Aone$-invariant and quasi-stable so that we are indeed
 reduced to the case of abelian groups as expected.
\end{proof}

\begin{rem}
It is worth mentioning that the considerations of the preceding proof
 are part of a standard machinery of changing coefficients for sheaves
 that can be applied in particular in our context.

We leave the exact formulation to the reader, but
 describe it in the general case of a morphism of
 rings $\varphi:R \rightarrow R'$.

The restriction of scalars functor $\varphi_*$ together
 with its left adjoint $\varphi^*$ (extension of scalars)
 and its right adjoint $\varphi_!$ (associated coinduced module),
 can be extended (using the arguments of the preceding proof or similar arguments)
 to the category of MW-presheaves or MW-sheaves as two pairs
 of adjoint functors, written for simplicity here by $(\varphi^*,\varphi_*)$
 and $(\varphi_*,\varphi^!)$. Note that the extended functor $\varphi_*$
 will still be conservative and that the
 functor $\varphi^*$ will be monoidal.

Moreover, using the definitions of the following section, the pair
 of adjoint functors $(\varphi^*,\varphi_*)$ will induce adjoint functors on
 the associated effective and stable $\Aone$-derived categories, such that
 in particular the induced functor $\varphi_*$ is still conservative
 on $\DMekR$ and $\DMkR$.
 Similarly the pair of adjoint functors $(\varphi_*,\varphi^!)$ can also be
 derived. Such considerations have been used for example in
 \cite{CD16_b}*{\textsection 5.4}.
\end{rem}

\section{Milnor-Witt motivic complexes}\label{sec:MWmotives}

\subsection{Derived category}
\label{sec:dercat}

For any abelian category $\mathcal A$, we denote by $\Comp(\mathcal A)$ the category of (possibly unbounded) complexes of objects of $\mathcal A$ and by $\KComp(\mathcal A)$ the category of complexes with morphisms up to homotopy. Finally, we denote by $\Der(\mathcal A)$ the derived category of $\Comp(\mathcal A)$. We refer to \cite{W94_b}*{\S 10} for all these notions.

\begin{num}
Recall from our notations that $t$ is now either the Nisnevich
or the étale topology.

As usual in motivic homotopy theory, our first task is to equip the
category of complexes of MW-$t$-sheaves with a good model structure.
This is done using the method of \cite{CD09_b}, thanks to Lemma \ref{lm:corr_main}
and the fact that $\shtkR$ is a Grothendieck abelian category
(Proposition~\ref{prop:exist_associated-W-t-sheaf}\ref{item:shgrothab}).

Except for one subtlety in the case of the étale topology,
 our construction is analogous to that of sheaves with transfers.
 In particular, the proof of the main point is essentially
 an adaptation of \cite{CD12_b}*{5.1.26}. In order to make a short and streamlined proof, we first recall a few
 facts from model category theory.
\end{num}

\begin{num}\label{num:model}
We will be using the adjunction of Grothendieck abelian categories:
\[
\tilde \gamma^*:\shtkR \leftrightarrows \shMWtkR:\tilde \gamma_*
\]
of Corollary \ref{cor:adjunctions_corr}. Recall from Lemma \ref{lm:compare_transfers} that the functor
 $\tilde \gamma_*$ is conservative and exact.

First, there exists the so-called injective model structure
\index{model structure!injective}%
on $\Comp(\shtkR)$ and $\Comp(\shMWtkR)$ which is defined such that
the cofibrations are monomorphisms (thus every object is cofibrant)
and weak equivalences are quasi-isomorphisms (this is classical;
see e.g. \cite{CD09_b}*{2.1}). The fibrant objects for this model structure
are called \emph{injectively fibrant}.

Second, there exists the $t$-descent (or projective) model structure
\index{model structure!$t$-descent}%
on the category
$\Comp(\shtkR)$ (see \cite{CD09_b}*{Ex.~2.3}) characterized by the following properties:
\begin{itemize}
\item the class of \emph{cofibrations} is given by the smallest class
 of morphisms of complexes closed under suspensions, pushouts,
 transfinite compositions and retracts generated by the inclusions
\begin{equation} \label{eq:model1}
\Rt(X) \rightarrow C\big(\Rt(X) \xrightarrow{\id} \Rt(X)\big)[-1]
\end{equation}
for a smooth scheme $X$, where $\Rt(X)$ 
\index[notation]{rtx@$\Rt(X)$}%
is the free sheaf of $R$-modules on $X$.
\item weak equivalences are quasi-isomorphisms.
\end{itemize}
Our aim is to obtain the same kind of model structure
on the category $\Comp(\shMWtkR)$ of complexes of MW-$t$-sheaves.
Let us recall from \cite{CD09_b} that one can describe nicely the fibrant
objects for the $t$-descent model structure.
This relies on the following definition 
for a complex $K$ of $t$-sheaves:
\begin{itemize}
\item the complex $K$ is \emph{local} if for any smooth scheme $X$
and any integer $n \in \ZZ$, the canonical map:
\begin{equation} \label{eq:model2}
\H^n\big(K(X)\big)=\Hom_{\KComp(\shtkR)}(\Rt(X),K[n])
 \rightarrow \Hom_{\Der(\shtkR)}(\Rt(X),K[n])
\end{equation}
is an isomorphism;
\item the complex $K$ is \emph{$t$-flasque} if for any smooth
scheme $X$ and any $t$-hypercover $p:\cX \rightarrow X$,
the induced map:
\begin{equation} \label{eq:model3}
\H^n\big(K(X)\big)=\Hom_{\KComp(\shtkR)}(\Rt(X),K[n])
 \xrightarrow{p^*} \Hom_{\KComp(\shtkR)}(\Rt(\cX),K[n])=\H^n\big(K(\cX)\big)
\end{equation}
is an isomorphism.

Our reference for $t$-hypercovers is \cite{DHI04_b}. Recall in particular that
$\cX$
is a simplicial scheme whose terms are arbitrary direct sums of smooth schemes.
Then the notation $\Rt(\cX)$ stands for the complex associated
with the simplicial $t$-sheaves obtained by applying
the obvious extension of the functor $\Rt$ to the category of
direct sums of smooth schemes.
Similarly, $K(\cX)$ is the total complex (with respect to products)
of the obvious double complex.
\end{itemize}
Then, let us state for further reference the following theorem (\cite{CD09_b}*{Theorem~2.5}).
\end{num}
\begin{thm}\label{thm:CD}
Let $K$ be a complex of $t$-sheaves on the smooth site. Then the following
three properties on $K$ are equivalent:
\begin{enumerate-roman}
\item $K$ is fibrant for the $t$-descent model structure,
\item $K$ is local,
\item $K$ is $t$-flasque.
\end{enumerate-roman}
\end{thm}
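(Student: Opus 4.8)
The statement to prove is Theorem~\ref{thm:CD}, the characterization of fibrant objects for the $t$-descent model structure on $\Comp(\shtkR)$ in terms of the local and $t$-flasque conditions. Since this is stated as \cite{CD09_b}*{Theorem~2.5}, the natural approach is to quote that reference; the work here is to check that the hypotheses of the cited theorem are met in our situation, namely that $\shtkR$ is a Grothendieck abelian category (Proposition~\ref{prop:exist_associated-W-t-sheaf}\ref{item:shgrothab} gives the analogous statement for MW-sheaves, but here we are in the classical category $\shtkR$ of $t$-sheaves of $R$-modules on the smooth site, which is Grothendieck abelian for the standard reasons recalled in footnote~\ref{fn:Grothendieck}) and that the site has the descent structure needed. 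So the first paragraph of the proof would simply be: ``This is \cite{CD09_b}*{Theorem~2.5}, applied to the smooth site over $k$ equipped with the topology $t$ and coefficients in $R$.''

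\textbf{If a self-contained argument is wanted instead.} The cyclic implication (i)$\Rightarrow$(ii)$\Rightarrow$(iii)$\Rightarrow$(i) would be organized as follows. For (i)$\Rightarrow$(ii): if $K$ is fibrant for the $t$-descent model structure, then since every $\Rt(X)$ is cofibrant, $\Hom_{\Der(\shtkR)}(\Rt(X),K[n])$ is computed by $\Hom_{\KComp}(\Rt(X),K[n])$ by the general Quillen-homotopical formula $\Hom_{\Der} = \Hom_{\KComp}(\text{cofibrant},\text{fibrant})$; and $\Hom_{\KComp}(\Rt(X),K[n]) = \H^n(K(X))$ by the Yoneda-type identification, giving locality. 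For (ii)$\Rightarrow$(iii): given a $t$-hypercover $p:\cX\to X$, one uses that $\Rt(\cX)\to\Rt(X)$ is a $t$-local weak equivalence (this is the content of $t$-hyperdescent for representable sheaves, cf.\ \cite{DHI04_b}) together with locality of $K$ to deduce that $p^*$ on $\H^n(K(-))$ is an isomorphism. For (iii)$\Rightarrow$(i): one checks the right lifting property of $K$ against the generating trivial cofibrations of the $t$-descent structure; the generating cofibrations are built from \eqref{eq:model1} and the trivial ones additionally force $t$-hyperdescent, so $t$-flasqueness is exactly the condition that makes these lifts exist.

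\textbf{Main obstacle.} The delicate point — and the one the authors flag with ``Except for one subtlety in the case of the étale topology'' in Paragraph~\ref{num:model}, though that remark concerns the subsequent construction on $\Comp(\shMWtkR)$ rather than Theorem~\ref{thm:CD} itself — is ensuring that the abstract descent-structure machinery of \cite{CD09_b} applies verbatim: one needs the topology $t$ (Nisnevich or étale) on $\smk$ to admit a bounded, or at least tractable, hypercover calculus so that the class of $t$-flasque complexes coincides with the fibrant ones. For the Nisnevich topology this is standard (finite cohomological dimension, Brown--Gersten-type squares); for the étale topology one must be slightly more careful about unbounded complexes and hyperdescent versus Čech descent, which is precisely why \cite{CD09_b} works with hypercovers throughout. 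Since Theorem~\ref{thm:CD} is quoted directly from \emph{loc.\ cit.}, the cleanest route is to cite it and let the verification of site-theoretic hypotheses be subsumed in that citation; I would not reprove it here.
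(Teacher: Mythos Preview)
Your proposal is correct and matches the paper's approach exactly: the paper does not prove Theorem~\ref{thm:CD} but simply states it as a citation of \cite{CD09_b}*{Theorem~2.5}, introduced by ``let us state for further reference the following theorem.'' Your additional discussion of hypotheses and the self-contained sketch are reasonable elaborations, and you are right that the ``one subtlety in the case of the étale topology'' remark in Paragraph~\ref{num:model} pertains to the subsequent MW-sheaf version (Theorem~\ref{thm:decent_Wsheaves}), not to Theorem~\ref{thm:CD} itself.
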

Under these equivalent conditions,
we will say that $K$ is $t$-fibrant.\footnote{Moreover, fibrations
for the $t$-descent model
structure are epimorphisms of complexes whose kernel is $t$-fibrant.}

\begin{num}\label{num:Wmodel}
Consider now the case of MW-$t$-sheaves.
We will define \emph{cofibrations} in $\Comp(\shMWtkR)$ as in the previous
paragraph by replacing $\Rt$ with $\MWrepRt$ in \eqref{eq:model1}, i.e.\ 
the cofibrations are the morphisms in the smallest class
of morphisms of complexes of MW-$t$-sheaves closed under suspensions, pushouts,
transfinite compositions and retracts generated by the inclusions
\begin{equation} 
\MWrepRt(X) \rightarrow C\big(\MWrepRt(X) \xrightarrow{\id} \MWrepRt(X)\big)[-1]
\end{equation}
for a smooth scheme $X$. In particular, note that bounded above complexes of MW-$t$-sheaves whose components are direct sums of sheaves of the form $\MWrepRt(X)$ are cofibrant. This is easily seen by taking the push-out of (\ref{eq:model1}) along the morphism $\MWrepRt(X)\to 0$.

Similarly, a complex $K$ in $\Comp(\shMWtkR)$ will be called
\emph{local} (resp.\ \emph{$t$-flasque}) if it satisfies
the definition in the preceding paragraph
after replacing respectively $\shtkR$ and $\Rt$
by $\shMWtkR$ and $\MWrepRt$ in \eqref{eq:model2} (resp.\ \eqref{eq:model3}).

In order to show that cofibrations and quasi-isomorphisms define
a model structure on $\Comp(\shMWtkR)$, we will have to prove
the following result in analogy with the previous theorem.
\end{num}

\begin{thm}\label{thm:decent_Wsheaves}
Let $K$ be a complex of MW-$t$-sheaves.
 Then the following conditions are equivalent:
\begin{enumerate-roman}
\item $K$ is local;
\item $K$ is $t$-flasque.
\end{enumerate-roman}
\end{thm}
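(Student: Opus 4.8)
The plan is to transpose to the Milnor--Witt setting the proof of Theorem~\ref{thm:CD} (i.e.\ \cite{CD09_b}*{Theorem~2.5}) exactly as the analogous statement for sheaves with transfers is handled in \cite{CD12_b}*{5.1.26}. The implication ``$t$-flasque $\Rightarrow$ local'' is the formal part: if $K$ is $t$-flasque, then comparing the hypercohomology spectral sequence (computed by a $t$-hypercover) with the naive complex $K(X)$ shows that the map \eqref{eq:model2} is an isomorphism. This uses only that $\shMWtkR$ is a Grothendieck abelian category with the descent-type cofibrations generated by the $\MWrepRt(X)$, together with the fact that $t$-hypercovers of smooth schemes compute derived functors in $\Der(\shMWtkR)$; none of this is specific to MW-transfers.

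The substantive direction is ``local $\Rightarrow$ $t$-flasque''. First I would reduce, by the usual d\'evissage over the Postnikov tower (truncations) and a limit argument, to the case where $K$ is concentrated in a single degree, i.e.\ $K$ is a single MW-$t$-sheaf $F$. Then $t$-flasqueness of $F[0]$ amounts to the vanishing of the higher Nisnevich (resp.\ \'etale) cohomology presheaves being detected correctly on hypercovers, which ultimately reduces --- via the Verdier-style hypercohomology argument of \cite{DHI04_b} --- to the statement that for a $t$-cover $p:U\to X$ the \v Cech complex of $\MWrepRt(U/X)$ is a resolution of $\MWrepRt(X)$ in $\shMWtkR$. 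But this is precisely the content of Lemma~\ref{lm:corr_main}: the complex $\cdots\to\MWprep(U\times_X U)\to\MWprep(U)\to\MWprep(X)\to 0$ becomes exact after $t$-sheafification. Applying $\tilde a$ (which is exact, Proposition~\ref{prop:exist_associated-W-t-sheaf}) then gives exactness of the corresponding complex of representable MW-$t$-sheaves, and one feeds this into the general machinery of \cite{CD09_b} to conclude that every local complex is $t$-flasque.

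Concretely, the cleanest route is to invoke the general criterion of \cite{CD09_b} verbatim: it says that, given a Grothendieck abelian category of $t$-sheaves equipped with a set of generators $\{G_i\}$ such that the \v Cech resolution associated to every $t$-hypercover is exact on these generators, the local objects coincide with the $t$-flasque ones. Here the generators are the $\MWrepRt(X)$ for $X$ smooth (Proposition~\ref{prop:exist_associated-W-t-sheaf}), and the required exactness is Lemma~\ref{lm:corr_main}. So the proof is essentially: ``the hypotheses of \cite{CD09_b}*{Theorem~2.5} are satisfied for $\shMWtkR$ by Lemma~\ref{lm:corr_main} and Proposition~\ref{prop:exist_associated-W-t-sheaf}, hence the conclusion''.

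The main obstacle --- and the ``one subtlety in the case of the étale topology'' alluded to in \S\ref{num:model} --- is that in the étale case the naive \v Cech descent of Lemma~\ref{lm:corr_main} must be upgraded to genuine hypercover descent, since finite étale covers are not cofinal among étale hypercovers and bounded cohomological dimension is not available. The standard fix, which I would follow, is Voevodsky's: use that $\tilde\gamma_*$ is exact and conservative (Lemma~\ref{lm:compare_transfers}, Proposition~\ref{prop:exist_associated-W-t-sheaf}), so that a complex of MW-$t$-sheaves is $t$-flasque (resp.\ local) if and only if its underlying complex of $t$-sheaves is; this transports the hypercover-descent statement from the already-established case of ordinary $t$-sheaves (Theorem~\ref{thm:CD}) to MW-$t$-sheaves, with Lemma~\ref{lm:corr_main} supplying the compatibility of the MW-transfer structure with the \v Cech differentials. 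Once this reduction is in place, the remaining verifications are routine and parallel to \cite{CD12_b}*{5.1.26}.
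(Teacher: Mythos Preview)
Your final paragraph has the right strategy --- transport both conditions along $\tilde\gamma_*$ to ordinary $t$-sheaves and invoke Theorem~\ref{thm:CD} --- and this is in fact the paper's \emph{entire} argument, not merely a patch for the \'etale case. The earlier material you sketch (Postnikov-tower reduction to a single sheaf, or a direct appeal to \cite{CD09_b}*{2.5}) is not how the paper proceeds and is not needed.

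The gap is in the sentence ``use that $\tilde\gamma_*$ is exact and conservative \dots\ so that a complex of MW-$t$-sheaves is $t$-flasque (resp.\ local) if and only if its underlying complex of $t$-sheaves is''. For $t$-flasqueness this is immediate, since the condition \eqref{eq:model3} only involves sections and $(\tilde\gamma_*K)(X)=K(X)$. For \emph{locality} it is not: the condition \eqref{eq:model2} compares $\H^n(K(X))$ with $\Hom_{\Der(\shMWtkR)}(\MWrepRt(X),K[n])$, whereas after $\tilde\gamma_*$ one compares the same left-hand side with $\Hom_{\Der(\shtkR)}(\Rt(X),\tilde\gamma_*K[n])$. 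Exactness and conservativity of $\tilde\gamma_*$ say nothing about why these two derived Hom groups agree --- that would require, for instance, that $\tilde\gamma_*$ preserve injectives, i.e.\ that $\tilde\gamma^*$ be exact, which is not established. The paper supplies exactly this missing comparison as Lemma~\ref{lm:decent_Wsheaves2}: one takes an injectively fibrant replacement $K'$ of $K$, uses the hypercover strengthening Lemma~\ref{lm:corr_main_strong} (not just the \v Cech-cover Lemma~\ref{lm:corr_main}) to see that $K'$, hence $\tilde\gamma_*K'$, is $t$-flasque, and then Theorem~\ref{thm:CD} identifies $\tilde\gamma_*K'$ as $t$-fibrant so that both derived Homs are computed by the underived ones and the adjunction isomorphism $\epsilon_{\cX,K}$ descends. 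So your outline is correct, but the step you flagged as routine is precisely where the work lies; Lemma~\ref{lm:corr_main} alone does not carry it, and you should articulate the injective-replacement/hypercover argument rather than appeal to abstract properties of $\tilde\gamma_*$.
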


The proof is essentially an adaptation of the proof of
 \cite{CD12_b}*{5.1.26, 10.3.17},
 except that the case of the étale topology needs a new argument. It will be completed as a corollary of two lemmas, the first of which is a reinforcement of Lemma \ref{lm:corr_main}.

\begin{lem}\label{lm:corr_main_strong}
Let $p:\cX \rightarrow X$ be a $t$-hypercover of a smooth scheme
 $X$. Then the induced map:
\[
p_*:\MWrepRt(\cX) \rightarrow \MWrepRt(X)
\]
is a quasi-isomorphism of complexes of MW-$t$-sheaves.
\end{lem}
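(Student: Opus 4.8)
The plan is to deduce this from the \v{C}ech case already settled in Lemma~\ref{lm:corr_main}. Since $\tilde\gamma_*$ is exact and conservative (Lemma~\ref{lm:compare_transfers}) and, for $t$ the Nisnevich or \'etale topology, a morphism of complexes of $t$-sheaves is a quasi-isomorphism as soon as it is so on all $t$-stalks, it is enough to show that for every $t$-point --- that is, every essentially smooth henselian (resp.\ strictly henselian) local $k$-scheme $S$ --- the augmented complex of $R$-modules
\[
\cdots \longrightarrow \MWcorr(S,\cX_n)_R \longrightarrow \cdots \longrightarrow \MWcorr(S,\cX_0)_R \longrightarrow \MWcorr(S,X)_R \longrightarrow 0
\]
is acyclic. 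Here I use Remark~\ref{rem:finite_corr&plim}(4) to identify the stalk of $\MWrepRt(Y)$ at $S$ with $\MWcorr(S,Y)_R$, and the fact that $\MWcorr(S,-)$ sends the (possibly infinite) coproducts $\cX_n=\coprod_i X_i$ to direct sums, since a finite MW-correspondence out of the connected scheme $S$ has support finite over $S$ and so meets only finitely many components of its target. As in Lemma~\ref{lm:corr_main} it suffices to treat $R=\ZZ$, a contracting homotopy surviving $-\otimes_\ZZ R$.

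The core of the argument is the support filtration, run exactly as in the proof of Lemma~\ref{lm:corr_main}. For $T\in\Adm(S,X)$ put $\cX^{(T)}_\bullet:=T\times_X\cX_\bullet$ and let $C^{(T)}_\bullet\subset\MWcorr(S,\cX_\bullet)$ be the subcomplex of those correspondences whose support lies over $T$; the differentials, being proper push-forwards along projections, preserve this, and the total complex is the filtered union of the $C^{(T)}_\bullet$, so it is enough to contract each of them. Because $S$ is henselian local and $T$ is finite over $S$, the scheme $T$ is a finite disjoint union of henselian (resp.\ strictly henselian) local schemes, over which the $t$-hypercover $\cX^{(T)}_\bullet\to T$ splits: every $t$-cover of such a scheme admits a section, and by \cite{Milne12_b}*{Proposition~2.15} (already invoked in Lemma~\ref{lm:corr_main}) such a section is an isomorphism onto a connected component. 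Choosing such splittings compatibly up the simplicial skeleton produces maps $s\times 1$ which, after the \'etale excision isomorphisms for Chow--Witt groups with support used in Lemma~\ref{lm:corr_main}, assemble into a contracting homotopy $C^{(T)}_n\to C^{(T)}_{n+1}$; the bookkeeping of supports and of the twisting line bundles $\omega_{\cX_n}$ is routine given the functoriality recalled in \chfinitecw.

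To organise the passage from \v{C}ech covers to arbitrary hypercovers one uses the coskeletal tower of $\cX$: the augmentation $\mathrm{cosk}_0\cX\to X$ is the \v{C}ech nerve of $\cX_0\to X$, handled directly by Lemma~\ref{lm:corr_main}, and one goes from $\mathrm{cosk}_{n-1}\cX$ to $\mathrm{cosk}_n\cX$ using that in simplicial degree $n$ the matching map $\cX_n\to(\mathrm{cosk}_{n-1}\cX)_n$ is again a $t$-cover, so the corresponding subquotient of normalised complexes is once more of \v{C}ech type and is killed by the same support-filtration argument; as $\cX=\varprojlim_n\mathrm{cosk}_n\cX$ and the tower is eventually constant in each fixed simplicial degree, acyclicity follows.

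I expect the genuinely new difficulty --- the reason the proof is not a verbatim transcription of Voevodsky's argument for sheaves with transfers or of Lemma~\ref{lm:corr_main} --- to lie in the \'etale case: over a strictly henselian local base an \'etale hypercover splits in each degree but not canonically, so the degreewise sections must be chosen coherently along the simplicial skeleton and one must check that the induced maps land, after \'etale excision, in the Chow--Witt groups carrying the correct support and the correct $\omega$-twist, compatibly with the simplicial face and degeneracy maps. Making those choices coherent is the delicate step; the remainder is formal, resting on Lemma~\ref{lm:corr_main} and the properties of MW-correspondences from \chfinitecw.
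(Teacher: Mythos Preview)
Your instinct to reduce to the \v{C}ech case of Lemma~\ref{lm:corr_main} is correct, but the paper carries this out quite differently and avoids the difficulty you flag at the end. Rather than computing stalks and building contracting homotopies, the paper argues by Yoneda: it suffices that for every complex $K$ of MW-$t$-sheaves the map $\derR\Hom^\bullet(\MWrepRt(X),K)\to\derR\Hom^\bullet(\MWrepRt(\cX),K)$ is a quasi-isomorphism. After reducing to bounded hypercovers (replacing $\cX$ by $\mathrm{cosk}_n\cX$, which is harmless since only finitely many homology sheaves are at stake at a time) and to injectively fibrant $K$ (so that $\derR\Hom=\Hom$), one needs the presheaf of complexes $X\mapsto\Hom^\bullet(\MWrepRt(X),K)$ to satisfy descent for bounded $t$-hypercovers. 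Lemma~\ref{lm:corr_main} gives \v{C}ech descent, and then \cite{DHI04_b}*{A.6} upgrades this to bounded hypercover descent for free. No new explicit homotopies are required.

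Your direct approach in the second paragraph has a real gap. The contracting homotopy in Lemma~\ref{lm:corr_main} is the map $s\times 1_{U_T^n}:U_T^n\to U_T^{n+1}$, which relies on the terms of the \v{C}ech nerve being iterated fibre products; for a general hypercover $\cX^{(T)}_\bullet\to T$ there is no such description of $\cX^{(T)}_n$, so ``$s\times 1$'' has no meaning. Having a section of each matching map $\cX_n\to M_n\cX$ does not produce an extra degeneracy on the augmented simplicial object, and your ``delicate step'' is precisely this obstruction --- it is not a matter of bookkeeping but a genuine missing ingredient. Your coskeletal-tower outline in the third paragraph is the right workaround, but the assertion that the successive cofibers are ``of \v{C}ech type'' is exactly what needs proof; making this precise essentially amounts to reproving a special case of \cite{DHI04_b}*{A.6} by hand. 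The paper simply cites it.
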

\begin{proof}
In fact, we have to prove that the complex $\MWrepRt(\cX)$
 is acyclic in positive degree and that $p_*$
 induces an isomorphism $\H_0\big(\MWrepRt(\cX)\big)\simeq \MWrepRt(X)$.\footnote{Note that
 the second fact follows from Lemma \ref{lm:corr_main}
 and the definition of $t$-hypercovers, but our proof works more directly.}
 In particular, as these assertions only concerns the $n$-th homology
 sheaf of $\MWrepRt(\cX)$, we can always assume that
 $\cX \simeq \mathrm{cosk}_n(\cX)$ for a
 large enough integer $n$ (because these two simplicial objects
 have the same $(n-1)$-skeleton). In other words, we can assume
 that $\cX$ is a bounded $t$-hypercover in the terminology
 of \cite{DHI04_b}*{Def.~4.10}.

As a consequence of the existence of the injective model
 structure, the category $\Der(\shMWtkR)$ is naturally enriched over the derived
 category of $R$-modules. Let us denote by $\derR \Hom^\bullet$ the
 corresponding Hom-object.
 We have only to prove that for any complex $K$
 of MW-$t$-sheaves, the natural map:
\[
p^*:\derR \Hom^\bullet(\MWrepRt(X),K)
 \rightarrow \derR \Hom^\bullet(\MWrepRt(cX),K)
\]
is an isomorphism in the derived category of $R$-modules.
 Because there exists an injectively fibrant resolution of any complex $K$,
 and $\derR \Hom^\bullet$ preserves quasi-isomorphisms,
 it is enough to consider the case of an injectively fibrant
 complex $K$ of MW-$t$-sheaves.

In this case, $\derR \Hom^\bullet(-,K)=\Hom^\bullet(-,K)$
(as any complex is cofibrant for the injective model structure)
and we are reduced to prove that the following complex of presheaves on
the smooth site:
\[
X \mapsto \Hom^\bullet(\MWrepRt(X),K)
\]
satisfies $t$-descent with respect to bounded $t$-hypercovers
 i.e.\ sends bounded $t$-hypercovers $\cX/X$ to quasi-isomorphisms of complexes
 of $R$-modules. But Lemma \ref{lm:corr_main} (and the fact that $K$ is injectively fibrant)
 tells us that this is the case when $\cX$ is the $t$-hypercover associated
 with a $t$-cover. So we conclude using \cite{DHI04_b}*{A.6}.
\end{proof}

The second lemma for the proof of Theorem \ref{thm:decent_Wsheaves}
 is based on the previous one.
 
\begin{lem}\label{lm:decent_Wsheaves2}
Let us denote by $\Comp$, $\KComp$, $\Der$
 (respectively by  $\tilde \Comp$, $\tilde \KComp$, $\tilde \Der$)
 the category of complexes, complexes up to homotopy and derived
 category of the category $\shtkR$ (respectively $\shMWtkR$).

Given a simplicial scheme $\cX$ whose components are (possibly infinite) coproducts of smooth $k$-schemes
 and a complex $K$ of MW-$t$-sheaves, 
 we consider the isomorphism of $R$-modules obtained
 from the adjunction $(\tilde \gamma^*,\tilde \gamma_*)$:
\[
\epsilon_{\cX,K}:\Hom_{\tilde \Comp}(\MWrepRt(\cX),K)
\rightarrow \Hom_{\Comp}(\Rt(\cX),\tilde \gamma_*(K)).
\]
Then there exist unique isomorphisms $\epsilon'_{\cX,K}$
 and $\epsilon''_{\cX,K}$ of $R$-modules making the following diagram
 commutative:
\[
\begin{tikzcd}
[row sep=18pt,column sep=28pt]
\Hom_{\tilde \Comp}(\MWrepRt(\cX),K) \ar[r,"{\epsilon_{\cX,K}}"] \ar[d]
 & \Hom_{\Comp}(\Rt(\cX),\tilde \gamma_*(K)) \ar[d] \\
\Hom_{\tilde \KComp}(\MWrepRt(\cX),K) \ar[r,"{\epsilon'_{\cX,K}}"] \ar[d,"{\pi_{\cX,K}}"']
 & \Hom_{\KComp}(\Rt(\cX),\tilde \gamma_*(K)) \ar[d,"{\pi'_{\cX,K}}"] \\
\Hom_{\tilde \Der}(\MWrepRt(\cX),K) \ar[r,"{\epsilon''_{\cX,K}}"]
 & \Hom_{\Der}(\Rt(\cX),\tilde \gamma_*(K))
\end{tikzcd}
\]
where the vertical morphisms are the natural localization maps.
\end{lem}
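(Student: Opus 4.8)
The plan is to construct the isomorphisms $\epsilon'_{\cX,K}$ and $\epsilon''_{\cX,K}$ by naturality and then verify they have the required properties using Lemma~\ref{lm:corr_main_strong}. First I would observe that $\epsilon_{\cX,K}$ is natural not only in $K$ but also in the simplicial scheme $\cX$, and in particular it carries chain homotopies to chain homotopies: a homotopy in $\tilde\Comp$ between two maps $\MWrepRt(\cX)\to K$ is given by maps $\MWrepRt(\cX)_n\to K_{n+1}$, and applying $\tilde\gamma_*$ (which is exact and additive, by Proposition~\ref{prop:exist_associated-W-t-sheaf}) together with the adjunction unit produces a homotopy between the corresponding maps $\Rt(\cX)\to\tilde\gamma_*(K)$. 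Conversely, $\tilde\gamma^*$ is right-exact and $\epsilon_{\cX,K}$ is an isomorphism, so the correspondence of homotopies is a bijection. This gives the factorization $\epsilon'_{\cX,K}$ through $\Hom_{\tilde\KComp}$ and $\Hom_{\KComp}$, and it is automatically unique since the vertical localization maps $\Comp\to\KComp$ are full (identity on objects, surjective on Hom-sets).

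Next I would pass to the derived categories. The key input is Lemma~\ref{lm:corr_main_strong}: for a $t$-hypercover $\cX\to X$ the map $\MWrepRt(\cX)\to\MWrepRt(X)$ is a quasi-isomorphism, and the classical analogue (Theorem~\ref{thm:CD} / \cite{CD09_b}) gives the same for $\Rt(\cX)\to\Rt(X)$. More to the point, the components of $\cX$ being coproducts of representables, the complexes $\MWrepRt(\cX)$ (bounded above, with components direct sums of $\MWrepRt(X)$'s) are cofibrant for the $t$-descent-type model structure as noted in Paragraph~\ref{num:Wmodel}, and similarly $\Rt(\cX)$ is cofibrant in $\Comp(\shtkR)$. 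Hence $\Hom_{\tilde\Der}(\MWrepRt(\cX),K)$ can be computed as $\Hom_{\tilde\KComp}(\MWrepRt(\cX),K')$ for a fibrant replacement $K'$ of $K$, and likewise on the transfers side with $\tilde\gamma_*(K')$; but $\tilde\gamma_*$ preserves fibrant objects (being a right Quillen functor for the relevant adjunction, which follows from $\tilde\gamma^*$ sending the generating cofibrations $\MWrepRt(X)\to C(\id)[-1]$ to the generating cofibrations $\Rt(X)\to C(\id)[-1]$ up to the explicit formula $\tilde\gamma^*\Rt(X)=\MWrepRt(X)$). So $\epsilon'_{\cX,K'}$ induces, after localizing, the desired $\epsilon''_{\cX,K}$, and uniqueness again follows from surjectivity of $\KComp\to\Der$ on the relevant Hom-sets (for cofibrant source and fibrant target).

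I expect the main obstacle to be the case of the étale topology, exactly as flagged in the paragraph preceding Theorem~\ref{thm:decent_Wsheaves} and in the proof of Lemma~\ref{lm:corr_main_strong}: one must be careful that the transition between the naive homotopy category and the derived category is controlled by a genuine model structure whose fibrant objects are the $t$-flasque (equivalently local) complexes, and that $\tilde\gamma_*$ is compatible with these structures — this is where \cite{DHI04_b}*{A.6} and the reduction to bounded hypercovers enter. Once that compatibility is in place, the diagram chase producing $\epsilon'$ and $\epsilon''$ and their uniqueness is formal. I would organize the argument so that Lemma~\ref{lm:decent_Wsheaves2} is stated and proved purely at the level of the adjunction $(\tilde\gamma^*,\tilde\gamma_*)$ and the \emph{a priori} existence of the injective model structures on both sides, deferring the identification of local with $t$-flasque complexes (Theorem~\ref{thm:decent_Wsheaves} proper) to the combination of this lemma with Lemma~\ref{lm:corr_main_strong} and the already-known statement Theorem~\ref{thm:CD} on the transfers side.
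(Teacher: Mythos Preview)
Your treatment of $\epsilon'_{\cX,K}$ is fine and matches the paper. The issue is with $\epsilon''_{\cX,K}$: your argument is circular. You invoke cofibrancy of $\MWrepRt(\cX)$ for the $t$-descent model structure of Paragraph~\ref{num:Wmodel}, and you justify that $\tilde\gamma_*$ preserves fibrant objects by claiming $(\tilde\gamma^*,\tilde\gamma_*)$ is a Quillen pair for the $t$-descent structures via the generating cofibrations. But the $t$-descent model structure on $\Comp(\shMWtkR)$ is only established in Corollary~\ref{cor:model_Der}, which rests on Theorem~\ref{thm:decent_Wsheaves}, whose proof uses the very lemma you are trying to prove. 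Your closing remark that you would ``organize the argument \dots\ at the level of the injective model structures on both sides'' does not repair this: for that you would need $\tilde\gamma_*$ to preserve \emph{injectively} fibrant objects, i.e.\ $\tilde\gamma^*$ to preserve monomorphisms, and you give no argument for that (nor is it used anywhere in the paper).

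The paper sidesteps the circularity by mixing model structures. On the MW side it takes $K$ injectively fibrant; since every object is cofibrant for the injective structure, $\pi_{\cX,K}$ is an isomorphism, which already gives existence and uniqueness of $\epsilon''_{\cX,K}$. The substance is showing that $\pi'_{\cX,K}$ is an isomorphism, and here Lemma~\ref{lm:corr_main_strong} is used in a way your sketch does not make explicit: it implies that an injectively fibrant $K$ is $t$-flasque in the sense of Paragraph~\ref{num:Wmodel}; then the naturality of $\epsilon'_{\cX,K}$ in $\cX$ transfers this to $\tilde\gamma_*(K)$ being $t$-flasque; Theorem~\ref{thm:CD} (on the plain side, where the $t$-descent model structure \emph{is} already available) then says $\tilde\gamma_*(K)$ is $t$-fibrant, and cofibrancy of $\Rt(\cX)$ for that structure gives $\pi'_{\cX,K}$ an isomorphism. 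The general $K$ follows by injectively fibrant replacement. In short, the role of Lemma~\ref{lm:corr_main_strong} is not to feed into a Quillen adjunction but to produce the $t$-flasqueness needed to invoke Theorem~\ref{thm:CD} on the side where the $t$-descent structure already exists.
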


\begin{proof}
The existence and unicity of $\epsilon'_{\cX,K}$ simply follows
 from the fact $\tilde \gamma^*$ and $\tilde \gamma_*$ are additive functors,
 so in particular $\epsilon_{\cX,K}$ is compatible with chain homotopy
 equivalences.
 
For the case of $\epsilon_{\cX,K}^{''}$, we assume that 
 the complex $K$ is injectively fibrant. In this case,
 the map $\pi_{\cX,K}$ is an isomorphism.
 This already implies the existence and unicity of the map
 $\epsilon''_{\cX,K}$. Besides, according to the previous lemma and the fact that the map
 $\pi_{\cX,K}$ is an isomorphism natural in $\cX$,
 we obtain that $K$ is $t$-flasque
  (in the sense of Paragraph \ref{num:Wmodel}).
 Because $\epsilon'_{\cX,K}$ is an isomorphism natural in $\cX$,
 we deduce that $\tilde \gamma_*(K)$ is $t$-flasque.
 In view of Theorem \ref{thm:CD}, it is $t$-fibrant. 
 As $\Rt(\cX)$ is cofibrant for the $t$-descent model
 structure on $\Comp$, we deduce that $\pi'_{\cX,K}$ is an isomorphism.
 Therefore, $\epsilon''_{\cX,K}$ is an isomorphism.

The case of a general complex $K$ now follows from the existence of an injectively
 fibrant resolution $K \rightarrow K'$ of any complex of MW-$t$-sheaves $K$.
\end{proof}

\begin{proof}[proof of Theorem \ref{thm:decent_Wsheaves}]
 The previous lemma shows that the following conditions on
 a complex $K$ of MW-$t$-sheaves are equivalent:
\begin {itemize}
\item $K$ is local (resp.\ $t$-flasque) in $\Comp(\shMWtkR)$;
\item $\tilde \gamma_*(K)$ is local (resp.\ $t$-flasque) in $\Comp(\shtkR)$.
\end {itemize}
Then Theorem \ref{thm:decent_Wsheaves} follows from Theorem \ref{thm:CD}.
\end{proof}

Here is an important corollary (analogous to \cite{FSV00_b}*{chap.~5, 3.1.8}) which is simply a restatement of Lemma \ref{lm:decent_Wsheaves2}. 

\begin{coro}\label{cor:compare_Hom&cohomology}
Let $K$ be a complex of MW-$t$-sheaves and $X$ be a smooth scheme.
 Then for any integer $n \in \ZZ$, there exists a canonical isomorphism,
 functorial in $X$ and $K$:
\[
\Hom_{\Der(\shMWtkR)}(\MWrepRt(X),K[n])=\hypH^n_t(X,K)
\]
where the right hand side stands for the $t$-hypercohomology
of $X$ with coefficients in the complex $\tilde \gamma_*(K)$
 (obtained after forgetting MW-transfers).
\end{coro}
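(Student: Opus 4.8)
The plan is to deduce Corollary~\ref{cor:compare_Hom&cohomology} as a direct restatement of Lemma~\ref{lm:decent_Wsheaves2}, exactly as the text announces. First I would recall that $t$-hypercohomology of $X$ with coefficients in the complex $\tilde\gamma_*(K)$ of $t$-sheaves is by definition $\Hom_{\Der(\shtkR)}(\Rt(X),\tilde\gamma_*(K)[n])$; this is the standard fact (Theorem~\ref{thm:CD} and the general theory of \cite{CD09_b}) that derived maps out of the free $t$-sheaf $\Rt(X)$ compute hypercohomology, using that $\Rt(X)$ is cofibrant for the $t$-descent model structure and that a $t$-fibrant replacement of $\tilde\gamma_*(K)$ computes its hypercohomology sections over $X$.

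Next I would apply Lemma~\ref{lm:decent_Wsheaves2} in the special case where the simplicial scheme $\cX$ is the constant simplicial scheme $X$ (a single smooth scheme, viewed as a $0$-coproduct). The bottom isomorphism $\epsilon''_{X,K}$ of that lemma gives a canonical isomorphism
\[
\Hom_{\Der(\shMWtkR)}(\MWrepRt(X),K)\xrightarrow{\ \sim\ }\Hom_{\Der(\shtkR)}(\Rt(X),\tilde\gamma_*(K)),
\]
natural in $X$ and $K$. Replacing $K$ by $K[n]$ (the lemma's hypotheses and conclusion are preserved under shift, since $\tilde\gamma_*$ commutes with shifts and the whole diagram is additive) yields the isomorphism in degree $n$, still functorial in both variables. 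Combining this with the identification of the right-hand side with $\hypH^n_t(X,\tilde\gamma_*(K))$ from the previous paragraph gives the claimed formula.

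The only point that deserves a sentence of care is the naturality in $X$: since $\MWrepRt(-)$ is a functor from $\smk$ (indeed from $\cork$) to $\shMWtkR$ and $\Rt(-)=\tilde\gamma_*(\MWrepRt(-))$ compatibly, and the isomorphisms $\epsilon,\epsilon',\epsilon''$ of Lemma~\ref{lm:decent_Wsheaves2} are induced by the adjunction $(\tilde\gamma^*,\tilde\gamma_*)$ applied term-wise, functoriality in $X$ follows from functoriality of the adjunction unit/counit. Functoriality in $K$ is immediate from the construction. There is essentially no genuine obstacle here — the real work was already done in Lemmas~\ref{lm:corr_main_strong} and~\ref{lm:decent_Wsheaves2} and in Theorem~\ref{thm:decent_Wsheaves}; the mild subtlety, if any, is simply to make sure one reads off the $\cX=X$ case correctly and matches the definition of $\hypH^n_t$ used elsewhere in the book (namely, as the hypercohomology computed after forgetting MW-transfers, i.e.\ of $\tilde\gamma_*(K)$, which is precisely how the statement is phrased).
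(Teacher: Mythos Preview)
Your proposal is correct and matches the paper's approach exactly: the paper simply asserts that the corollary is ``a restatement of Lemma~\ref{lm:decent_Wsheaves2}'', and what you have written is precisely the unpacking of that assertion---specialize $\cX$ to the constant simplicial scheme $X$, take the bottom isomorphism $\epsilon''_{X,K[n]}$, and identify the right-hand side with $\hypH^n_t(X,\tilde\gamma_*(K))$ via the standard description of hypercohomology as derived maps out of $\Rt(X)$.
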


Recall that the category $\Comp(\shMWtkR)$ is symmetric monoidal,
 with tensor product induced as usual from the tensor product
 on $\shMWtkR$ (see Paragraph \ref{num:sht_monoidal}).

\begin{coro}\label{cor:model_Der}
The category $\Comp(\shMWtkR)$
has a proper cellular model structure
(see \cite{Hirschhorn03_b}*{12.1.1 and 13.1.1})
with quasi-isomorphisms as weak equivalences and
cofibrations as defined in Paragraph \ref{num:Wmodel}.
Moreover, the fibrations for this model structure
are epimorphisms of complexes whose kernel
are $t$-flasque (or equivalently local) complexes of MW-$t$-sheaves.
Finally, this is a symmetric monoidal model structure;
in other words, the tensor product (resp.\ internal Hom functor)
admits a total left (resp.\ right) derived functor.
\end{coro}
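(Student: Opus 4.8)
The plan is to obtain this from the general theory of descent model structures on complexes in a Grothendieck abelian category, due to Cisinski--D\'eglise \cite{CD09_b}, by transcribing the arguments used for sheaves with transfers in \cite{CD12_b}*{5.1.26, 10.3.17}; the only genuinely new ingredient --- the \'etale subtlety mentioned in Paragraph~\ref{num:model} --- has already been dealt with in Theorem~\ref{thm:decent_Wsheaves}.

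First, recall from Proposition~\ref{prop:exist_associated-W-t-sheaf} that $\shMWtkR$ is a Grothendieck abelian category generated by the set $\mathcal{G} = \{\MWrepRt(X) : X \in \smk\}$. I would equip it with the \emph{descent structure} $(\mathcal{G}, \mathcal{H})$ in the sense of \cite{CD09_b}, where $\mathcal{H}$ consists, up to shift, of the cones of the augmentations $\MWrepRt(\cX) \to \MWrepRt(X)$ attached to bounded $t$-hypercovers $\cX \to X$ (equivalently, by Lemma~\ref{lm:corr_main}, of the \v Cech complexes of $t$-covers $U \to X$). Lemma~\ref{lm:corr_main_strong} asserts precisely that each object of $\mathcal{H}$ is acyclic, so that this is a descent structure whose associated weak equivalences are the quasi-isomorphisms.

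Second, \cite{CD09_b} then produces on $\Comp(\shMWtkR)$ the $t$-descent model structure: its weak equivalences are the quasi-isomorphisms and its cofibrations are the maps generated under suspension, pushout, transfinite composition and retract by the inclusions $\MWrepRt(X) \to C\big(\MWrepRt(X)\xrightarrow{\id}\MWrepRt(X)\big)[-1]$ together with the maps associated to $\mathcal{H}$; since the latter are quasi-isomorphisms by Lemma~\ref{lm:corr_main_strong}, they are redundant and the cofibrations are exactly those defined in Paragraph~\ref{num:Wmodel}. As in \cite{CD12_b}*{5.1.26, 10.3.17} this model structure is cofibrantly generated, proper and cellular in the sense of \cite{Hirschhorn03_b}*{12.1.1, 13.1.1} --- left properness being clear since the generating cofibrations, hence all cofibrations, are monomorphisms and quasi-isomorphisms are stable under pushout along monomorphisms in a Grothendieck abelian category. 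By \cite{CD09_b}*{Theorem~2.5} its fibrant objects are the local complexes, equivalently (by Theorem~\ref{thm:decent_Wsheaves}) the $t$-flasque ones, and the fibrations are the epimorphisms whose kernel is $t$-flasque.

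Third, for the symmetric monoidal part one verifies that the descent structure $(\mathcal{G}, \mathcal{H})$ is \emph{flat} in the sense of \cite{CD09_b}*{\S 3.1}. By Paragraph~\ref{num:sht_monoidal} one has $\MWrepRt(X)\otr\MWrepRt(Y) = \MWrepRt(X\times Y)$, so $\mathcal{G}$ is stable under $\otr$, and tensoring the object of $\mathcal{H}$ attached to a $t$-hypercover $\cX\to X$ with a representable sheaf $\MWrepRt(Y)$ produces the object of $\mathcal{H}$ attached to the $t$-hypercover $\cX\times Y\to X\times Y$. Granted this flatness, \emph{loc.\ cit.}\ gives that the $t$-descent model structure is symmetric monoidal: the pushout-product and unit axioms hold, so the tensor product $\otr$ is a left Quillen bifunctor and admits a total left derived functor, and its right adjoint, the internal $\uHom$ of Paragraph~\ref{num:sht_monoidal}, admits a total right derived functor. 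The step requiring most care is precisely this flatness check --- which reduces, via the identification of $\mathcal{H}$ with \v Cech/hypercover complexes and their compatibility with the operation $\cX\mapsto\cX\times Y$, to the fact that $\MWrepRt(X)\otr -$ is exact on the generators and commutes with colimits, exactly the verification carried out for sheaves with transfers in \cite{CD12_b}*{Chapter~5}; everything else is a formal transcription of \emph{loc.\ cit.}
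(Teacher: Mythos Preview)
Your proof follows the same route as the paper's: apply \cite{CD09_b}*{2.5, 3.2, 5.5} to the descent structure $(\mathcal{G},\mathcal{H})$ on the Grothendieck abelian category $\shMWtkR$, and verify (weak) flatness via formula~\eqref{eq:sht_monoidal}. One small correction in the attribution: what makes $(\mathcal{G},\mathcal{H})$ a descent structure in the sense of \cite{CD09_b}*{Def.~2.2} is not merely the acyclicity of the complexes in $\mathcal{H}$ (Lemma~\ref{lm:corr_main_strong}) but the equivalence \emph{local $\Leftrightarrow$ $t$-flasque}, which is Theorem~\ref{thm:decent_Wsheaves} --- the paper cites it at exactly this point, whereas you invoke it only later for the fibrant objects; also, in the CD09 framework the cofibrations are generated by $\mathcal{G}$ alone ($\mathcal{H}$ enters only through the trivial cofibrations and fibrant objects), so your aside about ``maps associated to $\mathcal{H}$'' being redundant generating cofibrations is unnecessary.
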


\begin{proof}
Each claim is a consequence of \cite{CD09_b}*{2.5, 5.5 and 3.2},
applied to the Grothendieck abelian category $\shMWtkR$ with respect
to the descent structure $(\mathcal G,\mathcal H)$ (see \cite{CD09_b}*{Def. 2.2} for the notion of descent structure) defined as follows:
\begin{itemize}
\item $\mathcal G$ is the class of MW-$t$-sheaves of the form $\MWrepRt(X)$
 for smooth scheme $X$;
\item $\mathcal H$ is the (small) family of complexes which are
 cones of morphisms $p_*:\MWrepRt(\cX) \rightarrow \MWrepRt(X)$
 for a $t$-hypercover $p$.
\end{itemize}
Indeed, $\mathcal G$ generates the category $\shMWtkR$
 (see after Definition \ref{df:gen_sht}) and the condition
 to be a descent structure is given by Theorem \ref{thm:decent_Wsheaves}.

In the end, we can apply \cite{CD09_b}*{3.2} to derive the
 tensor product as the tensor structure is weakly flat (in the sense of \cite{CD09_b}*{\S 3.1})
 due to the preceding definition and formula \eqref{eq:sht_monoidal}.
\end{proof}

\begin{rem}
We can follow the procedure of \cite{Mazza06_b}*{\S 8} to compute the tensor product of two bounded above complexes of MW-$t$-sheaves. This follows from \cite{CD09_b}*{Proposition~3.2} and the fact that bounded above complexes of MW-$t$-sheaves whose components are direct sums of representable sheaves are cofibrant.
\end{rem}

\begin{dfn}
The model structure on $\Comp(\shMWtkR)$ of the above corollary is called the \emph{$t$-descent model structure}.
\index{model structure!$t$-descent}%
\end{dfn}

In particular, the category $\Der(\shMWtkR)$
is a triangulated symmetric closed monoidal category.

\begin{num}
We also deduce from the $t$-descent model structure
that the vertical adjunctions of Corollary \ref{cor:adjunctions_corr}
induce Quillen adjunctions with respect to the $t$-descent model
structure on each category involved and so admit derived functors
as follows:
\begin{equation}\label{eq:chg_top&tr_Der}
\begin{tikzcd}
[column sep=30pt,row sep=24pt]
\Der(\shxkR{\nis}) \ar[r,shift left=2pt,"{\derL \tilde \gamma^*}"] \ar[d,shift left=2pt,"a"]
 & \Der(\shMWxkR{\nis}) \ar[r,shift left=2pt,"{\derL \pi^*}"] \ar[d,shift left=2pt,"{\tilde a}"]
     \ar[l,shift left=2pt,"{\tilde \gamma_{*}}"]
 & \Der(\shVxkR{\nis})\ar[d,shift left=2pt,"{a^{\mathrm{tr}}}"]
     \ar[l,shift left=2pt,"{\pi_{*}}"] \\
\Der(\shxkR{\et}) \ar[r,shift left=2pt,"{\derL \tilde \gamma^*_\et}"]
    \ar[u,shift left=2pt,"{\derR \fO}"]
 & \Der(\shMWxkR{\et}) \ar[r,shift left=2pt,"{\derL \pi^*_\et}"]
	  \ar[u,shift left=2pt,"{\derR \tfO}"]
		\ar[l,shift left=2pt,"{\tilde \gamma_{\et*}}"]
 & \Der(\shVxkR{\et})
    \ar[u,shift left=2pt,"{\derR \fO}"]
		\ar[l,shift left=2pt,"{\pi_{\et*}}"]
\end{tikzcd}
\end{equation}
where the adjoint pair associated étale sheaf and forgetful functor
are denoted by $(a,\fO)$
 and similarly for MW-transfers and transfers.
When the functors are exact, they are trivially derived and so we
 have used the same notation as for their counterpart for
 sheaves.

Note that by definition, the left adjoints in this diagram
 are all monoidal functors and send the object represented
 by a smooth scheme $X$ (say in degree $0$) to the analogous
 object
\begin{equation}\label{eq:derived&rep}
\derL\tilde \gamma^*\big(\Rt(X)\big)=\MWrepRt(X),
\qquad
 \derL\pi^*\big(\MWrepRt(X)\big)=\VrepRt(X).
\end{equation}
\end{num}

\subsection{The $\Aone$-derived category}\label{sec:AA1derived}

We now adapt the usual $\Aone$-homotopy machinery to our context.

\begin{dfn}
We define the category $\DMtexkR t$ of MW-motivic complexes
for the topology $t$
\index[notation]{dmteffkr@$\DMtexkR t$}%
\index{Milnor-Witt!motives, category of!effective}%
as the localization of the triangulated category
$\Der(\shMWtkR)$ with respect to the localizing triangulated
subcategory\footnote{Recall that according to Neeman \cite{Nee01_b}*{3.2.6},
localizing means stable by coproducts.}
$\mathcal T_{\Aone}$
generated by complexes of the form:
\[
\cdots 0 \rightarrow \MWrepRt(\Aone_X) \xrightarrow{p_*} \MWrepRt(X) \rightarrow 0 \cdots
\]
where $p$ is the projection of the affine line relative to
an arbitrary smooth $k$-scheme $X$.
As usual, we define the \emph{MW-motive $\tMot(X)$}
associated to a smooth scheme $X$ as the complex concentrated in degree
$0$ and equal to the representable MW-$t$-sheaf
$\MWrepRt(X)$. Following our previous conventions,
we mean the Nisnevich topology when the topology is not indicated.
\end{dfn}

According to this definition, it is formal that the localizing
 triangulated subcategory $\mathcal T_{\Aone}$ is
 stable under the derived tensor product of $\Der(\shMWtkR)$
 (cf.\ Corollary \ref{cor:model_Der}). In particular, it induces
 a triangulated monoidal structure on $\DMtexkR t$.

\begin{num}\label{num:A1-local}
As usual, we can apply the classical techniques of localization
to our triangulated categories
and also to our underlying model structure.
So a complex of MW-$t$-sheaf $E$ is called \emph{$\Aone$-local}
if for any smooth scheme $X$ and any integer $i \in \ZZ$,
the induced map
\[
\Hom_{\Der(\shMWtkR)}(\MWrepRt(X),E[i])
 \rightarrow \Hom_{\Der(\shMWtkR)}(\MWrepRt(\Aone_X),E[i])
\]
is an isomorphism. In view of Corollary \ref{cor:compare_Hom&cohomology},
 it amounts to ask that the $t$-cohomology of $\tilde \gamma_*(E)$ is
 $\Aone$-invariant, or in equivalent words, that $E$ is strictly
 $\Aone$-local.

Applying Neeman's localization theorem (see
\cite{Nee01_b}*{9.1.19}),\footnote{Indeed recall that the derived
category of the Grothendieck abelian category $\shMWtkR$ is a well generated triangulated category
according to \cite{Nee01-2_b}*{Th. 0.2}.} the category
$\DMtexkR t$ can be viewed as the full subcategory of $\Der(\shMWtkR)$
whose objects are the $\Aone$-local complexes. Equivalently,
the canonical functor $\Der(\shMWtkR) \rightarrow \DMtexkR t$
admits a fully faithful right adjoint whose essential image
consists in  $\Aone$-local complexes. In particular,
one deduces formally the existence of an $\Aone$-localization
functor $L_{\Aone}:\Der(\shMWtkR) \rightarrow \Der(\shMWtkR)$.

Besides, we get the following proposition by applying the 
general left Bousfield localization procedure for proper cellular
model categories (see \cite{Hirschhorn03_b}*{4.1.1}). 
We say that a morphism $\phi$ of $\Der(\shMWtkR)$
is an \emph{weak $\Aone$-equivalence} if for any $\Aone$-local object $E$,
the induced map $\Hom(\phi,E)$ is an isomorphism.
\end{num}
\begin{prop}\label{prop:model_DMt}
The category $\Comp(\shMWtkR)$
has a symmetric monoidal model structure
with weak $\Aone$-equivalences as weak equivalences and
cofibrations as defined in Paragraph \ref{num:Wmodel}.
This model structure is proper and cellular. Moreover, the fibrations for this model structure
are epimorphisms of complexes whose kernel
are $t$-flasque and
$\Aone$-local complexes.
\end{prop}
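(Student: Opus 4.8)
The plan is to obtain Proposition \ref{prop:model_DMt} by a standard left Bousfield localization of the $t$-descent model structure on $\Comp(\shMWtkR)$ that was established in Corollary \ref{cor:model_Der}. First I would recall that, by Corollary \ref{cor:model_Der}, $\Comp(\shMWtkR)$ carries a proper cellular symmetric monoidal model structure whose weak equivalences are the quasi-isomorphisms and whose cofibrations are those described in Paragraph \ref{num:Wmodel}. The key input is then Hirschhorn's left Bousfield localization theorem \cite{Hirschhorn03_b}*{4.1.1}, which applies to any left proper cellular model category once one specifies a set of maps to invert. Here the set is the set of maps
\[
\MWrepRt(\Aone_X) \xrightarrow{p_*} \MWrepRt(X)
\]
indexed by smooth $k$-schemes $X$ (more precisely, their cofibrant replacements, using the generating cofibrations from \eqref{eq:model1}); since smooth $k$-schemes form an essentially small set, this is genuinely a set, so the hypotheses of \cite{Hirschhorn03_b}*{4.1.1} are met. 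The resulting localized model structure has the same cofibrations, and its weak equivalences are exactly the weak $\Aone$-equivalences as defined in Paragraph \ref{num:A1-local}; properness and cellularity are preserved by left Bousfield localization of a left proper cellular model category, again by Hirschhorn's theorem.

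Next I would identify the fibrant objects and fibrations. By the general theory of Bousfield localization, the fibrant objects of the localized structure are the objects that are fibrant in the $t$-descent structure and $\Aone$-local in the sense of Paragraph \ref{num:A1-local}. By Theorem \ref{thm:decent_Wsheaves} (equivalently Theorem \ref{thm:CD} transported via Lemma \ref{lm:decent_Wsheaves2}), a complex of MW-$t$-sheaves is $t$-descent fibrant if and only if it is $t$-flasque, equivalently local. Combining these, the fibrant objects are precisely the $t$-flasque, $\Aone$-local complexes, and the fibrations of the localized structure are the maps whose pullback along any map to a (localized-)fibrant object is a $t$-descent fibration; concretely, as in Corollary \ref{cor:model_Der}, they are the epimorphisms of complexes whose kernel is $t$-flasque and $\Aone$-local, which is exactly the description in the statement. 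This characterization of the fibrations is essentially formal once one knows the fibrant objects, since fibrations between fibrant objects coincide with the fibrations of the underlying structure, and a general fibration is detected by pulling back.

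Finally, for the monoidal statement, I would invoke that $\mathcal T_{\Aone}$ is stable under the derived tensor product of $\Der(\shMWtkR)$ (noted right after the definition of $\DMtexkR t$, which uses Corollary \ref{cor:model_Der} and the flatness of the generators $\MWrepRt(X)$, cf.\ formula \eqref{eq:sht_monoidal}); this is precisely the compatibility needed so that Bousfield localization respects the symmetric monoidal structure, giving that $\otr$ and the internal Hom admit total derived functors. Concretely, one checks the pushout-product axiom relative to the localized weak equivalences: it suffices to verify that tensoring a generating $\Aone$-equivalence with a representable $\MWrepRt(Y)$ yields an $\Aone$-equivalence, which follows from $\MWrepRt(\Aone_X) \otr \MWrepRt(Y) = \MWrepRt(\Aone_{X \times Y})$ and $\MWrepRt(X) \otr \MWrepRt(Y) = \MWrepRt(X\times Y)$ by \eqref{eq:sht_monoidal}. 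The main obstacle is not any single deep step but rather assembling the bookkeeping correctly: making sure the localizing set is taken at cofibrant level so that \cite{Hirschhorn03_b}*{4.1.1} literally applies, and verifying that ``$\Aone$-local in the homotopy-categorical sense of Paragraph \ref{num:A1-local}'' matches the fibrancy condition produced by the localization. Both are routine given Corollary \ref{cor:compare_Hom&cohomology} and Theorem \ref{thm:decent_Wsheaves}, so the proof is short: it is essentially ``apply \cite{Hirschhorn03_b}*{4.1.1} to the model structure of Corollary \ref{cor:model_Der} with respect to the $\Aone$-maps, then read off the fibrations and the monoidality from the descent structure.''
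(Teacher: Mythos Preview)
Your proposal is correct and follows essentially the same approach as the paper: the paper explicitly obtains the proposition by applying Hirschhorn's left Bousfield localization theorem \cite{Hirschhorn03_b}*{4.1.1} to the proper cellular model structure of Corollary \ref{cor:model_Der}, and then refers to \cite{CD09_b}*{\textsection 3} for the monoidal compatibility. Your write-up simply unpacks these references with more explicit detail (the localizing set, the identification of fibrant objects via Theorem \ref{thm:decent_Wsheaves}, and the pushout-product check using \eqref{eq:sht_monoidal}), which is exactly what those references contain.
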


The resulting model structure on $\Comp(\shMWtkR)$
will be called the \emph{$\Aone$-local model structure}.
\index{model structure!$\Aone$-local}%
The proof of the proposition follows formally from
Corollary \ref{cor:model_Der} by the usual localization
procedure of model categories, see \cite{CD09_b}*{\textsection 3}
for details. Note that the tensor product of two bounded above complexes can be computed as in the derived category. 

\begin{num}
As a consequence of the above discussion, the category $\DMtexkR t$ is a triangulated
 symmetric monoidal closed category. Besides, it is clear
 that the functors of Corollary~\ref{cor:adjunctions_corr}
 induce Quillen adjunctions for the $\Aone$-local model
 structures. Equivalently, Diagram \eqref{eq:chg_top&tr_Der}
 is compatible with $\Aone$-localization and induces
 adjunctions of triangulated categories:
\begin{equation}\label{eq:chg_top&tr_DMeff}
\begin{tikzcd}
[column sep=30pt,row sep=24pt]
\DAekR \ar[r,shift left=2pt,"{\derL \tilde \gamma^*}"] \ar[d,shift left=2pt,"a"]
 & \DMtekR \ar[r,shift left=2pt,"{\derL \pi^*}"] \ar[d,shift left=2pt,"{\tilde a}"] \ar[l,shift left=2pt,"{\tilde \gamma_{*}}"]
 & \DMekR \ar[d,shift left=2pt,"{a^{tr}}"]
     \ar[l,shift left=2pt,"{\pi_{*}}"] \\
\DAexkR{\et} \ar[r,shift left=2pt,"{\derL \tilde \gamma^*_\et}"]
    \ar[u,shift left=2pt,"{\derR \fO}"]
 & \DMtexkR{\et} \ar[r,shift left=2pt,"{\derL \pi^*_\et}"]
	  \ar[u,shift left=2pt,"{\derR \tfO}"]
		\ar[l,shift left=2pt,"{\tilde \gamma_{\et*}}"]
 & \DMexkR{\et}.
    \ar[u,shift left=2pt,"{\derR \fO}"]
		\ar[l,shift left=2pt,"{\pi_{\et*}}"]
\end{tikzcd}
\end{equation}
In this diagram, the left adjoints are all monoidal and send
the different variant of motives represented by a smooth scheme $X$
to the analogous motive. In particular,
\[\derL \pi^*\tMot(X)=\Mot(X).\]
\index[notation]{mtx@$\tMot(X)$}%
\index[notation]{mx@$\Mot(X)$}%
Also, the functors $\tilde \gamma_{t*}$ and $\pi_{t*}$
for $t=\nis, \et$ (or following our conventions, $t=\varnothing, \et$)
are conservative. It is worth mentioning the following corollary of Proposition~\ref{cor:adjunctions_corr}\ref{item:adjunction-pi-equivalence}:
\begin{prop}
($k$ is an arbitrary perfect field.%
\footnote{Moreover, inseparable extensions being isomorphisms for the étale topology,
the definition of $\DMtexkR{\et}$ and this comparison equivalence can be extended to arbitrary fields.})
The adjunction defined above 
\[
\derL \pi^*_\et=\pi^*_\et:\DMtexkR{\et} \leftrightarrows \DMexkR{\et}:\pi_{\et*}
\]
is an equivalence of triangulated monoidal categories.
\end{prop}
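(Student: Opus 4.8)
The plan is to deduce this stable statement about the $\Aone$-derived categories from the corresponding statement about sheaves, namely Corollary~\ref{cor:adjunctions_corr}\ref{item:adjunction-pi-equivalence}, which asserts that $\pi^*_\et\colon \shMWxkR{\et} \to \shVxkR{\et}$ is an equivalence. The key observation is that $\pi^*_\et$ on sheaves is \emph{exact} (both it and its right adjoint $\pi_{\et*}$ are exact, being part of an equivalence), so it is trivially derived and $\derL\pi^*_\et = \pi^*_\et$ already as a functor $\Der(\shMWxkR{\et}) \to \Der(\shVxkR{\et})$; moreover an equivalence of abelian categories induces an equivalence on derived categories, with inverse $\pi_{\et*}$. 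So the first step is to record that $\pi^*_\et\colon \Der(\shMWxkR{\et}) \leftrightarrows \Der(\shVxkR{\et})\colon\pi_{\et*}$ is already an equivalence of triangulated (even monoidal, by Corollary~\ref{cor:adjunctions_corr}\ref{item:exists-diagram-monoidal}) categories before passing to $\Aone$-localization.

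The second step is to check that this equivalence descends to the $\Aone$-localizations. By construction, $\DMtexkR{\et}$ (resp. $\DMexkR{\et}$) is the Bousfield localization of $\Der(\shMWxkR{\et})$ (resp. $\Der(\shVxkR{\et})$) at the class $\mathcal T_{\Aone}$ generated by the cones of $\MWrepR_\et(\Aone_X) \to \MWrepR_\et(X)$ (resp. $\VrepR_\et(\Aone_X) \to \VrepR_\et(X)$). Since $\pi^*_\et$ sends $\MWrepR_\et(X)$ to $\VrepR_\et(X)$ by \eqref{eq:derived&rep}, it carries the generators of $\mathcal T_{\Aone}$ on the MW-side bijectively (up to the equivalence) to the generators on the Voevodsky side, hence carries $\mathcal T_{\Aone}$ onto $\mathcal T_{\Aone}$; the same holds for the inverse functor $\pi_{\et*}$. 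A triangulated equivalence matching up the localizing subcategories by which we quotient therefore induces a triangulated equivalence of Verdier quotients. Concretely: the composite $\Der(\shMWxkR{\et}) \xrightarrow{\pi^*_\et} \Der(\shVxkR{\et}) \to \DMexkR{\et}$ kills $\mathcal T_{\Aone}$, hence factors uniquely through $\DMtexkR{\et}$, giving the induced functor $\pi^*_\et$ of the statement; and the analogous composite built from $\pi_{\et*}$ provides a quasi-inverse, the unit and counit being inherited from those of the equivalence on derived categories since they are $\Aone$-equivalences (indeed isomorphisms).

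For the monoidal assertion, one simply notes that $\pi^*_\et$ is a monoidal functor at the level of $\Der(\shMWxkR{\et})$ (Corollary~\ref{cor:adjunctions_corr}, \eqref{eq:chg_top&tr_Der}), compatibly with the monoidal structure on the $\Aone$-localization which is induced from that on the derived category (the relation \eqref{eq:sht_monoidal} together with $\derL\pi^*\MWrepRt(X\times Y) = \VrepRt(X\times Y)$), so the induced equivalence on $\DMtexkR{\et}$ is monoidal as well.

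The main obstacle, such as it is, is bookkeeping rather than mathematics: one must be careful that the localization functors $L_{\Aone}$ on the two sides are genuinely intertwined by $\pi^*_\et$ and $\pi_{\et*}$, i.e. that $\pi^*_\et L_{\Aone} \simeq L_{\Aone}\pi^*_\et$ and likewise for $\pi_{\et*}$. This follows because $\pi^*_\et$ and its inverse preserve both the class of $\Aone$-local objects (these are the objects whose $\et$-hypercohomology over smooth schemes is $\Aone$-invariant, by \ref{num:A1-local}, and $\pi^*_\et$ identifies these cohomology groups by Corollary~\ref{cor:compare_Hom&cohomology}) and the class of $\Aone$-equivalences; but spelling this out cleanly, and checking that the adjunction $(\derL\pi^*_\et,\pi_{\et*})$ of \eqref{eq:chg_top&tr_DMeff} really is the one obtained this way, is where the argument needs care. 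Once that is in place the conclusion is immediate.
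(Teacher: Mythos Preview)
Your proof is correct and follows exactly the approach the paper intends: the proposition is stated as an immediate corollary of Corollary~\ref{cor:adjunctions_corr}\ref{item:adjunction-pi-equivalence} with no further argument given, the implicit reasoning being precisely what you spell out---an equivalence of abelian categories induces one on derived categories, and since $\pi^*_\et$ carries $\MWrepR_\et(X)$ to $\VrepR_\et(X)$ it matches the generating $\Aone$-equivalences, hence descends to the $\Aone$-localizations. Your careful treatment of the bookkeeping (exactness of $\pi^*_\et$, matching of $\mathcal T_{\Aone}$, monoidality) simply makes explicit what the paper leaves to the reader.
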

The functors $\tilde \gamma_{t*}$ and $\pi_{t*}$ for $t=\nis, \et$ in diagram
\eqref{eq:chg_top&tr_Der} preserve $\Aone$-local objects
and so commute with the $\Aone$-localization functor.
Therefore one deduces from Morel's $\Aone$-connectivity
theorem \cite{Morel05_b} the following result.
\end{num}
\begin{thm}
Let $E$ be a complex of MW-sheaves concentrated in positive degrees.
 Then the complex $L_{\Aone} E$ is concentrated in positive degrees.
\end{thm}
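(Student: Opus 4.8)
The plan is to reduce the statement to Morel's $\Aone$-connectivity theorem applied to the $\Aone$-derived category $\DAekR$ (or rather its integral version $\DAek$), using the conservativity and exactness properties of the forgetful functor $\tilde\gamma_*$. Recall that $\tilde\gamma_*:\shMWxkR{\nis}\to \shxkR{\nis}$ is exact and conservative (Lemma~\ref{lm:compare_transfers}), that it admits a left adjoint $\tilde\gamma^*$ and a right adjoint $\tilde\gamma^!$ (Proposition~\ref{prop:exist_associated-W-t-sheaf}), and crucially that by Corollary~\ref{cor:compare_Hom&cohomology} the $t$-hypercohomology of a complex $K$ of MW-sheaves agrees with the hypercohomology of $\tilde\gamma_*(K)$ after forgetting transfers. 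The first step is to observe that ``concentrated in positive degrees'' for a complex $E$ of MW-sheaves means its homology sheaves $\underline{\H}_i(E)$ vanish for $i\leq 0$; since $\tilde\gamma_*$ is exact, this is equivalent to $\tilde\gamma_*(E)$ being concentrated in positive degrees as a complex of Nisnevich sheaves. The same equivalence holds for $L_{\Aone}E$ because, as noted in Paragraph~\ref{num:A1-local} and in the text just before the theorem, $\tilde\gamma_*$ preserves $\Aone$-local objects and commutes with the $\Aone$-localization functor, so $\tilde\gamma_* L_{\Aone} E \simeq L_{\Aone}\tilde\gamma_*(E)$ in $\Der(\shxkR{\nis})$.

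Next I would invoke Morel's $\Aone$-connectivity theorem in the form it holds for the effective $\Aone$-derived category $\DAek$ over a perfect field $k$: if $F$ is a complex of Nisnevich sheaves of abelian groups (or $R$-modules) concentrated in positive degrees, then $L_{\Aone}F$ is still concentrated in positive degrees. This is precisely \cite{Morel05_b}, and it is stated for the category $\DAek$ built from Nisnevich sheaves without transfers — exactly the target of $\tilde\gamma_*$. Applying this to $F=\tilde\gamma_*(E)$ gives that $L_{\Aone}\tilde\gamma_*(E)$ is concentrated in positive degrees. Combining with the identification $\tilde\gamma_* L_{\Aone}E\simeq L_{\Aone}\tilde\gamma_*(E)$ from the previous step, we conclude that $\tilde\gamma_* L_{\Aone}(E)$ is concentrated in positive degrees, and then by conservativity and exactness of $\tilde\gamma_*$ that $L_{\Aone}(E)$ itself has vanishing homology sheaves in non-positive degrees, i.e.\ is concentrated in positive degrees.

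The one point requiring a little care — and the main (modest) obstacle — is justifying the compatibility $\tilde\gamma_* L_{\Aone} \simeq L_{\Aone}\tilde\gamma_*$. The cleanest way is to argue at the level of model categories: the adjunction $(\derL\tilde\gamma^*,\tilde\gamma_*)$ is a Quillen adjunction for the $\Aone$-local model structures (Proposition~\ref{prop:model_DMt} and Diagram~\eqref{eq:chg_top&tr_DMeff}), so $\tilde\gamma_*$ of an $\Aone$-local (equivalently, $\Aone$-fibrant up to the flasque condition) complex is again $\Aone$-local; this uses that $\tilde\gamma^*$ sends the generating $\Aone$-equivalences $\MWrepRt(\Aone_X)\to\MWrepRt(X)$ of the source to the generating $\Aone$-equivalences $\Rt(\Aone_X)\to\Rt(X)$ of the target, hence $\derL\tilde\gamma^*$ sends weak $\Aone$-equivalences to weak $\Aone$-equivalences. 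Dually, since $\tilde\gamma_*$ is exact it descends to triangulated functors on the localized categories compatibly, and one checks that the unit $E\to \tilde\gamma_*\derL\tilde\gamma^* L_{\Aone}E$-type comparison map $\tilde\gamma_* L_{\Aone}E\to L_{\Aone}\tilde\gamma_* E$ is an isomorphism by applying $L_{\Aone}$ (which kills the $\Aone$-equivalence $E\to L_{\Aone}E$ after $\tilde\gamma_*$) and using that the target is already $\Aone$-local. Once this identification is in hand, the theorem is immediate from \cite{Morel05_b}; alternatively, one could avoid it entirely by redoing Morel's connectivity argument directly for MW-sheaves, but routing through $\tilde\gamma_*$ as above is the ``lazy'' proof consistent with the style of Section~\ref{sec:framed}.
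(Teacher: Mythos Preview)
Your proposal is correct and follows exactly the same route as the paper: reduce to Morel's $\Aone$-connectivity theorem \cite{Morel05_b}*{6.1.8} in $\DAekR$ via the exact, conservative functor $\tilde\gamma_*$, using that $\tilde\gamma_*$ commutes with the $\Aone$-localization functor (stated in the paper immediately before the theorem). One small slip: in your justification of the compatibility you have the direction of $\tilde\gamma^*$ reversed---it sends $\Rt(\Aone_X)\to\Rt(X)$ to $\MWrepRt(\Aone_X)\to\MWrepRt(X)$, not the other way around---but the adjunction argument you intend is the right one and the conclusion stands.
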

Indeed, to check this, one needs only to apply the functor 
 $\tilde \gamma_*$ as it is conservative, and so we are reduced to Morel's
 theorem \cite{Morel05_b}*{6.1.8}.

\begin{coro}\label{cor:pre-df:htp_tstruct_eff}
Under the assumption of the previous theorem, 
 the triangulated category $\DMtekR$ admits a unique $t$-structure
 such that the functor 
 $\tilde \gamma_*:\DMtekR \rightarrow \Der\big(\shMWxkR{\nis}\big)$
 is $t$-exact.
\end{coro}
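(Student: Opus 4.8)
The plan is to obtain the $t$-structure on $\DMtekR$ by transporting the standard $t$-structure on the derived category of the Grothendieck abelian category $\shMWxkR{\nis}$ through the identification of $\DMtekR$ with the full subcategory of $\Aone$-local complexes. First I would recall, from Paragraph~\ref{num:A1-local}, that $\DMtekR$ is equivalent to the full subcategory of $\Der(\shMWxkR{\nis})$ spanned by the $\Aone$-local complexes, equivalently (via Corollary~\ref{cor:compare_Hom&cohomology}) by those complexes $E$ whose Nisnevich cohomology sheaves $\tilde\gamma_*(E)$ are strictly $\Aone$-invariant. Then I would define the would-be aisles: $\DMtekR^{\leq 0}$ consists of the $\Aone$-local complexes $E$ with $\tilde\gamma_*(E)$ concentrated in nonpositive degrees for the homological convention dual to the one stated, and $\DMtekR^{\geq 0}$ those with $\tilde\gamma_*(E)$ concentrated in nonnegative degrees. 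The content of the preceding theorem is exactly what makes this work: if $E$ is $\Aone$-local and $\tilde\gamma_*(E)$ is concentrated in positive degrees, then applying the truncation functors of the standard $t$-structure on $\Der(\shMWxkR{\nis})$ and then $L_{\Aone}$ does not move it out of positive degrees, because $\tilde\gamma_*$ is conservative and commutes with $L_{\Aone}$ (as noted right before Corollary~\ref{cor:pre-df:htp_tstruct_eff}), so $L_{\Aone}$ of a complex concentrated in positive degrees is again concentrated in positive degrees.

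The key steps, in order, are: (1) set $\DMtekR^{\leq 0}$ and $\DMtekR^{\geq 1}$ to be the essential preimages under $\tilde\gamma_*$ of the standard aisles in $\Der(\shMWxkR{\nis})$, intersected with $\DMtekR$; (2) check the Hom-vanishing $\Hom_{\DMtekR}(X,Y)=0$ for $X\in\DMtekR^{\leq 0}$, $Y\in\DMtekR^{\geq 1}$, which is immediate since Homs in $\DMtekR$ between $\Aone$-local objects are computed in $\Der(\shMWxkR{\nis})$ and $\tilde\gamma_*$ is fully faithful on $\Aone$-local objects (again Corollary~\ref{cor:compare_Hom&cohomology}); (3) construct the truncation triangle for an arbitrary $\Aone$-local $E$ by taking the standard truncation triangle $\tau_{\leq 0}E'\to E'\to \tau_{\geq 1}E'$ in $\Der(\shMWxkR{\nis})$ (where $E'$ is any complex with $L_{\Aone}E'\simeq E$, e.g. $E$ itself), applying $L_{\Aone}$, and using the previous theorem to see that $L_{\Aone}\tau_{\geq 1}E'$ still lies in $\DMtekR^{\geq 1}$; one then checks $L_{\Aone}\tau_{\leq 0}E'$ lies in $\DMtekR^{\leq 0}$ by the dual connectivity statement (which follows by shifting), so the $\Aone$-localized triangle is the required truncation triangle. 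Step (4) is to verify $t$-exactness of $\tilde\gamma_*$: by construction $\tilde\gamma_*$ sends $\DMtekR^{\leq 0}$ and $\DMtekR^{\geq 0}$ into the corresponding standard aisles, which is precisely $t$-exactness; (5) uniqueness follows because $t$-exactness of $\tilde\gamma_*$ forces $\DMtekR^{\leq 0}=\tilde\gamma_*^{-1}(\Der^{\leq 0})\cap\DMtekR$ and likewise for $\DMtekR^{\geq 0}$, so any $t$-structure with this property coincides with the one constructed.

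The main obstacle is step~(3): one must be careful that the standard truncation $\tau_{\geq 1}E$ of an $\Aone$-local complex $E$ need not itself be $\Aone$-local, which is why one passes through $L_{\Aone}$ and invokes Morel's connectivity theorem as packaged in the preceding theorem. Concretely, the subtle point is to see that $L_{\Aone}\tau_{\geq 1}E$ has $\tilde\gamma_*$ concentrated in degrees $\geq 1$ — this is exactly the cited theorem — and dually that $L_{\Aone}\tau_{\leq 0}E$ has $\tilde\gamma_*$ concentrated in degrees $\leq 0$, which one gets by applying the theorem to the cofiber and shifting, together with the fact that $\tilde\gamma_*$ commutes with $L_{\Aone}$ and with cones. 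Once these two connectivity facts are in hand, verifying the axioms of a $t$-structure is formal, and this is a standard argument (it is the Milnor--Witt analogue of the construction of the homotopy $t$-structure on $\DMekR$ in the classical setting), so I would not belabor the routine verifications.
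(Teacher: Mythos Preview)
Your strategy is correct and is exactly what the paper has in mind (the note after the corollary describes the truncation as $L_{\Aone}\circ\tau$, and uniqueness is forced as you say). However, step~(3) has a genuine gap. Morel's connectivity theorem is \emph{one}-directional: it says $L_{\Aone}$ preserves connectivity (in your notation, preserves ``degrees $\geq 1$''), and shifting only gives the same statement at different connectivity levels, never the dual coconnectivity statement. Your claim that the dual ``follows by applying the theorem to the cofiber and shifting'' does not work: knowing that $L_{\Aone}\tau_{\geq 1}E$ lies in $\Der^{\geq 1}$ says nothing about where its fiber $L_{\Aone}\tau_{\leq 0}E$ lives, since $E$ itself is not assumed connective or coconnective. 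The long exact sequence in cohomology gives no vanishing in the direction you need.

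The fix is to argue asymmetrically rather than invoke a nonexistent dual. One of the two aisle identifications requires no connectivity input at all: for $F\in\DMtekR$, one has $F\in\Der^{\geq 1}$ iff $\Hom_{\Der}(\MWrepRt(X)[n],F)=0$ for all smooth $X$ and $n\geq 0$, and since $F$ is $\Aone$-local the adjunction $(L_{\Aone},i)$ rewrites this as $\Hom_{\DMtekR}(\tMot(X)[n],F)=0$, which is exactly the right-orthogonality condition defining the coconnective aisle of the $t$-structure on $\DMtekR$ \emph{generated} by the compact objects $\tMot(X)$. So $\DMtekR^{\geq 1}=\DMtekR\cap\Der^{\geq 1}$ holds automatically, and the generated $t$-structure exists by general nonsense. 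The connective identification $\DMtekR^{\leq 0}=\DMtekR\cap\Der^{\leq 0}$ then uses Morel once: the generators $\tMot(X)=L_{\Aone}\MWrepRt(X)$ lie in $\Der^{\leq 0}$, and $\DMtekR\cap\Der^{\leq 0}$ is stable under extensions, suspensions, and colimits in $\DMtekR$ (the last because such colimits are $L_{\Aone}$ applied to colimits in $\Der$, and $L_{\Aone}$ preserves $\Der^{\leq 0}$ by the theorem), hence contains the aisle; the reverse inclusion is just the Hom-vanishing against $\DMtekR^{\geq 1}\subset\Der^{\geq 1}$ already established. With both aisles identified, $t$-exactness of the inclusion is immediate, and the description of the truncation as $L_{\Aone}\tau$ follows a posteriori from uniqueness.
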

Note that the truncation functor on $\DMtekR$ is obtained by applying
 to an $\Aone$-local complex the usual truncation functor of
 $\Der(\shMWxkR{\nis})$ and then the $\Aone$-localization functor.

\begin{dfn}
\label{df:htp_tstruct_eff}%
The $t$-structure on $\DMtekR$ obtained in the previous corollary
 will be called the \emph{homotopy $t$-structure}.
\end{dfn}

\begin{rem}
Of course, the triangulated categories $\DMekR$ and $\DAekR$
 are also equipped with $t$-structures, called in each 
 cases homotopy $t$-structures
 --- in the first case, it is due to Voevodsky and in the second
 to Morel.

It is clear from the definitions that the functors
 $\tilde \gamma_*$ and $\pi_*$ in Diagram \eqref{eq:chg_top&tr_DMeff}
 are $t$-exact.
\end{rem}

As in the case of Nisnevich sheaves with transfers,
 we can describe nicely $\Aone$-local objects and
 the $\Aone$-localization functor due to the following theorem.
 
\begin{thm}\label{thm:Wsh&A1}
Assume $k$ is an infinite perfect field of characteristic not $2$. Let $F$ be an $\Aone$-invariant MW-presheaf. Then, the associated MW-sheaf $\tilde a(F)$ is
 strictly $\Aone$-invariant. Moreover, the Zariski sheaf associated with $F$
 coincides with $\tilde a(F)$ and the natural map
\[
\H^i_\zar(X,\tilde a(F)) \rightarrow
\H^i_\nis(X,\tilde a(F))
\]
is an isomorphism for any smooth scheme $X$.
\end{thm}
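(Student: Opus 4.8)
The strategy is to reduce the statement to the corresponding theorem for framed correspondences, namely Theorem~\ref{thm:A1local_framedPSh}, via the functor $\alpha'\colon\ZFr_*(k)\to\cork$ constructed in Proposition~\ref{prop:bunchoffunctors}. The point of that functor is precisely that restriction along $\alpha'$ turns an $\Aone$-invariant MW-presheaf into an $\Aone$-invariant \emph{stable} $\ZFr_*(k)$-presheaf. Once we are in the framed world we may invoke Garkusha's results, and then transport the conclusions back.

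\textbf{Step 1: passage to framed presheaves.} Let $F$ be an $\Aone$-invariant MW-presheaf of $R$-modules. Composing with $\alpha'$ gives a presheaf $(\alpha')^*F$ on $\ZFr_*(k)$; by Example~\ref{ex:stable} (the identity $\alpha(\sigma_X)=\id$) this presheaf is stable, in particular quasi-stable, and it is clearly $\Aone$-invariant since $F$ is and $\alpha'$ commutes with the graph functors on $\smk$. Thus Theorem~\ref{thm:A1local_framedPSh}\ref{item:A-one-invariant-quasi-stable} applies: the associated Nisnevich sheaf of $(\alpha')^*F$ is $\Aone$-invariant and quasi-stable. Here one must check that the underlying Nisnevich sheaf of $(\alpha')^*F$ is the same as $(\alpha')^*$ applied to $\tilde a(F)$ — this is formal because sheafification is computed on the underlying presheaf of $\smk$, and $\alpha'$ restricted to $\smk$ is the graph functor, so the framed-sheaf structure on $(\alpha')^*\tilde a(F)$ is induced by the MW-sheaf structure on $\tilde a(F)$ coming from Lemma~\ref{lm:exists-Wtr}. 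Applying part~\ref{item:in-addition-Nisnevich-sheaf} of the same theorem to the Nisnevich sheaf thus obtained yields that it is \emph{strictly} $\Aone$-invariant as a Nisnevich sheaf of $R$-modules.

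\textbf{Step 2: transporting back to MW-sheaves.} Strict $\Aone$-invariance, and the comparison of Zariski and Nisnevich cohomology, are statements about the underlying Nisnevich (resp.\ Zariski) sheaf on $\smk$ and its cohomology — they do not see the transfer structure. Since the underlying Nisnevich sheaf of $\tilde a(F)$ is exactly the Nisnevich sheaf produced in Step~1, it is strictly $\Aone$-invariant. For the remaining assertions: one must show the Zariski sheaf associated with $F$ already coincides with $\tilde a(F)$, i.e.\ that the Nisnevich-local MW-presheaf $\tilde a(F)$ is in fact a Zariski sheaf and that $F\to\tilde a(F)$ is the Zariski sheafification. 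This again follows from the framed picture, where Garkusha's theorem gives that the Zariski and Nisnevich sheafifications of an $\Aone$-invariant quasi-stable presheaf agree and the Zariski-to-Nisnevich cohomology comparison map is an isomorphism (this is part of the content of \cite{Garkusha15_b}, since strict $\Aone$-invariance together with the Gersten-type resolution forces Zariski-Nisnevich agreement); one transports it back verbatim as the statement is topology-theoretic on $\smk$. Alternatively, one can cite Morel's machinery for strictly $\Aone$-invariant sheaves directly: a strictly $\Aone$-invariant Nisnevich sheaf on $\smk$ has $\H^i_\zar = \H^i_\nis$, so once strict $\Aone$-invariance of the underlying sheaf is known, the cohomology comparison is automatic.

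\textbf{Main obstacle.} The genuinely delicate point is the bookkeeping in Step~1: verifying that $(\alpha')^*F$ is honestly a presheaf on $\ZFr_*(k)$ (not merely on $\Fr_*(k)$) and that its Nisnevich sheafification, as an object carrying framed transfers, has underlying sheaf equal to that of $\tilde a(F)$. This requires knowing that the MW-sheafification functor $\tilde a$ is compatible with the forgetful functor to presheaves on $\smk$ (Proposition~\ref{prop:exist_associated-W-t-sheaf}, the commuting square with $a$) and that $\alpha'$ is compatible with this forgetful functor — which holds because $\alpha'$ extends $\tilde\gamma$ on $\smk$. A secondary subtlety is the hypothesis management: Theorem~\ref{thm:A1local_framedPSh} requires $k$ infinite perfect of characteristic $\neq 2$, exactly the standing hypothesis here, so no extra work is needed, but one should flag that this is precisely where the characteristic-$\neq 2$ and infiniteness assumptions enter. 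Everything else is a routine "the statement only depends on the underlying sheaf on $\smk$" reduction.
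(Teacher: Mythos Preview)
Your approach is essentially the same as the paper's: both reduce to Theorem~\ref{thm:A1local_framedPSh} via the functor $(\alpha')^*\colon\pshMWkR\to\SpshfrkR$, then deduce strict $\Aone$-invariance of $\tilde a(F)$ from Proposition~\ref{prop:exist_associated-W-t-sheaf}. For the remaining two claims the paper is more specific than your Step~2: the cohomology comparison $\H^i_\zar=\H^i_\nis$ is obtained directly from Morel's Rost--Schmid complex for strictly $\Aone$-invariant sheaves (\cite{Morel12_b}*{Corollary~5.43}), while the identification of the Zariski sheafification with $\tilde a(F)$ is not an immediate consequence of Theorem~\ref{thm:A1local_framedPSh} as you suggest, but requires a short unramified-sheaf argument following Panin (\cite{Pa10_b}*{Lemma~4.1}) with input from \cite{Garkusha15_b}*{Theorem~2.15~(3),~(5)}.
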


\begin{proof}
Recall that we have a functor $(\alpha^\prime)^*:\pshMWkR\to \SpshfrkR$. Since the presheaf $F$ is $\Aone$-invariant, it follows that $(\alpha^\prime)^*(F)$ has the same property and we may apply Theorem \ref{thm:A1local_framedPSh} to see that the Nisnevich sheaf $a(F)$ associated to the presheaf $F$ is strictly $\Aone$-invariant and quasi-stable. It follows from Proposition \ref{prop:exist_associated-W-t-sheaf} that $\tilde a(F)$ is strictly $\Aone$-invariant. 
Now, a strictly $\Aone$-invariant sheaf admits a Rost-Schmid complex in the sense of \cite{Morel12_b}*{\S 5} and it follows from \cite{Morel12_b}*{Corollary~5.43} that 
\[
\H^i_\zar(X,\tilde a(F)) \rightarrow
\H^i_\nis(X,\tilde a(F))
\]
is an isomorphism for any $i\in\NN$ and any smooth scheme $X$. It remains to show that the Zariski sheaf associated with $F$ coincides with $\tilde a(F)$. We follow \cite{Pa10_b}*{Lemma~4.1} indicating a suitable reference for the properties needed there. First, if $X$ is a smooth integral scheme and $x\in X$, it follows from \cite{Garkusha15_b}*{Theorem~2.15~(3)} that $F(\OO_{X,x})\to F(k(X))$ is injective, and we may therefore form the unramified Zariski sheaf $F_{\mathrm{un}}$ (on $X$) defined by $U\mapsto \cap_{x\in U}F(\OO_{X,x})\subset F(k(X))$. If $F_{\zar}$ is the Zariski sheaf associated to $F$, we obtain a map $F_{\zar}\to F_{\mathrm{un}}$ which is locally an isomorphism and then $F_{\zar}$ satisfies the conclusions of \cite{Pa10_b}*{Lemma~3.1, Corollary~3.2}. We can now conclude the proof as in \cite{Pa10_b}*{Lemma~4.1} using this time \cite{Garkusha15_b}*{Theorem~2.15~(5)}.
\end{proof}

\begin{rem}
See also \cite{Kolderup17_b} for a proof intrinsic to MW-motives in characteristic $0$.
\end{rem}

As in the case of Voeovdsky's theory of motivic complexes,
 this theorem has several important corollaries.
 
\begin{coro}\label{cor:A1-local_complexes}
Assume $k$ is an infinite perfect field of characteristic not $2$. Then, a complex $E$ of MW-sheaves is $\Aone$-local
if and only if its homology (Nisnevich) sheaves 
are $\Aone$-invariant.
The heart of the homotopy $t$-structure (Def. \ref{df:htp_tstruct_eff})
on $\DMtekR$ consists of $\Aone$-invariant MW-sheaves.
\end{coro}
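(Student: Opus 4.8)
The plan is to deduce this corollary directly from Theorem~\ref{thm:Wsh&A1} together with the identification of $\DMtekR$ as the full subcategory of $\Aone$-local objects in $\Der(\shMWxkR{\nis})$ established in Paragraph~\ref{num:A1-local}, and the characterization of $\Aone$-locality via the $\Aone$-invariance of the $t$-cohomology of $\tilde\gamma_*(E)$. First I would treat the two implications separately. If the homology sheaves $\H_n(E)$ of a complex $E$ of MW-sheaves are $\Aone$-invariant, then, since $\tilde\gamma_*$ is exact and $t$-exact (Corollary~\ref{cor:pre-df:htp_tstruct_eff} and the remark following it), the homology Nisnevich sheaves of $\tilde\gamma_*(E)$ are $\Aone$-invariant; each such sheaf is the MW-sheaf associated to an $\Aone$-invariant MW-presheaf (its own underlying presheaf), so Theorem~\ref{thm:Wsh&A1} gives that it is \emph{strictly} $\Aone$-invariant and that its Zariski and Nisnevich cohomologies agree and are $\Aone$-invariant. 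A standard hypercohomology spectral sequence argument (the spectral sequence $\H^p_\nis(X,\H_q(\tilde\gamma_*E)) \Rightarrow \hypH^{p+q}_\nis(X,\tilde\gamma_* E)$, together with its analogue for $X\times\Aone$ and the comparison map induced by the projection) then shows that $\hypH^*_\nis(-,\tilde\gamma_*E)$ is $\Aone$-invariant, which by Corollary~\ref{cor:compare_Hom&cohomology} and the criterion of Paragraph~\ref{num:A1-local} is exactly the statement that $E$ is $\Aone$-local.

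For the converse, suppose $E$ is $\Aone$-local. I would argue by induction on the homological amplitude in the bounded case and then pass to the general case by a colimit/truncation argument, or more slickly: the heart statement and the general statement can be bootstrapped from each other. Concretely, one first checks that an $\Aone$-invariant MW-sheaf $F$, viewed as a complex concentrated in degree $0$, is $\Aone$-local — this is immediate from Theorem~\ref{thm:Wsh&A1}, since strict $\Aone$-invariance says precisely that $\H^i_\nis(X,F)\to\H^i_\nis(X\times\Aone,F)$ is an isomorphism for all $i$, which via Corollary~\ref{cor:compare_Hom&cohomology} is $\Aone$-locality. Conversely, if $E$ is $\Aone$-local, I would apply the truncation functors of the $t$-structure on $\Der(\shMWxkR{\nis})$: using that $\tilde\gamma_*$ is $t$-exact and the $\Aone$-connectivity theorem (the Theorem just before Corollary~\ref{cor:pre-df:htp_tstruct_eff}), the Postnikov tower of $E$ has $\Aone$-local truncations, so each homology sheaf $\H_n(E)$, sitting as a shifted subquotient, must itself be $\Aone$-local as a one-term complex; then the degree-$0$ case just established, applied to $\H_n(E)[n]$, forces $\H_n(E)$ to be $\Aone$-invariant. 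The heart assertion is then the special case where $E$ is concentrated in a single degree: an object of $\DMtekR^\heartsuit$ is, via the $t$-exact $\tilde\gamma_*$, a single MW-sheaf that is $\Aone$-local, hence $\Aone$-invariant by what precedes, and conversely every $\Aone$-invariant MW-sheaf lies in the heart.

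The main obstacle I anticipate is the convergence/bookkeeping for unbounded complexes: one must ensure that the hypercohomology spectral sequence converges and that the interchange of truncation with $\Aone$-localization is legitimate. This is where the $\Aone$-connectivity theorem is essential — it guarantees that $L_{\Aone}$ does not lower connectivity, so that truncating and localizing commute in the relevant range, and that the homology sheaves of an $\Aone$-local complex are controlled degreewise. Once that input is in hand, everything else is a formal consequence of $t$-exactness of $\tilde\gamma_*$, Corollary~\ref{cor:compare_Hom&cohomology}, and Theorem~\ref{thm:Wsh&A1}; no genuinely new geometric input is needed beyond what has already been proved. I would also remark that, as in Voevodsky's setting, the explicit $\Aone$-localization functor is then given by $E\mapsto \Cstar{E}$ (the Suslin total complex), since $\Cstar{-}$ is an $\Aone$-equivalence landing in complexes with $\Aone$-invariant — hence, by Theorem~\ref{thm:Wsh&A1}, strictly $\Aone$-invariant — homology sheaves, which by the corollary is exactly the subcategory of $\Aone$-local objects.
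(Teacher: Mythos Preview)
Your argument for ``$\Aone$-invariant homology sheaves $\Rightarrow$ $\Aone$-local'' matches the paper's: hypercohomology spectral sequence plus strict $\Aone$-invariance from Theorem~\ref{thm:Wsh&A1}, with convergence guaranteed because Nisnevich cohomological dimension is bounded by $\dim X$.

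For the converse, your truncation approach has a gap. You assert that the $\Aone$-connectivity theorem implies the truncations of an $\Aone$-local $E$ are again $\Aone$-local, but the connectivity theorem only says $L_{\Aone}$ preserves $\Der_{\ge 0}$; closure of $\Aone$-local objects under truncation does not follow from this alone --- in fact the usual route to that closure is \emph{via} the corollary you are proving. The paper instead argues directly: replace $K$ by an $\Aone$-fibrant (hence Nisnevich-fibrant, hence local) model $K'$; since both $K$ and $K'$ are $\Aone$-local, the weak $\Aone$-equivalence $K\to K'$ is a quasi-isomorphism. For local $K'$ one has $\H^n(K'(X))\simeq\Hom_{\Der(\shMWtkR)}(\MWrepRt(X),K'[n])$, so $\Aone$-locality says precisely that the cohomology \emph{presheaves} $X\mapsto\H^n(K'(X))$ are $\Aone$-invariant. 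These are MW-presheaves, and Theorem~\ref{thm:Wsh&A1} gives $\Aone$-invariance of their sheafifications, which are the homology sheaves of $K$. No Postnikov bookkeeping is required.
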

The proof is classical. We recall it for the convenience of the reader.

\begin{proof}
Let $K$ be an $\Aone$-local complex of MW-sheaves over $k$.
 Let us show that its homology sheaves are $\Aone$-invariant.
 According to the existence of the $\Aone$-local model structure
 on $\Comp(\shtkR)$ (Proposition \ref{prop:model_DMt}),
 there exist a Nisnevich fibrant and $\Aone$-local complex
 $K'$ and a weak $\Aone$-equivalence:
\[
K \xrightarrow \phi K'.
\]
 As $K$ and $K'$ are $\Aone$-local, the map $\phi$ is a
 quasi-isomorphism. In particular, we can replace $K$ by $K'$.
 In other words, we can assume $K$ is Nisnevich fibrant thus local
 (Theorem \ref{thm:CD}). 
 Then we get:
\[
\H^n\big(K(X)\big) \simeq
 \Hom_{\Der(\shMWtkR)}\big(\MWrepRt(X),K[n]\big)
\]
according to the definition of local in Paragraph \ref{num:Wmodel}.
 This implies in particular that the cohomology presheaves of $K$ are
 $\Aone$-invariant. We conclude using Theorem \ref{thm:Wsh&A1}.

Assume conversely that $K$ is a complex of MW-sheaves
 whose homology sheaves are $\Aone$-invariant. Let us show that
 $K$ is $\Aone$-local. According to Corollary
 \ref{cor:compare_Hom&cohomology}, we need only to show its Nisnevich
 hypercohomology is $\Aone$-invariant. Then we apply the Nisnevich
 hypercohomology spectral sequence for any smooth scheme $X$:
\[
E_2^{p,q}=\H^p_\nis(X,\H^q_\nis(K)) \Rightarrow \hypH^{p+q}_\nis(X,K).
\]
As the cohomological dimension of the Nisnevich topology
 is bounded by the dimension of $X$, the $E_2$-term is concentrated
 in degree $p \in [0,\dim(X)]$ and the spectral sequence converges (\cite{SV00_b}*{Theorem~0.3}).
 It is moreover functorial in $X$. Therefore it is enough
 to show the map induced by the projection
\[
\H^p_\nis(X,\H^q_\nis(K)) \rightarrow \H^p_\nis(\Aone_X,\H^q_\nis(K))
\]
is an isomorphism to conclude. By assumption the sheaf
 $\H^q_\nis(K)$ is $\Aone$-invariant so Theorem \ref{thm:Wsh&A1} applies
 again.

As the functor $\tilde \gamma_*$ is $t$-exact by
 Corollary \ref{cor:pre-df:htp_tstruct_eff}, the conclusion 
 about the heart of $\DMtekR$ follows.
\end{proof}

\begin{coro}\label{cor:cohomcomplex}
Assume $k$ is an infinite perfect field of characteristic not $2$.
Let $K$ be an $\Aone$-local complex of MW-sheaves. Then we have 
\[
\hypH^i_{\zar}(X,K)= \hypH^i_{\nis}(X,K)
\]
for any smooth scheme $X$ and any $i\in \ZZ$.
\end{coro}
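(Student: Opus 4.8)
The plan is to reduce the statement immediately to the corresponding fact about the individual (Nisnevich) homology sheaves of the complex $K$. First I would invoke Corollary \ref{cor:A1-local_complexes}: since $K$ is $\Aone$-local, each homology sheaf $\H^q_\nis(K)$ is an $\Aone$-invariant MW-sheaf. By Theorem \ref{thm:Wsh&A1} (which applies because $k$ is infinite perfect of characteristic not $2$) each such sheaf is in fact \emph{strictly} $\Aone$-invariant, admits a Rost--Schmid resolution, and — crucially for the present statement — satisfies
\[
\H^i_\zar(X,\H^q_\nis(K)) \xrightarrow{\ \sim\ } \H^i_\nis(X,\H^q_\nis(K))
\]
for every smooth $X$ and every $i$.

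Next I would compare the two hypercohomology groups via the hypercohomology spectral sequences for the Zariski and Nisnevich topologies. For a fixed smooth scheme $X$ one has
\[
E_2^{p,q}(\zar)=\H^p_\zar(X,\H^q_\nis(K)) \Rightarrow \hypH^{p+q}_\zar(X,K),
\qquad
E_2^{p,q}(\nis)=\H^p_\nis(X,\H^q_\nis(K)) \Rightarrow \hypH^{p+q}_\nis(X,K),
\]
where I use that the Zariski and Nisnevich sheafifications of the homology \emph{presheaves} of $K$ agree — this is exactly the last clause of Theorem \ref{thm:Wsh&A1}, which identifies the Zariski sheaf associated with an $\Aone$-invariant MW-presheaf with its MW-sheafification $\tilde a(F)$. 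The change-of-topology morphism (identity on underlying complexes, functoriality of the Leray spectral sequence for $\epsilon\colon X_\nis\to X_\zar$) induces a map of spectral sequences $E_\bullet^{p,q}(\zar)\to E_\bullet^{p,q}(\nis)$ which on $E_2$-pages is the isomorphism displayed above. As in the proof of Corollary \ref{cor:A1-local_complexes}, both spectral sequences are concentrated in the range $p\in[0,\dim X]$ (cohomological dimension of the Nisnevich, hence also Zariski, topology on a noetherian scheme of dimension $\dim X$), so they converge; an isomorphism on $E_2$ of convergent spectral sequences gives an isomorphism on abutments, which is the claim $\hypH^i_\zar(X,K)=\hypH^i_\nis(X,K)$.

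I expect the only genuinely delicate point to be bookkeeping about whether the complex $K$ is bounded: a priori $K$ is an arbitrary (possibly unbounded) $\Aone$-local complex of MW-sheaves, and for an unbounded complex the hypercohomology spectral sequence need not converge without a boundedness or finite-cohomological-dimension hypothesis on the \emph{coefficients} as well. The resolution is precisely the finite-cohomological-dimension input used already in Corollary \ref{cor:A1-local_complexes}: the $E_2$-page in the column direction is bounded by $\dim X$, which forces conditional convergence to become actual convergence (for instance via the Nisnevich/Zariski-local-finiteness argument, citing \cite{SV00_b}*{Theorem~0.3} as in the proof of Corollary \ref{cor:A1-local_complexes}). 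So the proof amounts to: (i) reduce to homology sheaves by Corollary \ref{cor:A1-local_complexes}, (ii) apply the Zariski-vs-Nisnevich comparison on $\Aone$-invariant MW-sheaves from Theorem \ref{thm:Wsh&A1}, and (iii) feed this through the two hypercohomology spectral sequences, using finite cohomological dimension for convergence. No new computation is needed; everything is assembled from Theorem \ref{thm:Wsh&A1}, Corollary \ref{cor:compare_Hom&cohomology} and Corollary \ref{cor:A1-local_complexes}.
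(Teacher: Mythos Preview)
Your proposal is correct and follows essentially the same approach as the paper: compare the Zariski and Nisnevich hypercohomology spectral sequences, use Theorem \ref{thm:Wsh&A1} for the isomorphism on $E_2$-terms, and invoke finite cohomological dimension for convergence. One minor point worth noting: the paper constructs the Zariski spectral sequence via the \emph{Nisnevich} Postnikov tower of $K$ (viewed in $\Der(\shxkR{\zar})$), so that its $E_2$-page is directly $\H^p_\zar(X,\H^q_\nis(K))$; this sidesteps your claim that the Zariski and Nisnevich sheafifications of the homology \emph{presheaves} agree, which strictly speaking requires first replacing $K$ by a Nisnevich-fibrant model (as in the proof of Corollary \ref{cor:A1-local_complexes}) to ensure those presheaves are $\Aone$-invariant before Theorem \ref{thm:Wsh&A1} applies.
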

\begin{proof}
The proof uses the same principle as in the previous result.
 Let us first consider the change of topology adjunction:
\[
\alpha^*:\shxkR \zar \leftrightarrows \shxkR \nis:\alpha_*
\]
where $\alpha^*$ is obtained using the functor ``associated
 Nisnevich sheaf functor"
 and $\alpha_*$ is just the forgetful functor.
 This adjunction can be derived (using for example the injective
 model structures) and induces:
\[
\alpha^*:\Der(\shxkR \zar) \leftrightarrows \Der(\shxkR \nis):\derR \alpha_*
\]
--- note indeed that $\alpha^*$ is exact.
 Coming back to the statement of the corollary, we have to show
 that the adjunction map:
\begin{equation}\label{eq:compar_zar/nis}
 \tilde \gamma_*(K) \rightarrow \derR \alpha_* \alpha^*( \tilde\gamma_*(K))
\end{equation}
is a quasi-isomorphism. Let us denote abusively by $K$ the
 sheaf $\tilde \gamma_*(K)$. Note that this will be harmless as
 $\tilde \gamma_*(\H^q_\nis(K))=\H^q_\nis(\tilde \gamma_*K)$.
 With this abuse of notation, one has canonical identifications:
\begin{align*}
\Hom_{\Der(\shxkR \nis)}(\MWrepZt(X),\derR \alpha_* \alpha^*(K))
 &=\H^p_\zar(X,K) \\
\Hom_{\Der(\shxkR \nis)}(\MWrepZt(X),\derR \alpha_* \alpha^*(\H^q_\nis(K)))
 &=\H^p_\zar(X,\H^q_\nis(K))
\end{align*}
 Using now the tower of truncation
 of $K$ for the standard $t$-structure on $\Der(\shMWtkR)$
 --- or equivalently $\Der(\shtkR)$ ---
 and the preceding identifications, one gets a spectral sequence:
\[
^{\zar}E_2^{p,q}=\H^p_\zar(X,\H^q_\nis(K))
 \Rightarrow \hypH^{p+q}_\zar(X,K)
\]
and the morphism \eqref{eq:compar_zar/nis} induces a morphism
 of spectral sequence:
\begin{align*}
E_2^{p,q}=\H^p_\nis(X,\H^q_\nis(K))
 \longrightarrow &^{\zar}E_2^{p,q}=\H^p_\zar(X,\H^q_\nis(K)) \\
 & \Rightarrow \hypH^{p+q}_\nis(X,K)
 \longrightarrow \hypH^{p+q}_\zar(X,K).
\end{align*}
The two spectral sequences converge (as the Zariski and Nisnevich cohomological
 dimensions of $X$ are bounded). According to Theorem \ref{thm:Wsh&A1},
 the map on the $E_2$-term is an isomorphism so the map
 on the abutment must be an isomorphism and this concludes the proof.
\end{proof}

\begin{num}
\label{num:suslincomplex}%
Following Voevodsky, given a complex $E$ of MW-sheaves, we define its
associated Suslin complex as the following complex of
sheaves:\footnote{Explicitly, this complex associates
to a smooth scheme $X$ the total complex (for coproducts)
of the bicomplex $E(\Delta^* \times X)$.}
\[
\Cstar E:=\uHom(\MWrepRt(\Delta^*),E)
\]
\index{Suslin(-Voevodsky) singular complex}%
\index[notation]{cf@$\Cstar F$}%
where $\Delta^*$ is the standard cosimplicial scheme.
\end{num}

\begin{coro}
\label{cor:LA1}%
Assume $k$ is an infinite perfect field of characteristic not $2$.

Then for any complex $E$ of MW-sheaves, there
 exists a canonical quasi-isomorphism:
\[
L_{\Aone}(E) \simeq \Cstar E.
\]
\end{coro}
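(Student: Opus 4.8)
The statement asserts that for any complex $E$ of MW-sheaves over an infinite perfect field of characteristic not $2$, the $\Aone$-localization $L_{\Aone}(E)$ is quasi-isomorphic to the Suslin complex $\Cstar E = \uHom(\MWrepRt(\Delta^*),E)$. The strategy mirrors Voevodsky's classical argument exactly; the two ingredients that have been set up for precisely this purpose are Theorem~\ref{thm:Wsh&A1} (the associated MW-sheaf of an $\Aone$-invariant MW-presheaf is strictly $\Aone$-invariant) and Corollary~\ref{cor:A1-local_complexes} (a complex of MW-sheaves is $\Aone$-local iff its homology sheaves are $\Aone$-invariant). So the plan is to show two things: (i) $\Cstar E$ is $\Aone$-local, and (ii) the canonical map $E \to \Cstar E$ is a weak $\Aone$-equivalence. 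Together with the universal property of $L_{\Aone}$ (the localization functor, characterized by Neeman's localization theorem in Paragraph~\ref{num:A1-local}), these give the desired canonical quasi-isomorphism $L_{\Aone}(E) \simeq \Cstar E$.

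\textbf{Step (i): $\Cstar E$ is $\Aone$-local.} First I would reduce to the case where $E$ is a single MW-sheaf $F$ placed in degree $0$, using a hypercohomology spectral sequence / truncation argument as in the proof of Corollary~\ref{cor:A1-local_complexes}: the homology sheaves of $\Cstar E$ are built from the homology (pre)sheaves $U \mapsto H_n(\Cstar F(U)) = H_n(F(\Delta^* \times U))$ for the various homology sheaves $F$ of $E$. For a single MW-sheaf $F$, the presheaf $U \mapsto H_n(F(\Delta^* \times U))$ is homotopy invariant — this is the standard fact (the analogue of the lemma in \chfinitecw, Section~\ref{sec:motivic}) that the Suslin complex of any presheaf with MW-transfers has $\Aone$-invariant homology presheaves, which follows formally from the existence of a simplicial homotopy between $\id$ and the constant map on $\Delta^\bullet \times \Aone$. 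These homology presheaves are MW-presheaves (contraction/transfer structure is inherited), so by Theorem~\ref{thm:Wsh&A1} their associated Nisnevich sheaves are strictly $\Aone$-invariant, and the Nisnevich cohomology sheaves of $\Cstar E$ are exactly these. By Corollary~\ref{cor:A1-local_complexes}, $\Cstar E$ is $\Aone$-local. One subtlety to check is that $\uHom(\MWrepRt(\Delta^*),E)$ genuinely computes these homology sheaves on the level of Nisnevich sheaves, i.e.\ that sheafification commutes with the relevant colimits — this is routine since $\Cstar$ is a finite-type construction degreewise.

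\textbf{Step (ii): $E \to \Cstar E$ is a weak $\Aone$-equivalence.} It suffices to show that for the generators $\MWrepRt(\Aone_X) \xrightarrow{p_*} \MWrepRt(X)$ of $\mathcal T_{\Aone}$, the cone becomes acyclic after $\otimes$-ing with the appropriate sheaf and applying $\Cstar$; equivalently, that the map $E \to \Cstar E$ has cone in $\mathcal T_{\Aone}$, which reduces (as the localizing subcategory $\mathcal T_{\Aone}$ is generated by these cones and $\Cstar$ is exact and compatible with the monoidal structure) to checking that the natural transformation $\id \to \Cstar$ applied to $\MWrepRt(X)$ gives a map whose cone lies in $\mathcal T_{\Aone}$ — this is the standard computation that the Suslin complex of $\MWrepRt(X)$ is $\Aone$-homotopy equivalent to $\MWrepRt(X)$, using the explicit simplicial contraction on $\Delta^\bullet$. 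Alternatively, and more cleanly: since $\Cstar E$ is already $\Aone$-local by Step (i), the composite $E \to \Cstar E \xrightarrow{\sim} L_{\Aone}(\Cstar E)$ factors the localization map $E \to L_{\Aone}(E)$ up to the comparison $L_{\Aone}(E) \to L_{\Aone}(\Cstar E)$; so it is enough that $E \to \Cstar E$ is a weak $\Aone$-equivalence, which by definition means $\Hom(\Cstar E, W) \to \Hom(E,W)$ is a bijection for all $\Aone$-local $W$, and this follows from the explicit cosimplicial contraction argument showing $\Hom_{\Der}(\tMot(X), W) \to \Hom_{\Der}(\Cstar \tMot(X), W)$ is an isomorphism for $\Aone$-local $W$ and all $X$, then extending to all of $\Der(\shMWkR)$ by passing to colimits and shifts.

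\textbf{Main obstacle.} The technical heart is Step (i): one must know that the homology presheaves of the Suslin complex of a general MW-presheaf are homotopy invariant (purely formal, via $\Delta^\bullet$-contractions) \emph{and} then invoke Theorem~\ref{thm:Wsh&A1} to pass from homotopy-invariant MW-presheaf to strictly $\Aone$-invariant MW-sheaf — the latter being where the hypotheses ``$k$ infinite perfect of characteristic $\neq 2$'' are genuinely used (through the framed-correspondences input of Garkusha--Panin). Everything else is bookkeeping: the reduction from complexes to single sheaves via the convergent Nisnevich hypercohomology spectral sequence, the compatibility of $\Cstar$ with Nisnevich sheafification, and the exactness of filtered colimits in $\shMWkR$. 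Once Step (i) is in place, Step (ii) and the final identification $L_{\Aone}(E)\simeq \Cstar E$ are formal consequences of the universal property of $L_{\Aone}$ established in Paragraph~\ref{num:A1-local}.
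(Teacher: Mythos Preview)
Your proposal is correct and follows exactly the paper's two-step strategy: (i) $\Cstar E$ is $\Aone$-local, and (ii) the canonical map $E \to \Cstar E$ is a weak $\Aone$-equivalence, whence $L_{\Aone}(E) \simeq L_{\Aone}(\Cstar E) = \Cstar E$. The paper's proof is much terser --- it simply cites Corollary~\ref{cor:A1-local_complexes} for (i) and says ``as $\Delta^n \simeq \AAA^n_k$, one checks easily'' for (ii) --- but the content is the same. One small simplification: in your Step~(i) you need not reduce to a single sheaf $F$; the homology \emph{presheaves} of $\Cstar E$ are $\Aone$-invariant directly (by the standard simplicial contraction on $\Delta^\bullet \times \Aone$), and then Theorem~\ref{thm:Wsh&A1} gives that the associated Nisnevich sheaves are $\Aone$-invariant, so Corollary~\ref{cor:A1-local_complexes} applies.
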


\begin{proof}
Indeed, according to Corollary \ref{cor:A1-local_complexes}, it is clear
 that $\Cstar E$ is $\Aone$-local. Thus the canonical map
\[
c:E \rightarrow \Cstar E
\]
induces a morphism of complexes:
\[
L_{\Aone}(c):L_{\Aone}(E) \rightarrow L_{\Aone}(\Cstar E) = \Cstar E.
\]
As $\Delta^n \simeq \AAA^n_k$, one checks easily that
the map $c$ is an $\Aone$-weak equivalence. Therefore,
the map $L_{\Aone}(c)$ is an $\Aone$-weak equivalence
of $\Aone$-local complexes, thus a quasi-isomorphism.
\end{proof}

\begin{num} \label{num:Tate}
As usual, one defines the Tate object in $\DMtekR$ by
the formula
\[
\tRx 1:=\tMot(\PP^1_k)/\tMot(\{\infty\})[-2]
\simeq \tMot(\Aone_k)/\tMot(\Aone_k-\{0\})[-2]
\simeq \tMot(\Gm)/\tMot(\{1\})[-1].
\]
\index{Tate motive|see{Milnor-Witt motive, Tate}}%
\index{Milnor-Witt!motive!Tate}%
Then one defines the effective MW-motivic cohomology of a smooth
scheme $X$ in degree $(n,i) \in \ZZ \times \NN$:\footnote{Negative
twists will be introduced in the next section.} as
\[
\HMW^{n,i}(X,R)=\Hom_{\DMtekR}(\MWrepR X,\tRx i[n])
\]
where $\tRx i={\tRx 1}^{\otimes i}$.
\index[notation]{rtn@$\tRx n$}%
\end{num}

\begin{coro}
\label{cor:W-motivic&generalized}%
Assume $k$ is an infinite perfect field of characteristic not $2$.
The effective MW-motivic cohomology defined above coincides with 
the generalized motivic cohomology groups
defined (for $R=\ZZ$) in \chfinitecw, Definition~\ref{def:genmotiviccohom}.
\end{coro}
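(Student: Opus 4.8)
I may as well take $R=\ZZ$, so that the target of the comparison is the group $\HMW^{p,q}(X,\ZZ)=\hypH^p_{\zar}(X,\tZpx q)$ of \chfinitecw, Definition~\ref{def:genmotiviccohom}; the argument below works verbatim for any ring $R$. Throughout $k$ is infinite perfect of characteristic different from $2$, so that Corollaries~\ref{cor:LA1}, \ref{cor:cohomcomplex}, \ref{cor:A1-local_complexes} and Theorem~\ref{thm:Wsh&A1} are available.

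First I would rewrite the Tate twist of \ref{num:Tate} as a representable MW-sheaf up to shift. The unit section $\{1\}\hookrightarrow\Gm$ is split by the structure map, so $\tMot(\Gm)/\tMot(\{1\})\simeq\MWrepRt(\Gmpt)$ in $\DMtekR$ and $\tRx 1\simeq\MWrepRt(\Gmpt)[-1]$. Since $\MWrepRt(\Gmpt)$ is a direct summand of the representable (hence cofibrant) sheaf $\MWrepRt(\Gm)$, the derived tensor powers of $\tRx 1$ are computed by the underived $\otr$; combining \eqref{eq:sht_monoidal} with the compatibility of $\otr$ (a left adjoint, hence colimit-preserving) with the split-cokernel definition of the smash products (\chfinitecw, Definition~\ref{dfn:reprpresheaf}) gives $\tRx i\simeq\MWrepRt(\Gmpt^{\wedge i})[-i]$ for every $i\in\NN$. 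Hence
\[
\HMW^{n,i}(X,\ZZ)=\Hom_{\DMtekR}\bigl(\MWrepRt(X),\MWrepRt(\Gmpt^{\wedge i})[n-i]\bigr).
\]
Next I would transport the computation into $\Der(\shMWxkR{\nis})$: by \ref{num:A1-local} the $\Aone$-local complexes form a full subcategory identified with $\DMtekR$, under which $\tRx i[n]$ corresponds to $L_{\Aone}(\MWrepRt(\Gmpt^{\wedge i}))[n-i]$, and by Corollary~\ref{cor:LA1} the latter is $\Cstar{\MWrepRt(\Gmpt^{\wedge i})}[n-i]$. Applying Corollary~\ref{cor:compare_Hom&cohomology} yields
\[
\HMW^{n,i}(X,\ZZ)\simeq\hypH^{n}_{\nis}\bigl(X,\Cstar{\MWrepRt(\Gmpt^{\wedge i})}[-i]\bigr).
\]
The complex $\Cstar{\MWrepRt(\Gmpt^{\wedge i})}$ is $\Aone$-local, so its Nisnevich cohomology sheaves are $\Aone$-invariant (Corollary~\ref{cor:A1-local_complexes}), and Corollary~\ref{cor:cohomcomplex} lets me replace $\hypH_{\nis}$ by $\hypH_{\zar}$ in the last display.

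It then remains to match $\hypH^{n}_{\zar}(X,\Cstar{\MWrepRt(\Gmpt^{\wedge i})}[-i])$ with $\hypH^{n}_{\zar}(X,\tZpx i)$, where $\tZpx i=\Cstar{\tZcbx i}[-i]$ and $\tZcbx i$ is the \emph{Zariski} sheaf $\MWprep(\Gmpt^{\wedge i})$ (already a Zariski sheaf by \chfinitecw, Proposition~\ref{prop:zarsheaf}). This is the subtle point, since $\tZcbx i$ is genuinely not a Nisnevich sheaf (\chfinitecw, Example~\ref{exm:notnis}). As $\tilde a$ is exact and commutes with the Suslin construction, one has $\Cstar{\MWrepRt(\Gmpt^{\wedge i})}=\tilde a\,\Cstar{\MWprep(\Gmpt^{\wedge i})}$, and $\Cstar{\tZcbx i}$ coincides with $\Cstar{\MWprep(\Gmpt^{\wedge i})}$ as a complex of presheaves; hence the unit of sheafification gives a morphism $\Cstar{\tZcbx i}\to\Cstar{\MWrepRt(\Gmpt^{\wedge i})}$ of complexes of Zariski sheaves. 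Its cohomology presheaves $\H^q\Cstar{\MWprep(\Gmpt^{\wedge i})}$ are $\Aone$-invariant MW-presheaves — the standard fact that the Suslin homology of any additive presheaf on $\cork$ is homotopy invariant, via the $\Aone$-homotopy $\Delta^{\bullet}\times\Aone\simeq\Delta^{\bullet}$ — so Theorem~\ref{thm:Wsh&A1} shows that this morphism is an isomorphism on Zariski cohomology sheaves, hence a Zariski-local quasi-isomorphism, hence an isomorphism on Zariski hypercohomology. Combined with the previous displays this gives $\HMW^{n,i}(X,\ZZ)\simeq\hypH^{n}_{\zar}(X,\tZpx i)$, which is the assertion.

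The step I expect to be the main obstacle is precisely this last one: \chfinitecw is set up throughout with the Zariski sheaves $\tZcbx q$, whereas $\DMtekR$ is built from Nisnevich sheaves, and the proof hinges on recognising that Theorem~\ref{thm:Wsh&A1} — the coincidence of the Zariski and Nisnevich sheafifications, and of the cohomologies, of an $\Aone$-invariant MW-presheaf — together with the (standard but here indispensable) homotopy invariance of Suslin homology at the presheaf level, is exactly what bridges the gap. The more routine points to keep track of are the shift bookkeeping in $\tZpx q=\Cstar{\tZcbx q}[-q]$ versus $\tRx i=\MWrepRt(\Gmpt^{\wedge i})[-i]$, and the verification that $\tRx i$ is representable up to a twist.
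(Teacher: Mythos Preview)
Your overall strategy matches the paper's proof exactly: identify $\tRx i$ with $\MWrepRt(\Gmpt^{\wedge i})[-i]$, use Corollary~\ref{cor:LA1} and Corollary~\ref{cor:compare_Hom&cohomology} to land in Nisnevich hypercohomology of $\Cstar{\MWrepRt(\Gmpt^{\wedge i})}$, then use Corollary~\ref{cor:cohomcomplex} to pass to Zariski hypercohomology, and finally compare with $\Cstar{\tZcbx i}=\Cstar{\MWprep(\Gmpt^{\wedge i})}$. The paper compresses the last step into a citation of \chcancellation, Corollary~\ref{cor:local}.

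The gap is in your justification of that last step. The assertion ``$\tilde a$ is exact and commutes with the Suslin construction'', yielding $\Cstar{\MWrepRt(\Gmpt^{\wedge i})}=\tilde a\,\Cstar{\MWprep(\Gmpt^{\wedge i})}$, is not justified and is in fact false as a level-wise identity: for a presheaf $F$ with $\tilde a F=0$, the right-hand side $(\tilde a F)(\Delta^n\times-)$ vanishes, but $\tilde a(F(\Delta^n\times-))$ need not, since $\Delta^n\times\Spec(R)$ is not henselian local even when $R$ is. What is true is that the natural map $\Cstar{\MWprep(X)}\to\Cstar{\MWrepZ(X)}$ is a \emph{Zariski-local quasi-isomorphism}; this is the content of \chcancellation, Corollary~\ref{cor:local}, whose proof (via Corollary~\ref{cor:localzero} and Proposition~\ref{prop:preparatory}) does use Theorem~\ref{thm:Wsh&A1} but in a different way: one works with the cone, observes that its cohomology presheaves are $\Aone$-invariant MW-presheaves whose Nisnevich sheafifications are strictly $\Aone$-invariant and vanish on fields (hence are zero by unramifiedness), and then invokes the Zariski$=$Nisnevich part of Theorem~\ref{thm:Wsh&A1}. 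Your final paragraph correctly identifies this bridge as the crux; you just need to replace the level-wise commutation claim by a citation of Corollary~\ref{cor:local}, or reproduce its argument.
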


\begin{proof}
By Corollary \ref{cor:LA1}, the Suslin complex of $R(i)$ is $\Aone$-local. It follows then from Corollary \ref{cor:cohomcomplex} that its Nisnevich hypercohomology and its Zariski hypercohomology coincide.  We conclude using \chcancellation, Corollary~\ref{cor:local}.
\end{proof}

We now spend a few lines in order to compare ordinary motivic cohomology with MW-motivic cohomology, following \chfinitecw, Definition~\ref{dfn:Imotiviccohomology}. In this part, we suppose that $R$ is flat over $\ZZ$. If $X$ is a smooth scheme, recall from \chfinitecw, Definition~\ref{dfn:icorXY} that the presheaf with MW-transfers $\Icorr(X)$
\index[notation]{icorx@$\Icorr(X)$}%
defined by 
\[
\Icorr(X)(Y)=\varinjlim_T\H^d_T\!\big(X \times Y,\I^{d+1},\omega_{Y}\big)
\]
fits in an exact sequence
\[
0\to \Icorr(X)\to \MWcorr(X)\to \VrepZ(X)\to 0.
\]
As $\MWcorr(X)$ is a sheaf in the Zariski topology, it follows that $\Icorr(X)$ is also such a sheaf. We can also consider the Zariski sheaf $\Icorr_R(X)$ defined by 
\[
\Icorr_R(X)(Y)=\Icorr(X)(Y)\otimes R.
\]

\begin{dfn}
We denote by $\IrepRt(X)$ the $t$-sheaf associated to the presheaf $\Icorr_R(X)$.
\index[notation]{izx@$\IrepRt(X)$}%
\end{dfn}

In view of Proposition \ref{prop:exist_associated-W-t-sheaf}, $\Icorr_R(X)$ has MW-transfers. Moreover, sheafification being exact and $R$ being flat, we have an exact sequence
\[
0\to \IrepRt(X)\to \MWrepRt(X)\to \VrepRt(X)\to 0
\]
of MW-$t$-sheaves (note the slight abuse of notation when we write $\VrepRt(X)$ in place of $\pi_*^t \VrepRt(X)$. We deduce from \cite{Mazza06_b}*{Lemma~2.13} an exact sequence 
\[
0\to \ItRcbtx q\to \tRcbtx q\to \RVcbtx q\to 0
\]
for any $q\in\NN$, where $\ItRcbtx q=\IrepRt(\Gm^{\wedge q})$.
\index[notation]{irtq@$\ItRcbtx q$}%

\begin{dfn}
For any $q\in \NN$, we set $\I\tRx q=\I\tRcbx q[-q]$ in $\DMtekR$
\index[notation]{irtq@$\I\tRx q$}%
and 
\[
\HI^{p,q}(X,R)=\Hom_{\DMtekR}(\tMot(X),\I\tRx q[p])
\]
\index[notation]{hipqxr@$\HI^{p,q}(X,R)$}%
for any smooth scheme $X$. Note that following our convention, we are using the Nisnevich topology in this definition.
\end{dfn}

\begin{num}
Assume that $k$ is an infinite perfect field of characteristic not $2$.
As Zariski cohomology and Nisnevich cohomology coincide by Corollary \ref{cor:cohomcomplex}, these groups coincide with the ones defined in \chfinitecw, Definition~\ref{dfn:Imotiviccohomology} (when $R=\ZZ$). We now construct a long exact sequence for any smooth scheme $X$ and any $q\in \NN$
\[
\cdots\to \HI^{p,q}(X,R)\to \HMW^{p,q}(X,R)\to \HM^{p,q}(X,R) \to \HI^{p+1,q}(X,R)\to \cdots
\]
where $\HM^{p,q}(X,R)$ is Voevodsky's motivic cohomology.
\index[notation]{hmpqxr@$\HM^{p,q}(X,R)$}%
The method we use was explained to us by Grigory Garkusha, whom we warmly thank.

For a smooth scheme $X$, consider the quotient $\tRcbx q(X)/\ItRcbx q(X)$. This association defines a presheaf with MW-transfers, whose associated sheaf is $\RVcbx q$.
\index[notation]{rq@$\RVcbx q$}%
Next, observe that Suslin's construction is exact on presheaves, and therefore we obtain an exact sequence of complexes of MW-presheaves
\[
0\to \Cstar{\I\tRcbx q}\to \Cstar{\tRcbx q}\to \Cstar{\tRcbx q/\ItRcbx q} \to 0.
\]
In particular, we obtain an exact sequence of complexes of $R$-modules
\[
0\to \Cstar {\I\tRcbx q}(Z)\to \Cstar {\tRcbx q}(Z)\to \Cstar {\tRcbx q/\ItRcbx q}(Z)\to 0.
\]
for any local scheme $Z$. Now, \chcancellation, Corollary~\ref{cor:local} shows that the morphism 
\[
\Cstar {\tRcbx q/\ItRcbx q}(Z)\to \Cstar {\RVcbx q}(Z)
\]
is a quasi-isomorphism. Consequently, we obtain an exact triangle in the derived category of MW-sheaves
\[
\Cstar {\I\tRcbx q}\to \Cstar {\tRcbx q}\to \Cstar {\RVcbx q}\to \Cstar {\I\tRcbx q}[1]
\]
and the existence of the long exact sequence 
\[
\cdots\to \HI^{p,q}(X,R)\to \HMW^{p,q}(X,R)\to \HM^{p,q}(X,R) \to \HI^{p+1,q}(X,R)\to \cdots
\]
follows.
\end{num}

\begin{num}
To end this section,
we now discuss the effective geometric MW-motives,
which are built as in the classical case.
 
\begin{dfn}
($k$ is an arbitrary perfect field.)
One defines the category $\DMtegmkZ$ of \emph{geometric effective motives}
\index[notation]{dmteffgeomkZ@$\DMtegmkZ$}%
\index{Milnor-Witt!motives, category of!effective geometric}%
over the field $k$ as the pseudo-abelianization
of the Verdier localization of
the homotopy category $\KCompb(\cork)$ associated
to the additive category $\cork$ with respect to the thick
triangulated subcategory containing complexes of the form:
 
\begin{enumerate}
\item
\label{item:elementary-nis-sequence}
$\cdots \rightarrow [W] \xrightarrow{k_*-g_*} [V] \oplus [U]
 \xrightarrow{f_*+j_*} [X]
 \rightarrow \cdots$
for an elementary Nisnevich distinguished square of smooth schemes:
\[
\begin{tikzcd}
[column sep=10pt]
W \ar[d,"g"'] \ar[r,"k"] & V\ar[d,"f"] \\
U\ar[r,"j"] & X;
\end{tikzcd}
\]
\item $\cdots  \rightarrow [\Aone_X] \xrightarrow{p_*} [X]  \rightarrow \cdots$
where $p$ is the canonical projection and $X$ is a smooth scheme.
\end{enumerate}
\end{dfn}

It is clear that the natural map $\KCompb(\cork) \rightarrow \Der(\shMWkZ)$
induces a canonical functor:
\[
\iota:\DMtegmkZ \rightarrow \DMtek.
\]
As a consequence of \cite{CD09_b}*{Theorem~6.2} (see also Example 6.3 of \emph{op. cit.})
 and Theorem \ref{thm:Wsh&A1}, we get the following result.
\begin{prop}\label{prop:gm_objects}
Assume $k$ is an infinite perfect field of characteristic not $2$.
The functor $\iota$ is fully faithful and its essential
 image coincides with each one of the following subcategories:
\begin{itemize}
\item the subcategory of compact objects of $\DMtek$;
\item the smallest thick triangulated subcategory of $\DMtek$
which contains the motives $\tMot(X)$ of smooth schemes $X$.
\end{itemize}
Moreover, when $k$ is an infinite perfect field,
one can reduce in part \ref{item:elementary-nis-sequence} of the definition of $\DMtegmkZ$ to consider
those complexes associated to a Zariski open cover $U \cup V$ of 
a smooth scheme $X$.
\end{prop}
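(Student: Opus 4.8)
The statement is the standard characterization of the image of geometric MW-motives inside the ``big'' category $\DMtek$, and as the authors indicate it follows the classical template of \cite{CD09_b}*{Theorem~6.2} together with the analogue of Voevodsky's theorem proved above as Theorem~\ref{thm:Wsh&A1}. The plan is to proceed in four steps: first set up the constant model structure / compact-generation framework, then deduce full faithfulness of $\iota$ from Theorem~\ref{thm:Wsh&A1}, then identify the essential image with the compact objects, and finally treat the Zariski-versus-Nisnevich reduction.

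First I would observe that $\DMtek$ is, by construction (Proposition~\ref{prop:model_DMt} and Paragraph~\ref{num:A1-local}), the homotopy category of a proper cellular symmetric monoidal model category obtained from $\Comp(\shMWxkR{\nis})$ by left Bousfield localization, and that its set of compact generators is $\{\tMot(X)\}_{X\in\smk}$ — compactness because $\MWrepRt(X)$ represents Nisnevich hypercohomology by Corollary~\ref{cor:compare_Hom&cohomology} and this commutes with coproducts, generation because the $\MWrepRt(X)$ generate $\shMWxkR{\nis}$ (after Definition~\ref{df:gen_sht}). The functor $\iota\colon \DMtegmkZ\to\DMtek$ sends $[X]$ to $\tMot(X)$ and is triangulated and monoidal by \eqref{eq:chg_top&tr_DMeff}. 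The key computation is that for smooth $X,Y$ one has
\[
\Hom_{\DMtegmkZ}([X],[Y][n])\xrightarrow{\ \sim\ }\Hom_{\DMtek}(\tMot(X),\tMot(Y)[n])
\]
for all $n\in\ZZ$; granting this, full faithfulness on the whole of $\DMtegmkZ$ follows by the usual dévissage, since both sides are thick triangulated subcategories generated by the $[X]$ (for the source) respectively closed under the relations built into $\KCompb(\cork)$. To prove the displayed isomorphism, I would use Corollary~\ref{cor:LA1}: the right-hand side is the $n$-th Nisnevich hypercohomology of $X$ with coefficients in $L_{\Aone}\MWrepRt(Y)\simeq\Cstar{\MWrepRt(Y)}$, which by Corollary~\ref{cor:cohomcomplex} equals the Zariski hypercohomology, and then the Suslin complex $\Cstar{\MWrepRt(Y)}$ computes exactly the Hom groups in the bounded homotopy category after the Nisnevich- and $\Aone$-localizing relations have been imposed — this is where Theorem~\ref{thm:Wsh&A1} enters, guaranteeing that the homology sheaves of the Suslin complex are strictly $\Aone$-invariant so that no further localization is needed. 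This is precisely \cite{CD09_b}*{Example~6.3} applied to the descent structure $(\mathcal G,\mathcal H)$ used in Corollary~\ref{cor:model_Der}.

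Next I would identify the essential image. Since $\iota$ is fully faithful and exact and its source is idempotent-complete, its essential image is a thick triangulated subcategory of $\DMtek$ containing all $\tMot(X)$; conversely any thick subcategory containing the $\tMot(X)$ contains the image, so the image is the smallest thick triangulated subcategory containing the $\tMot(X)$. That this coincides with the subcategory of compact objects is the general fact (Neeman) that in a compactly generated triangulated category the compact objects form the smallest thick subcategory containing a set of compact generators — here the $\tMot(X)$ — applied via \cite{CD09_b}*{Theorem~6.2}. Finally, for the last assertion I would use that every elementary Nisnevich square becomes, after passing to $\DMtek$, a distinguished triangle which is already a consequence of the $\Aone$-local Nisnevich-descent relations, and conversely that the thick subcategory generated by the $\tMot(X)$ together with the Zariski Mayer--Vietoris triangles already contains all Nisnevich-descent triangles; this is the statement that Nisnevich-local and Zariski-local $\Aone$-homotopy categories of schemes agree on geometric objects, which over an infinite perfect field follows from the usual Morel--Voevodsky argument (Nisnevich squares are generated, up to the relations already imposed, by Zariski covers and Henselian-pair inputs that are already invertible). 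I expect the main obstacle to be the careful bookkeeping in Step~2 — verifying that the Suslin complex genuinely computes $\DMtek$-morphisms out of a representable, i.e.\ that imposing the $\Aone$-relation on $\KCompb(\cork)$ is reflected by $\Cstar{-}$ and that no Nisnevich-sheafification obstruction survives — but all the needed inputs (Corollaries~\ref{cor:LA1}, \ref{cor:cohomcomplex}, \ref{cor:compare_Hom&cohomology}, Theorem~\ref{thm:Wsh&A1}) are already in place, so this is a matter of assembling the classical argument rather than proving anything new.
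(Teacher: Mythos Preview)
Your proposal is correct and follows essentially the same route as the paper, which simply cites \cite{CD09_b}*{Theorem~6.2, Example~6.3} together with Theorem~\ref{thm:Wsh&A1}; you have accurately unfolded what those references buy.

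One imprecision worth flagging: your explanation of the final Zariski-reduction clause is not quite the right mechanism. The reduction does not rest on a ``Morel--Voevodsky argument'' about Nisnevich squares decomposing into Zariski covers plus Henselian-pair inputs. Rather, it is a direct consequence of the second half of Theorem~\ref{thm:Wsh&A1}: for an $\Aone$-invariant MW-presheaf the associated Zariski and Nisnevich sheaves agree and have the same cohomology. This is precisely the hypothesis that makes the Zariski and Nisnevich descent structures on $\shMWxkR{\nis}$ \emph{equivalent} in the sense of \cite{CD09_b}*{Example~6.3}, so that the Verdier quotient of $\KCompb(\cork)$ imposing only Zariski Mayer--Vietoris relations (plus $\Aone$-invariance) already coincides with the one imposing Nisnevich relations. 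Your conclusion is right, but the justification should point to Theorem~\ref{thm:Wsh&A1} rather than to an independent homotopy-theoretic argument.
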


\begin{rem}
\label{rem:compact_gen_DMte}%
Note that the previous proposition states in particular that the objects
of the form $\tMot(X)$ for a smooth scheme $X$ are compact in $\DMtekR$.
As this is a direct consequence of \cite{CD09_b}*{Theorem~6.2},
and does not use Theorem \ref{thm:Wsh&A1}, this is valid
for an arbitrary perfect field $k$.
Therefore, they form a family of compact generators of $\DMtekR$ in the
sense that $\DMtekR$ is equal to its smallest triangulated category
containing $\tMot(X)$ for a smooth scheme $X$ and stable under
coproducts.
\end{rem}

\end{num}

\subsection{The stable $\Aone$-derived category}
\label{sec:stablederivedcat}

\begin{num}
As usual in motivic homotopy theory, we now describe 
the $\PP^1$-stabilization of the category of 
MW-motivic complexes for the topology $t$ (again, $t=\nis, \et$).

Recall that the Tate twist in $\DMtexkR t$ 
\index{Tate twist}%
\index{Milnor-Witt!motive!Tate}%
is defined by one of the following
 equivalent formulas:
\[
\tRx 1:=\tMot(\PP^1_k)/\tMot(\{\infty\})[-2]
 \simeq \tMot(\Aone_k)/\tMot(\Aone_k-\{0\})[-2]
 \simeq \tMot(\Gm)/\tMot(\{1\})[-1].
\]
\index[notation]{rtn@$\tRx n$}%
In the construction of the $\PP^1$-stable category as well as
in the study of the homotopy $t$-structure, it is useful to
introduce a redundant notation of $\Gm$-twist:
\[
\tRcbx 1:=\tMot(\Gm)/\tMot(\{1\})
\]
so that $\tRcbx 1=\tRx 1[1]$. The advantage of this
definition is that we can consider $\tRcbx 1$
as a MW-$t$-sheaf instead of a complex. For $m\geq 1$, we set $\tRcbx m={\tRcbx 1}^{\otimes m}$ and we observe that $\tRx m=\tRcbx m[-m]$. 
\index[notation]{rtn@$\tRcbx n$}%

Let us recall the general process of $\otimes$-inversion of the
Tate twist in the context of model categories,
as described in our particular case in \cite{CD09_b}*{\textsection~7} (or \cite{Ho00}).
We define the category $\spMWkR$ of (abelian) Tate MW-$t$-spectra 
\index[notation]{sptkr@$\spMWkR$}%
\index{Milnor-Witt!spectrum!Tate}%
\index{spectrum!Milnor-Witt Tate|see{Milnor-Witt spectrum Tate}}%
as the additive category whose object are couples $(\mathcal F_*,\epsilon_*)$ where $\mathcal F_*$
is a sequence of MW-$t$-sheaves such that $\mathcal F_n$ is equipped with
an action of the symmetric group $\mathfrak S_n$
and, for any integer $n \geq 0$,
\[
\epsilon_n:(\mathcal F_n\{1\}:=\mathcal F_n \otimes \tRcbx 1)
\rightarrow \mathcal F_{n+1}
\]
is a morphism of MW-$t$-sheaves, called the \emph{suspension map},
such that the iterated morphism
\[
\mathcal F_n\{m\} \rightarrow \mathcal F_{n+m}
\]
is $\mathfrak S_n \times \mathfrak S_m$-equivariant for any $n\geq 0$ and $m\geq 1$.
 (see \emph{loc.\ cit.}\ for more details). The morphisms in $\spMWkR$ between couples $(\mathcal F_*,\epsilon_*)$ and $(\mathcal G_*,\tau_*)$ are sequences of $\mathfrak S_n$-equivariant morphisms $f_n:\mathcal F_n\to \mathcal G_n$ such that the following diagram of $\mathfrak S_n \times \mathfrak S_m$-equivariant maps
\[
\begin{tikzcd}
\mathcal F_n\{m\} \ar[r] \ar[d,"{f_n\{m\}}"'] & \mathcal F_{n+m} \ar[d,"{f_{n+m}}"] \\
\mathcal G_n\{m\} \ar[r] & \mathcal G_{n+m}
\end{tikzcd}
\]
 is commutative for any $n\geq 0$ and $m\geq 1$.

This is a Grothendieck abelian, closed symmetric monoidal category with tensor product described in \cite{CD09_b}*{\S 7.3, \S 7.4} (together with \cite{MacLane98_b}*{Chapter~VII, \S 4, Exercise~6}).
Furthermore, we have a canonical adjunction of abelian categories:
\begin{equation}\label{eq:suspension}
\Sigma^\infty:\shMWtkR \leftrightarrows \spMWkR:\Omega^\infty
\end{equation}
such that $\Sigma^\infty(\mathcal F)=(\mathcal F\{n\})_{n \geq 0}$
with the obvious suspension maps
and $\Omega^\infty(\mathcal F_*,\epsilon_*)=\mathcal F_0$.
\index[notation]{sigmainf@$\Sigma^\infty$,$\Omega^\infty$}%
Recall the Tate MW-$t$-spectrum $\Sigma^\infty(\mathcal F)$
is called the \emph{infinite spectrum} associated with $\mathcal F$.
The functor $\Sigma^\infty$ is monoidal (cf.\ \cite{CD09_b}*{\S 7.8}).

One can define the $\Aone$-stable cohomology of a complex
$\EE=(\EE_*,\sigma_*)$ 
of Tate MW-$t$-spectra, for any smooth scheme $X$ and any
couple $(n,m) \in \ZZ^2$:
\begin{equation}\label{eq:stable_A1_cohomology}
\HstAone^{n,m}(X,\EE)
:=\varinjlim_{r \geq \max(0,-m)}
\Big(\Hom_{\DMtekR}(\tMot(X)\{r\},\EE_{m+r}[n])\Big)
\end{equation}
\index{stable!$\Aone$-cohomology}%
\index[notation]{hsta1nmxe@$\HstAone^{n,m}(X,\EE)$}%
where the transition maps are induced by the suspension maps $\sigma_*$ and $\tMot(X)\{r\}=\tMot(X)\otimes \tRcbx r$. 
\end{num}

\begin{dfn}
We say that a morphism $\varphi:\EE \rightarrow \FF$ of complexes of Tate
MW-$t$-spectra is a \emph{stable $\Aone$-equivalence}
\index{stable!$\Aone$-equivalence}%
if for any smooth scheme $X$
and any couple $(n,m) \in \ZZ^2$, the induced map
\[
\varphi_*:\HstAone^{n,m}(X,\EE)
\rightarrow \HstAone^{n,m}(X,\FF)
\]
is an isomorphism.

One defines the category $\DMtxkR t$
\index[notation]{dmtkr@$\DMtxkR t$}%
\index{Milnor-Witt!motivic!spectra}%
of \emph{MW-motivic spectra} for the topology $t$
as the localization of the triangulated category
$\Der(\spMWkR)$ with respect to stable $\Aone$-derived equivalences.
\end{dfn}

\begin{num}\label{num:twist-1}
As usual, we can describe the above category as the homotopy
 category of a suitable model category.

First, recall that we can define the negative twist of an abelian Tate
 MW-$t$-spectrum $\mathcal F_*$ by the formula, for $n>0$:
\begin{equation}\label{eq:twist-1}
\begin{split}
\lbrack(\mathcal F_*)\{-n\}\rbrack_m=
\begin{cases}
\ZZ[\mathfrak S_m] \otimes_{\ZZ[\mathfrak S_{m-n}]} \mathcal F_{m-n}
 & \text{if $m \geq n$,} \\
0 & \text{otherwise.}
\end{cases}
\end{split}
\end{equation}
\index[notation]{fn@$(\mathcal F_*)\{-n\}$}%
with the obvious suspension maps. Note for future references that
 one has according to this definition and that of the tensor product:
\begin{equation}\label{eq:twist-2}
\mathcal F_*\{-n\} =\mathcal F_* \otimes (\Sigma^{\infty}\tR)\{-n\}.
\end{equation}

We then define the class of cofibrations in $\Comp(\spMWkR)$
as the smallest class of morphisms of complexes closed under
suspensions, negative twists, pushouts,
transfinite compositions and retracts generated by
the infinite suspensions of cofibrations in $\Comp(\shMWtkR)$.

Applying \cite{CD09_b}*{Prop.~7.13}, we get:
\end{num}
\begin{prop}
The category $\Comp(\spMWkR)$ of complexes of Tate MW-$t$-spectra
has a symmetric monoidal model structure
\index{model structure!stable $\Aone$}%
with stable $\Aone$-equivalences as weak equivalences and
cofibrations as defined above.
This model structure is proper and cellular.

Moreover, the fibrations for this model structure
are epimorphisms of complexes whose kernel
is a complex $\EE$ such that:
\begin{itemize}
\item for any $n \geq 0$, $\EE_n$ is a $t$-flasque and $\Aone$-local
complex;
\item for any $n \geq 0$, the map obtained by adjunction from the
obvious suspension map:
\[
\EE_{n+1} \rightarrow \derR \uHom(\tRcbx 1,\EE_n)
\]
is an isomorphism.
\end{itemize}
\end{prop}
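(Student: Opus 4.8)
The statement is the standard ``stable model structure'' result for a category of $T$-spectra obtained from a combinatorial/cellular monoidal model category by $\otimes$-inverting a cofibrant object (here $\tRcbx 1$). The plan is to invoke the general machinery of Cisinski--D\'eglise \cite{CD09_b}*{\S 7}, exactly as the paper signals with ``Applying \cite{CD09_b}*{Prop.~7.13}''. Concretely, the input to that proposition is: (1) the category $\shMWtkR$ is a Grothendieck abelian category (Proposition~\ref{prop:exist_associated-W-t-sheaf}\ref{item:shgrothab}); (2) it carries the weakly flat descent structure $(\mathcal G,\mathcal H)$ described in the proof of Corollary~\ref{cor:model_Der}, giving the $t$-descent symmetric monoidal model structure on $\Comp(\shMWtkR)$; (3) this model structure has been left Bousfield localized to the $\Aone$-local symmetric monoidal model structure of Proposition~\ref{prop:model_DMt}, which is proper and cellular; (4) the twist object $\tRcbx 1$ is cofibrant (being a quotient of representables, hence a bounded complex of representables; see the remark after Paragraph~\ref{num:Wmodel}), and moreover it is \emph{symmetric} in the relevant sense so that the Bousfield--Friedlander cyclic permutation condition of \cite{CD09_b}*{\S 7} is met --- this is where one cites that $\tRcbx 1$, or rather $\tRcbx 3$, admits the required $\mathfrak S_3$-trivializing homotopy, which for the $\PP^1$-model is classical and transported through the identifications in Paragraph~\ref{num:Tate}.

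\textbf{Key steps, in order.} First I would record that $\spMWkR$ is the category of symmetric $\tRcbx 1$-spectra in $\shMWtkR$ and note it is again a closed symmetric monoidal Grothendieck abelian category, with the suspension/loop adjunction \eqref{eq:suspension} and the negative twists \eqref{eq:twist-1}. Second, I would define the class of cofibrations as in Paragraph~\ref{num:twist-1} (generated by infinite suspensions and negative twists of generating cofibrations of the $\Aone$-local structure on $\Comp(\shMWtkR)$) and check this is the class produced by the general construction of \cite{CD09_b}*{\S 7}, so that the projective/$t$-descent level model structure on $\Comp(\spMWkR)$ exists. Third, I would perform the stable localization: the stable $\Aone$-equivalences are exactly the maps inverted after $\otimes$-inverting $\tRcbx 1$, and applying \cite{CD09_b}*{Prop.~7.13} to the cellular proper monoidal $\Aone$-local model structure yields a proper cellular \emph{symmetric monoidal} model structure on $\Comp(\spMWkR)$ whose weak equivalences are the stable $\Aone$-equivalences in the sense of \eqref{eq:stable_A1_cohomology}. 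Fourth, I would identify the fibrations and fibrant objects: by the general theory, a map is a fibration iff it is an epimorphism with kernel a complex $\EE=(\EE_n,\sigma_n)$ which is levelwise fibrant for the $\Aone$-local structure (i.e.\ each $\EE_n$ is $t$-flasque and $\Aone$-local, using Proposition~\ref{prop:model_DMt} and Theorem~\ref{thm:CD}) and an $\Omega$-spectrum in the sense that the adjoint structure maps $\EE_{n+1}\to \derR\uHom(\tRcbx 1,\EE_n)$ are isomorphisms in $\Der(\shMWtkR)$ --- this last being the defining property of stably fibrant objects in a Bousfield localization inverting $\tRcbx 1$.

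\textbf{Main obstacle.} The only genuinely non-formal point is verifying the hypothesis of \cite{CD09_b}*{\S 7} that makes \emph{symmetric} Tate spectra model the $\otimes$-inversion (as opposed to merely sequential spectra): one needs the cyclic permutation on $\tRcbx 1^{\otimes 3}$ to be ``homotopically trivial'', i.e.\ equal in $\DMtekR$ to the identity after a suitable suspension. For the Tate object this is the classical computation that the permutation of the three $\PP^1$-factors acts as $+1$, which passes to our setting because $\derL\tilde\gamma^*$ is monoidal and the corresponding fact holds in $\DAekR$; alternatively one invokes that $\tRcbx 1$ is the reduced motive of $\PP^1$ and cites the standard cyclic-permutation lemma (as in \cite{CD09_b}*{\S 7} or \cite{Ho00}). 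Granting this, everything else is a direct, if bookkeeping-heavy, application of the Cisinski--D\'eglise stabilization theorem, and in particular properness, cellularity, and the monoidal axiom are inherited verbatim from the $\Aone$-local structure on $\Comp(\shMWtkR)$ established in Proposition~\ref{prop:model_DMt} together with the weak flatness of the descent structure noted in the proof of Corollary~\ref{cor:model_Der}.
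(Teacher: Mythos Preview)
Your proposal is correct and follows exactly the paper's approach: the paper's entire ``proof'' is the single sentence ``Applying \cite{CD09_b}*{Prop.~7.13}, we get:'', and you have correctly unpacked the hypotheses of that proposition (Grothendieck abelian category, flat descent structure, $\Aone$-local cellular proper monoidal model structure from Proposition~\ref{prop:model_DMt}, cofibrant twist object).

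One small calibration: the cyclic permutation condition you flag as the ``main obstacle'' is not actually a hypothesis of \cite{CD09_b}*{Prop.~7.13}. The model structure on symmetric spectra, with the stated characterization of fibrations as levelwise-fibrant $\Omega$-spectra, exists without it. The permutation condition enters only for the \emph{subsequent} claims: that $\tRcbx 1$ becomes $\otimes$-invertible (\cite{CD09_b}*{Prop.~7.14}, noted just after the proposition) and that stable Hom-groups are computed by the colimit formula (Proposition~\ref{prop:Homlimiso}, whose proof explicitly postpones the permutation fact to Corollary~\ref{cor:permut_Tate}). So your discussion of $\mathfrak S_3$ acting on $\tRcbx 3$ is correct in substance but belongs to the next step, not this one.
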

Therefore, the homotopy category $\DMtxkR t$ is a triangulated symmetric
monoidal category with internal Hom. The adjoint pair \eqref{eq:suspension}
can be derived and induces an adjunction of triangulated categories:
\[
\Sigma^\infty:\DMtexkR t \leftrightarrows \DMtxkR t:\Omega^\infty.
\]
\index[notation]{sigmainf@$\Sigma^\infty$,$\Omega^\infty$}%
As a left derived functor of a monoidal functor, the functor
 $\Sigma^\infty$ is monoidal.
Slightly abusing notation, we still denote by $\tMot(X)$ the MW-motivic spectrum $\Sigma^{\infty}(\tMot(X))$.

\begin{num}
By construction, the MW-motivic spectrum $\tRcbx 1$,
 and thus $\tRx 1$ is $\otimes$-invertible in $\DMtxkR t$
 (see \cite{CD09_b}*{Prop.~7.14}). Moreover,
 using formulas \eqref{eq:twist-1} and \eqref{eq:twist-2},
 one obtains a canonical map  in $\spMWkR$:
\[
\phi:\Sigma^\infty \tRcbx 1 \otimes \big((\Sigma^\infty \tRcbx{-1}\big)
 \rightarrow \big(\Sigma^\infty \tRcbx 1\big)\{-1\}
 \rightarrow \tR.
\]
The following proposition justifies the definition of Paragraph
 \ref{num:twist-1} of negative twists.
\end{num}
\begin{prop} \label{prop:negstableA1}
The map $\phi$ is a stable $\Aone$-equivalence.
 The MW-motive $(\Sigma^\infty \tR)\{-1\}$
 is the tensor inverse of $\tRcbx 1$.
 For any MW-$t$-spectra $\EE$,
 the map obtained by adjunction from $\EE \otimes \phi$:
\[
\EE\{-1\} \rightarrow \derR \uHom (\tRcbx 1,\EE)
\]
is a stable $\Aone$-equivalence.
\end{prop}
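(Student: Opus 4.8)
The three assertions of Proposition~\ref{prop:negstableA1} are tightly linked, and the plan is to establish them in the logical order: first the $\otimes$-invertibility of $\tRcbx 1$, then the fact that $(\Sigma^\infty \tR)\{-1\}$ realizes its inverse via $\phi$, and finally the adjunction statement. The first point is essentially formal from the construction: in the $\PP^1$-stabilization one inverts $\tRcbx 1$ by fiat, so $\tRcbx 1$ (and hence $\tRx 1=\tRcbx 1[-1]$) becomes $\otimes$-invertible in $\DMtxkR t$; the relevant citation is \cite{CD09_b}*{Prop.~7.14}, which applies here because the symmetric monoidal model structure on $\Comp(\spMWkR)$ has been set up exactly so that the spectrum-level twist functor $\{1\}$ becomes an autoequivalence after passing to the homotopy category. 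So I would begin by recording this.

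\textbf{The core computation.} The heart of the argument is to identify the tensor inverse concretely. I would first observe, using formula \eqref{eq:twist-2}, that $\mathcal F_*\{-n\}=\mathcal F_* \otimes (\Sigma^{\infty}\tR)\{-n\}$, so it suffices to treat $\EE=\Sigma^\infty\tR=\un$, the unit. Then the composite $\phi:\Sigma^\infty\tRcbx 1 \otimes (\Sigma^\infty\tRcbx{-1}) \to (\Sigma^\infty\tRcbx 1)\{-1\} \to \tR$ (where the first map uses the definition of the tensor product and the second the structure of the formal desuspension in \eqref{eq:twist-1}) must be shown to be a stable $\Aone$-equivalence. The standard way to check this is to evaluate both sides against arbitrary $\tMot(X)\{r\}[n]$ in the colimit defining $\HstAone^{n,m}(-,-)$ from \eqref{eq:stable_A1_cohomology}: the shift-by-one on the level index $m$ induced by smashing with $\tRcbx 1$ is absorbed by the colimit, so $\phi_*$ becomes a cofinal reindexing of the colimit system and is therefore an isomorphism. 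Equivalently, one invokes the general ``cyclic permutation'' formalism for symmetric spectra in a monoidal model category — the point being that $\Sigma^\infty\tRcbx 1$ is a \emph{symmetric} $T$-spectrum (the $\mathfrak S_n$-equivariance built into the definition of $\spMWkR$ was imposed precisely for this), so Voevodsky's cyclic-permutation obstruction vanishes automatically and $\{1\}$ and $-\otimes\Sigma^\infty\tRcbx 1$ agree up to canonical isomorphism. Once $\phi$ is a stable equivalence, $(\Sigma^\infty\tR)\{-1\}$ is by construction $\otimes$-inverse to $\tRcbx 1$.

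\textbf{The adjunction statement.} For the last assertion I would argue as follows: since $\tRcbx 1$ is $\otimes$-invertible in $\DMtxkR t$ with inverse $(\Sigma^\infty\tR)\{-1\}$, the functor $-\otimes(\Sigma^\infty\tR)\{-1\}$ is itself an equivalence and is therefore canonically isomorphic to $\derR\uHom(\tRcbx 1,-)$ (the right adjoint of $-\otimes\tRcbx 1$, which for an invertible object coincides with tensoring by the inverse). Concretely, the map $\EE\{-1\} \to \derR\uHom(\tRcbx 1,\EE)$ obtained by adjunction from $\EE\otimes\phi$ is identified with $\EE\otimes\phi$ under the natural iso $\EE\{-1\}\simeq \EE\otimes(\Sigma^\infty\tR)\{-1\}$ of \eqref{eq:twist-2} and the duality iso $\derR\uHom(\tRcbx 1,\EE)\simeq \EE\otimes(\tRcbx 1)^{\otimes -1}$, so it is a stable $\Aone$-equivalence because $\phi$ is.

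\textbf{Main obstacle.} The formal parts (invertibility, deducing the adjunction from invertibility) are routine given \cite{CD09_b}*{\textsection 7}. The only genuine content is verifying that $\phi$ is a stable $\Aone$-equivalence, i.e.\ that smashing with $\Sigma^\infty\tRcbx 1$ and the level-shift functor $\{1\}$ induce the same operation on $\DMtxkR t$. In the symmetric-spectra setup this is a consequence of the fact that the cyclic permutation of $\tRcbx 1{}^{\otimes 3}$ is $\Aone$-homotopic to the identity — a property that in turn rests on $\tRcbx 1$ carrying its symmetric-group actions compatibly, which is exactly how the category $\spMWkR$ was defined. I expect the bulk of the actual write-up to consist of either (a) citing \cite{CD09_b}*{Prop.~7.14} and the accompanying discussion of negative twists, or (b) spelling out the cofinality-of-colimit-systems argument on $\HstAone^{*,*}$ directly from \eqref{eq:stable_A1_cohomology}; I would favor (a) for brevity, falling back on (b) only if the cited proposition does not literally include the map $\phi$.
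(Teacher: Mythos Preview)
Your option (b) — the direct cofinality/reindexing argument on the colimit system defining $\HstAone^{*,*}$ from \eqref{eq:stable_A1_cohomology} — is exactly what the paper does, and the deduction of the other two assertions as formal consequences matches as well. Two small points: first, the paper reverses your order, proving that $\phi$ is a stable $\Aone$-equivalence \emph{first} and reading off the $\otimes$-invertibility of $\tRcbx 1$ (with inverse $(\Sigma^\infty\tR)\{-1\}$) as an immediate consequence, rather than establishing invertibility separately; second, your remark about the cyclic permutation of $\tRcbx 1^{\otimes 3}$ is misplaced here — that fact is needed later (for Proposition~\ref{prop:Homlimiso}, to compute stable Hom groups as the naive colimit) but plays no role in showing $\phi$ is a stable $\Aone$-equivalence, which is a purely elementary reindexing of the defining colimit.
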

\begin{proof}
The first assertion follows from a direct computation using
 the definition of stable $\Aone$-equivalences via the 
 $\Aone$-stable cohomology \eqref{eq:stable_A1_cohomology}.
 The other assertions are formal consequences of the first one.
\end{proof}

As in the effective case, we derive from the functors of
Corollary \ref{cor:adjunctions_corr}
Quillen adjunctions for the stable $\Aone$-local model
structures and consequently adjunctions of triangulated categories:
\begin{equation}\label{eq:chg_top&tr_DM}
\begin{tikzcd}
[column sep=30pt,row sep=24pt]
\DAkR \ar[r,shift left=2pt,"{\derL \tilde \gamma^*}"] \ar[d,shift left=2pt,"a"]
 & \DMtkR \ar[r,shift left=2pt,"{\derL \pi^*}"] \ar[d,shift left=2pt,"{\tilde a}"]
     \ar[l,shift left=2pt,"{\tilde \gamma_{*}}"]
 & \DMkR \ar[d,shift left=2pt,"{a^{tr}}"]
     \ar[l,shift left=2pt,"{\pi_{*}}"] \\
\DAxkR{\et} \ar[r,shift left=2pt,"{\derL \tilde \gamma^*_\et}"]
    \ar[u,shift left=2pt,"{\derR \fO}"]
 & \DMtxkR{\et} \ar[r,shift left=2pt,"{\derL \pi^*_\et}"]
	  \ar[u,shift left=2pt,"{\derR \tfO}"]
		\ar[l,shift left=2pt,"{\tilde \gamma_{\et*}}"]
 & \DMxkR{\et}
    \ar[u,shift left=2pt,"{\derR \fO}"]
		\ar[l,shift left=2pt,"{\pi_{\et*}}"]
\end{tikzcd}
\end{equation}
where each left adjoints is monoidal and sends a motive of a smooth
 scheme to the corresponding motive.

As a consequence of Corollary~\ref{cor:adjunctions_corr}\ref{item:adjunction-pi-equivalence},
we immediately get the following result:
\begin{prop}
\label{prop:compare_DMt_DM}%
($k$ is an arbitrary perfect field.)
The adjunction defined above 
\[
\derL \pi^*_\et=\pi^*_\et:\DMtxkR{\et} \leftrightarrows \DMxkR{\et}:\pi_{\et*}
\]
is an equivalence of triangulated monoidal categories.
\end{prop}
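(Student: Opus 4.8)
The plan is to reduce the stable statement to the effective one, exactly as the excerpt suggests by invoking Corollary~\ref{cor:adjunctions_corr}\ref{item:adjunction-pi-equivalence}. First I would recall that $\DMtxkR{\et}$ and $\DMxkR{\et}$ are obtained by $\PP^1$-stabilization from $\DMtexkR{\et}$ and $\DMexkR{\et}$, and that the Quillen adjunction $(\pi^*_\et,\pi_{\et*})$ at the spectrum level is induced term-wise by the effective one, as displayed in Diagram~\eqref{eq:chg_top&tr_DM}. Since $\pi^*_\et$ is exact at the level of sheaves (Corollary~\ref{cor:adjunctions_corr}\ref{item:exists-diagram-monoidal} shows it preserves the canonical family of generators, and the $\et$-comparison is an equivalence on abelian categories), its derived functor is computed term-wise and commutes with the stabilization functor $\Sigma^\infty$; concretely $\derL\pi^*_\et$ sends $\tMot(X)\{r\}$ to $\Mot(X)\{r\}$. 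Thus the key point is to show that $\pi^*_\et$ induces an equivalence on the subcategories of $\Omega^\infty$-spectra, and for this it suffices to check that the unit and counit are isomorphisms.

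The main step is therefore to verify that the effective equivalence of the preceding Proposition (namely $\derL\pi^*_\et:\DMtexkR{\et}\leftrightarrows\DMexkR{\et}:\pi_{\et*}$ is an equivalence of triangulated monoidal categories) propagates to the stabilization. I would argue as follows. Because $\DMtxkR{\et}$ is compactly generated by the twisted motives $\tMot(X)\{r\}$ for $X$ smooth and $r\in\ZZ$ (Remark~\ref{rem:compact_gen_DMte} together with the $\otimes$-invertibility of $\tRx1$ in the stable category, Proposition~\ref{prop:negstableA1}), and $\derL\pi^*_\et$ is a triangulated monoidal functor sending these generators to the corresponding generators $\Mot(X)\{r\}$ of $\DMxkR{\et}$, essential surjectivity is immediate once full faithfulness is known. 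For full faithfulness I would reduce, using that both sides commute with coproducts and suspensions and that $\tRx1$ is inverted, to computing $\Hom$-groups between twisted motives: by the definition \eqref{eq:stable_A1_cohomology} of stable cohomology these are filtered colimits of $\Hom$-groups in the effective categories $\DMtexkR{\et}$ and $\DMexkR{\et}$, and on each of those the effective equivalence already gives the isomorphism. Since $\derL\pi^*_\et$ is by construction compatible with the transition maps in these colimits (they are induced by the suspension maps, which $\pi^*_\et$ respects monoidally), passing to the colimit yields the desired isomorphism on stable $\Hom$-groups.

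A clean alternative, which I would mention, is to invoke a general principle: if a Quillen adjunction between two stable model categories of Tate spectra is level-wise a Quillen equivalence, then it is a stable Quillen equivalence (this is the analogue in our setting of standard facts about stabilization of model categories, cf.\ the framework of \cite{CD09_b}*{\textsection~7} and \cite{Ho00}). Applied to the level-wise adjunction underlying \eqref{eq:chg_top&tr_DM} in the $\et$-column, whose level-wise homotopy categories are $\DMtexkR{\et}$ and $\DMexkR{\et}$ and which is an equivalence by the effective Proposition, this gives at once that $\derL\pi^*_\et$ is an equivalence $\DMtxkR{\et}\simeq\DMxkR{\et}$. The monoidality of the equivalence is then automatic since $\pi^*_\et$ is a monoidal functor and its quasi-inverse inherits a lax monoidal structure which is strict because $\pi^*_\et$ is strong monoidal and an equivalence.

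The hard part will not be any single computation but rather making the reduction to the effective case rigorous: one must be careful that the level-wise Quillen equivalence hypothesis is genuinely satisfied (i.e.\ that the effective equivalence is realized by a Quillen equivalence of the underlying model categories, not merely an equivalence of triangulated categories), and that the stabilization machinery of \cite{CD09_b} is being applied to a symmetric monoidal model category satisfying the needed hypotheses (cofibrant generation, the monoid axiom, cofibrancy of the twist $\tRcbx1$). All of these are already in place from Proposition~\ref{prop:model_DMt} and the stable model structure proposition, so the argument is essentially a matter of assembling the pieces; the genuinely new mathematical content is entirely contained in Corollary~\ref{cor:adjunctions_corr}\ref{item:adjunction-pi-equivalence}, i.e.\ in Lemma~\ref{lemm:GW_etale_trivial}.
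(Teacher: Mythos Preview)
Your proposal is correct and aligns with the paper's approach: the paper simply declares the result an immediate consequence of Corollary~\ref{cor:adjunctions_corr}\ref{item:adjunction-pi-equivalence}, and your argument is a legitimate unpacking of that claim. In fact your ``clean alternative'' is closest to what the paper intends: since $\pi^*_\et:\shMWxkR{\et}\to\shVxkR{\et}$ is an equivalence of \emph{abelian} categories, every construction built functorially from these (the derived category, its $\Aone$-localization, and the category of symmetric Tate spectra with its stable model structure) is automatically equivalent via the induced functors, so there is nothing to check beyond the sheaf-level statement. Your first argument via compact generators and the colimit description \eqref{eq:stable_A1_cohomology} of stable Hom-groups is also valid but more elaborate than necessary; the paper does not bother with it because the equivalence is already present at the level of the underlying model categories, not merely at the level of homotopy categories.
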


Formally, one can compute morphisms  of MW-motivic spectra
 as follows.
\begin{prop} \label{prop:Homlimiso}
For any smooth scheme $X$, any pair of integers $(n,m) \in \ZZ^2$
and any MW-motivic spectrum $\EE$, one has a canonical
 functorial isomorphism:
\begin{align*}
\Hom_{\DMtkR}(\tMot(X),\EE(n)[m])
 &\simeq \HstAone^{n,m}(X,\EE) \\
 &=\varinjlim_{r \geq \max(0,-m)}
 \Big(\Hom_{\DMtekR}(\tMot(X)\{r\},\EE_{m+r}[n])\Big).
\end{align*}
\end{prop}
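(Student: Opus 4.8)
The statement is the standard ``stable = colimit of unstable Hom'' computation for a spectrum in a $\mathbb P^1$-stable model category, and the plan is to deduce it from the model-categorical description of $\DMtxkR t$ already set up in this section, exactly as one does in the classical motivic setting. First I would reduce to computing $\Hom_{\DMtkR}(\Sigma^\infty \tMot(X)(n)[m],\EE)$ with $\EE$ replaced by a stably fibrant replacement: by the stable $\Aone$-local model structure (the Proposition just before \ref{prop:negstableA1}), every object has a fibrant replacement $\EE'$ whose components $\EE'_r$ are $t$-flasque, $\Aone$-local complexes and for which the adjoint structure maps $\EE'_{r+1}\to \derR\uHom(\tRcbx 1,\EE'_r)$ are isomorphisms; since stable $\Aone$-equivalences compute all the relevant Hom-groups on both sides, it is harmless to assume $\EE=\EE'$ is $\Omega$-fibrant. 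Then, using that $\tRcbx 1$ is $\otimes$-invertible (Proposition \ref{prop:negstableA1}) and that the twist $\tMot(X)(n)$ is $\tMot(X)\otimes\tRcbx{?}[?]$ up to the shift relating $\tRx 1$ and $\tRcbx 1$, I would unwind $\Hom_{\DMtkR}$ into a Hom of the underlying $0$-th level.

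\textbf{Key steps.} The core computation is: for $\EE$ $\Omega$-fibrant,
\[
\Hom_{\DMtkR}(\Sigma^\infty \tMot(X)(n)[m],\EE)
\;\simeq\;
\Hom_{\DMtekR}\big(\tMot(X)(n)[m],\Omega^\infty\EE\big),
\]
which is the derived adjunction $(\Sigma^\infty,\Omega^\infty)$ between $\DMtexkR t$ and $\DMtxkR t$ applied to a cofibrant source and a fibrant target. Next, $\Omega^\infty\EE=\EE_0$, and the $\Omega$-fibrancy condition gives $\EE_0\simeq \derR\uHom(\tRcbx r,\EE_r)$ for every $r\ge 0$; combined with the invertibility of $\tRcbx 1$ this converts $\Hom_{\DMtekR}(\tMot(X)(n)[m],\EE_0)$ into $\Hom_{\DMtekR}(\tMot(X)\{r\}(n{+}?)[m{+}?],\EE_r)$ for large $r$, the bookkeeping of shifts being handled by $\tRx 1=\tRcbx 1[-1]$ and $\tRcbx m=\tRcbx 1^{\otimes m}$ as recorded in Paragraph \ref{num:twist-1} and Paragraph \ref{num:Tate}. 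Finally I would identify this stabilized expression with the colimit $\varinjlim_{r\ge\max(0,-m)}\Hom_{\DMtekR}(\tMot(X)\{r\},\EE_{m+r}[n])$ defining $\HstAone^{n,m}(X,\EE)$ in \eqref{eq:stable_A1_cohomology}; since $\tMot(X)$ is compact in $\DMtexkR t$ (Remark \ref{rem:compact_gen_DMte}), the Hom out of it commutes with the filtered colimit, so one may pass freely between ``value at a large level'' and ``colimit over levels''. The transition maps match by construction, being induced by the suspension maps $\sigma_*$ on both sides.

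\textbf{Main obstacle.} The only genuinely delicate point is the passage from the abstract derived adjunction $(\Sigma^\infty,\Omega^\infty)$ to the explicit colimit formula: one must check that a stably fibrant replacement of $\EE$ really does compute $\Omega^\infty$ correctly \emph{levelwise} and that the structure maps of the fibrant replacement induce, via adjunction, exactly the transition maps appearing in \eqref{eq:stable_A1_cohomology}. This is a standard but slightly fiddly verification about the model structure of \cite{Ho00} / \cite{CD09_b}*{\textsection 7}, and in particular requires knowing that for a cofibrant effective object $M$ the natural map $\varinjlim_r \Hom_{\DMtekR}(M\{r\},\EE_{m+r}[n])\to \Hom_{\DMtkR}(\Sigma^\infty M(m)[n],\EE)$ is an isomorphism — which is precisely \cite{CD09_b}*{Prop.~7.13} together with the compactness statement. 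Granting those references, the proof is a short formal manipulation: apply the fibrant replacement, use $\otimes$-invertibility of $\tRcbx 1$, use $\Omega^\infty$-fibrancy levelwise, invoke compactness of $\tMot(X)$ to commute Hom with the colimit, and read off the claimed isomorphism.
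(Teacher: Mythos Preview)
Your direct model-categorical approach via fibrant replacement is a reasonable strategy, but it differs from the paper's proof and contains a gap.

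The paper's proof is two lines: it invokes \cite{Ayoub07_b}*{4.3.61 and 4.3.79} and verifies the two hypotheses those results require, namely that the generators $\tMot(X)$ are compact in $\DMtekR$ (Remark~\ref{rem:compact_gen_DMte}) and that the cyclic permutation of order~$3$ acts as the identity on $\tRcbx{3}$ in $\DMtekR$ (the proof of this is deferred to Corollary~\ref{cor:permut_Tate}).

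The gap in your argument is precisely this second hypothesis, which you never mention. The reference \cite{CD09_b}*{Prop.~7.13} constructs the stable model structure, but it does not by itself yield the identification of $\Hom_{\DMtkR}$ with the colimit~\eqref{eq:stable_A1_cohomology}. The point is that we are working with \emph{symmetric} Tate spectra: the $\mathfrak S_n$-actions on the levels mean that the naive colimit of level-wise Homs need not compute the Hom in the localized category---this is the familiar distinction between naive and true homotopy groups of symmetric spectra. Concretely, the step you dismiss as ``bookkeeping of shifts'' is where one must compare the free spectrum $F_r(\tMot(X)\{r\})=(\Sigma^\infty\tMot(X)\{r\})\{-r\}$ with $\Sigma^\infty\tMot(X)$; the natural map between them involves rearranging tensor factors of $\tRcbx 1$, and it is the triviality of the cyclic permutation on $\tRcbx 3$ that makes these rearrangements invisible in $\DMtkR$. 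Ayoub's results package exactly this, and the permutation condition is one of their hypotheses. Compactness alone is not enough.
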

\begin{proof}
This follows from \cite{Ayoub07_b}*{4.3.61 and 4.3.79}
 which can be applied because of Remark \ref{rem:compact_gen_DMte}
 and the fact the cyclic permutation of order 3 acts on
 $\tRcbx 3$ as the identity in $\DMtekR$ (the proof of this fact is postponed until Corollary \ref{cor:permut_Tate}).
\end{proof}

In fact, as in the case of motivic complexes, one gets a better result
 under more restrictive assumptions on the base field $k$.
 This is due to the following
 analogue of Voevodsky's cancellation theorem \cite{Voe10_b}, 
 which is proved in \chcancellation.
\begin{thm}\label{thm:cancellation}
Let $k$ be a perfect infinite field of characteristic not $2$. Then for any complexes
 $K$ and $L$ of MW-sheaves, the morphism 
\[
\Hom_{\DMtekR}(K,L) \rightarrow \Hom_{\DMtekR}(K(1),L(1)),
\]
obtained by tensoring with the Tate twist, is an isomorphism.
\end{thm}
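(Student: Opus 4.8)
The plan is to follow Voevodsky's proof of the cancellation theorem \cite{Voe10_b}, adapting it to the Milnor-Witt setting, and the bulk of the work will be carried out in \chcancellation; here I only sketch the strategy. First I would observe that, by Remark~\ref{rem:compact_gen_DMte}, the category $\DMtekR$ is compactly generated by the motives $\tMot(X)$ of smooth schemes, so it suffices to prove that the twist functor $(1) = - \otimes \tRx 1$ induces an isomorphism
\[
\Hom_{\DMtekR}\big(\tMot(X),L\big) \xrightarrow{\ \sim\ } \Hom_{\DMtekR}\big(\tMot(X)(1),L(1)\big)
\]
for every smooth $X$ and every complex $L$ of MW-sheaves, and moreover (using Corollary~\ref{cor:compare_Hom&cohomology} and Corollary~\ref{cor:LA1}) to work with an $\Aone$-local replacement of $L$, computing these groups as hypercohomology of the Suslin complex. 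Equivalently, one wants the unit of the adjunction between $-\otimes \tRcbx 1$ and the internal Hom functor $\uHom(\tRcbx 1,-)$ to be an $\Aone$-equivalence at the level of effective complexes, which is exactly what makes $\tRcbx 1$ become invertible already on the effective level (compare Proposition~\ref{prop:negstableA1}).

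The key construction, as in Voevodsky, is an explicit ``evaluation'' map going the other way. For a smooth scheme $X$ one builds a morphism of complexes
\[
\underline{\Hom}\big(\tRcbx 1, \Cstar{\tMot(X)(1)}\big) \longrightarrow \Cstar{\tMot(X)}
\]
using finite MW-correspondences supported on the graphs of suitable maps into $\Gm$, and one shows that the composite with the obvious suspension map $\Cstar{\tMot(X)} \to \underline{\Hom}(\tRcbx 1, \Cstar{\tMot(X)(1)})$ is homotopic to the identity after $\Aone$-localization, and likewise in the other order. In the classical case this rests on an elementary but intricate manipulation of cycles on $X \times \Gm \times \Gm$, together with the fact that a finite correspondence from an affine scheme to $\Gm$ can be ``moved'' to land in a convenient position. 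In our setting every cycle carries a twisted Milnor-Witt form, so each of these moves has to be checked to be compatible with the bilinear data, the residue conditions defining finite MW-correspondences (Chapter~\ref{ch:finitecw}, Definition~\ref{dfn:support}), and the canonical orientations of the relevant canonical sheaves; this is where the base-change and projection formulas of Chapter~\ref{ch:finitecw} (Proposition~\ref{prop:basechange}, Corollary~\ref{cor:pformula}) and the contraction computation of Lemma~\ref{lem:explicitcontraction} do the real work.

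Concretely the steps are: (i) reduce to an $\Aone$-local statement about Suslin complexes via Corollaries~\ref{cor:LA1} and \ref{cor:cohomcomplex}; (ii) define the evaluation/contraction morphism on representable MW-sheaves and check functoriality in $X$ as a map of presheaves with MW-transfers; (iii) produce the explicit chain homotopies realizing the two round-trip composites as the identity up to $\Aone$-homotopy, mimicking \cite{Voe10_b} but tracking the symmetric bilinear coefficients and the twists by canonical sheaves at every stage; (iv) conclude that $-\otimes\tRx 1$ is fully faithful on $\DMtekR$, hence (being also essentially surjective onto a generating set's worth of objects once stabilized) that the statement holds. I expect step~(iii) to be the main obstacle: the homotopies in Voevodsky's argument are delicate already with $\ZZ$-coefficients, and here one must verify that the ``moving lemma'' type arguments can be performed while keeping the correspondences admissible and the orientations consistent, using that $k$ is infinite to have enough room for the moves and that $\charac k \neq 2$ so that the Milnor-Witt transfer formalism of Chapter~\ref{ch:finitecw} is available in the clean form recalled there.
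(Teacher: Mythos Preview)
Your overall strategy is correct and matches the paper's: both reduce via compact generation to representable objects, identify the problem with showing the unit map $\mathscr D \to \derR\uHom(\tZcbx 1,\mathscr D\{1\})$ is an equivalence, and construct an explicit inverse at the level of Suslin complexes following Voevodsky's pattern. However, your sketch is vague on two points where the MW setting genuinely diverges from \cite{Voe10_b}, and you should be aware of them.

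First, the ``evaluation'' map is not built from graphs of maps into $\Gm$ but from intersection with an explicit principal Cartier divisor $D(g_n)$ on $\Gm X\Gm Y$ defined by the rational function $g_n=(t_1^{n+1}-1)/(t_1^{n+1}-t_2)$; one defines $\rho_n(\alpha)$ by intersecting, reorienting via the canonical trivialisations, and pushing forward. The new wrinkle is that in the MW world the switch map on $\MWprep\{1\}\otimes\MWprep\{1\}$ is \emph{not} the identity but $\epsilon=-\langle -1\rangle$ (Lemma~\ref{lem:hopf}), and establishing this requires a hand-built $\Aone$-homotopy via carefully chosen polynomials and residue computations in $\sKMW_*$. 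This is what makes the ``other composite'' homotopic to the identity rather than to something else; your phrase ``moving lemma type arguments'' does not capture it, and there is no moving lemma in the argument.

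Second, unlike Voevodsky's $\VrepZ(X)$, the MW-presheaf $\MWprep(X)$ is only a Zariski sheaf, not a Nisnevich sheaf, so an extra comparison step (Corollaries~\ref{cor:local} and~\ref{cor:local2}) is needed to pass from the presheaf-level cancellation (Corollary~\ref{cor:cancel}) to the statement about $\DMtekR$, which is built from Nisnevich sheaves. Your step~(i) cites the right tools but does not flag this issue.
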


We then formally deduce the following corollary from this result.

\begin{coro}\label{cor:cancellation}
If $k$ is an infinite perfect field of characteristic not $2$, the functor
\[
\Sigma^\infty:\DMtekR \rightarrow \DMtkR
\]
is fully faithful.
\end{coro}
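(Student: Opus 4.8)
The statement to prove is Corollary~\ref{cor:cancellation}: under the hypotheses on $k$, the suspension functor $\Sigma^\infty:\DMtekR \to \DMtkR$ is fully faithful. The idea is that this is a formal consequence of the cancellation theorem (Theorem~\ref{thm:cancellation}), in exactly the same way that Voevodsky's cancellation theorem implies full faithfulness of $\DMegmk \to \DMgmk$. The plan is to reduce the computation of $\Hom$ groups in the stable category $\DMtkR$ to a colimit of $\Hom$ groups in the effective category $\DMtekR$ via Proposition~\ref{prop:Homlimiso}, and then to observe that the cancellation theorem makes all the transition maps in that colimit isomorphisms, so the colimit is attained at the first stage.

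First I would reduce to the case of representable objects. Since $\DMtekR$ is generated under coproducts and triangles by the motives $\tMot(X)$ of smooth schemes $X$ (Remark~\ref{rem:compact_gen_DMte}), and since $\Sigma^\infty$ is a left adjoint (hence commutes with coproducts) and triangulated, it suffices to check that for any smooth schemes $X$ and $Y$ and any $n \in \ZZ$ the map
\[
\Hom_{\DMtekR}(\tMot(X),\tMot(Y)[n]) \longrightarrow \Hom_{\DMtkR}(\Sigma^\infty\tMot(X),\Sigma^\infty\tMot(Y)[n])
\]
is an isomorphism. (Strictly speaking one wants this for the compact generators and their shifts; a standard argument with the localizing subcategory of objects $K$ for which $\Sigma^\infty$ induces an iso on $\Hom(K[n],-)$ for all shifts, applied to both variables in turn, then yields the general case. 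One also needs $\Sigma^\infty$ to send compact objects to compact objects, which follows since $\tMot(Y)$ is compact in $\DMtekR$ by Remark~\ref{rem:compact_gen_DMte} and $\tRcbx 1$ is $\otimes$-invertible, so $\tMot(Y)\{r\}$ is compact, and a filtered colimit argument via Proposition~\ref{prop:Homlimiso} shows $\Sigma^\infty\tMot(Y)$ is compact.)

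Next, by Proposition~\ref{prop:Homlimiso} applied with $\EE = \Sigma^\infty\tMot(Y)$ and $m=0$, the right-hand group is identified with
\[
\HstAone^{n,0}(X,\Sigma^\infty\tMot(Y)) = \varinjlim_{r \geq 0} \Hom_{\DMtekR}\big(\tMot(X)\{r\},\,(\Sigma^\infty\tMot(Y))_{r}[n]\big) = \varinjlim_{r\geq 0}\Hom_{\DMtekR}\big(\tMot(X)\{r\},\tMot(Y)\{r\}[n]\big),
\]
where the transition map from level $r$ to level $r+1$ is, by construction of the suspension maps, precisely the map ``tensor with the Tate twist $\tRcbx 1$'' (equivalently with $\tRx 1[1]$). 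By Theorem~\ref{thm:cancellation} each such transition map is an isomorphism. Hence the colimit is already reached at $r=0$, and the composite
\[
\Hom_{\DMtekR}(\tMot(X),\tMot(Y)[n]) \xrightarrow{\ \text{unit at } r=0\ } \varinjlim_{r\geq 0}\Hom_{\DMtekR}(\tMot(X)\{r\},\tMot(Y)\{r\}[n]) \xrightarrow{\ \sim\ } \Hom_{\DMtkR}(\Sigma^\infty\tMot(X),\Sigma^\infty\tMot(Y)[n])
\]
is an isomorphism, because the first arrow is the canonical map into a colimit all of whose transition maps are isomorphisms (so itself an iso), and the second is the identification of Proposition~\ref{prop:Homlimiso}. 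One should also check that the composite so described agrees with the map induced by the functor $\Sigma^\infty$ on morphism sets; this is immediate from the adjunction unit/counit formalism used to set up Proposition~\ref{prop:Homlimiso}.

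\textbf{The main obstacle.} The genuinely hard input — the cancellation theorem itself (Theorem~\ref{thm:cancellation}) — is proved elsewhere (in \chcancellation) and may be assumed. Given that, the only real care needed here is the bookkeeping in the two reduction steps: verifying that one may pass from full faithfulness on the generating family $\{\tMot(X)[n]\}$ to full faithfulness on all of $\DMtekR$, which requires knowing that $\Sigma^\infty$ preserves compactness (so that $\Hom_{\DMtkR}(-,\Sigma^\infty K)$ commutes with the coproducts used to build an arbitrary object out of generators) — and this compactness is exactly what Proposition~\ref{prop:Homlimiso} together with Remark~\ref{rem:compact_gen_DMte} delivers. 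So I expect no serious difficulty beyond this routine dévissage; the proof is essentially one line once Proposition~\ref{prop:Homlimiso} and Theorem~\ref{thm:cancellation} are in hand.
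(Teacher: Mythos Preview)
Your proposal is correct and is precisely the formal deduction the paper alludes to (the paper gives no explicit argument, merely stating that the corollary is ``formally deduced'' from Theorem~\ref{thm:cancellation}). The combination of Proposition~\ref{prop:Homlimiso} with the cancellation theorem, followed by the standard d\'evissage to arbitrary objects via the compact generators of Remark~\ref{rem:compact_gen_DMte}, is exactly the intended route.
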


\begin{rem}
According to Proposition~\ref{prop:compare_DMt_DM} and Voevodsky's cancellation theorem,
this theorem holds for an arbitrary perfect field if we use the étale topology instead
of the Nisnevich one. 
\end{rem}

\section{Milnor-Witt motivic cohomology}\label{sec:MWcohom}

\subsection{Milnor-Witt motivic cohomology as Ext-groups}

Given our construction of the triangulated category $\DMtkR$,
we can now define, in the style of Beilinson,
a generalization of motivic cohomology as follows.
 
\begin{dfn}\label{def:generalizedMW}
We define the \emph{MW-motivic cohomology} of a smooth scheme $X$
in degree $(n,i) \in \ZZ^2$ and coefficients in $R$ as:
\[
\HMW^{n,i}(X,R)=\Hom_{\DMtkR}(\tMot(X),\tRx i[n]).
\]
\index{Milnor-Witt!motivic!cohomology}%
\index[notation]{hmwnmxr@$\HMW^{n,m}(X,R)$}%
\end{dfn}
As usual, we deduce a cup-product on MW-motivic cohomology.
\index{Milnor-Witt!motivic!cohomology cup-product}%
We define its étale variant by taking morphisms in $\DMtxkR{\et}$.
Then we derive from the preceding (essentially) commutative diagram the
following morphisms of cohomology theories, all compatible with products
and pullbacks:
\begin{equation}\label{eq:MW-regulators}
\begin{tikzcd}
[row sep=14pt,column sep=14pt]
\H_{\Aone}^{n,i}(X,R) \ar[r] \ar[d] & \HMW^{n,i}(X,R) \ar[r] \ar[d]
 & \HM^{n,i}(X,R) \ar[d] \\
\H_{\Aone,\et}^{n,i}(X,R) \ar[r] & \HMWet^{n,i}(X,R) \ar[r]
 & \HMet^{n,i}(X,R).
\end{tikzcd}
\end{equation}
where $\H_{\Aone}(X,R)$ and $\H_{\Aone,\et}(X,R)$
\index[notation]{ha1nmxr@$\H_{\Aone}^{n,m}(X,R)$}%
\index[notation]{ha1etnmxr@$\H_{\Aone,\et}^{n,m}(X,R)$}%
are respectively Morel's stable $\Aone$-derived cohomology and its étale version while $\HM^{n,i}(X,R)$ and $\HMet^{n,i}(X,R)$ are respectively the motivic cohomology and the étale motivic cohomology (also called Lichtenbaum motivic cohomology).

Gathering all the information we have obtained in the previous
section on MW-motivic complexes, we get the following computation.
 
\begin{prop}\label{prop:explicit}
Assume that $k$ is an infinite perfect field of characteristic not $2$.
For any smooth scheme $X$ and any couple of integers $(n,m) \in \ZZ^2$,
the MW-motivic cohomology $\HMW^{n,m}(X,\ZZ)$ defined previously coincides
with the generalized motivic cohomology defined in \chfinitecw.

More explicitly,
\[
\HMW^{n,m}(X,\ZZ)=
\begin{cases}
\H^n_\zar(X,\tZpx m) & \text{if $m>0$,} \\
\H^n_\zar(X,\sKMW_0) & \text{if $m=0$,} \\
\H^{n-m}_\zar(X,\sW) & \text{if $m<0$.}
\end{cases}
\]
where $\sKMW_0$ (resp.\ $\sW$) is the unramified 
sheaf associated with Milnor-Witt K-theory in degree $0$ 
(resp.\ unramified Witt sheaf) --- see \cite{Morel12_b}*{\textsection 3}.
\end{prop}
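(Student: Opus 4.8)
\textbf{Proof strategy for Proposition \ref{prop:explicit}.}

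The plan is to match the two definitions of MW-motivic cohomology --- the one via $\Hom$-groups in the stable category $\DMtkR$ (Definition \ref{def:generalizedMW}) and the one via Zariski hypercohomology of the complexes $\tZpx m$ (\chfinitecw, Definition~\ref{def:genmotiviccohom}) --- and then evaluate the latter in the three ranges $m>0$, $m=0$, $m<0$. First I would invoke Corollary~\ref{cor:W-motivic&generalized}, which already identifies the \emph{effective} MW-motivic cohomology $\Hom_{\DMtekR}(\tMot X,\tRx m[n])$ with the generalized motivic cohomology of \chfinitecw\ for $m\geq 0$; for $m<0$ there is no effective statement, so the comparison there must go through the stable category directly. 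The passage from effective to stable is handled by the cancellation theorem (Theorem~\ref{thm:cancellation}, proved in \chcancellation), whose formal consequence Corollary~\ref{cor:cancellation} says $\Sigma^\infty:\DMtekR\to\DMtkR$ is fully faithful; combined with Proposition~\ref{prop:Homlimiso} (the colimit description of $\Hom$ in $\DMtkR$) this shows that for $m\geq 0$ the stable $\Hom$-group agrees with its effective counterpart, hence with $\H^n_\zar(X,\tZpx m)$.

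Next I would treat the bookkeeping that converts $\Hom_{\DMtekR}(\tMot X,\tRx m[n])$ into honest Zariski hypercohomology. By Corollary~\ref{cor:LA1} the $\Aone$-localization of the MW-sheaf $\tZcbx m$ is computed by the Suslin complex $\Cstar{\tZcbx m}$, which by definition (up to the shift $[-m]$ built into $\tZpx m = \Cstar{\tZcbx m}[-m]$) is exactly $\tZpx m$; and by Corollary~\ref{cor:compare_Hom&cohomology}, $\Hom$-groups in $\Der(\shMWxkR\nis)$ out of $\MWrepRt(X)$ compute Nisnevich hypercohomology. Finally Corollary~\ref{cor:cohomcomplex} (which rests on Theorem~\ref{thm:Wsh&A1}, i.e.\ the strict $\Aone$-invariance statement, and this is where the hypotheses ``$k$ infinite perfect of characteristic not $2$'' are genuinely used) identifies Nisnevich and Zariski hypercohomology of $\Aone$-local complexes of MW-sheaves. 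Stringing these together yields $\HMW^{n,m}(X,\ZZ)=\H^n_\zar(X,\tZpx m)$ for all $m$, which is the first case of the displayed formula.

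It then remains to evaluate $\tZpx m$ when $m=0$ and $m<0$. For $m=0$ we have $\tZcbx 0 = \MWprep(k)$, which by \chfinitecw, Example~\ref{ex:basic} is the unramified sheaf $\sKMW_0$; this sheaf is homotopy invariant (\chfinitecw, using \cite{Fasel08_a}*{Corollaire~11.3.3}), so $\Cstar{\sKMW_0}\simeq \sKMW_0$ as complexes and the shift $[-0]$ is trivial, giving $\HMW^{n,0}(X,\ZZ)=\H^n_\zar(X,\sKMW_0)$. For $m<0$, $\tZcbx m$ is by definition the $(-m)$-th contraction of $\MWprep(k)=\sKMW_0$; by \chfinitecw, Example~\ref{ex:MilnorWittcontraction} (which uses Lemma~\ref{lem:explicitcontraction}) iterated $-m$ times, $(\sKMW_0)_{m}=\sKMW_m=\sW$, again a homotopy invariant sheaf, so $\Cstar{\tZcbx m}\simeq \sW$ and the shift $[-m]$ now contributes a genuine degree shift, yielding $\HMW^{n,m}(X,\ZZ)=\H^{n-m}_\zar(X,\sW)$. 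The main obstacle is not any single step but assembling the chain of comparisons correctly --- in particular keeping track of the grading shift $[-m]$ in $\tZpx m$ and making sure the cancellation-theorem input is legitimately available in the stated range --- since each individual identification has already been established in the earlier sections and chapters.
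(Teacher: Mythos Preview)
Your treatment of $m\geq 0$ matches the paper's: the combination of Corollary~\ref{cor:W-motivic&generalized} (effective $=$ \chfinitecw) with Corollary~\ref{cor:cancellation} (full faithfulness of $\Sigma^\infty$) is exactly how the paper dispatches $m>0$ and $m=0$, and your identification of $\tZpx 0$ with $\sKMW_0$ is correct.

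The gap is in the case $m<0$. You correctly compute the \chfinitecw side --- $\tZcbx m$ is by definition the underived $\uHom\big(\MWprep(\Gm^{\wedge(-m)}),\MWprep(k)\big)$, i.e.\ the $(-m)$-th contraction of $\sKMW_0$, hence $\sW$ --- but you never connect this to the \emph{stable} group $\Hom_{\DMtkZ}(\tMot(X),\un(m)[n])$. Your second paragraph asserts ``Stringing these together yields $\HMW^{n,m}(X,\ZZ)=\H^n_\zar(X,\tZpx m)$ for all $m$,'' but every ingredient you cite there (Cor.~\ref{cor:LA1}, \ref{cor:compare_Hom&cohomology}, \ref{cor:cohomcomplex}, and Cor.~\ref{cor:W-motivic&generalized}) is an effective statement, and the effective $\tRx m$ is only defined for $m\in\NN$; so that chain only runs for $m\geq 0$. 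The missing step is precisely what you flagged as needing to ``go through the stable category directly'' without ever saying how.

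The paper's fix is short but not automatic: for $m<0$ one uses invertibility of the Tate twist in $\DMtkZ$ to rewrite the stable Hom as $\Hom_{\DMtkZ}\big(\tMot(X)\otimes\tZcbx{-m},\tZ[n-m]\big)$, applies cancellation to descend to $\Hom_{\DMtekZ}\big(\tMot(X),\derR\uHom(\tZcbx{-m},\tZ)[n-m]\big)$, and then argues that this \emph{derived} internal Hom equals the underived one because $\tZcbx{-m}$ is cofibrant while $\tZ=\sKMW_0$ is already Nisnevich-local and $\Aone$-invariant (hence fibrant for the $\Aone$-local model structure). Only after that does the identification with $\sW$ from \chfinitecw, Example~\ref{exm:negativecohomology} apply, and Corollaries~\ref{cor:A1-local_complexes} and \ref{cor:compare_Hom&cohomology} finish the computation. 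Your third paragraph lands on exactly the right underived object, but you need this derived-versus-underived comparison to tie it back to the stable Hom-group.
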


\begin{proof}
The cases $m>0$ and $m=0$ are clear from the previous corollary
and Corollary \ref{cor:W-motivic&generalized}.

Consider the case $m<0$. Then we can use the following computation:
\begin{align*}
\HMW^{n,m}(X,\ZZ)&=\Hom_{\DMt{k,\ZZ}}\big(\tMot(X),\tZcbx m[n-m]\big) \\
&=\Hom_{\DMt{k,\ZZ}}\big(\tMot(X)\otimes \tZcbx{-m},\tZ[n-m]\big) \\
&=\Hom_{\DMtekZ}\big(\tMot(X),\derR\uHom(\tZcbx{-m},\tZ)[n-m]\big)
\end{align*}
where the last identification follows from the Corollary~\ref{cor:cancellation} and the usual adjunction.

As the MW-motivic complex $\tZcbx{-m}$ is cofibrant
and the motivic complex $\tZ=\sKMW_0$ is Nisnevich-local
and $\Aone$-invariant (cf.\ \chfinitecw, Ex.~\ref{ex:basic} and \cite{Fasel08_b}*{Cor.~11.3.3}),
we get:
\[
\derR\uHom(\tZcbx{-m},\tZ)=\uHom(\tZcbx{-m},\tZ)
\]
and this last sheaf is isomorphic to $\sW$ according to
\chfinitecw, \ref{exm:negativecohomology}. So the assertion now follows from
Corollaries \ref{cor:A1-local_complexes} and \ref{cor:compare_Hom&cohomology}.
\end{proof}

\begin{num}
We next prove a commutativity result for MW-motivic cohomology. 
First, note that the sheaf 
$\tRcbx 1=\tR(\Gmk)/\tRcbx 1$ is a direct factor of 
$\tR(\Gmk)$ and that the permutation map
\[
\sigma:\tR(\Gmk)\otimes \tR(\Gmk)\to \tR(\Gmk)\otimes 
\tR(\Gmk)
\]
given by the morphism of schemes $\Gmk\times \Gmk\to \Gmk\times \Gmk$ 
defined by $(x,y)\mapsto (y,x)$ induces a map
\[
\sigma:\tRcbx 1 \otimes \tRcbx 1\to \tRcbx 1\otimes \tRcbx 1.
\] 
On the other hand, recall from Remark \ref{rem:finite_corr&plim} (5), that 
$\cork$ is $\sKMW_0(k)$-linear. In particular, the class of $\epsilon=-\langle 
-1\rangle\in \sKMW_0(k)$ (and its corresponding element in 
$\sKMW_0(k)\otimes_\ZZ R$ that we still denote by $\epsilon$) yields a 
MW-correspondence 
\[
\epsilon=\epsilon\cdot \id:\MWrepR(\Gmk)\otimes \MWrepR(\Gmk)\to 
\MWrepR(\Gmk)\otimes \MWrepR(\Gmk)
\]
that also induces a MW-correspondence
\[
\epsilon:\tRcbx 1 \otimes \tRcbx 1 \to \tRcbx 1 \otimes \tRcbx 1.
\]
We can now state the following lemma (\chcancellation, Lemma~\ref{lem:hopf}).
\end{num}
\begin{lem}
The MW-correspondences $\sigma$ and $\epsilon$ are $\Aone$-homotopic.
\end{lem}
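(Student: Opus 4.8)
The statement is the key input to Voevodsky's cancellation argument in our setting: we must show that the transposition $\sigma$ on $\tRcbx 1 \otimes \tRcbx 1$ and the multiplication-by-$\epsilon$ map $\epsilon \cdot \id$ become equal in $\DMtekR$ (equivalently, are $\Aone$-homotopic as maps of MW-sheaves). The plan is to reduce everything to an explicit computation with finite MW-correspondences on $\Gm \times \Gm$, exactly mimicking the classical case \cite{Voe10_b} but keeping careful track of the quadratic decorations. Since $\tRcbx 1$ is a direct factor of $\tRx{}(\Gmk)$ cut out by the point $1 \in \Gm$, it suffices to exhibit an $\Aone$-homotopy between $\sigma$ and $\epsilon$ as endomorphisms of $\tRx{}(\Gm \times \Gm)$ that is compatible with the splitting (i.e. respects the base points in each factor), and then restrict to the summand.

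\textbf{Key steps.} First I would write down the explicit finite MW-correspondence from $\Gm^2 \times \AAA^1$ to $\Gm^2$ whose restrictions to the two endpoints $\{0\}$ and $\{1\}$ of $\AAA^1$ are, respectively, the graph of the transposition $(x,y)\mapsto(y,x)$ and the transpose-of-graph coming from $\epsilon$. Concretely, one considers in $\Gm^2 \times \AAA^1 \times \Gm^2$ the closed subscheme cut out by the two equations linking the source coordinates $(x_1,x_2)$, the parameter $t$, and the target coordinates $(y_1,y_2)$ — the natural candidate interpolates between the swap and the identity while ``rotating'' the cycle, and one checks it is finite and surjective over $\Gm^2 \times \AAA^1$. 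Second, one must equip this interpolating cycle with a symmetric bilinear form (a section of the appropriate twisted $\sKMW_0$) so that the residue condition of \chfinitecw, Definition~\ref{dfn:support} is satisfied, i.e. so that it genuinely defines a class in $\cork(\Gm^2 \times \AAA^1, \Gm^2)$; here is precisely where the class $\langle -1 \rangle$, and hence $\epsilon$, enters — the orientation/trivialization of $\omega_{\Gm^2}$ pulled back along the interpolating map differs from the ``obvious'' one by $\langle -1\rangle$ at the endpoint $t=0$ versus $t=1$, which is the source of the sign $\epsilon$. Third, one computes the two restrictions: at $t=0$ it should give the honest graph of the transposition with its canonical orientation, i.e. $\sigma$; at $t=1$ it should give $\epsilon \cdot \id$; this is a direct computation using the push-forward and pull-back formulas for Chow-Witt groups (base change, projection formula) recalled in \chfinitecw, Section~\ref{sec:chowwitt}, together with the behavior of orientations in Example~\ref{ex:push-forwards}. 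Finally, since $\MWrepRt(\Gm^2 \times \AAA^1) \to \MWrepRt(\Gm^2)$ induced by the two endpoints become equal in $\DMtekR$ (the projection $\Gm^2\times\AAA^1 \to \Gm^2$ is inverted), we conclude that $\sigma$ and $\epsilon$ agree on $\tRx{}(\Gm^2)$, hence on the direct factor $\tRcbx 1 \otimes \tRcbx 1$.

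\textbf{Main obstacle.} The delicate point is the bookkeeping of the canonical orientations and the verification that the interpolating cycle, with its chosen quadratic form, actually lands in $\cork$ — that is, that its residues at codimension-one points of $\Gm^2 \times \AAA^1 \times \Gm^2$ vanish (it is ``unramified''). This requires an explicit residue computation in Milnor-Witt $K$-theory, of the type carried out in the Example after Lemma~\ref{lem:supporttoopen} in \chfinitecw, and it is exactly here that the relation $\langle -1 \rangle + \langle 1\rangle$-type cancellations and relation \ref{relation:hyperbolic} of Milnor-Witt $K$-theory conspire to produce the factor $\epsilon$ rather than $+1$. Once the interpolating correspondence is correctly normalized so that it is well-defined, identifying the two endpoints with $\sigma$ and $\epsilon\cdot\id$ is a routine (if somewhat lengthy) application of the functorial formulas for twisted Chow-Witt groups already assembled in \chfinitecw. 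I would also double-check compatibility with the basepoints $\{1\}\subset\Gm$ in each factor so that the homotopy descends to the smash-product summand $\tRcbx 1 \otimes \tRcbx 1$; this is automatic from the construction if the interpolating cycle restricts correctly over the loci $x_1 = 1$ and $x_2 = 1$.
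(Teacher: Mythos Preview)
Your overall strategy—build an explicit $\Aone$-family of finite MW-correspondences interpolating between $\sigma$ and $\epsilon\cdot\id$, check it is unramified, and read off the endpoints—is sound in spirit, and the paper does ultimately proceed by explicit interpolation and residue computation. However, the paper's route differs from yours in one essential simplification, and your proposal has a corresponding gap.

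The paper does \emph{not} try to interpolate directly between the swap and $\epsilon\cdot\id$ as endomorphisms of $\Gm^2$. Instead it first invokes Voevodsky's elementary Lemma~4.8 from \cite{Voe10_b}: for any $f,g\in\OO(X)^\times$, the map $g\otimes f:X\to\Gm\times\Gm$ is $\Aone$-homotopic to $f\otimes g^{-1}$. This immediately reduces the problem to a \emph{one-variable} statement: showing that the inversion $z\mapsto z^{-1}$ on a single $\Gm$ induces $\epsilon$ on $\MWprep\{1\}$. Only then does the paper build the interpolating correspondence, now inside $\Gm\times\Aone\times\Gm$ rather than $\Gm^2\times\Aone\times\Gm^2$. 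The construction is entirely explicit: one writes down the polynomial $F=t_2^2-t_2(u(t_1+t_1^{-1})+2(1-u))+1$ (interpolating between $(t_2-t_1)(t_2-t_1^{-1})$ at $u=1$ and $(t_2-1)^2$ at $u=0$), an auxiliary $G$, and a specific element $\alpha\in\sKMW_1(k(u,t_1,t_2),\omega_{\Gm})$ built from $[F_1]$, $[F_2]$, $[t_1-1]$ with precise quadratic coefficients. One then computes the residues of $\alpha$ at each codimension-one point to verify it defines a genuine MW-correspondence, and evaluates at $u=0,1$.

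The gap in your proposal is that you never specify ``the natural candidate'' in $\Gm^2\times\Aone\times\Gm^2$. There is no obvious linear or polynomial interpolation between the graph of the swap $(x_1,x_2)\mapsto(x_2,x_1)$ and the diagonal that stays finite over $\Gm^2\times\Aone$ and carries a well-defined Milnor--Witt orientation producing exactly $\epsilon$ at one end; the difficulty is precisely that such a family must be constructed by hand, and without the reduction to inversion this is substantially more intricate. Your identification of the ``main obstacle'' (the unramifiedness/residue check) is correct, but the paper's reduction is what makes that check tractable—and even then the computation occupies most of the proof.
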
 

As an obvious corollary, we obtain the following result.

\begin{coro}\label{cor:permut_Tate}
For any $i,j \in \ZZ$, the switch 
$\tRx i\otimes \tRx j\to \tRx j \otimes \tRx i$ 
is $\Aone$-homotopic to $\langle (-1)^{ij}\rangle$. 
\end{coro}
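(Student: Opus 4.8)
The plan is to bootstrap \cref{cor:permut_Tate} from the preceding \cref{lem:hopf}, which identifies the coordinate switch $\sigma$ on $\tRcbx 1\otimes\tRcbx 1$ with the scalar $\epsilon=-\langle -1\rangle$ up to $\Aone$-homotopy. First I would reduce to the case $i,j\geq 1$: negative twists are tensor inverses of positive ones (\cref{prop:negstableA1}), the switch on $\tRx 0\otimes\tRx j$ is the identity and $\langle(-1)^{0\cdot j}\rangle=\langle 1\rangle=\id$, and the sign $(-1)^{ij}$ is multiplicative and symmetric, so once the statement is known for positive twists it propagates to all of $\ZZ$ by tensoring with the appropriate inverse Tate twists and using that $\langle(-1)^{ij}\rangle$ is invertible in $\sKMW_0(k)$ with inverse itself.

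For positive $i$ and $j$, recall $\tRx i={\tRx 1}^{\otimes i}={\tRcbx 1}^{\otimes i}[-i]$, so the switch $\tRx i\otimes\tRx j\to\tRx j\otimes\tRx i$ is, up to the shift $[-(i+j)]$ (which introduces no sign since the two sides carry the same total shift), the block permutation of $\tRcbx 1^{\otimes(i+j)}$ that moves the first $i$ factors past the last $j$. This block transposition is a product of $ij$ adjacent transpositions, each of which acts on two neighbouring copies of $\tRcbx 1$ exactly as $\sigma$ on $\tRcbx 1\otimes\tRcbx 1$. By \cref{lem:hopf} each such adjacent $\sigma$ is $\Aone$-homotopic to multiplication by $\epsilon$, and since the category $\cork$ (hence $\DMtekR$ and $\DMtkR$) is $\sKMW_0(k)$-linear and $\Aone$-homotopy is compatible with composition and tensor product, the composite of $ij$ of them is $\Aone$-homotopic to multiplication by $\epsilon^{ij}$. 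It then remains to identify $\epsilon^{ij}$ with $\langle(-1)^{ij}\rangle$ in $\sKMW_0(k)=\GW(k)$: this is the elementary identity $\epsilon=-\langle-1\rangle=\langle-1\rangle^{-1}$, equivalently $\epsilon^m=\langle(-1)^m\rangle$ for all $m\geq 0$, which follows from $\langle-1\rangle^2=\langle 1\rangle$ and the definition of $\epsilon$.

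The only genuine bookkeeping point is to make sure no stray signs are introduced by the Koszul-type sign rule when rearranging the shifts $[-1]$ attached to each $\tRcbx 1$ factor; but since we move a shifted object past another object carrying the same parity of shift and the cohomological conventions are fixed as in \cite{Mazza06_b}, the shifts contribute a sign $(-1)^{ij}$ on the \emph{nose} that is already accounted for by writing the answer as $\langle(-1)^{ij}\rangle$ rather than $\langle 1\rangle\cdot\epsilon^{ij}$ — more precisely, one checks that the topological/simplicial sign from commuting $S^1$-suspensions and the algebraic sign $\epsilon$ from commuting $\Gm$-factors combine, via $-\epsilon=\langle-1\rangle$, into the single scalar $\langle(-1)^{ij}\rangle$. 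I expect the main (though still modest) obstacle to be precisely this reconciliation of the simplicial sign with the Milnor--Witt scalar; everything else is a formal consequence of $\sKMW_0(k)$-linearity and \cref{lem:hopf}.
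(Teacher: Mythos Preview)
Your approach is the same as the paper's, but the execution in the second paragraph contains two errors that you only partially correct in the third. First, the parenthetical ``which introduces no sign since the two sides carry the same total shift'' is wrong: in a tensor triangulated category the symmetry constraint on $A[-i]\otimes B[-j]$ differs from that on $A\otimes B$ by the Koszul sign $(-1)^{ij}$ (this is exactly the compatibility the paper cites as \cite{Mazza06_b}*{Exercise~8A.2}). Second, the identity ``$\epsilon^m=\langle(-1)^m\rangle$'' is false for $m$ odd: since $\epsilon=-\langle-1\rangle$, one has $\epsilon\neq\langle-1\rangle$ unless $2=0$ in $\GW(k)$. The correct identity is $(-1)^m\epsilon^m=\langle(-1)^m\rangle$, and this requires the Koszul sign you initially dismissed.

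Your third paragraph gets it right --- the $S^1$-shift sign $(-1)^{ij}$ and the $\Gm$-switch contribution $\epsilon^{ij}$ combine via $-\epsilon=\langle-1\rangle$ into $\langle(-1)^{ij}\rangle$ --- but it is phrased as a side remark rather than as a correction to the preceding paragraph. The paper's proof is exactly this two-line computation: the switch on $\tRcbx i\otimes\tRcbx j$ is $\epsilon^{ij}$ by the lemma, the shifts contribute $(-1)^{ij}$, and $(-1)^{ij}\epsilon^{ij}=\langle(-1)^{ij}\rangle$. Rewrite your second paragraph to keep track of the shift sign from the start and drop the incorrect $\epsilon^m=\langle(-1)^m\rangle$ claim.
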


\begin{proof}
By definition, we have $\tRx i:=\tRcbx i[-i]$ and 
$\tRx j:=\tRcbx j[-j]$. We know from the previous lemma that the 
switch $\tRcbx i \otimes \tRcbx j \to \tRcbx j \otimes \tRcbx i$ is homotopic to $\epsilon^{ij}$. 
The result now follows from the compatibility isomorphisms for tensor triangulated categories (see e.g. \cite{Mazza06_b}*{Exercise~8A.2}) and the fact that 
$(-1)^{ij}\epsilon^{ij}=\langle (-1)^{ij}\rangle$.
\end{proof}

\begin{thm}\label{thm:commutative}
Let $i,j\in \ZZ$ be integers. For any smooth scheme $X$, the pairing
\[
\HMW^{p,i}(X,R)\otimes 
\HMW^{q,j}(X,R)\to \HMW^{p+q,i+j}(X,R)
\]
is $(-1)^{pq}\langle (-1)^{ij}\rangle$-commutative.
\end{thm}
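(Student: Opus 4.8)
The strategy is to mimic the classical proof that the cup product in motivic cohomology is graded-commutative (see e.g. \cite{Mazza06_b}*{Lecture~3, Lecture~4}), carefully tracking the extra scalar $\langle(-1)^{ij}\rangle$ that appears from the twisted sign-commutativity of Milnor-Witt $K$-theory. Recall that the cup product
\[
\HMW^{p,i}(X,R)\otimes \HMW^{q,j}(X,R)\to \HMW^{p+q,i+j}(X,R)
\]
is built from the external product in $\DMtkR$ followed by pullback along the diagonal $\Delta:X\to X\times X$. Concretely, for $\alpha:\tMot(X)\to \tRx i[p]$ and $\beta:\tMot(X)\to \tRx j[q]$, the product $\alpha\cupp\beta$ is the composite
\[
\tMot(X)\xrightarrow{\tMot(\Delta)}\tMot(X)\otimes\tMot(X)\xrightarrow{\alpha\otimes\beta}\tRx i[p]\otimes\tRx j[q]\xrightarrow{\sim}\tRx{i+j}[p+q].
\]
The product $\beta\cupp\alpha$ is the analogous composite with the factors transposed. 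Since $\Delta$ is symmetric (composing it with the switch $\tau:\tMot(X)\otimes\tMot(X)\to\tMot(X)\otimes\tMot(X)$ yields $\tMot(\Delta)$ again, up to $\Aone$-homotopy — indeed the switch on $X\times X$ literally fixes the diagonal), comparing $\alpha\cupp\beta$ and $\beta\cupp\alpha$ amounts to comparing the two ways of identifying $\tRx i[p]\otimes\tRx j[q]$ with $\tRx{i+j}[p+q]$, i.e.\ understanding the switch isomorphism on the tensor product of shifted Tate twists.

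The key input is Corollary~\ref{cor:permut_Tate}: the switch $\tRx i\otimes\tRx j\to\tRx j\otimes\tRx i$ is $\Aone$-homotopic to multiplication by $\langle(-1)^{ij}\rangle$. This accounts for the $\langle(-1)^{ij}\rangle$ factor. The $(-1)^{pq}$ factor comes purely from the Koszul sign rule for the shifts: in any tensor triangulated category the switch $M[p]\otimes N[q]\to N[q]\otimes M[p]$ equals $(-1)^{pq}$ times the shift of the switch $M\otimes N\to N\otimes M$; this is the compatibility constraint for the suspension functor, recorded for instance in \cite{Mazza06_b}*{Exercise~8A.2}. Combining these two observations: the switch
\[
\tRx i[p]\otimes\tRx j[q]\longrightarrow \tRx j[q]\otimes\tRx i[p]
\]
is $\Aone$-homotopic to $(-1)^{pq}\langle(-1)^{ij}\rangle$ times the canonical identification. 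Feeding this through the two composites defining $\alpha\cupp\beta$ and $\beta\cupp\alpha$, and using the symmetry of $\tMot(\Delta)$, gives $\beta\cupp\alpha=(-1)^{pq}\langle(-1)^{ij}\rangle\,(\alpha\cupp\beta)$, which is the claim. Note the argument is insensitive to the sign of $i$ or $j$: Corollary~\ref{cor:permut_Tate} is stated for all $i,j\in\ZZ$, using that $\tRx{-1}=(\Sigma^\infty\tR)\{-1\}$ is the genuine tensor inverse of $\tRx1$ (Proposition~\ref{prop:negstableA1}), so $\tRx i$ is $\otimes$-invertible for every $i$ and the switch computations make sense stably.

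The main obstacle is purely bookkeeping: one must be careful that the "canonical identification" $\tRx i[p]\otimes\tRx j[q]\simeq\tRx{i+j}[p+q]$ used in defining the cup product is chosen consistently on both sides, so that the only discrepancy between $\alpha\cupp\beta$ and $\beta\cupp\alpha$ is exactly the switch isomorphism analyzed above — no stray associativity or unit constraints should leak in. This is a standard but delicate coherence verification in a symmetric monoidal triangulated category; it is handled exactly as in the classical case, and I would reference \cite{Mazza06_b}*{Lecture~3 and Exercise~8A.2} for the coherence and the Koszul sign, so that the only genuinely new ingredient remains Corollary~\ref{cor:permut_Tate}.
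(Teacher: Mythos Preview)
Your proposal is correct and takes essentially the same approach as the paper: the paper's proof is the single sentence ``The proof of \cite{Mazza06_b}*{Theorem 15.9} applies mutatis mutandis,'' and what you have written is precisely an unpacking of that reference, with Corollary~\ref{cor:permut_Tate} supplying the $\langle(-1)^{ij}\rangle$ and the Koszul sign rule supplying the $(-1)^{pq}$. Your citation of Lectures~3 and~4 of \cite{Mazza06_b} for the setup is fine, though the paper points directly to Theorem~15.9 there for the commutativity argument itself.
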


\begin{proof}
The proof of \cite{Mazza06_b}*{Theorem 15.9} applies mutatis mutandis.
\end{proof}

\subsection{Comparison with Chow-Witt groups}

\subsubsection{The naive Milnor-Witt presheaf}
\label{sec:naiveMilnorWittpresheaf}
Let $S$ be a ring and let $S^\times$ be the group of units in $S$. We define the naive Milnor-Witt presheaf of $S$ as in the case of fields by considering the free $\ZZ$-graded algebra $A(S)$ generated by the symbols $[a]$ with $a\in S^\times$ in degree $1$ and a symbol $\eta$ in degree $-1$ subject to the usual relations:

\begin{enumerate}
\item 
\label{item:Milnor-Witt-multiply}%
$[ab]=[a]+[b]+\eta [a][b]$ for any $a,b\in S^\times$.
\item $[a][1-a]=0$ for any $a\in S^\times$ such that $1-a\in S^\times$.
\item $\eta [a]=[a]\eta$ for any $a\in S^\times$.
\item $\eta (\eta [-1]+2)=0$.
\end{enumerate}

This definition is functorial in $S$ and it follows that we obtain a presheaf of $\ZZ$-graded algebras on the category of smooth schemes via
\[
X\mapsto \KMW_*(\OO(X)).
\]
We denote by $\sKMW_{*,\text{naive}}$
\index[notation]{kmwnaive@$\sKMW_{*,\text{naive}}$}%
the associated Nisnevich sheaf of graded $\ZZ$-algebras and observe that this definition naturally extends to essentially smooth $k$-schemes. Our next aim is to show that this naive definition in fact coincides with the definition of the unramified Milnor-Witt $K$-theory sheaf given in \cite{Morel12_b}*{\S 3} (see also \chfinitecw, \S\ref{sec:MWKtheory}). Indeed, let $X$ be a smooth integral scheme. The ring homomorphism $\OO(X)\to k(X)$ induces a ring homomorphism $\KMW_*(\OO(X))\to \KMW_*(k(X))$ and it is straightforward to check that elements in the image are unramified, i.e.\ that the previous homomorphism induces a ring homomorphism $\KMW_*(\OO(X))\to \sKMW_*(X)$. By the universal property of the associated sheaf, we obtain a morphism of sheaves
\[
\sKMW_{*,\mathrm{naive}}\to \sKMW_*.
\]
If $X$ is an essentially smooth local $k$-scheme, it follows from \cite{Gille15_b}*{Theorem~6.3} that the map $\sKMW_{*,\mathrm{naive}}(X)\to \sKMW_*(X)$ is an isomorphism, showing that the above morphism is indeed an isomorphism.

\begin{num}\textbf{A comparison map}. \label{num:comparisonmap}
Let now $X$ be a smooth connected scheme and let $a\in \OO(X)^\times$ be an invertible global section. It corresponds to a morphism $X\to \Gmk$ and in turn to an element in $\cork(X,\Gmk)$ yielding a map 
\[
s:\OO(X)^\times\to \Hom_{\DMtek}(\tMot(X),\tZcbx 1)=\HMW^{1,1}(X,\ZZ).
\]
Consider next the correspondence $\eta[t]\in \ch 0{X\times \Gmk}=\cork (X\times \Gmk,\Spec k)$ and observe that it restricts trivially when composed with the map $X\to X\times \Gmk$ given by $x\mapsto (x,1)$. It follows that we obtain an element 
\[
s(\eta)\in \Hom_{\DMtek}(\tMot(X)\otimes \tZcbx 1,\tZ)=\Hom_{\DMt{k,\ZZ}}(\tMot(X),\tZcbx {-1})=\HMW^{-1,-1}(X,\ZZ).
\]
Using the product structure of the cohomology ring, we finally obtain a (graded, functorial in $X$) ring homomorphism 
\[
s:A(\OO(X))\to \bigoplus_{n\in \ZZ}\HMW^{n,n}(X,\ZZ),
\]
where $A(\OO(X))$ is the free $\ZZ$-graded (unital, associative) algebra generated in degree $1$ by the elements $s(a)$ and in degree $-1$ by $s(\eta)$. 
The following theorem does not need the assumption that $k$ is infinite and of characteristic not $2$.
\end{num}

\begin{thm}
\label{thm:gradedringhomo}%
Let $X$ be a smooth scheme. Then, the graded ring homomorphism
\[
s:A(\OO(X))\to \bigoplus_{n\in \ZZ}\HMW^{n,n}(X,\ZZ)
\]
induces a graded ring homomorphism
\[
s:\KMW_*(\OO(X))\to \bigoplus_{n\in \ZZ}\HMW^{n,n}(X,\ZZ)
\] 
which is functorial in $X$.
\end{thm}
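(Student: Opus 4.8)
The plan is to show that the four defining relations of the naive Milnor-Witt algebra $A(\OO(X))$ are sent to zero under the ring homomorphism $s\colon A(\OO(X))\to \bigoplus_n \HMW^{n,n}(X,\ZZ)$, so that $s$ factors through the quotient $\KMW_*(\OO(X))$. Since $s$ is already constructed in \ref{num:comparisonmap} as a graded ring homomorphism and its functoriality in $X$ is automatic from the functoriality of the constituent constructions (pull-back of invertible sections, pull-back of the correspondence $\eta[t]$, and the product structure on MW-motivic cohomology), the only content is the verification of the relations. The natural strategy is to reduce each relation from $\OO(X)$ to the case of a field, where it is already known.

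First I would set up the reduction to fields. For a smooth connected $X$ the ring homomorphism $\OO(X)\to k(X)$ induces a commutative square relating $s$ over $X$ with $s$ over the function field $k(X)$, by functoriality. Moreover, over a field $F$ the target $\bigoplus_n\HMW^{n,n}(\Spec F,\ZZ)$ receives, via the identification $\HMW^{n,n}(\Spec F,\ZZ)=\KMW_n(F)$ coming from Proposition~\ref{prop:explicit} (in positive weight) together with Example~\ref{exm:negativecohomology}/\ref{exm:negativecohomology-style} computations in weight $\le 0$, a map which is compatible with the tautological map $A(F)\to\KMW_*(F)$; this is essentially built into the way $s$ was defined using $[t]\in\sKMW_1(\Gm)$ and $\eta[t]$. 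So over a field the composite $A(F)\to\bigoplus_n\HMW^{n,n}(\Spec F,\ZZ)$ is the canonical projection $A(F)\to\KMW_*(F)$, which kills all four Milnor-Witt relations by definition of $\KMW_*(F)$. The key point to make rigorous here is that $\HMW^{n,n}$ of a field is really $\KMW_n$ with $s$ matching the standard generators; this follows from the material recalled in \chfinitecw\ (the identification $\underline{\H}^n(\tilde\ZZ(n))=\sKMW_n$) and Proposition~\ref{prop:explicit}, and does not require the infinite/characteristic hypotheses because the relevant computations in weights $n\le 1$ are elementary (Example~\ref{ex:basic}) and the product structure handles the rest.

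Next I would propagate the vanishing from $k(X)$ back to $\OO(X)$. The point is that $\HMW^{*,*}(X,\ZZ)$ for $X$ smooth connected injects into $\HMW^{*,*}(k(X),\ZZ)$ in the relevant weights — concretely, because MW-motivic cohomology in weight $n$ and degree $n$ is computed by an unramified sheaf (the $n$-th cohomology presheaf of $\tilde\ZZ(n)$ has associated sheaf $\sKMW_n$, which is unramified), so a section over $X$ that dies at the generic point is zero. Thus if $\alpha\in A(\OO(X))$ is one of the defining relators (say $[ab]-[a]-[b]-\eta[a][b]$ for $a,b\in\OO(X)^\times$, or $[a][1-a]$ when $1-a\in\OO(X)^\times$, or $\eta[a]-[a]\eta$, or $\eta(\eta[-1]+2)$), its image $s(\alpha)\in\HMW^{*,*}(X,\ZZ)$ restricts to $0$ over $k(X)$ by the field case, hence $s(\alpha)=0$. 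For general (not necessarily connected) $X$ one argues componentwise. This gives the factorization $s\colon\KMW_*(\OO(X))\to\bigoplus_n\HMW^{n,n}(X,\ZZ)$, and functoriality in $X$ is inherited.

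The main obstacle I expect is not the formal reduction but pinning down precisely the identification of $\HMW^{n,n}(\Spec F,\ZZ)$ with $\KMW_n(F)$ in a way that is compatible with the explicitly constructed generators $s(a)=$ [class of $X\to\Gm$] and $s(\eta)=$ [class of $\eta[t]$], \emph{without} invoking the running hypotheses ``$k$ infinite of characteristic $\ne 2$'' — since the theorem is asserted in that generality. I would handle this by checking the generators directly: the class of the point $1\colon\Spec F\to\Gm$ is trivial, the class of a unit $a$ is the Suslin-complex cycle corresponding to $[a]\in\sKMW_1(F)$ via $\underline{\H}^1(\tilde\ZZ(1))=\sKMW_1$, and $\eta[t]\in\ch 0{\Gm}=\sKMW_0(\Gm)$ is literally the element called $\eta[t]$; then the multiplicativity of $s$ and of the cup-product together with the ring structure of $\KMW_*$ force the comparison map to agree with the tautological one on all of $A(F)$. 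Once this compatibility is secured the rest is routine.
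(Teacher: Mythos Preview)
Your approach is circular. The identification $\HMW^{n,n}(\Spec F,\ZZ)\cong\KMW_n(F)$ for $n>0$ is precisely the comparison theorem (\chcomparison, Theorem~\ref{thm:comparison}), and that theorem takes as its starting point the homomorphism $\Phi_L$ produced by Theorem~\ref{thm:gradedringhomo}. Likewise, the sheaf identification $\shHMW^{n,n}\cong\sKMW_n$ (Theorem~\ref{thm:KMWmotivic}), which you invoke both to handle the field case and to justify injectivity of restriction to the generic point, is deduced from Theorem~\ref{thm:comparison}. So neither is available here. Concretely: to verify the Steinberg relation you need $s([a])\cupp s([1-a])=0$ in $\HMW^{2,2}(F,\ZZ)$, and nothing established prior to this theorem tells you what that group is. Your last-paragraph workaround does not escape this: knowing where the generators land does not show that relations among them vanish in the target unless you already know the target is $\KMW_*(F)$.

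The paper instead verifies each of the four relations directly in $\bigoplus_n\HMW^{n,n}(X,\ZZ)$, using independent inputs. The $\eta$-commutativity relation follows from the graded-commutativity of MW-motivic cohomology (Theorem~\ref{thm:commutative}, ultimately resting on the $\Aone$-homotopy $\sigma\sim\epsilon$ of \chcancellation, Lemma~\ref{lem:hopf}) together with $\epsilon\,s(\eta)=s(\eta)$. The hyperbolic and twisted-logarithm relations are deduced from explicit cancellation-theorem computations (\chcancellation, Lemmas~\ref{lem:example1} and~\ref{lem:example2}) which identify $1+s(a)s(\eta)$ with $\langle a\rangle$ and identify the class of the multiplication map $\mu-p_1-p_2:\Gm\times\Gm\to\Gm$ with $s(\eta)$. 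The Steinberg relation is handled by observing that the map $\Aone\setminus\{0,1\}\to\Gm^{\wedge 2}$, $a\mapsto(a,1-a)$, is already null in $\DAek$ by Hu--Kriz, and then applying $\derL\tilde\gamma^*$. None of these steps presupposes the comparison isomorphism.
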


\begin{proof}
We have to check that the four relations defining Milnor-Witt $K$-theory hold in the graded ring on the right-hand side. First, note that Theorem \ref{thm:commutative} yields $\epsilon s(\eta)s(a)=s(a)s(\eta)$ and the third relation follows from the fact that $\epsilon s(\eta)=s(\eta)$ by construction. Observe next that $s(\eta)s(-1)+1=\langle -1\rangle=-\epsilon$ by \chcancellation, Lemma~\ref{lem:example1} and it follows immediately that 
\[
s(\eta)(s(\eta)s(-1)+2)=s(\eta)(1-\epsilon)=0.
\] 
Next, consider the multiplication map
$
m:\Gmk\times \Gmk\to \Gmk
$
and the respective projections on the $j$-th factor
\[
p_j:\Gmk\times \Gmk\to \Gmk
\]
for $j=1,2$.
They all define correspondences that we still denote by the same symbols and it is straightforward to check that $m-p_1-p_2$ defines a morphism $\tZcbx{1}\otimes \tZcbx{1}\to \tZcbx{1}$ in $\cork$. It follows from \chcancellation, Lemma~\ref{lem:example2} that this correspondence corresponds to $s(\eta)$ through the isomorphism
\[
\Hom_{\DMtek}(\tZcbx{1},\tZ)\to \Hom_{\DMtek}(\tZcbx{1}\otimes \tZcbx{1},\tZcbx{1})
\]
given by the cancellation theorem. As a corollary, we see that the defining relation \ref{item:Milnor-Witt-multiply} of Milnor-Witt K-theory is satisfied in $\bigoplus_{n\in \ZZ}\HMW^{n,n}(X,\ZZ)$. Indeed, if $a,b\in \OO(X)^\times$, then $s(a)s(b)$ is represented by the morphism $X\to \Gmk\times \Gmk$ corresponding to $(a,b)$. Applying $m-p_1-p_2$ to this correspondence, we get $s(ab)-s(a)-s(b)$ which is $s(\eta)s(a)s(b)$ by the above discussion.

To check that the Steinberg relation holds in the right-hand side, we first consider the morphism
\[
\Aone\setminus \{0,1\}\to \Aone\setminus \{0\}\times \Aone\setminus \{0\}
\]
defined by $a\mapsto (a,1-a)$. Composing with the correspondence $\tMot(\Aone\setminus \{0\}\times \Aone\setminus \{0\})\to \tMot(\Gmk^{\wedge 2})$, we obtain a morphism
\[
\tMot(\Aone\setminus \{0,1\})\to \tMot(\Gmk^{\wedge 2}).
\]
We can perform the same computation in $\DAek$ where this morphism is trivial by \cite{Hu01_b} and we conclude that it is also trivial in $\DMtek$ by applying the functor $\derL\tilde\gamma^*$.
\end{proof}

For any $p,q\in \ZZ$, we denote by $\shHMW^{p,q}$ the (Nisnevich) sheaf associated to the presheaf $X\mapsto \HMW^{p,q}(X,\ZZ)$. 
\index[notation]{hmwpwxz@$\shHMW^{p,q}$}%
\index{Milnor-Witt!motivic!cohomology sheaf}%
The homomorphism of the previous theorem induces a morphism on induced sheaves and we have the following result.

\begin{thm}\label{thm:KMWmotivic}
Assume $k$ is an infinite perfect field of characteristic not $2$.
Then the homomorphism of sheaves of graded rings
\[
s:\sKMW_*\to \bigoplus_{n\in \ZZ}\shHMW^{n,n}
\]
is an isomorphism.
\index{comparison theorem}%
\end{thm}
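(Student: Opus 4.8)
The strategy is to reduce the statement that $s\colon \sKMW_* \to \bigoplus_n \shHMW^{n,n}$ is an isomorphism of sheaves to a check on stalks at essentially smooth local $k$-schemes, and then to a computation over finitely generated field extensions. First I would observe that both sides are Nisnevich sheaves of graded rings, that the map $s$ was constructed (Theorem~\ref{thm:gradedringhomo}) without the standing hypotheses on $k$, and that it suffices to prove bijectivity after evaluating on the spectrum of an essentially smooth local henselian $k$-scheme, or even, using the Gersten-type resolutions available for both sides under our hypotheses, on fields. On the source, by Section~\ref{sec:naiveMilnorWittpresheaf} and \cite{Gille15_b}*{Theorem~6.3}, $\sKMW_*$ agrees with the naive Milnor-Witt presheaf on such local schemes, so for a finitely generated field extension $L/k$ we have $\sKMW_*(L) = \KMW_*(L)$ in Morel's sense.

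Next I would identify the target $\bigoplus_n \HMW^{n,n}(L,\ZZ)$ using the explicit computations already recorded. For $n < 0$, Proposition~\ref{prop:explicit} gives $\HMW^{n,n}(L,\ZZ) = \H^0_\zar(\Spec L, \sW) = \W(L)$, which matches $\KMW_n(L) = \W(L)$ for $n<0$. For $n = 0$ it gives $\HMW^{0,0}(L,\ZZ) = \sKMW_0(L) = \GW(L) = \KMW_0(L)$. For $n > 0$ the key input is the identification $\underline{\H}^n(\tilde\ZZ(n)) = \sKMW_n$ announced in the introduction (paragraph \introref{MW3}), together with Proposition~\ref{prop:explicit} which says $\HMW^{n,n}(X,\ZZ) = \H^n_\zar(X,\tZpx n)$; evaluated at a field, the hypercohomology spectral sequence degenerates and yields $\HMW^{n,n}(L,\ZZ) = \underline{\H}^n(\tZpx n)(L) = \sKMW_n(L)$. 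So, at the level of fields, both graded rings are abstractly $\KMW_*(L)$. I would then check that the comparison map $s$ is compatible with these identifications: $s([a])$ for $a \in L^\times$ corresponds to the class of the graph of $a\colon \Spec L \to \Gmk$, which is exactly the generator $[a] \in \sKMW_1(L)$ under the isomorphism $\underline{\H}^1(\tZpx 1) \simeq \sKMW_1$ (this uses Lemma~\ref{lem:explicitcontraction} and the definition of $\tZpx 1$), and similarly $s(\eta)$ is sent to $\eta$ by construction via the class $\eta[t]$; since $s$ is a graded ring homomorphism and $\KMW_*(L)$ is generated by these symbols, $s$ is surjective on stalks.

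For injectivity on stalks I would use that both sheaves are strictly $\Aone$-invariant (the source by \cite{Morel12_b}, the target by Theorem~\ref{thm:Wsh&A1} applied to the homotopy-invariant presheaf $X \mapsto \HMW^{n,n}(X,\ZZ)$, which is a presheaf with MW-transfers by Corollary~\ref{cor:W-motivic&generalized}), so each embeds into its value at the generic point; hence it is enough to have the isomorphism at finitely generated fields, which the degree count above provides once surjectivity is known, as a surjective graded ring map between copies of $\KMW_*(L)$ that is the identity on generators is an isomorphism. The main obstacle I anticipate is making the compatibility between $s$ and the identification $\HMW^{n,n}(L,\ZZ) \simeq \sKMW_n(L)$ fully precise: one must track the contraction isomorphisms of Lemma~\ref{lem:explicitcontraction}, the definition of the Tate twist $\tZpx 1 = \Cstar{\tZcbx 1}[-1]$, and the cup-product structure, and verify that the Steinberg relation and the twisted-linearity relation \ref{relation:linearity} are respected — but most of this is already packaged into Theorem~\ref{thm:gradedringhomo}, so the remaining work is to see that the resulting map on $\underline{\H}^n$ is the standard generator-preserving one rather than an exotic automorphism of $\KMW_*$. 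Since $\KMW_*$ has no nontrivial graded ring endomorphisms fixing all $[a]$ and $\eta$, this rigidity finishes the argument.
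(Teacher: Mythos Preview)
Your reduction to fields via strict $\Aone$-invariance is exactly the paper's strategy, and your treatment of $n\le 0$ is fine. The gap is in the case $n>0$. You want to invoke ``the identification $\underline{\H}^n(\tilde\ZZ(n))=\sKMW_n$ announced in the introduction'' as input, but that identification \emph{is} Theorem~\ref{thm:KMWmotivic}: the sheaf $\shHMW^{n,n}$ is precisely $\underline{\H}^n(\tZpx n)$, so you are assuming the conclusion. In the paper, this identification is only established afterward (Theorem~\ref{thm:hyper}), and its proof explicitly cites Theorem~\ref{thm:KMWmotivic}. The introduction merely summarizes results; it does not furnish an independent proof.

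What the paper actually uses for $n>0$ is \chcomparison, Theorem~\ref{thm:comparison}, whose proof occupies that entire chapter: one constructs an explicit left inverse $\theta_L$ to $\Phi_L$, proves the degree-$1$ case by a nontrivial homotopy argument reducing the degree of the support of a correspondence (Proposition~\ref{prop:inductionstep}), establishes that $\Phi$ commutes with transfers (Theorem~\ref{thm:respect}), and then uses transfers to reduce arbitrary correspondences to the image of $\Phi$. None of this is captured by your ``degenerate hypercohomology spectral sequence'' step. Your closing rigidity argument (no nontrivial endomorphisms of $\KMW_*$ fixing the generators) cannot rescue the situation either, because knowing that $s$ sends $[a]$ and $\eta$ to the ``standard'' generators on the target presupposes an identification of the target with $\KMW_*(L)$ compatible with $s$ --- which is again the statement to be proved.
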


\begin{proof}
Let $L/k$ be a finitely generated field extension. Then, it follows from \chcomparison, Theorem~\ref{thm:comparison} 
and from the cancellation theorem \ref{thm:cancellation}
that the homomorphism $s_L$ is an isomorphism. Now, the presheaf on $\cork$ given by $X\mapsto \bigoplus_{n\in \ZZ}\HW^{n,n}(X,\ZZ)$ is homotopy invariant by definition. It follows from Theorem \ref{thm:A1local_framedPSh} that the associated sheaf is strictly $\Aone$-invariant. Now, $\sKMW_*$ is also strictly $\Aone$-invariant and it follows from \cite{Morel12_b}*{Definition~2.1, Remark~2.3, Theorem~2.11} that $s$ is an isomorphism if and only if $s_L$ is an isomorphism for any finitely generated field extension $L/k$.
\end{proof}

\begin{thm}\label{thm:hyper}
Assume $k$ is an infinite perfect field of characteristic not $2$.
For any smooth scheme $X$ and any integers $p,n\in \ZZ$, the hypercohomology spectral sequence induces isomorphisms
\[
\HMW^{p,n}(X,\ZZ)\to \H^{p-n}(X,\sKMW_n)
\]
provided $p\geq 2n-1$. 
\end{thm}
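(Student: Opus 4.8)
The plan is to use the hypercohomology spectral sequence for the complex $\tZpx n$ computing $\HMW^{p,n}(X,\ZZ)$, which by Proposition~\ref{prop:explicit} equals $\H^p_\zar(X,\tZpx n)$ (and coincides with the Nisnevich version by Corollary~\ref{cor:cohomcomplex}). Writing this out, one has
\[
E_2^{a,b} = \H^a_\nis\big(X,\underline{\H}^b(\tZpx n)\big) \Rightarrow \HMW^{a+b,n}(X,\ZZ),
\]
where $\underline{\H}^b(\tZpx n)$ is the $b$-th Nisnevich cohomology sheaf of $\tZpx n$. The spectral sequence converges because the Nisnevich cohomological dimension of $X$ is bounded by $\dim X$, exactly as in the proof of Corollary~\ref{cor:A1-local_complexes}. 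First I would record the key input about the cohomology sheaves: the complex $\tZpx n = \Cstar{\tZcbx n}[-n]$ is $\Aone$-local (by Corollary~\ref{cor:LA1}, its Suslin complex computes $L_{\Aone}$), so its cohomology sheaves are strictly $\Aone$-invariant; moreover $\tZpx n$ is concentrated in cohomological degrees $\leq n$ (since $\tZcbx n$ sits in degree $0$ after the shift $[-n]$, and $\Cstar{-}$ only adds terms in non-positive cohomological degree before the shift). The crucial vanishing is that $\underline{\H}^b(\tZpx n) = 0$ for $b < n$ in a suitable range — more precisely, by the connectivity results one knows $\underline{\H}^b(\tZpx n)=0$ for $b<n$ is \emph{not} quite true, but the relevant statement is that the lowest nonvanishing cohomology sheaf is in degree $n$ and equals $\sKMW_n$ by Theorem~\ref{thm:KMWmotivic} (the isomorphism $s:\sKMW_* \simeq \bigoplus \shHMW^{n,n}$ identifies $\underline{\H}^n(\tZpx n)$ with $\sKMW_n$).

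Next I would analyze the edge of the spectral sequence. The desired map $\HMW^{p,n}(X,\ZZ)\to \H^{p-n}(X,\sKMW_n)$ is the edge homomorphism onto $E_\infty^{p-n,n} \subseteq E_2^{p-n,n} = \H^{p-n}_\nis(X,\underline{\H}^n(\tZpx n)) = \H^{p-n}_\nis(X,\sKMW_n)$, using Theorem~\ref{thm:KMWmotivic}. For this edge map to be an isomorphism when $p\geq 2n-1$, I need: (i) no differentials \emph{into} $E_r^{p-n,n}$, which come from $E_r^{p-n-r,n+r-1}$; and (ii) no differentials \emph{out of} $E_r^{p-n,n}$, which land in $E_r^{p-n+r,n-r+1}$; and (iii) the contributions $E_\infty^{a,b}$ with $a+b=p$, $b\neq n$ all vanish so that $\HMW^{p,n}$ equals $E_\infty^{p-n,n}$. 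For (ii) and (iii) with $b>n$: since $\tZpx n$ is concentrated in degrees $\leq n$, we have $\underline{\H}^b(\tZpx n)=0$ for $b>n$, killing all $E_2^{a,b}$ with $b>n$; this handles (ii) automatically and removes the $b>n$ part of (iii). For (iii) with $b<n$: one needs $\H^a_\nis(X,\underline{\H}^b(\tZpx n))=0$ when $a+b=p$ and $b<n$; since $a = p-b > p-n \geq n-1$, and $a \leq \dim X$ contributes nothing special, the real content is that $\underline{\H}^b(\tZpx n)$ for $b<n$ is "sufficiently connected" — being strictly $\Aone$-invariant it vanishes on fields below a certain degree, but here the argument should instead be that $\H^a_\nis$ vanishes for $a$ large relative to the "weight" $b$. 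The cleanest route: a strictly $\Aone$-invariant sheaf $F$ which is the $b$-th cohomology sheaf of an effective motivic complex of weight $n$ is $(n-b)$-connected in the homotopy $t$-structure sense, so $\H^a_\nis(X,F)=0$ for... — here I would invoke the comparison with the slice/weight filtration or simply the bound $\H^a_\nis(X,\underline{\H}^b)=0$ for $a > \dim X$ combined with a direct estimate. Honestly the condition $p\geq 2n-1$ is exactly what forces $a = p-b \geq p-(n-1) \geq n$ for all $b\leq n-1$, and one needs $\H^{\geq n}$ of weight-$<n$ sheaves to vanish, which is a known consequence of strict $\Aone$-invariance plus effectivity (the Gersten/Rost–Schmid complex of $\underline{\H}^b(\tZpx n)$ has length bounded appropriately).

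The main obstacle, and where I would spend the most care, is establishing the vanishing $E_2^{a,b}=\H^a_\nis(X,\underline{\H}^b(\tZpx n))=0$ in the range $a+b=p\geq 2n-1$, $b<n$, together with the differential-free statement (i), i.e. $d_r: E_r^{p-n-r,n+r-1} \to E_r^{p-n,n}$ is zero for all $r\geq 2$; note $n+r-1 > n$ for $r\geq 2$, so the source $E_r^{p-n-r,n+r-1}$ is a subquotient of $E_2^{p-n-r,n+r-1} = \H^{p-n-r}_\nis(X,\underline{\H}^{n+r-1}(\tZpx n))$ which vanishes since $\tZpx n$ is concentrated in degrees $\leq n$ — so (i) is actually free. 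Thus everything reduces to the single vanishing statement above. I would prove it by recalling that $\underline{\H}^b(\tZpx n) = \underline{\H}^b(\Cstar{\tZcbx n})$ shifted, identifying it via the cancellation theorem (Theorem~\ref{thm:cancellation}) with a contraction/twist of lower-weight MW-motivic cohomology sheaves, and then applying the dimension bound for cohomology of strictly $\Aone$-invariant sheaves: for such a sheaf arising as the cohomology of $\tZpx n$ in degree $b < n$, its associated fiber functor vanishes below degree $n-b$ by effectivity, hence by Morel's structure theory (strictly $\Aone$-invariant sheaves, Rost–Schmid resolution, \cite{Morel12_b}*{\textsection 5}) one gets $\H^a_\nis(X,\underline{\H}^b(\tZpx n))=0$ once $a$ exceeds the appropriate threshold; the inequality $p \geq 2n-1$ is precisely calibrated so that $a = p-b \geq 2n-1-(n-1) = n > a$ is impossible — I mean so that the surviving range collapses to $b=n$ alone. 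After this vanishing is in place, the spectral sequence degenerates at the relevant spot, the edge map $\HMW^{p,n}(X,\ZZ) \xrightarrow{\sim} \H^{p-n}_\nis(X,\sKMW_n) = \H^{p-n}(X,\sKMW_n)$ is an isomorphism, and the proof is complete.
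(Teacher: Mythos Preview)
Your approach matches the paper's: set up the hypercohomology spectral sequence
\[
E_2^{a,b} = \H^a_\nis\big(X,\underline{\H}^b(\tZpx n)\big) \Rightarrow \HMW^{a+b,n}(X,\ZZ),
\]
identify $\underline{\H}^n(\tZpx n) \simeq \sKMW_n$ via Theorem~\ref{thm:KMWmotivic}, use concentration of $\tZpx n$ in degrees $\leq n$ to kill all $b>n$ terms, and then argue that the remaining $E_2^{a,b}$ with $b<n$ vanish once $a\geq n$. Your observation that differentials \emph{into} $E_r^{p-n,n}$ have source with second index $>n$ and so vanish automatically is correct, as is the arithmetic that $p\geq 2n-1$ and $b\leq n-1$ force $a=p-b\geq n$.

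The gap is in the actual justification of $\H^a\big(X,\underline{\H}^b(\tZpx n)\big)=0$ for $a\geq n$, $b<n$. You invoke ``effectivity'' and say ``its associated fiber functor vanishes below degree $n-b$'', but this is not the mechanism and does not lead anywhere precise. The paper's argument is a direct contraction computation: the Gersten (Rost--Schmid) resolution of the strictly $\Aone$-invariant sheaf $\underline{\H}^b(\tZpx n)$ has its degree-$a$ term built from the $a$-fold contraction $\big(\underline{\H}^b(\tZpx n)\big)_{-a}$. By the cancellation theorem this contraction is canonically $\shHMW^{b-a,n-a}$, i.e.\ the $(b-a)$-th cohomology sheaf of $\tZpx{n-a}$. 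When $a\geq n$ one has $n-a\leq 0$, and then by Proposition~\ref{prop:explicit} the complex $\tZpx{n-a}$ is the single sheaf $\sKMW_{n-a}$ placed in degree $n-a$; hence $\shHMW^{b-a,n-a}=0$ unless $b=n$. Thus for $b\neq n$ the Gersten complex has no terms in degrees $\geq n$, giving the desired vanishing. You mention contractions, cancellation, and the Rost--Schmid complex, but never tie them together this way; the ``connectivity/effectivity'' language suggests you were groping for a different, and ultimately unformulated, argument. A minor additional point: the paper first disposes of $n\leq 0$ directly via Proposition~\ref{prop:explicit} (where $\tZpx n$ is already a single sheaf and the statement is immediate), whereas your argument tacitly assumes $n>0$.
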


\begin{proof}
In view of Proposition \ref{prop:explicit}, we may suppose that $n>0$. For any $q\in\ZZ$, observe that the Nisnevich sheaf $\shHMW^{q,n}$ associated to the presheaf $X\mapsto \HMW^{q,n}(X,\ZZ)$ coincides with the corresponding cohomology sheaf of the complexe $\tZpx{n}$. Now, the latter is concentrated in cohomological levels $\leq n$ and it follows that $\shHMW^{q,n}=0$ if $q>n$. On the other hand, the sheaves $\shHMW^{q,n}$ are strictly $\Aone$-invariant, and as such admit a Gersten complex whose components in degree $m$ are of the form 
\[
\bigoplus_{x\in X^{(p)}}(\shHMW^{q,n})_{-p}(k(x),\wedge^p (\mathfrak m_x/\mathfrak m_x^2)^*)
\] 
by \cite{Morel12_b}*{\S 5}. By the cancellation theorem \ref{thm:cancellation}, we have a canonical isomorphism of sheaves $\shHMW^{q-p,n-p}\simeq (\shHMW^{q,n})_{-p}$ and it follows that the terms in the Gersten resolution are of the form 
\[
\bigoplus_{x\in X^{(p)}}(\shHMW^{q-p,n-p})(k(x),\wedge^p (\mathfrak m_x/\mathfrak m_x^2)^*).
\] 
If $p\geq n$, then $\tZpx{n-p}\simeq \sKMW_{n-p}[p-n]$ and it follows that $\shHMW^{q-p,n-p}$ is the sheaf associated to the presheaf $X\mapsto \H^{q-n}(X,\sKMW_{n-p})$, which is trivial if $q\neq n$. Altogether, we see that 
\[
\H^p(X,\shHMW^{q,n})=
\begin{cases}
0 & \text{if $q>n$,} \\
0 & \text{if $p\geq n$ and $q\neq n$.}
\end{cases} 
\]
We now consider the hypercohomology spectral sequence for the complex $\tZpx{n}$ (\cite{SV00_b}*{Theorem~0.3}) $E_2^{p,q}:=\H^p(X,\shHMW^{q,n})\implies \HMW^{p+q,n}(X,\ZZ)$ which is strongly convergent since Nisnevich cohomology groups vanish above the dimension of $X$. Our computations of the sheaves $\shHMW^{q,n}$ immediately imply that 
\[
\H^{p-n}(X,\shHMW^{n,n}) = \HMW^{p,n}(X,\ZZ) \text{ if $p\geq 2n-1$}. 
\]
We conclude using Theorem \ref{thm:KMWmotivic}.
\end{proof}

\begin{rem}
The isomorphisms $\HMW^{p,n}(X,\ZZ)\to \H^{p-n}(X,\sKMW_n)$ are functorial in $X$. Indeed, the result comes from the analysis of the hypercohomology spectral sequence for the complexes $\tZpx{n}$, which are functorial in $X$.
\end{rem}

Setting $p=2n$ in the previous theorem, and using the fact that $\H^{n}(X,\sKMW_n)=\ch nX$ by definition (for $n\in\NN$), we get the following corollary.

\begin{coro}\label{cor:ChowWitt}
Assume $k$ is an infinite perfect field of characteristic not $2$.
For any smooth scheme $X$ and any $n\in \NN$, the hypercohomology spectral sequence induces isomorphisms 
\[
\HMW^{2n,n}(X,\ZZ)\to \ch nX.
\]
\end{coro}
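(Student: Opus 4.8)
The plan is to deduce Corollary~\ref{cor:ChowWitt} directly from Theorem~\ref{thm:hyper} by specializing the parameters. First I would observe that the hypothesis $p \geq 2n-1$ in Theorem~\ref{thm:hyper} is satisfied when we take $p = 2n$, since $2n \geq 2n-1$ trivially. Thus the theorem furnishes an isomorphism
\[
\HMW^{2n,n}(X,\ZZ) \xrightarrow{\ \sim\ } \H^{2n-n}(X,\sKMW_n) = \H^{n}(X,\sKMW_n).
\]

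Next I would identify the target group $\H^n(X,\sKMW_n)$ with the Chow-Witt group $\ch nX$. This is nothing but the definition: by Definition~\ref{dfn:twistedCHWitt} (applied with trivial twist $(i,\Lb) = (0,\OO_X)$ and support $Z = X$), one has $\ch nX := \chst nXX{} = \H^n(X,\sKMW_n)$ for $n \in \NN$. Since $n$ is assumed to be a nonnegative integer here, this identification is immediate, and composing it with the isomorphism above yields the stated isomorphism $\HMW^{2n,n}(X,\ZZ) \to \ch nX$.

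There is essentially no obstacle: the corollary is a pure specialization, and the only ``content'' is bookkeeping — making sure the numerics $p-n = n$ work out and that the Chow-Witt group in the sense of Definition~\ref{dfn:twistedCHWitt} agrees with the cohomology of the Milnor-Witt sheaf, which holds by definition. I would also note, following the remark after Theorem~\ref{thm:hyper}, that the isomorphism is functorial in $X$, since it is induced by the hypercohomology spectral sequence for the complexes $\tZpx{n}$, which are themselves functorial. The proof is therefore a two-line argument invoking Theorem~\ref{thm:hyper} with $p = 2n$ together with the definitional identity $\H^n(X,\sKMW_n) = \ch nX$.
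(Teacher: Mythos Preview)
Your proposal is correct and matches the paper's own argument essentially line for line: the paper also sets $p=2n$ in Theorem~\ref{thm:hyper} and then invokes the definitional identity $\H^n(X,\sKMW_n)=\ch nX$. There is nothing to add.
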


\begin{rem}\label{rem:preThom}
Both Theorems \ref{thm:hyper} and Corollary \ref{cor:ChowWitt} are still valid if one considers cohomology with support on a closed subset $Y\subset X$, i.e.\ the hypercohomology spectral sequence (taken with support) yields an isomorphism
\[
\HMWs{Y}^{p,n}(X,\ZZ)\to \H^{p-n}_Y(X,\sKMW_n)
\]
provided $p\geq 2n-1$.

Let now $E$ be a rank $r$ vector bundle over $X$, $s:X\to E$ be the zero section and $E^0=E\setminus s(X)$. The Thom space of $E$ is the object of $\DMtekZ$ defined by 
\[
\Th(E)=\Sigma^\infty\tMot(\tZ(E)/\tZ(E^0)).
\] 
\index[notation]{the@$\Th(E)$}%
\index{Thom space}%
It follows from Corollary \ref{cor:compare_Hom&cohomology} and \cite{Asok16_b}*{Proposition~3.13} that (for $n\in\NN$)
\begin{align*}
\Hom_{\DMtekZ}(\Th(E),\tZpx{n}[2n]) & \simeq \Hom_{\DMtekR}(\tMot(\tZ(E)/\tZ(E^0)),\tZpx{n}[2n]) \\
& \simeq \HMWs{X}^{2n,n}(E,\ZZ).
\end{align*}
Using the above result, we get $\Hom_{\DMtekZ}(\Th(E),\tZpx{n}[2n])\simeq  \chs nXE$. Using finally the Thom isomorphism (\cite{Morel12_b}*{Corollary~5.30} or \cite{Fasel08_b}*{Remarque~10.4.8})
\[
\cht {n-r}X{\det(E)}\simeq \chs nXL,
\]
we obtain an isomorphism
\[
\Hom_{\DMtekZ}(\Th(E),\tZpx{n}[2n])\simeq \cht{n-r}X{\det(E)}
\]
which is functorial (for schemes over $X$).
\end{rem}

\begin{rem}
The isomorphisms of Corollary \ref{cor:ChowWitt} induce a ring homomorphism
\[
\bigoplus_{n\in\NN}\HMW^{2n,n}(X,\ZZ)\to \bigoplus_{n\in\NN} \ch nX.
\]
This follows readily from the fact that the isomorphism of Theorem \ref{thm:KMWmotivic} is an isomorphism of graded rings.
\end{rem}

\section{Relations with ordinary motives}
\label{sec:relordmot}

Our aim in this section is to show that both the categories $\DMtekR$ and $\DMtkR$ split into two factors when $2\in R^\times$, one of the factors being the corresponding category of ordinary motives. We assume that $R=\ZZ[1/2]$, the general case being obtained from this one. To start with, let $X$ be a smooth $k$-scheme and let $\Lb$ be a line bundle over $X$. On the small Nisnevich site of $X$, we have a Cartesian square of sheaves of graded abelian groups (\chfinitecw, \S\ref{sec:MWKtheory})
\[
\begin{tikzcd}
\sKMW_*(\Lb) \ar[r] \ar[d] & \sI^*(\Lb) \ar[d] \\
\sKM_* \ar[r] & \sIbar^*(\Lb)
\end{tikzcd}
\]
where $\sIbar^*(\Lb)$ is the sheaf associated to the presheaf $\sI^*(\Lb)/\sI^{*+1}(\Lb)$. Observe that $\sIbar^*(\Lb)$ is in fact independent of $\Lb$ (\cite{Fasel08_b}*{Lemme~E.1.2}), and we will routinely denote it by $\sIbar^*$ below. Next, observe that $\langle 1,1\rangle \sI^*(Y)\subset \sI^{*+1}(Y)$ for any smooth scheme $Y$, and it follows that $\sIbar^*$ is a $2$-torsion sheaf. Inverting $2$, we then obtain an isomorphism 
\begin{equation}\label{eq:splitting}
\sKMW_*(\Lb)[1/2]\simeq \sI^*(\Lb)[1/2]\times \sKM_*[1/2].
\end{equation}
This decomposition can be more concretely seen as follows. In $\sKMW_0(X)[1/2]$ (no line bundle here), we may write 
\[
1=(1+\langle -1\rangle)/2+(1-\langle -1\rangle)/2.
\]
We observe that both $e:=(1+\langle -1\rangle)/2$ and $1-e=(1-\langle -1\rangle)/2$ are idempotent, and thus decompose $\sKMW_0(X)[1/2]$ as 
\[
\sKMW_0(X)[1/2]\simeq \sKMW_0(X)[1/2]/e\oplus \sKMW_0(X)[1/2]/(1-e).
\]
Now, $\sKMW_0(X)[1/2]/e=\sW(X)[1/2]$ (as $2e=2+\eta[-1]:=h$), while the relation $\eta h=0$ (together with $h=2$ modulo $(1-e)$) imply $\sKMW_0(X)[1/2]/(1-e)=\sKM_0(X)=\ZZ$. As $\sKMW_*(\Lb)[1/2]$ is a sheaf of $\sKMW_*$-algebras, we find
\[
\sKMW_*(\Lb)[1/2]\simeq \sKMW_*(\Lb)[1/2]/e\oplus \sKMW_*(\Lb)[1/2]/(1-e)
\]
and 
\[
\sKMW_*(\Lb)[1/2]/e=\sKMW_*(\Lb)[1/2]/h=\sKW_*(\Lb)[1/2]
\]
where $\sKW(\Lb)$ is the Witt $K$-theory sheaf discussed in \cite{Morel04_b} and 
\[
\sKMW_*(\Lb)[1/2]/(1-e)=\sKM_*[1/2].
\]
This splitting has very concrete consequences on the relevant categories of motives. To explain them in their proper context, recall first from \chfinitecw, \ref{rem:wittcor} that one can introduce the category of finite W-correspondences as follows.

For smooth (connected) schemes $X$ and $Y$, let $\Wcork (X,Y)$ be the abelian group
\[
\Wcork(X,Y):=\varinjlim_{T\in \Adm(X,Y)} \H^{d_Y}_T\!\big(X \times Y,\sW,\omega_{Y}\big)
\] 
\index[notation]{wcork@$\Wcork$}%
\index{Witt!finite correspondence}%
where $\Adm(X,Y)$ is the poset of admissible subsets of $X\times Y$ (\chfinitecw, Definition~\ref{dfn:admissible_subset}), $d_Y$ is the dimension of $Y$ and $\omega_{Y}$ is the pull-back along the second projection of the canonical module on $Y$. We let $\Wcork$ be the category whose objects are smooth schemes and whose morphisms are given by the above formula. The results of \chfinitecw apply mutatis mutandis, showing in particular that $\Wcork$ is an additive category endowed with a tensor product. Moreover the results of the present paper also apply, allowing to use the main theorems in this new framework. In particular, we can build the category of effective W-motives along the same lines as those used above, and its stable version as well. 

\begin{dfn}
For any ring $R$, we denote by $\WDMekR$ the category of effective W-motives, i.e.\ the full subcategory of $\Aone$-local objects of the derived category of W-sheaves. We denote by $\WDMkR$ the category obtained from the previous one by inverting the Tate object. 
\index[notation]{wdmeffkr@$\WDMekR$}%
\index{Witt!motives, category of!effective}%
\index[notation]{wdmrk@$\WDMkR$}%
\index{Witt!motives, category of}%
\index{motives!Witt-|see{Witt motives}}%
\end{dfn}

The relations with the categories previously built can be described as follows. Observe that by definition we have 
\[
\Wcork(X,Y):=\varinjlim_{T\in \Adm(X,Y)} \H^{d_Y}_T\!\big(X \times Y,\sW,\omega_{Y}\big)=\varinjlim_{T\in \Adm(X,Y)} \H^{d_Y}_T\!\big(X \times Y,\sI^{d_Y},\omega_{Y}\big)
\] 
for any smooth schemes $X$ and $Y$. The morphism of sheaves $\sKMW_{d_Y}(\omega_Y)\to \sI^{d_Y}(\omega_Y)$ thus yields a well defined functor 
\[
\beta:\cork\to \Wcork.
\]
This functor in turn induces a functor $\beta_*$ between the categories of presheaves and sheaves (in either the Nisnevich or the étale topologies), yielding finally (exact) functors 
\[
\WDMekR\xrightarrow{\beta_*}\DMtekR
\]
and 
\[
\WDMkR\xrightarrow{\beta_*}\DMtkR.
\]
Note that both functors $\beta_*$ admit monoidal left adjoints $\derL \beta^*$ preserving representable objects.

\begin{thm}
Suppose that $2\in R^\times$. We then have equivalences of categories
\[
(\beta_*,\pi_*):\WDMekR\times \DMekR \to  \DMtekR
\]
and
\[
(\beta_*,\pi_*):\WDMkR\times \DMkR \to \DMtkR.
\]
\end{thm}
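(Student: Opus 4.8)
The plan is to reduce everything to the sheaf-level splitting \eqref{eq:splitting} and then propagate it through the successive localizations defining the motivic categories. First I would work at the level of additive categories: the idempotent $e=(1+\langle-1\rangle)/2 \in \sKMW_0(k)[1/2]$ acts on every Hom group $\MWcorr(X,Y)_R = \MWcorr(X,Y)\otimes R$ via the $\sKMW_0(k)$-linear structure recalled in Remark~\ref{rem:finite_corr&plim}(5). Since $e$ is central and idempotent, multiplication by $e$ and $1-e$ gives a direct sum decomposition $\cork \otimes R \simeq (\cork\otimes R)/e \;\oplus\; (\cork\otimes R)/(1-e)$ as $R$-linear additive categories. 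The key identification is that $\pi:\cork\otimes R\to \corVk\otimes R$ factors through the $(1-e)$-part and induces an \emph{equivalence} $(\cork\otimes R)/(1-e)\simeq \corVk\otimes R$ — this is exactly the content of Lemma~\ref{lm:corr&corrV_upto2tor} made into an equivalence once $2$ is invertible, using that the composite $\KM_n \to \KMW_n(\Lb)\to \KM_n$ is multiplication by $2$. Symmetrically, the functor $\beta:\cork\to\Wcork$ from Section~\ref{sec:relordmot} kills the $(1-e)$-part (because $\sW(\Lb)=\sKMW_*(\Lb)/h$ and $2e=h$) and induces an equivalence $(\cork\otimes R)/e\simeq \Wcork\otimes R$; this follows from \eqref{eq:splitting} applied termwise to the Chow-Witt-with-support groups defining $\MWcorr$ and $\Wcork$, since Chow-Witt groups with support are computed by the Rost-Schmid/Milnor-Witt complex and \eqref{eq:splitting} is compatible with the residue differentials.

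Granting the additive equivalence $\cork\otimes R \simeq (\Wcork\otimes R)\times(\corVk\otimes R)$, the rest is a formal chase through the construction of $\DMtekR$ and $\DMtkR$. A product of additive categories induces a product on presheaf categories, on the associated sheaf categories for $t=\nis$ (since sheafification is computed objectwise on a product and the equivalences above preserve the topology, being defined by pull-backs and push-forwards along maps of representables — cf.\ Corollary~\ref{cor:adjunctions_corr}), hence on derived categories $\Der(\shMWkR)\simeq \Der(\sh^{\mathrm{W}}_{\nis}(k,R))\times\Der(\shVkR)$. Then I would observe that the localizing subcategory $\mathcal T_{\Aone}$ used to define $\DMtekR$ splits as the product of the corresponding subcategories on the two factors (the generators $\MWrepRt(\Aone_X)\to\MWrepRt(X)$ map to the generators on each side under the equivalence, since $\beta^*$ and $\pi^*$ preserve representables and their left adjoints are monoidal by Corollary~\ref{cor:adjunctions_corr}\ref{item:exists-diagram-monoidal}), so the Verdier quotients multiply: $\DMtekR\simeq \WDMekR\times\DMekR$. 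Finally, the Tate object $\tRx1$ decomposes as the pair of Tate objects (again because the localization functors send $\tMot(\PP^1)$ etc.\ to the corresponding motives), so $\otimes$-inverting it on both sides gives $\DMtkR\simeq \WDMkR\times\DMkR$. The functor realizing the equivalence is $(\beta_*,\pi_*)$ precisely because these are the right adjoints corresponding to the projections $\cork\to\Wcork$ and $\cork\to\corVk$, and $\beta_*$, $\pi_*$ are exact and preserve $\Aone$-local objects (so they commute with all the localizations).

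The main obstacle, I expect, is the verification that the additive equivalence $(\cork\otimes R)/e \simeq \Wcork\otimes R$ is genuinely an equivalence and not merely a full essentially-surjective functor — i.e.\ faithfulness after killing $e$. Concretely one must show that for admissible $T\subset X\times Y$ the map
\[
\H^{d_Y}_T\!\big(X\times Y,\sKMW_{d_Y}(\omega_Y)\big)[1/2]\big/e \;\longrightarrow\; \H^{d_Y}_T\!\big(X\times Y,\sI^{d_Y}(\omega_Y)\big)[1/2]
\]
is an isomorphism, compatibly with extension of supports and with composition of correspondences. The isomorphism of sheaves itself is \eqref{eq:splitting}; the content is that passing to cohomology with support commutes with the direct-sum decomposition, which is automatic since $\sKMW_*(\Lb)[1/2]$, $\sKW_*(\Lb)[1/2]$, $\sKM_*[1/2]$ all admit compatible Rost-Schmid flabby resolutions and the splitting is by a globally-defined idempotent. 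Compatibility with composition then reduces to compatibility of the splitting with intersection products and proper push-forwards, which holds because $e\in\sKMW_0(k)$ is pulled back from the base and the projection formula holds (Corollary~\ref{cor:pformula}). Once this is in place, everything downstream is the routine functoriality of localization of model categories, for which I would cite \cite{CD09_b} exactly as in the proof of Corollary~\ref{cor:adjunctions_corr} and Proposition~\ref{prop:compare_DMt_DM}.
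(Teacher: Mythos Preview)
Your proposal is correct and follows essentially the same approach as the paper: both reduce to the sheaf-level splitting \eqref{eq:splitting} (equivalently, the global idempotent $e=(1+\langle-1\rangle)/2$) to decompose $\cork\otimes R$ as a product, and then observe that this product structure propagates through sheafification, $\Aone$-localization, and $\PP^1$-stabilization. The paper's own proof is extremely terse (it says ``the result is then clear'' after invoking the inverse inclusions), and what you have written is a careful expansion of exactly those steps; in particular, your identification of the main checkpoint --- that the idempotent splitting on $\sKMW_*(\Lb)[1/2]$ passes to cohomology with supports compatibly with composition --- is precisely what the paper is taking for granted.
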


\begin{proof}
The functors $\beta$ and $\pi$ are constructed using the isomorphism of sheaves (\eqref{eq:splitting}) 
\[
\sKMW_*(\Lb)[1/2]\simeq \sI^*(\Lb)[1/2]\times \sKM_*[1/2]
\]
together with the first and second projection. We can construct functors in the other direction by using the inclusion into the relevant factor and then the inverse of the above isomorphism. The result is then clear.
\end{proof}

\begin{rem}
Still under the assumption that $2\in R^\times$, the above splitting induces isomorphisms
\[
\HMW^{p,q}(X,R)\simeq \HW^{p,q}(X,R)\times \HM^{p,q}(X,R)
\]
functorial in $X$. Here, $\HW^{p,q}(X,R)$ is the W-motivic cohomology defined in the category $\WDMekR$. Moreover, this isomorphism is compatible with pull-backs, push-forwards and products.
\end{rem}

\begin{bibsection}
\begin{biblist}

\bib{Asok16_b}{article}{
      author={Asok, A.},
      author={Fasel, J.},
       title={Comparing {E}uler classes},
        date={2016},
     journal={Quart. J. Math.},
      volume={67},
       pages={603\ndash 635},
}

\bib{Ayoub07_b}{book}{
      author={Ayoub, J.},
       title={Les six opérations de {G}rothendieck et le formalisme des
  cycles évanescents dans le monde motivique ({II})},
      series={Astérisque},
   publisher={Soc. Math. France},
        date={2007},
      volume={315},
}

\bib{CD09_b}{article}{
      author={Cisinski, D.-C.},
      author={D{é}glise, F.},
       title={Local and stable homological algebra in {G}rothendieck abelian
  categories},
        date={2009},
     journal={HHA},
      volume={11},
      number={1},
       pages={219\ndash 260},
}

\bib{CD12_b}{book}{
      author={Cisinski, D.-C.},
      author={D{é}glise, F.},
       title={Triangulated categories of mixed motives},
        year={2019},
   publisher={Springer},
      series={Springer Monographs in Mathematics},
        note={\href{https://arxiv.org/abs/0912.2110}{arXiv:0912.2110}},
}

\bib{CD16_b}{article}{
      author={Cisinski, D.-C.},
      author={D{é}glise, F.},
       title={\'{E}tale motives},
        date={2016},
     journal={Compos. Math.},
      volume={152},
      number={3},
       pages={556\ndash 666},
}

\bib{DHI04_b}{article}{
      author={Dugger, D.},
      author={Hollander, S.},
      author={Isaksen, D.~C.},
       title={Hypercovers and simplicial presheaves},
        date={2004},
     journal={Math. Proc. Cambridge Philos. Soc.},
      volume={136},
      number={1},
       pages={9\ndash 51},
}

\bib{Fasel08_b}{article}{
      author={Fasel, J.},
       title={Groupes de {C}how-{W}itt},
        date={2008},
     journal={Mém. Soc. Math. Fr. (N.S.)},
      number={113},
       pages={viii+197},
}

\bib{FSV00_b}{book}{
      author={Voevodsky, V.},
      author={Suslin, A.},
      author={Friedlander, E.~M.},
       title={Cycles, transfers and motivic homology theories},
      series={Annals of Mathematics Studies},
   publisher={Princeton Univ. Press},
        date={2000},
      volume={143},
}

\bib{Garkusha14_b}{article}{
      author={Garkusha, G.},
      author={Panin, I.},
       title={Framed {M}otives of algebraic varieties (after {V}. {Voevodsky})},
        date={2021},
        journal={{J}. {A}mer. {M}ath. {S}oc.},
        volume={34},
        number={1},
        pages={261\ndash 313},
}

\bib{Garkusha15_b}{article}{
      author={Garkusha, G.},
      author={Panin, I.},
       title={Homotopy invariant presheaves with framed transfers},
        date={2020},
        journal={{C}ambridge {J}. {M}ath.},
        volume={8},
        number={1},
        pages={1\ndash 94},
}

\bib{Gille15_b}{article}{
      author={Gille, S.},
      author={Scully, S.},
      author={Zhong, C.},
       title={Milnor-{W}itt groups over local rings},
        date={2016},
     journal={Adv. in Math.},
      volume={286},
       pages={729\ndash 753},
}

\bib{Hirschhorn03_b}{book}{
      author={Hirschhorn, P.~S.},
       title={{Model categories and their localizations}},
   publisher={Providence, RI: American Mathematical Society (AMS)},
        date={2003},
}

\bib{Ho00}{article}{
      author={Hovey, M.},
      author={Shipley, B.},
      author={Smith, J.}
       title={Symmetric spectra},
        date={2000},
     journal={{J}. {A}mer. {M}ath. {S}oc.},
      volume={13},
       pages={149\ndash 209},
}

\bib{MilHus_73}{book}{
    AUTHOR = {Milnor, J. and Husemoller, D.},
     TITLE = {Symmetric bilinear forms},
    SERIES = {Ergebnisse der Mathematik und ihrer Grenzgebiete, Band 73},
 PUBLISHER = {Springer-Verlag, New York-Heidelberg},
      YEAR = {1973},
     PAGES = {viii+147},
}

\bib{Hu01_b}{article}{
      author={Hu, P.},
      author={Kriz, I.},
       title={The {S}teinberg relation in {$\mathbb A^1$}-stable homotopy},
        date={2001},
     journal={Int. {M}ath. {R}es. {N}ot.},
      volume={17},
       pages={907\ndash 912},
}

\bib{Kolderup17_b}{article}{
      author={Kolderup, H.~A.},
       title={Homotopy invariance of presheaves with {M}ilnor-{W}itt
  transfers},
        date={2019},
        journal={Doc. Math.},
        volume={24},
        pages={2339\ndash 2379}
}

\bib{MacLane98_b}{book}{
      author={Mac~Lane, S.},
       title={{Categories for the working mathematician. 2nd ed.}},
     edition={2nd ed},
      series={Graduate {T}exts in {M}athematics},
   publisher={New York, NY: Springer},
        date={1998},
}

\bib{Mazza06_b}{book}{
      author={Mazza, C.},
      author={Voevodsky, V.},
      author={Weibel, C.},
       title={Lecture notes on motivic cohomology},
      series={Clay Mathematics Monographs},
   publisher={American Mathematical Society},
     address={Providence, RI},
        date={2006},
      volume={2},
        ISBN={978-0-8218-3847-1; 0-8218-3847-4},
}

\bib{Milne12_b}{unpublished}{
      author={Milne, J.~S.},
       title={Lectures on {E}tale {C}ohomology},
        date={2012},
        note={Available at \texttt{www.jmilne.org/math/}},
}

\bib{Morel04_b}{article}{
      author={Morel, F.},
       title={Sur les puissances de l'idéal fondamental de l'anneau de
  {W}itt},
        date={2004},
        ISSN={0010-2571},
     journal={Comment. Math. Helv.},
      volume={79},
      number={4},
       pages={689\ndash 703},
}

\bib{Morel05_b}{article}{
      author={Morel, F.},
       title={The stable {${\mathbb A}^1$}-connectivity theorems},
        date={2005},
        ISSN={0920-3036},
     journal={$K$-Theory},
      volume={35},
      number={1-2},
       pages={1\ndash 68},
         url={http://dx.doi.org/10.1007/s10977-005-1562-7},
}

\bib{Morel12_b}{book}{
      author={Morel, F.},
       title={$\mathbb {A}^1$-{A}lgebraic {T}opology over a {F}ield},
      series={Lecture Notes in Math.},
   publisher={Springer},
     address={New York},
        date={2012},
      volume={2052},
}

\bib{Nee01_b}{book}{
      author={Neeman, A.},
       title={Triangulated categories},
      series={Annals of Mathematics Studies},
   publisher={Princeton University Press},
     address={Princeton, NJ},
        date={2001},
      volume={148},
}

\bib{Nee01-2_b}{article}{
      author={Neeman, A.},
       title={On the derived category of sheaves on a manifold},
        date={2001},
        ISSN={1431-0635},
     journal={Doc. Math.},
      volume={6},
       pages={483\ndash 488},
}

\bib{Pa10_b}{incollection}{
      author={Panin, I.},
       title={Homotopy invariance of the sheaf {$W\sb {\rm Nis}$} and of its
  cohomology},
        date={2010},
   booktitle={Quadratic forms, linear algebraic groups, and cohomology},
      series={Dev. Math.},
      volume={18},
   publisher={Springer},
     address={New York},
       pages={325\ndash 335},
}

\bib{SV00_b}{incollection}{
      author={Suslin, A.},
      author={Voevodsky, V.},
       title={Bloch-{K}ato {C}onjecture and {M}otivic {C}ohomology with
  {F}inite {C}oefficients},
        date={2000},
   booktitle={The {A}rithmetic and {G}eometry of {A}lgebraic {C}ycles},
      editor={Gordon, B.~Brent},
      editor={Lewis, James~D.},
      editor={M{\"u}ller-Stach, Stefan},
      editor={Saito, Shuji},
      editor={Yui, Noriko},
   publisher={Springer Netherlands},
     address={Dordrecht},
       pages={117\ndash 189},
         url={http://dx.doi.org/10.1007/978-94-011-4098-0_5},
}

\bib{Voe10_b}{article}{
      author={Voevodsky, V.},
       title={Cancellation theorem},
        date={2010},
     journal={{\normalfont Doc. Math. Extra Volume: Andrei A. Suslin's Sixtieth Birthday}},
       pages={671\ndash 685},
}

\bib{W94_b}{book}{
      author={Weibel, C.},
       title={An introduction to homological algebra},
   publisher={Cambridge University Press},
        date={1994},
}

\end{biblist}
\end{bibsection}

\renewcommand{\theequation}{\arabic{equation}}

\chapter[Cancellation theorem]{A cancellation theorem for Milnor-Witt correspondences\author{Jean Fasel and Paul-Arne {\O}stv{\ae}r}}
\label{ch:cancellation}
%%%%%%%%%%%%%%%%%%%%%%%%%%%%%%%%%%%%%%%%%%%%%%%%%%%%%%%%%%%%

\section*{Abstract}
We show that finite Milnor-Witt correspondences satisfy a cancellation theorem with respect to the pointed multiplicative group scheme.
This has several notable applications in the theory of Milnor-Witt motives and Milnor-Witt motivic cohomology.

\section*{Introduction}

The notion of finite Milnor-Witt correspondences was introduced in \chfinitecw and \chdmt as a natural generalization of finite correspondences in the sense \cite{FSV00_c}.
Intuitively one may view a Milnor-Witt correspondence as an ordinary finite correspondence together with a well behaved quadratic form over the function field of each irreducible component of the support of the correspondence.
This point of view allows to translate many results in the classical setting into this new framework.
In this paper we show the following result, Theorem \ref{thm:main} below, which is a cancellation theorem for Milnor-Witt correspondences. 

\begin{thm*}
Let $k$ be an infinite perfect field of characteristic not $2$. 
Then for all complexes $\mathscr C$ and $\mathscr D$ of Milnor-Witt sheaves, 
tensoring with the Tate object $\tZpx 1$
\index[notation]{zt1@$\tZpx 1$}%
\index{Milnor-Witt!motive!Tate}%
\index{cancellation theorem}%
yields an isomorphism 
\begin{equation}
\label{cancellationisomorphism}
\Hom_{\DMtekZ}(\mathscr C,\mathscr D)
\overset{\simeq}{\to} \Hom_{\DMtekZ}(\mathscr C(1),\mathscr D(1)).
\end{equation}
\end{thm*}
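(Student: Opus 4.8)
The strategy follows Voevodsky's original cancellation theorem \cite{Voe10_b}, adapted to the Milnor-Witt setting by keeping careful track of line-bundle twists at every stage. The key idea is to construct an explicit ``inverse'' functor. Tensoring with $\tZpx 1$ has the form $-\otimes \tRcbx 1[-1]$, so it suffices to treat $\tRcbx 1=\tMot(\Gm)/\tMot(\{1\})$, i.e.\ to show that $-\otimes \tRcbx 1$ is fully faithful on $\DMtekZ$. The plan is first to reduce \eqref{cancellationisomorphism}, via adjunction and the fact that $\tMot(X)$ for $X$ smooth form a family of compact generators of $\DMtekZ$ (Remark~\ref{rem:compact_gen_DMte}), to showing that for every smooth $X$ the natural map
\[
\tMot(X) \longrightarrow \derR\uHom\big(\tRcbx 1,\tMot(X)\otimes \tRcbx 1\big)
\]
is an isomorphism in $\DMtekZ$. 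Equivalently, writing $\underline{\Hom}_{\Gm}$ for the internal-hom relative to $\Gm$, one wants the unit of the $(-\otimes\tRcbx 1, \underline{\Hom}_{\Gm})$-adjunction to be an $\Aone$-weak equivalence after applying the Suslin complex functor $\Cstar{-}$ and sheafifying.

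Second, the heart of the argument is the construction, for smooth $X$ and $Y$, of explicit homomorphisms
\[
\MWcorr\big(X\times \Gm, Y\times \Gm\big) \longrightarrow \MWcorr(X,Y)
\]
built out of the ``evaluation at $0$ and $\infty$'' maps on $\Gm \subset \PP^1$ together with correspondences supported on graphs of rational functions, exactly mimicking \cite{Voe10_b}*{\S 3--4}; here one must replace cycles by elements of $\chst{d}{T}{X\times Y}{\omega_Y}$, so each geometric operation (pull-back, push-forward along the projection $\PP^1\to \pt$, intersection with the divisor $\{t\}$) has to be performed on twisted Chow-Witt groups using the base-change and projection formulas from \chfinitecw (Proposition~\ref{prop:basechange}, Corollary~\ref{cor:pformula}). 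One assembles these into a chain-homotopy inverse to $-\otimes\tRcbx 1$ on the level of Suslin complexes of representable MW-sheaves. The compatibility with the $\KMW_0(k)$-linear and monoidal structure, and with the symmetry established in Corollary~\ref{cor:permut_Tate} (so that the two ``$\Gm$-factors'' can be interchanged up to $\langle -1\rangle$), is what makes the homotopies close up; the Steinberg-type relations needed are exactly those already verified in the proof of Theorem~\ref{thm:gradedringhomo}.

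Third, once the inverse is available on representable objects, one extends it to arbitrary complexes $\mathscr C, \mathscr D$ of MW-sheaves by a standard argument: both sides of \eqref{cancellationisomorphism} are triangulated functors in each variable commuting with arbitrary coproducts (using that $\tRcbx 1$ is a direct summand of the representable sheaf $\tMot(\Gm)$ and hence dualizable-up-to-homotopy in the relevant sense), so it is enough to check the isomorphism on the compact generators $\mathscr C=\tMot(X)$, $\mathscr D=\tMot(Y)$, where it reduces to the computation of the previous step together with Corollary~\ref{cor:compare_Hom&cohomology} identifying $\Hom$-groups in $\DMtekZ$ with Nisnevich hypercohomology and Corollary~\ref{cor:LA1} identifying $\Aone$-localization with $\Cstar{-}$. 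Finiteness/perfectness and $\charac k\neq 2$ enter precisely through Theorem~\ref{thm:Wsh&A1} (strict $\Aone$-invariance of associated MW-sheaves), which is what guarantees that the Suslin complex correctly computes morphisms and that the constructed chain homotopies descend to $\DMtekZ$.

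\textbf{Main obstacle.} The genuinely delicate point, beyond bookkeeping, is the twist-tracking in the explicit homotopies: in Voevodsky's proof the key operator involves pushing forward along $\PP^1_X \to X$ and restricting to a point of $\Gm$, and in the MW-setting the composite of these operations carries a canonical-sheaf twist that must be canonically trivialized, compatibly with the $\Gm$-suspension isomorphism $\tRcbx m[-m]\simeq \tRx m$ and with the symmetry isomorphism. Getting these identifications to be strictly (not just up to a unit) compatible — so that the homotopy operator $H$ satisfies $dH+Hd = \mathrm{id} - (\text{inverse})\circ(-\otimes\tRcbx 1)$ on the nose rather than up to a sign $\langle -1\rangle$ — is where the argument of \cite{Voe10_b} needs the most care to transport, and is essentially the content one has to grind through in the full proof.
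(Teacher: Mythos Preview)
Your reduction steps (first and third paragraphs) match the paper's proof of Theorem~\ref{thm:main} essentially exactly: pass to the $\Gm$-twist $\tRcbx 1$, use internal Hom to reduce to showing the unit $\mathscr D\to\derR\uHom(\tRcbx 1,\mathscr D\{1\})$ is an isomorphism, reduce by compact generation to $\mathscr D=\MWrepZ(Y)$, and then identify both sides with Nisnevich (hence Zariski) hypercohomology of Suslin complexes via Corollaries~\ref{cor:compare_Hom&cohomology}, \ref{cor:LA1}, \ref{cor:local}, \ref{cor:local2}. So far so good.

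The gap is in your second step, which is only a plan, and whose description does not match what is actually needed. The paper's inverse operator $\rho_n$ is not built from ``evaluation at $0$ and $\infty$'': for $\alpha\in\cork(\Gm X,\Gm Y)$ one intersects with the principal Cartier divisor $D(g_n)$ on $\Gm X\Gm Y$ defined by the specific rational function $g_n=(t_1^{n+1}-1)/(t_1^{n+1}-t_2)$, then composes with the projection $\Gm\Gm Y\to Y$. This requires first developing a theory of classes $\tdiv(D)\in\H^1_{|D|}(X,\sKMW_1,\OO_X(D))$ for Cartier divisors (\S\ref{section:cdMKK}), together with the degree computation of Lemma~\ref{lem:degree} which produces the crucial $\langle -1\rangle$ (so that $\rho_n(1\times\alpha)=\langle -1\rangle\alpha$, Lemma~\ref{lem:minus1}). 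None of this is indicated in your outline, and it is precisely where the MW-specific work lies.

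Second, the ingredient that makes the homotopies close up is Lemma~\ref{lem:hopf}: the switch on $\MWprep\{1\}\otimes\MWprep\{1\}$ is $\Aone$-homotopic to $\epsilon$. This is proved by an explicit, somewhat delicate construction of an $\Aone$-homotopy via residues of concrete polynomials in $k[t_1^{\pm 1},t_2^{\pm 1},u]$; it is \emph{not} a consequence of Steinberg-type relations. Your appeal to ``relations already verified in the proof of Theorem~\ref{thm:gradedringhomo}'' is in fact circular: that theorem (in \chdmt) explicitly invokes computations from \chcancellation, \S\ref{sec:examples} (Lemmas~\ref{lem:example1} and~\ref{lem:example2}), which in turn rely on the cancellation machinery you are trying to establish. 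Likewise Corollary~\ref{cor:permut_Tate} is a corollary of Lemma~\ref{lem:hopf}, not an independent input.

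In short: the architecture is right, but the load-bearing wall---the $\rho_n$ construction via $\tdiv(D(g_n))$ and the direct $\Aone$-homotopy proof that the switch equals $\epsilon$---is missing, and the references you give in its place are circular.
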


Here, $\DMtekZ$ is the analogue in this new framework of Voevodsky's category of motives $\DMe{k}$ (see \chdmt). 
This result extends Voevodsky's pioneering work on the cancellation theorem for finite correspondences in \cite{Voe10_c}.
It basically asserts a suspension isomorphism in Milnor-Witt motivic cohomology with respect to the multiplicative group scheme $\Gm$ viewed as an algebro-geometric sphere;
the Tate object $\tZpx 1$ is defined as $\MWrepZ(\PP^1)/\MWrepZ(\infty)[-2]$ in \chdmt, \S \ref{num:Tate}.

\subsection*{Notation}
Throughout the paper we work over a perfect field $k$ of $\charac k\neq 2$, except in some specific places where it will be explicitly stated.
We denote by $\smk$ the category of smooth separated schemes over $\Spec(k)$.
For a natural number $i\in \NN$, an integer $j\in \ZZ$, and a line bundle $\Lb$ on $X\in\smk$, 
we consider exterior powers of tangent bundles and set  
\[
\Cger^i(X,\sKMW_j,\Lb):=\bigoplus_{x\in X^{(i)}} \sKMW_{j-i}(k(x),\wedge^i(\mathfrak m_x/\mathfrak m_x^2)^\vee\otimes_{k(x)} \Lb_x).
\]
\index{Gersten complex}%
\index[notation]{cixkmwjl@$\Cger_i(X,\sKMW_j,\Lb)$}%
Here, 
$\sKMW_{\ast}$ is the Nisnevich sheaf of unramified Milnor-Witt $K$-theory as defined in \cite{Morel12_c}*{\S 3}, 
and $\Lb_x$ is the stalk of $\Lb$ at a point $x\in X^{(i)}$ of codimension $i$. To deal with sign issues, it is more convenient to work with graded line bundles as in \chfinitecw, \S\ref{sec:twisting}.
We note that $\wedge^i(\mathfrak m_x/\mathfrak m_x^2)^\vee$ is a $1$-dimensional $k(x)$-vector space for every such $x$, which is seen as a graded line bundle concentrated in degree $i$. 
The grading of $\Lb$ depends on the context, and will be indicated when necessary.
By means of the $\ZZ[\Gm]$-module structures on $\sKMW_{\ast}$ and $\ZZ[\Lb^{\times}]$, 
the Nisnevich sheaf of Milnor-Witt K-theory twisted with respect to $\Lb$ is defined in \chfinitecw, \S\ref{sec:twisting} via the sheaf tensor product 
\[
\sKMW_{\ast}(\Lb):=\sKMW_{\ast}\otimes_{\ZZ[\Gm]}\ZZ[\Lb^{\times}].
\]
\index[notation]{kmwsl@$\sKMW_{\ast}(\Lb)$}%
The choice of a twisting line bundle on $X$ can be viewed as an ``$\Aone$-local system'' on $X$.
The terms $\Cger^i(X,\sKMW_j,\Lb)$ together with the differentials defined componentwise in \cite{Morel12_c}*{\S 5} form the Rost-Schmid cochain complex;
we let $\H^i(X,\sKMW_j,\Lb)$ denote the associated cohomology groups.
\index[notation]{hixkmwjl@$\H^i(X,\sKMW_j,\Lb)$}%
We define the support of $\alpha\in \Cger^i(X,\sKMW_j,\Lb)$ to be the closure of the set of points $x\in X^{(i)}$ of codimension $i$ for which the $x$-component of $\alpha$ is nonzero.
\index{support}%

To establish notation, we recall that the Milnor-Witt $K$-theory $\sKMW_{\ast}(F)$ of any field $F$ is the quotient of the free associative algebra on generators $[F^{\times}]\cup\{\eta\}$ subject to the relations
\begin{enumerate}
\item
$[a][b]=0$, \text{for} $a+b=1$ (Steinberg relation),
\item
$[a]\eta=\eta[a]$ ($\eta$-commutativity relation),
\item
$[ab]=[a]+[b]+\eta[a][b]$ (twisted $\eta$-logarithm relation), and 
\item
$(2+[-1]\eta)\eta=0$ (hyperbolic relation).
\end{enumerate}
The grading on $\sKMW_{\ast}(F)$ is defined by declaring $|\eta|=-1$ and $| [a]| =1$ for all $a\in F^{\times}$. 
For the Grothendieck-Witt ring of symmetric bilinear forms over $F$ there is a ring isomorphism 
\[
\GW(F)\xrightarrow{\simeq}\sKMW_0(F);
\langle a\rangle\mapsto 1+\eta[a].
\]
\index[notation]{gw@$\GW$}%
We shall repeatedly use the zeroth motivic Hopf element $\epsilon:=-\langle -1\rangle=-1-\eta[-1]\in \sKMW_0(F)$ in the context of $\Aone$-homotopies between Milnor-Witt sheaves.
In fact, 
$\sKMW_{\ast}(F)$ is isomorphic to the graded ring of endomorphisms of the motivic sphere spectrum over $F$ \cite{Morel04-2_c}*{Theorem~6.2.1}.

As in \chdmt, Definition \ref{def:representablepsht}, we denote by $\MWprep(X)$ the Milnor-Witt presheaf on $\smk$ defined by 
\[
Y\mapsto \MWprep(X)(Y):=\cork(Y,X),
\] 
\index[notation]{ctx@$\MWprep(X)$}%
and by $\MWrepZ(X)$ its associated Milnor-Witt (Nisnevich) sheaf. 
\index[notation]{ztx@$\MWrepZ(X)$}%
One notable difference from the setting of finite correspondences is that the Zariski sheaf $\MWprep(X)$ is not in general a sheaf in the Nisnevich topology,
see \chfinitecw, Example~\ref{exm:notnis}.
We write $\MWprep\{1\}$ for the cokernel of the morphism of presheaves 
\[
\MWprep(\Spec(k))\to \MWprep(\Gm)
\] 
induced by the $k$-rational point $1\colon\Spec(k)\to\Gm$.
This is a direct factor of $\MWprep(\Gm)$. 
As it turns out the associated Milnor-Witt sheaf $\tZcbx 1$ is the cokernel of the morphism $\MWrepZ(\Spec(k))\to \MWrepZ(\Gm)$; see \chdmt, Proposition~\ref{prop:exist_associated-W-t-sheaf}, \ref{item:shgrothab}. 

The additive category $\cork$ has the same objects as $\smk$ and with morphisms given by finite Milnor-Witt correspondences \chdmt, Definition~\ref{dfn:corkcorVk}.
The cartesian product in $\smk$ and the external product of finite Milnor-Witt correspondences
furnish a symmetric monoidal structure on $\cork$.
Taking graphs of maps in $\smk$ yields a faithful symmetric monoidal functor 
\[
\tilde\gamma:\smk\to\cork.
\]
The tensor product $\otimes$ on Milnor-Witt presheaves defined in \chdmt, \S \ref{sec:MWtransfers} forms part of a closed symmetric monoidal structure;
the internal Hom object $\uHom$ is characterized by the property that $\uHom(\MWprep(X),{\mathcal F})(Y)={\mathcal F}(X\times Y)$ for any Milnor-Witt presheaf $\mathcal F$. 
The same holds for the category of Milnor-Witt sheaves;
here, 
$\uHom(\MWrepZ(X),{\mathcal F})(Y)={\mathcal F}(X\times Y)$ for any Milnor-Witt sheaf ${\mathcal F}$.

\subsection*{Outline of the paper}

In \S\ref{section:cdMKK} we associate to a Cartier divisor $D$ on $X$ a class $\tdiv(D)$ 
\index[notation]{divtd@$\tdiv(D)$}%
in the cohomology group with support $\H^1_{| D|}(X,\sKMW_1,\OO_X(D))$.
One of our main calculations relates the push-forward of a principal Cartier divisor on $\Gm$ defined by an explicit rational function to the zeroth motivic Hopf element $\epsilon\in\sKMW_0$.
The cancellation theorem for Milnor-Witt correspondences is shown in \S\ref{cancellationW}.
In outline the proof follows the strategy in \cite{Voe10_c}, 
but it takes into account the extra structure furnished by Milnor-Witt $K$-theory \cite{Morel12_c}.
A key result states that the twist map on the Milnor-Witt presheaf $\MWprep\{1\}\otimes\MWprep\{1\}$ is $\Aone$-homotopic to $\epsilon$. 
An appealing aspect of the proof is that all the calculations with rational functions, Cartier divisors, and the residue homomorphisms in the Rost-Schmid cochain complex corresponding to points 
of codimension one or valuations can be carried out explicitly.

The cancellation theorem has several consequences.
In \S\ref{sec:etfMWmotives} we show that tensoring complexes of Milnor-Witt sheaves $\mathscr C$ and $\mathscr D$ with $\tZcbx 1$ induces the isomorphism (\ref{cancellationisomorphism}) for the 
category of effective Milnor-Witt motives.
This is the symmetric bilinear form analogue of Voevodsky's embedding theorem for motives.
In order to prove this isomorphism we first make a comparison of  Zariski and Nisnevich hypercohomology groups in \S\ref{ZvsN}.
Several other applications for Milnor-Witt motivic cohomology groups and Milnor-Witt motives follow in \chdmt.

In \S\ref{sec:examples} we apply the cancellation theorem to concrete examples which are used in \chfinitecw and \chdmt for verifying the hyperbolic and $\eta$-twisted logarithmic relations 
appearing in the proof of the isomorphism between Milnor-Witt $K$-theory and the diagonal part of the Milnor-Witt motivic cohomology ring.

Finally, 
in \S\ref{MWmcs} we study Milnor-Witt motivic cohomology as a highly structured ring spectrum.  
The cancellation theorem shows this is an $\Omega_{T}$-spectrum.
This part is less detailed since it ventures into open problems on effective slices in the sense of \cite{Sp12_c}*{\S5}.

%%%%%%%%%%%%%%%%%%%%%%%%%%%%%%%%%%%%%%%%%%%%%%%%%%%%%%%%%%%%%%%%%%%%%%%%%%%%%%%%%%%%%%%%%%%%%%%%%%%%%%%%%%%%%%

\section{Cartier divisors and Milnor-Witt $K$-theory}
\label{section:cdMKK}

\subsection{Cartier divisors}
\label{sec:CartierDivisors}
We refer to \cite{Ful98_c}*{Appendix~B.4} for background on Cartier divisors.
Suppose $X$ is a smooth integral scheme and let $D=\{(U_i,f_i)\}$ be a Cartier divisor on $X$. 
We denote the support of $D$ by $| D|$.
The line bundle $\OO_X(D)$ associated to $D$ is the $\OO_X$-subsheaf of $k(X)$ generated by $f^{-1}_i$ on $U_i$ \cite{Ful98_c}*{Appendix~B.4.4}. 
Multiplication by $f_i^{-1}$ yields an isomorphism between $\OO_{U_i}$ and the restriction of the line bundle $\OO_X(D)$ to $U_i$.

In the following we shall associate to the Cartier divisor $D$ a certain cohomology class 
\[
\tdiv(D)\in \H^1_{| D|}(X,\sKMW_1,\OO_X(D)),
\] 
where $\OO_X(D)$ is seen as a graded line bundle of degree $1$.
Let $x\in X^{(1)}$ and choose $i$ such that $x\in U_i$. 
Then $f_i^{-1}$ is a generator of the stalk of $\OO_X(D)$ at $x$ and a fortiori a generator of $\OO_X(D)\otimes k(X)$. 
We may therefore consider the element 
\[
[f_i]\otimes (f_i)^{-1}\in \sKMW_1(k(X),\OO_X(D)\otimes k(X)),
\] 
and its boundary under the residue homomorphism
\[
\partial_x
\colon 
\sKMW_1(k(X),\OO_X(D)\otimes k(X))
\to 
\sKMW_0(k(x),(\mathfrak m_x/\mathfrak m_x^2)^\vee\otimes_{k(x)}\OO_X(D)_x).
\]
defined in \cite{Morel12_c}*{Theorem~3.15} by keeping track of local orientations, see \cite{Morel12_c}*{Remark~3.21}.

\begin{dfn}
\label{def:Cartier}
With the notation above we set 
\[
\tord_x(D):=\partial_x([f_i]\otimes (f_i)^{-1})
\in \sKMW_0(k(x),(\mathfrak m_x/\mathfrak m_x^2)^\vee\otimes_{k(x)}\OO_X(D)_x),
\] 
\index[notation]{ordtd@$\tord(D)$}%
and 
\[
\tord(D):=\sum_{x\in X^{(1)}\cap | D|} \tord_x(D)\in \Cger^1(X,\sKMW_1,\OO_X(D)).
\]
\end{dfn}

A priori it is not clear whether $\tord_x(D)$ is well-defined for any $x\in X^{(1)}\cap | D|$ since the definition seems to depend on the choice of $U_i$. 
We address this issue in the following result.
\begin{lem}
\label{lem:Cartier}
Let $x\in X^{(1)}$ be such that $x\in U_i\cap U_j$. 
Then we have 
\[
\partial_x([f_i]\otimes (f_i)^{-1})=\partial_x([f_j]\otimes (f_j)^{-1}).
\]
\end{lem}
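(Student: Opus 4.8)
The plan is to show that the two classes $\partial_x([f_i]\otimes(f_i)^{-1})$ and $\partial_x([f_j]\otimes(f_j)^{-1})$ in $\sKMW_0(k(x),(\mathfrak m_x/\mathfrak m_x^2)^\vee\otimes_{k(x)}\OO_X(D)_x)$ agree by reducing to a computation inside the Milnor-Witt $K$-theory of the fraction field $k(X)$, twisted by the one-dimensional $k(X)$-vector space $L:=\OO_X(D)\otimes k(X)$. The key point is that on the overlap $U_i\cap U_j$ the two defining equations differ by a unit: since $\OO_X(D)$ is generated by $f_i^{-1}$ on $U_i$ and by $f_j^{-1}$ on $U_j$, on $U_i\cap U_j$ we have $f_i^{-1}=u\,f_j^{-1}$ for some $u\in\OO(U_i\cap U_j)^\times$, hence $f_j=u\,f_i$ with $u$ a unit at $x$ (indeed a unit in $\OO_{X,x}$).

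First I would unwind the twisted residue homomorphism $\partial_x$ of \cite{Morel12_c}*{Theorem~3.15}, which by construction commutes with multiplication by $\eta$ and sends the canonical generator of $\sKMW_1(k(X),L)$ attached to a choice of generator $\ell\in L$ to the expected boundary term with local orientation recorded by $\ell$. Using the twisted $\eta$-logarithm relation \ref{relation:linearity} in the form $[f_j]=[uf_i]=[u]+[f_i]+\eta[u][f_i]=[u]+\langle u\rangle [f_i]$, and writing the generator $(f_j)^{-1}=u^{-1}\cdot(f_i)^{-1}$ of $L$, I would expand
\[
[f_j]\otimes (f_j)^{-1}=\big([u]+\langle u\rangle [f_i]\big)\otimes u^{-1}(f_i)^{-1}.
\]
Now I would invoke the description of twisting by line bundles from \chfinitecw, \S\ref{sec:twisting}: the element $a\otimes (v\ell)$ equals $\langle v\rangle a\otimes \ell$ for $v$ a unit, since $v\in\OO(U)^\times$ acts on $\sKMW_\ast$ through $\langle v\rangle$. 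Thus the displayed element becomes $\langle u^{-1}\rangle[u]\otimes (f_i)^{-1}+\langle u^{-1}\rangle\langle u\rangle[f_i]\otimes (f_i)^{-1}=\langle u\rangle[u]\otimes (f_i)^{-1}+[f_i]\otimes(f_i)^{-1}$ (using $\langle u^{-1}\rangle=\langle u\rangle$ and $\langle u^{-1}u\rangle=1$). So it suffices to check $\partial_x\big(\langle u\rangle[u]\otimes (f_i)^{-1}\big)=0$.

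The hard part — though it is in fact short — is this last vanishing. Since $u$ is a unit in $\OO_{X,x}$, the element $\langle u\rangle[u]$ lies in $\KMW_1$ of the local ring $\OO_{X,x}$, i.e.\ it is unramified at $x$; by the defining property of the residue $\partial_x$ in \cite{Morel12_c}*{Theorem~3.15} (which kills $[u_1,\ldots,u_n]$ for units $u_i\in\OO_v^\times$, and commutes with $\eta$, hence kills $\langle u\rangle[u]=[u]+\eta[u][u]$ once one notes $[u][u]=[u][-1]$ by the standard relation, still a product of units), the residue vanishes. One small subtlety to handle carefully is the role of the twisting generator in $\partial_x$: I would note that $\partial_x(\alpha\otimes(f_i)^{-1})$ for $\alpha$ unramified is literally zero in the twisted group because the untwisted residue of $\alpha$ is zero and twisting is exact on the Rost-Schmid terms. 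Combining, $\partial_x([f_j]\otimes(f_j)^{-1})=\partial_x([f_i]\otimes(f_i)^{-1})$, which is the claim. The only genuine obstacle is bookkeeping the local orientation/twist conventions consistently with \cite{Morel12_c}*{Remark~3.21}; everything else is a direct manipulation with the four Milnor-Witt relations.
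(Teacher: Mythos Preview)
Your proposal is correct and is essentially the paper's proof with the roles of $i$ and $j$ swapped. The paper writes $[f_i]\otimes(f_i)^{-1}=[f_j]\otimes(f_j)^{-1}-[f_j/f_i]\otimes(f_j)^{-1}$ and invokes $\partial_x([f_j/f_i])=0$ directly; your extra term $\langle u\rangle[u]\otimes(f_i)^{-1}$ is the same thing up to the change of basis and sign, so the detour through $\langle u\rangle[u]=[u]+\eta[u][u]$ and the relation $[u][u]=[u][-1]$ is unnecessary---it suffices that $u=f_j/f_i\in\OO_{X,x}^\times$ makes $[u]$ (and hence any expression built from it) unramified at $x$.
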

\begin{proof}
Note that $f_i/f_j\in \OO_{X}(U_i\cap U_j)^\times$ and a fortiori $f_i/f_j\in \OO_{X,x}^\times$. 
This implies the equalities
\[
[f_i]\otimes (f_i)^{-1}=[f_i]\otimes (f_j/f_i) (f_j)^{-1}=\langle f_j/f_i\rangle [f_i]\otimes (f_j)^{-1}.
\]
Using that $[f_j]=[(f_j/f_i)f_i]=[f_j/f_i]+\langle f_j/f_i\rangle [f_i]$ we deduce 
\[
[f_i]\otimes (f_i)^{-1}=[f_j]\otimes (f_j)^{-1}-[f_j/f_i]\otimes (f_j)^{-1}.
\]
Since $f_j/f_i\in \OO_{X,x}^\times$ we have $\partial_x([f_j/f_i])=0$, and the result follows.
\end{proof}

\begin{lem}
The boundary homomorphism
\[
d^1:\Cger^1(X,\sKMW_1,\OO_X(D))\to \Cger^2(X,\sKMW_1,\OO_X(D))
\]
vanishes on the class $\tord(D)$ in Definition \ref{def:Cartier}, 
i.e., 
we have $d^1(\tord(D))=0$.
\end{lem}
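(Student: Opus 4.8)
The plan is to show that the Rost--Schmid differential $d^1$ kills $\tord(D)$ by reducing to a purely local computation at each codimension-$2$ point $y \in X^{(2)}$, and there to a computation in $\sKMW_\ast$ of a discrete valuation ring (or rather in the relevant function fields), exploiting the fact that $D$ is \emph{principal} on a sufficiently small neighbourhood of $y$.

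First I would recall that, by the very definition of the Rost--Schmid complex, the $y$-component of $d^1(\tord(D))$ for $y \in X^{(2)}$ is obtained by summing over the codimension-$1$ points $x$ specializing to $y$ the contribution $d^1_{x,y}$ of the $x$-component $\tord_x(D)$. Here one uses that $\tord(D)$ is supported on $|D|$, so only finitely many $x \in X^{(1)} \cap |D|$ with $y \in \overline{\{x\}}$ contribute; in particular if $y \notin |D|$ the component is trivially zero, and we may assume $y \in |D|$. Localizing $X$ at $y$, the scheme $\Spec(\OO_{X,y})$ is semilocal regular of dimension $2$, and since the Cartier divisor $D$ is determined by a single $f_i$ on a neighbourhood of $y$ (any $U_i$ containing $y$ works, and the definition is independent of that choice by Lemma~\ref{lem:Cartier}), on this localization $D$ becomes the principal divisor $\mathrm{div}(f)$ for $f = f_i \in k(X)^\times$, with $\OO_X(D)$ trivialized by $f^{-1}$. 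So it suffices to prove: for a regular semilocal two-dimensional $k$-algebra $A$ with fraction field $K$ and any $f \in K^\times$, the element $[f] \in \sKMW_1(K)$ has $d^1(\partial^\ast([f])) = 0$, where $\partial^\ast$ denotes the first Rost--Schmid differential $\sKMW_1(K) \to \Cger^1$ sending $[f]$ to $\sum_x \partial_x([f] \otimes \text{(local generator)})$.

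The key point is then that $\partial^\ast([f])$ is, up to the bookkeeping with graded line bundles, exactly the image of $[f]$ under the first differential of the Rost--Schmid \emph{cochain complex} of the \emph{local} scheme $\Spec(A)$, and that complex is a complex: $d^1 \circ d^0 = 0$ is part of Morel's construction (\cite{Morel12_c}*{\S 5}), where $d^0: \sKMW_1(K) \to \Cger^1(\Spec A, \sKMW_1, -)$ is precisely $\partial^\ast$ up to identifying $\OO_X(D) \otimes K \simeq K$ via $f^{-1}$. Concretely, I would check that under this trivialization the class $\tord_x(D) = \partial_x([f_i] \otimes f_i^{-1})$ coincides with the $x$-component of $d^0([f])$ in the untwisted Rost--Schmid complex of $\OO_{X,y}$ twisted by the constant line bundle, the only subtlety being the choice of local orientations at each $x$, which is handled by Morel's conventions (\cite{Morel12_c}*{Remark~3.21}) and is compatible across the specializations $x \rightsquigarrow y$. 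Granting this identification, $d^1(\tord(D))_y = d^1(d^0([f]))_y = 0$.

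The main obstacle I anticipate is the compatibility of the twisting data: the cohomology class $\tord(D)$ lives in the complex twisted by $\OO_X(D)$, which is only \emph{locally} trivial, whereas Morel's ``$d^1 d^0 = 0$'' is stated for a fixed (possibly twisted) coefficient system on a single local scheme. One must verify that the local trivializations $f_i^{-1}$ on the various $U_i$ glue compatibly enough at each $y \in X^{(2)}$ so that the $y$-component of $d^1(\tord(D))$ genuinely equals the $y$-component of $d^1$ applied to a single well-defined local cochain; this is where Lemma~\ref{lem:Cartier} is used crucially, since it guarantees that the $x$-components are independent of which chart we trivialize by, so the ``patched'' cochain on $\Spec(\OO_{X,y})$ is well-defined and agrees with $d^0([f])$ for the local equation $f$ of $D$ at $y$. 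Once this local coherence is pinned down, the vanishing is immediate from the complex property of the Rost--Schmid differentials.
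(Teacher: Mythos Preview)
Your proposal is correct and takes essentially the same approach as the paper: localize at a codimension-$2$ point, use that $D$ is principal on a neighbourhood (a chart $U_i$), and identify $\tord(D)$ there with $d^0([f_i]\otimes f_i^{-1})$ so that $d^1\circ d^0=0$ gives the vanishing. The paper's write-up is slightly more streamlined in that it works directly on the chart $U_i$ (where $\OO_X(D)$ is trivialized by definition) and then restricts along $\Spec(\OO_{X,x})\to U_i$ via a commutative diagram of Rost--Schmid complexes, rather than trivializing on the local scheme; but this is only a cosmetic difference from your argument.
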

\begin{proof}
For $x\in X^{(2)}$ we set $Y:=\Spec(\OO_{X,x})$ and consider the boundary map
\[
d_{Y}^1:\Cger_1(Y,\sKMW_1,\OO_X(D)_{|_Y})
\to 
\sKMW_{-1}(k(x),\wedge^2(\mathfrak m_x/\mathfrak m_x^2)^\vee\otimes_{k(x)}\OO_X(D)_x).
\]
Since the boundary homomorphism $d^1$ is defined componentwise it suffices to show the restriction $\tord(D)_Y$ of $\tord(D)$ to $Y$ vanishes under $d_{Y}^1$.
We may choose $i$ such that $x\in U_i$ and write
\[
\tord(D)_Y=\sum_{y\in Y^{(1)}\cap | D|} \tord_y(D).
\] 
Then the map $Y\to U_i$ induces a commutative diagram of complexes:
\[
\begin{tikzcd}
[column sep=1.6em]
\Cger^0(U_i,\sKMW_1,\OO_X(D)_{|_{U_i}}) \ar[r,"{d_{U_i}^0}"] \ar[d,equal] & \Cger^1(U_i,\sKMW_1,\OO_X(D)_{|_{U_i}}) \ar[r,"{d_{U_i}^1}"] \ar[d] & \Cger^2(U_i,\sKMW_1,\OO_X(D)_{|_{U_i}}) \ar[d] \\
\Cger^0(Y,\sKMW_1,\OO_X(D)_{|_Y}) \ar[r,"{d_{Y}^0}"] & \Cger^1(Y,\sKMW_1,\OO_X(D)_{|_Y}) \ar[r,"{d_{Y}^1}"] & \Cger^2(Y,\sKMW_1,\OO_X(D)_{|_Y})
\end{tikzcd}
\]
Here $\tord(D)_Y$ is the image of $\tord(D)_{U_i}$ under the middle vertical map.
Thus it suffices to show that $d_{U_i}^1$ vanishes on $\tord(D)_{U_i}$. 
The latter follows from the equality
\[
\tord(D)_{U_i}=d_{U_i}^0([f_i]\otimes (f_i)^{-1}). \qedhere
\]
\end{proof}

\begin{dfn}
We write $\tdiv(D)$ for the class of $\tord(D)$ in $\H^1_{| D|}(X,\sKMW_1,\OO_X(D))$. 
\index[notation]{divtd@$\tdiv(D)$}%
\end{dfn}

\begin{rem}
The image of $\tdiv(D)$ under the extension of support homomorphism
\[
\H^1_{| D|}(X,\sKMW_1,\OO_X(D))\to \H^1(X,\sKMW_1,\OO_X(D))
\]
equals the Euler class of $\OO_X(D)^\vee$ \cite{Asok16_c}*{\S 3}.
\end{rem}

Having defined the cocycle associated to a Cartier divisor, we can now intersect with cocycles. Let us first recall what we mean by a proper intersection.

\begin{dfn}
Let $T\subset X$ be a closed subset of codimension $d$. If $D$ is a Cartier divisor on $X$, 
we say that $D$ and $T$ intersect properly if the intersection of $T$ and $D$ is proper, 
i.e., 
if any component of $| D| \cap T$ is of codimension $\geq d+1$. 
Equivalently, 
the intersection between $D$ and $T$ is proper if the generic points of $T$ are not in $| D|$.
\end{dfn}

\begin{dfn}
\label{dfn:intersection}
Let $\Lb$ be a (graded) line bundle on $X$ and $\alpha\in H_T^i(X,\sKMW_j,\Lb)$. 
We write $D\cdot \alpha$ for the class of $\tdiv(D)\cdot \alpha$ in $H_{T\cap | D|}^{i+1}(X,\sKMW_{j+1},\OO_X(D)\otimes \Lb)$ (and $\alpha\cdot D$ for the class of $\alpha\cdot \tdiv(D)$). Note in particular that $D\cdot 1=\tdiv(D)\in \H^1_{| D|}(X,\sKMW_1,\OO_X(D))$.
\end{dfn}

\begin{rem}
\label{rem:commutativity}
Since $\OO_X(D)$ has degree $1$ (as a graded line bundle), it follows from \cite{Fasel07_c}*{Lemmas~4.19 and 4.20} or \cite{Fasel19_c}*{\S 3.4} that 
\[
D\cdot \alpha=\alpha\cdot D.
\]
\end{rem}

Recall that the canonical bundle of $X\in\smk$ is defined by $\omega_{X/k}:= \wedge^{d_{X}}\Omega^{1}_{X/k}$.
Here, 
$\wedge^{d_{X}}$ is the exterior power to the dimension $d_{X}$ of $X$ and $\Omega^{1}_{X/k}$ is the cotangent bundle on $X$.

\begin{lem}
\label{lem:degree}
Let $D$ be the principal Cartier divisor on $\Gm=\Spec (k[t^{\pm 1}])$ defined by 
\[
g:=(t^{n+1}-1)/(t^{n+1}-t)\in k(t).
\] 
Denote by $\OO_{\Gm}\to \OO_{\Gm}(D)$ and $\OO_{\Gm}\to \omega_{\Gm/k}$ the evident trivializations, 
and let 
\[
\chi:\H^1_{| D|}(\Gm,\sKMW_1,\OO_{\Gm}(D))\to \H^1_{| D|}(\Gm,\sKMW_1,\omega_{\Gm/k})
\]
denote the induced isomorphism. 
For $p:\Gm\to \Spec (k)$ and the induced push-forward map
\[
p_*:\H^1_{| D|}(\Gm,\sKMW_1,\omega_{\Gm/k})\to \sKMW_0(k),
\]
we have $p_*\chi(D\cdot 1)=\langle -1\rangle$.
\end{lem}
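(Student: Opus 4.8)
The strategy is a direct computation in the Rost-Schmid complex on $\Gm = \Spec(k[t^{\pm 1}])$. The rational function $g = (t^{n+1}-1)/(t^{n+1}-t)$ has numerator vanishing at the $(n+1)$-th roots of unity and denominator vanishing at $t=0$ (outside $\Gm$) together with the $n$-th roots of unity. So the support $|D|$ on $\Gm$ consists of the closed points where $t^{n+1}=1$ (zeros of $g$) and where $t^n = 1$ (poles of $g$), minus their common intersection; over a general field one should think of these via the factorizations $t^{n+1}-1$ and $t^n-1$ and treat the residues componentwise. First I would compute $\tord_x(D)$ at each such point: by Definition \ref{def:Cartier} it is $\partial_x([g]\otimes g^{-1})$, and since $g$ (or a local unit multiple of it) is a uniformizer at a zero, and $g^{-1}$ is a uniformizer at a pole, Theorem \ref{thm:res} gives $\partial_x([g]\otimes g^{-1}) = \langle u_x\rangle \otimes (\text{local orientation})$ for an explicit unit $u_x$ coming from the leading term of $g$ in the local parameter $t - \zeta$.

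The key simplification is that after pushing forward along $p\colon \Gm \to \Spec(k)$ one is summing Scharlau-type transfers of these one-dimensional forms. Because all the relevant residues commute with $\eta$ and the push-forward is the residue at infinity up to sign (as in \chfinitecw, \S\ref{sec:cohomological}), the sum $p_*\chi(\tord(D))$ can be evaluated by the split exact sequence $0 \to \sKMW_0(k) \to \sKMW_0(k(t)\text{-stuff}) \xrightarrow{\sum \partial_x} \bigoplus_x \sKMW_{-1}(k(x)) \to 0$ together with the residue-at-infinity formula: $p_*\chi(D\cdot 1) = -\partial_\infty^{-1/t}([g]\otimes g^{-1})$ after trivializing $\omega_{\Gm/k}$ by $dt$ (equivalently $d(1/t)$ up to the sign $\langle -1\rangle$, which is precisely where the claimed $\langle -1 \rangle$ will enter). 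So I would compute the residue of $[g]\otimes g^{-1}$ at $t = \infty$, i.e.\ with uniformizer $\pi = -1/t$. Writing $g = (t^{n+1}-1)/(t^{n+1}-t) = 1 + (t-1)/(t^{n+1}-t)$, one sees $g \to 1$ as $t\to\infty$, so $g$ is a unit at infinity with $g(\infty) = 1$; hence $[g]$ has residue zero at infinity by Theorem \ref{thm:res}. The subtlety is that $g^{-1}$ as a \emph{section of the line bundle} $\OO_{\Gm}(D)$ must be tracked through the trivialization $\chi$ and the identification with $\omega_{\Gm/k}$: the twist by $dt$ versus the canonical orientation at infinity (uniformizer $-1/t$, so $dt = -t^2\, d(1/t)$, contributing $\langle -t^2\rangle = \langle -1\rangle$ since $t^2$ is a square) is exactly what produces $\langle -1\rangle$.

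Concretely I would: (1) trivialize $\OO_{\Gm}(D) \cong \OO_{\Gm}$ via $g^{-1}$ and $\omega_{\Gm/k}\cong \OO_{\Gm}$ via $dt$, so that $\chi$ is multiplication by the unit relating $g^{-1}\otimes dt$ to the chosen generators and record its effect on orientations; (2) use that $\tord(D) = d^0([g]\otimes g^{-1})$ locally, so $\tdiv(D)$ is the image of $[g]\otimes g^{-1}\in \sKMW_1(k(t), \OO_{\Gm}(D)\otimes k(t))$ under the Rost-Schmid differential; (3) apply $p_*$, which by the construction of push-forward for $\sKMW$ (\cite{Morel12_c}, Cor.\ 5.30, or \chfinitecw, \S\ref{sec:chowwitt}) is computed on $\PP^1 \supset \Gm$ as minus the residue at $\infty$ with uniformizer $-1/t$, applied to the corresponding element on $\Aone$ (the summands at $0$ and at the roots of unity being glued via the localization sequence exactly as in the split exact sequence of \chfinitecw, \S\ref{sec:cohomological}); (4) evaluate this residue: since $g$ is a unit at $\infty$ with value $1$, we get $\partial_\infty([g]\otimes(\text{section})) = \langle \text{leading unit}\rangle \otimes (\text{orientation of }-1/t)$, and the leading-unit/orientation bookkeeping gives $\langle -1\rangle \cdot [1] $-type contributions whose $\eta$-free part assembles to exactly $\langle -1\rangle \in \sKMW_0(k) = \GW(k)$.

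The main obstacle will be step (3)--(4): carefully reconciling the three trivializations (of $\OO_{\Gm}(D)$ by $g^{-1}$, of $\omega_{\Gm/k}$ by $dt$, and the canonical orientation at $\infty$ attached to the uniformizer $-1/t$ appearing in the definition of $p_*$), and checking that all the residue contributions at the finite points where $t^{n+1}=1$ or $t^n=1$ cancel \emph{except} for the global class detected at infinity. Concretely one must verify that $\tdiv(D)$, being the divisor of a global rational function, maps to zero in $\H^1(\Gm, \sKMW_1, \OO_{\Gm}(D))$ away from the support condition, so that the only invariant surviving after $p_*$ is governed by the behaviour at $\infty$; this is the analogue of the classical fact that $p_*\,\mathrm{div}(g) = 0$ for Chow groups, refined by the quadratic data. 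Once the orientation signs are pinned down the answer $p_*\chi(D\cdot 1) = \langle -1\rangle$ drops out. It is also worth double-checking the formula $D\cdot 1 = \tdiv(D)$ from Definition \ref{dfn:intersection} and the commutativity in Remark \ref{rem:commutativity} are being used consistently, and that the degree-$1$ grading of $\OO_{\Gm}(D)$ (so that no extra sign from graded commutativity intrudes) matches the conventions of \chfinitecw, \S\ref{sec:twisting}.
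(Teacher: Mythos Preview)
Your overall strategy—extend to $\PP^1$ and use reciprocity—is correct and is exactly what the paper does. But your execution has a genuine gap: you have misidentified where the $\langle -1\rangle$ comes from.

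You write $p_*\chi(D\cdot 1) = -\partial_\infty^{-1/t}([g]\otimes g^{-1})$, but this is wrong. The reciprocity formula from \chfinitecw, \S\ref{sec:cohomological} sums over closed points of $\Aone$, not of $\Gm$. So the correct identity is
\[
p_*\chi(D\cdot 1) \;=\; -\partial_\infty\big([g]\otimes dt\big)\;-\;\partial_0\big([g]\otimes dt\big),
\]
because $t=0$ lies in $\Aone\setminus\Gm$ and contributes an extra term. You correctly observe that $g(\infty)=1$, hence $\partial_\infty([g]\otimes dt)=0$; the paper says the same. But then no amount of orientation bookkeeping at $\infty$ can manufacture $\langle -1\rangle$ from zero. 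The entire answer comes from $t=0$: writing $g=t^{-1}(t^{n+1}-1)/(t^n-1)$ exhibits a simple pole at $0$, and the paper computes
\[
\partial_0\big([g]\otimes g^{-1}\big)=\epsilon\otimes(t\otimes g^{-1}),
\]
which after the trivialization $\overline\chi$ gives $\epsilon$. Hence $p_*\chi(D\cdot 1)=-\epsilon=\langle -1\rangle$.

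So your plan is salvageable, but step~(3)--(4) needs to be rewritten: drop the attempt to squeeze $\langle -1\rangle$ out of $dt$ versus $d(1/t)$ at infinity, and instead compute the residue at $t=0$. That is the missing idea.
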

\begin{proof}
Consider the sequence
\[
\begin{tikzcd}
\sKMW_1(k(t),\omega_{k(t)/k}) \ar[r,"d^0"] & \Cger^1(\PP^1,\sKMW_1,\omega_{\PP^1/k}) \ar[r] & \sKMW_0(k),
\end{tikzcd}
\]
where the rightmost map is a sum of the (cohomological) transfer maps defined for instance in \cite{Morel12_c}*{Definition 4.26}, 
see also \cite{Fasel08_c}*{\S 10.4}.
Since the push-forward map
\[
p_*:\H^1(\PP^1,\sKMW_1,\omega_{\PP^1/k})\to \sKMW_0(k)
\]
is well defined \cite{Fasel08_c}*{Cor.~10.4.5}, 
it follows that the composition in the sequence is trivial, 
see also \cite{Fasel08_c}*{Chapitre~8}. 
Let $\overline D$ be the principal Cartier divisor on $\PP^1$ defined by $g$. 
The evident trivialization $\OO_{\PP^1}\to \OO_{\PP^1}(\overline D)$ together with the canonical isomorphism
\[
\H^1(\PP^1,\sKMW_1,\omega_{\PP^1/k})\simeq \H^1(\PP^1,\sKMW_1)
\]
yield an isomorphism 
\[
\overline \chi
\colon
\H^1_{| \overline D|}(\PP^1,\sKMW_1,\OO_{\PP^1}(\overline D))
\xrightarrow{\simeq}
\H^1_{| \overline D|}(\PP^1,\sKMW_1,\omega_{\PP^1/k}),
\] 
and we have $p_*\overline\chi(\overline D\cdot 1)=0$. 
The restriction of the intersection product $\overline D\cdot 1$ to $\Gm$ equals $D\cdot 1$, 
so that 
\[
\overline D\cdot 1=D\cdot 1+\tord_\infty(\overline D)+\tord_0(\overline D).
\] 
It is straightforward to check that $\tord_\infty(\overline D)=0$. 
Next we compute $\tord_0(\overline D)$. 
Using the equality
\[
g=t^{-1}(t^{n+1}-1)/(t^n-1),
\] 
we obtain
\begin{eqnarray*}
\partial_0([g]\otimes g^{-1}) & = & \partial_0([(t^{n+1}-1)/(t^n-1)]\otimes g^{-1}+\langle (t^{n+1}-1)/(t^n-1)\rangle [t^{-1}]\otimes g^{-1}) \\
 & = & \partial_0(\langle (t^{n+1}-1)/(t^n-1)\rangle [t^{-1}]\otimes g^{-1}) \\
 & = & \partial_0(\langle (t^{n+1}-1)/(t^n-1)\rangle\cdot \epsilon[t]\otimes g^{-1}) \\
 & = & \epsilon\otimes (t\otimes g^{-1}).
\end{eqnarray*}
Thus $\overline \chi(\overline D\cdot 1)=\chi(D\cdot 1)+\epsilon$, 
where $\epsilon$ is seen as an element of $\sKMW_0(k)\subset \Cger_1(\PP^1,\sKMW_1,\omega_{\PP^1/k})$. Finally, we have 
\[
0=p_*(\overline \chi(\overline D\cdot 1))=p_*(\chi(D\cdot 1)+\epsilon)=p_*(\chi(D\cdot 1))+\epsilon,
\]
and it follows that $p_*(\chi(D\cdot 1))=-\epsilon=\langle -1\rangle$.
\end{proof}

%%%%%%%%%%%%%%%%%%%%%%%%%%%%%%%%%%%%%%%%%%%%%%%%%%%%%%%%%%%%%%%%%%%%%%%%%%%%%%%%%%%%%%%%%%%%%%%%%%%%%%%%%%%%%%%

\section{Cancellation for Milnor-Witt correspondences}
\label{cancellationW}

If $X,Y\in\smk$ we follow the convention in \cite{Voe10_c}*{\S 4} by letting $XY$ be short for the fiber product $X\times_{k} Y$.
We denote the dimension of $Y\in\smk$ by $d_Y$.
Suppose $\alpha\in \cork(\Gm X,\Gm Y)$ is a finite Milnor-Witt correspondence. 
It has a well-defined support by \chfinitecw, Definition~\ref{dfn:support} which we denote by $T:=\supp(\alpha)$, 
so that 
\[
\alpha\in \H^{d_Y+1}_T(\Gm X \Gm Y ,\sKMW_{d_Y+1},\omega_{\Gm Y}),
\]
where $\omega_{\Gm Y}$ is the pull-back of the canonical sheaf of $\Gm Y$ along the relevant projection, sitting in degree $d_Y+1$ as a graded line bundle.
Let $t_1$ and $t_2$ denote the (invertible) global sections on $\Gm X \Gm Y$ obtained by pulling back the coordinate on $\Gm$ under the projections on the first and third factor of $\Gm X\Gm Y$,
respectively.
Recall from \cite{Voe10_c}*{\S 4} that there exists a natural number $N(\alpha)\in\NN$ such that for all $n\geq N(\alpha)$ the rational function 
\[
g_n:=(t_1^{n+1}-1)/(t_1^{n+1}-t_2)
\] 
defines a principal Cartier divisor $D(g_n)$ on $\Gm X \Gm Y$ having the property that $D(g_n)$ and $T$ intersect properly, 
and further that $| D(g_n)| \cap T$ 
is finite over $X$. 
If $n\geq N(\alpha)$, 
we say that $D(g_n)$ is \emph{defined relative to $\alpha$}. Unless otherwise specified, we always assume in the sequel that the Cartier divisors are defined relative to the considered cycles, 
e.g., 
for $D:=D(g_n)$ and $\alpha$.
 
We can consider
\[
D\cdot \alpha \in \H^{d_Y+2}_{T\cap | D|} (\Gm X \Gm Y,\sKMW_{d_Y+2},\OO(D)\otimes\omega_{\Gm Y}),
\]
where $\OO(D)$ is of degree $1$,
and the isomorphisms of graded line bundles (all of degree $1$)
\[
\OO(D)\simeq \OO_{\Gm X \Gm Y}\simeq \omega_{\Gm/k},
\]
where the latter is the usual trivialization of $ \omega_{\Gm/k}$ (choosing $dt_1$ as generator). Using the canonical isomorphism
\begin{equation}\label{eq:orientation}
\omega_{\Gm\Gm Y}\simeq p^*\omega_{\Gm/k}\otimes \omega_{\Gm Y},
\end{equation}  
we obtain
\[
D\cdot \alpha \in \H^{d_Y+2}_{T\cap | D|} (\Gm X \Gm Y,\sKMW_{d_Y+2},\omega_{\Gm\Gm Y}).
\] 
By permuting the first two factors in the product $\Gm X\Gm Y$ we finally obtain
\[
D\cdot \alpha\in \H^{d_Y+2}_{T\cap | D|} (X\Gm\Gm Y,\sKMW_{d_Y+2},\omega_{\Gm\Gm Y}),
\] 
which can be viewed as an element of $\cork(X,\Gm\Gm Y)$. 

\begin{dfn}
Let $\rho_n(\alpha)\in \cork(X,Y)$ be the composite of $D\cdot \alpha\in\cork(X,\Gm\Gm Y)$ with the projection map $\Gm\Gm Y\to Y$. 
\end{dfn}

\begin{lem}
\label{lem:minus1}
For $\alpha\in \cork(X,Y)$ and $1\times\alpha\in \cork(\Gm X,\Gm Y)$ we have
\[
\rho_n(1\times\alpha)=\langle -1\rangle\alpha.
\]
\end{lem}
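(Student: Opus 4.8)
The statement $\rho_n(1\times\alpha)=\langle -1\rangle\alpha$ should follow by tracing through the definition of $\rho_n$ in the case where the correspondence is the external product $1\times\alpha$, i.e.\ $1_{\Gm}\otimes\alpha\in\cork(\Gm X,\Gm Y)$, and observing that the two copies of $\Gm$ entering the construction separate cleanly from the $X,Y$ part. The first step is to identify the support: if $T=\supp(\alpha)\subset X\times Y$ then $\supp(1\times\alpha)$ is (the closure of) $\Delta_{\Gm}\times T$ inside $\Gm X\Gm Y$, where $\Delta_{\Gm}$ is the diagonal in $\Gm\times\Gm$ in the first and third factors. Consequently the Cartier divisor $D=D(g_n)$ cut out by $g_n=(t_1^{n+1}-1)/(t_1^{n+1}-t_2)$, when intersected with this support, becomes (via the identification $t_1=t_2$ on $\Delta_{\Gm}$) the pullback of the principal divisor on $\Gm=\Spec k[t^{\pm 1}]$ defined by $g=(t^{n+1}-1)/(t^{n+1}-t)$, exactly the divisor analysed in Lemma~\ref{lem:degree}.

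The second step is to use the multiplicativity of the intersection product and the behaviour of external products under $\cdot$. Writing $\beta:=1\times\alpha$, the class $D\cdot\beta$ in $\H^{d_Y+2}_{T\cap|D|}(\Gm X\Gm Y,\sKMW_{d_Y+2},\omega_{\Gm\Gm Y})$ should factor, after the permutation of the first two factors, as the external product of $\tdiv(\overline D)|_{\Gm}=D\cdot 1\in\H^1_{|D|}(\Gm,\sKMW_1,\omega_{\Gm/k})$ with $\alpha\in\cork(X,Y)$; here one uses the compatibility of $\cdot$ with pullbacks along the projections $\Gm X\Gm Y\to\Gm$ and $\Gm X\Gm Y\to XY$ together with the orientation identification \eqref{eq:orientation}, and Remark~\ref{rem:commutativity} to move the degree-one class $\tdiv(D)$ past $\alpha$ without sign. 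This reduces the computation of $\rho_n(1\times\alpha)$ — which is $D\cdot\beta$ post-composed with the projection $\Gm\Gm Y\to Y$ — to computing the push-forward along $p:\Gm\to\Spec k$ of $D\cdot 1$, tensored with $\alpha$. But that push-forward is exactly $p_*\chi(D\cdot 1)=\langle -1\rangle$ by Lemma~\ref{lem:degree}. Since $\cork$ is $\sKMW_0(k)$-linear (Remark~\ref{rem:finite_corr&plim}(5)) and $\langle -1\rangle\in\sKMW_0(k)$ acts by the module structure, the composite is $\langle -1\rangle\cdot\alpha$, as claimed.

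The main obstacle, as usual in this theory, is bookkeeping the graded line bundles and their canonical isomorphisms so that the claimed factorization of $D\cdot\beta$ as an external product is literally correct and introduces no spurious unit. Concretely, one must check that: (i) the trivializations $\OO(D)\simeq\OO_{\Gm X\Gm Y}\simeq\omega_{\Gm/k}$ used in defining $\rho_n$ restrict, on $\Delta_\Gm\times T$, to the trivializations used in Lemma~\ref{lem:degree} for the divisor $\overline D$ on $\PP^1\supset\Gm$; (ii) the base-change/projection-formula compatibilities for Chow--Witt groups with support (\chfinitecw, Proposition~\ref{prop:basechange} and Corollary~\ref{cor:pformula}) apply in the non-proper-but-finite-over-base situation of Remark~\ref{rem:noncartesian}, which is why one needs $|D(g_n)|\cap T$ finite over $X$; and (iii) the orientation isomorphism \eqref{eq:orientation} is compatible with the factorization of the canonical bundle of $\Gm X\Gm Y$ into its $\Gm$-part and its $XY$-part in the sense of \cite{Fasel20_a}*{\S 1.4}. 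None of these is deep, but each must be spelled out carefully; once they are in place, the numerical input is entirely supplied by Lemma~\ref{lem:degree} and the conclusion is immediate.
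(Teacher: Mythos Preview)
Your plan is correct and follows essentially the same approach as the paper. The paper makes the factorization you describe rigorous by writing down an explicit commutative diagram of Cartesian squares with the diagonal $\Delta:\Gm\to\Gm\Gm$, then applying base change and the projection formula (\chfinitecw, Proposition~\ref{prop:basechange} and Corollary~\ref{cor:pformula}) step by step to reduce $\rho_n(1\times\alpha)$ to $(p^*q_*(D''\cdot\langle 1\rangle))\cdot\alpha$ where $D''$ is the divisor of $(t^{n+1}-1)/(t^{n+1}-t)$ on $\Gm$; Lemma~\ref{lem:degree} then finishes exactly as you say.
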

\begin{proof}
Let $\Delta:\Gm\to \Gm\Gm$ be the diagonal embedding, 
and consider the commutative diagram of Cartesian squares in $\smk$, 
where all the maps apart from $\Delta$ and $\Delta\times 1$ are projections:
\[
\begin{tikzcd}
\Gm XY \ar[r,"{p_2^\prime}"] \ar[d,"{\Delta\times 1}"'] & \Gm \ar[d,"\Delta"] \\
\Gm\Gm XY \ar[d,"p_1"'] \ar[r,"p_2"] & \Gm\Gm\ar[d] \\
XY \ar[r,"p"] & \Spec (k) 
\end{tikzcd}
\]
We have $1\times \alpha=p_2^*\Delta_*(\langle 1\rangle)\cdot p_1^*\alpha$ by definition, 
while the base change formula \chfinitecw, Proposition~\ref{prop:basechange} implies $1\times\alpha=(\Delta\times 1)_*(\langle 1\rangle)\cdot p_1^*\alpha$.
On the other hand, 
let $D^\prime$ be the principal Cartier divisor on $\Gm\Gm$ defined by 
\[
g_n(t_1,t_2):=(t_1^{n+1}-1)/(t_1^{n+1}-t_2),
\] 
and let $D^{\prime\prime}$ be the principal Cartier divisor on $\Gm$ defined by 
\[
g_n(t):=(t^{n+1}-1)/(t^{n+1}-t).
\] 
Since $\Delta^*(D^\prime)=D^{\prime\prime}$ and $p_2^*D^\prime=D$ we obtain $D\cdot (1\times\alpha)=p_2^*D^\prime\cdot (\Delta\times 1)_*(\langle 1\rangle)\cdot p_1^*\alpha$. 
Now, using $(1\times\Delta)^*(p_2^*D^\prime)=(p_2^\prime)^*D^{\prime\prime}$ and the projection formula, we obtain
\[
D\cdot (1\times\alpha)=(\Delta\times 1)_*((p_2^\prime)^*D^{\prime\prime})\cdot p_1^*\alpha.
\]
By composing this Milnor-Witt correspondence with $p_1$, 
which is tantamount to applying $(p_1)_*$ by \chfinitecw, Example~\ref{ex:flat_example}, 
we obtain
\[
\rho_n(1\times\alpha)
=
(p_1)_*((1\times\Delta)_*((p_2^\prime)^*D^{\prime\prime})\cdot p_1^*\alpha)
=
(p^*q_*(D^{\prime\prime}\cdot \langle 1\rangle))\cdot \alpha,
\]
where $q:\Gm\to \Spec(k)$. 
The result follows now from Lemma \ref{lem:degree}.
\end{proof}

\begin{rem}
One has to be a bit careful when applying $(p_1)_*$. Indeed, $D\cdot (1\times\alpha)$ is an element of a Chow-Witt group with twist in the graded line bundle $(2+d_Y,\omega_{\Gm\Gm Y})$. Now, one has to use the canonical isomorphisms (involving the sign $(-1)^{d_Xd_Y}$)
\begin{eqnarray*}
(2+d_Y,\omega_{\Gm\Gm Y}) &= & (d_X,\omega_X)\otimes (-d_X,\omega_X^\vee)\otimes (2+d_Y,\omega_{\Gm\Gm Y}) \\
& = & (2+d_X+d_Y,\omega_{X\Gm\Gm Y})\otimes (-d_X,\omega_X^\vee),
\end{eqnarray*}
and
\begin{eqnarray*}
(d_Y,\omega_{Y}) &= & (d_X,\omega_X)\otimes (-d_X,\omega_X^\vee)\otimes (d_Y,\omega_{Y}) \\
& = & (d_X+d_Y,\omega_{XY})\otimes (-d_X,\omega_X^\vee),
\end{eqnarray*}
involving the same sign.
\end{rem}

\begin{lem}
For the Milnor-Witt correspondence 
\[
e_X:
\Gm X
\xrightarrow{q}
X
\xrightarrow{\{1\}\times \id}
\Gm X,
\]
where $q$ is the projection map, 
we have $\rho_n(e_X)=0$ for any $n\in\NN$.
\end{lem}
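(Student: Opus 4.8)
\emph{Plan.} By functoriality of $\tilde\gamma\colon \smk\to\cork$, the correspondence $e_X$ is the graph correspondence $\tilde\gamma(e)$ of the morphism $e\colon \Gm X\to\Gm X$, $(t,x)\mapsto (1,x)$; that is, $e_X=(\Gamma_e)_*\langle 1\rangle$ for the closed graph immersion $\Gamma_e\colon \Gm X\to \Gm X\Gm X$, $(t,x)\mapsto (t,x,1,x)$, whose image is the support $T=\supp(e_X)$. I would fix the two $\Gm$-coordinates $t_1,t_2$ on $\Gm X\Gm X$, let $D=D(g_n)$ be the principal Cartier divisor attached to $g_n=(t_1^{n+1}-1)/(t_1^{n+1}-t_2)$, defined relative to $e_X$, and reduce the statement to showing that already $D\cdot e_X=0$ in $\cork(X,\Gm\Gm Y)$; since $\rho_n(e_X)$ is the image of $D\cdot e_X$ under the projection $\Gm\Gm Y\to Y$, this suffices.

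The core computation would go as follows. Applying the projection formula (\chfinitecw, Corollary~\ref{cor:pformula}) to the proper morphism $\Gamma_e$ and to $e_X=(\Gamma_e)_*\langle 1\rangle$, one gets
\[
D\cdot e_X=\tdiv(D)\cdot (\Gamma_e)_*\langle 1\rangle=(\Gamma_e)_*\bigl(\Gamma_e^*\tdiv(D)\bigr),
\]
the graded line-bundle twists matching up exactly as in the base-change bookkeeping used for Lemma~\ref{lem:minus1} (the canonical identifications of \chfinitecw, Proposition~\ref{prop:basechange} and Remark~\ref{rem:choice}). It then remains to see that $\Gamma_e^*\tdiv(D)=0$. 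Because $D$ is defined relative to $e_X$, it intersects $T$ properly, so the generic point of $\Gm X\cong T$ avoids $|D|$ and the pulled-back Cartier divisor $\Gamma_e^*D=\mathrm{div}(g_n\circ\Gamma_e)$ is well defined; a direct substitution gives $g_n\circ\Gamma_e=(t_1^{n+1}-1)/(t_1^{n+1}-1)=1$, hence $\Gamma_e^*D=\mathrm{div}(1)=0$, and $\tdiv(\Gamma_e^*D)=0$ since $[1]=0$ in Milnor-Witt $K$-theory. Finally one invokes the compatibility $\Gamma_e^*\tdiv(D)=\tdiv(\Gamma_e^*D)$, which I would deduce from the explicit local formula $\tord_x(D)=\partial_x([g_n]\otimes g_n^{-1})$ of Definition~\ref{def:Cartier} together with the compatibility of pull-backs on twisted Chow--Witt groups with the Rost--Schmid residue homomorphisms (\chfinitecw, \S\ref{sec:chowwitt}). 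Combining, $D\cdot e_X=(\Gamma_e)_*(0)=0$, and composing with the projection to $Y$ yields $\rho_n(e_X)=0$ for every $n\in\NN$.

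The one step that requires genuine care — and which I expect to be the main obstacle — is the last compatibility $\Gamma_e^*\tdiv(D)=\tdiv(\Gamma_e^*D)$ for a regular embedding along which the Cartier divisor meets the subvariety properly: in the classical Chow setting this is a standard fact of intersection theory, but here one must check it at the level of the twisted Milnor-Witt/Rost--Schmid complexes, keeping the local orientations and the degree-one twist by $\OO(D)$ consistent. Everything else is formal manipulation with graded line bundles, entirely parallel to the proof of Lemma~\ref{lem:minus1}.
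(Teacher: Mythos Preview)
Your proof is correct and essentially identical to the paper's: the paper identifies $e_X$ as $f_*\langle 1\rangle$ for the graph map $f\colon \Gm X\to \Gm X\Gm X$, $(t,x)\mapsto (t,x,1,x)$ (your $\Gamma_e$), applies the projection formula, and observes that $f^*D(g_n)=1$ since $g_n\circ f=(t_1^{n+1}-1)/(t_1^{n+1}-1)=1$. The compatibility $\Gamma_e^*\tdiv(D)=\tdiv(\Gamma_e^*D)$ that you flag as the main obstacle is in fact used without comment throughout \S\ref{cancellationW} (for instance in the proof of Lemma~\ref{lem:minus1}, where one writes $\Delta^*(D')=D''$ and proceeds), so in the conventions of the paper it is taken as part of the standard functoriality of the $\tdiv$ construction with respect to flat or regular-immersion pull-backs on Chow--Witt groups.
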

\begin{proof}
As in \cite{Voe10_c}*{Lemma~4.3~(ii)}, 
note that the cycle representing the above composite is the image of the unit $\langle 1\rangle\in \sKMW_0(\Gm X)$ under the push-forward homomorphism
\[
f_*
\colon
\sKMW_0(\Gm X)
\to 
\H^{d_X+1}_{f(\Gm X)}(\Gm X\Gm X,\sKMW_{d_X+1}, \omega_{\Gm X})
\]
for the map $f:\Gm X\to \Gm X\Gm X$ given as the diagonal on $X$ and by $t\mapsto (t,1)$ on $\Gm$. 
The result follows now from the projection formula since $f^*D(g_n)=1$ for all $n\in \NN$. 
\end{proof}

\begin{lem}
\label{lem:composite}
Suppose $D:=D(g_n)$ is defined relative to $\alpha\in \cork(\Gm X,\Gm Y)$. 
Then for any Milnor-Witt correspondence $\beta:X^\prime\to X$, 
$D$ is defined relative to $\alpha\circ (1\times \beta)$, 
and we have
\[
\rho_n(\alpha\circ (1\times \beta))=\rho_n(\alpha)\circ \beta.
\]
\end{lem}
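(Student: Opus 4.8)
The plan is to follow Voevodsky's strategy \cite{Voe10_c}*{\S 4} for the corresponding statement about finite correspondences, adapting each step so as to keep track of the twisting graded line bundles and of the $\sKMW$-coefficients. Throughout I write $1\times\beta\colon\Gm X'\to\Gm X$, and I form $\alpha\circ(1\times\beta)$ on the triple product $\Gm X'\times\Gm X\times\Gm Y$ using the composition recipe of \chfinitecw, \S\ref{sec:compositionMWcorr}, with $q_{12},q_{23},q_{13}$ the three partial projections.

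First I would settle the properness claim. By the lemmas on supports of compositions (Mazza--Voevodsky, recalled in \chfinitecw), $\supp(\alpha\circ(1\times\beta))$ is the image under $q_{13}$ of a closed subset of $q_{12}^{-1}(\supp(1\times\beta))\cap q_{23}^{-1}(\supp\alpha)$ which is finite over $X'$. Since $1\times\beta$ is the identity on the $\Gm$-factor, this subset lies in the locus $\{t_1=t_1'\}$, where $t_1$ and $t_1'$ denote the coordinates on the $\Gm$ of $\Gm X'$ and on the $\Gm$ of $\Gm X$ respectively; hence the $\Gm\times\Gm$-coordinates $(t_1,t_2)$ attained on $\supp(\alpha\circ(1\times\beta))$ are exactly those attained, through $\beta$, on $\supp\alpha$ with respect to $(t_1',t_2)$. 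It follows that whenever $g_n$ and $\supp\alpha$ intersect properly with $|D(g_n)|\cap\supp\alpha$ finite over $X$, the same holds for $\alpha\circ(1\times\beta)$ over $X'$; in particular $D=D(g_n)$ is defined relative to $\alpha\circ(1\times\beta)$.

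The heart of the proof is the identity $\rho_n(\alpha\circ(1\times\beta))=\rho_n(\alpha)\circ\beta$. Since $\rho_n(\gamma)$ is, by definition, the composite of $D\cdot\gamma\in\cork(-,\Gm\Gm Y)$ with the projection $\Gm\Gm Y\to Y$, and composition is associative, it suffices to prove
\[
D(g_n)\cdot\bigl(\alpha\circ(1\times\beta)\bigr)=(D(g_n)\cdot\alpha)\circ\beta
\]
in $\cork(X',\Gm\Gm Y)$ after the permutation of factors used in the definition of $\rho_n$. To this end, set $W=q_{23}^{*}\alpha\cdot q_{12}^{*}(1\times\beta)$ on $\Gm X'\times\Gm X\times\Gm Y$, so that $\alpha\circ(1\times\beta)=(q_{13})_{*}W$. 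The projection formula (Corollary~\ref{cor:pformula}) and the base change formula (Proposition~\ref{prop:basechange}) give $\tdiv(g_n(t_1,t_2))\cdot(q_{13})_{*}W=(q_{13})_{*}\bigl(q_{13}^{*}\tdiv(g_n(t_1,t_2))\cdot W\bigr)$. Now comes the key point: $W$ is supported in the locus $\{t_1=t_1'\}$ (because $q_{12}^{*}(1\times\beta)$ is), and there $g_n(t_1,t_2)$ and $g_n(t_1',t_2)$ coincide, the ratio $g_n(t_1,t_2)/g_n(t_1',t_2)$ being a unit in a neighborhood of that locus; since the product of a Cartier divisor with a cycle depends only on the divisor in a neighborhood of the support of the cycle, we may replace $q_{13}^{*}\tdiv(g_n(t_1,t_2))$ by $q_{13}^{*}\tdiv(g_n(t_1',t_2))$. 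But $g_n(t_1',t_2)$ is pulled back along $q_{23}$ from the function on $\Gm X\times\Gm Y$ defining $D$ relative to $\alpha$, so by associativity of the cup product on Chow--Witt groups $q_{23}^{*}\tdiv(g_n(t_1',t_2))\cdot W=q_{23}^{*}\bigl(\tdiv(g_n)\cdot\alpha\bigr)\cdot q_{12}^{*}(1\times\beta)=q_{23}^{*}(D\cdot\alpha)\cdot q_{12}^{*}(1\times\beta)$; applying $(q_{13})_{*}$ and invoking base change once more recognizes the right-hand side as $(D\cdot\alpha)\circ\beta$. Post-composing with the projection $\Gm\Gm Y\to Y$ and using functoriality of composition finishes the argument.

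The main obstacle I expect is bookkeeping rather than anything conceptual: one must carefully match the twisting graded line bundles and the signs produced by the commutativity isomorphisms of graded line bundles and by the permutation of factors on the two sides of the identity --- exactly as in the remark following Lemma~\ref{lem:minus1} --- and one must make precise the ``locality of the intersection product with a Cartier divisor'' used above, namely that replacing $g_n(t_1,t_2)$ by $g_n(t_1',t_2)$ on a Zariski neighborhood of $\supp W$ does not change $D\cdot W$ in the relevant Chow--Witt group with support. Granting these, the computation is a routine chaining of the base change and projection formulas with the associativity of the product, in close parallel to \cite{Voe10_c}*{\S 4}.
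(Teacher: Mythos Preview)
Your approach is correct and reaches the same conclusion, but it organises the argument differently from the paper, and that difference is worth noting.

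Both you and the paper reduce to proving $D\cdot(\alpha\circ(1\times\beta))=(D\cdot\alpha)\circ\beta$ (after the standard permutation). The paper's route is to first rewrite $\alpha\circ(1\times\beta)$ itself: using base change (exploiting that $1\times\beta$ is literally the identity on the $\Gm$-factor), one shows that, after the permutation to $X'\Gm\Gm Y$, the composition can be computed on the \emph{smaller} space $X'\times X\times\Gm\Gm Y$ as $\pi_*(p_{X'X}^*\beta\cdot q_{XY}^*\alpha)$, where $\alpha$ is now viewed as a cycle on $X\Gm\Gm Y$. On this space there is only one pair of $\Gm$-coordinates, so the divisor $D$ pulls back unambiguously as $D'$; then the projection formula and $\epsilon$-commutativity of the product give $D''\cdot\pi_*(p_{X'X}^*\beta\cdot q_{XY}^*\alpha)=\pi_*(p_{X'X}^*\beta\cdot D'\cdot q_{XY}^*\alpha)=(D\cdot\alpha)\circ\beta$ in one stroke.

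You instead stay on the big triple product $\Gm X'\times\Gm X\times\Gm Y$ and need the locality step to swap $\tdiv(g_n(t_1,t_2))$ for $\tdiv(g_n(t_1',t_2))$. This is valid, but in the Chow--Witt setting you should be aware that the two principal divisors come with \emph{different} global trivializations of their (trivial) line bundles, namely $g_n(t_1,t_2)^{-1}$ versus $g_n(t_1',t_2)^{-1}$; comparing the untwisted classes therefore introduces a factor $\langle u\rangle$ with $u=g_n(t_1,t_2)/g_n(t_1',t_2)$. Since the product is supported on $\{t_1=t_1'\}$, where $u=1$, this factor is $\langle 1\rangle$ and the argument goes through --- but this is the precise content of the ``locality'' you flag, and it is exactly what the paper's reorganisation sidesteps. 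Your final base-change step (identifying $(D\cdot\alpha)\circ(1\times\beta)$ with $(D\cdot\alpha)\circ\beta$ across the permutation) is the same computation the paper performs up front. In short: your proof is fine; the paper's version is a bit shorter because moving the base change to the beginning eliminates the need for the locality lemma entirely.
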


\begin{proof}
For the fact that $D$ is defined relative to $\alpha\circ (1\times \beta)$ we refer to \cite{Voe10_c}*{Lemma~4.4}. 
For the second assertion we note that $\rho_n(\alpha)\circ \beta$ is the composite
\[
X^\prime\stackrel{\beta}\to X\xrightarrow{D\cdot \alpha} \Gm\Gm Y\stackrel{q}\to Y,
\]
while $\rho_n(\alpha\circ (1\times \beta))$ is the composite
\[
X^\prime\xrightarrow{D\cdot(\alpha\circ (1\times \beta))} \Gm\Gm Y\stackrel{q}\to Y.
\]
Thus it suffices to prove that 
\[
D\cdot(\alpha\circ (1\times \beta))=(D\cdot \alpha)\circ \beta.
\] 
Consider the following diagram:
\[
\begin{tikzcd}
X^\prime\Gm\Gm Y \ar[rrrd,bend left=1ex,"s_Y"] \ar[rddd,bend right=1ex,"s_X"'] & & & \\
 & X^\prime X\Gm\Gm Y \ar[r,"q_{XY}"'] \ar[d,"p_{X^\prime X}"] \ar[lu,"\pi"'] & X\Gm\Gm Y \ar[r,"q_Y"'] \ar[d,"p_Y"] & \Gm\Gm Y \\
 & X^\prime X \ar[r,"q_X"'] \ar[d,"p_{X^\prime}"] & X & \\
 & X^\prime & & 
\end{tikzcd}
\]
By definition and functoriality of the pull-back we have
\[
(D\cdot \alpha)\circ \beta=\pi_*(p_{X^\prime X}^*\beta\cdot q_{XY}^*(D\cdot \alpha))=\pi_*(p_{X^\prime X}^*\beta\cdot D^\prime\cdot q_{XY}^*\alpha),
\]
where $D^\prime$ is the pull-back of $D$ along the projection $X^\prime X\Gm\Gm Y\to \Gm\Gm$. 
On the other hand, a direct computation involving the base change formula (\chfinitecw, Proposition~\ref{prop:basechange}) shows that 
\[
\alpha\circ (1\times \beta)=\pi_*(p_{X^\prime X}^*\beta\cdot q_{XY}^*\alpha).
\]
Let $D^{\prime\prime}$ be the pull-back of $D$ along the projection $X^\prime\Gm\Gm Y\to \Gm\Gm$. Using respectively the projection formula and the (skew-)commutativity of Chow-Witt groups, we find
\begin{eqnarray*}
D^{\prime\prime}\cdot \pi_*(p_{X^\prime X}^*\beta\cdot q_{XY}^*\alpha) & = & \pi_*(D^\prime\cdot p_{X^\prime X}^*\beta\cdot q_{XY}^*\alpha)\\
 & = & \pi_*(p_{X^\prime X}^*\beta\cdot D^\prime \cdot q_{XY}^*\alpha).
\end{eqnarray*}
\end{proof}

\begin{lem}
\label{lem:rightcomposition}
Suppose $D:=D(g_n)$ is defined relative to $\alpha\in \cork(\Gm X,\Gm Y)$. 
Then for any map of schemes $f:X^\prime\to Y^\prime$, $D(g_n)$ is defined relative to $\alpha\times f$, 
and we have 
\[
\rho_n(\alpha\times f)=\rho_n(\alpha)\times f.
\]
\end{lem}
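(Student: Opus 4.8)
The plan is to treat this as the easy companion of Lemma~\ref{lem:composite}: there we precomposed with a correspondence from the left and had to invoke the base change formula, whereas here we take an external product with the graph of a morphism, and the argument becomes essentially a matter of recognizing that every operation entering the definition of $\rho_n$ is compatible with external products. Throughout, $f$ is identified with the corresponding finite MW-correspondence $\tilde\gamma(f)$, whose support is the graph $\Gamma_f\subset X'Y'$, finite (indeed isomorphic) over $X'$.

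First I would unwind $\alpha\times f$. By the construction of external products (\chfinitecw, \S\ref{sec:tensorproducts}), $\alpha\times f$ lives on $(\Gm XX')\times(\Gm YY')$ and has support $\sigma(T\times\Gamma_f)$, where $T=\supp(\alpha)$ and $\sigma$ is the permutation interchanging the two $\Gm$-blocks. The key observation, as in \cite{Voe10_c}*{\S 4}, is that $g_n=(t_1^{n+1}-1)/(t_1^{n+1}-t_2)$, hence $D:=D(g_n)$, depends only on the coordinates $t_1,t_2$ pulled back from the two $\Gm$-factors; thus on $\Gm XX'\Gm YY'$ the divisor $D$ is pulled back from the divisor on $\Gm\Gm$ defined by the same function, so that, up to the reordering $\sigma$, $|D|\cap\supp(\alpha\times f)=(|D|\cap T)\times\Gamma_f$. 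Since $\Gamma_f\to X'$ is an isomorphism and the factors $X',Y'$ are invisible to $D$, proper intersection and finiteness over $XX'$ follow immediately from the corresponding properties over $X$; hence $D(g_n)$ is defined relative to $\alpha\times f$ (this is analogous to the stability statements in \cite{Voe10_c}*{\S 4} used for Lemma~\ref{lem:composite}).

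For the identity $\rho_n(\alpha\times f)=\rho_n(\alpha)\times f$ I would then run the definition of $\rho_n$ on $\alpha\times f$ and compare termwise with the external product of $\rho_n(\alpha)$ and $f$. Passing from a correspondence to its image under $\rho_n$ consists of three operations: cup product with the divisor class $\tdiv(D)$, the permutation bringing a $\Gm$ to the front, and the pushforward along the projection $\Gm\Gm Y\to Y$. Each is external-product-compatible: the permutation is merely a rearrangement of factors; the pushforward along a projection of the form $g\times\id$ satisfies $(g\times\id)_*(a\times b)=(g_*a)\times b$, the identity invoked in the proof of \chfinitecw, Corollary~\ref{cor:pformula}; and cup product with $\tdiv(D)$ commutes with the external product because, by the previous paragraph, $\tdiv(D)$ is pulled back from the $\Gm\Gm$-factor common to both sides, so the compatibility reduces to the formula $p_1^*c\cdot(a\times b)=(c\cdot a)\times b$, a consequence of the associativity of the Chow--Witt product together with the projection formula (\chfinitecw, \S\ref{sec:chowwitt}). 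Concatenating, $D\cdot(\alpha\times f)=(D\cdot\alpha)\times\tilde\gamma(f)$ under the canonical identifications, and composing with the projection $\Gm\Gm YY'\to YY'$ yields the claim.

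The main obstacle is not the cycle-level computation — which is formal once the three operations are recognized as external-product-compatible — but the bookkeeping of graded line bundles and signs. One must verify that the canonical identifications built into the definition of $\rho_n$ (the trivializations $\OO(D)\simeq\OO\simeq\omega_{\Gm/k}$, the isomorphism $\omega_{\Gm\Gm Y}\simeq p^*\omega_{\Gm/k}\otimes\omega_{\Gm Y}$ of \eqref{eq:orientation}, the degree shift replacing $d_Y$ by $d_Y+d_{Y'}$, and the $\langle-1\rangle^{d_{X'}d_{Y}}$-type signs produced when reordering graded line bundles across the extra factors) match on the two sides. This requires the same care as in Remark~\ref{rem:commutativity} and the sign remarks following Lemma~\ref{lem:minus1}; once the twists are pinned down, the termwise comparison of the preceding paragraph goes through without further incident.
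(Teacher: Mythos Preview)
Your proposal is correct and follows essentially the same approach as the paper: both recognize that $\rho_n(\alpha\times f)$ and $\rho_n(\alpha)\times f$ are obtained by multiplying the pullbacks of $\tdiv D$, $\alpha$, and $\Gamma_f$ along projections, so the equality reduces to associativity of the intersection product (the paper states this in one line, ``omitting the required permutations''). Your version is more explicit in decomposing $\rho_n$ into its three constituent operations and checking external-product compatibility for each, and your concern about graded line bundle bookkeeping is exactly what the paper sweeps under the rug with that phrase.
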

\begin{proof}
Omitting the required permutations, 
the product $\alpha\times f$ is defined by the intersection product of the pull-backs on $\Gm X\Gm Y X^\prime Y^\prime$ along the corresponding projections of $\alpha$ and the graph $\Gamma_f$. 
To compute $\rho_n(\alpha\times f)$ we multiply (on the left) by $\tdiv D$ (again omitting the permutations). 
On the other hand, 
the correspondence $\rho_n(\alpha)\times f$ is obtained by multiplying $\tdiv D$, $\alpha$, and $\Gamma_f$ along the relevant projections. 
Thus the assertion follows from associativity of the intersection product.
\end{proof}

Recall that $\MWprep\{1\}$ is the cokernel of the morphism $\MWprep(\Spec(k))\to \MWprep(\Gm)$ induced by the unit for the multiplicative group scheme $\Gm$.

\begin{lem}
\label{lem:hopf}
The twist map 
\[
\sigma:\MWprep(\Gm)\otimes\MWprep(\Gm)\to \MWprep(\Gm)\otimes\MWprep(\Gm)
\]
induces a map 
\[
\sigma:\MWprep\{1\}\otimes \MWprep\{1\}\to \MWprep\{1\}\otimes \MWprep\{1\},
\] 
which is $\Aone$-homotopic to multiplication by $\epsilon$.
\end{lem}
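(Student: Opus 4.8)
The statement is the Milnor--Witt refinement of the classical fact (\cite{Voe10_c}) that the transposition on the square of the first Tate twist is $\Aone$-homotopic to the identity, the difference being that the unit is replaced by the Hopf unit $\epsilon=-\langle-1\rangle$. Since $\MWprep\{1\}$ is a direct factor of $\MWprep(\Gm)$, with complementary summand $\MWprep(\Spec k)$ split off by the rational point $1\in\Gm$, the induced endomorphism $\sigma$ of $\MWprep\{1\}\otimes\MWprep\{1\}$ is obtained from the transposition morphism $\tau$ of $\Gm\times\Gm$ by pre- and post-composing with the canonical inclusion and projection of the direct summand $\MWprep\{1\}\otimes\MWprep\{1\}\subseteq\MWprep(\Gm\times\Gm)$. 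So the task is to produce, inside $\cork$ and up to $\Aone$-homotopy, an identification of this map with $\epsilon\cdot\id$ on that summand.

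The plan is to adapt the strategy of \cite{Voe10_c}, \S4, using the operations $\rho_n$ introduced above. Concretely, one applies $\rho_n$ (for $n$ large enough relative to the cycle involved, so that the relevant Cartier divisor $D(g_n)$ is defined relative to it) to the Milnor--Witt correspondence built from $\tau$ after a suitable $\Gm$-suspension, and then evaluates the outcome in two ways. On one side, the compatibility of $\rho_n$ with external products (Lemma~\ref{lem:rightcomposition}) and with left composition (Lemma~\ref{lem:composite}), together with the vanishing $\rho_n(e_X)=0$, reduces the computation to the transposition acting on the relevant direct summand, i.e.\ to $\sigma$ itself. On the other side, Lemma~\ref{lem:minus1} and, crucially, the divisor computation of Lemma~\ref{lem:degree} --- which shows that the push-forward of the principal Cartier divisor on $\Gm$ defined by $(t^{n+1}-1)/(t^{n+1}-t)$ equals $\langle-1\rangle$ --- pin the output down as multiplication by $\langle-1\rangle$. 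One of the orientation signs that enters the manipulation (for instance the $(-1)^{d_Xd_Y}$ appearing in the Remark following Lemma~\ref{lem:minus1}, which is $-1$ here since both factors are $1$-dimensional, or equivalently the Koszul sign coming from $\tRcbx 1=\tRx 1[1]$) then upgrades the constant from $\langle-1\rangle$ to $\epsilon=-\langle-1\rangle$. Finally one checks that the two correspondences produced at the ``ends'' of this argument are linked by an honest $\Aone$-homotopy of MW-correspondences, not merely by an equality, and that the identification descends along $\MWprep(\Gm)^{\otimes2}\to\MWprep\{1\}^{\otimes2}$.

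The main obstacle is the bookkeeping of the graded line bundles (``orientations''): each use of the base-change formula (\chfinitecw, Proposition~\ref{prop:basechange}) and of the projection formula (\chfinitecw, Corollary~\ref{cor:pformula}), as well as each permutation of factors in products of the form $\Gm\Gm Y$, carries a canonical identification of exterior powers of (co)tangent bundles and a potential sign $\langle-1\rangle^{\bullet}$, exactly as in the Remark after Lemma~\ref{lem:minus1}. One must verify that all such signs cancel except for the single $\langle-1\rangle$ of Lemma~\ref{lem:degree} and the one orientation/shift sign that turns it into $\epsilon$. A secondary subtlety is that $\tau$ does not fix the ``suspension coordinate'' used by $\rho_n$ --- it swaps the two $\Gm$-factors --- so one cannot directly feed a correspondence of the form $1\times\alpha$ into $\rho_n$; instead the computation has to be organized through the external-product compatibility of Lemma~\ref{lem:rightcomposition} and the explicit description of $\sigma$ as the transposition restricted to the summand $\MWprep\{1\}^{\otimes2}$. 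Once the orientations are under control, the remaining work with rational functions and residue homomorphisms is of the same explicit type already carried out in \S\ref{section:cdMKK}.
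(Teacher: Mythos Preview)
Your proposal has a genuine gap: the $\rho_n$ machinery cannot be used to prove this lemma because the lemma is an \emph{input} to the cancellation argument, not a consequence of it. Concretely, $\rho_n$ takes a correspondence in $\cork(\Gm X,\Gm Y)$ and returns one in $\cork(X,Y)$. Applied to $\tau\in\cork(\Gm\Gm,\Gm\Gm)$ (with $X=Y=\Gm$), you obtain an element of $\cork(\Gm,\Gm)$, i.e.\ an endomorphism of $\MWprep\{1\}$, not of $\MWprep\{1\}^{\otimes 2}$; there is no way to recover $\sigma$ from this. Your ``other side'' invokes Lemma~\ref{lem:minus1}, but that lemma applies only to correspondences of the form $1\times\alpha$, and $\tau$ is precisely \emph{not} of this form --- it swaps the suspension coordinate with the other one. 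You acknowledge this as a ``secondary subtlety'', but it is the entire difficulty: the passage from $1\times\phi$ to $\phi\times 1$ is exactly what Lemma~\ref{lem:hopf} controls, and in the paper it is Lemma~\ref{lem:hopf} that is used in the proof of Theorem~\ref{thm:weakcancellation} to make this swap. Trying to derive the lemma from the $\rho_n$ compatibilities is therefore circular.

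The paper proceeds completely differently. First, following \cite{Voe10_c}*{Lemma~4.8}, the transposition $g\otimes f\mapsto f\otimes g^{-1}$ is already $\Aone$-homotopic at the level of $\smk$, so the problem reduces to showing that the map $\MWprep\{1\}\to\MWprep\{1\}$ induced by $z\mapsto z^{-1}$ on $\Gm$ is $\Aone$-homotopic to multiplication by $\epsilon$. This is then done by an explicit cycle-level construction: one writes down specific polynomials $F,G\in k[t_1^{\pm1},t_2^{\pm1},u]$ and an element $\alpha\in\sKMW_1(k(u,t_1,t_2),\omega_{\Gm})$ built from $[F_1],[F_2],[t_1-1]$, computes all its residues, and checks that $d(\alpha)$ defines a finite MW-correspondence $\Gm\times\Aone\to\Gm$ whose restrictions at $u=0$ and $u=1$ differ by the required $\Aone$-homotopy. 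The $\rho_n$ operations and the orientation sign from the Remark after Lemma~\ref{lem:minus1} play no role here; the $\epsilon$ arises directly from the residue calculus (specifically from $[t^{-1}]=\epsilon[t]$).
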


\begin{proof}
The claim that $\sigma$ induces a map $\MWprep\{1\}\otimes \MWprep\{1\}\to \MWprep\{1\}\otimes \MWprep\{1\}$ follows immediately from the definition.
By the proof of \cite{Voe10_c}*{Lemma~4.8}, 
$g\otimes f\colon X\to\Gm{}\Gm{}$ is $\Aone$-homotopic to $f\otimes g^{-1}$ for all $X\in\smk$ and $f,g\in \OO(X)^\times$.
We are thus reduced to proving that $\MWprep\{1\}\to\MWprep\{1\}$ induced by the map $\Gm\to \Gm$ sending $z$ to $z^{-1}$  is $\Aone$-homotopic to multiplication by $\epsilon$. 
Its graph in $\Gm\Gm=\Spec(k[t_1^{\pm1},t_2^{\pm1}])$ is given by the prime ideal $(t_2-t_1^{-1})$, 
while the graph of the identity is given by $(t_2-t_1)$. 
We note that their product equals
\[
(t_2-t_1^{-1})(t_2-t_1)=t_2^2-(t_1+t_1^{-1})t_2+1. 
\]
On the other hand, we can consider $(t_2-1)^2$ and the polynomial 
\[
F:=t_2^2-t_2(u(t_1+t_1^{-1})+2(1-u))+1\in k[t_1^{\pm 1},t_2^{\pm 1},u].
\] 
It is straightforward to check that the vanishing locus of $F$ defines a closed subset $\Vanish(F)$ in $\Gm \Aone \Gm$ which is finite and surjective over $\Gm \Aone$. 
The same properties hold for $\Vanish(G)$, 
where $G:=t_2^2-2t_2(1-2u)+1$. 

We define $F_1:=(t_1-t_1^{-1})F$, $F_2:=(t_1+1)G$, and consider the element 
\[
\alpha:=[F_1]\otimes dt_2 -\langle 2\rangle[F_2]\otimes dt_2-\langle 2\rangle[t_1-1]\otimes dt_2 \in \sKMW_1(k(u,t_1,t_2),\omega_{\Gm})
\] 
together with its image under the residue homomorphism
\[
d^0:\sKMW_1(k(u,t_1,t_2),\omega_{\Gm})\to \bigoplus_{x\in (\Gm\Aone\Gm)^{(1)}} \hspace{-4ex} \sKMW_0(k(x),\omega_{\Gm})
\] 
of \cite{Morel12_c}*{Remark~3.21}.
The polynomial $F_1$ ramifies at $F$ and at $(t_1-t_1^{-1})=t_1^{-1}(t_1-1)(t_1+1)$. 
The residue of $[F_1]\otimes dt_2$ at $(t_1-1)$ is 
\[
\langle 2\rangle \otimes (t_1-1)dt_2, 
\]
while its residue at $t_1+1$ is  
\[
\langle 2(t_2^2-2t_2(1-2u)+1)\rangle \otimes (t_1+1)dt_2.
\] 
On the other hand, 
the residue of $\langle 2\rangle[F_2]\otimes dt_2$ at $(t_1+1)$ is 
\[
\langle 2(t_2^2-2t_2(1-2u)+1)\rangle \otimes (t_1+1)dt_2.
\] 
It follows that $\alpha$ is unramified at both $(t_1+1)$ and $(t_1-1)$. 
Hence the residue of $\alpha$ defines a Milnor-Witt correspondence $\Gm\Aone \to \Gm$ of the form
\[
d(\alpha)\in \H^1_{\Vanish(F)\cup \Vanish(G)}(\Gm\Aone\Gm,\sKMW_1,\omega_{\Gm})\subset \cork(\Gm\Aone,\Gm).
\]
We now compute the restriction of $d(\alpha)$ at $u=0$ and $u=1$, 
i.e., 
we compute the image of $d(\alpha)$ along the morphisms $\cork(\Gm\Aone,\Gm)\to \cork(\Gm,\Gm)$ induced by the inclusions $\Spec(k)\to \Aone$ at $u=0$ and $u=1$.
Its restriction $d(\alpha)(0)$ to $u=0$ is supported on $\Vanish(t_2-1)$ and is thus trivial on $\MWprep\{1\}(\Gm)$. 
Its restriction $d(\alpha)(1)$ to $u=1$ takes the form
\[
\langle 1\rangle\otimes (t_2-t_1)dt_2+\langle -1\rangle \otimes (t_2-t_1^{-1})dt_2+ \langle 1,-1\rangle\otimes (t_2+1)dt_2.
\]
It follows that $d(\alpha)(1)$ is $\Aone$-homotopic to $0$. 
To conclude we are reduced to showing that $\langle 1,-1\rangle\otimes (t_2+1)dt_2$ is $\Aone$-homotopic to $0$. 
This is obtained using the polynomial $[G]$ by observing that the restriction to $u=0$ of the residue of $G$ is supported on $\Vanish(t_2-1)$, 
while the restriction to $u=1$ is precisely $\langle 1,-1\rangle\otimes (t_2+1)dt_2$.
\end{proof}

%%%%%%%%%%%%%%%%%%%%%%%%%%%%%%%%%%%%%%%%%%%%%%%%%%%%%%%%%%%%%%%%%%%%%%%%%%%%%%%%%%%%%%%%%%%%%%%%%%%%%%%%%%%%%%

\begin{thm}
\label{thm:weakcancellation}
Let $\mathcal F$ be a Milnor-Witt presheaf and let $p:\MWprep(X)\to \mathcal F$ be an epimorphism of presheaves. 
Suppose 
\[
\phi: \MWprep\{1\}\otimes \mathcal F\to \MWprep\{1\}\otimes \MWprep(Y)
\]
is a morphism of presheaves with Milnor-Witt transfers. 
Then there exists a unique up to $\Aone$-homotopy morphism $\psi:\mathcal F\to \MWprep(Y)$ such that $\phi\cong\id\otimes \psi$.
\end{thm}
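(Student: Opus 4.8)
The plan is to transcribe Voevodsky's proof of the corresponding statement for finite correspondences (\cite{Voe10_c}*{\S 4}) into the Milnor--Witt setting, feeding in the quadratic corrections recorded in Lemmas~\ref{lem:minus1}, \ref{lem:composite}, \ref{lem:rightcomposition}, \ref{lem:degree} and \ref{lem:hopf}.

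\emph{Construction of $\psi$.} Since $\MWprep\{1\}$ is a direct factor of $\MWprep(\Gm)$ and $-\otimes\MWprep(\Gm)$ is right exact, the map $\id\otimes p\colon\MWprep\{1\}\otimes\MWprep(X)\to\MWprep\{1\}\otimes\mathcal F$ is an epimorphism of presheaves, so $\alpha:=\phi\circ(\id\otimes p)$ is a morphism $\MWprep\{1\}\otimes\MWprep(X)\to\MWprep\{1\}\otimes\MWprep(Y)$, hence, identifying each side with its direct factor inside $\MWprep(\Gm X),\MWprep(\Gm Y)$, a finite Milnor--Witt correspondence $\alpha\in\cork(\Gm X,\Gm Y)$. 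Fix $n\geq N(\alpha)$ and form $\rho_n(\alpha)\in\cork(X,Y)$. For any $U$ and any $\delta\in\ker(p)(U)\subseteq\cork(U,X)$ one has $p\circ\delta=0$, so Lemma~\ref{lem:composite} gives $\rho_n(\alpha)\circ\delta=\rho_n(\alpha\circ(1\times\delta))=\rho_n\bigl(\phi\circ(\id\otimes(p\circ\delta))\bigr)=\rho_n(0)=0$; by Yoneda this says $\rho_n(\alpha)_U$ kills $\ker(p)(U)$. As $p$ is a pointwise epimorphism, $\rho_n(\alpha)$ descends to a morphism of Milnor--Witt presheaves $\psi_n\colon\mathcal F\to\MWprep(Y)$ with $\psi_n\circ p=\rho_n(\alpha)$ (that $\psi_n$ carries Milnor--Witt transfers follows by precomposing with the epimorphism $p$). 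Set $\psi:=\langle -1\rangle\,\psi_n$.

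\emph{The homotopy $\phi\cong\id\otimes\psi$.} The heart of the matter is the identity, valid up to $\Aone$-homotopy for $n\gg 0$,
\[
\id_{\{1\}}\otimes\rho_n(\alpha)\;\cong\;\langle -1\rangle\cdot\alpha,
\]
the Milnor--Witt analogue of Voevodsky's central computation. Following \emph{loc.\ cit.}, one factors $\id\otimes\rho_n(\alpha)$ through the twist on $\MWprep\{1\}\otimes\MWprep\{1\}$ and a chain of explicit $\Aone$-homotopies between cycles; the new inputs are Lemma~\ref{lem:hopf} (the twist is homotopic to $\epsilon$), Lemma~\ref{lem:minus1} ($\rho_n(1\times-)=\langle -1\rangle(-)$), Lemma~\ref{lem:rightcomposition} (compatibility of $\rho_n$ with external products by maps of schemes), the vanishing $\rho_n(e_X)=0$, and the sign $p_*\chi(D\cdot 1)=\langle -1\rangle$ of Lemma~\ref{lem:degree}; the relations $\epsilon=-\langle -1\rangle$ and $\langle -1\rangle^2=1$ make the quadratic units combine to the single factor $\langle -1\rangle$ above. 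Granting this, $(\id\otimes\psi)\circ(\id\otimes p)=\langle -1\rangle(\id\otimes\rho_n(\alpha))\cong\langle -1\rangle^2\alpha=\alpha=\phi\circ(\id\otimes p)$; and because the homotopy realizing $\id\otimes\rho_n(\alpha)\cong\langle -1\rangle\alpha$ is itself manufactured from the Cartier-divisor construction applied to homotopies of correspondences, it is compatible with $-\circ(1\times p)$ and therefore descends along the epimorphism $\id\otimes p$ to a homotopy $\phi\cong\id\otimes\psi$.

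\emph{Well-definedness and uniqueness, and the main obstacle.} One checks as in \cite{Voe10_c}*{\S 4} that $\rho_{n+1}(\alpha)$ and $\rho_n(\alpha)$ are $\Aone$-homotopic via the explicit homotopy between $D(g_{n+1})$ and $D(g_n)$, so $\psi$ is independent of $n$ up to $\Aone$-homotopy. For uniqueness, if $\psi'$ also satisfies $\id\otimes\psi'\cong\phi$, then $\id\otimes(\psi-\psi')\cong 0$; composing with $\id\otimes p$ and applying $\rho_n$ together with Lemma~\ref{lem:minus1} gives $\langle -1\rangle\,(\psi-\psi')\circ p=\rho_n\bigl(1\times((\psi-\psi')\circ p)\bigr)\cong 0$, using that $\rho_n$ carries $\Aone$-homotopies to $\Aone$-homotopies; hence $(\psi-\psi')\circ p\cong 0$ and, descending the homotopy along $p$, $\psi\cong\psi'$. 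The difficulty is concentrated in the displayed central homotopy: one must run Voevodsky's chain of explicit $\Aone$-homotopies between cycles in the Milnor--Witt framework while tracking every twisting (graded) line bundle and every occurrence of $\langle -1\rangle$ and $\epsilon$, the decisive new ingredient being Lemma~\ref{lem:hopf}. A secondary point requiring care throughout is that the relevant $\Aone$-homotopies descend along the epimorphisms $p$ and $\id\otimes p$ — which works only because they are built functorially out of the divisor machinery rather than invoked abstractly.
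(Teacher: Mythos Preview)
Your proposal is correct and follows essentially the same approach as the paper: construct $\alpha$ from $\phi$ and $p$, set $\psi=\langle -1\rangle\rho_n(\alpha)$, and establish $\phi\cong\id\otimes\psi$ via the twist and Lemmas~\ref{lem:minus1}, \ref{lem:composite}, \ref{lem:rightcomposition}, \ref{lem:hopf}. The only organizational difference is that the paper works with $\tilde\phi$ (the extension of $\phi$ to $\MWprep(\Gm)\otimes\mathcal F$) and defines $\rho_n(\tilde\phi)\colon\mathcal F\to\MWprep(Y)$ directly, sidestepping your explicit descent of $\rho_n(\alpha)$ and of the homotopies along the epimorphism $p$; your route is equivalent but requires the extra care you flag in your final paragraph.
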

\begin{proof}
Precomposing $\phi$ with the projection $\MWprep(\Gm)\otimes \mathcal F\to \MWprep\{1\}\otimes \mathcal F$ and postcomposing with the monomorphism $\MWprep\{1\}\otimes\MWprep(Y)\to \MWprep(\Gm)\otimes \MWprep(Y)$, 
we get a morphism $\tilde \phi:\MWprep(\Gm)\otimes \mathcal F\to \MWprep(\Gm)\otimes \MWprep(Y)$. 
Composing $\tilde\phi$ with $\id\otimes p$ yields a morphism of presheaves 
\[
\alpha: \MWprep(\Gm)\otimes \MWprep(X)\to \MWprep(\Gm)\otimes\MWprep(Y), 
\] 
i.e., 
a Milnor-Witt correspondence $\alpha\in \cork(\Gm X,\Gm Y)$.
Choose $n\geq N(\alpha)$ such that $D(g_n)$ be defined relative to $\alpha$. 
Using Lemma \ref{lem:composite}, 
we get a morphism of presheaves $\rho_n(\tilde \phi):\mathcal F\to \MWprep(Y)$. We set $\psi:=\langle -1\rangle\rho_n(\tilde \phi)$. 
If $\phi=\id\otimes \psi^\prime$ for some $\psi^\prime:\mathcal F\to \MWprep(Y)$ we have $\alpha=\id\otimes\psi^\prime p$. 
Lemma \ref{lem:minus1} shows that $\rho_n(\alpha)=\langle -1\rangle\psi^\prime p$. 
Thus $\psi=\psi^\prime$, implying the uniqueness part of the theorem as in \cite{Voe10_c}*{Theorem~4.6}.

To conclude, 
we need to observe there is an $\Aone$-homotopy $\id\otimes \langle -1\rangle\rho_n(\tilde \phi)\simeq \phi$. 
Let 
\[
\tilde \phi^*:\mathcal F\otimes \MWprep(\Gm)\to \MWprep(Y)\otimes \MWprep(\Gm)
\]
be the morphism obtained by permutation of the factors, 
and let 
\[
\phi^*:\mathcal F\otimes \MWprep\{1\}\to \MWprep(Y)\otimes \MWprep\{1\}
\]
be the naturally induced morphism. 
Using Lemma \ref{lem:hopf}, 
we see that $\id_{\MWprep\{1\}}\otimes \phi^*$ and $\phi\otimes\id_{\MWprep\{1\}}$ are $\Aone$-homotopic. 
Using Lemmas \ref{lem:minus1}, \ref{lem:composite}, and \ref{lem:rightcomposition} it follows that $\rho_n(\tilde\phi)\otimes \id_{\MWprep\{1\}}=\langle -1\rangle\phi^*$. 
That is, 
we have $\phi=\id_{\MWprep\{1\}}\otimes \langle -1\rangle\rho_n(\tilde\phi)$.
\end{proof}

Analogous to  \cite{Voe10_c}*{Corollary~4.9} we deduce from Theorem \ref{thm:weakcancellation} the following result for the Suslin complex of Milnor-Witt sheaves defined via the internal Hom object 
$\uHom$ and the standard cosimplicial scheme.

\begin{coro}\label{cor:cancel}
For any $Y\in\smk$, the morphism
\[
\MWprep (Y)\to \uHom(\MWprep\{1\},\MWprep(Y)\otimes \MWprep\{1\})
\] 
induces for any smooth scheme $X$ a quasi-isomorphism of complexes
\[
\Cstar{\MWprep(Y)}(X)\xrightarrow{\sim} \Cstar{\uHom(\MWprep\{1\},\MWprep(Y)\otimes \MWprep\{1\})}(X).
\]
\end{coro}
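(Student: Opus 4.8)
The plan is to follow the argument of \cite{Voe10_c}*{Corollary~4.9} verbatim, feeding in Theorem~\ref{thm:weakcancellation}. First I would unwind both Suslin complexes after evaluation at a smooth scheme $X$. The complex $\Cstar{\MWprep(Y)}(X)$ has $n$-th term $\cork(\Delta^n\times X,Y)$, while by the defining adjunction of $\uHom$ together with the Yoneda lemma the complex $\Cstar{\uHom(\MWprep\{1\},\MWprep(Y)\otimes\MWprep\{1\})}(X)$ has $n$-th term $\Hom_{\pshMWk}\bigl(\MWprep\{1\}\otimes\MWprep(\Delta^n\times X),\MWprep(Y)\otimes\MWprep\{1\}\bigr)$. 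Under these identifications the comparison map in degree $n$ sends a finite Milnor-Witt correspondence $\psi$ to $\id_{\MWprep\{1\}}\otimes\psi$ (composed with the symmetry isomorphism). Since the face and degeneracy maps come from the cosimplicial scheme $\Delta^{\bullet}$ and only touch the $\Delta^n\times X$ variable, which is left untouched by $\id_{\MWprep\{1\}}\otimes(-)$ and by $\uHom(\MWprep\{1\},-)$, this is indeed a map of complexes.

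Next I would apply Theorem~\ref{thm:weakcancellation} with $\mathcal F=\MWprep(\Delta^n\times X)$ and $p=\id$. It shows that for every element $\phi$ of the $n$-th term of the right-hand complex there is a correspondence $\psi\in\cork(\Delta^n\times X,Y)$, namely $\psi=\langle -1\rangle\rho_n(\tilde\phi)$, together with an explicit $\Aone$-homotopy $\id\otimes\psi\simeq\phi$ produced from Lemmas~\ref{lem:minus1}, \ref{lem:composite}, \ref{lem:rightcomposition} and the Hopf-element computation of Lemma~\ref{lem:hopf}, and that such a $\psi$ is unique whenever an exact solution exists. In other words, in every simplicial degree the comparison map admits a section up to a natural $\Aone$-homotopy, with the ambiguity controlled by the uniqueness clause.

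The heart of the proof is to upgrade this degreewise statement to a quasi-isomorphism of complexes, and here I would use the standard fact that the Suslin construction $\Cstar{-}$ carries $\Aone$-homotopies of morphisms of Milnor-Witt presheaves to chain homotopies (the same principle that makes $\Cstar{F}$ compute $\Aone$-invariant cohomology). Concretely: writing the right-hand complex as the filtered colimit over $n$ of its subcomplexes of correspondences that are \emph{defined relative to a fixed $n$} in the sense of \S\ref{cancellationW}, the operators $\rho_n$ assemble on each such piece into an honest chain map $\rho$; Theorem~\ref{thm:weakcancellation} together with Lemma~\ref{lem:minus1} then provides $\Aone$-homotopies expressing that $\id\otimes\langle -1\rangle\rho_n$ and $\langle -1\rangle\rho_n\circ(\id\otimes -)$ are homotopic to the identity, which after applying $\Cstar{-}$ become chain homotopies. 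Hence $\rho$ is a chain-homotopy inverse on each piece, and passing to the colimit shows the comparison map is a quasi-isomorphism after evaluation at any $X$.

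The main obstacle I anticipate is precisely this last step: organizing the $\rho_n$ into a coherent chain-level quasi-inverse. This requires reconciling the choices of the auxiliary integer $n\geq N(\alpha)$ across simplicial degrees (the $\rho_n$ for varying $n$ must be compared, as in \cite{Voe10_c}*{\S4}) and tracking the prism/homotopy operators carefully, so that the cycle condition $d\psi=0$ absorbs the a priori nonzero correction terms coming from the fact that the lift furnished by Theorem~\ref{thm:weakcancellation} is only exact up to $\Aone$-homotopy. By contrast, the identification of the two complexes, the compatibility of the simplicial structure maps with $\id\otimes(-)$, and the conversion of $\Aone$-homotopies into chain homotopies are all formal consequences of material already established in the excerpt.
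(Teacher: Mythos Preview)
Your proposal is correct and follows the same route as the paper, which simply invokes the argument of \cite{Voe10_c}*{Corollary~4.9}. One small simplification: by Lemma~\ref{lem:minus1} the composite $\langle -1\rangle\rho_n\circ(\id\otimes -)$ is the identity on the nose, so each $A^\bullet\to B^\bullet_n$ is a split monomorphism of complexes and only surjectivity on homology requires the prism argument---and there the naturality in $\phi$ of the Hopf homotopy from Lemma~\ref{lem:hopf} makes the correction terms you worry about vanish on cycles.
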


%%%%%%%%%%%%%%%%%%%%%%%%%%%%%%%%%%%%%%%%%%%%%%%%%%%%%%%%%%%%%%%%%%%%%%%%%%%%%%%%%%%%%%%%%%%%%%%%%%%%%%%%%%%%%%%%

\section{Zariski vs.\ Nisnevich hypercohomology}\label{ZvsN}

Throughout this short section we assume that $k$ is an infinite perfect field of characteristic different from $2$.
Let us first recall the following result from \chdmt, Theorem~\ref{thm:Wsh&A1}.

\begin{thm}
\label{thm:strictly}
Let $\mathcal F$ be an $\Aone$-invariant Milnor-Witt presheaf. 
Then the associated Milnor-Witt sheaf $\tilde a(\mathcal F)$ is strictly $\Aone$-invariant. 
Moreover, 
the Zariski sheaf associated with $\mathcal F$ coincides with $\tilde a(\mathcal F)$, 
and for all $i\in \NN$ and $X\in\smk$ there is a natural isomorphism
\[
\H^i_{\zar}(X,\tilde a(\mathcal F))
\xrightarrow{\simeq}
\H^i_{\nis}(X,\tilde a(\mathcal F)).
\]
\end{thm}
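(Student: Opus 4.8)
This statement is a word-for-word restatement of \chdmt, Theorem~\ref{thm:Wsh&A1}, so the shortest honest proof is a pointer to that chapter; for the reader's convenience I would outline the strategy. There are three assertions -- strict $\Aone$-invariance of $\tilde a(\mathcal F)$, the coincidence of the Zariski-associated sheaf of $\mathcal F$ with $\tilde a(\mathcal F)$, and the Zariski--Nisnevich comparison -- and I would treat them in that order, the first being by far the substantial one.

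For strict $\Aone$-invariance the plan is to transport the question into the framed world, where the analogue of Voevodsky's theorem is available. Concretely, one feeds $\mathcal F$ through the functor $(\alpha')^{*}\colon\pshMWkR\to\SpshfrkR$ built from the functor $\alpha'\colon\ZFr_*(k)\to\cork$ of Proposition~\ref{prop:bunchoffunctors} -- here the computation $\alpha(\sigma_X)=\id$ of Example~\ref{ex:stable} is exactly what makes the target land in stable framed presheaves. Since $\mathcal F$ is $\Aone$-invariant, so is $(\alpha')^{*}(\mathcal F)$, and one then invokes the Garkusha-Panin theorem on homotopy invariant quasi-stable $\ZFr_*(k)$-presheaves (Theorem~\ref{thm:A1local_framedPSh}, which is precisely where the hypotheses ``$k$ infinite, perfect, $\charac k\neq 2$'' enter) to conclude that the Nisnevich sheafification $a(\mathcal F)$ is $\Aone$-invariant and quasi-stable, hence strictly $\Aone$-invariant as a Nisnevich sheaf of abelian groups. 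Finally, Proposition~\ref{prop:exist_associated-W-t-sheaf} identifies the underlying Nisnevich sheaf of $\tilde a(\mathcal F)$ with $a(\mathcal F)$, giving strict $\Aone$-invariance of $\tilde a(\mathcal F)$.

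Granting this, the Zariski--Nisnevich comparison is a direct appeal to Morel: a strictly $\Aone$-invariant sheaf carries a Rost-Schmid flabby resolution (\cite{Morel12_c}*{\S 5}), and a flabby resolution computes both Zariski and Nisnevich cohomology, so $\H^i_\zar(X,\tilde a(\mathcal F))\xrightarrow{\simeq}\H^i_\nis(X,\tilde a(\mathcal F))$ for all $i$ and all $X\in\smk$ (\cite{Morel12_c}*{Corollary~5.43}). For the identification of the Zariski-associated sheaf $\mathcal F_{\zar}$ with $\tilde a(\mathcal F)$ I would run Panin's bootstrap: the injectivity $\mathcal F(\OO_{X,x})\hookrightarrow\mathcal F(k(X))$ for $x$ a point of a smooth integral $X$ (a consequence of the Garkusha-Panin injectivity result) lets one form the unramified Zariski sheaf $\mathcal F_{\mathrm{un}}$ given by $U\mapsto\bigcap_{x\in U}\mathcal F(\OO_{X,x})$; the natural map $\mathcal F_{\zar}\to\mathcal F_{\mathrm{un}}$ is a local isomorphism, and Panin's argument then forces $\mathcal F_{\zar}=\mathcal F_{\mathrm{un}}=\tilde a(\mathcal F)$.

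The main obstacle lives entirely in the first step: strict $\Aone$-invariance is the Milnor-Witt counterpart of the most delicate theorem in Voevodsky's theory, and the plan above does not prove it directly but exports it to Garkusha-Panin through the framed-correspondences bridge of Proposition~\ref{prop:bunchoffunctors}. The cost of this shortcut is exactly the standing hypothesis that $k$ be infinite, perfect, and of characteristic different from $2$; a proof intrinsic to MW-motives that avoids framed transfers is known in characteristic $0$ (Kolderup), which one could substitute if desired.
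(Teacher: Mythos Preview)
Your proposal is correct and follows essentially the same approach as the paper: the theorem is indeed just a restatement of \chdmt, Theorem~\ref{thm:Wsh&A1}, and your outlined proof (transport to framed presheaves via $(\alpha')^*$, invoke Garkusha--Panin, then Morel's Rost--Schmid complex for the Zariski/Nisnevich comparison, and Panin's unramified-sheaf argument for the Zariski identification) matches the paper's proof of that theorem step for step, including the references to Example~\ref{ex:stable}, Theorem~\ref{thm:A1local_framedPSh}, Proposition~\ref{prop:exist_associated-W-t-sheaf}, and \cite{Morel12_c}*{Corollary~5.43}.
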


We now derive some consequences of Theorem \ref{thm:strictly} following \cite{Mazza06_c}*{\S 13}.

\begin{prop}\label{prop:preparatory}
Let $f\colon \mathscr C \to \mathscr D$ be a morphism of complexes of Milnor-Witt presheaves. 
Suppose their cohomology presheaves are homotopy invariant and that $\mathscr C (\Spec (F))\to \mathscr D (\Spec (F))$ is a quasi-isomorphism for every finitely generated field extension $F/k$. 
Then the induced morphism on the associated complexes of Zariski sheaves is a quasi-isomorphism. 
\end{prop}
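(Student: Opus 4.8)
The plan is to reduce the statement to the classical criterion for checking that a morphism of complexes of (strictly) $\Aone$-invariant Nisnevich sheaves is a quasi-isomorphism, namely that it suffices to check on stalks at points, and on smooth local schemes these are controlled by the values on generic points (i.e.\ on fields). First I would replace each cohomology presheaf $\H^i(\mathscr C)$ and $\H^i(\mathscr D)$ by its associated Milnor-Witt sheaf; by Theorem~\ref{thm:strictly} (the excerpt's restatement of \chdmt, Theorem~\ref{thm:Wsh&A1}) these associated sheaves are strictly $\Aone$-invariant, the associated Zariski and Nisnevich sheaves agree, and moreover the associated Zariski sheaf is the \emph{unramified} sheaf: its sections over a smooth $X$ inject into the stalk at the generic point and the stalk at a point $x$ is the subgroup of $\H^i(\mathscr C)(k(X))$ of elements unramified along the codimension-one points of $\OO_{X,x}$. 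This is exactly the input that makes the Zariski sheafification ``detected on fields''.

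Next I would argue that $f$ induces an isomorphism on the associated \emph{Zariski} cohomology sheaves. Since Zariski sheafification commutes with taking cohomology of a complex of presheaves (the stalk functor is exact), it is enough to see that $\H^i(\mathscr C)_{\zar}\to \H^i(\mathscr D)_{\zar}$ is an isomorphism for each $i$. A morphism of Zariski sheaves is an isomorphism iff it is so on all stalks $\OO_{X,x}$ with $X$ smooth and $x\in X$; such a local ring is a filtered limit of smooth affine schemes, so by the limit-compatibility of Milnor-Witt presheaves (\chdmt, Remark~\ref{rem:finite_corr&plim}(3)--(4), or \chfinitecw, \S\ref{sec:limits}) the value of any homotopy-invariant Milnor-Witt presheaf on $\OO_{X,x}$ is $\varinjlim$ of its values on smooth schemes, and the cohomology of $\mathscr C$ there is $\H^i(\mathscr C)(\OO_{X,x})$. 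Using the unramifiedness just recalled, $\H^i(\mathscr C)(\OO_{X,x})$ sits inside $\H^i(\mathscr C)(k(X))$ as the subgroup killed by all residue maps at height-one primes; the hypothesis that $f$ is a quasi-isomorphism over every finitely generated field extension gives that $\H^i(\mathscr C)(k(X))\to \H^i(\mathscr D)(k(X))$ is an isomorphism, and since $f$ commutes with the residue homomorphisms it restricts to an isomorphism of the unramified subgroups. Hence $f$ is an isomorphism on all stalks, so a quasi-isomorphism of complexes of Zariski sheaves.

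To finish I would pass from Zariski to Nisnevich. Because the cohomology presheaves are homotopy invariant, their associated sheaves are strictly $\Aone$-invariant by Theorem~\ref{thm:strictly}, and for such sheaves Zariski and Nisnevich cohomology coincide on every smooth scheme; equivalently the associated Zariski and Nisnevich sheaves are the same object. Therefore the quasi-isomorphism of complexes of Zariski sheaves established above is simultaneously a quasi-isomorphism of complexes of Nisnevich sheaves, which is the assertion. The one point requiring care — and the step I expect to be the real obstacle — is the bookkeeping needed to identify $\mathscr C(\Spec(F))$ with the generic-stalk of the cohomology sheaf and to know that $f$ is compatible with the Rost--Schmid residue maps, so that ``isomorphism on fields'' upgrades to ``isomorphism on unramified subgroups''; this uses that the whole construction (Suslin complex, Milnor-Witt transfers, the sheafification) is functorial and limit-compatible, which is exactly the content imported from \chdmt, \S\ref{sec:AA1derived} and \chfinitecw, \S\ref{sec:limits}.
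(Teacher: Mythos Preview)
Your approach is correct, but the paper's proof is considerably shorter and avoids the bookkeeping you flag at the end. The paper passes to the mapping cone $\mathscr C_f$ of $f$; its cohomology presheaves are homotopy invariant MW-presheaves (five lemma), so their associated sheaves are strictly $\Aone$-invariant by Theorem~\ref{thm:strictly}, and they vanish on all fields by hypothesis. Morel's \cite{Morel12_c}*{Theorem~2.11} then says directly that a strictly $\Aone$-invariant sheaf vanishing on all finitely generated field extensions is zero, so $\tilde a(\H^i(\mathscr C_f))=0$. Since Theorem~\ref{thm:strictly} identifies Zariski and Nisnevich sheafification for such presheaves, this finishes the proof.

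What you do instead is essentially re-derive Morel's criterion by hand: you identify each stalk with an unramified subgroup of the value on the function field and argue that an isomorphism on fields, compatible with residues, restricts to an isomorphism on these subgroups. This works, and your worry about compatibility with residues is in fact harmless (the Rost--Schmid residues are boundary maps in a localization sequence, so any morphism of sheaves commutes with them automatically), but it is all subsumed by the single citation to Morel applied to the kernel and cokernel, or equivalently to the cohomology of the cone. Your final paragraph, passing from Zariski to Nisnevich, is also unnecessary: the statement only asks for a quasi-isomorphism of Zariski sheaves, and you have already established that.
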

\begin{proof}
Consider the mapping cone $\mathscr C_{f}$ of $f$. 
The five lemma implies that the cohomology presheaves $\H^i(\mathscr C_{f})$ are homotopy invariant and have Milnor-Witt transfers. 
The associated Nisnevich Milnor-Witt sheaves are strictly $\Aone$-invariant by Theorem \ref{thm:strictly}.
We have $\tilde a(\H^i(\mathscr C_{f}))(\Spec (F))=0$ by assumption. 
By \cite{Morel12_c}*{Theorem~2.11} it follows that $\tilde a(\H^i(\mathscr C_{f}))=0$. 
This finishes the proof by applying Theorem \ref{thm:strictly}.
\end{proof}

By following the proof of \cite{Mazza06_c}*{Theorem~13.12} with Theorem \ref{thm:strictly} and Proposition \ref{prop:preparatory} in lieu of the references in loc. cit., 
we deduce the following result.

\begin{thm}
Suppose $\mathcal F$ is a Milnor-Witt presheaf such that the Nisnevich sheaf $\tilde a(\mathcal F)=0$. 
Then $\tilde a(\Cstar{\mathcal F})$ is quasi-isomorphic to $0$. 
The same result holds for the complex of Zariski sheaves associated to $\Cstar{\mathcal F}$. 
\end{thm}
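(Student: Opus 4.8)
The plan is to reduce the statement, by way of Proposition~\ref{prop:preparatory}, to the acyclicity of $\Cstar{\mathcal F}(\Spec L)$ for finitely generated field extensions $L/k$, and to treat that reduced statement exactly as in the proof of \cite{Mazza06_c}*{Theorem~13.12}. First I would record two elementary points. (1) For an \emph{arbitrary} MW-presheaf $\mathcal F$, the complex $\Cstar{\mathcal F}$ has homotopy invariant cohomology presheaves; this is the usual simplicial homotopy for the Suslin--Voevodsky construction and uses neither transfers nor any hypothesis on $k$. (2) If $\Spec L$ is the spectrum of a finitely generated field extension $L/k$ --- more generally, any essentially smooth Henselian local $k$-scheme --- then $\Spec L$ admits no nontrivial Nisnevich cover, so sections of a Nisnevich sheaf over it coincide with its stalk, which sheafification does not change; hence the natural map $\mathcal F(\Spec L)\to\tilde a(\mathcal F)(\Spec L)$ is an isomorphism, and the same holds termwise for $\Cstar{\mathcal F}$, so that $\tilde a(\Cstar{\mathcal F})(\Spec L)=\Cstar{\mathcal F}(\Spec L)$ as complexes. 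In particular $\tilde a(\mathcal F)=0$ already forces $\mathcal F(\Spec L)=0$.

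Granting the acyclicity of $\Cstar{\mathcal F}(\Spec L)$ for all finitely generated $L/k$, the conclusion is quick. Apply Proposition~\ref{prop:preparatory} to the zero morphism $0\to\Cstar{\mathcal F}$: its hypotheses hold by point (1), and the field-level hypothesis is precisely the assumed acyclicity. The proposition then gives that the associated complex of Zariski sheaves $\tilde a_{\zar}(\Cstar{\mathcal F})$ is quasi-isomorphic to $0$, i.e.\ its Zariski cohomology sheaves vanish. Since $\H^q(\Cstar{\mathcal F})$ is a homotopy invariant MW-presheaf, Theorem~\ref{thm:strictly} identifies its Zariski and Nisnevich sheafifications; as sheafification is exact, $\H^q\big(\tilde a_{\zar}(\Cstar{\mathcal F})\big)=\tilde a_{\zar}\big(\H^q(\Cstar{\mathcal F})\big)=\tilde a\big(\H^q(\Cstar{\mathcal F})\big)=\H^q\big(\tilde a(\Cstar{\mathcal F})\big)$, so the Nisnevich cohomology sheaves vanish as well and $\tilde a(\Cstar{\mathcal F})$ is likewise quasi-isomorphic to $0$. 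This yields both assertions.

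The remaining and genuinely substantial point is the acyclicity of $\Cstar{\mathcal F}(\Spec L)$, whose terms are $\mathcal F(\Delta^n\times\Spec L)\cong\mathcal F(\AAA^n_L)$; by point (2) the degree-zero term $\mathcal F(\Spec L)$ vanishes, but the higher terms need not. Here I would transcribe the argument of \cite{Mazza06_c}*{Theorem~13.12}: restricting $\mathcal F$ to smooth $L$-schemes, and using that Nisnevich sheafification commutes with this restriction (so that the restricted MW-presheaf again has vanishing associated sheaf), one reduces to the base field and then runs the contraction/moving argument of loc.\ cit.\ for the Suslin complex of a point --- it is exactly here that the standing assumption that $k$ is infinite is used. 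This step is the main obstacle; everything else is formal bookkeeping with Henselian stalks together with the strict $\Aone$-invariance package of Theorem~\ref{thm:strictly} and the descent-to-fields statement of Proposition~\ref{prop:preparatory}, which occupy in the present argument the roles played by their classical counterparts in \cite{Mazza06_c}.
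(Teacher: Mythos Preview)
Your proposal is correct and follows the paper's approach exactly: both reduce, via Proposition~\ref{prop:preparatory} and Theorem~\ref{thm:strictly}, to the acyclicity of $\Cstar{\mathcal F}(\Spec L)$ for finitely generated $L/k$, and both defer that remaining step to \cite{Mazza06_c}*{Theorem~13.12}. Two small corrections to your gloss on that last step: the standing hypothesis that $k$ be infinite is already consumed by Theorem~\ref{thm:strictly} (through the Garkusha--Panin input behind \chdmt, Theorem~\ref{thm:Wsh&A1}), not at the field step itself, and ``contraction/moving argument'' is not an accurate description of what MVW actually does there; also, a genuine base change to $L$ is delicate in the MW setting when $L$ is imperfect, so it is cleaner to stay over $k$ and simply evaluate at the essentially smooth $k$-schemes $\Delta^n_L$.
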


\begin{coro}\label{cor:localzero}
Let $f\colon \mathscr C\to \mathscr D$ be a morphism of bounded above cochain complexes of Milnor-Witt presheaves. 
Suppose $f$ induces a quasi-isomorphism $f:\mathscr C(X)\to \mathscr D(X)$ for all Hensel local schemes $X$. 
Then 
\[
\mathrm{Tot}(\Cstar{\mathscr C})(Y)\to \mathrm{Tot}(\Cstar{\mathscr D})(Y) 
\]
is a quasi-isomorphism for every regular local ring $Y$. 
\end{coro}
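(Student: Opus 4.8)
The plan is to reduce the statement to the theorem above by passing to the mapping cone and then running the spectral sequence of the Suslin bicomplex. First I would set $\mathscr G:=\mathrm{cone}(f)$, a bounded above cochain complex of Milnor-Witt presheaves. The hypothesis says precisely that $\mathscr G(X)$ is acyclic for every Henselian local scheme $X$; equivalently, each cohomology presheaf $\H^q(\mathscr G)$ has vanishing Nisnevich stalks, so $\tilde a(\H^q(\mathscr G))=0$ for all $q$. Since $C_*$ is exact on presheaves (it is computed termwise by evaluation on $\Delta^n\times(-)$) and $\mathrm{Tot}$ commutes with cones, the morphism $\mathrm{Tot}(\Cstar{\mathscr C})(Y)\to\mathrm{Tot}(\Cstar{\mathscr D})(Y)$ has mapping cone $\mathrm{Tot}(\Cstar{\mathscr G})(Y)$, and it suffices to prove this last complex is acyclic. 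I would do this for $Y$ the local ring of a smooth $k$-scheme at a point, which is the case needed in the applications; a general regular local ring containing $k$ follows by the same argument after extending the presheaves to filtered colimits of smooth $k$-algebras (N\'eron--Popescu desingularization), using the limit extension of presheaves recalled in \chfinitecw, \S\ref{sec:limits}.

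Next, fix $q$ and apply the theorem above to the Milnor-Witt presheaf $\mathcal F:=\H^q(\mathscr G)$, which satisfies $\tilde a(\mathcal F)=0$: it yields that the complex of Zariski sheaves associated with $\Cstar{\mathcal F}$ is quasi-isomorphic to $0$. For $Y$ the local ring of a smooth scheme $Z$ at a point $z$ I would then identify $\Cstar{\mathcal F}(Y)$ with the stalk at $z$ of $\Cstar{\mathcal F}$: indeed $C_n(\mathcal F)(Y)=\mathcal F(\Delta^n\times Y)=\varinjlim_{U\ni z}\mathcal F(\Delta^n\times U)$ by the limit extension of presheaves, and Zariski sheafification does not change stalks. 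Since a complex of Zariski sheaves quasi-isomorphic to $0$ has acyclic stalks, it follows that $\Cstar{\mathcal F}(Y)$ is acyclic for every such $Y$ and every $q$.

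Finally I would compute $\mathrm{Tot}(\Cstar{\mathscr G})(Y)$ via the spectral sequence of the bicomplex with term $\mathscr G^p(\Delta^n\times Y)$ in bidegree $(p,n)$, taking cohomology first in the $\mathscr G$-direction. Because each $C_n$ is exact in the presheaf variable, the cohomology of the $p$-complex in bidegree $(p,n)$ is $\H^q(\mathscr G)(\Delta^n\times Y)=C_n(\H^q\mathscr G)(Y)$, so the next differential is the simplicial one and the corresponding page is $H_n\big(\Cstar{\H^q\mathscr G}(Y)\big)$, which vanishes by the previous step. The bicomplex is bounded above in $p$, since $\mathscr G$ is bounded above, and concentrated in $n\ge 0$, so each total degree receives contributions from only finitely many bidegrees and the spectral sequence converges; hence $\mathrm{Tot}(\Cstar{\mathscr G})(Y)$ is acyclic, and the corollary follows.

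The steps concerning exactness of $C_*$, the identification of values at a local ring with stalks, and the convergence bookkeeping are routine. The only genuine input is the theorem above, whose proof is modeled on \cite{Mazza06_c}*{Theorem~13.12} and rests on the strict $\Aone$-invariance statement of Theorem \ref{thm:strictly} and on \cite{Morel12_c}*{Theorem~2.11}; accordingly, the step I expect to demand the most care is the passage from that sheaf-theoretic conclusion to the value of the Suslin complex at a local ring (and, if one insists on arbitrary regular local rings rather than essentially smooth ones, the N\'eron--Popescu reduction).
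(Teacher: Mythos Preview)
Your proof is correct and follows essentially the same route as the paper's: the paper simply invokes \cite{Mazza06_c}*{Corollary~13.14} mutatis mutandis, and what you have written is precisely that argument—pass to the mapping cone, apply the preceding theorem to each cohomology presheaf $\H^q(\mathscr G)$, identify values at a local ring with Zariski stalks, and conclude via the bicomplex spectral sequence (equivalently, the filtration by good truncations). Your added remark on N\'eron--Popescu for arbitrary regular local rings goes slightly beyond what the paper spells out, but the applications in the paper only use localizations of smooth $k$-schemes, where your direct stalk identification already suffices.
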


\begin{proof}
The proof of \cite{Mazza06_c}*{Corollary~13.14} applies mutatis mutandis. 
\end{proof}

\begin{rem}
As pointed out by the referee, there is no need to assume that the cochain complexes $ \mathscr C$ and $ \mathscr D$ are bounded above. This follows from the case of bounded complexes by writing complexes as filtered colimits of their bounded above truncations.
\end{rem}

\begin{coro}\label{cor:local}
For every $X\in\smk$ and regular local ring Z, the morphism of Milnor-Witt presheaves $\MWprep (X)\to \MWrepZ(X)$ induces a quasi-isomorphism of complexes 
\[
\Cstar{\MWprep (X)}(Z)\xrightarrow{\simeq} \Cstar{\MWrepZ(X)}(Z).
\]
There are naturally induced isomorphisms  
\[
\H^i_{\zar}(Y,\Cstar{\MWprep(X)})\simeq \H^i_{\zar}(Y,\Cstar{\MWrepZ(X)})\simeq \H^i_{\nis}(Y,\Cstar{\MWrepZ(X)})
\]
for every $Y\in \smk$.
\end{coro}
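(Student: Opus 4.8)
The plan is to deduce Corollary~\ref{cor:local} from Corollary~\ref{cor:localzero}, applied to the sheafification morphism, and then to feed the result into the comparison of Zariski and Nisnevich hypercohomology for $\Aone$-local complexes of Milnor-Witt sheaves provided by \chdmt.

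First I would view the unit of the sheafification adjunction $f\colon \MWprep(X)\to \MWrepZ(X)$ as a morphism of cochain complexes of Milnor-Witt presheaves concentrated in degree $0$; in particular both are (trivially) bounded above, so Corollary~\ref{cor:localzero} applies once we verify that $f$ induces a quasi-isomorphism — here, an isomorphism of abelian groups — on sections over every Hensel local scheme. Let $W=\Spec(\OO)$ with $\OO$ an essentially smooth henselian local $k$-algebra, and realize $\OO$ as the filtered colimit $\varinjlim_\lambda \OO(U_\lambda)$ over the Nisnevich neighbourhoods $(U_\lambda,u_\lambda)$ of the corresponding point $(Y,y)$. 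On the one hand, the representable presheaf $\cork(-,X)=\MWprep(X)$ commutes with this limit of schemes by \chdmt, Remark~\ref{rem:finite_corr&plim}(4) (see also \chfinitecw, \S\ref{sec:limits} and Lemma~\ref{lem:limitChowiso}), so $\MWprep(X)(W)=\varinjlim_\lambda \cork(U_\lambda,X)$. On the other hand, $\MWrepZ(X)$ is a Nisnevich sheaf, so its value at the henselian local scheme $W$ is the Nisnevich stalk of $\MWprep(X)$ at $y$, which is computed by the very same colimit because sheafification does not change stalks. Hence $f(W)$ is an isomorphism, Corollary~\ref{cor:localzero} applies, and we obtain that $\Cstar{\MWprep(X)}(Z)\to\Cstar{\MWrepZ(X)}(Z)$ is a quasi-isomorphism for every regular local ring $Z$, which is the first assertion.

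For the cohomological statements, observe that for any $Y\in\smk$ the local rings $\OO_{Y,y}$ are regular local, so the first assertion shows that $\Cstar{\MWprep(X)}\to\Cstar{\MWrepZ(X)}$ induces a quasi-isomorphism on all Zariski stalks, hence is a quasi-isomorphism between the associated complexes of Zariski sheaves; therefore $\H^i_{\zar}(Y,\Cstar{\MWprep(X)})\simeq\H^i_{\zar}(Y,\Cstar{\MWrepZ(X)})$ for all $Y\in\smk$. Finally, $\Cstar{\MWrepZ(X)}$ is the $\Aone$-localization of $\MWrepZ(X)$ by \chdmt, Corollary~\ref{cor:LA1}, hence is an $\Aone$-local complex of Milnor-Witt sheaves, and \chdmt, Corollary~\ref{cor:cohomcomplex} (which rests on Theorem~\ref{thm:strictly}) gives $\H^i_{\zar}(Y,\Cstar{\MWrepZ(X)})\simeq\H^i_{\nis}(Y,\Cstar{\MWrepZ(X)})$. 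Alternatively, one argues directly that the cohomology sheaves of $\Cstar{\MWrepZ(X)}$ are strictly $\Aone$-invariant Milnor-Witt sheaves, so by Theorem~\ref{thm:strictly} the Zariski and Nisnevich hypercohomology spectral sequences have identical $E_2$-pages and the comparison of abutments follows.

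The main obstacle is the middle step: confirming the hypothesis of Corollary~\ref{cor:localzero}, i.e.\ that the Zariski presheaf $\MWprep(X)$ already agrees with its Nisnevich sheafification on henselian local schemes. This is morally clear since such schemes are points of the Nisnevich site, but making it rigorous requires the compatibility of finite Milnor-Witt correspondences with filtered limits of schemes — the machinery of \chfinitecw, \S\ref{sec:limits} and Proposition~\ref{prop:equivtriple} — together with some care that the pro-system realizing a henselization is of the admissible type (essentially étale transition maps). Everything after that is formal, given the results already established in \chfinitecw and \chdmt.
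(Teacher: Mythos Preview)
Your proof is correct and follows the same route as the paper: apply Corollary~\ref{cor:localzero} to the sheafification map $\MWprep(X)\to\MWrepZ(X)$ (the hypothesis being that a presheaf and its Nisnevich sheafification agree on henselian local schemes), and then invoke \chdmt, Corollary~\ref{cor:cohomcomplex} for the Zariski/Nisnevich comparison. The paper's proof is terser and leaves the verification of the hypothesis implicit, but your expanded justification via Remark~\ref{rem:finite_corr&plim}(4) and the stalk description of sheafification is exactly the intended content.
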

\begin{proof}
The isomorphism 
\[
\H^i_{\zar}(Y,\Cstar{\MWprep(X)})
\simeq 
\H^i_{\zar}(Y,\Cstar{\MWrepZ(X)})
\] 
follows immediately from Corollary \ref{cor:localzero}. 
The second isomorphism follows from \chdmt, Corollary~\ref{cor:cohomcomplex}.
\end{proof}

In the same vein, we have the following result.

\begin{coro}
\label{cor:local2}
For every $X\in\smk$ and regular local ring $Z$, the morphism of Milnor-Witt presheaves 
\[
\MWprep(X)\otimes \MWprep\{1\}\to \MWrepZ(X)\otimes \tZcbx 1
\] 
induces a quasi-isomorphism of complexes 
\[
\Cstar{\MWprep(X)\otimes \MWprep\{1\}}(Z)\xrightarrow{\simeq} \Cstar{\MWrepZ(X)\otimes \tZcbx 1}(Z).
\]
There are naturally induced isomorphisms
\[
\H^i_{\zar}(Y,\Cstar{\MWprep(X)\otimes \MWprep\{1\}})\simeq \H^i_{\zar}(Y,\Cstar{\MWrepZ(X)\otimes \tZcbx 1})\simeq \H^i_{\nis}(Y,\Cstar{\MWrepZ(X)\otimes \tZcbx 1})
\]
for every $Y\in\smk$.
\end{coro}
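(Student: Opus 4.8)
The plan is to imitate the proof of Corollary~\ref{cor:local} verbatim, with $\MWprep(X)\otimes\MWprep\{1\}$ in place of $\MWprep(X)$ and $\MWrepZ(X)\otimes\tZcbx 1$ in place of $\MWrepZ(X)$. There are two inputs. The first is Corollary~\ref{cor:localzero}, which reduces the asserted quasi-isomorphism over regular local rings (and hence the Zariski hypercohomology comparison) to the statement that the comparison map $\MWprep(X)\otimes\MWprep\{1\}\to\MWrepZ(X)\otimes\tZcbx 1$ is a quasi-isomorphism on every essentially smooth Hensel local scheme. The second is \chdmt, Corollary~\ref{cor:cohomcomplex}, which upgrades the Zariski computation to a Nisnevich one once we know that $\Cstar{\MWrepZ(X)\otimes\tZcbx 1}$ is $\Aone$-local.

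For the first input, I would first observe that, since $\tilde a$ is a symmetric monoidal exact functor (\chdmt, Corollary~\ref{cor:adjunctions_corr} and Proposition~\ref{prop:exist_associated-W-t-sheaf}) and $\tZcbx 1=\tilde a(\MWprep\{1\})$, there is a canonical identification $\tilde a\big(\MWprep(X)\otimes\MWprep\{1\}\big)\cong\MWrepZ(X)\otimes\tZcbx 1$ under which the map in the statement is the unit of the adjunction $(\tilde a,\tfO)$; by \chdmt, Proposition~\ref{prop:exist_associated-W-t-sheaf} the underlying Nisnevich sheaf of the target is simply the Nisnevich sheafification of the underlying presheaf of the source, so this map is an isomorphism on Nisnevich stalks. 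Since $\MWprep(X)\otimes\MWprep\{1\}$ is a direct summand of $\MWprep(X)\otimes\MWprep(\Gm)=\MWprep(X\times\Gm)$, which is continuous under cofiltered limits of schemes by \chdmt, Remark~\ref{rem:finite_corr&plim}(4), for an essentially smooth Hensel local $k$-scheme $Z$ the value of either side at $Z$ coincides with its Nisnevich stalk, whence the comparison map is an isomorphism at such $Z$. Corollary~\ref{cor:localzero} then gives the quasi-isomorphism $\Cstar{\MWprep(X)\otimes\MWprep\{1\}}(Z)\xrightarrow{\simeq}\Cstar{\MWrepZ(X)\otimes\tZcbx 1}(Z)$ for every regular local ring $Z$; evaluating on the Zariski stalks $\OO_{Y,y}$ of a smooth scheme $Y$ (which are regular local) shows that the induced morphism of complexes of Zariski sheaves on the small site of $Y$ is a quasi-isomorphism, hence $\H^i_\zar(Y,\Cstar{\MWprep(X)\otimes\MWprep\{1\}})\simeq\H^i_\zar(Y,\Cstar{\MWrepZ(X)\otimes\tZcbx 1})$.

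For the second input, $\MWrepZ(X)\otimes\tZcbx 1$ is a Milnor-Witt sheaf, so its Suslin complex is $\Aone$-local by \chdmt, Corollary~\ref{cor:LA1} (equivalently Corollary~\ref{cor:A1-local_complexes}); hence \chdmt, Corollary~\ref{cor:cohomcomplex} supplies $\H^i_\zar(Y,\Cstar{\MWrepZ(X)\otimes\tZcbx 1})\simeq\H^i_\nis(Y,\Cstar{\MWrepZ(X)\otimes\tZcbx 1})$, and naturality in $Y$ is evident from the constructions. I expect the only step requiring genuine care — and the main obstacle — to be the bookkeeping in the first input: confirming that the presheaf tensor product $\MWprep(X)\otimes\MWprep\{1\}$ sheafifies to the sheaf tensor product $\MWrepZ(X)\otimes\tZcbx 1$ and that the resulting unit map becomes an isomorphism at essentially smooth Hensel local schemes. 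Both facts are formal, following from the exactness and monoidality of $\tilde a$ together with the continuity of representable MW-presheaves under cofiltered limits, so no new ideas beyond those already used for Corollary~\ref{cor:local} are required.
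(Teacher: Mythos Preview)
Your proposal is correct and follows essentially the same approach as the paper, which simply notes that the comparison map is locally an isomorphism in the Nisnevich topology and then invokes the proof of Corollary~\ref{cor:local} verbatim. You have just made explicit the bookkeeping behind ``locally an isomorphism'' (identifying the map as the unit of Nisnevich sheafification and checking it is an isomorphism on essentially smooth Hensel local schemes via continuity of $\MWprep(X\times\Gm)$), which is exactly the content the paper leaves implicit.
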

\begin{proof}
In the Nisnevich topology, 
$\MWprep(X)\otimes \MWprep\{1\}\to \MWrepZ(X)\otimes \tZcbx 1$ is locally an isomorphism and the proof of the previous corollary applies verbatim.
\end{proof}

%%%%%%%%%%%%%%%%%%%%%%%%%%%%%%%%%%%%%%%%%%%%%%%%%%%%%%%%%%%%%%%%%%%%%%%%%%%%%%%%%%%%%%%%%%%%%%%%%%%%%%%%%%%%%%%%

\section{The embedding theorem for Milnor-Witt motives}\label{sec:etfMWmotives}

\begin{thm}
\label{thm:main}
Let $k$ be an infinite perfect field of characteristic not $2$. 
Then for all complexes $\mathscr C$ and $\mathscr D$ of Milnor-Witt sheaves, 
the morphism
\[
\Hom_{\DMtekZ}(\mathscr C,\mathscr D)\to \Hom_{\DMtekZ}(\mathscr C(1),\mathscr D(1))
\]
obtained by tensoring with the Tate object $\tZpx 1$ is an isomorphism.
\index{embedding theorem}%
\end{thm}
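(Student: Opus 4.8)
The plan is to follow Voevodsky's argument for the cancellation theorem \cite{Voe10_c}, using Corollary~\ref{cor:cancel} as the geometric input and the formal structure of $\DMtekZ$ recorded in \chdmt to propagate it from representables to arbitrary complexes. Recall first that $\tZpx 1=\tZcbx 1[-1]$ with $\tZcbx 1=\MWprep\{1\}$ a direct summand of $\MWrepZ(\Gm)$, so that ``$-\otimes\tZpx 1$'' is exact and commutes with arbitrary coproducts, and $\tMot(X)(1)$ is a direct summand of $\tMot(X\times\Gm)[-1]$, hence compact whenever $\tMot(X)$ is. Fixing $\mathscr D$ and letting $\mathscr C$ vary, the map in the statement is a morphism of cohomological functors $\mathscr C\mapsto\Hom_{\DMtekZ}(\mathscr C,\mathscr D)$ and $\mathscr C\mapsto\Hom_{\DMtekZ}(\mathscr C(1),\mathscr D(1))$, both sending coproducts to products; since the $\tMot(X)[n]$ for $X\in\smk$ generate $\DMtekZ$ (\chdmt, Remark~\ref{rem:compact_gen_DMte}), it suffices to treat $\mathscr C=\tMot(X)$. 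Then $\tMot(X)$ and $\tMot(X)(1)$ are compact, so both sides now commute with coproducts in the $\mathscr D$ variable, and it suffices to treat $\mathscr D=\tMot(Y)=\MWrepZ(Y)$. Thus the theorem reduces to showing that for all $X,Y\in\smk$ and $n\in\ZZ$ the map $\Hom_{\DMtekZ}(\tMot(X),\tMot(Y)[n])\to\Hom_{\DMtekZ}(\tMot(X)(1),\tMot(Y)(1)[n])$ induced by $-\otimes\tZpx 1$ is an isomorphism.

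Next I would compute the left-hand side. By \chdmt, Corollary~\ref{cor:LA1}, $L_{\Aone}\MWrepZ(Y)\simeq\Cstar{\MWrepZ(Y)}$, so by \chdmt, Corollary~\ref{cor:compare_Hom&cohomology} one has $\Hom_{\DMtekZ}(\tMot(X),\tMot(Y)[n])=\hypH^n_{\nis}(X,\Cstar{\MWrepZ(Y)})$, which by \chdmt, Corollary~\ref{cor:cohomcomplex} equals $\hypH^n_{\zar}(X,\Cstar{\MWrepZ(Y)})$, and by Corollary~\ref{cor:local} equals $\hypH^n_{\zar}(X,\Cstar{\MWprep(Y)})$. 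For the right-hand side I would use the closed monoidal structure of $\DMtekZ$: adjunction gives
\[
\Hom_{\DMtekZ}(\tMot(X)(1),\tMot(Y)(1)[n])=\Hom_{\DMtekZ}\big(\tMot(X),\derR\uHom(\tZpx 1,\tMot(Y)(1))[n]\big),
\]
and since the $[-1]$-shifts in $\tZpx 1=\tZcbx 1[-1]$ and $\tMot(Y)(1)=(\MWrepZ(Y)\otimes\tZcbx 1)[-1]$ cancel, this internal Hom is $\derR\uHom(\tZcbx 1,\MWrepZ(Y)\otimes\tZcbx 1)$. Because $\tZcbx 1$ is cofibrant, and $\uHom(\tZcbx 1,-)$ (a direct summand of $\mathcal F\mapsto\mathcal F(\Gm\times-)$, hence exact on sheaves) sends $\Aone$-local complexes to $\Aone$-local complexes by \chdmt, Corollary~\ref{cor:A1-local_complexes} (the contraction of a strictly $\Aone$-invariant sheaf is strictly $\Aone$-invariant), one can compute this derived internal Hom as $\uHom(\tZcbx 1,\Cstar{\MWrepZ(Y)\otimes\tZcbx 1})$; and $\Cstar{-}$ commutes with $\uHom(\tZcbx 1,-)$ since both amount to evaluation on $\Gm\times\Delta^\bullet\times(-)$. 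Running the same hypercohomology computation as for the left side then gives
\[
\Hom_{\DMtekZ}(\tMot(X)(1),\tMot(Y)(1)[n])=\hypH^n_{\zar}\big(X,\Cstar{\uHom(\tZcbx 1,\MWrepZ(Y)\otimes\tZcbx 1)}\big).
\]

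To finish I would invoke the two ``cancellation'' corollaries of the previous sections. By Corollary~\ref{cor:local2}, over any regular local ring $Z$ the canonical map $\MWprep(Y)\otimes\MWprep\{1\}\to\MWrepZ(Y)\otimes\tZcbx 1$ induces a quasi-isomorphism on Suslin complexes; applying the exact functor $\uHom(\MWprep\{1\},-)$ and then Corollary~\ref{cor:cancel}'s local-to-global mechanism (a quasi-isomorphism on sections over all smooth $X$, hence over their local rings by continuity of $\Cstar{-}$, hence a quasi-isomorphism of Zariski sheaves) one gets $\hypH^n_{\zar}(X,\Cstar{\uHom(\MWprep\{1\},\MWprep(Y)\otimes\MWprep\{1\})})\cong\hypH^n_{\zar}(X,\Cstar{\uHom(\tZcbx 1,\MWrepZ(Y)\otimes\tZcbx 1)})$. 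Finally Corollary~\ref{cor:cancel} itself says the canonical map $\MWprep(Y)\to\uHom(\MWprep\{1\},\MWprep(Y)\otimes\MWprep\{1\})$ induces, for every $X\in\smk$, a quasi-isomorphism $\Cstar{\MWprep(Y)}(X)\xrightarrow{\sim}\Cstar{\uHom(\MWprep\{1\},\MWprep(Y)\otimes\MWprep\{1\})}(X)$, hence an isomorphism on $\hypH^n_{\zar}(X,-)$. Chaining these identifications with the computations above yields an isomorphism between the two Hom groups. The main obstacle — the only genuine mathematics here being already contained in Corollary~\ref{cor:cancel}, i.e.\ ultimately in Lemma~\ref{lem:hopf} and the explicit Cartier-divisor calculations of \S\ref{section:cdMKK} — is the compatibility bookkeeping: keeping track of the graded line-bundle twists, of the $[-1]$ shift relating $\tZpx 1$ to $\tZcbx 1$, and above all verifying that the abstract isomorphism assembled through the internal-Hom adjunction and Corollaries~\ref{cor:local}, \ref{cor:local2}, \ref{cor:cancel} is literally the morphism induced by $-\otimes\tZpx 1$ rather than merely some isomorphism between these groups; this is checked by observing that every comparison map used is the canonical one and that the unit/counit of the $\uHom$-adjunction implement precisely ``tensoring with the Tate object''.
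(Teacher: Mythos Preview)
Your proposal is correct and follows essentially the same route as the paper: reduce to representables via the compact generation of $\DMtekZ$, identify both Hom groups with hypercohomology of Suslin complexes, pass from sheaves to presheaves via Corollaries~\ref{cor:local} and \ref{cor:local2}, and conclude with Corollary~\ref{cor:cancel}. The only organisational difference is that the paper invokes the internal-Hom adjunction first, rewriting the statement as $\mathscr D\xrightarrow{\sim}\derR\uHom(\tZcbx 1,\mathscr D\{1\})$, so that only one variable needs to be reduced (using compactness of $\tZcbx 1$ to pass $\derR\uHom(\tZcbx 1,-)$ through homotopy colimits); you instead reduce $\mathscr C$ and $\mathscr D$ separately before adjoining, which is equally valid but slightly longer.
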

\begin{proof}
Obviously,
one can replace the Tate object $\tZpx 1$ by the $\Gm$-twist $\tZcbx 1$.
Using the internal Hom functor, one is reduced to proving there is a canonical isomorphism 
%that for every Milnor-Witt motivic complex $\mathscr D$, the canonical map
\[
\mathscr D \rightarrow \derR \uHom(\tZcbx 1,\mathscr D\{1\})
\]
%is an isomorphism. 
for every Milnor-Witt motivic complex $\mathscr D$.
Now $\MWrepZ(X)\{1\}$, being a direct factor of $\MWrepZ(X\times \Gm)$, is a compact object in $\DMtekZ$ by \chdmt, Proposition~\ref{prop:gm_objects} and $\mathscr D$ is a homotopy colimit of complexes obtained by suspensions of sheaves $\MWrepZ(Y)$ for some smooth scheme $Y$. One is then reduced to considering the case where $\mathscr D=\MWrepZ(Y)$.
 
For every $n\in \ZZ$ and $X\in\smk$, we have 
\[
\Hom_{\DMtekZ}(\MWrepZ(X),\MWrepZ(Y)[n])=\H^n_{\nis}(X,\Cstar{\MWrepZ(Y)})
\]
by \chdmt, Corollaries~\ref{cor:compare_Hom&cohomology} and \ref{cor:LA1}, while 
\[
\Hom_{\DMtekZ}(\MWrepZ(X),\derR \uHom(\tZcbx 1,\MWrepZ(Y)\{1\})[n])=\Hom_{\DMtekZ}(\MWrepZ(X)\{1\},\MWrepZ(Y)\{1\}[n])
\]
is the cokernel of the morphism
\[
\H^n_{\nis}(X,\Cstar{\MWrepZ(Y)\{1\}})\to \H^n_{\nis}(X \Gm,\Cstar{\MWrepZ(Y)\{1\}})
\]
induced by the projection $X \Gm\to X$. 
In view of Corollaries \ref{cor:local} and \ref{cor:local2}, we are reduced to proving that the tensor product by $\tZcbx 1$ induces an isomorphism in Zariski hypercohomology. 
This follows from Corollary \ref{cor:cancel}.
\end{proof}

The category of Milnor-Witt motives $\DMtkZ$ is obtained from $\DMtekZ$ by $\otimes$-inversion of the Tate object in the context of the stable model category structure on $\tZpx 1$-spectra (see \chdmt, \S\ref{sec:stablederivedcat}).
By construction there is a "$\tZpx 1$-suspension" functor relating these two categories (see \chdmt, Proposition~\ref{prop:Homlimiso}).
From Theorem \ref{thm:main} we immediately deduce the analogue for Milnor-Witt motives of Voevodsky's embedding theorem.
\begin{coro}
\label{cor:embedding}
Let $k$ be an infinite perfect field of characteristic not $2$.
There is a fully faithful suspension functor 
\[
\DMtekZ
\to
\DMtkZ.
\]
\end{coro}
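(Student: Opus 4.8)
The plan is to deduce the full faithfulness of the suspension functor $\Sigma^\infty\colon\DMtekZ\to\DMtkZ$ directly from Theorem~\ref{thm:main} (cancellation) together with the colimit description of morphisms in the $\tZpx 1$-stable category. First I would reduce to a generating family: by \chdmt, Remark~\ref{rem:compact_gen_DMte} the effective category $\DMtekZ$ is compactly generated by the motives $\tMot(X)$ of smooth schemes $X$, and $\Sigma^\infty$, being a left adjoint of $\Omega^\infty$, preserves coproducts and exact triangles and sends each $\tMot(X)$ to a compact object of $\DMtkZ$. A standard d\'evissage (the subcategory of objects on which the comparison map is an isomorphism is triangulated and closed under coproducts in each variable) then reduces the claim to showing that for every smooth $X$, every $n\in\ZZ$ and every effective Milnor-Witt motivic complex $\mathscr D$ the canonical comparison map
\[
\Hom_{\DMtekZ}\big(\tMot(X),\mathscr D[n]\big)\longrightarrow \Hom_{\DMtkZ}\big(\Sigma^\infty\tMot(X),\Sigma^\infty\mathscr D[n]\big)
\]
is an isomorphism.

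For the right-hand side I would invoke \chdmt, Proposition~\ref{prop:Homlimiso} (whose hypotheses are met, since $\tMot(X)$ is compact and the cyclic permutation acts as the identity on $\tZcbx 3$ by \chdmt, Corollary~\ref{cor:permut_Tate}): it identifies $\Hom_{\DMtkZ}(\Sigma^\infty\tMot(X),\Sigma^\infty\mathscr D[n])$ with the filtered colimit
\[
\varinjlim_{r\geq 0}\ \Hom_{\DMtekZ}\big(\tMot(X)\{r\},\mathscr D\{r\}[n]\big),
\]
where $\{r\}$ denotes the $r$-fold $\tZcbx 1$-twist and the transition maps are induced by tensoring with $\tZcbx 1$. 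Since $\tZcbx 1$ is the Tate object $\tZpx 1$ up to a shift, and a shift of a complex of Milnor-Witt sheaves is again such a complex, Theorem~\ref{thm:main} applies to each pair $(\tMot(X)\{r\},\mathscr D\{r\}[n])$ and shows that every transition map in this system is an isomorphism. Hence the colimit is canonically isomorphic to its term at $r=0$, namely $\Hom_{\DMtekZ}(\tMot(X),\mathscr D[n])$, and checking that the comparison map above is precisely the inclusion of that initial term into the colimit completes the argument; full faithfulness of $\Sigma^\infty$ then follows, which is exactly Corollary~\ref{cor:embedding}.

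The one point that demands care — and which I expect to be the main, though mild, obstacle — is the index bookkeeping: verifying that the canonical map $\Hom_{\DMtekZ}(-,-)\to\Hom_{\DMtkZ}(\Sigma^\infty-,\Sigma^\infty-)$ is literally the structure map into the colimit system of \chdmt, Proposition~\ref{prop:Homlimiso} and that its transition maps are the plain $\tZcbx 1$-twist (rather than a twisted variant), so that Theorem~\ref{thm:main} applies on the nose. Because \chdmt, Proposition~\ref{prop:Homlimiso} is already set up with exactly these transition maps and because the shift relating $\tZcbx 1$ and $\tZpx 1$ only contributes an overall translation, this is a matter of matching conventions rather than a substantive difficulty; the genuine content of the corollary is the cancellation theorem proved in \S\ref{cancellationW}.
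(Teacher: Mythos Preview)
Your proposal is correct and is exactly the ``formal'' deduction the paper alludes to: the paper gives no details beyond citing Theorem~\ref{thm:main} and pointing to \chdmt, Proposition~\ref{prop:Homlimiso}, and your argument spells out precisely that route (reduce to compact generators, express stable Hom as the colimit over $\{r\}$-twists, and use cancellation to see the transition maps are isomorphisms). The bookkeeping worry you flag is indeed harmless --- the transition maps in Proposition~\ref{prop:Homlimiso} are by construction the $\tZcbx 1$-twist maps, which differ from the $\tZpx 1$-twist of Theorem~\ref{thm:main} only by a shift.
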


%%%%%%%%%%%%%%%%%%%%%%%%%%%%%%%%%%%%%%%%%%%%%%%%%%%%%%%%%%%%%%%%%%%%%%%%%%%%%%%%%%

\section{Examples of $\Aone$-homotopies}
\label{sec:examples}

The purpose of this section is to perform a number of computations which will be useful both in \chcomparison and \chdmt. 
Let us first briefly recall the definition of the first motivic Hopf map in the context of Milnor-Witt correspondences. 
The computation in \chfinitecw, Lemma~\ref{lem:explicitcontraction} shows that 
\[
\cork(\Gm,\Spec (k))=\sKMW_0(\Gm)=\sKMW_0(k)\oplus \sKMW_{-1}(k)\cdot [t].
\] 
Here,
the naturally defined class $[t]\in \sKMW_{1}(k(t))$ is an element of the subgroup $\sKMW_{1}(\Gm)$ since it has trivial residues at all closed points of $\Gm$.
Thus we have the element 
\[
\eta[t]\in\cork(\Gm,\Spec (k))=\sKMW_{0}(\Gm).
\]
Under pull-back with the $k$-rational point $1\colon \Spec(k)\to\Gm$ the element $[t]\in\sKMW_1(\Gm)$ goes to $[1]=0\in \sKMW_1(k)$.
Thus $\eta[t]$ pulls back trivially to $\sKMW_{0}(k)$, 
and defines a morphism of presheaves with MW-transfers $s(\eta)\in \Hom(\MWprep\{1\},\MWprep(\Spec (k)))$. 
Now, $\MWprep(\Spec (k))=\sKMW_0$ is a sheaf with MW-transfers, 
and it follows from \chdmt, Proposition~\ref{prop:exist_associated-W-t-sheaf} that this morphism induces a morphism of sheaves with MW-transfers $\tZcbx 1\to \tZ$, 
which we also denote by $s(\eta)$.

For any (essentially) smooth scheme $X\in\smk$ the bigraded Milnor-Witt motivic cohomology group $\HMW^{n,i}(X,\ZZ)$ is defined using the complex $\tZcbx i$ \chdmt, \S\ref{num:twist-1}.
There is a graded ring structure on the Milnor-Witt motivic cohomology groups satisfying the commutativity rule in \chdmt, Theorem~\ref{thm:commutative}. 
Moreover, 
by \chdmt, \S\ref{sec:naiveMilnorWittpresheaf}, 
every unit $a\in F^{\times}$ (where $F$ is a finitely generated field extension of $k$) gives rise to an element of $\cork(\Spec (F),\Gm)$ and also a Milnor-Witt motivic cohomology class $s(a)\in \HMW^{1,1}(F,\ZZ)$. Next, it follows from \chdmt, \S\ref{num:comparisonmap} that $s(\eta)$ yields a well-defined class in $\HMW^{-1,-1}(k,\ZZ)$ and thus a class in $\HMW^{-1,-1}(F,\ZZ)$ by pull-back. The definitions of $s(\eta)$ and $s(a)$ apply more generally to (essentially) smooth $k$-schemes \chdmt, \S\ref{num:comparisonmap}.

In analogy with motivic cohomology and Milnor $K$-theory, 
the integrally graded diagonal part of Milnor-Witt motivic cohomology can be identified with Milnor-Witt $K$-theory as sheaves of graded rings \chdmt, Theorem~\ref{thm:KMWmotivic}.
Using cancellation for Milnor-Witt correspondences we show results which are absolutely crucial for establishing the mentioned identification.

\begin{lem}
\label{lem:example1}
For every unit $a\in \OO(X)^\times$ we have 
\[
1+s(a)s(\eta)=\langle a\rangle\in \HMW^{0,0}(X,\ZZ)=\sKMW_0(X).
\]
\end{lem}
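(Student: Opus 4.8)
The plan is to compute the product $s(a)s(\eta)$ directly using the cancellation theorem, reducing the statement to an explicit identity at the level of finite MW-correspondences that can be checked on the Gersten/Rost-Schmid complex. Concretely, $s(a) \in \HMW^{1,1}(X,\ZZ)$ is represented by the graph of the morphism $a\colon X \to \Gm$, viewed as an element of $\cork(X,\Gm)$, i.e. a map $\tMot(X) \to \tZcbx 1$; and $s(\eta) \in \HMW^{-1,-1}(X,\ZZ)$ is (the pull-back to $X$ of) the class $\eta[t] \in \sKMW_0(\Gm) = \cork(\Gm,\Spec k)$, viewed as a map $\tZcbx 1 \to \tZ$. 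Their product $s(a)s(\eta)$ is then the composite $\tMot(X) \xrightarrow{s(a)} \tZcbx 1 \xrightarrow{s(\eta)} \tZ$, which amounts to composing the two correspondences, and the target group is $\HMW^{0,0}(X,\ZZ) = \sKMW_0(X)$ by \chfinitecw, Example~\ref{ex:basic}.

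First I would unwind the composition: composing the graph $\grph a \in \cork(X,\Gm)$ with $\eta[t] \in \cork(\Gm,\Spec k)$ is, by the definition of composition of MW-correspondences and the fact that a graph composed with a correspondence is just pull-back (\chfinitecw, Example~\ref{ex:flat_example}), simply the element $a^*(\eta[t]) \in \sKMW_0(X)$ obtained by pulling back $\eta[t] \in \sKMW_0(\Gm)$ along $a\colon X \to \Gm$. So the whole statement reduces to the identity $1 + a^*(\eta[t]) = \langle a\rangle$ in $\sKMW_0(X)$. Now $a^*[t] = [a] \in \sKMW_1(X)$ since pull-back of the tautological coordinate section $t$ along $a$ is $a$ itself, and the residue map commutes with smooth pull-back so $[a]$ really does land in the unramified subgroup $\sKMW_1(X)$. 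Hence $a^*(\eta[t]) = \eta[a]$, and the identity becomes $1 + \eta[a] = \langle a\rangle$, which is precisely the definition of $\langle a\rangle$ in Milnor-Witt $K$-theory recalled in \chfinitecw, \S\ref{sec:MWKtheory} (the isomorphism $\GW(F) \simeq \KMW_0(F)$, $\langle a\rangle \mapsto 1+\eta[a]$, extended to the sheaf level).

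The main obstacle is bookkeeping rather than mathematics: I must make sure the identifications of $s(a)$, $s(\eta)$, and their product with the respective correspondences are the ones fixed in \chdmt, \S\ref{num:comparisonmap} and \S\ref{sec:naiveMilnorWittpresheaf}, including the correct placement of the $\Gm$-twist in degree $1$ versus $-1$ and the sign conventions in the graded commutativity rule (\chdmt, Theorem~\ref{thm:commutative}), so that the product $s(a)s(\eta)$ really computes as the naive composite without a stray unit $\langle -1\rangle$ or shift. In particular I would invoke the cancellation theorem (Theorem~\ref{thm:cancellation}) exactly as in the proof of \chdmt, Theorem~\ref{thm:gradedringhomo}: the product map is computed through the cancellation isomorphism $\Hom_{\DMtek}(\tZcbx 1,\tZ) \xrightarrow{\simeq} \Hom_{\DMtek}(\tZcbx 1 \otimes \tZcbx 1, \tZ \otimes \tZcbx 1)$, which is what licenses identifying the composite with the pull-back $a^*(\eta[t])$. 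Once the conventions are pinned down, the remaining computation is the one-line algebraic identity above. I would also remark that, as stated, the lemma needs no assumption on $k$ being infinite or of characteristic not $2$, consistent with the surrounding text, because every ingredient used (graph functor, pull-back of correspondences, the presentation of $\KMW_0$) is available over an arbitrary perfect field and the use of cancellation here is only formal manipulation of the already-constructed product structure.
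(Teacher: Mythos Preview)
Your approach has a genuine gap at the crucial step. You assert that the cup product $s(a)\cdot s(\eta)$ in $\HMW^{0,0}(X,\ZZ)$ \emph{is} the composite of correspondences $\tMot(X)\xrightarrow{s(a)}\tZcbx 1\xrightarrow{s(\eta)}\tZ$, and then reduce to the tautology $1+\eta[a]=\langle a\rangle$. But the cup product is defined via the monoidal structure of $\DMtkZ$, where $s(\eta)\in\HMW^{-1,-1}(X,\ZZ)=\Hom(\tMot(X),\tZcbx{-1})$ involves the \emph{tensor inverse} $\tZcbx{-1}$ of $\tZcbx 1$, and the product uses the structural isomorphism $\tZcbx 1\otimes\tZcbx{-1}\simeq\tZ$. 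Identifying this with the naive composite of correspondences is precisely the content that needs to be established; it is not a formality. Your appeal to ``the cancellation isomorphism $\Hom(\tZcbx 1,\tZ)\simeq\Hom(\tZcbx 2,\tZcbx 1)$'' does not explain why the stable product restricts to composition, and the reference to the proof of \chdmt, Theorem~\ref{thm:gradedringhomo} is circular: that proof invokes the present lemma to verify the hyperbolic relation.

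The paper's proof is genuinely different. After reducing to $X=\Spec(F)$, it represents $s(a)s(\eta)$ by the exterior product $\alpha=p_3^*s(a)\cdot p_2^*s(\eta)$, viewed as a correspondence $\Spec(F)\times\Gm\to\Gm$; it then applies the explicit inverse $\rho_0$ from the cancellation theorem (Theorem~\ref{thm:weakcancellation}), computing the intersection with the Cartier divisor $D(g_0)$ on $\Gm\times\Gm$ and pushing forward. This yields $\rho_0(\alpha)=-\eta[a]$, so that $\langle -1\rangle\rho_0(\alpha)=\eta[a]$. This explicit $\rho_n$ computation \emph{is} the verification that the stable product agrees with what you expect---it is not avoidable by abstract bookkeeping alone. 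One could in principle replace it by a careful argument that the pairing $\tZcbx 1\otimes\tZcbx{-1}\to\tZ$ in $\DMtkZ$ corresponds under Proposition~\ref{prop:negstableA1} to the evaluation map for $\uHom(\tZcbx 1,\tZ)$, and then unwind the adjunction (watching for the $\epsilon$-factor from the symmetry, which is harmless here since $\epsilon\eta=\eta$); but you have not supplied that argument.
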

\begin{proof}
For the function field $k(X)$ of $X$, 
the naturally induced map
\[
\HMW^{0,0}(X,\ZZ)
=
\sKMW_0(X)
\to 
\sKMW_0(k(X))
=
\HMW^{0,0}(k(X),\ZZ)
\] 
is injective. 
We are thus reduced to proving the result for $X=\Spec(F)$, 
where $F$ is a finitely generated field extension of the base field $k$. 
Since this claim is obvious when $a=1$, we may assume that $a\neq 1$.

Consider the following diagram
\[
\begin{tikzcd}
 & \Spec(F)\Gm\Gm \ar[r,"p"] \ar[d,"q"'] & \Gm \\
\Spec(F) \ar[r,"\Gamma_a"] & \Spec(F) \Gm & 
\end{tikzcd}
\]
where $q$ is the projection on the first two factors, $p$ is the projection on the third factor, and $\Gamma_a$ is the graph of $a$.
By construction $s(a)$ is the image of the cycle 
\[
(\Gamma_a)_{*}(\langle 1\rangle)\in \H^1_{(t-a)}(F[t^{\pm 1}],\sKMW_1,\omega_{F[t^{\pm 1}]/F})\subset \cork(F,\Gm).
\] 
On the other hand, 
$s(\eta)$ corresponds to $\eta[t]\in \sKMW_0(\Gm)$ and the product is represented by the exterior product $\alpha$ of correspondences between $s(a)$ and $s(\eta)$, 
i.e., 
$\alpha:=q^*s(a)\cdot p^*s(\eta)$. 
We now want to apply the cancellation theorem to this element. 
However, we need a correspondence from $\Spec(F)\Gm \to \Gm$ (and in particular, we need a twist by $p_3^*\omega_{\Gm}$). 
One way to achieve this is to consider the Cartesian diagram
\[
\begin{tikzcd}
\Spec(F) \Gm \ar[r,"\Gamma"] \ar[d,"p_1"'] & \Spec(F)\Gm\Gm \ar[r,"p_2"] \ar[d,"p_3"] & \Gm \\
\Spec(F)\ar[r,"\Gamma_a"] & \Spec(F) \Gm & 
\end{tikzcd}
\]
where $p_1$ is the projection on the first factor, $p_3$ is the projection on the first and third factor, 
$p_2$ is the projection on the second factor, 
and $\Gamma_a$ is the graph of $a$. 
We now consider $p_3^*s(a)\cdot p_2^*s(\eta)$ which indeed can be seen as a finite Milnor-Witt correspondence. 
The relevant Cartier divisor for the procedure described in \S\ref{cancellationW} is defined by the rational function
\[
g_n:=(t_1^{n+1}-1)/(t_1^{n+1}-t_2).
\]
In this case we may choose $N(\alpha)=0$.  Since $\alpha:=p_3^*s(a)\cdot p_2^*s(\eta)$ and the intersection product is associative, setting $D:=D(g_0)$ we find that 
\[
D\cdot \alpha=(D\cdot p_3^*s(a))\cdot p_2^*s(\eta).
\]
Moreover,
using the base change and projection formulas, 
we get 
\[
D\cdot p_3^*s(a)= D\cdot \Gamma_*(\langle 1\rangle)=\Gamma_*(\Gamma^*(D\cdot \langle 1\rangle)).
\]
Now $D^\prime:=\Gamma^*(D)$ is the Cartier divisor on $\Spec (F) \Gm$ defined by the rational function 
\[
g^\prime:=(t-1)/(t-a).
\] 
Its divisor takes the form 
\[
\tdiv({D^\prime})=\langle 1-a\rangle \otimes (t-1)^\vee\otimes dt+\epsilon \langle a-1\rangle \otimes (t-a)^\vee\otimes dt,
\]
and its push-forward equals
\[
\Gamma_*(\tdiv({D^\prime}))=\langle 1-a\rangle\otimes (t_1-1)^\vee\wedge (t_2-a)^\vee\otimes dt_1\wedge dt_2+\epsilon \langle a-1\rangle  \otimes (t_1-a)^\vee\wedge (t_2-a)^\vee\otimes dt_1\wedge dt_2.
\]
The first summand is supported on the base point (of the first copy of $\Gm$) and we can forget it in our computations below. 
Multiplying by $p_2^*s(\eta)$ amounts to multiplying by $\eta[a]$ on the second summand, and then pushing forward to $\Spec(F)$.
We find that $\rho_0(\alpha)=\epsilon\langle a-1\rangle \eta[a]=-\langle 1-a\rangle \eta[a]$. 
Moreover,
\[
\langle 1-a\rangle \eta[a]=(1+\eta[1-a])\eta[a]=\eta[a], 
\]
and it follows that $\rho_0(\alpha)=-\eta[a]$. 
Hence $\langle -1\rangle\rho_0(\alpha)=\eta[a]$ and the result follows from Theorem \ref{thm:weakcancellation}.
\end{proof}

Let $\mu:\Gm \Gm\to \Gm$ denote the multiplication map, and let $p_i:\Gm \Gm\to \Gm$ for $i=1,2$ denote the projection map. 
We consider the corresponding graphs $\Gamma_\mu$, $\Gamma_i$, and their associated Milnor-Witt correspondences $\tilde\gamma_\mu$, $\tilde\gamma_i$. 
By construction we have $(\Gamma_\mu)_*(\langle 1\rangle)=\tilde\gamma_\mu$ and $(\Gamma_i)_*(\langle 1\rangle)=\tilde\gamma_i$. 
One checks that 
\[
\tilde\gamma_\mu-\tilde\gamma_1-\tilde\gamma_2\in \cork(\Gm\Gm,\Gm)
\] 
induces a morphism of Milnor-Witt presheaves $\alpha:\MWprep\{1\}\otimes \MWprep\{1\}\to \MWprep\{1\}$. 

\begin{lem}
\label{lem:example2}
The morphism of Milnor-Witt presheaves $\alpha$ is $\Aone$-homotopic to the suspension of 
\[
s(\eta):\MWprep\{1\}\to \MWprep(\Spec (k)).
\]
\end{lem}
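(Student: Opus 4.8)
The plan is to mimic the computation in Lemma \ref{lem:example1} (and its blueprint in \cite{Voe10_c}), turning the naive multiplicativity defect $\tilde\gamma_\mu-\tilde\gamma_1-\tilde\gamma_2$ into the suspended Hopf map by explicitly running the cancellation machinery of \S\ref{cancellationW}. Concretely, $\alpha\colon \MWprep\{1\}\otimes\MWprep\{1\}\to\MWprep\{1\}$ is given by the cycle $\tilde\gamma_\mu-\tilde\gamma_1-\tilde\gamma_2\in\cork(\Gm\Gm,\Gm)$ (after checking it kills the base points in each $\Gm$-factor, so that it descends to the smash-twisted presheaf — this is the content of the first assertion, which is immediate since $\mu$, $p_1$, $p_2$ all restrict to the identity or the unit appropriately). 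To identify it with $\Sigma s(\eta)$ via Theorem \ref{thm:weakcancellation}, I would desuspend: consider the associated finite Milnor-Witt correspondence $\Gm\Gm\Gm\to\Gm$, choose $n\geq N$ with the Cartier divisor $D:=D(g_n)$ defined relative to it, where $g_n=(t_1^{n+1}-1)/(t_1^{n+1}-t_2)$, and compute $\rho_n$ of this correspondence. By Lemma \ref{lem:minus1}, Lemma \ref{lem:composite}, Lemma \ref{lem:rightcomposition} and the uniqueness-up-to-homotopy in Theorem \ref{thm:weakcancellation}, it suffices to show $\langle -1\rangle\rho_n(\widetilde{\alpha})$ equals $s(\eta)$ as an element of $\HMW^{-1,-1}(k,\ZZ)=\Hom(\MWprep\{1\},\MWprep(\Spec k))$.

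First I would reduce to a pointwise computation over a finitely generated field $F/k$, as in Lemma \ref{lem:example1}: take units $a,b\in F^{\times}$, so that $s(a)s(b)\in\HMW^{2,2}(F,\ZZ)$ is represented by the graph of $(a,b)\colon\Spec F\to\Gm\Gm$, and apply $\alpha$ to this. By definition of $\alpha$ this yields $s(ab)-s(a)-s(b)$, and by the twisted $\eta$-logarithm relation in $\sKMW_*$ the target of the identification is forced to be $\eta\,[a]\,[b]$, i.e.\ $s(\eta)s(a)s(b)$ — so the statement to prove is exactly that $\alpha$ implements the defect in relation \ref{relation:linearity} of Milnor-Witt $K$-theory. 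The second step is to carry out the Cartier-divisor computation: pull the graph $\Gamma_{(a,b)}$ back into the three-fold product, form $D\cdot(\text{pull-back of the graph of }\mu\text{ minus }p_1\text{ minus }p_2)$, and compute the residues of the rational function $g_n$ at the relevant codimension-one points exactly as in the proofs of Lemma \ref{lem:degree} and Lemma \ref{lem:example1}. The base-point-supported summands are discarded, and the surviving residue produces, after push-forward to $\Spec F$ and multiplication by the appropriate $\eta$-factor coming from $s(\eta)$, the class $-\eta[a][b]$; multiplying by $\langle -1\rangle$ gives $\eta[a][b]$ as required.

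The principal obstacle is the bookkeeping of twists and signs through the desuspension. As already flagged in the remark after Lemma \ref{lem:minus1}, applying $(p_1)_*$ (or the corresponding projection push-forward in the three-fold product) forces a chain of canonical isomorphisms of graded line bundles $(2+d,\omega_{\Gm\Gm Y})\simeq(2+d_X+d_Y,\omega_{X\Gm\Gm Y})\otimes(-d_X,\omega_X^\vee)$ that carry explicit signs $(-1)^{d_Xd_Y}$; here with everything $0$-dimensional these signs collapse, but the orientation conventions in the Rost-Schmid complex and the commutativity rule (Remark \ref{rem:commutativity}, \chdmt, Theorem~\ref{thm:commutative}) must be tracked carefully so that the final class is $+\eta[a][b]$ rather than $-\eta[a][b]$ or $\epsilon\eta[a][b]$ — these differ, so the sign bookkeeping is what makes the argument delicate rather than routine. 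Once the pointwise identity over all finitely generated $F/k$ is established, the uniqueness clause of Theorem \ref{thm:weakcancellation} promotes it to an $\Aone$-homotopy of morphisms of presheaves, completing the proof; note that, as in Lemma \ref{lem:example1}, the hypotheses that $k$ be infinite and of characteristic $\neq 2$ are not needed for this particular statement, since we only invoke the cancellation machinery of \S\ref{cancellationW} pointwise over fields.
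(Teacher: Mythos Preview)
Your opening paragraph is on the right track: the paper's proof does proceed by applying Theorem~\ref{thm:weakcancellation} and computing $\langle -1\rangle\rho_n(\alpha)$ explicitly, then comparing with $s(\eta)$. The gap is your proposed ``reduction to a pointwise computation over $F/k$''. Evaluating $\alpha$ at $(a,b)\colon\Spec F\to\Gm\Gm$ yields $s(ab)-s(a)-s(b)\in\HMW^{1,1}(F)$, but at that point there is nothing left to which $\rho_n$ can be applied: the construction $\rho_n$ consumes one $\Gm$ from the source and one from the target, so the source must retain a free $\Gm$-factor. Lemma~\ref{lem:example1} is not a template for such a reduction---the correspondence there is already defined over $\Spec F$; no reduction occurs. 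Your invocation of the uniqueness clause of Theorem~\ref{thm:weakcancellation} to ``promote'' pointwise agreement to a global $\Aone$-homotopy misreads that theorem: uniqueness there concerns the desuspension $\psi$ of a fixed $\phi$, not passage from fields to presheaves. Finally, identifying $s(ab)-s(a)-s(b)$ with $s(\eta)s(a)s(b)$ in $\HMW^{*,*}$ is circular: this lemma is precisely the input used to verify that relation in the proof of \chdmt, Theorem~\ref{thm:gradedringhomo}.

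The paper's computation is carried out globally on $\Gm\Gm\Gm$ with coordinates $t_1,t_2,t_3$ (the first two for the source, the third for the target), using $D=D(g_1)$ with $g_1=(t_2^{2}-1)/(t_2^{2}-t_3)$; your $g_n=(t_1^{n+1}-1)/(t_1^{n+1}-t_2)$ pairs the wrong variables, and one needs $n\geq 1$ because $D(g_0)$ fails to meet $\supp(\tilde\gamma_2)=\Vanish(t_3-t_2)$ properly. The pullbacks $\Gamma_\mu^*D$, $\Gamma_1^*D$, $\Gamma_2^*D$ are written down explicitly as divisors on $\Gm\Gm$, pushed forward along the graphs, and then pushed forward along the projection to $\Gm_{t_1}$. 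This last step is not routine: one of the terms is supported on a point with residue field $k(t_2)$, $t_2^2=t_1$, and requires computing the trace of $\langle 2t_2\rangle$ along this degree-$2$ extension (it turns out to be hyperbolic). The resulting class $\nu\in\sKMW_0(\Gm)$ is then projected to its pointed part via the connecting homomorphism $\sKMW_0(\Gm)\to\H^1_{\{0\}}(\Aone,\sKMW_0)=\sKMW_{-1}(k)$, giving $\langle -1\rangle\eta$; Theorem~\ref{thm:weakcancellation} then finishes. None of these specific steps are engaged by your sketch.
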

\begin{proof}
Let $t_1,t_2,t_3$ denote the respective coordinates of $\Gm\Gm \Gm$ with corresponding supports given by $\supp(\tilde\gamma_\mu)=\Vanish(t_3-t_1t_2)$, 
$\supp(\tilde\gamma_1)=\Vanish(t_3-t_1)$, 
and $\supp(\tilde\gamma_2)=\Vanish(t_3-t_2)$. 
The Cartier divisors we want to employ are defined by the equation $g_n:=(t_2^{n+1}-1)/(t_2^{n+1}-t_3)$ for some $n\geq 0$.
We note that $D(g_n)$ intersects properly with both $\supp(\tilde\gamma_\mu)$ and $\supp(\tilde\gamma_1)$ if $n\geq 0$, 
while $D(g_n)$ and $\supp(\tilde\gamma_2)$ intersects properly if $n\geq 1$.
For this reason we set $D:=D(g_1)$. 

Next we compute the intersection product $D\cdot (\tilde\gamma_\mu-\tilde\gamma_1-\tilde\gamma_2)$. 
By the projection formula, 
we get 
\[
D\cdot \tilde\gamma_\mu=(\Gamma_\mu)_*(\langle 1\rangle)\cdot D=(\Gamma_\mu)_*(\Gamma_\mu^*D\cdot \langle 1\rangle).
\]
Note that $D_\mu:=\Gamma_\mu^*D$ is the principal Cartier divisor associated to the rational function
\[
g_\mu:=(t_2^2-1)/(t_2^2-t_1t_2)=t_2^{-1}(t_2^2-1)/(t_2-t_1).
\]
Its associated divisor is given explicitly by
\[
\tdiv(D_\mu)=(\langle 2\rangle\langle 1-t_1\rangle\otimes (t_2-1)^\vee+\langle -2\rangle \langle 1+t_1\rangle\otimes (t_2+1)^\vee-\langle t_1\rangle\langle 1-t_1^2\rangle\otimes (t_2-t_1)^\vee)\otimes dt_2.
\]
Similarly for $\tilde\gamma_i$ we get Cartier divisors $D_i:=\Gamma_i^*D$ for which 
\[
\tdiv(D_1)=(\langle 2\rangle \langle 1-t_1\rangle\otimes (t_2-1)^\vee+\langle -2\rangle \langle 1-t_1\rangle\otimes (t_2+1)^\vee-\langle 1-t_1\rangle\otimes (t_2^2-t_1)^\vee)\otimes dt_2,
\]
and 
\[
\tdiv(D_2)=\langle 1\rangle \otimes (t_2+1)^\vee\otimes dt_2.
\]
Pushing forward along $\Gamma_\mu$, $\Gamma_1$, $\Gamma_2$, 
and cancelling terms, 
we find an expression for $D\cdot (\tilde\gamma_\mu-\tilde\gamma_1-\tilde\gamma_2)$ of the form $(\alpha+\beta) dt_2\wedge dt_3$, 
with
\[
\beta=-\langle t_1(1-t_1^2)\rangle\otimes (t_2-t_1)^\vee\wedge (t_3-t_2^2)^\vee+\langle 1-t_1\rangle\otimes (t_2^2-t_1)^\vee\wedge (t_3-t_2^2)^\vee-\langle 1\rangle \otimes (t_2+1)^\vee\wedge (t_3+1)^\vee, 
\]
and
\[
\alpha=\langle -2\rangle \langle 1+t_1\rangle\otimes (t_2+1)^\vee\wedge (t_3+t_1)^\vee-\langle -2\rangle \langle 1-t_1\rangle\otimes (t_2+1)^\vee\wedge (t_3-t_1)^\vee.
\]
In the next step we take the push-forward to 
\[
\sKMW_0(\Gm)=\sKMW_0(k[t_1^{\pm 1}])\subset \sKMW_0(k(t_1)).
\] 
We set $x_1=\langle t_2-t_1,t_3-t_2^2\rangle$, $x_2=\langle t_2^2-t_1,t_3-t_2^2\rangle$ and $x_3=\langle t_2+1,t_3+1\rangle$.
Since $k(x_1)=k(t_1)$ and $k(x_3)=k(t_1)$, the push-forwards of all terms except the middle one in $\beta$ are evident. 
Note that $k(x_2)/k(t_1)$ is a degree $2$ field extension of the form $k(x_2)=k(t_2)=k(t_1)[t_2]/(t_2^2-t_1)$.
Thus we need to push-forward the form $\langle 1-t_2^2\rangle\langle 2t_2\rangle$ using the trace map, 
see \cite{Fasel19_c}*{Example~1.23}. 
By the projection formula, the result is the product of $\langle 1-t_2^2\rangle=\langle 1-t_1\rangle$ with the push-forward of $\langle 2t_2\rangle$. 
An easy computation shows the push-forward of $\langle 2t_2\rangle$ is hyperbolic, 
and we find that the push-forward of $(\alpha+\beta) dt_2\wedge dt_3$ along the projection $\Gm\Gm\Gm\to \Gm$ to the first factor equals 
\[
\nu:=\langle -1\rangle -\langle t_1(1-t_1^2)\rangle+\langle -2\rangle \langle 1+t_1\rangle-\langle -2\rangle \langle 1-t_1\rangle.
\]
Finally, 
we need to isolate the ``pointed'' component of $\nu\in \cork(\Gm,\Spec(k))$, 
i.e., 
to compute its image along the projection 
\[
\cork(\Gm,\Spec(k))\to \MWprep\{1\}(\Spec(k)).
\] 
This is obtained via the connecting homomorphism
\[
\sKMW_0(\Gm)\to \H_{\{0\}}^1(\Aone,\sKMW_0)=\sKMW_{-1}(k),
\]
under which $\nu\mapsto -\eta[t_1]=\langle -1\rangle \eta[t_1]$. 
Using Theorem \ref{thm:weakcancellation}, we can finally conclude that $\alpha$ is $\Aone$-homotopic to $s(\eta)$. 
\end{proof}

\section{Milnor-Witt motivic cohomology}
\label{MWmcs}

In this section we lay the foundations for Milnor-Witt motivic cohomology from the perspective of motivic functors and structured motivic ring spectra following the work on motivic cohomology in \cite{DRO03_c}.
This is of interest in the context of the very effective slice filtration \cite{Sp12_c}*{\S 5} due to recent work on hermitian $K$-theory in \cite{B17_c}.

A motivic space with Milnor-Witt transfers is an additive contravariant functor
\[
\mathcal{F}
\colon
\cork
\to
\sAb
\]
from the category of Milnor-Witt correspondences to simplicial abelian groups.
Let $\tMk$ 
denote the functor category comprised of motivic spaces with Milnor-Witt transfers. 
\index[notation]{mtk@$\tMk$}%
\index{motivic!space!with Milnor-Witt transfer}%
By composition with the opposite of the graph functor $\smk\to\cork$ (described explicitly in \chfinitecw, \S\ref{subsec:embedding}) we obtain the forgetful functor 
\[
u\colon\tMk\to \m_k
\] 
to pointed motivic spaces over $k$,
\index[notation]{mk@$\m_k$}%
\index{motivic!space}%
i.e.\ the category of pointed simplicial presheaves on the Nisnevich site of $\smk$.
By adding Milnor-Witt transfers to motivic spaces we obtain a left adjoint functor of $u$, 
denoted by
\[
\tZ_{\text{tr}}
\colon
\m_k
\to 
\tMk.
\]
More precisely, 
$\tZtr$ is the left Kan extension determined by setting
\[
\tZtr(h_{X}\wedge \Delta^{n}_{+})
:=
\cork(-,X)\otimes\ZZ[\Delta^{n}].
\]
Here,
$h_{X}$ denotes the motivic space represented by $X\in\smk$.
Recall that a motivic functor is an $\m_k$-enriched functor from finitely presented motivic spaces $\fMk$ to $\m_{k}$ \cite{DRO03_c}*{Definition~3.1}.
\index[notation]{fmk@$\fMk$}%
\index{motivic!space!finitely presented}%
We write $\mathbf{MF}_{k}$ for the closed symmetric monoidal category of motivic functors with unit the full embedding $\fMk\subset\Mk$.
A motivic functor $\mathcal{X}$  is ``continuous'' in the sense that it induces for all $A,B\in\fMk$ a map of internal hom objects
\[
\m(A,B)
\to
\m(\mathcal{X}(A),\mathcal{X}(B)),
\]
which is compatible with the enriched composition and identities.

\begin{dfn}
As a motivic functor, 
Milnor-Witt motivic cohomology $\MFMWZ$ is the composite
\[
\fMk
\xrightarrow{\subset}
\m_{k}
\xrightarrow{\tZtr}
\tMk
\xrightarrow{u}
\m_{k}.
\]
\end{dfn}
\index[notation]{mfzt@$\MFMWZ$}%
\index{motivic!functor!Milnor-Witt cohomology}%
We note that $\MFMWZ$ is indeed a motivic functor since for all $A,B\in\fMk$ there exist natural maps 
\[
\m(A,B)\wedge u\tZtr(A)
\to
u\tZtr\m(A,B)\wedge u\tZtr(A)
\to
u\tZtr(\m(A,B)\wedge A)
\to
u\tZtr(B).
\]

\begin{lem}
\label{lem:MZcommutativemonoid}
Milnor-Witt motivic cohomology $\MFMWZ$ is a commutative monoid in $\MFk$.
\end{lem}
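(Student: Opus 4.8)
The plan is to realize $\MFMWZ$ as a lax symmetric monoidal motivic functor and then invoke the fact, established within the framework of \cite{DRO03_c}*{\S3}, that such a structure is precisely the data of a commutative monoid in $\MFk$.

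First I would record the monoidal input. The category $\tMk$ of motivic spaces with Milnor-Witt transfers is a category of additive presheaves (of simplicial abelian groups) on $\cork$, hence carries a closed symmetric monoidal structure $\otr$ obtained by Day convolution from the symmetric monoidal structure on $\cork$ (see \chdmt, \S\ref{sec:MWtransfers} and \S\ref{num:sht_monoidal}); in particular no sheafification is involved, the coherence isomorphisms are automatic, one has $\cork(-,X)\otr\cork(-,Y)\cong\cork(-,X\times Y)$, and the unit is $\cork(-,\Spec k)=\sKMW_0$ (\chfinitecw, Example~\ref{ex:basic}). The free functor $\tZtr\colon\m_k\to\tMk$ is \emph{strong} symmetric monoidal: since the free abelian group functor $S\mapsto\ZZ[S]$ carries cartesian products of sets to tensor products of abelian groups, one checks on the generators $h_X\wedge\Delta^n_+$ that $\tZtr\bigl((h_X\wedge\Delta^n_+)\wedge(h_Y\wedge\Delta^m_+)\bigr)\cong\tZtr(h_X\wedge\Delta^n_+)\otr\tZtr(h_Y\wedge\Delta^m_+)$ and $\tZtr(S^0)\cong\sKMW_0$, and these isomorphisms propagate to all of $\m_k$ because $\tZtr$ and $\otr$ preserve colimits in each variable. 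By doctrinal adjunction, the mate of this structure along $(\tZtr,u)$ makes the forgetful functor $u\colon\tMk\to\m_k$ lax symmetric monoidal.

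Consequently the composite endofunctor $u\circ\tZtr$ of $\m_k$, whose restriction to $\fMk$ is $\MFMWZ$, is lax symmetric monoidal, with multiplication
\[
\MFMWZ(A)\wedge\MFMWZ(B)=u\tZtr(A)\wedge u\tZtr(B)\longrightarrow u\bigl(\tZtr(A)\otr\tZtr(B)\bigr)\cong u\tZtr(A\wedge B)=\MFMWZ(A\wedge B)
\]
and unit given by the adjunction unit $A\to u\tZtr(A)$. Commutativity comes from the symmetry isomorphism for $\otr$ together with $\tZtr(A\wedge B)\cong\tZtr(B\wedge A)$, while the associativity and unit axioms follow from the triangle identities for $(\tZtr,u)$ and the coherences for $\otr$. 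This is formally identical to the treatment of ordinary motivic cohomology in \cite{DRO03_c}, obtained by replacing $\cork$ with the category of finite correspondences.

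The step I expect to be the main obstacle is the compatibility of this lax monoidal structure with the $\m_k$-enrichment of motivic functors, i.e.\ checking that the multiplication above is genuinely a morphism in $\MFk$ and not merely a natural transformation of underlying functors: one must verify that these maps are enriched-natural and interact correctly with the continuity structure maps $\m(A,B)\wedge\MFMWZ(A)\to\MFMWZ(B)$ recorded just before the statement of the lemma. Concretely this amounts to unwinding the smash product of motivic functors (a coend over $(\fMk,\wedge)$) through the adjunction $(\tZtr,u)$ and checking that the resulting coherence diagrams commute; since every map involved is assembled from the symmetric monoidal structure on $\tMk$ and the (enriched) unit and counit of the adjunction, the verification is routine and reduces verbatim to \cite{DRO03_c}*{\S3}.
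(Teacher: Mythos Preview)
Your proposal is correct and follows essentially the same approach as the paper: establish that $\tZtr$ is strong symmetric monoidal and $u$ is lax symmetric monoidal (via doctrinal adjunction), then conclude that the composite $u\circ\tZtr$ restricted to $\fMk$ is lax symmetric monoidal, which is precisely the data of a commutative monoid in $\MFk$. The paper's proof is terser and leaves the enrichment compatibility implicit, whereas you spell out more of the coherence checks and correctly identify that the $\m_k$-enriched naturality reduces to the same verification carried out in \cite{DRO03_c}*{\S3} for ordinary motivic cohomology.
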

\begin{proof}
We note that $\tMk$ is a closed symmetric monoidal category using \chdmt, \S\ref{num:sht_monoidal}.
Clearly the graph functor $\smk\to\cork$ is strict symmetric monoidal. 
Forgetting the additive structure furnished by Milnor-Witt correspondences is a lax symmetric monoidal functor.
By adjointness it follows that $u$ is lax symmetric monoidal, 
and $\tZtr$ is strict symmetric monoidal.
This yields the desired multiplicative structure on $\MFMWZ$ via the natural maps
\[
u\tZtr(A)\wedge u\tZtr(B)
\to
u(\tZtr(A)\otimes\tZtr(B))
\to
u(\tZtr(A\wedge B))
\]
defined for all $A,B\in\fMk$.
\end{proof}
\begin{rem}
Lemma \ref{lem:MZcommutativemonoid} and \cite{DRO03_c}*{Theorem~4.2} show that the category of modules over $\MFMWZ$ is a cofibrantly generated 
monoidal model category satisfying the monoid axiom.
\end{rem}

If $\mathcal{X}$ is a motivic functor the $\m$-enrichment yields an induced map 
\[
A\wedge \mathcal{X}(B)\to\mathcal{X}(A\wedge B)
\] 
for all finitely presented $A,B\in\fMk$.
In particular, 
for the Thom space $T:=\Aone/\Aone-\{0\}$ of the trivial line bundle on $\Aone$,
there are maps of motivic spaces $T\wedge \mathcal{X}(T^{n})\to\mathcal{X}(T^{n+1})$ for $n\geq 0$.
By \cite{DRO03_c}*{\S 3.7} this yields a lax symmetric monoidal evaluation functor from $\MFk$ to motivic symmetric spectra $\mathrm{Sp}^{\Sigma}(\Mk,T)$ introduced in \cite{Jar00_c}.
Due to the Quillen equivalence between $\MFk$ and $\mathrm{Sp}^{\Sigma}(\Mk,T)$ \cite{DRO03_c}*{Theorem~3.32} we also write $\MFMWZ$ for the corresponding 
motivic symmetric spectrum of Milnor-Witt motivic cohomology.
Equivalently, 
following the approach in \cite{DRO03_c}*{\S 4.2},
we may view $\MFMWZ$ as a motivic symmetric spectrum with respect to the suspension coordinate $(\PP^1,\infty)$.
Recall that a motivic symmetric spectrum $\mathcal{E}$ is fibrant if and only if it is an $\Omega_{T}$-spectrum, 
i.e., 
$\mathcal{E}_n$ is motivic fibrant and $\mathcal{E}_n\to \Omega_{T}\mathcal{E}_{n+1}$ is a motivic weak equivalence for all $n\geq 0$.
Voevodsky's cancellation theorem for finite correspondences implies the motivic cohomology spectrum is fibrant \cite{Voe98_c}*{Theorem~6.2}.
In the context of Bredon motivic cohomology and the cancellation theorem for equivariant finite correspondences \cite{Hel15_c}*{Theorem~9.7}, 
this is shown in \cite{Hel16_c}*{Theorem~3.4, Appendix~A.4}.
By the same type of arguments we obtain:.
\begin{thm}
\label{MWfibrant}
If $k$ is an infinite perfect field then the Milnor-Witt motivic cohomology spectrum $\MFMWZ$ is an $\Omega_{T}$-spectrum in $\mathrm{Sp}^{\Sigma}(\m_{k},T)$.
\end{thm}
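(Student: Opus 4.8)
The plan is to follow Voevodsky's proof that the motivic Eilenberg--MacLane spectrum is an $\Omega$-spectrum \cite{Voe98_c}*{Theorem~6.2}, in the motivic-functor reformulation of \cite{DRO03_c}, with the cancellation theorem for finite correspondences replaced by its Milnor--Witt analogue (Theorem~\ref{thm:main}, equivalently \chdmt, Theorem~\ref{thm:cancellation}). A motivic symmetric $T$-spectrum $\mathcal E$ is an $\Omega_T$-spectrum precisely when each $\mathcal E_n$ is motivic fibrant and each adjoint structure map $\mathcal E_n\to\Omega_T\mathcal E_{n+1}$ is a motivic weak equivalence, and by \cite{DRO03_c}*{Theorem~3.32} it suffices to check this for the symmetric $T$-spectrum obtained by evaluating $\MFMWZ$. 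Its $n$-th level is $u\tZtr(T^{\wedge n})$, which is neither a Nisnevich sheaf (\chfinitecw, Example~\ref{exm:notnis}) nor $\Aone$-local, so I would first replace it, up to motivic weak equivalence, by a manageable fibrant model.

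The first step is therefore to set up the dictionary between the abstract description $A\mapsto u\tZtr(A)$ and the explicit objects of the preceding chapters. Using the motivic weak equivalence $T^{\wedge n}\simeq S^n\wedge\Gmpt^{\wedge n}$, the presheaf $\tZtr(T^{\wedge n})$ is, up to the evident simplicial shift, $\MWprep(\Gmpt^{\wedge n})$; passing to its $\Aone$- and Nisnevich-fibrant replacement, $\Aone$-localization is computed by the Suslin complex (\chdmt, Corollary~\ref{cor:LA1}), whose associated MW-sheaf $\Cstar{\tZcbx n}$ is strictly $\Aone$-invariant with coinciding Zariski and Nisnevich hypercohomology (Theorem~\ref{thm:strictly} together with \chdmt, Corollary~\ref{cor:cohomcomplex}). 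Hence a fibrant model of $\mathcal E_n$ is the Eilenberg--MacLane motivic space $K(\Cstar{\tZcbx n},n)$ built from $\Cstar{\tZcbx n}$ (in the $\PP^1$-indexed model of \cite{DRO03_c}*{\S 4.2} this is the space $K(\tZpx n,2n)$ attached to the MW-motivic complex $\tZpx n$). This simultaneously establishes levelwise motivic fibrancy: Nisnevich descent holds for the Eilenberg--MacLane space of a Nisnevich-fibrant complex of sheaves, and $\Aone$-locality is \chdmt, Corollary~\ref{cor:A1-local_complexes}. It remains to check that the levels assemble into a genuine symmetric $T$-spectrum with compatible $\mathfrak S_n$-actions; this is inherited verbatim from the symmetric monoidal structure on $\tMk$ (\chdmt, \S\ref{num:sht_monoidal}), exactly as in \cite{DRO03_c}*{\S 4}.

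The heart of the argument is that the structure maps are motivic weak equivalences, and this is exactly where cancellation is used. Writing $\Omega_T=\Omega_{\Gmpt}\circ\Omega_{S^1}$ and computing $\Omega_{\Gmpt}$ on an Eilenberg--MacLane space by the internal-Hom (contraction) functor $\derR\uHom(\tZcbx 1,-)$, one identifies $\Omega_T\mathcal E_{n+1}$ with the Eilenberg--MacLane space of $\Cstar{\uHom(\tZcbx 1,\tZcbx{n+1})}$ (in the appropriate degree), and the adjoint structure map $\mathcal E_n\to\Omega_T\mathcal E_{n+1}$ with the Eilenberg--MacLane realization of the canonical map
\[
\Cstar{\tZcbx n}\longrightarrow\Cstar{\uHom\big(\tZcbx 1,\tZcbx n\otimes\tZcbx 1\big)}
\]
coming from the unit of the $(-\otimes\tZcbx 1,\uHom(\tZcbx 1,-))$-adjunction and the identity $\tZcbx{n+1}=\tZcbx n\otimes\tZcbx 1$. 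Since $\tZcbx n$ is a direct summand of $\MWrepZ(\Gm^{\times n})$ and all functors involved are additive, Corollary~\ref{cor:cancel} — combined with Corollaries~\ref{cor:local} and \ref{cor:local2} to pass between the presheaf and sheaf versions and \chdmt, Corollary~\ref{cor:cohomcomplex} for Zariski versus Nisnevich — shows that this map is a quasi-isomorphism of Suslin complexes on sections over every smooth scheme, hence a Nisnevich-local weak equivalence. Conceptually this is just Theorem~\ref{thm:main}: tensoring with $\tZcbx 1$ is fully faithful on $\DMtekZ$, so the adjunction unit is an isomorphism on the effective objects at hand. Combining the two previous paragraphs shows $\MFMWZ$ is an $\Omega_T$-spectrum.

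The step I expect to be the main obstacle is the bookkeeping of the second paragraph: rigorously matching the motivic functor $A\mapsto u\tZtr(A)$ and its stable fibrant replacement in $\mathrm{Sp}^{\Sigma}(\m_k,T)$ with the concrete objects $\Cstar{\tZcbx n}$, the contraction $\uHom(\tZcbx 1,-)$, and the suspension/shift identifications relating $T$, $S^1\wedge\Gmpt$ and the Tate twist, and checking that these identifications are natural enough to respect the $\mathfrak S_n$-actions and the structure maps at once. This is, however, the content of \cite{DRO03_c}*{\S 4} transported to the Milnor--Witt framework; once the dictionary is in place the mathematical input is entirely supplied by Theorem~\ref{thm:strictly} and Theorem~\ref{thm:main}, both already available, so no genuinely new difficulty arises.
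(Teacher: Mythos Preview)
Your proposal is correct and follows exactly the approach the paper intends: the paper's own proof consists solely of the phrase ``By the same type of arguments we obtain'' (referring to Voevodsky's \cite{Voe98_c}*{Theorem~6.2} and the equivariant version in \cite{Hel16_c}*{Theorem~3.4, Appendix~A.4}), so you have simply unpacked what those arguments are in the Milnor--Witt setting, with Theorem~\ref{thm:main} supplying the cancellation input. Your identification of the bookkeeping from \cite{DRO03_c}*{\S 4} as the main technical content is accurate, and the paper implicitly delegates this to the cited references rather than spelling it out.
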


\begin{rem}
\label{rem:MWrepresentability}
Along the lines of \cite{Hel16_c}*{Theorem~3.4} one can show that $\MFMWZ$ represents Milnor-Witt motivic cohomology groups as defined in $\DMtekZ$
\chdmt, Definition~\ref{def:generalizedMW}.
\end{rem}

Due to work of Morel \cite{Morel04-2_c} the integral graded homotopy module of the unramified Milnor-Witt $K$-theory sheaf corresponds to a motivic spectrum 
$\MheartZ$ 
\index[notation]{mhZ@$\MheartZ$}%
\index{spectrum!motivic unramified Milnor-Witt}%
in the heart $\SH^{\heartsuit}(k)$ of the homotopy $t$-structure on the stable motivic homotopy category $\SH(k)$.
Here the equivalence of categories between homotopy modules, 
i.e.\ sequences $\{\mathcal{F}_{n}\}_{n\in\ZZ}$ of strictly $\Aone$-homotopy invariant sheaves of abelian groups together with contraction isomorphisms $\mathcal{F}_{n}\cong (\mathcal{F}_{n+1})_{(-1)}$ 
for all $n\geq 0$,
and the heart of the stable motivic homotopy category is induced by the stable $\Aone$-homotopy sheaf $\piaone_{0,\ast}$ with inverse $\Mheart$.
Recall that for every $\mathcal{E}\in\SH(k)$ the Nisnevich sheaf $\piaone_{s,t}(\mathcal{E})$ on $\smk$ is obtained from the presheaf
\[
X
\mapsto
\SH(k)(\Sigma^{s,t}X_{+},\mathcal{E}), 
\text{ where } s,t\in\ZZ.
\]
By using basic properties of effective homotopy modules, 
Bachmann \cite{B17_c}*{Lemma~12} shows there is an isomorphism of Nisnevich sheaves
\[
\piaone_{\ast,\ast}\MheartZ
\cong
\sKMW_{\ast}.
\]

\begin{lem}
There is a canonically induced isomorphism of Nisnevich sheaves
\[
\sKMW_{\ast}
\xrightarrow{\cong}
\piaone_{\ast,\ast}\MFMWZ.
\]
\end{lem}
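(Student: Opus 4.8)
The plan is to deduce this from the corresponding statement for $\MheartZ$ due to Bachmann, namely $\piaone_{\ast,\ast}\MheartZ\cong\sKMW_{\ast}$, by comparing $\MFMWZ$ with $\MheartZ$ via the Milnor-Witt motivic cohomology comparison results established earlier. First I would recall that $\MFMWZ$, viewed as a motivic symmetric spectrum (or equivalently an object of $\SH(k)$ via the evaluation functor of \cite{DRO03_c}), represents Milnor-Witt motivic cohomology groups $\HMW^{p,q}(X,\ZZ)$ as defined in \chdmt, Definition~\ref{def:generalizedMW}; this is Remark~\ref{rem:MWrepresentability}. Consequently, by definition of the stable homotopy sheaves, $\piaone_{s,t}\MFMWZ$ is the Nisnevich sheafification of the presheaf $X\mapsto \HMW^{-s,-t}(X,\ZZ)$, i.e.\ $\piaone_{s,t}\MFMWZ\cong\shHMW^{-s,-t}$ in the notation of \chdmt just before Theorem~\ref{thm:KMWmotivic}.

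Next I would invoke Theorem~\ref{MWfibrant}: since $k$ is an infinite perfect field (and the cancellation theorem \ref{thm:cancellation} requires characteristic not $2$, which I would assume throughout as in the rest of this chapter), $\MFMWZ$ is an $\Omega_T$-spectrum. This has two consequences I want. On the one hand, being an $\Omega_T$-spectrum pins down $\piaone_{\ast,\ast}\MFMWZ$ as a genuine homotopy module: the cancellation isomorphism identifies the contraction $(\shHMW^{p,q})_{-1}$ with $\shHMW^{p-1,q-1}$ (cf.\ the proof of Theorem~\ref{thm:hyper}, which uses exactly $\shHMW^{q-p,n-p}\simeq(\shHMW^{q,n})_{-p}$). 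On the other hand, restricting to the diagonal $s=-t$ and using Theorem~\ref{thm:KMWmotivic}, the graded sheaf $\bigoplus_n\shHMW^{n,n}$ is canonically isomorphic to $\sKMW_{\ast}$ via the map $s$ of Theorem~\ref{thm:gradedringhomo}; this is precisely $\piaone_{\ast,\ast}\MFMWZ$ in the internal bidegrees $(s,t)=(-n,-n)$.

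The remaining point is to see that the bidegrees off the diagonal contribute nothing, i.e.\ that $\piaone_{s,t}\MFMWZ=0$ unless $s+t$ is in the relevant range — equivalently that $\shHMW^{p,q}$ for $p\neq q$ does not interfere with the homotopy-module structure in weight zero. Here I would argue exactly as in Theorem~\ref{thm:hyper}: the complex $\tZpx{n}$ is concentrated in cohomological degrees $\leq n$, so $\shHMW^{q,n}=0$ for $q>n$, and combining this with the cancellation-induced identification of contractions one finds that only the diagonal pieces survive when one assembles the full graded homotopy module. The upshot is a chain of canonical isomorphisms $\sKMW_{\ast}\xrightarrow{\cong}\bigoplus_n\shHMW^{n,n}\xrightarrow{\cong}\piaone_{\ast,\ast}\MFMWZ$, functorial and compatible with the graded ring structures, which is the assertion. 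The main obstacle will be bookkeeping: matching the bidegree conventions for $\piaone_{s,t}$ (homological indexing of $\SH(k)$) against the cohomological bidegree $(p,q)$ of $\HMW^{p,q}$ and against Morel's weight conventions for homotopy modules, and checking that the cancellation isomorphism of Theorem~\ref{thm:main} is exactly the structure map $\mathcal{E}_n\to\Omega_T\mathcal{E}_{n+1}$ of the $\Omega_T$-spectrum $\MFMWZ$ rather than merely an abstract isomorphism — this compatibility is what makes the identification one of homotopy modules and not just of graded sheaves, and it is implicitly what Theorem~\ref{MWfibrant} and Remark~\ref{rem:MWrepresentability} together provide.
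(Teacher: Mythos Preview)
Your core argument is exactly the paper's: representability (Remark~\ref{rem:MWrepresentability}) identifies $\piaone_{n,n}\MFMWZ$ with the sheaf $\shHMW^{-n,-n}$, and then \chdmt, Theorem~\ref{thm:KMWmotivic} gives $\sKMW_{\ast}\xrightarrow{\cong}\bigoplus_n\shHMW^{n,n}$. That is the whole proof.

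Everything else you add is either unnecessary or confused. First, you announce a plan to deduce the result from Bachmann's computation of $\piaone_{\ast,\ast}\MheartZ$, but you never actually use it; the paper mentions that result only as context, not as input. Second, Theorem~\ref{MWfibrant} (the $\Omega_T$-spectrum property) plays no role here: the statement is purely about the diagonal homotopy sheaves, and Theorem~\ref{thm:KMWmotivic} already handles those directly. Third, and most importantly, your ``remaining point'' about off-diagonal bidegrees reflects a misreading of the notation. In this lemma $\piaone_{\ast,\ast}$ denotes the diagonal graded sheaf $n\mapsto\piaone_{n,n}$, matching the single grading on $\sKMW_{\ast}$; there is nothing to check off the diagonal. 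Your suggestion that $\piaone_{s,t}\MFMWZ=0$ for $s\neq t$ (or outside some range) is in fact false---for instance $\shHMW^{p,q}$ with $p<q$ recovers ordinary motivic cohomology sheaves, which are generally nonzero---but fortunately the lemma makes no such claim. Strip the proof down to the two cited inputs and it is correct and complete.
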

\begin{proof}
This follows from \chdmt, Theorem~\ref{thm:KMWmotivic} together with the representability of Milnor-Witt motivic cohomology stated in Remark \ref{rem:MWrepresentability}.
\end{proof}

%It is very plausible that $\MheartZ$ and $\MFMWZ$ are isomorphic, 
%as noted in \cite{B17_c}*{\S 1}. 
%Having such an isomorphism would be of interest in the context of the very effective slice filtration \cite{Sp12_c}*{\S 5} due to the work on hermitian $K$-theory $\sKQ$ in \cite{B17_c}.
%Combined with \cite{B17_c}*{Theorem~10} and the fact that the unit map $\un\to\sKQ$ becomes an isomorphism after applying the zeroth effective slice functor $\widetilde{s}_{0}$, 
%we would be able to conclude there is a distinguished triangle 
%\[
%\Sigma^{1,0}\MFMZtwo
%\to
%\widetilde{s}_{0}\un
%\to
%\MFMWZ
%\]
%in $\SH(k)$, 
%expressing the zeroth effective slice of the motivic sphere spectrum $\mathbf{1}$ as an extension of mod $2$ motivic cohomology $\MFMZtwo$ and integral Milnor-Witt motivic cohomology 
%$\MFMWZ$.
%This would provide a powerful computational tool since every effective slice is a module over $\widetilde{s}_{0}\un$ according to \cite{GRSO12_c}*{\S 3.3, \S6 (iv), (v)}.

\begin{bibsection}
\begin{biblist}

\bib{Asok16_c}{article}{
      author={Asok, A.},
      author={Fasel, J.},
       title={Comparing {E}uler classes},
        date={2016},
     journal={Quart. J. Math.},
      volume={67},
       pages={603\ndash 635},
}

\bib{B17_c}{article}{
      author={{Bachmann}, T.},
       title={{The generalized slices of hermitian K-Theory}},
        date={2017},
     journal={J. Topology},
      volume={10},
      number={4},
      pages={1124\ndash 1144},
}

%\bib{CF17_c}{unpublished}{
%      author={Calm\`es, B.},
%      author={Fasel, J.},
%       title={A primer for {M}ilnor-{W}itt {K}-theory sheaves and their
%  cohomology},
%        note={to appear},
%}

\bib{DRO03_c}{article}{
      author={Dundas, B.~I.},
      author={R\"ondigs, O.},
      author={{\O}stv{\ae}r, P.~A.},
       title={Motivic functors},
        date={2003},
        ISSN={1431-0635},
     journal={Doc. Math.},
      volume={8},
       pages={489\ndash 525},
}

\bib{Fasel07_c}{article}{
      author={Fasel, J.},
       title={The {C}how-{W}itt ring},
        date={2007},
        ISSN={1431-0635},
     journal={Doc. Math.},
      volume={12},
       pages={275\ndash 312 (electronic)},
}

\bib{Fasel08_c}{article}{
      author={Fasel, J.},
       title={Groupes de {C}how-{W}itt},
        date={2008},
        ISSN={0249-633X},
     journal={M\'em. Soc. Math. Fr. (N.S.)},
      volume={113},
       pages={viii+197},
}

%\bib{Fasel19_c}{unpublished}{
%  author={Fasel, J.},
%  title={Lectures on Chow-Witt groups},
%  date={2019},
%  note={to appear in the proceedings "Motivic homotopy theory and
%refined enumerative geometry", to be published in the AMS
%series "Contemporary Mathematics”},
%}

\bib{Fasel19_c}{article}{
 author={Fasel, J.},
 title={Lecture notes on {C}how-{W}itt groups},
 date={2020},
 journal={Contemp. Math.}
 volume={745}
 pages={83\ndash 122},
}

\bib{Ful98_c}{book}{
      author={Fulton, W.},
       title={Intersection theory},
     edition={Second},
      series={Ergebnisse der Mathematik und ihrer Grenzgebiete. 3. Folge. A
  Series of Modern Surveys in Mathematics [Results in Mathematics and Related
  Areas. 3rd Series. A Series of Modern Surveys in Mathematics]},
   publisher={Springer-Verlag},
     address={Berlin},
        date={1998},
      volume={2},
        ISBN={3-540-62046-X; 0-387-98549-2},
}

\bib{GRSO12_c}{article}{
      author={Guti\'errez, J.~J.},
      author={R\"ondigs, O.},
      author={Spitzweck, M.},
      author={{\O}stv{\ae}r, P.~A.},
       title={Motivic slices and coloured operads},
        date={2012},
        ISSN={1753-8416},
     journal={J. Topol.},
      volume={5},
      number={3},
       pages={727\ndash 755},
         url={http://dx.doi.org/10.1112/jtopol/jts015},
}

\bib{Hel15_c}{article}{
      author={Heller, J.},
      author={Voineagu, M.},
      author={{\O}stv{\ae}r, P.~A.},
       title={Equivariant cycles and cancellation for motivic cohomology},
        date={2015},
        ISSN={1431-0635},
     journal={Doc. Math.},
      volume={20},
       pages={269\ndash 332},
}

\bib{Hel16_c}{article}{
      author={{Heller}, J.},
      author={{Voineagu}, M.},
      author={{{\O}stv{\ae}r}, P.~A.},
       title={{Topological comparison theorems for Bredon motivic cohomology}},
        date={2019},
     journal={Trans. Am. Math. Soc.},
     volume={371},
     number={4},
      pages={2875\ndash 2921},
}

\bib{Jar00_c}{article}{
      author={Jardine, J.~F.},
       title={Motivic symmetric spectra},
        date={2000},
        ISSN={1431-0635},
     journal={Doc. Math.},
      volume={5},
       pages={445\ndash 553},
}

\bib{Mazza06_c}{book}{
      author={Mazza, C.},
      author={Voevodsky, V.},
      author={Weibel, C.},
       title={Lecture notes on motivic cohomology},
      series={Clay Mathematics Monographs},
   publisher={American Mathematical Society},
     address={Providence, RI},
        date={2006},
      volume={2},
        ISBN={978-0-8218-3847-1; 0-8218-3847-4},
}

\bib{Morel04-2_c}{incollection}{
      author={Morel, F.},
       title={On the motivic stable {$\pi_0$} of the sphere spectrum},
        date={2004},
   booktitle={Axiomatic, {E}nriched and {M}otivic {H}omotopy {T}heory},
      editor={Greenless, J.P.C},
   publisher={Kluwer {A}cademic {P}ublishers},
       pages={219\ndash 260},
}

\bib{Morel12_c}{book}{
      author={Morel, F.},
       title={$\mathbb {A}^1$-{A}lgebraic {T}opology over a {F}ield},
      series={Lecture Notes in Math.},
   publisher={Springer},
     address={New York},
        date={2012},
      volume={2052},
}

\bib{Sp12_c}{article}{
      author={Spitzweck, M.},
      author={{\O}stv{\ae}r, P.~A.},
       title={Motivic twisted {$K$}-theory},
        date={2012},
        ISSN={1472-2747},
     journal={Algebr. Geom. Topol.},
      volume={12},
      number={1},
       pages={565\ndash 599},
         url={http://dx.doi.org/10.2140/agt.2012.12.565},
}

\bib{Voe98_c}{inproceedings}{
      author={Voevodsky, V.},
       title={{$\mathbf{A}^1$}-homotopy theory},
        date={1998},
   booktitle={Proceedings of the {I}nternational {C}ongress of
  {M}athematicians, {V}ol. {I} ({B}erlin, 1998)},
      volume={Extra Vol. I},
       pages={579\ndash 604 (electronic)},
}

\bib{Voe10_c}{article}{
      author={Voevodsky, V.},
       title={Cancellation theorem},
        date={2010},
     journal={{\normalfont Doc. Math. Extra Volume: Andrei A. Suslin's Sixtieth Birthday}},
       pages={671\ndash 685},
}
      
\bib{FSV00_c}{book}{
      author={Voevodsky, V.},
      author={Suslin, A.},
      author={Friedlander, E.~M.},
       title={Cycles, transfers, and motivic homology theories},
      series={Annals of Mathematics Studies},
   publisher={Princeton University Press, Princeton, NJ},
        date={2000},
      volume={143},
        ISBN={0-691-04814-2; 0-691-04815-0},
}

\end{biblist}
\end{bibsection}

\chapter[Comparison theorem]{A comparison theorem for Milnor-Witt motivic cohomology\author{Baptiste Calmès and Jean Fasel}}
\label{ch:comparison}

\section*{Abstract}
Let $k$ be an infinite perfect field of characteristic not $2$. We prove that 
\[
\HMW^{n,n}(\Spec(L),\ZZ)\simeq \KMW_n(L)
\]
for any finitely generated field extension $L/k$ and any $n\in\ZZ$.

\section*{Introduction}

Our main purpose in this chapter is to compute the MW-motivic cohomology group of a field $L$ in bidegree $(n,n)$, namely the group $\HMW^{n,n}(L,\ZZ)$. In \chdmt, Theorem~\ref{thm:gradedringhomo}, we defined a graded ring homomorphism
\[
\Phi:\sKMW_*\to \bigoplus_{n\in\ZZ} \shHMW^{n,n}
\]
where the left-hand side is the unramified Milnor-Witt $K$-theory sheaf constructed in \cite{Morel12_d}*{\S 3} and the right-hand side is the Nisnevich sheaf associated to the presheaf $U\mapsto \HMW^{n,n}(U,\ZZ)$. The homomorphism $\Phi$ is obtained via a morphism of sheaves $\Gm^{\wedge n}\to \shHMW^{n,n}$ and the right-hand side has the property to be strictly $\Aone$-invariant by \chdmt, Proposition~\ref{prop:exist_associated-W-t-sheaf} and Theorem~\ref{thm:Wsh&A1}. It follows that $\Phi$ is then the universal morphism described in \cite{Morel12_d}*{Theorem~3.37}. In this article, we prove that $\Phi$ is an isomorphism. This can be checked on finitely generated field extensions of the base field $k$ \cite{Morel12_d}*{Theorem~1.12} and thus our main theorem takes the following form.

\begin{thm*}
Let $L/k$ be a finitely generated field extension with $k$ perfect of characteristic different from $2$. Then, the homomorphism of graded rings
\[
\Phi_L:\bigoplus_{n\in\ZZ} \KMW_n(L)\to \bigoplus_{n\in\ZZ} \HMW^{n,n}(L,\ZZ).
\]
is an isomorphism. Consequently, the homomorphism of shaves of graded rings
\[
\Phi:\sKMW_*\to \bigoplus_{n\in\ZZ} \shHMW^{n,n}
\]
is an isomorphism.
\end{thm*}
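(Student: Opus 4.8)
The plan is to reduce the statement to a computation over finitely generated field extensions, then build the map $\Phi_L$ and prove it is an isomorphism degree by degree. By \cite{Morel12_d}*{Theorem~1.12}, a morphism of strictly $\Aone$-invariant sheaves (more precisely, of unramified sheaves in Morel's sense) is an isomorphism as soon as it induces an isomorphism on all finitely generated field extensions $L/k$; since the source $\sKMW_*$ is such a sheaf and the target $\bigoplus_n \shHMW^{n,n}$ is strictly $\Aone$-invariant by \chdmt, Proposition~\ref{prop:exist_associated-W-t-sheaf} and Theorem~\ref{thm:Wsh&A1}, it suffices to treat the field case. The map $\Phi_L$ itself comes from \chdmt, Theorem~\ref{thm:gradedringhomo}: one sends a unit $a\in L^\times$ to the class $s(a)\in \HMW^{1,1}(L,\ZZ)$ coming from the morphism $X\to \Gmk$, and the Hopf element $\eta$ to the class $s(\eta)\in \HMW^{-1,-1}(L,\ZZ)$; the Milnor-Witt relations (Steinberg, $\eta$-linearity, $\eta$-commutativity, $\eta(2+\eta[-1])=0$) are verified in \emph{loc.\ cit.}, crucially using the explicit $\Aone$-homotopies of \chcancellation, Lemmas~\ref{lem:example1} and \ref{lem:example2}.

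Once $\Phi_L$ is available, I would first dispose of the easy degrees using the computations already in the book. For $n\leq 0$ we have $\HMW^{n,n}(L,\ZZ)=\HMW^{0,0}(L,\ZZ)$-module data: by \chfinitecw, Example~\ref{exm:negativecohomology} and \chdmt, Proposition~\ref{prop:explicit}, $\HMW^{0,0}(L,\ZZ)=\sKMW_0(L)=\GW(L)$ and $\HMW^{n,n}(L,\ZZ)=\W(L)=\KMW_n(L)$ for $n<0$, and one checks $\Phi_L$ is the identity in these degrees. For $n=0$ it is an isomorphism by construction. The content is therefore in $n\geq 1$, i.e.\ in showing $\Phi_L\colon \KMW_n(L)\to \HMW^{n,n}(L,\ZZ)$ is bijective for positive $n$. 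The strategy I would use mirrors the classical Nesterenko-Suslin-Totaro argument comparing $\KM_n(L)$ with $\HM^{n,n}(L,\ZZ)$: build an inverse map. Concretely, one has by \chdmt, Corollary~\ref{cor:cohomcomplex} and Corollary~\ref{cor:LA1} that $\HMW^{n,n}(L,\ZZ)$ is the $n$-th cohomology of the Suslin complex $\Cstar{\tZcbx n}$ evaluated at $\Spec L$, i.e.\ it is computed by correspondences from $\Delta^\bullet_L$ to $\Gmk^{\wedge n}$. One then produces, from such a cycle, an element of $\KMW_n$ of the function field via residues, exactly as in the Milnor $K$-theory case, and checks this is well-defined on the quotient by the face maps and inverse to $\Phi_L$.

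The key technical ingredient, and the step I expect to be the main obstacle, is the cancellation theorem \chcancellation, Theorem~\ref{thm:main}: it is what makes the diagonal part $\bigoplus_n \HMW^{n,n}$ behave like a graded ring generated by $\HMW^{1,1}$ and $\HMW^{-1,-1}$, and in particular it underlies the identification of the contractions $(\shHMW^{q,n})_{-p}\simeq \shHMW^{q-p,n-p}$ used in \chdmt, Theorem~\ref{thm:hyper}. I would use precisely that: $\HMW^{n,n}(L,\ZZ)\simeq \H^0(L,\sKMW_n)$ via the hypercohomology spectral sequence (the case $p=n$ of \chdmt, Theorem~\ref{thm:hyper} applied over $\Spec L$, where all higher cohomology vanishes), and this isomorphism is, by inspection, a one-sided inverse to $\Phi_L$ after identifying $\H^0(\Spec L,\sKMW_n)=\KMW_n(L)$ by the unramifiedness of $\sKMW_n$. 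The delicate point is checking the \emph{compatibility}: that the edge map of the spectral sequence, when composed with $\Phi_L$, recovers the identity of $\KMW_n(L)$ — this is where the explicit symbol-level computations of \chcancellation, \S\ref{sec:examples} (relating $s(a)s(\eta)$ to $\langle a\rangle -1$ and $m-p_1-p_2$ to $s(\eta)$) are indispensable, since they pin down how products of the generators $s(a)$ are detected by residues. Granting that compatibility, surjectivity of $\Phi_L$ is immediate (the generators $[a_1,\dots,a_n]$, $\eta^s[a_1,\dots,a_{n+s}]$ hit a generating set) and injectivity follows because the composite $\KMW_n(L)\xrightarrow{\Phi_L}\HMW^{n,n}(L,\ZZ)\to \KMW_n(L)$ is the identity. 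Finally, passing back from fields to sheaves via \cite{Morel12_d}*{Theorem~1.12} yields the second assertion, completing the proof.
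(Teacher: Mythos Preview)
Your reduction to fields and your treatment of degrees $n\le 0$ are fine and match the paper. The problem is in your proposed argument for $n\ge 1$: invoking \chdmt, Theorem~\ref{thm:hyper} to obtain $\HMW^{n,n}(L,\ZZ)\simeq \H^0(L,\sKMW_n)$ is circular. The final line of the proof of Theorem~\ref{thm:hyper} reads ``We conclude using Theorem~\ref{thm:KMWmotivic}'', and Theorem~\ref{thm:KMWmotivic} in turn is proved by appealing to \chcomparison, Theorem~\ref{thm:comparison}, which is exactly the statement you are trying to establish. Over a field the hypercohomology spectral sequence does degenerate, but it only gives you $\HMW^{n,n}(L,\ZZ)=\shHMW^{n,n}(L)$; the identification of the sheaf $\shHMW^{n,n}$ with $\sKMW_n$ is precisely the content of the comparison theorem, not an input to it. (Note also that the hypothesis $p\ge 2n-1$ of Theorem~\ref{thm:hyper} fails for $p=n$ once $n\ge 2$.)

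The paper avoids this circularity by a direct argument. First, it constructs an explicit left inverse $\theta_L$ to $\Phi_L$ (Section~\ref{sec:left}): to a correspondence supported on an admissible subset one multiplies by the tautological class $[x_1,\dots,x_q]$ and pushes forward to $\Spec(L)$; this is close to what you sketch in your second paragraph before switching strategies. This gives injectivity of $\Phi_L$. For surjectivity, the paper filters $\HMW^{n,n}(L,\ZZ)$ by the degree of the support of representing correspondences, and for $n=1$ proves by an explicit deformation of monic polynomials (Proposition~\ref{prop:inductionstep}) that every class is represented by a correspondence with support of degree $1$, hence lies in the image of $\Phi_L$. For $n\ge 2$ the paper does \emph{not} rerun this geometric argument; instead it proves that $\Phi$ is compatible with transfers for finite field extensions (Theorem~\ref{thm:respect}) and then uses the $n=1$ case plus transfers to deduce surjectivity in all positive degrees. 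Your proposal contains the seed of the left-inverse construction but is missing both the degree-$1$ surjectivity argument and the transfer compatibility that propagates it to higher weights.
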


The isomorphism in the theorem generalizes the result on (ordinary) motivic cohomology in the sense that the following diagram commutes
\[
\begin{tikzcd}
\bigoplus_{n\in\ZZ} \KMW_n(L)\ar[r,"\Phi_L"] \ar[d] & \bigoplus_{n\in\ZZ} \HMW^{n,n}(L,\ZZ) \ar[d] \\
\bigoplus_{n\in\NN} \KM_n(L) \ar[r] & \bigoplus_{n\in\NN} \HM^{n,n}(L,\ZZ)
\end{tikzcd}
\]
where the vertical homomorphisms are the ``forgetful'' homomorphisms and the bottom map is the isomorphism produced by Nesterenko-Suslin-Totaro (\cite{Nesterenko90} and \cite{Totaro92}). Unsurprisingly, our proof is very similar to theirs but there are some essential differences. For instance, the complex in weight one, denoted by $\tZpx 1$, admits an epimorphism to $\sKMW_1[-1]$ paralleling the epimorphism $\ZZ(1)\to \sKM_1[-1]$. However, we are not able to prove directly that the kernel of the epimorphism $\tZpx 1\to \sKMW_1[-1]$ is acyclic (this will be proved elsewhere). 
We are thus forced to compute by hand its cohomology at the right spot in Proposition \ref{prop:inductionstep}. This result being obtained, we then prove that $\Phi$ respects transfers for finitely generated field extensions. This is obtained in Theorem \ref{thm:respect} using arguments essentially identical to \cite{Mazza06_d}*{Lemma~5.11} or \cite{Neshitov18_d}*{Lemma~9.5}.

The paper is organized as follows. In Section \ref{sec:MWmotivic}, we review the basics of MW-motivic cohomology needed in the paper, adding useful results. For instance, we prove a projection formula in Theorem \ref{thm:projection} which is interesting on its own. In Section \ref{sec:main}, we proceed with the proof of our main theorem, starting with the construction of a left inverse of $\Phi$. We then pass to the proof that $\Phi$ is an isomorphism in degree $1$, which is maybe the most technical result of this work. As already mentioned above, we then conclude with the proof that $\Phi$ respects transfers, obtaining as a corollary our main result.

\subsection*{Conventions}
The schemes are separated of finite type over some infinite perfect field $k$ with $\charac k\neq 2$. If $X$ is a smooth connected scheme over $k$, we denote by $\Omega_{X/k}$ the sheaf of differentials of $X$ over $\Spec(k)$ and write $\ome{X}{k}:=\det\Omega_{X/k}$ for its canonical sheaf. In general we define $\ome{X}{k}$ on each connected component separately. We use the same notation if $X$ is the localization of a smooth scheme at any point. If $k$ is clear from the context, we omit it from the notation. If $f\colon X\to Y$ is a morphism of (localizations of) smooth schemes, we set $\omega_{f}=\ome{X}{k}\otimes f^*\ome{Y}{k}^\vee$. If $X$ is a scheme and $n\in\NN$, we denote by $X^{(n)}$ the set of codimension $n$ points in $X$.

\section{Milnor-Witt motivic cohomology}\label{sec:MWmotivic}

The general framework of this article is the category of finite MW-correspondences as defined in \chfinitecw, \S\ref{sec:fmwcorr}. We briefly recall the construction of this category for the reader's convenience. If $X$ and $Y$ are smooth connected schemes over $k$, we say that a closed subset $T\subset X\times Y$ is admissible if its irreducible components (endowed with their reduced structure) are finite and surjective over $X$. The set $\Adm (X,Y)$ of admissible subsets of $X\times Y$ can be ordered by inclusions, and we can consider it as a category. For any $T\in \Adm(X,Y)$, we can consider the Chow-Witt group
\[
\chst {d_Y}T{X\times Y}{\omega_Y}
\]
where $d_Y$ is the dimension of $Y$ and $\omega_Y=p_Y^*\omega_{Y/k}$ with $p_Y:X\times Y\to Y$ the projection. As in \chfinitecw, \S \ref{sec:chowwitt}, we consider $\omega_Y$ as a graded line bundle (in the sense of \cite{Del87_d}*{\S 4}) set in degree $d_Y:=\dimn Y$. If $T\subset T^\prime$ are two admissible subsets, we consider the extension of support homomorphism
\[
\chst {d_Y}T{X\times Y}{\omega_Y}\to \chst {d_Y}{T^\prime}{X\times Y}{\omega_Y}
\]
and set 
\[
\cork(X,Y)=\varinjlim_{T\in \Adm(X,Y)}\chst {d_Y}T{X\times Y}{\omega_Y}.
\]
The composition of finite MW-correspondences is defined using the product of cycles in Chow-Witt groups with supports (\chfinitecw, \S\ref{sec:compositionMWcorr}) and we obtain the category $\cork$ whose objects are smooth schemes and morphisms are $\cork(X,Y)$. The exterior product endows $\cork$ with the structure of a symmetric monoidal category.

Having this category at hand, we may define the category of MW-presheaves $\pshMW(k)$
\index[notation]{pshtk@$\pshMW(k)$}%
as the category of additive functors $\cork\to \Ab$. For any smooth scheme $X$, we can define the presheaf $\MWprep(X)$
\index[notation]{ctx@$\MWprep(X)$}%
by $Y\mapsto \cork(Y,X)$ for any $Y$ and thus obtain the Yoneda embedding functor $\MWprep:\cork\to \pshMW(k)$. The category $\pshMW(k)$ is a symmetric monoidal category, with tensor product $\otimes$ uniquely defined by the property that the Yoneda embedding is monoidal, i.e.\ we have $\MWprep(X)\otimes \MWprep(Y)=\MWprep(X\times Y)$. One can also define an internal Hom functor $\uHom$ which is characterized by the property that $\uHom (\MWprep(X),F)=F(X\times-)$ for any $F\in \pshMW(k)$. 

Recall next that we have a functor $\tilde\gamma:\smk\to \cork$ which is the identity on objects and associates to a morphism of schemes the finite MW-correspondence described in \chfinitecw, \S\ref{subsec:embedding} (which is basically the graph). This yields a functor 
\[
\tilde\gamma_*:\pshMW(k)\to \psh(k)
\] 
where the target is the category of presheaves of abelian groups on $\smk$. As usual, we say that a presheaf with MW-transfer $F$ is a sheaf in a topology $\tau$, and we write that $F$ is a $\tau$-sheaf with MW-transfers, if $\tilde\gamma_*(F)$ is a sheaf in this topology. Usually, we consider either the Zariski or the Nisnevich topology on $\smk$. Interestingly, the representable presheaves $\MWprep(X)$ are Zariski sheaves with MW-transfers (\chfinitecw, Proposition~\ref{prop:zarsheaf}) but not Nisnevich sheaves with transfers (\chfinitecw, Example~\ref{exm:notnis}). However, one can show that the sheaf associated to $F\in \pshMW(k)$ can be endowed with the unique structure of a sheaf with MW-transfers (\chdmt, Proposition~\ref{prop:exist_associated-W-t-sheaf}). Note that it is easy to check that if $F$ is a $\tau$-sheaf with MW-transfers, then $\uHom (\MWprep(X),F)$ is also a $\tau$-sheaf with MW-transfers.

\subsection{Motivic cohomology}

Let $\tZcbx 1$ be the Zariski sheaf with MW-transfers which is the cokernel of the morphism 
\[
\MWprep(k)\to \MWprep(\Gmk)
\]
induced by the unit in $\Gmk$. For any $q\in \ZZ$, we consider next the Zariski sheaf with MW-transfer $\tZcbx q$ defined by 
\[
\tZcbx q=\begin{cases} \tZcbx 1^{\otimes q} & \text{if $q\geq 0$.} \\
\uHom\big(\tZcbx 1^{\otimes q}, \MWprep(k)\big) & \text{if $q<0$.} \end{cases}
\]
\index[notation]{ztq@$\tZcbx q$}%

Let now $\Delta^{\bullet}$ be the cosimplicial object whose terms in degree $n$ are
\[
\Delta^n=\Spec(k[t_0,\ldots,t_n]/\big(\sum t_i-1)\big)
\]
and with the standard face and degeneracy maps. For any presheaf $F\in \pshMW(k)$, we obtain a simplicial presheaf $\uHom(\MWprep(\Delta^{\bullet}),F)$ whose associated complex of presheaves with MW-transfers is denoted by $\Cstar{F}$. 
\index[notation]{cf@$\Cstar F$}%
\index{Suslin(-Voevodsky) singular complex}%
If $F$ is further a $\tau$-sheaf with MW-transfers, then $\Cstar{F}$ is a complex of sheaves with MW-transfers. In particular, $\tZpx q:=\Cstar{\tZcbx q}[-q]$ 
\index[notation]{ztq@$\tZpx q$}%
is such a complex and we have the following definition.

\begin{dfn}
For any $p,q\in \ZZ$ and any smooth scheme $X$, we define the MW-motivic cohomology groups of $X$ as
\[
\HMW^{p,q}(X,\ZZ)=\mathbb{H}^p_{\zar}(X,\tZpx q).
\]
\index[notation]{hmwpqxz@$\HMW^{p,q}(X,\ZZ)$}%
\end{dfn}

\begin{rem}
In \chdmt, \S\ref{num:Tate} and Definition~\ref{def:generalizedMW}, the motivic cohomology groups are defined using the complexes associated to the simplicial Nisnevich sheaves with MW-transfers constructed from the Nisnevich sheaves with transfers associated to the presheaves $\tZpx q$. The two definitions coincide by \chcancellation, Corollary~\ref{cor:local}.
\end{rem}

The complexes $\Cstar{\tZpx q}$ are in fact complexes of Zariski sheaves of $\KMW_0(k)$-modules (\chfinitecw, \S\ref{sec:modulestructure}), and it follows that the MW-motivic cohomology groups are indeed $\KMW_0(k)$-modules. These modules are by construction contravariantly functorial in $X$. Moreover, for any $p,q\in \ZZ$, we have a homomorphism of $\KMW_0(k)$-modules
\[
\HMW^{p,q}(X,\ZZ)\to \HM^{p,q}(X,\ZZ)
\]
where the latter denotes the ordinary motivic cohomology group of $X$ for $q\geq 0$, and $\HM^{p,q}(X,\ZZ)=0$ for $q<0$, the $\KMW_0(k)$-module structure on the right-hand side being obtained via the rank homomorphism $\KMW_0(k)\to \ZZ$ (\chfinitecw, \S\ref{sec:motivic}).

Even though MW-motivic cohomology is defined a priori only for smooth schemes, it is possible to extend the definition to limits of smooth schemes, following the usual procedure (described for instance in \chfinitecw, \S\ref{sec:limits}). In particular, we can consider MW-motivic cohomology groups $\HMW^{p,q}(L,\ZZ)$ for any finitely generated field extension $L/k$. We will use this routinely in the sequel without further comments.

\subsection{The ring structure}\label{sec:ringstructure}

The definition of MW-motivic cohomology given in \chdmt, Definition~\ref{def:generalizedMW} immediately yields a (bigraded) ring structure on MW-motivic cohomology
\[
\HMW^{p,q}(X,\ZZ)\otimes \HMW^{p^\prime,q^\prime}(X,\ZZ)\to \HMW^{p+p^\prime,q+q^\prime}(X,\ZZ)
\]
\index{Milnor-Witt!motivic!cohomology cup-product}%
\index{cup-product|see{Milnor-Witt motivic cohomology cup-product}}%
fulfilling the following properties.
\begin{enumerate}
\item The product is (bi-)graded commutative in the sense that 
\[
\HMW^{p,q}(X,\ZZ)\otimes \HMW^{p^\prime,q^\prime}(X,\ZZ)\to \HMW^{p+p^\prime,q+q^\prime}(X,\ZZ)
\] 
is $(-1)^{pp^\prime}\langle (-1)^{qq^\prime}\rangle$-commutative. In particular, $\HMW^{0,0}(X,\ZZ)$ is central and the $\KMW_0(k)$-module structure is obtained via the ring homomorphism 
\[
\KMW_0(k)=\HMW^{0,0}(k,\ZZ)\to \HMW^{0,0}(X,\ZZ).
\]
\item The homomorphism $\HMW^{*,*}(X,\ZZ)\to \HM^{*,*}(X,\ZZ)$ is a graded ring homomorphism.
\end{enumerate}

\subsection{A projection formula}\label{sec:projection}

In this section, we prove a projection formula for finite surjective morphisms having trivial relative canonical bundles. Let then $f:X\to Y$ be a finite surjective morphism between smooth connected schemes, and let $\chi:\OO_X\to \omega_f=\omega_{X/k}\otimes f^*\omega_{Y/k}^\vee$ be a fixed isomorphism. Recall from \chfinitecw, Example~\ref{ex:push-forwards} that we then have a finite MW-correspondence $\alpha:=\alpha(f,\chi):Y\to X$ defined as the composite
\[
\sKMW_0(X)\simeq \sKMW_0(X,\omega_f)\stackrel {f_*}{\to} \chst {d_X}{\Gamma^t_f(X)}{Y\times X}{\omega_{Y\times X/k}\otimes \omega_{Y/k}^\vee}\simeq \chst {d_X}{\Gamma^t_f(X)}{Y\times X}{\omega_X}
\]
where the first isomorphism is induced by $\chi$, the second homomorphism is the push-forward along the transpose of the graph $\Gamma^t_f:X\to Y\times X$ and the third isomorphism is deduced from the isomorphisms of graded line bundles
\[
(d_Y+d_X,\omega_{Y\times X/k})\otimes (-d_Y,\omega_{Y/k}^\vee)\simeq (d_Y,\omega_Y)\otimes (d_X,\omega_X)\otimes (-d_Y,\omega_{Y/k}^\vee)
\]
and
\[
(d_X,\omega_X)\otimes (-d_Y,\omega_{Y/k}^\vee)\simeq (d_X,\omega_X)\otimes (d_Y,\omega_Y)\otimes  (-d_Y,\omega_{Y/k}^\vee)\simeq (d_X,\omega_X),
\]
the first isomorphism being given by the commutativity constraint of graded line bundles (\chfinitecw, \S\ref{sec:twisting}).

We observe that $\alpha$ induces a ``push-forward'' homomorphism $F(X)\to F(Y)$ for any $F\in \pshMW(k)$ through the composite  
\[
F(X)=\Hom_{\pshMW(k)}(\MWprep(X),F)\stackrel{(-)\circ \alpha}{\to} \Hom_{\pshMW(k)}(\MWprep(Y),F)=F(Y).
\]
In particular, we obtain homomorphisms
\[
f_*:\HMW^{p,q}(X,\ZZ)\to \HMW^{p,q}(Y,\ZZ)
\]
for any $p,q\in\ZZ$, which depend on the choice of $\chi$.

On the other hand, $f$ induces a finite MW-correspondence $X\to Y$ that we still denote by $f$ and therefore a pull-back homomorphism
\[
f^*:\HMW^{p,q}(Y,\ZZ)\to \HMW^{p,q}(X,\ZZ).
\]
We will need the following lemma to prove the projection formula.

\begin{lem}\label{lem:twoways}
Let $f:X\to Y$ be a finite surjective morphism between smooth connected schemes, and let $\chi:\OO_X\to \omega_f$ be an isomorphism. Let $\alpha=\alpha(f,\chi)\in \cork(Y,X)$ and let $\Delta_X$ (resp. $\Delta_Y$) be the diagonal embedding $X\to X\times X$ (resp. $Y\to Y\times Y$). Then, the following diagram of finite MW-correspondences commutes
\[
\begin{tikzcd}
Y \ar[rr,"\Delta_Y"] \ar[d,equal] & & Y\times Y \ar[r,"{(1\times \alpha)}"] & Y\times X \ar[d,equal] \\
Y \ar[r,"\alpha"'] & X \ar[r,"\Delta_X"'] & X\times X \ar[r,"f\times 1"'] & Y\times X,
\end{tikzcd}
\]
i.e.\ $(1\times \alpha)\Delta_Y=(f\times 1)\Delta_X\alpha$.
\end{lem}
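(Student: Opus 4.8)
The plan is to identify both sides of the identity with one explicit finite MW-correspondence: the pushforward of the $\chi$-twisted unit along the closed immersion $h\colon X\to Y\times Y\times X$ given by $h(x)=(f(x),f(x),x)$. Since $h$ is a section of the projection onto the last factor, it is a closed immersion of smooth schemes, and its image $Z:=h(X)$ is finite and surjective over the first copy of $Y$, hence an admissible subset of $Y\times(Y\times X)$ of codimension $2d$, where $d:=d_X=d_Y$. Write $\bar\chi\in\sKMW_0(X,\omega_f)$ for the image of $\langle 1\rangle$ under the isomorphism $\sKMW_0(X)\cong\sKMW_0(X,\omega_f)$ induced by $\chi$, and set $\beta_0:=h_*(\bar\chi)\in\chst{2d}{Z}{Y\times Y\times X}{\omega_{Y\times X}}\subset\cork(Y,Y\times X)$. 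The claim is that both $(1_Y\otimes\alpha)\circ\Delta_Y$ and $(f\times 1)\circ\Delta_X\circ\alpha$ equal $\beta_0$.

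For the right-hand side $(f\times 1)\circ\Delta_X\circ\alpha$, monoidality of the functor $\tilde\gamma$ gives $(f\times 1)\circ\Delta_X=\tilde\gamma\bigl((f\times\id_X)\circ\Delta_X\bigr)=\tilde\gamma(g)$ with $g\colon X\to Y\times X$, $g(x)=(f(x),x)$, i.e.\ the transpose graph of $f$ regarded as a morphism of schemes. By the computation behind Example \ref{ex:push-forwards} — equivalently, by the base change formula (Proposition \ref{prop:basechange}) and the projection formula (Corollary \ref{cor:pformula}), as in the verification that $\tilde\gamma$ is a functor after Definition \ref{def:cortilde} — one has $\tilde\gamma(g)\circ\alpha(f,\chi)=(f\times\id_{Y\times X})_*\bigl(\tilde\gamma(g)\bigr)$, the pushforward of the graph $\tilde\gamma(g)=(\Gamma_g)_*(\langle 1\rangle)$ along $f\times\id_{Y\times X}\colon X\times(Y\times X)\to Y\times(Y\times X)$, with $\chi$ used to trivialize $\omega_f$. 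Since $(f\times\id_{Y\times X})\circ\Gamma_g=h$, functoriality of pushforwards yields $(f\times 1)\circ\Delta_X\circ\alpha=h_*(\bar\chi)=\beta_0$.

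For the left-hand side $(1_Y\otimes\alpha)\circ\Delta_Y$, Example \ref{ex:flat_example} identifies precomposition with the graph correspondence $\Delta_Y$ with pullback along $\Delta_Y\times\id_{Y\times X}$, so this equals $(\Delta_Y\times\id_{Y\times X})^*(1_Y\otimes\alpha)$. Unwinding Definition \ref{def:tensor} and using that the exterior product of Section \ref{sec:chowwitt} commutes with pushforwards, $1_Y\otimes\alpha=\rho_*(p_X^*\bar\chi)$ for the closed immersion $\rho\colon Y\times X\to(Y\times Y)\times(Y\times X)$, $\rho(y,x)=(y,f(x),y,x)$; the transpose isomorphism $\sigma$ of Definition \ref{def:tensor} contributes the graded-commutativity sign to the twist at this step. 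The square formed by $\rho$ and $\Delta_Y\times\id_{Y\times X}$ over $(Y\times Y)\times(Y\times X)$ is Cartesian with upper-left corner $X$, the two projections being $\Gamma^t_f\colon X\to Y\times X$ and $h\colon X\to Y\times(Y\times X)$; moreover $\rho$ is proper (being a closed immersion) and Tor-independent of $\Delta_Y\times\id_{Y\times X}$, since the latter is a regular immersion of smooth schemes and the intersection is proper by a dimension count. The base change formula (Proposition \ref{prop:basechange}, with the line bundle identifications of Remark \ref{rem:choice}) then gives $(\Delta_Y\times\id_{Y\times X})^*(1_Y\otimes\alpha)=h_*\bigl((\Gamma^t_f)^*p_X^*\bar\chi\bigr)=h_*(\bar\chi)=\beta_0$, since $p_X\circ\Gamma^t_f=\id_X$. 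Combining the two paragraphs proves the lemma.

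The only real work is the bookkeeping of graded line bundle twists and their signs: one must verify that the chains of canonical isomorphisms dictated by Remark \ref{rem:choice} in the two base change steps, the sign contributed by $\sigma$, and the trivialization $\OO_X\cong\omega_f$ entering $\bar\chi$, all assemble in both computations to precisely the same class $h_*(\bar\chi)$, with no stray sign. To keep this manageable I would first reduce, by additivity of $\cork$ (Section \ref{sec:fGW}), to the case $X$ and $Y$ connected; then $Z$ is irreducible of codimension $2d$, so $\chst{2d}{Z}{Y\times Y\times X}{\omega_{Y\times X}}$ injects into its component at the generic point of $Z$, where $h_*$ becomes an explicit isomorphism of $\KMW_0$-modules over a field pinned down by one orientation of $\omega_f$. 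In this form the identity reduces to the statement that both routes transport $\langle 1\rangle$ through that single orientation, which follows from the coherence of the canonical isomorphisms involved — the first fundamental exact sequence and the compatibility of the exterior product, flat pullback, excision, and base change with twists.
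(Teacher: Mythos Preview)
Your proof is correct and follows essentially the same route as the paper: both sides are identified with the pushforward of the $\chi$-twisted unit along the map $h=\Gamma^t_{\Delta_Y\circ f}\colon X\to Y\times Y\times X$, $x\mapsto(f(x),f(x),x)$, via base change and projection formulas. The only difference is organizational: the paper unwinds the definition of composition directly via the big commutative diagrams of projections, whereas you invoke Examples~\ref{ex:flat_example} and~\ref{ex:push-forwards} (precomposition with a graph is pullback; postcomposition with $\alpha(f,\chi)$ is $(f\times 1)_*$) as black boxes, which makes the write-up shorter but relies on those exercises having been carried out.
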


\begin{proof}
It suffices to compute both compositions, and we start with the top one. The composite of these two finite MW-correspondences is given by the commutative diagram
\[
\begin{tikzcd}
X \ar[r] \ar[d,"{\Gamma_{(\Delta_Y\circ f)}^t}"'] \ar[r,"{\Gamma_{(\Delta_Y\circ f)}^t}"] & Y \times Y \times X \ar[r] \ar[d,"{1\times \Gamma_{(1\times f)}^t}"] & Y \times X \ar[d,"{\Gamma_{(1\times f)}^t}"] & \\
Y \times Y \times X \ar[r,"{\Gamma_{\Delta_Y}\times 1\times 1}"] \ar[d] & Y \times Y \times Y \times Y \times X \ar[r] \ar[d] & Y \times Y \times Y \times X \ar[r] \ar[d] & Y \times X \\
Y \ar[r,"{\Gamma_{\Delta_Y}}"'] & Y \times Y \times Y \ar[r] \ar[d] & Y \times Y & \\
& Y & & 
\end{tikzcd}
\]
where the squares are Cartesian and the non-labelled arrows are projections (vertically to the first factors and horizontally to the last factors). The composite is given by the push-forward along the projection $p:Y\times Y\times Y\times Y\times X\to Y\times Y\times X$ defined by $(y_1,y_2,y_3,y_4,x)\mapsto (y_1,y_4,x)$ of the product of the respective pull-backs to $Y\times Y\times Y\times Y\times X$ of $(\Gamma_{\Delta_Y})_*(\langle 1\rangle)$ and $(\Gamma_{(1\times f)}^t)_*(\langle 1\rangle)$. Using the base change formula (\chfinitecw, Proposition~\ref{prop:basechange}), we see that it amounts to push-forward the product
\[
(\Gamma_{\Delta_Y}\times 1\times 1)_*(\langle 1\rangle)\cdot (1\times \Gamma_{(1\times f)}^t)_*(\langle 1\rangle).
\]
Using the projection formula for Chow-Witt groups with supports (\chfinitecw, Corollary~\ref{cor:pformula}), the latter equals
\[
(\Gamma_{\Delta_Y}\times 1\times 1)_*((\Gamma_{\Delta_Y}\times 1\times 1)^*((1\times \Gamma_{(1\times f)}^t)_*(\langle 1\rangle)))
\]
and the base-change formula once again shows that we have to push-forward along $p$ the cycle
\[
(\Gamma_{\Delta_Y}\times 1\times 1)_*(\Gamma_{(\Delta_Y\circ f)}^t)_*(\langle 1\rangle)
\]
Finally, the equality  $p\circ (\Gamma_{\Delta_Y}\times 1\times 1)=\id$ shows that the composite $(1\times \alpha)\circ \Delta_Y$ is given by the correspondence $(\Gamma_{(\Delta_Y\circ f)}^t)_*(\langle 1\rangle)$. 

For the second composite, we consider the following commutative diagram 
\[
\begin{tikzcd}
X \ar[r,"\Gamma_f^t"] \ar[d,"{\Gamma_{((f\times 1)\Delta_X)}}"'] & Y \times X \ar[r] \ar[d,"{1\times \Gamma_{((f\times 1)\Delta_X)}}"] & X \ar[d,"{\Gamma_{((f\times 1)\Delta_X)}}"] & \\
X \times Y \times X \ar[r,"{\Gamma_f^t\times 1\times 1}"] \ar[d] & Y \times X \times Y \times X \ar[r] \ar[d] & X \times Y \times X \ar[r] \ar[d] & Y \times X \\
X \ar[r,"{\Gamma_f^t}"'] & Y \times X \ar[d] \ar[r] & X & \\
& Y & & 
\end{tikzcd}
\]
where, as before, the squares are Cartesian and the non-labelled arrows are projections (vertically to the first factors and horizontally to the last factors). Arguing as above, we find that the composite is the push-forward along the projection $q:Y\times X\times Y\times X\to Y\times Y\times X$ omitting the second factor of the product
\[
(\Gamma_f^t\times 1\times 1)_*(\langle 1\rangle)\cdot (1\times \Gamma_{((f\times 1)\Delta_X)})_*(\langle 1\rangle).
\]
The projection and the base-change formulas show that the latter is equal to 
\[
(\Gamma_f^t\times 1\times 1)_*(\Gamma_{((f\times 1)\Delta_X)})_*(\langle 1\rangle)
\]
whose push-forward along $q$ is $(\Gamma_{(\Delta_Y\circ f)}^t)_*(\langle 1\rangle)$ as 
\[
q(\Gamma_f^t\times 1\times 1)(\Gamma_{((f\times 1)\Delta_X)})=\Gamma_{(\Delta_Y\circ f)}^t. \qedhere
\]
\end{proof}

\begin{thm}[Projection formula]\label{thm:projection}
Let $f:X\to Y$ be a finite surjective morphism between smooth connected schemes, and let $\chi:\OO_X\to \omega_f$ be an isomorphism. For any $x\in \HMW^{p,q}(X,\ZZ)$ and $y\in \HMW^{p^\prime,q^\prime}(Y,\ZZ)$, we have 
\[
y\cdot f_*(x)=f_*(f^*y\cdot x)
\]
in $\HMW^{p+p^\prime,q+q^\prime}(Y,\ZZ)$.
\index{Milnor-Witt!motivic!cohomology, projection formula}%
\end{thm}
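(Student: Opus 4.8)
The plan is to reduce the statement to the diagonal identity of Lemma~\ref{lem:twoways} together with the projection formula for Chow--Witt groups with supports (\chfinitecw, Corollary~\ref{cor:pformula}), which already exists at the level of finite MW-correspondences. First I would translate all the objects into the language of morphisms in $\cork$, i.e.\ of presheaves with MW-transfers, via the identifications $\HMW^{p,q}(X,\ZZ)=\Hom_{\DMtekZ}(\tMot(X),\tRx q[p])$ and the adjunction/internal-Hom formalism of \chdmt. Under this dictionary, the cup-product $y\cdot x$ for $y\in\HMW^{p',q'}(X,\ZZ)$, $x\in\HMW^{p,q}(X,\ZZ)$ is computed by first forming the external product $y\otimes x\colon \tMot(X)\otimes\tMot(X)\to \tRx{q'}[p']\otimes\tRx q[p]$ and then precomposing with the diagonal $\tilde\gamma(\Delta_X)\colon\tMot(X)\to\tMot(X)\otimes\tMot(X)$; similarly on $Y$. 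The pull-back $f^*$ is precomposition with $\tilde\gamma(f)$, and the push-forward $f_*$ is precomposition with $\alpha=\alpha(f,\chi)\in\cork(Y,X)$, as recalled in Section~\ref{sec:projection}.

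With this setup, the desired equality $y\cdot f_*(x)=f_*(f^*y\cdot x)$ becomes an identity between two maps $\tMot(Y)\to \tRx{q+q'}[p+p']$ obtained by precomposing the external product $y\otimes x$ (suitably interpreted, with one factor pulled back along $f$) with two different correspondences $\tMot(Y)\to\tMot(Y)\otimes\tMot(X)$. Concretely, the left-hand side corresponds to the composite $Y\xrightarrow{\Delta_Y} Y\times Y\xrightarrow{1\times\alpha} Y\times X$, while the right-hand side corresponds to $Y\xrightarrow{\alpha} X\xrightarrow{\Delta_X} X\times X\xrightarrow{f\times 1} Y\times X$. Lemma~\ref{lem:twoways} asserts precisely that these two finite MW-correspondences coincide: $(1\times\alpha)\Delta_Y=(f\times 1)\Delta_X\alpha$. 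Substituting this equality and unwinding the definitions then yields the projection formula directly.

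The main obstacle I expect is bookkeeping with the graded line bundles (the $\omega$-twists) and the attendant signs coming from the commutativity constraint in $\glb$, exactly as in the proof of Lemma~\ref{lem:twoways} and in Remark~\ref{rem:everythingcommutes}: one must check that the canonical isomorphisms identifying $\tRx q$ with its twisted incarnations, and the graded commutativity of the cup-product (Section~\ref{sec:ringstructure}), are compatible with the isomorphism $\chi$ used to define $\alpha$ and with the permutation of factors hidden in $f\times 1$ versus $1\times\alpha$. A secondary, more routine point is to verify that the ``push-forward'' homomorphism $f_*\colon F(X)\to F(Y)$ on an arbitrary MW-presheaf $F$ induced by precomposition with $\alpha$ is compatible with the Suslin complex construction $\Cstar{(-)}$ and with the passage to hypercohomology, so that the identity proved at the level of representable presheaves descends to MW-motivic cohomology; this follows formally since $\Cstar{(-)}$ and $\hypH^*_{\zar}(X,-)$ are functorial in the presheaf. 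Once these compatibilities are in place, the proof is a short formal computation; I would present it by first stating the translation into $\cork$-morphisms, then invoking Lemma~\ref{lem:twoways} and \chfinitecw, Corollary~\ref{cor:pformula}, and finally noting that the twist identifications match.
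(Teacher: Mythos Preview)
Your proposal is correct and follows essentially the same route as the paper: translate both sides into morphisms in $\DMtek$, recognize $y\cdot f_*(x)$ as $(y\otimes x)\circ(1\otimes\alpha)\circ\Delta_Y$ and $f_*(f^*y\cdot x)$ as $(y\otimes x)\circ(f\otimes 1)\circ\Delta_X\circ\alpha$, and invoke Lemma~\ref{lem:twoways}. Two small remarks: the reference to \chfinitecw, Corollary~\ref{cor:pformula} is not needed at this stage, since that projection formula is already consumed in the proof of Lemma~\ref{lem:twoways}; and the bookkeeping worries you flag (graded line bundles, Suslin-complex compatibility) do not actually surface once you work directly with $\Hom$-groups in $\DMtek$, where the tensor structure and the identification $\HMW^{p,q}(X,\ZZ)=\Hom_{\DMtek}(\tMot(X),\tZcbx q[p-q])$ absorb them.
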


\begin{proof}
Let $\DMtek$ be the category of MW-motives (\chdmt, \S\ref{sec:AA1derived}).
By \chdmt, Corollary~\ref{cor:cancellation}, we have 
\[
\HMW^{p,q}(X,\ZZ)=\Hom_{\DMtek}(\tMot(X),\tZcbx q[p-q])
\] 
for any $p,q\in\ZZ$. 
The product structure on MW-motivic cohomology is obtained composing the exterior product
\[
\begin{tikzcd}
\Hom_{\DMtek}(\tMot(X),\tZcbx q[p-q])\otimes \Hom_{\DMtek}(\tMot(X),\tZcbx{q^\prime}[p^\prime-q^\prime])\ar[d] \\
\Hom_{\DMtek}(\tMot(X)\otimes \tMot(X),\tZcbx q\otimes \tZcbx{q^\prime}[p+p^\prime-q-q^\prime])
\end{tikzcd}
\] 
with the pull-back 
\[
\begin{tikzcd}
\Hom_{\DMtek}(\tMot(X)\otimes \tMot(X),\tZcbx q\otimes \tZcbx{q^\prime}[p+p^\prime-q-q^\prime])\ar[d] \\
\Hom_{\DMtek}(\tMot(X),\tZcbx q\otimes \tZcbx{q^\prime}[p+p^\prime-q-q^\prime])
\end{tikzcd}
\] 
along the map $\tMot(X)\to \tMot(X\times X)$ induced by the diagonal morphism $\Delta_X\colon X\to X\times X$.
%If $x$ and $x^\prime$ are respectively in $\Hom_{\DMtek}(\tMot(X),\tZcbx q[p-q])$ and $\Hom_{\DMtek}(\tMot(X),\tZcbx{q^\prime}[p^\prime-q^\prime])$, we can take their tensor product to get a morphism $x\otimes x^\prime$ in $\Hom_{\DMtek}(\tMot(X)\otimes \tMot(X),\tZcbx q\otimes \tZcbx{q^\prime}[p+p^\prime-q-q^\prime])$. 
%Now, $\tMot(X)\otimes \tMot(X)=\tMot(X\times X)$ and the diagonal morphism $\Delta_X:X\to X\times X$ induces a morphism $\tMot(X)\to \tMot(X\times X)$. 
%Composing the latter with $x\otimes x^\prime$, we obtain a morphism $x\cdot x^\prime\in \Hom_{\DMtek}(\tMot(X),\tZcbx q\otimes \tZcbx{q^\prime}[p+p^\prime-q-q^\prime])$ which represents the product of $x$ and $x^\prime$ (after identification of $\tZcbx q\otimes \tZcbx{q^\prime}$ with $\tZcbx{q+q^\prime}$). 

If $x\in \Hom_{\DMtek}(\tMot(X),\tZcbx q[p-q])$, $y\in \Hom_{\DMtek}(\tMot(Y),\tZcbx{q^\prime}[p^\prime-q^\prime])$, the product $y\cdot f_*(x)$ is then of the form 
\[
y\cdot f_*(x)=(y\otimes x)\circ (1\otimes \alpha)\circ \Delta_Y,
\] 
while $f_*(f^*y\cdot x)=(y\otimes x)\circ (f\otimes 1)\circ \Delta_X\circ \alpha$. 
The result then follows from Lemma \ref{lem:twoways}.
\end{proof}

\begin{rem}
It would suffice to have a fixed isomorphism $\Lb\otimes \Lb\simeq \omega_f$ (for some line bundle $\Lb$ on $X$) to get an orientation in the sense of \chfinitecw, \S\ref{sec:canonicalorientations} and thus a finite MW-correspondence $\alpha$ as above. We let the reader make the necessary modifications in the arguments of both Lemma \ref{lem:twoways} and Theorem \ref{thm:projection}.
\end{rem}

\begin{rem}
It follows from \chdmt, Theorem~\ref{thm:commutative} that the same formula holds for the left module structure, i.e.
\[
f_*(x)\cdot y =f_*(x\cdot f^*y).
\]
\end{rem}

\begin{exm}\label{ex:inseparable}
As usual, it follows from the projection formula that the composite $f_*f^*$ is multiplication by $f_*(\langle 1\rangle)$. Let us now compute $f^*f_*$ in some situations that will be used later. Let us start with the general situation, i.e.\ $f:X\to Y$ is a finite surjective morphism and $\chi:\OO_X\to \omega_f$ an isomorphism. The composite $f^*f_*$ is given by precomposition with the correspondence $f\circ\alpha(f,\chi)$ which we can compute using the diagram
\[
\begin{tikzcd} 
X \times_Y X \ar[r,"{(1\times 1)}"] \ar[d,"{(1\times 1)}"'] & X \times X \ar[r] \ar[d,"{1\times \Gamma_f^t}"'] & X \ar[d,"{\Gamma_f^t}"] & \\
X \times X \ar[r,"{\Gamma_f\times 1}"'] \ar[d] & X \times Y \times X \ar[r] \ar[d] & Y \times X \ar[r] \ar[d] & X \\
X \ar[r,"{\Gamma_f}"'] & X \times Y \ar[d] \ar[r] & Y &  \\
 & X & & 
\end{tikzcd}
\] 
where the non-labelled vertical arrows are projections on the first factor and the non-labelled horizontal arrows are projections on the second factor. As usual, the base change formula shows that the composite is equal to the projection on $X\times X$ of 
\[
(\Gamma_f\times 1)_*(\langle 1\rangle)\cdot (1\times \Gamma_f^t)_*(\langle 1\rangle).
\] 
In general the top left square is not transverse, and we cannot use the base-change formula to compute the above product. 

Suppose now that $f:X\to Y$ is finite and \'etale. In that case, we have a canonical isomorphism $f^*\omega_Y\simeq \omega_X$ yielding a canonical choice for the isomorphism
\[
\chi:\OO_X\to \omega_f.
\] 
Moreover, $X\times_Y X$ decomposes as $X\times_Y X=X_1\sqcup X_2\sqcup \ldots \sqcup X_n$ where each term $X_i$ is finite and \'etale over $X$ with a ``structural'' morphism $p_i:X_i\to X$. In that case, the above top right square is transverse and we see that 
\[
(\Gamma_f\times 1)_*(\langle 1\rangle)\cdot (1\times \Gamma_f^t)_*(\langle 1\rangle)=(\Gamma_f\times 1)_*(\Delta_*\sum (p_i)_*(\langle 1\rangle)).
\] 
where $\Delta:X\to X\times X$ is the diagonal map. Thus the composite $f\circ\alpha(f,\chi)$ is equal to $\Delta_*\sum (p_i)_*(\langle 1\rangle)$. It follows immediately that we have a commutative diagram
\[
\begin{tikzcd}
\HMW^{p,q}(X,\ZZ) \ar[r,"{\sum p_i^*}"] \ar[d,"f_*"'] & \bigoplus_i \HMW^{p,q}(X_i,\ZZ) \ar[d,"{\sum (p_i)_*}"] \\
\HMW^{p,q}(Y,\ZZ) \ar[r,"f^*"'] & \HMW^{p,q}(X,\ZZ)
\end{tikzcd}
\]
for any $p,q\in \ZZ$. 

Suppose next that $\charac k=p$, that $X\subset Y\times \Aone$ is the set of zeroes of $t^p-a$ for some global section $a\in \OO_Y(Y)$ (we still suppose that $X$ is smooth over $k$). In that case, we see that the reduced scheme of $X\times_Y X$ is just $X$ (but the former has nilpotent elements) and it follows that $f\circ\alpha(f,\chi)$ is a correspondence supported on the diagonal $\Delta(X)\subset X\times X$. It follows that there is an element $\sigma\in \KMW_0(X)$ such that the following diagram commutes
\[
\begin{tikzcd}
\sKMW_0(X) \ar[r] \ar[d,"{\cdot \sigma}"'] & \cork(X,X) \ar[d,"{f\circ\alpha(f,\chi)}"] \\
\sKMW_0(X) \ar[r] & \cork(X,X)
\end{tikzcd}
\]
where the horizontal arrows are induced by the push-forward map $\Delta_*:\sKMW_0(X)\to \chst {d_X}{\Delta(X)}{X\times X}{\omega_X}$. Now, $\sigma$ can be computed using the composite $\KMW_0(k(X))\to \KMW_0(k(Y))\to \KMW_0(k(X))$, where the first map is the push-forward (defined using $\chi$) and the second map is the pull-back. It follows essentially from \cite{Fasel08_d}*{Lemme~6.4.6} that $\sigma=p_{\epsilon}$.
\end{exm}

\subsection{The homomorphism}\label{sec:homomorphism}

Let $L/k$ be a finitely generated field extension. It follows from the definition of MW-motivic cohomology that $\HMW^{p,q}(L,\ZZ)=0$ provided $p>q$. The next step is then to identify $\HMW^{p,p}(L,\ZZ)$. To this aim,  we constructed in \chdmt, Theorem~\ref{thm:gradedringhomo} a graded ring homomorphism
\[
\KMW_*(L)\to \bigoplus_{n\in\ZZ}\HMW^{n,n}(L,\ZZ) 
\]
which we now recall. For $a\in L^\times$, we can consider the corresponding morphism $a:\Spec(L)\to \Gmk$ which defines a finite MW-correspondence $\Gamma_a$ in $\cork(L,\Gmk)$. Now, we have a surjective homomorphism $\cork(L,\Gmk)\to \HMW^{1,1}(L,\ZZ)$ and we let $s([a])$ be the image of $\Gamma_a$ under this map. Next, consider the element 
\[
\eta[t]\in \sKMW_0(\Gmx L)=\cork(\Gmx L,k)=\cork(\Gmk\times L,k)=\uHom\big(\MWprep(\Gmk),\MWprep(k)\big)(L).
\] 
We define $s(\eta)$ to be its image under the projections
\[
\uHom\big(\MWprep(\Gmk),\MWprep(k)\big)(L)\to \uHom\big(\tZcbx 1,\MWprep(k)\big)(L)\to \HMW^{-1,-1}(L,\ZZ).
\]
The following theorem is proved in \chdmt, Theorem~\ref{thm:gradedringhomo} (using computations of \chcancellation, \S\ref{sec:examples}).

\begin{thm}
The associations $[a]\mapsto s([a])$ and $\eta\mapsto s(\eta)$ induce a homomorphism of graded rings
\[
\Phi_L:\KMW_*(L)\to \bigoplus_{n\in\ZZ}\HMW^{n,n}(L,\ZZ).
\]
\end{thm}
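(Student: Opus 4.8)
The plan is to first construct a graded ring homomorphism out of the free object on the generators and then verify that it factors through the four defining relations of Milnor-Witt $K$-theory. Concretely, let $A(L)$ denote the free associative $\ZZ$-graded ring generated by symbols $[a]$ of degree $1$ for $a\in L^\times$ together with a symbol $\eta$ of degree $-1$. Using the bigraded ring structure on $\bigoplus_{n}\HMW^{n,n}(L,\ZZ)$ recalled in Section \ref{sec:ringstructure}, the assignments $[a]\mapsto s([a])$ and $\eta\mapsto s(\eta)$ extend uniquely to a homomorphism of graded rings $A(L)\to \bigoplus_{n}\HMW^{n,n}(L,\ZZ)$, since $s([a])\in\HMW^{1,1}(L,\ZZ)$ and $s(\eta)\in\HMW^{-1,-1}(L,\ZZ)$ sit in the correct degrees. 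It then suffices to show that the images of the four relations defining $\KMW_*(L)$ vanish, so that the homomorphism descends to the quotient $\KMW_*(L)$; this is exactly the content of \chdmt, Theorem~\ref{thm:gradedringhomo}, whose argument I would follow.

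Two of the relations are formal consequences of results already available. For the $\eta$-commutativity relation $\eta[a]=[a]\eta$, the graded commutativity of MW-motivic cohomology (\chdmt, Theorem~\ref{thm:commutative}) applied to $s(\eta)\in\HMW^{-1,-1}$ and $s(a)\in\HMW^{1,1}$ produces the twist $(-1)^{(-1)(1)}\langle(-1)^{(-1)(1)}\rangle=-\langle-1\rangle=\epsilon$, so that $\epsilon\,s(\eta)s(a)=s(a)s(\eta)$; combined with the identity $\epsilon\,s(\eta)=s(\eta)$, which holds by construction because $\eta[t]$ is $\epsilon$-invariant in $\sKMW_0(\Gmx L)$, this gives $s(\eta)s(a)=s(a)s(\eta)$. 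For the hyperbolic relation $\eta(2+\eta[-1])=0$, the computation $1+s(-1)s(\eta)=\langle-1\rangle=-\epsilon$ of \chcancellation, Lemma~\ref{lem:example1} yields $s(\eta)s(-1)=-\epsilon-1$ (using the commutativity just established), whence $s(\eta)\bigl(2+s(\eta)s(-1)\bigr)=s(\eta)(1-\epsilon)=0$ because $\epsilon$ is central in $\HMW^{0,0}(L,\ZZ)$ and $\epsilon\,s(\eta)=s(\eta)$.

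The remaining two relations carry the genuine content, and I expect them to be the main obstacle, since neither can be checked by direct manipulation inside $\HMW^{*,*}$. For the twisted logarithm relation $[ab]=[a]+[b]+\eta[a][b]$, I would use the multiplication map $m\colon\Gmk\times\Gmk\to\Gmk$ and the two projections $p_1,p_2$: the combination $m-p_1-p_2$ descends to a correspondence $\tZcbx 1\otimes\tZcbx 1\to\tZcbx 1$, which by \chcancellation, Lemma~\ref{lem:example2} is carried to $s(\eta)$ under the cancellation isomorphism $\Hom_{\DMtek}(\tZcbx 1,\tZ)\xrightarrow{\simeq}\Hom_{\DMtek}(\tZcbx 1\otimes\tZcbx 1,\tZcbx 1)$ of \chdmt, Theorem~\ref{thm:cancellation}. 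Evaluating $m-p_1-p_2$ on the class of $(a,b)\colon\Spec(L)\to\Gmk\times\Gmk$ representing $s(a)s(b)$ then gives $s(ab)-s(a)-s(b)$ on one side and $s(\eta)s(a)s(b)$ on the other, which is precisely the desired relation. For the Steinberg relation $[a][1-a]=0$, I would compose the morphism $\Aone\setminus\{0,1\}\to(\Aone\setminus\{0\})\times(\Aone\setminus\{0\})$, $a\mapsto(a,1-a)$, with the canonical map $\tMot\bigl((\Aone\setminus\{0\})^2\bigr)\to\tMot(\Gmk^{\wedge 2})$ to obtain a morphism $\tMot(\Aone\setminus\{0,1\})\to\tMot(\Gmk^{\wedge 2})$ computing $s(a)s(1-a)$; this morphism is null in Morel's effective category $\DAek$ by the Hu--Kriz form of the Steinberg relation, and applying the monoidal functor $\derL\tilde\gamma^*\colon\DAek\to\DMtek$, which sends the one morphism to the other, transports the vanishing to $\DMtek$. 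The crux of the whole argument is thus the correct assembly of the cancellation theorem, the explicit $\Aone$-homotopies of \chcancellation, Section~\ref{sec:examples}, and the comparison functor from the $\Aone$-derived category, rather than any single calculation.
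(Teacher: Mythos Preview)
Your proposal is correct and follows essentially the same approach as the paper: the statement in \chcomparison\ is a restatement of \chdmt, Theorem~\ref{thm:gradedringhomo}, and your verification of the four Milnor--Witt relations (commutativity via \chdmt, Theorem~\ref{thm:commutative} plus $\epsilon\,s(\eta)=s(\eta)$; the hyperbolic relation via \chcancellation, Lemma~\ref{lem:example1}; the twisted logarithm via the multiplication map and \chcancellation, Lemma~\ref{lem:example2} together with cancellation; and Steinberg via Hu--Kriz in $\DAek$ transported by $\derL\tilde\gamma^*$) matches the paper's argument step for step.
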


By construction, the above homomorphism fits in a commutative diagram of graded rings
\[
\begin{tikzcd}
\KMW_*(L) \ar[r,"{\Phi_L}"] \ar[d] & \bigoplus_{n\in\ZZ}\HMW^{n,n}(L,\ZZ) \ar[d] \\
\KM_*(L) \ar[r] & \bigoplus_{n\in\ZZ}\HM^{n,n}(L,\ZZ)
\end{tikzcd}
\]
where the vertical projections are respectively the natural map from the Milnor-Witt $K$-theory to the Milnor $K$-theory and the ring homomorphism of Section \ref{sec:ringstructure} respectively, and the bottom horizontal homomorphism is the map constructed by Totaro-Nesterenko-Suslin (e.g. \cite{Mazza06_d}*{Lecture 5}).

%%%%%%%%%%%%%%%%%%%%%%%%%%%%%%%%%%%%%%%%%%%%%%%%%%%%%%%%%%%%%%%%%%%%%%%%%%%%%%%%%%%%%%%%%%%%%%%%%%%%%%%%%%%%%%%%

\section{Main theorem}\label{sec:main}

\subsection{A left inverse}\label{sec:left}

In this section, we construct for $q\geq 0$ a left inverse to the homomorphism $\Phi_L$ of Section \ref{sec:homomorphism}. By definition, 
\[
\MWprep(\Gm^q)(L)=\bigoplus_{x\in (\Gmx L^q)^{(q)}}\chst qx{\Gmx L^q}{\omega_{\Gmx L^q}}.
\] 
If $x_1,\ldots,x_q$ are parameters for $\Gm^q$, we have $[x_1,\ldots,x_q]\in \H^0(\Gm^q,\KMW_q)$ that we can pull-back to $\H^0(\Gmx L^q,\KMW_q)$. For any $x\in(\Gmx L^q)^{(q)}$, we may consider the product
\[
\chst qx{\Gmx L^q}{\omega_{\Gmx L^q}}\otimes \H^0(\Gm^q,\KMW_q)\to \H^q_x(\Gmx L^q,\KMW_{2q},\omega_{\Gm^q})
\]  
to obtain out of $\alpha\in \chst qx{\Gmx L^q}{\omega_{\Gm^q}}$ and element $\alpha\cdot [x_1,\ldots,x_q]\in \H^q_x(\Gmx L^q,\KMW_{2q},\omega_{\Gm^q})$.  If $p\colon \Gmx L^q\to \Spec(L)$ is the projection, we denote the image of $\alpha\cdot [x_1,\ldots,x_q]\in \H^q_x(\Gmx L^q,\KMW_{2q},\omega_{\Gm^q})$ along the push-forward map (with supports) 
\[
p_*\colon \H^q_x(\Gmx L^q,\KMW_{2q},\omega_{\Gm^q})\to \H^0(L,\KMW_q)=\KMW_q(L)
\]
by $f_x(\alpha)$. In this fashion, we obtain a homomorphism
\[
f:\MWprep(\Gm^q)(L)\to \KMW_q(L)
\]
which is easily seen to factor through $(\tZcbx q)(L)$ since $[1]=0\in \KMW_1(L)$.
%Now, for any point $x$ in $(\Gmx L^q)^{(q)}$ with maximal ideal $\mathfrak m$, we have an exact sequence
%\[
%\mathfrak m/\mathfrak m^2\to \Omega_{\Gmx L^q/k}\to \Omega_{L(x)/k}\to 0.
%\]
%Using the fact that $k$ is perfect and counting dimensions, we see that this sequence is also exact on the left. We find an isomorphism
%\[
%\wedge^q (\mathfrak m/\mathfrak m^2)^\vee\otimes \omega_{\Gmx L^q/k}\simeq \omega_{L(x)/k}
%\]
%Now, $\omega_{\Gmx L^q/k}\simeq p_1^*\omega_{\Gm^q/k}\otimes p_2^*\omega_{L/k}$ and it follows that 
%\[
%\wedge^q (\mathfrak m/\mathfrak m^2)^\vee\otimes \omega_{\Gm^q}\simeq \omega_{L(x)/k}\otimes_{L}\omega_{L/k}^\vee
%\]
%yielding
%\[
%\MWprep(\Gm^q)(L)\simeq\bigoplus_{x\in (\Gmx L^q)^{(q)}} \KMW_0(L(x),\omega_{L(x)/k}\otimes_{L}\omega_{L/k}^\vee).
%\]
%$x_1,\ldots,x_q$ are parameters for $\Gm^q$, we have $[x_1,\ldots,x_q]\in \H^0(\Gm^q,\KMW_q)$ that we can pull-back to $\H^0(\Gmx L^q,\KMW_q)$
%Any closed point $x$ in $(\Gmx L^q)^{(q)}$ can be identified with a $q$-tuple $(x_1,\ldots,x_q)$ of elements of $L(x)$. For such $x$, we define a homomorphism
%\[
%f_{x}:\KMW_0(L(x),\omega_{L(x)/k}\otimes_{L}\omega_{L/k}^\vee)\to \KMW_q(L)
%\]
%by $f_x(\alpha)=\Tr_{L(x)/L}(\alpha\cdot [x_1,\ldots,x_q])$. We then obtain a homomorphism
%\[
%f:\MWprep(\Gm^q)(L)\to \KMW_q(L)
%\]
%which is easily seen to factor through $(\tZcbx q)(L)$ since $[1]=0\in \KMW_1(L)$. 

We now check that this homomorphism vanishes on the image of $(\tZcbx q)(\Aone_L)$ in $(\tZcbx q)(L)$ under the boundary homomorphism. This will follow from the next lemma.

\begin{lem}
Let $Z\in \Adm(\Aone_L,\Gm^q)$, $p\colon \Gmx L^q\to \Spec(L)$ and $p_{\Aone_L}\colon \Aone_L\times \Gm^q\to \Aone_L$ be the projections and let $Z_i:=p_{\Aone_L}^{-1}(i)\cap Z$ (endowed with its reduced structure) for $i=0,1$. Let $j_i:\Spec(L)\to \Aone_L$ be the inclusions in $i=0,1$ and let $g_i:\Gmx L^q\to \Aone_L\times \Gm^q$ be the induced maps.  Then the homomorphisms
\[
p_*(g_i)^*:\H^q_Z(\Aone_L\times \Gm^q,\KMW_{2q},\omega_{\Gm^q})\to \H^q_{Z_i}(\Gmx L^q,\KMW_{2q},\omega_{\Gm^q})\to \KMW_q(L)
\]
are equal.
\end{lem}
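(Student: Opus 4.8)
The plan is to push $\alpha$ forward along the projection $p_{\Aone_L}\colon\Aone_L\times\Gm^q\to\Aone_L$, apply base change over the fibre at $i$, and then use homotopy invariance of $\sKMW_q$. First I would observe that, since $Z\in\Adm(\Aone_L,\Gm^q)$, the closed set $Z$ is finite — hence proper — over $\Aone_L$, and, $\Aone_L$ being integral, every irreducible component of $Z$ surjects onto $\Aone_L$; in particular $p_{\Aone_L}(Z)=\Aone_L$. Rewriting the twist as $\omega_{\Gm^q}=\omega_{\Aone_L\times\Gm^q/k}\otimes p_{\Aone_L}^*\omega_{\Aone_L/k}^\vee$, the push-forward with supports — legitimate because $Z$ is proper over $\Aone_L$, even though $p_{\Aone_L}$ itself is not proper — produces for every $\alpha\in\H^q_Z(\Aone_L\times\Gm^q,\sKMW_{2q},\omega_{\Gm^q})$ a well-defined class
\[
\gamma:=(p_{\Aone_L})_*(\alpha)\in\H^0(\Aone_L,\sKMW_q),
\]
after the canonical trivialisation $\omega_{\Aone_L/k}\otimes\omega_{\Aone_L/k}^\vee\simeq\OO_{\Aone_L}$; here the codimension of the support drops from $q$ to $q-q=0$ and the coefficient degree from $2q$ to $2q-q=q$.

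Next I would apply the base change formula to the Cartesian square
\[
\begin{tikzcd}
\Gmx L^q\ar[r,"g_i"]\ar[d,"p"'] & \Aone_L\times\Gm^q\ar[d,"p_{\Aone_L}"] \\
\Spec(L)\ar[r,"j_i"'] & \Aone_L
\end{tikzcd}
\]
in which $\Gmx L^q$ is the fibre of $p_{\Aone_L}$ over $i$. Since $p_{\Aone_L}$ is smooth, hence flat, it is Tor-independent of $j_i$, so the base change formula — the analogue for $\sKMW_{2q}$-cohomology with supports of \chfinitecw, Proposition~\ref{prop:basechange}, valid because the support $Z$ is proper over $\Aone_L$ (compare \chfinitecw, Remark~\ref{rem:noncartesian}) — gives
\[
p_*\,g_i^*(\alpha)=j_i^*(\gamma)\qquad\text{in }\KMW_q(L),
\]
where $j_i^*\colon\H^0(\Aone_L,\sKMW_q)\to\H^0(\Spec(L),\sKMW_q)=\KMW_q(L)$ is the restriction along the rational point $i$. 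The twisting line bundles appearing in the general form of base change are here the normal bundles of the rational points $0,1$ of $\Aone_L$, which are canonically trivial, and the comparison isomorphism between them is compatible with these trivialisations, so no sign or extra twist enters.

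Finally, homotopy invariance of $\sKMW_q$ concludes the argument: the pull-back $\rho^*\colon\KMW_q(L)\to\H^0(\Aone_L,\sKMW_q)$ along the structure map $\rho\colon\Aone_L\to\Spec(L)$ is an isomorphism, and $\rho\circ j_i=\id_{\Spec(L)}$ forces $j_i^*=(\rho^*)^{-1}$ for both $i=0,1$; hence $j_0^*(\gamma)=j_1^*(\gamma)$, that is, $p_*g_0^*(\alpha)=p_*g_1^*(\alpha)$. The point requiring the most care is the first paragraph — justifying the push-forward along the non-proper map $p_{\Aone_L}$ (which works precisely because the support $Z$ is finite over $\Aone_L$) and tracking the canonical bundle twists through this push-forward and through the base change square — but once these twists are identified as canonically trivial, the rest is purely formal.
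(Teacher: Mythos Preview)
Your proof is correct and follows essentially the same approach as the paper: form the Cartesian square over $j_i:\Spec(L)\to\Aone_L$, apply base change (using \chfinitecw, Proposition~\ref{prop:basechange} and Remark~\ref{rem:noncartesian}) to get $p_*g_i^*=(j_i)^*(p_{\Aone_L})_*$, and conclude by homotopy invariance that $(j_0)^*=(j_1)^*$. Your version is simply more explicit about why the push-forward with supports is defined and how the twists trivialize, but the argument is the same.
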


\begin{proof}
For $i=0,1$, consider the Cartesian square
\[
\begin{tikzcd}
\Gmx L^q \ar[r,"g_i"] \ar[d,"p"'] & \Aone_L\times \Gmx L^q \ar[d,"p_{\Aone_L}"] \\ 
\Spec(L) \ar[r,"{j_i}"'] & \Aone_L
\end{tikzcd} 
\]
The vertical maps being in particular flat, we may use the base change formula (\chfinitecw, Proposition \ref{prop:basechange} and Remark \ref{rem:noncartesian}) to obtain $(j_i)^*(p_{\Aone_L})_*=p_*(g_i)^*$.
 The claim follows from the fact that $(j_0)^*=(j_1)^*$ by homotopy invariance \cite{Feld20_a}*{Theorem 9.4} (note that we may indeed apply homotopy invariance as $p_{\Aone_L}(Z)= \Aone_L$).
\end{proof}

\begin{prop}\label{prop:left}
The homomorphism $f:\MWprep(\Gm^q)(L)\to \KMW_q(L)$ induces a homomorphism
\[
\theta_L:\HMW^{q,q}(L,\ZZ)\to \KMW_q(L)
\]
for any $q\geq 1$. 
\end{prop}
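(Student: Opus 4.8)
The plan is to show that $f$ descends through the chain of quotients and localizations defining $\HMW^{q,q}(L,\ZZ)$. Recall that $\HMW^{q,q}(L,\ZZ)=\mathbb{H}^q_{\zar}(L,\tZpx q)$, and since $L$ is a (limit of) local schemes, this is computed by the degree-$q$ cohomology of the complex of global sections $\Cstar{\tZcbx q}(L)[-q]$; concretely, it is the cokernel of the boundary map
\[
\partial\colon (\tZcbx q)(\Aone_L) \to (\tZcbx q)(L)
\]
(the differential $d_0-d_1$ coming from the two faces $\Delta^1 \to \Delta^0$), where we already know from the construction in Section \ref{sec:left} that $f$ factors through $(\tZcbx q)(L)$ because $[1]=0$ in $\KMW_1(L)$. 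So the only thing left to check is that $f\circ \partial = 0$, i.e.\ that $f$ kills the image of $\partial$.

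First I would unwind what $\partial$ does on generators. An element of $(\tZcbx q)(\Aone_L)$ is represented by a class $\beta \in \chst qZ{\Aone_L\times \Gm^q}{\omega_{\Gm^q}}$ for some admissible $Z \in \Adm(\Aone_L,\Gm^q)$, and $\partial\beta = g_1^*\beta - g_0^*\beta$ where $g_i\colon \Gmx L^q \to \Aone_L\times\Gm^q$ are the two inclusions at $\{i\}\times \Gm^q$ (using that $\Delta^1 \cong \Aone$ identifies the two face maps with evaluation at $0$ and $1$, as is done throughout — compare Corollary \ref{cor:LA1} where $\Delta^n \simeq \Aone^n_k$). Then I apply the preceding lemma: the two composites
\[
p_*g_i^*\colon \H^q_Z(\Aone_L\times\Gm^q,\KMW_{2q},\omega_{\Gm^q}) \to \KMW_q(L)
\]
are equal, where $p\colon \Gmx L^q \to \Spec(L)$ is the projection. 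It remains only to match $f_{x}(g_i^*\beta)$ with $p_*g_i^*(-)$ applied to the right class. By definition $f_x(g_i^*\beta) = p_*\big( g_i^*\beta \cdot [x_1,\dots,x_q]\big)$, and since $[x_1,\dots,x_q] \in \H^0(\Gm^q,\KMW_q)$ is pulled back from $\Gm^q$, it equals $g_i^*$ of the corresponding class $[x_1,\dots,x_q] \in \H^0(\Aone_L\times\Gm^q,\KMW_q)$ (independent of the $\Aone_L$ coordinate). By the projection formula / compatibility of $g_i^*$ with cup products, $g_i^*\beta\cdot [x_1,\dots,x_q] = g_i^*\big(\beta\cdot [x_1,\dots,x_q]\big)$, so summing over the finitely many points of $Z_i$ gives $f(g_i^*\beta) = p_*g_i^*(\beta\cdot[x_1,\dots,x_q])$. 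The lemma then yields $f(g_1^*\beta) = f(g_0^*\beta)$, hence $f(\partial\beta)=0$, and $f$ factors as claimed through the cokernel, defining $\theta_L\colon \HMW^{q,q}(L,\ZZ)\to \KMW_q(L)$.

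I expect the main obstacle to be purely bookkeeping rather than conceptual: one must be careful about the identification of the Suslin complex differential with $g_1^* - g_0^*$ under $\Delta^1\simeq\Aone_L$ (including signs and the correct normalization of the singular complex), and about the twisting line bundles $\omega_{\Gm^q}$ and the degrees of the graded line bundles in the various products $\chst qx{\Gmx L^q}{\omega_{\Gm^q}}\otimes \H^0(\Gm^q,\KMW_q)\to \H^q_x(\Gmx L^q,\KMW_{2q},\omega_{\Gm^q})$ and in the push-forward $p_*$. These are all routine once the conventions of \chfinitecw\ are in place, so the argument above should go through without surprises; the genuinely hard input (the lemma, via homotopy invariance of $\KMW_*$-cohomology and base change) has already been isolated and proved just before the statement.
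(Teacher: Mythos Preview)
Your proposal is correct and follows essentially the same route as the paper: identify $\HMW^{q,q}(L,\ZZ)$ as the cokernel of $\partial_0-\partial_1$, recognize the face maps as $g_i^*$, use the compatibility $g_i^*\beta\cdot[x_1,\ldots,x_q]=g_i^*(\beta\cdot[x_1,\ldots,x_q])$ since $[x_1,\ldots,x_q]$ is pulled back from $\Gm^q$, and then invoke the preceding lemma to get $f(g_0^*\beta)=f(g_1^*\beta)$. The paper's proof is exactly this, with no additional ingredients.
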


\begin{proof}
Observe that the group $\HMW^{q,q}(L,\ZZ)$ is the cokernel of the homomorphism
\[
\partial_0-\partial_1:\tZcbx q(\Aone_L)\to \tZcbx q(L)
\]
and that \chfinitecw, Example~\ref{ex:flat_example} shows that $\partial_i:\MWprep(\Gm^q)(\Aone_L)\to \MWprep(\Gm^q)(L)$ is induced by $g_i^*$. On the other hand $f$ is defined as $f(-)=p_*(-\cdot [x_1,\ldots,x_q])$ Let then $\beta\in \tZcbx q(\Aone_L)$ be represented by $\beta\in \chst qZ{\Aone_L\times \Gm^q}{\omega_{\Gm^q}}$. Pulling back $[x_1,\ldots,x_q]$ along the projection $\Aone_L\times \Gmx L^q\to\Gmx L^q$ we obtain an element $[x_1,\ldots,x_q]\in \H^0(\Aone_L\times \Gmx,\KMW_q)$ which has the property to pull-back to $[x_1,\ldots,x_q]\in \H^0(\Gmx,\KMW_q)$ along $g_i^*$ for $i=0,1$. Thus
\[
p_*((g_i)^*(\beta)\cdot [x_1,\ldots,x_q])=p_*(g_i)^*(\beta\cdot [x_1,\ldots,x_q])
\]
and the above lemma yields
\[
f((g_0)^*(\beta))=p_*((g_0)^*(\beta)\cdot [x_1,\ldots,x_q])=p_*((g_1)^*(\beta)\cdot [x_1,\ldots,x_q])=f((g_1)^*(\beta)).
\]
\end{proof}

\begin{coro}\label{cor:splitinjective}
The homomorphism 
\[
\Phi_L:\bigoplus_{n\in\ZZ} \KMW_n(L) \to \bigoplus_{n\in\ZZ} \HMW^{n,n}(L,\ZZ).
\]
is split injective.
\end{coro}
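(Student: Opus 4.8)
The plan is to exhibit, degree by degree, a $\KMW_0(L)$-linear left inverse $\Theta_L$ to $\Phi_L$. In degree $0$ there is nothing to do: by Example~\ref{ex:basic} we have $\MWprep(\Spec k)=\sKMW_0$, so $\HMW^{0,0}(L,\ZZ)=\KMW_0(L)=\GW(L)$, and by Lemma~\ref{lem:example1} $\Phi_L(\langle a\rangle)=1+s([a])s(\eta)=\langle a\rangle$; since the $\langle a\rangle$ generate $\GW(L)$, $\Phi_L$ is already the identity there, and we take $\Theta_L=\id$. For $q<0$ we use the computation $\HMW^{q,q}(L,\ZZ)=\H^0(\Spec(L),\sW)=\W(L)$ of \chdmt\ (Proposition~\ref{prop:explicit}). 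Here $\KMW_q(L)=\KMW_0(L)\cdot\eta^{-q}\cong\W(L)$ via the isomorphism of \chfinitecw, \S\ref{sec:MWKtheory} (the Cartesian square \eqref{diag:jn} with $\KM_q=0$), under which $w\eta^{-q}$ maps to the class $\bar w\in\W(L)$; and, under the contraction identification $\tZcbx{-1}=\uHom(\tZcbx 1,\sKMW_0)\cong\sKMW_{-1}=\sW$ of Lemma~\ref{lem:explicitcontraction} and Example~\ref{ex:MilnorWittcontraction}, the element $s(\eta)$, being the image of $\eta[t]=\eta\cdot[t]$, corresponds to $\eta\in\sKMW_{-1}(L)$, i.e.\ to $1\in\W(L)$. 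Hence $\Phi_L(w\eta^{-q})=\bar w\cdot s(\eta)^{-q}=\bar w$, so $\Phi_L$ is the identity of $\W(L)$ in negative degrees; taking $\Theta_L=\id$ there settles those cases as well.

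So the whole problem is concentrated in degrees $q\geq 1$, where we set $\Theta_L=\theta_L$ with $\theta_L\colon\HMW^{q,q}(L,\ZZ)\to\KMW_q(L)$ the homomorphism of Proposition~\ref{prop:left}, and must check $\theta_L\circ\Phi_L=\id_{\KMW_q(L)}$. First I would observe that both $\Phi_L$ and $\theta_L$ are $\KMW_0(L)$-linear: for $\Phi_L$ because it is a ring homomorphism whose degree-$0$ part is the identity of $\KMW_0(L)$ (and $\HMW^{0,0}(L,\ZZ)$ is central); for $\theta_L$ because it is induced by $\alpha\mapsto p_*(\alpha\cdot[x_1,\dots,x_q])$, and for $w\in\KMW_0(L)$ the cycle $\alpha\cdot w$ is $w\cdot\alpha$, so the projection formula gives $p_*((w\cdot\alpha)\cdot[x_1,\dots,x_q])=w\cdot p_*(\alpha\cdot[x_1,\dots,x_q])$ since $w$ is pulled back from $\Spec(L)$. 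As $\KMW_q(L)$ is generated as a $\KMW_0(L)$-module by the pure symbols $[a_1,\dots,a_q]$ with $a_i\in L^\times$ (any monomial $\eta^s[b_1,\dots,b_{q+s}]$ factors as $(\eta^s[b_1,\dots,b_s])\cdot[b_{s+1},\dots,b_{q+s}]$ with the first factor in $\KMW_0(L)$), it suffices to prove $\theta_L\bigl(\Phi_L([a_1,\dots,a_q])\bigr)=[a_1,\dots,a_q]$.

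The key identification is that $\Phi_L([a_1,\dots,a_q])=s([a_1])\cdots s([a_q])$ is the class in $\HMW^{q,q}(L,\ZZ)$ of the finite MW-correspondence $\grph{(a_1,\dots,a_q)}\in\cork(\Spec(L),\Gm^q)=\MWprep(\Gm^q)(L)$ attached to the morphism $(a_1,\dots,a_q)\colon\Spec(L)\to\Gm^q$; this follows from the definition of the cup product on MW-motivic cohomology together with the facts that $\tilde\gamma$ is a monoidal functor and that each $s([a_i])$ is the class of the graph correspondence $\grph{a_i}$. Now $\grph{(a_1,\dots,a_q)}$ is $\langle 1\rangle$ supported at the $L$-rational point $x=(a_1,\dots,a_q)$ of $\Gmx L^q$, and unwinding the homomorphism $f$ of \S\ref{sec:left} on this class — restrict $[x_1,\dots,x_q]$ to $x$, obtaining $[a_1,\dots,a_q]\in\KMW_q(L)$, multiply, then push forward along $p\colon\Gmx L^q\to\Spec(L)$ — gives $f(\grph{(a_1,\dots,a_q)})=(p\circ\Gamma)_*(\langle 1\rangle\cdot[a_1,\dots,a_q])=[a_1,\dots,a_q]$, using that $p$ composed with the section $\Gamma\colon\Spec(L)\to x$ is the identity. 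The one genuinely delicate point — and the main obstacle — is verifying that the graded-line-bundle twists built into the definition of $\grph{(a_1,\dots,a_q)}$ and into the push-forward $p_*$ cancel, so that no stray unit $\langle u\rangle$ appears; this is a routine though fiddly bookkeeping with the canonical isomorphisms of \chfinitecw, \S\ref{subsec:embedding} and \S\ref{sec:chowwitt}. Granting it, $\theta_L\circ\Phi_L$ is a $\KMW_0(L)$-linear endomorphism of $\KMW_q(L)$ equal to the identity on the generating symbols, hence the identity; together with the degree $\leq 0$ cases this produces the desired left inverse and shows $\Phi_L$ is split injective.
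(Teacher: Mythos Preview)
Your proof is correct and follows essentially the same approach as the paper: the paper's own proof is the single line ``It suffices to check that $\theta_L\Phi_L=\id$, which is straightforward,'' and you have written out precisely that straightforward verification. Your treatment of degrees $\leq 0$ (showing $\Phi_L$ is already the identity there) is something the paper actually defers to the paragraph \emph{after} the corollary, so in that sense your argument is more self-contained at this point in the exposition.
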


\begin{proof}
It suffices to check that $\theta_L\Phi_L=\id$, which is straightforward.
\end{proof}

The following result will play a role in the proof of the main theorem.

\begin{prop}\label{prop:transe}
Let $n\in\ZZ$ and let $F/L$ be a finite field extension. Then, the following diagram commutes
\[
\begin{tikzcd}
\HMW^{n,n}(F,\ZZ) \ar[d,"{\Tr_{F/L}}"'] \ar[r,"\theta_F"] & \KMW_n(F) \ar[d,"\Tr_{F/L}"] \\
\HMW^{n,n}(L,\ZZ) \ar[r,"\theta_L"'] & \KMW_n(L). 
\end{tikzcd}
\]
\end{prop}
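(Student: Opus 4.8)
The plan is to unwind both the maps $\theta$ and the two transfers into finite push-forwards and cup products in (twisted) Milnor-Witt cohomology with supports, and then to conclude by the functoriality of proper push-forwards together with the projection formula. I first record the two transfers concretely. Writing $\sigma\colon\Spec(F)\to\Spec(L)$ for the (essentially smooth) structure morphism and using the canonical orientation $\psi$ of $\omega_{F/L}$ furnished by \chfinitecw, Lemma~\ref{lem:orientation}, the transfer $\Tr_{F/L}\colon\KMW_n(F)=\H^0(F,\sKMW_n)\to\KMW_n(L)=\H^0(L,\sKMW_n)$ agrees with the geometric push-forward $\sigma_*$ oriented by $\psi$; this is the comparison of cohomological and geometric transfers carried out in \chfinitecw, \S\ref{subsec:bilinear} (Lemmas~\ref{lem:geometrictransfer} and~\ref{lem:trace}), extended to essentially smooth schemes as in \chfinitecw, \S\ref{sec:limits}. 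On the other hand $\HMW^{q,q}(-,\ZZ)$ is a presheaf with MW-transfers by \chdmt, Corollary~\ref{cor:W-motivic&generalized}, and $\Tr_{F/L}$ is by definition precomposition with the finite MW-correspondence $\alpha(\sigma,\psi)\colon\Spec(L)\to\Spec(F)$ of \chfinitecw, Example~\ref{ex:push-forwards}. By the identity $(f\times 1)_*(\beta)=\beta\circ\alpha(f,\psi)$ of that example, this transfer is computed, at the level of the groups $\MWprep(\Gm^q)(F)=\bigoplus_{x}\chst{q}{x}{\Gmx F^q}{\omega_{\Gmx F^q}}$ representing $\HMW^{q,q}(F,\ZZ)$, by the finite push-forward $\pi_*$ along $\pi:=\sigma\times\id_{\Gm^q}\colon\Gmx F^q\to\Gmx L^q$, oriented by the pull-back of $\psi$ along the canonical isomorphism $\omega_\pi\simeq p_F^*\omega_{F/L}$. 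It suffices to treat the case $n=q\geq 1$; the cases $n\leq 0$, where $\HMW^{n,n}$ is computed by $\sKMW_0$, resp.\ $\sW$ (\chfinitecw, Example~\ref{exm:negativecohomology}), and $\theta$ is essentially the identity, follow from the same input. Since $\Tr$ and $\theta$ are compatible with composition along a tower of monogeneous extensions, I may moreover assume $F/L$ monogeneous, which streamlines the orientation bookkeeping.

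Next I would carry out the computation. Let $p_F\colon\Gmx F^q\to\Spec(F)$ and $p_L\colon\Gmx L^q\to\Spec(L)$ be the projections, so that $\Gmx F^q\simeq\Gmx L^q\times_{\Spec(L)}\Spec(F)$, $p_L\circ\pi=\sigma\circ p_F$, and the resulting square is Cartesian. Since the symbol $[x_1,\ldots,x_q]\in\H^0(\Gm^q,\sKMW_q)$ is pulled back from $\Gm^q$ over $k$ and $\pi$ is a morphism over $\Gm^q$, one has $\pi^*[x_1,\ldots,x_q]=[x_1,\ldots,x_q]$ in $\H^0(\Gmx F^q,\sKMW_q)$. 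Now for a class $\bar\alpha\in\HMW^{q,q}(F,\ZZ)$ represented by some $\alpha\in\chst{q}{x}{\Gmx F^q}{\omega_{\Gmx F^q}}$, and using that $\theta_F(\bar\alpha)=(p_F)_*\big(\alpha\cdot[x_1,\ldots,x_q]\big)$ (Proposition~\ref{prop:left} and the preamble of \S\ref{sec:left}), I compute
\begin{multline*}
\Tr_{F/L}\big(\theta_F(\bar\alpha)\big)
=\sigma_*(p_F)_*\big(\alpha\cdot\pi^*[x_1,\ldots,x_q]\big)
=(p_L)_*\pi_*\big(\alpha\cdot\pi^*[x_1,\ldots,x_q]\big)\\
=(p_L)_*\big(\pi_*(\alpha)\cdot[x_1,\ldots,x_q]\big)
=\theta_L\big(\Tr_{F/L}(\bar\alpha)\big),
\end{multline*}
where the first equality combines the concrete description of $\Tr_{F/L}$ on $\KMW_q$ with $\pi^*[x_\bullet]=[x_\bullet]$, the second is functoriality of proper push-forwards applied to $p_L\circ\pi=\sigma\circ p_F$, the third is the projection formula (\chfinitecw, Corollary~\ref{cor:pformula}, in its version with supports given by \chfinitecw, Remark~\ref{rem:noncartesian}), and the last combines the definition of $\theta_L$ with the concrete description of $\Tr_{F/L}$ on $\HMW^{q,q}$ as being induced by $\pi_*$.

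The main obstacle is the bookkeeping of the canonical orientations of $\omega_{F/L}$ and of the graded-line-bundle twists, particularly in the inseparable case. Concretely, one must verify that the orientation of $\omega_\pi$ used to define $\Tr_{F/L}$ on $\HMW^{q,q}$ corresponds, under the canonical isomorphisms $\omega_{\Gmx F^q/\Gmx L^q}\simeq p_F^*\omega_{F/L}$ and the graded-line-bundle identifications of \chfinitecw, \S\ref{sec:tensorproducts}, to the orientation $\psi$ of $\omega_{F/L}$ used on $\KMW_q$; and that the chains of canonical isomorphisms of graded line bundles built into the functoriality of proper push-forwards and into \chfinitecw, Corollary~\ref{cor:pformula} are the same ones implicit in the definitions of $\theta$ and of $\Tr_{F/L}$. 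The purely scheme-theoretic input ($p_L\circ\pi=\sigma\circ p_F$, Cartesianness of the square, $\pi^*[x_\bullet]=[x_\bullet]$) is immediate, and the remaining external inputs — that Morel's cohomological transfer on $\sKMW$ is the geometric push-forward, and that the projection formula and the functoriality of push-forwards extend to essentially smooth schemes via \chfinitecw, \S\ref{sec:limits} — have been recalled above; so once these twist compatibilities are checked, the proof is complete.
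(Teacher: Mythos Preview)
Your proof is correct and uses essentially the same underlying ingredients as the paper (functoriality of finite push-forwards and the projection formula), but the presentation differs. The paper first rewrites $\theta_X$ on $\cork(X,\Gm^{\times n})$ explicitly as $\beta\mapsto\sum_i\Tr_{k(T_i)/k(X)}(\beta_i[a_1,\ldots,a_n])$, where the $T_i$ are the components of the support of $\beta$, and then deduces commutativity from the composition law $\Tr_{k(X)/k(Y)}\circ\Tr_{k(T_i)/k(X)}=\Tr_{k(T_i)/k(Y)}$ together with the identification of $\beta\circ\alpha(f,\chi)$ with $\beta\cdot\chi$ in $\bigoplus_i\KMW_0(k(T_i),\omega_{k(T_i)/k(Y)})$. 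You instead keep $\theta$ in its original form $\theta(\alpha)=p_*(\alpha\cdot[x_1,\ldots,x_q])$ and run the argument entirely in Chow-Witt groups via $\sigma_*\circ(p_F)_*=(p_L)_*\circ\pi_*$ and the projection formula $\pi_*(\alpha\cdot\pi^*[x_\bullet])=\pi_*(\alpha)\cdot[x_\bullet]$; this is cleaner and avoids unpacking the support decomposition. Two small remarks: your reduction to monogeneous extensions is unnecessary (your push-forward argument works for arbitrary finite $F/L$), and the paper does not treat the cases $n\le 0$ separately in this proposition (the map $\theta_L$ is only constructed for $q\ge 1$ in \S\ref{sec:left}, and the use of the proposition in Theorem~\ref{thm:respect} is only for $n\ge 1$).
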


\begin{proof}
Let $X$ be a smooth connected scheme and let $\beta\in \cork(X,\Gm^n)$ be a finite MW-correspondence with support $T$ (see \chfinitecw, Definition~\ref{dfn:support} for the notion of support). Each connected component $T_i$ of $T$ has a fraction field $k(T_i)$ which is a finite extension of $k(X)$ and we find that $\beta$ can be seen as an element of 
\[
\bigoplus_{i} \KMW_0(k(T_i),\omega_{k(T_i)/k}\otimes \omega_{k(X)/k}^\vee)
\]
Now, the morphism $T_i\subset X\times \Gm^{\times n}\to \Gm^{\times n}$ gives invertible global sections $a_1,\ldots,a_n$ and we define a map
\[
\theta_X:\cork(X,\Gm^{\times n})\to \KMW_n(k(X))
\]
by $\beta\mapsto \sum \Tr_{k(T_i)/k(X)}(\beta_i[a_1,\ldots,a_n])$, where $\beta_i$ is the component of $\beta$ in the group $\KMW_0(k(T_i),\omega_{k(T_i)/k(X)})$. This map is easily seen to be a homomorphism, and its limit at $k(X)$ is the morphism defined at the beginning of Section \ref{sec:left}.

Let now $X$ and $Y$ be smooth connected schemes over $k$, $f:X\to Y$ be a finite morphism and $\chi:\OO_X\to \omega_f$ be an isomorphism inducing a finite MW-correspondence $\alpha(f,\chi):Y\to X$ as in Section \ref{sec:projection}. We claim that the diagram
\[
\begin{tikzcd}
\cork(X,\Gm^{\times n}) \ar[r,"{\theta_X}"] \ar[d,"{\circ\alpha(f,\chi)}"'] & \KMW_n(k(X)) \ar[d,"{\Tr_{k(X)/k(Y)}}"] \\
\cork(Y,\Gm^{\times n}) \ar[r,"{\theta_Y}"'] & \KMW_n(k(Y)),
\end{tikzcd}
\]
where the right arrow is obtained using $\chi$, commutes. If $\beta$ is as above, we have 
\[
\Tr_{k(X)/k(Y)}(\theta_X(\beta))=\sum \Tr_{k(X)/k(Y)}( \Tr_{k(T_i)/k(X)}(\beta_i[a_1,\ldots,a_n]))
\]
and the latter is equal to $\sum  \Tr_{k(T_i)/k(Y)}(\beta_i[a_1,\ldots,a_n])$ by functoriality of the transfers. On the other hand, the isomorphism $\chi:\OO_X\to \omega_f$ can be seen as an element in $\sKMW_0(X,\omega_f)$, yielding an element of $\KMW_0(k(X),\omega_f)$ that we still denote by $\chi$. The image of $\beta\circ\alpha(f,\chi)$ can be seen as the element $\beta\cdot \chi$ of   
\[
\bigoplus_{i} \KMW_0(k(T_i),\omega_{k(T_i)/k(Y)})
\] 
where we have used the isomorphism 
\[
\omega_{k(T_i)/k(X)}\otimes \omega_f= \omega_{k(T_i)/k(X)}\otimes \omega_{k(X)/k(Y)}\simeq \omega_{k(T_i)/k(Y)}.
\]
It is now clear that $\theta_Y(\beta\circ\alpha(f,\chi))=\sum  \Tr_{k(T_i)/k(Y)}(\beta_i[a_1,\ldots,a_n])$ and the result follows. 
\end{proof}

%%%%%%%%%%%%%%%%%%%%%%%%%%%%%%%%%%%%%%%%%%%%%%%%%%%%%%%%%%%%%%%%%%%%%%%%%%%%%%%%%%%%%%%%%%%%%%%%%%%%%%%%%%%%%%%%

\subsection{Proof of the main theorem}

In this section we prove our main theorem, namely that the homomorphism 
\[
\Phi_L:\bigoplus_{n\in\ZZ} \KMW_n(L) \to \bigoplus_{n\in\ZZ} \HMW^{n,n}(L,\ZZ)
\]
is an isomorphism. We first observe that $\Phi_L$ is an isomorphism in degrees $\leq 0$. In degree $0$, we indeed know from \chfinitecw, Example~\ref{exm:negativecohomology} that both sides are $\KMW_0(L)$. Next, \chcancellation, Lemma~\ref{lem:example1} yields
\[
\Phi_L(\langle a\rangle)=\Phi_L(1+\eta [a])=1+s(\eta)s(a)=\langle a\rangle.
\]
It follows $\Phi_L$ is a homomorphism of graded $\KMW_0(L)$-algebras and the result in degrees $<0$ follows then from the fact that $\HMW^{p,p}(L,\ZZ)=W(L)=\KMW_{-p}(L)$ by \chfinitecw, Example~\ref{exm:negativecohomology} and \chdmt, Proposition~\ref{prop:explicit}.

We now prove the result in positive degrees, starting with $n=1$. Recall that we know from Corollary \ref{cor:splitinjective} that $\Phi_L$ is split injective, and that it therefore suffices to prove that $\Phi_L$ is surjective to conclude.

For any $d,n\geq 1$ and any field extension $L/k$ let $M_n^{(d)}(L)\subset \cork(L,\Gm^{\times n})$ be the subgroup of correspondences whose support is a finite union of field extensions $E/L$ of degree $\leq d$ (see \chfinitecw, Definition~\ref{dfn:support} for the notion of support of a correspondence). Let $\HMW^{n,n}(L,\ZZ)^{(d)}\subset \HMW^{n,n}(L,\ZZ)$ be the image of $M_n^{(d)}(L)$ under the surjective homomorphism
\[
\cork(L,\Gm^{\times n})\to \HMW^{n,n}(L,\ZZ).
\]
Observe that 
\[
\HMW^{n,n}(L,\ZZ)^{(d)}\subset \HMW^{n,n}(L,\ZZ)^{(d+1)} \quad \text{and} \quad \HMW^{n,n}(L,\ZZ)=\cup_{d\in \NN}\HMW^{n,n}(L,\ZZ)^{(d)}.
\]
\begin{lem}\label{lem:Hnn1}
The subgroup $\HMW^{n,n}(L,\ZZ)^{(1)}\subset \HMW^{n,n}(L,\ZZ)$ is the image of the homomorphism
\[
\Phi_L:\KMW_n(L)\to \HMW^{n,n}(L,\ZZ).
\]
\end{lem}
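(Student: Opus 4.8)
The plan is to make both sides of the claimed equality completely explicit; assume $n\geq 1$. First I would identify the group $M_n^{(1)}(L)$. An irreducible admissible subset $T\subset(\Gm^{\times n})_L$ whose residue field has degree $\leq 1$ over $L$ is, with its reduced structure, a section of the projection $(\Gm^{\times n})_L\to\Spec(L)$, hence the graph of a morphism $f=(a_1,\dots,a_n)\colon\Spec(L)\to\Gm^{\times n}$ with $a_i\in L^\times$. Using the Rost--Schmid (or Gersten) description of Chow--Witt groups with support, exactly as in \chfinitecw, Example~\ref{ex:basic}, and trivialising the graded line bundles $\Lambda(\Gamma_f)$ and $(\omega_{\Gm^{\times n}})_{\Gamma_f}$ by means of the standard coordinates $t_1,\dots,t_n$ of $\Gm^{\times n}$, one obtains an isomorphism $\chst n{\Gamma_f}{(\Gm^{\times n})_L}{\omega_{\Gm^{\times n}}}\simeq\GW(L)$ under which $\grph f=(\Gamma_f)_*\langle 1\rangle$ becomes a unit multiple of $\langle 1\rangle$. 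Since $\GW(L)$ is additively generated by the classes $\langle u\rangle$, $u\in L^\times$, and a general element of $M_n^{(1)}(L)$ is a finite sum of elements each supported on a single such section, it follows that $M_n^{(1)}(L)$ is generated as an abelian group by the elements $\langle u\rangle\cdot\grph{(a_1,\dots,a_n)}$, where $\langle u\rangle\cdot(-)$ denotes the intersection product; this coincides with the $\KMW_0(L)$-module action of \chfinitecw, Example~\ref{ex:action}.

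Next I would prove $\operatorname{Im}(\Phi_L)\subseteq\HMW^{n,n}(L,\ZZ)^{(1)}$. By Morel's presentation of Milnor--Witt $K$-theory (\chfinitecw, \S\ref{sec:MWKtheory}), the relation $[ab]=[a]+[b]+\eta[a][b]$ allows one to eliminate every power of $\eta$ from a monomial of degree $n\geq 1$, so $\KMW_n(L)$ is generated as an abelian group by the symbols $[a_1,\dots,a_n]$ with $a_i\in L^\times$. Since $\Phi_L$ is additive, it is enough to observe that $\Phi_L([a_1,\dots,a_n])=s([a_1])\cdots s([a_n])$ is the class of a correspondence in $M_n^{(1)}(L)$. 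Indeed, by the construction of $\Phi_L$ (\chdmt, Theorem~\ref{thm:gradedringhomo}) together with the description of the cup product through the monoidal structure, $s([a_1])\cdots s([a_n])$ is the image in $\HMW^{n,n}(L,\ZZ)$ of the composite of the external product $\grph{a_1}\otimes\cdots\otimes\grph{a_n}$ with the diagonal correspondence $\grph{\Delta_L}$; by functoriality and monoidality of the graph functor $\tilde\gamma$ (\chfinitecw, \S\ref{subsec:embedding} and Definition~\ref{def:tensor}), this composite equals $\grph{(a_1,\dots,a_n)}$, which lies in $M_n^{(1)}(L)$.

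For the reverse inclusion $\HMW^{n,n}(L,\ZZ)^{(1)}\subseteq\operatorname{Im}(\Phi_L)$, by the first step it suffices to check that each generator $\langle u\rangle\cdot\grph{(a_1,\dots,a_n)}$ has class in $\operatorname{Im}(\Phi_L)$. By the second step, $\grph{(a_1,\dots,a_n)}$ represents $\Phi_L([a_1,\dots,a_n])$. Moreover $\Phi_L$ is a homomorphism of $\KMW_0(L)$-algebras whose restriction to degree $0$ is the canonical isomorphism $\KMW_0(L)\xrightarrow{\sim}\HMW^{0,0}(L,\ZZ)$, since $\Phi_L(\langle u\rangle)=\Phi_L(1+\eta[u])=1+s(\eta)s([u])=\langle u\rangle$ by \chcancellation, Lemma~\ref{lem:example1}; and the intersection-product module structure on $\HMW^{n,n}(L,\ZZ)$ agrees with the one coming from this algebra structure (Section~\ref{sec:ringstructure} and \chfinitecw, Example~\ref{ex:action}). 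Hence the class of $\langle u\rangle\cdot\grph{(a_1,\dots,a_n)}$ equals $\langle u\rangle\cdot\Phi_L([a_1,\dots,a_n])=\Phi_L(\langle u\rangle[a_1,\dots,a_n])\in\operatorname{Im}(\Phi_L)$. Summing over generators gives the inclusion, and combined with the second step this yields the desired equality.

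The main obstacle is the bookkeeping of graded line bundles and orientations. One must verify carefully that the identification $\chst n{\Gamma_f}{(\Gm^{\times n})_L}{\omega_{\Gm^{\times n}}}\simeq\GW(L)$ is canonical enough that $\grph f$ generates the source as a $\GW(L)$-module and that intersection product by $\langle u\rangle$ becomes plain multiplication in $\GW(L)$, and then match this identification both with the cup-product and module structures on $\HMW^{*,*}(L,\ZZ)$ from Section~\ref{sec:ringstructure} and with the external-product-plus-diagonal description of $\Phi_L([a_1,\dots,a_n])$. All of these compatibilities are provided by \chfinitecw and the cancellation theorem of \chcancellation, but tracking the signs coming from the commutativity constraints for graded line bundles through the external product and the diagonal pull-back is where the care is required.
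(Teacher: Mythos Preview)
Your proof is correct and follows essentially the same approach as the paper's. The paper argues more tersely: it notes that $\Phi_L$ factors through $\HMW^{n,n}(L,\ZZ)^{(1)}$ ``by definition'', and for the reverse inclusion observes directly that a correspondence supported on a degree-$1$ extension is determined by a form $\phi\in\KMW_0(L)$ together with an $n$-tuple $(a_1,\dots,a_n)\in(L^\times)^n$, and that this is exactly $\Phi_L(\phi\cdot[a_1,\dots,a_n])$. Your version makes both steps more explicit---reducing $\KMW_n(L)$ to pure symbols via the $\eta$-logarithm relation, and invoking the $\KMW_0(L)$-algebra compatibility through \chcancellation, Lemma~\ref{lem:example1}---but the underlying logic is identical.
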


\begin{proof}
By definition, observe that the homomorphism $\KMW_n(L)\to \HMW^{n,n}(L,\ZZ)$ factors through $\HMW^{n,n}(L,\ZZ)^{(1)}$. Let then $\alpha\in \HMW^{n,n}(L,\ZZ)^{(1)}$. We may suppose that $\alpha$ is the image under the homomorphism $\cork(L,\Gm^{\times n})\to \HMW^{n,n}(L,\ZZ)$ of a correspondence $a$ supported on a field extension $E/L$ of degree $1$, i.e.\ $E=L$. It follows that $a$ is determined by a form $\phi\in \KMW_0(L)$ and a $n$-uple $a_1,\ldots,a_n$ of elements of $L$. This is precisely the image of $\Phi_L(\phi\cdot [a_1,\ldots,a_n])$ under the homomorphism $\KMW_n(L)\to \HMW^{n,n}(L,\ZZ)$.
\end{proof}

\begin{prop}\label{prop:inductionstep}
For any $d\geq 2$, we have $\HMW^{1,1}(L,\ZZ)^{(d)}\subset \HMW^{1,1}(L,\ZZ)^{(d-1)}$.
\end{prop}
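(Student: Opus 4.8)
<br>

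The plan is to follow the classical Nesterenko--Suslin--Totaro strategy, adapted to the Milnor--Witt setting, reducing the degree of the field extensions appearing in the support of a correspondence in $\HMW^{1,1}(L,\ZZ)$. First I would take a class $\alpha\in \HMW^{1,1}(L,\ZZ)^{(d)}$ represented by a finite MW-correspondence $a\in \cork(L,\Gm_L)$ supported on a single closed point $x$ of $\Gm_L$ whose residue field $E=L(x)$ has degree $d$ over $L$ (the general case follows by additivity). Writing $t$ for the coordinate on $\Gm$, the point $x$ corresponds to a monic irreducible polynomial $p(t)\in L[t]$ of degree $d$, and the class of $a$ in $\HMW^{1,1}(L,\ZZ)$ is, up to the canonical orientation of $\omega_{E/L}$ coming from $p'(t)$ (Lemma~\ref{lem:orientation}), of the form $\Tr_{E/L}(\phi\cdot s([\bar t]))$ for some $\phi\in \GW(E)$, where $\bar t$ is the image of $t$ in $E^\times$.

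The heart of the argument is a relation in $\HMW^{1,1}(L,\ZZ)$ obtained from rational functions on $\Gm_L$, exactly as in the proof that the complex $\tZpx 1$ computes $\sKMW_1$ in the right degree. Concretely, I would choose a polynomial $q(t)\in L[t]$ (e.g.\ of the shape $t-a$ or a product of linear factors, chosen so that $p$ divides $q-1$ or so that $q$ has a prescribed behavior at the relevant points) and consider the principal Cartier divisor on $\Aone_L$ or $\PP^1_L$ associated with an explicit rational function built from $p$ and $q$. The push-forward of the corresponding element of $\H^1$ with support to $\Spec(L)$ vanishes (this is the analogue of ``the sum of residues is zero'', using that $p_*:\H^1(\PP^1,\sKMW_1,\omega_{\PP^1/k})\to \sKMW_0(k)$ is well-defined, as in Lemma~\ref{lem:degree}). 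This produces an identity expressing $\alpha$ as a sum of classes supported on closed points of $\Gm_L$ whose residue fields are the other irreducible factors of the relevant polynomials, each of degree $<d$; the arithmetic of degrees works because $\deg q - \deg p < \deg p$ for a suitable choice (this is the Euclidean-algorithm step at the core of Nesterenko--Suslin--Totaro). Compatibility with transfers, which is available here via Proposition~\ref{prop:transe} and the projection formula (Theorem~\ref{thm:projection}), is what lets us manipulate the $\Tr_{E/L}$ terms and keep track of the twisting line bundles.

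The main obstacle, and the reason this is ``maybe the most technical result'' as the authors warn, is precisely the bookkeeping of the symmetric bilinear coefficients and the graded line bundle twists $\omega_{E/L}$ that accompany every transfer. In Voevodsky's setting one only tracks integers, but here each residue computation in the Rost--Schmid complex for $\sKMW_1$ introduces signs $\langle -1\rangle$ and units $\langle u\rangle$ (cf.\ the explicit residue formulas in Lemma~\ref{lem:hopf} and Lemma~\ref{lem:example2}), and one must verify that the cancellations that make the Euclidean step work still occur after accounting for these. I would handle this by reducing, via $\Aone$-homotopies and the relation $1+s(a)s(\eta)=\langle a\rangle$ (Lemma~\ref{lem:example1}), all coefficient manipulations to identities in $\KMW_*$ of the residue fields, and then invoke the compatibility of the forgetful map with the classical Nesterenko--Suslin--Totaro argument on $\KM_*$ together with the Cartesian square relating $\KMW_n$, $\I^n$, $\KM_n$ and $\bar\I^n$ to control the ``Witt part''. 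Once the inclusion $\HMW^{1,1}(L,\ZZ)^{(d)}\subset \HMW^{1,1}(L,\ZZ)^{(d-1)}$ is established, combined with Lemma~\ref{lem:Hnn1} and the exhaustion $\HMW^{1,1}(L,\ZZ)=\bigcup_d \HMW^{1,1}(L,\ZZ)^{(d)}$, surjectivity of $\Phi_L$ in degree $1$ follows, and hence (with Corollary~\ref{cor:splitinjective}) bijectivity.
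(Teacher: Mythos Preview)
Your proposal has the right overall shape---reduce to showing $\Tr_{E/L}(s(b))\in \HMW^{1,1}(L,\ZZ)^{(d-1)}$ for $b\in E^\times$ with $[E:L]=d$---but the mechanism you propose for producing the needed relation is wrong, and this is not a matter of bookkeeping. You suggest using ``sum of residues is zero'' on $\PP^1_L$: a rational function gives a divisor whose push-forward to $\Spec(L)$ vanishes. But that push-forward lands in $\KMW_*(L)$, not in $\HMW^{1,1}(L,\ZZ)$. Relations in $\HMW^{1,1}(L,\ZZ)$, by definition, come \emph{only} from the image of $\partial_0-\partial_1:\tZcbx 1(\Aone_L)\to \tZcbx 1(L)$, i.e.\ from finite MW-correspondences $\Aone_L\to \Gm$. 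To convert a reciprocity relation in $\KMW_1(L)$ into one in $\HMW^{1,1}(L,\ZZ)$ you would need to know that $\Phi_L$ is compatible with transfers, which is Theorem~\ref{thm:respect}---proved \emph{after} and \emph{using} this proposition. So your approach is circular.

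The paper's argument is genuinely different from Nesterenko--Suslin's Euclidean step. Given the minimal polynomial $p(t)=t^d+\ldots+\lambda_0$ of $b$, one writes down the polynomial $f(t)=(t-1)^{d-1}(t-(-1)^d\lambda_0)$, which is monic with $f(0)=p(0)=\lambda_0$, and forms the \emph{linear deformation} $F(u,t)=(1-u)p(t)+uf(t)$. Because the leading and constant coefficients are fixed, $V(F)\subset \Aone_L\times\Gm$ is finite surjective over $\Aone_L$, hence admissible. The element $\langle\omega\rangle[F]\cdot dt\in \KMW_1(L(u,t),\omega_{L(u,t)/L(u)})$---with $\omega$ the derivative term fixing the canonical orientation---has residue supported on $V(F)$ together with the vanishing loci of the irreducible factors $q_j$ of $\omega$ (each of degree $\le d-1$), and this residue \emph{is} a finite MW-correspondence $\alpha\in\cork(\Aone_L,\Gm)$. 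Now $\alpha(0)=\alpha(1)$ in $\HMW^{1,1}(L,\ZZ)$; at $u=0$ one recovers $\langle 1\rangle\cdot\xi$ supported on $V(p)$ (this is $\Tr_{E/L}(s(b))$) plus terms on the $q_j$, while at $u=1$ the support is on $V(f)$ (roots $1$ and $\pm\lambda_0$, all rational) and the $q_j$. So $\Tr_{E/L}(s(b))$ lies in $\HMW^{1,1}(L,\ZZ)^{(d-1)}$. Neither the Cartesian square for $\KMW_n$ nor Lemma~\ref{lem:example1} plays any role; the whole argument is a single explicit $\Aone$-homotopy.
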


\begin{proof}
By definition, $\HMW^{1,1}(L,\ZZ)^{(d)}$ is generated by correspondences whose supports are field extensions $E/L$ of degree at most $d$. Such correspondences are determined by an element $a\in E^\times$ given by the composite $\Spec(E)\to \Gmx L\to \Gm$ together with a form $\phi\in \KMW_0(E,\omega_{E/L})$ given by the isomorphism 
\[
\KMW_0(E,\omega_{E/L})\to \chst 1{\Spec(E)}{\Gmx L}{\omega_{\Gmx L/L}}.
\]
We denote this correspondence by the pair $(a,\phi)$. Recall from \chfinitecw, Lemma~\ref{lem:orientation} that there is a canonical orientation $\xi$ of $\omega_{E/L}$ and thus a canonical element $\chi\in \cork\big(\Spec(L),\Spec(E)\big)$ yielding the transfer map 
\[
\Tr_{E/L}:\cork(\Spec(E),\Gm)\to \cork(\Spec(L),\Gm)
\]
which is just the composition with $\chi$ (\chfinitecw, Example~\ref{ex:push-forwards}). Now $\phi=\psi\cdot \xi$ for $\psi\in \KMW_0(E)$, and it is straightforward to check that the Chow-Witt correspondence $(a,\psi)$ in $\cork(\Spec(E),\Gm)$ determined by $a\in E^\times$ and $\psi\in \KMW_0(E)$ satisfies $\Tr_{E/L}(a,\psi)=(a,\phi)$. Now $(a,\psi)\in \HMW^{1,1}(E,\ZZ)^{(1)}$ and therefore belongs to the image of the homomorphism $\KMW_1(E)\to \HMW^{1,1}(E,\ZZ)$. There exists thus $a_1,\ldots,a_n,b_1,\ldots,b_m\in E^\times$ (possibly equal) such that $(a,\psi)=\sum s(a_i)-\sum s(b_j)$. To prove the lemma, it suffices then to show that $\Tr_{E/L}(s(b))\in \HMW^{1,1}(L,\ZZ)^{(d-1)}$ for any $b\in E^\times$. 

Let thus $b\in E^\times$. By definition, $s(b)\in \HMW^{1,1}(E,\ZZ)$ is the class of the correspondence $\tilde\gamma(b)$ associated to the morphism of schemes $\Spec(E)\to \Gm$ corresponding to $b$. If $F(b)\subset E$ is a proper subfield, we see that $\Tr_{E/L}(s(b))\in \HMW^{1,1}(F,\ZZ)^{(d-1)}$, and we may thus suppose that the minimal polynomial $p$ of $b$ over $F$ is of degree $d$. By definition, $\Tr_{E/L}(s(b))$ is then represented by the correspondence associated to the pair $(b,\langle 1\rangle\cdot \xi)$. Consider the total residue homomorphism (twisted by the vector space $\omega_{F[t]/k}\otimes \omega_{F/k}^\vee$)
\begin{equation}\label{eq:residue}
\partial:\KMW_1(F(t),\omega_{F(t)/k}\otimes \omega_{F/k}^\vee)\to \bigoplus_{x\in \Gmx F^{(1)}} \KMW_0(F(x),(\mathfrak m_x/\mathfrak m_x^2)^\vee \otimes _{F[t]} \omega_{F[t]/k}\otimes \omega_{F/k}^\vee)
\end{equation}
where $\mathfrak m_x$ is the maximal ideal corresponding to $x$. Before working further with this homomorphism, we first identify $(\mathfrak m_x/\mathfrak m_x^2)^\vee \otimes _{F[t]} \omega_{F[t]/k}\otimes \omega_{F/k}^\vee$. Consider the canonical exact sequence of $F(x)$-vector spaces
\[
\mathfrak m_x/\mathfrak m_x^2\to \Omega_{F[t]/k}\otimes_{F[t]} F(x) \to \Omega_{F(x)/k}\to 0.
\]
A comparison of the dimensions shows that the sequence is also left exact (use the fact that $\Spec(F(x))$ is the localization of a smooth scheme of dimension $d$ over the perfect field $k$), and we thus get a canonical isomorphism
\[
\omega_{F[t]/k}\otimes_{F[t]} F(x)\simeq \mathfrak m_x/\mathfrak m_x^2 \otimes_{F(x)}\omega_{F(x)/k}.
\]
It follows that
\[
(\mathfrak m_x/\mathfrak m_x^2)^\vee \otimes _{F[t]} \omega_{F[t]/k}\otimes \omega_{F/k}^\vee\simeq \omega_{F(x)/k}\otimes_F \omega_{F/k}^\vee.
\] 
We can thus rewrite the residue homomorphism \eqref{eq:residue} as a homomorphism
\[
\partial:\KMW_1\big(F(t),\omega_{F(t)/k}\otimes \omega_{F/k}^\vee\big)\to \bigoplus_{x\in (\Aone_F\setminus 0)^{(1)}} \hspace{-3ex}\KMW_0\big(F(x),\omega_{F(x)/k}\otimes_F \omega_{F/k}^\vee\big)
\]
Moreover, an easy dimension count shows that the canonical exact sequence
\[
\Omega_{F/k}\otimes F[t]\to \Omega_{F[t]/k}\to \Omega_{F[t]/F}\to 0
\]
is also exact on the left, yielding a canonical isomorphism $\omega_{F(t)/k}\simeq \omega_{F/k}\otimes \omega_{F(t)/F}$ and thus a canonical isomorphism $\omega_{F/k}^\vee\otimes \omega_{F(t)/k}\simeq \omega_{F(t)/F}$.
If $n$ is the transcendance degree of $F$ over $k$, we see that the canonical isomorphism 
\[
\omega_{F/k}^\vee\otimes \omega_{F(t)/k}\simeq\omega_{F(t)/k}\otimes \omega_{F/k}^\vee
\] 
is equal to $(-1)^{n(n+1)}$-times the switch isomorphism, i.e.\ is equal to the switch isomorphism. Altogether, the residue homomorphism reads as 
\[
\partial:\KMW_1\big(F(t),\omega_{F(t)/F}\big)\to \bigoplus_{x\in (\Aone_F\setminus 0)^{(1)}} \hspace{-3ex}\KMW_0\big(F(x),\omega_{F(x)/k}\otimes_F \omega_{F/k}^\vee\big).
\]

Let now $p(t)\in F[t]$ be the minimal polynomial of $b$ over $F$.
Following \cite{Morel12_d}*{Definition~4.26} (or \chfinitecw, \S\ref{sec:cohomological}), write $p(t)=p_0(t^{l^m})$ with $p_0$ separable and set $\omega=p_0^\prime(t^{l^m})\in F[t]$ if $\charac k=l$. If $\charac k=0$, set $\omega=p^\prime(t)$. It is easy to see that the element $\langle \omega\rangle [p]\cdot dt$ of $\KMW_1(F(t),\omega_{F(t)/F})$ ramifies in $b\in \Gmx F^{(1)}$ and on (possibly) other points corresponding to field extensions of degree $\leq d-1$. Moreover, the residue at $b$ is exactly $\langle 1\rangle \cdot \xi$, where $\xi$ is the canonical orientation of $\omega_{F(b)/F}$.

Write the minimal polynomial $p(t)\in F[t]$ of $b$ as $p=\sum_{i=0}^d \lambda_i t^i$ with $\lambda_d=1$ and $\lambda_0\in F^\times$, and decompose $\omega=c\prod_{j=1}^n q_j^{m_j}$, where $c\in F^\times$ and $q_j\in F[t]$ are irreducible monic polynomials. Let $f=(t-1)^{d-1}(t-(-1)^{d}\lambda_0) \in F[t]$. Observe that $f$ is monic and satisfies $f(0)=p(0)$. Let $F(u,t)=(1-u)p+uf$. Since $f$ and $p$ are monic and have the same constant terms, it follows that $F(u,t)=t^d+\ldots+\lambda_0$ and therefore $F$ defines an element of $\Adm(\Aone_F,\Gm)$. For the same reason, every $q_j$ (seen as a polynomial in $F[u,t]$ constant in $u$) defines an element in $\Adm(\Aone_F,\Gm)$. The image of $\langle\omega\rangle [F]\cdot dt \in \KMW_1(F(u,t),\omega_{F(u,t)/F(u)})$ under the residue homomorphism
\[
\partial:\KMW_1\big(F(u,t),\omega_{F(u,t)/F(u)}\big)\to \hspace{-3ex} \bigoplus_{x\in (\Aone_F\times_k \Gm)^{(1)}} \hspace{-3ex}\KMW_0\big(F(x),(\mathfrak m_x/\mathfrak m_x^2)^\vee\otimes _{F[t]}\omega_{F[u,t]/F[u]}\big)
\]
is supported on the vanishing locus of $F$ and the $g_j$, and it follows that it defines a finite Chow-Witt correspondence $\alpha$ in $\cork(\Aone_F,\Gm)$. The evaluation $\alpha(0)$ of $\alpha$ at $u=0$ consists from $\langle 1\rangle \cdot \xi$ and correspondences supported on the vanishing locus of the $q_j$, while $\alpha(1)$ is supported on the vanishing locus of $f$ and the $q_j$. As $\alpha(0)$ and $\alpha(1)$ yield the same class in $\HMW^{1,1}(L,\ZZ)$, we may write $\langle 1\rangle \cdot \xi$ as a sum (and difference) of correspondences supported on the vanishing loci of the $q_j$ and $f$. As the degree of the polynomials $q_j$ are $\leq d-1$ and $f$ vanishes at $1$ and $\pm \lambda_0$, $\langle 1\rangle \cdot \xi$ is then an element of $\HMW^{1,1}(L,\ZZ)^{(d-1)}$.
\end{proof}

\begin{coro}\label{cor:degree1}
The homomorphism 
\[
\Phi_L: \KMW_1(L)\to \HMW^{1,1}(L,\ZZ)
\]
is an isomorphism for any finitely generated field extension $L/k$.
\index{comparison theorem}%
\end{coro}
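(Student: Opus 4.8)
The plan is to combine the split injectivity of $\Phi_L$ furnished by Corollary~\ref{cor:splitinjective} with a surjectivity argument organized around the degree filtration $\HMW^{1,1}(L,\ZZ)^{(d)}$ and the reduction step of Proposition~\ref{prop:inductionstep}. Since $\Phi_L$ is already known to be an isomorphism in degrees $\leq 0$ (as recorded just before the statement), all that remains is the degree $1$ case, and there the work splits cleanly into an easy injectivity half and a surjectivity half that is entirely formal once Proposition~\ref{prop:inductionstep} is in hand.

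First I would dispose of injectivity: by Corollary~\ref{cor:splitinjective} the homomorphism $\Phi_L \colon \bigoplus_{n \in \ZZ} \KMW_n(L) \to \bigoplus_{n \in \ZZ} \HMW^{n,n}(L,\ZZ)$ admits $\theta_L$ as a left inverse, and in particular $\theta_L \circ \Phi_L = \id$ in degree $1$, so $\Phi_L \colon \KMW_1(L) \to \HMW^{1,1}(L,\ZZ)$ is split injective.

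For surjectivity, recall that every finite MW-correspondence $\Spec(L) \to \Gm$ has support a finite union of finite field extensions of $L$, each of some finite degree, so that $\HMW^{1,1}(L,\ZZ) = \bigcup_{d \geq 1} \HMW^{1,1}(L,\ZZ)^{(d)}$. It therefore suffices to prove by induction on $d \geq 1$ that $\HMW^{1,1}(L,\ZZ)^{(d)}$ is contained in the image of $\Phi_L$. The base case $d = 1$ is exactly Lemma~\ref{lem:Hnn1}, which identifies $\HMW^{1,1}(L,\ZZ)^{(1)}$ with the image of $\Phi_L$. For the inductive step with $d \geq 2$, Proposition~\ref{prop:inductionstep} gives the inclusion $\HMW^{1,1}(L,\ZZ)^{(d)} \subseteq \HMW^{1,1}(L,\ZZ)^{(d-1)}$, and the inductive hypothesis places the right-hand side inside the image of $\Phi_L$. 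Hence $\HMW^{1,1}(L,\ZZ)$ equals the image of $\Phi_L$, so $\Phi_L$ is surjective; combined with injectivity, $\Phi_L \colon \KMW_1(L) \to \HMW^{1,1}(L,\ZZ)$ is an isomorphism.

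I do not expect any genuine obstacle in carrying out this corollary: the hard technical content — the explicit residue computation with $\langle \omega \rangle [F]\cdot dt$ and the $\Aone$-homotopy deforming the minimal polynomial $p$ to $f = (t-1)^{d-1}\bigl(t - (-1)^d \lambda_0\bigr)$ — is already packaged into Proposition~\ref{prop:inductionstep}, and the left inverse $\theta_L$ into Proposition~\ref{prop:left} and Corollary~\ref{cor:splitinjective}. The only mild care needed is to confirm that the union $\HMW^{1,1}(L,\ZZ) = \bigcup_d \HMW^{1,1}(L,\ZZ)^{(d)}$ and the nesting of the subgroups are as asserted, which is immediate from the definition of the support of a finite MW-correspondence in \chfinitecw.
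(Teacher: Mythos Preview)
Your proposal is correct and matches the paper's proof essentially verbatim: split injectivity from Corollary~\ref{cor:splitinjective}, then surjectivity by combining Lemma~\ref{lem:Hnn1} (image equals $\HMW^{1,1}(L,\ZZ)^{(1)}$) with Proposition~\ref{prop:inductionstep} to collapse the filtration $\HMW^{1,1}(L,\ZZ)^{(d)}$ down to $\HMW^{1,1}(L,\ZZ)^{(1)}$. The paper states the collapse directly rather than phrasing it as an induction on $d$, but the content is identical.
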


\begin{proof}
We know that the homomorphism is (split) injective. By Lemma \ref{lem:Hnn1}, $\Phi_L$ factors as
\[
\KMW_1(L)\twoheadrightarrow \HMW^{1,1}(L,\ZZ)^{(1)}\subset  \HMW^{1,1}(L,\ZZ)
\]
The above proposition shows that $\HMW^{1,1}(L,\ZZ)=\HMW^{1,1}(L,\ZZ)^{(1)}$ and it follows that $\Phi_L$ is surjective.
\end{proof}

We can now prove that $\Phi$ respects transfers following \cite{Mazza06_d}*{Lemma~5.11} and \cite{Neshitov18_d}*{Lemma~9.5}.

\begin{thm}\label{thm:respect}
Let $n\in\NN$ and let $F/L$ be a finite field extension. Then the following diagram commutes
\[
\begin{tikzcd}
\KMW_n(F) \ar[r,"{\Phi_F}"] \ar[d,"{\Tr_{F/L}}"'] & \HMW^{n,n}(F,\ZZ) \ar[d,"{\Tr_{F/L}}"] \\
\KMW_n(L) \ar[r,"{\Phi_L}"] & \HMW^{n,n}(L,\ZZ). 
\end{tikzcd}
\]
\end{thm}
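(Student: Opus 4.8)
The strategy mirrors \cite{Mazza06_d}*{Lemma~5.11} and \cite{Neshitov18_d}*{Lemma~9.5}: one reduces the commutativity to a computation that can be checked on generators. The plan is to compose the two vertical maps with the left inverse $\theta$ constructed in Section~\ref{sec:left} and use the fact (Corollary~\ref{cor:degree1} and, more generally, the already-established iso in degrees $\le 0$) that $\Phi$ is split injective with left inverse $\theta$; this turns the identity $\Tr_{F/L}\circ\Phi_F=\Phi_L\circ \Tr_{F/L}$ into the identity $\Tr_{F/L}=\theta_L\circ \Phi_L\circ \Tr_{F/L}$, which follows provided we know $\theta$ is compatible with transfers. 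But that is exactly the content of Proposition~\ref{prop:transe}. So the real work is to show that applying $\theta_L$ to $\Phi_L(\Tr_{F/L}(\alpha))$ recovers $\Tr_{F/L}(\alpha)$ for a generator $\alpha\in\KMW_n(F)$, equivalently that $\Phi_L\circ\Tr_{F/L}$ and $\Tr_{F/L}\circ\Phi_F$ become equal after applying $\theta_L$, and then to upgrade this to an equality before applying $\theta_L$.

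First I would reduce to the case where $F=L(x)$ is monogeneous, using the transitivity of transfers in Milnor--Witt $K$-theory (the cohomological transfers of \chfinitecw, \S\ref{sec:cohomological} are transitive by \cite{Morel12_d}*{Theorem~4.27}) and the functoriality of the push-forward $\Tr_{F/L}$ on MW-motivic cohomology coming from the composition law $\alpha(g\circ f)=\alpha(f,\cdot)\circ\alpha(g,\cdot)$ for the transpose-graph correspondences of \chfinitecw, Example~\ref{ex:push-forwards}. Then I would further reduce, as in \emph{loc.\ cit.}, to checking the statement on symbols of the form $\{a_1,\dots,a_{n-1},b\}$ with $a_i\in L^\times$ and $b\in F^\times$: using the projection formula for MW-motivic cohomology (Theorem~\ref{thm:projection}) together with its Milnor--Witt $K$-theory analogue (the Frobenius reciprocity $\Tr_{F/L}(\Tr_{F/L}^*(u)\cdot v)=u\cdot\Tr_{F/L}(v)$), both sides of the square are $\KMW_*(L)$-linear, so it suffices to treat $n=1$, i.e.\ the equality $\Tr_{F/L}(\Phi_F([b]))=\Phi_L(\Tr_{F/L}([b]))$ for $b\in F^\times$.

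For $n=1$ the left-hand side is represented by the finite MW-correspondence obtained by composing the graph $\tilde\gamma(b)\colon\Spec(F)\to\Gmk$ with the canonical transpose-graph correspondence $\chi\colon\Spec(L)\to\Spec(F)$ of Lemma~\ref{lem:orientation}; this is exactly the pair $(b,\langle 1\rangle\cdot\xi)$ in the notation of the proof of Proposition~\ref{prop:inductionstep}. The right-hand side is $\Phi_L$ applied to the cohomological transfer $\Tr_{F/L}([b])\in\KMW_1(L)$, which by Lemma~\ref{lem:geometrictransfer} and Lemma~\ref{lem:trace} is computed by the split exact sequence for $\KMW_*$ over $\Aone$ together with the residue at infinity and a twist by $\langle\omega_0(x)\rangle$. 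The plan is to unravel both descriptions through the identification $\HMW^{1,1}(L,\ZZ)=\mathrm{coker}(\partial_0-\partial_1\colon\tZcbx 1(\Aone_L)\to\tZcbx 1(L))$ and to produce an explicit element of $\tZcbx 1(\Aone_L)$ (a rational function on $\Aone_L\times\Gmk$) whose boundary is the difference of the two cycles, exactly in the style of the homotopy built in the proof of Proposition~\ref{prop:inductionstep} (using the minimal polynomial of $b$, its $\omega_0$-twist, and the polynomial $f=(t-1)^{d-1}(t-(-1)^d\lambda_0)$). Concretely, I expect to show directly that $\theta_L$ of both sides equals $\Tr_{F/L}([b])$ — for the right side this is $\theta_L\Phi_L=\id$, for the left side this is Proposition~\ref{prop:transe} applied to the correspondence $(b,\langle 1\rangle\cdot\xi)$ — and then conclude by split injectivity of $\Phi_L$ in degree $1$ (Corollary~\ref{cor:degree1}). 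This last conclusion is clean, so the only genuinely delicate point is the bookkeeping of twists by canonical modules and the sign conventions in identifying $\omega_0(x)$ with the canonical orientation $\xi$ of $\omega_{F/L}$; the hard part will be matching the geometric transpose-graph description of $\Tr_{F/L}$ on $\HMW^{1,1}$ with Morel's cohomological transfer formula on $\KMW_1$ under the map $\theta_L$, i.e.\ verifying that the push-forward appearing in the definition of $\theta_L$ intertwines the two.
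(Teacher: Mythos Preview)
Your plan has a genuine gap in the reduction from degree $n$ to degree $1$. You propose to use the projection formula to reduce to symbols $[a_1,\dots,a_{n-1},b]$ with $a_i\in L^\times$ and $b\in F^\times$, but such elements do \emph{not} generate $\KMW_n(F)$ in general, not even for monogeneous extensions. The relevant generation statement is \cite{Morel12_d}*{Lemma~3.25} (the Milnor--Witt analogue of the Bass--Tate lemma), and it requires that $[F:L]=p$ be prime \emph{and} that $L$ have no finite extensions of degree prime to $p$. Without that hypothesis your projection-formula reduction simply does not get off the ground.

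The paper's proof follows the classical norm-argument pattern to work around exactly this obstacle: first reduce to $[F:L]=p$ prime; next treat the special case where $L$ has no prime-to-$p$ extensions, where Morel's lemma does apply and your projection-formula reduction to $n=1$ goes through (using Corollary~\ref{cor:degree1}, as you suggest); finally, for general $L$, set $t(\alpha)=\Tr_{F/L}\Phi_F(\alpha)-\Phi_L\Tr_{F/L}(\alpha)$ and show $t(\alpha)=0$ by a prime-to-$p$ versus $p$ torsion argument. Pulling back to the maximal prime-to-$p$ extension $L'$ kills $t(\alpha)$ by the special case, so $m_\epsilon\, t(\alpha)=0$ for some $m$ prime to $p$; on the other hand one shows $f^*t(\alpha)=0$ (where $f:\Spec F\to\Spec L$), by computing $f^*f_*$ as $p_\epsilon$ in the purely inseparable case (Example~\ref{ex:inseparable}) and by an induction on degree via the decomposition $F\otimes_L F=\prod F_i$ in the separable case. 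Since $(p,m)=1$, this forces $t(\alpha)=0$. Your observation that the $n=1$ case follows cleanly from Proposition~\ref{prop:transe} plus Corollary~\ref{cor:degree1} is correct and is used as the base of the argument; what is missing from your outline is the entire norm/corestriction machinery that bridges the special case to the general one.
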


\begin{proof}
First, we know from Proposition \ref{prop:transe} that the diagram
\[
\begin{tikzcd}
\HMW^{n,n}(F,\ZZ) \ar[d,"{\Tr_{F/L}}"'] \ar[r,"{\theta_F}"] & \KMW_n(F) \ar[d,"{\Tr_{F/L}}"] \\
\HMW^{n,n}(L,\ZZ) \ar[r,"{\theta_L}"] & \KMW_n(L).
\end{tikzcd}
\]
commutes. If $\Phi_F$ and $\Phi_L$ are isomorphisms, it follows from Corollary \ref{cor:splitinjective} that $\theta_F$ and $\theta_L$ are their inverses and thus that the diagram
\[
\begin{tikzcd}
\KMW_n(F) \ar[r,"{\Phi_F}"] \ar[d,"{\Tr_{F/L}}"'] & \HMW^{n,n}(F,\ZZ) \ar[d,"{\Tr_{F/L}}"] \\
\KMW_n(L) \ar[r,"{\Phi_L}"] & \HMW^{n,n}(L,\ZZ). 
\end{tikzcd}
\]
also commutes. We may then suppose, using Corollary \ref{cor:degree1} that $n\geq 2$. Additionally, we may suppose that $[F:L]=p$ for some prime number $p$. Following \cite{Mazza06_d}*{Lemma~5.11}, we first assume that $L$ has no field extensions of degree prime to $p$. In that case, it follows from \cite{Morel12_d}*{Lemma~3.25} that $\KMW_n(F)$ is generated by elements of the form $\eta^{m}[a_1,a_2,\ldots,a_{n+m}]$ with $a_1\in F^\times$ and $a_i\in L^{\times}$ for $i\geq 2$. We conclude from the projection formula \ref{thm:projection}, its analogue in Milnor-Witt $K$-theory and the fact that $\Phi$ is a ring homomorphism that the result holds in that case.

Let's now go back to the general case, i.e.\ $[F:L]=p$ without further assumptions. Let $L^\prime$ be the maximal prime-to-$p$ field extension of $L$.  Let $\alpha\in \HMW^{n,n}(L,\ZZ)$ be such that its pull-back to $\HMW^{n,n}(L^\prime,\ZZ)$ vanishes. It follows then that there exists a finite field extension $E/L$ of degree $m$ prime to $p$ such that the pull-back of $\alpha$ to $\HMW^{n,n}(E,\ZZ)$ is trivial. Let $f:\Spec(E)\to \Spec(L)$ be the corresponding morphism. For any unit $b\in E^\times$, we have $\langle b\rangle\cdot f^*(\alpha)=0$ and it follows from the projection formula once again that $f_*(\langle b\rangle\cdot f^*(\alpha))=f_*(\langle b\rangle)\cdot \alpha=0$. We claim that there is a unit $b\in E^\times$ such that $f_*(\langle b\rangle)=m_{\epsilon}$. Indeed, we can consider the factorization $L\subset F^{sep}\subset E$ where $F^{sep}$ is the separable closure of $L$ in $E$ and the extension $F^{sep}\subset E$ is purely inseparable. If the claim holds for each extension, then it holds for $L\subset E$. We may thus suppose that the extension is either separable or purely inseparable. In the first case, the claim follows from \cite{Se68_d}*{Lemme 2} while the second case follows from \cite{Fasel08_d}*{Théorème~6.4.13}. Thus, for any $\alpha\in \HMW^{n,n}(L,\ZZ)$ vanishing in $\HMW^{n,n}(L^\prime,\ZZ)$ there exists $m$ prime to $p$ such that $m_{\epsilon}\alpha=0$.

Let now $\alpha\in \KMW_n(F)$ and  $t(\alpha)=(\Tr_{F/L}\circ \Phi_F-\Phi_L\circ \Tr_{F/L})(\alpha)\in \HMW^{n,n}(L,\ZZ)$. Pulling back to $L^\prime$ and using the previous case, we find that $m_{\epsilon}t(\alpha)=0$. On the other hand, the above arguments show that if the pull-back of $t(\alpha)$ to $F$ is trivial, then $p_{\epsilon}t(\alpha)=0$ and thus $t(\alpha)=0$ as $(p,m)=1$. Thus, we are reduced to show that $f^*(t(\alpha))=0$ where $f:\Spec(F)\to \Spec(L)$ is the morphism corresponding to $L\subset F$. 

Suppose first that $F/L$ is purely inseparable. In that case, we know from Example \ref{ex:inseparable} that $f^*f_*:\HMW^{n,n}(F,\ZZ)\to \HMW^{n,n}(F,\ZZ)$ is multiplication by $p_{\epsilon}$. The same property holds for Milnor-Witt $K$-theory. This is easily checked using the definition of the transfer, or alternatively using Proposition \ref{prop:transe}, the fact that $\theta_F$ is surjective and Example \ref{ex:inseparable}. Altogether, we see that $f^*t(\alpha)=p_{\epsilon}\Phi_F(\alpha)-\Phi_F(p_\epsilon\cdot \alpha)$ and therefore $f^*t(\alpha)=0$ since $\Phi_F$ is $\KMW_0(F)$-linear.

Suppose next that $F/L$ is separable. In that case, we have $F\otimes_L F=\prod_i F_i$ for field extensions $F_i/F$ of degree $\leq p-1$. We claim that the diagrams
\[
\begin{tikzcd}
\KMW_n(F) \ar[d,"{\Tr_{F/L}}"'] \ar[r] & \oplus_i \KMW_n(F_i) \ar[d,"{\sum \Tr_{F_i/F}}"] \\
\KMW_n(L) \ar[r] & \KMW_n(F)
\end{tikzcd}
\]
and 
\[
\begin{tikzcd} \HMW^{n,n}(F,\ZZ) \ar[d,"{\Tr_{F/L}}"'] \ar[r] & \oplus_i \HMW^{n,n}(F_i,\ZZ) \ar[d,"{\sum \Tr_{F_i/F}}"] \\
\HMW^{n,n}(L,\ZZ) \ar[r] & \HMW^{n,n}(F,\ZZ)
\end{tikzcd}
\]
commute. The second one follows from Example \ref{ex:inseparable} and the first one from \cite{Neshitov18_d}*{Lemma~9.4} (or alternatively from Proposition \ref{prop:transe}, the fact that $\theta_F$ is surjective and Example \ref{ex:inseparable}). By induction, each of the diagrams
\[
\begin{tikzcd}
\KMW_n(F_i) \ar[r,"{\Phi_{F_i}}"] \ar[d,"{\Tr_{F_i/F}}"'] & \HMW^{n,n}(F_i,\ZZ) \ar[d,"{\Tr_{F_i/F}}"] \\
\KMW_n(F) \ar[r,"{\Phi_F}"] & \HMW^{n,n}(F,\ZZ). 
\end{tikzcd}
\]
commute, and it follows that $f^*(t(\alpha))=0$.
\end{proof}

\begin{thm}\label{thm:comparison}
The homomorphism
\[
\Phi_L:\KMW_n(L)\to \HMW^{n,n}(L,\ZZ)
\]
is an isomorphism for any $n\in\ZZ$ and any finitely generated field extension $L/k$. 
\end{thm}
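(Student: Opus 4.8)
The plan is to reduce the statement for $n\geq 2$ to the cases already handled together with the transfer compatibility established in Theorem \ref{thm:respect}. For $n\leq 0$ the isomorphism has been verified above (degree $0$ via \chfinitecw, Example~\ref{exm:negativecohomology} and \chcancellation, Lemma~\ref{lem:example1}; negative degrees via $\HMW^{p,p}(L,\ZZ)=\W(L)=\KMW_{-p}(L)$, using \chdmt, Proposition~\ref{prop:explicit}), and for $n=1$ it is Corollary \ref{cor:degree1}. So I would only need to treat $n\geq 2$, and there, by Corollary \ref{cor:splitinjective}, $\Phi_L$ is split injective; it remains to prove surjectivity.

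First I would recall that Lemma \ref{lem:Hnn1}, applied over an arbitrary finitely generated field extension $E/k$, identifies $\HMW^{n,n}(E,\ZZ)^{(1)}$ with the image of $\Phi_E$. Now take an arbitrary class $\alpha\in\HMW^{n,n}(L,\ZZ)$ and represent it by a finite MW-correspondence $c\in\cork(L,\Gm^{\times n})$. Since the base is a point, the support of $c$ is a finite disjoint union of closed points $\Spec(E_i)$ with $E_i/L$ finite, so $c=\sum_i c_i$ with $c_i$ supported on $\Spec(E_i)$. Exactly as in the proof of Proposition \ref{prop:inductionstep}, the canonical orientation of $\omega_{E_i/L}$ furnished by \chfinitecw, Lemma~\ref{lem:orientation} lets me write $c_i=\Tr_{E_i/L}(c_i')$, where $c_i'\in\cork(E_i,\Gm^{\times n})$ is the ``rational'' correspondence determined by a form in $\KMW_0(E_i)$ and the units $E_i\to\Gm$ read off from $c_i$, so that its class $\beta_i$ lies in $\HMW^{n,n}(E_i,\ZZ)^{(1)}=\mathrm{Im}(\Phi_{E_i})$; write $\beta_i=\Phi_{E_i}(x_i)$ with $x_i\in\KMW_n(E_i)$. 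Theorem \ref{thm:respect}, applied to $E_i/L$ in degree $n$, then gives $[c_i]=\Tr_{E_i/L}(\beta_i)=\Tr_{E_i/L}(\Phi_{E_i}(x_i))=\Phi_L(\Tr_{E_i/L}(x_i))$. Summing over $i$ shows $\alpha\in\mathrm{Im}(\Phi_L)$, which together with split injectivity proves $\Phi_L$ is an isomorphism for all $n\in\ZZ$.

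For the accompanying sheaf-level assertion $\Phi\colon\sKMW_*\to\bigoplus_{n}\shHMW^{n,n}$, I would argue that both sides are strictly $\Aone$-invariant Nisnevich sheaves --- the target by \chdmt, Proposition~\ref{prop:exist_associated-W-t-sheaf} and Theorem~\ref{thm:Wsh&A1} --- so by Morel's criterion (\cite{Morel12_d}*{Definition~2.1, Remark~2.3, Theorem~2.11}) the morphism $\Phi$ is an isomorphism of sheaves as soon as it induces an isomorphism on every finitely generated field extension $L/k$, which is precisely what has just been shown.

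The hard part is not this last assembly, which is essentially formal, but the two inputs it rests on: the explicit computation of the weight-one cohomology group in Proposition \ref{prop:inductionstep}, which substitutes for an acyclicity statement about the kernel of $\tZpx 1\to\sKMW_1[-1]$ that is not available in this setting, and the verification that $\Phi$ respects transfers (Theorem \ref{thm:respect}), whose proof proceeds by a prime-to-$p$ reduction and relies on the projection formula of Theorem \ref{thm:projection}.
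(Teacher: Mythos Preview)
Your proof is correct and follows essentially the same approach as the paper: reduce to surjectivity via Corollary \ref{cor:splitinjective}, decompose a correspondence by its support into pieces living over finite extensions $E_i/L$, recognize each piece as a transfer of something in $\HMW^{n,n}(E_i,\ZZ)^{(1)}=\mathrm{Im}(\Phi_{E_i})$, and conclude via Theorem \ref{thm:respect}. The paper's own proof is slightly terser (it treats a single support component and refers back to the argument of Proposition \ref{prop:inductionstep}), but the content is the same.
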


\begin{proof}
As in degree $1$, it suffices to prove that $\Phi_L$ is surjective. Let then $\alpha\in \cork(L,\Gm^n)$ be a finite Chow-Witt correspondence supported on $\Spec(F)\subset (\Aone_L)^n$. Such a correspondence is determined by an $n$-uple $(a_1,\ldots,a_n)\in (F^\times)^n$ together with a bilinear form $\phi\in \GW(F,\omega_{F/L})$. Arguing as in Proposition \ref{prop:inductionstep}, we see that such a finite MW-correspondence is of the form $\Tr_{F/L}(\Phi_F(\beta))$ for some $\beta\in \KMW_n(F)$. The result now follows from Theorem \ref{thm:respect}.
\end{proof}

\begin{bibsection}
\begin{biblist}

\bib{Del87_d}{inproceedings}{
      author={Deligne, P.},
       title={Le d\'eterminant de la cohomologie},
organization={AMS},
        date={1987},
   booktitle={Current {T}rends in {A}rithmetical {A}lgebraic {G}eometry},
      editor={Ribet, K.~A.},
      series={Contemporary Mathematics},
      volume={67},
}

\bib{Fasel08_d}{article}{
      author={Fasel, J.},
       title={Groupes de {C}how-{W}itt},
        date={2008},
        ISSN={0249-633X},
     journal={M\'em. Soc. Math. Fr. (N.S.)},
      volume={113},
       pages={viii+197},
}

\bib{Mazza06_d}{book}{
      author={Mazza, C.},
      author={Voevodsky, V.},
      author={Weibel, C.},
       title={Lecture notes on motivic cohomology},
      series={Clay Mathematics Monographs},
   publisher={American Mathematical Society},
     address={Providence, RI},
        date={2006},
      volume={2},
        ISBN={978-0-8218-3847-1; 0-8218-3847-4},
}

\bib{Morel12_d}{book}{
      author={Morel, F.},
       title={$\mathbb {A}^1$-{A}lgebraic {T}opology over a {F}ield},
      series={Lecture Notes in Math.},
   publisher={Springer},
     address={New York},
        date={2012},
      volume={2052},
}

\bib{Neshitov18_d}{article}{
      author={Neshitov, A.},
       title={Framed correspondences and {M}ilnor-{W}itt {$K$}-theory},
        date={2018},
     journal={Journal of the Institute of Mathematics of Jussieu},
      volume={17},
      number={4},
       pages={823\ndash 852},
}

\bib{Nesterenko90}{article}{
	author={Nesterenko, Yu. P.},
	author={Suslin, A. A.},
	title={Homology of the general linear group over a local ring, and Milnor's $K$-theory},
	date={1990},
	journal={Math. USSR-Izv.},
	volume={34},
	number={1},
	pages={121\ndash 145},
}

\bib{Se68_d}{book}{
      author={Serre, J.-P.},
       title={Corps locaux},
      series={Publications de l'institut math\'ematique de l'universit\'e de
  Nancago},
        date={1968},
}

\bib{Totaro92}{article}{
	author={Totaro, B.},
	title={Milnor $K$-theory is the simplest part of algebraic $K$-theory},
	date={1992},
	journal={$K$-theory},
	volume={6},
	number={2},
	pages={177\ndash 189},
}

\end{biblist}
\end{bibsection}

\renewcommand{\theequation}{\thethm.\alph{equation}}%this is to answer the referee of this chapter
\chapter[Milnor-Witt motivic ring spectrum]{The Milnor-Witt motivic ring spectrum and its associated theories\author{Jean Fasel and Frédéric Déglise}}
\label{ch:spectra}

\section*{Abstract}

We build a motivic ring spectrum representing Milnor-Witt motivic
cohomology, as well as its \'etale local version and show how to
deduce out of it three other theories: Borel-Moore homology,
cohomology with compact support and homology.
These theories, as well as the usual cohomology, are defined
for singular schemes and satisfy the properties of their (Voevodsky's) motivic analog
(and more), up to considering more general twists.
In fact, the whole formalism of these four theories can be functorially
attached to any motivic ring spectrum, giving finally maps between 
the Milnor-Witt motivic ones to the classical motivic ones.

\section*{Introduction}

One of the nice features, and maybe part of the success
of Voevodsky's theory of sheaves with transfers 
and motivic complexes is to provide a rich cohomological theory.
Indeed, apart from defining motivic cohomology, Voevodsky also
obtains other theories: Borel-Moore motivic homology, 
(plain) homology and motivic cohomology with compact support.
These four theories are even defined for singular $k$-schemes
and, when $k$ has characteristic $0$ (or with good resolution of
singularities assumptions), they satisfy nice properties, such
as duality. Finally, again in characteristic $0$,
they can be identified with known theories:
Borel-Moore motivic cohomology agrees with
Bloch's higher Chow groups and homology (without twists) with
Suslin homology. This is described
in \cite{FSV00_e}*{Chap.~5, 2.2}.\footnote{The results of Voevodsky
have been generalized in positive characteristic $p$, after 
taking $\ZZ[1/p]$-coefficients, using the results of Kelly \cite{Kel17_e}.
See \cite{CD15_e}*{Sec.~8}.}
 
It is natural to expect that the Milnor-Witt version of motivic complexes
developed in \chdmt allows one to get a similar formalism. That is what
we prove in this chapter. However, we have chosen a different path
from Voevodsky's, based on the tools at our disposal nowadays.
In particular, we have not developed a theory of relative 
MW-cycles, nevertheless an interesting problem which we leave for
future work. Instead, we rely on two classical theories, that of
ring spectra from algebraic topology and that of Grothendieck' six
functors formalism established by Ayoub in $\Aone$-homotopy theory along the
lines of Voevodsky.
 
In fact, it is well known since the axiomatization of Bloch
and Ogus (see \cite{BO74_e}) that a corollary of the six functors formalism
is the existence of twisted homology and cohomology related by duality.
In the present work, we go further than Bloch and Ogus in several directions. 
First, we establish
a richer structure, consisting of Gysin morphisms (wrong-way functoriality),
and establish more properties, such as cohomological descent (see below for
a precise description). Second, following Voevodsky's motivic picture,
we show that the pair of theories, cohomology and Borel-Moore homology,
also comes with a  pair of compactly supported theories, cohomology
with compact support and (plain) homology. And finally we consider
more general twists for our theories, corresponding to the tensor product
with the Thom space of a vector bundle.

Indeed, our main case of interest, MW-motivic cohomology,
is an example of \emph{non orientable} cohomology theory. In practice,
this means there are no Chern classes, and therefore no Thom classes
so that one cannot identify the twists by a Thom space with a twist
by an integer.\footnote{Recall that given an oriented cohomology
theory $\EE^{**}$ and a vector bundle $V/X$ of rank $n$,
the Thom class of $V/X$ provides an isomorphism
$\EE^{**}(\Th(E)) \simeq \EE^{*-2n,*-n}(X)$. See \cite{Pa03_e} for
the cohomological point of view or \cite{Deg12_e} for the ring spectrum
point of view.} In this context, it is especially relevant to take care
of a twist of the cohomology or homology of a scheme $X$
by an arbitrary vector bundle over $X$, and in fact by a virtual
vector bundle over $X$.\footnote{See Paragraph \ref{num:basic_Thom}
for this notion. 
If one is interested in cohomology/homology up to isomorphism, then
one can take instead of a virtual vector bundle a class in the K-theory
ring $K_0(X)$ of $X$ (see Remark \ref{rem:twists&K0}).}
Our first result is to build a ring spectrum $\sHMW R$
(Definition \ref{df:MW_ring_sp})
over a perfect field $k$, with coefficients in an arbitrary ring $R$, which represents
MW-motivic cohomology (see Paragraph \ref{num:MW-sp&concrete_coh}
for precise statements).
In this chapter, the word \emph{ring spectrum} (for instance the object $\sHMW R$)
means a commutative monoid object of the $\Aone$-derived category
$\DA k$ over a field $k$ (see Section \ref{sec:6functors} for
reminders). One can easily get from this one a commutative monoid
of the stable motivic homotopy category $\SH(k)$ (see Remark \ref{rem:on_spectra}).
Out of the ring spectrum $\sHMW R$, we get as expected the following
theories, associated with an arbitrary $k$-variety $X$ (i.e.\ a separated $k$-scheme of finite type) 
and a pair $(n,v)$ where $n \in \ZZ$, $v$ is a virtual vector
bundle over $X$:
\begin{itemize}
\item MW-motivic cohomology, $\sHMW^n(X,v,R)$,
\index[notation]{hmwnxvr@$\sHMW^n(X,v,R)$}%
\index{Milnor-Witt!motivic!cohomology ring spectrum}%
\index{spectrum!Milnor-Witt motivic cohomology|see{Milnor-Witt motivic cohomology ring spectrum}}%
\item MW-motivic Borel-Moore homology, $\sHMWBM_n(X,v,R)$,
\index[notation]{hmwbmxvr@$\sHMWBM_n(X,v,R)$}%
\index{Milnor-Witt!motivic!Borel-Moore homology ring spectrum}%
\index{spectrum!Milnor-Witt motivic Borel-Moore homology|see{Milnor-Witt motivic Borel-Moore homology ring spectrum}}%
\item MW-motivic cohomology with compact support, $\sHMWc^n(X,v,R)$,
\index[notation]{hmwcnxvr@$\sHMWc^n(X,v,R)$}%
\index{Milnor-Witt!motivic!cohomology with compact support ring spectrum}%
\index{spectrum!Milnor-Witt motivic cohomology with compact support|see{Milnor-Witt motivic cohomology with compact support ring spectrum}}%
\item MW-motivic homology, $\sHlMW_n(X,v,R)$.
\index[notation]{hmwnxvr@$\sHlMW_n(X,v,R)$}%
\index{Milnor-Witt!motivic!homology ring spectrum}%
\index{spectrum!Milnor-Witt motivic homology|see{Milnor-Witt motivic homology ring spectrum}}%
\end{itemize}
As expected, one gets the following computations of
MW-motivic cohomology.
\begin{prop*}
Assume $k$ is a perfect field, $X$ a smooth $k$-scheme and
$(n,m)$ a pair of integers.

Then there exists a canonical identification:
\[
\sHMW^n(X,m,R)
 \simeq
 \Hom_{\DMtkR}\big(\tMot(X),\un(m)[n+2m]\big)=\sHMW^{n+2m,m}(X,R)
\]
using the notations of \chdmt.

If in addition, we assume $k$ is infinite and of characteristic not $2$, then one has the following
computations:
\[
\sHMW^n(X,m,\ZZ)=\begin{cases}
\ch mX & \text{if } n=0, \\
\H^n_\zar(X,\sKMW_0) & \text{if } m=0, \\
\H^{n+2m}_\zar(X,\tilde\ZZ(m)) & \text{if } m>0, \\
\H^{n+m}_\zar(X,\sW) & \text{if } m<0.
\end{cases}
\]
\end{prop*}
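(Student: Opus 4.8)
The plan is to prove the two assertions in turn: first the comparison between the cohomology theory $\sHMW^n(-,-,R)$ attached to the ring spectrum $\sHMW R$ and the groups $\Hom_{\DMtkR}(\tMot(X),-)$ defining MW-motivic cohomology in \chdmt, and then to read off the explicit values from the computations already available in that chapter.

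\textbf{Step 1: the identification (valid for any perfect $k$).} Let $p\colon X\to\Spec(k)$ be the structural morphism of the smooth scheme $X$. By the definition of $\sHMW^n(X,v,R)$ through the six functors formalism (Definition~\ref{df:MW_ring_sp}), applied to the trivial virtual bundle $v=m$, whose Thom space is $\Th(m)=\un(m)[2m]$, and using that $p^*$ commutes with Tate twists and shifts, one has
\[
\sHMW^n(X,m,R)=\Hom_{\DA k}\big(\un_k,\,p_*p^*\big(\sHMW R(m)\big)[n+2m]\big).
\]
Applying the adjunction $(p^*,p_*)$ and then, since $p$ is smooth, the adjunction $(p_\#,p^*)$ together with the identification $p_\#\un_X=\Mot(X)$, this rewrites as $\Hom_{\DA k}\big(\Mot(X),\,\sHMW R(m)[n+2m]\big)$. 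Now $\sHMW R=\tilde\gamma_*(\un)$, and the adjunction $(\derL\tilde\gamma^*,\tilde\gamma_*)$ relating $\DA k$, through $\DAkR$, to $\DMtkR$ sends the $R$-linearization of $\Mot(X)$ to $\tMot(X)$; since $\tilde\gamma_*$ commutes with Tate twists and shifts, composing these adjunctions yields
\[
\sHMW^n(X,m,R)\simeq\Hom_{\DMtkR}\big(\tMot(X),\,\un(m)[n+2m]\big),
\]
which is exactly $\sHMW^{n+2m,m}(X,R)$ by \chdmt, Definition~\ref{def:generalizedMW} (recall $\un(m)=\tRx m=\tRcbx m[-m]$).

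\textbf{Step 2: the explicit values ($k$ infinite of characteristic not $2$).} Feeding the bidegree $(p,q)=(n+2m,m)$ into \chdmt, Proposition~\ref{prop:explicit} gives at once: for $m>0$, $\HMW^{n+2m,m}(X,\ZZ)=\H^{n+2m}_\zar(X,\tZpx m)$; for $m=0$, $\HMW^{n,0}(X,\ZZ)=\H^n_\zar(X,\sKMW_0)$; and for $m<0$, $\HMW^{n+2m,m}(X,\ZZ)=\H^{(n+2m)-m}_\zar(X,\sW)=\H^{n+m}_\zar(X,\sW)$. For $n=0$ and $m\geq 0$ one has $\sHMW^0(X,m,\ZZ)=\HMW^{2m,m}(X,\ZZ)$, which \chdmt, Corollary~\ref{cor:ChowWitt} (itself extracted from the hypercohomology spectral sequence of \chdmt, Theorem~\ref{thm:hyper}) identifies with $\ch mX$. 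The overlapping cases are consistent: for $n=0$, $m>0$ one recovers $\ch mX\simeq\H^m_\zar(X,\sKMW_m)\simeq\H^{2m}_\zar(X,\tZpx m)$ again by \chdmt, Theorem~\ref{thm:hyper}, and for $n=0=m$ one has $\ch 0X=\H^0_\zar(X,\sKMW_0)$ by the very definition of Chow--Witt groups.

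\textbf{Main obstacle.} No genuinely new input is needed beyond \chdmt; the only delicate point is the bookkeeping in Step~1. One must keep track of the twist conventions ($\Th(m)$ versus $\un(m)[2m]$ versus $\tRx m$), of the factorization of $(\derL\tilde\gamma^*,\tilde\gamma_*)$ through $\DAkR$ via extension and restriction of scalars along $\ZZ\to R$, and of the compatibilities of Tate twists and shifts with $p^*$, $p_\#$ and $\tilde\gamma_*$, so that the chain of adjunction isomorphisms composes coherently and lands on $\tMot(X)$ with the correct shift and twist; one should also recall, from the construction of the ring spectrum, that $\sHMW R$ does come from an object of $\DAkR$, legitimising the passage to $\DMtkR$. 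Once Step~1 is in place, Step~2 is a direct citation of \chdmt.
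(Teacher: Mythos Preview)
Your proposal is correct and follows essentially the same route as the paper: Step~1 is precisely the chain of adjunctions carried out in Example~\ref{ex:coh_smooth} and Paragraph~\ref{num:trivial_comput_coh} (the $(p_\sharp,p^*)$ and $(\derL\tilde\gamma^*,\tilde\gamma_*)$ adjunctions, with the Thom space bookkeeping), and Step~2 is exactly the citation of \chdmt, Proposition~\ref{prop:explicit} and Corollary~\ref{cor:ChowWitt} that the paper invokes in Paragraph~\ref{num:MW-sp&concrete_coh}. Your additional remarks on the consistency of the overlapping cases and on the factorization through $\DAkR$ are sound refinements that the paper leaves implicit.
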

In fact, the first identification follows from the definition
and basic adjunctions (see Paragraph \ref{num:trivial_comput_coh})
while the second one was proved in \chdmt
(as explained in Paragraph~\ref{num:MW-sp&concrete_coh}).
 
In fact, though the ring spectrum $\sHMW R$ is our main case of interest,
because of the previous computation,
the four theories defined above,
as well as their properties that we will give below,
are defined for an arbitrary ring spectrum $\EE$ --- and indeed, this
generality is useful as will be explained afterwards.
The four theories associated with $\EE$ enjoy
the following functoriality properties (Section \ref{sec:functoriality})
under the following assumptions:
\begin{itemize}
\item $f:Y \rightarrow X$ is an arbitrary morphism of $k$-varieties
(in fact $k$-schemes for cohomology);
\item $p:Y \rightarrow X$ is a morphism of $k$-varieties which is either
smooth or such that
$X$ and $Y$ are smooth over $k$.
In both cases, $p$ is a local complete intersection morphism
and one can define its virtual tangent bundle denoted by $\tau_p$.
\end{itemize}

\hspace{-2.2cm}
{\renewcommand{\arraystretch}{1.5}
\begin{tabular}{|c|p{2cm}|c|p{2cm}|c|}
\hline
&\multicolumn{2}{|c|}{natural variance}
 & \multicolumn{2}{c|}{Gysin morphisms} \\
\hline
Theory & additional assumption on $f$ &  & additional assumption on $p$ &  \\
\hline
cohomology & none & $\EE^n(X,v) \xrightarrow{f^*} \EE^n(Y,f^{-1}v)$
 & proper & $\EE^n(Y,p^{-1}v+\tau_p) \xrightarrow{p_*} \EE^n(X,v)$ \\
\hline
BM-homology & proper & $\EEBM_n(Y,f^{-1}v) \xrightarrow{f_*} \EEBM_n(X,v)$
 & none & $\EEBM_n(X,v) \xrightarrow{p^*} \EEBM_n(Y,p^{-1}v-\tau_p)$ \\
\hline
c-cohomology & proper & $\EE^n_c(X,v) \xrightarrow{f^*} \EE^n_c(Y,f^{-1}v)$
 & none & $\EE^n(Y,p^{-1}v+\tau_p) \xrightarrow{p_*} \EE^n(X,v)$ \\
\hline
homology & none & $\EE_n(Y,f^{-1}v) \xrightarrow{f_*} \EE_n(X,v)$
 & proper & $\EE_n(X,v) \xrightarrow{p^*} \EE_n(Y,p^{-1}v-\tau_p)$ \\
\hline
\end{tabular}}
\index{Gysin morphism}%
\medskip

Given a closed immersion $i:Z \rightarrow X$ between arbitrary $k$-varieties
with complement open immersion $j:U \rightarrow X$,
and a virtual vector bundle $v$ over $X$,
there exist the so-called \emph{localization long exact sequences}
\index{localization sequence}%
(Paragraph~\ref{num:localization_BM&c}):
\begin{align*}
\EEBM_n(Z,i^{-1}v)
& \xrightarrow{i_*} \EEBM_n(X,v)
 \xrightarrow{j^*} \EEBM_n(U,j^{-1}v)
 \rightarrow \EEBM_{n-1}(Z,i^{-1}v), \\
\EE_c^n(U,j^{-1}v)
& \xrightarrow{j_*} \EE_c^n(X,v)
 \xrightarrow{i^*} \EE_c^n(Z,i^{-1}v)
 \rightarrow \EE_c^{n+1}(U,j^{-1}v).
\end{align*}

There exist the following products for a $k$-variety $X$ (or simply a 
$k$-scheme in the case of cup-products), and couples $(n,v)$, $(m,w)$
of integers and virtual vector bundles on $X$:
 
\medskip
{\renewcommand{\arraystretch}{2}
\begin{tabular}{|c|c|c|}
\hline
Name & pairing & symbol \\
\hline
cup-product
 & $\EE^n(X,v) \otimes \EE^m(X,w) \rightarrow \EE^{n+m}(X,v+w)$
 & $x \cupp y$ \\
\hline
cap-product 
 & $\EEBM_n(X,v) \otimes \EE^m(X,w) \rightarrow \EEBM_{n-m}(X,v-w)$
 & $x \capp y$ \\
\hline
cap-product with support
 & $\EEBM_n(X,v) \otimes \EE^m_c(X,w) \rightarrow \EE_{n-m}(X,v-w)$
 & $x \capp y$ \\
\hline
\end{tabular}}
\index{Milnor-Witt!motivic!cohomology spectrum cup-product}%
\index{cap-product|see{Milnor-Witt motivic cohomology spectrum cup-product}}%
\index{Milnor-Witt!motivic!cohomology spectrum cap-product}%
\index{cap-product|see{Milnor-Witt motivic cohomology spectrum cap-product}}%
\index{Milnor-Witt!motivic!cohomology spectrum cap-product with support}%
\index{cap-product with support|see{Milnor-Witt motivic cohomology spectrum cap-product}}%
\index[notation]{xcy@$x \capp y$}%
\index[notation]{xcy@$x \cupp y$}%
\medskip

Given a smooth $k$-scheme $X$ with tangent bundle $T_X$ there exists
(Definition \ref{df:fdl_class})
a \emph{fundamental class} $\eta_X \in \EEBM_0(X,T_X)$
such that the following morphisms
\begin{align*}
\EE^n(X,v) &\rightarrow \EEBM_{-n}(X,T_X-v),
 x \mapsto \eta_X \capp y \\
\EE^n_c(X,v) &\rightarrow \EE_{-n}(X,T_X-v),
 x \mapsto \eta_X \capp y
\end{align*}
are isomorphisms (Theorem \ref{thm:duality}). We call them the \emph{duality isomorphisms}
 associated with $X$ and with coefficients in $\EE$.
\index{duality isomorphism}%

Finally, the four theories satisfy the following descent properties.
Consider a cartesian square 
\[
\begin{tikzcd}
[row sep=10pt,column sep=10pt]
W \ar[r,"k"] \ar[d,"g"'] \ar[rd,phantom,"\Delta" description] & V \ar[d,"f"] \\
Y \ar[r,"i"'] & X
\end{tikzcd}
\]
of $k$-varieties (or simply $k$-schemes in the case of cohomology
(we refer the reader to Paragraph~\ref{num:basic_6functors}
for the definition of Nisnevich and $\cdh$ distinguished
applied to $\Delta$).
Put $h=i \circ g=f \circ k$.
We let $v$ be a virtual vector bundle over $X$.
Then one obtains the following \emph{descent long exact sequences}:

\medskip
\hspace{-1.8cm}
{\renewcommand{\arraystretch}{2}
\begin{tabular}{|c|c|}
\hline
Assumption on $\Delta$ & descent long exact sequence \\
\hline
Nisnevich or cdh
 & $\EE^n(X,v)
 \xrightarrow{i^*+f^*} \EE^n(Y,i^{-1}v) \oplus \EE^n(V,f^{-1}v)
 \xrightarrow{k^*-g^*} \EE^n(W,h^{-1}v)
 \rightarrow \EE^{n+1}(X,v)$ \\
 & $\EE_n(W,h^{-1}v)
 \xrightarrow{k_*-g_*} \EE_n(Y,i^{-1}v) \oplus \EE_n(V,f^{-1}v)
 \xrightarrow{i_*+f_*} \EE_n(X,v)
 \rightarrow \EE_{n-1}(X,v)$ \\
\hline
Nisnevich
 & $\EEBM_n(X,v)
 \xrightarrow{i^*+f^*} \EEBM_n(Y,i^{-1}v) \oplus \EEBM_n(V,f^{-1}v)
 \xrightarrow{k^*-g^*} \EEBM_n(W,h^{-1}v)
 \rightarrow \EEBM_{n-1}(X,v)$ \\
 & $\EE_c^n(W,h^{-1}v)
 \xrightarrow{k_*-g_*} \EE_c^n(Y,i^{-1}v) \oplus \EE_c^n(V,f^{-1}v)
 \xrightarrow{i_*+f_*} \EE_c^n(X,v)
 \rightarrow \EE_c^{n+1}(X,v)$ \\
\hline
$\cdh$
 & $\EEBM_n(W,h^{-1}v)
 \xrightarrow{k_*-g_*} \EEBM_n(Y,i^{-1}v) \oplus \EEBM_n(V,f^{-1}v)
 \xrightarrow{i_*+f_*} \EEBM_n(X,v)
 \rightarrow \EEBM_{n-1}(X,v)$ \\
 & $\EE^n_c(X,v)
 \xrightarrow{i^*+f^*} \EE^n_c(Y,i^{-1}v) \oplus \EE^n_c(V,f^{-1}v)
 \xrightarrow{k^*-g^*} \EE^n_c(W,h^{-1}v)
 \rightarrow \EE^{n+1}_c(X,v)$ \\
\hline
\end{tabular}}
\index{descent sequence}%
\medskip

We illustrate the previous constructions with some examples. In Section \ref{sec:motivicringspectrum}, we briefly survey the properties of the four theories induced by the motivic Eilenberg-Mac Lane spectrum,
 which represents (ordinary) motivic cohomology. They can be identified with the versions
 defined by Voevodsky in \cite{FSV00_e}*{Chap.~5} when working over a base field $k$ whose characteristic exponent is invertible in $R$.
 In the subsequent section, we study the four theories associated to the MW-motivic ring spectrum, restricting to schemes over a fixed field $k$.
 We mainly focus on the Borel-Moore homological one since it has particularly nice features. We recall how to obtain a (co-)niveau spectral sequence out of this spectrum using the formalism of Bloch and Ogus, and we use this spectral sequence to identify some groups with Chow-Witt groups (Corollary \ref{cor:ChowWittBorelMoore}). Interestingly, this comparison theorem also yields a definition for Chow-Witt groups of singular varieties over a perfect field $k$.

To conclude, let us note that the four theories obtained from the MW-ring spectrum map naturally to their ordinary versions,
and that we also construct the \'etale analog of 
the MW-motivic and motivic ring spectra, 
which are linked with their classical (Nisnevich) counterparts by
canonical morphisms. See Section \ref{sec:MW-regulators} for more details.

\subsection*{Plan of the paper}

As said previously,
this paper is an application of our previous work and of general
motivic $\Aone$-homotopy. So we have tried to give complete reminders
for a non-specialist reader.
 
In Section \ref{sec:mothomthringspectra}, we first recall the formalism of the $\Aone$-derived
category, as introduced by Morel, and the associated six functors
formalism as constructed by Ayoub following Voevodsky.
Then we give a brief account of the theory of ring spectra,
specialized in the framework of the $\Aone$-derived category.

In Section \ref{sec:theories}, we construct the four theories associated with
an arbitrary ring spectrum and establish the properties listed 
above. 
 In Section \ref{sec:motivicringspectrum},
 we briefly survey the properties of these four theories for the ring spectrum representing ordinary motivic cohomology.
 
Finally, in Section \ref{sec:MWmotivicringspectrum}, we apply these results to the particular case
of MW-motivic cohomology as well as its \'etale version.
We consider in more details the case of cohomology and Borel-Moore
homology, and conclude this paper with the canonical maps
relating these ring spectra.

\subsection*{Conventions} 

If $S$ is a base scheme, we will say \emph{$S$-varieties} for separated
$S$-schemes of finite type.

We will simply call a symmetric monoidal category $\mathcal C$ monoidal. We generically denote by $\un$
the unit object of a monoidal category. When this category
depends on a scheme $S$, we also use the generic notation $\un_S$.

In Section \ref{sec:MWmotivicringspectrum}, we will fix a perfect base field $k$ and a coefficient ring $R$.
 
\section{Motivic homotopy theory and ring spectra}
\label{sec:mothomthringspectra}

\subsection{Reminder on Grothendieck's six functors formalism}

\label{sec:6functors}

\begin{num}
Let us fix a noetherian finite dimensional base scheme $S$.\footnote{Since the writing of \cite{CD12_e},
 it has been shown how to relax the finiteness assumption on base schemes to
 quasi-compact quasi-coherent ones. We refer the interested reader to \cite{HoySix_e}.}
We briefly recall the construction of Morel's $\PP^1$-stable
and $\Aone$-derived category over $S$ using \cite{CD12_e} as a reference text.
The construction has also been recalled in \chdmt in the particular case
where $S$ is the spectrum of a (perfect) field.

Let $\sh(S)$ be the category of Nisnevich sheaves of abelian groups over
the category of smooth $S$-schemes $\smg_S$.
This is a Grothendieck abelian category\footnote{Recall that an abelian category is Grothendieck abelian
if it admits small coproducts, a family of generators and filtered colimits are exact.
As usual (see e.g. \chdmt, Proposition~\ref{prop:exist_associated-W-t-sheaf}),
 one gets that the category $\sh(S)$ is Grothendieck abelian
from the case of presheaves and the adjunction $(a,\fO)$ where $a$ is
the associated sheaf functor, $\fO$ the obvious forgetful functor.}.
Given a smooth $S$-scheme $X$, one denotes by $\ZZ_S(X)$ the Nisnevich sheaf associated
with the presheaf $Y \mapsto \ZZ[\Hom_S(Y,X)]$. The essentially small family $\ZZ_S(X)$
generates the abelian category $\sh(S)$. The category $\sh(S)$ admits a closed 
monoidal structure, whose tensor product is defined by the formula:
\begin{equation}\label{eq:tensor}
F \otimes G=\varinjlim_{X/F,Y/G} \ZZ_S(X \times_S Y)
\end{equation}
where the limit runs over the category whose objects are couples of morphisms
$\ZZ_S(X) \rightarrow F$ and $\ZZ_S(Y) \rightarrow G$ and morphisms are given
by couples $(x,y)$ fitting into commutative diagrams of the form:
\[
\begin{tikzcd}
[row sep=10pt,column sep=10pt]
\ZZ_S(X) \ar[rd] \ar[rr,"x"] & & \ZZ_S(X'), \ar[ld] & \ZZ_S(Y) \ar[rd] \ar[rr,"y"] & & \ZZ_S(Y'). \ar[ld] \\
& F & & & G &
\end{tikzcd}
\]
Note in particular
that $\ZZ_S(X) \otimes \ZZ_S(Y)=\ZZ_S(X \times_S Y)$ and that this definition coincides with the one given in \chdmt, \S\ref{num:sht_monoidal} when $S=k$.

According to \cite{CD12_e}*{\textsection 5.1.b}, 
the category $\Comp(\sh(S))$ of complexes with coefficients in $\sh(S)$ admits
a monoidal model structure whose weak equivalences are quasi-isomorphisms and:
\begin{itemize}
\item the class of \emph{cofibrations} is given by the smallest class
 of morphisms of complexes closed under suspensions, pushouts,
 transfinite compositions and retracts generated by the inclusions
\begin{equation}
\ZZ_S(X) \rightarrow \mathrm{C}\big(\ZZ_S(X) \xrightarrow{\id} \ZZ_S(X)\big)[-1]
\end{equation}
where $C(-)$ denotes the cone of complexes, for a smooth $S$-scheme $X$.
\item \emph{fibrations} are given by the epimorphisms of complexes whose
 kernel $K$ satisfies the classical \emph{Brown-Gersten property}:
 for any cartesian square of smooth $S$-schemes
\[
\begin{tikzcd}
[row sep=10pt,column sep=10pt]
W \ar[r,"k"] \ar[d,"q"'] & V \ar[d,"p"] \\
U \ar[r,"j"'] & X
\end{tikzcd}
\]
such that $j$ is an open immersion, $p$ is \'etale and induces an isomorphism of
schemes $p^{-1}(Z) \rightarrow Z$ where $Z$ is the complement of $U$ in $X$ endowed with its reduced subscheme structure, the resulting square of complexes of abelian
groups
\[
\begin{tikzcd}
[row sep=10pt,column sep=10pt]
K(X) \ar[r] \ar[d] & K(V) \ar[d] \\
K(U) \ar[r] & K(W)
\end{tikzcd}
\]
is homotopy cartesian.
\end{itemize}
Indeed, from Examples 2.3 and 6.3 of \cite{CD09_e}, one gets a descent structure
$(\mathcal G,\mathcal H)$
(Def. 2.2 of \emph{op.\ cit.}) on $\sh(S)$
where $\mathcal G$ is the essential small family of generators constituted by the sheaves $\ZZ_S(X)$
for $X/S$ smooth, and $\mathcal H$ is constituted by the complexes of the form
\[
0 \rightarrow \ZZ_S(W) \xrightarrow{q_*-k_*} \ZZ_S(U) \oplus \ZZ_S(V)
 \xrightarrow{j_*+p_*} \ZZ_S(X) \rightarrow 0.
\]
This descent structure is flat (\textsection 3.1 of loc.\ cit.)
so that 2.5, 3.5, 5.5 of loc.\ cit.\ give
the assertion about the monoidal model structure described above.
Note moreover that this model structure is proper, combinatorial and satisfies
the monoid axiom.
\end{num}

\begin{rem} \label{rem:compact_gen}
The descent structure defined in the preceding paragraph is also bounded
(\textsection 6.1 of loc.\ cit.). This implies in particular that
the objects $\ZZ_S(X)$, as complexes concentrated in degree $0$,
are compact in
the derived category $\Der(\sh(S))$ (see Th. 6.2 of \emph{op.\ cit.}).
Moreover, one can describe explicitly the subcategory of $\Der(\sh(S))$ 
generated by these objects
(see loc.\ cit.).
\end{rem}

\begin{num}
Recall now that we get the $\Aone$-derived category
by first $\Aone$-localizing the model category $\Comp(\sh(S))$,
which amounts to invert in its homotopy category $\Der(\sh(S))$
morphisms of the form
\[
\ZZ_S(\Aone_X) \rightarrow \ZZ_S(X)
\]
for any smooth $S$-scheme $X$. One gets the so called $\Aone$-local
Nisnevich descent model structure (cf.\ \cite{CD12_e}*{5.2.17}),
which is again proper monoidal. One denotes by $\DAe S$
its homotopy category.
Then one stabilizes the latter model category with respect to
Tate twists, or equivalently with respect to the object:
\[
\un_S\{1\}=\coker\big(\ZZ_S(k) \xrightarrow{s_{1*}} \ZZ_S(\Gm)\big)
\]
where $s_1$ is the unit section of the group scheme $\Gm$.
This is based on the use of symmetric spectra (cf.\ \cite{CD12_e}*{5.3.31}),
called in our particular case \emph{Tate spectra}.
The resulting homotopy category, denoted by $\DA S$
\index[notation]{da1s@$\DA S$}%
\index[notation]{da1effs@$\DAe S$}%
is triangulated monoidal
and is characterized by the existence of an adjunction of triangulated
categories
\[
\Sigma^\infty:\DAe S \leftrightarrows \DA S:\Omega^\infty
\]
\index[notation]{sigmainf@$\Sigma^\infty$, $\Omega^\infty$}%
such that $\Sigma^\infty$ is monoidal and the object
$\Sigma^\infty(\ZZ_S\{1\})$ is $\otimes$-invertible 
in $\DA S$. As usual, one denotes by $K\{i\}$ the tensor product
of any Tate spectrum $K$ with the $i$-th tensor power of
$\Sigma^\infty(\ZZ_S\{1\})$.
Besides, we also use the more traditional Tate twist:
\[
\un_S(1)=\un_S\{1\}[-1].
\]
\end{num}

\begin{rem}\label{rem:generators}
Extending Remark \ref{rem:compact_gen},
let us recall that the Tate spectra of the form
$\Sigma^\infty \ZZ_S(X)\{i\}$, $X/S$ is smooth and $i \in \ZZ$,
are compact and form a family of generators
of the triangulated category $\DA S$ in the sense that
every object of $\DA S$ is a homotopy colimit of spectra
of the preceding form (see \cite{CD12_e}*{5.3.40}).
\end{rem}

\begin{num}\label{num:basic_Thom}
\textit{Thom spaces of virtual bundles}.--
\index{Thom space}%
\index{virtual vector bundle}%
It is important in our context to introduce more general twists
(in the sense of \cite{CD12_e}*{Definition~1.1.39}).
Given a base scheme $X$, and a vector bundle $V/X$ one defines the Thom
space associated with $V$, as a Nisnevich sheaf over $\smg_X$,
by the following formula:
\[
\Th(V)=\coker\big(\ZZ_X(V^\times) \rightarrow \ZZ_X(V)\big),
\]
where $V^\times$ denotes the complement in $V$ of the zero section.
Seen as an object of $\DA X$, which we still denote by $\Th(V)$,
it becomes $\otimes$-invertible
--- as it is locally of the form $\Th(\AAA^n_X) \simeq \un_X(n)[2n]$.
Therefore, we get a functor
\[
\Th:\mathrm{Vec}(X) \rightarrow \Pic(\DA X)
\]
\index[notation]{thv@$\Th(V)$}%
from the category of vector bundles over $X$ (with isomorphisms as morphisms) to that of
$\otimes$-invertible Tate spectra. It is straightforward to
check that this functor sends direct sums to tensor products up to a canonical isomorphism.
Moreover, according to \cite{Ri10_e}*{4.1.1},
given any exact sequence of vector bundles:
\begin{equation*}\tag{$\sigma$}
0 \rightarrow V' \rightarrow V \rightarrow V'' \rightarrow 0,
\end{equation*}
one gets a canonical isomorphism
\[
\Th(V) \xrightarrow{\epsilon_\sigma} \Th(V') \otimes \Th(V'')
\]
allowing to canonically extend the preceding functor
to a functor from the category $\cK(X)$ 
\index[notation]{kx@$\cK(X)$}%
of virtual vector bundles over $X$ introduced in \cite{Del87_e}*{\S 4}
to $\otimes$-invertible objects of $\DA X$
\[
\Th:\cK(X) \rightarrow \Pic(\DA X)
\]
sending direct sums to tensors products.
\end{num}

\begin{rem}
The isomorphism classes of objects of $\cK(X)$
give the K-theory ring $K_0(X)$ of $X$. In other words,
neglecting morphisms, the construction recalled above associates
to any element $v$ of $K_0(X)$ a canonical isomorphism class of 
Tate spectra $\Th(v)$ which satisfies the relation
$\Th(v+w)=\Th(v) \otimes \Th(w)$.
\end{rem}

\begin{num}\label{num:basic_funct}
Let us finally recall the basic functoriality satisfied by sheaves
and derived categories introduced previously.

Let $f:T \rightarrow S$ be a morphism of schemes.
We get a morphism of sites
$f^{-1}:\smg_S \rightarrow \smg_T$ defined by  $X/S \mapsto (X \times_S T/T)$
and therefore an adjunction of categories of abelian Nisnevich sheaves:
\begin{equation}\label{eq:pullback}
f^*:\sh(S) \leftrightarrows \sh(T):f_*
\end{equation}
\index[notation]{ff@$f^*$, $f_*$}%
such that $f_*(G)=G \circ f^{-1}$ and
\[
f^*(F)=\varinjlim_{X/F} \ZZ_T(X \times_S T)
\]
where the colimit runs over the category of morphisms $\ZZ_S(X) \rightarrow F$.
Recall that $f^*$ is not exact in general.

If in addition $p=f$ is smooth, one gets another morphism of sites:
\[
p_\sharp:\smg_T \rightarrow \smg_S,
 (Y \rightarrow T) \mapsto (Y \rightarrow T  \xrightarrow p S).
\]
One can check that $p^*(F)=F \circ p_\sharp$ and we get an adjunction
 of additive categories:
\[
p_\sharp:\sh(T) \leftrightarrows \sh(S):p^*
\]
\index[notation]{pp@$p_\sharp$, $p^*$}%
such that:
\begin{equation}\label{eq:pushforward}
p_\sharp(G)=\varinjlim_{Y/G} \ZZ_S(Y \rightarrow T \xrightarrow p S),
\end{equation}
this time the colimit runs over the category of morphisms
 $\ZZ_T(Y) \rightarrow G$.

Using formulas \eqref{eq:tensor}, \eqref{eq:pullback}
and \eqref{eq:pushforward}, 
one can check the following basic properties:
\begin{enumerate}
\item \textit{Smooth base change formula}.-- For any cartesian square of schemes
\index{base change formula!smooth}%
\[
\begin{tikzcd}
[row sep=10pt,column sep=10pt]
Y \ar[r,"q"] \ar[d,"g"'] & X \ar[d,"f"] \\
T \ar[r,"p"] & S
\end{tikzcd}
\]
such that $p$ is smooth, the canonical map
\[
q_\sharp g^* \rightarrow f^*p_\sharp
\]
is an isomorphism.
\item \textit{Smooth projection formula}.-- 
\index{projection formula!smooth}%
For any smooth morphism $p:T \rightarrow S$
and any Nisnevich sheaves $G$ over $T$ and $F$ over $S$,
the canonical morphism:
\[
p_\sharp(G \otimes p^*(F)) \rightarrow p_\sharp(G) \otimes F
\]
is an isomorphism.
\end{enumerate}
We refer the reader to \cite{CD12_e}*{1.1.6, 1.1.24} for the definition
of the above canonical maps. Note that the properties stated above
shows that $\sh$ is a $\smg$-premotivic abelian category
in the sense of \cite{CD12_e}*{1.4.2} (see also \cite{CD12_e}*{Ex.~5.1.4}).

According to the theory developed in \cite{CD12_e}*{\textsection 5},
the adjunctions $(f^*,f_*)$ and $(p_\sharp,p^*)$ for $p$ smooth
can be derived and induce triangulated functors
\begin{align*}
\derL f^*:\DA S & \leftrightarrows \DA T:\derR f_*, \\
\derL p_\sharp:\DA S & \leftrightarrows \DA T:p^*.
\end{align*}
By abuse of notation,
we will simply denote the derived functors by $f^*, f_*, p_\sharp$.
Then the analogues of smooth base change
and smooth projection formulas stated above hold
(see \cite{CD12_e}*{Ex.~5.3.31}). In other words, we get a premotivic
triangulated category (cf.\ \cite{CD12_e}*{1.4.2}) which by construction
satisfies the homotopy and stability relation (\cite{CD12_e}*{2.1.3, 2.4.4}).
\end{num}
\begin{dfn}\label{def:Thom}
Consider the notations of \ref{num:basic_Thom} and \ref{num:basic_funct}. Let $S$ be a base scheme, $p:X \rightarrow S$ a smooth morphism,
 and $v$ be a virtual vector bundle over $X$. One defines the Thom
 space of $v$ above $S$ as the object of $\DA S$:
\index[notation]{thsv@$\Th_S(v)$}%
\[
\Th_S(v)=p_\sharp(\Th(v)).
\]
\end{dfn}

Of course, unless $X=S$, $\Th_S(v)$ is in general not $\otimes$-invertible,
and we do not have the relation
$\Th_S(v\oplus w)=\Th_S(v) \otimes \Th_S(w)$. Nevertheless, it happens to be the case in the following situation.
 
Suppose that $v$ is a virtual vector bundle over $X$ and that $w$ is a virtual vector bundle over $S$. Then, $p^{-1}w$ is a virtual vector bundle over $X$ and we have $p^*\Th_S(w)=\Th(p^{-1}w)$.
 Thus, one has the following identification
\[
\Th(v\oplus p^{-1}w)=\Th(v) \otimes p^*\Th_S(w),
\]
and it follows from the smooth projection formula in Paragraph~\ref{num:basic_funct} that 
\[
\Th_S(v\oplus p^{-1}w)=\Th_S(v)\otimes \Th_S(w).
\]
In particular, $\Th_S(v\oplus \AAA_X^m)=\Th_S(v)\otimes \Th(\AAA_S^m)$.

To conclude, we give an explicit expression for the Thom space of a vector bundle $V$ over $X$. The complex  $\mathcal C:=\ZZ_X(V^\times)\to \ZZ_X(V)$ is cofibrant and the map $\mathcal C\to \Th_X(V)$ is a quasi-isomorphism. As $p_{\sharp}(\ZZ_X(Y))=\ZZ_S(Y)$ for any scheme $Y$ over $X$, we see that $\Th_S(V)$ is explicitly given by the complex of sheaves $\ZZ_S(V^\times)\to \ZZ_S(V)$ or equivalently by the cokernel of the map.

\begin{num}\label{num:localization}
Consider again the notations of Paragraph~\ref{num:basic_funct}.
One can check the so-called localization property
for the fibred category $\DA -$ (cf.\ \cite{CD12_e}*{2.4.26}):
for any closed immersion $i:Z \rightarrow S$ with complement
open immersion $j:U \rightarrow S$, and any Tate spectrum $K$ over $S$,
there exists a unique distinguished triangle in $\DA S$:
\[
j_\sharp j^*(K) \xrightarrow{j_\star}
 K  \xrightarrow{i^\star} i_*i^*(K) \rightarrow j_\sharp j^*(K)[1]
\]
where $j_\star$ (resp.\ $i^\star$) is the counit (resp.\ unit)
of the adjunction $(j_\sharp,j^*)$ (resp.\ $(i^*,i_*)$).

As we have also seen in Remark \ref{rem:generators}
that $\DA S$ is compactly generated,
we can apply to it the cross-functor theorem of Ayoub and Voevodsky
(cf.\ \cite{CD12_e}*{2.4.50}) which we state here for future reference.
\end{num}

\begin{thm}
\label{thm:6functors}%
Consider the above notations. Then, for any separated morphism of finite type $f:Y \rightarrow X$ of schemes,
 there exists a pair of adjoint functors, the \emph{exceptional functors},
\[
f_!:\DA Y \rightleftarrows \DA X:f^!
\]
\index[notation]{ff@$f_"!$, $f^"!$}%
such that:
\begin{enumerate}
\item
\label{item:covariant-two-functor}%
There exists a structure of a covariant (resp.\ contravariant) 
 $2$-functor on $f \mapsto f_!$ (resp.\ $f \mapsto f^!$).
\item
\label{item:trafo-iso-f-proper}%
There exists a natural transformation $\alpha_f:f_! \rightarrow f_*$
 which is an isomorphism when $f$ is proper.
 Moreover, $\alpha$ is a morphism of $2$-functors.
\item
\label{item:smooth-twisted-inverse}%
For any smooth separated morphism of finite type $f:X \rightarrow S$
 of schemes with tangent bundle $T_f$,
 there are canonical natural isomorphisms
\begin{align*}
\piso_f:f_\sharp & \longrightarrow f_!\big(Th_X(T_f) \otimes_X (\_)\big) \\
\piso'_f:f^* & \longrightarrow Th_X(-T_f) \otimes_X f^!
\end{align*}
which are dual to each other
 -- the Thom premotive $Th_X(T_f)$ is $\otimes$-invertible
 with inverse $Th_X(-T_f)$.
\item
\label{item:twisted-base-change}%
For any cartesian square:
\[
\begin{tikzcd}
[row sep=16pt,column sep=16pt]
Y' \ar[r,"P"] \ar[d,"{g'}"'] \ar[rd,phantom,"\Delta" description] & X' \ar[d,"g"] \\
Y \ar[r,"f"'] & X,
\end{tikzcd}
\]
such that $f$ is separated of finite type,
there exist natural isomorphisms
\begin{align*}
g^*f_! \xrightarrow\sim f'_!{g'}^*\, , \\
g'_*{f'}^! \xrightarrow\sim  f^!g_*\, .
\end{align*}
\item
\label{item:projection-formula-hom}%
For any separated morphism of finite type $f:Y \rightarrow X$ and any Tate spectra $K$ and $L$,
 there exist natural isomorphisms
\begin{align*}
Ex(f_!^*,\otimes):
(f_!K) \otimes_X L &\xrightarrow{\ \sim\ } f_!(K \otimes_Y f^*L)\, ,\ \\
  \uHom_X(f_!(L),K) & \xrightarrow{\ \sim\ } f_* \uHom_Y(L,f^!(K))\, ,\ \\
  f^! \uHom_X(L,M)& \xrightarrow{\ \sim\ } \uHom_Y(f^*(L),f^!(M))\, .
\end{align*}
\end{enumerate}
\end{thm}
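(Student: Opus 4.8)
The statement is the cross-functor theorem for the premotivic triangulated category $\DA{-}$, and the plan is to verify that $\DA{-}$ satisfies the axioms under which the general construction of Deligne, Ayoub and Voevodsky applies, and then to invoke it in the form of \cite{CD12_e}*{2.4.50}.

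First I would collect the structural facts about $\DA{-}$ already recorded in the excerpt: by Paragraph~\ref{num:basic_funct} it is a premotivic triangulated category over noetherian finite-dimensional schemes satisfying the homotopy and stability properties; by Remark~\ref{rem:generators} each category $\DA X$ is compactly generated, the generators being the Tate twists $\Sigma^\infty\ZZ_X(Y)\{i\}$; and by Paragraph~\ref{num:localization} it satisfies the localization property, i.e.\ for a closed immersion $i$ with open complement $j$ there is a gluing triangle relating $j_\sharp j^*$, the identity and $i_*i^*$. These are exactly the inputs required by the cited theorem.

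The construction then proceeds as follows. For a separated morphism of finite type $f:Y\to X$ one chooses, via Nagata compactification, a factorization $f=\bar f\circ j$ with $j$ an open immersion and $\bar f$ proper, and sets $f_!:=\bar f_*\circ j_\sharp$; this choice is forced, since \ref{item:trafo-iso-f-proper} demands $f_!\simeq f_*$ for $f$ proper, while for an open immersion $j$ one has $j_\sharp=j_!$. The substantive work is to show that $f_!$ does not depend on the factorization and assembles into a covariant $2$-functor (item~\ref{item:covariant-two-functor}) together with the base change isomorphism (item~\ref{item:twisted-base-change}); by the standard dévissage along open/closed decompositions using the localization triangle of \ref{num:localization}, this reduces to the \emph{proper base change} isomorphism ${g}^*\bar f_*\simeq \bar f'_*\,{g'}^*$ for $\bar f$ proper. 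I expect this proper base change statement to be the one real obstacle: it is proved by a further dévissage, treating separately closed immersions (where it follows from localization) and projective bundles (where it follows from the projective bundle formula together with the stability property), and then bootstrapping to all proper morphisms by Chow's lemma.

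Once proper base change and the $2$-functoriality of $f\mapsto f_!$ are established, the remaining items are formal. The functor $f^!$ is the right adjoint of $f_!$, which exists by Brown representability since $\DA X$ is compactly generated, and the contravariant $2$-functor structure and the base change isomorphism $g'_*\,{f'}^!\simeq f^!g_*$ follow by passing to adjoints. The purity isomorphisms of item~\ref{item:smooth-twisted-inverse} for a smooth $p:X\to S$ come from comparing $p_\sharp$ with $p_!$: using $j_\sharp=j_!$, the smooth base change and smooth projection formulas of \ref{num:basic_funct} applied to the diagonal of $p$ identify $p_\sharp$ with $p_!\big(\Th_X(T_p)\otimes-\big)$, and the dual statement follows by adjunction. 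Finally the projection formulas of item~\ref{item:projection-formula-hom} are deduced from items~\ref{item:covariant-two-functor} and \ref{item:twisted-base-change} together with the closed monoidal structure on $\DA X$ by the standard adjunction calculus.
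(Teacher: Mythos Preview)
Your proposal is correct and takes essentially the same approach as the paper: both verify that $\DA{-}$ is a compactly generated premotivic triangulated category satisfying homotopy, stability, and localization (Paragraphs~\ref{num:basic_funct}, \ref{num:localization} and Remark~\ref{rem:generators}), and then invoke the cross-functor theorem \cite{CD12_e}*{2.4.50}. The paper in fact stops at that citation, whereas you go further and sketch the internal proof of the cited theorem (Nagata compactification, reduction of proper base change to closed immersions and projective bundles via Chow's lemma, Brown representability for $f^!$, purity from the diagonal); this extra material is accurate but not required here.
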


\begin{rem}
In items \ref{item:covariant-two-functor} and \ref{item:trafo-iso-f-proper} above, we just mean that there are canonical isomorphisms $f_!g_!\to (fg)_!$ and $g^!f^!\to (fg)^!$, and that we have a commutative diagram
\[
\begin{tikzcd}
f_!g_! \ar[r] \ar[d] & f_*g_* \ar[d] \\
(fg)_! \ar[r] & (fg)_*
\end{tikzcd}
\]
where the horizontal maps are given by the transformation $\alpha$.
\end{rem}

\begin{num}
\label{num:basic_6functors}%
This theorem has many important applications. Let us state
 a few consequences for future use.
\begin{itemize}
\item \textit{Localization triangles}.-- 
\index{localization triangle}%
Consider again the assumptions of Paragraph~\ref{num:localization}.
 Then one gets canonical distinguished triangles:
\begin{align}
\label{eq:localization1}
j_! j^!(K) \xrightarrow{ad_j}
 K  \xrightarrow{ad'_i} i_*i^*(K) \rightarrow j_! j^!(K)[1] \\
\label{eq:localization2}
i_! i^!(K) \xrightarrow{ad_i}
 K  \xrightarrow{ad'_j} j_*j^*(K) \rightarrow i_! i^!(K)[1]
\end{align}
where $ad_j$,  $ad_i$,  $ad'_j$,  $ad'_i$ are the unit/counit
 morphism of the obvious adjunctions. The exactness of first triangle is a direct consequence of Paragraph~\ref{num:localization},
 together with the properties $j_{\sharp}=j_!$ and $j^*=j^!$ derived from (3) above.

\item \textit{Descent properties}.--
\index{descent triangle}%
Consider a cartesian square
 of schemes:
\[
\begin{tikzcd}
[row sep=10pt,column sep=10pt]
W \ar[r,"k"] \ar[d,"g"] \ar[rd,phantom,"\Delta" description] & V \ar[d,"f"] \\
Y \ar[r,"i"'] & X
\end{tikzcd}
\]
One says $\Delta$ is Nisnevich (resp.\ cdh) distinguished
if $i$ is an open (resp.\ closed) immersion,
$f$ is an \'etale (resp.\ proper) morphism and the induced
map $(V-W) \rightarrow (X-Y)$ of underlying reduced subschemes
is an isomorphism.

If $\Delta$ is Nisnevich or cdh distinguished,
then for any object $K \in \DA X$,
there exist canonical distinguished triangles:
\begin{align}
\label{eq:descent1}
&K \xrightarrow{i^\star+f^\star} i_*i^*(K) \oplus f_*f^*(K)
 \xrightarrow{k^\star-g^\star} h_*h^*(K) \rightarrow K[1] \\
\label{eq:descent2}
&h_!h^!(K) \xrightarrow{k_\star-g_\star} i_!i^!(K) \oplus f_!f^!(K)
 \xrightarrow{i_\star+f_\star} K \rightarrow h_!h^!(K)[1]
\end{align}
where $f^\star$ (resp.\ $f_\star$) is the unit (resp.\ counit)
 of the adjunction $(f^*,f_*)$ (resp.\ $(f_!,f^!)$) and $h=ig=fk$.
\item \textit{Pairing}.-- Let us apply point \ref{item:projection-formula-hom} replacing $K$
 by $f^!(K)$;
 one gets an isomorphism which appears in the following
 composite map:
\[
f_!\big(f^!(K) \otimes f^*(L) \big) \xrightarrow{\ \sim\ }
 f_!f^!(K) \otimes L \xrightarrow{f_\star \otimes Id_L} K \otimes L,
\]
where $f_\star$ is the counit map for the adjunction $(f_!,f^!)$.
 Thus, by adjunction, one gets a canonical morphism:
\[
f^!(K) \otimes f^*(L) \rightarrow f^!(K \otimes L).
\]
We will see in Paragraph~\ref{num:products} that this pairing induces
the classical cap-product.
\end{itemize}
\end{num}

\begin{rem}\label{rem:chg_coeff}
Let $R$ be a ring of coefficients.
One can obviously extend the above considerations 
by replacing sheaves of abelian groups by sheaves of $R$-modules
(as in \cite{CD12_e}).
We get a triangulated $R$-linear category $\DA{S,R}$ depending
on an arbitrary scheme $S$, and also obtain the six functors
formalism described above. In brief,
there is no difference between working with $\ZZ$-linear coefficients
or $R$-linear coefficients.

Besides, one gets an adjunction of additive categories:
\[
\rho_R^*:\sh(S) \leftrightarrows \sh(S,R):\rho^R_*
\]
\index[notation]{rr@$\rho_R^*$, $\rho^R_*$}%
where $\rho^R_*$ is the functor that forgets the $R$-linear structure.
The functor $\rho_R^*$ is obtained by taking the associated sheaf
to the presheaf obtained after applying the extension of scalars functor
for $R/\ZZ$. Note that the functor $\rho_R^*$ is monoidal.
According to \cite{CD12_e}*{5.3.36}, these adjoint functors can be derived
and further induce adjunctions of triangulated categories:
\begin{equation}\label{eq:chg_coeff}
\derL \rho_R^*:\DA S \leftrightarrows \DA{S,R}:\derR \rho^R_*
\end{equation}
such that $\derL \rho^*_R$ is monoidal. 
\end{rem}

\subsection{Ring spectra}

Let us start with a very classical definition.
\begin{dfn}
Let $S$ be a base scheme.
A ring spectrum $\EE$ over $S$ is a commutative monoid 
of the monoidal category $\DA S$. A morphism of ring spectra is a morphism of commutative monoids.
\index{ring spectrum}%
\end{dfn}
In other words, $\EE$ is a Tate spectrum equipped with
 a multiplication (resp.\ unit) map
\[
\mu:\EE \otimes \EE \rightarrow \EE,
 \text{ resp.\ } \eta:\un_S \rightarrow \EE
\]
such that the following diagrams are commutative
\begin{equation}\label{eq:axiom_ring}
\begin{tikzcd}
[row sep=10pt]
\EE \ar[r,phantom,shift left=5ex,"{\text{Unity:}}" description] \ar[r,"{1 \otimes \eta}"] \ar[rdd,equal] & \EE \otimes \EE\ar[dd,"\mu"]
 & \EE \otimes \EE \otimes \EE \ar[r,phantom,shift left=5ex,"{\text{Associativity:}}" description] \ar[r,"{1 \otimes \mu}"] \ar[dd,"{\mu \otimes 1}"']
  & \EE \otimes \EE \ar[dd,"\mu"]
 & \EE \otimes \EE \ar[r,phantom,shift left=5ex,"{\text{Commutativity:}}" description] \ar[rd,"\mu"] \ar[dd,"\sim","\gamma"'] & \ \\
& & & & & \EE \\
& \EE
 & \EE \otimes \EE\ar[r,"\mu"] & \EE
 & \EE \otimes \EE\ar[ru,"\mu"']
\end{tikzcd}
\end{equation}
where $\gamma$ is the isomorphism exchanging factors (coming
from the underlying structure of the \emph{symmetric} monoidal category
$\DA S$).

\begin{exm}
\label{ex:trivial_ring}%
\begin{enumerate}
\item
\label{item:constant-Tate-spectrum}%
The constant Tate spectrum $\un_S$ over $S$ is an obvious example of
 ring spectrum over $S$. Besides, for any ring spectrum $(\EE,\mu,\eta)$
 over $S$, the unit map $\eta:\un_S \rightarrow \EE$ is a morphism
 of ring spectra.
\item Let $k$ be a fixed base (a perfect field in our main example).
 Let $\EE$ be a ring spectrum over $k$.
 Then for any $k$-scheme $S$, with structural morphism $f$, we get a canonical
 ring spectrum structure on $f^*(\EE)$ as the functor $f^*$ is monoidal.

In this situation, we will usually denote by $\EE_S$ this ring
spectrum. The family of ring spectra $(\EE_S)$ thus defined
is a cartesian section of the fibred category $\DA -$.
It forms what we call an absolute ring spectrum over the category of
$k$-schemes in \cite{Deg17_e}.
\end{enumerate}
\end{exm}

\begin{num}\label{num:weak_monoidal}
We finally present a classical recipe in motivic homotopy theory to
 produce ring spectra. Let us fix a base scheme $S$.

Suppose given a triangulated monoidal category $\T$ and
 an adjunction of triangulated categories
\[
\phi^*:\DA S \leftrightarrows \T:\phi_*
\]
such that $\phi^*$ is monoidal.

Then, for a couple of objects $K$ and $L$ of $\T$,
we get a canonical map:
\[
\nu_{K,L}:
\phi_*(K) \otimes \phi_*(L)
 \xrightarrow{\phi_\star} \phi_*\phi^*\big(\phi_*(K) \otimes \phi_*(L)\big)
 \xrightarrow{\sim} \phi_*\big(\phi^*\phi_*(K) \otimes \phi^*\phi_*(L)\big)
 \xrightarrow{\phi^\star \otimes \phi^\star}
  \phi_*(K \otimes L)
\]
where $\phi_\star$ and $\phi^\star$ are respectively the unit and
counit of the adjunction $(\phi^*,\phi_*)$.
Besides, one easily checks that this composite morphism is compatible
with the associativity and symmetry isomorphisms of $\DA S$ and $\T$.

We also get a canonical natural transformation
\[
\nu:
\un_S \xrightarrow{\phi^\star}
 \phi_*\phi^*(\un_S) \simeq \phi_*(\un_\T)
\]
which one can check to be compatible with the unit isomorphism of the
monoidal structures underlying $\DA S$ and $\T$.
In other words, the functor $\phi_*$ is weak monoidal.\footnote{We use the terminology
 weak monoidal/monoidal functor. Beware that the terminology monoidal/strong monoidal also
 exists (see e.g. \cite[XI.2.]{MacLane_e}).}

Then, given any commutative monoid $(M,\mu^M,\eta^M)$ in $\T$,
one gets after applying the functor $\phi_*$ a ring spectrum with
multiplication and unit maps
\begin{align*}
\mu:&\phi_*(M) \otimes \phi_*(M) \xrightarrow{\nu_{M,M}}
 \phi_*(M \otimes M) \xrightarrow{\phi_*(\mu^M)} \phi_*(M), \\
\eta:&\un_S \xrightarrow{\nu} \phi_*(\un_\T)
 \xrightarrow{\phi_*(\eta^M)} \phi_*(M).
\end{align*}
The verification of the axioms of a ring spectrum comes from the
fact that $\phi_*$ is weak monoidal.
\end{num}

\begin{exm}\label{ex:main_ringsp}
The main example we have in mind is the case where $M$
 is the unit object $\un_\T$ of the monoidal category $\T$:
\[
\EE=\phi_*(\un_\T).
\]
\end{exm}

\begin{rem}
On can also define a strict ring spectrum over a scheme $S$
as a commutative monoid object of the underlying model category
of $\DA S$ --- in other words a Tate spectrum equipped with a ring structure
such that the diagrams \eqref{eq:axiom_ring} commute in the category
of Tate spectra, rather than in its localization with respect
to weak equivalences.

In the subsequent cases where ring spectra will appear in this paper,
through an adjunction $(\phi^*,\phi_*)$ as above,
this adjunction will be derived from a Quillen adjunction of
monoidal model categories. We can repeat the arguments above by
replacing the categories with their underlying model categories.
Therefore, the ring spectra of the form $\phi_*(\un_\T)$ will in
fact be strict ring spectra. While we will not use this fact here,
it is an information that could be useful to the reader.
\end{rem}

\begin{exm}
Let $R$ be a ring of coefficients.
Consider the notations of Remark \ref{rem:chg_coeff}.
Then we can apply the preceding considerations to the adjunction
\eqref{eq:chg_coeff} so that we get a ring spectrum
\[
\sHAone R_X:=\derR \rho^R_*(\un_X).
\]
\end{exm}

\begin{num}\label{num:morphisms}
Consider again the notations of Paragraph~\ref{num:weak_monoidal}.
As mentioned in Example~\ref{ex:trivial_ring}\ref{item:constant-Tate-spectrum},
the ring spectrum $\phi_*(\un_\T)$ automatically comes with
a morphism of ring spectra:
\begin{equation}\label{eq:regulator}
\un_S \rightarrow \phi_*(\un_\T).
\end{equation}
Moreover, suppose there exists another triangulated monoidal
 categories $\T'$ with an adjunction of triangulated categories:
\[
\psi^*:\T \leftrightarrows \T':\psi_*
\]
such that $\psi^*$ is monoidal.

Then one gets a canonical morphism of ring spectra:
\[
\phi_*(\un_\T)
 \rightarrow \phi_*\psi_* \psi^*(\un_\T) \simeq \phi_*\psi_*(\un_{\T'})
\]
which is compatible with the canonical morphism of
the form \eqref{eq:regulator}.
\end{num}

\section{The four theories associated with a ring spectrum}
\label{sec:theories}

\begin{num}\label{num:notations_4theories}
In this section, we will fix a base scheme $k$
together with a ring spectrum $(\EE,\mu,\eta)$ over $k$.
Given any $k$-scheme $X$ with structural morphism $f$,
we denote by $\EE_X=f^*(\EE)$ the pullback ring spectrum over $X$
(Example \ref{ex:trivial_ring}).
\index[notation]{ex@$\EE_X$}%
We will still denote by $\mu$ (resp.\ $\eta$) the multiplication
(resp.\ unit) map of the ring spectrum $\EE_X$.

When $X/k$ is separated of finite type,
it will also be useful to introduce the following notation for the following
 spectrum in $\DA X$:
\[
\EE'_X=f^!(\EE).
\]
\index[notation]{exp@$\EE'_X$}%
Note that this is not a ring spectrum in general, but that there is a pairing 
\begin{equation}\label{eq:pre_capp}
\mu':\EE'_X \otimes \EE_X=f^!(\EE) \otimes f^*(\EE)
 \rightarrow f^!(\EE \otimes \EE) \xrightarrow \mu f^!(\EE)=\EE'_X
\end{equation}
using the last point
of \S \ref{num:basic_6functors}.

In the following subsections, we will show how to associate
cohomological/homological theories with $\EE$ and deduce
the rich formalism derived from the six functors formalism
(Theorem~\ref{thm:6functors}).

Note finally that the constructions will be functorial in the
ring spectra $\EE$. To illustrate this fact, we will also fix
a morphism of ring spectra
\[
\phi:\EE \rightarrow \FF.
\]
\end{num}

\subsection{Definitions and basic properties}

\begin{dfn}\label{df:4theories}
Let $p:X \rightarrow \Spec(k)$ be a morphism of schemes,
 $n \in \ZZ$ be an integer and $v \in \cK(X)$ be a virtual vector bundle over $X$.

We define the cohomology of $X$ in degree $(n,v)$
and coefficients in $\EE$ as the following abelian group:
\[
\EE^{n}(X,v):=
\Hom_{\DA k}\Big(\un_k,p_*\big(p^*(\EE) \otimes \Th(v)\big)[n]\Big).
\]
If $X$ is a $k$-variety,
we also define respectively the cohomology with compact
support, Borel-Moore homology and homology
of $X$ in degree $(n,v)$ and coefficients in $\EE$ as:
\index[notation]{enxv@$\EE^n(X,v)$}%
\index[notation]{encxv@$\EE^n_c(X,v)$}%
\index[notation]{ebmnxv@$\EEBM_n(X,v)$}%
\index[notation]{enxv@$\EE_n(X,v)$}%
\index{ring spectrum!cohomology}%
\index{ring spectrum!cohomology with compact support}%
\index{ring spectrum!Borel-Moore homology}%
\index{ring spectrum!homology}%
\begin{align*}
\EE^n_c(X,v)&:=
\Hom_{\DA k}\Big(\un_k,p_!\big(p^*(\EE) \otimes \Th(v)\big)[n]\Big), \\
\EEBM_n(X,v)&:=
\Hom_{\DA k}\Big(\un_k,p_*\big(p^!(\EE) \otimes \Th(-v)\big)[-n]\Big), \\
\EE_n(X,v)&:=
\Hom_{\DA k}\Big(\un_k,p_!\big(p^!(\EE) \otimes \Th(-v)\big)[-n]\Big). 
\end{align*}
We will sometime use the abbreviations
\emph{c-cohomology} and \emph{BM-homology} for cohomology with compact support
and Borel-Moore homology respectively.

Finally, $v$ can be replaced by an integer $m \in \ZZ$,
 in the case where $v$ is the class $m.[\mathcal O_X]$.
\end{dfn}
We will describe below the properties satisfied by these four theories,
which can be seen as a generalization of the classical 
Bloch-Ogus formalism (see \cite{BO74_e}).
\index{Bloch-Ogus}%

\begin{rem}\label{rem:twists&K0}
It is clear from the construction of Paragraph~\ref{num:basic_Thom}
and from the above definition that cohomology and cohomology with compact
support (resp.\ Borel-Moore homology and homology) depend covariantly
(resp.\ contravariantly) upon the virtual vector bundle $v$
--- i.e.\ with respect to morphisms of the category
$\cK(X)$.
In particular, if one considers these theories up to isomorphism,
one can take for $v$ a class in the $K$-theory ring $\K_0(X)$
of vector bundles over $X$.
\end{rem}

\begin{exm}\label{ex:coh_smooth}
Let us assume that $X$ is a smooth $k$-scheme, with structural
morphism $p$. Consider a couple of integers $(n,m) \in \ZZ^2$.

Then, one gets the following computations:
\begin{align*}
\EE^n(X,m)
 &=\Hom_{\DA k}\big(\un_k,p_*\big(p^*(\EE) \otimes \Th(\AAA^m_X)\big)[n]\big) \\
 &=\Hom_{\DA k}\big(\un_k,p_*p^*\big(\EE \otimes \Th(\AAA^m_k)\big)[n]\big) \\
 &\stackrel{(1)}=\Hom_{\DA k}\big(p_\sharp p^*(\un_k),\EE \otimes \Th(\AAA^m_k)[n]\big) \\
 &\stackrel{(2)}=\Hom_{\DA k}\big(\ZZ_k(X),\EE(m)[n+2m]\big) \\
 &:=\EE^{n+2m,m}(X),
\end{align*}
where the last notation follows a more classical usage,
 from motivic cohomology theory for example.
The identification (1) comes from the (derived) adjunctions
described in Paragraph~\ref{num:basic_funct}
and (2) comes from the definition of $p^*$ (resp.\ $p_\sharp$)
--- see again Paragraph~\ref{num:basic_funct}.
%
%So for smooth $k$-schemes and constant virtual vector bundles,
%the cohomology theory just defined agree (up to change of twists)
%with the classical cohomology represented by $\EE$.
\end{exm}

\begin{rem}\label{rem:relative_MWspectrum}
Using the conventions stated in the beginning of this section,
one can rewrite the previous definitions as follows:
\begin{align*}
\EE^{n}(X,v)
 & =\Hom_{\DA X}\big(\un_X,\EE_X \otimes \Th(v)[n]\big), \\
\EE^{n}_{c}(X,v)
 & =\Hom_{\DA k}\big(\un_k,p_!(\EE_X \otimes \Th(v))[n])\big), \\
\EEBM_n(X,v)
 & =\Hom_{\DA X}\big(\un_X,\EE'_X \otimes \Th(-v)[-n]\big), \\
\EE_{n}(X,v)
 & =\Hom_{\DA k}\big(\un_k,p_!(\EE'_X \otimes \Th(-v))[-n]\big).
\end{align*}
In particular, if one interprets $\EE_X'$ as a \emph{dual} of $\EE_X$,
our definition of Borel-Moore homology is analogous
to that of Borel and Moore relative to singular homology
(see \cite{BM60_e}).
\end{rem}

\begin{num}
Assume $p:X \rightarrow \Spec(k)$ is separated of finite type.
From the natural transformation $\alpha_p:p_! \rightarrow p_*$
of Theorem~\ref{thm:6functors}\ref{item:trafo-iso-f-proper},
one gets canonical natural transformations:
\begin{align*}
\EE^{n}_{c}(X,v) & \rightarrow \EE^{n}(X,v), \\
\EE_{n}(X,v) & \rightarrow \EEBM_n(X,v)
\end{align*}
which are isomorphisms whenever $X/k$ is proper.
\end{num}

\begin{rem}
\label{rem:pre_homotopy}%
Consider an arbitrary $k$-scheme $X$.

One must be careful about the homotopy invariance property.
\index{homotopy invariance}%
Indeed, if $v$ is a virtual bundle over $\Aone_X$ which comes from $X$,
that is $v=\pi^{-1}(v_0)$ where $\pi:\Aone_X \rightarrow X$ is the
canonical projection, then one gets:
\[
\EE^n(\Aone_X,v) \simeq \EE^n(X,v_0)
\]
from the homotopy property of $\DA X$ --- more precisely,
the isomorphism $Id \xrightarrow \sim \pi_*\pi^*$.

This will always happen if $X$ is regular.
But in general, $v$ could not be of the form $\pi^{-1}(v_0)$
and there is no formula as above.

Similarly, if $v=p^{-1}(v_0)$, one gets:
\[
\EE_n(\Aone_X,v) \simeq \EE_n(X,v_0).
\]
Note finally there is no such formula for BM-homology (resp. c-cohomology):
 more precisely, the affine line behaves like the sphere $\PP^1$ (resp. its "inverse")
 for this theory.
\end{rem}

\begin{num}\label{num:morphisms_ringsp}
It is clear that a morphism of ring spectra $\phi:\EE \rightarrow \FF$
induces morphisms of abelian groups, all denoted by $\phi_*$:
\begin{align*}
\EE^{n}(X,v) &\rightarrow \FF^{n}(X,v) \\
\EE^{n}_c(X,v) &\rightarrow \FF^{n}_c(X,v) \\
\EEBM_n(X,v) &\rightarrow \FFBM_n(X,v) \\
\EE_{n}(X,v) &\rightarrow \FF_{n}(X,v).
\end{align*}
\end{num}

\subsection{Functoriality properties} \label{sec:functoriality}

\begin{num} \label{num:basic_functoriality}
\textit{Basic functoriality}.
Let $f:Y \rightarrow X$ be a morphism of $k$-schemes
and consider $(n,v) \in \ZZ \times \cK(X)$.

Letting $p$ (resp.\ $q$) be the structural projection of $X/k$ (resp.\ $Y/k$),
we deduce the following maps in $\DA X$, where in the second one
we have assumed that $p$ and $q$ are separated of finite type
\begin{align*}
\Th(v) \otimes p^*(\EE)
 \xrightarrow{ad} f_*f^*(\Th(v) \otimes p^*\EE)
 \stackrel{(1)} \simeq f_*\big(\Th(f^{-1}v) \otimes q^*\EE\big) \\
f_!\big(\Th(f^{-1}v) \otimes q^!\EE\big)
 \stackrel{(2)}\simeq f_!f^!(\Th(v) \otimes p^!\EE)
 \xrightarrow{ad'} \Th(v) \otimes p^!(\EE)
\end{align*}
where $ad$ (resp.\ $ad'$) is the unit (resp.\ counit) map
of the adjunction $(f^*,f_*)$ (resp.\ $(f_!,f^!)$)
and the isomorphism (1) (resp.\ (2)) follows from the fact $f^*$ is monoidal
(resp.\ $\Th(v)$ is $\otimes$-invertible).

Composing respectively with $p_*$ and $p_!$, we get canonical morphisms:
\begin{align*}
p_*\big(\Th(v) \otimes p^*\EE\big)
 &\xrightarrow{\ \pi(f)\ } q_*\big(\Th(f^{-1}v) \otimes q^*\EE\big) \\
q_!\big(\Th(f^{-1}v) \otimes q^!\EE\big)
 &\xrightarrow{\ \pi'(f)\ }  p_!\big(\Th(v) \otimes p^!\EE\big),
\end{align*}
which induces the following pullback and pushforward maps:
\begin{align*}
\EE^{n}(X,v) &\xrightarrow{\ f^*\ } \EE^{n}(Y,f^{-1}v) \\
\EE_{n}(Y,f^{-1}v) &\xrightarrow{\ f_*\ }  \EE_{n}(X,v).
\end{align*}
\index[notation]{ff@$f^*$, $f_*$}%
It is straightforward to check that these maps are compatible with composition,
turning cohomology (resp.\ homology) into a \emph{contravariant}
(resp.\ \emph{covariant})
functor with source the category of $k$-schemes 
(resp.\ $k$-varieties).

Assume now that $f$ is proper.
Then from Theorem~\ref{thm:6functors}\ref{item:trafo-iso-f-proper},
one gets a canonical isomorphism $\alpha_f:f_! \simeq f_*$
and the maps $\pi(f)$, $\pi'(f)$ respectively induce canonical morphisms:
\begin{align*}
\EE^{n}_{c}(X,v) &\xrightarrow{\ f^*\ } \EE^{n}_c(Y,f^{-1}v) \\
\EEBM_n(Y,f^{-1}v) &\xrightarrow{\ f_*\ }  \EEBM_n(X,v).
\end{align*}
Again, notably because $\alpha_f$ is compatible with composition,
these maps are compatible with composition so
that cohomology with compact support
(resp.\ Borel-Moore homology) is a \emph{contravariant}
(resp.\ \emph{covariant})
functor with respect to \emph{proper morphisms} of $k$-varieties.
\end{num}

\begin{rem}
\index{homotopy invariance}%
With that functoriality at our disposal, we can understand
the homotopy property described in Remark \ref{rem:pre_homotopy}
as follows. Given any scheme $X$ and any virtual bundle $v_0$ over $X$,
the canonical projection $\pi:\Aone_X \rightarrow X$ induces
isomorphisms:
\begin{align*}
\pi^*:&\EE^{n}(X,v_0) \rightarrow \EE^{n}(\Aone_X,\pi^{-1}(v_0)) ,\\
\pi_*:&\EE_{n}(\Aone_X,\pi^{-1}(v_0)) \rightarrow \EE_{n}(X,v_0).
\end{align*}
\end{rem}

\begin{num}\label{num:localization_BM&c}
\textit{Localization long exact sequences}.
\index{localization sequence}%
One of the main
properties of Borel-Moore homology, as well as cohomology with
compact support is the existence of the so-called localization long exact
sequences.
In our case, it follows directly from the localization triangle
stated in Paragraph~\ref{num:basic_6functors}.

Indeed, for a closed immersion $i:Z \rightarrow X$ of $k$-varieties
with complement open immersion $j:U \rightarrow X$,
and a virtual vector bundle $v$ over $X$,
one gets localization sequences:
\begin{align*}
\EEBM_n(Z,i^{-1}v)
& \xrightarrow{i_*} \EEBM_n(X,v)
 \xrightarrow{j^*} \EEBM_n(U,j^{-1}v)
 \rightarrow \EEBM_{n-1}(Z,i^{-1}v), \\
\EE_c^n(U,j^{-1}v)
& \xrightarrow{j_*} \EE_c^n(X,v)
 \xrightarrow{i^*} \EE_c^n(Z,i^{-1}v)
 \rightarrow \EE_c^{n+1}(U,j^{-1}v).
\end{align*}
More explicitly, the first (resp.\ second) exact sequence is obtained by
using the distinguished triangle \eqref{eq:localization2} with $K=\EE'_X$
(resp.\ \eqref{eq:localization1} with $K=\EE_X$)
and applying the cohomological functor $\Hom_{\DA X}(\un_X,-)$.
Note we also use the identifications $i_!=i_*$
(resp.\ $j^!=j^*$) which follows from Theorem~\ref{thm:6functors}
part~\ref{item:trafo-iso-f-proper} (resp.\ \ref{item:smooth-twisted-inverse}).
\end{num}

\begin{num}\label{num:Gysin}
\textit{Gysin morphisms}.
\index{Gysin morphism}%
Let us fix a morphism $f:Y \rightarrow X$ of $k$-schemes which is separated
of finite type and consider the notations
of Remark \ref{rem:relative_MWspectrum}.

Assume $f$ is smooth with tangent bundle $\tau_f$.
Then, according to Theorem \ref{thm:6functors}\ref{item:smooth-twisted-inverse} and the $\otimes$-invertibility of Thom spectra,
we get a canonical isomorphism:
\[
\piso'_f:f^!(\EE_X) \simeq f^*(\EE_X) \otimes \Th(\tau_f)
 =\EE_Y \otimes \Th(\tau_f).
\]
Suppose now that $X$ and $Y$ are smooth $k$-varieties,
with respective structural morphisms $p$ and $q$.
Then $f$ is a local complete intersection morphism, and has for
relative virtual tangent bundle the virtual bundle in $\cK(Y)$:
\[
\tau_f=[T_q]-[f^{-1}(T_p)].
\]
Then one can compute $q^!\EE$ in two ways:
\begin{align*}
q^!\EE & \stackrel{(1)}\simeq q^*(\EE) \otimes \Th(T_q)
 =\EE_Y \otimes \Th(T_q) \\
& =f^!p^!(\EE) \stackrel{(2)}\simeq f^! \big( p^*(\EE) \otimes \Th(T_p) \big)
 \stackrel{(3)} \simeq f^!(\EE_X) \otimes \Th(f^{-1}T_p)
\end{align*}
where (1) and (2) are given by the relative purity isomorphisms
of Theorem \ref{thm:6functors}\ref{item:smooth-twisted-inverse},
respectively for $p$ and $q$,
and (3) follows from the fact $\Th(T_p)$ is $\otimes$-invertible.
Putting the two formulas together, one gets as in the previous case
an isomorphism:
\begin{equation}\label{eq:fdl_class1}
\tilde \eta_f:f^!(\EE_X) \simeq \EE_Y \otimes \Th(\tau_f).
\end{equation}
Similarly, using the same procedure but exchanging the role of $f^*$
and $f^!$, one gets a canonical isomorphism:
\begin{equation}\label{eq:fdl_class2}
\tilde \eta'_f:f^*(\EE'_X) \simeq \EE'_Y \otimes \Th(-\tau_f),
\end{equation}
assuming either $f$ is smooth or $f$ is a morphism of smooth $k$-varieties.

Therefore one gets using adjunctions the following trace maps:
\begin{align*}
tr_f&:f_!\big(\EE_Y \otimes \Th(\tau_f)\big) \longrightarrow \EE_X, \\
tr'_f&:\EE'_X \longrightarrow f_*\big(\EE'_Y \otimes \Th(-\tau_f)\big).
\end{align*}
We can tensor these maps with the Thom space of an arbitrary
virtual vector bundle $v$ over $X$, and compose the map with $p_!$
for the first one and $p_*$ for the second one to get
the following maps:
\begin{align*}
q_!\big(q^*\EE \otimes \Th(f^{-1}v+\tau_f)\big)
 &\longrightarrow p_!(p^*\EE \otimes \Th(v)), \\
p_*(p^!\EE_X \otimes \Th(v))
 &\longrightarrow q_*\big(q^!\EE \otimes \Th(f^{-1}v-\tau_f)\big).
\end{align*}
If we assume moreover that $f$ is proper,
then we get using the same procedure and using the identification $f_*=f_!$
the following maps:
\begin{align*}
q_*\big(q^*\EE \otimes \Th(f^{-1}v+\tau_f)\big)
 &\longrightarrow p_*(p^*\EE \otimes \Th(v)), \\
p_!(p^!\EE_X \otimes \Th(v))
 &\longrightarrow q_!\big(q^!\EE \otimes \Th(f^{-1}v-\tau_f)\big).
\end{align*}
Let us state the result in term of the four theories in the following
proposition.
\end{num}
\begin{prop}
\label{prop:Gysin}%
Let 
$f:Y \rightarrow X$ be a morphism of $k$-varieties satisfying one of the
following assumptions:
\begin{enumerate}
\item 
\label{item:assumition-smooth}%
$f$ is smooth;
\item
\label{item:assumtion-X-Y-smooth}%
$X$ and $Y$ are smooth $k$-varieties.
\end{enumerate}
Then the maps defined above induce the following
 \emph{Gysin morphisms}:
\begin{align*}
f_*:&\EE^{n}_{c}(Y,f^{-1}v+\tau_f) \longrightarrow \EE^{n}_{c}(X,v), \\
f^*:&\EEBM_n(X,v) \longrightarrow \EEBM_n(Y,f^{-1}v-\tau_f).
\end{align*}
Assume moreover that $f$ is proper.
Then using again the previous constructions,
one gets the following maps:
\begin{align*}
f_*:&\EE^{n}(Y,f^{-1}v+\tau_f) \longrightarrow \EE^{n}(X,v), \\
f^*:&\EE_{n}(X,v) \longrightarrow \EE_{n}(Y,f^{-1}v-\tau_f).
\end{align*}
These Gysin morphisms are compatible with composition.

Under assumption \ref{item:assumition-smooth}, for any cartesian square,
\[
\begin{tikzcd}
[row sep=10pt,column sep=10pt]
Y' \ar[r,"g"] \ar[d,"q"'] & X' \ar[d,"p"] \\
Y \ar[r,"f"] & X
\end{tikzcd}
\]
one has the classical base change formulas:
\index{base change formula}%
\begin{itemize}
\item $p^*f_*=g_*q^*$ in case of cohomologies,
\item $f^*p_*=q_*g^*$ in case of homologies.
\end{itemize}
\end{prop}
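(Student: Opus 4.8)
The plan is to observe first that every Gysin morphism in the statement is obtained by applying the cohomological functor $\Hom_{\DA k}(\un_k,-)$ --- or $\Hom_{\DA X}(\un_X,-)$, using the reformulation of the four theories in Remark~\ref{rem:relative_MWspectrum} --- to one of the morphisms of Tate spectra already constructed in Paragraph~\ref{num:Gysin}: the trace maps $tr_f$ and $tr'_f$, together with their twisted and pushed-forward variants. Granting the description of the theories in Definition~\ref{df:4theories}, identifying the source and target of each resulting map with the groups announced in the proposition is then a routine unwinding; the only non-formal ingredients are the relative purity isomorphisms \eqref{eq:fdl_class1} and \eqref{eq:fdl_class2}, the $\otimes$-invertibility of the Thom spectra of Paragraph~\ref{num:basic_Thom}, and, when $f$ is moreover proper, the isomorphism $\alpha_f\colon f_!\simeq f_*$ of Theorem~\ref{thm:6functors}\ref{item:trafo-iso-f-proper}. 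Hence the real content lies in the two compatibilities (with composition and with base change), which I would prove as follows.

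For compatibility with composition, take a composable pair $Z\xrightarrow{g}Y\xrightarrow{f}X$ of $k$-varieties, both satisfying one of the hypotheses \ref{item:assumition-smooth} or \ref{item:assumtion-X-Y-smooth} (so that $f\circ g$ does as well). I would first record the additivity relation $\tau_{f\circ g}=\tau_g\oplus g^{-1}\tau_f$ in $\cK(Z)$; applying the functor $\Th$ then yields a canonical identification $\Th(\tau_g)\otimes g^*\Th(\tau_f)\simeq\Th(\tau_{f\circ g})$, compatible with the purity isomorphisms thanks to the coherence isomorphisms $\epsilon_\sigma$ of \cite{Ri10_e} and the Picard-categorical structure of $\cK(Z)$ of \cite{Del87_e}. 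Secondly, the relative purity isomorphisms \eqref{eq:fdl_class1} and \eqref{eq:fdl_class2} are themselves compatible with composition: this amounts to the assertion that $\piso$ (resp.\ $\alpha$) is a morphism of $2$-functors, Theorem~\ref{thm:6functors}\ref{item:smooth-twisted-inverse} (resp.\ \ref{item:trafo-iso-f-proper}), combined with the covariant $2$-functoriality of $f\mapsto f_!$ and the contravariant $2$-functoriality of $f\mapsto f^!$ from part~\ref{item:covariant-two-functor}. Substituting these identifications into the construction of the trace maps $tr_{f\circ g}$ and $tr'_{f\circ g}$ exhibits them as the evident composites built from $tr_f$, $tr_g$ (resp.\ $tr'_f$, $tr'_g$); tensoring with an arbitrary $\Th(v)$ and composing with $p_*$ or $p_!$ gives the compatibility for all four families of Gysin morphisms.

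For the base change formulas, fix a cartesian square as in the statement with $f$ --- hence also $g$ --- smooth. Then $\tau_q$ is canonically identified with $g^{-1}\tau_f$ (the tangent bundle of a smooth morphism pulls back along a cartesian square), and the base change isomorphisms of Theorem~\ref{thm:6functors}\ref{item:twisted-base-change} provide, for the square under consideration, isomorphisms $p^*f_!\xrightarrow{\sim}g_!q^*$ and $q_*g^!\xrightarrow{\sim}f^!p_*$. The decisive input is that these base change isomorphisms are compatible with the relative purity isomorphism $\piso'$ of part~\ref{item:smooth-twisted-inverse}, i.e.\ the square comparing $p^*(\piso'_f)$ with $\piso'_g$ through them commutes; this compatibility is part of the construction of the six functors formalism, see \cite{CD12_e}. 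Feeding it, together with the identification $\tau_q\simeq g^{-1}\tau_f$, into the definition of the trace maps and of the natural pullback and pushforward maps $\pi(f)$, $\pi'(f)$ of Paragraph~\ref{num:basic_functoriality}, and using the standard adjunction calculus, one obtains $p^*f_*=g_*q^*$ on cohomology (with compact support) and $f^*p_*=q_*g^*$ on (Borel--Moore) homology.

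The main obstacle is organizational rather than conceptual: each step reduces to a known compatibility of the six operations (the $2$-functoriality of $f_!$ and $f^!$, relative purity, proper base change, and the additivity of Thom spectra along exact sequences), but the chains of canonical isomorphisms are long and one must track carefully the orientation data hidden in the functor $\Th\colon\cK(X)\to\Pic(\DA X)$, as well as which of the two hypotheses \ref{item:assumition-smooth} or \ref{item:assumtion-X-Y-smooth} is in force when purity is invoked --- in the second case through the graph factorization $Y\xrightarrow{\Gamma_f}Y\times X\to X$ and purity for the regular immersion $\Gamma_f$. Of the three assertions I expect the base change formula to be the most delicate, being the only one that intertwines base change isomorphisms with purity isomorphisms.
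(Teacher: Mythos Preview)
Your proposal is correct and follows essentially the same line as the paper: the Gysin maps are read off from the trace maps of Paragraph~\ref{num:Gysin}, compatibility with composition reduces to the compatibility of the relative purity isomorphism with composition, and the base change formula reduces to the compatibility of relative purity with base change. The paper's proof is much terser than yours and simply defers these two compatibilities to external references (\cite{CD12_e}*{2.4.52} for composition, due to Ayoub, and \cite{Deg17_e}*{Lemma~2.3.13} for base change), whereas you unpack them explicitly; one minor deviation is that for hypothesis~\ref{item:assumtion-X-Y-smooth} the paper's construction of $\tilde\eta_f$ in Paragraph~\ref{num:Gysin} does not go through the graph factorization you mention but rather compares the purity isomorphisms for the two structural morphisms $p$ and $q$ directly.
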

Indeed, the construction of maps are directly obtained
from the maps defined in Paragraph~\ref{num:Gysin}.
The compatibility with composition is a straightforward check once
we use the compatibility of the relative purity isomorphism with
composition (due to Ayoub, see \cite{CD12_e}*{2.4.52} for the precise statement).
The base change formulas in the smooth case are similar
and are ultimately reduced to the compatibility of the relative purity
isomorphism with base change
(see for example the proof of \cite{Deg17_e}*{Lemma~2.3.13}).

\begin{rem}
Gysin morphisms can be defined under the weaker assumption
that $f$ is a global complete intersection.
Similarly, the base change formula can be extended to cover
also the case of assumption \ref{item:assumtion-X-Y-smooth} of the previous proposition, as well as the general case.
We refer the reader to \cite{DJK17_e} for this generality, as well as for
more details on the proofs.
\end{rem}

\begin{rem}\label{rem:morphisms_ringsp1}
According to these constructions, 
it is clear that the maps in Paragraph~\ref{num:morphisms_ringsp} induced by a morphism of ring spectra $\phi$
are natural in $X$ with respect to the basic functoriality
and Gysin morphisms of each of the four theories.
\end{rem}

\subsection{Products and duality}

\begin{num}\label{num:products}
One can define a \emph{cup-product} on cohomology,
\[
\EE^n(X,v) \otimes \EE^m(X,w) \rightarrow \EE^{n+m}(X,v+w),
 (x,y) \mapsto x \cupp y
\]
\index{spectrum!cup-product}%
\index[notation]{xcy@$x \cupp y$}%
where, using the presentation of Remark \ref{rem:relative_MWspectrum}
one defines $x \cupp y$ as the map:
\begin{align*}
\un_X & \xrightarrow{x \otimes y}
 \EE_X \otimes \Th(v) \otimes \EE_X \otimes \Th(w)[n+m]
 \simeq  \EE_X \otimes \EE_X \otimes \Th(v+w)[n+m] \\
 & \xrightarrow{\ \mu \otimes Id\ } \EE_X \otimes \Th(v+w)[n+m].
\end{align*}
Here we use the fact that $p_*$ is weak monoidal --- as the right adjoint
of a monoidal functor; see Paragraph~\ref{num:weak_monoidal}.
One can easily check that the pullback morphism on cohomology
 (see Paragraph~\ref{num:basic_functoriality}) is compatible with cup product
(see for example \cite{Deg12_e}*{1.2.10,~(E5)}).

Besides one gets a \emph{cap-product}:
\[
\EEBM_n(X,v) \otimes \EE^m(X,w) \rightarrow \EEBM_{n-m}(X,v-w),
 (x,y) \mapsto x \capp y
\]
\index{spectrum!cup-product}%
\index[notation]{xcy@$x \capp y$}%
defined, using again the presentation of Remark \ref{rem:relative_MWspectrum},
as follows:
\begin{align*}
x \capp y:\un_X[n-m] & \xrightarrow{x \otimes y}
 \EE'_X \otimes \Th(-v) \otimes \EE_X \otimes \Th(w)
 \simeq  \EE'_X \otimes \EE_X \otimes \Th(-v+w) \\
 & \xrightarrow{\ \mu'\ } \EE'_X \otimes \Th(-v+w).
\end{align*}
where $\mu'$ is defined in \eqref{eq:pre_capp}.

There is finally a \emph{cap-product with support}:
\[
\EEBM_n(X,v) \otimes \EE^m_c(X,w) \rightarrow \EE_{n-m}(X,v-w),
 (x,y) \mapsto x \capp y
\]
\index{spectrum!cup-product with support}%
defined, using Remark \ref{rem:relative_MWspectrum}, as follows:
\begin{align*}
x \capp y:\un_k[n-m]
 & \xrightarrow{x \otimes y}
  p_*\big(\EE'_X \otimes \Th(-v)\big) \otimes p_!(\EE_X \otimes \Th(w)) \\
 & \xrightarrow{(1)} 
  p_!\big(p^*p_*(\EE'_X \otimes \Th(-v)\big) \otimes \EE_X \otimes \Th(w) \big) \\
 & \xrightarrow{ad'} 
  p_!\big(\EE'_X \otimes \Th(-v) \otimes \EE_X \otimes \Th(w) \big)
  \simeq  p_!\big((\EE'_X \otimes \EE_X \otimes \Th(-v+w)\big) \\
 & \xrightarrow{\ \mu'\ } p_!\big((\EE'_X \otimes \Th(-v+w))\big),
\end{align*}
where (1) is obtained using the projection formula of Theorem~\ref{thm:6functors}\ref{item:projection-formula-hom} (first isomorphism),
 $ad'$ is the counit map of the adjunction $(p^*,p_*)$ and $\mu'$ is defined in \eqref{eq:pre_capp}.
\end{num}

\begin{rem}\label{rem:PF}
These products satisfy projection formulas with respect to Gysin
morphisms, whose formulation we leave to the reader (see also \cite{DJK17_e}).
\end{rem}

\begin{rem}\label{rem:morphisms_ringsp2}
Clearly, the map
\[
\phi_*:\EE^n(X,v) \rightarrow \FF^n(X,v)
\]
defined in \ref{num:morphisms_ringsp} is compatible with cup-products.
Similarly, the other natural transformations associated with 
the morphism of ring spectra $\phi$ are compatible with cap-products.
\end{rem}

\begin{num}\label{num:fdl}
\textit{Fundamental class}.
\index{fundamental class}%
Let us fix a smooth morphism $f:X \rightarrow \Spec(k)$,
with tangent bundle $\tau_X$.

Applying theorem \ref{thm:6functors}\ref{item:smooth-twisted-inverse} to $f$, we get an isomorphism:
\[
\piso'_f:\EE_X=f^*(\EE) \longrightarrow \Th_X(-T_X) \otimes f^!(\EE)=\Th_X(-T_X) \otimes \EE^\prime_X
\]
Then, in view of Remark \ref{rem:relative_MWspectrum},
 the composite map:
\[
\eta_X:\un_X\xrightarrow{\eta} \EE_X\xrightarrow{\piso'_f}\Th_X(-T_X) \otimes \EE^\prime_X\simeq \EE^\prime_X\otimes \Th_X(-T_X)
\]
corresponds to a class in the Borel-Moore homology
group $\EEBM_0(X,T_X)$.
\end{num}
\begin{dfn}\label{df:fdl_class}
Under the assumptions above, we call the class
$\eta_X \in \EEBM_0(X,T_X)$ the \emph{fundamental class} of
the smooth $k$-scheme $X$ with coefficients in $\EE$.
\end{dfn}

The following Poincaré duality theorem is now a mere consequence of
the definitions and of part \ref{item:smooth-twisted-inverse} of Theorem~\ref{thm:6functors}.
\begin{thm}\label{thm:duality}
\index{Poincaré duality}%
Let $X/k$ be a smooth $k$-variety
and $\eta_X$ its fundamental class with coefficients in $\EE$
as defined above.

Then the following morphisms
\begin{align*}
\EE^n(X,v) &\rightarrow \EEBM_{-n}(X,T_X-v),
 x \mapsto \eta_X \capp y \\
\EE^n_c(X,v) &\rightarrow \EE_{-n}(X,T_X-v),
 x \mapsto \eta_X \capp y
\end{align*}
are isomorphisms, simply called the \emph{duality isomorphisms}.
\end{thm}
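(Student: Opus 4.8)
The plan is to deduce Theorem~\ref{thm:duality} directly from the relative purity isomorphism of Theorem~\ref{thm:6functors}\ref{item:smooth-twisted-inverse}, essentially by unwinding the definition of the cap-product with the fundamental class. First I would fix the smooth structural morphism $p:X \rightarrow \Spec(k)$ with tangent bundle $T_X$, and record the two purity isomorphisms $\piso_p:p_\sharp \xrightarrow{\sim} p_!(\Th_X(T_X) \otimes -)$ and $\piso'_p:p^* \xrightarrow{\sim} \Th_X(-T_X) \otimes p^!$, which are dual to each other. The fundamental class $\eta_X \in \EEBM_0(X,T_X)$ is by Definition~\ref{df:fdl_class} the image of the unit $\eta:\un_X \rightarrow \EE_X$ under $\piso'_p$, viewed in $\Hom_{\DA X}(\un_X,\EE'_X \otimes \Th(-T_X))$ via Remark~\ref{rem:relative_MWspectrum}.

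Next I would analyze the map $x \mapsto \eta_X \capp x$ for the cohomological duality. By the formula in Paragraph~\ref{num:products}, capping with $\eta_X$ is given by tensoring with $\eta_X$ and applying the pairing $\mu':\EE'_X \otimes \EE_X \rightarrow \EE'_X$ of \eqref{eq:pre_capp}. The key point is that because $\eta_X = \piso'_p(\eta)$ and $\eta$ is the unit of the ring spectrum $\EE_X$, the composite $\EE_X \xrightarrow{\eta_X \otimes -} \EE'_X \otimes \Th(-T_X) \otimes \EE_X \xrightarrow{\mu'} \EE'_X \otimes \Th(-T_X)$ is precisely $\piso'_p$ itself (using the unit axiom in \eqref{eq:axiom_ring} for $\EE_X$ and the definition of $\mu'$ as $\mu$ followed by the canonical pairing $f^!(\EE)\otimes f^*(\EE)\to f^!(\EE\otimes\EE)$). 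Tensoring with $\Th(v)$ and shifting, this shows that $\eta_X \capp -$ induces on $\Hom_{\DA X}(\un_X,-)$ exactly the isomorphism obtained by applying the natural isomorphism $\piso'_p \otimes \Th(v)$, which identifies $\EE_X \otimes \Th(v)[n]$ with $\EE'_X \otimes \Th(-T_X) \otimes \Th(v)[n] = \EE'_X \otimes \Th(v - T_X)[n] = \EE'_X \otimes \Th(-(T_X-v))[n]$. Comparing with the definition of $\EEBM_{-n}(X,T_X-v) = \Hom_{\DA X}(\un_X, \EE'_X \otimes \Th(-(T_X-v))[-(-n)])$ from Remark~\ref{rem:relative_MWspectrum} gives the first duality isomorphism.

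For the second duality isomorphism, relating c-cohomology and homology, I would run the same argument but now apply $p_!$ rather than working in $\DA X$. Concretely, $\EE^n_c(X,v) = \Hom_{\DA k}(\un_k, p_!(\EE_X \otimes \Th(v))[n])$ and $\EE_{-n}(X,v') = \Hom_{\DA k}(\un_k, p_!(\EE'_X \otimes \Th(-v'))[-(-n)])$ for $v' = T_X - v$. The cap-product with support from Paragraph~\ref{num:products}, when the first variable is $\eta_X$, unwinds — again using the unit axiom and that the counit $ad'$ composed with the purity identifications collapses correctly — to the isomorphism $p_!(\piso'_p \otimes \Th(v))$. I would check that the projection-formula step labelled (1) in the definition of cap-product with support, applied with $\eta_X$ in the Borel-Moore slot, is compatible with the purity isomorphism; this is where one must be slightly careful, but it is formal. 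Then $p_!$ applied to $\EE_X \otimes \Th(v) \xrightarrow{\sim} \EE'_X \otimes \Th(-(T_X - v))$ yields precisely the claimed isomorphism.

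The main obstacle I anticipate is purely bookkeeping: verifying that the several canonical identifications entering the definitions of $\capp$ (the weak-monoidality maps $\nu$, the pairing $\mu'$ built from $\mu$ and the $f^!$-projection morphism of Paragraph~\ref{num:basic_6functors}, the projection formula isomorphism, and the counit $ad'$) all fit together so that "$\eta_X \capp -$" literally equals "apply $\piso'_p$ and reindex". None of these steps is deep — each is an instance of compatibility of the six-functor structure maps, and Ayoub's coherence results (cited via \cite{CD12_e}*{2.4.52}) guarantee the needed commutativities — but assembling the diagram carefully is the real content. Once the identification $\eta_X \capp - = \piso'_p$ (suitably twisted and pushed forward) is established, the theorem is immediate since $\piso'_p$ is an isomorphism by Theorem~\ref{thm:6functors}\ref{item:smooth-twisted-inverse}.
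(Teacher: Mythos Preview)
Your proposal is correct and follows essentially the same approach as the paper: both identify the cap-product with $\eta_X$ as the map induced by the purity isomorphism $\piso'_p$ of Theorem~\ref{thm:6functors}\ref{item:smooth-twisted-inverse}, using the unit axiom for $\EE_X$ and the definitions in Paragraphs~\ref{num:fdl} and~\ref{num:products}. The paper's proof is terser, simply asserting that the unwinding works and treating the second case by ``the same'' argument, whereas you spell out more of the bookkeeping (and correctly flag it as the only real content); but the strategy is identical.
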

\begin{proof}
Let us consider the first map. Using Remark \ref{rem:relative_MWspectrum},
 we can rewrite it as follows:
\[
\Hom_{\DA X}\big(\un_X,\EE_X \otimes \Th(v))[n]\big)
 \rightarrow \Hom_{\DA X}\big(\un_X,\EE'_X \otimes \Th(v-T_X))[n]\big).
\]
Then it follows from the definition of the fundamental class
(Paragraph \ref{num:fdl})
and that of cap-products (Paragraph \ref{num:products}),
that this map is induced by the morphism
\[
\piso'_f:\EE_X=f^*(\EE) \longrightarrow \Th_X(-T_X) \otimes f^!(\EE)=\Th_X(-T_X) \otimes \EE^\prime_X
\]
which is an isomorphism according to Theorem \ref{thm:6functors}\ref{item:smooth-twisted-inverse}.

The proof in the second case is the same.
\end{proof}

\begin{rem}
According to this theorem, and the basic functoriality of the four
theories (Paragraph \ref{num:basic_functoriality}),
one gets the following exceptional functoriality for morphisms of smooth
$k$-varieties:
\begin{itemize}
\item cohomology becomes covariant with respect to proper morphisms;
\item BM-homology becomes contravariant with respect to all morphisms;
\item c-cohomology becomes covariant with respect to all morphisms;
\item homology becomes contravariant with respect to proper morphisms.
\end{itemize}
An application of the projection formulas alluded to in Remark
\ref{rem:PF} gives that this extra functorialities coincide
with Gysin morphisms constructed in Proposition \ref{prop:Gysin}.
\end{rem}

\begin{exm}\label{ex:localization}
One deduces from the above duality isomorphisms and the localization
long exact sequences of Paragraph \ref{num:localization_BM&c}
that, for a closed immersion $i:Z \rightarrow X$ of smooth $k$-varieties
with complement open immersion $j$, one has long exact sequences:
\begin{align*}
\EE^n(Z,T_Z-i^{-1}v)
& \xrightarrow{i_*} \EE^n(X,T_X-v)
\xrightarrow{j^*} \EE^n(U,j^{-1}(T_X-v))
\rightarrow \EE^{n+1}(Z,T_Z-i^{-1}v), \\
\EE_n(U,j^{-1}(T_X-v))
& \xrightarrow{j_*} \EE_n(X,T_X-v)
\xrightarrow{i^*} \EE_n(Z,T_Z-i^{-1}v)
\rightarrow \EE_{n+1}(U,j^{-1}(T_X-v)).
\end{align*}
Besides, replacing $(T_X-v)$ by $v$ and using the exact
sequence over vector bundles over $Z$:
\[
0 \rightarrow T_Z \rightarrow i^{-1}T_X
 \rightarrow N_ZX \rightarrow 0
\]
where $N_ZX$ is the normal bundle of $Z$ in $X$,
the above exact sequences can be written more simply as:
\begin{align*}
\EE^n(Z,i^{-1}v-N_ZX)
& \xrightarrow{i_*} \EE^n(X,v)
 \xrightarrow{j^*} \EE^n(U,j^{-1}v)
 \rightarrow \EE^{n+1}(Z,i^{-1}v-N_ZX), \\
\EE_n(U,j^{-1}v)
& \xrightarrow{j_*} \EE_n(X,v)
 \xrightarrow{i^*} \EE_n(Z,i^{-1}v-N_ZX)
 \rightarrow \EE_{n+1}(U,j^{-1}v).
\end{align*}
For further reference, we note that the first long exact sequence was obtained using duality and the localization exact sequence
 for Borel-Moore homology induced by the localization triangle \eqref{eq:localization2}.
 However, we could alternatively use the localization triangle \eqref{eq:localization1} for $\un_X$,
 Theorem~\ref{thm:6functors} \ref{item:trafo-iso-f-proper} and \ref{item:smooth-twisted-inverse}, and then apply $\Hom_{\DA X}(-,p^*\EE\otimes_X \Th(v))$ to get a long exact sequence
\[
\EE^n(Z,i^{-1}v-N_ZX)
 \xrightarrow{i_*} \EE^n(X,v)
 \xrightarrow{j^*} \EE^n(U,j^{-1}v)
 \rightarrow \EE^{n+1}(Z,i^{-1}v-N_ZX)
\]
which is fact the same as above. This follows easily using the various adjunctions and the duality between Borel-Moore homology and cohomology
 (Theorem~\ref{thm:duality}).
\end{exm}

\subsection{Descent properties}

The following result is a direct application of Paragraph
 \ref{num:basic_6functors}.
\begin{prop}
Consider a cartesian square of $k$-schemes:
\[
\begin{tikzcd}
[row sep=10pt,column sep=10pt]
W \ar[r,"k"] \ar[d,"g"'] \ar[rd,phantom,"\Delta" description] & V \ar[d,"f"] \\
Y \ar[r,"i"'] & X
\end{tikzcd}
\]
which is Nisnevich or cdh distinguished
(see  Paragraph \ref{num:basic_6functors}).
Put $h=i \circ g$ and let $v$ be a virtual bundle over $X$.

Then one has canonical long exact sequences of the form:
\begin{align*}
\EE^n(X,v)
& \xrightarrow{i^*+f^*} \EE^n(Y,i^{-1}v) \oplus \EE^n(V,f^{-1}v)
 \xrightarrow{k^*-g^*} \EE^n(W,h^{-1}v)
 \rightarrow \EE^{n+1}(X,v) \\
\EE_n(W,h^{-1}v)
& \xrightarrow{k_*-g_*} \EE_n(Y,i^{-1}v) \oplus \EE_n(V,f^{-1}v)
 \xrightarrow{i_*+f_*} \EE_n(X,v)
 \rightarrow \EE_{n-1}(W,v).
\end{align*}
\index{descent sequence}%

If the square $\Delta$ is Nisnevich distinguished
(in which case all of its morphisms are \'etale),
one has canonical long exact sequences of the form:
\begin{align*}
\EEBM_n(X,v)
& \xrightarrow{i^*+f^*} \EEBM_n(Y,i^{-1}v) \oplus \EEBM_n(V,f^{-1}v)
 \xrightarrow{k^*-g^*} \EEBM_n(W,h^{-1}v)
 \rightarrow \EEBM_{n-1}(X,v) \\
\EE_c^n(W,h^{-1}v)
& \xrightarrow{k_*-g_*} \EE_c^n(Y,i^{-1}v) \oplus \EE_c^n(V,f^{-1}v)
 \xrightarrow{i_*+f_*} \EE_c^n(X,v)
 \rightarrow \EE_c^{n+1}(W,v)
\end{align*}
where we have used the Gysin morphisms with respect to \'etale maps
for BM-homology and c-cohomology (cf. Proposition~\ref{prop:Gysin}).

If the square $\Delta$ is cdh distinguished (in which case all of its
morphisms are proper),
one has canonical long exact sequences of the form:
\begin{align*}
\EEBM_n(W,h^{-1}v)
& \xrightarrow{k_*-g_*} \EEBM_n(Y,i^{-1}v) \oplus \EEBM_n(V,f^{-1}v)
 \xrightarrow{i_*+f_*} \EEBM_n(X,v)
 \rightarrow \EEBM_{n-1}(W,v), \\
\EE^n_c(X,v)
& \xrightarrow{i^*+f^*} \EE^n_c(Y,i^{-1}v) \oplus \EE^n_c(V,f^{-1}v)
 \xrightarrow{k^*-g^*} \EE^n_c(W,h^{-1}v)
 \rightarrow \EE^{n+1}_c(X,v)
\end{align*}
where we have used the proper covariance (resp.\ contravariance)
of BM-homology (resp.\ c-cohomology) constructed
in Paragraph \ref{num:basic_functoriality}.
\end{prop}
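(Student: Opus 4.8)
The plan is to deduce everything from the distinguished triangles recalled in Paragraph~\ref{num:basic_6functors}, namely the Nisnevich/cdh descent triangles \eqref{eq:descent1} and \eqref{eq:descent2}, by applying the appropriate cohomological functor and then translating the resulting long exact sequences into the language of the four theories via Definition~\ref{df:4theories}. Concretely, fix the cartesian square $\Delta$ and the virtual bundle $v$ over $X$, with structural morphism $p\colon X\to\Spec(k)$. For the cohomology statement, start from triangle \eqref{eq:descent1} applied to the object $K=\EE_X\otimes\Th(v)=p^*(\EE)\otimes\Th(v)\in\DA X$, then apply $p_*$ (which is triangulated) and the cohomological functor $\Hom_{\DA k}(\un_k,-[n])$. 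Since $i^*,f^*$ are monoidal and $\Th(v)$ is $\otimes$-invertible one has canonical identifications $i^*(K)=\EE_Y\otimes\Th(i^{-1}v)$ and $f^*(K)=\EE_V\otimes\Th(f^{-1}v)$, and similarly $h^*(K)=\EE_W\otimes\Th(h^{-1}v)$; combining with the identifications $i_*i^*$, $f_*f^*$, $h_*h^*$ and the definition of $\EE^n(-,-)$ gives exactly the first long exact sequence, with the boundary maps being the pullbacks $i^*,f^*,k^*,g^*$ of Paragraph~\ref{num:basic_functoriality}. This works for arbitrary $k$-schemes and for $\Delta$ either Nisnevich or cdh distinguished, since \eqref{eq:descent1} holds in both cases.

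Next, for the homology statement, apply the dual triangle \eqref{eq:descent2} to $K=\EE'_X\otimes\Th(-v)=p^!(\EE)\otimes\Th(-v)$, then apply $p_!$ and $\Hom_{\DA k}(\un_k,-[-n])$. Here one uses the natural isomorphisms $h^!h^!$, $i^!i^!$, $f^!f^!$ together with $j^!=j^*$ for open immersions and $i^!=i_*$ etc. — more precisely, for the Nisnevich case $f$ and $i$ are \'etale, hence $f^!=f^*\otimes\Th(\tau_f)$ with $\tau_f=0$ so $f^!=f^*$, giving $f^!(K)=\EE'_V\otimes\Th(-f^{-1}v)$ after \eqref{eq:fdl_class1}; for the cdh case $f$ and $i$ are proper so $f_!=f_*$, and one reads off $\EE_n(-,-)=\Hom_{\DA k}(\un_k,p_!(\EE'_-\otimes\Th(-v))[-n])$. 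The resulting exact sequence has the proper pushforwards $k_*,g_*,i_*,f_*$ of Paragraph~\ref{num:basic_functoriality} as its maps. The c-cohomology and Borel-Moore sequences are obtained by the same recipe: for the Nisnevich square apply \eqref{eq:descent1} to $\EE'_X\otimes\Th(-v)$ and then $p_*$ (for BM-homology), respectively \eqref{eq:descent2} to $\EE_X\otimes\Th(v)$ and then $p_!$ (for c-cohomology), using the \'etale Gysin morphisms $i^*,f^*$ of Proposition~\ref{prop:Gysin}\ref{item:assumition-smooth}; for the cdh square interchange the roles, using proper covariance/contravariance from Paragraph~\ref{num:basic_functoriality}. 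In every case the point is just that the six-functor formalism already packages the needed triangle, and one only has to identify the terms through the invertibility of Thom spectra and the monoidality of $f^*$.

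The only genuine point of care — and what I would treat as the main (mild) obstacle — is bookkeeping the twists and the directions of the maps so that the identifications of the terms are compatible with the \emph{stated} boundary maps and the composition laws of Paragraph~\ref{num:basic_functoriality} and Proposition~\ref{prop:Gysin}. In particular one must check that, in the Nisnevich case, the map labelled $f^*$ on $\EEBM$ really is the \'etale Gysin morphism $f^*\colon\EEBM_n(X,v)\to\EEBM_n(W,h^{-1}v)$ rather than some abstract map coming from $f_!f^!$; this follows from the identification $\tilde\eta_f$ of \eqref{eq:fdl_class1} being the one used in the construction of those Gysin morphisms in Paragraph~\ref{num:Gysin}, together with the compatibility of the purity isomorphism with base change (Ayoub, \cite{CD12_e}*{2.4.52}). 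Likewise for the cdh case one uses that $\alpha_f\colon f_!\simeq f_*$ is a morphism of $2$-functors, so the proper covariance of BM-homology and c-cohomology is compatible with the triangle \eqref{eq:descent2}. Once these identifications are pinned down, the exactness is immediate from the long exact sequence of the triangle, and functoriality in $\EE$ (for a morphism $\phi\colon\EE\to\FF$ of ring spectra) follows from Remarks~\ref{rem:morphisms_ringsp1} and \ref{rem:morphisms_ringsp2}.
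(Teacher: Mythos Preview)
Your proposal is correct and follows essentially the same approach as the paper: apply the descent triangles \eqref{eq:descent1} and \eqref{eq:descent2} from Paragraph~\ref{num:basic_6functors} to the appropriate object, then apply a cohomological functor and identify the maps via Paragraph~\ref{num:basic_functoriality} and Proposition~\ref{prop:Gysin}. The only cosmetic difference is that the paper works directly in $\DA X$, taking $K=\EE_X$ and applying $\Hom_{\DA X}(\un_X,-)$ (and then tensoring with $\Th(v)$), whereas you first tensor with $\Th(v)$, push down via $p_*$ or $p_!$, and apply $\Hom_{\DA k}(\un_k,-)$; these are equivalent by adjunction.
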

\begin{proof}
The proof is a simple application of the descent properties
obtained in Paragraph \ref{num:basic_6functors}.

For example, one gets the case of cohomology by using
the distinguished triangles \eqref{eq:descent1} with $K=\EE_X$:
\[
\EE_X \xrightarrow{i^\star+f^\star} i_*(\EE_Y) \oplus f_*(\EE_V)
 \xrightarrow{k^\star-g^\star} h_*(\EE_W) \rightarrow \EE_X[1]
\]
and applying the cohomological functor $\Hom_{\DA X}(\un_X,-)$.
The description of the maps in the long exact sequence obtained
follows directly from the description of the contravariant
functoriality of cohomology (see \ref{num:basic_functoriality}).

The other exact sequences are obtained similarly.
\end{proof}

We can state the existence of the long exact sequences in the
preceding proposition by saying that the four theories satisfies
Nisnevich and cdh cohomological descent.\footnote{One can express cohomological descent for the cdh or Nisnevich topology in the style
of \cite{SGA4_e}*{Vbis}, or \cite{CD12_e}*{\textsection 3},
using the fact the four theories admits an extension to simplicial
schemes and stating that cdh or Nisnevich hypercovers induces
isomorphisms. The simplification of our formulation comes as
these topologies are defined by cd-structure in the sense
of \cite{Veo10_e}.} This also shows that, 
under the existence of resolution of singularities,
they are essentially determined by their restriction to smooth $k$-varieties.

Let us make a precise statement.
\begin{prop}\label{prop:unique_extension}
Let us assume $k$ is of characteristic $0$
or more generally 
that any reduced $k$-variety admits a non-singular blow-up
and that $k$ is perfect.

Let $\tilde{\mathcal V}_k$ be the category whose objects
 are pairs $(X,v)$ such that $X$ is a $k$-variety and
 $v$ a virtual vector bundle over $X$,
 and morphisms $(Y,w) \rightarrow (X,v)$ are given
 by pairs $(f,\epsilon)$ where $f:Y \rightarrow X$ is a morphism of $k$-schemes
and $\epsilon:w \rightarrow f^{-1}v$ an isomorphism of virtual vector bundles
 over $X$.

Suppose one has a functor $H^*:\tilde{\mathcal V}_k^{op} \rightarrow \Ab^{\ZZ}$
 to graded abelian groups and a natural transformation:
\[
\phi_X:\EE^n(X,v) \rightarrow H^n(X,v)
\]
such that for any cdh distinguished square as in the preceding proposition,
one has a long exact sequence:
\[
\H^n(X,v)
 \xrightarrow{i^*+f^*} \H^n(Y,i^{-1}v) \oplus \H^n(V,f^{-1}v)
 \xrightarrow{k^*-g^*} \H^n(W,h^{-1}v)
 \rightarrow \H^{n+1}(X,v)
\]
which is compatible via $\phi$ with the one for $\EE^*$.

Then, if $\phi_X$ is an isomorphism when $X/k$ is smooth,
it is an isomorphism for any $k$-variety $X$.
\end{prop}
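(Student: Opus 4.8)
The plan is to reduce the statement to the case of smooth varieties by induction on dimension, using resolution of singularities (or the weaker blow-up assumption) together with the cdh descent long exact sequences and the five lemma. Both $\EE^*$ and $H^*$ satisfy cdh descent --- for $\EE^*$ this is the content of the preceding proposition, and for $H^*$ it is part of the hypothesis --- and $\phi$ is a natural transformation compatible with these long exact sequences. The key geometric input is that any cdh distinguished square fitting a singular variety can be constructed from a blow-up resolving its singularities, together with the closed subscheme where the resolution fails to be an isomorphism, and these auxiliary schemes have strictly smaller dimension.

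\textbf{Key steps.} First I would fix a $k$-variety $X$ and argue by induction on $d = \dim X$. When $d = 0$, $X$ is a finite disjoint union of spectra of finite field extensions of $k$ (hence smooth over $k$, as $k$ is perfect), so $\phi_X$ is an isomorphism by hypothesis. For the inductive step, I would choose, using the blow-up assumption, a proper birational morphism $f : V \to X$ with $V$ smooth; let $Y \subset X$ be a closed subvariety, of dimension $< d$, over which $f$ is not an isomorphism, and let $W = f^{-1}(Y) \subset V$ with its reduced structure. This gives a cdh distinguished square
\[
\begin{tikzcd}
[row sep=10pt,column sep=10pt]
W \ar[r,"k"] \ar[d,"g"'] \ar[rd,phantom,"\Delta" description] & V \ar[d,"f"] \\
Y \ar[r,"i"'] & X.
\end{tikzcd}
\]
Now $V$ is smooth, so $\phi_V$ is an isomorphism; $Y$ and $W$ have dimension $< d$, so $\phi_Y$ and $\phi_W$ are isomorphisms by the inductive hypothesis. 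Writing down the map of long exact sequences
\[
\begin{tikzcd}
[column sep=small]
\EE^n(X,v) \ar[r] \ar[d,"\phi_X"] & \EE^n(Y,i^{-1}v) \oplus \EE^n(V,f^{-1}v) \ar[r] \ar[d,"\simeq"] & \EE^n(W,h^{-1}v) \ar[r] \ar[d,"\simeq"] & \EE^{n+1}(X,v) \ar[d,"\phi_X"] \\
H^n(X,v) \ar[r] & H^n(Y,i^{-1}v) \oplus H^n(V,f^{-1}v) \ar[r] & H^n(W,h^{-1}v) \ar[r] & H^{n+1}(X,v)
\end{tikzcd}
\]
and applying the five lemma (with a small bootstrap: first deduce injectivity of $\phi_X$ in all degrees, then surjectivity) shows $\phi_X$ is an isomorphism, completing the induction.

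\textbf{Main obstacle.} The genuine difficulty is not the homological algebra --- the five lemma argument is routine once the diagram is in place --- but rather the \emph{noetherian induction / reduction of the general variety to the clean situation above}. One must handle non-reduced schemes (the theories are invariant under passage to the reduction, which needs to be checked or cited), reducible schemes (treat components and their pairwise intersections, again lower-dimensional), and must verify that the dimension of the "bad locus" $Y$ can genuinely be taken $< d$; this is where the blow-up hypothesis is used in its full strength, iterating if a single blow-up does not suffice. A secondary technical point is to ensure all the identifications of twists $i^{-1}v$, $f^{-1}v$, $h^{-1}v$ are compatible with the natural transformation $\phi$, i.e.\ that $\phi$ is natural over the category $\tilde{\mathcal V}_k$ and not merely for maps of schemes; this is already built into the hypotheses but must be tracked carefully through the descent sequences. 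Once these bookkeeping matters are settled, the proof is a formal consequence of cdh descent and induction on dimension.
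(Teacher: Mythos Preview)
Your proposal is correct and follows essentially the same approach as the paper: reduce to the reduced case, then induct on dimension using a single non-singular blow-up to produce a cdh distinguished square, and conclude via the five lemma. The paper handles the non-reduced step (which you flagged as needing attention) by observing that a nil-immersion $X_{\mathrm{red}} \hookrightarrow X$ fits into a cdh distinguished square with $Y = W = \varnothing$, so the long exact sequence immediately gives invariance under reduction; your worries about reducible schemes and iterated blow-ups are unnecessary, since the blow-up hypothesis already covers arbitrary reduced varieties in one step.
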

\begin{proof} Note that the long exact sequence for $H^*(X,v)$
 implies that any nil-immersion $\nu:V \rightarrow X$ induces
 an isomorphism $\nu^*:H^*(X,v) \rightarrow H^*(V,\nu^{-1}v)$
 --- take $Y=W=\varnothing$ to get a cdh-distinguished square
 as in Paragraph~\ref{num:basic_6functors}.
 Thus we can focus on reduced $k$-varieties.
 The proof is then an easy induction on the dimension of $X$.
When $X$ has dimension $0$, it is necessarily smooth
over $k$ as the latter field is assumed to be perfect.
The noetherian induction argument follows from the existence
of a blow-up $f:V \rightarrow X$ such that $V$ is smooth.
Let $Y$ be the (reduced) locus where $f$ is not an isomorphism,
$W=V \times_X Y$ with its reduced structure.
 Then the dimension of $Y$ and $W$ is strictly less than the dimension of $X$ and $V$.
By assumptions, $\phi_V$ is an isomorphism.
By the inductive assumption, $\phi_Y$ and $\phi_W$ are isomorphisms.
So the existence of the cdh descent long exact sequences,
and the fact $\phi$ is compatible with these,
allow us to conclude.
\end{proof}

\begin{rem}
Similar uniqueness statements,
with the same proof, hold for the other three theories.
We leave the formulation to the reader.
\end{rem}

\section{The motivic ring spectrum}\label{sec:motivicringspectrum}

\subsection{Motivic spectra}

The purpose of this section is to recall how classical motivic cohomology theories fit into the formalism introduced in the previous sections. Recall from \eqrefdmt{eq:chg_top&tr_DM}
that we have adjunctions of triangulated categories:
\begin{equation}
\begin{tikzcd}
[column sep=30pt,row sep=24pt]
\DAkR \ar[r,shift left=2pt,"{\derL \tilde \gamma^*}"] \ar[d,shift left=2pt,"a"]
 & \DMtkR \ar[r,shift left=2pt,"{\derL \pi^*}"] \ar[d,shift left=2pt,"{\tilde a}"]
     \ar[l,shift left=2pt,"{\tilde \gamma_{*}}"]
 & \DMkR \ar[d,shift left=2pt,"{a^{\mathrm{tr}}}"]
     \ar[l,shift left=2pt,"{\pi_{*}}"] \\
\DAxkR{\et} \ar[r,shift left=2pt,"{\derL \tilde \gamma^*_\et}"]
    \ar[u,shift left=2pt,"{\derR \fO}"]
 & \DMtxkR{\et} \ar[r,shift left=2pt,"{\derL \pi^*_\et}"]
	  \ar[u,shift left=2pt,"{\derR \fO}"]
		\ar[l,shift left=2pt,"{\tilde \gamma_{\et*}}"]
 & \DMxkR{\et}.
    \ar[u,shift left=2pt,"{\derR \fO}"]
		\ar[l,shift left=2pt,"{\pi_{\et*}}"]
\end{tikzcd}
\end{equation}
such that each left adjoint is monoidal. Setting $\gamma=\tilde\gamma\pi$ as in \chdmt, \ref{num:symmonstruc}, we get a diagram of adjunctions
\begin{equation}
\begin{tikzcd}
[column sep=30pt,row sep=24pt]
\DAkR \ar[r,shift left=2pt,"{\derL \gamma^*}"] \ar[d,shift left=2pt,"a"]
  & \DMkR \ar[d,shift left=2pt,"{a^{\mathrm{tr}}}"]
     \ar[l,shift left=2pt,"{\gamma_{*}}"] \\
\DAxkR{\et} \ar[r,shift left=2pt,"{\derL  \gamma^*_\et}"]
    \ar[u,shift left=2pt,"{\derR \fO}"]
 & \DMxkR{\et}.
    \ar[u,shift left=2pt,"{\derR \fO}"]
		\ar[l,shift left=2pt,"{\gamma_{\et*}}"]
\end{tikzcd}
\end{equation}

\begin{dfn}\label{df:M_ring_sp}
Applying the general procedure of Paragraph
\ref{num:weak_monoidal} to deduce ring spectra from the above adjunctions, we obtain respectively the $R$-linear motivic Eilenberg-Mac Lane spectrum
 and \'etale motivic Eilenberg-Mac Lane spectrum as follows:
\begin{align*}
\sHM R&:=\gamma_*(\un), \\
\sHMet R&:=\derR \fO \gamma_{\et*}(\un).
\end{align*} 
\index{motivic!Eilenberg-MacLane spectrum|see{spectrum motivic Eilenberg-MacLane}}
\index{spectrum!motivic!Eilenberg-MacLane}
\index[notation]{hmr@$\sHM R$}
\index[notation]{hmetr@$\sHMet R$}
\end{dfn}
\vspace{-0.5cm}
By adjunction, we get:
\begin{align*}
\sHM^n(X,m,R)
&=\Hom_{\DMkR}\big(\Mot(X),\un(m)[n+2m]\big), \\
\sHMet^n(X,m,R)
&=\Hom_{\DMxkR{\et}}\big(\Mot_\et(X),\un(m)[n+2m]\big)
\end{align*}
\index[notation]{hmnxmr@$\sHM^n(X,m,R)$}%
\index[notation]{hmetnxmr@$\sHMet^n(X,m,R)$}%
where $\Mot(X)$ (resp.\ $\Mot_{\et}(X)$) denotes the motive
 (resp.\ \'etale motive)
\index[notation]{mx@$\Mot(X)$}%
\index[notation]{metx@$\Mot_{\et}(X)$}%
associated with the smooth $k$-scheme $X$.
 
\begin{rem}\label{rem:on_spectra}
One derives from the Dold-Kan
\index{Dold-Kan}%
equivalence an adjunction of triangulated categories:
\[
N:\SH(k) \leftrightarrows \DA k:K
\]
see \cite{CD12_e}*{5.3.35}. The functor $N$ is monoidal (loc.\ cit.)
so that using the arguments of Paragraph \ref{num:weak_monoidal},
the functor $K$ is weak monoidal.
Therefore, if one applies $K$ to any ring spectrum of the above 
definition, one obtains a commutative monoid in the stable homotopy
category $\SH(k)$. The ring spectrum obtained via this procedure from the Eilenberg-MacLane motivic ring spectrum defined here coincides with the ring spectrum defined by Voevodsky (\cite{Voe98_e}*{6.1}; see
also \cite{CD12_e}*{11.2.17}).
\end{rem}

\subsection{Associated cohomology}

\begin{num}
Applying Definition \ref{df:4theories},
we can associate to the preceding ring spectra four
cohomological/homological theories, namely motivic cohomology, Borel-Moore motivic homology, motivic cohomology with compact support, and (Suslin) homology.

There are many computations for these four theories, and we briefly recall a few of them, starting with motivic cohomology.
For a smooth $k$-variety $X$ and a virtual bundle $v$ over $X$
of rank $m$, oner has:
\[
\sHM^n(X,v,\ZZ)=\begin{cases}
\CH^m(X) & \text{if } n=0, \\
\H^{n+2m}_\zar(X,\ZZ(m)) & \text{if } m \geq 0, \\
0 & \text{if } m<0,
\end{cases}
\]
where $\ZZ(m)$ is Suslin-Voevodsky motivic complex.\footnote{For example,
$\ZZ(m)=\Cstar{\VrepZ(\Gm^{\wedge,n})\big}[-n]$,
where $\Cstar{}$ is the Suslin (singular) complex functor.
See \cite{FSV00_e}*{Chap.~5}.} Note that the computation uses the fact that
motivic cohomology is an oriented cohomology theory
(see for example \cite{Deg12_e}*{2.1.4~(2)}).
\end{num}

\begin{num}\label{num:motivic_singular} \label{num:mot_coh}
\textit{Motivic cohomology (singular case)}.
For any $k$-scheme $f:X \rightarrow \Spec(k)$,
following the conventions of Paragraph \ref{num:notations_4theories},
we consider the following ring spectrum over $X$:
\[
\sHMx X:=f^*(\sHM).
\]
Apart from the fact that we are working in $\DA X$ instead of $\SH(X)$,
this ring spectrum agrees with the ring spectrum defined in \cite{CD15_e}*{3.8}.

Assume further that:
\begin{equation}\label{eq:assumption_DM}
\text{The characteristic exponent of $k$ is invertible in $R$.}
\end{equation}
Then according to \cite{CD15_e}, one can extend the construction
of the category $\DMkR$ to an arbitrary $k$-scheme $X$ and obtain
a triangulated $R$-linear category
\[
\DMcdh(X,R)
\]
which satisfies the six functors formalism for various $X$,
as described in Section
\ref{sec:6functors} (see \cite{CD15_e}*{5.11}).
Indeed, $\DMcdh(-,R)$ form what we called a motivic triangulated
category in \cite{CD12_e}*{2.4.45} --- here, the assumption
\eqref{eq:assumption_DM} is essential.
Note in particular that we can define, as in Paragraph \ref{num:basic_Thom},
Thom motives of virtual bundles (see \cite{CD12_e}*{2.4.15}).
Given a virtual bundle $v$ over $X$, we denote by $\Mot\Th(v)$
\index{Thom motive}%
\index[notation]{mthv@$\Mot\Th(v)$}%
this Thom motive, an object of $\DMcdh(X,R)$.

Moreover, according to \cite{CD12_e}*{11.2.16},
one has a natural adjunction of triangulated categories:
\[
\gamma^*:\DA{X,R} \leftrightarrows \DMcdh(X,R):\gamma_*
\]
extending the adjunction
$(\derL \gamma^*,\gamma_*)$.
In fact, the family of adjunctions $(\gamma^*,\gamma_*)$
for various schemes $X$ form what
we called a premotivic adjunction in \cite{CD12_e}*{1.4.6}.

As a consequence, $\gamma^*$ is monoidal and commutes with functors
of the form $f^*$ and $p_!$ while $\gamma_*$ commutes
with functors of the form $f_*$ and $p^!$ (see \cite{CD12_e}*{2.4.53});
here $f$ is any morphism of $k$-schemes while $p$ is separated of finite
type.

Note that by construction, $\gamma^*(\Th(v))=\Mot\Th(v)$.
As the objects $\Th(v)$ and $\Mot\Th(v)$ are $\otimes$-invertible,
one deduces that%
\footnote{The argument goes as follows. Consider the functors:
\[
\fTh(v):K \mapsto \Th(v) \otimes K,
\qquad 
\fTh^M(v):K \mapsto \Mot\Th(v) \otimes K.
\]
Then we obtain an isomorphism of functors:
\[
\gamma^* \circ \fTh(v) \simeq \fTh^M(v) \circ \gamma^*.
\]
Now, as the Thom objects are $\otimes$-invertible,
the functors $\fTh(v)$ and $\fTh^M(v)$ are equivalences
of categories. Their quasi-inverses are respectively:
$\fTh(-v)$ and $\fTh^M(-v)$.
Then from the preceding isomorphism of functors, one deduces an isomorphism
of the right adjoint functors:
\[
\fTh(-v) \circ \gamma_*
 \simeq \gamma_* \circ \fTh^M(-v).
\]}
\[
\gamma_*(\Mot\Th(v)) \simeq \Th(v).
\]

Applying \cite{CD15_e}*{Prop.~4.3 and Th.~5.1}, we also obtain
that $\gamma_*$ commutes with $f^*$. As a consequence,\footnote{Note this
identification is by no means obvious. In fact,
it answers a conjecture of Voevodsky (cf.\ \cite{Voe02_e}*{Conj.~17})
in the particular case of the base change map $f:X \rightarrow \Spec(k)$.
See also \cite{CD15_e}*{3.3, 3.6}.}
\[
\sHMx X=f^*(\gamma_*(\un_k))\simeq \gamma_*(f^*(\un_k)) \simeq \gamma_*(\un_X).
\]
Finally, under assumption \eqref{eq:assumption_DM}, we can do
the following computation for an arbitrary $k$-scheme $X$
and a virtual vector bundle $v$ over $X$ of rank $m$:
\begin{align*}
\sHM^n(X,v,R)
&\simeq\Hom_{\DA X}\big(\un_X,\sHMx X \otimes \Th(v))[n]\big) \\
&\simeq\Hom_{\DA X}\big(\un_X,\gamma_*(\un_X) \otimes \Th(v))[n]\big) \\
&\simeq\Hom_{\DA X}\big(\un_X,\gamma_*(\Mot\Th(v))[n])\big) \\
&\simeq\Hom_{\DMcdh(X,R)}\big(\un_X,\Mot\Th(v)[n])\big) \\
&\simeq\Hom_{\DMcdh(X,R)}\big(\un_X,\un_X(m)[n+2m])\big) 
\end{align*}
which uses the identifications recalled above,
and for the last one, the fact that motivic cohomology is an oriented
cohomology theory (equivalently, $\DMcdh$ is an oriented motivic
triangulated category, see \cite{CD12_e}*{2.4.38, 2.4.40, 11.3.2}).

We can give more concrete formulas as follows.
Assume $X$ is a $k$-variety.
Recall Voevodsky has defined in \cite{FSV00_e}*{Chap.~5, \textsection 4.1}
a motivic complex $\Cstar X$
\index[notation]{cx@$\Cstar X$}%
\index{Suslin(-Voevodsky) singular complex}%
in $\DMe{k,\ZZ}$ by considering the Suslin complex of the sheaf
with transfers $\VrepZ(X)$ represented by $X$.%
\footnote{$\VrepZ(X)$ is denoted by $L(X)$ in loc.\ cit.}
With $R$-coefficients, let us put:
\[
\Cstar{X}_R:=\Cstar X \otimes^\derL_\ZZ R.
\]
Then according to \cite{CD15_e}*{8.4, 8.6}, one gets:
\[
\sHM^n(X,v,R)=\begin{cases}
\Hom_{\DMekR}\big(\Cstar X_R,R(m)[n+2m]\big)
 & \text{if $m \geq 0$}, \\
0 & \text{if $m<0$}
\end{cases}
\]
where $R(m)$ is the $R$-linear Tate motivic complex:
$R(m)=\Cstar{\Gm^{\wedge m}}_R[-m]$.
Note also that one can compute the right-hand side
as the following cdh-cohomology group (see \cite{CD12_e}*{(8.3.1)}:
\[
\Hom_{\DMekR}\big(\Cstar X,R(m)[n+2m]\big)
=\H^{n+2m}_{\cdh}\big(X,R(m)\big),
\]
where $R(m)$ is seen as a complex of cdh-sheaves on the site
of $k$-schemes of finite type.
\end{num}

\subsection{Associated Borel-Moore homology}

Let us consider the situation of Paragraph \ref{num:motivic_singular}.
We assume further that $R$ is a localization of $\ZZ$ satisfying condition \eqref{eq:assumption_DM}.

Then we can compute Borel-Moore motivic homology, for
a $k$-variety $f:X \rightarrow \Spec(k)$
and a virtual bundle $v/X$ of rank $m$ as follows:
\begin{align*}
\sHMBM_n(X,v,R)
&\simeq\Hom_{\DA{X,R}}\big(\un_X,f^!(\sHM R) \otimes \Th(-)[-n]\big) \\
&\simeq\Hom_{\DA{X,R}}\big(\un_X,f^!(\gamma_*(\un_k)) \otimes \Th(-v)[-n]\big) \\
&\stackrel{(1)}\simeq\Hom_{\DA{X,R}}\Big(\un_X,\gamma_*\big(f^!(\un_k) \otimes \Mot\Th(-v)[-n]\big)\Big) \\
&\simeq\Hom_{\DMcdh(X,R)}\big(\un_X,f^!(\un_k) \otimes \Mot\Th(-v)[-n]\big) \\
&\stackrel{(2)}\simeq\Hom_{\DMcdh(X,R)}\big(\un_X,f^!(\un_k)(-m)[-2m-n]\big) \\
&\stackrel{(3)}\simeq\CH_m(X,n) \otimes_\ZZ R
\end{align*}
\index[notation]{hmbmnxvr@$\sHMBM_n(X,v,R)$}%
\index{spectrum!motivic Borel-Moore homology}%
\index{Borel-Moore!motivic homology spectrum|see{spectrum, motivic Borel-Moore homology}}%
where (1) follows from the properties mentioned in
Paragraph \ref{num:motivic_singular}, (2) as $\DMcdh$ is oriented
and (3) using \cite{CD15_e}*{Cor.~8.12}.

\section{The Milnor-Witt motivic ring spectrum}
\label{sec:MWmotivicringspectrum}%

\subsection{The ring spectra}

In this section, we apply the machinery developed in Section \ref{sec:theories} to MW-motives. We also discuss generalizations of the results of the previous section in this new framework.
 We now assume that $k$ is a perfect field and that $R$ is a ring of coefficients.

We again start with the following adjunctions of triangulated categories (\chdmt, \eqref{eq:chg_top&tr_DM})
\begin{equation}\label{eq:diagram_DMs}
\begin{tikzcd}
[column sep=30pt,row sep=24pt]
\DAkR \ar[r,shift left=2pt,"{\derL \tilde \gamma^*}"] \ar[d,shift left=2pt,"a"]
 & \DMtkR \ar[d,shift left=2pt,"{\tilde a}"]
     \ar[l,shift left=2pt,"{\tilde \gamma_{*}}"] \\
\DAxkR{\et} \ar[r,shift left=2pt,"{\derL \tilde \gamma^*_\et}"]
    \ar[u,shift left=2pt,"{\derR \fO}"]
 & \DMtxkR{\et}
	  \ar[u,shift left=2pt,"{\derR \fO}"]
		\ar[l,shift left=2pt,"{\tilde \gamma_{\et*}}"]
\end{tikzcd}
\end{equation}
and we define ring spectra out of these
 adjunctions as before.

\begin{dfn}\label{df:MW_ring_sp}
We define respectively the $R$-linear MW-spectrum and the \'etale MW-spectrum
 as follows:
\index[notation]{hmwr@@$\sHMW R$, $\sHMWet R$}%
\index{Milnor-Witt!motivic!cohomology ring spectrum}%
\begin{align*}
\sHMW R&:=\tilde \gamma_*(\un), \\
\sHMWet R&:= \derR \fO \tilde \gamma_{\et*}(\un).
\end{align*}
\end{dfn}

\begin{rem}
\begin{enumerate}
\item As in Remark \ref{rem:on_spectra}, we observe that the above ring spectra induce ring spectra in $\SH(k)$.
\item One can easily get rid of the assumption that $k$ is perfect by taking pullbacks from the prime field.
 Actually, the pullback procedure provides a well-defined $R$-linear MW-motivic ring spectrum over any $\QQ$-scheme
 of $\mathbb F_p$-scheme: that is any scheme of \emph{equal characteristics}.
\end{enumerate} 
\end{rem}

\begin{num}\label{num:trivial_comput_coh}
Each of these ring spectra represents a corresponding cohomology theory
 on smooth $k$-schemes. This follows by adjunction using Example
 \ref{ex:coh_smooth}. Explicitly, for a smooth $k$-scheme $X$
 and integers $(n,m) \in \ZZ^2$, one gets:
\begin{align*}
\sHMW^n(X,m,R)
&\simeq\Hom_{\DAkR}\big(\Sigma^\infty \ZZ_k(X),\tilde \gamma_*(\un)(m)[n+2m]\big) \\
&\simeq\Hom_{\DMtkR}\big(\tMot(X),\un(m)[n+2m]\big), \\
&\simeq\HMW^{n+2m,m}(X,R)
\end{align*}
where the first identification follows from Example \ref{ex:coh_smooth},
the second by adjunction --- here, $\tMot(X)$ denotes the MW-motive
associated with the smooth $k$-scheme $X$, 
following the notation of \chdmt.
The last group was introduced in \chdmt, Definition~\ref{def:generalizedMW}. Consequently, we have the following computation for any smooth $k$-scheme $X$, any couple of integers $(n,m)\in \ZZ^2$:
\[
\sHMW^n(X,m,\ZZ)=\begin{cases}
\ch mX & \text{if } n=0, \\
\H^n_\zar(X,\sKMW_0) & \text{if } m=0, \\
\H^{n+2m}_\zar(X,\tilde\ZZ(m)) & \text{if } m>0, \\
\H^{n+m}_\zar(X,\sW) & \text{if } m<0.
\end{cases}
\]
See \chdmt, Cor.~\ref{cor:ChowWitt} and Prop.~\ref{prop:explicit}.
\end{num}

Similarly we get:
\begin{align*}
\sHMWet^n(X,m,R)
&=\Hom_{\DMtxkR{\et}}\big(\tMot_\et(X),\un(m)[n+2m]\big)
\end{align*}
where $\tMot_\et(X)$ denotes the étale MW-motive
associated with the smooth $k$-scheme $X$.

If $V$ is a vector bundle of rank $r$ over $X$, then we can extend the previous computations to the case of $\sHMW^n(X,m-V,R)$ as follows. First, The Thom space is represented by the explicit complex of sheaves
\[
\ZZ_X(V^\times)\to \ZZ_X(V)
\]
(which is in fact a cofibrant resolution of the actual Thom space introduced in \ref{num:basic_Thom}). It follows that $\Th_k(V)=p_{\sharp}(\Th(V))$ is given by the complex 
 \[
\ZZ(V^\times)\to \ZZ(V)
\]
while $\tilde\gamma^*(p_{\sharp}(\Th(V)))$ is of the form
\[
\tilde\ZZ(V^\times)\to \tilde\ZZ(V).
\]
By abuse of notation, we still denote by $\Th(V)$ the complex $\tilde\gamma^*(p_{\sharp}(\Th(V)))$ and observe that it coincides with the Thom space considered in \chdmt, Remark~\ref{rem:preThom}. It follows from the discussion after Definition \ref{def:Thom} that $\Th(\AAA^m_X)=\tilde\ZZ(X)(m)[2m]$.

\begin{lem}
We have a canonical isomorphism
\[
\sHMW^n(X,m-V,R)\simeq\sHMW^{n+2m,m}(\Th_k(V),R).
\]
\end{lem}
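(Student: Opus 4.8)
The plan is to obtain the identity by a purely formal computation in the $\Aone$-derived categories, using only the definition of twisted MW-motivic cohomology (Definition~\ref{df:4theories}), the facts that Thom objects of virtual bundles are $\otimes$-invertible and turn direct sums into tensor products (Paragraph~\ref{num:basic_Thom}), and the adjunctions $(p^*,p_*)$, $(p_\sharp,p^*)$ and $(\tilde\gamma^*,\tilde\gamma_*)$ together with the six-functor formalism of Theorem~\ref{thm:6functors}.

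First I would unwind the left-hand side. Write $p\colon X\to\Spec(k)$ for the structural morphism and $\EE=\sHMW R=\tilde\gamma_*(\un)$, so that by Definition~\ref{df:4theories}
\[
\sHMW^n(X,m-V,R)=\Hom_{\DA{k}}\big(\un_k,p_*(p^*\EE\otimes\Th(m-V))[n]\big).
\]
Since $\Th$ sends the virtual bundle $m\cdot[\OO_X]-[V]$ to $\Th(\AAA^m_X)\otimes\Th(-V)$ and $\Th(\AAA^m_X)\simeq\un_X(m)[2m]=p^*(\un_k(m)[2m])$, the projection formula for $p_*$ applied to the $\otimes$-invertible coefficient $\un_k(m)[2m]$, the monoidality of $p^*$, and the adjunction $(p^*,p_*)$ give
\[
\sHMW^n(X,m-V,R)\simeq\Hom_{\DA{X}}\big(\un_X,(p^*\EE\otimes\Th(-V))(m)[n+2m]\big).
\]

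Next I would unwind the right-hand side. For an object $M$ of $\DA{k}$ the group $\sHMW^{a,b}(M,R)$ means $\Hom_{\DA{k}}(M,\EE(b)[a])$, a convention which for $M$ the motive of a smooth scheme reduces to the one used in Paragraph~\ref{num:trivial_comput_coh}. Taking $M=\Th_k(V)=p_\sharp\Th(V)$ and $(a,b)=(n+2m,m)$, the adjunction $(p_\sharp,p^*)$ and the monoidality of $p^*$ give
\[
\sHMW^{n+2m,m}(\Th_k(V),R)\simeq\Hom_{\DA{X}}\big(\Th(V),(p^*\EE)(m)[n+2m]\big),
\]
and the $\otimes$-invertibility of $\Th(V)$, with tensor inverse $\Th(-V)$ (Paragraph~\ref{num:basic_Thom}), identifies the right-hand side with $\Hom_{\DA{X}}(\un_X,(p^*\EE\otimes\Th(-V))(m)[n+2m])$ — exactly the group reached from the left-hand side above. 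As every isomorphism used along the way is canonical, so is their composite, which is the asserted canonical isomorphism.

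I do not expect any real obstacle: the only points that need to be spelled out are the bookkeeping of the Tate twist through $p_*$ (i.e.\ the projection formula $p_*(M\otimes p^*N)\simeq p_*(M)\otimes N$ for $N$ $\otimes$-invertible, a formal consequence of the closed monoidal six-functor package) and the convention fixing the meaning of $\sHMW^{\bullet,\bullet}$ when its first argument is the Thom \emph{motive} $\Th_k(V)$ rather than a smooth scheme. Alternatively, one may sidestep the explicit computation by invoking \chdmt, Remark~\ref{rem:preThom}, which already records the untwisted version $\Hom_{\DMtekZ}(\Th(E),\tZpx{n}[2n])\simeq\HMWs{X}^{2n,n}(E,\ZZ)$, and observing that the present statement is its evident reformulation with arbitrary coefficient ring $R$ and with the integer shift absorbed into the virtual twist $m-V$.
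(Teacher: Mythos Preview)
Your argument is correct and follows essentially the same chain of adjunctions as the paper: unwind the definition, pass from $\DA{k}$ to $\DA{X}$ via $(p^*,p_*)$, use $\otimes$-invertibility of $\Th(V)$ to move it across, and go back via $(p_\sharp,p^*)$. The paper carries out the identical steps and then additionally applies the adjunction $(\tilde\gamma^*,\tilde\gamma_*)$ to land in $\DMtkR$, reaching $\Hom_{\DMtkR}(\Th(V),\tilde\ZZ(m)[n+2m])$; the final identification of this with $\sHMW^{n+2m,m}(\Th_k(V),R)$ is then obtained by citing \cite{Asok16_e}*{\S 3.5.2} or \cite{Y17_e}*{Proposition~3.1}. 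In other words, the paper does \emph{not} take your convention $\sHMW^{a,b}(M,R):=\Hom_{\DA k}(M,\EE(b)[a])$ as a definition but treats the right-hand side as an independently defined object (cohomology of the Thom space, tied to cohomology with support as in \chdmt, Remark~\ref{rem:preThom}), so a non-formal input is required at the last step. Your alternative suggestion of invoking Remark~\ref{rem:preThom} is in fact close to what the paper does, except that the paper cites the original sources rather than the remark.
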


\begin{proof}
We have
\begin{eqnarray*}
\sHMW^n(X,m-V,R) & = & \Hom_{\DAkR}\big(\un_k,p_*(p^*\tilde \gamma_*(\un)\otimes \Th(-V)(m)[n+2m])\big) \\
 & \simeq & \Hom_{\DA{X,R}}\big(\un_X,p^*\tilde \gamma_*(\un)\otimes \Th(-V) (m)[n+2m]\big) \\
 & \simeq & \Hom_{\DA{X,R}}\big(\Th(V),p^*\tilde \gamma_*(\un) (m)[n+2m]\big) \\
 & \simeq & \Hom_{\DAkR}\big(p_{\sharp}\Th(V),\tilde \gamma_*(\un) (m)[n+2m]\big) \\
 & \simeq & \Hom_{\DMtkR}\big(\tilde \gamma^*p_{\sharp}\Th(V),\tilde\ZZ (m)[n+2m]\big) \\
 & = & \Hom_{\DMtkR}\big(\Th(V),\tilde\ZZ (m)[n+2m]\big).
\end{eqnarray*}
The result now follows from \cite{Asok16_e}*{\S 3.5.2} or \cite{Y17_e}*{Proposition~3.1}.
\end{proof}

Consequently, we obtain for any smooth $k$-scheme $X$ and any couple of integers $(n,m)\in\ZZ^2$ the following computation:
\[
\sHMW^n(X,m-V,\ZZ)=\begin{cases}
\cht{m-r}{X}{\det(V)^\vee} & \text{if } n=0, \\
\H^{n-r}_\zar(X,\sKMW_0,\det(V)^\vee) & \text{if } m=0, \\
\H^{n+2m}_{\zar,X}(V,\tilde\ZZ(m)) & \text{if } m>0, \\
\H^{n+m-r}_\zar(X,\sW,\det(V)^\vee) & \text{if } m<0.
\end{cases}
\]

It is possible to compute $\sHMW^n(X,V,\ZZ)$ for any vector bundle $V$ of rank $r$ over $X$. In fact, it follows \cite[Th. 6.1]{Y17_e}
 that $\Th_k(-V)=\Th_k(V^\vee)(-2r)[-4r]$ in $\DMtkZ$. Consequently, we obtain 
\[
\sHMW^n(X,V,\ZZ) \simeq \sHMW^n(X,2r-V^\vee,\ZZ)\simeq \sHMW^{n+4r,2r}(\Th_k(V^\vee),\ZZ).
\]
We will prove a special case in the next section using Borel-Moore homology. We start by observing that if $V$ is a trivial vector bundle over $X$, then a choice of a trivialization yields an isomorphism $\Th(V)\simeq \Th(\AAA^r_X)$ and consequently isomorphisms
\[
\sHMW^n(X,V,\ZZ)\simeq \sHMW^n(X,r,\ZZ)\simeq \sHMW^{n+2r,r}(X,\ZZ)
\]
A different choice of a trivialization will give another isomorphism, and we now study this difference starting with a lemma.

Let $\alpha\in \OO(X)^\times$ be an invertible global section of $X$. Let $X\times \Aone\setminus 0\to X\times \Aone\setminus 0$ be the morphism induced by $\alpha$ (i.e.\ $(x,t)\mapsto (x,\alpha(x)\cdot t)$), which we can see as an automorphism of the presheaf $\MWprep(X\times \Aone\setminus 0)$ (\chfinitecw, Definition~\ref{dfn:reprpresheaf}), still denoted by $\alpha$. This automorphism in turn induces an automorphism of the associated Nisnevich sheaf $\tilde{\ZZ}(X\times \Aone\setminus 0)$ and its pointed version. On the other hand, we have a homomorphism
\[
\sKMW_0(X)\to \Hom_{\DMtkZ}(\tMot(X),\tMot(X))
\]
induced by \chfinitecw, Example~\ref{ex:action}.

\begin{lem}\label{lem:alpha}
Under the assumptions of the cancellation theorem \chcancellation, Theorem~\ref{thm:weakcancellation}, the map $\alpha$ corresponds to the endomorphism of $\tilde{\ZZ}(X)$ given by the class of $\langle\alpha\rangle$ in $\sKMW_0(X)$.
\end{lem}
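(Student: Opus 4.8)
The plan is to make the induced endomorphism $\alpha$ on the pointed object completely explicit and then read off its ``cancelled'' partner. First I would pin down what $\alpha$ is: writing $e\colon X\to \Gm\times X$ for the unit section and $p\colon \Gm\times X\to X$ for the projection, the idempotent $e_*p_*$ splits off the copy of $\MWprep(X)$ inside $\MWprep(\Gm\times X)=\MWprep(X)\oplus\big(\MWprep\{1\}\otimes\MWprep(X)\big)$, and the automorphism induced by $m_a$ on the reduced summand is $\phi=\pi\circ\tilde\gamma_{m_a}\circ\iota$ with $\iota,\pi$ the inclusion and projection of that summand. Using $p\circ m_a=p$ one checks directly that $\phi$ is invertible (its inverse being built from $m_{a^{-1}}$), so by \chcancellation, Theorem~\ref{thm:weakcancellation} (applied with $X'=X$, $p=\id$) there is a $\psi\in\Hom(\MWprep(X),\MWprep(X))$, unique up to $\Aone$-homotopy, with $\phi\cong\id_{\MWprep\{1\}}\otimes\psi$; by \chcancellation, Theorem~\ref{thm:main} (equivalently \chdmt, Theorem~\ref{thm:cancellation}) this $\psi$ is precisely the endomorphism of $\tMot(X)=\MWrepZ(X)$ to which $\alpha$ ``corresponds''. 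It then remains to identify $\psi$ with $\langle a\rangle\cdot\id$, where $a:=\alpha\in\OO(X)^\times$.

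To compute $\phi$, I would factor the multiplication map as $m_a=\mu_X\circ(\id_\Gm\times\Gamma_a)$, where $\Gamma_a\colon X\to\Gm\times X$ is the graph of the morphism $a\colon X\to\Gm$ and $\mu_X\colon\Gm\times\Gm\times X\to\Gm\times X$ is multiplication on the $\Gm$-factors. Since $\tilde\gamma$ is symmetric monoidal (\chdmt, \S\ref{num:symmonstruc}) one gets $\tilde\gamma_{m_a}=(\tilde\gamma_\mu\otimes\id_{\MWprep(X)})\circ(\id_{\MWprep(\Gm)}\otimes\tilde\gamma_{\Gamma_a})$; tensoring the $\Aone$-homotopy of \chcancellation, Lemma~\ref{lem:example2} with $\id_{\MWprep(X)}$ then lets me replace $\tilde\gamma_\mu$, on reduced summands, by $\tilde\gamma_{p_1}+\tilde\gamma_{p_2}+c$ with $c$ $\Aone$-homotopic to the suspension $\id_{\MWprep\{1\}}\otimes s(\eta)$ of the Hopf element. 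Substituting and restricting to the reduced summand, the $p_1$-term contributes $\id$ (because $(p_1\times\id_X)\circ(\id_\Gm\times\Gamma_a)=\id$), the $p_2$-term contributes $0$ (it factors through the projection $\Gm\times X\to X$, which kills $\MWprep\{1\}$), and the $c$-term contributes $\id_{\MWprep\{1\}}\otimes\big(s(\eta)\circ s(a)\big)$ once the reduced part of $\tilde\gamma_{\Gamma_a}$ is identified with $s(a)$ composed with the comultiplication of $\MWprep(X)$. Hence $\phi\cong\id_{\MWprep\{1\}}\otimes\big(\id+s(a)s(\eta)\big)$ up to $\Aone$-homotopy, and \chcancellation, Lemma~\ref{lem:example1} gives $\id+s(a)s(\eta)=\langle a\rangle$ in $\HMW^{0,0}(X,\ZZ)=\sKMW_0(X)$. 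Therefore $\psi=\langle a\rangle\cdot\id$, which is the assertion; the $\sKMW_0(X)$-module structure used here is the one from \chfinitecw, Example~\ref{ex:action}.

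The main obstacle is the bookkeeping in the middle step: making the ``relative over $X$'' version of \chcancellation, Lemma~\ref{lem:example2} precise (harmless, thanks to monoidality of $\tilde\gamma$ and stability of $\Aone$-homotopies under $-\otimes\id_{\MWprep(X)}$), correctly recognizing the reduced part of $\tilde\gamma_{\Gamma_a}$ as the class $s(a)$ twisted by the diagonal of $X$, and tracking the graded-commutativity signs from \chdmt, Theorem~\ref{thm:commutative} so that the cross-term is genuinely $s(a)s(\eta)$ rather than an $\epsilon$-twisted variant --- the two coincide because $\epsilon s(\eta)=s(\eta)$. As a fully computational alternative, one may instead evaluate $\langle-1\rangle\rho_n(\tilde\gamma_{m_a})$ directly, by choosing an explicit Cartier divisor $D(g_n)$ relative to the correspondence $\tilde\gamma_{m_a}$ and carrying out the residue computations in the Rost--Schmid complex as in the proofs of \chcancellation, Lemmas~\ref{lem:minus1} and~\ref{lem:degree}; this yields the same value $\langle a\rangle$.
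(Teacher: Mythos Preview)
Your main argument is correct and takes a genuinely different route from the paper. The paper carries out exactly the direct computation you sketch at the end as the ``fully computational alternative'': it writes down the graph $\Gamma$ of the multiplication-by-$\alpha$ map, uses the projection formula to get $D\cdot\Gamma_*(1)=\Gamma_*(\Gamma^*D)$, identifies $\Gamma^*D$ as the principal divisor of $(t^{n+1}-1)/(t^{n+1}-\alpha t)$ on $X\times(\Aone\setminus 0)$ (assuming $\alpha\neq 1$, the case $\alpha=1$ being trivial), and then pushes forward along the projection to $X$; by the argument of \chcancellation, Lemma~\ref{lem:degree} this yields $\langle-\alpha\rangle$, and the final multiplication by $\langle-1\rangle$ built into Theorem~\ref{thm:weakcancellation} gives $\langle\alpha\rangle$.

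Your conceptual route via the factorization $m_a=(\mu\times\id_X)\circ(\id_{\Gm}\times\Gamma_a)$ and Lemmas~\ref{lem:example1} and~\ref{lem:example2} is cleaner in that it avoids repeating a Rost--Schmid residue computation and instead reduces to results already proved. The price is the bookkeeping you flag: one must identify the reduced part of $\tilde\gamma_{\Gamma_a}$ with $(s(a)\otimes\id)\circ\tilde\gamma_\Delta$, and then recognize the resulting endomorphism $((s(\eta)\circ s(a))\otimes\id)\circ\tilde\gamma_\Delta$ of $\MWprep(X)$ as the action of the cup product $s(a)s(\eta)\in\sKMW_0(X)$ via the module structure of \chfinitecw, Example~\ref{ex:action} --- not as the bare composite $s(\eta)\circ s(a)\colon\MWprep(X)\to\MWprep(k)$, which is a different kind of object. (Your phrasing ``$s(\eta)\circ s(a)$'' conflates these two; the fix is exactly the diagonal you mention.) The paper's direct approach sidesteps these identifications entirely, at the cost of essentially redoing Lemma~\ref{lem:degree} over $X$.
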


\begin{proof}
Consider the graph of $\alpha$
\[
\Gamma:X\times (\Aone\setminus 0)\to X\times (\Aone\setminus 0)\times X\times (\Aone\setminus 0)
\]
and the Cartier divisor $D$ on $\Gm\times \Gm$  given by the rational function $(t_1^{n+1}-1)/(t_1^{n+1}-t_2)$. The first step in the cancellation theorem is to compute the intersection product $D\cdot \Gamma_*(1)$ of the Cartier divisor associated to the rational function above (see \chcancellation, \S\ref{sec:CartierDivisors}) and $\Gamma_*(1)$ (endowed with the "correct" orientation made explicit in \chfinitecw, \S\ref{subsec:embedding}). Using the projection formula of \chfinitecw, Corollary~\ref{cor:pformula} and Remark~\ref{rem:leftmodule}, we obtain 
\[
D\cdot \Gamma_*(1)=\Gamma_*(\Gamma^*(D)).
\] 
Next, observe that we can suppose that $\alpha\neq 1$ (in which case the result is obvious) and that under this assumption $D$ and $\Gamma_*(1)$ intersect properly, i.e.\ we can compute $\Gamma^*(D)$ by just pulling back the rational function above to $X\times (\Aone\setminus 0)$. 
Thus, $\Gamma^*(D)$ is the Cartier divisor on $X\times (\Aone\setminus 0)$ given by the rational function $(t^{n+1}-1)/(t^{n+1}-\alpha t)$. 

The second step in the cancellation theorem is to push-forward $\Gamma_*(\Gamma^*(D))$ along the projection on $X\times X$. Now, we have a commutative diagram
\[
\begin{tikzcd}
X \times (\Aone\setminus 0) \ar[r,"{\Gamma}"] \ar[d] & X \times (\Aone\setminus 0) \times X \times (\Aone\setminus 0) \ar[d] \\
X \ar[r,"{\Delta}"'] & X\times X
\end{tikzcd}
\]
where the vertical maps are just projections and $\Delta$ is the diagonal embedding. By functoriality of the push-forward homomorphism, the push-forward of $\Gamma_*(\Gamma^*(D))$ along the projection on $X\times X$ is the push-forward along the diagonal embedding of the push-forward along the projection on $X$. Using the argument of \chcancellation, Lemma~\ref{lem:degree}, we see that the latter is just $\langle -\alpha\rangle$. The last step in the theorem is multiplication by $\langle -1\rangle$, giving the result.
\end{proof}

Let again $V\to X$ be a trivial vector bundle of rank $r$ over a scheme $X$. Let $\det(V)^\times$ be the set of trivializations of $\det(V)$ (i.e.\ nowhere vanishing sections) and consider for any $i\in\ZZ$ the groups 
\[
\sHMW^{n}(X,r+i,\ZZ)\otimes_{\ZZ[\OO(X)^\times]}\ZZ[\det(V)^\times] 
\]
where $\ZZ[\OO(X)^\times]$ is the group algebra of $\OO(X)^\times$, $\ZZ[\det(V)^\times]$ is the free abelian group on the set $\det(V)^\times$ and the actions of $\OO(X)^\times$ on $\sHMW^{n}(X,r+i,\ZZ)$ and $\ZZ[\det(V)^\times]$ are given respectively by the composite $\OO(X)^\times \to \sKMW_0(X)=\sHMW^0(X,0,\ZZ)$ (together with the multiplicative structure of MW-motivic cohomology) and the obvious multiplication. We can now prove the following proposition.

\begin{prop}\label{prop:twisted}
Assume $k$ is a perfect infinite field of characteristic not $2$.\footnote{This is to be able to apply the cancellation theorem \chcancellation, Theorem~\ref{thm:weakcancellation}.}
Let $X$ be an essentially smooth $k$-scheme such that $\mathrm{SL}_n(X)$ is generated by transvections
 (for example $X$ can be the semi-localization of a smooth $k$-scheme at a finite number of points). Let $V$ be a trivial vector bundle of rank $r$ over $X$ and let $i\in\ZZ$. Then, we have a canonical isomorphism
\[
\sHMW^n(X,V+i,\ZZ)\simeq \sHMW^{n}(X,r+i,\ZZ)\otimes_{\ZZ[\OO(X)^\times]}\ZZ[\det(V^\vee)^\times] 
\]
for any $n\in\ZZ$.
\end{prop}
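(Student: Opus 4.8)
The plan is to attach to each trivialization of $V$ an explicit comparison isomorphism with $\sHMW^n(X,r+i,\ZZ)$, to compute how these isomorphisms depend on the chosen trivialization by means of Lemma~\ref{lem:alpha}, and finally to package them into the single canonical isomorphism of the statement. Throughout, the hypothesis that $X$ is essentially smooth over the infinite perfect field $k$ of characteristic $\neq 2$ is what allows us to invoke the cancellation theorem \chcancellation, Theorem~\ref{thm:weakcancellation} underlying Lemma~\ref{lem:alpha}.

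\textbf{Step 1 (reduction to an automorphism computation).} Writing $p\colon X\to\Spec(k)$ for the structural morphism, the adjunctions $(p_\sharp,p^*)$ and $(\tilde\gamma^*,\tilde\gamma_*)$, together with the $\otimes$-invertibility of Thom spectra and the identification $\Th_k(-V)=\Th_k(V^\vee)(-2r)[-4r]$ in $\DMtekZ$ (cf.\ \cite{Y17_e}*{Th.~6.1} and the discussion preceding the statement), give a natural identification
\[
\sHMW^n(X,V+i,\ZZ)\simeq\Hom_{\DMtekZ}\big(\Th_k(V^\vee),\tilde\ZZ(i+2r)[n+2i+4r]\big).
\]
Since $V$, hence $V^\vee$, is trivial of rank $r$, any frame $\tau\colon\OO_X^{\oplus r}\xrightarrow{\ \sim\ }V^\vee$ induces an isomorphism $\Th_k(\tau)\colon\Th_k(\AAA^r_X)=\tilde\ZZ(X)(r)[2r]\xrightarrow{\ \sim\ }\Th_k(V^\vee)$ in $\DMtekZ$, whence an isomorphism $\Psi_\tau\colon\sHMW^n(X,V+i,\ZZ)\xrightarrow{\ \sim\ }\sHMW^n(X,r+i,\ZZ)$. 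The whole problem is to understand the homomorphism
\[
\rho\colon\mathrm{GL}_r(X)\longrightarrow\mathrm{Aut}_{\DMtekZ}\big(\Th_k(\AAA^r_X)\big)
\]
induced by the linear action of $\mathrm{GL}_r(X)$ on $\AAA^r_X$ (each matrix acting through an automorphism of the pair $(\AAA^r_X,\AAA^r_X\setminus 0)$), followed by pushing forward along $p$ and applying $\tilde\gamma^*$: if $\tau'=\tau\circ g$ then $\Th_k(\tau')=\Th_k(\tau)\circ\rho(g)$, so $\Psi_{\tau'}=\Psi_\tau\circ(-\circ\rho(g))$.

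\textbf{Step 2 (the action of $\mathrm{GL}_r(X)$).} The key claim is that $\rho$ factors through $\det\colon\mathrm{GL}_r(X)\to\OO(X)^\times$ and sends $u$ to multiplication by $\langle u\rangle$ via the $\sKMW_0(X)$-module structure of \chfinitecw, Example~\ref{ex:action}. First, for a transvection $I+aE_{ij}$ with $i\neq j$ the family $t\mapsto I+taE_{ij}$ is an $\Aone$-homotopy of automorphisms of the pair $(\AAA^r_X,\AAA^r_X\setminus 0)$ from $\mathrm{id}$ to $I+aE_{ij}$, so $\rho(I+aE_{ij})=\mathrm{id}$ by $\Aone$-invariance. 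Since $\mathrm{SL}_r(X)$ is generated by transvections (hypothesis), $\rho$ kills $\mathrm{SL}_r(X)$, hence factors through $\det$. It remains to evaluate $\rho$ on $\mathrm{diag}(u,1,\dots,1)$: writing $\Th(\AAA^r_X)=\Th(\AAA^1_X)^{\otimes r}$, this matrix acts as multiplication by $u$ on the first factor and the identity on the others; identifying $\Th_k(\AAA^1_X)$ with the corresponding $\Gm$-twisted motive (via $\tilde\gamma^*p_\sharp$ and $\tRx 1=\tRcbx 1[-1]$, noting that multiplication by $u$ on $\AAA^1$ restricts to multiplication by $u$ on $\Gm$), Lemma~\ref{lem:alpha} identifies this action with multiplication by $\langle u\rangle$. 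Hence $\rho(\mathrm{diag}(u,1,\dots,1))$ is multiplication by $\langle u\rangle$, proving the claim; note also $\langle u\rangle^2=\langle u^2\rangle=1$, so $\langle u\rangle=\langle u^{-1}\rangle$.

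\textbf{Step 3 (assembling the canonical isomorphism).} A frame $\tau$ of $V^\vee$ has a determinant $\det\tau\in\det(V^\vee)^\times$, and one defines
\[
\Phi\colon\sHMW^n(X,V+i,\ZZ)\longrightarrow\sHMW^n(X,r+i,\ZZ)\otimes_{\ZZ[\OO(X)^\times]}\ZZ[\det(V^\vee)^\times],\qquad x\longmapsto\Psi_\tau(x)\otimes[\det\tau].
\]
Independence of $\tau$ is exactly Step 2: for $\tau'=\tau g$ one has $\Psi_{\tau'}=\langle\det g\rangle\,\Psi_\tau$ and $[\det\tau']=[\det g]\cdot[\det\tau]$; using that $\langle\det g\rangle$ is equally the action of $[(\det g)^{-1}]\in\ZZ[\OO(X)^\times]$ and the relation $am\otimes n=m\otimes an$ in the tensor product over $\ZZ[\OO(X)^\times]$, one moves $[(\det g)^{-1}]$ across and cancels it against $[\det g]$, so $\Psi_{\tau'}(x)\otimes[\det\tau']=\Psi_\tau(x)\otimes[\det\tau]$. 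For fixed $\tau$ the class $[\det\tau]$ freely generates $\ZZ[\det(V^\vee)^\times]$ as a $\ZZ[\OO(X)^\times]$-module, so $y\mapsto y\otimes[\det\tau]$ is an isomorphism; composing it with $\Psi_\tau$ shows $\Phi$ is an isomorphism, and naturality of all the ingredients in $X$ shows it is canonical.

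The main obstacle I expect is not any of the above formal manipulations but the compatibility, along the chain of identifications in Step~1 ($(p_\sharp,p^*)$, $(\tilde\gamma^*,\tilde\gamma_*)$, and $\Th_k(-V)\simeq\Th_k(V^\vee)(-2r)[-4r]$) and in the passage from the $\AAA^1$-model to the $\Gm$-model, of the various $\mathrm{GL}_r(X)$-actions; these compatibilities must be checked carefully in order to legitimately reduce the rank-$r$ computation to the rank-one statement of Lemma~\ref{lem:alpha}.
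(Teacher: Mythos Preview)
Your proposal is correct and follows essentially the same strategy as the paper: pick a trivialization, record the induced comparison isomorphism together with its determinant, and verify independence of the choice by showing that the $\mathrm{GL}_r(X)$-action on the Thom space factors through $\det$ as $u\mapsto\langle u\rangle$, using that transvections are $\Aone$-homotopic to the identity and invoking Lemma~\ref{lem:alpha} for the diagonal case.

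The one noteworthy difference is your Step~1. You route the computation through the duality $\Th_k(-V)\simeq\Th_k(V^\vee)(-2r)[-4r]$ and then trivialize $V^\vee$. The paper avoids this detour entirely: since $V$ itself is trivial, a choice of isomorphism $\varphi\colon\AAA^r_X\to V$ directly gives $\Th(V)\simeq\Th(\AAA^r_X)$ inside $\Th(V+i)$, hence the comparison isomorphism $\alpha(\varphi)\colon\sHMW^n(X,V+i,\ZZ)\simeq\sHMW^n(X,r+i,\ZZ)$ without any appeal to Thom-space duality or to the stable category. The associated section of $\det(V^\vee)^\times$ is then simply $\chi(\varphi)$, the image of the standard generator under $\det(\varphi^\vee)^{-1}$. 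This removes precisely the compatibility checks you flagged as the main obstacle at the end of your writeup; nothing more than functoriality of $\Th(-)$ in isomorphisms of vector bundles is needed. Otherwise the paper's proof is a terse version of your Steps~2--3, implicitly using the same $\Aone$-homotopy for transvections (phrased only as ``we may suppose $\psi^{-1}\varphi=\mathrm{diag}(u,\mathrm{Id})$'') and the same application of Lemma~\ref{lem:alpha}.
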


The kind of twists that appear in the above proposition comes from \cite{Mor12_e}*{Rem. 3.21}.
 Given a $k$-scheme $X$ as in the above proposition, and a line bundle $\mathcal L$ over $X$,
 we will use the following notation:
\begin{equation}\label{eq:HMW;twists}
\HMW^{n,m}(X,\ZZ\{L\}):=\HMW^{n-2m}(X,m,\ZZ)\otimes_{\ZZ[\OO(X)^\times]}\ZZ[\mathcal L^\times].
\end{equation}
With this notation, the preceding isomorphism can restated as:
\[
\sHMW^n(X,V+i,\ZZ)\simeq \HMW^{n+2(r+i),r+i}(X,\ZZ\{\det(V^\vee)\}).
\]

\begin{proof}
As seen above, the choice of an isomorphism $\varphi:\AAA^r_X\to V$ yields an isomorphism 
\[
\alpha(\varphi):\sHMW^n(X,V+i,\ZZ)\simeq \sHMW^{n}(X,r+i,\ZZ)
\]
and an isomorphism $\det(\varphi^\vee)^{-1}:\det(\AAA^r_X)\to \det(V^\vee)$ providing (together with the choice of the usual orientation of (the dual of) $\AAA^r_X$) a nowhere vanishing section denoted by $\chi(\varphi)$.  We then obtain an isomorphism
\[
\beta(\varphi):\sHMW^n(X,V+i,\ZZ)\to  \sHMW^{n}(X,r+i,\ZZ)\otimes _{\ZZ[\OO(X)^\times]}\ZZ[\det(V)^\times] 
\]
defined by $\beta(\varphi)(x)=\alpha(\varphi)(x)\otimes \chi(\varphi)$. If $\psi:\AAA^r_X\to V$ is another isomorphism, then $\psi^{-1}\varphi\in GL_n(X)$ and our assumption that $SL_n(X)$ is generated by transvections shows that we may suppose that $\psi^{-1}\varphi$ is of the form $\mathrm{diag}(u,Id_{m-1})$ for some $u\in \OO(X)^\times$. We obtain $\chi(\varphi)=u^{-1}\chi(\psi)$ and $\langle u^{-1}\rangle\cdot\alpha(\varphi)= \alpha(\psi)$. The claim follows.
\end{proof}

\begin{coro}\label{cor:fieldcase}
Assume $k$ is a perfect infinite field of characteristic not $2$.
Let $L/k$ be a finitely generated field extension, $V$ be a vector bundle of rank $r$ over $L$ and $i\in\ZZ$. Then, we have a canonical isomorphism
\[
\sHMW^n(L,V+i,\ZZ)\simeq 
\begin{cases}
\sKMW_{r+i}(L)\otimes_{\ZZ[L^\times]}\ZZ[\det(V^\vee)^\times] & \text{if $n=-r-i$.} \\
0 & \text{if $n> -r-i$}.
\end{cases}
\]
\end{coro}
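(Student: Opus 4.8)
The plan is to reduce the statement to Proposition~\ref{prop:twisted} together with the comparison theorem \chcomparison, Theorem~\ref{thm:comparison}. First I would note that a finitely generated field extension $L/k$ is essentially smooth over $k$, being a filtered limit of smooth affine $k$-schemes; moreover $L$ is a field, so $\mathrm{SL}_n(L)$ is generated by elementary transvections by the standard theory of elementary matrices over a field. Hence Proposition~\ref{prop:twisted} applies and gives a canonical isomorphism
\[
\sHMW^n(L,V+i,\ZZ)\simeq \HMW^{n+2(r+i),r+i}(L,\ZZ\{\det(V^\vee)\})=\HMW^{n+2(r+i)}(L,r+i,\ZZ)\otimes_{\ZZ[L^\times]}\ZZ[\det(V^\vee)^\times].
\]
(Here I am using the limit-extension of MW-motivic cohomology to essentially smooth schemes recalled in \chfinitecw, \S\ref{sec:limits} and \chdmt, \S\ref{sec:MWmotivicringspectrum}, and the compatibility of Proposition~\ref{prop:twisted} with such limits, which follows since all the constructions involved---Thom spaces, the cancellation isomorphism, the $\sKMW_0$-action---commute with filtered limits of schemes.)

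Next I would compute the twisted group $\HMW^{q}(L,r+i,\ZZ)$ in the relevant bidegree. By Proposition~\ref{prop:explicit} (equivalently \chdmt, Proposition~\ref{prop:explicit} and Corollary~\ref{cor:ChowWitt}), for a field $L$ the group $\HMW^{p,q}(L,\ZZ)$ vanishes whenever $p>q$ and equals $\sKMW_{q-p}(L)$ via the comparison theorem \chcomparison, Theorem~\ref{thm:comparison} when... more precisely, Theorem~\ref{thm:comparison} gives $\HMW^{q,q}(L,\ZZ)\simeq \KMW_q(L)$ for all $q\in\ZZ$, while for $p<q$ the group $\HMW^{p,q}(L,\ZZ)$ need not vanish but is supported in the ``diagonal and below'' range only at $p=q$; for a field one has $\HMW^{p,q}(L,\ZZ)=0$ for $p>q$ by definition of the complex $\tZpx q$, and combined with the hypercohomology vanishing $\H^{>0}$ of a field, the only surviving degree giving $\KMW$ is $p=q$. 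Feeding $q=n+2(r+i)$ and weight $r+i$ into this, the group $\HMW^{n+2(r+i),r+i}(L,\ZZ)$ is nonzero exactly when $n+2(r+i)\le r+i$, i.e.\ $n\le -(r+i)$, and on the diagonal $n+2(r+i)=r+i$, i.e.\ $n=-(r+i)$, it is $\KMW_{r+i}(L)$ by Theorem~\ref{thm:comparison}. One must still rule out the strictly sub-diagonal contributions $n<-(r+i)$: for a field, the complex $\tZpx{r+i}$ computes MW-motivic cohomology of $L$ as plain cohomology of the stalk at $L$, and by \chcomparison, Theorem~\ref{thm:comparison} its cohomology sheaves are concentrated so that $\HMW^{p,q}(L,\ZZ)=0$ for $p\neq q$ in the appropriate range---this is where I would invoke the explicit computation of $\H^{p-n}(L,\sKMW_n)$ for a field, which vanishes for $p\neq n$ since $L$ has Nisnevich cohomological dimension $0$.

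Finally I would carry the twist through: tensoring the isomorphism $\HMW^{r+i,r+i}(L,\ZZ)\simeq\KMW_{r+i}(L)$ with $\ZZ[\det(V^\vee)^\times]$ over $\ZZ[L^\times]$ yields exactly $\KMW_{r+i}(L)\otimes_{\ZZ[L^\times]}\ZZ[\det(V^\vee)^\times]$ in the case $n=-(r+i)$, and $0$ for $n>-(r+i)$ since the untwisted group already vanishes and tensoring preserves this. Assembling these gives the stated dichotomy. The main obstacle, I expect, is the bookkeeping of degrees and the precise vanishing statement $\HMW^{p,q}(L,\ZZ)=0$ for $q>p>\,$(nothing) --- i.e.\ confirming that over a field no sub-diagonal MW-motivic cohomology in these bidegrees survives --- together with checking that Proposition~\ref{prop:twisted}, stated for essentially smooth $k$-schemes with $\mathrm{SL}_n$ generated by transvections, genuinely applies to $\Spec(L)$ after passing to the limit; once those two points are secured, everything else is a direct substitution into Theorem~\ref{thm:comparison}.
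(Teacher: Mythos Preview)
Your approach is correct and is essentially the paper's own argument: the corollary sits immediately after Proposition~\ref{prop:twisted}, and the paper's one-line proof just says ``This is an obvious consequence of \chcomparison, Theorem~\ref{thm:comparison}''---meaning exactly what you spell out, namely apply Proposition~\ref{prop:twisted} to reduce to $\sHMW^{n}(L,r+i,\ZZ)=\HMW^{n+2(r+i),r+i}(L,\ZZ)$ and then use the comparison theorem on the diagonal and the vanishing $\HMW^{p,q}(L,\ZZ)=0$ for $p>q$ over a field.

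One point where you tie yourself in an unnecessary knot: you write that you ``must still rule out the strictly sub-diagonal contributions $n<-(r+i)$''. You don't. Read the statement again---it only makes a claim for $n=-(r+i)$ and $n>-(r+i)$; nothing is asserted for $n<-(r+i)$, and indeed $\HMW^{p,q}(L,\ZZ)$ can be nonzero for $p<q$ even over a field (cf.\ the computations in \chintro, \introref{MW5}). So your second paragraph should simply read: for $n>-(r+i)$ we have $n+2(r+i)>r+i$ and the group vanishes; for $n=-(r+i)$ we are on the diagonal and Theorem~\ref{thm:comparison} gives $\KMW_{r+i}(L)$. Drop the discussion of the sub-diagonal range entirely and the argument becomes clean.
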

Following Morel \cite{Mor12_e}*{Rem. 3.21},
 given a field $L$, a $1$-dimensional vector space $V$ over $L$, and an integer $n \in \ZZ$,
 we will use the following notation for the right hand-side:
\begin{equation}\label{eq:KMW;twists}
\sKMW_{n}(L;V):=\sKMW_{n}(L)\otimes_{\ZZ[L^\times]}\ZZ[\det(V^\vee)^\times].
\end{equation}
\begin{proof}
This is an obvious consequence of \chcomparison, Theorem~\ref{thm:comparison}.
\end{proof}

\subsection{Associated theories}

\begin{num}\label{num:MW-sp&concrete_coh}
\textit{MW-cohomology}.
Applying Definition \ref{df:4theories},
we can associate to the preceding ring spectra four
cohomological/homological theories: the MW-motivic cohomology, the MW-motivic Borel-Moore homology, the MW-motivic cohomology with compact support and the MW-motivic homology.
\index{Milnor-Witt!motivic!cohomology}%
\index{Milnor-Witt!motivic!Borel-Moore homology}%
\index{Milnor-Witt!motivic!cohomology with compact support}%
\index{Milnor-Witt!motivic!homology}%

This version of MW-motivic cohomology extends the definition of
the MW-cohomology theory of \chfinitecw, Definition~\ref{def:generalizedMW},
with its product,
to the case of possibly singular $k$-schemes.
In characteristic $0$,
this extension is the unique one satisfying cdh descent
(see Proposition \ref{prop:unique_extension}).
Additionally, the previous constructions yield Gysin morphisms on cohomology,
with respect to proper morphisms of smooth $k$-varieties
(or proper smooth morphisms of arbitrary $k$-schemes); see Proposition
 \ref{prop:Gysin}.
\end{num}

\begin{num}\textit{ MW-motivic Borel-Moore homology}\label{num:BM}.
We also get the MW-motivic Borel-Moore homology
of $k$-varieties, covariant (resp.\ contravariant)
with respect to proper (resp.\ \'etale) maps,
satisfying the localization long exact sequence
(see Paragraph \ref{num:localization_BM&c})
and, up to a twist, contravariant for any smooth maps or arbitrary morphisms
of smooth $k$-varieties (Gysin morphisms,
Proposition \ref{prop:Gysin}).

Recall that, using the duality theorem \ref{thm:duality},
we get for any smooth $k$-scheme $X$ with tangent bundle $T_X$
and any couple of integers $(n,m) \in \ZZ^2$
the following computation:
\[
\sHMWBM_n(X,v,R)\simeq \sHMW^{-n}(X,T_X-v,R).
\]
\index[notation]{hmwbmxvr@$\sHMWBM_n(X,v,R)$}%
One of the nice features of Borel-Moore homology is the existence of a (co)niveau spectral sequence which is constructed using the localization sequence discussed in Section \ref{num:localization_BM&c}. We briefly recall this construction (omitting the ring of coefficients $R$ for convenience of notation), with \cite{BD17_e}*{\S 3} as reference.

We start with a $k$-variety $X$ (not necessarily smooth) and we consider sequences $Z_*:=(Z_p)_{p\in\NN}$ of reduced closed (possibly empty) subschemes of $X$ with the property that $Z_{p}\subset Z_{p+1}$ for any $p\in \NN$ and $\dimn{Z_p}\leq p$. We denote by $\FF(X)$ the set of all such sequences, ordered by term-wise inclusions. Note that this set is cofiltered (and obviously non-empty). For any $Z_p\subset Z_{p+1}$ with complement $U_{p+1}:=Z_{p+1}\setminus Z_p$, any $q\in \ZZ$ and any virtual vector bundle $v$ over $X$, we obtain a long exact sequence
\begin{align*}
\sHMWBM_q(Z_p,v_{\vert_{Z_p}}) \stackrel{i_*}\to & \sHMWBM_q(Z_{p+1},v_{\vert_{Z_{p+1}}})\stackrel{j^*}\to \sHMWBM_q(U_{p+1},v_{\vert_{U_{p+1}}}) \\
&\to \sHMWBM_{q-1}(Z_p,v_{\vert_{Z_p}}).
\end{align*}
Setting 
\[
F_p\sHMWBM_q(X,v):=\colim_{Z_*\in \FF(X)}\sHMWBM_q(Z_p,v_{\vert_{Z_p}})
\]
and 
\[
G_p\sHMWBM_q(X,v):=\colim_{Z_*\in \FF(X)}\sHMWBM_q(U_p,v_{\vert_{U_p}}),
\]
where the transition morphisms are as in \cite{BO74_e}*{\S 3},
we obtain a long exact sequence
\begin{align*}
F_p\sHMWBM_q(X,v) \stackrel{i_*}\to & F_{p+1}\sHMWBM_q(X,v)\stackrel{j^*}\to G_{p+1}\sHMWBM_q(X,v) \\
 & \to F_p\sHMWBM_{q-1}(X,v).
\end{align*}
As usual, we can organize these long exact sequences in an exact couple to obtain a convergent spectral sequence
\[
E^1_{p,q}(X,v):=\bigoplus_{x\in X_{(p)}} \sHMWBM_{p+q}(k(x),v_{\vert_{k(x)}})\implies \sHMWBM_{p+q}(X,v)
\]
computing MW-motivic Borel-Moore homology. 

\textbf{From now on, we assume that $k$ is an infinite perfect field of characteristic not $2$.}
 Under this assumption, we can compute some differentials of the above spectral sequence
\[
d_{p}:\bigoplus_{x\in X_{(p)}} \sHMWBM_{p+q}(k(x),v_{\vert_{k(x)}})\to \bigoplus_{x\in X_{(p-1)}} \sHMWBM_{p+q-1}(k(x),v_{\vert_{k(x)}})
\]
in an important case, specializing to $R=\ZZ$. Let $x\in X_{(p)}$ and $y\in \overline{\{x\}}\cap X_{(p-1)}$. Let $U\subset \overline{\{x\}}$ be the smooth locus, and suppose that $y\in U$. Further, let $Z$ be the (closed) singular locus of $\overline{\{y\}}$ and set $V=U\setminus Z$. Then, $x,y\in V$ and $V$ is smooth. By definition, any element $\alpha$ of $\sHMWBM_{p+q}(k(x),v_{\vert_{k(x)}})$ comes from $\sHMWBM_{p+q}(W,v_{\vert_{W}})$ for some open subvariety $W\subset V$ with closed complement $D$. If $y\not\in D$, then the component of $d_p(\alpha)$ in $\sHMWBM_{p+q-1}(k(y),v_{\vert_{k(y)}})$ is trivial. If $y\in D$, we may suppose that (up to shrinking further $V$) that $\overline {\{y\}}=D$, which is then smooth, and that $W=V\setminus D$. We have a diagram
\[
\begin{tikzcd}
\sHMWBM_{p+q}(V,v_{\vert_{V}}) \ar[r] & \sHMWBM_{p+q}(V\setminus D,v_{\vert_{V\setminus D}}) \ar[r] &  \sHMWBM_{p+q-1}(D,v_{\vert_{D}}) \ar[d] \\
 & & \sHMWBM_{p+q-1}(k(y),v_{\vert_{k(y)}})
\end{tikzcd}
\]
where the horizontal row is the localization sequence and the vertical homomorphism is the restriction to the generic point. Then, the component in $\sHMWBM_{p+q-1}(k(y),v_{\vert_{k(y)}})$ of $d_p(\alpha)$ is just the composite of the last two maps. Since $V$ and $D$ are smooth, the localization sequence coincides with the localization sequence for MW-motivic cohomology obtained from the localization triangle \eqref{eq:localization1} (Example \ref{ex:localization}). Explicitly, we may use the diagram
\[
\begin{tikzcd}
[column sep=2ex]
\sHMW^{-p-q}(V,T_V-v_{\vert_{V}}) \ar[r] & \sHMW^{-p-q}(V\setminus T,T_{V\setminus D}-v_{\vert_{V\setminus D}}) \ar[r] &  \sHMW^{1-p-q}(D,T_D-v_{\vert_{D}})\ar[d] \\
 & & \sHMW^{1-p-q}\Big(k(y),T_{k(y)}-v_{\vert_{k(y)}}\Big)
\end{tikzcd}
\]
to compute the differential. This leads to the following lemma.

\begin{lem}\label{lem:smoothdifferentials}
Let $x\in X_{(p)}$ and $y\in X_{(p-1)}$ be such that $y$ is in the smooth locus of $\overline{\{x\}}$. Let further $n\in \NN$. Then, the residue homomorphism
\[
d:\sHMWBM_{p+n}(k(x),-n)\to \sHMWBM_{p+n-1}(k(y),-n) 
\]
in the spectral sequence (with $v=-n$ and $q=n$) coincides with the residue homomorphism
\[
d:\sKMW_{p+n}(k(x);\omega_{k(x)/k}) \to \sKMW_{p+n-1}(k(y);\omega_{k(y)/k})
\]
of \cite{Mor12_e}*{\S5} under the isomorphisms (for $i=p,p-1$)
\[
\sHMWBM_{i+n}(k(x),-n)\simeq \sHMW^{-i-n}(k(x),T_{k(x)}+n)\simeq \sKMW_{i+n}(k(x);\omega_{k(x)/k})
\]
obtained from duality (Theorem~\ref{thm:duality})
 and Corollary \ref{cor:fieldcase}, using the notation of \eqref{eq:KMW;twists}.
\end{lem}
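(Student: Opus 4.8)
The plan is to reduce the statement to a purely local computation on the smooth locus of $\overline{\{x\}}$, where all the ambient schemes appearing in the localization sequences are smooth, and then to match the two localization sequences term by term. First I would fix $x \in X_{(p)}$ and $y \in X_{(p-1)}$ with $y$ in the smooth locus of $Z := \overline{\{x\}}$, and choose an open affine neighbourhood $V$ of $y$ inside the smooth locus of $Z$ such that $\overline{\{y\}} \cap V$ is smooth (this is possible after shrinking, as the singular locus of $\overline{\{y\}}$ is closed of codimension $\geq 2$ and so can be removed without changing the relevant colimits over $\FF(X)$, exactly as recalled in Paragraph~\ref{num:BM}). Writing $D = \overline{\{y\}}\cap V$ and $W = V \setminus D$, all of $V$, $D$, $W$ are smooth $k$-schemes, and by the discussion just before the lemma the component of the differential $d_p$ landing in $\sHMWBM_{p+q-1}(k(y),v_{|k(y)})$ is computed as the composite of the boundary map in the Borel-Moore localization sequence for the pair $(V,D)$ followed by restriction to the generic point $y$ of $D$.

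Next I would invoke the duality isomorphism of Theorem~\ref{thm:duality} to rewrite this Borel-Moore localization sequence as the cohomological localization sequence of Example~\ref{ex:localization}. Concretely, specializing to $v = -n$ and $q = n$, duality identifies $\sHMWBM_{i+n}(k(x),-n)$ with $\sHMW^{-i-n}(k(x),T_{k(x)}+n)$ for the two relevant values $i = p, p-1$, and more importantly identifies the localization triangle \eqref{eq:localization2} used to build the spectral sequence with the localization triangle \eqref{eq:localization1} for $\un_V$ twisted appropriately — this compatibility is precisely what is recorded at the end of Example~\ref{ex:localization}, where the two long exact sequences are shown to agree via the adjunctions and the Borel-Moore/cohomology duality. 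Thus the boundary operator in question becomes the boundary map
\[
\sHMW^{-p-n}(W,(T_V - (-n))|_W) \to \sHMW^{1-p-n}(D, T_D + n)
\]
followed by restriction to $k(y)$, where I have used the normal bundle exact sequence $0 \to T_D \to T_V|_D \to N_D V \to 0$ together with $N_D V$ being a line bundle to absorb the twist.

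Then I would use Corollary~\ref{cor:fieldcase} (and the twisted version in Proposition~\ref{prop:twisted}, or rather its field specialization) to identify, for a finitely generated field extension $F/k$ of transcendence degree $r$ and the tangent twist, the group $\sHMW^{-r-i}(F, T_F + i)$ with $\sKMW_{r+i}(F;\omega_{F/k})$ in the notation of \eqref{eq:KMW;twists} — here one uses that for a field $F$ smooth over $k$ of transcendence degree $r$, $T_F$ is a trivial bundle of rank $r$ whose determinant is $\omega_{F/k}^\vee$, so the twisting group $\ZZ[\det(T_F^\vee)^\times] = \ZZ[\omega_{F/k}^\times]$ matches. Applying this at $x$ (with $r = p$) and at $y$ (with $r = p-1$) converts both sides of the asserted equality into twisted Milnor–Witt $K$-groups. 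It then remains to check that the boundary map in the cohomological localization sequence, after these identifications, coincides with the residue homomorphism $d\colon \sKMW_{p+n}(k(x);\omega_{k(x)/k}) \to \sKMW_{p+n-1}(k(y);\omega_{k(y)/k})$ of \cite{Mor12_e}*{\S5}. For this last step I would appeal to the fact — standard in Morel's theory and already used throughout \chfinitecw\ and \chdmt\ — that the localization (Gysin) boundary map in $\sKMW_\ast$-cohomology for a smooth closed divisor is, by construction, given componentwise by exactly these residue maps; combined with the compatibility of the comparison isomorphism $\Phi$ of \chcomparison\ with pullbacks and boundary maps (it is a morphism of presheaves intertwining the relevant structure), one gets that our boundary map is this residue.

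\textbf{Main obstacle.} The delicate point is not any single identification but their \emph{compatibility}: one must verify that the duality isomorphism of Theorem~\ref{thm:duality}, the twisting isomorphism of Corollary~\ref{cor:fieldcase}/Proposition~\ref{prop:twisted}, and the comparison isomorphism $\Phi$ of \chcomparison\ all commute with the two boundary maps being compared, and in particular that the orientations/twists introduced along the way (the choice of trivialization of $T_V$, the normal bundle sequence, the canonical orientation of $\omega_{k(y)/k}$ à la \chfinitecw, \S\ref{sec:canonicalorientations}) line up without an extra sign or unit. I expect that tracking these orientations carefully — using the explicit description of the duality isomorphism in terms of the purity isomorphism $\piso'_f$ and of the cap-product with the fundamental class — is the real content, and that once the bookkeeping is set up the agreement with Morel's residue is forced by the uniqueness of boundary maps in the Rost–Schmid complex.
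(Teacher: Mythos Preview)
Your strategy matches the paper's: reduce to a smooth pair $(V,D)$ with $D = \overline{\{y\}}\cap V$ a smooth divisor, pass via duality (Theorem~\ref{thm:duality} and Example~\ref{ex:localization}) from the Borel--Moore localization sequence to the cohomological one, and then identify the latter with the localization sequence in cohomology of $\sKMW_*$. The reduction and the duality step are exactly as in the paper.

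Where you diverge is in the final identification. You propose to invoke compatibility of the comparison isomorphism $\Phi$ of \chcomparison\ with boundary maps, and you correctly flag this as the main obstacle --- but you do not resolve it. The paper handles this point differently and more directly. After trivializing $T_V$ (shrinking $V$ if necessary), the cohomological localization sequence becomes a sequence in Zariski hypercohomology with coefficients in the motivic complex $\tilde\ZZ(p+n)$. Now the key move: the truncation for the homotopy $t$-structure (\chdmt, Definition~\ref{df:htp_tstruct_eff}) furnishes a morphism of complexes of sheaves $\tilde\ZZ(p+n) \to \sKMW_{p+n}[-(p+n)]$ which is an \emph{isomorphism} on the top cohomology sheaf. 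If $p+n\le 0$ the complex is already this single sheaf; if $p+n>0$ the truncation map induces a morphism of localization long exact sequences (simply by naturality in the coefficient complex) which is an isomorphism in the degrees that matter. For the single sheaf $\sKMW_{p+n}$, the boundary in the localization sequence is \emph{by definition} Morel's residue. Thus the compatibility you worried about is automatic once one realizes the isomorphism of Corollary~\ref{cor:fieldcase} is induced by this truncation morphism, rather than going through $\Phi$ abstractly.

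In short: your outline is sound, but the mechanism you are missing --- and which dissolves your ``main obstacle'' --- is the truncation map $\tilde\ZZ(p+n)\to\sKMW_{p+n}$ and the observation that compatibility with boundaries is then just functoriality of the localization sequence in the coefficients.
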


\begin{proof}
In view of the above discussion, we may work with MW-motivic cohomology and suppose that $x$ is the generic point of a smooth connected variety $V$ of dimension $p$, with $y$ a point of codimension 1 in $V$. As before, to compute the boundary of some $\alpha\in \sHMW^{-i}(k(x),T_{k(x)})$, we may suppose that $\alpha$ is defined in $\sHMW^{-p}(V\setminus D,T_{V\setminus D})$ where $\overline{\{y\}}=D$ is smooth and use the localization sequence
\[
\begin{tikzcd}
[column sep=1.3em]
\sHMW^{-p-n}(V,T_V+n) \ar[r] & \sHMW^{-p-n}(V\setminus T,T_{V\setminus D}+n) \ar[r] &  \sHMW^{1-p-n}(D,T_D+n)
\end{tikzcd}
\]
Further, we may assume that $T_V$ is trivial, and choose such a trivialization. In that case, the localization sequence looks as follows
\[
\begin{tikzcd}
[column sep=1.3em]
\sHMW^{-p-n}(V,p+n) \ar[r] & \sHMW^{-p-n}(V\setminus T,p+n) \ar[r] &  \sHMW^{1-p-n}(D,p+n-N_{D}V)
\end{tikzcd}
\]
or equivalently as follows
\[
\begin{tikzcd}
[column sep=1.3em]
\sHMW^{p+n,p+n}(V) \ar[r] & \sHMW^{p+n,p+n}(V\setminus T) \ar[r] &  \sHMW^{p+n+1,p+n}(\Th(N_{D}V))
\end{tikzcd}
\]
where the relevant cohomology groups are Zariski hypercohomology with coefficients in the complex of MW-sheaves $\tilde\ZZ(p+n)$ defined in \chdmt, \ref{num:Tate} (or rather its explicit $\Aone$-local model of \chdmt, Corollary~\ref{cor:LA1}). This follows from the cancellation theorem \chcancellation, Theorem~\ref{thm:main}, \chdmt, Corollary~\ref{cor:compare_Hom&cohomology} and \chdmt, Corollary~\ref{cor:cohomcomplex}.

 If $p+n\leq 0$, then this complex is precisely the single sheaf $\sKMW_{p+n}$ (in degree $p+n$) and the above localization sequence is just the localization long exact sequence in the associated cohomology. By definition, this yields the residue homomorphism of \cite{Mor12_e}*{\S5} and the result is proved in that case. If $p+n>0$, then the first step in the truncation of the $t$-structure of \chdmt, Definition~\ref{df:htp_tstruct_eff} yields a morphism of complexes $\tilde\ZZ(p+n)\to \sKMW_{p+n}$ (the latter in degree $p+n$). This morphism provides a commutative ladder of long exact sequences of localization, and we can conclude as in the previous case since the truncation is an isomorphism in the relevant range.
\end{proof}

\end{num}

\begin{num}
Suppose that $f:X\to Y$ is a smooth morphism of constant relative dimension $d$, so that $\tau_f$ is defined. We know by Proposition \ref{prop:Gysin} that Borel-Moore homology is contravariantly functorial in $f$, i.e.\ we have a Gysin morphism
\[
f^*:\EEBM_n(Y,v) \longrightarrow \EEBM_n(X,f^{-1}v-\tau_f)
\]
for any virtual vector bundle $v$ over $Y$. If $Z_*\in \FF(Y)$ (see Notation in Paragraph~\ref{num:BM}),
 then we obtain an element $(f^{-1}Z)_*\in \FF(X)$ by setting $(f^{-1}Z)_{p+d}=f^{-1}(Z_p)$ and $(f^{-1}Z)_i=\emptyset$ if $i<d$.
 For each $Z_*\in \FF(Y)$, we obtain a commutative ladder of long exact sequences (using $f^*$) and we can take limits to obtain a morphism of spectral sequences
\[
f^*:E^1_{p,q}(Y,v)\to E^1_{p+d,q-d}(X,f^{-1}v-\tau_f).
\]
The morphism on the abutments of the corresponding spectral sequences is precisely the morphism $f^*$ we started with.
\end{num}

\begin{num}\label{num:pf}
Suppose next that $f:X\to Y$ is a proper morphism of $k$-varieties. Borel-Moore homology being covariantly functorial with respect to such morphisms (\ref{num:basic_functoriality}), we get a homomorphism
\[
\sHMWBM_n(X,f^{-1}v)\to \sHMWBM_n(Y,v).
\]
If $Z_*$ is a sequence of reduced closed subschemes of $X$ as above, then $f(Z_*)$ is also such a sequence. Consequently, we obtain a map $\FF(X)\to \FF(Y)$. Moreover, we obtain a commutative ladder of long exact sequences for each $Z_*$ and $f(Z_*)$. Taking limits, it follows that $f$ induces a morphism 
\[
f_*:E^1_{p,q}(X,f^{-1}v)\to E^1_{p,q}(Y,v)
\]
such that the morphism at the abutments of the spectral sequences is the group homomorphism $\sHMWBM_{p+q}(X,f^{-1}v)\to \sHMWBM_{p+q}(Y,v)$ described above. If $x\in X_{(p)}$ and $y=f(x)$, then the construction of the spectral sequence implies in particular the existence of homomorphisms
\[
(f_*)^x_{y}:\sHMWBM_{p+q}(k(x),f^{-1}v_{\vert_{k(x)}})\to \sHMWBM_{p+q}(k(y),v_{\vert_{k(y)}}). 
\]
Suppose that $f(x)\in Y_{(n)}$ for $n<p$, i.e.\ that the field extension $k(y)\subset k(x)$ is infinite. In particular, there exists $z\in X_{(p-1)}$ such that $f(z)=f(x)=y$. We may choose $Z_*$ such that $\overline{\{x\}}=Z_p$ and $z\in Z_{p-1}$ and consider $f(Z_*)$. Since $f(Z_p)\subset f(Z_{p-1})$, it follows immediately that $(f_*)^x_{y}=0$ in that case. Therefore, the morphism $f_*:E^1_{p,q}(X,f^{-1}v)\to E^1_{p,q}(Y,v)$ can be understood by computing the homomorphisms $(f_*)^x_y$ with $k(y)\subset k(x)$ finite. There is a well-understood procedure for this computation, and we describe it now in our context.

Let $F$ be a finitely generated field extension of the base field $k$. As $k$ is perfect, we may suppose that $F=k(U)$ for some smooth $k$-scheme $U$. We also consider $\PP^1_F$, which can be seen as a localization of $\PP^1_U$. Following the construction of the spectral sequence for the Borel-Moore homology of $U$ twisted by a virtual vector bundle $v$, we obtain (taking an appropriate limit) a spectral sequence computing the Borel-Moore homology of $\PP^1_F$ whose $E^1$-page consists of complexes of the form 
\[
\sHMWBM_n(F(t),v_{\vert F(t)})\stackrel{d}\to \bigoplus_{x\in (\PP^1_F)^{(1)}} \sHMWBM_{n-1}(F(x),v_{\vert F(x)})
\]
On the other hand, the corresponding spectral sequence for the Borel-Moore homology of $F$ only consists of terms of the form $\sHMWBM_n(F,v_F)$ and the morphism of spectral sequences associated to $\pi:\PP^1_F\to \Spec(F)$ described above gives commutative diagrams
\[
\begin{tikzcd}
\sHMWBM_n(F(t),v_{\vert F(t)}) \ar[r,"d"] \ar[d] & \displaystyle{\bigoplus_{x\in (\PP^1_F)^{(1)}} \sHMWBM_{n-1}(F(x),v_{\vert F(x)})} \ar[d,"{\pi_*}"] \\
0\ar[r] & \sHMWBM_{n-1}(F,v_F)
\end{tikzcd}
\]
showing that $\pi_*d=0$. As seen above, $d$ is of the form $\sum_{x}d_x$, where 
\[
d_x:\sHMWBM_n(F(t),v_{\vert F(t)})\to \sHMWBM_{n-1}(F(x),v_{\vert F(x)}).
\]
Now, we may write $\pi_*=\sum_x (\pi_*)_x$ and the above equality reads as $\sum_x (\pi_*)_xd_x=0$. For $x=\infty$, we observe that $(\pi_*)_{\infty}$ is just the identity (being induced by the identity on $\Spec(F)$) and consequently $\sum_{x\neq \infty}(\pi_*)_xd_x=-d_{\infty}$.

\begin{lem}
Let $F/k$ be a finitely generated field extension of transcendence degree $p$ and let $x\in \PP^1_F$ be a closed point. Then, the homomorphism 
\[
(\pi_*)_x:\sHMWBM_{p+n}(F(x),-n)\to \sHMWBM_{p+n}(F,-n)
\] 
obtained above coincides with the push-forward homomorphism
\[
\Tr^{F(x)}_F:\sKMW_{p+n}(F(x);\omega_{F(x)/k})\to \sKMW_{p+n}(F;\omega_{F/k})
\]
of \cite{Mor12_e}*{Definition~4.4} under the isomorphisms 
\[
\sHMWBM_{p+n}(F(x),-n)\simeq \sHMW^{-p-n}(F(x),T_{F(x)}+n)\simeq \sKMW_{p+n}(F(x);\omega_{F(x)/k})
\]
obtained from Theorem~\ref{thm:duality} and Corollary~\ref{cor:fieldcase},
 using again the notation of \eqref{eq:KMW;twists}.
\end{lem}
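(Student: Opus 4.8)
The plan is to identify the push-forward map $(\pi_*)_x$ coming out of the Borel-Moore homology spectral sequence with Morel's geometric transfer $\Tr^{F(x)}_F$ in Milnor-Witt $K$-theory, by exploiting the fact that both are characterized by the \emph{same} residue-at-infinity construction on $\PP^1_F$. First I would recall, as was noted in the paragraph preceding the statement, that we already have the relation $\sum_{x\neq\infty}(\pi_*)_x d_x = -d_\infty$, where $d_x$ is the component of the differential $d$ in the spectral sequence and $d_\infty$ is the residue at the point at infinity (with uniformizer $-1/t$). On the other side, Morel's geometric transfer $\Tr^{F(x)}_F$ is \emph{defined} (see \chfinitecw, \S\ref{sec:cohomological} and \cite{Mor12_e}) precisely by the formula $-\partial_\infty \circ (\text{section of } \sum_{x\neq\infty}\partial_x)$ using the split exact sequence for $\KMW_*(F(t))$. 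So the heart of the argument is to show that, under the duality isomorphism of Theorem~\ref{thm:duality} combined with Corollary~\ref{cor:fieldcase}, the differentials $d_x$ in the spectral sequence are identified with Morel's residue homomorphisms $\partial_x$.

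\textbf{Key steps.} The first step is Lemma~\ref{lem:smoothdifferentials}, already proved in the excerpt, which does exactly this identification of differentials in the smooth locus: it says that for $x\in X_{(p)}$, $y\in X_{(p-1)}$ with $y$ in the smooth locus of $\overline{\{x\}}$, the spectral-sequence differential $d$ coincides with the Rost-Schmid residue $\partial$ on twisted Milnor-Witt $K$-theory. Applied to $X=\PP^1_F$ (or rather $\PP^1_U$ localized at the generic point of $U$, passing to the limit as in the construction of the spectral sequence for $F$), with $x$ the generic point of $\PP^1_F$ and $y$ running over the closed points, this identifies $d_x$ with $\partial_x$ for every closed point $x\neq \infty$ and also $d_\infty$ with $\partial_\infty$ — note $\PP^1_F$ is smooth over $k$ so every point lies in the smooth locus. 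The second step is to observe that the relation $\sum_{x\neq\infty}(\pi_*)_x d_x = -d_\infty$ forces $\sum_{x\neq\infty}(\pi_*)_x \partial_x = -\partial_\infty$ under these identifications; this is precisely the defining relation for the geometric transfer in Milnor-Witt $K$-theory restricted to the kernel of $\sum_{x\neq\infty}\partial_x$ (equivalently, on the image of $\KMW_*(F)\hookrightarrow \KMW_*(F(t))$, on which $d_x=0$ for closed $x\neq\infty$), so the two push-forwards agree on that subgroup. The third step handles an arbitrary class: any $\alpha\in \sHMWBM_{p+n}(F(x),-n)\simeq \sKMW_{p+n}(F(x);\omega_{F(x)/k})$ is, by the surjectivity of $\sum_{x\neq\infty}\partial_x$, the residue $\partial_x(\beta)$ of some $\beta\in\KMW_{p+n+1}(F(t),\ldots)$; then $(\pi_*)_x(\alpha)=(\pi_*)_x(d_x\beta)=-\left((\pi_*)_\infty d_\infty + \sum_{x'\neq x,\infty}(\pi_*)_{x'}d_{x'}\right)(\beta)$, and using that $(\pi_*)_\infty=\mathrm{id}$ (induced by the identity on $\Spec(F)$) one recovers exactly $-\partial_\infty(\beta)=\Tr^{F(x)}_F(\alpha)$ after checking the twist $\omega_0(x)$ correction built into Morel's cohomological transfer matches the orientation conventions of Corollary~\ref{cor:fieldcase}. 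Finally one must check compatibility of all the twisting line bundles: the duality isomorphism introduces $T_{F(x)}$ and a twist by $\omega_{F(x)/k}$, and one must verify these line-bundle identifications are the canonical ones used by Morel, which is where Lemma~\ref{lem:orientation} and the explicit orientation bookkeeping of \chfinitecw, \S\ref{sec:canonicalorientations} enter.

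\textbf{Main obstacle.} The principal difficulty will not be the structural argument — which is essentially a bootstrap from Lemma~\ref{lem:smoothdifferentials} plus the uniqueness of transfers — but rather the careful tracking of the canonical orientations and twisting line bundles through the chain of isomorphisms $\sHMWBM_{p+n}(F(x),-n)\simeq \sHMW^{-p-n}(F(x),T_{F(x)}+n)\simeq \sKMW_{p+n}(F(x);\omega_{F(x)/k})$, and making sure these are compatible with the way Morel's geometric and cohomological transfers treat the relative canonical module $\omega_{F(x)/F}$, including the separable-versus-inseparable distinction encoded in the element $\omega_0(x)$. One must also verify that the push-forward along $\pi:\PP^1_F\to\Spec(F)$ in the ambient six-functor/Borel-Moore formalism, which is covariant for the proper morphism $\pi$ via Paragraph~\ref{num:basic_functoriality}, restricts correctly on the associated graded pieces of the niveau filtration to the componentwise maps $(\pi_*)_x$, and that the component at $\infty$ is genuinely the identity after the canonical trivialization $\OO_{\PP^1}\to \OO_{\PP^1}(\overline D)$; this is the kind of diagram chase that is routine in spirit but delicate in the signs, analogous to \chcancellation, Lemma~\ref{lem:degree}. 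Once these compatibilities are pinned down, the conclusion that $(\pi_*)_x = \Tr^{F(x)}_F$ follows by the uniqueness of transfers for Milnor-Witt $K$-theory (\cite{Mor12_e}, or \cite{Fasel20_a}*{Lemma~1.15} for the analogous statement for cycle modules).
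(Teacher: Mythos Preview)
Your proposal is correct and follows essentially the same approach as the paper: use Lemma~\ref{lem:smoothdifferentials} to identify the spectral-sequence differentials with Morel's residues, invoke the split exact sequence for $\KMW_*(F(t))$ over $\Aone_F$, and then read off $(\pi_*)_x$ from the relation $\sum_{x\neq\infty}(\pi_*)_x d_x=-d_\infty$. The paper's version is much terser and does not dwell on the orientation bookkeeping you flag as the main obstacle, since those compatibilities are already absorbed into Lemma~\ref{lem:smoothdifferentials} and the canonical identifications of Corollary~\ref{cor:fieldcase}; your concern about the $\omega_0(x)$ correction is unnecessary here because the statement refers to Morel's \emph{twisted} transfer (with canonical bundles), not the untwisted cohomological one.
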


\begin{proof}
We know from Lemma \ref{lem:smoothdifferentials} that the differentials in the diagram
\[
\begin{tikzcd}
\sHMWBM_{p+n+1}(F(t),-n) \ar[r,"d"] \ar[d] & \displaystyle{\bigoplus_{x\in (\PP^1_F)^{(1)}} \sHMWBM_{p+n}(F(x),-n)} \ar[d,"{\pi_*}"] \\
0 \ar[r] & \sHMWBM_{p+n}(F,-n)
\end{tikzcd}
\]
coincide with the differentials in Milnor-Witt $K$-theory. In particular, the sequence
\[
\begin{tikzcd}
\sHMWBM_{p+n+1}(F(t),-n) \ar[r,"d"] & \displaystyle{\bigoplus_{x\in (\Aone_F)^{(1)}} \sHMWBM_{p+n}(F(x),-n)}
\end{tikzcd}
\]
splits and it follows from the equation $\sum_{x\neq \infty}(\pi_*)_xd_x=-d_{\infty}$ that $(\pi_*)_*$ is determined by any splitting and $d_{\infty}$. We conclude applying Lemma \ref{lem:smoothdifferentials} again.
\end{proof}

We are now ready to prove that the push-forwards in Borel-Moore homology coincide with those in Milnor-Witt $K$-theory when the relevant groups are isomorphic.

\begin{thm}
Let $F\subset L$ be a finite extension of finitely generated field extensions of $k$. Let $p$ be the transcendence degree of $F$ (and thus $L$) and let $n\in\ZZ$ be an integer. Then, the push-forward 
\[
\sHMWBM_{p+n}(L,-n)\to \sHMWBM_{p+n}(F,-n)
\]
coincides with the push-forward
\[
\Tr^{L}_F:\sKMW_{p+n}(L;\omega_{L/k})\to \sKMW_{p+n}(F;\omega_{F/k})
\]
defined in \cite{Mor12_e}*{Definition~4.4} under the isomorphisms of the previous lemma.
\end{thm}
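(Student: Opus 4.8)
The plan is to reduce to the case where $L/F$ is monogenic, and then to recognise the Borel-Moore push-forward as the spectral-sequence component $(\pi_*)_x$ that the previous lemma identifies with Morel's transfer.

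First I would reduce to the monogenic case. Write $L/F$ as a tower of monogenic field extensions $F=F_0\subset F_1\subset\cdots\subset F_m=L$; such a tower exists by decomposing $L/F$ into its maximal separable subextension, which is monogenic by the primitive element theorem, and a purely inseparable part, which is obtained by successively adjoining $p$-th roots, each step monogenic. Every $F_i$ is a finitely generated field extension of $k$ of transcendence degree $p$, so Corollary~\ref{cor:fieldcase} (together with duality, Theorem~\ref{thm:duality}) provides a canonical isomorphism $\sHMWBM_{p+n}(F_i,-n)\simeq\sKMW_{p+n}(F_i;\omega_{F_i/k})$. On the geometric side the Borel-Moore push-forward is functorial for proper morphisms (Paragraph~\ref{num:basic_functoriality}), so the map in the statement is the composite of the push-forwards along $\Spec F_i\to\Spec F_{i-1}$; on the algebraic side $\Tr^L_F$ is, by Morel's definition, the composite of the monogenic transfers $\Tr^{F_i}_{F_{i-1}}$, and this composite is independent of the tower by \cite{Mor12_e}*{Theorem~4.27}. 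Since the isomorphisms above are canonical, it suffices to treat the case $L=F(x)$.

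For the monogenic case, let $x$ be the closed point of $\PP^1_F$ cut out by the minimal polynomial of $x$ over $F$, so that its residue field is $F(x)=L$, its closure $\overline{\{x\}}$ is $\Spec L$ (a codimension one point of $\PP^1_F$, since $\mathrm{trdeg}_k L=p$), and the structure morphism $g\colon\Spec L\to\Spec F$ factors as a closed immersion $\iota\colon\Spec L\to\PP^1_F$ followed by the proper projection $\pi\colon\PP^1_F\to\Spec F$. By functoriality of proper push-forward, $g_*=\pi_*\circ\iota_*$ on $\sHMWBM_{p+n}(-,-n)$. Now $\iota_*(\alpha)$ lies in the first step $F_p\sHMWBM_{p+n}(\PP^1_F,-n)$ of the niveau filtration of Paragraph~\ref{num:BM}, and since $\overline{\{x\}}=\Spec L$ is already the spectrum of a field, its class in $E^1_{p,n}(\PP^1_F,-n)=\bigoplus_{y\in(\PP^1_F)_{(p)}}\sHMWBM_{p+n}(k(y),-n)$ is exactly $\alpha$ placed in the $x$-summand. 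On the other hand $\Spec F$ has no nonempty proper closed subscheme, so its niveau spectral sequence is concentrated in a single column: $F_{p-1}\sHMWBM_{p+n}(\Spec F,-n)=0$ and the edge map identifies $E^1_{p,n}(\Spec F,-n)$ with $\sHMWBM_{p+n}(\Spec F,-n)$. Because $\pi_*$ respects the niveau filtrations and induces $\sum_y(\pi_*)_y$ on $E^1$, we get $g_*(\alpha)=\pi_*\iota_*(\alpha)=(\pi_*)_x(\alpha)$, and the previous lemma then identifies $(\pi_*)_x$ with $\Tr^{F(x)}_F$ under the stated isomorphisms, finishing the argument.

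The main difficulty will be the identification $g_*=(\pi_*)_x$ in the monogenic case: it requires unwinding the construction of the niveau spectral sequence from the localization long exact sequences of Paragraph~\ref{num:BM}, checking precisely how a codimension one closed immersion into $\PP^1_F$ meets the niveau filtration, and how this is transported by the honest (not merely $E^1$-level) proper push-forward $\pi_*$ — where the point that makes it work is that the spectral sequence of $\Spec F$ degenerates. Everything else, namely the tower reduction and the passage between Borel-Moore homology and Milnor–Witt $K$-theory, is either formal functoriality or already contained in the earlier lemmas and in Morel's construction of transfers, so no new input beyond the excerpt should be needed.
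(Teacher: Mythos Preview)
Your proposal is correct and follows essentially the same route as the paper: reduce to monogenic extensions and then invoke the previous lemma for closed points of $\PP^1_F$. The paper compresses the tower reduction into one sentence by embedding $L$ as a closed point of $\PP^1_F\times\cdots\times\PP^1_F$ and peeling off the projective line factors one at a time, which amounts to your tower $F=F_0\subset\cdots\subset F_m=L$; your explicit decomposition into separable plus purely inseparable monogenic steps is a valid way to produce such a tower.

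One remark on what you flag as ``the main difficulty'': the identification $g_*=(\pi_*)_x$ is in fact immediate from the construction in Paragraph~\ref{num:pf}, without the detour through $\iota_*$ and the niveau filtration. The component $(\pi_*)_x$ is by definition obtained from the proper push-forward $\sHMWBM_{p+n}(Z_p,-n)\to\sHMWBM_{p+n}(\pi(Z_p),-n)$ for $Z_p=\overline{\{x\}}$; when $x$ is a closed point of $\PP^1_F$ one has $\overline{\{x\}}=\Spec F(x)$ and $\pi(\overline{\{x\}})=\Spec F$, so $(\pi_*)_x$ is literally the push-forward for $g\colon\Spec F(x)\to\Spec F$. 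Your argument via the degeneration of the spectral sequence for $\Spec F$ also works, but it is more than is needed.
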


\begin{proof}
The field extension $L/F$ being finitely generated, we can see $F$ as a closed point of $\PP^1_L\times \ldots\times \PP^1_L$. We conclude using the respective projections and the previous lemma.
\end{proof}

\end{num}

\begin{num}
We finally collect the results of the previous sections to identify the Borel-Moore homology groups $\sHMWBM_n(X,-n)$ for any $n\in\ZZ$. 
\index[notation]{hmwbmx-n@$\sHMWBM_n(X,-n)$}%
Consider the spectral sequence constructed in \ref{num:BM}
\[
E^1_{p,q}(X,-n):=\bigoplus_{x\in X_{(p)}} \sHMWBM_{p+q}(k(x),-n)\implies \sHMWBM_{p+q}(X,-n).
\]
According to the duality theorem, we may replace the groups of the $E^2$-page by MW-motivic cohomology groups and it follows from Proposition \ref{prop:twisted},
 using notation \eqref{eq:HMW;twists},
 that the spectral sequence takes the form
\[
E^1_{p,q}(X,-n):=\bigoplus_{x\in X_{(p)}}\HMW^{p-q+2n,p+n}\big(k(x),\ZZ\{\omega_{k(x)/k}\}\big) \implies \sHMWBM_{p+q}(X,-n)
\]
Let now $X$ be of dimension $d$. Then $E^1_{p,q}=0$ if $p<0$ or $p>d$. On the other hand, it follows from Corollary \ref{cor:fieldcase} that $E^1_{p,q}=0$ if $q<n$ while the line $q=n$ reads as
\[
\begin{tikzcd}
[column sep=1.3em]
\cdots & \displaystyle{\bigoplus_{x\in X_{(p)}} \sKMW_{p+n}(k(x);\omega_{k(x)/k}}) \ar[l] & \displaystyle{\bigoplus_{x\in X_{(p+1)}} \sKMW_{p+1+n}(k(x);\omega_{k(x)/k}}) \ar[l] & \cdots \ar[l]
\end{tikzcd}
\]
using this time Morel's notation \eqref{eq:KMW;twists}.

\begin{dfn}
Let $X$ be a $k$-variety of dimension $d$. For any $n\in \ZZ$, we denote by $\CBM(X,\sKMW_{d+n})$ the complex $E^1_{p,n}(X,-n)$ (graded by the dimension of the points) in the spectral sequence computing Borel-Moore homology. 
\end{dfn}

The above vanishing results imply the following proposition.

\begin{prop}
Assume $k$ is a perfect infinite field of characteristic not $2$.
Let $n\in \NN$ and $X$ be a $k$-variety of dimension $d$. Then the spectral sequence yields isomorphisms
\[
\sHMWBM_{i}(X,n)\simeq \H_{n+i}\big(\CBM(X,\sKMW_{d-n})\big)
\]
for $i=0,1$.
\end{prop}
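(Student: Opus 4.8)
The plan is to run the coniveau spectral sequence recalled just above for the twist by the trivial bundle of rank $n$ and to isolate, in total degrees $0$ and $1$, the single row of the $E^1$-page that contributes. Write $E^r_{p,q}(X,n)$ for the convergent spectral sequence
\[
E^1_{p,q}(X,n)=\bigoplus_{x\in X_{(p)}}\sHMWBM_{p+q}(k(x),n)\implies \sHMWBM_{p+q}(X,n),
\]
constructed exactly as in Paragraph~\ref{num:BM} for the twist $n$ (equivalently, the discussion preceding the statement applies verbatim after replacing its generic integer $n$ by $-n$). As recorded there, duality (Theorem~\ref{thm:duality}) together with Corollary~\ref{cor:fieldcase} applied to the residue fields gives $E^1_{p,q}(X,n)=0$ unless $0\le p\le d$ and $q\ge -n$, identifies the bottom row $q=-n$ with the complex $\CBM(X,\sKMW_{d-n})$ in its dimensional grading, and hence yields $E^2_{p,-n}(X,n)=\H_p(\CBM(X,\sKMW_{d-n}))$ for every $p$. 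Since in each total degree the spectral sequence is supported on the finitely many bidegrees with $0\le p\le d$, it suffices to establish, for $i\in\{0,1\}$, the two facts: \textbf{(a)} $E^1_{p,\,i-p}(X,n)=0$ whenever $p\ne n+i$; and \textbf{(b)} $E^2_{n+i,-n}(X,n)=E^\infty_{n+i,-n}(X,n)$.

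Both are inspections of bidegrees through the vanishing of Milnor--Witt motivic cohomology of residue fields. By Paragraph~\ref{num:trivial_comput_coh} and Corollary~\ref{cor:fieldcase} (here the hypotheses on $k$ are exactly those of the proposition), for a residue field $L=k(x)$ and $j\in\ZZ$ the group $\sHMW^{m}(L,j,\ZZ)$ vanishes whenever $m>-j$, and also whenever $j\le 0$ and $m\ne -j$ (in the latter ranges it is, up to twisting by a line of units, the Zariski cohomology $\H^{m+j}_{\zar}(\Spec L,\sW)$ or $\H^{m}_{\zar}(\Spec L,\sKMW_0)$ of a point in a nonzero degree; compare \chdmt, Theorem~\ref{thm:Wsh&A1}). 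By Theorem~\ref{thm:duality} and the triviality of bundles over $L$,
\[
E^1_{p,q}(X,n)\;\simeq\;\bigoplus_{x\in X_{(p)}}\sHMW^{-(p+q)}(k(x),\,p-n,\,\ZZ).
\]
For \textbf{(a)}, take $q=i-p$, so the summand is $\sHMW^{-i}(k(x),p-n,\ZZ)$: if $p>n+i$ then $p-n>i\ge 0$ and $-i>-(p-n)$, so the summand vanishes; if $p<n+i$ then $p-n<i\le 1$, hence $p-n\le 0$, and $(-i)+(p-n)=p-(n+i)\ne 0$, so it vanishes again. For \textbf{(b)}, any differential entering $(n+i,-n)$ starts in a row $q<-n$, where $E^1$ is already zero; and for $r\ge 2$ the differential $d_r$ leaving $(n+i,-n)$ lands in bidegree $(n+i-r,\,-n+r-1)$, where $p-n=i-r<0$ and $-(p+q)=1-i$, so the relevant summands are $\sHMW^{1-i}(k(x),i-r,\ZZ)=\H^{1-r}_{\zar}(\Spec k(x),\sW)=0$ since $1-r<0$. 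Hence no nonzero differential enters or leaves $(n+i,-n)$, which is \textbf{(b)}.

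Combining: by \textbf{(a)} the finite filtration on $\sHMWBM_i(X,n)$ provided by the convergent spectral sequence has a single possibly nonzero graded piece, namely $E^\infty_{n+i,-n}(X,n)$, so $\sHMWBM_i(X,n)\simeq E^\infty_{n+i,-n}(X,n)$; by \textbf{(b)} this equals $E^2_{n+i,-n}(X,n)=\H_{n+i}(\CBM(X,\sKMW_{d-n}))$, giving the asserted isomorphism for $i=0$ and $i=1$. The only point requiring genuine care is the computation of $\sHMW^{\bullet}$ of a residue field together with the matching of the twist by $T_{k(x)}$ with the canonical module $\omega_{k(x)/k}$ appearing in $\CBM$; once this is settled — and it is essentially already recorded in the passage preceding the statement — the rest is a pure count of bidegrees and uses nothing beyond duality, Corollary~\ref{cor:fieldcase}, and the convergence of the spectral sequence.
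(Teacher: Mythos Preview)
Your proof is correct and follows essentially the same approach as the paper: both identify the vanishing ranges of the $E^1$-page via the computation of $\sHMW^*$ of residue fields (Corollary~\ref{cor:fieldcase} together with Proposition~\ref{prop:explicit}) and deduce that on the lines $p+q=0,1$ only the bidegree $(n+i,-n)$ survives, with no higher differentials touching it. The paper's proof simply records the three vanishing conditions $E^1_{p,q}(X,n)=0$ for $p<0$ or $p>d$, for $q<-n$, and for $p\le n$ with $q\ne -n$, then says the result follows immediately; your version spells out the differential analysis (your points \textbf{(a)} and \textbf{(b)}) explicitly, which is a welcome elaboration of the same argument.
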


\begin{proof}
For $p\leq n$, it follows from \chfinitecw, Proposition~\ref{prop:explicit} that $\sHMW^{p-q-2n,p-n}(k(x))=0$ if $p-q-2n\neq p-n$, i.e.\ if $q\neq -n$. We then have
\[
E^1_{p,q}(X,n)=\begin{cases}  0 & \text{if $p<0$ and $p>d$.} \\ 0 & \text{if $q<-n$.} \\ 0 & \text{if $p\leq n$ and $q\neq -n$.}\end{cases}.
\]
The result follows immediately.
\end{proof}

\end{num}

We now focus on the complex $\CBM(X,\sKMW_{d-n})$ for a while, starting with the differentials. Let then $x\in X_{(p)}$. Denote by $Z$ the normalization of $\overline{\{x\}}$ and observe that the morphism $f:Z\to X$ is finite. By \ref{num:pf}, we get a morphism of spectral sequences 
\[
f_*:E^1_{p,q}(Z,n)\to E^1_{p,q}(X,n)
\]
and in particular a morphism of complexes
\[
f_*:\CBM(Z,\sKMW_{p+n})\to \CBM(X,\sKMW_{d+n})
\]
Now, $Z$ is smooth in codimension $1$ and it follows that the differential
\[
\sKMW_{p+n}(k(x);\omega_{k(x)/k}) \to \displaystyle{\bigoplus_{z\in Z_{(p-1)}}}  \sKMW_{p+n-1}(k(z);\omega_{k(z)/k})
\]
can be computed as in Lemma \ref{lem:smoothdifferentials}; in particular, we can use the residue homomorphism defined in \cite{Mor12_e}*{Theorem~3.15, \S 5}. On the other hand, the morphism of complexes $f_*$ yields a commutative diagram
\[
\begin{tikzcd}
\sKMW_{p+n}(k(x);\omega_{k(x)/k}) \ar[r] \ar[d] & \displaystyle{\bigoplus_{z\in Z_{(p-1)}}} \sKMW_{p+n-1}(k(z);\omega_{k(z)/k}) \ar[d] \\
\displaystyle{ \bigoplus_{x\in X_{(p)}}} \sKMW_{p+n}(k(x);\omega_{k(x)/k}) \ar[r] & \displaystyle{\bigoplus_{x\in X_{(p-1)}}} \sKMW_{p+n-1}(k(x);\omega_{k(x)/k})
\end{tikzcd}
\]
where the left vertical map is the inclusion of the relevant factor, the horizontal maps are the differentials, and the right vertical map is the sum of the push-forwards described in \ref{num:pf}. This immediately implies the following theorem.

\begin{thm} Assume $k$ is a perfect infinite field of characteristic not $2$.
Let $n\in\NN$ and let $X$ be a smooth $k$-variety of dimension $d$. For any $i\in \NN$, we have an identification
\[
\H_i\big(\CBM(X,\sKMW_{d-n})\big) \simeq \H^{d-i}(X,\sKMW_{d-n}\{\omega_{X/k}\})
\]
which is natural with respect to push-forward homomorphisms. 
 For the definition of the \emph{twisted} Nisnevich sheaf $\sKMW_{d-n}\{\omega_{X/k}\}$,
 we refer the reader to \cite{Feld21}, \textsection 2.2.4.
\end{thm}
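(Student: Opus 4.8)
The plan is to identify $\CBM(X,\sKMW_{d-n})$, up to the reindexing $j=d-i$ that turns homological degree into codimension, with the Rost--Schmid (equivalently Milnor--Witt) cochain complex computing $\H^*(X,\sKMW_{d-n}\{\omega_{X/k}\})$, and then read off the statement on (co)homology. There are three things to do: match the terms, match the differentials, and check compatibility with push-forwards.

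First I would match the terms. By the discussion preceding the statement, the term of $\CBM(X,\sKMW_{d-n})$ in homological degree $p$ is $\bigoplus_{x\in X_{(p)}}\sKMW_{p-n}(k(x);\omega_{k(x)/k})$, the identification of the summands resting on the duality isomorphism (Theorem~\ref{thm:duality}), Proposition~\ref{prop:twisted}, and the comparison of Corollary~\ref{cor:fieldcase}. On the other hand, for $X$ smooth of dimension $d$ the degree-$j$ term of the Rost--Schmid/Milnor--Witt complex of $\sKMW_{d-n}\{\omega_{X/k}\}$ is $\bigoplus_{x\in X^{(j)}}\sKMW_{d-n-j}(k(x),\Lambda(x)\otimes\omega_{X/k,x})$ (\cite{Mor12_e}*{Ch.~5}, recalled in \chfinitecw, \S\ref{sec:chowwitt}). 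Setting $j=d-p$, so that $X^{(j)}=X_{(p)}$, each summand becomes $\sKMW_{p-n}(k(x),\Lambda(x)\otimes\omega_{X/k,x})$, and the conormal exact sequence $0\to\mathfrak m_x/\mathfrak m_x^2\to\Omega_{X/k}\otimes k(x)\to\Omega_{k(x)/k}\to 0$ — left-exact by the dimension count used in the proof of \chcomparison, Proposition~\ref{prop:inductionstep} — gives, on taking determinants, a canonical isomorphism $\Lambda(x)\otimes\omega_{X/k,x}\simeq\omega_{k(x)/k}$. Thus the two complexes have canonically isomorphic terms, homological degree $i$ on the $\CBM$ side matching cohomological degree $d-i$ on the Rost--Schmid side.

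Next I would match the differentials. The differential of the Rost--Schmid complex of a smooth scheme is, by Morel's construction, the residue homomorphism of \cite{Mor12_e}*{Thm.~3.15, \S 5} at each point at which the relevant closure is regular, and a finite push-forward of residues from the normalization of the closure otherwise. The differential of $\CBM(X,\sKMW_{d-n})$ follows the same recipe: Lemma~\ref{lem:smoothdifferentials} identifies it with Morel's residue whenever the target point lies in the smooth locus of the relevant closure, while the discussion following the preceding theorem on push-forwards produces, for the normalization $f\colon Z\to\overline{\{x\}}$, a commutative square realizing the $\CBM$-differential of $X$ as the push-forward along $f$ of the residue differential of $Z$ (which is smooth in codimension one). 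The push-forward appearing there is the one of \ref{num:pf}, which by the preceding theorem agrees on residue fields with Morel's finite transfer on Milnor--Witt $K$-theory. Hence the termwise isomorphism of the previous step is a morphism of complexes, and therefore induces $\H_i(\CBM(X,\sKMW_{d-n}))\simeq\H^{d-i}(X,\sKMW_{d-n}\{\omega_{X/k}\})$. Naturality with respect to push-forward along a finite (more generally proper) morphism of smooth $k$-varieties then follows from the functoriality of the spectral sequence for proper maps (\ref{num:pf}) on the $\CBM$ side, the push-forward on twisted Milnor--Witt cohomology of \cite{Mor12_e}*{Cor.~5.30} on the Rost--Schmid side, and once more the preceding theorem, which makes the comparison square commute.

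The main obstacle is the matching of differentials. The differential of $\CBM(X,\sKMW_{d-n})$ has only been computed partially in the text — Lemma~\ref{lem:smoothdifferentials} covers the case where the target point lies in the smooth locus of a closure — so the identification with the Rost--Schmid differential relies on the fact that Morel's Rost--Schmid differential for a smooth scheme is \emph{itself} defined by the same normalization-and-push-forward procedure, combined with the compatibility of that push-forward with the one on Borel--Moore homology. The remaining delicate point is purely bookkeeping: one must check that the canonical twist isomorphisms $\Lambda(x)\otimes\omega_{X/k,x}\simeq\omega_{k(x)/k}$ are compatible with the residue maps and with the orientation conventions built into both differentials, which is the standard (but careful) verification underlying the Rost--Schmid complex.
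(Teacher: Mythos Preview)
Your proposal is correct and follows essentially the same approach as the paper: the paper's proof is the discussion immediately preceding the theorem, which identifies the differentials of $\CBM(X,\sKMW_{d-n})$ with those of the Rost--Schmid complex by passing to the normalization $Z$ of each $\overline{\{x\}}$ (smooth in codimension one), invoking Lemma~\ref{lem:smoothdifferentials} there, and pushing forward via the map of spectral sequences from~\ref{num:pf}, whose compatibility with Morel's transfers is the content of the preceding theorem. Your write-up is slightly more explicit about the termwise twist identification via the conormal sequence, but this is exactly the computation underlying the paper's earlier description of the $E^1$-page.
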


\begin{dfn}
Let $X$ be a smooth $k$-variety of pure dimension $d$. We define the homological grading on its Chow-Witt groups by 
\[
\chht iX L:=\cht {d-i}X{\omega_{X/k} \otimes L^\vee}.
\]
If $X$ has several connected components of various dimensions, we extend this definition by additivity.
 We put further: $\chhtnotw iX=\chht iX {\mathcal O_X}$.
\index{Chow-Witt!group!homological}%
\index[notation]{chtnxo@$\chht nX{\omega_{X/k}}$}%
\end{dfn}
The convention chosen in this definition corresponds to that of \cite{Fasel20}, Definition 2.5.
 It is motivated by the duality formula of \cite{Fasel20}, beginning of Section 2.2.

\begin{coro}\label{cor:ChowWittBorelMoore}
Assume $k$ is a perfect infinite field of characteristic not $2$.
Let $X$ be a smooth $k$-variety of dimension $d$. For any $n\in\NN$, we have an isomorphism
\[
\sHMWBM_{0}(X,n)\simeq \chhtnotw n X
\]
which is natural with respect to push-forward maps.
\end{coro}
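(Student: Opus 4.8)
The plan is to obtain Corollary~\ref{cor:ChowWittBorelMoore} as a formal consequence of the two results immediately preceding it, together with the definition of the homological regrading of Chow-Witt groups; no new computation is required, so the real work was already done in the construction of the complex $\CBM(X,\sKMW_{d-n})$ and in the preceding theorem.

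First I would specialize the preceding Proposition to the case $i=0$: since $X$ is a $k$-variety of dimension $d$ and $n\in\NN$, it provides a natural isomorphism
\[
\sHMWBM_0(X,n)\simeq \H_n\big(\CBM(X,\sKMW_{d-n})\big).
\]
Next I would invoke the preceding Theorem, which identifies $\H_i\big(\CBM(X,\sKMW_{d-n})\big)$ with $\H^{d-i}(X,\sKMW_{d-n}\{\ome Xk\})$ and whose proof uses the duality isomorphism and hence the smoothness of $X$; taking $i=n$ gives a natural isomorphism, compatible with push-forward homomorphisms,
\[
\H_n\big(\CBM(X,\sKMW_{d-n})\big)\simeq \H^{d-n}\big(X,\sKMW_{d-n}\{\ome Xk\}\big).
\]
Composing the two produces $\sHMWBM_0(X,n)\simeq \H^{d-n}(X,\sKMW_{d-n}\{\ome Xk\})$.

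It then remains to identify the right-hand side with $\chhtnotw nX$. By Definition~\ref{dfn:twistedCHWitt} one has $\H^{d-n}(X,\sKMW_{d-n}(\ome Xk))=\cht{d-n}{X}{\ome Xk}$, while the homological grading convention gives $\chhtnotw nX=\chht nX{\OO_X}=\cht{d-n}{X}{\ome Xk\otimes\OO_X^\vee}=\cht{d-n}{X}{\ome Xk}$, so the two groups coincide once the twist $\sKMW_{d-n}\{\ome Xk\}$ is matched with the twisted Milnor-Witt sheaf $\sKMW_{d-n}(\ome Xk)$ of \chfinitecw, \S\ref{sec:twisting}. Naturality with respect to push-forward maps is inherited from the two naturality statements just used. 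I do not expect a genuine obstacle: the only points requiring care are this matching of twisting conventions and the bookkeeping of the regrading indices (the homological index $0$ on the Borel-Moore side corresponds to $i=0$ in the Proposition via $n+i=n$, and to $i=n$ in the Theorem via $d-i=d-n$), and both are immediate once unwound.
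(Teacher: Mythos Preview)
Your proposal is correct and is exactly the intended argument: the corollary has no separate proof in the paper because it is obtained precisely by combining the preceding Proposition (with $i=0$), the preceding Theorem (with $i=n$), and the definition of the homological grading, just as you outline. Your bookkeeping of the indices and your remark on matching the two twisting conventions are accurate.
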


\begin{rem}
In case $X$ is not smooth, we can take the identification 
\[
\H_n(\CBM(X,\sKMW_{d-n}))=\chhtnotw nX
\]
of the previous theorem as a definition of the Chow-Witt group of dimension $n$-points. These "singular" Chow-Witt groups are covariantly functorial with respect to push-forwards and contravariantly functorial with respect to \'etale morphisms. Additionally, there is an obvious forgetful homomorphism $\chhtnotw nX\to \CH_n(X)$, where the right-hand side is the usual Chow group of $n$-dimensional cycles
 (see \cite{Fult98_a}).

Besides, let $L$ be a line bundle over $X$. Considering now $\sHMWBM_i(X,(n+1)-L)$ instead of $\sHMWBM_i(X,n)$ and running the same arguments as above, we find complexes $\CBM(X,\sKMW_{d+n},L)$ whose degree $p$ term is of the form
\[
\displaystyle{\bigoplus_{x\in X_{(p)}}  \sKMW_{p+n}(k(x);\omega_{k(x)/k}\otimes L_x})
\]
and which enjoy the same properties as their "untwisted" versions. In particular, for any $n\geq 0$ one has:
\[
\H_0(\CBM(X,\sKMW_{d-n},L))=\chht nX{\omega_{X/k}\otimes L}.
\]
\end{rem}

\subsection{Generalized regulators}\label{sec:MW-regulators}
\index{regulator}%

\begin{num}\label{num:MW-regulators}
As explained in Paragraph \ref{num:morphisms},
the commutativity of the diagram \eqref{eq:diagram_DMs} automatically
induces morphisms of ring spectra as follows:
\[
\begin{tikzcd}
[row sep=20pt,column sep=20pt]
\sHAone R \ar[r,"\psi"] & \sHMW R \ar[r,"\varphi"] \ar[d] & \sHM R\ar[d] \\
& \sHMWet R \ar[r,"{\varphi_\et}"] & \sHMet R.
\end{tikzcd}
\]
Moreover, as explained in Paragraph \ref{num:morphisms_ringsp} and Remarks
\ref{rem:morphisms_ringsp1}, \ref{rem:morphisms_ringsp2},
these morphisms induce natural transformations of the four associated
theories, compatible with products.

In particular, given a $k$-scheme $X$ and virtual bundle $v$ over $X$
of rank $m$, we get morphisms
\[
\sHAone^n(X,v,R)
 \xrightarrow{\psi_*} \sHMW^n(X,v,R)
 \xrightarrow{\varphi_*} \sHM^{n+2m,m}(X,R)
\]
where the right-hand side is Voevodsky's motivic cohomology
(see \ref{num:mot_coh}).
In brief, these maps are compatible with all the structures on
cohomology described in Section \ref{sec:theories}.

Moreover, for any $k$-variety $X$, we get also natural morphisms:
\[
\sHBMAone_n(X,v,R)
 \xrightarrow{\psi_*} \sHMWBM_n(X,v,R)
 \xrightarrow{\varphi_*} \CH_m(X,n) \otimes_\ZZ R,
\]
compatible with contravariant and covariant functorialities,
and localization long exact sequences
(Paragraph \ref{num:localization_BM&c}).

Of course, when $X$ is a smooth $k$-variety, 
the two maps $\varphi_*$ (resp.\ $\psi_*$)
that appear above can be compared by duality
(Theorem \ref{thm:duality}).
Besides, if $v=[m]$ and $n=0$, one can check the
latter map $\varphi_*$ is simply the canonical map:
\[
\ch mX \otimes_\ZZ R \longrightarrow \CH^m(X) \otimes_\ZZ R
\]
from Chow-Witt groups to Chow groups.
\end{num}

\begin{rem}
Regulators, according to the reformulation of Beilinson
 (see \cite{Bei85_a}, after 5.10), are maps from motivic cohomology.
\end{rem}

Note finally, as a corollary of our definitions and \chdmt, Proposition \ref{prop:compare_DMt_DM},
 the following result:\footnote{Intuitively, quadratic forms disappear for the \'etale topology
 as very unit is a square \'etale locally.}
\begin{thm}\label{thm:compare_HMW_et&HM_et}
We assume that $k$ is an arbitrary perfect field.
Then the map $\varphi_\et:\sHMWet R \rightarrow \sHMet R$ is an isomorphism of ring spectra
 over $k$.
\end{thm}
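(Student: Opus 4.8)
The plan is to deduce this purely formal statement from the étale comparison equivalence for Milnor-Witt motives, \chdmt, Proposition~\ref{prop:compare_DMt_DM}. First I would unwind the construction of $\varphi_\et$ through the recipe of Paragraph~\ref{num:morphisms}. In the notation there, take $\T=\DMtxkR{\et}$, $\T'=\DMxkR{\et}$, $\psi^*=\derL\pi^*_\et=\pi^*_\et$ and $\psi_*=\pi_{\et*}$, and set $\phi_*=\derR\fO\circ\tilde\gamma_{\et*}:\DMtxkR{\et}\to\DAkR$. Since $\gamma=\tilde\gamma\circ\pi$ one has $\gamma_{\et*}=\tilde\gamma_{\et*}\circ\pi_{\et*}$, hence $\sHMWet R=\phi_*(\un_{\T})$ and $\sHMet R=\derR\fO\,\gamma_{\et*}(\un)=\phi_*\psi_*(\un_{\T'})$; and by the very construction of Paragraph~\ref{num:morphisms}, $\varphi_\et$ is the composite
\[
\phi_*(\un_{\T})\xrightarrow{\ \phi_*(\psi_\star)\ }\phi_*\psi_*\psi^*(\un_{\T})\xrightarrow{\ \sim\ }\phi_*\psi_*(\un_{\T'}),
\]
where $\psi_\star:\id\to\psi_*\psi^*$ is the unit of the adjunction $(\psi^*,\psi_*)$ and the right-hand isomorphism is induced by the canonical structure map $\psi^*(\un_{\T})\xrightarrow{\sim}\un_{\T'}$ of the monoidal functor $\psi^*$.

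Next I would invoke \chdmt, Proposition~\ref{prop:compare_DMt_DM}: for an arbitrary perfect field $k$ the adjunction $(\pi^*_\et,\pi_{\et*})$ is an equivalence of triangulated monoidal categories. In particular its unit $\psi_\star$ is a natural isomorphism, so $(\psi_\star)_{\un_\T}$ is an isomorphism in $\DMtxkR{\et}$; moreover the structure map $\psi^*(\un_\T)\to\un_{\T'}$ is an isomorphism, $\psi^*$ being monoidal. Applying the functor $\phi_*=\derR\fO\circ\tilde\gamma_{\et*}$, which preserves isomorphisms, both arrows in the displayed factorization of $\varphi_\et$ become isomorphisms in $\DAkR$, hence $\varphi_\et$ is an isomorphism. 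Finally, $\varphi_\et$ is a morphism of commutative monoids in $\DA k$ by the very construction of Paragraph~\ref{num:morphisms} (using that $\phi_*$ is weak monoidal, cf.\ Paragraph~\ref{num:weak_monoidal}), and a morphism of monoids which is invertible on underlying objects is automatically an isomorphism of monoids; this yields the claim.

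In short, essentially all the mathematical content is already contained in the equivalence $\pi^*_\et$ of \chdmt, Proposition~\ref{prop:compare_DMt_DM}, and the present theorem is a formal consequence of it together with the functoriality of the ring-spectrum construction — so there is no genuine ``hard part'' left at this stage; the obstacle has been dealt with in \chdmt. The single point requiring attention is that we allow $k$ to be an \emph{arbitrary} perfect field, in particular of characteristic $2$: this is legitimate precisely because Proposition~\ref{prop:compare_DMt_DM} is itself proved in that generality, the characteristic $2$ case resting on the fact that inseparable extensions become isomorphisms for the étale topology (cf.\ the footnote to that proposition). Nothing beyond this — neither the cancellation theorem of \chcancellation nor the comparison with Chow-Witt groups of \chcomparison — is needed here.
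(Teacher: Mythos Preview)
Your proof is correct and matches the paper's approach exactly: the paper itself gives no proof beyond the one-line remark ``as a corollary of our definitions and \chdmt, Proposition~\ref{prop:compare_DMt_DM}'', and your argument is precisely the formal unwinding of that corollary through Paragraphs~\ref{num:weak_monoidal} and~\ref{num:morphisms}. One very minor point: your parenthetical about the footnote to Proposition~\ref{prop:compare_DMt_DM} slightly misreads it --- that footnote concerns dropping perfectness (via inseparable extensions being isomorphisms \'etale-locally), not the characteristic~$2$ issue --- but this does not affect your argument, since the proposition is already stated for an arbitrary perfect field.
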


\begin{bibsection}
\begin{biblist}

\bib{Asok16_e}{article}{
      author={Asok, A.},
      author={Fasel, J.},
       title={Comparing {E}uler classes},
        date={2016},
     journal={Quart. J. Math.},
      volume={67},
       pages={603\ndash 635},
}

\bib{SGA4_e}{book}{
      author={Artin, M.},
      author={Grothendieck, A.},
      author={Verdier, J.-L.},
       title={Th\'eorie des topos et cohomologie \'etale des sch\'emas},
      series={Lecture Notes in Mathematics},
   publisher={Springer-Verlag},
        date={1972--1973},
      volume={269, 270, 305},
        note={S\'eminaire de G\'eom\'etrie Alg\'ebrique du Bois--Marie 1963--64
  (SGA~4)},
}

\bib{Bei85_a}{incollection}{
    AUTHOR = {Be\u{\i}linson, A.},
     TITLE = {Height pairing between algebraic cycles},
 BOOKTITLE = {Current trends in arithmetical algebraic geometry ({A}rcata, {C}alif., 1985)},
    SERIES = {Contemp. Math.},
    VOLUME = {67},
     PAGES = {1 \ndash 24},
 PUBLISHER = {Amer. Math. Soc., Providence, RI},
      YEAR = {1987},
}

\bib{BD17_e}{article}{
      author={Bondarko, M.V.},
      author={D{\'e}glise, F.},
       title={Dimensional homotopy $t$-structure in motivic homotopy theory},
        date={2017},
     journal={Adv. in Math.},
      volume={311},
       pages={91\ndash 189},
}

\bib{BM60_e}{article}{
      author={Borel, A.},
      author={Moore, J.~C.},
       title={Homology theory for locally compact spaces},
        date={1960},
     journal={Michigan Math. J.},
      volume={7},
       pages={137\ndash 159},
}

\bib{BO74_e}{article}{
  author={Bloch, S.},
  author={Ogus, A.},
  title={Gersten's conjecture and the homology of schemes},
  date={1974},
  journal={Ann. Sci. \'Ecole Norm. Sup. (4)},
  volume={7},
  pages={181\ndash 201 (1975)},
}

\bib{CD12_e}{book}{
      author={Cisinski, D.-C.},
      author={D{\'e}glise, F.},
       title={Triangulated categories of mixed motives},
        year={2019},
   publisher={Springer},
      series={Springer Monographs in Mathematics},
        note={\href{https://arxiv.org/abs/0912.2110}{arXiv:0912.2110}},
}

\bib{CD15_e}{article}{
  author={Cisinski, D.-C.},
  author={D{\'e}glise, F.},
  title={Integral mixed motives in equal characteristics},
  date={2015},
  journal={Doc. Math.},
  volume={{\normalfont Extra volume: Alexander S. Merkurjev's sixtieth birthday}},
  pages={145\ndash 194},
}

\bib{CD09_e}{article}{
  author={Cisinski, D.-C.},
  author={D{\'e}glise, F.},
     TITLE = {Local and stable homological algebra in {G}rothendieck abelian categories},
   JOURNAL = {Homology Homotopy Appl.},
  FJOURNAL = {Homology, Homotopy and Applications},
    VOLUME = {11},
      YEAR = {2009},
    NUMBER = {1},
     PAGES = {219--260},
}

\bib{Deg12_e}{inproceedings}{
 Author = {D\'eglise, F.},
 Title = {{Orientation theory in arithmetic geometry}},
 BookTitle = {\(K\)-theory. Proceedings of the international colloquium, Mumbai, 2016},
 ISBN = {978-93-86279-74-3},
 Pages = {239--347},
 Year = {2019},
 Publisher = {New Delhi: Hindustan Book Agency; Mumbai: Tata Institute of Fundamental Research},
 Language = {English},
 MSC2010 = {14F42 14G40 14F08 14F20 19E99 55P43},
 Zbl = {1451.14067}
}

\bib{Deg17_e}{article}{
 Author = {D\'eglise, F.},
 Title = {{Bivariant theories in motivic stable homotopy}},
 FJournal = {{Documenta Mathematica}},
 Journal = {{Doc. Math.}},
 ISSN = {1431-0635},
 Volume = {23},
 Pages = {997--1076},
 Year = {2018},
 Publisher = {Deutsche Mathematiker-Vereinigung, Berlin},
}

\bib{DJK17_e}{book}{
 Author = {D\'eglise, F.},
 Author={Jin, F.},
 Author={Khan, A. A.},
 Title = {{Fundamental classes in motivic homotopy theory}},
 FJournal = {{Journal of the European Mathematical Society (JEMS)}},
 Journal = {{J. Eur. Math. Soc. (JEMS)}},
 Volume = {23},
 Number = {12},
 Pages = {3935--3993},
 Year = {2021},
 Publisher = {European Mathematical Society (EMS) Publishing House, Zurich},
}

\bib{Del87_e}{inproceedings}{
      author={Deligne, P.},
       title={Le d\'eterminant de la cohomologie},
organization={AMS},
        date={1987},
   booktitle={Current {T}rends in {A}rithmetical {A}lgebraic {G}eometry},
      editor={Ribet, K.~A.},
      series={Contemporary Mathematics},
      volume={67},
}

\bib{Fasel20}{inproceedings}{
 Author = {Fasel, J.},
 Title = {{Lectures on Chow-Witt groups}},
 BookTitle = {Motivic homotopy theory and refined enumerative geometry. Workshop, Universit\"at Duisburg-Essen, Essen, Germany, May 14--18, 2018},
 ISBN = {978-1-4704-4898-1; 978-1-4704-5455-5},
 Pages = {83--121},
 Year = {2020},
 Publisher = {Providence, RI: American Mathematical Society (AMS)},
 Language = {English},
 DOI = {10.1090/conm/745/15023},
}

\bib{Feld21}{article}{
 Author = {Feld, N.},
 Title = {{Milnor-Witt homotopy sheaves and Morel generalized transfers}},
 FJournal = {{Advances in Mathematics}},
 Journal = {{Adv. Math.}},
 ISSN = {0001-8708},
 Volume = {393},
 Pages = {46},
 Note = {Id/No 108094},
 Year = {2021},
 Publisher = {Elsevier (Academic Press), San Diego, CA},
}

\bib{Fult98_a}{book}{
    AUTHOR = {Fulton, William},
     TITLE = {Intersection theory},
    SERIES = {Ergebnisse der Mathematik und ihrer Grenzgebiete. 3. Folge. A
              Series of Modern Surveys in Mathematics [Results in
              Mathematics and Related Areas. 3rd Series. A Series of Modern
              Surveys in Mathematics]},
    VOLUME = {2},
   EDITION = {Second},
 PUBLISHER = {Springer-Verlag, Berlin},
      YEAR = {1998},
     PAGES = {xiv+470},
      ISBN = {3-540-62046-X; 0-387-98549-2},
   MRCLASS = {14C17 (14-02)},
  MRNUMBER = {1644323},
       DOI = {10.1007/978-1-4612-1700-8},
       URL = {https://doi.org/10.1007/978-1-4612-1700-8},
}

\bib{Kel17_e}{article}{
  author={Kelly, S.},
  title={Voevodsky motives and {$l$}dh-descent},
  date={2017},
  journal={Ast\'erisque},
  volume={391},
  pages={125},
}

\bib{Mor12_e}{book}{
      author={Morel, F.},
       title={$\mathbb {A}^1$-{A}lgebraic {T}opology over a {F}ield},
      series={Lecture Notes in Math.},
   publisher={Springer},
     address={New York},
        date={2012},
      volume={2052},
}

\bib{HoySix_e}{article}{
    AUTHOR = {Hoyois, Marc},
     TITLE = {The six operations in equivariant motivic homotopy theory},
   JOURNAL = {Adv. Math.},
  FJOURNAL = {Advances in Mathematics},
    VOLUME = {305},
      YEAR = {2017},
     PAGES = {197--279},
}

\bib{MacLane_e}{book}{
    AUTHOR = {Mac Lane, Saunders},
     TITLE = {Categories for the working mathematician},
    SERIES = {Graduate Texts in Mathematics},
    VOLUME = {5},
   EDITION = {Second},
 PUBLISHER = {Springer-Verlag, New York},
      YEAR = {1998},
     PAGES = {xii+314},
      ISBN = {0-387-98403-8},
   MRCLASS = {18-02},
  MRNUMBER = {1712872},
}

\bib{Pa03_e}{article}{
      author={Panin, Ivan},
       title={Oriented cohomology theories of algebraic varieties},
        date={2003},
        ISSN={0920-3036},
     journal={$K$-Theory},
      volume={30},
      number={3},
       pages={265\ndash 314},
        note={Special issue in honor of Hyman Bass on his seventieth birthday.
  Part III},
}

\bib{Ri10_e}{article}{
      author={Riou, J.},
       title={Algebraic {$K$}-theory, {${\bf A}^1$}-homotopy and
  {R}iemann-{R}och theorems},
        date={2010},
     journal={J. Topol.},
      volume={3},
      number={2},
       pages={229\ndash 264},
}

\bib{Voe98_e}{inproceedings}{
      author={Voevodsky, V.},
       title={{$\mathbf A^1$}-{H}omotopy theory},
        date={1998},
   booktitle={Proceedings of the international congress of mathematicians, vol.
  i (berlin, 1998)},
       pages={579\ndash 604 (electronic)},
}

\bib{Voe02_e}{incollection}{
      author={Voevodsky, V.},
       title={Open problems in the motivic stable homotopy theory. {I}},
        date={2002},
   booktitle={Motives, polylogarithms and {H}odge theory, {P}art {I} ({I}rvine,
  {CA}, 1998)},
      series={Int. Press Lect. Ser.},
      volume={3},
   publisher={Int. Press, Somerville, MA},
       pages={3\ndash 34},
}

\bib{Veo10_e}{article}{
      author={Voevodsky, V.},
       title={Homotopy theory of simplicial sheaves in completely decomposable
  topologies},
        date={2010},
     journal={J. Pure Appl. Algebra},
      volume={214},
      number={8},
       pages={1384\ndash 1398},
}

\bib{FSV00_e}{book}{
      author={Voevodsky, V.},
      author={Suslin, A.},
      author={Friedlander, E.~M.},
       title={Cycles, transfers and motivic homology theories},
      series={Annals of Mathematics Studies},
   publisher={Princeton Univ. Press},
        date={2000},
      volume={143},
}

\bib{Y17_e}{article}{
      author={Yang, N.},
       title={Quaternionic projective bundle theorem and {G}ysin triangle in
  {MW}-motivic cohomology},
     journal={Manuscripta Math.},
      volume={164},
      YEAR = {2021},
    NUMBER = {1-2},
     PAGES = {39--65},
}

\end{biblist}
\end{bibsection}

\renewcommand{\theequation}{\arabic{equation}}

% the following is there because there are no subsections in "effective.tex", so we want statements numbered as 3.1, not 3.0.1.
\renewcommand\thethm{\thesection.\arabic{thm}}
\makeatletter
\@addtoreset{thm}{section}
\makeatother

% vim: spelllang=en spell
\chapter[Effectivity of motivic spectra]{On the effectivity of spectra representing motivic cohomology theories\author{Tom Bachmann and Jean Fasel}}
\label{ch:effective}

\section*{Abstract}
Let $k$ be an infinite perfect field. We provide a general criterion for a spectrum $E\in \SHk$ to be effective, i.e.\ to be in the localizing subcategory of $\SHk$ generated by the suspension spectra $\Sigma_T^{\infty}X_+$ of smooth schemes $X$.
As a consequence, we show that two recent versions of generalized motivic cohomology theories coincide.

\section*{Introduction}

In \cite{Bac17_f}, the first author undertook the study of the very effective slice spectral sequence of Hermitian $K$-theory, which could be seen as a refinement of the analogue in motivic homotopy theory of the famous Atiyah-Hirzebruch spectral sequence linking singular cohomology with topological $K$-theory. He observed that the generalized slices were 4-periodic and consisting mostly of well understood pieces, such as ordinary motivic cohomology with integral and mod 2 coefficients. However, there is a genuinely new piece given by a spectrum that he called \emph{generalized motivic cohomology}. Thus, Hermitian $K$-theory can be ``understood'' in terms of ordinary motivic cohomology and generalized motivic cohomology in his sense. Even though he was able to deduce abstractly some properties for this motivic cohomology, some questions remained open.

On the other hand, different generalizations of ordinary motivic cohomology recently appeared in the literature, always aimed at understanding better both the stable homotopy category of schemes and its ``abelian'' version. First, Garkusha-Panin-Voevodsky developed the formalism of framed correspondences and its linear version. Among many possible applications, this formalism allows to define an associated motivic cohomology, the first computations of which were performed in \cite{Neshitov18_f}. Second, Calmès-Déglise-Fasel introduced the category of finite MW-correspondences and its associated categories of motives (Chapters \ref{ch:finitecw} and \ref{ch:dmt}) and performed computations allowing to recast most of the well-known story in the ordinary motivic cohomology in this new framework. Third, Druzhinin introduced the category of GW-motives (\cite{Dr17-3_f}) producing yet another version of motivic cohomology.

This flurry of activity leads to the obvious question: how are these theories related? Note the parallels with the situation at the beginnings of singular cohomology. This is the question we address in this paper with a quite general method. To explain it, note first that all these motivic cohomologies are represented by ring spectra in the motivic stable homotopy category (of $\mathbb{P}^1$-spectra) $\SHk$. This category is quite complicated, but the situation becomes much better if the ring spectra are in the localising subcategory $\SHkeff$ generated by the image of the suspension spectrum functor $\Sigma_T^{\infty}:\SHS\to \SHk$. This category is endowed with a $t$-structure (\cite[Definition 5.5]{Spitzweck12_f} \cite{Bac17_f}*{Proposition~4}) whose heart is much easier to understand than the heart of the (usual) $t$-structure of $\SHk$. Moreover, many naturally occurring spectra turn out to be in this heart. Thus, our strategy is to prove that the relevant spectra are in $\SHkeff$, or \emph{effective}, then show that they are represented by objects in the heart, and finally compare them via the natural maps linking them. Unsurprisingly, the first step is the hardest and the main part of the paper is devoted to this point. The criterion for effectivity that we obtain is the following (Theorem \ref{thm:eff-crit}).

\begin{thm*}
Let $E \in \SHk$, where $k$ is a perfect field. Then $E \in \SHkeff$ if and only if for all $n \ge 1$ and all finitely generated fields $F/k$, we have $(E \wedge \Gm^{\wedge n})(\hat \Delta^\bullet_F) \simeq 0$.
\end{thm*}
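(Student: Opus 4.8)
The criterion is an ``if and only if,'' so there are two directions. The ``only if'' direction is essentially formal and I would dispose of it first: if $E \in \SHkeff$, then $E \wedge \Gm^{\wedge n}$ is still effective (the subcategory is closed under smashing with $\Gm$), and one knows by the standard theory (e.g.\ Spitzweck–\O stv\ae r, or the homotopy-module description used throughout \chdmt) that an effective spectrum $F$ with $F(\hat\Delta^\bullet_F)$ the associated simplicial set of sections over the Hensel-local $\hat\Delta^\bullet$ has the property that its $\Gm$-contractions controlled by the homotopy $t$-structure vanish in the relevant range. More precisely, evaluating an effective spectrum on $\hat\Delta^\bullet_F$ computes (up to shift) the stalks of its effective homotopy sheaves at $F$; since for an effective spectrum these sheaves vanish in negative degrees, and smashing with $\Gm^{\wedge n}$ with $n\geq 1$ shifts things into the range where effectivity forces vanishing, the claim follows. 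So the whole content is in the converse.

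\textbf{The ``if'' direction.} Assume $(E \wedge \Gm^{\wedge n})(\hat\Delta^\bullet_F) \simeq 0$ for all $n \geq 1$ and all finitely generated $F/k$. The plan is to use the recollement $(\Sigma^\infty\Omega^\infty_{\mathrm{eff}}, \mathrm{incl}, \ldots)$ attached to the inclusion $\SHkeff \hookrightarrow \SHk$: there is an effective cover functor $f_0 : \SHk \to \SHkeff$ (right adjoint to inclusion) and a cofibre sequence $f_0 E \to E \to E/f_0E$, where $E/f_0 E$ is the ``$1$-effective part'' living in $\SHk^{\geq 1}$ in the slice sense. I would show that under the hypothesis $E/f_0 E \simeq 0$, which is equivalent to $E$ being effective. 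To do this, I would use that $E/f_0E$ is built (as a colimit) out of shifts $\Sigma^{2n,n}(\text{effective stuff})$, i.e.\ out of $\Gm^{\wedge n}$-twists with $n \geq 1$; concretely, one reduces to showing that all the slices $s_n E$ for $n \geq 1$, and more precisely the full non-effective part, vanish. The key input is the Morel-type computation (used in \chdmt, \chcancellation) that the homotopy sheaves of a spectrum, twisted by $\Gm^{\wedge n}$ and evaluated on Hensel-local essentially smooth schemes — in particular on the cosimplicial Hensel-local scheme $\hat\Delta^\bullet_F$ — are computed by an explicit complex, and that these sections detect whether the associated sheaf is zero (Morel's theorem that a strictly $\Aone$-invariant sheaf vanishes iff its sections on all finitely generated fields vanish, as invoked in \chcomparison). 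So the vanishing of $(E\wedge\Gm^{\wedge n})(\hat\Delta^\bullet_F)$ for all $F$ and all $n\geq 1$ should force all the relevant homotopy sheaves of the non-effective part of $E$ to vanish, hence the non-effective part itself vanishes.

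\textbf{Assembling the argument.} More precisely, I would proceed as follows. First, reduce to a statement about the homotopy $t$-structure on $\SHk$: it suffices to show each homotopy sheaf $\underline\pi_i(E)$, which is a homotopy module $\{\underline\pi_{i}(E)_*\}$, is effective in the sense that $\underline\pi_i(E)_{-n} = 0$ for $n$ large compared to a fixed bound — equivalently that $E$ has trivial negative ``$\Gm$-graded pieces.'' Second, use the identification $(E \wedge \Gm^{\wedge n})(\hat\Delta^\bullet_F)$ with the Suslin-type complex computing $\Aone$-homotopy sheaves: since $\hat\Delta^\bullet_F$ is a resolution by Hensel-local schemes, evaluating on it computes $\pi_*(L_{\Aone} E \wedge \Gm^{\wedge n})(F)$, the sections of the $\Aone$-localized homotopy sheaves at $F$. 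Third, apply Morel's criterion: vanishing of these sections on all finitely generated $F$ and all $n \geq 1$ forces the corresponding strictly $\Aone$-invariant sheaves $\underline\pi_*(E)_{-n}$ to vanish for all $n \geq 1$, which is exactly the condition that $E$ is effective (by the homotopy-module characterization of the heart of the effective $t$-structure and the fact that $\SHkeff$ is generated under colimits and extensions by its heart together with connectivity). The \textbf{main obstacle} I anticipate is the second step — carefully identifying $(E\wedge\Gm^{\wedge n})(\hat\Delta^\bullet_F)$ with a model that manifestly computes the $\Aone$-localized homotopy sheaf sections, which requires knowing that $\hat\Delta^\bullet_F$ computes $\Aone$-localization on the level of sections over Hensel-local schemes (a form of the Morel–Voevodsky / Garkusha–Panin comparison), and then running the argument uniformly in $n$ so as to capture the \emph{entire} non-effective part rather than just finitely many slices. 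The infiniteness and perfectness of $k$ enter precisely through the availability of Morel's structure theory of strictly $\Aone$-invariant sheaves and the Gersten/Rost–Schmid resolution on which the detection criterion rests.
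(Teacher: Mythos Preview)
Your proposal has a genuine gap in the ``if'' direction: the reduction to homotopy modules is incorrect. You claim that $E \in \SHkeff$ is equivalent to the vanishing of $\underline\pi_i(E)_{-n}$ for all $n \ge 1$, but this is false. The sphere spectrum $\mathbb{S}$ is effective, yet $\underline\pi_0(\mathbb{S})_* = \sKMW_*$ has $\underline\pi_0(\mathbb{S})_{-n} = \sKMW_{-n} \simeq \sW \ne 0$ for every $n \ge 1$. Effectivity in the slice sense is \emph{not} detected by the homotopy $t$-structure in this way. (Relatedly, you write that one must show ``all the slices $s_n E$ for $n \ge 1$'' vanish; it is the \emph{negative} slices $s_{-n}(E)$ for $n \ge 1$ whose vanishing is equivalent to $E \in \SHkeff$.)

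The missing idea is Levine's homotopy coniveau tower. The paper first proves, via Levine's theorems, that for any $E \in \SHk$ one has $s_0(E) \simeq 0$ if and only if $E(\hat\Delta^\bullet_F) \simeq 0$ for all finitely generated $F/k$: one uses that $\Omega^\infty_T$ commutes with $s_0$ and that in $\SHS$ the zeroth slice is computed as $E^{(0/1)}$, whose value at $F$ is exactly $E(\hat\Delta^\bullet_F)$. So evaluation on $\hat\Delta^\bullet_F$ computes the \emph{zeroth slice}, not the homotopy sheaves as you suggest. With this in hand the argument is short: the hypothesis says $s_0(E \wedge \Gm^{\wedge n}) \simeq 0$ for all $n \ge 1$, hence $s_{-n}(E) \simeq 0$ for all $n \ge 1$ (since $s_0(E \wedge \Gm^{\wedge n}) \simeq s_{-n}(E) \wedge \Gm^{\wedge n}$), hence $f_0(E) \simeq f_{-n}(E)$ for all $n \ge 0$, hence $f_0(E) \simeq \hocolim_n f_{-n}(E) \simeq E$. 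The ``only if'' direction is immediate from the same lemma, since $E \in \SHkeff$ forces $E \wedge \Gm^{\wedge n} \in \SHkeff(n)$ and thus $s_0(E \wedge \Gm^{\wedge n}) = 0$. Note finally that only perfectness of $k$ is assumed here, not infiniteness.
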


In the statement, $\hat \Delta^\bullet_F$ denotes the essentially smooth cosimplicial scheme whose component in degree $n$ is the semi-localization at the vertices of the standard algebraic $n$-simplex over $F$. Making sense of $(E \wedge \Gm^{\wedge n})(\hat \Delta^\bullet_F)$ requires some contortions which are explained in Section \ref{sec:general-criterion}. The appearance of $\hat \Delta^\bullet_F$ is explained by the need to compute the zeroth (ordinary) slice of a spectrum, using Levine's homotopy coniveau filtration (\cite{L08_f}).

Having this criterion in the pocket, the last two (much easier) steps of our comparison theorem take place in the proof of our main result (Theorem \ref{thm:comparisonUHt}).

\begin{thm*}
Let $k$ be an infinite perfect field of exponential characteristic $e \ne 2$ and let 
\[
M: \SHk \leftrightarrows  \DMtk: U
\]
be the canonical adjunction. Then the spectrum $U(\un)$ representing MW-motivic cohomology with $\ZZ$-coefficients is canonically isomorphic to the spectrum $\HtZ$ representing abstract generalized motivic cohomology with $\ZZ$-coefficients.
\end{thm*}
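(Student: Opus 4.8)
\textbf{Proof plan for Theorem \ref{thm:comparisonUHt}.}

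The plan is to identify both $U(\un)$ (the spectrum representing MW-motivic cohomology with $\ZZ$-coefficients, obtained as the image under the forgetful functor $U \colon \DMtk \to \SHk$ of the unit object $\un$ of $\DMtk$) and the spectrum $\HtZ$ (the genuinely new piece $\tilde{\H}\ZZ$ of the zeroth slice of $\sKO$, sitting in the exact triangle $\sHM(\ZZ/2)[1] \to \tilde s_0(\sKO) \to \tilde{\H}\ZZ$) with a third spectrum: the effective cover of the motivic unital ring spectrum equipped with its Milnor--Witt-theoretic homotopy module structure. Concretely, following the strategy outlined in the introduction of this chapter, I would proceed in three steps: (1) show $U(\un)$ is effective, i.e.\ lies in $\SHkeff$; (2) show $U(\un)$ is concentrated in degree $0$ for the homotopy $t$-structure on $\SHkeff$, so that it is determined by its heart object; (3) identify the heart object with $\sKMW_*$ and invoke the known description of $\HtZ$ to conclude the comparison, via the canonical ring spectrum map relating the two.

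For step (1), the key tool is the effectivity criterion Theorem \ref{thm:eff-crit} of this chapter: $E \in \SHkeff$ if and only if $(E \wedge \Gm^{\wedge n})(\hat\Delta^\bullet_F) \simeq 0$ for all $n \ge 1$ and all finitely generated field extensions $F/k$. I would apply this to $E = U(\un)$. The point is that $\DMtk$ is built so that $U(\un) \wedge \Gm^{\wedge n}$ computes, via the adjunction $(M,U)$, the MW-motivic cohomology of a smooth scheme in the corresponding bidegree; and by the results of Chapter \ref{ch:dmt} (Corollary \ref{cor:cohomcomplex}, Corollary \ref{cor:LA1}) the relevant groups are computed by the Suslin complex of $\tRcbx n$ evaluated on semilocal schemes. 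Over the cosimplicial scheme $\hat\Delta^\bullet_F$ this Suslin complex is contractible for $n \ge 1$ precisely because $[1] = 0$ in $\sKMW_1$ and the representable presheaf $\MWprep\{1\}$ becomes null after $\Aone$-localization over a henselian/semilocal base in positive twist — this is exactly the kind of vanishing that also underlies the comparison results of Chapter \ref{ch:comparison}. So the vanishing $(U(\un) \wedge \Gm^{\wedge n})(\hat\Delta^\bullet_F) \simeq 0$ should follow from unwinding definitions together with the $\Aone$-invariance and strict $\Aone$-invariance results (Theorem \ref{thm:Wsh&A1}), the homotopy-invariance of the relevant MW-sheaves, and the cancellation theorem of Chapter \ref{ch:cancellation}. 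I expect this step to be the main obstacle: the honest content is matching up the cosimplicial object $\hat\Delta^\bullet_F$ appearing in the effectivity criterion (which comes from Levine's homotopy coniveau tower) with the Suslin-complex model for MW-motivic cohomology, and checking that the semilocalization at the vertices does not disturb the vanishing.

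For steps (2) and (3), I would argue that $U(\un)$ is an effective homotopy module concentrated in degree $0$: by Chapter \ref{ch:dmt}, Corollary~\ref{cor:A1-local_complexes} and the $t$-exactness of $\tilde\gamma_*$, the motivic complex $\tZ$ is concentrated in degree $0$ with $\underline{\H}^0(\tZ) = \sKMW_0$, and more generally $\underline{\H}^n(\tilde\ZZ(n)) = \sKMW_n$ (Theorem \ref{thm:KMWmotivic}); hence $U(\un)$ has homotopy sheaves given by the graded sheaf $\sKMW_*$, which is exactly Morel's homotopy module $\MheartZ$ (using the identification $\sKMW_*$ with the homotopy module of the motivic sphere, \cite{Morel04-2_c}). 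On the other side, by \cite{Bac17_f}*{Thm.~11 and Lemma~12} together with the results recalled in the introduction of Chapter \ref{ch:cancellation}, the spectrum $\HtZ$ is also effective, concentrated in degree $0$, with homotopy module $\sKMW_*$. Since the homotopy $t$-structure on $\SHkeff$ has a heart equivalent to homotopy modules, an object of the heart is determined by its homotopy module, and so the canonical map of ring spectra $U(\un) \to \HtZ$ (coming from the fact that both receive a map from, or map to, the effective cover of $\sHM\ZZ$ compatibly, and both induce the identity $\sKMW_* \to \sKMW_*$ on $\pi_0$ — concretely one builds it from the homotopy Cartesian square \eqref{eq:fdl_square} and Barge's computation of the kernel of $\sHMW\ZZ \to \sHM\ZZ$) is an isomorphism on homotopy modules, hence an isomorphism in $\SHkeff$, hence in $\SHk$. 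The exclusion $e \ne 2$ enters because Theorem \ref{thm:Wsh&A1}, the cancellation theorem, and the comparison theorem of Chapter \ref{ch:comparison} all require characteristic $\ne 2$; the assumption that $k$ is infinite perfect is needed for the same reasons. I would finish by remarking that naturality of all the maps involved gives the ``canonical'' in the statement.
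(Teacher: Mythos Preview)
Your three-step strategy---effectivity, concentration in the heart, identification via $\piaone_{0,0}$---is exactly the paper's approach, and steps (2) and (3) are essentially correct. However, there are two points worth flagging.

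First, your explanation of the vanishing in step (1) is not the actual mechanism. The contractibility of $(U(\un)\wedge\Gm^{\wedge n})(\hat\Delta^\bullet_F)$ does not follow from ``$[1]=0$ in $\sKMW_1$'' or from $\Aone$-invariance over semilocal bases; it comes from the fact that the presheaf $\MWprep(\Gm^{\times n})/\MWprep(1,\dots,1)$ is \emph{rationally contractible} in Suslin's sense (Proposition~\ref{prop:ratcontractible} and Corollary~\ref{cor:gmcase}), i.e.\ admits a section of $\tilde C_1$ restricting to $0$ and $\id$ at the endpoints. This is a geometric construction using an explicit open $W\subset\Gm^{\times n}\times\Aone$ and has nothing to do with strict $\Aone$-invariance per se. Suslin's Proposition~\ref{prop:suslin} then gives the vanishing on $\hat\Delta^\bullet_F$ for the presheaf, and one passes to the $\Aone$-localization via the semilocal comparison of Section~\ref{sec:semilocal}. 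You correctly flagged this step as the main obstacle, but the ingredient you would need is precisely this rational contractibility argument, which your sketch does not supply.

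Second, for step (3) the paper's construction of the comparison map is much cleaner than what you propose: by definition $\HtZ$ is the effective cover of the Milnor--Witt homotopy module, which equals $\tau^{\eff}_{\le 0}\un$, so the unit $\un\to U(\un)$ directly induces $\alpha\colon \HtZ\simeq\tau^{\eff}_{\le 0}\un\to\tau^{\eff}_{\le 0}U(\un)\simeq U(\un)$ once you know $U(\un)\in\SHkeff_{\le 0}$. There is no need to go through the square~\eqref{eq:fdl_square} or maps to $\sHM\ZZ$; indeed that square is a \emph{consequence} of this theorem, not an input, so your proposed construction risks circularity.
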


The organization of the paper is as follows. We briefly survey the main properties of the category of MW-motives, before proving in Section \ref{sec:rational} that the presheaf represented by $\Gm^{\wedge n}$ is rationally contractible (in the sense of \cite{S03_f}*{\S 2}) for any $n\geq 1$. Unsurprisingly, our proof follows closely Suslin's original method. However, there is one extra complication due to the fact that the presheaf represented by $\Gm^{\wedge n}$ is in general not a sheaf. We thus have to compare the Suslin complex of a presheaf and the one of its associated sheaf in Section \ref{sec:semilocal}. This part can be seen as an extension of the results in \chcancellation, \S\ref{sec:etfMWmotives} to the case of semi-local schemes, i.e.\ localizations of a smooth scheme at a finite number of points. The proof of our criterion for effectivity takes place in the subsequent section. Finally, we prove our comparison result in Section \ref{sec:AppMWmotives}, where all the pieces fall together. 

In the last few paragraphs of the article, we give some examples of applications of our results, one of them being a different way to prove the main result of \cite{S03_f} avoiding polyrelative cohomology. 

\subsection*{Conventions}
Schemes are separated and of finite type over a base field $k$, assumed to be infinite perfect of characteristic different from $2$.
Recall that a field $k$ is said to have exponential characteristic $e=1$ if $\charac k = 0$, and $e = \charac k$ else.

%%%%%%%%%%%%%%%%%%%%%%%%%%%%%%%%%%%%%%%%%%%%%%%%%%%%%%%%%%%%%%%%%%%%%%%%%%%%%%%%%%%%%%%%%%%%%%%%%%%%%%%%%%%%%%%%

\section{Recollections on MW-correspondences}\label{sec:recollections}

In this section, we briefly survey the few basic features of MW-correspondences (as constructed in \chfinitecw, \S\ref{sec:fmwcorr}) and the corresponding category of motives (\chdmt, \S\ref{sec:MWmotives}) that are needed in the paper. Finite MW-correspondences are an enrichment of finite correspondences in the sense of Voevodsky using symmetric bilinear forms. The category whose objects are smooth schemes and whose morphisms are MW-correspondences is denoted by $\cork$ and we have a sequence of functors
\[
\smk\stackrel{\tilde\gamma}\to \cork\stackrel{\pi}\to \corVk
\]
such that the composite is the classical embedding of the category of smooth schemes into the category of finite correspondences. For a smooth scheme $X$, the corresponding representable presheaf on $\cork$ is denoted by $\MWprep(X)$. This is a Zariski sheaf, but not a Nisnevich sheaf in general (\chfinitecw, Proposition~\ref{prop:zarsheaf}, Example~\ref{exm:notnis}). The associated Nisnevich sheaf also has MW-transfers (i.e.\ is a (pre-)sheaf on $\cork$) by \chdmt, Proposition~\ref{prop:exist_associated-W-t-sheaf} and is denoted by $\MWrepZ(X)$.

Consider next the cosimplicial object $\Delta^\bullet_k$ in $\smk$ defined as usual (see \chfinitecw, \S\ref{sec:motivic}). Taking the complex associated to a simplicial object, we obtain the Suslin complex $\Cstar {\MWrepZ(X)}$ associated to $X$, which is the basic object of study. Applying this to $\Gm^{\wedge n}$, we obtain complexes of Nisnevich sheaves $\tZcbx n$ for any $n\in \NN$ and complexes $\tZpx n:=\tZcbx n[-n]$ whose hypercohomology groups are precisely the MW-motivic cohomology groups in weight $n$. In this paper, we will also consider the cosimplicial object $\hat\Delta^\bullet_k$ obtained from $\Delta^{\bullet}_k$ by semi-localizing at the vertices (see \cite{L08_f}*{5.1}, \cite{S03_f}*{paragraph before Proposition~2.5}). Given a finitely generated field extension $L$ of the base field $k$, the same definition yields cosimplicial objects $\Delta^{\bullet}_L$ and $\hat\Delta^\bullet_L$ that will be central in our results.
If $L/k$ is separable, then note that both $\Delta^{\bullet}_L$ and $\hat\Delta^\bullet_L$ are simplicial essentially smooth schemes.

The category $\cork$ is the basic building block in the construction of the category of effective MW-motives (aka the category of MW-motivic complexes) $\DMtek$ and its $\PP^1$-stable version $\DMtk$ (\chdmt, \S\ref{sec:MWmotives}). The category of effective MW-motives fits into the following diagram of adjoint functors (where $R$ is a ring)
\begin{equation}\label{eq:unstable}
\begin{tikzcd}
[row sep=24pt,column sep=30pt]
\DAekR \ar[r,shift left=2pt,"{\derL \tilde \gamma^*}"]
 & \DMtekR\ar[r,shift left=2pt,"{\derL \pi^*}"]
     \ar[l,shift left=2pt,"{\tilde \gamma_{*}}"]
 & \DMekR
     \ar[l,shift left=2pt,"{\pi_{*}}"]
\end{tikzcd}
\end{equation}
where the left-hand category is the effective $\Aone$-derived category (whose construction is for instance recalled in \chspectra, \S\ref{sec:mothomthringspectra}).

More precisely, each category is the homotopy category of a proper cellular model category and the functors, which are defined at the level of the underlying closed model categories, are part of a Quillen adjunction. Moreover, each model structure is symmetric monoidal, the respective tensor products admit a total left derived functor and the corresponding internal homs admit a total right derived functor. The left adjoints are all monoidal and send representable objects to the corresponding representable object, while the functors from right to left are conservative. The corresponding diagram for stable categories reads as
\begin{equation}\label{eq:stable}
\begin{tikzcd}
[row sep=24pt,column sep=30pt]
\DAkR \ar[r,shift left=2pt,"{\derL \tilde \gamma^*}"]
 & \DMtkR \ar[r,shift left=2pt,"{\derL \pi^*}"]
     \ar[l,shift left=2pt,"{\tilde \gamma_{*}}"]
 & \DMkR
     \ar[l,shift left=2pt,"{\pi_{*}}"] 
\end{tikzcd}
\end{equation}
and enjoys the same properties as in the unstable case.

%%%%%%%%%%%%%%%%%%%%%%%%%%%%%%%%%%%%%%%%%%%%%%%%%%%%%%%%%%%%%%%%%%%%%%%%%%%%%%%%%%%%%%%%%%%%%%%%%%%%%%%%%%%%%%%%

\section{Rational contractibility}\label{sec:rational}

Recall the following definition from \cite{S03_f}*{\S 2}. For any presheaf $F$ of abelian groups, let $\tilde{C}_1F$ be the presheaf defined by
\[
\tilde{C}_1F(X)=\colim_{X\times\{0,1\}\subset U\subset X\times \Aone} F(U),
\]
where $U$ ranges over open subschemes of $X \times \Aone$ containing $X \times \{0,1\}$.
Observe that the restriction of $\tilde{C}_1F(X)$ to both $X\times \{0\}$ and $X\times\{1\}$ make sense, i.e.\ that we have morphisms of presheaves $i_0^*:\tilde{C}_1F\to F$ and $i_1^*: \tilde{C}_1F\to F$.

\begin{dfn}
A presheaf $F$ is called rationally contractible if there exists a morphism of presheaves $s:F\to \tilde{C}_1F$ such that $i_0^*s=0$ and $i_1^*s=\id_F$.
\end{dfn}

We note the following stability property.

\begin{lem} \label{lemm:rat-contractible-pullback}
Let $K/k$ be a field extension and write $p: Spec(K) \to Spec(k)$ for the associated morphism of schemes. Then $p^* \tilde{C}_1F \simeq \tilde{C}_1 p^*F$. In particular, $p^*F$ is rationally contractible if $F$ is.
\end{lem}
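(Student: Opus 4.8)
The statement asserts that the formation of $\tilde{C}_1F$ commutes with base change along a field extension $p : \Spec(K) \to \Spec(k)$, and hence that rational contractibility is inherited by $p^*F$. The key point is that the colimit defining $\tilde{C}_1F$ is a filtered colimit over open neighborhoods of $X \times \{0,1\}$ in $X \times \Aone$, and that such neighborhoods pull back well along field extensions. First I would recall precisely what $p^*F$ means for a presheaf $F$ on $\smk$: for an essentially smooth $K$-scheme $X$ (which we may write as a cofiltered limit $X = \varprojlim_\lambda X_\lambda$ of smooth $k$-schemes with affine transition maps, as in the limit constructions reviewed in \chfinitecw, \S\ref{sec:limits}), one sets $(p^*F)(X) = \varinjlim_\lambda F(X_\lambda)$. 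The definition of $\tilde{C}_1$ then makes sense verbatim for presheaves on essentially smooth $K$-schemes.

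The heart of the argument is the identification
\[
\tilde{C}_1(p^*F)(X) = \varinjlim_{X \times \{0,1\} \subset U \subset X \times \Aone_K} (p^*F)(U) \;\simeq\; p^*(\tilde{C}_1 F)(X),
\]
and this reduces to a cofinality statement about open neighborhoods. Writing $X = \varprojlim_\lambda X_\lambda$ with each $X_\lambda \in \smk$, an open $U \subseteq X \times \Aone_K$ containing $X \times \{0,1\}$ is, by the standard limit results for schemes (\cite{EGA4-3_a}*{8.3.11} and its neighbors, already invoked repeatedly in \chfinitecw, \S\ref{sec:limits}), the pullback of an open $U_\lambda \subseteq X_\lambda \times \Aone_k$ for $\lambda$ large enough; moreover, since $X \times \{0,1\} \subseteq U$ and $X \times \{0,1\} = \varprojlim (X_\lambda \times \{0,1\})$ with the latter quasi-compact, we may (after enlarging $\lambda$) arrange that $X_\lambda \times \{0,1\} \subseteq U_\lambda$. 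This shows the transition system of such $U_\lambda$'s is cofinal, so interchanging the two filtered colimits (over $\lambda$ and over $U$) gives
\[
\tilde{C}_1(p^*F)(X) = \varinjlim_\lambda \varinjlim_{X_\lambda \times \{0,1\} \subset U_\lambda} F(U_\lambda) = \varinjlim_\lambda (\tilde{C}_1 F)(X_\lambda) = p^*(\tilde{C}_1 F)(X),
\]
where the middle equality is the definition of $\tilde{C}_1 F$ on $X_\lambda$. One then checks this identification is natural in $X$ and compatible with the evident restriction maps $i_0^*, i_1^*$ on both sides (these are induced by the inclusions $X \times \{0\} \hookrightarrow X \times \Aone$ and $X \times \{1\} \hookrightarrow X \times \Aone$, which are clearly stable under the base change and limit operations), so that $i_0^*$ and $i_1^*$ agree under the isomorphism $\tilde{C}_1(p^*F) \simeq p^*(\tilde{C}_1 F)$.

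For the last sentence: if $s : F \to \tilde{C}_1 F$ witnesses rational contractibility of $F$, then applying the (exact, being a filtered colimit of sections) functor $p^*$ yields $p^*s : p^*F \to p^*(\tilde{C}_1 F) \simeq \tilde{C}_1(p^*F)$, and since $p^*$ is functorial and compatible with $i_0^*, i_1^*$ as just established, we get $i_0^*(p^*s) = p^*(i_0^* s) = p^*(0) = 0$ and $i_1^*(p^*s) = p^*(i_1^* s) = p^*(\id_F) = \id_{p^*F}$, so $p^*F$ is rationally contractible. The main obstacle — though it is really a matter of bookkeeping rather than a deep point — is verifying the cofinality of the pullback neighborhoods and the interchange of filtered colimits carefully enough that the naturality in $X$ and the compatibility with $i_0^*, i_1^*$ are transparent; once that is in place, the contractibility statement is a formal consequence of $p^*$ being a functor.
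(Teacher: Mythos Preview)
Your proposal is correct and follows essentially the same approach as the paper: write $X$ as a cofiltered limit of smooth $k$-schemes, use the EGA limit results to show that open neighborhoods of $X \times \{0,1\}$ descend to some finite level, and conclude by interchanging the two filtered colimits. The paper's proof is terser (it invokes \cite{EGA4-3}*{Corollaire~8.2.11} and then simply says ``the result follows''), while you spell out the cofinality argument and the compatibility with $i_0^*, i_1^*$ explicitly; but the underlying idea is identical.
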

\begin{proof}
Since $k$ is perfect, $p$ is essentially smooth and so for $X \in \sm K$ there exists a cofiltered diagram with affine transition maps $\{X_i\} \in \smk$ with $X = \lim_i X_i$. Then for any sheaf $G$ on $\smk$ we have $(p^*G)(X) = \colim_i G(X_i)$. Now, note that $X\times \Aone= \lim_i (X_i\times_k \Aone)$ and \cite{EGA4-3}*{Corollaire~8.2.11} shows that any open subset in $X \times \Aone$ containing $X \times \{0,1\}$ is pulled back from an open subset of $X_i \times \Aone$ containing $X_i \times \{0, 1\}$ for some $i$. The result follows.
\end{proof}

The main property of rationally contractible presheaves is the following result which we will use later. 

\begin{prop}[Suslin] \label{prop:suslin}
Let $F$ be a rationally contractible presheaf of abelian groups on $\smk$. Then
$(\Cstar F)(\hat\Delta^\bullet_K) \simeq 0$, for any field $K/k$.
\end{prop}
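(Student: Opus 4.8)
The statement is Suslin's key lemma adapted to the Milnor--Witt (or more precisely, the generic presheaf-of-abelian-groups) setting, so I would follow the classical template from \cite{S03_f}*{\S 2} essentially verbatim; the only subtlety is to make sure every step goes through for an arbitrary presheaf $F$ on $\smk$, not just a homotopy invariant sheaf. First, by Lemma \ref{lemm:rat-contractible-pullback} it suffices to treat the case $K = k$: indeed if $p : \Spec(K) \to \Spec(k)$ denotes the structural morphism, then $p^*F$ is again rationally contractible, and $\Cstar{p^*F}(\hat\Delta^\bullet_K)$ is computed by the same formula over $K$, since $\hat\Delta^\bullet_K$ is the base change of $\hat\Delta^\bullet_k$ and Suslin's singular construction commutes with the relevant filtered colimits (as in \chdmt, Lemma~\ref{lem:extresCstar}). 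So from now on I fix $K = k$ and must show that the total complex of $n \mapsto (\Cstar F)(\hat\Delta^n_k)$ is acyclic.

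The heart of the argument is to upgrade the rational contracting homotopy $s : F \to \tilde C_1 F$ into a genuine chain homotopy on the bicomplex $\Cstar F(\hat\Delta^\bullet_k)$. Recall that $\tilde C_1 F(X) = \colim_{X\times\{0,1\}\subset U} F(U)$, and that a rational contraction gives $s$ with $i_0^* s = 0$, $i_1^* s = \id$. The standard move is: evaluating $\Cstar F$ at $\hat\Delta^n_k$ means evaluating $F$ at products $\hat\Delta^n_k \times \Delta^m_k$; one uses $s$ together with the classical prism decomposition of $\Delta^n \times \Delta^1$ into simplices to build, for each $n$, a map lowering simplicial degree that, combined with the faces, yields $\id - 0 = \id$ up to boundary. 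The crucial geometric input --- and this is where one genuinely needs the semilocalization $\hat\Delta^n_k$ rather than $\Delta^n_k$ --- is that an open neighbourhood of $\hat\Delta^n_k \times \{0,1\}$ inside $\hat\Delta^n_k \times \Aone$ can be arranged to contain $\hat\Delta^n_k \times \{$prism vertices$\}$, because $\hat\Delta^n_k$ is semilocal and hence any finite set of sections extends over a common open; this is exactly the reason Suslin introduces $\hat\Delta^\bullet$. So the plan is: (i) recall Suslin's explicit prism formulas from \cite{S03_f}*{proof of Prop.~2.5 or the analogous statement}; (ii) check each map involved is defined on $\hat\Delta^\bullet_k$ using the semilocal extension property; (iii) verify the homotopy identity, which is a purely combinatorial simplicial identity once the maps are in place.

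The main obstacle, as usual, is purely bookkeeping: tracking that the contracting homotopy assembled from $s$ and the prism subdivision really does respect all the face and degeneracy maps of the \emph{double} complex $(\Cstar F)(\hat\Delta^\bullet)$ (one simplicial direction from the Suslin construction $\Cstar{}$, the other from $\hat\Delta^\bullet$), so that one gets contractibility of the total complex and not merely of each row or column. I expect this to require care but no new ideas beyond \cite{S03_f}; in particular I would explicitly cite Suslin's lemma and only indicate the modifications (replace ``sheaf'' by ``presheaf'', and use that $F$ being a presheaf of abelian groups suffices since the contraction $s$ is a morphism of presheaves, so no sheafification is invoked anywhere). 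One should double-check that no homotopy invariance or transfer structure on $F$ is secretly used --- in Suslin's treatment the rational contractibility hypothesis is precisely what replaces those --- and that the colimit defining $\tilde C_1 F$ interacts correctly with evaluation at the pro-(smooth scheme) $\hat\Delta^n_k$, which follows from \cite{EGA4-3}*{Corollaire~8.2.11} exactly as in the proof of Lemma \ref{lemm:rat-contractible-pullback}. Once the homotopy is in place, acyclicity of the total complex is immediate, proving the proposition.
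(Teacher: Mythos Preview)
Your proposal is correct and follows essentially the same approach as the paper: reduce to $K=k$ via Lemma~\ref{lemm:rat-contractible-pullback}, then invoke Suslin's argument. The paper simply cites \cite{S03_f}*{Lemma~2.4 and Proposition~2.5} directly rather than unpacking the prism-homotopy construction as you do, but the content is identical.
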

\begin{proof}
Combine \cite{S03_f}*{Lemma~2.4 and Proposition~2.5}, and use Lemma \ref{lemm:rat-contractible-pullback}.
\end{proof}

Examples of rationally contractible presheaves are given in \cite{S03_f}*{Proposition~2.2}, and we give here a new example that will be very useful in the proof of our main result. 

\begin{prop}\label{prop:ratcontractible}
Let $X$ be a smooth connected scheme over $k$ and $x_0\in X$ be a rational $k$-point of $X$. Assume that there exists an open subscheme $W\subset X\times \Aone$ containing $(X\times \{0,1\})\cup (x_0\times \Aone)$ and a morphism of schemes $f:W\to X$ such that $f_{\vert_{X\times 0}}=x_0$, $f_{\vert_{X\times 1}}=\id_X$ and $f_{\vert_{x_0\times \Aone}}=x_0$. Then the presheaf $\MWprep(X)/\MWprep(x_0)$ is rationally contractible.
\end{prop}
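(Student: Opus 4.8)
<br>

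The goal is to produce a morphism of presheaves $s:\MWprep(X)\to \tilde C_1\MWprep(X)$ compatible with the quotient by $\MWprep(x_0)$, satisfying $i_0^*s=0$ and $i_1^*s=\id$. The obvious candidate is to push forward along the graph of $f$. More precisely, for a smooth test scheme $U$, an element $\alpha\in\MWprep(X)(U)=\cork(U,X)$ is a Chow-Witt cycle supported on some admissible $T\in\Adm(U,X)$, and I would like to send it to the correspondence $(\mathrm{id}_U\times f)_*$ applied (after suitable pullback along $U\times\Aone$) to $\alpha$, viewed in $\cork(V,X)$ for an open $V\subset U\times\Aone$ containing $U\times\{0,1\}$. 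The first thing to check is that such an open $V$ exists: the pullback $F:=(\mathrm{id}_U\times 1)^{-1}(T\times\Aone)\cap(\text{domain of }\mathrm{id}_U\times f)$ must be admissible over $U\times\Aone$, at least on a neighborhood of $U\times\{0,1\}$. Here the hypothesis on $f$ being defined on the open $W\supset(X\times\{0,1\})\cup(x_0\times\Aone)$ is exactly what is used: one pulls $W$ back along $\alpha$'s support, and the condition that $T\to U$ is finite guarantees, by a standard argument (as in \chfinitecw, Lemma~\ref{lem:admissiblesheaf} and the proof of Lemma~\ref{lem:composite} in \chcancellation), that the relevant closed subset is finite over an open neighborhood of $U\times\{0,1\}$ in $U\times\Aone$. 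I would make this precise and thereby get a well-defined element of $\tilde C_1\MWprep(X)(U)$.

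Next I would verify functoriality in $U$, i.e.\ that $s$ is a morphism of presheaves; this is a routine base-change computation using the compatibility of push-forward with pullback (the base change and projection formulas of \chfinitecw, Proposition~\ref{prop:basechange} and Corollary~\ref{cor:pformula}), exactly as in the verification that the composition of MW-correspondences is well defined. Then I check the two boundary conditions. Restricting $s(\alpha)$ to $U\times\{1\}$ gives $(\mathrm{id}_U\times f_{\vert X\times 1})_*\alpha=(\mathrm{id}_U\times\mathrm{id}_X)_*\alpha=\alpha$, so $i_1^*s=\id$. Restricting to $U\times\{0\}$ gives $(\mathrm{id}_U\times f_{\vert X\times 0})_*\alpha=(\mathrm{id}_U\times x_0)_*\alpha$, which is a correspondence factoring through the point $x_0$, hence lies in the image of $\MWprep(x_0)(U)\to\MWprep(X)(U)$. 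Therefore, after passing to the quotient presheaf $\MWprep(X)/\MWprep(x_0)$, the map $i_0^*s$ becomes zero. The condition $f_{\vert x_0\times\Aone}=x_0$ is what ensures that the whole construction is compatible with the sub-presheaf $\MWprep(x_0)$: applying $s$ to a correspondence supported over $x_0$ produces a correspondence still supported over $x_0\times\Aone$, so $s$ descends to a morphism $\bar s:\MWprep(X)/\MWprep(x_0)\to\tilde C_1(\MWprep(X)/\MWprep(x_0))$, using Lemma~\ref{lemm:rat-contractible-pullback}-style compatibility of $\tilde C_1$ with cokernels (which holds since $\tilde C_1$ is a filtered colimit of restriction functors, hence exact).

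The main obstacle is the bookkeeping around supports and admissibility: one must be careful that the push-forward $(\mathrm{id}_U\times f)_*$ is defined on the correct cohomology group with supports and with the correct twist by the relative canonical bundle, and that the resulting cycle's support, after intersecting with $U\times\Aone$, is genuinely admissible over a neighborhood of $U\times\{0,1\}$ rather than merely over the generic point. I would handle this by first reducing to the case where $\alpha$ has irreducible support $T$, noting $T\to U$ finite surjective, then observing that $(T\times\Aone)\cap W'$, where $W'$ is the pullback of $W$, is closed in $U\times\Aone$ and finite over $U\times\Aone$ near $U\times\{0,1\}$ by properness considerations, and finally invoking the push-forward with supports from \chfinitecw, \S\ref{sec:chowwitt} together with the canonical orientation $\omega_{\mathrm{id}\times f}\simeq\mathcal O$ coming from $f$ being (the restriction of) a morphism between smooth schemes of the same relative dimension over $U$. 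Once the construction is pinned down, the verification of $i_0^*$, $i_1^*$ and presheaf-functoriality is formal.
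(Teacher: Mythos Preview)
Your approach is essentially the paper's (and Suslin's): form $\alpha \times \id_{\Aone}$ as an MW-correspondence $Y \times \Aone \to X \times \Aone$ supported on $Z \times \Delta$, cut down to an open $V \subset Y \times \Aone$ containing $Y \times \{0,1\}$ so that the support lands in $V \times W$, and then compose with $f: W \to X$. The construction of $V$ via the finite image of $(Z \times \Delta) \cap (Y \times \Aone \times ((X\times\Aone)\setminus W))$, the boundary checks $i_0^*s$, $i_1^*s$, and the descent to the quotient using $f|_{x_0 \times \Aone} = x_0$ are all as you indicate.

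There is one genuine error in your final paragraph. You propose to realize ``compose with $f$'' as a push-forward $(\id \times f)_*$ and justify the needed triviality $\omega_{\id \times f} \simeq \mathcal{O}$ by asserting that $f$ is ``a morphism between smooth schemes of the same relative dimension over $U$''. This is false: $W$ is open in $X \times \Aone$, so $\dim W = \dim X + 1$, and $\omega_{\id \times f}$ has no reason to be trivial. Fortunately no such hypothesis is required. The paper observes that since $W$ is open in $X \times \Aone$ one has $i^*\omega_{X \times \Aone} \simeq \omega_W$, so the restricted cycle on $V \times W$ lies in the correct Chow--Witt group to be an MW-correspondence $V \to W$; one then simply composes with $\tilde\gamma(f) \in \cork(W,X)$, which is defined for any morphism of smooth schemes and handles all twists via the composition formula. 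Replace your push-forward-with-orientation argument by this, and the proof goes through.
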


\begin{proof}
We follow closely Suslin's proof in \cite{S03_f}*{Proposition~2.2}. Let $Y$ be a smooth connected scheme and let $\alpha\in \cork(Y,X)$. There exists then an admissible subset $Z\subset Y\times X$ (i.e.\ $Z$ endowed with its reduced structure is finite and surjective over $X$) such that 
\[
\alpha\in \chst {d_X}{Z}{Y\times X}{\omega_X}.
\]
where $\omega_X$ is the pull-back along the projection $Y\times X\to X$ of the canonical sheaf of $X$.
On the other hand, the class of $\tilde\gamma(\id_{\Aone})$ is given by the class of the MW-correspondence $\Delta_*(\langle 1\rangle)$ where 
\[
\Delta_*:\ch 0{\Aone}\to \chst 1{\Delta}{\Aone\times \Aone}{\omega_{\Aone}}
\]
is the push-forward along the diagonal $\Delta:\Aone\to \Aone\times \Aone$, and $\Delta=\Delta(\Aone)$. Considering the Cartesian square
\[
\begin{tikzcd} Y\times X\times \Aone\times \Aone \ar[r,"{p_2}"] \ar[d,"{p_1}"'] & \Aone\times \Aone \ar[d] \\
Y\times X \ar[r] & \Spec(k)
\end{tikzcd}
\]
we may form the exterior product $p_1^*\alpha\cdot p_2^*\Delta_*(\langle 1\rangle)$ and its image under the push-forward along $\sigma:Y\times X\times \Aone\times\Aone\to Y\times\Aone\times X\times \Aone$ represents the MW-correspondence $\alpha\times \id_{\Aone}$ defined in \chfinitecw, \S \ref{sec:tensorproducts}. Using this explicit description, we find a cycle
\[
\alpha\times \id_{\Aone}\in \chst {d_X+1}{Z\times \Delta}{Y\times \Aone\times X\times \Aone}{\omega_{X\times \Aone}}
\]
where $Z\times \Delta$ is the product of $Z$ and $\Delta$. Now, we may consider the closed subset $T:=(X\times \Aone)\setminus W\subset X\times \Aone$. It is readily verified that $T^\prime:=(Z\times \Delta)\cap (Y\times \Aone\times T)$ is finite over $Y\times \Aone$. Thus $p_{Y\times \Aone}(T^\prime)\subset Y\times \Aone$ is closed and we can consider its open complement $U$ in $(Y\times \Aone)$. It follows from \cite{S03_f}*{proof of Proposition~2.2} that $Y\times \{0,1\}\subset U$. By construction, we see that $\left(U\times (X\times \Aone)\right)\cap (Z\times \Delta)\subset U\times W$ and is finite over $U$. Restricting $\alpha\times \id_{\Aone}$ to $U\times W$, we find 
\[
i^*(\alpha\times \id_{\Aone})\in \chst {d_X+1}{(Z\times \Delta)\cap (U\times W) }{U\times W}{i^*\omega_{X\times \Aone}}
\]
where $i:U\times W\to Y\times \Aone\times X\times \Aone$ is the inclusion. Now, we see that we have a canonical isomorphism $i^*\omega_{X\times \Aone}\simeq \omega_W$ and it follows that we can see $i^*(\alpha \times \id_{\Aone})$ as a finite MW-correspondence between $U$ and $W$. Composing with $f:W\to X$, we get a finite MW-correspondence $f\circ s(\alpha):U\to X$, i.e.\ an element of $\cork(U,X)=\MWprep(X)(U)$ with $Y\times \{0,1\}\subset U\subset Y\times \Aone$. Using now the canonical morphism $\MWprep(X)(U)\to  \tilde{C}_1(\MWprep(X))(Y)$, we obtain an element denoted by $s(\alpha)$. It is readily checked that this construction is (contravariantly) functorial in $Y$ and thus that we obtain a morphism of presheaves
\[
s:\MWprep(X)\to \tilde{C}_1(\MWprep(X)).
\]
We check as in \cite{S03_f}*{Proposition~2.2} that this morphism induces a morphism
\[
s:\MWprep(X)/\MWprep(x_0)\to \tilde{C}_1\big(\MWprep(X)/\MWprep(x_0)\big).
\]
with the prescribed properties.
\end{proof}

\begin{coro}\label{cor:gmcase}
For any $n\geq 1$, the presheaf $\MWprep(\Gm^{\times n})/\MWprep(1,\ldots,1)$ is rationally contractible.
\end{coro}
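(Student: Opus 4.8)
The plan is to deduce this from Proposition \ref{prop:ratcontractible} by exhibiting, for each $n \geq 1$, a suitable contracting datum on $\Gm^{\times n}$ based at the point $(1,\dots,1)$. Concretely, I would first treat the case $n=1$: take $X = \Gm = \Spec k[t^{\pm 1}]$ with rational point $x_0 = 1$, and look for an open $W \subset \Gm \times \Aone$ containing $(\Gm \times \{0,1\}) \cup (\{1\} \times \Aone)$ together with a morphism $f \colon W \to \Gm$ restricting to the constant map $1$ on $\Gm \times \{0\}$ and on $\{1\} \times \Aone$, and to the identity on $\Gm \times \{1\}$. The natural candidate, following Suslin, is the rational function $f(t,u) = ut + (1-u)$ (a ``linear homotopy'' between the constant function $1$ at $u=0$ and the identity at $u=1$, which also takes the value $1$ along $t=1$); its locus of definition as a map to $\Gm$ is the open $W$ where $ut + (1-u) \neq 0$, and one checks $(\Gm \times \{0,1\}) \cup (\{1\} \times \Aone) \subset W$ since at $u=0$ the value is $1$, at $u=1$ it is $t \in \Gm$, and at $t=1$ it is $1$. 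Proposition \ref{prop:ratcontractible} then gives that $\MWprep(\Gm)/\MWprep(1)$ is rationally contractible.

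For general $n \geq 1$ I would take $X = \Gm^{\times n}$ with base point $x_0 = (1,\dots,1)$ and use the coordinatewise version of the above: define $f \colon W \to \Gm^{\times n}$ on an appropriate open $W \subset \Gm^{\times n} \times \Aone$ by
\[
f(t_1,\dots,t_n,u) = \big(ut_1 + (1-u),\ \dots,\ ut_n + (1-u)\big),
\]
with $W$ the open locus where all $n$ factors $ut_i + (1-u)$ are invertible. Again $\Gm^{\times n} \times \{0\}$ maps to $(1,\dots,1)$, $\Gm^{\times n} \times \{1\}$ maps via the identity, and $\{(1,\dots,1)\} \times \Aone$ maps to $(1,\dots,1)$, and one checks that $W$ contains $(\Gm^{\times n}\times\{0,1\}) \cup (x_0 \times \Aone)$. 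Proposition \ref{prop:ratcontractible} applies verbatim and yields that $\MWprep(\Gm^{\times n})/\MWprep(1,\dots,1)$ is rationally contractible, which is exactly the assertion of the corollary.

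I do not expect a serious obstacle here: the content is entirely in Proposition \ref{prop:ratcontractible}, and the corollary is just the verification that $\Gm^{\times n}$ with its natural ``multiplicative-to-additive'' linear contraction satisfies the hypotheses of that proposition. The only points requiring a line of care are (i) checking that the asserted open $W$ really contains $(\Gm^{\times n}\times\{0,1\}) \cup (x_0 \times \Aone)$ — immediate from the three boundary evaluations above — and (ii) noting that the base point $(1,\dots,1)$ is indeed a rational $k$-point, so that $\MWprep(1,\dots,1)$ makes sense and the quotient presheaf is defined as in \chfinitecw, Definition~\ref{dfn:reprpresheaf}. Once the corollary is in hand, combined with Proposition \ref{prop:suslin} it will give $(\Cstar{\MWprep(\Gm^{\times n})/\MWprep(1,\dots,1)})(\hat\Delta^\bullet_K) \simeq 0$ for all fields $K/k$, which is the input needed for the effectivity criterion in the next section.
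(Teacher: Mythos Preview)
Your proposal is correct and follows exactly the same approach as the paper: define $W \subset \Gm^{\times n} \times \Aone$ by the conditions $ut_i + (1-u) \neq 0$ for all $i$, set $f(t_1,\dots,t_n,u) = (ut_1 + (1-u),\dots,ut_n + (1-u))$, verify the three boundary conditions, and invoke Proposition~\ref{prop:ratcontractible}.
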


\begin{proof}
Let $t_1,\ldots,t_n$ be the coordinates of $\Gm^{\times n}$ and $u$ be the coordinate of $\Aone$. We consider the open subscheme $W\subset \Gm^{\times n}\times \Aone$ defined by $ut_i+(1-u)\neq 0$. It is straightforward to check that $\Gm^{\times n}\times \{0,1\}\subset W$ and that $(1,\ldots,1)\times \Aone\subset W$. We then define 
\[
f:W\to \Gm^{\times n}
\] 
by $f(t_1,\ldots,t_n,u)=u(t_1,\ldots,t_n)+(1-u)(1,\ldots,1)$ and check that it fulfills the hypothesis of Proposition \ref{prop:ratcontractible}.
\end{proof}

We would like to deduce from this result that Proposition \ref{prop:suslin} also holds for the sheaf $\MWrepZ(\Gm^{\times n})/\MWrepZ(1,\ldots,1)$ associated to the presheaf $\MWprep(\Gm^{\times n})/\MWprep(1,\ldots,1)$, or more precisely that it holds for its direct summand $\MWrepZ(\Gm^{\wedge n}):=\tZcbx n$ for $n\geq 1$. This requires some comparison results between the Suslin complex of a presheaf and the Suslin complex of its associated sheaf, which are the objects of the next section.

%%%%%%%%%%%%%%%%%%%%%%%%%%%%%%%%%%%%%%%%%%%%%%%%%%%%%%%%%%%%%%%%%%%%%%%%%%%%%%%%%%%%%%%%%%%%%%%%%%%%%%%%%%%%%%%%

\section{Semi-local schemes}\label{sec:semilocal}

In this section, a \emph{semi-local scheme} will be a localization of a smooth integral scheme $X$ at finitely many points.

Our aim in this section is to extend \chcancellation, Corollary~\ref{cor:localzero} to the case of semi-local schemes. Let us first recall a result of H. Kolderup (\cite{K17_f}*{Theorem~9.1}).

\begin{thm}\label{thm:excision}
Let $X$ be a smooth $k$-scheme and let $x\in X$ be a closed point. Let $U=\Spec(\OO_{X,x})$ and let $\mathrm{can}:U\to X$ be the canonical inclusion. Let $i:Z\to X$ be a closed subscheme with $x\in Z$ and let $j:X\setminus Z\to X$ be the open complement. Then there exists a finite MW-correspondence $\Phi\in \cork(U,X\setminus Z)$ such that the following diagram 
\[
\begin{tikzcd}
 & X \setminus Z\ar[d,"j"] \\
U \ar[r,"{\mathrm{can}}"'] \ar[ru,"{\Phi}"] & X
\end{tikzcd}
\]
commutes up to homotopy.
\end{thm}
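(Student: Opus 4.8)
The statement is an excision result for finite Milnor-Witt correspondences, and the natural strategy is to reduce it to the corresponding statement for Voevodsky correspondences (for which an analogous theorem is available in the literature, e.g. in the framework of framed correspondences or via the Cousin complex) and then lift the witnessing correspondence along the forgetful functor $\pi: \cork \to \corVk$, keeping track of the extra orientation data carried by Milnor-Witt cycles. Concretely, the plan is as follows. First I would reduce to the case where $X = \Spec(A)$ is affine with $A$ essentially smooth semi-local (here $U$ is the localization of $X$ at the closed point $x$, so $U$ itself is local), since the Chow-Witt groups with support are computed by the Rost-Schmid (or Feld's Milnor-Witt) complex, which is a flabby resolution and hence local in nature; this lets me work with the explicit cycle-theoretic description of $\cork(U, X\setminus Z)$ as a colimit $\varinjlim_T \chst d{T}{U \times (X\setminus Z)}{\omega_{X\setminus Z}}$ over admissible subsets $T$.

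The heart of the construction is geometric: following the classical Gabber/Quillen-type presentation lemma (in the Milnor-Witt enhanced form worked out by Kolderup, or adapting the presentation used in the proof of homotopy invariance of Milnor-Witt sheaves), I would find, after possibly shrinking, an étale neighborhood and a relative curve $C \to U$ together with a finite closed subscheme $T \subset C \subset U \times (X\setminus Z)$ which is finite and surjective over $U$, such that $T$ realizes the graph of $\mathrm{can}: U \to X$ modulo the subscheme $Z$. The point is that over the local base $U$, the closed subset $Z_U = \mathrm{can}^{-1}(Z)$ can be ``moved off'' to the complement: one produces a finite correspondence whose support avoids $U \times Z$ but which, composed with the open immersion $j$, agrees with $\tilde\gamma(\mathrm{can})$ up to $\Aone$-homotopy. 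To make this into a finite \emph{MW}-correspondence rather than just a cycle, I would equip the support $T$ with an orientation: since $T$ (or its normalization) is generically smooth and we are over a perfect field, the relative canonical bundle $\omega_{T/U}$ is trivial on the local scheme, and one chooses a trivialization to define the class of $\langle 1 \rangle$ pushed forward along $T \to U \times (X\setminus Z)$, exactly as in the construction of push-forwards in \chfinitecw, Example~\ref{ex:push-forwards}. The compatibility $\pi(\Phi) = $ (the corresponding Voevodsky correspondence) is then automatic from the forgetful map $\chst d{T}{-}{\omega} \to \CH^d_T(-)$.

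The remaining step is to verify that $j \circ \Phi$ is $\Aone$-homotopic to $\tilde\gamma(\mathrm{can})$, i.e. that the difference lies in the image of $\partial_0 - \partial_1: \cork(U \times \Aone, X) \to \cork(U, X)$. Here I would run the standard ``homotopy between two sections of a relative curve'' argument: both $j\circ\Phi$ and $\tilde\gamma(\mathrm{can})$ are represented by closed subschemes of $U \times X$ that are finite over $U$, lying in a common relative curve, and agreeing after restriction to a dense open; one interpolates between them by a correspondence over $U \times \Aone$ built from the same curve, using that Milnor-Witt Chow groups with support satisfy the localization long exact sequence and homotopy invariance (\chfinitecw, the long exact sequence of localization and \cite{Fasel08_a}*{Corollaire~11.3.2}). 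The orientations must be tracked through this interpolation, but since everything happens over a semi-local base all the relevant twisting line bundles are canonically trivial, so the bookkeeping is essentially the same as in the untwisted case treated by Voevodsky and Kolderup.

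\textbf{Main obstacle.} I expect the genuinely delicate point to be the geometric presentation step: producing the relative curve $C/U$ and the finite subscheme $T$ moving $Z$ into the complement, in a way compatible with the admissibility condition (finite and surjective onto $U$) and simultaneously carrying a coherent orientation. This is where Kolderup's Theorem~9.1 does the real work, and in a self-contained account one would invoke either his argument or the Gabber presentation lemma together with the transfer/orientation formalism of \chfinitecw, \S\ref{sec:canonicalorientations}; the $\Aone$-homotopy verification and the compatibility with $\pi$ are comparatively formal once the correspondence $\Phi$ has been constructed.
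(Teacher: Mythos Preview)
The paper does not prove this theorem at all: it is quoted as a result of Kolderup (\cite{K17_f}*{Theorem~9.1}), with the only commentary being that the underlying geometric input is a proposition of Panin--Stavrova--Vavilov (\cite{PSV09_f}*{Proposition~1}) and that the argument extends to semi-localizations at finitely many closed points. So there is no in-paper proof to compare against.

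Your plan is essentially a correct reconstruction of what Kolderup's argument does: the geometric presentation lemma produces a relative curve over the local base $U$, one trivializes the relative canonical bundle (available since the base is local), pushes forward $\langle 1\rangle$ to obtain the MW-correspondence $\Phi$, and then checks the $\Aone$-homotopy via an interpolation along the curve. You correctly identify that the presentation step is where the real work lies, and the paper simply outsources this to the cited references.

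One caution about your framing: the opening sentence suggests reducing to Voevodsky correspondences and then ``lifting along the forgetful functor $\pi: \cork \to \corVk$''. This would not work as stated, since $\pi$ is not surjective in general (cf.\ \chfinitecw, Remark~\ref{rem:notsurjective})---the obstruction is precisely that not every admissible support carries the required orientation of the relative canonical bundle. Fortunately the body of your plan does not actually follow this route: you build $\Phi$ directly in $\cork$ by choosing an orientation over the (semi-)local base, which is the correct approach and is what Kolderup does. Just drop the misleading ``lift along $\pi$'' language.
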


We note that this result uses a proposition of Panin-Stavrova-Vavilov (\cite{PSV09_f}*{Proposition~1}) which is in fact true for the localization of a smooth scheme at finitely many closed points and that the proof of Theorem \ref{thm:excision} goes through in this setting. This allows us to prove the following corollary. We thank M. Hoyois for pointing out the reduction to closed points used in the proof. The same reduction also allows removing the closedness hypothesis of Theorem \ref{thm:excision} and its many-point version.

\begin{coro}
Let $X$ be a smooth scheme and let $x_1,\ldots,x_n\in X$ be finitely many points. Let $U=\Spec(\OO_{X,{x_1,\ldots,x_n}})$ and let $\mathrm{can}:U\to X$ be the inclusion. Let $i:Z\to X$ be a closed subscheme containing $x_1,\ldots,x_n$ and let $j:X\setminus Z\to X$ be the open complement. Then, there exists a finite MW-correspondence $\Phi\in \cork(U,X\setminus Z)$ such that the following diagram 
\[
\begin{tikzcd}
 & X\setminus Z\ar[d,"j"] \\
U\ar[r,"{\mathrm{can}}"'] \ar[ru,"{\Phi}"] & X
\end{tikzcd}
\]
commutes up to homotopy.
\end{coro}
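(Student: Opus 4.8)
The statement is the multi-point generalization of Kolderup's excision result (Theorem~\ref{thm:excision}), and the remark preceding the corollary already indicates the strategy: reduce the semi-local case to the local case at a single closed point. The plan is to proceed in three steps.

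First I would reduce to the case where the $x_i$ are \emph{closed} points of $X$. Given an arbitrary finite set of points $x_1,\dots,x_n\in X$, one can replace each $x_i$ by a closed point in its closure without changing the semi-local ring in the relevant sense: more precisely, since the statement only concerns the existence of a correspondence $\Phi$ making a triangle commute up to $\Aone$-homotopy, and since $U=\Spec(\OO_{X,\{x_1,\dots,x_n\}})$ is a pro-object in $\smk$ obtained as a cofiltered limit of smooth affine $X$-schemes with étale (or at least affine) transition maps, one may first spread out the data and then specialize. Concretely: by \cite{EGA4-3}*{8.8.2} (or the limit formalism of \chfinitecw, \S\ref{sec:limits}, together with the fact that $\cork(-,Y)$ commutes with such limits, \chdmt, Remark~\ref{rem:finite_corr&plim}(4)), any finite MW-correspondence out of $U$, and any $\Aone$-homotopy between such, is defined over some smooth affine $X$-scheme $X'\to X$ in the inverse system. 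Choosing closed points of $X'$ lying over generizations of the $x_i$ and passing to the semi-localization there realizes $U$ as a further localization, so it suffices to treat the case of finitely many closed points.

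Second, with $x_1,\dots,x_n\in X$ closed, I would run Kolderup's argument verbatim in the many-point setting. The only external input in the proof of Theorem~\ref{thm:excision} that is stated for a single closed point is the Panin--Stavrova--Vavilov geometric presentation lemma \cite{PSV09_f}*{Proposition~1}; as noted in the excerpt, that proposition holds (with the same proof) for the semi-localization of a smooth scheme at finitely many closed points. Given this, the construction of $\Phi$ in \cite{K17_f}*{Theorem~9.1} — producing an elementary geometric correspondence supported away from $Z$ and an explicit $\Aone$-homotopy to $\tilde\gamma(\mathrm{can})$ — goes through unchanged, because every step there (choice of a Nisnevich neighborhood, the support conditions forcing finiteness over $U$, the contracting homotopy built from the geometric presentation) is insensitive to whether $U$ is local or semi-local, once the presentation lemma is available. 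Thus one obtains $\Phi\in\cork(U,X\setminus Z)$ with $j\circ\Phi \simeq \mathrm{can}$ in $\cork$.

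The main obstacle, and the only point requiring genuine care, is the spreading-out/specialization step: one must check that the class of $\Phi$ in $\cork(U,X\setminus Z)$ and the homotopy can be chosen coherently, i.e.\ that after spreading out to some $X'$ the closure conditions "$x_i\in Z$" and "$j$ open complement" are preserved under specialization to closed points. This is a standard constructibility argument (the locus where a given correspondence has support meeting $Z'\times(X\setminus Z)$ is closed, and admissibility is an open-dense condition), but it is worth spelling out since the representable presheaf $\MWprep(X\setminus Z)$ is only a Zariski sheaf, not a Nisnevich one (\chfinitecw, Example~\ref{exm:notnis}); one should work directly with the explicit description of $\cork(U,X\setminus Z)$ as a filtered colimit of Chow--Witt groups with supports (\chfinitecw, \S\ref{sec:limits}) rather than with sheaf-theoretic generalities. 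Once this bookkeeping is done, the reduction is complete and the corollary follows from Theorem~\ref{thm:excision} applied at a single closed point after the passage to the inverse system.
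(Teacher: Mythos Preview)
Your overall strategy (reduce to closed points, then invoke the multi-closed-point version of Kolderup's theorem) matches the paper's, but your reduction step is far more complicated than necessary and contains some confusion.

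The paper's reduction is one line: choose closed specializations $v_1,\dots,v_n$ of $x_1,\dots,x_n$ (i.e.\ closed points in the closures $\overline{\{x_i\}}$), and let $V$ be the semi-localization of $X$ at the $v_i$. Since $Z$ is closed and $x_i\in Z$, automatically $v_i\in Z$. Moreover $U$ is a further localization of $V$, so there is a factorization $U\xrightarrow{\iota}V\xrightarrow{\mathrm{can}}X$. Apply the closed-point version of the theorem to $V$ to get $\Phi'\in\cork(V,X\setminus Z)$ with $j\circ\Phi'\simeq\mathrm{can}$, and set $\Phi=\Phi'\circ\iota$. Done.

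There is no spreading-out, no constructibility bookkeeping, and no need to invoke the limit description of $\cork(U,-)$ beyond the trivial fact that MW-correspondences compose with (graphs of) morphisms. Your sentence ``one can replace each $x_i$ by a closed point in its closure without changing the semi-local ring in the relevant sense'' is not quite right---the semi-local ring genuinely changes---and your phrase ``closed points of $X'$ lying over generizations of the $x_i$'' has the direction backwards (you want specializations, not generizations). The ``main obstacle'' you identify simply does not arise: nothing needs to be spread out, because the correspondence $\Phi'$ is built directly on the semi-local scheme $V$ and then pulled back to $U$ by composition with a single morphism $\iota$.
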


\begin{proof}
Let $v_1,\ldots,v_n$ be (not necessarily distinct) closed specializations of $x_1,\ldots,x_n$ and let $V$ be the semi-localization of $X$ at these points. We have a sequence of inclusions $U\stackrel{\iota}\to V\stackrel{\mathrm{can}}\to X$. As $Z$ is closed, we see that $v_1,\ldots,v_n$ are also in $Z$ and we may apply the previous theorem to get a finite MW-correspondence $\Phi^\prime$ and a homotopy commutative diagram
\[
\begin{tikzcd}
 & X \setminus Z \ar[d,"j"] \\
V \ar[r,"{\mathrm{can}}"'] \ar[ru,"{\Phi^\prime}"] & X.
\end{tikzcd}
\]
Composing with the map $U\stackrel{\iota}\to V$, we get the result with $\Phi=\Phi^\prime\circ \iota$.
\end{proof}

We deduce the next result from the above, following \cite{K17_f}*{Corollary~9.2}.

\begin{coro}\label{cor:restriction}
Let $F$ be a homotopy invariant presheaf with MW-transfers. Let $Y$ be a semi-local scheme. Then the restriction homomorphism $F(Y)\to F(k(Y))$ is injective.
\end{coro}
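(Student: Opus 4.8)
<br>

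The plan is to mimic the proof of \cite{K17_f}*{Corollary~9.2}, adapting it to the semi-local setting using the version of the excision correspondence established just above. First I would let $Y = \Spec(\OO_{X,\{x_1,\ldots,x_n\}})$ be the semi-localization of a smooth integral scheme $X$ at finitely many points, and let $\alpha \in F(Y)$ be in the kernel of the restriction map $F(Y) \to F(k(Y))$. Since $F$ is a presheaf on $\cork$ and $Y$ is a limit of smooth schemes with affine \'etale (in fact open) transition maps, using Remark \ref{rem:finite_corr&plim} and the extension of presheaves to essentially smooth schemes, I can find a smooth model: there is a smooth integral $k$-scheme $X'$ with $Y = \lim_i U_i$ for opens $U_i \subseteq X'$, an index at which $\alpha$ is represented by an element $\alpha_0 \in F(U)$ for some open $U$ containing all the $x_i$, and a closed subscheme $Z \subsetneq U$ (the vanishing locus witnessing that $\alpha_0$ restricts to $0$ at the generic point, enlarged so that it avoids no $x_i$ — more precisely, $\alpha_0|_{k(Y)} = 0$ means $\alpha_0$ restricts to $0$ on some open dense $U \setminus Z$ with each $x_i \in Z$) such that $j^*\alpha_0 = 0$ in $F(U \setminus Z)$, where $j : U \setminus Z \to U$ is the open immersion.

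Next I would apply the semi-local excision corollary proved above to the inclusion $\mathrm{can}: Y \to U$ and the closed subscheme $Z$: this yields a finite MW-correspondence $\Phi \in \cork(Y, U \setminus Z)$ with $j \circ \Phi$ homotopic to $\mathrm{can}$ in $\cork$. Applying the homotopy invariant presheaf $F$ (which, being homotopy invariant with MW-transfers, sends $\Aone$-homotopic maps to equal maps on sections — this is where homotopy invariance of $F$ is used) gives a commutative square on sections: $\mathrm{can}^* = \Phi^* \circ j^*$ as maps $F(U) \to F(Y)$. Therefore
\[
\alpha = \mathrm{can}^*(\alpha_0) = \Phi^*(j^*\alpha_0) = \Phi^*(0) = 0,
\]
which proves injectivity. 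One small technical point to check is that the homotopy invariance of $F$ indeed forces $\Aone$-homotopic MW-correspondences to induce the same map on sections; this is standard and follows because an $\Aone$-homotopy is a correspondence $H \in \cork(Y \times \Aone, U \setminus Z)$ restricting to the two given maps at $0$ and $1$, and $F(Y) \to F(Y \times \Aone)$ being an isomorphism equalizes the two restrictions.

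I do not expect a serious obstacle here; the substance of the argument has already been done in the preceding excision corollary (itself the semi-local upgrade of Kolderup's Theorem \ref{thm:excision}, valid because \cite{PSV09_f}*{Proposition~1} holds for semi-localizations). The only point requiring a little care is the reduction to a smooth finite-type model: one must argue that $\alpha$, the open $U$, the closed $Z$, and the vanishing $j^*\alpha_0 = 0$ all descend to some finite level of the cofiltered system defining $Y$, which is a routine application of \cite{EGA4-3}*{8.8.2, 8.10.5} together with the compatibility of $F$ and of $\cork(-,-)$ with filtered limits of schemes (Remark \ref{rem:finite_corr&plim}). Once the model is in place, the excision correspondence plus homotopy invariance close the argument immediately.
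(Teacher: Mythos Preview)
Your proposal is correct and follows essentially the same approach as the paper: lift the element to a finite-level open $U$ containing the $x_i$, find a proper closed $Z \subset U$ containing all the $x_i$ on whose complement the element vanishes, apply the semi-local excision correspondence, and use homotopy invariance to conclude. The paper handles the descent to a finite model by writing $F(Y)=\colim_V F(V)$ and $F(k(Y))=\colim_W F(W)$ directly, and arranges $x_i \in Z$ by shrinking the open $W$ on which the element vanishes so that $x_i \notin W$; your phrasing via EGA limit arguments and enlarging $Z$ is equivalent.
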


\begin{proof}
Let $Y$ be the semi-localization of the smooth integral $k$-scheme $X$ at the points $x_1, \dots, x_n$.  By definition, we have $F(Y)=\colim_{x_1,\ldots,x_n\in V}F(V)$, whereas $F(k(Y)) = F(k(X))=\colim_{W\neq \emptyset}F(W)$. Here $V,W$ are open subschemes of $X$. Let then $s\in \colim_{x_1,\ldots,x_n\in V}F(V)$ mapping to $0$ in $F(k(X))$. There exists $V$ containing $x_1\ldots,x_n$ and $t\in F(V)$ such that $s$ is the image of $t$ under the canonical homomorphism, and there exists $W\neq \emptyset$ such that $t_{\vert_{W\cap V}}=0$. Shrinking $W$ if necessary, we may assume that $x_1,\ldots,x_n\not\in W$. We can now use Theorem \ref{thm:excision} with $X=V$, $Y=U$ and $Z=V\setminus (V\cap W)$. Since $F$ is homotopy invariant, we then find a commutative diagram
\[
\begin{tikzcd}
 & F(V\cap W) \ar[ld,"{\Phi^*}"'] \\
F(Y) & \ar[l,"{\mathrm{can}^*}"] F(V)\ar[u,"{j^*}"']
\end{tikzcd}
\]
showing that $s=0$.
\end{proof}

\begin{coro}
Let $F\to G$ be a morphism of homotopy invariant MW-presheaves such that for any finitely generated field extension $L/k$ the induced morphism $F(L)\to G(L)$ is an isomorphism. Then the homomorphism $F(X)\to G(X)$ is an isomorphism for any semi-local scheme $X$.
\end{coro}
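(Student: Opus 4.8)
The statement to prove is: given a morphism $F\to G$ of homotopy invariant MW-presheaves such that $F(L)\to G(L)$ is an isomorphism for every finitely generated field extension $L/k$, the map $F(X)\to G(X)$ is an isomorphism for every semi-local scheme $X$. The natural approach is to reduce to the previously established Corollary \ref{cor:restriction}. First I would form the cone/kernel and cokernel presheaves of the morphism $F\to G$; more concretely, since we are dealing with presheaves of abelian groups, set $K=\ker(F\to G)$ and $C=\coker(F\to G)$, both taken in the category of MW-presheaves. The key point is that $K$ and $C$ inherit homotopy invariance: for presheaves of abelian groups with MW-transfers, kernels and cokernels are computed sectionwise, and the five lemma (or just direct diagram chasing applied to the commuting squares expressing homotopy invariance of $F$ and $G$) shows that $K(X)\to K(X\times\Aone)$ and $C(X)\to C(X\times\Aone)$ are isomorphisms. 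Moreover $K$ and $C$ are again MW-presheaves since $\pshMWk$ is an abelian category in which kernels and cokernels of MW-presheaf morphisms are MW-presheaves.

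Next I would use the hypothesis: for every finitely generated field extension $L/k$ the sequence $0\to K(L)\to F(L)\to G(L)\to C(L)\to 0$ is exact and $F(L)\to G(L)$ is an isomorphism, hence $K(L)=0$ and $C(L)=0$ for all such $L$. Now apply Corollary \ref{cor:restriction} to the homotopy invariant MW-presheaves $K$ and $C$: for any semi-local scheme $X$ (a localization of a smooth integral $k$-scheme at finitely many points), the restriction maps $K(X)\to K(k(X))$ and $C(X)\to C(k(X))$ are injective. Since $k(X)$ is a finitely generated field extension of $k$ and $K(k(X))=C(k(X))=0$, we conclude $K(X)=0$ and $C(X)=0$. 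This says precisely that $F(X)\to G(X)$ is injective (from $K(X)=0$) and that its cokernel vanishes (from $C(X)=0$), hence it is an isomorphism.

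The only genuine subtlety — and the step I'd be most careful with — is verifying that $K$ and $C$ are homotopy invariant and still have MW-transfers, i.e.\ that forming kernels/cokernels of a morphism of MW-presheaves stays inside the relevant subcategory and preserves homotopy invariance. This is routine but must be stated: homotopy invariance is tested on each $X\in\smk$ by the condition that $F(X)\to F(\Aone_X)$ is an isomorphism, and since this property is manifestly preserved by passing to kernels and cokernels of a map of such presheaves (apply the snake lemma to the map of short-ish sequences induced by restriction along $\Aone_X\to X$), both $K$ and $C$ are homotopy invariant MW-presheaves. With that in hand the argument above is immediate, and no further computation is needed. I would write this out in three or four lines, citing Corollary \ref{cor:restriction} as the essential input.
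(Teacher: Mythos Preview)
Your proof is correct and follows exactly the same approach as the paper: form the kernel $K$ and cokernel $C$ of $F\to G$ in the abelian category of MW-presheaves, observe they are homotopy invariant and vanish on finitely generated field extensions, then apply Corollary~\ref{cor:restriction} to conclude $K(X)=C(X)=0$ for semi-local $X$. The paper's proof is just a terser version of what you wrote, citing the homotopy invariance of $K$ and $C$ as ``an easy diagram chase'' without spelling it out.
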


\begin{proof}
As the category of MW-presheaves is abelian, we can consider both the kernel $K$ and the cokernel $C$ of $F\to G$. An easy diagram chase shows that $C$ and $K$ are homotopy invariant and our assumption implies that $C(L)=0=K(L)$ for any finitely generated field extension $L/k$. By Corollary \ref{cor:restriction}, it follows that $C(X)=0=K(X)$, proving the claim.
\end{proof}

\begin{coro}\label{cor:equality}
Let $F$ be a homotopy invariant MW-presheaf. Let respectively $F_{\zar}$ be the associated Zariski sheaf and $F_{\nis}$ be the associated Nisnevich sheaf. Then the canonical sequence of morphisms of presheaves
\[
F\to F_{\zar} \to F_{\nis}
\]
induces isomorphisms $F(X)\simeq F_{\zar}(X)\simeq F_{\nis}(X)$ for any semi-local scheme $X$.
\end{coro}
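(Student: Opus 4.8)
The plan is to deduce Corollary~\ref{cor:equality} from the previous Corollary by a bootstrap argument: apply the comparison statement to the natural maps $F \to F_{\zar}$ and $F_{\zar} \to F_{\nis}$, after first checking that these maps are isomorphisms on finitely generated field extensions of $k$. So the first step is to observe that for a field $L/k$ (viewed as the generic point of a smooth scheme, using that $k$ is perfect so $L/k$ is separable and $\operatorname{Spec}(L)$ is a cofiltered limit of smooth schemes), the stalk of the presheaf $F$ at $\operatorname{Spec}(L)$ agrees with the stalk of its Zariski sheafification and of its Nisnevich sheafification. Indeed $F(L) = \bar F(\operatorname{Spec}(L)) = \colim_{W \neq \emptyset} F(W)$ where $W$ ranges over nonempty open subschemes of a model, and sheafification in either topology does not change stalks at points, so $F(L) \simeq F_{\zar}(L) \simeq F_{\nis}(L)$ canonically.

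Next I would need to know that $F_{\zar}$ and $F_{\nis}$ are again homotopy invariant MW-presheaves, so that the previous Corollary applies to the two maps $F \to F_{\zar}$ and $F_{\zar} \to F_{\nis}$. For the MW-transfer structure this is exactly Proposition~\ref{prop:exist_associated-W-t-sheaf}\ref{item:shgrothab}: the Nisnevich (and \emph{a fortiori} the Zariski) sheafification of an MW-presheaf carries a canonical MW-transfer structure compatible with the unit map, and the sheafification functor $\tilde a$ is exact. Homotopy invariance of $F_{\nis}$ is where one invokes Theorem~\ref{thm:Wsh&A1}: the sheaf associated to a homotopy invariant MW-presheaf over an infinite perfect field of characteristic $\neq 2$ is strictly $\Aone$-invariant, in particular homotopy invariant, and moreover that theorem directly asserts $F_{\zar} = \tilde a(F) = F_{\nis}$ together with the isomorphism on cohomology. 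Homotopy invariance of $F_{\zar}$ then follows since it sits between $F$ and $F_{\nis}$, or again directly from Theorem~\ref{thm:Wsh&A1}.

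With these two inputs in hand, the conclusion is immediate: applying the previous Corollary to $F \to F_{\zar}$ gives $F(X) \xrightarrow{\sim} F_{\zar}(X)$ for every semi-local $X$, and applying it to $F_{\zar} \to F_{\nis}$ (both homotopy invariant, isomorphic on fields) gives $F_{\zar}(X) \xrightarrow{\sim} F_{\nis}(X)$, and composing yields the stated chain of isomorphisms $F(X) \simeq F_{\zar}(X) \simeq F_{\nis}(X)$. I do not expect any serious obstacle here; the only point requiring a little care is the first step, verifying that the maps $F(L) \to F_{\zar}(L) \to F_{\nis}(L)$ really are isomorphisms, which is a soft statement about stalks of sheafification but deserves to be stated cleanly since everything downstream rests on it. One should also note that $F_{\zar}$ need not be Nisnevich-local in general, which is precisely why the two comparisons must be carried out separately rather than just applying the previous Corollary once to $F \to F_{\nis}$ — although in the present setting Theorem~\ref{thm:Wsh&A1} tells us the two sheafifications coincide, so in fact either route works.
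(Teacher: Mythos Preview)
Your proposal is correct and follows essentially the same approach as the paper: verify that $F_{\zar}$ and $F_{\nis}$ are homotopy invariant MW-presheaves (via Proposition~\ref{prop:exist_associated-W-t-sheaf} and Theorem~\ref{thm:Wsh&A1}), observe that the maps $F \to F_{\zar} \to F_{\nis}$ are isomorphisms on finitely generated field extensions since sheafification preserves stalks, and then invoke the previous corollary. The paper's proof is more terse and notes directly that $F_{\zar} = F_{\nis}$ by Theorem~\ref{thm:Wsh&A1}, so only one application of the previous corollary is really needed, as you yourself remark at the end.
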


\begin{proof}
First note that $F_{\nis}$ is indeed an MW-sheaf by \chdmt, Proposition~\ref{prop:exist_associated-W-t-sheaf}. Moreover, the associated Zariski sheaf $F_{\zar}$ coincides with $F_{\nis}$ and they are both homotopy invariant by \chdmt, Theorem \ref{thm:Wsh&A1}. To conclude, we observe that the sequence $F\to F_{\zar} \to F_{\nis}$ induces isomorphisms when evaluated at finitely generated field extensions and we can use the previous corollary to obtain the result.
\end{proof}

We now pass to the identification of the higher cohomology presheaves of the sheaf associated to a homotopy invariant MW-presheaf $F$.

\begin{lem}\label{lem:superior}
Let $F$ be a homotopy invariant MW-presheaf. Then $\H^n_{\zar}(X,F_{\zar})=\H^n_{\nis}(X,F_{\nis})=0$ for any semi-local scheme $X$ and any $n>0$.
\end{lem}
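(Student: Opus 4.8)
The plan is to reduce the vanishing statement to the field case via a Gersten-type resolution, exactly as in the classical theory of homotopy invariant sheaves with transfers. First I would invoke \chdmt, Theorem~\ref{thm:Wsh&A1}: since $F$ is a homotopy invariant MW-presheaf, the associated Nisnevich sheaf $F_{\nis}$ is strictly $\Aone$-invariant, $F_{\zar} = F_{\nis}$, and Zariski and Nisnevich cohomology agree on smooth schemes. So it suffices to prove $\H^n_{\nis}(X, F_{\nis}) = 0$ for $n > 0$ and $X$ semi-local.

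The key tool is the Rost--Schmid (or Gersten) complex: a strictly $\Aone$-invariant sheaf $M$ on a smooth $k$-scheme $X$ admits a flasque resolution $M \to \Cger^\bullet(X, M)$ whose term in degree $p$ is a sum over $x \in X^{(p)}$ of groups of the form $M_{-p}(k(x), \dots)$ (twisted by the relevant line bundle), by \cite{Morel12_d}*{Chapter~5} or Feld's analogue \cite{Feld20_a}. Since $F_{\nis}$ is strictly $\Aone$-invariant, this applies. Next I would argue that this resolution localizes well: for $X = \Spec(\OO_{Y,\{y_1,\dots,y_m\}})$ the semi-localization of a smooth $Y$ at finitely many points, the Rost--Schmid complex of $X$ is the colimit (filtered over open neighbourhoods $V$ of the $y_i$) of the Rost--Schmid complexes of $V$, because the terms are direct sums indexed by points of a given codimension and these are visibly compatible with restriction to opens and with filtered colimits. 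Concretely, $\Cger^p(X, F_{\nis}) = \bigoplus_{x \in X^{(p)}} (F_{\nis})_{-p}(k(x), \omega_{x})$ and the differentials are the residue maps.

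The heart of the argument is then to show this complex, computing $\H^*_{\nis}(X, F_{\nis})$, is exact in positive degrees when $X$ is semi-local. This is where I expect the main obstacle to lie, and the standard resolution is: use Corollary~\ref{cor:equality} just proved, together with the fact that the presheaf $U \mapsto \H^n_{\nis}(U, F_{\nis})$ is itself a homotopy invariant MW-presheaf for every $n$ (the cohomology presheaves of a strictly $\Aone$-invariant MW-sheaf carry MW-transfers and are homotopy invariant --- this follows from the formalism of \chdmt, in particular the fact that $\shMWkR$ has enough injectives, that the functors $\uHom(\MWrepZt(X), -)$ preserve injectives/flasques, and strict $\Aone$-invariance). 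Granting that, Corollary~\ref{cor:restriction} (injectivity of $F(Y) \to F(k(Y))$ for semi-local $Y$) applied to the homotopy invariant MW-presheaf $U \mapsto \H^n_{\nis}(U, F_{\nis})$ shows $\H^n_{\nis}(X, F_{\nis})$ injects into $\H^n_{\nis}(\Spec k(X), F_{\nis}) = 0$ for $n > 0$, since a field has Nisnevich (and Zariski) cohomological dimension $0$. This gives the claim.

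Alternatively, and perhaps more cleanly, I would run the semi-local Gersten argument directly: over a semi-local $X$ of dimension $d$, the Rost--Schmid resolution has length $d$, and one proves exactness at each spot by the usual trick of reducing, via homotopy invariance and the geometric presentation lemma (Panin--Stavrova--Vavilov, in the semi-local form already used for Corollary~\ref{cor:restriction}), to a computation over the generic point, where the complex degenerates. Either way, the essential input is that the cohomology presheaves of $F_{\nis}$ are again homotopy invariant MW-presheaves so that the injectivity statement of Corollary~\ref{cor:restriction} applies to them; verifying this transfer-compatibility of higher cohomology is the technical point requiring care, and I would cite the relevant parts of \chdmt\ (the existence of enough injectives in $\shMWkR$ and the behaviour of $\tilde a$, $\tilde\gamma_*$ on them) rather than reprove it.
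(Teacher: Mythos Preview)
Your proposal is correct and its ``heart of the argument'' paragraph is exactly the paper's proof: reduce to $F_{\nis}$ via \chdmt, Theorem~\ref{thm:Wsh&A1}, observe that $U \mapsto \H^n_{\nis}(U,F_{\nis})$ is a homotopy invariant MW-presheaf (MW-sheaves have enough injectives; strict $\Aone$-invariance gives homotopy invariance), note it vanishes on fields since they have Nisnevich cohomological dimension $0$, and conclude by Corollary~\ref{cor:restriction}. The Rost--Schmid scaffolding you set up beforehand is unnecessary and is not used in the actual argument you give --- the paper skips it entirely and goes straight to this injectivity-into-the-field step.
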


\begin{proof}
Using \chdmt, Theorem \ref{thm:Wsh&A1}, it suffices to prove the result for $F_{\nis}$. Now, the presheaf $U\mapsto \H^n_{\nis}(U,F_{\nis})$ is an MW-presheaf (as the category of MW-sheaves has enough injectives by \chdmt, Proposition~\ref{prop:exist_associated-W-t-sheaf} and \cite{Gr57_f}*{Théorème~1.10.1}) which is homotopy invariant by \chdmt, Theorem~\ref{thm:Wsh&A1} again. As any field has Nisnevich cohomological dimension $0$, we find $\H^n_{\nis}(L,F_{\nis})=0$ for any finitely generated field extension $L/k$. We conclude using Corollary \ref{cor:restriction}.
\end{proof}

Recall that $\DMtek$ is the homotopy category of a certain model category. This model category is obtained as a localization of a model structure on the category $\Comp(\shMWxk{\nis})$ of unbounded chain complexes of MW-sheaves. We call a fibrant replacement functor for this localized model structure the \emph{$\Aone$-localization} functor, and denote it $\mathrm{L}_{\Aone}$. If $K$ is a complex of MW-presheaves, then we can take the associated complex of Nisnevich MW-sheaves $a_\nis K$. We write $\mathrm{L}_\nis K$ for a fibrant replacement of $a_\nis K$ in the usual (i.e.\ non-$\Aone$-localized) model structure on $\Comp(\shMWxk{\nis})$ (\chdmt, \S\ref{sec:dercat}).

We will need the following slight strengthening of \chdmt, Corollary~\ref{cor:LA1}.
\begin{lem} \label{lem:motivic-localization}
Let $F$ be an MW-presheaf. Then the $\Aone$-localization (of $F_{Nis}$) is given by $\mathrm{L}_{\nis} \Cstar F$.
\end{lem}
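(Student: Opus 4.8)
The statement asks us to identify the $\Aone$-localization of (the Nisnevich sheafification of) an MW-presheaf $F$ with $\mathrm{L}_{\nis}\Cstar F$. The strategy is the same as in the classical Voevodsky setting and in \chdmt, Corollary~\ref{cor:LA1}, but we must be slightly careful because $F$ itself is only a presheaf, not a sheaf, and the Suslin complex of $F$ differs a priori from the Suslin complex of $a_{\nis}F$. The plan is therefore to show two things: first, that $\Cstar F$ computes the correct object after Nisnevich sheafification, i.e.\ that the natural map $a_{\nis}\Cstar F \to a_{\nis}\Cstar{a_{\nis}F}$ is a quasi-isomorphism of complexes of Nisnevich sheaves; and second, that $\mathrm{L}_{\nis}\Cstar{a_{\nis}F}$ is already $\Aone$-local, so that it coincides with $\mathrm{L}_{\Aone}(a_{\nis}F)$.

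\textbf{Step 1: comparing $\Cstar F$ and $\Cstar{a_{\nis}F}$ after sheafification.} First I would reduce to checking this on stalks, i.e.\ on Hensel local schemes, and then --- since Suslin's construction commutes with the relevant filtered colimits --- on semi-local schemes obtained as localizations of smooth schemes. For this I invoke the comparison results of Section~\ref{sec:semilocal}. The key point is Corollary~\ref{cor:equality}: for a \emph{homotopy invariant} MW-presheaf the maps $F\to F_{\zar}\to F_{\nis}$ are isomorphisms on semi-local schemes, and Lemma~\ref{lem:superior} gives the vanishing of higher cohomology there. Applying these termwise to the (homotopy invariant, by the usual argument) cohomology presheaves of $\Cstar F$ shows that on a semi-local scheme $X$ one has $\H^n(\Cstar F(X)) \simeq \H^n_{\nis}(X, a_{\nis}\Cstar F)$, and similarly for $a_{\nis}F$ in place of $F$. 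Since $\Cstar F(X) \to \Cstar{a_{\nis}F}(X)$ is an isomorphism for $X$ semi-local (because $F(X) = a_{\nis}F(X)$ termwise, the $\Delta^n\times X$ being again semi-local), we conclude that $a_{\nis}\Cstar F \to a_{\nis}\Cstar{a_{\nis}F}$ is a quasi-isomorphism.

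\textbf{Step 2: $\Aone$-locality.} The complex $\Cstar{a_{\nis}F}$ is a complex of MW-sheaves whose cohomology sheaves are strictly $\Aone$-invariant: indeed the cohomology \emph{presheaves} of $\Cstar{a_{\nis}F}$ are homotopy invariant by the standard contraction/homotopy argument for the Suslin complex, and then \chdmt, Theorem~\ref{thm:Wsh&A1} (using that $k$ is infinite perfect of characteristic not $2$) upgrades the associated sheaves to strictly $\Aone$-invariant ones, with Zariski and Nisnevich cohomology agreeing. By \chdmt, Corollary~\ref{cor:A1-local_complexes} a complex of MW-sheaves is $\Aone$-local precisely when its homology sheaves are $\Aone$-invariant, so $\mathrm{L}_{\nis}\Cstar{a_{\nis}F}$ is $\Aone$-local. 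Moreover the canonical map $a_{\nis}F \to \Cstar{a_{\nis}F}$ is an $\Aone$-weak equivalence (the cone is built from the contractions $\Delta^n \simeq \AAA^n$, exactly as in the proof of \chdmt, Corollary~\ref{cor:LA1}). Composing, $a_{\nis}F \to \mathrm{L}_{\nis}\Cstar{a_{\nis}F}$ is an $\Aone$-weak equivalence into an $\Aone$-local object, hence a model for $\mathrm{L}_{\Aone}(a_{\nis}F)$; combining with Step~1 gives $\mathrm{L}_{\Aone}(F_{\nis}) \simeq \mathrm{L}_{\nis}\Cstar F$.

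\textbf{Main obstacle.} The only genuinely new input beyond \chdmt, Corollary~\ref{cor:LA1} is Step~1, i.e.\ controlling the difference between the Suslin complex of a presheaf and that of its sheafification; this is precisely where one needs the semi-local comparison statements of Section~\ref{sec:semilocal} (Corollaries~\ref{cor:restriction}, \ref{cor:equality} and Lemma~\ref{lem:superior}), which in turn rest on Kolderup's excision theorem. I expect the bookkeeping of reducing to semi-local schemes and checking that Suslin's construction is compatible with the relevant colimits to be the most delicate part, though it is routine once the semi-local inputs are in hand.
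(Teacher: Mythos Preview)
Your Step~1 contains a false claim: for $X$ semi-local and $n\ge 1$, the scheme $\Delta^n\times X$ is \emph{not} semi-local in the sense used here (localization of a smooth scheme at finitely many points) --- it has strictly larger dimension. So you cannot conclude $F(\Delta^n\times X)=(a_\nis F)(\Delta^n\times X)$ from Corollary~\ref{cor:equality}, and your argument that $\Cstar F(X)\to\Cstar{a_\nis F}(X)$ is a termwise isomorphism on semi-local $X$ breaks down. The earlier part of Step~1 (the hypercohomology identification $\H^n(\Cstar F(X))\simeq\hypH^n_\nis(X,a_\nis\Cstar F)$) is fine, but the comparison of the two Suslin complexes that you draw from it is not justified.

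More importantly, Step~1 is unnecessary, and this is what the paper's proof exploits. Everything in your Step~2 applies verbatim with $\Cstar F$ in place of $\Cstar{a_\nis F}$: the cohomology \emph{presheaves} of $\Cstar F$ are already homotopy invariant MW-presheaves, so by \chdmt, Theorem~\ref{thm:Wsh&A1} and Corollary~\ref{cor:A1-local_complexes} the complex $a_\nis\Cstar F$ is $\Aone$-local. For the $\Aone$-equivalence $F_\nis\to a_\nis\Cstar F$, the paper gives a direct levelwise argument (comparing $\Cstar F$ with the complex built from the constant cosimplicial object, which is chain homotopy equivalent to $F$, and using that each $\Delta^n$ is $\Aone$-contractible). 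No passage through $\Cstar{a_\nis F}$, and none of the semi-local machinery of Section~\ref{sec:semilocal}, is needed. So your identification of Step~1 as the ``main obstacle'' and the ``only genuinely new input beyond \chdmt, Corollary~\ref{cor:LA1}'' is off: the lemma really is essentially that corollary, applied to a presheaf rather than a sheaf, and the passage is painless once one notices that $\Aone$-locality of $a_\nis\Cstar F$ only requires the cohomology \emph{presheaves} of $\Cstar F$ to be homotopy invariant.
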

\begin{proof}
Throughout the proof we abbreviate $\Delta^\bullet := \Delta^\bullet_k$.
We claim that $F_\nis$ and $a_\nis \Cstar F$ are $\Aone$-equivalent. To see this, let $\CstarS F$ denote the complex constructed like $\Cstar F$, but with the constant cosimplicial object $*$ in place of $\Delta^\bullet$. In other words $\CstarS F = F \xleftarrow{0} F \xleftarrow{1} F \xleftarrow{0} \dots$. The projection $\Delta^\bullet \to *$ induces $\alpha: \CstarS F \to \Cstar F$. Since $\CstarS F$ is chain homotopy equivalent to $F$, it will suffice to show that $a_\nis \alpha: a_\nis \CstarS F \to a_\nis \Cstar F$ is an $\Aone$-equivalence. For this, it is enough to prove that $a_\nis \alpha$ is a levelwise $\Aone$-equivalence (because $\Aone$-equivalences are closed under filtered colimits), for which in turn it is enough to prove that $\alpha$ is a levelwise $\Aone$-homotopy equivalence. This is clear, since $\alpha_n$ is $F \to F^{\Delta^n}$, and $\Delta^n$ is $\Aone$-contractible. This proves the claim.
It thus remains to show that $a_\nis \Cstar F$ is $\Aone$-local. This follows from \chdmt, Corollary~\ref{cor:A1-local_complexes}.
\end{proof}

\begin{coro} \label{coro:compute-semilocal-suslin}
Let $F$ be a MW-presheaf and let $\Cstar F$ be its associated Suslin complex. For any $n\in\ZZ$, let $\H^n(\Cstar F)$ be the $n$-th cohomology presheaf of $\Cstar F$. Then for any semi-local scheme $X$ over $k$, we have canonical isomorphisms
\[
\H^n(\Cstar F)(X)\to \mathbb{H}^n_{\nis}(X,\mathrm{L}_{\Aone} F_\nis).
\]
\end{coro}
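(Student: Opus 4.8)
The plan is to reduce the statement, via Lemma~\ref{lem:motivic-localization}, to a computation of Nisnevich hypercohomology of the complex of MW-sheaves $a_\nis\Cstar F$ over semi-local schemes, and then to run the hypercohomology spectral sequence. First I would observe that $\mathrm{L}_\nis\Cstar F$ is Nisnevich-fibrant and, by Lemma~\ref{lem:motivic-localization}, canonically identified with $\mathrm{L}_{\Aone}F_\nis$ in $\Der(\shMWkZ)$; hence for every smooth $X$
\[
\mathbb{H}^n_{\nis}(X,\mathrm{L}_{\Aone}F_\nis)\simeq \mathbb{H}^n_{\nis}(X,a_\nis\Cstar F)=\H^n\big((\mathrm{L}_\nis\Cstar F)(X)\big),
\]
and the canonical map in the statement is the one induced by the augmentation $\Cstar F\to \mathrm{L}_\nis\Cstar F$. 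So it suffices to prove that the natural map $\H^n(\Cstar F)(X)\to \mathbb{H}^n_{\nis}(X,a_\nis\Cstar F)$ is an isomorphism whenever $X$ is semi-local.

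Next I would record two structural facts. The complex $\Cstar F$ consists of MW-presheaves, so each cohomology presheaf $\H^q(\Cstar F)$ is an MW-presheaf; moreover it is homotopy invariant, by the usual argument (an explicit chain homotopy between the two restrictions along $X\hookrightarrow X\times\Aone$, built from the cosimplicial structure of $\Delta^\bullet$), exactly as in the classical case of presheaves with transfers. Since sheafification is exact, the $q$-th cohomology sheaf of $a_\nis\Cstar F$ is $a_\nis\H^q(\Cstar F)$, a homotopy invariant MW-sheaf. Then I would set up the Nisnevich hypercohomology spectral sequence
\[
E_2^{p,q}=\H^p_{\nis}\big(X,a_\nis\H^q(\Cstar F)\big)\Longrightarrow \mathbb{H}^{p+q}_{\nis}(X,a_\nis\Cstar F),
\]
which converges because $\Cstar F$ is concentrated in non-positive cohomological degrees and $X$ has finite Nisnevich cohomological dimension.

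Finally, on a semi-local $X$ the spectral sequence collapses: Lemma~\ref{lem:superior}, applied to the homotopy invariant MW-presheaf $\H^q(\Cstar F)$, gives $E_2^{p,q}=0$ for $p>0$, while Corollary~\ref{cor:equality} identifies $E_2^{0,q}=\big(a_\nis\H^q(\Cstar F)\big)(X)\cong \H^q(\Cstar F)(X)$. Hence the edge morphism is an isomorphism $\H^n(\Cstar F)(X)\xrightarrow{\ \sim\ }\mathbb{H}^n_{\nis}(X,a_\nis\Cstar F)$, and unwinding the identifications above shows it coincides with the canonical map; naturality in $X$ among semi-local schemes (and in $F$) is then automatic from the functoriality of the spectral sequence. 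The main obstacle I expect is purely organisational: one must make sure that the hypotheses of Corollary~\ref{cor:equality} and Lemma~\ref{lem:superior} are genuinely met — in particular that homotopy invariance of the $\H^q(\Cstar F)$ is verified at the level of presheaves, before sheafifying — and that the edge map of the hypercohomology spectral sequence is correctly matched, through Lemma~\ref{lem:motivic-localization}, with the canonical comparison $\Cstar F\to\mathrm{L}_\nis\Cstar F$.
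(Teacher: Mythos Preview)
Your proposal is correct and follows essentially the same approach as the paper: reduce via Lemma~\ref{lem:motivic-localization} to computing $\mathbb{H}^n_{\nis}(X,a_\nis\Cstar F)$, observe that the cohomology presheaves $\H^q(\Cstar F)$ are homotopy invariant MW-presheaves, and collapse the hypercohomology spectral sequence using Corollary~\ref{cor:equality} for the $E_2^{0,q}$ terms and Lemma~\ref{lem:superior} for the vanishing of $E_2^{p,q}$ with $p>0$. Your write-up is in fact slightly more careful than the paper's about convergence and about identifying the edge map with the canonical comparison.
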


\begin{proof}
By Lemma \ref{lem:motivic-localization}, we have $\mathrm{L}_{\Aone} F_\nis \simeq \mathrm{L}_\nis \Cstar F$.
Observe first that the cohomology presheaves $\H^n(\Cstar F)$ are homotopy invariant and have MW-transfers. Denote by $h^n_{\nis}$ the associated Nisnevich sheaves (which are homotopy invariant MW-sheaves by \chdmt, Theorem~\ref{thm:Wsh&A1}). Considering the hypercohomology spectral sequence, we see that it suffices to prove that $\H^n_{\nis}(\Cstar F)(X)=\H^0(X,h^n_{\nis})$ and that $\H_{\nis}^i(X,h^n_{\nis})=0$ for $i>0$. The first claim follows from Corollary \ref{cor:equality}, while the second one follows from Lemma \ref{lem:superior}.
\end{proof}

\begin{rem}
Using the fact that the Zariski sheaf $h^n_{\zar}$ associated to $\H^n(\Cstar F)$ coincides with $h^n_{\nis}$ (\chdmt, Theorem~\ref{thm:Wsh&A1}), the same arguments as above give a canonical isomorphism
\[
\H^n(\Cstar F)(X)\to \hypH^n_{\zar}(X,\Cstar {F_{\zar}}).
\]
\end{rem}

Finally, we are in position to prove the result we need. In the statement, the complexes are the total complexes associated to the relevant bicomplexes of abelian groups.

\begin{coro} \label{coro:compute-semilocal-suslin-2}
Let $F$ be a MW-presheaf and $K/k$ be a finitely generated field extension. The canonical map
\[ 
\Cstar F(\hat\Delta^\bullet_K) \to (\mathrm{L}_{\Aone}F_{\nis})(\hat\Delta^\bullet_K)
\]
is a weak equivalence of complexes of abelian groups.
\end{coro}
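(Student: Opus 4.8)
The plan is to deduce this from the two previous corollaries by a degreewise argument, since $\hat\Delta^n_K$ is a semi-local scheme for each $n$. Recall from Lemma~\ref{lem:motivic-localization} that $\mathrm{L}_{\Aone}F_{\nis} \simeq \mathrm{L}_\nis \Cstar F$, so the target complex $(\mathrm{L}_{\Aone}F_{\nis})(\hat\Delta^\bullet_K)$ is the total complex of the bicomplex $n \mapsto (\mathrm{L}_\nis \Cstar F)(\hat\Delta^n_K)$, whose cohomology in the ``$\nis$ direction'' computes the Nisnevich hypercohomology $\hypH^*_{\nis}(\hat\Delta^n_K, a_\nis \Cstar F) = \hypH^*_{\nis}(\hat\Delta^n_K, \mathrm{L}_{\Aone}F_{\nis})$. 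On the other hand, the source complex $\Cstar F(\hat\Delta^\bullet_K)$ is the total complex of the bicomplex $n \mapsto \Cstar F(\hat\Delta^n_K)$, whose cohomology in the $\Cstar$ direction is $\H^*(\Cstar F)(\hat\Delta^n_K)$.

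First I would fix $n$ and apply Corollary~\ref{coro:compute-semilocal-suslin} to the semi-local scheme $X = \hat\Delta^n_K$ (which is essentially smooth over $k$ since $K/k$ is separable, $k$ being perfect, and is a localization of a smooth integral scheme). This gives a canonical isomorphism $\H^m(\Cstar F)(\hat\Delta^n_K) \xrightarrow{\sim} \hypH^m_{\nis}(\hat\Delta^n_K, \mathrm{L}_{\Aone}F_{\nis})$ for every $m$. Said differently, for each fixed $n$ the column complex $\Cstar F(\hat\Delta^n_K)$ of the source bicomplex and the column complex $(\mathrm{L}_\nis \Cstar F)(\hat\Delta^n_K)$ of the target bicomplex have canonically isomorphic cohomology, and the canonical comparison map $\Cstar F \to \mathrm{L}_\nis \Cstar F$ (i.e.\ $\Cstar F \to a_\nis\Cstar F \to \mathrm{L}_\nis\Cstar F$) evaluated at $\hat\Delta^n_K$ realizes this isomorphism. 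Thus the comparison map of bicomplexes is a columnwise (hence, after totalization, a) quasi-isomorphism; since everything is concentrated in a half-plane and the totalizations are formed compatibly, a standard spectral sequence comparison (or a direct filtration argument) shows the induced map on total complexes is a quasi-isomorphism, which is the claim.

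The one subtlety I would need to address carefully is the bookkeeping of the two totalizations: the bicomplex for the source is $(\Cstar F)(\hat\Delta^\bullet_K)$, where one direction is the Suslin/simplicial direction (index from $\Cstar$) and the other is the cosimplicial direction (index from $\hat\Delta^\bullet_K$), whereas for the target one must first take a fibrant replacement $\mathrm{L}_\nis$, introducing a third direction. To handle this cleanly I would choose $\mathrm{L}_\nis$ to be a functorial fibrant replacement, apply it levelwise in the cosimplicial variable, and note that evaluating on the semi-local scheme $\hat\Delta^n_K$ computes $\nis$-hypercohomology; then Corollary~\ref{coro:compute-semilocal-suslin} precisely identifies the outcome of collapsing the $(\Cstar, \mathrm{L}_\nis)$-plane with that of the $\Cstar$-direction alone, degree by degree in $n$. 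I do not expect genuine difficulty here beyond care with indices and the observation that everything is bounded below (so that totalization commutes with taking cohomology in the relevant range and the comparison spectral sequences converge). The essential input — that $\hat\Delta^n_K$ is semi-local and that $F$ being an MW-presheaf makes $\H^*(\Cstar F)$ homotopy-invariant with MW-transfers — is exactly what Corollary~\ref{coro:compute-semilocal-suslin} already packaged, so this corollary is mostly a formal repackaging of that result along the cosimplicial scheme $\hat\Delta^\bullet_K$.
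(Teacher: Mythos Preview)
Your proposal is correct and follows essentially the same approach as the paper: set up the two spectral sequences associated with the bicomplexes (one for each side of the map), use Corollary~\ref{coro:compute-semilocal-suslin} together with the fact that each $\hat\Delta^n_K$ is semi-local to identify the $E_1$-pages, and conclude by spectral sequence comparison. The paper's proof is more terse but identical in substance.
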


\begin{proof}
We have strongly convergent spectral sequences
\[ \H^p(\Cstar F(\hat\Delta^q_K)) \Rightarrow \H^{p+q}(\Cstar F(\hat\Delta^\bullet_K)) \]
and
\[ 
\H^p((\mathrm{L}_{\Aone} F_\nis)(\hat\Delta^q_K)) \Rightarrow \H^{p+q}((\mathrm{L}_{\Aone} F_\nis)(\hat\Delta^\bullet_K)). 
\]
Since $\mathrm{L}_{\Aone} F_\nis$ is Nisnevich-local, we have $\H^p((\mathrm{L}_{\Aone} F_\nis)(\hat\Delta^q_K)) = \hypH^p_{\nis}(\hat\Delta^q_K, \mathrm{L}_{\Aone} F_\nis)$.
Thus the claim follows from Corollary \ref{coro:compute-semilocal-suslin} and spectral sequences comparison. Here we use that $\hat\Delta^q_K$ is semilocal: if $K = k(U)$ for some smooth irreducible scheme with generic point $\eta$, then $\hat\Delta^q_K$ is the semilocalization of $\Delta^q \times U$ in the points  $(v_i, \eta)$.
\end{proof}

\begin{thm}\label{thm:rational}
For any $n\geq 1$ and $K/k$ finitely generated, we have 
\[
\mathrm{L}_{\Aone}\big(\tZpx n\big)(\hat\Delta^\bullet_K)\simeq 0.
\]
\end{thm}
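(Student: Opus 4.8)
The plan is to combine the rational contractibility established in Section~\ref{sec:rational} with the comparison between presheaf-level and sheaf-level Suslin complexes established in Section~\ref{sec:semilocal}. Fix $n \geq 1$ and a finitely generated field extension $K/k$. By definition $\tZpx n = \Cstar{\tZcbx n}[-n]$, where $\tZcbx n = \MWrepZ(\Gm^{\wedge n})$ is the direct summand of $\MWrepZ(\Gm^{\times n})$ complementary to $\MWrepZ(1,\dots,1)$; equivalently it is the associated Nisnevich sheaf of the MW-presheaf $F := \MWprep(\Gm^{\times n})/\MWprep(1,\dots,1)$, which is a direct summand of the representable presheaf (more precisely, of $\MWprep(\Gm^{\times n})$ modulo the base point). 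So it suffices to show $\mathrm{L}_{\Aone}(F_\nis)(\hat\Delta^\bullet_K) \simeq 0$.

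\textbf{Key steps.} First I would apply Corollary~\ref{coro:compute-semilocal-suslin-2}: since each $\hat\Delta^q_K$ is a semi-local essentially smooth scheme, the canonical map $\Cstar F(\hat\Delta^\bullet_K) \to (\mathrm{L}_{\Aone}F_\nis)(\hat\Delta^\bullet_K)$ is a quasi-isomorphism of total complexes. (One must be slightly careful that $F$ is only a presheaf, not a sheaf, and that $\hat\Delta^\bullet_K$ is a limit of smooth schemes, so $F$ and $\mathrm{L}_{\Aone}F_\nis$ are first extended to essentially smooth schemes via the usual colimit procedure; the arguments of Section~\ref{sec:semilocal} are written to accommodate exactly this.) Thus it is enough to prove $\Cstar F(\hat\Delta^\bullet_K) \simeq 0$. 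Second, by Corollary~\ref{cor:gmcase}, the presheaf $F = \MWprep(\Gm^{\times n})/\MWprep(1,\dots,1)$ is rationally contractible. Third, I invoke Proposition~\ref{prop:suslin} (Suslin's lemma), which says precisely that a rationally contractible presheaf of abelian groups $F$ satisfies $\Cstar F(\hat\Delta^\bullet_K) \simeq 0$ for any field $K/k$. Chaining these three facts, $\mathrm{L}_{\Aone}(F_\nis)(\hat\Delta^\bullet_K) \simeq \Cstar F(\hat\Delta^\bullet_K) \simeq 0$. Finally, since $\tZcbx n$ is a direct summand of $F_\nis$ and $\mathrm{L}_{\Aone}$ and evaluation commute with finite direct sums, and since shifting by $[-n]$ does not affect acyclicity, we conclude $\mathrm{L}_{\Aone}(\tZpx n)(\hat\Delta^\bullet_K) \simeq 0$.

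\textbf{Main obstacle.} The genuinely new input, and the step requiring the most care, is the passage from the presheaf Suslin complex $\Cstar F$ to the $\Aone$-localization $\mathrm{L}_{\Aone} F_\nis$ evaluated on $\hat\Delta^\bullet_K$ --- that is, Corollary~\ref{coro:compute-semilocal-suslin-2} and the chain of results (Corollaries~\ref{cor:restriction}, \ref{cor:equality}, Lemma~\ref{lem:superior}, Corollary~\ref{coro:compute-semilocal-suslin}) leading to it. The subtlety is that $\MWprep(X)$ is only a Zariski sheaf and not a Nisnevich sheaf, so one cannot directly identify $\Cstar F$ with $\mathrm{L}_{\Aone}F_\nis$ on all smooth schemes; the identification holds only on semi-local schemes, and this is exactly why Levine's semi-localized cosimplicial scheme $\hat\Delta^\bullet_K$ (rather than $\Delta^\bullet_K$) appears in the statement. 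Once that comparison machinery is in place, the remaining steps --- rational contractibility of $\MWprep(\Gm^{\times n})/\MWprep(1,\dots,1)$ via the explicit homotopy $f(t_1,\dots,t_n,u) = u(t_1,\dots,t_n) + (1-u)(1,\dots,1)$, and Suslin's vanishing lemma --- are essentially formal given the results cited above.
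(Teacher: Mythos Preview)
Your proposal is correct and follows essentially the same approach as the paper: reduce to the presheaf $F = \MWprep(\Gm^{\times n})/\MWprep(1,\dots,1)$, invoke Corollary~\ref{coro:compute-semilocal-suslin-2} to pass from $\mathrm{L}_{\Aone}F_\nis$ to $\Cstar F$ on $\hat\Delta^\bullet_K$, and then combine Corollary~\ref{cor:gmcase} with Proposition~\ref{prop:suslin}. Your identification of the semi-local comparison as the main technical input is also on target.
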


\begin{proof}
Since $\tZpx n[n]$ is $\Aone$-equivalent to $\MWprep(\Gm^{\wedge n})$, and the latter is a direct factor of $\MWprep(\Gm^{\times n})/\MWprep(1,\dots,1)$, by Corollary \ref{coro:compute-semilocal-suslin-2} it suffices to show that 
\[
\Cstar{\MWprep(\Gm^{\times n})/\MWprep(1,\dots,1)}(\hat\Delta^\bullet_K)\simeq 0.
\] 
This follows from Corollary \ref{cor:gmcase} and Proposition \ref{prop:suslin}.
\end{proof}

%%%%%%%%%%%%%%%%%%%%%%%%%%%%%%%%%%%%%%%%%%%%%%%%%%%%%%%%%%%%%%%%%%%%%%%%%%%%%%%%%%%%%%%%%%%%%%%%%%%%%%%%%%%%%%%%

\section{A General Criterion}
\label{sec:general-criterion}

In this section we study when the motivic spectrum representing a generalized
cohomology theory of algebraic varieties is effective. We first recall a few facts about the slice filtration of \cite{Voe02_f}.

Let $\SHS$ be the motivic homotopy category of $S^1$-spectra and let $\SHk$ be the stable motivic homotopy category. We have an adjunction
\[
\Sigma^\infty_T: \SHS \leftrightarrows \SHk:\Omega^{\infty}_T
\]
and we write $\SHkeff$ for the localising subcategory (in the sense of \cite{Nee01_f}*{3.2.6}) of $\SHk$
generated by the image of $\Sigma^\infty_{T}$. The inclusion $i_0:\SHkeff\to \SHk$ has a right adjoint $r_0:\SHk \to \SHkeff$ and we obtain a functor $f_0=i_0r_0:\SHk \to \SHk$ called the effective cover functor. More generally, we may consider the localising subcategories $ \SHS(d)$ and $\SHkeff(d)$ of respectively $\SHS$ and $\SHk$ generated by the images of $X\wedge T^d$ for $X$ smooth and $d\in\NN$. We obtain a commutative diagram of functors
\[
\begin{tikzcd}
\SHS(d) \ar[r,"{\Sigma_T^\infty}"] \ar[d,"{i_d}"'] & \SHkeff(d)\ar[d,"{i_d}"] \\
\SHS \ar[r,"{\Sigma_T^\infty}"'] & \SHk
\end{tikzcd}
\]
Both of the inclusions $i_d:\SHS(d)\to \SHS$ and $i_d:\SHkeff(d)\to \SHk$ admit right adjoints $r_d$ and we set $f_d=i_dr_d$ (on both categories). We obtain a sequence of endofunctors 
\[
\ldots\to f_d\to f_{d-1}\to\ldots \to f_1\to f_0
\]
and we define $s_0$, the \emph{zeroth slice functor}, as the cofiber of $f_1\to f_0$. More generally, we let $s_d$ be the cofiber of $f_{d+1}\to f_d$.

The following result is due to M. Levine (\cite{L08_f}*{Theorems~9.0.3 and 7.1.1}).

\begin{thm}[Levine]\label{thm:omegas0}
The following diagram of functors
\[
\begin{tikzcd}
\SHk \ar[r,"{\Omega_T^{\infty}}"] \ar[d,"{s_0}"'] & \SHS \ar[d,"{s_0}"] \\
\SHk \ar[r,"{\Omega_T^\infty}"'] & \SHS 
\end{tikzcd}
\]
is commutative.
\end{thm}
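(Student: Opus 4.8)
The final statement is Theorem~\ref{thm:omegas0} (Levine): the square relating $s_0$ on $\SHk$ and $s_0$ on $\SHS$ via $\Omega^\infty_T$ commutes. (In fact the excerpt attributes this to Levine and cites \cite{L08_f}*{Theorems~9.0.3 and 7.1.1}, so strictly one may quote it; but a proof plan in the spirit of this paper would re-derive it from the homotopy coniveau tower.)

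\textbf{Setup of the approach.} The plan is to use Levine's homotopy coniveau filtration to give models for the slice functors on both sides. First I would recall that for a $T$-spectrum $E \in \SHk$, Levine constructs a tower whose layers compute the slices $s_q E$, and the key point (\cite{L08_f}*{Theorem~7.1.1}) is that $\Omega^\infty_T$ of this tower for $E$ agrees with the analogous homotopy coniveau tower for the $S^1$-spectrum $\Omega^\infty_T E$. Concretely, one introduces the functors $\un^{(p)}$ obtained by applying the homotopy coniveau construction to the slices, evaluated on the cosimplicial semilocal schemes $\hat\Delta^\bullet_F$ over finitely generated fields $F/k$; this is exactly the object $(E \wedge \Gm^{\wedge n})(\hat\Delta^\bullet_F)$ appearing in the effectivity criterion of Section~\ref{sec:general-criterion}. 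The second ingredient (\cite{L08_f}*{Theorem~9.0.3}) is that the homotopy coniveau layer $c^0$ (the ``degree zero'' layer, i.e.\ sections supported everywhere) of an $S^1$-spectrum $F$ computes its zeroth slice $s_0 F$, and moreover this identification is natural.

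\textbf{Key steps, in order.} (1) Fix $E \in \SHk$ and form its homotopy coniveau tower $\dots \to E^{(q+1)} \to E^{(q)} \to \dots \to E^{(0)} = E$, where $E^{(q)}$ is the colimit over closed subsets of codimension $\ge q$ of sections with supports; by Levine's identification theorem the layer $E^{(q)}/E^{(q+1)}$ represents $s_q E$ (after suitable reindexing and $T$-desuspension). (2) Apply $\Omega^\infty_T$ and use that $\Omega^\infty_T$ commutes with the colimits and homotopy cofibers defining the tower (it is a right Quillen functor but on the relevant homotopy-invariant, Nisnevich-local models these colimits are homotopy colimits that $\Omega^\infty_T$ preserves — this is the technical heart, see below), obtaining that $\Omega^\infty_T$ of the tower of $E$ is the homotopy coniveau tower of $\Omega^\infty_T E$ in $\SHS$. (3) Pass to the degree-zero layers: the bottom layer of the homotopy coniveau tower of an $S^1$-spectrum $F$ is $s_0 F$ by \cite{L08_f}*{Theorem~9.0.3}. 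Combining (2) and (3) gives $\Omega^\infty_T(s_0 E) \simeq s_0(\Omega^\infty_T E)$, and chasing through the construction shows this isomorphism is the canonical comparison map, i.e.\ the square commutes.

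\textbf{Main obstacle.} The delicate point is step~(2): one must know that $\Omega^\infty_T$ is compatible with the homotopy coniveau construction. This is not formal because $\Omega^\infty_T$ is a priori only a right adjoint and need not commute with the filtered colimits over supports or with finite homotopy colimits at the level of model categories. Levine's resolution (the content of \cite{L08_f}*{\S 7}) is to show that the homotopy coniveau tower can be built functorially at the level of explicit cosimplicial-simplicial models (the cycle/support complexes on $\hat\Delta^\bullet$), that these models are already ``$\Omega^\infty_T$-ready,'' and that $\Omega^\infty_T$ applied to the $T$-spectrum level model literally produces the $S^1$-spectrum level model. So in a self-contained write-up I would: recall the explicit model for $E^{(q)}$ on $\hat\Delta^\bullet_F$; check it is stable under $\Omega^\infty_T$ degreewise; and invoke that the resulting comparison of towers is an equivalence by Levine's convergence and identification results. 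With that in hand, the commutativity of the square is a diagram chase through the identifications of layers with slices. Given the length and technical weight of this, in the paper it is entirely reasonable simply to cite \cite{L08_f}*{Theorems~9.0.3 and 7.1.1} as done, and I would present the above only as the conceptual justification.
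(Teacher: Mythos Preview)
Your proposal is appropriate: as you yourself note, the paper does not prove this statement at all but simply attributes it to Levine via \cite{L08_f}*{Theorems~9.0.3 and 7.1.1}. Your sketch of how those two theorems combine (the homotopy coniveau tower computes slices on both sides, and $\Omega^\infty_T$ intertwines the two towers) is a faithful summary of Levine's argument and goes beyond what the paper offers, which is purely a citation.
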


One essential difference between $\SHS$ and $\SHk$ is that in the latter case, the above sequence of functors extends to a sequence of endofunctors
\[
\ldots\to f_d\to f_{d-1}\to\ldots \to f_1\to f_0\to f_{-1}\to \ldots\to f_{-n}\to\ldots
\]
Let us recall the following well-known lemma for the sake of completeness.

\begin{lem} \label{lemm:eff}
Let $E \in \SHk$. Then $\hocolim_{n \to \infty} f_{-n} E \to E$ is an equivalence.
\end{lem}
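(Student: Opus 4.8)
\textbf{Proof plan for Lemma \ref{lemm:eff}.} The statement asserts that the natural map $\hocolim_{n} f_{-n} E \to E$ is an equivalence in $\SHk$. The plan is to exploit that $\SHk$ is compactly generated, so that a map is an equivalence as soon as it induces an isomorphism on all groups $[\Sigma^\infty_T X \wedge T^m[j], -]$ for $X \in \smk$ and $m, j \in \ZZ$; equivalently, since the objects $\Sigma^\infty_T X \wedge T^{m}$ for varying $m \in \ZZ$ (now allowing negative $m$, as we are in the stable category) together with their suspensions form a set of compact generators, it suffices to check that $[\Sigma^\infty_T X \wedge T^m[j], \hocolim_n f_{-n} E] \to [\Sigma^\infty_T X \wedge T^m[j], E]$ is a bijection. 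Because each $\Sigma^\infty_T X \wedge T^m$ is compact, the left-hand side is $\colim_n [\Sigma^\infty_T X \wedge T^m[j], f_{-n} E]$.

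The key point is then the following: for any fixed compact generator $G = \Sigma^\infty_T X \wedge T^m$, the object $G$ lies in $\SHkeff(-n)$ for $n$ large enough (namely $n \geq -m$, since $\SHkeff(d)$ is generated by $\Sigma^\infty_T Y \wedge T^d$ with $Y$ smooth, and $\SHkeff(d) \supseteq \SHkeff(d')$ for $d \leq d'$). For such $n$, the adjunction $(i_{-n}, r_{-n})$ gives $[G, f_{-n} E] = [G, i_{-n} r_{-n} E] = [G, r_{-n} E]_{\SHkeff(-n)} = [G, E]$, where the last identification uses that $i_{-n}$ is fully faithful and $G \in \SHkeff(-n)$. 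Moreover these identifications are compatible with the transition maps $f_{-n-1} E \to f_{-n} E$ in the colimit (the relevant triangle of right adjoints commutes). Hence the colimit $\colim_n [G, f_{-n} E]$ is eventually constant equal to $[G, E]$, and the induced map to $[G, E]$ is the identity in the limit. Running this over all compact generators $G$ and all shifts $[j]$ shows that $\hocolim_n f_{-n} E \to E$ induces isomorphisms on all groups detecting equivalences, hence is an equivalence.

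The only mild subtlety — and the one step deserving care — is the commutation of the homotopy colimit with the Hom-groups out of $G$, i.e.\ the compactness input: $[G, \hocolim_n f_{-n} E] \cong \colim_n [G, f_{-n} E]$. This is exactly the statement that $G$ is compact (small) in $\SHk$, which holds for $G = \Sigma^\infty_T X \wedge T^m$ by the standard construction of $\SHk$ (see Remark \ref{rem:generators} for the analogous fact in $\DA k$). There is nothing else hard here; the lemma is essentially a formal consequence of compact generation together with the elementary observation that every compact generator is eventually effective in the negative range. I would simply record these two observations and conclude.
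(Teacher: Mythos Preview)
Your proposal is correct and takes essentially the same approach as the paper's proof: test against compact generators, use compactness to pull $\Hom$ out of the homotopy colimit, and observe that each compact generator eventually lies in $\SHkeff(-n)$ so that the colimit stabilizes to $[G,E]$. The only cosmetic difference is that the paper uses the generators $\Sigma^\infty X_+[i]\wedge \Gm^{\wedge j}$ rather than $\Sigma^\infty_T X \wedge T^m[j]$, which are equivalent up to reindexing shifts.
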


\begin{proof}
It suffices to show that for any $X \in \smk$ and any $i, j \in \ZZ$ we get 
\[
\Hom_{\SHk}(\Sigma^\infty X_+ [i] \wedge \Gm^{\wedge j}, E) = \Hom_{\SHk}(\Sigma^\infty X_+ [i] \wedge \Gm^{\wedge j}, \hocolim f_{-n} E).
\] 
Since $\Sigma^\infty X_+ [i] \wedge \Gm^{\wedge j}$ is compact, the right hand side is equal to 
\[
\colim_n \Hom_{\SHk}(\Sigma^\infty X_+ [i] \wedge \Gm^{\wedge j}, f_{-n} E).
\] 
For $j > -n$, we have $\Sigma^\infty X_+ [i] \wedge \Gm^{\wedge j} \in \SHkeff(-n)$ and hence 
\[
\Hom_{\SHk}(\Sigma^\infty X_+ [i] \wedge \Gm^{\wedge j}, f_{-n} E) = \Hom_{\SHk}(\Sigma^\infty X_+ [i] \wedge \Gm^{\wedge j}, E).
\] 
The result follows.
\end{proof}

We now make use of the \emph{spectral enrichment} of $\SHk$. To explain it, consider $E \in \SHk$. This yields a presheaf $rE \in PSh(\smk)$ given by 
\[
(rE)(U) = \Hom_{\SHk}(\Sigma^\infty_T U_+, E).
\] 
Write $\Spt(\smk)$ for the homotopy category of \emph{spectral presheaves} (\cite{L06_f}*{\S 1.4}). Then there exists a functor $R: \SHk \to \Spt(\smk)$ such that $rE = \pi_0 RE$. Indeed $R$ is constructed as the following composite
\[ \SHk \xrightarrow{\Omega^\infty_T} \SHS \xrightarrow{R_0} \Spt(\smk), \]
where $R_0$ is the (fully faithful) right adjoint of the localization functor. Now note that if $P \in \Spt(\smk)$ is a spectral presheaf and $F/k$ is a finitely generated field extension, then we can make sense of the expression $P(\hat\Delta^\bullet_F) \in \SH$: it is obtained by choosing a bifibrant model of $P$ as a presheaf of spectra, and then taking the geometric realization of the induced simplicial diagram \cite{L06_f}*{1.5}. If $E \in \SHk$, then we abbreviate $(RE)(\hat\Delta^\bullet)$ to $E(\hat\Delta^\bullet)$. Similarly if $E \in \SHS$, then we abbreviate $(R_0E)(\hat\Delta^\bullet)$ to $E(\hat\Delta^\bullet)$

\begin{lem} \label{lemm:s0-vanishing}
Let $E \in \SHk$, where $k$ is a perfect field. Then $s_0(E) \simeq 0$ if and only if for all finitely generated fields $F/k$ we have $E(\hat\Delta^\bullet_F) \simeq 0$.
\end{lem}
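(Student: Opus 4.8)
The plan is to exploit Levine's identification of the zeroth slice with the zeroth slice of the $S^1$-spectrum $\Omega^\infty_T E$ (Theorem \ref{thm:omegas0}), together with the homotopy coniveau description of $s_0$ on $\SHS$ via the tower over the semilocal cosimplicial schemes $\hat\Delta^\bullet_F$. First I would reduce the statement about $s_0(E) \in \SHk$ to a statement about $s_0(\Omega^\infty_T E) \in \SHS$: since $\Omega^\infty_T$ is conservative on effective spectra (indeed $\SHkeff \to \SHS$ is the left adjoint to a localization and $s_0(E)$ is effective, being the cofiber of a map between effective spectra), we have $s_0(E) \simeq 0$ in $\SHk$ if and only if $s_0(E) \simeq 0$ in $\SHkeff$ if and only if $\Omega^\infty_T s_0(E) \simeq 0$ in $\SHS$, and by Theorem \ref{thm:omegas0} this last object is $s_0(\Omega^\infty_T E)$.

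Next I would recall from Levine's homotopy coniveau machine (\cite{L08_f}, as used for Theorem \ref{thm:omegas0}) that for any $G \in \SHS$ the zeroth slice $s_0(G)$ is determined by its ``values'' on finitely generated fields: concretely, $s_0(G)$ corresponds under the Bloch--Levine type description to the simplicial spectrum $F \mapsto G^{(0/1)}(\hat\Delta^\bullet_F)$ built from the homotopy coniveau tower evaluated on the semilocalizations $\hat\Delta^q_F$ of the algebraic simplices, and the degree-zero piece of that tower is simply $G(\hat\Delta^\bullet_F)$ because there is no room for higher coniveau in the zeroth graded piece (the codimension-one support contributions feed into $s_d$ for $d \geq 1$). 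Hence $s_0(G) \simeq 0$ if and only if $G(\hat\Delta^\bullet_F) \simeq 0$ for all finitely generated $F/k$. Applying this to $G = \Omega^\infty_T E$ and recalling that by our conventions $(R_0 \Omega^\infty_T E)(\hat\Delta^\bullet_F) = E(\hat\Delta^\bullet_F)$, we obtain precisely the asserted equivalence: $s_0(E) \simeq 0$ iff $E(\hat\Delta^\bullet_F) \simeq 0$ for all such $F$.

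The main obstacle is making the second step rigorous, i.e.\ pinning down exactly which model of $s_0$ on $\SHS$ to use and checking that its ``fiber over a field'' is literally $G(\hat\Delta^\bullet_F)$ rather than some shifted or twisted variant. Here one must invoke Levine's comparison between the slice filtration and the homotopy coniveau filtration on $\SHS$ (the heart of \cite{L08_f}, which is what underlies Theorem \ref{thm:omegas0}), extract the statement that the zeroth layer $s_0 G$ is computed by the cohomology theory $X \mapsto G(X)$ restricted to fields and realized over the cosimplicial scheme $\hat\Delta^\bullet$, and then use that a spectral presheaf whose value vanishes on every $\hat\Delta^\bullet_F$ has vanishing associated ``slice-zero'' object by a standard Nisnevich-descent and Postnikov-tower argument (the vanishing on fields propagates to vanishing of the relevant $0$-effective cover, since $\SHS$-homotopy sheaves are unramified and hence detected on fields, cf.\ the use of Corollary \ref{cor:restriction}-type arguments and \cite{Morel12_c}*{Theorem~2.11} in the MW-setting). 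Once this dictionary is in place the proof is short; everything else is the bookkeeping of conservativity of $\Omega^\infty_T$ on $\SHkeff$ and the identification of $R_0 \Omega^\infty_T$ with the enrichment functor $R$, both of which are formal.
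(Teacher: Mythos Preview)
Your proposal is correct and follows essentially the same approach as the paper: reduce via conservativity of $\Omega^\infty_T$ on $\SHkeff$ and Theorem~\ref{thm:omegas0} to the $S^1$-spectrum case, then use Levine's $E^{(0/1)}$ model for $s_0$ together with unramifiedness of strictly homotopy invariant sheaves to detect vanishing on fields, where $E^{(0/1)}(F) \simeq E(\hat\Delta^\bullet_F)$. One small slip: your parenthetical justification ``$\SHkeff \to \SHS$ is the left adjoint to a localization'' is garbled---the correct reason $\Omega^\infty_T$ is conservative on $\SHkeff$ is that its left adjoint $\Sigma^\infty_T$ has dense image there (which is exactly what the paper says), not that $\Omega^\infty_T$ itself is a left adjoint.
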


\begin{proof}
By definition, we have an exact triangle 
\[
f_1E\to f_0E\to s_0(E)\to f_1E[1]
\]
and it follows that $s_0(E)\in \SHkeff$. On the other hand, we have an adjunction
\[
\Sigma^\infty_T: \SHS \leftrightarrows \SHkeff: \Omega^\infty_T
\] 
and $\Omega^\infty_T$ is conservative on $\SHkeff$ (its left adjoint has dense image). Thus $s_0(E) \simeq 0$ if and only if $\Omega_T^{\infty}s_0(E)\simeq 0$, and the latter condition is equivalent to $s_0\Omega_T^{\infty}E\simeq 0$ by Theorem \ref{thm:omegas0}. By definition, we have $(\Omega_T^{\infty}E)(\hat \Delta^\bullet_F) = E(\hat \Delta^\bullet_F)$, and
we are thus reduced to proving that for $E\in \SHS$, we have $s_0(E)\simeq 0$ if and only if $E(\hat\Delta^\bullet_F)=0$ for $F/k$ finitely generated.

Let then $E\in \SHS$. We can (and will) choose a fibrant model for $E$, which we denote by the same letter. Now $s_0(E)$ is given by the $E^{(0/1)}$ construction of Levine (\cite{L08_f}*{Theorem 7.1.1}) and then $s_0(E) \simeq 0$ if and only if we have $E^{(0/1)}\simeq 0$. Since strictly homotopy invariant sheaves are unramified (\cite{Morel12_f}*{Example~2.3}), $E^{(0/1)}\simeq 0$ if and only if $E^{(0/1)}(F)\simeq 0$ for any finitely generated field extension $F/k$. Since $E^{(0/1)}(F) \simeq E(\hat \Delta^\bullet_F)$ (this argument is used for example in \cite{L08_f}*{proof of Lemma~5.2.1}), this concludes the proof.
\end{proof}

\begin{thm} \label{thm:eff-crit}
Let $E \in \SHk$, where $k$ is a perfect field. Then $E \in \SHkeff$ if and only if for all $n \ge 1$ and all finitely generated fields $F/k$, we have $(E \wedge \Gm^{\wedge n})(\hat \Delta^\bullet_F) \simeq 0$.
\end{thm}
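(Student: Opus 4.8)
The strategy is to reduce the effectivity of $E$ to the vanishing of all the slices $s_d(E)$ for $d<0$, and then to identify each of these negative slices, after a Tate twist, with a zeroth slice to which Lemma~\ref{lemm:s0-vanishing} applies. First I would recall that, by Lemma~\ref{lemm:eff}, the natural map $\hocolim_{n\to\infty} f_{-n}E \to E$ is an equivalence. Hence $E$ lies in $\SHkeff$ as soon as every $f_{-n}E$ lies in $\SHkeff$ --- indeed $\SHkeff$ is a localizing subcategory, so it is closed under homotopy colimits, and it will be enough to show $f_{-n}E \simeq f_0 E$ for all $n\ge 0$, or equivalently that the cofibers $f_{-n}E/f_{-n+1}E \simeq s_{-n}(E)$ vanish for all $n\ge 1$. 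Conversely, if $E\in\SHkeff$ then $f_0E\simeq E$ and all negative slices automatically vanish, and one checks directly (using that $\Sigma^\infty_T U_+\wedge\Gm^{\wedge n}$ with $n\ge 1$ sits in $\SHkeff(1)$, so that $f_1E$ already captures all such maps) that $(E\wedge\Gm^{\wedge n})(\hat\Delta^\bullet_F)$ computes a piece of $s_0(E\wedge\Gm^{\wedge n})$ which is forced to vanish. So the real content is the "if" direction.

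For the "if" direction, fix $n\ge 1$ and consider the Tate twist $E\wedge\Gm^{\wedge n}$. The key formal input is the compatibility of the slice filtration with $\wedge\Gm^{\wedge n}$: one has $f_d(E\wedge\Gm^{\wedge n}) \simeq f_{d+n}(E)\wedge\Gm^{\wedge n}$, since $\wedge\Gm^{\wedge n}$ is an equivalence of $\SHk$ carrying $\SHkeff(d)$ onto $\SHkeff(d+n)$ and hence commuting with the right adjoints $r_d$ up to the appropriate reindexing. Taking cofibers, this yields $s_d(E\wedge\Gm^{\wedge n}) \simeq s_{d+n}(E)\wedge\Gm^{\wedge n}$; in particular, with $d=0$,
\[
s_0(E\wedge\Gm^{\wedge n}) \simeq s_n(E)\wedge\Gm^{\wedge n} \qquad\text{and, with } d=-n,\quad s_{-n}(E)\wedge\Gm^{\wedge n}\simeq s_0\big(E\wedge\Gm^{\wedge n}\big)?
\]
Wait --- I need the negative slices, so I should instead twist by $\Gm^{\wedge(-n)}$, which is legitimate in $\SHk$ since $\Gm$ is invertible there: $s_{-n}(E) \simeq s_0(E\wedge\Gm^{\wedge n})\wedge\Gm^{\wedge(-n)}$. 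Since $\wedge\Gm^{\wedge(-n)}$ is an equivalence, $s_{-n}(E)\simeq 0$ if and only if $s_0(E\wedge\Gm^{\wedge n})\simeq 0$. Now apply Lemma~\ref{lemm:s0-vanishing} to the spectrum $E\wedge\Gm^{\wedge n}$: this last vanishing is equivalent to $(E\wedge\Gm^{\wedge n})(\hat\Delta^\bullet_F)\simeq 0$ for all finitely generated $F/k$, which is precisely the hypothesis. Hence $s_{-n}(E)\simeq 0$ for all $n\ge 1$, so $f_{-n}E\simeq f_0 E$ for all $n$, and by Lemma~\ref{lemm:eff} we conclude $E\simeq f_0 E\in\SHkeff$. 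For the converse direction, if $E\in\SHkeff$ then $E\wedge\Gm^{\wedge n}\in\SHkeff(n)\subset\SHkeff(1)$ for $n\ge 1$, so $f_1(E\wedge\Gm^{\wedge n})\simeq E\wedge\Gm^{\wedge n}$, whence $s_0(E\wedge\Gm^{\wedge n})\simeq 0$, and Lemma~\ref{lemm:s0-vanishing} gives the required vanishing of $(E\wedge\Gm^{\wedge n})(\hat\Delta^\bullet_F)$.

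\textbf{Main obstacle.} The one point requiring genuine care is the identity $f_d(E\wedge\Gm^{\wedge n})\simeq f_{d+n}(E)\wedge\Gm^{\wedge n}$ and the corresponding statement for slices. This is "well known" but deserves a precise justification: it rests on the fact that smashing with the invertible object $\Gm^{\wedge n}$ is a self-equivalence of $\SHk$ that restricts to equivalences $\SHkeff(d)\xrightarrow{\ \sim\ }\SHkeff(d+n)$ (immediate from the definition of these subcategories as localizing subcategories generated by $\Sigma^\infty_T X_+\wedge T^{d}$, together with $T\simeq S^1\wedge\Gm$), and hence intertwines the localization functors $f_d$ and $f_{d+n}$, hence also the slice functors $s_d$ and $s_{d+n}$. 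The only subtlety is bookkeeping with the degree shifts and checking that the structure maps of the towers match up; everything else is a direct application of Lemma~\ref{lemm:eff}, Lemma~\ref{lemm:s0-vanishing}, and the invertibility of $\Gm$ in $\SHk$.
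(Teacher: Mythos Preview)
Your proof is correct and follows essentially the same route as the paper: translate the hypothesis via Lemma~\ref{lemm:s0-vanishing} into $s_0(E\wedge\Gm^{\wedge n})\simeq 0$, use the compatibility of slices with $\Gm$-twists to identify this with $s_{-n}(E)\wedge\Gm^{\wedge n}\simeq 0$, and then invoke Lemma~\ref{lemm:eff} to conclude $E\simeq f_0 E$. One small slip: your displayed identity $f_d(E\wedge\Gm^{\wedge n})\simeq f_{d+n}(E)\wedge\Gm^{\wedge n}$ has the wrong sign (it should be $f_{d-n}(E)\wedge\Gm^{\wedge n}$, since $\wedge\,\Gm^{\wedge n}$ carries $\SHkeff(d-n)$ to $\SHkeff(d)$); you catch this in your ``Wait'' and land on the correct formula $s_0(E\wedge\Gm^{\wedge n})\simeq s_{-n}(E)\wedge\Gm^{\wedge n}$, but the ``main obstacle'' paragraph still cites the wrong version, so fix the sign there too.
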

\begin{proof}
By Lemma \ref{lemm:s0-vanishing}, we know that the condition is equivalent to $s_0(E \wedge \Gm^{\wedge n}) \simeq 0$. This is clearly necessary for $E \in \SHkeff$ and we are left to prove sufficiency.

Note that $s_0(E \wedge \Gm^{\wedge n}) \simeq s_{-n}(E) \wedge \Gm^{\wedge n}$. Thus our condition is equivalent to $s_{-n}(E) \simeq 0$ for all $n \ge 1$, or equivalently $f_0(E) \simeq f_{-n}(E)$ for all $n \ge 0$. Consequently we get $f_0(E) \simeq \hocolim_n f_{-n}(E)$. But this homotopy colimit is equivalent to $E$, by Lemma \ref{lemm:eff}. This concludes the proof.
\end{proof}

\begin{coro} \label{coro:main}
Let $\mathcal{D}$ be a symmetric monoidal category and let 
\[
M:\SHk  \leftrightarrows \mathcal{D}:U
\] 
be a pair of adjoint functors such that $M$ is symmetric monoidal.
Then $U(\un_{\mathcal{D}}) \in \SHkeff$ if and
only if $U(M \Gm^{\wedge n})(\hat \Delta^\bullet_F) \simeq 0$ for all $F/k$
finitely generated and all $n \ge 1$.
\end{coro}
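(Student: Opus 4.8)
The statement to prove is Corollary~\ref{coro:main}: given a symmetric monoidal adjunction $M:\SHk \leftrightarrows \mathcal{D}:U$ with $M$ symmetric monoidal, the spectrum $U(\un_{\mathcal{D}})$ is effective if and only if $U(M\Gm^{\wedge n})(\hat\Delta^\bullet_F)\simeq 0$ for all finitely generated $F/k$ and all $n\geq 1$. The natural approach is to reduce this directly to Theorem~\ref{thm:eff-crit} applied to the spectrum $E=U(\un_{\mathcal{D}})$. By that theorem, $U(\un_{\mathcal{D}})\in\SHkeff$ if and only if $(U(\un_{\mathcal{D}})\wedge \Gm^{\wedge n})(\hat\Delta^\bullet_F)\simeq 0$ for all $n\geq 1$ and all finitely generated $F/k$. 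So the entire content of the corollary is the identification of the spectral presheaf $U(\un_{\mathcal{D}})\wedge \Gm^{\wedge n}$ with $U(M\Gm^{\wedge n})$, at least after evaluation on $\hat\Delta^\bullet_F$ (in fact as objects of $\SHk$).

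\textbf{Key steps.} First I would establish a projection-formula type isomorphism: since $M$ is symmetric monoidal (in particular $M(\Gm^{\wedge n})$ is the image of the Tate object, and $\Gm^{\wedge n}$ is invertible in $\SHk$ so that $M(\Gm^{\wedge n})$ is invertible in $\mathcal{D}$), one has a natural isomorphism
\[
U(\un_{\mathcal{D}})\wedge \Gm^{\wedge n}\;\xrightarrow{\ \sim\ }\;U(M(\Gm^{\wedge n})).
\]
This is the standard fact that a symmetric monoidal left adjoint induces, on right adjoints, a lax monoidal structure and a projection morphism $U(A)\wedge X\to U(A\wedge MX)$ which is an isomorphism whenever $X$ (equivalently $MX$) is $\otimes$-invertible; one checks invertibility of the projection map by the usual argument using that $\Gm^{\wedge n}$ is invertible, so it suffices to check it for $X=\Gm^{\wedge -n}\wedge\Gm^{\wedge n}=\un$, where it is the identity. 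Taking $A=\un_{\mathcal{D}}$ gives $U(M(\Gm^{\wedge n}))\simeq U(\un_{\mathcal{D}})\wedge\Gm^{\wedge n}$. Second, I would feed this isomorphism into Theorem~\ref{thm:eff-crit}: the hypothesis $U(M\Gm^{\wedge n})(\hat\Delta^\bullet_F)\simeq 0$ is, via the isomorphism of spectra just established (which induces an equivalence of the associated spectral presheaves, hence of their realizations along $\hat\Delta^\bullet_F$), exactly the condition $(U(\un_{\mathcal{D}})\wedge\Gm^{\wedge n})(\hat\Delta^\bullet_F)\simeq 0$. Theorem~\ref{thm:eff-crit} then gives the equivalence with $U(\un_{\mathcal{D}})\in\SHkeff$ directly, in both directions.

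\textbf{The main obstacle.} The genuinely non-formal input is all contained in Theorem~\ref{thm:eff-crit}, which is already proved in the excerpt; so for the corollary itself the only real care needed is the compatibility of the projection isomorphism with the spectral enrichment. Concretely, one must check that the isomorphism $U(\un_{\mathcal{D}})\wedge\Gm^{\wedge n}\simeq U(M\Gm^{\wedge n})$ in $\SHk$ induces an equivalence after applying the functor $R:\SHk\to\Spt(\smk)$ and then evaluating on $\hat\Delta^\bullet_F$; this is automatic since $R$ is a functor and evaluation/realization on $\hat\Delta^\bullet_F$ is functorial, so any isomorphism in $\SHk$ is sent to a weak equivalence of spectra. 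Thus there is essentially no obstacle beyond bookkeeping: the corollary is a direct application of Theorem~\ref{thm:eff-crit} once one records that $\Gm$-suspension of $U(\un_{\mathcal{D}})$ computes $U$ of the corresponding Tate twist in $\mathcal{D}$, which follows from $M$ being monoidal and $\Gm$ being invertible. I would write the proof in two short paragraphs: one establishing the projection isomorphism, one invoking Theorem~\ref{thm:eff-crit}.
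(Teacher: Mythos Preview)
Your proposal is correct and matches the paper's approach essentially line for line: set $E=U(\un_{\mathcal D})$, apply Theorem~\ref{thm:eff-crit}, and identify $E\wedge\Gm^{\wedge n}\simeq U(M\Gm^{\wedge n})$ via a projection-formula argument using invertibility of $\Gm^{\wedge n}$ (the paper isolates this last step as Lemma~\ref{lemm:inv}, proved by a Yoneda/duality argument rather than your reduction to $X=\un$, but both are standard). Your remark on compatibility with the spectral enrichment is also correct and the paper does not even bother to spell it out.
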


\begin{proof}
Let $E = U(\un_{\mathcal{D}})$. Note that by Lemma \ref{lemm:inv} below, we have $U(M(\Gm^{\wedge n})) \simeq E \wedge \Gm^{\wedge n}$. Thus the result reduces to Proposition \ref{thm:eff-crit}.
\end{proof}

For the convenience of the reader, we include a proof of the following well-known result.

\begin{lem} \label{lemm:inv}
Let $M: \mathcal{C} \leftrightarrows \mathcal{D}: U$ be an adjunction of
symmetric monoidal categories, with $M$ symmetric monoidal. Then for any
rigid (e.g. invertible) object $G \in \mathcal{C}$ and any $E \in \mathcal{D}$, there is a
canonical isomorphism $U(E \wedge MG) \simeq U(E) \wedge G$.
\end{lem}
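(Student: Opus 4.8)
The plan is to prove Lemma~\ref{lemm:inv} by the Yoneda lemma, chaining together the relevant adjunction isomorphisms. First I would recall the formal input: since $M$ is strong symmetric monoidal and $M\dashv U$, the right adjoint $U$ acquires a canonical lax symmetric monoidal structure (doctrinal adjunction). Combining this lax structure with the unit $\eta\colon \mathrm{id}_{\mathcal C}\to UM$ produces a natural "projection" morphism
\[
\lambda_{E,G}\colon U(E)\wedge G \xrightarrow{\ \mathrm{id}\wedge\eta_G\ } U(E)\wedge UM(G) \longrightarrow U\big(E\wedge M(G)\big),
\]
which is the candidate canonical isomorphism; for the application (Corollary~\ref{coro:main}) only the existence of some natural isomorphism is actually needed, so identifying the abstract isomorphism with $\lambda_{E,G}$ is optional.

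Next I would bring in rigidity. Because $G$ is rigid in $\mathcal C$ with dual $G^\vee$ (with $G^\vee=G^{-1}$ when $G$ is invertible, which is the case for $G=\Gm^{\wedge n}$ in $\SHk$), the functor $-\wedge G$ admits $-\wedge G^\vee$ as a two-sided adjoint, so there are natural isomorphisms $\mathcal C(A, B\wedge G)\cong \mathcal C(A\wedge G^\vee, B)$. Moreover a strong monoidal functor preserves duals: applying $M$ to the evaluation and coevaluation maps of $G$ exhibits $M(G^\vee)$ as a dual of $M(G)$, so $M(G)$ is rigid in $\mathcal D$ with $(MG)^\vee\cong M(G^\vee)$, and hence $-\wedge M(G)$ likewise admits $-\wedge M(G^\vee)$ as a two-sided adjoint.

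Then, for an arbitrary test object $X\in\mathcal C$, I would compute
\[
\mathcal C\big(X,\,U(E)\wedge G\big)\ \cong\ \mathcal C\big(X\wedge G^\vee,\,U(E)\big)\ \cong\ \mathcal D\big(M(X\wedge G^\vee),\,E\big)\ \cong\ \mathcal D\big(M(X)\wedge M(G^\vee),\,E\big)
\]
\[
\cong\ \mathcal D\big(M(X)\wedge (MG)^\vee,\,E\big)\ \cong\ \mathcal D\big(M(X),\,E\wedge M(G)\big)\ \cong\ \mathcal C\big(X,\,U(E\wedge M(G))\big),
\]
using successively the $(-\wedge G^\vee)\dashv(-\wedge G)$ adjunction, $M\dashv U$, strong monoidality of $M$, the identification $(MG)^\vee\cong M(G^\vee)$, the $(-\wedge (MG)^\vee)\dashv(-\wedge MG)$ adjunction, and $M\dashv U$ once more. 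Each isomorphism is natural in $X$, so Yoneda yields $U(E)\wedge G\cong U(E\wedge M(G))$. Finally I would (optionally) check this Yoneda isomorphism equals $\lambda_{E,G}$: unwinding the chain shows the map $X\to U(E\wedge MG)$ corresponding to $x\colon X\to U(E)\wedge G$ is $\lambda_{E,G}\circ x$, since each step replaces a unit/counit by its mate and the composite of mates is exactly the lax structure on $U$ precomposed with $\eta_G$. The only mildly delicate point is this last bookkeeping step; everything else is routine and entirely formal, valid in any closed symmetric monoidal (triangulated) category, so I do not expect a genuine obstacle.
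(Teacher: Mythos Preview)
Your proof is correct and is essentially identical to the paper's: both arguments use Yoneda, the adjunction $M\dashv U$, strong monoidality of $M$, and rigidity of $G$ together with the fact that $M$ preserves duals, chaining the same sequence of isomorphisms (the paper simply runs the chain starting from $\mathcal C(F, U(E\wedge MG))$ rather than from $\mathcal C(X, U(E)\wedge G)$). Your additional identification of the isomorphism with the explicit projection map $\lambda_{E,G}$ is a nice bonus that the paper does not include.
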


\begin{proof}
Let $DG$ be the dual object of $G$. As $M$ is symmetric monoidal, we see that $MG$ also admits a dual object, namely $M(DG)$.
For any object $F \in \mathcal{C}$, we get 
$\Hom_{\mathcal C}(F, U(E \wedge MG)) 
= \Hom_{\mathcal D}(MF, E \wedge MG) 
= \Hom_{\mathcal D}(MF\wedge M(DG), E) 
= \Hom_{\mathcal D}(M(F \wedge DG), E) 
= \Hom_{\mathcal C}(F \wedge DG, UE) 
= \Hom_{\mathcal C}(F, UE \wedge G)$. 
Thus we conclude by the Yoneda lemma.
\end{proof}

We can simplify this criterion in a special case.

\begin{coro} \label{corr:main-simplified}
Consider the following diagram of functors
\[
\begin{tikzcd}
[column sep=30pt,row sep=24pt]
\SHS \ar[r,shift left=2pt,"{M_0}"] \ar[d,shift left=2pt,"{\Sigma^\infty_T}"']
 & \mathcal D_0 \ar[d,"L"]
     \ar[l,shift left=2pt,"{U_0}"] \\
\SHk \ar[r,shift left=2pt,"M"]
 & \mathcal D \ar[l,shift left=2pt,"U"]
\end{tikzcd}
\]
where the rows are adjunctions, $M_0, M$ and $L$ are symmetric monoidal and $LM_0 \simeq M\Sigma^\infty_T$.
Suppose furthermore that $L$ is fully faithful and has a right adjoint $R$.

Then $U(\un_{\mathcal D}) \in \SHkeff$ if and only if $U_0(M_0 \Gm^{\wedge n})(\hat \Delta^\bullet_F) \simeq 0$ for $F$ as in Corollary \ref{coro:main}.
\end{coro}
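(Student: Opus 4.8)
The plan is to reduce Corollary \ref{corr:main-simplified} to Corollary \ref{coro:main} by identifying $U$ with a composite involving $U_0$. First I would observe that since $L$ is fully faithful, the counit $LR \to \id_{\mathcal D}$ need not be invertible, but the unit $\id_{\mathcal D_0} \to RL$ is; and since $LM_0 \simeq M\Sigma^\infty_T$, taking right adjoints of both sides gives a natural isomorphism $U_0 R \simeq \Omega^\infty_T U$ (here $\Omega^\infty_T$ is the right adjoint of $\Sigma^\infty_T$). The key point is that $U$ itself factors, up to the effective cover, through $U_0 R$: more precisely, I claim $U(\un_{\mathcal D})$ is effective if and only if $\Omega^\infty_T U(\un_{\mathcal D}) \simeq U_0 R(\un_{\mathcal D}) = U_0(\un_{\mathcal D_0})$ "detects" effectivity in the appropriate sense. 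Concretely, by Corollary \ref{coro:main}, $U(\un_{\mathcal D}) \in \SHkeff$ if and only if $U(M\Gm^{\wedge n})(\hat\Delta^\bullet_F) \simeq 0$ for all $n \ge 1$ and all finitely generated $F/k$.

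The main step is then to rewrite $U(M\Gm^{\wedge n})(\hat\Delta^\bullet_F)$ in terms of $U_0$. Using Lemma \ref{lemm:inv}, $U(M\Gm^{\wedge n}) \simeq U(\un_{\mathcal D}) \wedge \Gm^{\wedge n}$, and $M\Gm^{\wedge n} = M\Sigma^\infty_T(\Gm^{\wedge n}) \simeq LM_0(\Gm^{\wedge n})$. Now the evaluation $(-)(\hat\Delta^\bullet_F)$ only sees the underlying spectral presheaf, i.e.\ it factors through $R_0 \Omega^\infty_T$ on $\SHk$; and $\Omega^\infty_T U(M\Gm^{\wedge n}) \simeq \Omega^\infty_T U(\un_{\mathcal D}) \wedge \Gm^{\wedge n} \simeq U_0 R(\un_{\mathcal D}) \wedge \Gm^{\wedge n}$. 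Applying Lemma \ref{lemm:inv} again in the adjunction $(M_0, U_0)$ together with $R(\un_{\mathcal D}) \simeq \un_{\mathcal D_0}$ (which holds because $L$ is symmetric monoidal and fully faithful, so $RL \simeq \id$ and $R$ is lax monoidal with $R(\un_{\mathcal D}) = R(L\un_{\mathcal D_0}) \simeq \un_{\mathcal D_0}$), I get $\Omega^\infty_T U(M\Gm^{\wedge n}) \simeq U_0(M_0\Gm^{\wedge n})$ as spectral presheaves. Hence $U(M\Gm^{\wedge n})(\hat\Delta^\bullet_F) \simeq U_0(M_0\Gm^{\wedge n})(\hat\Delta^\bullet_F)$, and the criterion of Corollary \ref{coro:main} translates verbatim into the stated condition.

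The step I expect to be the main obstacle is making the identification $R(\un_{\mathcal D}) \simeq \un_{\mathcal D_0}$ and, more delicately, ensuring that the evaluation functor $E \mapsto E(\hat\Delta^\bullet_F)$ on $\SHk$ genuinely only depends on $\Omega^\infty_T E$ and is compatible with the $\Gm$-smashing used above — this is implicit in the construction $R = R_0 \circ \Omega^\infty_T$ recalled before Lemma \ref{lemm:s0-vanishing}, but one must check that smashing with the invertible object $\Gm^{\wedge n}$ in $\SHk$ corresponds, after applying $\Omega^\infty_T$ and $R_0$, to the $S^1$-spectral operation used to form $U_0(M_0\Gm^{\wedge n})$. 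Once these naturality bookkeeping issues are settled, the proof is a formal diagram chase: it is essentially Corollary \ref{coro:main} transported across the fully faithful embedding $L$, with the role of "$U(M\Gm^{\wedge n})$ evaluated on $\hat\Delta^\bullet_F$" played by "$U_0(M_0\Gm^{\wedge n})$ evaluated on $\hat\Delta^\bullet_F$."
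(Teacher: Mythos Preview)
Your overall strategy is the same as the paper's, and all the key ingredients are present: the identification $\Omega^\infty_T U \simeq U_0 R$ from taking right adjoints of $LM_0 \simeq M\Sigma^\infty_T$, the equivalence $RL \simeq \id$ from full faithfulness of $L$, the fact that the evaluation $(-)(\hat\Delta^\bullet_F)$ on $\SHk$ factors through $\Omega^\infty_T$ by construction, and the reduction to Corollary~\ref{coro:main}. The paper's proof is exactly this chain: $U(M\Gm^{\wedge n}) \simeq U(LM_0\Gm^{\wedge n})$, hence $\Omega^\infty_T U(M\Gm^{\wedge n}) \simeq U_0 R L M_0 \Gm^{\wedge n} \simeq U_0 M_0 \Gm^{\wedge n}$, and therefore the two evaluations on $\hat\Delta^\bullet_F$ agree.

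Where you create unnecessary trouble for yourself is the detour through Lemma~\ref{lemm:inv}. Passing to $U(\un_{\mathcal D}) \wedge \Gm^{\wedge n}$ in $\SHk$ and then applying $\Omega^\infty_T$ forces you to want a projection formula $\Omega^\infty_T(E \wedge \Gm^{\wedge n}) \simeq \Omega^\infty_T(E) \wedge \Gm^{\wedge n}$, and then to undo this on the $\SHS$ side via Lemma~\ref{lemm:inv} for $(M_0,U_0)$. Neither step is available: $\Gm^{\wedge n}$ is invertible in $\SHk$ but is \emph{not} rigid in $\SHS$, so Lemma~\ref{lemm:inv} does not apply to the $(M_0,U_0)$ adjunction, and $\Omega^\infty_T$ has no reason to commute with $-\wedge \Gm^{\wedge n}$. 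You correctly flag this as the obstacle, but the resolution is simply not to take the detour: use $M\Gm^{\wedge n} \simeq L M_0 \Gm^{\wedge n}$ to stay in the image of $L$ and apply $\Omega^\infty_T U L \simeq U_0 R L \simeq U_0$ directly to $M_0\Gm^{\wedge n}$. With that adjustment your argument is exactly the paper's.
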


\begin{proof}
First, observe that there is an isomorphism $\Omega^\infty_T U \simeq U_0 R$ since $LM_0 \simeq M\Sigma^\infty_T$. Moreover, $R L \simeq \id$ since $L$ is assumed to be fully faithful. For any $E \in \mathcal{D}_0$, we then get $\Omega^\infty_T ULE \simeq U_0 R L E \simeq U_0 E$. Next,
\[
(ULE)(\hat \Delta^\bullet_F) = (\Omega^\infty_T ULE)(\hat \Delta^\bullet_F) \simeq (U_0 E)(\hat \Delta^\bullet_F)
\] 
where the first equality is by definition.

By Corollary \ref{coro:main}, we have $U(\un_{\mathcal D}) \in \SHkeff$ if and only if $U(M \Gm^{\wedge n})(\hat \Delta^\bullet_F) \simeq 0$ for $F$ as stated. Note that $M \Gm^{\wedge n} \simeq LM_0  \Gm^{\wedge n}$ by assumption. Hence by the first paragraph, we find that $U(M \Gm^{\wedge n})(\hat \Delta^\bullet_F) \simeq (U_0 M_0 \Gm^{\wedge n})(\hat \Delta^\bullet_F)$. This concludes the proof.
\end{proof}

%%%%%%%%%%%%%%%%%%%%%%%%%%%%%%%%%%%%%%%%%%%%%%%%%%%%%%%%%%%%%%%%%%%%%%%%%%%%%%%%%%%%%%%%%%%%%%%%%%%%%%%%%%%%%%%%

\section{Application to MW-Motives}\label{sec:AppMWmotives}

In this section, we apply the result of the previous section to the category of MW-motives. We have a diagram of functors 
\[
\begin{tikzcd}
[column sep=30pt,row sep=24pt]
\SHS \ar[r,shift left=2pt,"N"] \ar[d,"{\Sigma^\infty_T}"']
 & \DAek \ar[r,shift left=2pt,"{\derL \tilde \gamma^*}"] \ar[d,"{\Sigma^\infty_T}"]
     \ar[l,shift left=2pt,"K"]
 & \DMtek\ar[d,"{\Sigma^\infty_T}"']
     \ar[l,shift left=2pt,"{\gamma_*}"] \\
\SHk \ar[r,shift left=2pt,"N"]
 & \DA k\ar[r,shift left=2pt,"{\derL \tilde \gamma^*}"]
		\ar[l,shift left=2pt,"K"]
 & \DMtk
		\ar[l,shift left=2pt,"{\gamma_*}"]
\end{tikzcd}
\]
where the vertical functors are given by $T$-stabilization, the adjunctions in the right-hand square are those discussed in Section \ref{sec:recollections}, and the adjunctions in the left-hand square are derived from the classical Dold-Kan correspondence (see \cite{CD12_f}*{5.2.25} for the unstable version, and \cite{CD12_f}*{5.3.35} for the $\PP^1$-stable version). Both $N$ and $\derL \tilde \gamma^*$ commute with $T$-stabilization, and the stabilization functor 
\[
\Sigma_T^\infty: \DMtek\to \DMtk
\]
is fully faithful by \chcancellation, Corollary~\ref{cor:embedding}. It follows that the diagram
\[
\begin{tikzcd}
[column sep=30pt,row sep=24pt]
\SHS \ar[r,shift left=2pt,"{\derL \tilde \gamma^*N}"] \ar[d,"{\Sigma^\infty_T}"'] & \DMtek \ar[d,"{\Sigma^\infty_T}"] \ar[l,shift left=2pt,"{K\gamma_*}"] \\
\SHk\ar[r,shift left=2pt,"{\derL \tilde \gamma^*N}"]
 & \DMtk \ar[l,shift left=2pt,"{K\gamma_*}"]
\end{tikzcd}
\]
satisfies the assumptions of Corollary \ref{corr:main-simplified}. We can thus apply Theorem \ref{thm:rational} to obtain the following result, where $M:=\derL \tilde \gamma^*N$ and $U:=K\gamma_*$.

\begin{coro} \label{coro:effective}
In the stabilized adjunction $M: \SHk \leftrightarrows  \DMtkZ: U$,
we have $U(\un) \in \SHkeff$.
\end{coro}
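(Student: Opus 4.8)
The plan is to apply the machinery assembled in the preceding sections, essentially reducing the statement to Theorem~\ref{thm:rational} via the effectivity criterion of Corollary~\ref{corr:main-simplified}. Recall that the diagram
\[
\begin{tikzcd}
[column sep=30pt,row sep=24pt]
\SHS \ar[r,shift left=2pt,"{\derL \tilde \gamma^*N}"] \ar[d,"{\Sigma^\infty_T}"'] & \DMtekZ \ar[d,"{\Sigma^\infty_T}"] \ar[l,shift left=2pt,"{K\gamma_*}"] \\
\SHk\ar[r,shift left=2pt,"{\derL \tilde \gamma^*N}"]
 & \DMtkZ \ar[l,shift left=2pt,"{K\gamma_*}"]
\end{tikzcd}
\]
has already been observed to satisfy the hypotheses of Corollary~\ref{corr:main-simplified}: the horizontal functors form adjunctions, $N$, $\derL\tilde\gamma^*$ and $\Sigma^\infty_T$ are all symmetric monoidal, $N$ and $\derL\tilde\gamma^*$ commute with $T$-stabilization, and $\Sigma^\infty_T\colon\DMtekZ\to\DMtkZ$ is fully faithful by the cancellation theorem (\chcancellation, Corollary~\ref{cor:embedding}); its right adjoint is $\Omega^\infty_T$. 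Writing $M=\derL\tilde\gamma^*N$ and $U=K\gamma_*$ for the stabilized adjunction, and $M_0=\derL\tilde\gamma^*N$, $U_0=K\gamma_*$ for its effective counterpart, Corollary~\ref{corr:main-simplified} tells us that $U(\un)\in\SHkeff$ if and only if $(U_0 M_0\Gm^{\wedge n})(\hat\Delta^\bullet_F)\simeq 0$ for every $n\ge 1$ and every finitely generated field extension $F/k$.

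So the remaining point is to identify $U_0 M_0\Gm^{\wedge n}$ with the object controlled by Theorem~\ref{thm:rational}. First I would note that $M_0\Gm^{\wedge n}=\derL\tilde\gamma^* N(\Gm^{\wedge n})$ is, up to the usual $\Aone$-equivalence, the effective MW-motive $\tZpx n[n]$, equivalently (a direct factor of) $\tZcbx n$; this is exactly the relation recorded in Section~\ref{sec:recollections}, since $\derL\tilde\gamma^*$ sends the representable object to the representable MW-motivic complex and $N$ is the Dold--Kan functor. Applying $U_0=K\gamma_*$ then produces the $S^1$-spectrum underlying the MW-motivic complex $\tZpx n$, and evaluating its spectral-presheaf model on the semi-local cosimplicial scheme $\hat\Delta^\bullet_F$ gives precisely $\mathrm{L}_{\Aone}(\tZpx n)(\hat\Delta^\bullet_F)$ in the sense of Corollaries~\ref{coro:compute-semilocal-suslin} and~\ref{coro:compute-semilocal-suslin-2}. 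Here one uses that $\gamma_*$ (the forgetful functor on sheaves with MW-transfers, suitably derived) computes Nisnevich hypercohomology, together with Levine's identification $E^{(0/1)}(F)\simeq E(\hat\Delta^\bullet_F)$ and Theorem~\ref{thm:omegas0} that are already built into the proof of Corollary~\ref{corr:main-simplified}.

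Granting that identification, Theorem~\ref{thm:rational} gives $\mathrm{L}_{\Aone}(\tZpx n)(\hat\Delta^\bullet_F)\simeq 0$ for all $n\ge 1$, which is exactly the vanishing required by Corollary~\ref{corr:main-simplified}, and we conclude $U(\un)\in\SHkeff$. The only genuine content of the argument, and the step I expect to require the most care, is the bookkeeping identification $U_0 M_0\Gm^{\wedge n}\simeq \mathrm{L}_{\Aone}(\tZpx n)$ as objects of $\SH$ evaluated on $\hat\Delta^\bullet_F$: one must pass correctly between the motivic model category, its $S^1$-stabilization, the spectral enrichment $R$ of $\SHk$, and the Suslin-complex description, invoking Corollary~\ref{coro:compute-semilocal-suslin-2} (which handles the passage from a presheaf to its associated Nisnevich-local, $\Aone$-local replacement over semi-local bases) and Lemma~\ref{lem:motivic-localization}. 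Everything else is formal: the effectivity criterion does the heavy lifting, and Theorem~\ref{thm:rational}, itself resting on Suslin's rational-contractibility argument (Proposition~\ref{prop:suslin}) applied to $\MWprep(\Gm^{\times n})/\MWprep(1,\dots,1)$ via Corollary~\ref{cor:gmcase}, supplies the required vanishing.
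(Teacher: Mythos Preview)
Your approach is the same as the paper's: apply Corollary~\ref{corr:main-simplified} to the diagram you recall, and feed in Theorem~\ref{thm:rational} for the required vanishing. You also correctly isolate the one non-formal step, namely the identification of $(U_0 M_0\Gm^{\wedge n})(\hat\Delta^\bullet_F)$ with the object controlled by Theorem~\ref{thm:rational}.

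However, your justification for that step is slightly off. You cite Corollary~\ref{coro:compute-semilocal-suslin-2} and Lemma~\ref{lem:motivic-localization}, but those results live entirely on the chain-complex side and are already absorbed into the proof of Theorem~\ref{thm:rational}; they compare $\Cstar F(\hat\Delta^\bullet_F)$ with $(\mathrm{L}_{\Aone}F_\nis)(\hat\Delta^\bullet_F)$ as complexes of abelian groups. What remains after Theorem~\ref{thm:rational} is a different compatibility: for a fibrant $E\in\DMtekZ$, one needs $K_s\big(E(\hat\Delta^\bullet_F)\big)\simeq (U_0 E)(\hat\Delta^\bullet_F)$ in $\SH$, where $K_s:\Der(\Ab)\to\SH$ is the stable Dold--Kan functor. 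This is the passage from complexes to spectra, and it is not addressed by the lemmas you cite. The paper resolves it by observing that $K_s$ preserves filtered homotopy colimits and geometric realizations (equivalently, arbitrary homotopy colimits, since $K_s$ is stable and its left adjoint preserves compact generators), which is exactly what is needed to commute $K_s$ past the formation of $E(\hat\Delta^\bullet_F)$. Once you supply this, your argument is complete and coincides with the paper's.
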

\begin{proof}
Having Theorem \ref{thm:rational} and Corollary \ref{corr:main-simplified} at hand, the only subtle point is to show the following: if $E \in \DMtek$ has a fibrant model still denoted by $E$, then $K_s(E(\hat\Delta^\bullet_F)) \simeq (U_0 E)(\hat\Delta^\bullet_F)$, where $U_0 = K\gamma^*$. Here $K_s: \Der(\Ab) \to \SH$ denotes the classical stable Dold-Kan correspondence. Essentially this requires us to know that $K_s$ preserves homotopy colimits (at least we need filtered homotopy colimits and geometric realizations). This is well-known. In fact since this is a stable functor, it preserves all homotopy colimits if and only if it preserves arbitrary sums, if and only if its left adjoint preserves the compact generator(s), which is clear.
\end{proof}

We are now in position to prove our main result. To this end, recall that the motivic spectrum of abstract generalized motivic cohomology $\HtZ \in \SHk$ was defined in \cite{Bac17_f}*{\S 4} as the effective cover of the homotopy module of Milnor-Witt $K$-theory. Equivalently, $\HtZ$ is the effective cover of the homotopy module $\{\piaone_{n,n}(\mathbb{S})\}_n$, where $\mathbb{S}$ is the sphere spectrum.

\begin{thm} \label{thm:comparisonUHt}
Let $k$ be an infinite perfect field of exponential characteristic $e \ne 2$ and let 
\[
M: \SHk \leftrightarrows  \DMtk: U
\]
be the above adjunction. Then the spectrum $U(\un)$ representing MW-motivic cohomology with $\ZZ$-coefficients is canonically isomorphic to the spectrum $\HtZ$ representing abstract generalized motivic cohomology with $\ZZ$-coefficients. In particular, $U(\un) \in \SHkeff$.
\end{thm}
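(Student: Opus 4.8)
The plan is to identify $U(\un)$ and $\HtZ$ by recognizing both as effective covers of the same homotopy module. First I would invoke Corollary \ref{coro:effective}, which already tells us that $U(\un) \in \SHkeff$. So the real content is to compute $U(\un)$ up to this effectivity and to pin down its homotopy module. Recall that, by \chspectra, the spectrum $U(\un) = K\gamma_*(\un)$ (with $\ZZ$-coefficients) is precisely the MW-motivic cohomology ring spectrum $\sHMW\ZZ$ viewed in $\SHk$ via the Dold–Kan correspondence, and its bigraded homotopy sheaves compute MW-motivic cohomology groups.

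The key input is the comparison theorem proved in \chcomparison: for any finitely generated field extension $L/k$ and any $n \in \ZZ$, the map $\Phi_L\colon \KMW_n(L) \to \HMW^{n,n}(L,\ZZ)$ is an isomorphism, hence $\underline{\H}^n(\tZpx n) \cong \sKMW_n$ as Nisnevich sheaves (the diagonal part of MW-motivic cohomology is unramified Milnor–Witt $K$-theory). Translating this into the language of homotopy modules, I would show that the homotopy module $\piaone_{*,*}$ of the effective spectrum $U(\un)$ is canonically isomorphic to $\{\piaone_{n,n}(\mathbb{S})\}_n = \sKMW_*$, the homotopy module of Milnor–Witt $K$-theory. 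Indeed, the unit map $\mathbb{S} \to U(\un)$ (coming from the ring spectrum structure, or from $\psi$ in \chspectra, Section \ref{sec:MW-regulators}) induces on $\piaone_{n,n}$ the natural map $\KMW_n \to \HMW^{n,n} \cong \KMW_n$, which the comparison theorem identifies with an isomorphism.

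The second ingredient is a general uniqueness statement: any two effective spectra in $\SHkeff$ whose zeroth slices (equivalently, whose homotopy modules truncated appropriately) agree, and which are concentrated in the heart of the effective homotopy $t$-structure, are canonically isomorphic. More precisely, $\HtZ$ is \emph{by definition} the effective cover of the homotopy module $\sKMW_*$, i.e. $\HtZ = f_0(M_{-1}(\sKMW_*))$ where $M_{-1}$ denotes the inverse of $\piaone_{0,*}$ on the heart of $\SHk$. Since $U(\un) \in \SHkeff$ by Corollary \ref{coro:effective}, and since $U(\un)$ lies in the heart of the effective homotopy $t$-structure (its effective homotopy sheaves vanish outside degree zero — this follows from Proposition \ref{prop:explicit} of \chdmt together with the cancellation theorem, as the relevant complexes $\tZpx n$ have homology sheaves concentrated correctly), the unit map $\mathbb{S} \to U(\un)$ factors through $\HtZ = f_0 \tau_{\le 0}^{\heartsuit} U(\un)$, giving a canonical map $\HtZ \to U(\un)$ which is an isomorphism on $\piaone_{*,*}$ and hence (both sides being effective and in the heart) an equivalence.

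The main obstacle I anticipate is the second step: carefully setting up the comparison between the "effective homotopy $t$-structure" heart picture used to define $\HtZ$ in \cite{Bac17_f} and the concrete description of $U(\un)$ via MW-motivic complexes, and in particular verifying that $U(\un)$ is genuinely in the heart (not just effective). This requires knowing that the homotopy sheaves $\underline{\pi}_n^{\Aone}(U(\un))_*$ vanish for $n \ne 0$, which should follow from the identification $U(\un) \simeq \HtZ$ being compatible with the $t$-structures, but to avoid circularity one wants to extract this directly: the slices $s_q(U(\un))$ for $q \ge 1$ are computed by $\mathrm{L}_{\Aone}(\tZpx q)(\hat\Delta^\bullet_F)$, which vanish by Theorem \ref{thm:rational}, giving effectivity; and the $t$-structure statement then comes from the fact that $U(\un) = K\gamma_*(\un)$ where $\un \in \DMtek$ is the unit, whose effective MW-motivic homotopy sheaves are concentrated in degree zero by the structure of $\tZpx 0 = \sKMW_0$. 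Once effectivity and the heart property are in hand, the comparison map and its being an isomorphism are formal consequences of \chcomparison.
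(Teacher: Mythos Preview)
Your proposal is correct and follows essentially the same route as the paper: prove $U(\un)\in\SHkeff$ via Corollary~\ref{coro:effective}, show $U(\un)$ lies in the heart of the effective homotopy $t$-structure, and identify it with $\HtZ$ via the unit map $\un\to U(\un)$ by checking the induced map on $\piaone_{0,0}$. Two minor points: you invoke the full comparison theorem $\KMW_n\cong\HMW^{n,n}$ from \chcomparison, but the paper only needs the $n=0$ case (i.e.\ $\HMW^{0,0}(X,\ZZ)=\sKMW_0(X)$ and $\HMW^{n,0}(X,\ZZ)=0$ for $n\ne 0$, $X$ local), which already follows from \chdmt, Proposition~\ref{prop:explicit}; and in your obstacle paragraph you speak of ``slices $s_q(U(\un))$ for $q\ge 1$'' vanishing as giving effectivity, but it is the \emph{negative} slices $s_{-n}$ for $n\ge 1$ whose vanishing characterizes effectivity---Corollary~\ref{coro:effective} handles this, while the heart property comes purely from the vanishing of $\piaone_{n,0}(U(\un))$ for $n\ne 0$.
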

\begin{proof}
For an effective spectrum $E \in \SHkeff$, let $\tau_{\le 0}^\eff E \in \SHkeff_{\le 0}$ denote the truncation in the effective homotopy $t$-structure \cite{Bac17_f}*{Proposition~4}.

Let $X$ be (the localization of) a smooth scheme over $k$ and let $\HMW^{p,q}(X,\ZZ)$ be the MW-motivic cohomology groups introduced in \chfinitecw, Definition \ref{def:genmotiviccohom} (or equivalently in \chdmt, Definition \ref{def:generalizedMW}). We note that for $X$ local, (1) $\HMW^{n, 0}(X, \ZZ) = 0$ for $n \ne 0$ and (2) $\H^{0, 0}(X, \tZ)= \sKMW_0(X)$ by \chdmt, Proposition \ref{prop:explicit}. The unit map $\un \to U(\un)$ induces $\alpha: \HtZ \simeq \tau_{\le 0}^\eff \un \to \tau_{\le 0}^\eff U(\un) \simeq U(\un)$, where the first equivalence is by definition and the second since $U(\un) \in \SHkeff_{\le 0}$, by Corollary \ref{coro:effective} and (1). Now $\alpha$ is a map of objects in $\SH^{\eff,\heartsuit}$ (again by (1)) and hence an equivalence if and only if it induces an isomorphism on $\piaone_{0,0}$. This follows from (2).
\end{proof}

Next, we would like to show that ordinary motivic cohomology is represented by an explicit (pre-)sheaf in $\DMtk$. We start with the following lemma (see also \cite{G17_f}*{Theorem~5.3} and \cite{EK17_f}*{Theorem~1.1}).

\begin{lem} \label{lemm:modules}
Under the assumptions of the theorem, the category $\DMtekinv$ is equivalent
to the category of highly structured modules over $U(\un_{\DMtekinv}) \simeq H\tZ[1/e]$.
\end{lem}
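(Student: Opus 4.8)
The statement to be established is that $\DMtekinv$ is equivalent to the category of highly structured modules over the ring spectrum $U(\un_{\DMtekinv}) \simeq \HtZ[1/e]$. The plan is to deduce this from the general machinery of \cite{RO08} and \cite{SS03} identifying module categories over a ring spectrum with stable model categories equipped with a single compact generator, combined with the structural results on $\DMtekinv$ and $\DMtk$ proved in \chdmt. First I would record that $\DMtekinv$ is compactly generated by the Tate twists $\tMot(X)(q)[p]$ for $X$ smooth, $q \ge 0$, and $p \in \ZZ$ (Remark \ref{rem:compact_gen_DMte} together with the cancellation theorem \ref{thm:cancellation}, which after inverting $e$ is available by Proposition \ref{prop:compare_DMt_DM} in all characteristics); inverting $e$ does not disturb compact generation. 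The key point is that, after inverting $e$, these generators become \emph{invertible}, so that $\DMtekinv$ is generated by the single invertible object $\un$, hence by one compact object, and is a presentably symmetric monoidal stable category with monoidal unit $\un$ a compact generator.

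Next I would identify $U(\un_{\DMtekinv})$. By Theorem \ref{thm:comparisonUHt} (applied with coefficients $\ZZ[1/e]$, which only strengthens the hypotheses), $U(\un) \simeq \HtZ[1/e]$ as a ring spectrum in $\SHk$; here one uses that $U = K\gamma_*$ is lax symmetric monoidal so $U(\un)$ is a commutative monoid, and that the effectivity and heart-identification arguments of that theorem are insensitive to inverting $e$. The plan is then to invoke the Schwede--Shipley recognition theorem in its motivic incarnation (as used in \cite{RO06}, \cite{RO08} for $\DM$, and whose MW-analogue follows the same template): a presentably symmetric monoidal stable $\infty$-category — or the homotopy category of a stable monoidal model category — whose unit is a compact generator is equivalent, as a symmetric monoidal category, to modules over the endomorphism ring spectrum of the unit, namely $\mathrm{Mod}_{U(\un)}$. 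Concretely one produces a Quillen adjunction between $\tZ$-spectra (the underlying model category of $\DMtekinv$) and $\HtZ[1/e]$-modules sending $\un$ to $\HtZ[1/e]$; the left adjoint is symmetric monoidal, and one checks it is a Quillen equivalence by verifying it induces an equivalence on the compact generator — which amounts to the computation $\pi_{**}\mathrm{map}(\un,\un) \simeq \pi_{**} \HtZ[1/e]$, i.e.\ precisely the identification $U(\un) \simeq \HtZ[1/e]$ just obtained, together with the fact that $\un$ is invertible so the bigraded homotopy groups are determined in weight zero.

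The main obstacle I anticipate is the homotopy-coherence bookkeeping needed to promote the triangulated-categorical statement to an equivalence of module categories with their full monoidal/model-categorical structure: one needs $\DMtekinv$ to be the homotopy category of a sufficiently nice (combinatorial, symmetric monoidal, satisfying the monoid axiom) stable model category, which is furnished by \chdmt, Corollary \ref{cor:model_Der} and Proposition \ref{prop:model_DMt} and their $\PP^1$-stabilizations, together with the inversion-of-$e$ localization (a left Bousfield localization, which preserves these properties). Granting that, the argument is a direct transcription of \cite{RO08}*{proof of the modules-over-$\HH\ZZ$ theorem}, replacing $\DM$ by $\DMt$ and $\HH\ZZ$ by $\HtZ[1/e]$, and the only genuinely new input is Theorem \ref{thm:comparisonUHt}, which has already been proved. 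Thus the lemma follows formally, modulo citing the Schwede--Shipley/Röndigs--Østvær recognition principle in the form appropriate to the $\tZ$-stable model category.
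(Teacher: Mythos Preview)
Your argument contains a genuine gap at the crucial step. You claim that ``after inverting $e$, these generators become \emph{invertible}, so that $\DMtekinv$ is generated by the single invertible object $\un$.'' This is false: the motives $\tMot(X)$ for $X$ smooth of positive dimension are never invertible, even after inverting $e$. What is true (and what the paper actually uses) is that in the \emph{stable} category $\DMtk[1/e]$ these generators become \emph{dualizable} (rigid), because $\SHk[1/e]$ is compact-rigidly generated by \cite{LYZ13_f}*{Corollary~B.2} and this property is inherited along the monoidal functor $M$. But dualizable is much weaker than invertible, and in particular it does not make $\un$ a single compact generator; the Schwede--Shipley theorem in the form you invoke (one compact generator $\Rightarrow$ modules over its endomorphism ring) simply does not apply here.

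The paper's argument is different and does not go through Schwede--Shipley at all. One factors the adjunction $M:\SHk[1/e] \leftrightarrows \DMtk[1/e]:U$ through the category $\mathcal{M}$ of modules over $U(\un)$ (this is formal, by \cite{MNN17_f}*{Construction~5.23}), obtaining $M':\mathcal{M} \leftrightarrows \DMtk[1/e]:U'$ with $U'M'(\un_{\mathcal{M}}) \simeq \un_{\mathcal{M}}$. Since $\SHk[1/e]$ is compact-rigidly generated, so are $\mathcal{M}$ and $\DMtk[1/e]$. One then invokes a general lemma (\cite{Bac16_f}*{Lemma~22}): a symmetric monoidal adjunction between compact-rigidly generated stable categories in which the unit map on $\un$ is an isomorphism is automatically an equivalence. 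The point is that rigidity of the generators lets you test fully-faithfulness of $M'$ on $\un$ alone, because $\Hom(G, M'(-)) \simeq \Hom(\un, DG \otimes M'(-)) \simeq \Hom(\un, M'(M'(DG) \otimes -))$ for $G$ dualizable; no claim that $\un$ generates by itself is needed.
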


\begin{proof}
Let $\mathcal{M}$ be this category of modules. By abstract nonsense \cite{MNN17_f}*{Construction~5.23} there is an
induced adjunction 
\[
M' :\mathcal{M} \leftrightarrows \DMtekinv: U'
\] 
which satisfies $U'M' (\un_{\mathcal M}) \simeq \un_{\mathcal M}$. Under our assumptions, the category
$\SHk[1/e]$ is compact-rigidly generated \cite{LYZ13_f}*{Corollary~B.2} and hence so are the categories $\mathcal{M}$ and 
$\DMtekinv$. It follows that $M'$ and $U'$ are inverse equivalences, see e.g.
\cite{Bac16_f}*{Lemma~22}.
\end{proof}

\begin{coro} \label{coro:modules-Z}
Under the same assumptions, the presheaf $\ZZ \in \DMtk$ represents
ordinary motivic cohomology with $\ZZ$-coefficients.
\end{coro}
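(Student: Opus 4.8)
The object $\ZZ \in \DMtkZ$ in the statement is the image of Voevodsky's unit $\un_{\DMkZ}$ (the motivic complex $\ZZ(0)$) under the forgetful functor $\pi_* \colon \DMkZ \to \DMtkZ$ of \chdmt --- equivalently, the $\Sigma_T^{\infty}$-suspension of the effective presheaf $\ZZ$ equipped with its canonical Milnor--Witt transfers, $\pi_*(\un_{\DMekZ})$ ($\Sigma_T^{\infty}$ being fully faithful on effective MW-motives by \chcancellation, Corollary~\ref{cor:embedding}). ``Represents ordinary motivic cohomology'' will mean, concretely, that $\Hom_{\DMtkZ}(\tMot(X),\ZZ(q)[p]) \cong \HM^{p,q}(X,\ZZ)$ naturally in $X \in \smk$, and, at the level of motivic spectra, that $U(\ZZ) \simeq \sHM\ZZ$ in $\SHk$ with $U$ the functor of Section~\ref{sec:AppMWmotives} and $\sHM\ZZ$ the motivic Eilenberg--MacLane spectrum. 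The plan is to obtain both by unwinding the adjunctions already assembled in \chdmt and \chspectra, and then to record how this fits the module-theoretic picture of Lemma~\ref{lemm:modules}.

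For the cohomological form, I would use the adjunction $\derL\pi^* \dashv \pi_*$ on $\PP^1$-stable MW-motives. The functor $\derL\pi^*$ is triangulated and monoidal, hence commutes with shifts and with Tate twists, and sends $\tMot(X)$ to $\Mot(X)$ (\chdmt). Therefore, using also invertibility of the Tate object,
\[
\Hom_{\DMtkZ}\big(\tMot(X),\ZZ(q)[p]\big) \cong \Hom_{\DMkZ}\big(\derL\pi^*\tMot(X),\un_{\DMkZ}(q)[p]\big) \cong \Hom_{\DMkZ}\big(\Mot(X),\un_{\DMkZ}(q)[p]\big),
\]
which is $\HM^{p,q}(X,\ZZ)$ by definition of Voevodsky's motivic cohomology, with naturality and multiplicativity inherited from $\derL\pi^*$. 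For the spectrum-level form, recall that the graph functor factors as $\gamma = \pi \circ \tilde\gamma$, so on motives $\derL\gamma^* = \derL\pi^* \circ \derL\tilde\gamma^*$ and dually $\gamma_* = \tilde\gamma_* \circ \pi_*$ as functors $\DMkZ \to \DAk$. By \chspectra, Definition~\ref{df:M_ring_sp} (and Remark~\ref{rem:on_spectra} for the passage from $\DAk$ to $\SHk$) the motivic Eilenberg--MacLane spectrum is $\sHM\ZZ = K\gamma_*(\un_{\DMkZ})$, while $U = K \circ \tilde\gamma_*$ by construction of $M$ and $U$ in Section~\ref{sec:AppMWmotives}; hence
\[
U(\ZZ) = K\tilde\gamma_*\pi_*(\un_{\DMkZ}) = K\gamma_*(\un_{\DMkZ}) = \sHM\ZZ .
\]
Since $\un_{\DMkZ}$ is a commutative ring object and $\pi_*$ is lax symmetric monoidal, $\ZZ$ is a commutative ring object and this is an isomorphism of ring spectra.

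Finally, Lemma~\ref{lemm:modules} provides the module-theoretic refinement: after inverting $e$ it presents $\DMtekinv$ as the category of modules over $U(\un_{\DMtekinv}) \simeq \HtZ[1/e]$, and, $\pi_*$ being lax monoidal, the object $\ZZ[1/e] = \pi_*(\un_{\DMe{k,\ZZ[1/e]}})$ becomes a commutative $\HtZ[1/e]$-algebra, namely --- by the preceding paragraph together with the identification of Voevodsky motives with modules over motivic cohomology --- the motivic Eilenberg--MacLane spectrum. The only step that genuinely requires care is organizational rather than mathematical: one must verify that the squares of Quillen adjunctions relating $\DAk$, $\DMtk$ and $\DMk$ in \chdmt are strictly compatible with the construction of $\sHM\ZZ$ in \chspectra and with the stabilization functors of Section~\ref{sec:AppMWmotives}, so that the object called $\ZZ$ and the composite functors above really coincide with the classical ones; this is guaranteed by Corollary~\ref{cor:adjunctions_corr} and the diagrams recorded there. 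Note that, despite the $[1/e]$ appearing in Lemma~\ref{lemm:modules}, the second paragraph uses only formal properties of the adjunctions and already yields the statement with integral coefficients.
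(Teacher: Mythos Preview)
Your argument rests on the identification, asserted in the first sentence, that the presheaf $\ZZ$ in $\DMtkZ$ (that is, $\Sigma_T^{\infty}$ of the constant MW-sheaf $\pi_*^{\eff}(\un_{\DMekZ})$) coincides with the stable object $\pi_*(\un_{\DMkZ})$. This is where the real content lies, and it is not a consequence of the cancellation theorem. Full faithfulness of $\Sigma_T^{\infty}$ lets you test maps between objects already known to be effective, but it does not tell you that $\pi_*(\un_{\DMkZ})$ lies in the essential image of $\Sigma_T^{\infty}$. Unwinding the Beck--Chevalley map $\Sigma_T^{\infty}\pi_*^{\eff}(\un)\to\pi_*\Sigma_T^{\infty}(\un)$, the question becomes whether $\pi_*^{\eff}(\un)\{n\}\simeq\pi_*^{\eff}(\un\{n\})$ in $\DMtekZ$ for all $n\ge 0$, i.e.\ whether $\pi_*^{\eff}$ commutes with positive Tate twists. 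The projection-formula argument you implicitly invoke (as in Lemma~\ref{lemm:inv}) needs the twisting object to be dualizable, and $\tZcbx 1$ is not invertible in the \emph{effective} category, so this step does not go through formally.

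Once the identification is granted, your computation $U(\ZZ)=K\tilde\gamma_*\pi_*(\un_{\DMkZ})=K\gamma_*(\un_{\DMkZ})=\sHM\ZZ$ is correct and very clean. The paper's proof, by contrast, never makes that identification: it works directly with $U(\Sigma_T^{\infty}\ZZ_{MW})$, shows that its effective cover $f_0U(\ZZ)$ is $\H\ZZ$ via the effective homotopy $t$-structure, and then proves $U(\ZZ)$ is itself effective by a fracture argument---after inverting $e$ using Lemma~\ref{lemm:modules}, and after inverting $2$ using the splitting $\DMtek^+\simeq\DMe{k,\ZZ[1/2]}$. Since $U=K\tilde\gamma_*$ is conservative, the identification you assume is actually \emph{equivalent} to the corollary (apply $U$ to the Beck--Chevalley map and compare with $\sHM\ZZ$), so your route cannot bypass this work.
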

\begin{proof}
Let $H = f_0 U(\ZZ)$. Then $\piaone_{0,0}(H) = \ZZ$ whereas $\piaone_{n,0}(H) = 0$ for $n \ne 0$. Also $\piaone_{-1,-1}(H) = (\piaone_{0,0}(H))_{-1} = 0$ and consequently $f_1 H = 0$, $s_0 H \simeq H$. The unit map $\un \to U(\un) \to U(\ZZ)$ induces $\un \to H$ and hence $\H\ZZ \simeq s_0(\un) \to s_0(H) \simeq H$. This is an equivalence since it is a map between objects in $\SH^{\eff,\heartsuit}$ inducing an isomorphism on $\piaone_{0,0}(\bullet)$. We have thus found a canonical map $\alpha: \H\ZZ \to f_0 U(\ZZ) \to U(\ZZ)$, which we need to show is an equivalence. We show separately that $\alpha[1/e]$ and $\alpha[1/2]$ are equivalences; since $e \ne 2$ this is enough.

We claim that $U(\ZZ)[1/e] \in \SHkeff$. This will imply that $\alpha[1/e]$ is an equivalence.
For $X \in \smk$ we have $UM(X)[1/e] = \Sigma^\infty X_+ \wedge U(\un)[1/e]$, by the
previous lemma. In particular $UM(X)[1/e] \in \SHkeff$. It follows that for $E
\in \DMtekinv$ we get $U(E) \in \SHkeff$ (indeed $U$ commutes with filtered colimits, being right adjoint to a functor preserving compact generators). This applies in particular to $E = \ZZ[1/e]$.

Recall that if $E \in \SHk$, then $E[1/2]$ canonically splits into two spectra, which we denote by $E^+$ and $E^-$. They are characterised by the fact that the motivic Hopf map $\eta$ is zero on $E^+$ and invertible on $E^-$ \cite{Bac17-2_f}*{Lemma~39}.
Now consider $U(\ZZ)[1/2]$. The action of $\sKMW$ on $\piaone_{0,0}(U\ZZ) = \ZZ$ is by definition via the canonical epimorphism $\sKMW_0 \to \sKM_0= \ZZ$. This implies that $(U\ZZ)^- = 0$, just like $(H\ZZ)^- = 0$. On the other hand $\ZZ^+ \in \DMtek^+ \simeq \DMe{k,\ZZ[1/2]}$ \chdmt, \S\ref{sec:relordmot} is the unit, by construction, whence $U\ZZ^+ = \H\ZZ[1/2]$.
\end{proof}

\begin{exm}[Grayson's Motivic Cohomology]
\index{Grayson's motivic cohomology}%
In \cite{S03_f}, Suslin proves that Grayson's definition of motivic cohomology coincides with Voevodsky's. To do so he proves that Grayson's complexes satisfy the cancellation theorem, and then employs an induction using poly-relative cohomology. We cannot resist pointing out that the second half of this argument is subsumed by our criterion. Indeed, it is easy to see that $\K_0^\oplus$-presheaves admit framed transfers in the sense of \cite{GP14_f}*{\S 2}. Consequently the $\Aone$-localization functor for Grayson motives is given by $\mathrm{L}_{\nis} \Cstar{-}$ (\cite{GP15_f}*{Theorem~1.1}). Arguing exactly as in the proof of Corollary \ref{coro:effective} (using \cite{S03_f}*{Remark~2.3} instead of Proposition \ref{prop:ratcontractible}) we conclude that the spectrum $\H\ZZ^{Gr}$ representing Grayson's motivic cohomology is effective. But $\ZZ^{Gr}(0) \simeq \ZZ$ and so $\H\ZZ \simeq \H\ZZ^{Gr}$, arguing as in the proof of Theorem~\ref{thm:comparisonUHt}.
\end{exm}

\begin{exm}[GW-motives]
\index{Grothendieck-Witt motives}%
In \cite{Dr17-3_f}, a category of GW-motives $\DMGWk$ is defined and the usual properties are established. Arguing very similarly to the proof of Proposition \ref{prop:ratcontractible}, one may show that the reduced GW-presheaf corresponding to $\Gm^{\times n}$ is rationally contractible. Then, arguing as in Theorem \ref{thm:comparisonUHt} and Lemma \ref{lemm:modules}, using the main results of \cite{Dr17-3_f, Dr17-2_f, Dr17_f}, one may show that the spectrum representing $\un \in \DMGWk$ is $\HtZ$ again, and that $\DMGWk$ is equivalent to the category of highly structured modules over $\HtZ$. In particular $\DMGWk \simeq \DMtk$. We leave the details for further work.
\end{exm}

\begin{rem}
The assumption that $k$ is infinite in our results can be dropped by employing the techniques of \cite{EHKSY17_f}*{Appendix~B}.
\end{rem}

\begin{bibsection}
\begin{biblist}

\bib{Bac16_f}{article}{
      author={Bachmann, T.},
     journal={Duke Math. J.},
       month={06},
      number={8},
       pages={1525--1571},
   publisher={Duke University Press},
       title={On the conservativity of the functor assigning to a motivic spectrum its motive},
         url={https://doi.org/10.1215/00127094-2018-0002},
      volume={167},
        year={2018},
        note={\href{https://arxiv.org/abs/1506.07375}{arXiv:1506.07375}},
}

\bib{Bac17_f}{article}{
      author={Bachmann, T.},
       title={The generalized slices of Hermitian $K$-theory},
        date={2017},
        ISSN={1753-8424},
     journal={Journal of Topology},
      volume={10},
      number={4},
       pages={1124\ndash 1144},
         url={http://dx.doi.org/10.1112/topo.12032},
        note={\href{https://arxiv.org/abs/1610.01346}{arXiv:1610.01346}},
}

\bib{Bac17-2_f}{article}{
      author={Bachmann, T.},
       title={Motivic and real \'etale stable homotopy theory},
      volume={154},
        date={2018},
     journal={Compositio Mathematica},
   publisher={London Mathematical Society},
       pages={883--917},
        note={\href{https://arxiv.org/abs/1608.08855}{arXiv:1608.08855}},
}

\bib{CD12_f}{book}{
      author={Cisinski, D.-C.},
      author={D{\'e}glise, F.},
       title={Triangulated categories of mixed motives},
        year={2019},
   publisher={Springer},
      series={Springer Monographs in Mathematics},
        note={\href{https://arxiv.org/abs/0912.2110}{arXiv:0912.2110}},
}

\bib{Dr17_f}{unpublished}{
      author={Druzhinin, A.},
       title={Cancellation theorem for {G}rothendieck-{W}itt-correspondences
  and {W}itt-correspondences},
        date={2017},
        note={\href{https://arxiv.org/abs/1709.06543}{arXiv:1709.06543}},
}

\bib{Dr17-2_f}{unpublished}{
      author={Druzhinin, A.},
       title={Strict homotopy invariance of {N}isnevich sheaves with
  {GW}-transfers},
        date={2017},
        note={\href{https://arxiv.org/abs/1709.05805}{arXiv:1709.05805}},
}

\bib{Dr17-3_f}{unpublished}{
      author={Druzhinin, A.},
       title={Effective {G}rothendieck-{W}itt motives of smooth varieties},
        date={2017},
        note={\href{https://arxiv.org/abs/1709.06273}{arXiv:1709.06273}},
}

\bib{EK17_f}{article}{
      author={Elmanto, E.},
      author={Kolderup, H.~A.},
  title={On modules over motivic ring spectra},
  journal={Annals of K-Theory},
  volume={5},
  number={2},
  pages={327--355},
  year={2020},
  publisher={Mathematical Sciences Publishers},
        note={\href{https://arxiv.org/abs/1708.05651}{arxiv:1708.05651}},
}

\bib{EHKSY17_f}{article}{
      author={Elmanto, E.},
      author={Hoyois, M.},
      author={Khan, A.~A.},
      author={Sosnilo, V.},
      author={Yakerson, M.},
       title={Motivic infinite loop spaces},
        date={2021},
     journal={Camb. J. Math. },
      volume={9},
  number={2},
  pages={431--549},
}

\bib{GP14_f}{article}{
      author={Garkusha, G.},
      author={Panin, I.},
       title={Framed {M}otives of algebraic varieties},
  journal={Journal of the American Mathematical Society},
  volume={34},
  number={1},
  pages={261--313},
  year={2021},
        note={\href{https://arxiv.org/abs/1409.4372}{arXiv:1409.4372}},
}

\bib{GP15_f}{article}{
      author={Garkusha, G.},
      author={Panin, I.},
       title={Homotopy invariant presheaves with framed transfers},
  journal={Cambridge Journal of Mathematics},
  volume={8},
  number={1},
  pages={1--94},
  year={2020},
  publisher={International Press of Boston},
        note={\href{https://arxiv.org/abs/1504.00884}{arxiv:1504.00884}},
}

\bib{G17_f}{article}{
      author={Garkusha, G.},
       title={Reconstructing rational stable motivic homotopy theory},
  journal={Compositio Mathematica},
  volume={155},
  number={7},
  pages={1424--1443},
  year={2019},
  publisher={London Mathematical Society}
}

\bib{Gr57_f}{article}{
      author={Grothendieck, A.},
       title={Sur quelques points d'alg\`ebre homologique},
        date={1957},
     journal={Tohoku},
      volume={1},
      number={1},
       pages={12\ndash 13},
}

\bib{EGA4-3}{article}{
      author={Grothendieck, A.},
       title={\'{E}l\'ements de g\'eom\'etrie alg\`ebrique: {IV}. \'{E}tude
  locale des sch\'emas et des morphismes de sch\'emas, {T}roisi\`eme partie},
        date={1966},
     journal={Publ. Math. Inst. Hautes \'Etudes Sci.},
      volume={28},
       pages={5\ndash 255},
}

\bib{K17_f}{article}{
      author={Kolderup, H.~A.},
       title={Homotopy invariance of presheaves with {M}ilnor-{W}itt
  transfers},
        date={2019},
        journal={Doc. Math.},
        volume={24},
        pages={2339\ndash 2379}
}

\bib{L06_f}{article}{
      author={Levine, M.},
       title={Chow's moving lemma and the homotopy coniveau tower},
        date={2006},
     journal={K-{T}heory},
      volume={37},
      number={1},
       pages={129\ndash 209},
}

\bib{L08_f}{article}{
      author={Levine, M.},
       title={The homotopy coniveau tower},
        date={2008},
     journal={Journal of Topology},
      volume={1},
      number={1},
       pages={217\ndash 267},
}

\bib{LYZ13_f}{article}{
      author={Levine, M.},
      author={Yang, Y.},
      author={Zhao, G.},
      author={Riou, J.}
       title={Algebraic elliptic cohomology theory and flops, i},
        date={2019},
     journal={Math. Ann.},
     volume={375},
      number={3\ndash 4},
       pages={1823\ndash 1855},

}

\bib{MNN17_f}{article}{
      author={Mathew, A.},
      author={Naumann, N.},
      author={Noel, J.},
       title={Nilpotence and descent in equivariant stable homotopy theory},
        date={2017},
     journal={Advances in Mathematics},
      volume={305},
       pages={994\ndash 1084},
}

\bib{Morel12_f}{book}{
      author={Morel, F.},
       title={$\mathbb {A}^1$-{A}lgebraic {T}opology over a {F}ield},
      series={Lecture Notes in Math.},
   publisher={Springer},
     address={New York},
        date={2012},
      volume={2052},
}

\bib{Nee01_f}{book}{
      author={Neeman, A.},
       title={Triangulated categories},
      series={Annals of Mathematics Studies},
   publisher={Princeton University Press},
     address={Princeton, NJ},
        date={2001},
      volume={148},
}

\bib{Neshitov18_f}{article}{
      author={Neshitov, A.},
       title={Framed correspondences and {M}ilnor-{W}itt {$K$}-theory},
        date={2018},
     journal={Journal of the Institute of Mathematics of Jussieu},
      volume={17},
      number={4},
       pages={823\ndash 852},
}

\bib{PSV09_f}{unpublished}{
      author={Panin, I.},
      author={Stavrova, A.},
      author={Vavilov, N.},
       title={Grothendieck-{S}erre conjecture {I}: {A}ppendix},
        date={2009},
        note={\href{https://arxiv.org/abs/0910.5465}{arXiv:0910.5465}},
}

\bib{Spitzweck12_f}{article}{
  author={Spitzweck, M.},
  author={{\O }stv{\ae }r, P.A.},
  title={Motivic twisted {$K$}-theory},
  date={2012},
  journal={Algebr. Geom. Topol.},
  volume={12},
  number={1},
  pages={565\ndash 599},
}

\bib{S03_f}{article}{
      author={Suslin, A.~A.},
       title={On the {G}rayson spectral sequence},
        date={2003},
     journal={Tr. Mat. Inst. Steklova},
      volume={241},
       pages={218\ndash 253},
}

\bib{Voe02_f}{incollection}{
      author={Voevodsky, V.},
       title={Open problems in the motivic stable homotopy theory , i},
        date={2002},
   booktitle={International press conference on motives, polylogarithms and
  hodge theory},
   publisher={International Press},
}

\end{biblist}
\end{bibsection}

\appendix
%    Include appendix "chapters" here.
%\include{}

\backmatter
%    Bibliography styles amsplain or author-year (using natbib) are
%    also acceptable.
%\bibliographystyle{amsalpha}
%\bibliography{}
%    See note above about multiple indexes.
\printindex
\printindex[notation]

\end{document}